\def\ps@pprintTitle{
 \let\@oddhead\@empty
 \let\@evenhead\@empty
 \def\@oddfoot{\centerline{\thepage}}
 \let\@evenfoot\@oddfoot}
\newtheorem{theorem}{Theorem}[section]
\newtheorem{corollary}[theorem]{Corollary}
\newtheorem{lemma}[theorem]{Lemma}
\newtheorem{proposition}[theorem]{Proposition}
\newtheorem{definition}[theorem]{Definition}
\newtheorem{hypothesis}[theorem]{Hypothesis}
\newtheorem{remark}[theorem]{Remark}
\newtheorem{notation}[theorem]{Notation}
\newtheorem{example}[theorem]{Example}
\newtheorem{conjecture}[theorem]{Conjecture}
\newcommand{\hooklongrightarrow}{\lhook\joinrel\longrightarrow}
\newcommand{\twoheadlongrightarrow}{\relbar\joinrel\twoheadrightarrow}
\newcommand{\us}{\upsilon}
\newcommand{\ra}{\rightarrow}
\newcommand{\lra}{\longrightarrow}
\newcommand{\ul}{\underline}
\newcommand{\hV}{\textbf V}
\newcommand{\bA}{\mathbb A}
\newcommand{\bC}{\mathbb C}
\newcommand{\F}{\mathbb F}
\newcommand{\Q}{\mathbb Q}
\newcommand{\bR}{\mathbb R}
\newcommand{\bT}{\mathbb T}
\newcommand{\Z}{\mathbb Z}
\newcommand{\bW}{\mathbb W}
\newcommand{\cN}{\mathcal N}
\newcommand{\cL}{\mathcal L}
\newcommand{\co}{\mathcal O}
\newcommand{\cR}{\mathcal R}
\newcommand{\cC}{\mathcal C}
\newcommand{\cI}{\mathcal I}
\newcommand{\cW}{\mathcal W}
\newcommand{\cT}{\mathcal T}
\newcommand{\cM}{\mathcal M}
\newcommand{\cF}{\mathcal F}
\newcommand{\cE}{\mathcal E}
\newcommand{\cP}{\mathcal P}
\newcommand{\fn}{\mathfrak n}
\newcommand{\fm}{\mathfrak{m}}
\newcommand{\ub}{\mathfrak b}
\newcommand{\fl}{\mathfrak l}
\newcommand{\fp}{\mathfrak p}
\newcommand{\ug}{\mathfrak g}
\newcommand{\ft}{\mathfrak t}
\newcommand{\sE}{\mathscr E}
\newcommand{\sD}{\mathscr D}
\DeclareMathAlphabet{\mathpzc}{OT1}{pzc}{m}{it}
\DeclareMathOperator{\sm}{\mathrm sm}
\DeclareMathOperator{\gl}{\mathfrak gl}
\DeclareMathOperator{\tr}{\mathrm tr}
\DeclareMathOperator{\GL}{\mathrm GL}
\DeclareMathOperator{\gr}{\mathrm gr}
\DeclareMathOperator{\Fil}{\mathrm Fil}
\DeclareMathOperator{\Res}{\mathrm Res}
\DeclareMathOperator{\Frac}{\mathrm Frac}
\DeclareMathOperator{\Gal}{\mathrm Gal}
\DeclareMathOperator{\Hom}{\mathrm Hom}
\DeclareMathOperator{\End}{\mathrm End}
\DeclareMathOperator{\cris}{\mathrm cris}
\DeclareMathOperator{\rig}{\mathrm rig}
\DeclareMathOperator{\an}{\mathrm an}
\DeclareMathOperator{\Spec}{\mathrm Spec}
\DeclareMathOperator{\Frob}{\mathrm Frob}
\DeclareMathOperator{\Ind}{\mathrm Ind}
\DeclareMathOperator{\unr}{\mathrm unr}
\DeclareMathOperator{\Ker}{\mathrm Ker}
\DeclareMathOperator{\pr}{\mathrm pr}
\DeclareMathOperator{\ord}{\mathrm ord}
\DeclareMathOperator{\Ext}{\mathrm Ext}
\DeclareMathOperator{\Spf}{\mathrm Spf}
\DeclareMathOperator{\Ima}{\mathrm Im}
\DeclareMathOperator{\SL}{\mathrm SL}
\DeclareMathOperator{\lalg}{\mathrm lalg}
\DeclareMathOperator{\dett}{\mathrm det}
\DeclareMathOperator{\alg}{\mathrm alg}
\DeclareMathOperator{\soc}{\mathrm soc}
\DeclareMathOperator{\Rep}{\mathrm Rep}
\DeclareMathOperator{\sss}{\mathrm ss}
\DeclareMathOperator{\red}{\mathrm red}
\DeclareMathOperator{\st}{\mathrm st}
\DeclareMathOperator{\St}{\mathrm St}
\DeclareMathOperator{\Art}{\mathrm Art}
\DeclareMathOperator{\WD}{\mathrm WD}
\DeclareMathOperator{\W}{\mathrm W}
\DeclareMathOperator{\HT}{\mathrm HT}
\DeclareMathOperator{\Tor}{\mathrm Tor}
\DeclareMathOperator{\wt}{\mathrm wt}
\DeclareMathOperator{\cfs}{\mathrm fs}
\DeclareMathOperator{\tri}{\mathrm tri}
\DeclareMathOperator{\val}{\mathrm val}
\DeclareMathOperator{\univ}{\mathrm univ}
\DeclareMathOperator{\aut}{\mathrm aut}
\DeclareMathOperator{\gal}{\mathrm gal}
\DeclareMathOperator{\Def}{\mathrm Def}
\DeclareMathOperator{\Comp}{\mathrm Comp}
\DeclareMathOperator{\cts}{\mathrm cts}
\DeclareMathOperator{\reg}{\mathrm reg}
\DeclareMathOperator{\FM}{\mathrm FM}
\DeclareMathOperator{\Ord}{\mathrm Ord}
\DeclareMathOperator{\fini}{\mathrm finite}
\DeclareMathOperator{\can}{\mathrm can}
\DeclareMathOperator{\NOrd}{\mathrm NOrd}
\DeclareMathOperator{\nord}{\mathrm nord}
\DeclareMathOperator{\fin}{\mathrm fin}
\DeclareMathOperator{\lfin}{\mathrm lfin}
\DeclareMathOperator{\Mod}{\mathrm Mod}
\DeclareMathOperator{\aug}{\mathrm aug}
\DeclareMathOperator{\pro}{\mathrm pro}
\DeclareMathOperator{\nul}{\mathrm null}
\DeclareMathOperator{\rec}{\mathrm rec}
\DeclareMathOperator{\DF}{\mathrm DF}
\DeclareMathOperator{\diag}{\mathrm diag}
\DeclareMathOperator{\glo}{\mathrm gl}
\DeclareMathOperator{\ev}{\mathrm ev}
\DeclareMathOperator{\pLL}{\mathrm pLL}
\DeclareMathOperator{\ortho}{\mathrm ortho}
\DeclareMathOperator{\fg}{\mathrm fg}
\begin{document}

\title{Higher $\cL$-invariants for $\GL_3(\Q_p)$ and local-global compatibility}

\author[CB]{Christophe Breuil}
\ead[CB]{christophe.breuil@math.u-psud.fr}
\author[YD]{Yiwen Ding}
\ead[YD]{yiwen.ding@bicmr.pku.edu.cn}
\address[CB]{L.M.O., C.N.R.S., Universit\'e Paris-Sud,
Universit\'e Paris-Saclay, 91405 Orsay, France}
\address[YD]{B.I.C.M.R., Peking University,
No.5 Yiheyuan Road Haidian District,
Beijing, P.R. China 100871}

\begin{abstract}
Let $\rho_p$ be a $3$-dimensional $p$-adic semi-stable representation of $\Gal(\overline{\Q_p}/\Q_p)$ with Hodge-Tate weights $(0,1,2)$ (up to shift) and such that $N^2\ne 0$ on $D_{\st}(\rho_p)$. When $\rho_p$ comes from an automorphic representation $\pi$ of $G(\bA_{F^+})$ (for a unitary group $G$ over a totally real field $F^+$ which is compact at infinite places and $\GL_3$ at $p$-adic places), we show under mild genericity assumptions that the associated Hecke-isotypic subspaces of the Banach spaces of $p$-adic automorphic forms on $G(\bA_{F^+}^\infty)$ of arbitrary fixed tame level contain (copies of) a unique admissible finite length locally analytic representation of $\GL_3(\Q_p)$ of the form considered in \cite{Br16} which only depends on and completely determines $\rho_p$.
\end{abstract}

\maketitle
\setcounter{tocdepth}{4}
\tableofcontents
\numberwithin{equation}{section}

\section{Introduction and notation}\label{intronota}

\noindent
Let $p$ be a prime number, $n\geq 2$ an integer, $F^+$ a totally real number field and $F$ a totally imaginary quadratic extension of $F^+$ such that all places of $F^+$ dividing $p$ split in $F$. We fix a unitary algebraic group $G$ over $F^+$ which becomes $\GL_n$ over $F$ and such that $G(F^+\otimes_{\Q}\mathbb R)$ is compact and $G$ is split at all places above $p$. We also fix a place $\wp$ of $F^+$ above $p$. Then to each $\Q_p$-algebraic irreducible (finite dimensional) representation $W^{\wp}$ of $\prod_{v\mid p,v\ne \wp}G(F_v^+)$ over a finite extension $E$ of $\Q_p$ and to each prime-to-$\wp$ level $U^{\wp}$ in $G(\bA_{F^+}^{\infty,\wp})$, one can associate the Banach space of $p$-adic automorphic forms $\widehat{S}(U^{\wp}, W^{\wp})$ (see e.g. \S~\ref{prelprel}).\\

\noindent
If $\rho:\Gal(\overline F/F)\rightarrow \GL_n(E)$ is a continuous irreducible representation and $\widetilde\wp$ is a place of $F$ above $\wp$, one can consider the associated Hecke isotypic subspace $\widehat{S}(U^{\wp}, W^{\wp})[\fm_\rho]$, which is a continuous admissible representation of $G(F^+_{\wp})\buildrel\sim\over\rightarrow \GL_n(F_{\widetilde\wp})$ over $E$, or its locally $\Q_p$-analytic vectors $\widehat{S}(U^{\wp}, W^{\wp})[\fm_\rho]^{\an}$, which is an admissible locally $\Q_p$-analytic representation of $\GL_n(F_{\widetilde\wp})$. When nonzero, these representations of $\GL_n(F_{\widetilde\wp})$ are so far only understood when $n=2$ and $F_{\widetilde\wp}=\Q_p$ (\cite{Colm10a}, \cite{Em4}, \cite{Kis10}, \cite{CS2}, \cite{Colm14}, \cite{LXZ}, \cite{DLB}, \cite{CEG+}, ...). Indeed, though these representations are expected to be very rich, many results from $\GL_2(\Q_p)$ collapse (see e.g. \cite{Pascompl}, \cite{SchrB}) and it presently seems an almost impossible task to find a way to completely describe them in general. However, it is (quite reasonably) hoped that they {\it determine} the local Galois representation $\rho_{\widetilde\wp}:=\rho\vert_{\Gal(\overline F_{\widetilde\wp}/F_{\widetilde\wp})}$ and (may-be less reasonably) hoped that they also {\it only depend on} $\rho_{\widetilde\wp}$. Note that the special case where $\rho$ is automorphic is of particular interest, since then the subspace $\widehat{S}(U^{\wp}, W^{\wp})[\fm_\rho]^{\lalg}$ of locally $\Q_p$-algebraic vectors is nonzero, given by the classical local Langlands correspondence for $\GL_n(F_{\widetilde\wp})$ tensored by $\Q_p$-algebraic representations of $\GL_n(F_{\widetilde\wp})$.\\

\noindent
The aim of this work is to consolidate the above hopes in the case of $\GL_3(\Q_p)$. Let $\St_3^\infty$ be the usual smooth Steinberg representation of $\GL_3(\Q_p)$ and $v_{\overline P_i}^\infty=(\Ind_{\overline P_i(\Q_p)}^{\GL_3(\Q_p)}1)^\infty/1$ for $i=1,2$ the two smooth generalized Steinberg representations where $\overline P_1(\Q_p)=\Big(\begin{smallmatrix} * & * & 0 \\ * & * & 0\\ * & *&* \end{smallmatrix}\Big)$ and $\overline P_2(\Q_p):=\Big(\begin{smallmatrix} * & 0 & 0 \\ *&*&*\\ * & *&*\end{smallmatrix}\Big)$. Our main result is the following.

\begin{theorem}[Corollary \ref{main}]\label{mainintro}
Assume $p\geq 5$, $n=3$, $F_{\widetilde{\wp}}=\Q_p$ and $U^{\wp}=\prod_{v\ne \wp}U_v$ with $U_v$ maximal if $v\vert p$, $v\ne \wp$. Assume moreover that:
\begin{itemize}
\item $\overline\rho$ is absolutely irreducible
\item $\widehat{S}(U^{\wp}, W^{\wp})[\fm_{\rho}]^{\lalg}\neq 0$
\item $\rho_{\widetilde{\wp}}$ is semi-stable with consecutive Hodge-Tate weights and $N^2\ne 0$ on $D_{\st}(\rho_{\widetilde{\wp}})$
\item any dimension $2$ subquotient of $\overline \rho_{\widetilde{\wp}}:=\overline{\rho}|_{\Gal(\overline F_{\widetilde\wp}/F_{\widetilde\wp})}$ is nonsplit.
\end{itemize}
\noindent
Then $\widehat{S}(U^{\wp}, W^{\wp})[\fm_{\rho}]$ contains (copies of) a {\rm unique} locally analytic representation $\Pi \otimes \chi\!\circ\!\dett$ of $\GL_3(\Q_p)$ with $\chi$ a locally algebraic character of $\Q_p^\times$ and $\Pi$ of the form:
\begin{equation}\label{pi}
\begin{gathered}
\begindc{\commdiag}[32]
 \obj(0,16)[a]{\!\!\!\!\!\!\!\!\!\!\!\!\!\!\!\!\!\!\!\!\!\!\!\!\!\!\!\!$\Pi\ \ \cong \ \ \St_3^{\infty}$}
  \obj(12,2)[b]{$C_{2,1}$}
  \obj(23,-6)[c]{$\widetilde{C}_{2,2}$}
  \obj(23,10)[f]{$v_{\overline{P}_2}^{\infty}$}
  \obj(34,2)[x]{$C_{2,3}$}
  \obj(45,10)[y]{$v_{\overline{P}_1}^{\infty}$}
  \obj(45,-6)[h]{$\widetilde{C}_{2,4}$}
  \obj(56,2)[i]{$C_{2,5}$}
  \obj(12,30)[d]{$C_{1,1}$}
  \obj(23,38)[e]{$\widetilde{C}_{1,2}$}
  \obj(23,22)[g]{$v_{\overline{P}_1}^{\infty}$}
  \obj(34,30)[u]{$C_{1,3}$}
  \obj(45,22)[v]{$v_{\overline{P}_2}^{\infty}$}
  \obj(45,38)[j]{$\widetilde{C}_{1,4}$}
  \obj(56,30)[k]{$C_{1,5}$}
  \mor{a}{b}{}[+1,\solidline]
   \mor{b}{c}{}[+1,\solidline]
      \mor{b}{f}{}[+1,\solidline]
        \mor{c}{x}{}[+1, \dashline]
             \mor{f}{x}{}[+1,\solidline]
   \mor{x}{y}{}[+1, \solidline]
   \mor{a}{d}{}[+1,\solidline]
   \mor{d}{e}{}[+1, \solidline]
\mor{e}{u}{}[+1, \dashline]
\mor{u}{v}{}[+1, \solidline]
\mor{g}{u}{}[+1, \solidline]
\mor{d}{g}{}[+1, \solidline]
\mor{h}{i}{}[+1,\dashline]
\mor{y}{i}{}[+1,\solidline]
\mor{x}{h}{}[+1,\solidline]
\mor{u}{j}{}[+1, \solidline]
\mor{j}{k}{}[+1,\dashline]
\mor{v}{k}{}[+1,\solidline]
\enddc.
\end{gathered}
\end{equation}
where the $C_{i,j}$, $\widetilde{C}_{i,j}$ are certain explicit irreducible subquotients of locally analytic principal series of $\GL_3(\Q_p)$ (see \S~\ref{allthecij} or \cite[\S~4.1]{Br16}), where $\St_3^\infty = \soc_{\GL_3(\Q_p)}\Pi$ and where $-$ (resp. the dashed line) means a nonsplit (resp. a possibly split) extension as subquotient. Moreover the representation $\Pi \otimes \chi\!\circ\!\dett$ completely determines and only depends on $\rho_{\widetilde{\wp}}$. In particular the locally analytic representation $\widehat{S}(U^{\wp}, W^{\wp})[\fm_{\rho}]^{\rm an}$ of $\GL_3(\Q_p)$, hence also the continuous representation $\widehat{S}(U^{\wp}, W^{\wp})[\fm_{\rho}]$, determine $\rho_{\widetilde{\wp}}$.
\end{theorem}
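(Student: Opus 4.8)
The plan is to combine three ingredients: (i) the local-global compatibility for companion points / the existence of locally analytic companion constituents on the eigenvariety, (ii) the explicit computation of higher $\cL$-invariants in \cite{Br16} which shows that a representation $\Pi$ of the shape \eqref{pi} is determined (up to the twist by $\chi\circ\dett$) by a collection of numerical invariants attached to $D_{\st}(\rho_{\widetilde\wp})$, and (iii) an adjunction/multiplicity-one argument (à la Breuil--Herzig--Hu--Schraen) showing that the socle filtration of $\widehat{S}(U^{\wp},W^{\wp})[\fm_\rho]^{\an}$ already forces the presence of the full $\Pi$.

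First I would fix notation: after twisting we may assume $\rho_{\widetilde\wp}$ has Hodge--Tate weights $(0,1,2)$ and $N^2\neq 0$ on $D_{\st}$, so the associated Weil--Deligne representation is the (twisted) Steinberg and the locally algebraic vectors $\widehat{S}(U^{\wp},W^{\wp})[\fm_\rho]^{\lalg}$ are (a sum of copies of) $\St_3^\infty\otimes W$ for the appropriate algebraic $W$; this is the bottom layer $\soc_{\GL_3(\Q_p)}\Pi$. Next, for each of the two maximal parabolics $\overline P_i$ I would use the existence of the corresponding point on the $\GL_3(\Q_p)$-eigenvariety together with the adjunction formula for Emerton's Jacquet functor (Breuil's "existence of companion points", building on Emerton, Breuil--Hellmann--Schraen) to produce the generalized Steinberg constituents $v_{\overline P_i}^\infty$ and the two "smooth-by-algebraic" layers inside $\widehat{S}(U^{\wp},W^{\wp})[\fm_\rho]^{\an}$. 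The genericity hypothesis (any $2$-dimensional subquotient of $\overline\rho_{\widetilde\wp}$ nonsplit, plus $p\geq 5$) is exactly what guarantees that all the relevant crystalline/semistable deformation rings are formally smooth and that the companion points are "non-critical", so that the whole principal-series constituents $C_{i,j}$, $\widetilde C_{i,j}$ appear and the extensions between them are the nonsplit (resp. possibly split) ones displayed in \eqref{pi}; here I would cite the classification of the subquotients of locally analytic principal series and the $\Ext^1$-computations in \cite[\S 4]{Br16}.

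The heart of the argument is then to pin down the \emph{higher $\cL$-invariants}: the parameters distinguishing one $\Pi$ of shape \eqref{pi} from another are the classes in the relevant $\Ext^1$-groups which, by the main local computation of \cite{Br16}, are in canonical bijection with the Fontaine--Mazur/Greenberg--Benois $\cL$-invariants of $D_{\st}(\rho_{\widetilde\wp})$ (equivalently, with the Hodge filtration position of the monodromy-stable flag). To identify these on the automorphic side I would use the deformation-theoretic / infinitesimal argument: the tangent space of the trianguline (or full) deformation ring of $\rho_{\widetilde\wp}$ maps isomorphically, under local-global compatibility on the eigenvariety, onto the space of extensions realized in $\widehat{S}(U^{\wp},W^{\wp})[\fm_\rho]^{\an}$, so the extension classes occurring are forced to be precisely those coming from $\rho_{\widetilde\wp}$. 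This yields both that $\Pi\otimes\chi\circ\dett$ is uniquely determined as a subrepresentation (uniqueness + "only depends on $\rho_{\widetilde\wp}$") and, reading the implication backwards, that the isomorphism class of $\Pi$ recovers all the $\cL$-invariants, hence $D_{\st}(\rho_{\widetilde\wp})$ as a filtered $(\varphi,N)$-module, hence $\rho_{\widetilde\wp}$.

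The main obstacle I anticipate is step (iii): showing that the \emph{entire} representation $\Pi$ — not just its socle or its locally algebraic part — embeds into $\widehat{S}(U^{\wp},W^{\wp})[\fm_\rho]^{\an}$, i.e. that no constituent of \eqref{pi} is "missing" and that the extensions are non-degenerate. This requires a delicate bootstrapping: one builds $\Pi$ layer by layer, at each stage using (a) a companion-point / partial-classicality input to see that the next constituent is attached to the same Galois representation, and (b) a vanishing result ($\Ext^1_{\GL_3(\Q_p)}$ between the already-constructed quotient and the constituents that should \emph{not} appear is zero, which is where the precise genericity hypotheses and the formal smoothness of the deformation rings are used) to see that the extension is the expected one. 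Making these $\Ext$-vanishing statements precise for the locally analytic principal series constituents $C_{i,j}$, $\widetilde C_{i,j}$, and checking compatibility of the two "branches" of the diagram through the common subquotients $C_{i,3}$, is the technical core; the rest is bookkeeping with Emerton's spectral sequence and the adjunction formula.
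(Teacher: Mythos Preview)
Your outline has the right high-level shape (infinitesimal deformations, local-global, identifying extension classes with Fontaine--Mazur $\cL$-invariants), but it misses the central technical mechanism of the paper, and your proposed substitute would not work.

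The gap is in your step (iii). You plan to produce the layers of $\Pi$ via companion points on the eigenvariety and adjunction for the Jacquet functor $J_B$. That approach, which is essentially the one in \cite{Br16}, can only detect characters of the torus: it gives you the \emph{simple} $\cL$-invariants (the two lines $\cL_{\FM}(D_i^{i+1}:\cR_E(\chi_i))\subset\Hom(\Q_p^\times,E)$), and hence constituents up to $C_{i,2}$ or so, but it cannot pin down the higher $\cL$-invariants, which live in the $3$-dimensional spaces $\Ext^1_{\GL_3(\Q_p)}(v_{\overline P_{3-i}}^\infty,\Pi^i(\lambda,\psi)^+)$ and are invisible to $J_B$. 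Your sentence ``the tangent space of the trianguline deformation ring maps isomorphically, under local-global compatibility on the eigenvariety, onto the space of extensions realized'' hides exactly the difficulty: the $B$-eigenvariety has tangent space of the wrong dimension at $x$ (it is $n$, not $n+1$), and carries no information about the $\GL_2$-part of the extension. Indeed \cite{Br16} already carries out the programme you describe and obtains existence of \emph{some} $\Pi$ of shape \eqref{pi}, but cannot show it depends only on $\rho_{\widetilde\wp}$.

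What the paper actually does is replace $J_B$ by Emerton's \emph{ordinary part functor} $\Ord_{P_r}$ for the maximal parabolic $P_r$ with Levi $L_{P_r}\cong\GL_2\times\GL_1$ (or $\GL_1\times\GL_2$), and then invoke the $p$-adic local Langlands correspondence for $\GL_2(\Q_p)$ on the $\GL_2$-factor. Concretely: one constructs a universal $L_{P_r}(\Q_p)$-representation $\pi_P^\otimes(U^\wp)$ over the $P_r$-ordinary Hecke algebra $\widetilde{\bT}(U^\wp)^{P_r-\ord}_{\overline\rho}$ and a module $X_{P_r}(U^\wp)$ such that the evaluation map realizes $\Ord_{P_r}(\widehat S(U^\wp,\bW^\wp)_{\overline\rho})$. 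One then proves (i) that $M_{P_r}(U^\wp)[1/p]$ is locally free at $x$ (via a Chenevier-type multiplicity argument, Lemma~\ref{lem: lg2-lfree2}), so that for any tangent vector $v$ the fibre $X_{P_r}(U^\wp)[\cI_v][1/p]$ is free over $E[\epsilon]/\epsilon^2$; and (ii) that the composite $V_x\to\Ext^1_{(\varphi,\Gamma)}(D_r^{r+1},D_r^{r+1})\twoheadrightarrow\Ext^1_{(\varphi,\Gamma)}(D_r^{r+1},\cR_E(\chi_{r+1}))$ has image exactly $\ell_{\FM}(D:D_r^{r+1})$ (Proposition~\ref{prop: lg2-L-inv}), using a parabolic Colmez--Greenberg--Stevens formula (Theorem~\ref{thm: hL-hL}) and a lower bound $\dim_E V_x\ge n+1$ coming from the $P$-ordinary eigenvariety. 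With (i) and (ii) in hand, any $w\in\cL_{\aut}(D:D_r^{r+1})$ lifts to an $E[\epsilon]/\epsilon^2$-point of the Hecke algebra, and the freeness lets one extend a given map $\pi_{1,2}\boxtimes 1\to\Ord_{P_1}(\widehat S[\fm_\rho])^{\an}$ to an $E[\epsilon]/\epsilon^2$-map $\widetilde\pi_{1,2}\boxtimes\widetilde 1\to\Ord_{P_1}(\widehat S[\cI_v])^{\an}$; parabolically inducing and quotienting produces the desired extension $\Pi^w\hookrightarrow\widehat S[\fm_\rho]^{\an}$. None of this is available if you only use $J_B$.
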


\noindent
In fact one proves the stronger result that the restriction morphism:
\begin{equation}\small\label{isointro}
\Hom_{\GL_3(\Q_p)}\big(\Pi\otimes \chi\!\circ\!\dett,\widehat{S}(U^{\wp}, W^{\wp})[\fm_{\rho}]^{\rm an}\big)\rightarrow \Hom_{\GL_3(\Q_p)}\big(\St_3^\infty\otimes \chi\!\circ\!\dett,\widehat{S}(U^{\wp}, W^{\wp})[\fm_{\rho}]^{\rm an}\big)
\end{equation}
is bijective. The third assumption in Theorem \ref{mainintro} implies that $\rho_{\widetilde{\wp}}$ is up to twist isomorphic to $\Big(\begin{smallmatrix} \varepsilon^2 & * & * \\ 0 &\varepsilon &*\\ 0 & 0&1\end{smallmatrix}\Big)$ where $\varepsilon$ is the cyclotomic character. Hence $\overline \rho_{\widetilde{\wp}}$ is up to twist isomorphic to $\Big(\begin{smallmatrix} \overline\varepsilon^2 & * & * \\ 0 &\overline\varepsilon &*\\ 0 & 0&1\end{smallmatrix}\Big)$, and the fourth assumption means that we require the two $*$ above the diagonal in $\overline \rho_{\widetilde{\wp}}$ to be nonzero, a kind of assumption which already appears in the $\GL_2(\Q_p)$ case (see e.g. \cite[Thm.~1.2.1]{Em4}).\\

\noindent
Without assuming $\overline\rho$ absolutely irreducible, consecutive Hodge-Tate weights and the above condition on $\overline \rho_{\widetilde{\wp}}$, {\it but} assuming $F^+=\Q$, $\rho$ absolutely irreducible and a slightly unpleasant condition on $\widehat{S}(U^{\wp}, W^{\wp})[\fm_{\rho}]^{\lalg}$ (see \cite[Rem.~6.2.2(ii)]{Br16}), it was proven in \cite[Thm.~6.2.1]{Br16} that $\widehat{S}(U^{\wp}, W^{\wp})[\fm_{\rho}]^{\rm an}$ contains (copies of) a unique locally analytic representation which has the same form as (\ref{pi}). However, nothing more was known of its possible link to $\rho_{\widetilde{\wp}}$. So the main novelty in Theorem \ref{mainintro} is that the $\GL_3(\Q_p)$-representation $\Pi\otimes \chi\!\circ\!\dett$ contains {\it exactly} the same information as the $\Gal(\overline{\Q_p}/\Q_p)$-representation $\rho_{\widetilde{\wp}}$. Note however that $\Pi\otimes \chi\!\circ\!\dett$ is presumably only a small part of the representation $\widehat{S}(U^{\wp}, W^{\wp})[\fm_{\rho}]^{\rm an}$. For instance one could push a little bit further the methods of this paper to prove that $\widehat{S}(U^{\wp}, W^{\wp})[\fm_{\rho}]^{\an}$ as in Theorem \ref{mainintro} in fact contains (copies of) a representation of the form $\widetilde\Pi\otimes \chi\!\circ\!\dett$ with:
\begin{equation}\label{piwithan}
\begin{gathered}
 \begindc{\commdiag}[32]
 \obj(0,10)[a]{\!\!\!\!\!\!\!\!\!\!\!\!\!\!\!\!\!\!\!\!\!\!\!\!\!\!\!\!$\widetilde\Pi\ \ \cong \ \ \St_3^{\an}$}
  \obj(12,2)[b]{$v_{\overline P_2}^{\an}$}
  \obj(42,2)[x]{$\big(\Ind_{\overline B(\Q_p)}^{\GL_3(\Q_p)}1\otimes \varepsilon^{-1}\otimes \varepsilon\big)^{\an}$}
  \obj(72,2)[y]{$v_{\overline P_1}^{\an}$}
  \obj(12,18)[e]{$v_{\overline P_1}^{\an}$}
  \obj(42,18)[u]{$\big(\Ind_{\overline B(\Q_p)}^{\GL_3(\Q_p)}\varepsilon^{-1}\otimes \varepsilon \otimes 1\big)^{\an}$}
  \obj(72,18)[v]{$v_{\overline P_2}^{\an}$}
  \mor{a}{b}{}[+1,\solidline]
  \mor{b}{x}{}[+1, \solidline]
  \mor{x}{y}{}[+1, \solidline]
  \mor{a}{e}{}[+1, \solidline]
  \mor{e}{u}{}[+1, \solidline]
  \mor{u}{v}{}[+1, \solidline]
\enddc
\end{gathered}
\end{equation}
which still determines and only depends on $\rho_{\widetilde{\wp}}$. In (\ref{piwithan}), we denote by $\St_3^{\an}$, resp. $v_{\overline P_i}^{\an}$, the locally analytic Steinberg, resp. generalized Steinberg, and by $(\Ind_{\overline B(\Q_p)}^{\GL_3(\Q_p)} \cdot)^{\an}$ the locally analytic principal series from lower triangular matrices. In fact the subrepresentation of $\Pi\otimes \chi\circ\dett$ without the constituents $\widetilde C_{i,4}$, $C_{i,5}$ ($i=1,2$) in Theorem \ref{mainintro} can be seen as the ``edge'' of the representation $\widetilde\Pi\otimes \chi\!\circ\!\dett$. But even adding those constituents to $\widetilde\Pi\otimes \chi\!\circ\!\dett$ (or more precisely $(\widetilde\Pi\otimes \chi\!\circ\!\dett)^{\oplus d}$ where $d:=\dim_E\Hom_{\GL_3(\Q_p)}(\St_3^\infty\otimes \chi\!\circ\!\dett,\widehat{S}(U^{\wp}, W^{\wp})[\fm_{\rho}]^{\rm an})$), we are presumably still far from the full representation $\widehat{S}(U^{\wp}, W^{\wp})[\fm_{\rho}]^{\an}$.\\

\noindent
Theorem \ref{mainintro} (in its stronger form as above) is in fact a special case of a conjecture in arbitrary (distinct) Hodge-Tate weights. In \S~\ref{allthecij}, we show that one can associate to $\rho_{\widetilde{\wp}}$, assumed semi-stable with $N^2\ne 0$ on $D_{\st}(\rho_{\widetilde{\wp}})$ and sufficiently generic (we explain this below, any $\rho_{\widetilde{\wp}}$ as in Theorem \ref{mainintro} is sufficiently generic), a locally analytic representation $\Pi(\rho_{\widetilde{\wp}})=\Pi \otimes \chi\circ\dett$ of $\GL_3(\Q_p)$ containing the same information as $\rho_{\widetilde{\wp}}$ where $\Pi$ has the same form as (\ref{pi}) but replacing $\St_3^\infty$, $v_{\overline P_i}^\infty$ by $\St_3^\infty(\lambda)=:\St_3^\infty\otimes_EL(\lambda)$, $v_{\overline P_i}^\infty(\lambda):=v_{\overline P_i}^\infty\otimes_EL(\lambda)$. Here $L(\lambda)$ is the algebraic representation of $\GL_3(\Q_p)$ of highest weight $\lambda=k_1\geq k_2\geq k_3$ where $k_1>k_2-1>k_3-2$ are the Hodge-Tate weights of $\rho_{\widetilde{\wp}}$. We conjecture the following statement.

\begin{conjecture}[Conjecture \ref{THEconjecture}]\label{conjectureintro}
Assume $n=3$, $F_{\widetilde{\wp}}=\Q_p$ and:
\begin{itemize}
\item $\rho$ absolutely irreducible
\item $\widehat{S}(U^{\wp}, W^{\wp})[\fm_{\rho}]^{\lalg}\neq 0$
\item $\rho_{\widetilde{\wp}}$ semi-stable with $N^2\ne 0$ on $D_{\st}(\rho_{\widetilde{\wp}})$ and sufficiently generic.
\end{itemize}
Then the following restriction morphism is bijective:
\begin{equation*}
\Hom_{\GL_3(\Q_p)}\!\big(\Pi(\rho_{\widetilde{\wp}}),\widehat{S}(U^{\wp}, W^{\wp})[\fm_{\rho}]^{\an}\big)
\! \xlongrightarrow{\sim} \!\Hom_{\GL_3(\Q_p)}\!\big(\!\St_3^\infty\!\otimes_EL(\lambda)\otimes \chi\circ\dett, \widehat{S}(U^{\wp}, W^{\wp})[\fm_{\rho}]\big).
\end{equation*}
\end{conjecture}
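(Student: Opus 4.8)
The plan is to split the assertion into a ``soft'' part --- existence and uniqueness, inside $\widehat{S}(U^{\wp}, W^{\wp})[\fm_{\rho}]^{\an}$, of a representation of the shape (\ref{pi}), together with injectivity of the restriction morphism --- and a ``hard'' part: the identification of the $\cL$-invariant parameters of that representation with invariants of $\rho_{\widetilde\wp}$, which is what forces surjectivity and yields that $\Pi(\rho_{\widetilde\wp})$ determines and only depends on $\rho_{\widetilde\wp}$. For the soft part I would adapt \cite[Thm.~6.2.1]{Br16} and \cite[\S~4.1]{Br16}: from $\widehat{S}(U^{\wp}, W^{\wp})[\fm_{\rho}]^{\lalg}\ne0$, classical local--global compatibility and the classical local Langlands correspondence, one extracts that $\rho_{\widetilde\wp}$ is, up to twist, semi-stable non-crystalline of the stated form with $N^2\ne0$, that the locally algebraic character $\chi$ and the dominant weight $\lambda$ (the Hodge--Tate weights) are determined, and that $d:=\dim_E\Hom_{\GL_3(\Q_p)}(\St_3^\infty(\lambda)\otimes\chi\circ\dett,\widehat{S}(U^{\wp}, W^{\wp})[\fm_{\rho}])\geq1$ (where $\St_3^\infty(\lambda)=\St_3^\infty\otimes_E L(\lambda)$); and, via a computation of $\Ext$-groups of locally analytic $\GL_3(\Q_p)$-representations with the correct infinitesimal character, that $\widehat{S}(U^{\wp}, W^{\wp})[\fm_{\rho}]^{\an}$ contains, up to isomorphism, a unique representation $\Pi$ of the shape (\ref{pi}) with $\soc_{\GL_3(\Q_p)}\Pi=\St_3^\infty(\lambda)\otimes\chi\circ\dett$ depending on finitely many $\cL$-invariant parameters, and moreover that $\soc_{\GL_3(\Q_p)}\widehat{S}(U^{\wp}, W^{\wp})[\fm_{\rho}]^{\an}=(\St_3^\infty(\lambda)\otimes\chi\circ\dett)^{\oplus d}$.

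Injectivity of the restriction morphism then follows by Noetherian induction on $\Pi(\rho_{\widetilde\wp})$: a $\GL_3(\Q_p)$-map vanishing on the socle factors through $\Pi(\rho_{\widetilde\wp})/(\St_3^\infty(\lambda)\otimes\chi\circ\dett)$, whose own socle is a sum of constituents ($C_{1,1}$ and $C_{2,1}$) distinct from $\St_3^\infty(\lambda)\otimes\chi\circ\dett$; since the socle of $\widehat{S}(U^{\wp}, W^{\wp})[\fm_{\rho}]^{\an}$ contains only copies of $\St_3^\infty(\lambda)\otimes\chi\circ\dett$, the map kills that next layer too, and one iterates. Thus the whole content is surjectivity, i.e.\ that every $\GL_3(\Q_p)$-map $\St_3^\infty(\lambda)\otimes\chi\circ\dett\to\widehat{S}(U^{\wp}, W^{\wp})[\fm_{\rho}]^{\an}$ extends to $\Pi(\rho_{\widetilde\wp})$; equivalently, that the $\cL$-invariant parameters of the $\Pi$ produced above coincide with those of $\Pi(\rho_{\widetilde\wp})$.

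For the hard part I would proceed layer by layer and transport the question to the Galois side via Taylor--Wiles--Kisin patching. The purely smooth constituents $v_{\overline P_1}^\infty(\lambda),v_{\overline P_2}^\infty(\lambda)$ and their immediate extensions are controlled by the parabolic (``ordinary'') part of $p$-adic local--global compatibility along $\overline P_1,\overline P_2$ and use only $N\ne0$. The first genuinely locally analytic layer ($C_{i,1},\widetilde C_{i,2}$) is where $N^2\ne0$ enters: I would combine Emerton's adjunction for the locally analytic Jacquet module with the geometry of the (patched) eigenvariety --- companion points of the point attached to $\rho_{\widetilde\wp}$ --- to see that the corresponding $\Ext^1$-classes are realized, and match them with the ``simple $\cL$-invariants'' of $\rho_{\widetilde\wp}$, i.e.\ with lines in the $\Ext^1$ of $(\varphi,\Gamma)$-modules over the Robba ring. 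The crux is the deepest layer $C_{i,3},\widetilde C_{i,4},C_{i,5}$: these carry the finer invariant of $\rho_{\widetilde\wp}$, the position of the Hodge filtration inside $D_{\st}(\rho_{\widetilde\wp})$ beyond what the two simple $\cL$-invariants detect --- which, once the Hodge--Tate weights and $N^2\ne0$ are fixed, is precisely the remaining moduli of $\rho_{\widetilde\wp}$ up to twist. Here I would work with a patched module $M_\infty$ over a patched deformation ring $R_\infty$: the Hecke-isotypic space is, up to the multiplicity $d$, a locally analytic $\GL_3(\Q_p)$-representation built functorially from $M_\infty$ and the universal deformation, so on the rigid generic fibre the $\cL$-invariant parameters of $\Pi$ become analytic functions on the patched eigenvariety; using that completed cohomology sees exactly the trianguline (non-crystalline) deformation ring, together with the explicit local description of the trianguline variety and of the associated $\GL_3(\Q_p)$-representation near the point of $\rho_{\widetilde\wp}$ (Breuil--Hellmann--Schraen, Ding), one identifies those functions along the semi-stable locus with the Hodge-filtration invariants of $D_{\st}$. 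This simultaneously yields surjectivity, shows that no constituent outside (\ref{pi}) occurs on the ``edge'' of the image, identifies $\Pi$ with $\Pi(\rho_{\widetilde\wp})$, and --- reading the dictionary backwards --- shows that $\Pi(\rho_{\widetilde\wp})$ determines $\rho_{\widetilde\wp}$.

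The main obstacle should be precisely this last layer. One needs the $\Ext$-vanishing and one-dimensionality statements in the intricate category of locally analytic $\GL_3(\Q_p)$-representations of \cite[\S~4.1]{Br16}; enough control of the local geometry of the trianguline variety at the relevant non-classical, non-crystalline points to rule out spurious constituents; and, most delicately, the comparison between the $\GL_3(\Q_p)$-side extension parameters and the Hodge filtration on $D_{\st}(\rho_{\widetilde\wp})$ must be an \emph{isomorphism} of parameter spaces rather than merely a map --- it is this that lets the argument be run in reverse, and so underlies the ``only depends on'' half of the statement. The genericity of $\rho_{\widetilde\wp}$ and the nonsplitness of the two-dimensional subquotients of $\overline\rho_{\widetilde\wp}$ are exactly what make the patched-module analysis and the $\Ext$-computations go through in the setting of Theorem~\ref{mainintro}; removing them (and the consecutive-weights restriction), that is, obtaining Conjecture~\ref{conjectureintro} in full, is what lies beyond this strategy at present.
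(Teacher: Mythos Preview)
The statement you address is stated in the paper as a \emph{conjecture}; the paper does not prove it in general and only establishes the special case of consecutive Hodge--Tate weights (Corollary~\ref{main}). There is thus no paper proof of the full statement to compare against, and your closing remark that the general case lies beyond the strategy is in line with the paper's own position.

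Your ``soft part'' (injectivity via the socle) is correct and matches what the paper does (see step~(a) of the proof of Theorem~\ref{thm: lg2-GL3}). However, your proposed route to surjectivity in the special case is genuinely different from the paper's. You propose Taylor--Wiles--Kisin patching, a patched module $M_\infty$ over $R_\infty$, and the local geometry of the trianguline variety near the point of $\rho_{\widetilde\wp}$. The paper uses \emph{none} of this. Instead it works through Emerton's ordinary part functor $\Ord_{P_i}$ for the two maximal parabolics with Levi $\GL_2\times\GL_1$ and $\GL_1\times\GL_2$, and applies the $p$-adic local Langlands correspondence for $\GL_2(\Q_p)$ \emph{over deformation rings} to the $\GL_2$-factor. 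The key technical inputs are: (i) a Greenberg--Stevens-type constraint (Theorem~\ref{thm: hL-hL}) identifying which first-order deformations of $D_1^2$ extend to deformations of $D$; (ii) a lower bound on the tangent space of the $P_i$-ordinary Hecke algebra (Proposition~\ref{prop: lg1-dim}) via $P$-ordinary eigenvarieties; and (iii) local freeness of $M_P(U^\wp)[1/p]$ at $x$ (Lemma~\ref{lem: lg2-lfree2}), obtained by a Chenevier-style constancy-of-multiplicity argument rather than patching. Surjectivity then comes by lifting a given map on the socle to an $E[\epsilon]/\epsilon^2$-valued point of the ordinary Hecke algebra and pushing through parabolic induction from $\overline P_i$ (see (\ref{equ: lg2-GL3ordx})--(\ref{closetoend})). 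This is why the consecutive-weights hypothesis is needed: it makes $\rho_{\widetilde\wp}$ strictly $P_i$-ordinary so that $\Ord_{P_i}$ captures everything.

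Your patching/trianguline-variety approach is not unreasonable in spirit, but there is a concrete obstacle you do not address: at a semi-stable non-crystalline point with $N^2\ne0$ the trianguline variety is singular, and the ``explicit local description'' you invoke is not available in the literature at such points with enough precision to carry out the matching of $\Ext^1$-classes with the Hodge filtration. The paper's method sidesteps this entirely by reducing the higher $\cL$-invariants to the $\GL_2(\Q_p)$ case via $\Ord_{P_i}$ and the pairings (\ref{keypairing})--(\ref{keypairing2}), where everything is computable.
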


\noindent
We now sketch the proof of Theorem \ref{mainintro}.\\

\noindent
The preliminary step, which is purely local and holds for arbitrary distinct Hodge-Tate weights, is the definition of $\Pi(\rho_{\widetilde{\wp}})$. Since $N^2\ne 0$, the $(\varphi,\Gamma)$-module $D:=D_{\rig}(\rho_{\widetilde{\wp}})$ over the Robba ring with $E$-coefficients $\cR_E$ can be uniquely written as $\cR_E(\delta_1)-\cR_E(\delta_2)-\cR_E(\delta_3)$ for some locally algebraic characters $\delta_i:\Q_p^\times\rightarrow E^\times$ (where, as usual, $\cR_E(\delta_1)$ is a submodule, $\cR_E(\delta_3)$ a quotient and $-$ means a nonsplit extension). We assume that the triangulation $(\cR_E(\delta_1),\cR_E(\delta_2),\cR_E(\delta_3))$ is noncritical, equivalently the Hodge-Tate weight of $\delta_i$ is $k_i-(i-1)$. Twisting $D_{\rig}(\rho_{\widetilde{\wp}})$ if necessary (and twisting $\Pi(\rho_{\widetilde{\wp}})$ accordingly), we can assume $\delta_1=x^{k_1}$, $\delta_2=x^{k_2}\varepsilon^{-1}$ and $\delta_3=x^{k_3}\varepsilon^{-2}$ (note that $D$ is not \'etale anymore if $k_1\ne 0$, but this won't be a problem). By the recipe for $\GL_2(\Q_p)$, one can associate to $D_1^2:=\cR_E(\delta_1)-\cR_E(\delta_2)$ and $D_2^3:=\cR_E(\delta_2)-\cR_E(\delta_3)$ locally analytic representations $\pi_{1,2}$ and $\pi_{2,3}$ of $\GL_2(\Q_p)$. Then the representations:
\begin{equation*}
 \begindc{\commdiag}[32]
 \obj(0,10)[a]{$\St_3^{\infty}(\lambda)$}
 \obj(14,10)[b]{$C_{1,1}$}
 \obj(26,4)[c]{$v_{\overline P_1}^\infty(\lambda)$}
 \obj(26,16)[d]{$\widetilde{C}_{1,2}$}
 \obj(38, 10)[e]{$C_{1,3}$,}
   \mor{a}{b}{}[+1,\solidline]
   \mor{b}{c}{}[+1,\solidline]
   \mor{b}{d}{}[+1,\solidline]
   \mor{c}{e}{}[+1,\solidline]
   \mor{d}{e}{}[+1, \dashline]
 \enddc \ \ \ \ \ \ \ \ \
 \begindc{\commdiag}[32]
 \obj(0,10)[a]{$\St_3^{\infty}(\lambda)$}
 \obj(14,10)[b]{$C_{2,1}$}
 \obj(26,4)[c]{$v_{\overline P_1}^\infty(\lambda)$}
 \obj(26,16)[d]{$\widetilde{C}_{2,2}$}
 \obj(38, 10)[e]{$C_{2,3}$,}
   \mor{a}{b}{}[+1,\solidline]
   \mor{b}{c}{}[+1,\solidline]
   \mor{b}{d}{}[+1,\solidline]
   \mor{c}{e}{}[+1,\solidline]
   \mor{d}{e}{}[+1, \dashline]
 \enddc
 \end{equation*}
can be defined as subquotients of the locally analytic parabolic inductions $(\Ind_{\overline P_1(\Q_p)}^{\GL_3(\Q_p)}\pi_{1,2}\otimes \delta_3\varepsilon^2)^{\an}$ and $(\Ind_{\overline P_2(\Q_p)}^{\GL_3(\Q_p)}\delta_1\otimes(\pi_{2,3}\otimes \varepsilon\circ \dett_{\GL_2}))^{\an}$ respectively, see \S~\ref{sec: hL-pI}. Note that these two representations (together) contain what we call the two ``simple'' $\cL$-invariants of $\rho_{\widetilde{\wp}}$ (given by the Hodge filtration on the $2$-dimensional filtered $(\varphi,N)$-modules associated to $D_1^2$ and $D_2^3$). We consider the two following representations (see \S~\ref{linvariantgl3} where they are denoted $\Pi^1(\lambda,\psi)^+$ and $\Pi^2(\lambda,\psi)^+$):
$$\ \ \ \ \ \ \ \ \ \
 \begindc{\commdiag}[32]
 \obj(0,10)[a]{$\!\!\!\!\!\!\!\!\!\!\!\!\!\!\!\!\!\!\!\!\!\!\!\!\Pi^{1}\ \ :=\ \ \St_3^{\infty}(\lambda)$}
  \obj(12,2)[b]{$C_{2,1}$}
  \obj(12,18)[c]{$C_{1,1}$}
  \obj(24,10)[d]{$v_{\overline P_1}^\infty(\lambda)$}
  \obj(36,18)[e]{$C_{1,3}$}
  \obj(24,26)[f]{$\widetilde{C}_{1,2}$}
  \mor{a}{b}{}[+1,\solidline]
   \mor{a}{c}{}[+1,\solidline]
   \mor{c}{d}{}[+1,\solidline]
   \mor{c}{f}{}[+1,\solidline]
   \mor{d}{e}{}[+1, \solidline]
   \mor{f}{e}{}[+1, \dashline]
 \enddc \ \ \ \ \ \ \ \ \ \ \ \ \ \ \ \ \ \ \
 \begindc{\commdiag}[32]
 \obj(0,10)[a]{$\!\!\!\!\!\!\!\!\!\!\!\!\!\!\!\!\!\!\!\!\!\!\!\!\Pi^{2}\ \ :=\ \ \St_3^{\infty}(\lambda)$}
  \obj(12,2)[b]{$C_{2,1}$}
  \obj(12,18)[c]{$C_{1,1}$}
  \obj(26,10)[d]{$v_{\overline P_2}^\infty(\lambda)$}
  \obj(26,-6)[f]{$\widetilde{C}_{2,2}$}
  \obj(40,2)[e]{$C_{2,3}.$}
  \mor{a}{b}{}[+1,\solidline]
   \mor{a}{c}{}[+1,\solidline]
   \mor{b}{d}{}[+1,\solidline]
   \mor{d}{e}{}[+1, \solidline]
    \mor{b}{f}{}[+1, \solidline]
     \mor{f}{e}{}[+1, \dashline]
 \enddc
$$
We say that $D$ is sufficiently generic if there are canonical isomorphisms (induced by Colmez's functor \cite{Colm10a}):
\begin{equation}\small\label{ext1intro}
\Ext^1_{{\GL_2}(\Q_p)}(\pi_{1,2},\pi_{1,2})\buildrel\sim\over\longrightarrow \Ext^1_{(\varphi,\Gamma)}(D_1^2, D_1^2)\ {\rm and}\ \Ext^1_{{\GL_2}(\Q_p)}(\pi_{2,3},\pi_{2,3})\buildrel\sim\over\longrightarrow \Ext^1_{(\varphi,\Gamma)}(D_2^3, D_2^3)
\end{equation}
satisfying the properties of Hypothesis \ref{hypo: hL-pLL0} in the text. We prove in Lemma \ref{lem: hL-etale}, Proposition \ref{prop: hL-gl2pLL} and Proposition \ref{prop: hL-gl2pLL2} that such isomorphisms are true under mild genericity assumptions on the $(\varphi,\Gamma)$-modules $D_1^2$ and $D_2^3$. Note that we couldn't find these isomorphisms in the literature (though we suspect they might be known), so we provided our own proofs, see e.g. the proof of Proposition \ref{prop: hL-gl2pLL2} in the appendix, where we go through the Galois side and use deformation theory, which forces the aforementioned mild genericity assumptions. Using these isomorphisms, we then prove that there are canonical perfect pairings of $3$-dimensional $E$-vector spaces (see Theorem \ref{thm: hL-L3}):
\begin{eqnarray}
 \Ext^1_{(\varphi,\Gamma)}\big(\cR_E(\delta_3),D_1^2\big) &\times &\Ext^1_{{\GL_3}(\Q_p)}\big(v_{\overline{P}_2}^{\infty}(\lambda),\Pi^1\big) \ \longrightarrow \ E \label{keypairing}\\
 \Ext^1_{(\varphi,\Gamma)}\big(D_2^3,\cR_E(\delta_1)\big) &\times &\Ext^1_{{\GL_3}(\Q_p)}\big(v_{\overline{P}_1}^{\infty}(\lambda),\Pi^2\big) \ \longrightarrow \ E. \label{keypairing2}
\end{eqnarray}
For instance (\ref{keypairing}) comes from a perfect pairing $\Ext^1_{(\varphi,\Gamma)}(\cR_E(\delta_3),D_1^2\big) \times \Ext^1_{(\varphi,\Gamma)}\!\big(D_1^2,\cR_E(\delta_2))\! \rightarrow E$ and an isomorphism $\Ext^1_{{\GL_3}(\Q_p)}(v_{\overline{P}_2}^{\infty}(\lambda),\Pi^1)\buildrel\sim\over\rightarrow \Ext^1_{(\varphi,\Gamma)}(D_1^2,\cR_E(\delta_2))$ induced by (the first isomorphism in) (\ref{ext1intro}) and locally analytic parabolic induction (see (\ref{equ: hL-L3i})). The $(\varphi,\Gamma)$-module $D$ gives an $E$-line in the left hand side of both (\ref{keypairing}), (\ref{keypairing2}), hence its orthogonal space gives a $2$-dimensional subspace of $\Ext^1_{{\GL_3}(\Q_p)}(v_{\overline{P}_2}^{\infty}(\lambda),\Pi^1)$ and a $2$-dimensional subspace of $\Ext^1_{{\GL_3}(\Q_p)}(v_{\overline{P}_1}^{\infty}(\lambda),\Pi^2)$. Choosing a basis of each subspace and amalgamating as much as possible the four corresponding extensions produces a well-defined locally analytic representation of the form (see (\ref{sansc5})):
$$\ \ \ \ \ \ \ \ \ \ \ \ \ \
 \begin{gathered}
\begindc{\commdiag}[32]
 \obj(0,16)[a]{\!\!\!\!\!\!\!\!\!\!\!\!\!\!\!\!\!\!\!\!\!\!\!\!\!\!\!\!$\Pi(D)^-\ \ \cong \ \ \St_3^{\infty}(\lambda)$}
  \obj(12,2)[b]{$C_{2,1}$}
  \obj(23,-6)[c]{$\widetilde{C}_{2,2}$}
  \obj(23,10)[f]{$v_{\overline{P}_2}^{\infty}(\lambda)$}
  \obj(34,2)[x]{$C_{2,3}$}
  \obj(49,2)[y]{$v_{\overline{P}_1}^{\infty}(\lambda)$}
  \obj(12,30)[d]{$C_{1,1}$}
  \obj(23,38)[e]{$\widetilde{C}_{1,2}$}
  \obj(23,22)[g]{$v_{\overline{P}_1}^{\infty}(\lambda)$}
  \obj(34,30)[u]{$C_{1,3}$}
  \obj(49,30)[v]{$v_{\overline{P}_2}^{\infty}(\lambda)$}
  \mor{a}{b}{}[+1,\solidline]
   \mor{b}{c}{}[+1,\solidline]
      \mor{b}{f}{}[+1,\solidline]
        \mor{c}{x}{}[+1, \dashline]
             \mor{f}{x}{}[+1,\solidline]
   \mor{x}{y}{}[+1, \solidline]
   \mor{a}{d}{}[+1,\solidline]
   \mor{d}{e}{}[+1, \solidline]
\mor{e}{u}{}[+1, \dashline]
\mor{u}{v}{}[+1, \solidline]
\mor{g}{u}{}[+1, \solidline]
\mor{d}{g}{}[+1, \solidline] \enddc
\end{gathered}
$$
which turns out to determine and only depend on $D$. Then results of \cite{Br16} show that there is a unique way to add constituents $\widetilde{C}_{1,4}$, $\widetilde{C}_{2,4}$, $C_{1,5}$, $C_{2,5}$ on the right so that the resulting representation $\Pi(D)=\Pi(\rho_{\widetilde{\wp}})$ contains $\Pi(D)^-$ and has the same form as (\ref{pi}) (see (\ref{piDplus})).\\

\noindent
We now assume $k_1=k_2=k_3$ and recall that $\rho_{\widetilde{\wp}}$ is then upper triangular. The strategy of the proof of Theorem \ref{mainintro} is the same as that of \cite{Ding3}, \cite{Ding4} when $n=2$ and $F_{\widetilde{\wp}}$ is arbitrary, and is entirely based on infinitesimal deformations. Very roughly, we replace the diagonal torus $\GL_1\times \GL_1$ in the arguments of \cite{Ding3} by the two Levi $L_{\overline P_1}=\GL_2\times \GL_1$ and $L_{\overline P_2}=\GL_1\times \GL_2$, and we deal with the $\GL_2$-factors using the $p$-adic local Langlands correspondence for $\GL_2(\Q_p)$.\\

\noindent
Following Emerton's local-global compatibility work for $\GL_2(\Q_p)$ (\cite{Em4}), we first study the localized modules $\Ord_{P_i}(\widehat{S}(U^{\wp}, W^{\wp})_{\overline\rho})$, $i=1,2$ where $\Ord_{P_i}$ is Emerton's ordinary functor (\cite{EOrd1}, \cite{EOrd2}) with respect to the parabolic subgroup $P_i(\Q_p)$ of $\GL_3(\Q_p)$ opposite to $\overline P_i(\Q_p)$. We show that $\Ord_{P_i}(\widehat{S}(U^{\wp}, W^{\wp})_{\overline\rho})$ is a faithful module over a certain $p$-adic localized Hecke algebra $\widetilde{\bT}(U^{\wp})_{\overline{\rho}}^{P_i-\ord}$ (see Lemma \ref{lem: Pord-reduce}) and using the $p$-adic local Langlands correspondence for $\GL_2(\Q_p)$ over deformation rings (as in \cite{Kis10} or \cite{Pas13}, see also the appendix), we define a continuous admissible representation $\pi^{\otimes}_{P_i}(U^{\wp})$ of $L_{P_i}(\Q_p)$ over $\widetilde{\bT}(U^{\wp})_{\overline{\rho}}^{P_i-\ord}$ (see (\ref{equ: lg1-piup})) and a canonical ``evaluation'' morphism:
$$X_{P_i}(U^{\wp}) \otimes_{\widetilde{\bT}(U^{\wp})_{\overline{\rho}}^{P_i-\ord}}\pi^{\otimes}_{P_i}(U^{\wp})\longrightarrow \Ord_{P_i}(\widehat{S}(U^{\wp}, W^{\wp})_{\overline\rho})$$
where $X_{P_i}(U^{\wp})$ is the $\widetilde{\bT}(U^{\wp})_{\overline{\rho}}^{P_i-\ord}$-module $\Hom_{\widetilde{\bT}(U^{\wp})_{\overline{\rho}}^{P_i-\ord}[L_{P_i}(\Q_p)]}^{\cts}\!(\pi^{\otimes}_{P_i}\!(U^{\wp}), \!\Ord_{P_i}\!(\widehat{S}(U^{\wp}, \!\bW^{\wp})_{\overline{\rho}}))$, $\bW^{\wp}$ being an invariant $\co_E$-lattice in the algebraic representation $W^\wp$ (see (\ref{xpup})).\\

\noindent
Twisting if necessary, we can assume $k_1=k_2=k_3=0$. We want to prove that the restriction morphism (\ref{isointro}) (with $\chi=1$ now) is bijective. Injectivity is not difficult, the hard part is surjectivity. Let $w$ be a nonzero vector in the subspace $D^\perp$ of $\Ext^1_{{\GL_3}(\Q_p)}(v_{\overline{P}_{3-i}}^{\infty},\Pi^i)$ orthogonal to $D$ under the pairings (\ref{keypairing}), (\ref{keypairing2}) and denote by $\Pi^w$ the corresponding extension $\Pi^i-v_{\overline{P}_{3-i}}^{\infty}$. It is enough to prove that the following restriction morphism is surjective for $i=1,2$ and any such $w$:
\begin{equation}\small\label{isointro2}
\Hom_{\GL_3(\Q_p)}\big(\Pi^w,\widehat{S}(U^{\wp}, W^{\wp})[\fm_{\rho}]^{\rm an}\big)\longrightarrow \Hom_{\GL_3(\Q_p)}\big(\St_3^\infty,\widehat{S}(U^{\wp}, W^{\wp})[\fm_{\rho}]^{\rm an}\big).
\end{equation}

\noindent
We now assume $i=1$, the case $i=2$ being symmetric. Taking ordinary parts induces an isomorphism (see the first isomorphism in (\ref{equ: lg2-isomos})):
\begin{equation}\footnotesize\label{firstisointro}
\Hom_{\GL_3(\Q_p)}\big(\St_3^\infty, \widehat{S}(U^{\wp}, W^{\wp})_{\overline{\rho}}[\fm_{\rho}]^{\an}\big)
\xlongrightarrow{\sim} \Hom_{L_{P_1}(\Q_p)}\big(\St_2^\infty \boxtimes 1,(\Ord_{P_1}(\widehat{S}(U^{\wp}, W^{\wp})_{\overline{\rho}}[\fm_{\rho}]))^{\an}\big)
\end{equation}
where $\boxtimes$ is the exterior tensor product ($\GL_2(\Q_p)$ acting on the left and $\Q_p^\times$ on the right). By a variation/generalization of the arguments in the $\GL_2(\Q_p)$-case, we prove that the restriction induces an isomorphism (see Corollary \ref{coro: lg2-simpL}):
\begin{multline}\label{secondisointro}
\Hom_{L_{P_1}(\Q_p)}\big(\pi_{1,2}\boxtimes 1, (\Ord_{P_1}(\widehat{S}(U^{\wp}\!, W^{\wp})_{\overline{\rho}}[\fm_{\rho}]))^{\an}\big)\\
\xlongrightarrow{\sim} \Hom_{L_{P_1}(\Q_p)}\big(\St_2^\infty \!\boxtimes 1,(\Ord_{P_1} (\widehat{S}(U^{\wp}, W^{\wp})_{\overline{\rho}}[\fm_{\rho}]))^{\an}\big).
\end{multline}
Note that the isomorphism (\ref{secondisointro}) involves the ``simple'' $\cL$-invariant contained in $D_1^2$ and is thus already nontrivial.\\

\noindent
Denote by $V_\rho$ the tangent space of $\Spec (\widetilde{\bT}(U^{\wp})_{\overline{\rho}}^{P_1-\ord}[1/p])$ at the closed point associated to the Galois representation $\rho$. Going through Galois deformation rings, one can prove that there is a canonical morphism of $E$-vector spaces $d\omega_{1,\rho}^+:V_\rho\longrightarrow \Ext^1_{(\varphi,\Gamma)}(D_1^{2},D_1^{2})$ such that the image of the composition $d\omega_{1,\rho}^+:V_\rho\rightarrow \Ext^1_{(\varphi,\Gamma)}(D_1^{2},D_1^{2})\twoheadrightarrow \Ext^1_{(\varphi,\Gamma)}(D_1^{2},\cR_E(\delta_2))$ is exactly $D^\perp$ (see Proposition \ref{prop: lg2-L-inv}). The proof of this statement is based on two main ingredients. The first one (see Theorem \ref{thm: hL-hL}) says that any extension $D_1^2-D_1^2$ which is contained as a $(\varphi, \Gamma)$-submodule in an extension $D-D$ is sent (after a suitable twist) to an element of $D^\perp$ via $\Ext^1_{(\varphi,\Gamma)}(D_1^{2},D_1^{2})\twoheadrightarrow \Ext^1_{(\varphi,\Gamma)}(D_1^{2},\cR_E(\delta_2))$ (the analogue of this statement in dimension $2$ was first observed by Greenberg and Stevens \cite[Thm.~3.14]{GS93}, see also \cite{Colm10}). It gives that the image is contained in $D^\perp$. The second ingredient is a lower bound on $\dim_EV_\rho$ (see Proposition \ref{prop: lg1-dim}) which implies that the image must be all of $D^\perp$.\\

\noindent
The vector $w$ is thus the image of a vector $v\in V_\rho$ via the above surjection $d\omega_{1,\rho}^+:V_\rho \twoheadrightarrow D^\perp$, and by definition of $V_\rho$, $v$ is an $E[\epsilon]/\epsilon^2$-valued point of $\Spec (\widetilde{\bT}(U^{\wp})_{\overline{\rho}}^{P_1-\ord})$. Denote by $\cI_v$ the corresponding ideal of $\widetilde{\bT}(U^{\wp})_{\overline{\rho}}^{P_1-\ord}$, by a generalization of arguments due to Chenevier (\cite{Che11}), one can prove that the $E[\epsilon]/\epsilon^2$-module $X_{P_1}(U^\wp)[\cI_v][1/p]$ of vectors in $X_{P_1}(U^\wp)[1/p]$ killed by $\cI_v$ is free of finite rank (see (\ref{equ: lg2-tgfree})). This implies that any $L_{P_1}(\Q_p)$-equivariant morphism $\pi_{1,2}\boxtimes 1\rightarrow (\Ord_{P_1}(\widehat{S}(U^{\wp}, W^{\wp})_{\overline{\rho}}[\fm_{\rho}]))^{\an}$ extends to an $E[\epsilon]/\epsilon^2$-linear and $L_{P_1}(\Q_p)$-equivariant morphism $\widetilde \pi_{1,2}\boxtimes_{E[\epsilon]/\epsilon^2} \widetilde 1\rightarrow (\Ord_{P_1}(\widehat{S}(U^{\wp}, W^{\wp})_{\overline{\rho}}[\cI_v]))^{\an}$ where $\widetilde \pi_{1,2}\boxtimes_{E[\epsilon]/\epsilon^2} \widetilde 1:=\pi^{\otimes}_{P_1}(U^{\wp})\otimes_{\widetilde{\bT}(U^{\wp})_{\overline{\rho}}^{P_1-\ord}}(\widetilde{\bT}(U^{\wp})_{\overline{\rho}}^{P_1-\ord}/\cI_v)[1/p]$. Note that $\widetilde \pi_{1,2}$ (resp. $\widetilde 1$) is an extension of $\pi_{1,2}$ (resp. $1$) by itself. By the adjunction formula for $\Ord_{P_1}$, we obtain a $\GL_3(\Q_p)$-equivariant morphism:
\begin{equation}\label{indp1}
\big(\Ind_{\overline P_1(\Q_p)}^{\GL_3(\Q_p)}\widetilde \pi_{1,2}\boxtimes_{E[\epsilon]/\epsilon^2} \widetilde 1\big)^{\an}\longrightarrow \widehat{S}(U^{\wp}, W^{\wp})_{\overline{\rho}}[\cI_v]^{\an}.
\end{equation}
The representation $\Pi^w$ is a multiplicity free subquotient of $(\Ind_{\overline P_1(\Q_p)}^{\GL_3(\Q_p)}\widetilde \pi_{1,2}\boxtimes_{E[\epsilon]/\epsilon^2} \widetilde 1)^{\an}$, and one can prove that (\ref{indp1}) induces a $\GL_3(\Q_p)$-equivariant morphism:
$$\Pi^w\longrightarrow \widehat{S}(U^{\wp}, W^{\wp})_{\overline{\rho}}[\fm_\rho]^{\an}\subseteq \widehat{S}(U^{\wp}, W^{\wp})_{\overline{\rho}}[\cI_v]^{\an}$$
which restricts to the unique morphism $\St_3^\infty\rightarrow  \widehat{S}(U^{\wp}, W^{\wp})_{\overline{\rho}}[\fm_\rho]^{\an}$ corresponding to $\pi_{1,2}\boxtimes 1\rightarrow (\Ord_{P_1}(\widehat{S}(U^{\wp}, W^{\wp})_{\overline{\rho}}[\fm_{\rho}]))^{\an}$ via (\ref{firstisointro}) and (\ref{secondisointro}) (see the proof of Theorem \ref{thm: lg2-GL3}). This proves the surjectivity of (\ref{isointro2}) (for $i=1$) and finishes the proof of Theorem \ref{mainintro}.\\

\noindent
This work raises several questions. For instance one can ask for a more explicit (local) construction of the $\GL_3(\Q_p)$-representation $\Pi(\rho_{\wp})$, and in particular try to relate the two ``branches'' in (\ref{pi}) to the filtered $(\varphi,N)$-module of $\rho_{\wp}$ along the lines of \cite[Conj.~6.1.2]{Br16}. Though there is so far no construction of (analogues of) $\Pi(\rho_{\wp})$ for $n\geq 4$, one can also still try to push further the results and methods of this paper in arbitrary dimension. Note that many of the intermediate results used in the proof of Theorem \ref{mainintro} are already proven here in a more general setting than just $\GL_3$. For instance we allow an arbitrary parabolic subgroup of $\GL_n$ in \S\S~\ref{ordinarypartfunctor}, \ref{galoisclassicallanglands}, \ref{automor}, \ref{domialgebraic} and we work in arbitrary dimension $n$ in all sections except \S\S~\ref{gl3gl3}, \ref{sec: lg2-1} and the appendix. We hope to come back to some of these questions in the future.\\

\noindent
We finally mention that some of our results in arbitrary dimension have an interest in their own. For instance Corollary \ref{coro: lg1-clas} gives new cases of classicality for certain $p$-adic automorphic forms with associated Galois representation which is de Rham at $\wp$ and Theorem \ref{thm: lg2-lg} gives a complete description (under certain assumptions) of the $P$-ordinary part of completed cohomology for a parabolic subgroup $P$ of $\GL_n$ with only $\GL_2$ or $\GL_1$ factors in its Levi subgroup, analogous to Emerton's description in the $\GL_2(\Q_p)$-case (\cite{Em4}). \\

\noindent
At the beginning of each section, the reader will find a sentence explaining its contents. We now give the main notation of the paper. In the whole text we denote by $E$ a finite extension of $\Q_p$, ${\mathcal O}_E$ its ring of integers, $\varpi_E$ a uniformizer of ${\mathcal O}_E$ and $k_E$ its residue field. Given an $E$-bilinear map $V \times W\xrightarrow{\cup} E$, for $W'\subseteq W$ we denote:
\begin{equation*}
(W')^{\perp}:=\{v\in V,\ v\cup w=0 \ \forall \ w\in W'\}.
\end{equation*}
For $L$ a finite extension of $\Q_p$, we let $\Sigma_L$ be the set of embeddings of $L$ into $E$ (equivalently into $\overline{\Q_p}$ by taking $E$ sufficiently large), $q_L:=|k_L|$ with $k_L$ the residue field of $L$, $\Gal_{L}:=\Gal(\overline L/L)$ the Galois group of $L$, $\W_L\subset \Gal_{L}$ the Weil group of $L$, and $\Gamma_L:=\mathrm{Gal}(L(\zeta_{p^n},n\geq 1)/L)$ where $(\zeta_{p^n})_{n\geq 1}$ is a compatible system of primitive $p^n$-th roots of $1$ in $\overline L$. When $L=\Q_p$ we write $\Gamma$ instead of $\Gamma_{\Q_p}$. We denote by $\varepsilon: \Gal_{L}\twoheadrightarrow \Gamma_L \ra E^{\times}$ the cyclotomic character with the convention $\HT_{\sigma}(\varepsilon)=1$ for all $\sigma\in \Sigma_L$ where $\HT_{\sigma}$ is the Hodge-Tate weight relative to the embedding $\sigma:L\hookrightarrow E$, and by $\overline\varepsilon$ its reduction modulo $p$. We normalize local class field theory by sending a uniformizer to a (lift of the) geometric Frobenius. In this way, we view characters of $\Gal_L$ as characters of $L^{\times}$ without further mention. We let $\unr(a)$ be the unramified character of $\Gal_{L}$ sending a uniformizer of $L^\times$ to $a\in E^\times$ and $|\cdot|:=\unr(q_L^{-1})$. We denote by $\val_p$ the valuation normalized by $\val_p(p)=1$.\\

\noindent
If $A$ is a finite dimensional $\Q_p$-algebra, for instance $A=E$ or $E[\epsilon]/\epsilon^2$ (the dual numbers), we write $\mathcal{R}_{A,L}$ for the Robba ring associated to $L$ with $A$-coefficients. When $L$ is fixed, we only write $\mathcal{R}_{A}$. We denote by $\Ext^i_{(\varphi,\Gamma_L)}(\cdot,\cdot)$ the extensions groups in the category of $(\varphi,\Gamma_L)$-modules over $\mathcal{R}_{E,L}$ and by $H^i_{(\varphi,\Gamma_L)}(\cdot):=\Ext^i_{(\varphi,\Gamma_L)}(\mathcal{R}_{E,L},\cdot)$ (\cite[\S~2.2.5]{Bch}, \cite{Liu07}, \cite{Colm2}). If $\delta:L^\times\rightarrow A^\times$ is a continuous character, we denote by $\mathcal{R}_{A,L}(\delta)$ the associated rank one $(\varphi,\Gamma_L)$-module (see \cite[Cons.~6.2.4]{KPX}). Thanks to local class field theory, we fix an isomorphism $\Ext^1_{(\varphi,\Gamma_L)}(\cR_E, \cR_E) \xrightarrow{\sim} \Hom(L^{\times}, E)$ where $\Hom(L^{\times}, E)$ is the $E$-vector space of continuous characters to the additive group $E$. For any continuous $\delta: L^{\times} \ra E^{\times}$, the twist by $\delta^{-1}$ induces a canonical isomorphism $\Ext^1_{(\varphi,\Gamma_L)}(\cR_E(\delta), \cR_E(\delta))\buildrel\sim\over\rightarrow \Ext^1_{(\varphi,\Gamma_L)}(\cR_E, \cR_E)$, and we deduce isomorphisms (uniformly in $\delta$):
\begin{equation}\label{equ: hL-cad}
\Ext^1_{(\varphi,\Gamma_L)}(\cR_E(\delta), \cR_E(\delta)) \xlongrightarrow{\sim} \Hom(L^{\times}, E).
\end{equation}
By \cite[Lem. 1.15]{Ding4}, the isomorphism (\ref{equ: hL-cad}) induces an isomorphism:
\begin{equation}\label{ginfty}
 \Ext^1_{g}(\cR_E(\delta), \cR_E(\delta)) \xlongrightarrow{\sim} \Hom_{\infty}(L^{\times}, E)
\end{equation}
where $\Ext^1_g$ denotes the subspace of extensions which are de Rham up to twist by characters, and $\Hom_{\infty}(L^{\times}, E)$ denotes the subspace of smooth characters. Finally, if $L=\Q_p$, we denote by $\wt(\delta)\in E$ the Sen weight of $\delta$, for instance $\wt(x^k\unr(a))=k$ for $k\in \Z$ and $a\in E^\times$.\\

\noindent
Let $G$ be the $L$-points of a reductive algebraic group over $\Q_p$, we refer without comment to \cite{ST02}, resp. \cite{ST03}, for the background on general, resp. admissible, locally $\Q_p$-analytic representations of $G$ over locally convex $E$-vector spaces, and to \cite{ST} for the background on continuous (admissible) representations of $G$ over $E$. If $V$ is a continuous representation of $G$ over $E$, we denote by $V^{\an}$ its locally $\Q_p$-analytic vectors (\cite[\S~7]{ST03}). If $V$ is a locally $\Q_p$-analytic representation of $G$ over $E$, we denote by $V^{\sm}$, resp. $V^{\lalg}$, the subrepresentation of its smooth vectors, resp. of its locally $\Q_p$-algebraic vectors (\cite[Def.~4.2.6]{Em04}). If $X$ and $Y$ are topological spaces, we denote by $\cC(X,Y)$ the set of continuous functions from $X$ to $Y$. If $P$ is the $L$-points of a parabolic subgroup of $G$ and $\pi_p$ is a continuous representation of $P$ over $E$, i.e. on a Banach vector space over $E$, we denote by:
$$(\Ind_P^G\pi_p)^{\cC^0}:=\{f:G\rightarrow \pi_P\ {\rm continuous},\ f(pg)=p(f(g))\}$$
the continuous parabolic induction endowed with the left action of $G$ by right translation on functions: $(gf)(g'):=f(gg')$. It is again a continuous representation of $G$ over $E$. Likewise, if $\pi_p$ is a locally analytic representation of $P$ on a locally convex $E$-vector space of compact type, we denote by:
$$(\Ind_P^G\pi_p)^{\an}:=\{f:G\rightarrow \pi_P\ {\rm locally}\ \Q_p{\rm -analytic},\ f(pg)=p(f(g))\}$$
the locally analytic parabolic induction endowed with the same left action of $G$. It is again a locally analytic representation of $G$ on a locally convex $E$-vector space of compact type (see e.g. \cite[Rem.~5.4]{Kohl}). If $\pi_P$ is a smooth representation of $P$ over $E$, we finally denote by $(\Ind_P^G\pi_p)^{\infty}$ the smooth parabolic induction (taking locally constant functions $f:G\rightarrow \pi_P$) endowed with the same $G$-action. We denote by $\delta_P$ the usual (smooth unramified) modulus character of $P$.\\

\noindent
If $V$, $W$ are two locally $\Q_p$-analytic representations of $G$ over $E$, we {\it define} the extension groups $\Ext^i_G(V,W)$ as in \cite[D\'ef.~3.1]{Sch11}, that is, as the extension groups $\Ext^i_{D(G,E)}(W^\vee,V^\vee)$ of their strong duals $V^\vee$, $W^\vee$ as algebraic $D(G,E)$-modules where $D(G,E)$ is the algebra of locally analytic $E$-valued distributions. If the center $Z$ of $G$ (or a subgroup $Z$ of the center of $G$) acts by the same locally analytic character on $V$ and $W$, we define the extension groups with that central character $\Ext^i_{G,Z}(V,W)$ as in \cite[(3.11)]{Sch11}, and there are then functorial morphisms $\Ext^i_{G,Z}(V,W)\rightarrow \Ext^i_{G}(V,W)$. If $V$, $W$ are smooth representations of $G$ over $E$, we denote by $\Ext^i_{G,\infty}(V,W)$ the usual extension groups in the category of smooth representations of $G$ over $E$ (see e.g. \cite[\S~2.1.3]{Da} or \cite[\S~3]{Orl}). Finally, if $(V_i)_{i=1,\cdots,r}$ are admissible locally analytic representations of $G$, the notation $V_1-V_2-V_3-\cdots-V_r$ means an admissible locally analytic representation of $G$ such that $V_1$ is a subobject, $V_2$ is a subobject of the quotient by $V_1$ etc. where each subquotient $V_i-V_{i+1}$ is a nonsplit extensions of $V_{i+1}$ by $V_i$.\\

\noindent
If $A$ is a commutative ring, $M$ an $A$ module and $I$ an ideal of $A$, we denote by $M[I]\subseteq M$ the $A$-submodule of elements killed by $I$ and by $M\{I\}:=\cup_{n\geq 1}M[I^n]$.\\

\noindent
Acknowledgement: The authors are very grateful to Y. Hu for many helpful discussions. For their answers to their questions, they also want to thank F. Herzig, Y. Hu, R. Liu, A. Minguez, C. Moeglin, Z. Qian, B. Schraen and D. Xu. Y. D. was supported by E.P.S.R.C. Grant EP/L025485/1 and by Grant No. 8102600240 from B.I.C.M.R.

\section{Higher $\cL$-invariants and deformations of $(\varphi,\Gamma)$-modules}\label{sec: hL-FM}

\noindent
In this section we define and study certain subspaces $\cL_{\FM}(D: D_1^{n-1})$ and $\ell_{\FM}(D: D_1^{n-1})$ of some Ext${}^1$ groups in the category of $(\varphi,\Gamma_L)$-modules that will be used in the next sections.\\

\noindent
We fix a finite extension $L$ of $\Q_p$ and write $\cR_E$ for $\cR_{E,L}$. Let $D$ be a trianguline $(\varphi,\Gamma_L)$-module over $\cR_E$ of arbitrary rank $n\geq 1$. We denote an arbitrary parameter of $D$ by $(\delta_1,\cdots, \delta_n)$ where the $\delta_i:L^\times\rightarrow E^\times$ are continuous characters (see e.g. \cite[\S~2.1]{BHS2}). Recall that $D$ can have several parameters, see {\it loc.cit.}

\begin{definition}\label{def: hL-spe}
We call a parameter $(\delta_1,\cdots, \delta_n)$ of $D$ special if we have:
$$\delta_{i} \delta_{i+1}^{-1}=|\cdot |\prod_{\sigma\in \Sigma_L} \sigma^{k_{\sigma,i}}\ \ \ \forall\ i\in\{1,\cdots, n-1\}$$
for some $k_{\sigma,i}\in \Z$.
\end{definition}

\noindent
We say that $(\delta_1,\cdots,\delta_n)$ as in Definition \ref{def: hL-spe} is noncritical if $k_{\sigma,i}\in \Z_{>0}$ for all $\sigma$, $i$. It follows from the proof of \cite[Prop.~2.3.4]{Bch} that a trianguline $D$ with a special noncritical parameter is de Rham up to twist. It then easily follows from Berger's equivalence of categories \cite[Thm.~A]{Berger2} that such a $D$ has only one special noncritical parameter. In the sequel when we say that $(D,(\delta_1,\cdots, \delta_n))$ is special noncritical, it means that $(\delta_1,\cdots, \delta_n)$ is the unique special noncritical parameter of $D$.\\

\noindent
We now fix a special noncritical $(D,(\delta_1,\cdots,\delta_n))$ and for $1\leq i<i'\leq n$ we denote by $D_i^{i'}$ the unique $(\varphi,\Gamma_L)$-module subquotient of $D$ of trianguline parameter $(\delta_i, \cdots, \delta_{i'})$. It is then clear that $(D_i^{i'},(\delta_i,\cdots,\delta_{i'}))$ is also a special noncritical $(\varphi,\Gamma_L)$-module.\\

\noindent
We first assume that for $i\in \{1,\cdots, n-2\}$ the extension of $\cR_E(\delta_{i+1})$ by $\cR_E(\delta_i)$ appearing as a subquotient of $D_1^{n-1}$ is nonsplit. We consider the following cup-product:
\begin{equation}\label{equ: hL-cup}
\Ext^1_{(\varphi,\Gamma_L)}(\cR_E(\delta_n), D_1^{n-1}) \times \Ext^1_{(\varphi,\Gamma_L)}(D_1^{n-1}, D_1^{n-1}) \xlongrightarrow{\cup} \Ext^2_{(\varphi,\Gamma_L)}(\cR_E(\delta_n), D_1^{n-1}).
\end{equation}

\begin{lemma}\label{equ: hL-dim}
We have $\dim_E \Ext^1_{(\varphi,\Gamma_L)}(\cR_E(\delta_n), D_1^{n-1})=(n-1)[L:\Q_p]+1$, and the surjection $D_1^{n-1}\twoheadrightarrow \cR_E(\delta_{n-1})$ induces an isomorphism:
\begin{equation*}
 \Ext^2_{(\varphi,\Gamma_L)}(\cR_E(\delta_n), D_1^{n-1})\xlongrightarrow{\sim} \Ext^2_{(\varphi,\Gamma_L)}(\cR_E(\delta_n), \cR_E(\delta_{n-1}))\cong E.
 \end{equation*}
\end{lemma}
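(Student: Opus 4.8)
The plan is to compute the two invariants separately, reducing everything to the known cohomology of rank-one $(\varphi,\Gamma_L)$-modules over $\cR_E$. Recall the standard facts: for a continuous character $\eta:L^\times\to E^\times$, one has $\dim_E H^0_{(\varphi,\Gamma_L)}(\cR_E(\eta))\le 1$ with equality iff $\eta=\prod_\sigma\sigma^{-k_\sigma}$ for integers $k_\sigma\ge 0$ (i.e.\ $\eta$ is ``of the form $x^{-k}$''); $\dim_E H^2_{(\varphi,\Gamma_L)}(\cR_E(\eta))\le 1$ with equality iff $\eta=|\cdot|\prod_\sigma\sigma^{k_\sigma+1}$ for integers $k_\sigma\ge 0$ (the ``de Rham-critical'' dual condition); and the Euler characteristic formula $\dim H^0-\dim H^1+\dim H^2=-[L:\Q_p]$. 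Applying $\Hom_{(\varphi,\Gamma_L)}(\cR_E(\delta_n),-)=H^\bullet_{(\varphi,\Gamma_L)}(-\otimes\cR_E(\delta_n^{-1}))$ to the successive extension $D_1^{n-1}$, I would dévissage along the filtration with graded pieces $\cR_E(\delta_j)$, $j=1,\dots,n-1$.

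First, for the dimension count: tensoring by $\cR_E(\delta_n^{-1})$, the graded pieces become $\cR_E(\delta_j\delta_n^{-1})$. Because $(\delta_1,\dots,\delta_n)$ is the special noncritical parameter, each $\delta_j\delta_n^{-1}=\prod_{i=j}^{n-1}\delta_i\delta_{i+1}^{-1}=|\cdot|^{\,n-j}\prod_\sigma\sigma^{\,m_{\sigma,j}}$ with $m_{\sigma,j}=\sum_{i=j}^{n-1}k_{\sigma,i}>0$; in particular none of these characters is of the form $x^{-k}$ (the exponents are strictly positive, and there is a nontrivial power of $|\cdot|$), so $H^0_{(\varphi,\Gamma_L)}(\cR_E(\delta_j\delta_n^{-1}))=0$ for all $j$. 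Hence $H^0_{(\varphi,\Gamma_L)}(D_1^{n-1}\otimes\cR_E(\delta_n^{-1}))=0$, and, dually, $H^2_{(\varphi,\Gamma_L)}(\cR_E(\delta_j\delta_n^{-1}))$ vanishes for $j<n-1$ (there $\delta_j\delta_n^{-1}=|\cdot|^{n-j}\prod\sigma^{m_{\sigma,j}}$ with $n-j\ge 2$, so it is not of the critical form $|\cdot|\prod\sigma^{(\cdot)+1}$), while for $j=n-1$ we have $\delta_{n-1}\delta_n^{-1}=|\cdot|\prod\sigma^{k_{\sigma,n-1}}$, exactly the critical form, so $H^2_{(\varphi,\Gamma_L)}(\cR_E(\delta_{n-1}\delta_n^{-1}))\cong E$ and the long exact sequences give $H^2_{(\varphi,\Gamma_L)}(D_1^{n-1}\otimes\cR_E(\delta_n^{-1}))\cong E$. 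Now the Euler characteristic formula over the $(n-1)$ graded pieces gives $\sum_j\dim H^1_{(\varphi,\Gamma_L)}(\cR_E(\delta_j\delta_n^{-1}))=(n-1)[L:\Q_p]+\sum_j\dim H^0+\sum_j\dim H^2=(n-1)[L:\Q_p]+0+1$; and since $H^0$ of $D_1^{n-1}\otimes\cR_E(\delta_n^{-1})$ vanishes, the long exact sequences show $\dim_E\Ext^1_{(\varphi,\Gamma_L)}(\cR_E(\delta_n),D_1^{n-1})$ equals exactly this sum $=(n-1)[L:\Q_p]+1$.

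Second, for the isomorphism in $\Ext^2$: the surjection $D_1^{n-1}\twoheadrightarrow\cR_E(\delta_{n-1})$ has kernel $D_1^{n-2}$ (with graded pieces $\cR_E(\delta_j)$, $j\le n-2$), and I would apply $\Ext^\bullet_{(\varphi,\Gamma_L)}(\cR_E(\delta_n),-)$ to $0\to D_1^{n-2}\to D_1^{n-1}\to\cR_E(\delta_{n-1})\to 0$. From the computation just made, $\Ext^2_{(\varphi,\Gamma_L)}(\cR_E(\delta_n),D_1^{n-2})=0$ (all its graded pieces have vanishing $H^2$ after the twist, by the case $j\le n-2$ above), and there is no $\Ext^3$ since $(\varphi,\Gamma_L)$-modules have cohomological dimension $2$; hence the map $\Ext^2_{(\varphi,\Gamma_L)}(\cR_E(\delta_n),D_1^{n-1})\to\Ext^2_{(\varphi,\Gamma_L)}(\cR_E(\delta_n),\cR_E(\delta_{n-1}))$ is an isomorphism, and the target is $H^2_{(\varphi,\Gamma_L)}(\cR_E(\delta_{n-1}\delta_n^{-1}))\cong E$ by the critical-form computation. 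The only genuinely delicate points are bookkeeping the twisted characters $\delta_j\delta_n^{-1}$ correctly and invoking the precise vanishing/one-dimensionality criteria for $H^0$ and $H^2$ of rank-one modules (these are standard, e.g.\ from \cite{Colm2} or \cite{Liu07}); once those are in place the long-exact-sequence dévissage is routine. The one subtlety to keep an eye on is that we need the extensions $\cR_E(\delta_{i+1})$ by $\cR_E(\delta_i)$ inside $D_1^{n-1}$ to be nonsplit only to pin down $D_1^{n-1}$ itself as a well-defined object — it is not actually needed for this particular lemma, whose statement depends only on the graded pieces.
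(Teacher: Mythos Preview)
Your proof is correct and is essentially a detailed unpacking of what the paper leaves as a one-line citation: the paper's proof reads ``The lemma follows easily from \cite[\S~2.2]{Na} (see also \cite{Liu07}),'' which amounts to exactly the rank-one cohomology computations plus d\'evissage that you carry out explicitly. Your closing observation that the nonsplit hypothesis on the extensions inside $D_1^{n-1}$ is not actually used here is also accurate.
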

\begin{proof}
The lemma follows easily from \cite[\S~2.2]{Na} (see also \cite{Liu07}).
\end{proof}

\noindent
By functoriality, we have the following commutative diagram of pairings:
\begin{equation}\footnotesize\label{equ: l3-diag2}
 \begin{CD}
 \Ext^1_{(\varphi,\Gamma_L)}(\cR_E(\delta_n), D_1^{n-1}) \ \ @. \times @. \Ext^1_{(\varphi,\Gamma_L)}(D_1^{n-1},D_1^{n-2}) @> \cup>> \Ext^2_{(\varphi,\Gamma_L)}(\cR_E(\delta_n), D_1^{n-2}) \\
 @| @. @V \iota VV @VVV \\
\Ext^1_{(\varphi,\Gamma_L)}(\cR_E(\delta_n), D_1^{n-1}) \ \ @. \times @. \Ext^1_{(\varphi,\Gamma_L)}(D_1^{n-1},D_1^{n-1}) @> \cup >> \Ext^2_{(\varphi,\Gamma_L)}(\cR_E(\delta_n), D_1^{n-1}) \\
 @| @. @V \kappa VV @V \sim VV \\
 \Ext^1_{(\varphi,\Gamma_L)}(\cR_E(\delta_n), D_1^{n-1}) \ \ @. \ \ \!\times \ @. \ \ \Ext^1_{(\varphi,\Gamma_L)}(D_1^{n-1}, \cR_E(\delta_{n-1})) @> \cup >> \Ext^2_{(\varphi,\Gamma_L)}(\cR_E(\delta_n), \cR_E(\delta_{n-1}))
 \end{CD}
\end{equation}
with the bottom right map being an isomorphism by Lemma \ref{equ: hL-dim}.

\begin{proposition}\label{prop-l3-cup}
Keep the above assumption and notation.\\
(1) The map $\kappa$ is surjective.\\
(2) The bottom cup-product in (\ref{equ: l3-diag2}) is a perfect pairing and we have:
$$\Ker (\kappa)=\Ext^1_{(\varphi,\Gamma_L)}(\cR_E(\delta_n), D_1^{n-1}) ^{\perp}$$
with respect to the middle cup-product in (\ref{equ: l3-diag2}).
\end{proposition}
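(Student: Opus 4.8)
The plan is to deduce both statements from a single input: the knowledge (Lemma \ref{equ: hL-dim}) that the target $\Ext^2_{(\varphi,\Gamma_L)}(\cR_E(\delta_n), D_1^{n-1})$ is one-dimensional and identified, via the quotient map $D_1^{n-1}\twoheadrightarrow \cR_E(\delta_{n-1})$, with $\Ext^2_{(\varphi,\Gamma_L)}(\cR_E(\delta_n),\cR_E(\delta_{n-1}))\cong E$. For part (1), I would first observe that the composite $D_1^{n-1}\twoheadrightarrow \cR_E(\delta_{n-1})$ induces $\kappa\colon \Ext^1_{(\varphi,\Gamma_L)}(D_1^{n-1},D_1^{n-1})\twoheadrightarrow \Ext^1_{(\varphi,\Gamma_L)}(D_1^{n-1},\cR_E(\delta_{n-1}))$; surjectivity follows from the long exact sequence of $\Ext^\bullet_{(\varphi,\Gamma_L)}(D_1^{n-1},-)$ attached to the short exact sequence $0\to D_1^{n-2}\to D_1^{n-1}\to \cR_E(\delta_{n-1})\to 0$, since the next term is $\Ext^2_{(\varphi,\Gamma_L)}(D_1^{n-1},D_1^{n-2})$, and one checks this vanishes (or at least that the connecting map is zero) using the global Euler characteristic formula and Tate-type duality for $(\varphi,\Gamma_L)$-modules as in \cite[\S~2.2]{Na}: indeed $D_1^{n-2}$ is built from the $\cR_E(\delta_i)$ with $i\le n-2$, and each $\Ext^2_{(\varphi,\Gamma_L)}(\cR_E(\delta_j),\cR_E(\delta_i))$ with $j\le n-1$, $i\le n-2$ is dual to $\Hom_{(\varphi,\Gamma_L)}(\cR_E(\delta_i),\cR_E(\delta_j)(\chi_{\mathrm{cyc}}))$, which vanishes by the special noncritical hypothesis on the $\delta_i$ (the relevant weight/character conditions preclude nonzero maps). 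This is the step I expect to be the main obstacle: one must verify carefully that none of the $\Ext^2$ obstruction terms involving $D_1^{n-2}$ interfere, i.e. the vanishing of the appropriate $\Hom$'s between the graded pieces, which is exactly where special noncriticality is used.

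For part (2), perfectness of the bottom pairing $\Ext^1_{(\varphi,\Gamma_L)}(\cR_E(\delta_n),D_1^{n-1})\times \Ext^1_{(\varphi,\Gamma_L)}(D_1^{n-1},\cR_E(\delta_{n-1}))\to E$ is a dimension count combined with nondegeneracy. For nondegeneracy I would first replace $D_1^{n-1}$ on the left-hand entry by its top quotient: the surjection $D_1^{n-1}\twoheadrightarrow \cR_E(\delta_{n-1})$ and the filtration on $D_1^{n-1}$ give, via the long exact sequence, that $\Ext^1_{(\varphi,\Gamma_L)}(\cR_E(\delta_n),D_1^{n-1})$ maps onto $\Ext^1_{(\varphi,\Gamma_L)}(\cR_E(\delta_n),\cR_E(\delta_{n-1}))$ and the pairing is compatible with the Tate/Poitou-type cup product $\Ext^1(\cR_E(\delta_n),\cR_E(\delta_{n-1}))\times \Ext^1(\cR_E(\delta_{n-1}),\cR_E(\delta_n))\to \Ext^2(\cR_E(\delta_n),\cR_E(\delta_n))\cong E$, which is known to be perfect for rank-one $(\varphi,\Gamma_L)$-modules (this is the $(\varphi,\Gamma_L)$-module version of local Tate duality, see \cite[\S~2.2]{Na}, \cite{Liu07}). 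Then I would check dimensions: by Lemma \ref{equ: hL-dim} the left space has dimension $(n-1)[L:\Q_p]+1$, and by the same Euler characteristic computation (or by \cite[Lem.~1.15]{Ding4}-type arguments applied inductively along the filtration of $D_1^{n-1}$) the right space $\Ext^1_{(\varphi,\Gamma_L)}(D_1^{n-1},\cR_E(\delta_{n-1}))$ also has dimension $(n-1)[L:\Q_p]+1$, since $\Hom_{(\varphi,\Gamma_L)}(\cR_E(\delta_i),\cR_E(\delta_{n-1}))=0$ for $i<n-1$ and $\Ext^2$ terms vanish as in part (1), so that the $\Ext^1$ is filtered with graded pieces $\Ext^1_{(\varphi,\Gamma_L)}(\cR_E(\delta_i),\cR_E(\delta_{n-1}))$ each of dimension $[L:\Q_p]$ for $i<n-1$ plus $\Ext^1_{(\varphi,\Gamma_L)}(\cR_E(\delta_{n-1}),\cR_E(\delta_{n-1}))$ of dimension $[L:\Q_p]+1$. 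Equal finite dimensions plus nondegeneracy on one side forces perfectness.

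Finally, for the identification $\Ker(\kappa)=\Ext^1_{(\varphi,\Gamma_L)}(\cR_E(\delta_n),D_1^{n-1})^{\perp}$ with respect to the middle cup product: by the commutativity of the lower square in (\ref{equ: l3-diag2}), for $x\in \Ext^1_{(\varphi,\Gamma_L)}(\cR_E(\delta_n),D_1^{n-1})$ and $y\in \Ext^1_{(\varphi,\Gamma_L)}(D_1^{n-1},D_1^{n-1})$ the middle cup product $x\cup y$, transported by the isomorphism $\Ext^2_{(\varphi,\Gamma_L)}(\cR_E(\delta_n),D_1^{n-1})\xrightarrow{\sim}\Ext^2_{(\varphi,\Gamma_L)}(\cR_E(\delta_n),\cR_E(\delta_{n-1}))$, equals the bottom cup product $x\cup \kappa(y)$. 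Hence $y$ pairs to zero with every $x$ in the middle pairing if and only if $\kappa(y)$ pairs to zero with every $x$ in the bottom pairing, which by perfectness of the bottom pairing (part (2), first assertion) is equivalent to $\kappa(y)=0$, i.e. $y\in\Ker(\kappa)$. This gives $\Ext^1_{(\varphi,\Gamma_L)}(\cR_E(\delta_n),D_1^{n-1})^{\perp}=\Ker(\kappa)$ and completes the proof; the only genuinely delicate point throughout is the systematic vanishing of the relevant $\Hom$ and $\Ext^2$ groups between the rank-one graded pieces, which I would isolate as a preliminary sublemma invoking special noncriticality.
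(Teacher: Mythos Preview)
Your argument for the identification $\Ker(\kappa)=\Ext^1_{(\varphi,\Gamma_L)}(\cR_E(\delta_n),D_1^{n-1})^{\perp}$ is correct and matches the paper. However there are two genuine gaps elsewhere.

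\textbf{Part (1).} Your full d\'evissage to rank-one pieces does not work: the claim that each $\Ext^2_{(\varphi,\Gamma_L)}(\cR_E(\delta_j),\cR_E(\delta_i))$ vanishes for $j\le n-1$, $i\le n-2$ is false precisely in the case $j=i+1$. Indeed this $\Ext^2$ is dual to $H^0_{(\varphi,\Gamma_L)}(\cR_E(\delta_{i+1}\delta_i^{-1}\varepsilon))$, and since $\delta_{i+1}\delta_i^{-1}\varepsilon=\prod_\sigma\sigma^{1-k_{\sigma,i}}$ with $1-k_{\sigma,i}\le 0$, this $H^0$ is one-dimensional for \emph{every} special noncritical parameter. (If one takes $D_1^{n-1}$ split, one sees directly that $\Ext^2(D_1^{n-1},D_1^{n-2})\neq 0$.) So special noncriticality alone is not what makes the obstruction vanish; the nonsplit hypothesis on the rank-$2$ subquotients of $D_1^{n-1}$ is essential. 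The paper accordingly d\'evissages only the second variable, reducing to $\Ext^2_{(\varphi,\Gamma_L)}(D_1^{n-1},\cR_E(\delta_i))=0$, which by Tate duality is $H^0_{(\varphi,\Gamma_L)}(D_1^{n-1}\otimes\cR_E(\delta_i^{-1}\varepsilon))=0$; the nonsplitness of $D_i^{i+1}$ is exactly what prevents the potential section $\cR_E(\delta_{i+1})\hookrightarrow D_1^{n-1}$ and forces this $H^0$ to vanish.

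\textbf{Part (2), perfectness.} Your rank-one reduction is misstated: the cup product you write lands in $\Ext^2_{(\varphi,\Gamma_L)}(\cR_E(\delta_n),\cR_E(\delta_n))\cong H^2_{(\varphi,\Gamma_L)}(\cR_E)$, which is $0$, so that pairing is trivially degenerate and cannot be used to deduce nondegeneracy of the bottom pairing. The paper's route is different and is the key trick you are missing: one uses the natural injection $\cR_E(\delta_n^{-1})\hookrightarrow\cR_E(\delta_{n-1}^{-1}\varepsilon)$ (which exists because $\delta_n^{-1}=\delta_{n-1}^{-1}\varepsilon\prod_\sigma\sigma^{k_{\sigma,n-1}-1}$ with $k_{\sigma,n-1}-1\ge 0$) to compare the bottom pairing with the honest Tate duality pairing
\[
H^1_{(\varphi,\Gamma_L)}\big((D_1^{n-1})^\vee\otimes\cR_E(\delta_{n-1})\big)\times H^1_{(\varphi,\Gamma_L)}\big(D_1^{n-1}\otimes\cR_E(\delta_{n-1}^{-1}\varepsilon)\big)\longrightarrow H^2_{(\varphi,\Gamma_L)}(\cR_E(\varepsilon))\cong E,
\]
and then checks (via \cite[Lem.~1.13]{Ding4} or \cite[Lem.~4.8(i)]{BHS2}) that the induced maps on $H^1$ and $H^2$ are isomorphisms. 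Your dimension count for $\Ext^1_{(\varphi,\Gamma_L)}(D_1^{n-1},\cR_E(\delta_{n-1}))$ is correct, but without a valid nondegeneracy input it does not by itself give perfectness.
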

\begin{proof}
(1) It is enough to show $\Ext^2_{(\varphi,\Gamma_L)}(D_1^{n-1}, D_1^{n-2})=0$. By d\'evissage it is enough to show that $\Ext^2_{(\varphi,\Gamma_L)}(D_1^{n-1}, \cR_E(\delta_i))=0$ for all $i\in \{1,\cdots, n-2\}$. We have a natural isomorphism:
\begin{equation*}
 \Ext^2_{(\varphi,\Gamma_L)}(D_1^{n-1}, \cR_E(\delta_i))\cong H^2_{(\varphi,\Gamma_L)}\big((D_1^{n-1})^{\vee}\otimes_{\cR_E} \cR_E(\delta_i)\big).
\end{equation*}
Together \ with \ \cite[\S~4.2]{Liu07} \ (see \ also \ \cite[Prop. 1.7(4)]{Ding4}), \ we \ are \ thus \ reduced \ to \ show $H^0_{(\varphi,\Gamma_L)}\big(D_1^{n-1}\otimes_{\cR_E} \cR_E(\delta_i^{-1} \varepsilon)\big)=0$ which follows easily from our assumption on $D_1^{n-1}$.\\

\noindent
(2) Using the natural isomorphisms:
\begin{eqnarray*}
\Ext^1_{(\varphi,\Gamma_L)}(D_1^{n-1}, \cR_E(\delta_{n-1})) &\cong &H^1_{(\varphi,\Gamma_L)}((D_1^{n-1})^{\vee} \otimes_{\cR_E} \cR_E(\delta_{n-1})),\\
 \Ext^1_{(\varphi,\Gamma_L)}(\cR_E(\delta_n), D_1^{n-1}) &\cong& H^1_{(\varphi,\Gamma_L)}(D_1^{n-1}\otimes_{\cR_E} \cR_E(\delta_n^{-1})),
\\
\Ext^2_{(\varphi,\Gamma_L)}(\cR_E(\delta_n), \cR_E(\delta_{n-1})) &\cong& H^2_{(\varphi,\Gamma_L)}(\cR_E(\delta_{n-1}\delta_n^{-1})),
\end{eqnarray*}
we are reduced to show for the first statement in (2) that the cup-product:
\begin{equation*}
H^1_{(\varphi,\Gamma_L)}((D_1^{n-1})^{\vee} \otimes_{\cR_E} \cR_E(\delta_{n-1})) \times H^1_{(\varphi,\Gamma_L)}(D_1^{n-1}\otimes_{\cR_E} \cR_E(\delta_n^{-1})) \xlongrightarrow{\cup} H^2_{(\varphi,\Gamma_L)}(\cR_E(\delta_{n-1}\delta_n^{-1}))
\end{equation*}
is a perfect pairing.
We have a commutative diagram:
\begin{equation}\footnotesize\label{equ: hL-cup2}
\begin{CD} H^1_{(\varphi,\Gamma_L)}((D_1^{n-1})^{\vee} \otimes_{\cR_E} \cR_E(\delta_{n-1}))@. \ \times @. H^1_{(\varphi,\Gamma_L)}(D_1^{n-1}\otimes_{\cR_E} \cR_E(\delta_n^{-1}))@> \cup >> H^2_{(\varphi,\Gamma_L)}(\cR_E(\delta_{n-1}\delta_n^{-1}))
\\
 @| @. @VVV @VVV \\
 H^1_{(\varphi,\Gamma_L)}((D_1^{n-1})^{\vee} \otimes_{\cR_E} \cR_E(\delta_{n-1})) @. \ \ \ \times \ \ @. H^1_{(\varphi,\Gamma_L)}(D_1^{n-1}\otimes_{\cR_E} \cR_E(\delta_{n-1}^{-1}\varepsilon)) @> \cup >> H^2_{(\varphi,\Gamma_L)}(\cR_E(\varepsilon))
\end{CD}
\end{equation}
where the two vertical maps on the right are induced by the injection $\cR_E(\delta_n^{-1})\hookrightarrow \cR_E(\delta_{n-1}^{-1} \varepsilon)$ (recall from Definition \ref{def: hL-spe} that we have $\delta_n^{-1}=\delta_{n-1}^{-1}\varepsilon \prod_{\sigma\in \Sigma_L} \sigma^{k_{\sigma,n-1}-1}$ with $k_{\sigma,n-1}-1\geq 0$). Moreover, using the same argument as in the proof of \cite[Lem. 1.13]{Ding4} (or by \cite[Lem. 4.8(i)]{BHS2} together with an easy d\'evissage argument), the vertical maps are isomorphisms. By Tate duality (see \cite[\S~4.2]{Liu07} or \cite[Prop. 1.7(4)]{Ding4}), the bottom cup-product in (\ref{equ: hL-cup2}) is a perfect pairing, hence so is the top cup-product. The first part of (2) follows.\\
By similar (and easier) arguments as in the proof of (1), we have $\Ext^2_{(\varphi,\Gamma_L)}(\cR_E(\delta_n), D_1^{n-2}) =0$. By (\ref{equ: l3-diag2}), we deduce:
\begin{equation*}
 \Ker (\kappa)\subseteq \Ext^1_{(\varphi,\Gamma_L)}(\cR_E(\delta_n), D_1^{n-1}) ^{\perp}.
\end{equation*}
However, since the bottom cup-product of (\ref{equ: l3-diag2}) is a perfect pairing and the bottom right map an isomorphism, we easily get $\Ext^1_{(\varphi,\Gamma_L)}(\cR_E(\delta_n), D_1^{n-1}) ^{\perp} \subseteq \Ker (\kappa)$, hence an equality.
\end{proof}

\noindent
The $(\varphi,\Gamma_L)$-module $D$ gives rise to a nonzero element in $\Ext^1_{(\varphi,\Gamma_L)}(\cR_E(\delta_n),D_1^{n-1})$ that we denote by $[D]$. In particular the $E$-vector subspace $E[D]$ it generates is well defined and we define (with respect to the two bottom pairings in (\ref{equ: l3-diag2})):
\begin{eqnarray*}
\cL_{\FM}(D: D_1^{n-1})&:=&(E [D])^{\perp} \subseteq \Ext^1_{(\varphi,\Gamma_L)}(D_1^{n-1},D_1^{n-1})\\
\ell_{\FM}(D: D_1^{n-1})&:=&(E [D])^{\perp} \subseteq \Ext^1_{(\varphi,\Gamma_L)}(D_1^{n-1},\cR_E(\delta_{n-1})).
 \end{eqnarray*}
By Proposition \ref{prop-l3-cup} (and the bottom right isomorphism in (\ref{equ: l3-diag2})), we deduce a short exact sequence of $E$-vector spaces:
\begin{equation}\label{sesker}
 0 \lra \Ker (\kappa) \lra \cL_{\FM}(D: D_1^{n-1}) \xlongrightarrow{\kappa} \ell_{\FM}(D: D_1^{n-1}) \lra 0.
\end{equation}

\noindent
The following corollary also follows easily from Proposition \ref{prop-l3-cup} and (\ref{sesker}).

\begin{corollary}\label{codimension}
(1) The $(\varphi,\Gamma_L)$-module $D$ (seen in $\Ext^1_{(\varphi,\Gamma_L)}(\cR_E(\delta_n),D_1^{n-1})$) is determined up to isomorphism by $D_1^{n-1}$, $\delta_n$ and $\cL_{\FM}(D: D_1^{n-1})$ (resp. and $\ell_{\FM}(D:D_1^{n-1})$).\\
(2) If $D$ (seen in $\Ext^1_{(\varphi,\Gamma_L)}(\cR_E(\delta_n), D_1^{n-1})$) is nonsplit, then $\cL_{\FM}(D: D_1^{n-1})$ (resp. $\ell_{\FM}(D: D_1^{n-1})$) is of codimension $1$ in $\Ext^1_{(\varphi,\Gamma_L)}(D_1^{n-1},D_1^{n-1})$ (resp. in $\Ext^1_{(\varphi,\Gamma_L)}(D_1^{n-1}, \cR_E(\delta_{n-1}))$).
\end{corollary}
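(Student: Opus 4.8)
The plan is to deduce both assertions formally from the two perfect pairings of Proposition~\ref{prop-l3-cup} and from the short exact sequence (\ref{sesker}); no new input on $(\varphi,\Gamma_L)$-modules is needed, so this is essentially an exercise in finite-dimensional linear algebra.

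For part~(1) I would first treat $\ell_{\FM}(D:D_1^{n-1})$ and then reduce the $\cL_{\FM}$-statement to it. The ambient space $\Ext^1_{(\varphi,\Gamma_L)}(D_1^{n-1},\cR_E(\delta_{n-1}))$, the space $\Ext^1_{(\varphi,\Gamma_L)}(\cR_E(\delta_n),D_1^{n-1})$ and the bottom cup-product in (\ref{equ: l3-diag2}) are all manufactured from $D_1^{n-1}$ and $\delta_n$ alone. Since that pairing is perfect (Proposition~\ref{prop-l3-cup}(2)) and the spaces are finite-dimensional, passing to orthogonal complements is an involution, so $\big(\ell_{\FM}(D:D_1^{n-1})\big)^{\perp}=\big((E[D])^{\perp}\big)^{\perp}=E[D]$ inside $\Ext^1_{(\varphi,\Gamma_L)}(\cR_E(\delta_n),D_1^{n-1})$; hence the line $E[D]$ is recovered. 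Then I would use the standard fact that rescaling an extension class by a unit $\lambda\in E^{\times}$ (equivalently, pushing the extension out along the automorphism $\lambda\cdot\id$ of $D_1^{n-1}$) yields an isomorphic module, so the line $E[D]$ — which is $\{0\}$ precisely when $D$ splits — determines $D$ up to isomorphism. Finally, the map $\kappa$ of (\ref{sesker}) is induced by $D_1^{n-1}\twoheadrightarrow\cR_E(\delta_{n-1})$ and so depends only on $D_1^{n-1}$, and it sends $\cL_{\FM}(D:D_1^{n-1})$ onto $\ell_{\FM}(D:D_1^{n-1})$; thus knowing $\cL_{\FM}$ yields $\ell_{\FM}$, which reduces the remaining case to the one already done.

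For part~(2), assume $D$ is nonsplit, i.e.\ $[D]\neq0$ and $\dim_E E[D]=1$. Perfectness of the bottom pairing immediately gives that $\ell_{\FM}(D:D_1^{n-1})=(E[D])^{\perp}$ has codimension $\dim_E E[D]=1$ in $\Ext^1_{(\varphi,\Gamma_L)}(D_1^{n-1},\cR_E(\delta_{n-1}))$. To transfer this to $\cL_{\FM}(D:D_1^{n-1})$ I would compare the exact sequence (\ref{sesker}), which gives $\dim_E\cL_{\FM}(D:D_1^{n-1})=\dim_E\Ker(\kappa)+\dim_E\ell_{\FM}(D:D_1^{n-1})$, with the surjection $\kappa\colon\Ext^1_{(\varphi,\Gamma_L)}(D_1^{n-1},D_1^{n-1})\twoheadrightarrow\Ext^1_{(\varphi,\Gamma_L)}(D_1^{n-1},\cR_E(\delta_{n-1}))$ of Proposition~\ref{prop-l3-cup}(1), whose kernel is the same $\Ker(\kappa)$; this second identity reads $\dim_E\Ext^1_{(\varphi,\Gamma_L)}(D_1^{n-1},D_1^{n-1})=\dim_E\Ker(\kappa)+\dim_E\Ext^1_{(\varphi,\Gamma_L)}(D_1^{n-1},\cR_E(\delta_{n-1}))$. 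Subtracting the two displays shows that $\cL_{\FM}(D:D_1^{n-1})$ and $\ell_{\FM}(D:D_1^{n-1})$ have the same codimension in their respective ambient spaces, namely $1$.

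The only point asking for a bit of attention — the closest thing to an obstacle — is this last comparison: one must be sure that the subspace $\Ker(\kappa)$ occurring inside $\cL_{\FM}(D:D_1^{n-1})$ in (\ref{sesker}) is literally the full kernel of $\kappa$ on $\Ext^1_{(\varphi,\Gamma_L)}(D_1^{n-1},D_1^{n-1})$. This is exactly Proposition~\ref{prop-l3-cup}(2): it identifies $\Ker(\kappa)$ with $\Ext^1_{(\varphi,\Gamma_L)}(\cR_E(\delta_n),D_1^{n-1})^{\perp}$ for the middle pairing, which in particular lies in $(E[D])^{\perp}=\cL_{\FM}(D:D_1^{n-1})$ since $E[D]\subseteq\Ext^1_{(\varphi,\Gamma_L)}(\cR_E(\delta_n),D_1^{n-1})$, so the inclusion in (\ref{sesker}) is the tautological one and the dimension bookkeeping is legitimate. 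I do not expect any serious difficulty beyond this.
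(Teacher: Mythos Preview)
Your proposal is correct and follows precisely the route the paper indicates: the paper simply asserts that the corollary ``follows easily from Proposition~\ref{prop-l3-cup} and (\ref{sesker})'', and you have supplied exactly those linear-algebra details, including the careful (and correct) verification that the full $\Ker(\kappa)$ sits inside $\cL_{\FM}(D:D_1^{n-1})$.
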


\noindent
By functoriality we have a commutative diagram for $i<n-1$:
\begin{equation}\footnotesize\label{equ: hL-commL}
 \begin{CD}
\Ext^1_{(\varphi,\Gamma_L)}(\cR_E(\delta_n), D_1^{n-1}/D_1^i) @. \ \ \times \ \ @. \Ext^1_{(\varphi,\Gamma_L)}(D_1^{n-1}/D_1^i,\cR_E(\delta_{n-1})) @> \cup >> \Ext^2_{(\varphi,\Gamma_L)}(\cR_E(\delta_n), \cR_E(\delta_{n-1})) \\
 @A u_i AA @. @V j_i VV @| \\
  \Ext^1_{(\varphi,\Gamma_L)}(\cR_E(\delta_n), D_1^{n-1}) @.  \times @. \Ext^1_{(\varphi,\Gamma_L)}(D_1^{n-1}, \cR_E(\delta_{n-1})) @> \cup >> \Ext^2_{(\varphi,\Gamma_L)}(\cR_E(\delta_n), \cR_E(\delta_{n-1})).
  \end{CD}
\end{equation}
It is easy to deduce for $i<n-1$ from \cite[\S~4.2]{Liu07} (see also \cite[Prop. 1.7]{Ding4}):
\begin{equation*}
 \Ext^2_{(\varphi,\Gamma_L)}(\cR_E(\delta_n), D_1^i) =\Ext^2_{(\varphi,\Gamma_L)}(D_1^i, \cR_E(\delta_{n-1}))=0
\end{equation*}
and it is clear that $\Hom_{(\varphi,\Gamma_L)}(D_1^i,\cR_E(\delta_{n-1}))=\Hom_{(\varphi,\Gamma_L)}(\cR_E(\delta_n), D_1^{n-1}/D_1^i)=0$. By d\'evissage, we deduce that $u_i$ is surjective, $j_i$ is injective and $\Ker(u_i)\cong \Ext^1_{(\varphi,\Gamma_L)}(\cR_E(\delta_n), D_1^i)$. Also the two cup-products in (\ref{equ: hL-commL}) are perfect pairings by Proposition \ref{prop-l3-cup}. In particular we obtain the following lemma.

\begin{lemma}\label{lem: hL-compL}
We have in $\Ext^1_{(\varphi,\Gamma_L)}(D_1^{n-1}, \cR_E(\delta_{n-1}))$ for $i<n-1$ (via $j_i$):
\begin{equation}
\ell_{\FM}(D: D_1^{n-1})\cap \Ext^1_{(\varphi,\Gamma_L)}(D_1^{n-1}/D_1^i,\cR_E(\delta_{n-1}))=\ell_{\FM}(D/D_1^i: D_1^{n-1}/D_1^i)
\end{equation}
and with respect to the bottom pairing in (\ref{equ: hL-commL}):
$$\Ext^1_{(\varphi,\Gamma_L)}(D_1^{n-1}/D_1^i,\cR_E(\delta_{n-1}))\cong \Ext^1_{(\varphi,\Gamma_L)}(\cR_E(\delta_n), D_1^i)^{\perp}.$$
\end{lemma}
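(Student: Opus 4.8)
The plan is to deduce both statements from the functorial commutative diagram (\ref{equ: hL-commL}) together with the duality/surjectivity facts already established just before the lemma, namely that $u_i$ is surjective with $\Ker(u_i)\cong \Ext^1_{(\varphi,\Gamma_L)}(\cR_E(\delta_n), D_1^i)$, that $j_i$ is injective, and that both horizontal cup-products in (\ref{equ: hL-commL}) are perfect pairings (by Proposition \ref{prop-l3-cup}, applied to $D_1^{n-1}$ and to its quotient $D_1^{n-1}/D_1^i$, which is again special noncritical with the nonsplitness hypothesis inherited).

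First I would prove the second displayed identity. The bottom pairing in (\ref{equ: hL-commL}) is perfect, and under it the subspace $j_i\big(\Ext^1_{(\varphi,\Gamma_L)}(D_1^{n-1}/D_1^i,\cR_E(\delta_{n-1}))\big)$ is, by the commutativity of (\ref{equ: hL-commL}), exactly the set of classes in $\Ext^1_{(\varphi,\Gamma_L)}(D_1^{n-1},\cR_E(\delta_{n-1}))$ that pair to zero with everything in $\Ker(u_i)$: indeed a class lies in the image of $j_i$ precisely when its cup-product against any $e\in\Ext^1_{(\varphi,\Gamma_L)}(\cR_E(\delta_n),D_1^{n-1})$ depends only on $u_i(e)$, and since $u_i$ is surjective this means it kills $\Ker(u_i)$. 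Combining with $\Ker(u_i)\cong \Ext^1_{(\varphi,\Gamma_L)}(\cR_E(\delta_n), D_1^i)$ gives
\[
\Ext^1_{(\varphi,\Gamma_L)}(D_1^{n-1}/D_1^i,\cR_E(\delta_{n-1}))\cong \Ext^1_{(\varphi,\Gamma_L)}(\cR_E(\delta_n), D_1^i)^{\perp}
\]
as claimed. (One should check the two spaces have complementary dimensions, which follows from Lemma \ref{equ: hL-dim} applied to $D$ and to $D_1^{n-1}/D_1^i$, so that "$\subseteq$" forces equality; alternatively one argues directly by exactness.)

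For the first identity, recall $\ell_{\FM}(D: D_1^{n-1})=(E[D])^{\perp}$ inside $\Ext^1_{(\varphi,\Gamma_L)}(D_1^{n-1},\cR_E(\delta_{n-1}))$ with respect to the bottom pairing of (\ref{equ: l3-diag2}), which is the same as the bottom pairing of (\ref{equ: hL-commL}); and $\ell_{\FM}(D/D_1^i: D_1^{n-1}/D_1^i)=(E[D/D_1^i])^{\perp}$ inside $\Ext^1_{(\varphi,\Gamma_L)}(D_1^{n-1}/D_1^i,\cR_E(\delta_{n-1}))$ with respect to the top pairing. The point is that $u_i([D])=[D/D_1^i]$ — pushing the extension class of $D$ (as an extension of $\cR_E(\delta_n)$ by $D_1^{n-1}$) along $D_1^{n-1}\twoheadrightarrow D_1^{n-1}/D_1^i$ yields the class of $D/D_1^i$. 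By commutativity of (\ref{equ: hL-commL}), for $x\in \Ext^1_{(\varphi,\Gamma_L)}(D_1^{n-1}/D_1^i,\cR_E(\delta_{n-1}))$ we have $j_i(x)\cup [D] = x\cup u_i([D]) = x\cup [D/D_1^i]$; hence $j_i(x)\in (E[D])^{\perp}$ if and only if $x\in (E[D/D_1^i])^{\perp}$. Since $j_i$ is injective and its image is exactly $\Ext^1_{(\varphi,\Gamma_L)}(D_1^{n-1}/D_1^i,\cR_E(\delta_{n-1}))$ viewed inside $\Ext^1_{(\varphi,\Gamma_L)}(D_1^{n-1}, \cR_E(\delta_{n-1}))$, this says precisely
\[
\ell_{\FM}(D: D_1^{n-1})\cap \Ext^1_{(\varphi,\Gamma_L)}(D_1^{n-1}/D_1^i,\cR_E(\delta_{n-1}))=\ell_{\FM}(D/D_1^i: D_1^{n-1}/D_1^i).
\]

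I do not expect a serious obstacle here: the whole argument is a diagram chase powered by the perfect-pairing and vanishing inputs already in hand. The one place needing a little care is the identification $u_i([D])=[D/D_1^i]$ and the compatibility of the two "bottom" pairings (in (\ref{equ: l3-diag2}) versus (\ref{equ: hL-commL})) — both are standard functoriality of Yoneda/cup products but worth stating explicitly. A minor subtlety is that, to conclude the orthogonality identity for $\ell_{\FM}$ rather than merely an inclusion, one uses that $j_i$ identifies $\Ext^1_{(\varphi,\Gamma_L)}(D_1^{n-1}/D_1^i,\cR_E(\delta_{n-1}))$ with a genuine subspace on which the restricted pairing with $E[D]$ factors through $E[D/D_1^i]$, so taking orthogonals commutes with the intersection; this is exactly what injectivity of $j_i$ plus the displayed second identity provide.
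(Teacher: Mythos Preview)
Your proof is correct and follows exactly the approach the paper intends: the paper states the lemma as an immediate consequence of the commutative diagram (\ref{equ: hL-commL}) together with the surjectivity of $u_i$, the injectivity of $j_i$, the identification $\Ker(u_i)\cong \Ext^1_{(\varphi,\Gamma_L)}(\cR_E(\delta_n), D_1^i)$, and the perfectness of both pairings, without writing out the diagram chase you have made explicit. Your parenthetical dimension count (needed to upgrade $\Ima(j_i)\subseteq \Ker(u_i)^\perp$ to an equality) and the observation $u_i([D])=[D/D_1^i]$ are precisely the points the paper is tacitly invoking.
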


\begin{remark}\label{rem: hL-sim}
{\rm In particular, for $i=n-2$, we have a perfect pairing:
\begin{equation}\label{equ: hL-cupS}
 \Ext^1_{(\varphi,\Gamma_L)}(\cR_E(\delta_n), \cR_E(\delta_{n-1})) \times \Ext^1_{(\varphi,\Gamma_L)}(\cR_E(\delta_{n-1}), \cR_E(\delta_{n-1})) \lra E.
\end{equation}
Thanks to (\ref{equ: hL-cad}) we can thus view:
\begin{multline*}
\ell_{\FM}(D:D_1^{n-1})\cap \Ext^1_{(\varphi,\Gamma_L)}(\cR_E(\delta_{n-1}),\cR_E(\delta_{n-1}))=\ell_{\FM}(D/D_1^{n-2}:D_1^{n-1}/D_1^{n-2})
\\ =\cL_{\FM}(D/D_1^{n-2}: D_1^{n-1}/D_1^{n-2})\end{multline*}
as an $E$-vector subspace of $\Hom(L^{\times}, E)$ of codimension $\leq 1$. By \cite[Prop. 1.9]{Ding4}, the pairing (\ref{equ: hL-cupS}) induces an equality of subspaces of $\Ext^1_{(\varphi,\Gamma_L)}(\cR_E(\delta_{n-1}), \cR_E(\delta_{n-1}))$:
\begin{equation*}
  \Ext^1_{g}(\cR_E(\delta_{n-1}), \cR_E(\delta_{n-1})) \cong \Ext^1_e(\cR_E(\delta_n), \cR_E(\delta_{n-1}))^{\perp}
\end{equation*}
where $\Ext^1_e$ denotes the subspace of extensions which are crystalline up to twist by characters. In particular via (\ref{ginfty}) we have an inclusion in $\Ext^1_{(\varphi,\Gamma_L)}(\cR_E(\delta_{n-1}), \cR_E(\delta_{n-1}))$:
\begin{equation*}
 \Hom_{\infty}(L^{\times}, E)\subseteq \ell_{\FM}(D/D_1^{n-2}:D_1^{n-1}/D_1^{n-2})
\end{equation*}
if and only if $D/D_1^{n-2}$ is crystalline up to twist by characters.}
\end{remark}

\noindent
We now assume that for $i\in \{2,\cdots, n-1\}$ the extension of $\cR_E(\delta_{i+1})$ by $\cR_E(\delta_i)$ appearing as a subquotient of $D_2^{n}$ is nonsplit. Similarly to the two bottom lines of (\ref{equ: l3-diag2}) we have a commutative diagram of pairings:
\begin{equation}\small\label{equ: hL-CD3}\begin{CD}
 \Ext^1_{(\varphi,\Gamma_L)}(D_2^{n},D_2^{n}) @. \times \ @. \Ext^1_{(\varphi,\Gamma_L)}(D_2^{n},\cR_E(\delta_1)) @> \cup >> \Ext^2_{(\varphi,\Gamma_L)}(D_2^{n}, \cR_E(\delta_1)) \\
 @V\kappa' VV @. @| @V \wr VV \\
  \Ext^1_{(\varphi,\Gamma_L)}(\cR_E(\delta_2), D_2^{n}) @. \ \times \ @. \Ext^1_{(\varphi,\Gamma_L)}(D_2^{n},\cR_E(\delta_1)) @> \cup >> \Ext^2_{(\varphi,\Gamma_L)}(\cR_E(\delta_2), \cR_E(\delta_1))
  \end{CD}
\end{equation}
where the right vertical map is an isomorphism of $1$-dimensional $E$-vector spaces, the bottom cup-product is a perfect pairing, $\kappa'$ is surjective, and:
\begin{equation*}
\dim_E \Ext^1_{(\varphi,\Gamma_L)}(\cR_E(\delta_2), D_2^{n}) =\dim_E \Ext^1_{(\varphi,\Gamma_L)}(D_2^{n},\cR_E(\delta_1))=(n-1)[L:\Q_p]+1.
\end{equation*}
We define as previously the orthogonal spaces $\cL_{\FM}(D:D_2^{n})\subseteq \Ext^1_{(\varphi,\Gamma_L)}(D_2^{n},D_2^{n})$ and $\ell_{\FM}(D:D_2^{n})\subseteq \Ext^1_{(\varphi,\Gamma_L)}(\cR_E(\delta_2), D_2^{n})$ of $E[D]\subseteq \Ext^1_{(\varphi,\Gamma_L)}(D_2^{n},\cR_E(\delta_1))$ and we again have a short exact sequence:
\begin{equation*}
0 \lra \Ker (\kappa' )\lra \cL_{\FM}(D:D_2^{n}) \xlongrightarrow{\kappa'} \ell_{\FM}(D:D_2^{n}) \lra 0.
\end{equation*}
We have as in (\ref{equ: hL-commL}) a commutative diagram for $2<i$:
\begin{equation*}
 \begin{CD}
\Ext^1_{(\varphi,\Gamma_L)}(\cR_E(\delta_2), D_2^{n}) @. \ \times \ @. \Ext^1_{(\varphi,\Gamma_L)}(D_2^{n}, \cR_E(\delta_{1})) @> \cup >> \Ext^2_{(\varphi,\Gamma_L)}(\cR_E(\delta_2), \cR_E(\delta_{1})) \\
 @A j_i AA @. @V u_i VV @| \\
  \Ext^1_{(\varphi,\Gamma_L)}(\cR_E(\delta_2), D_2^i) @. \ \times \ @. \Ext^1_{(\varphi,\Gamma_L)}(D_2^i, \cR_E(\delta_1)) @> \cup >> \Ext^2_{(\varphi,\Gamma_L)}(\cR_E(\delta_2), \cR_E(\delta_{1}))
  \end{CD}
\end{equation*}
where the cup-products are perfect pairings, $j_i$ is injective and $u_i$ is surjective. Moreover as in Lemma \ref{lem: hL-compL}, we have in $\Ext^1_{(\varphi,\Gamma_L)}(\cR_E(\delta_2), D_2^{n})$ for $2<i$ (via $j_i$):
\begin{equation*}
 \ell_{\FM}(D:D_2^{n}) \cap \Ext^1_{(\varphi,\Gamma_L)}(\cR_E(\delta_2), D_2^i) =\ell_{\FM}(D_1^i: D_2^i).
\end{equation*}

\begin{theorem}\label{thm: hL-hL}
Let $\widetilde{D}_1^{n-1}$ (resp. $\widetilde{D}_2^{n}$) be a deformation of $D_1^{n-1}$ (resp. ${D}_2^{n}$) over $\cR_{E[\epsilon]/\epsilon^2}$ of rank $n-1$ (thus with $\widetilde{D}_1^{n-1}\equiv D_1^{n-1}\pmod{\epsilon}$, resp. $\widetilde{D}_2^{n}\equiv D_2^{n}\pmod{\epsilon}$). Then there exist a deformation $\widetilde{D}$ of $D$ over $\cR_{E[\epsilon]/\epsilon^2}$ and a deformation $\widetilde{\delta}_n$ (resp. $\widetilde{\delta}_1$) of $\delta_n$ (resp. $\delta_1$) over $E[\epsilon]/\epsilon^2$ such that $\widetilde{D}$ sits in an exact sequence of $(\varphi,\Gamma_L)$-modules over $\cR_{E[\epsilon]/\epsilon^2}$:
\begin{equation*}
 0 \lra \widetilde{D}_1^{n-1} \lra \widetilde{D} \lra \cR_{E[\epsilon]/\epsilon^2}(\widetilde{\delta}_n) \lra 0
\end{equation*}
\begin{equation*}
 \big(\text{resp. } 0 \lra\cR_{E[\epsilon]/\epsilon^2}(\widetilde{\delta}_1) \lra \widetilde{D} \lra\widetilde{D}_2^{n} \lra 0\big)
\end{equation*}
if and only if (with notation as for $[D]$):
\begin{equation*}
 [\widetilde{D}_1^{n-1}\otimes_{\cR_{E[\epsilon]/\epsilon^2}} \cR_{E[\epsilon]/\epsilon^2}(\widetilde{\delta}_n^{-1}\delta_n)] \in \cL_{\FM}(D:D_1^{n-1})
\end{equation*}
\begin{equation*}
(\text{resp. } [\widetilde{D}_2^{n}\otimes_{\cR_{E[\epsilon]/\epsilon^2}} \cR_{E[\epsilon]/\epsilon^2}(\widetilde{\delta}_1^{-1}\delta_1)] \in \cL_{\FM}(D:D_2^{n})).
\end{equation*}
\end{theorem}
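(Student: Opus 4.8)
The plan is to prove both equivalences by the same argument, dualized; I describe the one involving $D_1^{n-1}$ and indicate the changes for $D_2^{n}$ at the end.

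\emph{Step 1: twisting away $\widetilde\delta_n$.} Given a putative exact sequence $0\to\widetilde D_1^{n-1}\to\widetilde D\to\cR_{E[\epsilon]/\epsilon^2}(\widetilde\delta_n)\to0$ over $\cR_{E[\epsilon]/\epsilon^2}$, I would tensor it over $\cR_{E[\epsilon]/\epsilon^2}$ with the rank one free module $\cR_{E[\epsilon]/\epsilon^2}(\widetilde\delta_n^{-1}\delta_n)$ (a deformation of $\cR_E$), producing
\begin{equation*}
0\lra\widetilde D_1^{n-1}{}'\lra\widetilde D\otimes_{\cR_{E[\epsilon]/\epsilon^2}}\cR_{E[\epsilon]/\epsilon^2}(\widetilde\delta_n^{-1}\delta_n)\lra\cR_{E[\epsilon]/\epsilon^2}(\delta_n)\lra0,
\end{equation*}
where $\widetilde D_1^{n-1}{}':=\widetilde D_1^{n-1}\otimes_{\cR_{E[\epsilon]/\epsilon^2}}\cR_{E[\epsilon]/\epsilon^2}(\widetilde\delta_n^{-1}\delta_n)$ is a deformation of $D_1^{n-1}$, of class $[\widetilde D_1^{n-1}{}']\in\Ext^1_{(\varphi,\Gamma_L)}(D_1^{n-1},D_1^{n-1})$, and $\widetilde D\otimes_{\cR_{E[\epsilon]/\epsilon^2}}\cR_{E[\epsilon]/\epsilon^2}(\widetilde\delta_n^{-1}\delta_n)$ is still a deformation of $D$. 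Since the operation is invertible (tensor back with $\cR_{E[\epsilon]/\epsilon^2}(\delta_n^{-1}\widetilde\delta_n)$), it suffices to prove: a deformation $\widetilde D$ of $D$ sitting in $0\to\widetilde D_1^{n-1}{}'\to\widetilde D\to\cR_{E[\epsilon]/\epsilon^2}(\delta_n)\to0$ exists if and only if $[\widetilde D_1^{n-1}{}']\in\cL_{\FM}(D:D_1^{n-1})$.

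\emph{Step 2: reduction to a lifting problem.} If $D$ is split as an extension of $\cR_E(\delta_n)$ by $D_1^{n-1}$ then $[D]=0$, $\cL_{\FM}(D:D_1^{n-1})=\Ext^1_{(\varphi,\Gamma_L)}(D_1^{n-1},D_1^{n-1})$, and $\widetilde D:=\widetilde D_1^{n-1}{}'\oplus\cR_{E[\epsilon]/\epsilon^2}(\delta_n)$ works; so assume $[D]\ne0$. Since $D_1^{n-1}$ is the unique $(\varphi,\Gamma_L)$-submodule of $D$ of parameter $(\delta_1,\dots,\delta_{n-1})$, since $\operatorname{Aut}_{(\varphi,\Gamma_L)}(D_1^{n-1})=E^\times$ (because $\End_{(\varphi,\Gamma_L)}(D_1^{n-1})=E$, the successive extensions in $D_1^{n-1}$ being nonsplit by hypothesis) and $\operatorname{Aut}_{(\varphi,\Gamma_L)}(\cR_E(\delta_n))=E^\times$, a deformation $\widetilde D$ as above exists if and only if $[D]$ lies in the image of the ``reduction modulo $\epsilon$'' map
\begin{equation*}
\Ext^1_{(\varphi,\Gamma_L),\cR_{E[\epsilon]/\epsilon^2}}\!\big(\cR_{E[\epsilon]/\epsilon^2}(\delta_n),\widetilde D_1^{n-1}{}'\big)\lra\Ext^1_{(\varphi,\Gamma_L)}\!\big(\cR_E(\delta_n),D_1^{n-1}\big);
\end{equation*}
indeed such a $\widetilde D$ is, up to those automorphism groups, the extension attached to a class $\widetilde c$ in the source reducing to $[D]$, and conversely any such $\widetilde c$ yields a $\widetilde D$ which is free (hence flat) over $E[\epsilon]/\epsilon^2$ with $\widetilde D\otimes_{E[\epsilon]/\epsilon^2}E\cong D$. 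Finally, as $\cR_{E[\epsilon]/\epsilon^2}(\delta_n)=\cR_{E[\epsilon]/\epsilon^2}\otimes_{\cR_E}\cR_E(\delta_n)$ with $\cR_{E[\epsilon]/\epsilon^2}$ free over $\cR_E$, adjunction (equivalently, the invariance of the Herr complex of $\cR_E(\delta_n^{-1})\otimes_{\cR_E}\widetilde D_1^{n-1}{}'$ under change of the coefficient ring) identifies the source with $\Ext^1_{(\varphi,\Gamma_L)}(\cR_E(\delta_n),\widetilde D_1^{n-1}{}'|_{\cR_E})$, compatibly with reduction modulo $\epsilon$.

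\emph{Step 3: the long exact sequence.} I would then feed the short exact sequence of $(\varphi,\Gamma_L)$-modules over $\cR_E$
\begin{equation*}
0\lra D_1^{n-1}\xrightarrow{\ \cdot\,\epsilon\ }\widetilde D_1^{n-1}{}'|_{\cR_E}\lra D_1^{n-1}\lra0,\qquad\text{of class }[\widetilde D_1^{n-1}{}'],
\end{equation*}
into the long exact sequence of $\Ext^\bullet_{(\varphi,\Gamma_L)}(\cR_E(\delta_n),-)$: there the image of the reduction map $\Ext^1(\cR_E(\delta_n),\widetilde D_1^{n-1}{}')\to\Ext^1(\cR_E(\delta_n),D_1^{n-1})$ is the kernel of the connecting map to $\Ext^2(\cR_E(\delta_n),D_1^{n-1})$, and this connecting map is, up to sign, Yoneda composition with $[\widetilde D_1^{n-1}{}']$, hence—after composing with the isomorphism $\Ext^2(\cR_E(\delta_n),D_1^{n-1})\xrightarrow{\sim}E$ of Lemma \ref{equ: hL-dim}—is exactly the partial cup-product $c\mapsto c\cup[\widetilde D_1^{n-1}{}']$ of the middle row of (\ref{equ: l3-diag2}). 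Therefore $[D]$ lifts if and only if $[D]\cup[\widetilde D_1^{n-1}{}']=0$, i.e. if and only if $[\widetilde D_1^{n-1}{}']\in(E[D])^\perp=\cL_{\FM}(D:D_1^{n-1})$, which is the first assertion. The second assertion follows by the dual argument, with the contravariant functor $\Hom_{(\varphi,\Gamma_L)}(-,\cR_E(\delta_1))$ replacing $\Hom_{(\varphi,\Gamma_L)}(\cR_E(\delta_n),-)$, with $\widetilde D_2^{n}{}':=\widetilde D_2^{n}\otimes_{\cR_{E[\epsilon]/\epsilon^2}}\cR_{E[\epsilon]/\epsilon^2}(\widetilde\delta_1^{-1}\delta_1)$ replacing $\widetilde D_1^{n-1}{}'$, and (\ref{equ: hL-CD3}) replacing (\ref{equ: l3-diag2}); the base-change step is then applied to $\underline{\Hom}_{\cR_{E[\epsilon]/\epsilon^2}}(\widetilde D_2^{n}{}',\cR_{E[\epsilon]/\epsilon^2}(\delta_1))$, or one first invokes local Tate duality to move the deformed module into the covariant slot. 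The argument is conceptually short; the main obstacle is the bookkeeping—checking that the connecting homomorphism is identified, with correct signs and through the various isomorphisms $\Ext^2\cong E$, with the cup-product/Yoneda pairing defining $\cL_{\FM}$, and that in the $D_2^{n}$ case the base-change comparison between $\Ext$ over $\cR_{E[\epsilon]/\epsilon^2}$ and over $\cR_E$ is set up correctly with the deformed module in the contravariant slot—whereas flatness of the extensions, the identification $\widetilde D\otimes_{E[\epsilon]/\epsilon^2}E\cong D$, and the twisting trick of Step 1 are routine.
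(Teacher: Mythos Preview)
Your proof is correct and follows essentially the same route as the paper: twist to make the rank-one quotient constant, then use the long exact sequence attached to $0\to D_1^{n-1}\to\widetilde D_1^{n-1}{}'\to D_1^{n-1}\to 0$ to identify the obstruction to lifting $[D]$ with the cup-product $[D]\cup[\widetilde D_1^{n-1}{}']$. The paper is terser (it additionally twists by $\cR_E(\delta_n^{-1})$ to assume $\delta_n=1$ and writes everything as $H^i_{(\varphi,\Gamma_L)}$ rather than $\Ext^i$), and glosses over precisely the bookkeeping points you flag---the ``up to nonzero scalars'' identification of the connecting map with the cup-product, and the equivalence between the existence of $\widetilde D$ and the lifting of $[D]$---but the argument is the same.
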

\begin{proof}
We prove the case $D_1^{n-1}$, the proof for $D_2^{n}$ being symmetric. Replacing $\widetilde{D}$ and $\widetilde{D}_1^{n-1}$ by $\widetilde{D}\otimes_{\cR_{E[\epsilon]/\epsilon^2}} \cR_{E[\epsilon]/\epsilon^2}(\widetilde{\delta}_n^{-1}\delta_n)$ and $\widetilde{D}_1^{n-1} \otimes_{\cR_{E[\epsilon]/\epsilon^2}} \cR_{E[\epsilon]/\epsilon^2}(\widetilde{\delta}_n^{-1}\delta_n)$ respectively, we can assume $\widetilde{\delta}_n=\delta_n$. By twisting by $\cR_E(\delta_n^{-1})$, without loss of generality we can assume $\delta_n=1$. Now consider the exact sequence $0 \lra D_1^{n-1}\lra \widetilde{D}_1^{n-1} \lra D_1^{n-1}\lra 0$, taking cohomology, we get a long exact sequence:
 \begin{equation*}
  0\lra H^1_{(\varphi,\Gamma_L)}(D_1^{n-1}) \lra H^1_{(\varphi,\Gamma_L)}(\widetilde{D}_1^{n-1}) \xlongrightarrow{\pr} H^1_{(\varphi,\Gamma_L)}(D_1^{n-1}) \xlongrightarrow{c} H^2_{(\varphi,\Gamma_L)}(D_1^{n-1})
 \end{equation*}
with the map $c$ equal (up to nonzero scalars) to $\langle [\widetilde{D}_1^{n-1}],\cdot\rangle$ where $\langle \ ,\ \rangle$ is the cup product in (\ref{equ: hL-cup}) with $\delta_n=1$ (and $[\widetilde{D}_1^{n-1}]$ is seen in $\Ext^1_{(\varphi,\Gamma_L)}(D_1^{n-1}, D_1^{n-1})$). So we have $\langle [\widetilde{D}_1^{n-1}],[D]\rangle=0$ if and only if $[D]\in H^1_{(\varphi,\Gamma_L)}(D_1^{n-1})$ lies in the image of $\pr$ if and only if a deformation $\widetilde D$ of $D$ as in the statement exists. But by definition we also have $\langle [\widetilde{D}_1^{n-1}], [D]\rangle=0$ if and only if $[\widetilde{D}_1^{n-1}]\in \cL_{\FM}(D: D_1^{n-1})$. This concludes the proof.
\end{proof}

\begin{remark}
{\rm One can view Theorem \ref{thm: hL-hL} as a parabolic version of \cite[Thm. 3.14]{GS93} or \cite[Thm. 0.5]{Colm10}.}
\end{remark}

\noindent
When $n=2$, the two cases in Theorem \ref{thm: hL-hL} obviously coincide, which in particular implies the following corollary.

\begin{corollary}\label{n=2FM}
Assume $n=2$, then we have $\cL_{\FM}(D:\cR_{E}(\delta_1))=\cL_{\FM}(D:\cR_{E}(\delta_2))$ when these two vector spaces are viewed as subspaces of $ \Hom(L^{\times}, E)$ via (\ref{equ: hL-cad}).
\end{corollary}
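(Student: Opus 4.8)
The statement is the $n=2$ specialization of Theorem \ref{thm: hL-hL}, so the plan is to unwind what the two cases of that theorem say when $D$ has rank $2$ and to observe that they literally coincide. Write $D$ as the nonsplit extension $0\to\cR_E(\delta_1)\to D\to\cR_E(\delta_2)\to 0$, so that $D_1^{n-1}=D_1^1=\cR_E(\delta_1)$ and $D_2^n=D_2^2=\cR_E(\delta_2)$, and both are honest rank-one $(\varphi,\Gamma_L)$-modules. The subspaces in question are
$$\cL_{\FM}(D:\cR_E(\delta_1))=(E[D])^\perp\subseteq\Ext^1_{(\varphi,\Gamma_L)}(\cR_E(\delta_1),\cR_E(\delta_1))$$
computed against the perfect pairing with $\Ext^1_{(\varphi,\Gamma_L)}(\cR_E(\delta_2),\cR_E(\delta_1))\ni[D]$ (the bottom pairing of \eqref{equ: l3-diag2} with $n=2$), and symmetrically
$$\cL_{\FM}(D:\cR_E(\delta_2))=(E[D])^\perp\subseteq\Ext^1_{(\varphi,\Gamma_L)}(\cR_E(\delta_2),\cR_E(\delta_2))$$
computed against the perfect pairing of \eqref{equ: hL-CD3} with $n=2$, again pairing with $[D]\in\Ext^1_{(\varphi,\Gamma_L)}(\cR_E(\delta_2),\cR_E(\delta_1))$. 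Via \eqref{equ: hL-cad} both target spaces are identified with $\Hom(L^\times,E)$, and the claim is that the two orthogonal complements agree inside $\Hom(L^\times,E)$.

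First I would make the identification completely explicit using Theorem \ref{thm: hL-hL}. Take any $\eta\in\Hom(L^\times,E)$ and let $\widetilde D_1^{1}:=\cR_{E[\epsilon]/\epsilon^2}(\delta_1(1+\epsilon\eta))$, resp. $\widetilde D_2^{2}:=\cR_{E[\epsilon]/\epsilon^2}(\delta_2(1+\epsilon\eta))$, be the corresponding self-extensions; under \eqref{equ: hL-cad} these are exactly the classes $[\widetilde D_1^1]$, resp. $[\widetilde D_2^2]$, attached to $\eta$. By Theorem \ref{thm: hL-hL} (case $D_1^{n-1}$, with $\widetilde\delta_n=\delta_n$ chosen, so the twisting factor $\widetilde\delta_n^{-1}\delta_n$ is trivial), the condition $\eta\in\cL_{\FM}(D:\cR_E(\delta_1))$ is equivalent to the existence of a deformation $\widetilde D$ of $D$ over $\cR_{E[\epsilon]/\epsilon^2}$ and a deformation $\widetilde\delta_2$ of $\delta_2$ fitting in
$$0\lra \cR_{E[\epsilon]/\epsilon^2}(\delta_1(1+\epsilon\eta))\lra\widetilde D\lra\cR_{E[\epsilon]/\epsilon^2}(\widetilde\delta_2)\lra 0.$$
Symmetrically, by the case $D_2^n$ of the same theorem, $\eta\in\cL_{\FM}(D:\cR_E(\delta_2))$ is equivalent to the existence of a deformation $\widetilde D'$ of $D$ and a deformation $\widetilde\delta_1$ of $\delta_1$ with
$$0\lra\cR_{E[\epsilon]/\epsilon^2}(\widetilde\delta_1)\lra\widetilde D'\lra\cR_{E[\epsilon]/\epsilon^2}(\delta_2(1+\epsilon\eta))\lra 0.$$
The point is that for $n=2$ both of these say \emph{the same thing}: a rank-two deformation $\widetilde D$ of $D$ over $\cR_{E[\epsilon]/\epsilon^2}$ that is trianguline with graded pieces deforming $\cR_E(\delta_1)$ and $\cR_E(\delta_2)$, in which the sub-line has parameter $\delta_1(1+\epsilon\eta)$ in the first description and correspondingly the quotient line has parameter $\delta_2(1+\epsilon\eta)$ in the second. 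Indeed, for a rank-two trianguline deformation the quotient parameter is forced once the subparameter and the determinant character $\det\widetilde D$ are known, and conversely: the possible pairs $(\widetilde\delta_1,\widetilde\delta_2)$ of graded parameters of a rank-two deformation of $D$ are exactly those with $\widetilde\delta_1\widetilde\delta_2\equiv\delta_1\delta_2\pmod{\epsilon\cdot(\text{anything deforming the determinant})}$, so fixing the sub-parameter $\delta_1(1+\epsilon\eta)$ and leaving $\widetilde\delta_2$ free is the same as fixing the quotient-parameter $\delta_2(1+\epsilon\eta')$ for the appropriate $\eta'$ and leaving $\widetilde\delta_1$ free — and when we are free to move \emph{both} $\widetilde\delta_1$ and $\widetilde\delta_2$, as we are in Theorem \ref{thm: hL-hL}, the two conditions describe the identical set of $\eta$. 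Hence $\eta\in\cL_{\FM}(D:\cR_E(\delta_1))\iff\eta\in\cL_{\FM}(D:\cR_E(\delta_2))$.

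The main (and only) subtlety is making precise the sentence ``the two cases of Theorem \ref{thm: hL-hL} coincide when $n=2$'': one must check that the twisting manipulations in the proof of that theorem (replacing $\widetilde D$ by a twist to reduce to $\widetilde\delta_n=\delta_n$, then twisting by $\cR_E(\delta_n^{-1})$ to reduce to $\delta_n=1$) are compatible with the identification \eqref{equ: hL-cad}, which by construction is ``uniform in $\delta$'' — precisely the normalization recorded just before \eqref{equ: hL-cad}. Granting that, no computation is needed: the existence statement ``$\exists$ rank-two trianguline deformation $\widetilde D$ of $D$ with sub-line of Sen-parameter twisted by $\eta$'' is manifestly left–right symmetric in the two triangulation steps, because transposing (or dualizing-and-twisting) a rank-two trianguline module interchanges sub and quotient while preserving the class of $\eta$ up to the identification \eqref{equ: hL-cad}. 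Thus the two orthogonal complements, being cut out by the same existence condition on deformations, are equal as subspaces of $\Hom(L^\times,E)$, which is the assertion of Corollary \ref{n=2FM}. I do not expect any genuine obstacle here; the corollary is essentially a bookkeeping consequence of the symmetry already built into Theorem \ref{thm: hL-hL}.
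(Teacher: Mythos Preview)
Your approach is exactly the paper's: when $n=2$ the two cases of Theorem~\ref{thm: hL-hL} are the same statement, and the corollary drops out. But your execution contains a genuine error in reading that theorem.

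You write that $\eta\in\cL_{\FM}(D:\cR_E(\delta_1))$ is equivalent to the existence of $\widetilde D$ \emph{and} a deformation $\widetilde\delta_2$ fitting in the short exact sequence, and later insist that ``we are free to move both $\widetilde\delta_1$ and $\widetilde\delta_2$''. With $\widetilde\delta_2$ existentially quantified this equivalence is vacuous: writing $\widetilde\delta_i=\delta_i(1+\psi_i\epsilon)$, the class of $\cR_{E[\epsilon]/\epsilon^2}(\delta_1(1+\epsilon\eta))\otimes\cR_{E[\epsilon]/\epsilon^2}(\widetilde\delta_2^{-1}\delta_2)$ in $\Hom(L^\times,E)$ is $\eta-\psi_2$, which sweeps out all of $\Hom(L^\times,E)$ as $\psi_2$ varies, so the right-hand side of your ``iff'' is always true. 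Despite the slightly ambiguous wording of Theorem~\ref{thm: hL-hL}, its proof makes clear that $\widetilde\delta_n$ is an \emph{input} alongside $\widetilde D_1^{n-1}$: one twists to reduce to $\widetilde\delta_n=\delta_n$ and then checks $[\widetilde D_1^{n-1}]\in\cL_{\FM}$.

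The clean argument is therefore: fix \emph{both} $\widetilde\delta_1=\delta_1(1+\psi_1\epsilon)$ and $\widetilde\delta_2=\delta_2(1+\psi_2\epsilon)$. Case $D_1^{n-1}$ of Theorem~\ref{thm: hL-hL} (with $n=2$) says that a deformation $\widetilde D$ of $D$ sitting in $0\to\cR_{E[\epsilon]/\epsilon^2}(\widetilde\delta_1)\to\widetilde D\to\cR_{E[\epsilon]/\epsilon^2}(\widetilde\delta_2)\to 0$ exists iff $\psi_1-\psi_2\in\cL_{\FM}(D:\cR_E(\delta_1))$; case $D_2^n$ says the \emph{same} short exact sequence exists iff $\psi_2-\psi_1\in\cL_{\FM}(D:\cR_E(\delta_2))$. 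Hence $\psi\in\cL_{\FM}(D:\cR_E(\delta_1))\Leftrightarrow -\psi\in\cL_{\FM}(D:\cR_E(\delta_2))$, and since both are $E$-subspaces they coincide. Your hand-waving about determinants and ``left--right symmetry under dualizing'' is unnecessary and obscures this one-line comparison.
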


\begin{remark}\label{compatgl2}
{\rm For any $\sigma\in \Sigma_L$, denote by $\Hom_{\sigma}(L^{\times}, E)$ the subspace of $\Hom(L^{\times}, E)$ consisting of locally $\sigma$-analytic characters on $L^{\times}$. We have $\Hom_{\infty}(L^{\times}, E) \subset \Hom_{\sigma}(L^{\times}, E)$ and $\dim_E \Hom_{\sigma}(L^{\times}, E)=2$. Let $\log_p: L^{\times} \rightarrow L$ be the unique character which restricts to the $p$-adic logarithm on $\co_L^{\times}$ and such that $\log_p(p)=0$. We see that $(\val_p,\sigma \circ \log_p)$ form a basis of $\Hom_{\sigma}(L^{\times}, E)$. Assume $n=2$, $D$ special noncritical and noncrystalline (equivalently semi-stable noncrystalline with distinct Hodge-Tate weights) and denote by $\cL_{\FM}(D)\subset \Hom(L^{\times}, E)$ the subspace of Corollary \ref{n=2FM}. Then we have $\cL_{\FM}(D)\cap \Hom_{\infty}(L^{\times}, E)=0$ and $\cL_{\FM}(D)_{\sigma}:=\cL_{\FM}(D) \cap \Hom_{\sigma}(L^{\times}, E)$ $1$-dimensional (inside $\Hom(L^{\times}, E)$). Thus for any $\sigma\in \Sigma_L$ there exists $\cL_{\sigma} \in E$ such that $\cL_{\FM}(D)_{\sigma}$ is generated by the vector $\sigma\circ \log_p - \cL_{\sigma}\val_p$. By comparing Theorem \ref{thm: hL-hL} with \cite[Thm.~1.1]{Zhang} (which generalizes a formula due to Colmez), it follows that this $\cL_{\sigma}$ is equal to Fontaine-Mazur's $\cL$-invariant obtained from the Hodge line in the $\sigma$-direct summand of the $(\varphi,N)$-filtered module associated to $D$ (with the normalization of \cite[\S~3.1]{Colm10}).}
\end{remark}

\noindent
We end this section by a quick speculation. We can call $\cL_{\FM}(D: D_1^{n-1})$ (resp. $\cL_{\FM}(D: D_2^{n})$) the (Fontaine-Mazur) $\cL$-invariants of $D$ relative to $D_1^{n-1}$ (resp. to $D_2^{n}$). A natural question in the $p$-adic Langlands program is to understand their counterpart on the automorphic side, e.g. in the setting of locally $\Q_p$-analytic representations of $\GL_n(L)$. The above results suggest that such invariants might be found in deformations of certain representations of (lower rank) Levi subgroups of $\GL_n(L)$. In the following section, we indeed succeed in finding such $\cL$-invariants in the locally analytic representations of $\GL_3(\Q_p)$ constructed in \cite{Br16} by means of the $p$-adic Langlands correspondence for $\GL_2(\Q_p)$.

\section{$\cL$-invariants for $\GL_3(\Q_p)$}\label{gl3gl3}

\noindent
In this section we use the subspaces $\cL_{\FM}(D: D_1^{n-1})$ and $\ell_{\FM}(D: D_1^{n-1})$ defined in \S~\ref{sec: hL-FM} to associate to a given $3$-dimensional semi-stable noncrystalline representation of $\Gal_{\Q_p}$ with distinct Hodge-Tate weights one of the finite length locally analytic representations of $\GL_3(\Q_p)$ constructed in \cite{Br16}.

\subsection{Preliminaries on locally analytic representations}

\noindent
We recall some useful notation and statements on locally analytic representations.\\

\noindent
We fix the $\Q_p$-points $G$ of a reductive algebraic group over $\Q_p$ (we will only use its $\Q_p$-points).

\begin{lemma}\label{lem: hL-cc}
Let $V_1$, $V_2$, $V$ be locally $\Q_p$-analytic representations of $G$ over $E$ such that $V$ is a strict extension of $V_2$ by $V_1$ in the category of locally analytic representations of $G$. Suppose $\Hom_G(V_2,V)=0$, where $\Hom_G(V_2,V)$ is the $E$-vector space of continuous $G$-equivariant morphisms, and that $V_1$, $V_2$ have the same central character $\chi$. Then $V$ has central character $\chi$.
\end{lemma}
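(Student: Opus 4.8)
The statement asserts that if $V$ is a (strict) extension $0 \to V_1 \to V \to V_2 \to 0$ of locally analytic representations with $V_1, V_2$ sharing central character $\chi$ and $\Hom_G(V_2, V) = 0$, then the center $Z$ of $G$ acts on all of $V$ by $\chi$. The plan is to exhibit the "twisted $Z$-action" as a morphism $V_2 \to V$ and force it to vanish. Concretely, let $z \in Z$. Since $V_1$ and $V_2$ both have central character $\chi$, the operator $\phi_z := z - \chi(z)\cdot\id$ on $V$ kills $V_1$ (as $z$ acts on $V_1$ by $\chi(z)$) and lands inside $V_1$ (because on the quotient $V/V_1 \cong V_2$ the operator $\phi_z$ acts as $\chi(z) - \chi(z) = 0$, so $\phi_z(V) \subseteq V_1$). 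Hence $\phi_z$ factors through a map $\overline{\phi_z}: V/V_1 \cong V_2 \to V_1 \hookrightarrow V$.

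First I would check that $\overline{\phi_z}$ is $G$-equivariant and continuous, so that it is a legitimate element of $\Hom_G(V_2, V)$. Equivariance is where one uses that $z$ is \emph{central}: for $g \in G$ one has $g \phi_z(v) = g z v - \chi(z) g v = z g v - \chi(z) g v = \phi_z(g v)$, using $gz = zg$ in $G$; this descends to equivariance of $\overline{\phi_z}$ because the identifications $V/V_1 \cong V_2$ and $V_1 \hookrightarrow V$ are $G$-equivariant by hypothesis (the extension is strict, so $V_1$ is a closed subrepresentation and $V_2$ carries the quotient topology and action). Continuity of $\overline{\phi_z}$ follows from continuity of $z \colon V \to V$ (part of the data of a locally analytic representation), continuity of the quotient map $V \twoheadrightarrow V/V_1$, and the open mapping theorem / strictness to identify $V/V_1$ with $V_2$ as topological vector spaces (this is exactly what "strict extension" buys us, and is why the hypothesis is phrased that way).

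Once $\overline{\phi_z} \in \Hom_G(V_2, V) = 0$, we conclude $\phi_z = 0$ on $V$, i.e. $z$ acts on $V$ by $\chi(z)$. Doing this for every $z \in Z$ gives that $Z$ acts on $V$ by the character $\chi$, which is the assertion. (One should also note $\chi$ is automatically locally analytic since it is the central character of the locally analytic representation $V_1$, so "central character $\chi$" for $V$ is meaningful in the same sense.)

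\textbf{Main obstacle.} The only genuinely delicate point is the topological bookkeeping: verifying that $\overline{\phi_z}$ is continuous and $G$-equivariant as a map \emph{out of $V_2$} rather than merely out of the abstract quotient $V/V_1$. This is precisely where the word "strict" in the hypothesis is essential — without strictness the quotient topology on $V/V_1$ need not agree with the given topology on $V_2$, and the factored map might fail to be continuous, so $\Hom_G(V_2,V)$ would not be the right group to invoke. Everything else (the algebraic identity $\phi_z(V) \subseteq V_1$, the vanishing on $V_1$, centrality giving equivariance) is routine.
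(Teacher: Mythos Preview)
Your proof is correct and follows essentially the same approach as the paper: for each $z$ in the center, consider $v \mapsto zv - \chi(z)v$, observe it factors through a continuous $G$-equivariant map $V_2 \to V$, and conclude it vanishes. Your added discussion of why strictness is needed for the topological factorization is more detailed than the paper's one-line treatment but entirely in line with it.
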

\begin{proof}
For $z$ in the center of $G$ consider the $G$-equivariant map $V \ra V$, $v\mapsto zv-\chi(z)v$. It is easy to see this map induces a continuous $G$-equivariant morphism $V_2 \ra V$, which has to be zero. The lemma follows.
\end{proof}

\noindent
Let $V_1\hookrightarrow V_2 \hookrightarrow V$ be closed embeddings of locally $\Q_p$-analytic representations of $G$ over $E$ with central character $\chi$. Let $U$ be a strict extension of $V_1$ by $V$ and $W:=U/V_2$ (where $V_2\hookrightarrow V\hookrightarrow U$). We can then view $U$ as a representation of $G$ over $E[\epsilon]/\epsilon^2$ on which $\epsilon$ acts via $\epsilon: U\twoheadlongrightarrow V_1 \hooklongrightarrow U$. Thus the closed subrepresentation $V$ of $U$ is exactly the subspace annihilated by $\epsilon$. We also see $W$ as a representation over $E[\epsilon]/\epsilon^2$ by making $\epsilon$ act trivially, so that $U\twoheadrightarrow W$ is a surjection of $E[\epsilon]/\epsilon^2$-modules. Let $\psi :\Q_p^{\times}\ra E$ be a continuous additive character and define the character $1+\psi \epsilon : \Q_p^\times\rightarrow 1+E\epsilon\subset (E[\epsilon]/\epsilon^2)^\times$. Set $U':=U\otimes_{E[\epsilon]/\epsilon^2} (1+\psi \epsilon)\circ \dett$ and $W':=U'/V_2$ (where we still denote by $V_2$ the image of $V_2\otimes _{E[\epsilon]/\epsilon^2} (1+\psi \epsilon)\circ \dett$).

\begin{lemma}\label{lem: hL-cent3}
We have $W\cong W'$ as $G$-representations.
\end{lemma}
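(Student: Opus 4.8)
The plan is to compare $W$ and $W'$ by tracking the effect of twisting by $(1+\psi\epsilon)\circ\dett$ on the $E[\epsilon]/\epsilon^2$-module structure. Write $\tau := (1+\psi\epsilon)\circ\dett$, an $E[\epsilon]/\epsilon^2$-valued character of $G$ whose reduction mod $\epsilon$ is trivial. First I would record the basic structural facts: since $V_1, V_2, V$ all have central character $\chi$, Lemma \ref{lem: hL-cc} applied to the extension $U$ of $V_1$ by $V$ (using that $\Hom_G(V_1, U) = 0$, which one checks from $V_1 \hookrightarrow V_2 \hookrightarrow V \subseteq U$ and the absence of maps in the wrong direction between the relevant subquotients) shows $U$ has central character $\chi$ as well; the same then holds for $U' = U \otimes_{E[\epsilon]/\epsilon^2} \tau$ up to the twist of $\chi$ by $(1+\psi\epsilon)$ on the center, but this twist will be irrelevant for the isomorphism $W \cong W'$. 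The key point is that $\epsilon$ acts on $U$ through the composite $U \twoheadrightarrow V_1 \hookrightarrow V \hookrightarrow U$, so $\epsilon \cdot U = V_1$ and $U[\epsilon] = V$; twisting by $\tau$ does not change the underlying $E$-vector space or the $G$-action up to the scalar correction, but it does change how $\epsilon$ acts, precisely because $\tau$ itself involves $\epsilon$.

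The heart of the argument is the following explicit description. On $U'$, the action of $g \in G$ is $g \cdot_{U'} u = \tau(g)\,(g \cdot_U u) = (g\cdot_U u) + \psi(\dett g)\,\epsilon\,(g\cdot_U u) = (g\cdot_U u) + \psi(\dett g)\,(\bar g \cdot_{V_1} \bar u)$, where $\bar u$ denotes the image of $u$ in $V_1$ under $\epsilon: U \twoheadrightarrow V_1$ and $\bar g$ the reduction mod $\epsilon$. Hence $U' \to W' = U'/V_2$ remains a surjection of $E[\epsilon]/\epsilon^2$-modules, $\epsilon$ acts on $W'$ trivially (since $\epsilon U' \subseteq V_1 \subseteq V_2$), and $W'$ as an $E$-vector space is $U/V_2 = W$. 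It remains to check the $G$-action agrees. For $\bar u \in W = U/V_2$, lift to $u \in U$; then the $U'$-action pushed to $W'$ sends $g$ to the class of $(g\cdot_U u) + \psi(\dett g)(\bar g\cdot_{V_1}\text{(image of }u\text{ in }V_1))$. But the image of $u$ in $V_1$ lies in $V_2$, so the correction term dies in $W' = U'/V_2$, and the $G$-action on $W'$ reduces to that on $W$. This gives the identity map $W \to W'$ as a $G$-equivariant $E[\epsilon]/\epsilon^2$-linear (indeed $E$-linear, since $\epsilon$ is trivial on both) isomorphism.

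The main obstacle, and the only place requiring care, is the bookkeeping of \emph{which} submodule one quotients by: in $U'$ the subspace denoted $V_2$ is the image of $V_2 \otimes_{E[\epsilon]/\epsilon^2}\tau$, and one must verify this is literally the same $E$-subspace of the common underlying vector space as the original $V_2 \subseteq U$ (it is, because $\tau \equiv 1 \bmod \epsilon$ and $V_2$ is $\epsilon$-stable — indeed $\epsilon V_2 \subseteq \epsilon U = V_1 \subseteq V_2$ — so the twist acts on $V_2$ by an automorphism preserving $V_2$ setwise). Once that is pinned down, the cancellation of the $\psi(\dett g)$-term in the quotient is immediate because $\epsilon U \subseteq V_1 \subseteq V_2$. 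So the proof is essentially: twisting by a character trivial mod $\epsilon$ changes the $G$-action on $U$ only by a term valued in $\epsilon U \subseteq V_2$, hence induces nothing on the quotient $U/V_2$; I would write it in exactly that order — structural preliminaries via Lemma \ref{lem: hL-cc}, explicit formula for the twisted action, verification that $V_2$ is unchanged, then cancellation in the quotient.
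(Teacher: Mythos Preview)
Your core argument (paragraphs 2 and 3) is correct and is exactly the paper's proof: identify $U'$ with $U$ as $E$-vector spaces via $v\mapsto v\otimes e$, compute that the twisted $G$-action differs from the original by the term $\psi(\dett g)\,\epsilon(gv)\in V_1\subseteq V_2$, and conclude that the induced map $U/V_2\to U'/V_2$ is $G$-equivariant. The paper phrases this as ``the same argument using the fact that $W$ is killed by $\epsilon$'', which is precisely your observation that $\epsilon U = V_1 \subseteq V_2$.

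One caveat: your first paragraph's digression on central characters is both unnecessary (as you yourself note) and contains an error. You claim $\Hom_G(V_1,U)=0$ in order to apply Lemma~\ref{lem: hL-cc}, but this is false: the inclusion $V_1\hookrightarrow V_2\hookrightarrow V\hookrightarrow U$ is a nonzero element. Since you do not actually use the conclusion anywhere, simply delete that discussion.
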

\begin{proof}
Let $e$ be a basis of the underlying $E[\epsilon]/\epsilon^2$-module of the representation $(1+\psi\epsilon) \circ \dett$, we have a natural $E$-linear bijection $f: U \buildrel \sim \over \longrightarrow U', \ v\mapsto v\otimes e$. For $v\in V$, we have:
\begin{equation*}
g(f(v))=g(v\otimes e)=g(v)\otimes ((1+\psi \epsilon)\circ \dett(g))e=g(v)\otimes e=f(g(v))
\end{equation*}
where the last equality follows from the fact that $g(v)\in V\hookrightarrow U$ is annihilated by $\epsilon$. Thus $f|_{V}$ induces a $G$-equivariant automorphism of $V$ if we still denote by $V$ the image of $V\otimes _{E[\epsilon]/\epsilon^2} (1+\psi \epsilon)\circ \dett$ in $U'$. We now consider the induced map (still denoted by $f$):
\begin{equation*}
 f: U/V_2 \buildrel \sim \over\longrightarrow U'/V_2.
\end{equation*}
The same argument using the fact that $W$ is killed by $\epsilon$ shows that $f$ is $G$-equivariant.
\end{proof}

\noindent
The following lemma will often be tacitly used in the sequel.

\begin{lemma}\label{tacit}
Let $Z:=(\Q_p^{\times})^r$ for some integer $r$ and $\chi$, $\chi'$ be locally analytic characters of $Z$ over $E$. Assume $\chi\neq \chi'$, then we have $\Ext_Z^i(\chi', \chi)=0$ for $i\geq 0$.
\end{lemma}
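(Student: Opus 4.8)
The plan is to compute the relevant Ext-groups directly using the fact that $Z = (\Q_p^\times)^r$ is abelian and topologically finitely generated, and reduce to the known structure of the locally analytic distribution algebra $D(Z,E)$. First I would recall that for a locally analytic representation of an abelian group, $\Ext^i_Z(\chi',\chi) = \Ext^i_Z(\mathbf{1}, \chi\chi'^{-1})$ by twisting (both $\chi,\chi'$ are characters, hence invertible one-dimensional objects), so it suffices to show $\Ext^i_Z(\mathbf{1},\eta) = H^i(Z,\eta) = 0$ for $i \geq 0$ whenever $\eta = \chi\chi'^{-1}$ is a nontrivial locally analytic character. The case $i=0$ is immediate: $H^0(Z,\eta) = \eta^Z = 0$ since $\eta \neq \mathbf{1}$.

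For the higher degrees, I would use the decomposition $Z = (\Q_p^\times)^r \cong (p^{\Z} \times \Z_p^\times)^r$ and the fact that locally analytic cohomology of $Z$ can be computed via a Koszul-type complex. Concretely, writing $Z \cong \Z^r \times (\text{compact})$, one has a Hochschild--Serre / Künneth-type spectral sequence, and for the compact factor $(\Z_p^\times)^r$ the locally analytic cohomology with coefficients in a character is governed by the Lie algebra action together with the action of a complement of the torsion. The key point is that $H^*_{\mathrm{la}}(\Q_p^\times,\eta_0)$ for a character $\eta_0$ of $\Q_p^\times$ is concentrated in degrees $0,1$ and is nonzero in degree $1$ only when $\eta_0 = \mathbf{1}$ or $\eta_0 = |\cdot|$ (this is the analogue on the $\GL_1$ side of the computation of $H^i_{(\varphi,\Gamma_L)}$ of rank one $(\varphi,\Gamma_L)$-modules, and indeed follows from it via Colmez's dictionary, or directly from the structure of $D(\Q_p^\times,E)$ as in work of Schneider--Teitelbaum and Kohlhaase). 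I would cite the relevant computation (e.g. the analysis of $\Ext$-groups between characters of $\Q_p^\times$ in \cite{Colm2} or \cite{Ding4}) rather than redo it. Assembling the factors by Künneth, $\Ext^i_Z(\mathbf{1},\eta)$ can only be nonzero if $\eta$ is trivial on enough factors; but here I would arrange the hypothesis $\chi \neq \chi'$ to force vanishing.

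Actually, the cleanest route avoids worrying about $|\cdot|$-type characters: since the statement only claims vanishing when $\chi \neq \chi'$, and since the conclusion as stated is $\Ext^i_Z(\chi',\chi)=0$ for all $i \geq 0$, I suspect the intended reading is that one is in a situation where $\eta = \chi\chi'^{-1}$ avoids the "bad" characters (trivial or $|\cdot|$ on every factor) — or, more likely, the intended proof is simply: $Z$ is abelian so $\Ext^i_Z(\chi',\chi) \cong H^i_{\mathrm{la}}(Z, E(\chi\chi'^{-1}))$, and one invokes that $D(Z,E)$ has a description as a ring of functions on a quasi-Stein space so that these cohomology groups are computed by an explicit finite Koszul complex whose differentials, evaluated at the character $\eta$, are all invertible precisely when $\eta \neq \mathbf{1}$. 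I would therefore structure the proof as: (i) reduce to $H^i_{\mathrm{la}}(Z,E(\eta))$ with $\eta \neq \mathbf{1}$ via twisting; (ii) write $Z = \Z_p^{\times r} \times \Z^r$ and treat the $\Z^r$ (free) part by an explicit finite complex, noting a free abelian group of rank $r$ has cohomological dimension $r$; (iii) for the $\Z_p^\times$ factors use that $\Z_p^\times = \mu \times (1+p\Z_p)$ with $1+p\Z_p \cong \Z_p$ and apply the locally analytic cohomology of $\Z_p$; (iv) combine by Künneth and observe nonvanishing in any degree would force $\eta|_{\text{that factor}} = \mathbf{1}$ on every factor, contradicting $\eta \neq \mathbf{1}$ unless one hits a $|\cdot|$-type obstruction, which I would rule out or acknowledge is excluded by the hypotheses under which the lemma is actually applied.

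The main obstacle I anticipate is being careful about the degree-one classes coming from unramified (smooth) twists: $H^1_{\mathrm{la}}(\Q_p^\times, E) \cong \Hom(\Q_p^\times, E)$ is two-dimensional and there is a genuine nonzero $H^1$ even for $\eta$ close to trivial, so the Künneth bookkeeping must be done honestly to confirm that a \emph{strictly nontrivial} $\eta$ kills all cohomology. I expect this is handled exactly as in \cite[Lem.~1.15]{Ding4} and the surrounding discussion, and the safest writeup is to reduce to that statement: twist to make one character trivial, then cite the known vanishing of $H^i_{(\varphi,\Gamma_L)}$ / locally analytic $H^i$ of a nontrivial rank-one object, the point being that a nontrivial character of $(\Q_p^\times)^r$ gives a $(\varphi,\Gamma)$-type object with no invariants or coinvariants, forcing all $\Ext^i$ to vanish by Euler characteristic and duality.
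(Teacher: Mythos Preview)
Your reduction by twisting to computing $H^i_{\mathrm{la}}(Z,E(\eta))$ with $\eta\neq\mathbf 1$ is fine, and the idea of exploiting the structure of $D(Z,E)$ is the right one. However, there is a genuine confusion in your proposal: you repeatedly conflate locally analytic group cohomology of $Z=(\Q_p^\times)^r$ with $(\varphi,\Gamma)$-module cohomology. These are different theories. The ``bad'' characters such as $|\cdot|$ (or $x^{-i}$, $\varepsilon x^i$) are an artifact of the Galois side, coming from Tate local duality for $(\varphi,\Gamma)$-modules; they play \emph{no} special role in the category of locally analytic representations of the abelian group $Z$. On the group side the statement is clean: for \emph{any} nontrivial locally analytic character $\eta$ of $Z$ one has $\Ext^i_Z(\mathbf 1,\eta)=0$ for all $i\geq 0$, with no exceptional cases. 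So your worry that the hypothesis $\chi\neq\chi'$ might need strengthening, and your suggestion to ``cite the known vanishing of $H^i_{(\varphi,\Gamma)}$'', are both misplaced; invoking \cite{Ding4} or \cite{Colm2} here would be citing the wrong category.

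The paper's proof is simply a citation to three results of Kohlhaase \cite{Kohl} (Cor.~8.8, Thm.~4.8, Thm.~6.5). The underlying mechanism there is close to what you gesture at in your ``cleanest route'' paragraph: for $Z$ abelian, the distribution algebra $D(Z,E)$ is the algebra of global sections on a smooth quasi-Stein rigid space (the character variety $\widehat Z$), locally analytic characters correspond to distinct closed points, and $\Ext^i_{D(Z,E)}$ between the residue fields at two distinct points vanishes in all degrees (disjoint supports plus regularity). Your Koszul/K\"unneth sketch can be made to work once you drop the spurious $|\cdot|$-obstruction, but the efficient write-up is to cite Kohlhaase directly rather than reprove the structure theory of $D(Z,E)$.
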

\begin{proof}
This follows from \cite[Cor.~8.8]{Kohl} together with \cite[Thm.~4.8]{Kohl} and \cite[Thm.~6.5]{Kohl}.
\end{proof}

\begin{notation}\label{not: hL-ext}
Let $V_1$, $V_2$ be admissible locally $\Q_p$-analytic representations of $G$ over $E$, $W\subseteq \Ext^1_G(V_2,V_1)$ be a finite dimensional $E$-vector subspace and $d:=\dim_E W$. Then we denote by $\sE(V_1,V_2^{\oplus d}, W)$ the extension of $V_2^{\oplus d}$ by $V_1$ naturally associated to $W$.
\end{notation}

\noindent
Explicitly, let $e_1, \cdots, e_d$ be a basis of $W$ over $E$ and denote by $\sE(V_1,V_2,e_i)\in \Ext^1_G(V_2,V_1)$ the extension corresponding to $e_i$, then we have:
\begin{equation*}
  \sE(V_1,V_2^{\oplus d},W) :=\bigoplus^{i=1,\cdots, d}_{V_1} \sE(V_1,V_2,e_i)
\end{equation*}
where the subscript $V_1$ means the amalgamate sum over $V_1$. This is an admissible locally $\Q_p$-analytic representation of $G$ over $E$ which only depends on $W$.

\subsection{$p$-adic Langlands correspondence for $\GL_2(\Q_p)$ and deformations}

\noindent
We study Ext${}^1$ groups of rank $2$ special $(\varphi,\Gamma)$-modules over $\cR_E$ and relate them to Ext${}^1$ groups of their associated locally analytic $\GL_2(\Q_p)$-representations. We prove several results on these Ext${}^1$ groups that are used in the next sections. Some statements in this section might already be known or hidden in the literature, but we provide complete proofs.

\subsubsection{Deformations of rank $2$ special $(\varphi,\Gamma)$-modules}\label{sec: hL-FM3}

\noindent
We define and study certain subspaces of Ext${}^1$ groups of $(\varphi,\Gamma)$-module over $\cR_E$ and relate them to infinitesimal deformations of rank $2$ special $(\varphi,\Gamma)$-modules.\\

\noindent
We now assume $L=\Q_p$ and let $(D, (\delta_1,\delta_2))$ be a special, noncritical and nonsplit $(\varphi,\Gamma)$-module over $\cR_E$ (see the beginning of \S~\ref{sec: hL-FM}).

\begin{lemma}\label{lem: hL-ext2}
We have $\dim_E \Ext^1_{(\varphi,\Gamma)}(D,D)=5$ and a short exact sequence:
\begin{equation}\label{equ: hL-ex0}
  0 \lra \Ext^1_{(\varphi,\Gamma)}(D,\cR_E(\delta_1)) \xlongrightarrow{\iota} \Ext^1_{(\varphi,\Gamma)}(D,D)\xlongrightarrow{\kappa} \Ext^1_{(\varphi,\Gamma)}(D, \cR_E(\delta_2)) \lra 0
\end{equation}
where $\dim_E \Ext^1_{(\varphi,\Gamma)}(D,\cR_E(\delta_1))=2$ and $\dim_E \Ext^1_{(\varphi,\Gamma)}(D, \cR_E(\delta_2)) =3$.
\end{lemma}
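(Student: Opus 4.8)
Lemma \ref{lem: hL-ext2} computes $\Ext^1$ groups attached to a rank $2$ special noncritical nonsplit $(\varphi,\Gamma)$-module over $\cR_E$ with $L=\Q_p$; let me sketch the argument.

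**Setup.** We have $D$ sitting in a nonsplit exact sequence $0\to \cR_E(\delta_1)\to D\to \cR_E(\delta_2)\to 0$ with $\delta_1\delta_2^{-1}=x^k|\cdot|$, $k\geq 1$ an integer (this is the special noncritical condition with $L=\Q_p$, $n=2$). The plan is to compute each $E$-dimension from cohomology of rank $1$ $(\varphi,\Gamma)$-modules, using the Euler characteristic formula $\dim_E H^0_{(\varphi,\Gamma)}(M) - \dim_E H^1_{(\varphi,\Gamma)}(M) + \dim_E H^2_{(\varphi,\Gamma)}(M) = -\rk_{\cR_E}(M)$ for $L=\Q_p$ (here $[L:\Q_p]=1$), together with Tate duality $H^i_{(\varphi,\Gamma)}(M)^\vee\cong H^{2-i}_{(\varphi,\Gamma)}(M^\vee\otimes\cR_E(\varepsilon))$, and the standard fact that $H^0$ and $H^2$ of a rank $1$ module $\cR_E(\delta)$ vanish unless $\delta$ or $\delta^{-1}\varepsilon$ (respectively) is of the form $x^{-j}$ or $x^j\varepsilon$ for appropriate integers.

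**Step 1: compute $\dim_E\Ext^1_{(\varphi,\Gamma)}(D,\cR_E(\delta_2))$.** This equals $\dim_E H^1_{(\varphi,\Gamma)}(D^\vee\otimes\cR_E(\delta_2))$. Use the exact sequence $0\to\cR_E(\delta_2^{-1})\to D^\vee\to\cR_E(\delta_1^{-1})\to 0$ twisted by $\cR_E(\delta_2)$, i.e. $0\to\cR_E\to D^\vee\otimes\cR_E(\delta_2)\to\cR_E(\delta_1^{-1}\delta_2)\to 0$. Here $\delta_1^{-1}\delta_2 = x^{-k}|\cdot|^{-1}$, which is neither $1$ nor generic-avoiding; one checks $H^0(\cR_E) = E$, $H^2(\cR_E)=0$, $\dim H^1(\cR_E)=2$; and $H^0(\cR_E(\delta_1^{-1}\delta_2))=0$, $\dim H^1(\cR_E(\delta_1^{-1}\delta_2))=2$, $H^2(\cR_E(\delta_1^{-1}\delta_2))$ has dimension $1$ (since $\delta_1^{-1}\delta_2=x^{-k}|\cdot|^{-1}$ gives $H^2\ne 0$ exactly when $\delta_1\delta_2^{-1}\varepsilon^{-1}$ is a nonnegative power of $x$, which holds as $k\geq 1$... actually need to recheck — $H^2(\cR_E(\delta))\ne 0$ iff $\delta = x^j\varepsilon$, $j\leq 0$; we have $\delta_1^{-1}\delta_2\varepsilon^{-1}$... let me instead get the answer via duality below). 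The key point: the connecting map $H^1(\cR_E(\delta_1^{-1}\delta_2))\to H^2(\cR_E)=0$ is zero, and $H^0(\cR_E(\delta_1^{-1}\delta_2))=0$ forces $0\to H^1(\cR_E)\to H^1(D^\vee\otimes\cR_E(\delta_2))\to H^1(\cR_E(\delta_1^{-1}\delta_2))\to 0$ exact, hence $\dim = 2+... $. Rather than fight the rank $1$ bookkeeping here, the cleanest route is: $\Ext^1_{(\varphi,\Gamma)}(D,\cR_E(\delta_2))$ fits in $\Ext^1(\cR_E(\delta_2),\cR_E(\delta_2))\to\Ext^1(D,\cR_E(\delta_2))\to\Ext^1(\cR_E(\delta_1),\cR_E(\delta_2))\to\Ext^2(\cR_E(\delta_2),\cR_E(\delta_2))$; one has $\dim\Ext^1(\cR_E(\delta_2),\cR_E(\delta_2))=2$ (from \eqref{equ: hL-cad}), $\dim\Ext^1(\cR_E(\delta_1),\cR_E(\delta_2))=1$ by Euler characteristic since $\delta_1^{-1}\delta_2 = x^{-k}|\cdot|^{-1}$ is not $x^{-j}$ ($j\geq 0$) nor $x^j\varepsilon$ ($j\geq 1$), and $\Ext^2(\cR_E(\delta_2),\cR_E(\delta_2))=0$; finally the injectivity of $\Ext^1(\cR_E(\delta_2),\cR_E(\delta_2))\to\Ext^1(D,\cR_E(\delta_2))$ follows because $\Hom(\cR_E(\delta_1),\cR_E(\delta_2))=0$. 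This gives $\dim_E\Ext^1(D,\cR_E(\delta_2))=2+1=3$.

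**Step 2: compute $\dim_E\Ext^1_{(\varphi,\Gamma)}(D,\cR_E(\delta_1))$ and assemble.** Symmetrically, from $\Ext^1(\cR_E(\delta_2),\cR_E(\delta_1))\to\Ext^1(D,\cR_E(\delta_1))\to\Ext^1(\cR_E(\delta_1),\cR_E(\delta_1))\to\Ext^2(\cR_E(\delta_2),\cR_E(\delta_1))$. The key input: $\Ext^1_{(\varphi,\Gamma)}(\cR_E(\delta_2),\cR_E(\delta_1))\cong H^1(\cR_E(\delta_1\delta_2^{-1}))$ with $\delta_1\delta_2^{-1}=x^k|\cdot|$, which by Euler characteristic and the vanishing/nonvanishing criteria has $\dim H^1 = 1$ when $k\geq 1$ — and since $D$ is nonsplit, the class $[D]$ spans it, so the composite $\Ext^1(\cR_E(\delta_2),\cR_E(\delta_1))\to\Ext^1(D,\cR_E(\delta_1))$ is \emph{zero} (pulling back the extension $D$ along $D\twoheadrightarrow\cR_E(\delta_2)$... rather: this map is $[D]\mapsto [D]\circ(\text{projection})$; one shows its image is $0$ because the pushout description shows it lands in the image of $\Ext^1(\cR_E(\delta_2),\cR_E(\delta_1))$ under $D\to\cR_E(\delta_2)$, which is killed). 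Meanwhile $\Ext^2(\cR_E(\delta_2),\cR_E(\delta_1))\cong H^2(\cR_E(\delta_1\delta_2^{-1}))=0$ when $k\geq 1$ (Tate dual to $H^0(\cR_E(\delta_2\delta_1^{-1}\varepsilon))=H^0(\cR_E(x^{-k}))$ which vanishes as $-k<0$... wait $H^0(\cR_E(x^{-k}))\ne 0$ for $k\geq 0$). I will double-check this sign: $H^2(\cR_E(\delta))\cong H^0(\cR_E(\delta^{-1}\varepsilon))^\vee$, and $H^0(\cR_E(\eta))\ne 0$ iff $\eta = x^{-j}$ for some integer $j\geq 0$; so $H^2(\cR_E(\delta_1\delta_2^{-1}))\ne 0$ iff $\delta_1^{-1}\delta_2\varepsilon = x^{-j}$, i.e. $x^{-k}|\cdot|^{-1}\varepsilon = x^{-k}$ (using $|\cdot|=\varepsilon^{-1}$ as characters via $\unr(q_L^{-1})$... careful, $|\cdot|$ is unramified, $\varepsilon$ is not) — so this is $x^{-k}\cdot(\text{unramified}\ne 1)\ne x^{-j}$, hence $H^2=0$. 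Good. Thus $\dim\Ext^1(D,\cR_E(\delta_1))=\dim\Ext^1(\cR_E(\delta_1),\cR_E(\delta_1))=2$. Finally, the exact sequence \eqref{equ: hL-ex0} is obtained by applying $\Hom_{(\varphi,\Gamma)}(D,-)$ to $0\to\cR_E(\delta_1)\to D\to\cR_E(\delta_2)\to 0$: exactness on the left uses $\Hom(D,\cR_E(\delta_1))\hookrightarrow\Hom(D,D)$; surjectivity of $\kappa$ follows once we know $\Ext^2_{(\varphi,\Gamma)}(D,\cR_E(\delta_1))=0$, which holds by d\'evissage from $\Ext^2(\cR_E(\delta_i),\cR_E(\delta_1))=0$ for $i=1,2$ (the $i=1$ case is $H^2(\cR_E)=0$, the $i=2$ case was just checked). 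Then $\dim_E\Ext^1(D,D)=2+3=5$, completing the proof.

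**Main obstacle.** The only delicate point is the careful bookkeeping of which rank $1$ cohomology groups $H^0,H^2$ vanish, using the special noncritical hypothesis $\delta_1\delta_2^{-1}=x^k|\cdot|$ with $k\geq 1$ and the distinction between the unramified character $|\cdot|=\unr(p^{-1})$ and the (ramified) cyclotomic character $\varepsilon$; in particular that $\delta_1\delta_2^{-1}$ is \emph{not} of the exceptional forms $x^{-j}$ or $x^{j}\varepsilon$ that would enlarge the relevant Ext groups. Granting those vanishings (all of which follow from \cite[\S~2.2]{Na}, \cite{Liu07}, or the dimension formulas in \cite[\S~4.2]{Liu07} already invoked in \S\ref{sec: hL-FM}), everything else is formal homological algebra with long exact sequences.
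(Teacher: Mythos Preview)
Your overall strategy (d\'evissage via the long exact sequences for $\Hom_{(\varphi,\Gamma)}(D,-)$ and $\Hom_{(\varphi,\Gamma)}(-,\cR_E(\delta_i))$) is the same as the paper's, and Step~1 is fine. But Step~2 contains a genuine error in the rank-$1$ bookkeeping, precisely at the point you flag as the ``main obstacle''. Your claim that $\Ext^2_{(\varphi,\Gamma)}(\cR_E(\delta_2),\cR_E(\delta_1))=H^2_{(\varphi,\Gamma)}(\cR_E(\delta_1\delta_2^{-1}))=0$ is false: in the paper's conventions $\varepsilon=x|\cdot|$ as characters of $\Q_p^\times$, so $|\cdot|^{-1}\varepsilon=x$ is \emph{not} unramified, and $\delta_2\delta_1^{-1}\varepsilon=x^{-k}|\cdot|^{-1}\varepsilon=x^{1-k}$ \emph{is} of the form $x^{-j}$ with $j=k-1\geq 0$. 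Hence $H^0(\cR_E(\delta_2\delta_1^{-1}\varepsilon))\cong E$ and by Tate duality $\Ext^2(\cR_E(\delta_2),\cR_E(\delta_1))\cong E$; equivalently $\delta_1\delta_2^{-1}=x^{k-1}\varepsilon$ \emph{is} of the exceptional form $x^j\varepsilon$, $j\geq 0$. By Euler characteristic this forces $\dim_E\Ext^1(\cR_E(\delta_2),\cR_E(\delta_1))=2$, not $1$ (both facts are recorded just after the paper's proof, see (\ref{equ: hL-lex2})).

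This propagates to two places. First, your computation of $\dim_E\Ext^1(D,\cR_E(\delta_1))$ accidentally lands on the right number $2$, but for the wrong reason: with the correct inputs, the map $\iota_1$ in (\ref{equ: hL-lex2}) has $1$-dimensional image (kernel $E[D]$), and $\kappa_1$ also has $1$-dimensional image. Second, and more seriously, your proof that $\kappa$ is surjective uses $\Ext^2(D,\cR_E(\delta_1))=0$ ``by d\'evissage from $\Ext^2(\cR_E(\delta_i),\cR_E(\delta_1))=0$ for $i=1,2$''; since the $i=2$ term is nonzero, that d\'evissage does not work. The vanishing $\Ext^2(D,\cR_E(\delta_1))=0$ is still true but genuinely requires the \emph{nonsplit} hypothesis: the connecting map $\Ext^1(\cR_E(\delta_1),\cR_E(\delta_1))\to\Ext^2(\cR_E(\delta_2),\cR_E(\delta_1))\cong E$ is cup product with $[D]$, hence surjective iff $[D]\neq 0$. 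This is how the paper proceeds (via Proposition~\ref{prop-l3-cup}(1)); alternatively, it computes $\dim_E\Ext^1(D,\cR_E(\delta_1))=2$ directly from the Euler characteristic of the rank-$2$ module $D^\vee\otimes\cR_E(\delta_1)$, avoiding the delicate rank-$1$ case entirely.
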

\begin{proof}
By the hypothesis on $D$ we have a long exact sequence:
\begin{multline}\label{equ: hL-lex}
 0 \lra \Hom_{(\varphi,\Gamma)}(D, \cR_E(\delta_1))\lra \Hom_{(\varphi,\Gamma)}(D,D) \\ \lra \Hom_{(\varphi,\Gamma)}(D, \cR_E(\delta_2)) \lra \Ext^1_{(\varphi,\Gamma)}(D,\cR_E(\delta_1)) \\ \xlongrightarrow{\iota} \Ext^1_{(\varphi,\Gamma)}(D,D)\xlongrightarrow{\kappa} \Ext^1_{(\varphi,\Gamma)}(D, \cR_E(\delta_2)) \lra \cdots
\end{multline}
By Proposition \ref{prop-l3-cup}(1), $\kappa$ is surjective, and $\iota$ is injective since the third arrow is obviously an isomorphism. By (\ref{equ: hL-cad}) we have $\dim_E \Ext^1_{(\varphi,\Gamma)}(\cR_E(\delta_2),\cR_E(\delta_2) )=2$ and using Tate duality (\cite[\S~4.2]{Liu07}) we get $\dim_E \Ext^1_{(\varphi,\Gamma)}(\cR_E(\delta_1),\cR_E(\delta_2) )=1$ and $\Ext^2_{(\varphi,\Gamma)}(\cR_E(\delta_2),\cR_E(\delta_2) )=0$ which implies $\dim_E \Ext^1_{(\varphi,\Gamma)}(D, \cR_E(\delta_2))=3$ by an obvious d\'evissage. By \cite[Thm.~4.3]{Liu07} together with \cite[\S~4.2]{Liu07} we obtain (where $D^\vee$ is the dual of $D$):
\begin{equation*}
\dim_E \Ext^1_{(\varphi,\Gamma)}(D,\cR_E(\delta_1)) = \dim_E H^1_{(\varphi,\Gamma)}(D^\vee\otimes_{\cR_E}\cR_E(\delta_1))=2.
\end{equation*}
The lemma follows.
\end{proof}

\noindent
We have $\dim_E \Hom_{(\varphi,\Gamma)}(\cR_E(\delta_1), \cR_E(\delta_1))=\dim_E \Ext^2_{(\varphi,\Gamma)}(\cR_E(\delta_2), \cR_E(\delta_1))=1$, the latter again via Tate duality \cite[\S~4.2]{Liu07}, and from {\it loc.cit.} and the proof of Lemma \ref{lem: hL-ext2} we have:
\begin{multline*}
\dim_E \Ext^1_{(\varphi,\Gamma)}(\cR_E(\delta_2), \cR_E(\delta_1))=\dim_E \Ext^1_{(\varphi,\Gamma)}(D, \cR_E(\delta_1))\\ =\dim_E \Ext^1_{(\varphi,\Gamma)}(\cR_E(\delta_1), \cR_E(\delta_1))=2.
\end{multline*}
Moreover we also have $\Ext^2_{(\varphi,\Gamma)}(D, \cR_E(\delta_1))=H^0_{(\varphi,\Gamma)}(D^\vee\otimes_{\cR_E}\cR_E(\delta_1))=0$. We deduce a long exact sequence:
\begin{multline}\label{equ: hL-lex2}
 0 \lra \Hom_{(\varphi,\Gamma)}(\cR_E(\delta_1), \cR_E(\delta_1))\lra \Ext^1_{(\varphi,\Gamma)}(\cR_E(\delta_2), \cR_E(\delta_1)) \\ \xlongrightarrow{\iota_1} \Ext^1_{(\varphi,\Gamma)}(D, \cR_E(\delta_1)) \xlongrightarrow{\kappa_1} \Ext^1_{(\varphi,\Gamma)}(\cR_E(\delta_1), \cR_E(\delta_1)) \\ \lra \Ext^2_{(\varphi,\Gamma)}(\cR_E(\delta_2), \cR_E(\delta_1))\lra 0
\end{multline}
where $\dim_E \Ima(\iota_1)=\dim_E \Ima(\kappa_1)=1$. Since $\Ext^2_{(\varphi,\Gamma)}(\cR_E(\delta_2), \cR_E(\delta_2))=H^0_{(\varphi,\Gamma)}(\cR_E(\varepsilon))=0$, we also have a short exact sequence:
\begin{multline}\label{equ: hL-lex3}
 0 \lra \Ext^1_{(\varphi,\Gamma)}(\cR_E(\delta_2), \cR_E(\delta_2)) \xlongrightarrow{\iota_2} \Ext^1_{(\varphi,\Gamma)}(D, \cR_E(\delta_2)) \\ \xlongrightarrow{\kappa_2} \Ext^1_{(\varphi,\Gamma)}(\cR_E(\delta_1), \cR_E(\delta_2)) \lra 0
\end{multline}
with $\dim_E \Ext^1_{(\varphi,\Gamma)}(\cR_E(\delta_2), \cR_E(\delta_2))=2$ and $\dim_E \Ext^1_{(\varphi,\Gamma)}(\cR_E(\delta_1), \cR_E(\delta_2)) =1$ (see the proof of Lemma \ref{lem: hL-ext2}). We denote by $\kappa_0$ the following composition:
\begin{equation}\label{equ: l3-exa2}
\kappa_0: \Ext^1_{(\varphi,\Gamma)}(D,D) \xlongrightarrow{\kappa} \Ext^1_{(\varphi,\Gamma)}(D,\cR_E(\delta_2)) \xlongrightarrow{\kappa_2} \Ext^1_{(\varphi,\Gamma)}(\cR_E(\delta_1),\cR_E(\delta_2)).
\end{equation}
In the sequel we loosely identify $\Ext^1_{(\varphi,\Gamma)}(D, D)$ with deformations $\widetilde{D}$ of $D$ over $\cR_{E[\epsilon]/\epsilon^2}$, dropping the $[\cdot]$ (this won't cause any ambiguity). We define:
$$\Ext^1_{\tri}(D,D):=\Ker(\kappa_0)\subseteq \Ext^1_{(\varphi,\Gamma)}(D, D).$$
It is then easy to check that those $\widetilde{D}$ in $\Ext^1_{\tri}(D,D)$ can be written as a (nonsplit) extension of $\cR_{E[\epsilon]/\epsilon^2}(\widetilde{\delta}_2)$ by $\cR_{E[\epsilon]/\epsilon^2}(\widetilde{\delta}_1)$ as a $(\varphi,\Gamma)$-module over $\cR_{E[\epsilon]/\epsilon^2}$ where $\widetilde{\delta}_i$ for $i\in \{1,2\}$ is a deformation of the character $\delta_i$ over $E[\epsilon]/\epsilon^2$.

\begin{lemma}\label{lem: hL-tri3}
We have $\dim_E \Ext^1_{\tri}(D,D)=4$.
\end{lemma}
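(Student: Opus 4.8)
The plan is to read off the dimension of $\Ext^1_{\tri}(D,D)=\Ker(\kappa_0)$ from the rank--nullity theorem, so the only thing to pin down is $\dim_E\Ima(\kappa_0)$. By definition (\ref{equ: l3-exa2}), $\kappa_0$ is the composition
\[
\Ext^1_{(\varphi,\Gamma)}(D,D)\xlongrightarrow{\kappa}\Ext^1_{(\varphi,\Gamma)}(D,\cR_E(\delta_2))\xlongrightarrow{\kappa_2}\Ext^1_{(\varphi,\Gamma)}(\cR_E(\delta_1),\cR_E(\delta_2)).
\]
First I would note that $\kappa$ is surjective: this is part of Lemma \ref{lem: hL-ext2} (coming from Proposition \ref{prop-l3-cup}(1) via the long exact sequence (\ref{equ: hL-lex})). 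Second, $\kappa_2$ is surjective by the short exact sequence (\ref{equ: hL-lex3}). Hence $\kappa_0$, being a composition of two surjections, is itself surjective onto $\Ext^1_{(\varphi,\Gamma)}(\cR_E(\delta_1),\cR_E(\delta_2))$, which is $1$-dimensional (established in the proof of Lemma \ref{lem: hL-ext2} via Tate duality). Since $\dim_E\Ext^1_{(\varphi,\Gamma)}(D,D)=5$ by Lemma \ref{lem: hL-ext2}, we conclude $\dim_E\Ext^1_{\tri}(D,D)=\dim_E\Ker(\kappa_0)=5-1=4$.

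There is no real obstacle here: the statement is a bookkeeping consequence of the dimension counts and surjectivity assertions already assembled in Lemma \ref{lem: hL-ext2} and its proof, together with the short exact sequences (\ref{equ: hL-lex3}) and (\ref{equ: hL-ex0}). If desired, one can additionally check the description of the elements of $\Ext^1_{\tri}(D,D)$ as extensions of $\cR_{E[\epsilon]/\epsilon^2}(\widetilde{\delta}_2)$ by $\cR_{E[\epsilon]/\epsilon^2}(\widetilde{\delta}_1)$ over $\cR_{E[\epsilon]/\epsilon^2}$ by tracking the triangulation through the devissage, but this refinement is not needed for the dimension formula itself.
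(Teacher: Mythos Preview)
Your proof is correct and takes essentially the same approach as the paper: both are dimension counts using the surjectivity of $\kappa$ (from Lemma~\ref{lem: hL-ext2}) and of $\kappa_2$ (from (\ref{equ: hL-lex3})), together with $\dim_E\Ext^1_{(\varphi,\Gamma)}(D,D)=5$. The paper phrases it as computing $\kappa^{-1}(\Ker(\kappa_2))$ rather than applying rank--nullity to $\kappa_0$ directly, but the content is identical.
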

\begin{proof}
It follows from the surjectivity of $\kappa$ (Lemma \ref{lem: hL-ext2}) that $\Ker(\kappa_0)$ is the inverse image (under the map $\kappa$) of $\Ker (\kappa_2)$ in $\Ext^1_{(\varphi,\Gamma)}(D,D)$. The lemma follows then from (\ref{equ: hL-lex3}) and a dimension count using the first equality in Lemma \ref{lem: hL-ext2}.
\end{proof}
\noindent
By (\ref{equ: hL-lex3}), (\ref{equ: hL-ex0}) and the proof of Lemma \ref{lem: hL-tri3}, we get a short exact sequence:
\begin{equation}\label{equ: l3-exa1}
 0 \lra \Ext^1_{(\varphi,\Gamma)}(D,\cR_E(\delta_1))\xlongrightarrow{\iota} \Ext^1_{\tri}(D, D)\\ \xlongrightarrow{\kappa} \Ext^1_{(\varphi,\Gamma)}(\cR_E(\delta_2),\cR_E(\delta_2))\lra 0.
\end{equation}
The map $\kappa$ in (\ref{equ: l3-exa1}) is given by sending $(\widetilde{D}, (\widetilde{\delta}_1, \widetilde{\delta}_2))\in \Ext^1_{\tri}(D,D)$ (with the above notation) to $\widetilde{\delta}_2\in \Ext^1_{(\varphi,\Gamma)}(\cR_E(\delta_2),\cR_E(\delta_2))$. In particular we deduce from (\ref{equ: l3-exa1}) the following lemma.

\begin{lemma}\label{lem: hL-kk}
Let $\widetilde{D}\in \Ext^1_{\tri}(D,D)$ and $(\widetilde{\delta}_1, \widetilde{\delta}_2)$ be the above trianguline parameter of $\widetilde{D}$ over $E[\epsilon]/\epsilon^2$. Then $\widetilde{D}\in \Ker (\kappa)$ if and only if $\widetilde{\delta}_2=\delta_2$.
\end{lemma}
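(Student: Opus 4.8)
The plan is to deduce the statement formally from the short exact sequence (\ref{equ: l3-exa1}), once the map $\kappa$ there has been made completely explicit. Recall that this $\kappa$ on $\Ext^1_{\tri}(D,D)$ is the composite of the map $\kappa$ of (\ref{equ: hL-ex0}), namely the push-forward $\Ext^1_{(\varphi,\Gamma)}(D,D)\to\Ext^1_{(\varphi,\Gamma)}(D,\cR_E(\delta_2))$ along $D\twoheadrightarrow\cR_E(\delta_2)$, with the inverse of the injection $\iota_2$ of (\ref{equ: hL-lex3}), which is the pull-back $\Ext^1_{(\varphi,\Gamma)}(\cR_E(\delta_2),\cR_E(\delta_2))\hookrightarrow\Ext^1_{(\varphi,\Gamma)}(D,\cR_E(\delta_2))$ along the same surjection $D\twoheadrightarrow\cR_E(\delta_2)$. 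The composite $\iota_2^{-1}\circ\kappa$ makes sense on $\Ext^1_{\tri}(D,D)$ precisely because $\widetilde D\in\Ext^1_{\tri}(D,D)$ forces $\kappa_0(\widetilde D)=\kappa_2(\kappa(\widetilde D))=0$.

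The first step is to identify $\kappa(\widetilde D)$ with the class of $\widetilde\delta_2$. Writing $\widetilde D$ as a $(\varphi,\Gamma)$-module over $\cR_{E[\epsilon]/\epsilon^2}$ in the form of a nonsplit extension $0\to\cR_{E[\epsilon]/\epsilon^2}(\widetilde\delta_1)\to\widetilde D\to\cR_{E[\epsilon]/\epsilon^2}(\widetilde\delta_2)\to 0$, I would unwind the two maps above and check that $\iota_2^{-1}(\kappa(\widetilde D))$ is nothing but the class in $\Ext^1_{(\varphi,\Gamma)}(\cR_E(\delta_2),\cR_E(\delta_2))$ of the rank-two $(\varphi,\Gamma)$-module over $\cR_E$ underlying the quotient $\widetilde D/\cR_{E[\epsilon]/\epsilon^2}(\widetilde\delta_1)=\cR_{E[\epsilon]/\epsilon^2}(\widetilde\delta_2)$, viewed as the extension $0\to\epsilon\cR_{E[\epsilon]/\epsilon^2}(\widetilde\delta_2)\to\cR_{E[\epsilon]/\epsilon^2}(\widetilde\delta_2)\to\cR_E(\delta_2)\to 0$ of $\cR_E(\delta_2)$ by $\cR_E(\delta_2)$. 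Indeed, the push-forward of (\ref{equ: hL-ex0}) replaces the sub-copy of $D$ in $\widetilde D$ by $\cR_E(\delta_2)$, i.e. kills the rank-one submodule $\cong\cR_E(\delta_1)$ sitting inside $\epsilon\widetilde D$; inverting $\iota_2$ then amounts to quotienting further by the remaining $\cR_E(\delta_1)$, namely the reduction mod $\epsilon$ of $\cR_{E[\epsilon]/\epsilon^2}(\widetilde\delta_1)$, which is split off precisely because $\widetilde D$ lies in $\Ext^1_{\tri}(D,D)$. Composed, the two quotients are just $\widetilde D\twoheadrightarrow\widetilde D/\cR_{E[\epsilon]/\epsilon^2}(\widetilde\delta_1)$. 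Under (\ref{equ: hL-cad}) the resulting class is the additive character of $\Q_p^\times$ associated to the infinitesimal deformation $\widetilde\delta_2$ of $\delta_2$; this recovers the description of $\kappa$ recorded just before the Lemma.

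Granting this identification, the Lemma is immediate: $\widetilde D\in\Ker(\kappa)$ if and only if $\cR_{E[\epsilon]/\epsilon^2}(\widetilde\delta_2)$ splits as an $\cR_E$-module, if and only if the associated additive character vanishes, i.e. $\widetilde\delta_2=\delta_2$. I do not expect a genuine obstacle; the only slightly delicate point is the diagram chase of the second paragraph — keeping track of which copy of $\cR_E(\delta_1)$ is quotiented at each stage and checking the resulting rank-one subquotients against the direction conventions of (\ref{equ: hL-ex0}) and (\ref{equ: hL-lex3}) — but this is bookkeeping rather than a real difficulty. One should also note that the trianguline parameter $(\widetilde\delta_1,\widetilde\delta_2)$ is uniquely determined by $\widetilde D$, so the statement is unambiguous: this holds because $D$ is special, noncritical and nonsplit, hence $\delta_1\neq\delta_2$ and the triangulation of $D$ — and, infinitesimally, of $\widetilde D$ — is rigid.
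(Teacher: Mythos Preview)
Your proposal is correct and follows exactly the approach of the paper: the paper simply records, just before the Lemma, that the map $\kappa$ in (\ref{equ: l3-exa1}) sends $(\widetilde D,(\widetilde\delta_1,\widetilde\delta_2))$ to $\widetilde\delta_2\in\Ext^1_{(\varphi,\Gamma)}(\cR_E(\delta_2),\cR_E(\delta_2))$ and declares the Lemma an immediate consequence. You have spelled out the diagram chase justifying this description of $\kappa$ in more detail than the paper does, but the content is the same.
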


\noindent
Let $D_1:=D_1^1=\cR_E(\delta_1)\subset D$ (notation of \S~\ref{sec: hL-FM}), identifying $\Ext^1_{(\varphi,\Gamma)}(D_1, D_1)$ with $\Hom(\Q_p^{\times}, E)$ by (\ref{equ: hL-cad}) we view $\cL_{\FM}(D: D_1)\subseteq \Ext^1_{(\varphi,\Gamma)}(D_1, D_1)$ (see \S~\ref{sec: hL-FM}) as an $E$-vector subspace of $\Hom(\Q_p^{\times}, E)$. Since $D$ is assumed to be nonsplit, $\cL_{\FM}(D:D_1)$ is one dimensional by Corollary \ref{codimension}(2). The following formula (sometimes called a Colmez-Greenberg-Stevens formula) is a special case of Theorem \ref{thm: hL-hL} (via the identification (\ref{equ: hL-cad})).

\begin{corollary}\label{coro: hL-tri}
Let $\widetilde{D}\in \Ext^1_{\tri}(D,D)$ and $(\widetilde{\delta}_1, \widetilde{\delta}_2)$ its above trianguline parameter. Let $\psi\in \Hom(\Q_p^{\times}, E)$ such that $\widetilde{\delta}_2\widetilde{\delta}_1^{-1}=\delta_2\delta_1^{-1}(1+\psi \epsilon)$, then $\psi\in \cL_{\FM}(D: D_1)$.
\end{corollary}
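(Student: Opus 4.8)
The plan is to deduce this directly from Theorem~\ref{thm: hL-hL} applied with $n=2$ and $L=\Q_p$, combined with the translation provided by~(\ref{equ: hL-cad}). First I would unwind the definitions. Since $\widetilde{D}\in\Ext^1_{\tri}(D,D)=\Ker(\kappa_0)$, the discussion following~(\ref{equ: l3-exa2}) shows that $\widetilde{D}$ can be written as a nonsplit extension
$$0\lra \cR_{E[\epsilon]/\epsilon^2}(\widetilde{\delta}_1)\lra \widetilde{D}\lra \cR_{E[\epsilon]/\epsilon^2}(\widetilde{\delta}_2)\lra 0$$
of $(\varphi,\Gamma)$-modules over $\cR_{E[\epsilon]/\epsilon^2}$, where $(\widetilde{\delta}_1,\widetilde{\delta}_2)$ is the given trianguline parameter and each $\widetilde{\delta}_i$ is a deformation of $\delta_i$ over $E[\epsilon]/\epsilon^2$. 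Setting $\widetilde{D}_1^{1}:=\cR_{E[\epsilon]/\epsilon^2}(\widetilde{\delta}_1)$ (a free rank-one deformation of $D_1=D_1^1=\cR_E(\delta_1)$) and $\widetilde{\delta}_n:=\widetilde{\delta}_2$ (a deformation of $\delta_n=\delta_2$), the triple $(\widetilde{D},\widetilde{D}_1^{1},\widetilde{\delta}_2)$ realizes precisely the exact sequence appearing in the first case of Theorem~\ref{thm: hL-hL} (whose nonsplitness hypothesis on the $D_1^{n-1}$-side is vacuous here since $n=2$). The ``only if'' direction of that theorem then gives
$$\big[\widetilde{D}_1^{1}\otimes_{\cR_{E[\epsilon]/\epsilon^2}}\cR_{E[\epsilon]/\epsilon^2}(\widetilde{\delta}_2^{-1}\delta_2)\big]\in \cL_{\FM}(D:D_1).$$

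Next I would transport this membership through~(\ref{equ: hL-cad}). Writing $\widetilde{\delta}_i=\delta_i(1+\psi_i\epsilon)$ for the unique $\psi_i\in\Hom(\Q_p^\times,E)$, a direct computation gives $\widetilde{D}_1^{1}\otimes_{\cR_{E[\epsilon]/\epsilon^2}}\cR_{E[\epsilon]/\epsilon^2}(\widetilde{\delta}_2^{-1}\delta_2)\cong \cR_{E[\epsilon]/\epsilon^2}\big(\delta_1(1+(\psi_1-\psi_2)\epsilon)\big)$, whose class in $\Ext^1_{(\varphi,\Gamma)}(\cR_E(\delta_1),\cR_E(\delta_1))$ corresponds under~(\ref{equ: hL-cad}) to $\psi_1-\psi_2$. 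On the other hand, the hypothesis $\widetilde{\delta}_2\widetilde{\delta}_1^{-1}=\delta_2\delta_1^{-1}(1+\psi\epsilon)$ forces $1+(\psi_2-\psi_1)\epsilon=1+\psi\epsilon$, i.e. $\psi=\psi_2-\psi_1=-(\psi_1-\psi_2)$. Hence $-\psi\in\cL_{\FM}(D:D_1)$, and since $\cL_{\FM}(D:D_1)$ is an $E$-vector subspace of $\Hom(\Q_p^\times,E)$ it follows that $\psi\in\cL_{\FM}(D:D_1)$, which is the assertion.

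There is essentially no real obstacle here: Theorem~\ref{thm: hL-hL} carries all the substantive content, and the corollary is just its rank-two incarnation read off through the identification~(\ref{equ: hL-cad}). The only points that need a word of care are (i) that the sub-object of $\widetilde{D}$ furnished by the definition of $\Ext^1_{\tri}(D,D)$ is indeed a free rank-one $(\varphi,\Gamma)$-module over $\cR_{E[\epsilon]/\epsilon^2}$ reducing to $D_1$ modulo $\epsilon$, so that it may legitimately serve as $\widetilde{D}_1^{n-1}$ in Theorem~\ref{thm: hL-hL}, and (ii) keeping track of the interchange between $\widetilde{\delta}_2\widetilde{\delta}_1^{-1}$ and $\widetilde{\delta}_1\widetilde{\delta}_2^{-1}$, which is innocuous because the relevant subspace is stable under scalar multiplication.
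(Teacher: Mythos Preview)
Your proof is correct and takes exactly the same approach as the paper, which simply states that the corollary is a special case of Theorem~\ref{thm: hL-hL} via the identification~(\ref{equ: hL-cad}). You have just carefully unwound what that sentence means, including the harmless sign discrepancy between $\psi_1-\psi_2$ and $\psi_2-\psi_1$.
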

\noindent
Likewise one checks that the composition:
\begin{equation}\label{equ: hL-kappa1'}
\Ker(\kappa) \xlongrightarrow{\iota^{-1}} \Ext^1_{(\varphi,\Gamma)}(D, \cR_E(\delta_1)) \xlongrightarrow{\kappa_1} \Ext^1_{(\varphi,\Gamma)}(\cR_E(\delta_1), \cR_E(\delta_1))
\end{equation}
(see (\ref{equ: hL-ex0}) for $\iota$ and (\ref{equ: hL-lex2}) for $\kappa_1$) is given by sending $(\widetilde{D}, (\widetilde{\delta}_1, \delta_2))\in \Ker(\kappa)$ (cf. Lemma \ref{lem: hL-kk}) to $\widetilde{\delta}_1$. Hence, by Corollary \ref{coro: hL-tri} and $\dim_E \Ima(\kappa_1)=1$, (\ref{equ: hL-kappa1'}) has image equal to $\cL_{\FM}(D:D_1)$. Denote by $\iota_0$ the following composition:
\begin{equation*}
\iota_0: \Ext^1_{(\varphi,\Gamma)}(\cR_E(\delta_2), \cR_E(\delta_1)) \xlongrightarrow{\iota_1} \Ext^1_{(\varphi,\Gamma)}(D, \cR_E(\delta_1)) \xlongrightarrow{\iota} \Ext^1_{\tri}(D,D)
\end{equation*}
(see (\ref{equ: hL-lex2}) for $\iota_1$). By Lemma \ref{lem: hL-ext2} and $\dim_E \Ima(\iota_1)=1$ we see that $\Ima(\iota_0)$ is a one dimensional subspace of $\Ker(\kappa)$. From (\ref{equ: hL-lex2}) and (the discussion after) (\ref{equ: hL-kappa1'}), we deduce a short exact sequence:
\begin{equation}\label{equ: hL-kap}
 0 \lra \Ima(\iota_0) \lra \Ker(\kappa) \xlongrightarrow{\kappa_1} \cL_{\FM}(D: D_1) \lra 0.
\end{equation}
In particular, $\Ima(\iota_0)$ is generated by $(\widetilde{D}, \widetilde{\delta}_1, \widetilde{\delta}_2)\in \Ext^1_{\tri}(D, D)$ with $\widetilde{\delta}_1=\delta_1$ and $\widetilde{\delta}_2=\delta_2$.\\

\noindent
We denote by $\Ext^1_{(\varphi,\Gamma),Z}(D,D)$ the $E$-vector subspace of $\Ext^1_{(\varphi,\Gamma)}(D,D)$ consisting of $(\varphi,\Gamma)$-modules $\widetilde{D}$ over $\cR_{E[\epsilon]/\epsilon^2}$ such that $\wedge^2_{\cR_{E[\epsilon]/\epsilon^2}} \widetilde{D}\cong \cR_{E[\epsilon]/\epsilon^2}(\delta_1\delta_2)$ (i.e. with ``constant'' determinant), and by $\Ext^1_g(D, D)$ the $E$-vector subspace of $\Ext^1_{(\varphi,\Gamma)}(D,D)$ consisting of $\widetilde{D}$ such that $\widetilde{D} \otimes_{\cR_E}\cR_E(\delta_1^{-1})$ is de Rham.

\begin{lemma}\label{lem: hL-cenG}
We have $\dim_E \Ext^1_{(\varphi,\Gamma),Z}(D,D)=3$.
\end{lemma}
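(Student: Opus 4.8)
The plan is to compute $\dim_E\Ext^1_{(\varphi,\Gamma),Z}(D,D)$ by realizing it as the kernel of a determinant map and then counting dimensions. Concretely, I would introduce the map
\[
\dett\colon \Ext^1_{(\varphi,\Gamma)}(D,D)\longrightarrow \Ext^1_{(\varphi,\Gamma)}\big(\cR_E(\delta_1\delta_2),\cR_E(\delta_1\delta_2)\big),\qquad \widetilde{D}\longmapsto {\textstyle\bigwedge}^2_{\cR_{E[\epsilon]/\epsilon^2}}\widetilde{D},
\]
which makes sense because $\bigwedge^2_{\cR_E}D\cong \cR_E(\delta_1\delta_2)$, so $\bigwedge^2\widetilde{D}$ is a deformation of $\cR_E(\delta_1\delta_2)$ over $\cR_{E[\epsilon]/\epsilon^2}$. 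Unravelling the definition, $\Ext^1_{(\varphi,\Gamma),Z}(D,D)$ is exactly $\Ker(\dett)$ (a deformation has ``constant determinant'' if and only if its image under $\dett$ is the trivial deformation $\cR_{E[\epsilon]/\epsilon^2}(\delta_1\delta_2)$). So the statement will follow from $\dim_E\Ext^1_{(\varphi,\Gamma)}(D,D)=5$ (Lemma \ref{lem: hL-ext2}) together with $\dim_E\Ext^1_{(\varphi,\Gamma)}(\cR_E(\delta_1\delta_2),\cR_E(\delta_1\delta_2))=2$ (immediate from (\ref{equ: hL-cad}), since $\Hom(\Q_p^\times,E)$ is $2$-dimensional), provided $\dett$ is surjective.

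The only point that genuinely needs an argument is the surjectivity of $\dett$. Here I would use that, under the identifications $\Ext^1_{(\varphi,\Gamma)}(D,D)\cong H^1_{(\varphi,\Gamma)}(\End_{\cR_E}(D))$ and $\End_{\cR_E}(\bigwedge^2 D)\cong \cR_E$, the map $\dett$ is the map induced on $H^1$ by the trace morphism of $(\varphi,\Gamma)$-modules $\tr\colon \End_{\cR_E}(D)\to \cR_E$ (a one-line first-order check: $\bigwedge^2(\id+c\epsilon)=\id+\tr(c)\epsilon$ acting on $\bigwedge^2 D$, which also re-proves $E$-linearity of $\dett$). Since $D$ has rank $2$ and $2\in E^\times$, the scalar map $\cR_E\hookrightarrow\End_{\cR_E}(D)$, $x\mapsto \tfrac{x}{2}\id$, is a $(\varphi,\Gamma)$-equivariant section of $\tr$, hence induces a section of $\dett$ on $H^1$; this gives surjectivity. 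Equivalently, and perhaps more transparently, one exhibits explicit preimages: for $\psi\in\Hom(\Q_p^\times,E)$ the central twist $\widetilde{D}_\psi:=D\otimes_{\cR_E}\cR_{E[\epsilon]/\epsilon^2}(1+\psi\epsilon)$ is a deformation of $D$ with $\dett(\widetilde{D}_\psi)=\cR_{E[\epsilon]/\epsilon^2}(\delta_1\delta_2(1+2\psi\epsilon))$, corresponding to $2\psi$; as $\psi\mapsto 2\psi$ is a bijection of $\Hom(\Q_p^\times,E)$, these twists already surject onto the target.

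Putting this together, $\dett$ fits into a short exact sequence
\[
0\longrightarrow \Ext^1_{(\varphi,\Gamma),Z}(D,D)\longrightarrow \Ext^1_{(\varphi,\Gamma)}(D,D)\xlongrightarrow{\;\dett\;}\Ext^1_{(\varphi,\Gamma)}\big(\cR_E(\delta_1\delta_2),\cR_E(\delta_1\delta_2)\big)\longrightarrow 0,
\]
whence $\dim_E\Ext^1_{(\varphi,\Gamma),Z}(D,D)=5-2=3$. There is no real obstacle: everything reduces to the surjectivity of $\dett$, which is handled by the scalar section of the trace map (equivalently by the central twists $\widetilde{D}_\psi$). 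The only things to be mildly careful about are identifying the exterior-square operation on deformations with the map induced by $\tr$ on $H^1$, and noting that one needs only $2$ invertible in $E$, which is automatic in characteristic $0$ — so no hypothesis on $p$ is required.
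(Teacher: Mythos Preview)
Your proof is correct and follows essentially the same approach as the paper: both set up the determinant map from $\Ext^1_{(\varphi,\Gamma)}(D,D)$ to $\Hom(\Q_p^\times,E)$ (via (\ref{equ: hL-cad})), identify $\Ext^1_{(\varphi,\Gamma),Z}(D,D)$ as its kernel, and prove surjectivity by the explicit section $\psi\mapsto D\otimes_{\cR_E}\cR_{E[\epsilon]/\epsilon^2}(1+(\psi/2)\epsilon)$. Your additional remark interpreting the determinant map as the trace on $H^1_{(\varphi,\Gamma)}(\End_{\cR_E}(D))$ is a nice conceptual gloss but not needed for the argument.
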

\begin{proof}
We have a natural exact sequence:
\begin{equation}\label{equ: hL-cent}
 0 \lra \Ext^1_{(\varphi,\Gamma),Z}(D,D) \lra \Ext^1_{(\varphi,\Gamma)}(D, D) \lra \Hom(\Q_p^{\times}, E)
\end{equation}
where the last map sends $\widetilde{D}\in \Ext^1_{(\varphi,\Gamma)}(D, D)$ to $\psi'$ with $\psi'$ satisfying:
$$\wedge^2_{\cR_{E[\epsilon]/\epsilon^2}} \widetilde{D}\cong \cR_{E[\epsilon]/\epsilon^2}(\delta_1\delta_2(1+\psi'\epsilon)).$$
On the other hand, we have an injection $j: \Hom(\Q_p^{\times}, E) \hookrightarrow \Ext^1_{(\varphi,\Gamma)}(D,D)$, $\psi \mapsto D\otimes_E \cR_{E[\epsilon]/\epsilon^2}(1+(\psi/2)\epsilon)$ and it is clear that $\Ima(j)$ gives a section of the last map of (\ref{equ: hL-cent}). Hence the latter is surjective and the lemma follows from the first equality in Lemma \ref{lem: hL-ext2}.
\end{proof}

\begin{lemma}\label{2dimltri}
We have $\dim_E \big(\Ext^1_{(\varphi,\Gamma),Z}(D,D) \cap \Ext^1_{\tri}(D,D)\big)=2$.
\end{lemma}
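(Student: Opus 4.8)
The plan is to compute the intersection inside the $5$-dimensional space $\Ext^1_{(\varphi,\Gamma)}(D,D)$ (first equality of Lemma \ref{lem: hL-ext2}), using the two subspaces of respective dimensions $4$ and $3$ provided by Lemma \ref{lem: hL-tri3} and Lemma \ref{lem: hL-cenG}. A trivial dimension count already gives $\dim_E\big(\Ext^1_{(\varphi,\Gamma),Z}(D,D)\cap\Ext^1_{\tri}(D,D)\big)\geq 4+3-5=2$, so the only thing to prove is that $\Ext^1_{(\varphi,\Gamma),Z}(D,D)$ is \emph{not} contained in $\Ext^1_{\tri}(D,D)=\Ker(\kappa_0)$; equivalently, since the target $\Ext^1_{(\varphi,\Gamma)}(\cR_E(\delta_1),\cR_E(\delta_2))$ of $\kappa_0$ is $1$-dimensional, that the restriction of $\kappa_0$ to $\Ext^1_{(\varphi,\Gamma),Z}(D,D)$ is surjective.

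First I would recall that $\kappa_0=\kappa_2\circ\kappa$ is itself surjective: $\kappa$ is surjective by the exact sequence (\ref{equ: hL-ex0}) and $\kappa_2$ is surjective by (\ref{equ: hL-lex3}). Next I would invoke the splitting established inside the proof of Lemma \ref{lem: hL-cenG}: the map $j\colon \Hom(\Q_p^{\times},E)\hookrightarrow \Ext^1_{(\varphi,\Gamma)}(D,D)$, $\psi\mapsto D\otimes_E\cR_{E[\epsilon]/\epsilon^2}(1+(\psi/2)\epsilon)$, is a section of the ``determinant'' map of (\ref{equ: hL-cent}), so that $\Ext^1_{(\varphi,\Gamma)}(D,D)=\Ext^1_{(\varphi,\Gamma),Z}(D,D)\oplus\Ima(j)$.

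The key point is then that $\Ima(j)\subseteq\Ext^1_{\tri}(D,D)$. Indeed, for $\psi\in\Hom(\Q_p^{\times},E)$ the $(\varphi,\Gamma)$-module $D\otimes_E\cR_{E[\epsilon]/\epsilon^2}(1+(\psi/2)\epsilon)$ inherits, by tensoring the triangulation $\cR_E(\delta_1)\subset D\twoheadrightarrow\cR_E(\delta_2)$ of $D$, a triangulation over $\cR_{E[\epsilon]/\epsilon^2}$ with parameter $\big(\delta_1(1+(\psi/2)\epsilon),\delta_2(1+(\psi/2)\epsilon)\big)$, and hence lies in $\Ker(\kappa_0)=\Ext^1_{\tri}(D,D)$. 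Combining this with the direct sum decomposition, $\kappa_0\big(\Ext^1_{(\varphi,\Gamma),Z}(D,D)\big)=\kappa_0\big(\Ext^1_{(\varphi,\Gamma)}(D,D)\big)=\Ext^1_{(\varphi,\Gamma)}(\cR_E(\delta_1),\cR_E(\delta_2))$, so $\kappa_0$ is surjective on $\Ext^1_{(\varphi,\Gamma),Z}(D,D)$ and its kernel $\Ext^1_{(\varphi,\Gamma),Z}(D,D)\cap\Ext^1_{\tri}(D,D)$ has dimension $3-1=2$.

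I do not anticipate any real difficulty: the argument is a dimension bookkeeping resting on the elementary observation that twisting a trianguline $(\varphi,\Gamma)$-module by a deformation of the trivial character preserves trianguline-ness. The only point requiring a small verification — the ``main obstacle'', such as it is — is precisely that the section $j$ lands in $\Ext^1_{\tri}(D,D)$ and not merely in $\Ext^1_{(\varphi,\Gamma)}(D,D)$; this is handled by the explicit triangulation exhibited above.
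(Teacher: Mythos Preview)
Your proof is correct and follows essentially the same route as the paper's: both reduce to showing $\Ext^1_{(\varphi,\Gamma),Z}(D,D)\not\subseteq\Ext^1_{\tri}(D,D)$, use the section $j$ from the proof of Lemma~\ref{lem: hL-cenG} together with the observation $\Ima(j)\subseteq\Ext^1_{\tri}(D,D)$, and conclude via the direct-sum decomposition. The only cosmetic difference is that you phrase the last step as surjectivity of $\kappa_0$ on $\Ext^1_{(\varphi,\Gamma),Z}(D,D)$, while the paper argues by contradiction on $\dim_E\Ext^1_{\tri}(D,D)$.
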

\begin{proof}
From \ Lemma \ref{lem: hL-ext2}, \ Lemma \ref{lem: hL-tri3} \ and \ Lemma \ref{lem: hL-cenG} \ it \ is \ sufficient \ to \ show \ that $\Ext^1_{(\varphi,\Gamma),Z}(D,D)$ is not contained in $\Ext^1_{\tri}(D,D)$. However with the notation of the proof of Lemma \ref{lem: hL-cenG}, we have $\Ima(j)\cap \Ext^1_{(\varphi,\Gamma),Z}(D,D)=0$ inside $\Ext^1_{(\varphi,\Gamma)}(D, D)$ and it is clear that $\Ima(j)\subseteq \Ext^1_{\tri}(D,D)$. If $\Ext^1_{(\varphi,\Gamma),Z}(D,D)\subseteq \Ext^1_{\tri}(D,D)$, this would imply $\dim_E \Ext^1_{\tri}(D,D)\geq \dim_E \Ima(j)+\dim_E \Ext^1_{(\varphi,\Gamma),Z}(D,D)=2+3=5$, contradicting Lemma \ref{lem: hL-tri3}.
\end{proof}

\begin{lemma}\label{lem: hL-dRg}
(1) We have $\Ext^1_{g}(D,D)\subset\Ext^1_{\tri}(D,D)$.\\
\noindent
(2) For $\widetilde{D} \in \Ext^1_{\tri}(D,D)$ of trianguline parameter $\widetilde{\delta}_1=\delta_1(1+\psi_1\epsilon)$, $\widetilde{\delta}_2=\delta_2(1+\psi_2\epsilon)$, we have $\widetilde{D}\in \Ext^1_g(D,D)$ if and only if $\psi_i\in \Hom_{\infty}(\Q_p^{\times}, E)$ for $i=1,2$.\\
\noindent
(3) We have:
\begin{equation}\dim_E \Ext^1_g(D,D)=\begin{cases}
  2 & \text{\!if\ \ } \cL_{\FM}(D:D_1)\neq \Hom_{\infty}(\Q_p^{\times}, E) \\
  3 & \text{\!if\ \ } \cL_{\FM}(D:D_1)= \Hom_{\infty}(\Q_p^{\times}, E).
 \end{cases}
 \end{equation}
\end{lemma}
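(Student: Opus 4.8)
The plan is to reduce all three statements to the adjoint $(\varphi,\Gamma)$-module $\mathrm{ad}(D):=D^\vee\otimes_{\cR_E}D$. First I would twist $D$ by $\cR_E(\delta_1^{-1})$; this affects none of the objects in the statement (the spaces $\Ext^1_g(D,D)$, $\Ext^1_{\tri}(D,D)$, $\cL_{\FM}(D:D_1)\subseteq\Hom(\Q_p^{\times},E)$ and the characters $\psi_i$ are all insensitive to it), so we may assume $\delta_1=1$. The special condition then forces $\delta_2=x^{-k}\unr(p)$ with $k:=k_{\sigma,1}\geq 1$, so $D$ is an extension of the crystalline module $\cR_E(\delta_2)$ by the trivial one, hence is semi-stable with Sen weights $\{0,-k\}$; thus $\mathrm{ad}(D)$ is semi-stable, $\Ext^1_{(\varphi,\Gamma)}(D,D)=H^1_{(\varphi,\Gamma)}(\mathrm{ad}(D))$, and under this identification $\Ext^1_g(D,D)=H^1_g(\mathrm{ad}(D))$ is the subspace of de Rham extension classes. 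Note also that $\End_{(\varphi,\Gamma)}(D)=E$: an endomorphism preserves the rank-one submodule $\cR_E(\delta_1)$ (because $\Hom_{(\varphi,\Gamma)}(\cR_E(\delta_2),D)=0$, as $D$ is nonsplit and $\delta_1\delta_2^{-1}\neq x^j$) and hence is scalar, so $\dim_E H^0(\mathrm{ad}(D))=1$. For part~(1) I would observe that the map $\kappa_0$ of (\ref{equ: l3-exa2}) is induced on $H^1$ by a morphism of \emph{de Rham} $(\varphi,\Gamma)$-modules $\mathrm{ad}(D)\to\cR_E(\delta_1)^\vee\otimes_{\cR_E}\cR_E(\delta_2)=\cR_E(\delta_1^{-1}\delta_2)$ (restrict an endomorphism to $\cR_E(\delta_1)$, then project onto the quotient $\cR_E(\delta_2)$); since $H^1_g$ is functorial for morphisms of de Rham modules (a push-out of a de Rham extension along such a morphism stays de Rham), $\kappa_0$ carries $H^1_g(\mathrm{ad}(D))$ into $H^1_g(\cR_E(\delta_1^{-1}\delta_2))$. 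But $\wt(\delta_1^{-1}\delta_2)=-k<0$, and the Bloch--Kato dimension formula for rank-one de Rham modules gives $H^1_g(\cR_E(\delta_1^{-1}\delta_2))=0$ (the relevant $H^0$ vanishes, $D_{\dR}/\Fil^0$ vanishes since the weight is negative, and the $\varphi$-eigenvalue of $D_{\cris}(\cR_E(\delta_1\delta_2^{-1}\varepsilon))$ is $\neq 1$). Hence $\Ext^1_g(D,D)\subseteq\Ker\kappa_0=\Ext^1_{\tri}(D,D)$, which is part~(1).

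For part~(3) I would compute $\dim_E H^1_g(\mathrm{ad}(D))$ from the Bloch--Kato formula $\dim_E H^1_g(V)=\dim_E H^0(V)+\dim_E\big(D_{\dR}(V)/\Fil^0 D_{\dR}(V)\big)+\dim_E D_{\cris}(V^*(1))^{\varphi=1}$ with $V=\mathrm{ad}(D)$. The first term is $1$; the second is $1$ (the Sen weights of $\mathrm{ad}(D)$ being $0,0,k,-k$); and for the third, using the trace identification $\mathrm{ad}(D)^*(1)\cong\mathrm{ad}(D)(\varepsilon)$ I would compute $D_{\cris}(\mathrm{ad}(D)(\varepsilon))^{\varphi=1}$ directly. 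By Remark~\ref{rem: hL-sim} (applied with $n=2$), $\cL_{\FM}(D:D_1)=\Hom_{\infty}(\Q_p^{\times},E)$ exactly when $D$ — equivalently $\mathrm{ad}(D)$ — is crystalline up to twist; otherwise $\mathrm{ad}(D)$ is semi-stable noncrystalline. In both cases the $\varphi$-eigenvalues on $D_{\st}(D)$ are forced (by the special condition and $\delta_1=1$) to be $1$ and $p$, with $N$ dividing $\varphi$-eigenvalues by $p$ and $N^2=0$; a short Frobenius-slope computation then shows $D_{\cris}(\mathrm{ad}(D)(\varepsilon))^{\varphi=1}=0$ in the semi-stable noncrystalline case and one-dimensional in the crystalline case. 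This gives $\dim_E H^1_g(\mathrm{ad}(D))=2$, resp.\ $3$, which is part~(3).

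For part~(2), the ``only if'' direction is immediate from part~(1): if $\widetilde{D}\in\Ext^1_g(D,D)$ then $\widetilde{D}\otimes_{\cR_E}\cR_E(\delta_1^{-1})$ is de Rham and, by part~(1), trianguline over $\cR_{E[\epsilon]/\epsilon^2}$ with rank-one submodule $\cR_{E[\epsilon]/\epsilon^2}(1+\psi_1\epsilon)$ and rank-one quotient $\cR_{E[\epsilon]/\epsilon^2}(\delta_2\delta_1^{-1}(1+\psi_2\epsilon))$, both de Rham as sub-, resp.\ quotient, module of a de Rham module; by the rank-one case of (\ref{ginfty}) this forces $\psi_1,\psi_2\in\Hom_{\infty}(\Q_p^{\times},E)$. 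For ``if'' I would argue by dimension count. Let $S\subseteq\Ext^1_{\tri}(D,D)$ be the subset of those $\widetilde{D}$ with $\psi_1$ and $\psi_2$ smooth; by the above $\Ext^1_g(D,D)\subseteq S$. The assignment $\widetilde{D}\mapsto(\psi_1,\psi_2)$ on $\Ext^1_{\tri}(D,D)$ has image inside $\{(\psi_1,\psi_2):\psi_2-\psi_1\in\cL_{\FM}(D:D_1)\}$ (Corollary~\ref{coro: hL-tri}) and kernel the one-dimensional space $\Ima(\iota_0)$ of (\ref{equ: hL-kap}); since $\cL_{\FM}(D:D_1)$ is one-dimensional (Corollary~\ref{codimension}), it follows that $\dim_E S\leq 1+\dim_E\{(\psi_1,\psi_2):\psi_1,\psi_2\in\Hom_{\infty}(\Q_p^{\times},E),\ \psi_2-\psi_1\in\cL_{\FM}(D:D_1)\}$, which is $\leq 2$ if $\cL_{\FM}(D:D_1)\neq\Hom_{\infty}(\Q_p^\times,E)$ (then $\cL_{\FM}(D:D_1)\cap\Hom_{\infty}(\Q_p^\times,E)=0$) and $\leq 3$ if $\cL_{\FM}(D:D_1)=\Hom_{\infty}(\Q_p^\times,E)$. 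By part~(3) this upper bound equals $\dim_E\Ext^1_g(D,D)$ in both cases, whence $S=\Ext^1_g(D,D)$, proving part~(2).

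The main obstacle is the last term of the Bloch--Kato formula in part~(3): one must match the dichotomy ``$\mathrm{ad}(D)$ crystalline vs.\ semi-stable noncrystalline'' with ``$\cL_{\FM}(D:D_1)=\Hom_{\infty}(\Q_p^\times,E)$ or not'' (this is precisely the content of Remark~\ref{rem: hL-sim}) and then carry out the elementary but convention-sensitive computation of the Frobenius slopes on $D_{\cris}(\mathrm{ad}(D)(\varepsilon))$; everything else reduces to bookkeeping with the exact sequences of \S\ref{sec: hL-FM3} and the rank-one $H^1_g$-formulas.
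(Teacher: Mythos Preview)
Your proof is correct, but the logical architecture differs from the paper's. The paper proves (1), then (2), then deduces (3) from (2); you prove (1), then (3) independently via the Bloch--Kato dimension formula for $H^1_g(\mathrm{ad}(D))$, then deduce the ``if'' direction of (2) by a dimension count from (3). Your approach to (1) and to the ``only if'' half of (2) is essentially the same as the paper's.

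The paper's route to (2) ``if'' is more elementary: given $\psi_1,\psi_2\in\Hom_\infty(\Q_p^\times,E)$, the rank-one pieces $\cR_{E[\epsilon]/\epsilon^2}(\widetilde\delta_i)$ are de Rham, and since $\wt(\delta_1\delta_2^{-1})>0$, \emph{every} class in $H^1_{(\varphi,\Gamma)}(\cR_{E[\epsilon]/\epsilon^2}(\widetilde\delta_1\widetilde\delta_2^{-1}))$ is automatically de Rham; hence $\widetilde D$ is de Rham. For (3), the paper then splits $\Ext^1_g$ via the short exact sequence $0\to\Ext^1_{(\varphi,\Gamma),Z}\cap\Ext^1_g\to\Ext^1_g\to\Hom_\infty(\Q_p^\times,E)\to 0$ and computes the fixed-determinant piece by combining (2) with Corollary~\ref{coro: hL-tri} and Lemma~\ref{2dimltri}. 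Your Bloch--Kato computation for (3) is more intrinsic (it identifies the dichotomy with whether $\mathrm{ad}(D)$ is crystalline) but, as you note, is convention-sensitive: one has to track the $\varphi$-eigenvalues $\{p^{-1},p^{-1},1,p^{-2}\}$ on $D_{\st}(\mathrm{ad}(D)(\varepsilon))$ and check that in the semi-stable noncrystalline case the unique $\varphi=1$-eigenvector is not killed by $N$. The paper's argument avoids this by never leaving the trianguline parameter space.
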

\begin{proof}
(1) Twisting by a character, we can (and do) assume that the $\delta_i$ for $i=1,2$ are locally algebraic (see Definition \ref{def: hL-spe}). Since $\wt(\delta_2\delta_1^{-1})\in \Z_{<0}$ we have $\Ext^1_{g}(\cR_E(\delta_1),\cR_E(\delta_2))=H^1_{g}(\cR_E(\delta_2\delta_1^{-1}))=0$ and we deduce from (\ref{equ: l3-exa2})) (since being de Rham is preserved by taking subquotients) $\Ext^1_g(D,D)\subseteq \Ker(\kappa_0)=\Ext^1_{\tri}(D,D)$.\\
\noindent
(2) We know $\cR_{E[\epsilon]/\epsilon^2}(\delta_i (1+\psi_i\epsilon))$ is de Rham if and only if $\psi_i \in \Hom_{\infty}(\Q_p^{\times}, E)$ (see e.g. \cite[Rem. 2.2(2)]{Ding4}). The ``only if" part follows. For $i\in \{1,2\}$ let $\psi_i \in \Hom_{\infty}(\Q_p^{\times}, E)$, $\widetilde{\delta}_i:=\delta_i(1+\psi_i\epsilon)$ and $\widetilde D\in \Ext^1_{(\varphi,\Gamma)}(\cR_{E[\epsilon]/\epsilon^2}(\widetilde{\delta}_2), \cR_{E[\epsilon]/\epsilon^2}(\widetilde{\delta}_1))\subset \Ext^1_{\tri}(D,D)$. Since $\cR_{E[\epsilon]/\epsilon^2}(\widetilde{\delta_2})$ is de Rham, we are reduced to show that:
\begin{equation*}
D\otimes_{\cR_{E[\epsilon]/\epsilon^2}} \cR_{E[\epsilon]/\epsilon^2}\big(\widetilde{\delta_2}^{-1}\big) \in H^1_{(\varphi,\Gamma)}\big(\cR_{E[\epsilon]/\epsilon^2}(\widetilde{\delta}_1\widetilde{\delta}_2^{-1})\big)
\end{equation*}
is de Rham. However, since $\wt(\delta_1\delta_2^{-1})\in \Z_{>0}$ and $\cR_{E[\epsilon]/\epsilon^2}(\widetilde{\delta}_1\widetilde{\delta}_2^{-1})$ is de Rham, we know (e.g. by \cite[Lem.~1.11]{Ding4}) that any element in $H^1_{(\varphi,\Gamma)}\big(\cR_{E[\epsilon]/\epsilon^2}(\widetilde{\delta}_1\widetilde{\delta}_2^{-1})\big)$ is de Rham. The ``if" part follows.\\
\noindent
(3) The exact sequence (\ref{equ: hL-cent}) induces a short exact sequence:
\begin{equation}\label{seg}
 0 \lra \Ext^1_{(\varphi,\Gamma),Z}(D,D) \cap \Ext^1_g(D,D) \lra \Ext^1_g(D,D) \lra \Hom_{\infty}(\Q_p^{\times}, E)\lra 0
\end{equation}
where the last map is surjective since the map $j$ in the proof of Lemma \ref{lem: hL-cenG} induces an injection $\Hom_{\infty}(\Q_p^{\times}, E)\hookrightarrow \Ext^1_g(D,D)$. We have $\widetilde{D}\in \Ext^1_{(\varphi,\Gamma),Z}(D,D) \cap \Ext^1_g(D,D)$ if and only if $\psi_1,\psi_2\in \Hom_{\infty}(\Q_p^{\times}, E)$ and $\psi_1+\psi_2=0$ (for $\psi_i$ as in (2)).
Moreover, for any $\widetilde{D}\in \Ext^1_{\tri}(D,D)$ we have $\psi_1-\psi_2\in \cL_{\FM}(D:D_1)$ by Corollary \ref{coro: hL-tri}. If $\cL_{\FM}(D:D_1)\neq \Hom_{\infty}(\Q_p^{\times}, E)$, we see this implies $\psi_i=0$, hence $\Ext^1_{(\varphi,\Gamma),Z}(D,D)\cap \Ext^1_g(D,D)=\Ima(\iota_0)$ is one dimensional by the sentences before and after (\ref{equ: hL-kap}). If $\cL_{\FM}(D:D_1)=\Hom_{\infty}(\Q_p^{\times}, E)$, we have $\Ext^1_{(\varphi,\Gamma),Z}(D,D)\cap \Ext^1_g(D,D)=\Ext^1_{(\varphi,\Gamma),Z}(D,D)\cap \Ext^1_{\tri}(D,D)$ by (2) since $\psi_1+\psi_2=0$ and $\psi_1-\psi_2\in \Hom_{\infty}(\Q_p^{\times}, E)$ is equivalent to $\psi_1,\psi_2\in \Hom_{\infty}(\Q_p^{\times}, E)$, hence $\Ext^1_{(\varphi,\Gamma),Z}(D,D)\cap \Ext^1_g(D,D)$ is $2$-dimensional by Lemma \ref{2dimltri}. The result then follows from (\ref{seg}).
\end{proof}

\noindent
Now fix $k\in \Z_{\geq 1}$, set $\delta_3:=\delta_2x^{-k} |\cdot|^{-1}$ and consider the special case of the pairing (\ref{equ: hL-cup}):
\begin{equation}\label{equ: hL-cup3}
\Ext^1_{(\varphi,\Gamma_L)}(\cR_E(\delta_3), D) \times \Ext^1_{(\varphi,\Gamma_L)}(D, D) \xlongrightarrow{\cup} \Ext^2_{(\varphi,\Gamma_L)}(\cR_E(\delta_3), D)\simeq E.
\end{equation}
Recall the map $\Ext^1_{(\varphi,\Gamma)}(\cR_E(\delta_3), \cR_E(\delta_1))\rightarrow \Ext^1_{(\varphi,\Gamma_L)}(\cR_E(\delta_3), D)$ is injective from our assumptions on the $\delta_i$, $i\in \{1,2,3\}$.

\begin{lemma}
We have $\Ext^1_{\tri}(D,D)=\Ext^1_{(\varphi,\Gamma)}(\cR_E(\delta_3), \cR_E(\delta_1))^{\perp}$ in (\ref{equ: hL-cup3}) and a commutative diagram:
\begin{equation}\label{equ: hL-comtri}
   \begin{CD}
  \Ext^1_{(\varphi,\Gamma)}(\cR_E(\delta_3), \cR_E(\delta_2)) @.\ \ \times \ \ @. \Ext^1_{(\varphi,\Gamma)}(\cR_E(\delta_2),\cR_E(\delta_2)) @> \cup >> E \\
   @| @. @A \kappa AA @ |\\
  \Ext^1_{(\varphi,\Gamma)}\big(\cR_E(\delta_3), \cR_E(\delta_2)\big) @. \ \times @. \!\!\!\!\!\!\!\!\!\!\!\!\Ext^1_{\tri}\big(D,D\big) @> \cup >> E\\
  @AAA @. @VVV @| \\
 \ \ \Ext^1_{(\varphi,\Gamma)}\big(\cR_E(\delta_3), D\big) @. \ \times @. \!\!\!\!\!\!\!\!\!\!\!\!\Ext^1_{(\varphi,\Gamma)}\big(D,D\big)@> \cup >> E.
  \end{CD}
\end{equation}
\end{lemma}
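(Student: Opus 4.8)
The plan is to prove the two claims in the lemma by reducing everything to the perfect pairing $\Ext^1_{(\varphi,\Gamma)}(D,D)\times \Ext^1_{(\varphi,\Gamma)}(\cR_E(\delta_3),D)\to E$ of (\ref{equ: hL-cup3}) and carefully tracking functoriality. First I would establish the identity $\Ext^1_{\tri}(D,D)=\Ext^1_{(\varphi,\Gamma)}(\cR_E(\delta_3),\cR_E(\delta_1))^{\perp}$. Recall from (\ref{equ: l3-exa2}) that $\Ext^1_{\tri}(D,D)=\Ker(\kappa_0)$ where $\kappa_0$ factors as $\Ext^1_{(\varphi,\Gamma)}(D,D)\xrightarrow{\kappa}\Ext^1_{(\varphi,\Gamma)}(D,\cR_E(\delta_2))\xrightarrow{\kappa_2}\Ext^1_{(\varphi,\Gamma)}(\cR_E(\delta_1),\cR_E(\delta_2))$. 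On the other side, the subspace $\Ext^1_{(\varphi,\Gamma)}(\cR_E(\delta_3),\cR_E(\delta_1))\hookrightarrow\Ext^1_{(\varphi,\Gamma)}(\cR_E(\delta_3),D)$ is precisely the part of $\Ext^1_{(\varphi,\Gamma)}(\cR_E(\delta_3),D)$ that dies under the pushout $\Ext^1_{(\varphi,\Gamma)}(\cR_E(\delta_3),D)\to\Ext^1_{(\varphi,\Gamma)}(\cR_E(\delta_3),\cR_E(\delta_2))$ (using $\Hom_{(\varphi,\Gamma)}(\cR_E(\delta_3),\cR_E(\delta_2))=0$, which holds since $\delta_3\ne\delta_2$). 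By the standard compatibility of cup products with pushforward and pullback, $\langle e,f\rangle=0$ for all $e\in\Ext^1_{(\varphi,\Gamma)}(\cR_E(\delta_3),\cR_E(\delta_1))$ and a fixed $f\in\Ext^1_{(\varphi,\Gamma)}(D,D)$ if and only if the image of $f$ in $\Hom\big(\Ext^1_{(\varphi,\Gamma)}(\cR_E(\delta_3),\cR_E(\delta_1)),\Ext^2_{(\varphi,\Gamma)}(\cR_E(\delta_3),\cR_E(\delta_1))\big)$ (obtained by composing the extension class $f$ with $D\twoheadrightarrow\cR_E(\delta_2)$, landing in $\Ext^1(\cR_E(\delta_1),\cR_E(\delta_2))$ after identifying $\Ext^2(\cR_E(\delta_3),\cR_E(\delta_1))$ with $\Ext^2(\cR_E(\delta_3),\cR_E(\delta_2))$ via the Tate-duality computations of Lemma \ref{equ: hL-dim} and the special parameter condition) vanishes; this is exactly the condition $\kappa_0(f)=0$. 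A dimension count confirms this: $\Ext^1_{(\varphi,\Gamma)}(\cR_E(\delta_3),\cR_E(\delta_1))$ is $2$-dimensional (as in Lemma \ref{lem: hL-ext2}), so its orthogonal in the $5$-dimensional space $\Ext^1_{(\varphi,\Gamma)}(D,D)$ is $3$-dimensional$\cdots$ wait, that must be reconciled with $\dim_E\Ext^1_{\tri}(D,D)=4$ from Lemma \ref{lem: hL-tri3}; the resolution is that the pairing (\ref{equ: hL-cup3}) is not perfect on $\Ext^1_{(\varphi,\Gamma)}(\cR_E(\delta_3),\cR_E(\delta_1))$ because its image in $\Ext^1_{(\varphi,\Gamma)}(\cR_E(\delta_3),D)$ already pairs trivially against part of $\Ext^1_{(\varphi,\Gamma)}(D,D)$ coming from $\Ext^1(D,\cR_E(\delta_1))$, so the effective rank is $1$, giving codimension $1$, i.e. dimension $4$. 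I would spell this out via the exact sequence $0\to\Ext^1(\cR_E(\delta_3),\cR_E(\delta_1))\to\Ext^1(\cR_E(\delta_3),D)\to\Ext^1(\cR_E(\delta_3),\cR_E(\delta_2))\to 0$ and dualize against (\ref{equ: hL-ex0}).

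Next I would construct the commutative diagram (\ref{equ: hL-comtri}). The bottom row is just the restriction of (\ref{equ: hL-cup3}) along the inclusion $\Ext^1_{(\varphi,\Gamma)}(\cR_E(\delta_3),\cR_E(\delta_2))\hookrightarrow\Ext^1_{(\varphi,\Gamma)}(\cR_E(\delta_3),D)$ (injective because $\Hom_{(\varphi,\Gamma)}(\cR_E(\delta_3),\cR_E(\delta_1))=0$), and commutativity of the lower square is functoriality of the cup product in the first variable. The middle row is the same pairing restricted in its second variable to $\Ext^1_{\tri}(D,D)\subseteq\Ext^1_{(\varphi,\Gamma)}(D,D)$, so the lower square commutes tautologically. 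For the upper square, the left vertical is the identity and the right vertical is $\kappa$; one must check that pairing a class in $\Ext^1(\cR_E(\delta_3),\cR_E(\delta_2))$ with $\widetilde D\in\Ext^1_{\tri}(D,D)$ (via the middle pairing, i.e. first pushing $\widetilde D$ along $D\hookrightarrow\cdots$ or rather composing appropriately) equals pairing it with $\kappa(\widetilde D)\in\Ext^1(\cR_E(\delta_2),\cR_E(\delta_2))$ via the top pairing. This again is functoriality: for $\widetilde D\in\Ext^1_{\tri}(D,D)$, the induced map on $\Ext$-groups factors through $\cR_E(\delta_2)$ because $\widetilde D$ is triangulable, so cupping with an element of $\Ext^1(\cR_E(\delta_3),\cR_E(\delta_2))$ only sees the $\kappa(\widetilde D)$-part. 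Here I would use Lemma \ref{lem: hL-kk} (describing $\kappa$ as sending $\widetilde D$ to its lower-graded-piece deformation $\widetilde\delta_2$) to make the identification concrete, and the top pairing is the one from (\ref{equ: hL-cupS}) with $n=3$, i.e. $\Ext^1(\cR_E(\delta_3),\cR_E(\delta_2))\times\Ext^1(\cR_E(\delta_2),\cR_E(\delta_2))\to\Ext^2(\cR_E(\delta_3),\cR_E(\delta_2))\cong E$.

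The main obstacle I anticipate is purely bookkeeping: making precise the chain of canonical isomorphisms $\Ext^2_{(\varphi,\Gamma)}(\cR_E(\delta_3),D)\cong\Ext^2_{(\varphi,\Gamma)}(\cR_E(\delta_3),\cR_E(\delta_2))\cong E$ and checking that all three rows of (\ref{equ: hL-comtri}) use compatibly normalized versions of this identification, so that the right-hand column is literally the identity on $E$ and not merely an isomorphism up to scalar. This is exactly the content that Lemma \ref{equ: hL-dim} (for $n=3$) provides — namely $D\twoheadrightarrow\cR_E(\delta_2)$ induces $\Ext^2(\cR_E(\delta_3),D)\xrightarrow{\sim}\Ext^2(\cR_E(\delta_3),\cR_E(\delta_2))$ — together with the vanishing $\Ext^2(\cR_E(\delta_3),\cR_E(\delta_1))$ is also $1$-dimensional but the relevant comparison map $\Ext^2(\cR_E(\delta_3),\cR_E(\delta_2))\to\Ext^2(\cR_E(\delta_3),D)$ induced by $\cR_E(\delta_1)\hookrightarrow D\cdots$ hmm, more carefully: one uses that $\Ext^2(\cR_E(\delta_3),\cR_E(\delta_1))\to\Ext^2(\cR_E(\delta_3),D)$ is the zero map (since the cokernel map to $\Ext^2(\cR_E(\delta_3),\cR_E(\delta_2))$ is an iso and the source is nonzero — indeed one checks $\Ext^2(\cR_E(\delta_3),\cR_E(\delta_1))\to\Ext^2(\cR_E(\delta_3),D)$ has image $0$ via the long exact sequence since $\Ext^2(\cR_E(\delta_3),D)$ is $1$-dimensional and surjects onto $\Ext^2(\cR_E(\delta_3),\cR_E(\delta_2))\ne 0$). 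Everything else — injectivity of the vertical maps in the left column of (\ref{equ: hL-comtri}), surjectivity of $\kappa$ — is already recorded in Lemma \ref{lem: hL-ext2} and Proposition \ref{prop-l3-cup}, so once the normalizations are fixed the commutativity is a diagram chase with the standard functoriality of Yoneda/cup products in the derived category of $(\varphi,\Gamma)$-modules.
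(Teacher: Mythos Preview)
Your overall strategy---reduce both claims to functoriality of the cup product and track everything through the map $\kappa$---is exactly the right one, and it is what the paper does. However, you have several concrete errors that need fixing.

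First, the dimension count: $\Ext^1_{(\varphi,\Gamma)}(\cR_E(\delta_3),\cR_E(\delta_1))$ is $1$-dimensional, not $2$-dimensional. Since $\delta_1\delta_3^{-1}=|\cdot|^2 x^{k_1+k}$ (with $k_1,k>0$), one checks $H^0=H^2=0$ for $\cR_E(\delta_1\delta_3^{-1})$, whence $\dim H^1=1$ by Euler characteristic. Likewise $\Ext^2_{(\varphi,\Gamma)}(\cR_E(\delta_3),\cR_E(\delta_1))=0$ (this is precisely the vanishing proved in Proposition~\ref{prop-l3-cup}(1) for $n=3$), so your later remark that it is $1$-dimensional is also off. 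With the correct dimension, your puzzle evaporates: the orthogonal of a $1$-dimensional subspace has codimension at most $1$, matching $\dim\Ext^1_{\tri}(D,D)=4$ directly; there is no need for an ``effective rank'' argument.

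Second, you misidentify the map in the left column of (\ref{equ: hL-comtri}). You write an injection $\Ext^1(\cR_E(\delta_3),\cR_E(\delta_2))\hookrightarrow\Ext^1(\cR_E(\delta_3),D)$, but $\cR_E(\delta_2)$ is the \emph{quotient} of $D$, not a sub, so there is no such natural map. The actual arrow is the surjection $\Ext^1(\cR_E(\delta_3),D)\twoheadrightarrow\Ext^1(\cR_E(\delta_3),\cR_E(\delta_2))$ induced by $D\twoheadrightarrow\cR_E(\delta_2)$ (this is $u_1$ in (\ref{equ: hL-commL})), directed upward.

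The paper's proof avoids rederiving everything by recognizing that this lemma is the $n=3$ specialization of the general machinery already set up in \S\ref{sec: hL-FM}: the top squares of (\ref{equ: hL-comtri}) are the bottom squares of (\ref{equ: l3-diag2}); the bottom squares are obtained from (\ref{equ: hL-commL}) by pulling back along $\kappa:\Ext^1(D,D)\to\Ext^1(D,\cR_E(\delta_2))$; and the first claim then follows immediately from Lemma~\ref{lem: hL-compL} (which gives $\Ext^1(\cR_E(\delta_2),\cR_E(\delta_2))=\Ext^1(\cR_E(\delta_3),\cR_E(\delta_1))^\perp$ in the perfect bottom pairing of (\ref{equ: hL-commL})) combined with $\Ext^1_{\tri}(D,D)=\kappa^{-1}(\Ext^1(\cR_E(\delta_2),\cR_E(\delta_2)))$. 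Once you correct the two errors above, your argument will converge to this; but quoting the earlier results makes the proof a few lines.
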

\begin{proof}
The top squares of (\ref{equ: hL-comtri}) are induced from the bottom squares of (\ref{equ: l3-diag2}). Recall $\Ext^1_{\tri}\big(D,D\big)=\kappa^{-1}\big(\Ext^1_{(\varphi,\Gamma)}(\cR_E(\delta_2),\cR_E(\delta_2))\big)\subseteq \Ext^1_{(\varphi,\Gamma)}\big(D,D\big)$ (see the proof of Lem\-ma \ref{lem: hL-tri3}). Replacing the middle objects in (\ref{equ: hL-commL}) (for $\delta_n=\delta_3$, $\delta_{n-1}=\delta_2$, $D_1^{n-1}/D_1^i=D/D_1=\cR_E(\delta_2)$ and $D_1^{n-1}=D$) by their preimage under the map $\kappa:\Ext^1_{(\varphi,\Gamma)}(D,D)\rightarrow \Ext^1_{(\varphi,\Gamma)}(D, \cR_E(\delta_2))$, we obtain the bottom squares of (\ref{equ: hL-comtri}). This gives the commutativity. Together with the second part of Lemma \ref{lem: hL-compL}, the first statement also follows.
\end{proof}

\subsubsection{Deformations of $\GL_2(\Q_p)$-representations in special cases}\label{sec: hL-ext1}

\noindent
We study certain Ext${}^1$ groups in the category of locally analytic representations of $\GL_2(\Q_p)$.\\

\noindent
For an integral weight $\mu$ of $\GL_2(\Q_p)$, we denote by $\delta_{\mu}$ the algebraic character of the diagonal torus $T(\Q_p)$ of weight $\mu$. We fix $\lambda=(k_1,k_2)\in \Z^{2}$ a dominant weight of $\GL_2(\Q_p)$ with respect to the Borel subgroup $B(\Q_p)$ of upper triangular matrices (i.e. $k_1\geq k_2$), and denote by $L(\lambda)$ the associated algebraic representation of $\GL_2(\Q_p)$ over $E$. If $s$ is the nontrivial element of the Weyl group of ${\rm GL}_2$, we have $s\cdot \lambda=(k_2-1,k_1+1)$ (dot action with respect to $B(\Q_p)$). We denote by $\St_2^{\infty}$ be the usual smooth Steinberg representation of $\GL_2(\Q_p)$ over $E$ and set:
\begin{equation*}
 I(\lambda):=\big(\Ind_{\overline{B}(\Q_p)}^{\GL_2(\Q_p)}\delta_{\lambda}\big)^{\an}, \ \ I(s\cdot \lambda):=\big(\Ind_{\overline{B}(\Q_p)}^{\GL_2(\Q_p)}\delta_{s\cdot \lambda}\big)^{\an}
\end{equation*}
where $\overline{B}(\Q_p)$ is the subgroup of lower triangular matrices. Then $I(\lambda)$ has the form $I(\lambda)\cong L(\lambda)- \St_2^{\infty}(\lambda) -I(s\cdot \lambda)$ (recall $-$ denotes a nonsplit extension), $\St_2^{\infty}(\lambda):=\St_2\otimes_E L(\lambda)$ and where the subrepresentation $L(\lambda)-\St_2^{\infty}(\lambda)$ is isomorphic to $i(\lambda):=(\Ind_{\overline{B}(\Q_p)}^{\GL_2(\Q_p)} 1)^{\infty}\otimes_E L(\lambda)$. We denote by $\St_2^{\an}(\lambda):=I(\lambda)/L(\lambda)=\St_2^{\infty}(\lambda) -I(s\cdot \lambda)$ and set:
\begin{eqnarray*}
 \widetilde{I}(\lambda)&:=&\big(\Ind_{\overline{B}(\Q_p)}^{\GL_2(\Q_p)} \delta_{\lambda}(|\cdot|^{-1}\otimes |\cdot|)\big)^{\an}\\
 \widetilde{I}(s\cdot \lambda)&:=&\big(\Ind_{\overline{B}(\Q_p)}^{\GL_2(\Q_p)} \delta_{s\cdot\lambda}(|\cdot|^{-1}\otimes |\cdot|)\big)^{\an}.
\end{eqnarray*}
Then $\widetilde{I}(\lambda)$ has the form $\St_2^{\infty}(\lambda)-L(\lambda)-\widetilde{I}(s\cdot \lambda)$ where the subrepresentation $\St_2^{\infty}(\lambda) -L(\lambda)$ is isomorphic to $\widetilde{i}(\lambda):=(\Ind_{\overline{B}(\Q_p)}^{\GL_2(\Q_p)} |\cdot|^{-1} \otimes |\cdot|)^{\infty}\otimes_E L(\lambda)$. If $V$ is a locally analytic representation of $\GL_2(\Q_p)$, we define the locally analytic homology groups $H_i(\overline N(\Q_p),V)$ as in \cite[Def.~2.7]{Kohl} where $\overline N(\Q_p)$ is the unipotent radical of $\overline B(\Q_p)$.

\begin{lemma}
We have the following isomorphisms:
\begin{eqnarray}\label{equ: hL-N1}
   H_i(\overline{N}(\Q_p), L(\lambda))&\cong &\begin{cases}
  \delta_{\lambda}& i=0 \\
  \delta_{s\cdot \lambda} & i=1 \\
  0 & i\geq 2
 \end{cases}\\
\label{equ: hL-N2}
  H_i(\overline{N}(\Q_p), \St_2^{\infty}(\lambda))&\cong &\begin{cases}
 \delta_{\lambda}(|\cdot|^{-1}\otimes |\cdot|) & i=0 \\
 \delta_{s\cdot \lambda}(|\cdot|^{-1}\otimes |\cdot|)&i=1\\
 0& i\geq 2
\end{cases}\\
\label{equ: hL-N3}
   H_i(\overline{N}(\Q_p), I(s\cdot \lambda))&\cong &\begin{cases}
  \delta_{s\cdot \lambda} & i=0 \\
  \delta_{\lambda} (|\cdot|^{-1}\otimes |\cdot|)& i=1 \\
  0 & i\geq 2
 \end{cases}\\
\label{equ: hL-N4}
   H_i(\overline{N}(\Q_p), \widetilde{I}(s\cdot \lambda))&\cong &\begin{cases}
  \delta_{s\cdot \lambda}(|\cdot|^{-1}\otimes |\cdot|) & i=0 \\
  \delta_{\lambda} & i=1 \\
  0 & i\geq 2.
 \end{cases}
\end{eqnarray}
\end{lemma}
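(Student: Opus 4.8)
The plan is to deduce all four isomorphisms from one mechanism: the $\overline N(\Q_p)$-homology of the one-dimensional (over $\Q_p$) group $\overline N(\Q_p)\cong(\Q_p,+)$, together with the ``$-\alpha$-shift'' satisfied by the $\overline N(\Q_p)$-homology of weight characters. First I would record that $\overline N(\Q_p)$ has locally analytic homological dimension $1$ (the complex computing $H_\bullet(\overline N(\Q_p),-)$ has length one; cf. \cite{Kohl}), so that $H_i(\overline N(\Q_p),-)=0$ for $i\geq 2$ on any locally analytic representation; this yields the last line of (\ref{equ: hL-N1})--(\ref{equ: hL-N4}) at once and reduces everything to $H_0$ and $H_1$. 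For $V=L(\lambda)$, which is finite dimensional with $\overline N(\Q_p)$ acting algebraically through a single Jordan block of the regular nilpotent $\overline e\in\overline{\mathfrak n}$, $H_\bullet(\overline N(\Q_p),L(\lambda))$ is the Lie algebra homology $H_\bullet(\overline{\mathfrak n},L(\lambda))$, computed by the two-term Chevalley--Eilenberg complex $\overline{\mathfrak n}\otimes_E L(\lambda)\to L(\lambda)$: thus $H_0=L(\lambda)/\overline e L(\lambda)$ is the highest weight line, $\cong\delta_\lambda$ as a $T(\Q_p)$-representation, and $H_1=\overline{\mathfrak n}\otimes_E\ker(\overline e)$ is the lowest weight line (weight $s_0\lambda$) twisted by the weight $-\alpha$ of $\overline{\mathfrak n}$, hence of weight $s_0\lambda-\alpha=s\cdot\lambda$, i.e. $\cong\delta_{s\cdot\lambda}$; this is (\ref{equ: hL-N1}). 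The same computation applied to a one-dimensional representation gives the key observation $H_0(\overline N(\Q_p),\delta_\mu)=\delta_\mu$ and $H_1(\overline N(\Q_p),\delta_\mu)=\delta_{\mu-\alpha}$ for every character $\mu$ of $T(\Q_p)$, so that $H_1$ of a weight line and $H_0$ of the ``next weight down'' coincide — this is what makes the dévissages below collapse.

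For $V=\St_2^\infty(\lambda)=\St_2^\infty\otimes_E L(\lambda)$ I would first compute $H_\bullet(\overline N(\Q_p),\St_2^\infty)$: its degree-$0$ part is the unnormalized smooth Jacquet module $|\cdot|^{-1}\otimes|\cdot|$ of the Steinberg, and, using $\St_2^\infty|_{\overline N(\Q_p)}\cong\mathcal C_c^\infty(\Q_p)$ (translation action) together with $\Q_p=\varinjlim p^{-n}\Z_p$ and Shapiro's lemma, its degree-$1$ part is $(|\cdot|^{-1}\otimes|\cdot|)\cdot\delta_{-\alpha}$, so the $-\alpha$-shift again holds. Then, filtering $L(\lambda)|_{\overline N(\Q_p)}$ by the $\overline N(\Q_p)$-stable flag of its weight subspaces (graded pieces $\delta_{\mu_0},\dots,\delta_{\mu_{m-1}}$ with $\mu_j=\lambda-j\alpha$, $m=\dim_E L(\lambda)$), the homology spectral sequence of the induced filtration of $\St_2^\infty\otimes_E L(\lambda)$ has $E^1$-terms $\delta_{\overline B}\delta_{\mu_p}$ in bidegree $(p,-p)$ and $\delta_{\overline B}\delta_{\mu_{p+1}}$ in bidegree $(p,1-p)$; the $d^1$-differentials between the equal-character pairs (which exist precisely by the $-\alpha$-shift) are isomorphisms, so all terms cancel except $E^\infty$ in bidegrees $(m-1,1-m)$ and $(0,1)$, giving $H_0\cong\delta_\lambda(|\cdot|^{-1}\otimes|\cdot|)$ and $H_1\cong\delta_{s\cdot\lambda}(|\cdot|^{-1}\otimes|\cdot|)$ since $\mu_{m-1}=s_0\lambda$ and $\mu_{m-1}-\alpha=s\cdot\lambda$; this is (\ref{equ: hL-N2}).

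Finally, for $V=I(s\cdot\lambda)$ and $V=\widetilde I(s\cdot\lambda)$ I would use that they are the top quotients of $I(\lambda)=L(\lambda)-\St_2^\infty(\lambda)-I(s\cdot\lambda)$ and $\widetilde I(\lambda)=\St_2^\infty(\lambda)-L(\lambda)-\widetilde I(s\cdot\lambda)$: one computes $H_\bullet(\overline N(\Q_p),I(\lambda))$ and $H_\bullet(\overline N(\Q_p),\widetilde I(\lambda))$ directly from the two $\overline N(\Q_p)$-orbits on $\overline B(\Q_p)\backslash\GL_2(\Q_p)\cong\mathbb P^1(\Q_p)$ (the closed base point, whose fibre contributes the inducing character with $\overline N(\Q_p)$ acting trivially, and the open big cell $\cong\overline N(\Q_p)$), and then peels off using the long exact sequences attached to the two filtration steps and the already computed $H_\bullet$ of $L(\lambda)$ and $\St_2^\infty(\lambda)$; every connecting map is forced to be zero between non-isomorphic characters of $T(\Q_p)$ (Lemma \ref{tacit}) and an isomorphism between the equal characters produced by the $-\alpha$-shift, and a bookkeeping of the characters yields (\ref{equ: hL-N3}) and (\ref{equ: hL-N4}). (Alternatively, since $I(s\cdot\lambda)$ and $\widetilde I(s\cdot\lambda)$ are irreducible, one may run the $\mathbb P^1$-argument on them directly.) The main obstacle is the honest computation of $H_\bullet(\overline N(\Q_p),(\Ind_{\overline B(\Q_p)}^{\GL_2(\Q_p)}\chi)^{\an})$ — equivalently, of the $\overline N(\Q_p)$-homology of the space of locally analytic sections vanishing at the $\overline N(\Q_p)$-fixed point of $\mathbb P^1$ — where one must control the genuinely locally analytic behaviour at that point (not merely its algebraic or smooth shadow), via the Amice--Fourier description of $D(\Q_p,E)$ and the methods of \cite{Kohl} (compare the analogous computations in \cite{Br16}).
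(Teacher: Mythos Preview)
Your approach is correct but considerably more hands-on than the paper's. The paper's proof is essentially a two-line citation: for the locally algebraic representations $L(\lambda)$ and $\St_2^\infty(\lambda)$ it invokes \cite[(4.41) \& Thm.~4.10]{Sch11}, which give a K\"unneth-type formula $H_i(\overline N(\Q_p),V^\infty\otimes_E L(\lambda))\cong (V^\infty)_{\overline N(\Q_p)}\otimes_E H_i(\overline{\mathfrak n},L(\lambda))$ (smooth Jacquet module tensored with Lie algebra homology), and for the irreducible locally analytic principal series $I(s\cdot\lambda)$ and $\widetilde I(s\cdot\lambda)$ it invokes \cite[Thm.~8.13]{Kohl} and \cite[Thm.~3.15]{Sch11}, which compute $H_\bullet(\overline N(\Q_p),-)$ of such representations directly.

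Your argument for (\ref{equ: hL-N1}) is essentially the same as the paper's. For (\ref{equ: hL-N2}), your weight-filtration spectral sequence is a roundabout way of recovering the formula $J(\St_2^\infty)\otimes H_\bullet(\overline{\mathfrak n},L(\lambda))$; it does work, but you should justify why the $d_1$'s are isomorphisms rather than just asserting it --- the cleanest way is to note that your filtration is $\St_2^\infty\otimes(\text{weight filtration of }L(\lambda))$, so the $d_1$'s are (Jacquet module of $\St_2^\infty$) tensored with the $d_1$'s of the weight filtration of $L(\lambda)$, and the latter are isomorphisms by (\ref{equ: hL-N1}). For (\ref{equ: hL-N3}) and (\ref{equ: hL-N4}), your d\'evissage from $I(\lambda)$ and $\widetilde I(\lambda)$ is unnecessary: $I(s\cdot\lambda)$ and $\widetilde I(s\cdot\lambda)$ are themselves irreducible principal series, so \cite[Thm.~8.13]{Kohl} applies to them directly --- exactly the alternative you mention in your parenthetical. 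What you flag as the ``main obstacle'' (the $\overline N(\Q_p)$-homology of a full locally analytic principal series via the $\mathbb P^1$-decomposition and Amice--Fourier theory) is precisely the content of that theorem of Kohlhaase, so you are proposing to reprove it rather than cite it.
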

\begin{proof}
The isomorphisms (\ref{equ: hL-N1}) and (\ref{equ: hL-N2}) follow from results on classical Jacquet module together with \cite[(4.41) \& Thm. 4.10]{Sch11}. The isomorphisms (\ref{equ: hL-N3}) and (\ref{equ: hL-N4}) follow from \cite[Thm. 8.13]{Kohl} and \cite[Thm. 3.15]{Sch11}.
\end{proof}

\noindent
The following statement is not new, we include a proof for the reader's convenience.

\begin{theorem}\label{thm: hL-lGL2}
We have natural isomorphisms:
\begin{equation}\small\label{equ: hL-sim}
\Hom(\Q_p^{\times}, E)\xlongrightarrow{\sim} \Ext^1_{\GL_2(\Q_p)}(L(\lambda), I(\lambda)/L(\lambda))\xlongleftarrow{\sim}\Ext^1_{\GL_2(\Q_p)}(\widetilde{I}(\lambda)/\St_2^{\infty}(\lambda), I(\lambda)/L(\lambda)).
\end{equation}
\end{theorem}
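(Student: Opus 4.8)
The plan is to deduce both isomorphisms from a single tool: the Grothendieck spectral sequence
\[
E_2^{i,j}=\Ext^i_{T(\Q_p)}\big(H_j(\overline{N}(\Q_p),V),\chi\big)\ \Longrightarrow\ \Ext^{i+j}_{\GL_2(\Q_p)}\big(V,(\Ind_{\overline{B}(\Q_p)}^{\GL_2(\Q_p)}\chi)^{\an}\big)
\]
coming from the exactness of $(\Ind_{\overline{B}(\Q_p)}^{\GL_2(\Q_p)}-)^{\an}$ and its adjunction with the $\overline{N}(\Q_p)$-homology functor $H_0(\overline{N}(\Q_p),-)$ (see \cite{Kohl}), fed with the homology computations \eqref{equ: hL-N1}--\eqref{equ: hL-N4}. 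Since $k_1\geq k_2$, the four characters $\delta_\lambda$, $\delta_{s\cdot\lambda}$, $\delta_\lambda(|\cdot|^{-1}\otimes|\cdot|)$, $\delta_{s\cdot\lambda}(|\cdot|^{-1}\otimes|\cdot|)$ of $T(\Q_p)$ are pairwise distinct, so Lemma \ref{tacit} annihilates every $E_2$-term with mismatched torus characters. First I would record the consequences: $\Ext^\bullet_{\GL_2(\Q_p)}(L(\lambda),\widetilde{I}(\lambda))=\Ext^\bullet_{\GL_2(\Q_p)}(L(\lambda),\widetilde{I}(s\cdot\lambda))=0$, $\Ext^\bullet_{\GL_2(\Q_p)}(\widetilde{I}(s\cdot\lambda),\widetilde{I}(\lambda))=\Ext^\bullet_{\GL_2(\Q_p)}(\widetilde{I}(s\cdot\lambda),I(s\cdot\lambda))=0$, and $\Ext^i_{\GL_2(\Q_p)}(L(\lambda),I(\lambda))\cong\Ext^i_{T(\Q_p)}(\delta_\lambda,\delta_\lambda)$, $\Ext^i_{\GL_2(\Q_p)}(L(\lambda),I(s\cdot\lambda))\cong\Ext^{i-1}_{T(\Q_p)}(\delta_{s\cdot\lambda},\delta_{s\cdot\lambda})$, $\Ext^i_{\GL_2(\Q_p)}(\widetilde{I}(s\cdot\lambda),I(\lambda))\cong\Ext^{i-1}_{T(\Q_p)}(\delta_\lambda,\delta_\lambda)$.

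For the first isomorphism, apply $\Ext^\bullet_{\GL_2(\Q_p)}(L(\lambda),-)$ to $0\to L(\lambda)\to I(\lambda)\to I(\lambda)/L(\lambda)\to 0$. Because $L(\lambda)$ is the socle of $I(\lambda)$ one has $\Hom_{\GL_2(\Q_p)}(L(\lambda),L(\lambda))\xlongrightarrow{\sim}\Hom_{\GL_2(\Q_p)}(L(\lambda),I(\lambda))=E$ and $\Hom_{\GL_2(\Q_p)}(L(\lambda),I(\lambda)/L(\lambda))=0$, so the long exact sequence reduces to
\[
0\to \Ext^1_{\GL_2(\Q_p)}(L(\lambda),L(\lambda))\to \Ext^1_{\GL_2(\Q_p)}(L(\lambda),I(\lambda))\to \Ext^1_{\GL_2(\Q_p)}(L(\lambda),I(\lambda)/L(\lambda))\to \Ext^2_{\GL_2(\Q_p)}(L(\lambda),L(\lambda)).
\]
Here $\Ext^1_{\GL_2(\Q_p)}(L(\lambda),L(\lambda))\cong\Hom_{\la}(\GL_2(\Q_p),E)\cong\Hom(\Q_p^\times,E)$ via $\det$ (decompose $\End_E(L(\lambda))$ into irreducible algebraic summands; only the trivial one contributes to $H^1$), $\Ext^1_{\GL_2(\Q_p)}(L(\lambda),I(\lambda))\cong\Ext^1_{T(\Q_p)}(\delta_\lambda,\delta_\lambda)\cong\Hom(T(\Q_p),E)$, and the left-hand map is pullback along $\det\colon T(\Q_p)\to\Q_p^\times$. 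Granting that the connecting map into $\Ext^2_{\GL_2(\Q_p)}(L(\lambda),L(\lambda))$ vanishes, we obtain $\Ext^1_{\GL_2(\Q_p)}(L(\lambda),I(\lambda)/L(\lambda))\cong\Hom(T(\Q_p),E)/\det^{*}\Hom(\Q_p^\times,E)\cong\Hom(\Q_p^\times,E)$, the last isomorphism being $(\psi_1,\psi_2)\mapsto\psi_1-\psi_2$. Concretely, the composite sends $\psi\in\Hom(\Q_p^\times,E)$ to the class of a natural subquotient of the self-extension $(\Ind_{\overline{B}(\Q_p)}^{\GL_2(\Q_p)}\widetilde{\delta})^{\an}$ of $I(\lambda)$, where $\widetilde{\delta}$ is the deformation of $\delta_\lambda$ over $E[\epsilon]/\epsilon^2$ obtained by multiplying its first coordinate by $1+\psi\epsilon$; this makes the map manifestly natural.

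For the second isomorphism, the inclusion $L(\lambda)\hookrightarrow \widetilde{I}(\lambda)/\St_2^{\infty}(\lambda)$, with cokernel $\widetilde{I}(s\cdot\lambda)$, induces a restriction map $\Ext^1_{\GL_2(\Q_p)}(\widetilde{I}(\lambda)/\St_2^{\infty}(\lambda),I(\lambda)/L(\lambda))\to\Ext^1_{\GL_2(\Q_p)}(L(\lambda),I(\lambda)/L(\lambda))$, and I claim it is the inverse of the map in the theorem. Its bijectivity follows from the long exact sequence of $\Ext^\bullet_{\GL_2(\Q_p)}(-,I(\lambda)/L(\lambda))$ attached to $0\to L(\lambda)\to \widetilde{I}(\lambda)/\St_2^{\infty}(\lambda)\to\widetilde{I}(s\cdot\lambda)\to 0$, once one checks $\Ext^1_{\GL_2(\Q_p)}(\widetilde{I}(s\cdot\lambda),I(\lambda)/L(\lambda))=0$ (by dévissage along $0\to\St_2^{\infty}(\lambda)\to I(\lambda)/L(\lambda)\to I(s\cdot\lambda)\to 0$ and $0\to\St_2^{\infty}(\lambda)\to\widetilde{I}(\lambda)\to\widetilde{I}(\lambda)/\St_2^{\infty}(\lambda)\to 0$ from the first-paragraph vanishings, which also yield $\Ext^1_{\GL_2(\Q_p)}(\widetilde{I}(s\cdot\lambda),\St_2^{\infty}(\lambda))=0$) and that the connecting map $\Ext^1_{\GL_2(\Q_p)}(L(\lambda),I(\lambda)/L(\lambda))\to\Ext^2_{\GL_2(\Q_p)}(\widetilde{I}(s\cdot\lambda),I(\lambda)/L(\lambda))$ vanishes.

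The structural reductions above are mechanical. The main obstacle is the vanishing of the two connecting homomorphisms into the $\Ext^2$-groups — equivalently, the injectivity of $\Ext^2_{\GL_2(\Q_p)}(L(\lambda),L(\lambda))\hookrightarrow\Ext^2_{\GL_2(\Q_p)}(L(\lambda),I(\lambda))$ and of the analogous map with $\widetilde{I}(s\cdot\lambda)$ — which forces one to identify these $\Ext^2$-groups explicitly (again via the spectral sequence, together with a small amount of $\mathfrak{gl}_2$-cohomology for $\Ext^\bullet_{\GL_2(\Q_p)}(L(\lambda),L(\lambda))$); a secondary point is to verify that the two resulting isomorphisms with $\Hom(\Q_p^\times,E)$ are the natural ones, i.e. compatible with $\det^{*}$ and with restriction along $L(\lambda)\hookrightarrow\widetilde{I}(\lambda)/\St_2^{\infty}(\lambda)$.
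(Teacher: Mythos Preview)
Your structural reductions are sound, and the obstacle you flag --- the vanishing of the two connecting maps into $\Ext^2$ --- is exactly the point where your approach diverges from the paper's. The paper does not attempt to compute those $\Ext^2$-groups directly. Instead, for the first isomorphism it works from the outset in the category with fixed central character, using the spectral sequence \cite[(4.38)]{Sch11} in place of \cite[(4.37)]{Sch11}. There \cite[Cor.~4.8]{Sch11} gives $\Ext^i_{\GL_2(\Q_p),Z}(L(\lambda),L(\lambda))=0$ for $i=1,2$, so the entire $L(\lambda)$-column in the long exact sequence disappears, and one obtains $\Ext^1_{\GL_2(\Q_p),Z}(L(\lambda),I(\lambda)/L(\lambda))\cong\Ext^1_{T(\Q_p),Z}(\delta_\lambda,\delta_\lambda)\cong\Hom(T(\Q_p)/Z(\Q_p),E)\cong\Hom(\Q_p^\times,E)$ directly. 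The passage from $\Ext^1_Z$ to $\Ext^1$ then costs nothing: Lemma~\ref{lem: hL-cc} (together with \cite[Lem.~2.1.1]{Br16}) shows that every extension of $L(\lambda)$ by $I(\lambda)/L(\lambda)$ automatically has the right central character. This is cleaner than your route, which must contend with $\Ext^1_{\GL_2(\Q_p)}(L(\lambda),L(\lambda))\ne 0$ and an unidentified $\Ext^2$.

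For the second isomorphism the paper proves the stronger statement $\Ext^i_{\GL_2(\Q_p)}(\widetilde{I}(s\cdot\lambda),I(\lambda)/L(\lambda))=0$ for both $i=1$ and $i=2$, so the connecting map issue never arises. The vanishing for $I(s\cdot\lambda)$ you have; for $\St_2^\infty(\lambda)$ the paper invokes \cite[Prop.~3.1.6]{Br16} to get $\Ext^i_{\GL_2(\Q_p)}(\widetilde{I}(s\cdot\lambda),\St_2^\infty(\lambda))=0$ for $i=1,2$ in one stroke. Your d\'evissage through $\widetilde{I}(\lambda)$ recovers $i=1$ but, as you will find, does not immediately yield $i=2$ because $\Ext^1_{\GL_2(\Q_p)}(\widetilde{I}(s\cdot\lambda),\widetilde{I}(\lambda)/\St_2^\infty(\lambda))$ is not obviously zero (the quotient contains $\widetilde{I}(s\cdot\lambda)$ itself). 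So your proposed path is workable in principle but genuinely harder; the central-character trick and the citation to \cite{Br16} are what make the paper's proof short.
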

\begin{proof}
The first isomorphism follows from \cite[\S~2.1]{Br04}, but we include a proof. By \cite[(4.38)]{Sch11} and (\ref{equ: hL-N1}), we have isomorphisms (where $Z(\Q_p)\cong \Q_p^{\times}$ is the center of $\GL_2(\Q_p)$, that we often shorten into $Z$):
\begin{equation*}
\Hom(T(\Q_p)/Z(\Q_p), E) \xlongrightarrow{\sim} \Ext^1_{T(\Q_p),Z}(\delta_{\lambda}, \delta_{\lambda}) \xlongrightarrow{\sim} \Ext^1_{\GL_2(\Q_p),Z}(L(\lambda), I(\lambda)).
\end{equation*}
By \cite[Cor. 4.8]{Sch11}, we have $\Ext^1_{\GL_2(\Q_p),Z}(L(\lambda), L(\lambda))=\Ext^2_{\GL_2(\Q_p),Z}(L(\lambda), L(\lambda))=0$. By d\'evissage, Lemma \ref{lem: hL-cc} and \cite[Lem.~2.1.1]{Br16}, we have then:
\begin{equation}\footnotesize\label{equ: hL-ecc}
\Ext^1_{\GL_2(\Q_p),Z}(L(\lambda), I(\lambda)) \xlongrightarrow{\sim} \Ext^1_{\GL_2(\Q_p),Z}(L(\lambda), I(\lambda)/L(\lambda)) \cong \Ext^1_{\GL_2(\Q_p)}(L(\lambda), I(\lambda)/L(\lambda)).
\end{equation}
The first isomorphism follows. By \cite[Prop. 3.1.6]{Br16} we have $\Ext^i_{\GL_2(\Q_p)}(\widetilde{I}(s\cdot \lambda), \St_2^{\infty}(\lambda))=0$ for $i=1,2$. By \cite[(4.37)]{Sch11} and (\ref{equ: hL-N4}), we have $\Ext^i_{\GL_2(\Q_p)}(\widetilde{I}(s\cdot \lambda), I(s\cdot \lambda))=0$ for $i\in \Z_{\geq 0}$. By d\'evissage, we deduce $\Ext^i_{\GL_2(\Q_p)}(\widetilde{I}(s\cdot \lambda), I(\lambda)/L(\lambda))=0$ for $i=1, 2$. The second isomorphism then follows by d\'evissage again.
\end{proof}

\noindent
By \cite[(4.37) \& (4.38)]{Sch11}, we have a commutative diagram (with the notation of the last proof):
\begin{equation}\label{diag: hL-GL2}
\begin{CD}
\Hom(T(\Q_p)/Z(\Q_p), E) @> \sim>> \Ext^1_{T(\Q_p),Z}(\delta_{\lambda}, \delta_{\lambda}) @> \sim>> \Ext^1_{\GL_2(\Q_p),Z}(L(\lambda), I(\lambda))\\
@VVV @VVV @VVV \\
\Hom(T(\Q_p), E) @> \sim>> \Ext^1_{T(\Q_p)}(\delta_{\lambda}, \delta_{\lambda}) @> \sim>> \Ext^1_{\GL_2(\Q_p)}(L(\lambda), I(\lambda)).
\end{CD}
\end{equation}
Consider the short exact sequence:
\begin{equation}\footnotesize
0 \lra \Ext^1_{\GL_2(\Q_p)}(L(\lambda), L(\lambda)) \lra \Ext^1_{\GL_2(\Q_p)}(L(\lambda), I(\lambda)) \lra \Ext^1_{\GL_2(\Q_p)}(L(\lambda), I(\lambda)/L(\lambda))\longrightarrow 0
\end{equation}
where the last map is surjective by (\ref{equ: hL-ecc}). Contrary to $\Ext^1_{\GL_2(\Q_p),Z}(L(\lambda), L(\lambda))=0$, we have $\Ext^1_{\GL_2(\Q_p)}(L(\lambda), L(\lambda))\ne 0$. More precisely, we have a commutative diagram:
\begin{equation*}
\begin{CD}
 \Hom(Z(\Q_p), E) @> \sim >> \Ext^1_{\GL_2(\Q_p)}(L(\lambda), L(\lambda))\\
 @VVV @VVV \\
\Hom(T(\Q_p), E) @> \sim>> \Ext^1_{\GL_2(\Q_p)}(L(\lambda), I(\lambda))
\end{CD}
\end{equation*}
where the bottom horizontal map is the composition of the bottom line in (\ref{diag: hL-GL2}) and where the left vertical map is given by $\psi\mapsto \psi\circ \dett$. In particular, the natural surjective map
\begin{equation}\label{equ: hL-L2}
\Hom(T(\Q_p), E) \twoheadlongrightarrow \Ext^1_{\GL_2(\Q_p)}(L(\lambda), I(\lambda)/L(\lambda))
\end{equation}
is zero on $\Hom(Z(\Q_p),E)$.\\

\noindent
We can make this map more explicit (e.g. by unwinding the spectral sequence \cite[(4.37)]{Sch11}). Let $\psi\in \Hom(\Q_p^{\times}, E)$ and choose $\psi_i\in \Hom(\Q_p^{\times}, E)$ for $i=1,2$ such that $\psi_1-\psi_2=\psi$. Let $\sigma(\psi_1,\psi_2)$ be the following two dimensional representation of $T(\Q_p)$:
\begin{equation*}
 \sigma(\psi_1,\psi_2)\begin{pmatrix}
  a & 0 \\ 0 & d
 \end{pmatrix}:=\begin{pmatrix}
  1 & \psi_1(a)+\psi_2(d)\\0 & 1
 \end{pmatrix}
\end{equation*}
and consider the natural exact sequence:
\begin{equation*}
 0\longrightarrow I(\lambda) \longrightarrow \big(\Ind_{\overline{B}(\Q_p)}^{\GL_2(\Q_p)} \delta_{\lambda}\otimes_E \sigma(\psi_1, \psi_2)\big)^{\an} \xlongrightarrow{\pr} I(\lambda) \longrightarrow 0.
\end{equation*}
Then the locally analytic representation:
\begin{equation}\label{equ: hL-upsi}
\pi(\lambda, \psi)^-:=\pr^{-1}(L(\lambda))/L(\lambda)\ \cong \ \St_2^{\an}(\lambda)-L(\lambda)
\end{equation}
only depends on $\psi$ and not on the choice of $\psi_1$ and $\psi_2$, and the map (\ref{equ: hL-L2}) is given by sending $\psi$ to $\pi(\lambda, \psi)^-$ (seen as an element of $\Ext^1_{\GL_2(\Q_p)}(L(\lambda), I(\lambda)/L(\lambda))$). Moreover:
\begin{equation}\label{equ: hL-wpsi}
\pi(\lambda,\psi_1,\psi_2)^-:=\pr^{-1}(i(\lambda))/L(\lambda)
\end{equation}
actually depends on (and is determined by) both $\psi_1$ and $\psi_2$. By \cite[(4.37)]{Sch11} and (\ref{equ: hL-N1}), (\ref{equ: hL-N2}), we have $\Hom(T(\Q_p),E)\buildrel\sim\over\longrightarrow \Ext^1_{T(\Q_p)}(\delta_{\lambda}, \delta_{\lambda}) \xlongrightarrow{\sim} \Ext^1_{\GL_2(\Q_p)}(i(\lambda), I(\lambda))$ and the composition is given by mapping $(\psi_1, \psi_2)$ to $\pr^{-1}(i(\lambda))$.
By \cite[Prop. 15]{Orl} and the same argument as in the proof of \cite[Cor. 2]{Orl} (see also \cite{Da}), we have:
\begin{equation}\label{equ: hL-smooth}
 \Ext^i_{\GL_2(\Q_p),\infty}(i(0), 1)=0,\ \ i\in \Z_{\geq 0}.
\end{equation}
By a version without central character of the spectral sequence \cite[(4.27)]{Sch11} (which follows exactly by the same argument), we deduce from (\ref{equ: hL-smooth}) $\Ext^i_{\GL_2(\Q_p)}(i(\lambda), L(\lambda))=0$ for $i\in \Z_{\geq 0}$ and in particular that the natural push-forward map:
\begin{equation*}
 \Ext^1_{\GL_2(\Q_p)}(i(\lambda), I(\lambda)) \longrightarrow \Ext^1_{\GL_2(\Q_p)}(i(\lambda), I(\lambda)/L(\lambda))
\end{equation*}
is a bijection. Putting the above maps together, we obtain:
\begin{equation*}
 \Hom(T(\Q_p),E)\xlongrightarrow{\sim} \Ext^1_{\GL_2(\Q_p)}(i(\lambda), I(\lambda)/L(\lambda)), \ (\psi_1, \psi_2)\longmapsto \pi(\lambda,\psi_1, \psi_2)^-.
\end{equation*}

\noindent
Let $0\neq \psi\in \Hom(\Q_p^{\times}, E)$ and define:
\begin{equation}\label{pipsi}
\pi(\lambda, \psi)\in \Ext^1_{\GL_2(\Q_p)}\big(\widetilde{I}(\lambda)/\St_2^{\infty}(\lambda), I(\lambda)/L(\lambda)\big)
\end{equation}
to be the preimage of $\pi(\lambda, \psi)^-$ in (\ref{equ: hL-upsi}) via the second isomorphism of (\ref{equ: hL-sim}) (we should write $[\pi(\lambda, \psi)]$ to denote some element of the above $\Ext^1$ associated to the representation $\pi(\lambda, \psi)$, but as in \S~\ref{sec: hL-FM3} we drop the $[\cdot]$, which won't cause any ambiguity). So one has $\pi(\lambda,\psi)\simeq I(\lambda)/L(\lambda)-(L(\lambda)-\widetilde I(s\cdot \lambda))$ and we recall the irreducible constituents of $I(\lambda)/L(\lambda)$ are $\St_2^{\infty}(\lambda)$ and $I(s\cdot\lambda)$. We now study the extension groups $\Ext^1_{\GL_2(\Q_p)}(\pi(\lambda, \psi), \pi(\lambda, \psi))$ and $\Ext^1_{\GL_2(\Q_p),Z}(\pi(\lambda, \psi), \pi(\lambda, \psi))$. \ Note \ first \ that \ by \ \cite[Lem.~2.1.1]{Br16} \ one \ can \ identify $\Ext^1_{\GL_2(\Q_p)}(\pi(\lambda, \psi), \pi(\lambda, \psi))$ with deformations $\widetilde{\pi}$ of $\pi(\lambda, \psi)$ over $E[\epsilon]/\epsilon^2$. Let $\chi_{\lambda}:=\delta_{\lambda}|_{Z(\Q_p)}$, which is the central character of $\pi(\lambda, \psi)$.

\begin{lemma}\label{lem: hL-cent2}
For any $\widetilde{\pi}\in \Ext^1_{\GL_2(\Q_p)}(\pi(\lambda, \psi), \pi(\lambda, \psi))$, there exists a unique lifting $\widetilde{\chi}_{\lambda}: \Q_p^{\times} \ra (E[\epsilon]/\epsilon^2)^{\times}$ of $\chi_{\lambda}$ such that $Z(\Q_p)$ acts on $\widetilde{\pi}$ via $\widetilde{\chi}_\lambda$.
\end{lemma}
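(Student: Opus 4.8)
The plan is to reduce the statement to the single fact that $\End_{\GL_2(\Q_p)}(\pi(\lambda,\psi))=E$, and then to run the standard infinitesimal argument (in the spirit of Lemma~\ref{lem: hL-cc} and its proof). View $\widetilde{\pi}$ as an admissible locally $\Q_p$-analytic representation over the ring $E[\epsilon]/\epsilon^2$, where $\epsilon$ acts via $\widetilde{\pi}\twoheadrightarrow \pi(\lambda,\psi)\hookrightarrow\widetilde{\pi}$, so that $\ker(\epsilon)=\epsilon\widetilde{\pi}\cong\pi(\lambda,\psi)$ and $\widetilde{\pi}/\epsilon\widetilde{\pi}\cong\pi(\lambda,\psi)$ (this is the identification used via \cite[Lem.~2.1.1]{Br16}). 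Since $Z(\Q_p)$ is central in $\GL_2(\Q_p)$, it acts on $\widetilde{\pi}$ through $E[\epsilon]/\epsilon^2$-linear $\GL_2(\Q_p)$-equivariant endomorphisms, so it is enough to show that every such endomorphism $\Phi$ is of the form $(c_0+c_1\epsilon)\,\id$. Reducing $\Phi$ modulo $\epsilon$ gives $\bar{\Phi}\in\End_{\GL_2(\Q_p)}(\pi(\lambda,\psi))=E$, say $\bar{\Phi}=c_0\,\id$; then $\Phi-c_0$ annihilates $\epsilon\widetilde{\pi}$ and has image inside $\epsilon\widetilde{\pi}$, hence factors through a $\GL_2(\Q_p)$-equivariant endomorphism of $\pi(\lambda,\psi)\cong\epsilon\widetilde{\pi}$, again a scalar $c_1\,\id$. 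Therefore $Z(\Q_p)$ acts on $\widetilde{\pi}$ through a map $\widetilde{\chi}_{\lambda}\colon\Q_p^{\times}\to E[\epsilon]/\epsilon^2$ landing in $(E[\epsilon]/\epsilon^2)^{\times}$; it is automatically a continuous character (being read off from a continuous $E[\epsilon]/\epsilon^2$-linear action of the group $Z(\Q_p)$) whose reduction mod $\epsilon$ is the central character $\chi_{\lambda}$ of $\pi(\lambda,\psi)$, and it is unique because it is entirely determined by the $Z(\Q_p)$-action on $\widetilde{\pi}$.

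It remains to establish $\End_{\GL_2(\Q_p)}(\pi(\lambda,\psi))=E$, which is the actual content. Recall $\pi(\lambda,\psi)\cong I(\lambda)/L(\lambda)-(L(\lambda)-\widetilde{I}(s\cdot\lambda))$; it has finite length with the four pairwise distinct irreducible constituents $\St_2^{\infty}(\lambda)$, $I(s\cdot\lambda)$ (the constituents of $I(\lambda)/L(\lambda)=\St_2^{\an}(\lambda)$), $L(\lambda)$ and $\widetilde{I}(s\cdot\lambda)$, so it is multiplicity free. The first step is to identify its socle with the irreducible subrepresentation $\St_2^{\infty}(\lambda)$: an irreducible subrepresentation either lies in $\St_2^{\an}(\lambda)=I(\lambda)/L(\lambda)$, whose socle is $\St_2^{\infty}(\lambda)$, or has nonzero image in the quotient $\widetilde{I}(\lambda)/\St_2^{\infty}(\lambda)=L(\lambda)-\widetilde{I}(s\cdot\lambda)$, whence it is isomorphic to $L(\lambda)$ and (as $L(\lambda)$ is not a constituent of $\St_2^{\an}(\lambda)$) lifts to a subrepresentation $\cong L(\lambda)$ splitting the pullback of the extension $0\to\St_2^{\an}(\lambda)\to\pi(\lambda,\psi)\to L(\lambda)-\widetilde{I}(s\cdot\lambda)\to 0$ along $L(\lambda)\hookrightarrow L(\lambda)-\widetilde{I}(s\cdot\lambda)$; but that pullback is $\pi(\lambda,\psi)^-=\St_2^{\an}(\lambda)-L(\lambda)$, which is nonsplit since $\psi\neq 0$ (see (\ref{equ: hL-upsi}) and the first isomorphism of (\ref{equ: hL-sim})), a contradiction. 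The second step is then formal: for $\phi\in\End_{\GL_2(\Q_p)}(\pi(\lambda,\psi))$ the restriction $\phi|_{\soc}$ is a scalar $c$ by Schur's lemma applied to the irreducible admissible representation $\St_2^{\infty}(\lambda)$, and $\phi-c$ kills the socle; hence $\St_2^{\infty}(\lambda)$ does not occur in the image of $\phi-c$, which by multiplicity freeness forces $\phi-c$ to vanish, because any nonzero subrepresentation of $\pi(\lambda,\psi)$ contains its (irreducible) socle $\St_2^{\infty}(\lambda)$.

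The main obstacle is precisely the computation of $\End_{\GL_2(\Q_p)}(\pi(\lambda,\psi))$: it depends on knowing that the four irreducible constituents are pairwise non-isomorphic (which I would verify by comparing locally algebraic vectors together with central characters and Sen weights of the principal series $I(s\cdot\lambda)$ and $\widetilde{I}(s\cdot\lambda)$, or simply cite \cite{Br16}) and on the nonsplitness input $\psi\neq 0$ that pins down the socle. Once these are in hand everything is formal, and the remainder is an entirely routine infinitesimal deformation computation.
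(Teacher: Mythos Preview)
Your proof is correct. Both your approach and the paper's rest on the single fact $\End_{\GL_2(\Q_p)}(\pi(\lambda,\psi))=E$, which the paper cites from \cite[\S~3.4]{DosSch} (multiplicity freeness) and which you re-derive directly via the socle computation; your socle argument is the same one the paper uses implicitly later (it quotes $\soc_{\GL_2(\Q_p)}\pi(\lambda,\psi)\cong\St_2^\infty(\lambda)$ without proof).

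The difference lies in how you pass from $\End(\pi(\lambda,\psi))=E$ to a uniform central character on $\widetilde\pi$. The paper works $E$-linearly: for each $z$ the map $v\mapsto (z-\chi_\lambda(z))v$ factors through an endomorphism of $\pi(\lambda,\psi)$ and is therefore in $E\epsilon$, which yields a character $\widetilde\chi_\lambda$ \emph{a priori depending on the vector $v$}; the paper then needs an additional step (via $\soc=\St_2^\infty(\lambda)$ and Lemma~\ref{lem: hL-cc}) to rule out that different vectors see different liftings. You instead observe at the outset that $Z(\Q_p)$ acts by $E[\epsilon]/\epsilon^2$-linear $\GL_2(\Q_p)$-equivariant endomorphisms of $\widetilde\pi$ (since the $\epsilon$-action is $\GL_2(\Q_p)$-equivariant), and you compute $\End_{E[\epsilon]/\epsilon^2,\GL_2(\Q_p)}(\widetilde\pi)=E[\epsilon]/\epsilon^2$ by the standard two-step reduction. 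This gives the character uniformly and bypasses the paper's extra argument. Your route is slightly more economical; the paper's has the minor advantage of making the role of the socle and Lemma~\ref{lem: hL-cc} explicit, which is reused elsewhere in \S\ref{sec: hL-ext1}.
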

\begin{proof}
 For $v\in \widetilde{\pi}$, we have $(z-\chi_{\lambda}(z))v\in \pi(\lambda, \psi)$ and thus $(z-\chi_{\lambda}(z))^2 v=0$ for all $z\in Z(\Q_p)$. The map $v\mapsto (z-\chi_{\lambda}(z))v$ induces a morphism from $\pi(\lambda, \psi)$ (as quotient of $\widetilde{\pi}$) to $\pi(\lambda, \psi)$ (as subobject of $\widetilde{\pi}$) which is $\GL_2(\Q_p)$-equivariant since $z$ is in the center. But any such endomorphism of $\pi(\lambda, \psi)$ is a scalar by \cite[\S~3.4]{DosSch} since all (absolutely) irreducible constituents of $\pi(\lambda, \psi)$ are distinct. So for any $v\in \widetilde{\pi}$ and $z\in Z(\Q_p)$, we have $(z-\chi_{\lambda}(z))v\in E(\epsilon v)$, and hence there exists $\widetilde{\chi}_{\lambda}: Z(\Q_p)\ra (E[\epsilon]/\epsilon^2)^{\times}$ (which \emph{a priori} depends on $v$) such that $z v=\widetilde{\chi}_{\lambda}(z) v$ for all $z\in Z(\Q_p)$. We fix a $v$ which is not in $\pi(\lambda, \psi)=\epsilon \widetilde{\pi}$ and define:
 \begin{equation*}
 \pi(\widetilde{\chi}_{\lambda}):=\big\{w\in \widetilde{\pi},\ (z-\widetilde{\chi}_{\lambda}(z))w=0 \ \forall \ z\in Z(\Q_p)\big\}
 \end{equation*}
which is a $\GL_2(\Q_p)$-subrepresentation of $\widetilde{\pi}$ strictly containing $\pi(\lambda, \psi)=\epsilon \widetilde{\pi}$. Thus we have $\St_2^{\infty}(\lambda) \subseteq \pi(\widetilde{\chi}_{\lambda})/\pi(\lambda, \psi)$ since $\soc_{\GL_2(\Q_p)} \pi(\lambda, \psi)\cong\St_2^{\infty}(\lambda)$ and $\pi(\widetilde{\chi}_{\lambda})/\pi(\lambda, \psi)$ is a nonzero subrepresentation of $\widetilde\pi/\epsilon \widetilde{\pi}\simeq \pi(\lambda, \psi)$. We need to prove $\pi(\widetilde{\chi}_{\lambda})= \widetilde{\pi}$. If $\pi(\widetilde{\chi}_{\lambda})\neq \widetilde{\pi}$, then there exists another lifting $\widetilde{\chi}_{\lambda}'\neq \widetilde \chi_{\lambda}$ such that $\pi(\widetilde{\chi}_{\lambda}')$ strictly contains $\pi(\lambda, \psi)$, and hence $\St_2^{\infty}(\lambda)\subseteq \pi(\widetilde{\chi}_{\lambda}')/\pi(\lambda, \psi)$. This implies $Z(\Q_p)$ acts on the subextension $V_1\subset \widetilde \pi$ of $\St_2^{\infty}(\lambda)$ by $\pi(\lambda, \psi)=\epsilon \widetilde{\pi}$ via $\chi_{\lambda}$. However, by Lemma \ref{lem: hL-cc}, this then implies $Z(\Q_p)$ acts on the whole $\widetilde{\pi}$ by the character $\chi_{\lambda}$, a contradiction. Hence $\pi(\widetilde{\chi}_{\lambda})=\widetilde{\pi}$.
\end{proof}

\begin{lemma}\label{Lem: hL-cent4}
We have a short exact sequence:
\begin{equation*}
0 \lra \Ext^1_{\GL_2(\Q_p),Z}(\pi(\lambda, \psi), \pi(\lambda, \psi)) \lra \Ext^1_{\GL_2(\Q_p)}(\pi(\lambda, \psi), \pi(\lambda,\psi)) \xlongrightarrow{\pr} \Hom(\Q_p^{\times}, E)\lra 0
\end{equation*}
where $\pr$ sends $\widetilde{\pi}$ to $(\widetilde{\chi}_\lambda\chi_\lambda^{-1}-1)/\epsilon$ where $\widetilde{\chi}_\lambda$ is the central character of $\widetilde{\pi}$ given by Lemma \ref{lem: hL-cent2}.
\end{lemma}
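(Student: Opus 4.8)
Recall (by \cite[Lem.~2.1.1]{Br16}) that an element of $\Ext^1_{\GL_2(\Q_p)}(\pi(\lambda,\psi),\pi(\lambda,\psi))$ is the same datum as a deformation $\widetilde{\pi}$ of $\pi(\lambda,\psi)$ over $E[\epsilon]/\epsilon^2$. The plan is to \emph{build} the map $\pr$ directly out of Lemma \ref{lem: hL-cent2}: that lemma attaches to each such $\widetilde\pi$ a unique character $\widetilde{\chi}_\lambda:\Q_p^\times\to (E[\epsilon]/\epsilon^2)^\times$ with $\widetilde{\chi}_\lambda\equiv\chi_\lambda\pmod\epsilon$ through which $Z(\Q_p)$ acts on $\widetilde\pi$. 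Hence $\widetilde{\chi}_\lambda\chi_\lambda^{-1}$ takes values in $1+E\epsilon$ and is of the form $1+\pr(\widetilde\pi)\epsilon$ for a unique $\pr(\widetilde\pi)\in\Hom(\Q_p^\times,E)$, which defines $\pr$. I would then check that $\pr$ is $E$-linear; this can be done by a direct Baer-sum computation, or more conceptually by observing that $\widetilde\pi\mapsto\widetilde{\chi}_\lambda$ is a morphism from the deformation functor of $\pi(\lambda,\psi)$ to that of $\chi_\lambda$ — the argument of Lemma \ref{lem: hL-cent2}, which only uses $\End_{\GL_2(\Q_p)}\pi(\lambda,\psi)=E$, works verbatim over any Artinian local $E$-algebra — and noting that $\pr$ is its tangent map.

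Next I would establish exactness at the first two terms. By construction $\Ker(\pr)$ consists of those deformations on which $Z(\Q_p)$ acts through the \emph{constant} lift $\chi_\lambda$ of $\chi_\lambda$; these are exactly the extensions of $\pi(\lambda,\psi)$ by itself in the category of locally analytic representations with central character $\chi_\lambda$, i.e. precisely the image of the natural map $\Ext^1_{\GL_2(\Q_p),Z}(\pi(\lambda,\psi),\pi(\lambda,\psi))\to\Ext^1_{\GL_2(\Q_p)}(\pi(\lambda,\psi),\pi(\lambda,\psi))$. This comparison map is injective: a $\GL_2(\Q_p)$-equivariant splitting of an extension that already has central character $\chi_\lambda$ is automatically a morphism in the subcategory of representations with central character $\chi_\lambda$, hence a splitting there. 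This gives exactness at $\Ext^1_{\GL_2(\Q_p),Z}$ and at $\Ext^1_{\GL_2(\Q_p)}$.

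Finally, for surjectivity of $\pr$ I would write down an explicit section. For $\eta\in\Hom(\Q_p^\times,E)$ put $j(\eta):=\big(\pi(\lambda,\psi)\otimes_E E[\epsilon]/\epsilon^2\big)\otimes_{E[\epsilon]/\epsilon^2}\big((1+(\eta/2)\epsilon)\circ\dett\big)$, the trivial deformation twisted by the character $(1+(\eta/2)\epsilon)\circ\dett$ of $\GL_2(\Q_p)$ with values in $1+E\epsilon$ (as in Lemma \ref{lem: hL-cent3}). Reduction modulo $\epsilon$ recovers $\pi(\lambda,\psi)$ and the underlying $E[\epsilon]/\epsilon^2$-module remains free of the correct rank, so $j(\eta)$ is again a deformation; a scalar matrix $zI\in Z(\Q_p)$ acts on it by $\chi_\lambda(z)(1+(\eta/2)\epsilon)(z^2)=\chi_\lambda(z)(1+\eta(z)\epsilon)$, whence $\pr(j(\eta))=\eta$. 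Thus $\pr$ is surjective (with section $j$) and the short exact sequence follows. The only substantive ingredient is Lemma \ref{lem: hL-cent2} — which supplies the central character of an arbitrary deformation and is where the hypothesis that the irreducible constituents of $\pi(\lambda,\psi)$ are pairwise distinct enters; everything else is formal, the one mildly delicate point being the identification $\Ker(\pr)=\Ima(\Ext^1_{\GL_2(\Q_p),Z}\hookrightarrow\Ext^1_{\GL_2(\Q_p)})$ via the injectivity of the comparison map.
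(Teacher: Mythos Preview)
Your proof is correct and follows the same approach as the paper: the paper's proof is a one-liner observing only that $\psi'\mapsto \pi(\lambda,\psi)\otimes_E (1+\frac{\psi'}{2}\epsilon)\circ\dett$ gives a section of $\pr$, treating the well-definedness and linearity of $\pr$ and the exactness at the first two terms as understood. You have simply written out in full the details the paper leaves implicit, and your section $j$ is exactly the one the paper uses.
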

\begin{proof}
It is sufficient to prove $\pr$ is surjective. However, it is easy to check that $\psi'\mapsto \pi(\lambda, \psi)\otimes_E (1+\frac{\psi'}{2}\epsilon) \circ \dett$ gives a section of the map $\pr$. The lemma follows.
\end{proof}

\begin{lemma}\label{lem: hL-ext1}
(1) We have $\Ext^i_{\GL_2(\Q_p)}(I(s\cdot \lambda), \pi(\lambda, \psi))=\Ext^i_{\GL_2(\Q_p)}(\widetilde{I}(s\cdot \lambda), \pi(\lambda, \psi))=0$ for all $i\in \Z_{\geq 0}$.\\

\noindent
(2) We have:
$$\begin{array}{l}
\dim_E \Ext^1_{\GL_2(\Q_p),Z}(\St_2^{\infty}(\lambda), \pi(\lambda, \psi))=2,\ \dim_E \Ext^1_{\GL_2(\Q_p)}(\St_2^{\infty}(\lambda), \pi(\lambda, \psi))=4\\
\dim_E \Ext^1_{\GL_2(\Q_p),Z}(L(\lambda), \pi(\lambda, \psi))=\dim_E \Ext^1_{\GL_2(\Q_p)}(L(\lambda), \pi(\lambda, \psi))=1\\
\dim_E \!\Ext^1_{\GL_2(\Q_p),Z}(\pi(\lambda, \psi)/\!\St_2^{\infty}(\lambda), \pi(\lambda, \psi))\!=\!\dim_E \!\Ext^1_{\GL_2(\Q_p)}(\pi(\lambda, \psi)/\!\St_2^{\infty}(\lambda), \pi(\lambda, \psi))\!=\!1.
\end{array}$$

\noindent
(3) We have exact sequences:
\begin{multline}\label{equ: hL-stv2}
  0 \lra \Ext^1_{\GL_2(\Q_p),Z}(\pi(\lambda, \psi)/\St_2^{\infty}(\lambda), \pi(\lambda, \psi)) \lra \Ext^1_{\GL_2(\Q_p),Z}(\pi(\lambda, \psi), \pi(\lambda, \psi)) \\ \lra \Ext^1_{\GL_2(\Q_p),Z}(\St_2^{\infty}(\lambda), \pi(\lambda, \psi))
\end{multline}
\begin{multline}\label{equ: hL-stv3}
  0 \lra \Ext^1_{\GL_2(\Q_p)}(\pi(\lambda, \psi)/\St_2^{\infty}(\lambda), \pi(\lambda, \psi)) \xlongrightarrow{\iota_0} \Ext^1_{\GL_2(\Q_p)}(\pi(\lambda, \psi), \pi(\lambda, \psi)) \\ \xlongrightarrow{\kappa_1} \Ext^1_{\GL_2(\Q_p)}(\St_2^{\infty}(\lambda), \pi(\lambda, \psi))
\end{multline}
with $\dim_E \Ext^1_{\GL_2(\Q_p),Z}(\pi(\lambda, \psi), \pi(\lambda, \psi))\leq 3$ and $\dim_E \Ext^1_{\GL_2(\Q_p)}(\pi(\lambda, \psi), \pi(\lambda, \psi))\leq 5$.\\

\noindent
(4) We have exact sequences (see (\ref{equ: hL-upsi}) for $\pi(\lambda, \psi)^-$):
\begin{multline}\label{equ: hL-iot1}
 0\lra \Ext^1_{\GL_2(\Q_p)}(\St_2^{\infty}(\lambda), \pi(\lambda, \psi)^-)\xlongrightarrow{\iota_1} \Ext^1_{\GL_2(\Q_p)}(\St_2^{\infty}(\lambda), \pi(\lambda, \psi)) \\
  \lra \Ext^1_{\GL_2(\Q_p)}(\St_2^{\infty}(\lambda), \widetilde{I}(s\cdot \lambda))\lra 0
\end{multline}
\begin{multline}\label{equ: hL-pi_-}
 0 \lra \Ext^1_{\GL_2(\Q_p)}(\St_2^{\infty}(\lambda), \St_2^{\infty}(\lambda)) \lra \Ext^1_{\GL_2(\Q_p)}(\St_2^{\infty}(\lambda), \pi(\lambda, \psi)^-)\\ \lra \Ext^1_{\GL_2(\Q_p)}(\St_2^{\infty}(\lambda), \pi(\lambda, \psi)^-/\St_2^{\infty}(\lambda)) \lra 0
\end{multline}
with \ \ $\dim_E \Ext^1_{\GL_2(\Q_p)}(\St_2^{\infty}(\lambda), \St_2^{\infty}(\lambda))=2$, \ \ $\dim_E \Ext^1_{\GL_2(\Q_p)}(\St_2^{\infty}(\lambda), \widetilde{I}(s\cdot \lambda))=1$ \ \ and \ \ $\dim_E \Ext^1_{\GL_2(\Q_p)}(\St_2^{\infty}(\lambda), L(\lambda))=\dim_E \Ext^1_{\GL_2(\Q_p)}(\St_2^{\infty}(\lambda), \pi(\lambda, \psi)^-/\St_2^{\infty}(\lambda))=1$.
\end{lemma}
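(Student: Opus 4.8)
\noindent
The plan is to deduce all four parts by d\'evissage, feeding the natural short exact sequences relating $\pi(\lambda,\psi)$, $\pi(\lambda,\psi)^-$, $I(\lambda)$, $\widetilde I(\lambda)$, $\St_2^\infty(\lambda)$ and $L(\lambda)$ into the long exact sequences of $\Ext^\bullet_{\GL_2(\Q_p)}(-,-)$ and $\Ext^\bullet_{\GL_2(\Q_p),Z}(-,-)$. Everything reduces to the $\Ext$-groups among the four (pairwise distinct) irreducible constituents $\St_2^\infty(\lambda)$, $I(s\cdot\lambda)$, $L(\lambda)$, $\widetilde I(s\cdot\lambda)$ of $\pi(\lambda,\psi)$, together with the self-extensions of $\St_2^\infty(\lambda)$, of $L(\lambda)$ and of $\pi(\lambda,\psi)$. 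These building-block $\Ext$-groups are computed from Schraen's spectral sequences \cite[(4.27), (4.37), (4.38)]{Sch11} --- which express $\Ext^\bullet_{\GL_2(\Q_p)}$ involving a locally analytic principal series $(\Ind_{\overline B(\Q_p)}^{\GL_2(\Q_p)}\chi)^{\an}$ in terms of $\Ext^\bullet_{T(\Q_p)}$ against the locally analytic $\overline N(\Q_p)$-homology --- using the explicit homology $(\ref{equ: hL-N1})$--$(\ref{equ: hL-N4})$, Lemma \ref{tacit} (vanishing of $\Ext^\bullet_{T(\Q_p)}$ between distinct characters of $(\Q_p^\times)^2$), \cite[Thm.~8.13]{Kohl}, the smooth vanishing $(\ref{equ: hL-smooth})$ together with \cite[Prop.~15, Cor.~2]{Orl} and \cite{Da}, and the results of \cite[\S~2.1, \S~3.1]{Br16}; the one arithmetic input used throughout is that $\delta_\lambda$, $\delta_{s\cdot\lambda}$, $\delta_\lambda(|\cdot|^{-1}\otimes|\cdot|)$, $\delta_{s\cdot\lambda}(|\cdot|^{-1}\otimes|\cdot|)$ are pairwise distinct (from $\lambda$ dominant and $|\cdot|\neq 1$).

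\noindent
Logically I would prove (1) first, then the ``building blocks'' and exact sequences of (4), then deduce (2), and finally (3). For (1), since $I(s\cdot\lambda)$ and $\widetilde I(s\cdot\lambda)$ are irreducible locally analytic principal series, $\Ext^\bullet_{\GL_2(\Q_p)}(I(s\cdot\lambda),\pi(\lambda,\psi))$ and $\Ext^\bullet_{\GL_2(\Q_p)}(\widetilde I(s\cdot\lambda),\pi(\lambda,\psi))$ are governed by the spectral sequence with $E_2$-term $\Ext^p_{T(\Q_p)}(\chi, H_q(\overline N(\Q_p),\pi(\lambda,\psi)))$ for $\chi = \delta_{s\cdot\lambda}$, resp. $\chi = \delta_{s\cdot\lambda}(|\cdot|^{-1}\otimes|\cdot|)$. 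I would compute $H_\bullet(\overline N(\Q_p),\pi(\lambda,\psi))$ by d\'evissage from $(\ref{equ: hL-N1})$--$(\ref{equ: hL-N4})$ along $0\to I(\lambda)/L(\lambda)\to\pi(\lambda,\psi)\to\widetilde I(\lambda)/\St_2^\infty(\lambda)\to 0$ and its two halves, and check that every character occurring in it pairs trivially with $\chi$ by Lemma \ref{tacit}; equivalently, one d\'evisses at the level of $\Ext$-groups, using the vanishings $\Ext^i_{\GL_2(\Q_p)}(\widetilde I(s\cdot\lambda),I(s\cdot\lambda))=0$ ($i\geq 0$) and $\Ext^i_{\GL_2(\Q_p)}(\widetilde I(s\cdot\lambda),\St_2^\infty(\lambda))=\Ext^i_{\GL_2(\Q_p)}(\widetilde I(s\cdot\lambda),I(\lambda)/L(\lambda))=0$ ($i=1,2$) already obtained in the proof of Theorem \ref{thm: hL-lGL2} together with \cite[\S~3.1]{Br16}, and showing that the connecting maps involving $\Ext^\bullet(\widetilde I(s\cdot\lambda),\widetilde I(s\cdot\lambda))$ and $\Ext^\bullet(\widetilde I(s\cdot\lambda),L(\lambda))$ (e.g. the one sending $\mathrm{id}\in\Hom(\widetilde I(s\cdot\lambda),\widetilde I(s\cdot\lambda))$ to the class of $\widetilde I(\lambda)/\St_2^\infty(\lambda)$) are injective; the case of $I(s\cdot\lambda)$ is symmetric.

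\noindent
For (4), the dimensions $\dim_E\Ext^1_{\GL_2(\Q_p)}(\St_2^\infty(\lambda),\St_2^\infty(\lambda))=2$ and $\dim_E\Ext^1_{\GL_2(\Q_p)}(\St_2^\infty(\lambda),\widetilde I(s\cdot\lambda))=\dim_E\Ext^1_{\GL_2(\Q_p)}(\St_2^\infty(\lambda),L(\lambda))=1$ are Schraen/Kohlhaase computations (the last also recorded in \cite[\S~3.1]{Br16}), whence $\dim_E\Ext^1_{\GL_2(\Q_p)}(\St_2^\infty(\lambda),\pi(\lambda,\psi)^-/\St_2^\infty(\lambda))=1$ by d\'evissing $\pi(\lambda,\psi)^-/\St_2^\infty(\lambda)$ (constituents $I(s\cdot\lambda)$, $L(\lambda)$), and the exact sequences $(\ref{equ: hL-iot1})$, $(\ref{equ: hL-pi_-})$ are the long exact sequences of $\Ext^\bullet(\St_2^\infty(\lambda),-)$ for $0\to\pi(\lambda,\psi)^-\to\pi(\lambda,\psi)\to\widetilde I(s\cdot\lambda)\to 0$ and $0\to\St_2^\infty(\lambda)\to\pi(\lambda,\psi)^-\to\pi(\lambda,\psi)^-/\St_2^\infty(\lambda)\to 0$, exactness at the ends needing two more connecting-map vanishings. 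Then (2): the $\St_2^\infty(\lambda)$-statement follows from $(\ref{equ: hL-iot1})$, $(\ref{equ: hL-pi_-})$ ($4=(2+1)+1$); the $L(\lambda)$-statement by d\'evissing $\pi(\lambda,\psi)$ (a connecting-map computation starting from $\dim_E\Ext^1(L(\lambda),\St_2^\infty(\lambda))=\dim_E\Ext^1(L(\lambda),I(s\cdot\lambda))=1$, $\dim_E\Ext^1(L(\lambda),L(\lambda))=2$, $\Ext^1(L(\lambda),\widetilde I(s\cdot\lambda))=0$); and the $\pi(\lambda,\psi)/\St_2^\infty(\lambda)$-statement reduces to the $L(\lambda)$-case by (1). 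The $Z$-versions are read off $0\to\Ext^1_{\GL_2(\Q_p),Z}(-,-)\to\Ext^1_{\GL_2(\Q_p)}(-,-)\xlongrightarrow{\pr}\Hom(\Q_p^\times,E)$ (the analogue of $(\ref{equ: hL-cent})$, via Lemma \ref{lem: hL-cent2} and \cite[Lem.~2.1.1]{Br16}), $\pr$ being surjective since $\psi'\mapsto(-)\otimes_E(1+\tfrac{\psi'}{2}\epsilon)\circ\dett$ is a section as in Lemma \ref{Lem: hL-cent4}. Finally (3) is the long exact sequence of $\Ext^\bullet(-,\pi(\lambda,\psi))$ for $0\to\St_2^\infty(\lambda)\to\pi(\lambda,\psi)\to\pi(\lambda,\psi)/\St_2^\infty(\lambda)\to 0$: $\iota_0$ is injective because $\Hom_{\GL_2(\Q_p)}(\pi(\lambda,\psi),\pi(\lambda,\psi))=E$ (constituents distinct, \cite[\S~3.4]{DosSch}) surjects onto $\Hom_{\GL_2(\Q_p)}(\St_2^\infty(\lambda),\pi(\lambda,\psi))=E$ (as $\St_2^\infty(\lambda)=\soc_{\GL_2(\Q_p)}\pi(\lambda,\psi)$), and the bounds $\leq 3$, $\leq 5$ follow by adding the dimensions from (2).

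\noindent
The main obstacle --- where the bulk of the work lies --- is this connecting-map bookkeeping. Because $\pi(\lambda,\psi)$ (for $\psi\neq 0$) is a genuinely nonsplit amalgam rather than the direct sum of its constituents, the dimensions above are \emph{not} the sums of the constituent-wise contributions: in several of the long exact sequences the $\Ext^1$ of a constituent is nonzero but gets annihilated by a connecting homomorphism, and one must verify each such vanishing or injectivity by hand, tracking how the extension classes defining $\pi(\lambda,\psi)$ and $\pi(\lambda,\psi)^-$ sit inside the relevant groups. This is precisely where $\psi\neq 0$ is used: it forces $L(\lambda)$ to ``see'' the socle $\St_2^\infty(\lambda)$ inside $\pi(\lambda,\psi)^-$, whereas for $\psi=0$ all the above Ext-dimensions would be strictly larger.
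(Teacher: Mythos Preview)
Your overall strategy---d\'evissage plus Schraen's spectral sequences---is correct and essentially the paper's approach, but two shortcuts in the paper are worth knowing. For part (1), rather than tracking connecting maps, the paper invokes the duality \cite[Cor.~4.3]{Sch11} to get $\Ext^i(I(s\cdot\lambda),L(\lambda))=0$ for \emph{all} $i\geq 0$ in one stroke (and $\Ext^i(\widetilde I(s\cdot\lambda),\St_2^\infty(\lambda))=0$ in the parallel case); combined with $\Ext^i(I(s\cdot\lambda),I(\lambda))=\Ext^i(I(s\cdot\lambda),\widetilde I(s\cdot\lambda))=0$ from the spectral sequence, the d\'evissage for all $i$ is then immediate. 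Your connecting-map route would work for small $i$ but requires verifying, e.g., that $\Ext^1(\widetilde I(s\cdot\lambda),\widetilde I(s\cdot\lambda))\to\Ext^2(\widetilde I(s\cdot\lambda),L(\lambda))$ is injective, which is not so transparent.

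For (2), the paper computes the $Z$-versions \emph{first}, exploiting \cite[Cor.~4.8]{Sch11} ($\Ext^i_Z(\St_2^\infty(\lambda),\St_2^\infty(\lambda))=\Ext^i_Z(L(\lambda),L(\lambda))=0$ for $i=1,2$), which trivializes the $Z$-category d\'evissage; the non-$Z$ versions are then deduced. Your reverse direction is fine in principle, but your section $\psi'\mapsto(-)\otimes(1+\tfrac{\psi'}{2}\epsilon)\circ\dett$ only makes sense for self-extensions: for $\Ext^1(L(\lambda),\pi(\lambda,\psi))$ you should instead invoke Lemma~\ref{lem: hL-cc} to see directly that every extension already has central character $\chi_\lambda$ (so $\pr=0$, not surjective), and for $\Ext^1(\St_2^\infty(\lambda),\pi(\lambda,\psi))$ one needs a separate argument for surjectivity of $\pr$. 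The paper also computes $\dim_E\Ext^1_Z(L(\lambda),\pi(\lambda,\psi))$ via the sequence (\ref{equ: hL-1Vpsi}) and Theorem~\ref{thm: hL-lGL2}, rather than by d\'evissing into all four constituents.
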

\begin{proof}
In this proof, we write $\Ext^{i}$ (resp. $\Ext^{i}_{Z}$) for $\Ext^{i}_{\GL_2(\Q_p)}$ (resp. $\Ext^{i}_{\GL_2(\Q_p),Z}$).\\
(1) We prove the case of $I(s\cdot \lambda)$, the proof for $\widetilde{I}(s\cdot \lambda)$ being parallel. By \cite[(4.37)]{Sch11}, (\ref{equ: hL-N3}) and Lemma \ref{tacit}, we have
 $\Ext^{i}(I(s\cdot \lambda), I(\lambda))=0$ for all $i\in \Z_{\geq 0}$. By \cite[Cor. 4.3]{Sch11}, we have:
\begin{equation*}
  \Ext^i(I(s\cdot \lambda), L(\lambda))\cong \Ext^{i-1}\big(L(\lambda)^\vee, (\Ind_{\overline{B}(\Q_p)}^{\GL_2(\Q_p)}\delta_{\mu}\delta_B)^{\an}\big)
\end{equation*}
where $\mu$ is the weight such that $L(\lambda)^\vee\cong L(\mu)$. However, by \cite[(4.37)]{Sch11}, (\ref{equ: hL-N1}) (with $\lambda$ replaced by $\mu$) and Lemma \ref{tacit}, we have $\Ext^{i}(L(\lambda)^\vee, (\Ind_{\overline{B}(\Q_p)}^{\GL_2(\Q_p)}\delta_{\mu}\delta_B)^{\an})=0$ for $i\in \Z_{\geq 0}$, hence $\Ext^i(I(s\cdot \lambda), L(\lambda))=0$ for all $i\in \Z_{\geq 0}$. By d\'evissage, we deduce:
\begin{equation}\label{equ: hL-nul1}
 \Ext^i(I(s\cdot \lambda), I(\lambda)/L(\lambda))=0, \ \forall \ i\geq 0.
\end{equation}
By \cite[(4.37)]{Sch11} and (\ref{equ: hL-N3}) (+ Lemma \ref{tacit}), we also have:
\begin{equation}\label{equ: hL-e1}
 \Ext^i(I(s\cdot \lambda), \widetilde{I}(s\cdot \lambda))=0, \ \forall \ i\geq 0
\end{equation}
and by d\'evissage we deduce $\Ext^i(I(s\cdot \lambda), L(\lambda)-\widetilde{I}(s\cdot \lambda))=0$ for $i\geq 0$. Together with (\ref{equ: hL-nul1}) this implies (again by d\'evissage) $\Ext^i(I(s\cdot \lambda), \pi(\lambda, \psi))=0$ for all $i\geq 0$. This concludes the proof of (1).\\

\noindent
(2) By \cite[Cor. 4.8]{Sch11}, we have $\Ext^i_{Z}(\St_2^{\infty}(\lambda), \St_2^{\infty}(\lambda))=0$ for $i=1, 2$. Consider the following map:
\begin{equation}\label{equ: hL-Stdet}
\Hom(\Q_p^{\times}, E)\lra \Ext^1(\St_2^{\infty}(\lambda), \St_2^{\infty}(\lambda)), \ \psi'\longmapsto \St_2^{\infty}(\lambda)\otimes_E (1+\psi'\epsilon)\circ \dett.
\end{equation}
It is straightforward to see this map is injective. We claim it is also surjective. For any nonsplit extension $\widetilde{\pi}\in\Ext^1(\St_2^{\infty}(\lambda), \St_2^{\infty}(\lambda))$ (which we view as an representation of $\GL_2(\Q_p)$ over $E[\epsilon]/\epsilon^2$), let $\psi'\in \Hom(\Q_p^{\times}, E)$ be such that the central character of $\widetilde{\pi}$ is given by $\chi_{\lambda}(1+\psi'\epsilon)$ (argue as in Lemma \ref{lem: hL-cent2} for the latter, though this is simpler here). Then the representation $\widetilde{\pi}\otimes_{E[\epsilon]/\epsilon^2} (1-(\psi'/2)\epsilon)\circ \dett$ has central character $\chi_{\lambda}$ and hence is isomorphic to $\St_2^{\infty}(\lambda)^{\oplus 2}\cong\St_2^{\infty}(\lambda) \otimes_E E[\epsilon]/\epsilon^2 $ since $\Ext^1_{Z}(\St_2^{\infty}(\lambda), \St_2^{\infty}(\lambda))=0$ (\cite[Cor.~4.8]{Sch11}). So we have:
\begin{eqnarray*}
\widetilde{\pi}&\cong &\big(\widetilde{\pi}\otimes_{E[\epsilon]/\epsilon^2} (1-(\psi'/2)\epsilon)\circ \dett \big)\otimes_{E[\epsilon]/\epsilon^2} (1+(\psi'/2)\epsilon)\circ \dett \\
&\cong & \big(\St_2^{\infty}(\lambda)\otimes_E E[\epsilon]/\epsilon^2\big) \otimes_{E[\epsilon]/\epsilon^2} (1+(\psi'/2)\epsilon)\circ \dett\\
&\cong & \St_2^{\infty}(\lambda) \otimes_E (1+(\psi'/2)\epsilon)\circ \dett.
\end{eqnarray*}
Thus $\dim_E \Ext^1(\St_2^{\infty}(\lambda), \St_2^{\infty}(\lambda))=2$. By \cite[(4.38)]{Sch11} and (\ref{equ: hL-N2}), we have:
\begin{eqnarray*}
\Ext^1_Z(\St_2^{\infty}(\lambda), \widetilde{I}(\lambda))&\cong &\Ext^1_{T(\Q_p),Z}(\delta_\lambda(|\cdot|^{-1}\otimes |\cdot|), \delta_\lambda(|\cdot|^{-1}\otimes |\cdot|))\\
\Ext^i_Z(\St_2^{\infty}(\lambda), I(s\cdot \lambda))&=&0 \ \ \forall \ i\in \Z_{\geq 0}.
\end{eqnarray*}
 Putting these together we deduce by d\'evissage:
\begin{multline}\label{equ: hL-stv4}
\Ext^1(\St_2^{\infty}(\lambda), \pi(\lambda, \psi)/\St_2^{\infty}(\lambda))\cong  \Ext^1_Z(\St_2^{\infty}(\lambda),\pi(\lambda, \psi)/\St_2^{\infty}(\lambda))\\ \cong \Ext^1_Z(\St_2^{\infty}(\lambda), L(\lambda)-\widetilde{I}(s\cdot \lambda))\cong \Ext^1_Z(\St_2^{\infty}(\lambda), \widetilde{I}(\lambda))\cong \Hom(\Q_p^{\times}, E)
\end{multline}
where, for the second last isomorphism, we use the exact sequence:
\begin{equation*}
  0 \lra \St_2^{\infty}(\lambda) \lra \widetilde{I}(\lambda) \lra (L(\lambda)-\widetilde{I}(s\cdot \lambda))\lra 0
\end{equation*}
together with $\Ext^i_Z(\St_2^{\infty}(\lambda), \St_2^{\infty}(\lambda))=0$, $i=1,2$. Likewise we have:
\begin{multline}\label{equ: hL-stv}
0=\Ext^1_Z(\St_2^{\infty}(\lambda), \St_2^{\infty}(\lambda)) \lra \Ext^1_Z(\St_2^{\infty}(\lambda), \pi(\lambda, \psi)) \\ \lra  \Ext^1_Z(\St_2^{\infty}(\lambda),\pi(\lambda, \psi)/\St_2^{\infty}(\lambda))\lra \Ext^2_Z(\St_2^{\infty}(\lambda), \St_2^{\infty}(\lambda))=0
\end{multline}
from which together with (\ref{equ: hL-stv4}) we deduce $\dim_E \Ext^1_Z(\St_2^{\infty}(\lambda), \pi(\lambda, \psi))=2$. Similarly:
\begin{multline*}
  0 \lra \Ext^1(\St_2^{\infty}(\lambda), \St_2^{\infty}(\lambda)) \lra \Ext^1(\St_2^{\infty}(\lambda), \pi(\lambda, \psi)) \\ \lra  \Ext^1(\St_2^{\infty}(\lambda), \pi(\lambda, \psi)/\St_2^{\infty}(\lambda)) \lra 0
\end{multline*}
where the last map is surjective by (\ref{equ: hL-stv}) and the first isomorphism of (\ref{equ: hL-stv4}). By the above dimension computations we deduce $\dim_E\Ext^1(\St_2^{\infty}(\lambda), \pi(\lambda, \psi))=4$. To prove the remaining equalities in (2), we only need to prove $\dim_E \Ext^1_Z(L(\lambda), \pi(\lambda, \psi))=1$ since the other equalities follow easily from (1) and Lemma \ref{lem: hL-cc}. By \cite[(4.38)]{Sch11} and (\ref{equ: hL-N1}), we have $\Ext^i_Z(L(\lambda), \widetilde{I}(s\cdot \lambda))=0$ for $i\geq 0$ and by \cite[Cor. 4.8]{Sch11}, we have $\Ext^1_Z(L(\lambda), L(\lambda))=0$. By d\'evissage, we deduce then $\Ext^1_Z(L(\lambda), \widetilde{I}(\lambda)/\St_2^{\infty}(\lambda))=0$. By the exact sequence:
\begin{multline}\label{equ: hL-1Vpsi}
 0 \lra \Hom(L(\lambda), \widetilde{I}(\lambda)/\St_2^{\infty}(\lambda)) \lra \Ext^1_Z(L(\lambda), I(\lambda)/L(\lambda)) \lra \Ext^1_Z(L(\lambda), \pi(\lambda, \psi)) \\ \lra \Ext^1_Z(L(\lambda), \widetilde{I}(\lambda)/\St_2^{\infty}(\lambda)) =0,
\end{multline}
Theorem \ref{thm: hL-lGL2} and an easy dimension count, we get $\dim_E \Ext^1_Z(L(\lambda), \pi(\lambda, \psi))=1$.\\

\noindent
(3) This follows easily from (1), (2) and Lemma \ref{Lem: hL-cent4}.\\

\noindent
(4) To get the exact sequences, it is sufficient to prove that the maps:
\begin{eqnarray*}
 \Ext^1(\St_2^{\infty}(\lambda), \pi(\lambda, \psi)) &\lra &\Ext^1(\St_2^{\infty}(\lambda), \widetilde{I}(s\cdot \lambda))\\
\Ext^1(\St_2^{\infty}(\lambda), \pi(\lambda, \psi)^-) &\lra &\Ext^1(\St_2^{\infty}(\lambda), \pi(\lambda, \psi)^-/\St_2^{\infty}(\lambda))\end{eqnarray*}
are surjective and it is sufficient to prove they are surjective with $\Ext^1$ replaced by $\Ext^1_Z$ (since the vector spaces on the right hand side do not change). The second one follows easily from $\Ext^2_Z(\St_2^{\infty}(\lambda),\St_2^{\infty}(\lambda))=0$ (see the proof of (2) above). By \cite[(4.38)]{Sch11} and (\ref{equ: hL-N2}), we have $\dim_E \Ext^1_Z(\St_2^{\infty}(\lambda), \widetilde{I}(s\cdot \lambda))=1$ and $\Ext^1_Z(\St_2^{\infty}(\lambda), {I}(s\cdot \lambda))=0$, and by \cite[Cor.~4.8]{Sch11} we have $\dim_E \Ext^1_Z(\St_2^{\infty}(\lambda), L(\lambda))=1$. The last two equalities imply by d\'evissage $\dim_E\Ext^1_Z(\St_2^{\infty}(\lambda), \pi(\lambda, \psi)^-/\St_2^{\infty}(\lambda))\leq 1$. The first, together with $\dim_E \Ext^1_{Z}(\St_2^{\infty}(\lambda), \pi(\lambda, \psi))=2$ in (2), imply the surjectivity of $\Ext^1_Z(\St_2^{\infty}(\lambda), \pi(\lambda, \psi)) \ra \Ext^1_Z(\St_2^{\infty}(\lambda), \widetilde{I}(s\cdot \lambda))$, and then $\dim_E\Ext^1_Z(\St_2^{\infty}(\lambda), \pi(\lambda, \psi)^-/\St_2^{\infty}(\lambda))=1$. We have seen $\dim_E \Ext^1(\St_2^{\infty}(\lambda), \St_2^{\infty}(\lambda))=2$ in the proof of (2), the rest of (4) follows from lemma \ref{lem: hL-cc}.
\end{proof}

\begin{remark}\label{rem: hL-sp}
{\rm It follows from (\ref{equ: hL-1Vpsi}) and (2) that, if $\psi'\notin E\psi\subset \Hom(\Q_p^\times,E)$, the image of $\pi(\lambda, \psi')^-$, seen as an element of $\Ext^1(L(\lambda), I(\lambda)/L(\lambda))$, in $\Ext^1(L(\lambda), \pi(\lambda, \psi))$ is the unique nonsplit extension $V$ of $L(\lambda)$ by $\pi(\lambda, \psi)$. Moreover $V$ contains the unique extension $V_0$ of $L(\lambda)^{\oplus 2}$ by $I(\lambda)/L(\lambda)=\St_2^{\infty}(\lambda)-I(s\cdot\lambda)$ with socle $\St_2^{\infty}(\lambda)$ and we have $(V_0)^{\lalg}\cong V^{\lalg}\cong \St_2^{\infty}(\lambda)-L(\lambda)\cong \widetilde{i}(\lambda)$. By Lemma \ref{lem: hL-ext1}(1) and (\ref{equ: hL-stv3}), we have:
\begin{multline*}
\Ext^1_{\GL_2(\Q_p)}(L(\lambda), \pi(\lambda, \psi))\cong \Ext^1_{\GL_2(\Q_p)}(\pi(\lambda, \psi)/\St_2^{\infty}(\lambda), \pi(\lambda, \psi))\\
\hooklongrightarrow \Ext^1_{\GL_2(\Q_p)}(\pi(\lambda, \psi), \pi(\lambda, \psi))
\end{multline*}
and we let $\widetilde{\pi}$ be the image of $V\in \Ext^1_{\GL_2(\Q_p)}(L(\lambda), \pi(\lambda, \psi))$ via the above injection. It is not difficult then to deduce:
\begin{equation}\label{equ: hL-lalg}
 \widetilde{\pi}^{\lalg}\cong
 \begin{cases} \St_2^{\infty}(\lambda)^{\oplus 2} & \psi \text{ not smooth}\\
  \St_2^{\infty}(\lambda) \oplus \widetilde{i}(\lambda) & \psi \text{ smooth}
  \end{cases}
\end{equation}
and that, if $\psi$ is smooth, the map $\widetilde{\pi}^{\lalg} \ra \pi(\lambda, \psi)^{\lalg}\cong \widetilde{i}(\lambda)$ induced by $\widetilde{\pi} \twoheadrightarrow \pi(\lambda, \psi)$ is nonzero but not surjective.}
\end{remark}

\noindent
We now make the following hypotheses, which will be proved (under some mild technical assumption) in Proposition \ref{prop: hL-gl2pLL} below.

\begin{hypothesis}\label{hypo: hL-dim}
(1) Any representation in $\Ext^1_{\GL_2(\Q_p)}(\pi(\lambda, \psi), \pi(\lambda, \psi))$ is very strongly admissible in the sense of \cite[Def.~0.12]{Em2} (which implies $\pi(\lambda,\psi)$ itself is very strongly admissible).\\
\noindent
(2) We have $\dim_E \Ext^1_{\GL_2(\Q_p),Z}(\pi(\lambda, \psi), \pi(\lambda, \psi))=3$ and $\dim_E \!\Ext^1_{\GL_2(\Q_p)}(\pi(\lambda, \psi), \pi(\lambda, \psi))\!=5$. In particular (by Lemma \ref{lem: hL-ext1}(2)), the last maps in (\ref{equ: hL-stv2}) and (\ref{equ: hL-stv3}) are surjective.
\end{hypothesis}

\noindent
Denote by $\Hom(T(\Q_p),E)_{\psi}$ the subspace of $\Hom(T(\Q_p),E)$ generated by those $(\psi_1, \psi_2)\in \Hom(\Q_p^\times,E)^2$ such that $\psi_1-\psi_2\in E\psi$. For a locally analytic character $\delta: T(\Q_p) \ra E^{\times}$, denote by $\Ext^1_{T(\Q_p)}(\delta, \delta)_\psi\subseteq \Ext^1_{T(\Q_p)}(\delta, \delta)$ the $E$-vector subspace corresponding to $\Hom(T(\Q_p),E)_{\psi}$ via the natural bijection $\Ext^1_{T(\Q_p)}(\delta, \delta)\cong\Hom(T(\Q_p),E)$. Denoting by $J_B$ the Jacquet-Emerton functor relative to the Borel subgroup $B$ (where the $T(\Q_p)$-action is normalized as in \cite{Em11}), we have since $\psi\neq 0$:
\begin{equation}\label{equ: hL-Jac}
 J_B(\pi(\lambda, \psi))=\begin{cases}
 J_B(\St_2^{\infty}(\lambda))\cong \delta_{\lambda} (|\cdot|\otimes |\cdot|^{-1}) & \psi \text{ not smooth} \\
 J_B(\St_2^{\infty}(\lambda)) \oplus J_B(L(\lambda))\cong\delta_{\lambda} (|\cdot|\otimes |\cdot|^{-1}) \oplus \delta_{\lambda} & \psi \text{ smooth}.
 \end{cases}
\end{equation}
It is clear that the right hand side is contained in the left hand side. Since $\pi(\lambda, \psi)$ is very strongly admissible, it is not difficult to prove they are equal using \cite[Thm. 4.3]{Br13II} together with the left exactness of $J_B$ and \cite[Ex.~5.1.9]{Em2}.

\begin{lemma}\label{lem: hL-tri1}
(1) Let $V\in \Ext^1_{\GL_2(\Q_p)}(\St_2^{\infty}(\lambda), \pi(\lambda, \psi))$, then $J_B(V)\neq J_B(\pi(\lambda, \psi))$ if and only if $V$ lies in the image of $\Ext^1_{\GL_2(\Q_p)}(\St_2^{\infty}(\lambda), \pi(\lambda, \psi)^-)$.\\
\noindent
(2) The functor $J_B$ induces a bijection:
\begin{equation}\label{equ: hL-jac2}
 \Ext^1_{\GL_2(\Q_p)}(\St_2^{\infty}(\lambda), \pi(\lambda, \psi)^-) \xlongrightarrow{\sim} \Ext^1_{T(\Q_p)}\big(\delta_{\lambda}(|\cdot|\otimes |\cdot|^{-1}), \delta_{\lambda}(|\cdot|\otimes |\cdot|^{-1})\big)_\psi.
\end{equation}
\end{lemma}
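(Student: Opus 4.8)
\textbf{Plan for the proof of Lemma \ref{lem: hL-tri1}.}

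The plan is to run everything through the Jacquet--Emerton functor $J_B$, using its exactness properties, its compatibility with the relevant Ext-computations, and the explicit knowledge of $J_B(\pi(\lambda,\psi))$ in (\ref{equ: hL-Jac}). For part (1), I would first record the short exact sequence $0\to \pi(\lambda,\psi)^-\to \pi(\lambda,\psi)\to \widetilde{I}(s\cdot\lambda)\to 0$ coming from (\ref{equ: hL-upsi}) and (\ref{pipsi}) (recall $\pi(\lambda,\psi)/\pi(\lambda,\psi)^-\cong \widetilde I(\lambda)/(\St_2^\infty(\lambda)-L(\lambda))=\widetilde I(s\cdot\lambda)$), and apply $J_B$ to it. Since $J_B$ is left exact, and since $J_B(\widetilde I(s\cdot\lambda))$ has its weight (here $\delta_{s\cdot\lambda}(|\cdot|\otimes|\cdot|^{-1})$, from the analogue of (\ref{equ: hL-N4}) and \cite[Thm.~4.3]{Br13II}) disjoint from the weights of $J_B(\pi(\lambda,\psi))$ appearing in (\ref{equ: hL-Jac}), the map $J_B(\pi(\lambda,\psi)^-)\to J_B(\pi(\lambda,\psi))$ is an isomorphism onto the $\delta_\lambda(|\cdot|\otimes|\cdot|^{-1})$-part (together with the $\delta_\lambda$-part when $\psi$ is smooth). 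Then, given $V$ as an extension of $\St_2^\infty(\lambda)$ by $\pi(\lambda,\psi)$, I would chase the exact sequence (\ref{equ: hL-iot1}): $V$ lies in the image of $\Ext^1_{\GL_2(\Q_p)}(\St_2^\infty(\lambda),\pi(\lambda,\psi)^-)$ iff its pushout to $\widetilde I(s\cdot\lambda)$ is split, and I would identify that pushout-splitting with the condition that $J_B(V)$ does not acquire the extra weight $\delta_{s\cdot\lambda}(|\cdot|\otimes|\cdot|^{-1})$ that $J_B(\St_2^\infty(\lambda)-\widetilde I(s\cdot\lambda))$ would contribute; equivalently $J_B(V)=J_B(\pi(\lambda,\psi))$ fails precisely when $V$ does \emph{not} come from $\pi(\lambda,\psi)^-$. (One must be a little careful here: ``$J_B(V)\neq J_B(\pi(\lambda,\psi))$'' should be read, as the statement intends, as $J_B(V)$ strictly larger, i.e. picking up a new generalized eigenspace; the very strong admissibility from Hypothesis \ref{hypo: hL-dim}(1) together with \cite[Ex.~5.1.9]{Em2} and \cite[Thm.~4.3]{Br13II} is what lets one compute $J_B$ of these extensions exactly rather than just up to a submodule.)

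For part (2), I would first note that by Lemma \ref{lem: hL-ext1}(4) (the exact sequence (\ref{equ: hL-pi_-})) and the computations there, $\Ext^1_{\GL_2(\Q_p)}(\St_2^\infty(\lambda),\pi(\lambda,\psi)^-)$ has dimension $3$ (namely $2$ from $\Ext^1(\St_2^\infty(\lambda),\St_2^\infty(\lambda))$ plus $1$ from $\Ext^1(\St_2^\infty(\lambda),\pi(\lambda,\psi)^-/\St_2^\infty(\lambda))$). On the other side, $\Ext^1_{T(\Q_p)}(\delta,\delta)\cong\Hom(T(\Q_p),E)$ is $4$-dimensional and the subspace $\Hom(T(\Q_p),E)_\psi$ cut out by the single linear condition ``$\psi_1-\psi_2\in E\psi$'' is $3$-dimensional, so the target of (\ref{equ: hL-jac2}) also has dimension $3$; hence it suffices to prove $J_B$ induces an injection here (or a surjection). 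I would construct the map explicitly: $J_B$ is functorial and exact on the relevant subcategory, so it sends an extension $V$ of $\St_2^\infty(\lambda)$ by $\pi(\lambda,\psi)^-$ to an extension of $J_B(\St_2^\infty(\lambda))=\delta_\lambda(|\cdot|\otimes|\cdot|^{-1})$ by $J_B(\pi(\lambda,\psi)^-)$; composing with the canonical projection $J_B(\pi(\lambda,\psi)^-)\twoheadrightarrow\delta_\lambda(|\cdot|\otimes|\cdot|^{-1})$ (this uses $\pi(\lambda,\psi)^-\cong \St_2^{\an}(\lambda)-L(\lambda)$ and the structure of $J_B$ on each constituent) gives the desired element of $\Ext^1_{T(\Q_p)}(\delta_\lambda(|\cdot|\otimes|\cdot|^{-1}),\delta_\lambda(|\cdot|\otimes|\cdot|^{-1}))$. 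To see it lands in the subspace $(\cdots)_\psi$ one uses that $V$ is glued from extensions already constrained by $\psi$ via (\ref{equ: hL-wpsi}) and (\ref{equ: hL-upsi}). For injectivity, I would argue that if $J_B(V)$ is the trivial ($T(\Q_p)$-)extension then by adjunction $\Hom_{\GL_2(\Q_p)}$ out of a principal series forces $V$ to split off $\St_2^\infty(\lambda)$: more precisely, a splitting of $J_B(V)$ yields a $T(\Q_p)$-equivariant map $\delta_\lambda(|\cdot|\otimes|\cdot|^{-1})\to J_B(V)$ whose image misses $J_B(\pi(\lambda,\psi)^-)$, and Emerton's adjunction $\Hom_{T(\Q_p)}(\delta,J_B(V))\cong\Hom_{\GL_2(\Q_p)}((\Ind_{\overline B}^{\GL_2}\delta\delta_B^{-1})^{\an},V)$ together with the fact that $\St_2^\infty(\lambda)$ is the unique irreducible quotient of the relevant (locally algebraic) principal series produces a $\GL_2(\Q_p)$-splitting $\St_2^\infty(\lambda)\to V$; very strong admissibility of $V$ (Hypothesis \ref{hypo: hL-dim}(1)) is needed to apply the adjunction.

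The main obstacle I anticipate is the careful bookkeeping around $J_B$ of these specific extensions — in particular making sure that $J_B$ is \emph{exact} (not merely left exact) on the short exact sequences in play, which is where the very strong admissibility hypothesis, \cite[Ex.~5.1.9]{Em2}, and the essentially-maximal computation of $J_B$ via \cite[Thm.~4.3]{Br13II} do the real work; and, relatedly, pinning down the precise meaning of ``$J_B(V)\neq J_B(\pi(\lambda,\psi))$'' (it is the appearance of the new weight $\delta_{s\cdot\lambda}(|\cdot|\otimes|\cdot|^{-1})$, i.e. strict inclusion in the other direction from what the raw inequality might suggest). Once exactness of $J_B$ on these sequences is in hand, both parts reduce to dimension counts already available from Lemma \ref{lem: hL-ext1} plus the explicit description of $J_B$ on $L(\lambda)$, $\St_2^\infty(\lambda)$, $I(s\cdot\lambda)$, $\widetilde I(s\cdot\lambda)$, so the rest is routine.
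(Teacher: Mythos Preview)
Your plan for part (1) has the direction of the equivalence backwards, and the underlying reason is that you are tracking the wrong weight. You propose that ``$J_B(V)\neq J_B(\pi(\lambda,\psi))$'' means $J_B(V)$ picks up the character $\delta_{s\cdot\lambda}(|\cdot|\otimes|\cdot|^{-1})$ coming from $\widetilde I(s\cdot\lambda)$, and that this happens precisely when the pushout to $\widetilde I(s\cdot\lambda)$ is nonsplit, i.e.\ when $V$ does \emph{not} come from $\pi(\lambda,\psi)^-$. But the extra weight at stake is $\chi=\delta_\lambda(|\cdot|\otimes|\cdot|^{-1})=J_B(\St_2^\infty(\lambda))$, contributed by the \emph{quotient} $\St_2^\infty(\lambda)$ of $V$ via the left-exact sequence $0\to J_B(\pi(\lambda,\psi))\to J_B(V)\to J_B(\St_2^\infty(\lambda))$. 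A direct check confirms the lemma's direction: for $0\neq\psi'\in\Hom(\Q_p^\times,E)$, the representation $U(\psi'):=\St_2^\infty(\lambda)\otimes(1+\psi'\epsilon)\circ\det$ pushes forward to $W(\psi')\in\Ext^1(\St_2^\infty(\lambda),\pi(\lambda,\psi)^-)$ and satisfies $J_B(W(\psi'))\supseteq J_B(U(\psi'))=\chi(1+\psi'\epsilon)\circ\det\supsetneq\chi$, so $J_B$ is strictly larger exactly for an extension that \emph{does} come from $\pi(\lambda,\psi)^-$.

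The paper's argument is correspondingly different from yours. For the ``only if'' direction (the subtle one), given $\widetilde\chi\hookrightarrow J_B(V)$ one uses Emerton's adjunction \cite[Thm.~0.13]{Em2} to produce a $\GL_2(\Q_p)$-map $I_{\overline B}^{\GL_2}\widetilde\delta_\lambda\to V$; the image must contain $\St_2^\infty(\lambda)$ with multiplicity $2$ but cannot contain $\widetilde I(s\cdot\lambda)$ (it is not a constituent of the source), which via (\ref{equ: hL-iot1}) forces $V$ to come from $\pi(\lambda,\psi)^-$. For the ``if'' direction, one writes down an explicit basis of the $3$-dimensional space $\Ext^1(\St_2^\infty(\lambda),\pi(\lambda,\psi)^-)$ (two twists $W(\psi_1'),W(\psi_2')$ as above plus one $\pi(\lambda,\psi_1,\psi_2)^-$ with $\psi_1-\psi_2\in E^\times\psi$) and verifies directly from the constructions (\ref{equ: hL-jac3}), (\ref{equ: hL-jac4}) that each contributes a new copy of $\chi$ to $J_B$. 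Your reliance on ``$J_B$ exact on the relevant subcategory'' is not available here in the generality you need; the paper circumvents this entirely by these explicit computations, and part~(2) then falls out of the ``if'' argument since the explicit basis above visibly maps onto a basis of $\Hom(T(\Q_p),E)_\psi$.
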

\begin{proof}
In this proof we write $\chi:=\delta_{\lambda}(|\cdot|\otimes |\cdot|^{-1})$ for simplicity.\\

\noindent
(1) We first prove the ``only if" part, and for that we can assume that $V$ is nonsplit. If $J_B(V)\neq J_B(\pi(\lambda, \psi))$, then by (\ref{equ: hL-Jac}) we see that $J_B(V)$ is isomorphic to an extension of $\chi$ by $J_B(\pi(\lambda, \psi))$ and that there exists an extension $\widetilde{\chi}$ of $\chi$ by $\chi$ such that $j_1: \widetilde{\chi}\hookrightarrow J_B(V)$ (recall $\Ext^1_{T(\Q_p)}(\chi, \delta_{\lambda})=0$). Denote by $\widetilde{\delta}_{\lambda}:=\widetilde{\chi}\otimes_E (|\cdot|^{-1}\otimes |\cdot|)$, which is thus isomorphic to an extension of $\delta_{\lambda}$ by $\delta_{\lambda}$.
One can check (e.g. by the proof of \cite[Lem. 4.11]{Ding17}) that the morphism $j_1$ is balanced in the sense of \cite[Def. 0.8]{Em11}. From Hypothesis \ref{hypo: hL-dim} (both (1) and (2) are needed), we deduce that $V$ is very strongly admissible. By \cite[Thm. 0.13]{Em2}, the map $j_1$ then induces a $\GL_2(\Q_p)$-equivariant map:
\begin{equation*}
j_2: I_{\overline{B}(\Q_p)}^{\GL_2(\Q_p)}\widetilde{\delta}_{\lambda} \lra V
\end{equation*}
such that the morphism $j_1$ can be recovered from $j_2$ by applying the functor $J_B(\cdot)$ and where $I_{\overline{B}(\Q_p)}^{\GL_2(\Q_p)}\widetilde{\delta}_{\lambda}$ denotes the closed subrepresentation of $\big(\Ind_{\overline{B}(\Q_p)}^{\GL_2(\Q_p)}\widetilde{\delta}_{\lambda}\big)^{\an}$ generated by $\widetilde{\chi}$ via the natural embedding (see \cite[Lem.~0.3]{Em2} for details):
\begin{equation*}
\widetilde{\chi}\hooklongrightarrow J_B\big(\big(\Ind_{\overline{B}(\Q_p)}^{\GL_2(\Q_p)}\widetilde{\delta}_{\lambda}\big)^{\an}\big) \hooklongrightarrow \big(\Ind_{\overline{B}(\Q_p)}^{\GL_2(\Q_p)}\widetilde{\delta}_{\lambda}\big)^{\an}.
\end{equation*}
We have $\soc_{\GL_2(\Q_p)} \Ima(j_2) \buildrel\sim\over\ra \soc_{\GL_2(\Q_p)} V \cong\St_2^{\infty}(\lambda)$ (as $V$ is nonsplit). This implies $\St_2^{\infty}(\lambda)$ has multiplicity $2$ in the irreducible constituents of $\Ima(j_2)$, since otherwise we would have $\Ima(j_2)\subseteq \pi(\lambda, \psi)$ and thus $\widetilde{\chi}\hookrightarrow J_B(\Ima(j_2))\subseteq J_B(\pi(\lambda, \psi))$ which is a contradiction. By the exact sequence (\ref{equ: hL-iot1}) together with the fact that $\widetilde{I}(s\cdot \lambda)$ is not an irreducible constituent of $I_{\overline{B}(\Q_p)}^{\GL_2(\Q_p)}\widetilde{\delta}_{\lambda}$ (since it is not an irreducible constituent of $(\Ind_{\overline{B}(\Q_p)}^{\GL_2(\Q_p)}\widetilde{\delta}_{\lambda})^{\an}$), we obtain that $V$ comes from an element in $\Ext^1(\St_2^{\infty}(\lambda), \pi(\lambda, \psi)^-)$.\\
\noindent
We prove the ``if" part. For $\psi'\in\Hom(\Q_p^{\times}, E)$, let $U(\psi'):=\St_2^{\infty}(\lambda)\otimes_E (1+\psi'\epsilon)\circ \dett$, hence $J_B(U(\psi'))\cong\chi\otimes_E (1+\psi'\epsilon)\circ \dett$. In particular, taking $J_B$ induces a bijection by (\ref{equ: hL-Stdet}):
\begin{equation*}
\Ext^1_{\GL_2(\Q_p)}(\St_2^{\infty}(\lambda), \St_2^{\infty}(\lambda)) \xlongrightarrow{\sim} \Hom(Z(\Q_p), E) \big(\hooklongrightarrow \Hom(T(\Q_p),E)\big).
\end{equation*}
Denote by $W(\psi')\in \Ext^1_{\GL_2(\Q_p)}(\St_2^{\infty}(\lambda), \pi(\lambda, \psi)^-)$ the image of $U(\psi')$ via the injection in (\ref{equ: hL-pi_-}) (so $U(\psi')\subset W(\psi')$), then by left exactness of $J_B$ we have:
\begin{equation}\label{equ: hL-jac3}
\chi\otimes_E (1+\psi'\epsilon)\circ \dett \hooklongrightarrow J_B(W(\psi')).
\end{equation}
Now let $\Psi=(\psi_1, \psi_2) \in \Hom(T(\Q_p), E)_{\psi}\setminus \Hom(Z(\Q_p),E)$ (i.e. $\psi_1\neq \psi_2$ and $\psi_1-\psi_2\in E\psi$) and consider the representation $\pi(\lambda, \psi_1, \psi_2)^-$ in (\ref{equ: hL-wpsi}). We know $\pi(\lambda, \psi)^-\subseteq \pi(\lambda, \psi_1, \psi_2)^-$ and thus $\pi(\lambda, \psi_1, \psi_2)^-$ gives a \emph{nonsplit} extension of $\St_2^{\infty}(\lambda)$ by $\pi(\lambda, \psi)^-$ (since the quotient $i(\lambda)$ is nonsplit). Note that by construction $\pi(\lambda, \psi_1, \psi_2)^-$ is a subquotient of $W(\Psi):=(\Ind_{\overline{B}}^{\GL_2} \delta_{\lambda}(1+\Psi \epsilon))^{\an}$ and that we have a natural injection $\chi\otimes_E (1+\Psi \epsilon)\hookrightarrow J_B(W(\Psi))$ (cf. \cite[Lem. 0.3]{Em2}). Moreover $\pi(\lambda, \psi_1, \psi_2)^-\hookrightarrow W(\Psi)/L(\lambda)$ and neither $J_B(L(\lambda))$ nor $J_B((W(\Psi)/L(\lambda))/\pi(\lambda, \psi_1, \psi_2)^-)=J_B(I(s\cdot\lambda))$ contains $\chi$ as a subquotient (the latter by \cite[Ex.~5.1.9]{Em2}). By left exactness of $J_B$ we deduce:
\begin{equation}\label{equ: hL-jac4}
\chi\otimes_E (1+\Psi\epsilon)\hooklongrightarrow J_B(\pi(\lambda, \psi_1, \psi_2)^-).
\end{equation}
From Lemma \ref{lem: hL-ext1}(2)\&(4) we deduce $\dim_E \Ext^1_{\GL_2(\Q_p)}(\St_2^{\infty}(\lambda), \pi(\lambda, \psi)^-)=3$ and we let $\Pi$ be the unique extension of $\St_2^{\infty}(\lambda)^{\oplus 3}$ by $\pi(\lambda, \psi)^-$ with $\soc_{\GL_2(\Q_p)} \Pi\cong \St_2^{\infty}(\lambda)$. The above discussion implies $J_B(\Pi)$ contains the unique extension of $\chi^{\oplus 3}$ by $\chi$ with socle $\chi$ attached to the $3$-dimensional space $\Ext^1_{T(\Q_p)}(\chi, \chi)_\psi$. Indeed, let $\{\psi_1'\circ \dett, \psi_2' \circ \dett, \Psi_3:=\Psi\}$ be a basis of the $3$-dimensional space $\Hom(T(\Q_p),E)_{\psi}\cong \Ext^1_{T(\Q_p)}(\chi, \chi)_\psi$ where $\{\psi_1', \psi_2'\}$ is a basis of $\Hom(\Q_p^{\times},E)$ and $\Psi=(\psi_1, \psi_2)$ is as after (\ref{equ: hL-jac3}), then we have by (\ref{equ: hL-pi_-}) again:
\begin{equation*}
\Pi \cong W(\psi_1')\oplus_{\pi(\lambda,\psi)^-} W(\psi_2') \oplus_{\pi(\lambda, \psi)^-} \pi(\lambda, \psi_1, \psi_2)^-.
\end{equation*}
By (\ref{equ: hL-jac3}), (\ref{equ: hL-jac4}), left exactness of $J_B$ and (\ref{equ: hL-Jac}), we deduce that applying $J_B$ to $\Pi\twoheadrightarrow \St_2^{\infty}(\lambda)^{\oplus 3}$ induces a surjective map:
 \begin{equation}\label{equ: hL-jac5}
  J_B(\Pi) \twoheadrightarrow J_B(\St_2^{\infty}(\lambda)^{\oplus 3})\cong \chi^{\oplus 3},
 \end{equation}
from which we deduce (together with (\ref{equ: hL-Jac}) and the left exactness of $J_B$) that for any $U\in \Ext^1(\St_2^{\infty}(\lambda), \pi(\lambda, \psi)^-)$, we have $J_B(U)\neq J_B(\pi(\lambda, \psi))$.\\

\noindent
(2) By the proof of (1) (see (\ref{equ: hL-jac3}), (\ref{equ: hL-jac4}), (\ref{equ: hL-jac5})) together with (\ref{equ: hL-Jac}) and $\Ext^1_{T(\Q_p)}(\chi, \delta_{\lambda})=0$, we see that taking $J_B$ induces a map:
\begin{equation*}
 \Ext^1_{\GL_2(\Q_p)}(\St_2^{\infty}(\lambda), \pi(\lambda, \psi)^-) \lra \Ext^1_{T(\Q_p)}\big(\chi, J_B(\pi(\lambda, \psi)^-)\big)\cong \Ext^1_{T(\Q_p)}(\chi, \chi)
\end{equation*}
which induces an isomorphism $\Ext^1_{\GL_2(\Q_p)}(\St_2^{\infty}(\lambda), \pi(\lambda, \psi)^-) \buildrel\sim\over \ra \Ext^1_{T(\Q_p)}(\chi, \chi)_\psi$. This finishes the proof.
\end{proof}

\begin{remark}\label{rem: hL-Jac}
{\rm From the proof of Lemma \ref{lem: hL-tri1}, we can explicitly describe the inverse of (\ref{equ: hL-jac2}) as follows. Let $\Psi\in \Hom(T(\Q_p), E)_{\psi}$, define:
$$\widetilde{\chi}=\delta_{\lambda}(|\cdot|\otimes |\cdot|^{-1})(1+\Psi\epsilon)\in \Ext^1_{T(\Q_p)}\big(\delta_{\lambda}(|\cdot|\otimes |\cdot|^{-1}), \delta_{\lambda}(|\cdot|\otimes |\cdot|^{-1})\big)_\psi$$
and consider the short exact sequence:
\begin{equation*}
 0 \lra I(\lambda) \lra \big(\Ind_{\overline{B}(\Q_p)}^{\GL_2(\Q_p)}\delta_{\lambda}(1+\Psi \epsilon)\big)^{\an} \xlongrightarrow{\pr} I(\lambda) \lra 0.
 \end{equation*}
If $\Psi=(\psi', \psi')$, i.e. $\Psi\in \Hom(Z(\Q_p), E)$, then $\pr^{-1}(i(\lambda))/L(\lambda)$ has a subrepresentation isomorphic to $\St_2^{\infty}(\lambda)\otimes_E (1+\psi'\epsilon)\circ \dett$, and the inverse image of $\widetilde{\chi}$ in (\ref{equ: hL-jac2}) is then given by the push-forward of this representation via $\St_2^{\infty}(\lambda) \hookrightarrow \pi(\lambda, \psi)^-$. If $\Psi\notin \Hom(Z(\Q_p),E)$, the inverse image of $\Psi$ is then isomorphic to $\pr^{-1}(i(\lambda))/L(\lambda)$.}
\end{remark}

\noindent
We now denote by $\Ext^1_{\tri}(\pi(\lambda, \psi), \pi(\lambda, \psi))$ the kernel of the composition:
\begin{multline}\label{equ: hL-kappa0}
\kappa_0: \Ext^1_{\GL_2(\Q_p)}(\pi(\lambda, \psi), \pi(\lambda, \psi)) \xlongrightarrow{\kappa_1} \Ext^1_{\GL_2(\Q_p)}(\St_2^{\infty}(\lambda), \pi(\lambda, \psi)) \longrightarrow \\
\Ext^1_{\GL_2(\Q_p)}(\St_2^{\infty}(\lambda), \widetilde{I}(s\cdot \lambda))
\end{multline}
with $\kappa_1$ as in (\ref{equ: hL-stv3}). In particular, by (\ref{equ: hL-stv3}) we have $\Ima(\iota_0)\subset \Ext^1_{\tri}(\pi(\lambda, \psi), \pi(\lambda, \psi))$.

\begin{lemma}\label{dimtri}
(1) We have $\dim_E \Ext^1_{\tri}(\pi(\lambda, \psi), \pi(\lambda, \psi))=4$.\\
\noindent
(2) For $\widetilde{\pi}\in \Ext^1_{\GL_2(\Q_p)}(\pi(\lambda, \psi), \pi(\lambda, \psi))$, we have $\widetilde{\pi}\in \Ext^1_{\tri}(\pi(\lambda, \psi), \pi(\lambda, \psi))$ if and only if $\delta_{\lambda}(|\cdot|\otimes |\cdot|^{-1})$ appears with multiplicity $2$ in $J_B(\widetilde{\pi})$.\\
\noindent
(3) We have a natural short exact sequence:
\begin{multline}\label{equ: hL-ex1}
0 \lra \Ext^1_{\GL_2(\Q_p)}(\pi(\lambda, \psi)/\St_2^{\infty}(\lambda), \pi(\lambda, \psi)) \xlongrightarrow{\iota_0}\Ext^1_{\tri}(\pi(\lambda, \psi), \pi(\lambda, \psi)) \\ \lra \Ext^1_{T(\Q_p)}\big(\delta_{\lambda}(|\cdot|\otimes |\cdot|^{-1}), \delta_{\lambda}(|\cdot|\otimes |\cdot|^{-1})\big)_\psi \lra 0.
\end{multline}
\end{lemma}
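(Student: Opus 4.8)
The plan is to read off all three parts from the exact sequences (\ref{equ: hL-stv3}), (\ref{equ: hL-iot1}), (\ref{equ: hL-pi_-}), the dimension counts of Lemma \ref{lem: hL-ext1}, Hypothesis \ref{hypo: hL-dim}, and Lemma \ref{lem: hL-tri1}. Recall $\Ext^1_{\tri}(\pi(\lambda, \psi), \pi(\lambda, \psi)) = \Ker(\kappa_0)$, where $\kappa_0$ factors as $\kappa_1$ followed by the pushforward $\Ext^1_{\GL_2(\Q_p)}(\St_2^{\infty}(\lambda), \pi(\lambda, \psi)) \to \Ext^1_{\GL_2(\Q_p)}(\St_2^{\infty}(\lambda), \widetilde{I}(s\cdot \lambda))$; by (\ref{equ: hL-iot1}) the kernel of this last map is $\Ima(\iota_1) = \Ext^1_{\GL_2(\Q_p)}(\St_2^{\infty}(\lambda), \pi(\lambda, \psi)^-)$, so $\Ext^1_{\tri}(\pi(\lambda, \psi), \pi(\lambda, \psi)) = \kappa_1^{-1}(\Ima(\iota_1))$. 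Since $\kappa_1$ is surjective by Hypothesis \ref{hypo: hL-dim}(2) with $\Ker(\kappa_1) = \Ima(\iota_0)$ of dimension $1$ (by (\ref{equ: hL-stv3}) and Lemma \ref{lem: hL-ext1}(2)), and $\dim_E \Ext^1_{\GL_2(\Q_p)}(\St_2^{\infty}(\lambda), \pi(\lambda, \psi)^-) = 2 + 1 = 3$ by (\ref{equ: hL-pi_-}) and Lemma \ref{lem: hL-ext1}(4), I get $\dim_E \Ext^1_{\tri}(\pi(\lambda, \psi), \pi(\lambda, \psi)) = 1 + 3 = 4$, which is (1). For (3), the same description together with $\Ima(\iota_0) \subseteq \Ext^1_{\tri}(\pi(\lambda, \psi), \pi(\lambda, \psi))$ (noted after (\ref{equ: hL-kappa0})) gives a short exact sequence $0 \to \Ima(\iota_0) \to \Ext^1_{\tri}(\pi(\lambda, \psi), \pi(\lambda, \psi)) \xlongrightarrow{\kappa_1} \Ima(\iota_1) \to 0$; I then rewrite the outer terms using $\iota_0$ (which identifies $\Ima(\iota_0)$ with $\Ext^1_{\GL_2(\Q_p)}(\pi(\lambda, \psi)/\St_2^{\infty}(\lambda), \pi(\lambda, \psi))$) and the isomorphism $\Ima(\iota_1) = \Ext^1_{\GL_2(\Q_p)}(\St_2^{\infty}(\lambda), \pi(\lambda, \psi)^-) \xlongrightarrow{\sim} \Ext^1_{T(\Q_p)}(\delta_{\lambda}(|\cdot|\otimes |\cdot|^{-1}), \delta_{\lambda}(|\cdot|\otimes |\cdot|^{-1}))_\psi$ supplied by $J_B$ in Lemma \ref{lem: hL-tri1}(2), obtaining (\ref{equ: hL-ex1}).

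\noindent
For (2), let $\widetilde{\pi} \in \Ext^1_{\GL_2(\Q_p)}(\pi(\lambda, \psi), \pi(\lambda, \psi))$ and set $V := \kappa_1(\widetilde{\pi})$. By construction of $\kappa_1$ (pullback along $\St_2^{\infty}(\lambda) = \soc_{\GL_2(\Q_p)}\pi(\lambda, \psi) \hookrightarrow \pi(\lambda, \psi)$), the representation $V$ is the preimage in $\widetilde{\pi}$ of the socle of the quotient copy of $\pi(\lambda, \psi)$, so $V$ fits in $0 \to \pi(\lambda, \psi) \to V \to \St_2^{\infty}(\lambda) \to 0$ and $\widetilde{\pi}/V \cong \pi(\lambda, \psi)/\St_2^{\infty}(\lambda)$. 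By (\ref{equ: hL-iot1}) and Lemma \ref{lem: hL-tri1}(1), $\widetilde{\pi} \in \Ext^1_{\tri}(\pi(\lambda, \psi), \pi(\lambda, \psi))$ if and only if $V \in \Ima(\iota_1)$ if and only if $J_B(V) \neq J_B(\pi(\lambda, \psi))$. Writing $\chi := \delta_{\lambda}(|\cdot| \otimes |\cdot|^{-1})$, left-exactness of $J_B$ applied to $0 \to \pi(\lambda, \psi) \to V \to \St_2^{\infty}(\lambda) \to 0$, together with $J_B(\St_2^{\infty}(\lambda)) = \chi$ and (\ref{equ: hL-Jac}) (by which $\chi$ occurs exactly once in $J_B(\pi(\lambda, \psi))$), shows $J_B(V) \neq J_B(\pi(\lambda, \psi))$ if and only if $\chi$ occurs with multiplicity $2$ in $J_B(V)$. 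It remains to transfer this to $J_B(\widetilde{\pi})$: the constituents of $\widetilde{\pi}/V \cong \pi(\lambda, \psi)/\St_2^{\infty}(\lambda)$ are $I(s\cdot\lambda)$, $L(\lambda)$, $\widetilde{I}(s\cdot\lambda)$, and $\chi$ is not a Jordan--H\"older factor of $J_B$ of any of them — this follows from (\ref{equ: hL-N1}), (\ref{equ: hL-N3}), (\ref{equ: hL-N4}) and the normalization of $J_B$ visible in (\ref{equ: hL-Jac}), by comparing the algebraic (Sen) parts of the characters involved and using that $\lambda$ is dominant. Hence, by left-exactness of $J_B$ on $0 \to V \to \widetilde{\pi} \to \widetilde{\pi}/V \to 0$, $\chi$ occurs in $J_B(\widetilde{\pi})$ with the same multiplicity as in $J_B(V)$, and chaining the equivalences yields (2).

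\noindent
\emph{Expected difficulty.} Parts (1) and (3) are pure diagram chasing once Lemma \ref{lem: hL-ext1}, the exact sequences (\ref{equ: hL-iot1})--(\ref{equ: hL-pi_-}), Hypothesis \ref{hypo: hL-dim} and Lemma \ref{lem: hL-tri1}(2) are in hand, so I anticipate no obstacle there. The only point requiring genuine care is (2): one must correctly identify $\kappa_1(\widetilde{\pi})$ with the concrete subrepresentation $V$ of $\widetilde{\pi}$, and then track the multiplicity of the single character $\chi = \delta_{\lambda}(|\cdot|\otimes|\cdot|^{-1})$ across the two short exact sequences. This transfer rests on left-exactness of $J_B$ and on the fact — read off from the homology computations (\ref{equ: hL-N1})--(\ref{equ: hL-N4}) compared with (\ref{equ: hL-Jac}) — that $\chi$ appears in $J_B$ of no Jordan--H\"older factor of $\pi(\lambda, \psi)$ other than $\St_2^{\infty}(\lambda)$. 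That bookkeeping is the main (and essentially only) obstacle.
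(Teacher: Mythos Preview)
Your proof is correct and follows essentially the same approach as the paper: both derive the short exact sequence $0 \to \Ima(\iota_0) \to \Ext^1_{\tri} \to \Ima(\iota_1) \to 0$ from the surjectivity of $\kappa_1$ (Hypothesis \ref{hypo: hL-dim}(2)) and (\ref{equ: hL-iot1}), use Lemma \ref{lem: hL-ext1} for the dimension count in (1), identify the outer terms via $\iota_0$ and Lemma \ref{lem: hL-tri1}(2) for (3), and deduce (2) from Lemma \ref{lem: hL-tri1}(1), left exactness of $J_B$, and (\ref{equ: hL-Jac}). Your argument for (2) is in fact more detailed than the paper's, which leaves implicit the transfer of the multiplicity of $\chi$ from $J_B(V)$ to $J_B(\widetilde{\pi})$; your use of (\ref{equ: hL-N1}), (\ref{equ: hL-N3}), (\ref{equ: hL-N4}) to rule out $\chi$ in $J_B$ of the constituents of $\widetilde{\pi}/V$ is exactly the missing step, and the paper's own proof of Lemma \ref{lem: hL-tri1}(1) already records that $\chi$ does not occur in $J_B(L(\lambda))$ or $J_B(I(s\cdot\lambda))$.
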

\begin{proof}
By Hypothesis \ref{hypo: hL-dim}(2) and Lemma \ref{lem: hL-ext1}(4) (see in particular (\ref{equ: hL-iot1})), there is a natural exact sequence:
\begin{equation}\label{equ: hL-tria}
0 \lra \Ext^1(\pi(\lambda, \psi)/\St_2^{\infty}(\lambda), \pi(\lambda, \psi)) \xlongrightarrow{\iota_0}\Ext^1_{\tri}(\pi(\lambda, \psi), \pi(\lambda, \psi)) \lra \Ima(\iota_1) \ra 0.
\end{equation}
(1) follows by Lemma \ref{lem: hL-ext1}(2), (\ref{equ: hL-iot1}) and a dimension count. Together with (the proof of) Lemma \ref{lem: hL-tri1}(1), left exactness of $J_B$ and (\ref{equ: hL-Jac}), we easily deduce (2) and (3), where the third map of (\ref{equ: hL-ex1}) is given by:
\begin{multline}\label{equ: hL-jac7}
\Ext^1_{\tri}(\pi(\lambda, \psi), \pi(\lambda, \psi)) \twoheadrightarrow \Ima(\iota_1) \xlongrightarrow[\sim]{\iota_1^{-1}} \Ext^1_{\GL_2(\Q_p)}(\St_2^{\infty}(\lambda), \pi(\lambda, \psi)^-) \\
\xlongrightarrow[\sim]{(\ref{equ: hL-jac2})} \Ext^1_{T(\Q_p)}\big(\delta_{\lambda}(|\cdot|\otimes |\cdot|^{-1}), \delta_{\lambda}(|\cdot|\otimes |\cdot|^{-1})\big)_\psi.
\end{multline}
\end{proof}

\begin{remark}\label{rem: hL-tri}
{\rm By Lemma \ref{lem: hL-tri1}(2) and its proof, for any $\widetilde{\pi}\in \Ext^1_{\tri}(\pi(\lambda, \psi), \pi(\lambda, \psi))$, the composition (\ref{equ: hL-jac7}) sends $\widetilde{\pi}$ to the (unique) deformation $\widetilde{\chi}\in \Ext^1_{T(\Q_p)}\big(\delta_{\lambda}(|\cdot|\otimes |\cdot|^{-1}), \delta_{\lambda}(|\cdot|\otimes |\cdot|^{-1})\big)_\psi$ such that $\widetilde{\chi} \hookrightarrow J_B(\widetilde{\pi})$.}
\end{remark}

\noindent
We denote by $\kappa$ the following composition:
\begin{multline}\label{equ: hL-kp1aut}
 \kappa: \Ext^1_{\tri}(\pi(\lambda, \psi), \pi(\lambda, \psi)) \xlongrightarrow{(\ref{equ: hL-jac7})} \Ext^1_{T(\Q_p)}\big(\delta_{\lambda}(|\cdot|\otimes |\cdot|^{-1}), \delta_{\lambda}(|\cdot|\otimes |\cdot|^{-1})\big)_\psi\\ \cong \Hom(T(\Q_p), E)_{\psi} \xlongrightarrow{\pr_2} \Hom(\Q_p^{\times}, E)
\end{multline}
where the last map sends $\Psi=(\psi_1, \psi_2)$ to $\psi_2$ (and hence is surjective). From the exact sequence:
\begin{equation*}
0 \lra E \psi \lra \Hom(T(\Q_p), E)_{\psi} \xlongrightarrow{\pr_2} \Hom(\Q_p^{\times}, E) \lra 0
\end{equation*}
(where the injection is $\psi\mapsto \Psi=(\psi,0)$), we obtain with (\ref{equ: hL-ex1}) an exact sequence (compare with (\ref{equ: hL-kap})):
\begin{equation}\label{equ: hL-kappa1}
0 \lra \Ext^1_{\GL_2(\Q_p)}(\pi(\lambda, \psi)/\St_2^{\infty}(\lambda), \pi(\lambda, \psi)) \xlongrightarrow{\iota_0} \Ker(\kappa) \lra E \psi \lra 0.
\end{equation}

\begin{lemma}\label{lem: hL-kp1}
(1) We have $\dim_E \Ker(\kappa)=2$, and $\widetilde{\pi}\in \Ker (\kappa)$ if and only if $\kappa_1(\widetilde{\pi})\in E \iota_1(\pi(\lambda, \psi,0)^-)$ where $\kappa_1$ is as in (\ref{equ: hL-stv3}) and $\iota_1$ as in (\ref{equ: hL-iot1}).\\
\noindent
(2) We have $\Ext^1_{\GL_2(\Q_p),Z}(\pi(\lambda, \psi), \pi(\lambda, \psi)) \cap \Ker (\kappa)=\Ima(\iota_0)$ (where the intersection is in $\Ext^1_{\GL_2(\Q_p)}(\pi(\lambda, \psi), \pi(\lambda, \psi))$), and it is a $1$-dimensional $E$-vector space.
\end{lemma}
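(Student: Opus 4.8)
Here is the plan. The whole statement is a diagram chase through the short exact sequence (\ref{equ: hL-kappa1}) and the explicit factorisation of $\kappa$ given by (\ref{equ: hL-kp1aut})--(\ref{equ: hL-jac7}), combined with Lemma \ref{lem: hL-ext1}(2), Remark \ref{rem: hL-Jac} and Remark \ref{rem: hL-tri}.

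\emph{Part (1).} For the dimension I would simply feed $\dim_E \Ext^1_{\GL_2(\Q_p)}(\pi(\lambda, \psi)/\St_2^{\infty}(\lambda), \pi(\lambda, \psi))=1$ (Lemma \ref{lem: hL-ext1}(2)) and $\dim_E E\psi=1$ into the exact sequence (\ref{equ: hL-kappa1}), giving $\dim_E\Ker(\kappa)=2$. For the characterisation, recall that $\kappa$ factors as $\Ext^1_{\tri}(\pi(\lambda, \psi), \pi(\lambda, \psi))\xrightarrow{\kappa_1}\Ima(\iota_1)\xrightarrow{\iota_1^{-1}}\Ext^1_{\GL_2(\Q_p)}(\St_2^{\infty}(\lambda), \pi(\lambda, \psi)^-)\xrightarrow{(\ref{equ: hL-jac2})}\Hom(T(\Q_p),E)_\psi\xrightarrow{\pr_2}\Hom(\Q_p^{\times},E)$, and that $\Ker(\kappa)\subseteq\Ext^1_{\tri}(\pi(\lambda,\psi),\pi(\lambda,\psi))$ by construction. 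Hence $\widetilde\pi\in\Ker(\kappa)$ is equivalent to: $\kappa_1(\widetilde\pi)\in\Ima(\iota_1)$ (equivalently $\widetilde\pi\in\Ext^1_{\tri}$, by (\ref{equ: hL-iot1})) \emph{and} the image of $\kappa_1(\widetilde\pi)$ in $\Hom(T(\Q_p),E)_\psi$ lies in $\ker(\pr_2)=E\psi$, the latter being embedded via $\psi'\mapsto(\psi',0)$. By Remark \ref{rem: hL-Jac}, since $(\psi,0)\notin\Hom(Z(\Q_p),E)$ (as $\psi\neq 0$), the inverse image of $\{(c\psi,0):c\in E\}$ under (\ref{equ: hL-jac2}) is exactly $E\cdot\pi(\lambda,\psi,0)^-$; so the condition becomes $\kappa_1(\widetilde\pi)\in E\,\iota_1(\pi(\lambda,\psi,0)^-)$. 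For the converse direction one only notes that $E\,\iota_1(\pi(\lambda,\psi,0)^-)\subseteq\Ima(\iota_1)$, which forces $\widetilde\pi\in\Ext^1_{\tri}$ by (\ref{equ: hL-iot1}), after which the above chain of equivalences applies.

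\emph{Part (2).} The inclusion $\Ima(\iota_0)\subseteq\Ker(\kappa)$ is part of (\ref{equ: hL-kappa1}). By Lemma \ref{lem: hL-ext1}(2) the forgetful map $\Ext^1_{\GL_2(\Q_p),Z}(\pi(\lambda, \psi)/\St_2^{\infty}(\lambda), \pi(\lambda, \psi))\to\Ext^1_{\GL_2(\Q_p)}(\pi(\lambda, \psi)/\St_2^{\infty}(\lambda), \pi(\lambda, \psi))$ is an isomorphism of $1$-dimensional spaces, so by the compatibility of (\ref{equ: hL-stv2}) with (\ref{equ: hL-stv3}) the map $\iota_0$ factors through $\Ext^1_{\GL_2(\Q_p),Z}(\pi(\lambda, \psi), \pi(\lambda, \psi))$ and $\dim_E\Ima(\iota_0)=1$; hence $\Ima(\iota_0)\subseteq\Ext^1_{\GL_2(\Q_p),Z}(\pi(\lambda, \psi), \pi(\lambda, \psi))\cap\Ker(\kappa)$. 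For the reverse inclusion, take $\widetilde\pi$ in the intersection; I will show $\kappa_1(\widetilde\pi)=0$, whence $\widetilde\pi\in\ker(\kappa_1)=\Ima(\iota_0)$ by (\ref{equ: hL-stv3}). By Remark \ref{rem: hL-tri}, $\kappa_1(\widetilde\pi)$ corresponds under $\iota_1^{-1}$ and (\ref{equ: hL-jac2}) to $\Psi=(\psi_1,\psi_2)\in\Hom(T(\Q_p),E)_\psi$ characterised by $\widetilde\chi:=\delta_{\lambda}(|\cdot|\otimes|\cdot|^{-1})(1+\Psi\epsilon)\hookrightarrow J_B(\widetilde\pi)$. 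Since $\widetilde\pi\in\Ker(\kappa)$ we get $\psi_2=\pr_2(\Psi)=0$. On the other hand $Z(\Q_p)\subseteq T(\Q_p)$ acts on $J_B(\widetilde\pi)$ through the central character of $\widetilde\pi$, and since $(|\cdot|\otimes|\cdot|^{-1})|_{Z(\Q_p)}=1$ while $\sigma(\psi_1,\psi_2)|_{Z(\Q_p)}$ corresponds to $\psi_1+\psi_2$, we get $\widetilde\chi|_{Z(\Q_p)}=\chi_\lambda(1+(\psi_1+\psi_2)\epsilon)$; as $\widetilde\pi\in\Ext^1_{\GL_2(\Q_p),Z}(\pi(\lambda, \psi), \pi(\lambda, \psi))$, Lemma \ref{Lem: hL-cent4} forces $\psi_1+\psi_2=0$. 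Hence $\psi_1=\psi_2=0$, so $\Psi=0$ and $\kappa_1(\widetilde\pi)=0$. This gives the equality, and the $1$-dimensionality was observed above.

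The main obstacle is the bookkeeping: tracking the two linear forms $\Psi\mapsto\pr_2(\Psi)=\psi_2$ (which cuts out $\Ker\kappa$) and $\Psi\mapsto\psi_1+\psi_2$ (which cuts out $\Ext^1_{\GL_2(\Q_p),Z}$) through the chain of identifications (\ref{equ: hL-jac2}), (\ref{equ: hL-jac7}), (\ref{equ: hL-kp1aut}), which requires keeping careful track of the normalisation of the Jacquet--Emerton functor $J_B$ and of the convention in the matrix $\sigma(\psi_1,\psi_2)$; once these are pinned down the statement follows formally from the earlier results.
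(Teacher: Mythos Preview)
Your proof is correct. Part~(1) follows the same route as the paper: the dimension comes from the exact sequence~(\ref{equ: hL-kappa1}) and Lemma~\ref{lem: hL-ext1}(2), and the characterisation is the paper's argument spelled out as an explicit diagram chase through~(\ref{equ: hL-jac7}) and Remark~\ref{rem: hL-Jac}, where the paper instead uses~(\ref{equ: hL-jac4}) for one direction and a dimension count ($\kappa_1^{-1}(E\iota_1(\pi(\lambda,\psi,0)^-))$ is $2$-dimensional, hence equals $\Ker(\kappa)$) for the other.

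Part~(2) takes a genuinely different route. The paper argues the contrapositive: if $\kappa_1(\widetilde\pi)\in E^\times\iota_1(\pi(\lambda,\psi,0)^-)$ then $\widetilde\pi$ cannot have central character $\chi_\lambda$, and it reduces this to showing that $\pi(\lambda,\psi,0)^-$ itself does not have central character $\chi_\lambda$, which is established by a propagation argument via Lemma~\ref{lem: hL-cc} (if it did, so would the full parabolic induction $(\Ind_{\overline{B}(\Q_p)}^{\GL_2(\Q_p)}\delta_\lambda\otimes_E\sigma(\psi,0))^{\an}$, a contradiction). You instead read the central character of $\widetilde\pi$ directly off the Jacquet module: the embedding $\widetilde\chi\hookrightarrow J_B(\widetilde\pi)$ from Remark~\ref{rem: hL-tri} forces the central character to be $\chi_\lambda(1+(\psi_1+\psi_2)\epsilon)$, so the two linear constraints $\psi_2=0$ (from $\Ker(\kappa)$) and $\psi_1+\psi_2=0$ (from $\Ext^1_Z$) give $\Psi=0$. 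Your argument is cleaner and avoids the auxiliary use of Lemma~\ref{lem: hL-cc}; the paper's argument has the virtue of isolating a concrete representation-theoretic fact about $\pi(\lambda,\psi,0)^-$ that is of some independent interest.
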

\begin{proof}
(1) The first statement follows from (\ref{equ: hL-kappa1}) and Lemma \ref{lem: hL-ext1}(2). By (\ref{equ: hL-jac4}) applied to $\Psi=(\psi,0)$ and Remark \ref{rem: hL-tri}, the ``if" part follows. However, it is straightforward from (\ref{equ: hL-stv3}) and Lemma \ref{lem: hL-ext1}(2) that $\kappa_1^{-1}(E \iota_1([\pi(\lambda, \psi,0)^-]))$ is also $2$-dimensional. The ``only if" part follows.\\
\noindent
(2) The direction $\supseteq$ is clear from the definitions and Lemma \ref{lem: hL-ext1}(3). By (1), it is thus sufficient to show that if $\kappa_1([\widetilde{\pi}])\ne 0$, i.e. $\kappa_1([\widetilde{\pi}])\in E^{\times} \iota_1([\pi(\lambda, \psi,0)^-])$, then $\widetilde{\pi}$ does not have central character $\chi_{\lambda}$ (which is the central character of $\pi(\lambda, \psi)$), and it is enough to show that $\pi(\lambda, \psi,0)^-$ does not have central character $\chi_{\lambda}$. By the construction following Theorem \ref{thm: hL-lGL2} and by Lemma \ref{lem: hL-cc} (applied first to the extension $W$ of $V_1=\pi(\lambda, \psi,0)^-$ by $V_2=L(\lambda)$ inside $V:=(\Ind_{\overline{B}(\Q_p)}^{\GL_2(\Q_p)} \delta_{\lambda}\otimes_E \sigma(\psi, 0))^{\an}$, then to $V_1=W$, $V_2=I(\lambda)/L(\lambda)$), if $\pi(\lambda, \psi,0)^-$ has central character $\chi_{\lambda}$, so does $(\Ind_{\overline{B}(\Q_p)}^{\GL_2(\Q_p)} \delta_{\lambda}\otimes_E \sigma(\psi, 0))^{\an}$, a contradiction.
\end{proof}

\noindent
We denote by $\Ext^1_g(\pi(\lambda, \psi), \pi(\lambda, \psi))$ the $E$-vector subspace of $\Ext^1_{\GL_2(\Q_p)}(\pi(\lambda, \psi), \pi(\lambda, \psi))$ generated by those $\widetilde{\pi}$ such that $\widetilde{\pi}^{\lalg}\neq \pi(\lambda, \psi)^{\lalg}$.

\begin{lemma}\label{lem: hL-dR}(1)
We have ($\iota_0$ as in (\ref{equ: hL-stv3})):
\begin{equation*}
\Ima(\iota_0) \subseteq \Ext^1_g(\pi(\lambda, \psi), \pi(\lambda, \psi))\subseteq \Ext^1_{\tri}(\pi(\lambda, \psi), \pi(\lambda, \psi)).\end{equation*}
\noindent
(2) The exact sequence (\ref{equ: hL-ex1}) induces an exact sequence:
\begin{multline}\label{equ: hL-ex2}
 0 \lra \Ext^1_{\GL_2(\Q_p)}(\pi(\lambda, \psi)/\St_2^{\infty}(\lambda), \pi(\lambda, \psi)) \xlongrightarrow{\iota_0} \Ext^1_g(\pi(\lambda, \psi), \pi(\lambda, \psi))\\ \lra \Hom_{\infty}(T(\Q_p), E)_{\psi} \lra 0
\end{multline}
 where we have identified $\Ext^1_{T(\Q_p)}\big(\delta_{\lambda}(|\cdot|\otimes |\cdot|^{-1}), \delta_{\lambda}(|\cdot|\otimes |\cdot|^{-1})\big)_\psi$ with $\Hom(T(\Q_p),E)_{\psi}$ and $\Hom_{\infty}(T(\Q_p),E)_{\psi}$ is the subspace of smooth characters in $\Hom(T(\Q_p),E)_{\psi}$. In particular:
\begin{equation*}
  \dim_E \Ext^1_g(\pi(\lambda, \psi),\pi(\lambda, \psi))=\begin{cases}
   2 & \psi \text{ non smooth}\\
   3 & \psi \text{ smooth}.
  \end{cases}
\end{equation*}
\end{lemma}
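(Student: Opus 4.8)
The plan is to derive both parts from the short exact sequence (\ref{equ: hL-ex1}) of Lemma \ref{dimtri}(3) — whose third arrow is made explicit through the Jacquet--Emerton functor $J_B$ in (\ref{equ: hL-jac7}) and Remark \ref{rem: hL-tri} — together with the multiplicity criterion of Lemma \ref{dimtri}(2). For the first inclusion in (1): if $\widetilde{\pi}\in\Ima(\iota_0)=\Ker(\kappa_1)$ (notation of (\ref{equ: hL-stv3})), then the extension obtained by pulling $\widetilde{\pi}$ back along the socle inclusion $\St_2^\infty(\lambda)\hookrightarrow\pi(\lambda,\psi)$ is split, so $\widetilde{\pi}$ contains a subrepresentation isomorphic to $\pi(\lambda,\psi)\oplus\St_2^\infty(\lambda)$; as $\soc_{\GL_2(\Q_p)}\pi(\lambda,\psi)\cong\St_2^\infty(\lambda)$ this forces $\St_2^\infty(\lambda)$ to occur in $\soc_{\GL_2(\Q_p)}\widetilde{\pi}$ with multiplicity two, hence $\widetilde{\pi}^{\lalg}\neq\pi(\lambda,\psi)^{\lalg}$ and $\widetilde{\pi}\in\Ext^1_g$. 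For the second inclusion it suffices, by definition of $\Ext^1_g$, to treat a $\widetilde{\pi}$ with $\widetilde{\pi}^{\lalg}\neq\pi(\lambda,\psi)^{\lalg}$; using that the constituent of $\pi(\lambda,\psi)$ just above its socle $\St_2^\infty(\lambda)$ is the non-locally-algebraic $I(s\cdot\lambda)$, one checks that $\widetilde{\pi}^{\lalg}$ must then contain a locally algebraic length-two self-extension of $\St_2^\infty(\lambda)$ (of the form $\St_2^\infty(\lambda)^{\oplus2}$ or $\St_2^\infty(\lambda)\otimes_E(1+\psi'\epsilon)\circ\dett$), whence, since $J_B(\St_2^\infty(\lambda))\cong\delta_\lambda(|\cdot|\otimes|\cdot|^{-1})$ and $J_B$ is left exact, the character $\delta_\lambda(|\cdot|\otimes|\cdot|^{-1})$ occurs with multiplicity two in $J_B(\widetilde{\pi})$ and Lemma \ref{dimtri}(2) gives $\widetilde{\pi}\in\Ext^1_{\tri}$. (When $\psi$ is smooth the same argument applies with $\pi(\lambda,\psi)^{\lalg}\cong\widetilde{i}(\lambda)$ in place of $\St_2^\infty(\lambda)$.)

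Granting (1), part (2) is almost formal. Since $\Ima(\iota_0)\subseteq\Ext^1_g\subseteq\Ext^1_{\tri}(\pi(\lambda,\psi),\pi(\lambda,\psi))$ and $\Ima(\iota_0)$ is exactly the kernel of the surjection in (\ref{equ: hL-ex1}) onto $\Hom(T(\Q_p),E)_\psi$, restriction of (\ref{equ: hL-ex1}) to $\Ext^1_g$ yields the exact sequence (\ref{equ: hL-ex2}) with $\Hom_\infty(T(\Q_p),E)_\psi$ replaced by the image $W$ of $\Ext^1_g$, and it remains to prove $W=\Hom_\infty(T(\Q_p),E)_\psi$. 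For the inclusion $W\subseteq\Hom_\infty(T(\Q_p),E)_\psi$: by the analysis above, a generator $\widetilde{\pi}$ of $\Ext^1_g$ has inside $\widetilde{\pi}^{\lalg}$ a locally algebraic length-two self-extension of $\St_2^\infty(\lambda)$, and local algebraicity forces the parameter $\psi'$ of such a self-extension to be smooth (an additive character of $\Q_p^\times$ is locally algebraic if and only if it is smooth, since $\log_p$ is not locally polynomial); applying $J_B$ and invoking the uniqueness in Remark \ref{rem: hL-tri}, the image of $\widetilde{\pi}$ under (\ref{equ: hL-jac7}) is $(0,0)$ or $(\psi',\psi')$, hence smooth. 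For the reverse inclusion: the central twists $\pi(\lambda,\psi)\otimes_E(1+\psi'\epsilon)\circ\dett$ with $\psi'\in\Hom_\infty(\Q_p^\times,E)$ lie in $\Ext^1_g$ (they are extensions of $\pi(\lambda,\psi)$ by a locally algebraic representation with strictly larger locally algebraic vectors) and map onto $\Hom_\infty(Z(\Q_p),E)\subseteq\Hom(T(\Q_p),E)$, which already exhausts $\Hom_\infty(T(\Q_p),E)_\psi$ when $\psi$ is not smooth; when $\psi$ is smooth one produces the remaining dimension from the explicit representative of a smooth non-central $\Psi$ described in Remark \ref{rem: hL-Jac}, namely a lift to $\Ext^1_{\tri}(\pi(\lambda,\psi),\pi(\lambda,\psi))$ of $\iota_1$ applied to the subquotient $\pr^{-1}(i(\lambda))/L(\lambda)$ of $(\Ind_{\overline{B}(\Q_p)}^{\GL_2(\Q_p)}\delta_\lambda(1+\Psi\epsilon))^{\an}$, which is locally algebraic beyond $\St_2^\infty(\lambda)$ precisely because smoothness of $\Psi$ makes $\delta_\lambda(1+\Psi\epsilon)$ a locally algebraic character. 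The dimension formula then follows from $\dim_E\Ext^1_{\GL_2(\Q_p)}(\pi(\lambda,\psi)/\St_2^\infty(\lambda),\pi(\lambda,\psi))=1$ (Lemma \ref{lem: hL-ext1}(2)) together with $\dim_E\Hom_\infty(T(\Q_p),E)_\psi$ being $1$ if $\psi$ is not smooth and $2$ if $\psi$ is smooth.

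The main obstacle will be the careful bookkeeping of locally algebraic vectors of the two-step extension $\widetilde{\pi}$: deciding exactly which locally algebraic subquotients can be realized as genuine subrepresentations of $\widetilde{\pi}$ (this is governed by $\soc_{\GL_2(\Q_p)}\pi(\lambda,\psi)\cong\St_2^\infty(\lambda)$ and by the fact that $L(\lambda)$ sits behind $I(s\cdot\lambda)$ in $\pi(\lambda,\psi)$, so that no new locally algebraic contribution of $\widetilde{\pi}$ can be a copy of $L(\lambda)$ not accompanied by a new copy of $\St_2^\infty(\lambda)$), and matching this, through the left-exact functor $J_B$, with the two conditions ``$\delta_\lambda(|\cdot|\otimes|\cdot|^{-1})$ has multiplicity two in $J_B(\widetilde{\pi})$'' (Lemma \ref{dimtri}(2)) and ``the deformation $\widetilde{\chi}\hookrightarrow J_B(\widetilde{\pi})$ of Remark \ref{rem: hL-tri} is smooth''. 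The distinction between $\psi$ smooth and $\psi$ not smooth, where $\pi(\lambda,\psi)^{\lalg}$ itself changes shape, is a minor but persistent complication to carry throughout.
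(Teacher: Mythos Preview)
Your overall strategy matches the paper's, but there is a genuine gap when $\psi$ is smooth. You claim that a generator $\widetilde{\pi}$ of $\Ext^1_g$ always contains, inside $\widetilde{\pi}^{\lalg}$, a length-two self-extension of $\St_2^\infty(\lambda)$ as a \emph{subrepresentation}, and then deduce via (\ref{equ: hL-Stdet}) and the uniqueness in Remark \ref{rem: hL-tri} that the image of $\widetilde{\pi}$ under (\ref{equ: hL-jac7}) is diagonal, of the form $(\psi',\psi')$. This is correct when $\psi$ is not smooth (then $\pi(\lambda,\psi)^{\lalg}=\St_2^\infty(\lambda)$ and $\widetilde{\pi}^{\lalg}$ is itself a self-extension of $\St_2^\infty(\lambda)$). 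But when $\psi$ is smooth, $\pi(\lambda,\psi)^{\lalg}=\widetilde{i}(\lambda)$, and $\widetilde{\pi}^{\lalg}$ can be an extension of $\St_2^\infty(\lambda)$ by $\widetilde{i}(\lambda)$ whose class projects nontrivially to $\Ext^1(\St_2^\infty(\lambda),L(\lambda))$; such a $\widetilde{\pi}^{\lalg}$ has socle a single $\St_2^\infty(\lambda)$ and quotient $i(\lambda)$, so it contains \emph{no} self-extension of $\St_2^\infty(\lambda)$ as a subobject. In that situation the $\widetilde{\chi}$ of Remark \ref{rem: hL-tri} is genuinely non-diagonal --- exactly the non-central smooth $\Psi$ you yourself exhibit in the surjectivity step via Remark \ref{rem: hL-Jac}, contradicting your ``$(0,0)$ or $(\psi',\psi')$'' claim. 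The same issue affects your route to the second inclusion in (1) through Lemma \ref{dimtri}(2).

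The fix is simple and is what the paper does: one only needs that both \emph{constituents} $\St_2^\infty(\lambda)$ lie in $\widetilde{\pi}^{\lalg}$ (which you correctly argue: the projection $\widetilde{\pi}^{\lalg}\to\pi(\lambda,\psi)^{\lalg}$ is nonzero, hence hits the socle $\St_2^\infty(\lambda)$). Then $\St_2^\infty(\lambda)$ is not a constituent of $\widetilde{\pi}/\widetilde{\pi}^{\lalg}$, so by left exactness of $J_B$ and \cite[Ex.~5.1.9]{Em2} the embedding $\widetilde{\chi}\hookrightarrow J_B(\widetilde{\pi})$ factors through $J_B(\widetilde{\pi}^{\lalg})$; the latter is locally algebraic, forcing $\Psi\in\Hom_\infty(T(\Q_p),E)_\psi$. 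For (1), the paper avoids $J_B$ altogether: it observes directly that $\kappa_1(\widetilde{\pi})$ lies in the image of $\Ext^1(\St_2^\infty(\lambda),\pi(\lambda,\psi)^{\lalg})\to\Ext^1(\St_2^\infty(\lambda),\pi(\lambda,\psi))$, whence $\kappa_0(\widetilde{\pi})=0$ since $\widetilde{I}(s\cdot\lambda)$ is not a constituent of $\pi(\lambda,\psi)^{\lalg}$.
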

\begin{proof}
(1) It is easy to see $\Ima(\iota_0)\subseteq \Ext^1_g(\pi(\lambda, \psi),\pi(\lambda, \psi))$. Since $\soc_{\GL_2(\Q_p)} \pi(\lambda, \psi)\cong \St_2^{\infty}(\lambda)$, for any $\widetilde{\pi}\in \Ext^1_g(\pi(\lambda, \psi), \pi(\lambda, \psi))$, its image $\kappa_1(\widetilde{\pi})$ in $\Ext^1_{\GL_2(\Q_p)}(\St_2^{\infty}(\lambda), \pi(\lambda, \psi))$ (see (\ref{equ: hL-stv3})) in fact lies in the image of:
\begin{equation*}
\Ext^1_{\GL_2(\Q_p)}(\St_2^{\infty}(\lambda), \pi(\lambda, \psi)^{\lalg}) \lra \Ext^1_{\GL_2(\Q_p)}(\St_2^{\infty}(\lambda), \pi(\lambda, \psi)).
\end{equation*}
This easily implies $\kappa_0(\widetilde{\pi})=0$ ($\kappa_0$ as in (\ref{equ: hL-kappa0})), and (1) follows.\\

\noindent
(2) Let $\widetilde{\pi}\in \Ext^1_g(\pi(\lambda, \psi), \pi(\lambda, \psi))$. By (1) and Remark \ref{rem: hL-tri}, we know that there exists $\Psi\in \Hom(T(\Q_p),E)_{\psi}$ such that
\begin{equation} \label{equ: hL-jac6}\delta_{\lambda}(|\cdot|\otimes |\cdot|^{-1})\otimes_E (1+\Psi\epsilon)\hooklongrightarrow J_B(\widetilde{\pi}).
\end{equation}
Moreover, the natural surjection $\widetilde{\pi}\twoheadrightarrow \pi(\lambda,\psi)$ induces a nonzero map $\widetilde{\pi}^{\lalg}/\pi(\lambda,\psi)^{\lalg} \ra \pi(\lambda,\psi)^{\lalg}$, and hence we have $\St_2^{\infty}(\lambda)\hookrightarrow \widetilde{\pi}^{\lalg}/\pi(\lambda, \psi)^{\lalg}$ (since $\soc_{\GL_2(\Q_p)} \pi(\lambda, \psi)^{\lalg}\cong \St_2^{\infty}(\lambda)$). Thus $\St_2^{\infty}(\lambda)$ is not an irreducible constituent of $\widetilde{\pi}/\widetilde{\pi}^{\lalg}$, from which we see (together with the left exactness of $J_B$ and \cite[Ex.~5.1.9]{Em2}) that the map (\ref{equ: hL-jac6}) must have image in the subspace $J_B(\widetilde{\pi}^{\lalg})$. However, $J_B(\widetilde{\pi}^{\lalg})$ is locally algebraic since so is $\widetilde{\pi}^{\lalg}$, which implies $\Psi\in \Hom_{\infty}(T(\Q_p),E)\cap \Hom(T(\Q_p),E)_{\psi}=\Hom_{\infty}(T(\Q_p),E)_{\psi}$.\\
\noindent
By (1), the sequence (\ref{equ: hL-ex1}) hence induces (\ref{equ: hL-ex2}), except for the surjectivity on the right. By Lemma \ref{lem: hL-ext1}(2) and an easy dimension count, it is enough to prove this surjectivity.
However, by the construction in Remark \ref{rem: hL-Jac}, if $\Psi$ in Remark \ref{rem: hL-Jac} is smooth, then we see that the inverse image $\widetilde{\pi}_1$ of $\Psi$ in (\ref{equ: hL-jac2}) has extra locally algebraic vectors than $(\pi(\lambda, \psi)^-)^{\lalg}=\pi(\lambda, \psi)^{\lalg}$. Let $\widetilde{\pi}\in \Ext^1_{\GL_2(\Q_p)}(\pi(\lambda, \psi), \pi(\lambda, \psi))$ such that $\kappa_1(\widetilde{\pi})=\iota_1(\widetilde{\pi}_1)$, it is easy to see that we have an injection $\widetilde{\pi}_1\subset \widetilde{\pi}$, hence $\widetilde{\pi}\in\Ext^1_g(\pi(\lambda, \psi), \pi(\lambda, \psi))$, and that $\widetilde{\pi}$ is sent (up to nonzero scalars) to $\Psi$ via (\ref{equ: hL-jac7}) (use Remark \ref{rem: hL-tri} and that $\widetilde{\pi}_1$ is sent to $\Psi$ via (\ref{equ: hL-jac2})). This concludes the proof.
\end{proof}

\noindent
Finally, for any locally algebraic character $\delta: \Q_p^{\times} \ra E^{\times}$, it is obvious that all the above results hold if we twist all the representations of $\GL_2(\Q_p)$ by $\delta \circ \dett$.

\subsubsection{$p$-adic correspondence for $\GL_2(\Q_p)$ and deformations}\label{sec: hL-pLL}

\noindent
We relate the Ext${}^1$ groups of \S~\ref{sec: hL-FM3} to those of \S~\ref{sec: hL-ext1} via the local $p$-adic correspondence for $\GL_2(\Q_p)$.\\

\noindent
We keep all the previous notation. For $k\in \Z_{>0}$ and $0\neq \psi\in \Hom(\Q_p^{\times}, E)$, we denote by $D(k, \psi)\in \Ext^1_{(\varphi,\Gamma)}(\cR_E, \cR_E(|\cdot|x^k))$ the unique (nonsplit) extension up to isomorphism such that:
\begin{equation*}
(ED(k, \psi))^{\perp}=E \psi\in \Ext^1_{(\varphi,\Gamma)}(\cR_E(|\cdot|x^k), \cR_E(|\cdot|x^k))\buildrel (\ref{equ: hL-cad}) \over \cong \Hom(\Q_p^{\times}, E)
\end{equation*}
for the perfect pairing given by the cup-product:
$$\Ext^1_{(\varphi,\Gamma)}(\cR_E, \cR_E(|\cdot|x^k)) \times \Ext^1_{(\varphi,\Gamma)}(\cR_E(|\cdot|x^k),\cR_E(|\cdot|x^k)) \lra E.$$
For $\lambda=(k_1,k_2)\in \Z^{2}$ with $k_1>k_2$, we denote by $D(\lambda, \psi):=D(k_1-k_2,\psi)\otimes_{\cR_E} \cR_E(x^{k_2})$ and $\lambda^{\flat} :=(k_1, k_2+1)$. For $\alpha\in E^{\times}$, we set:
\begin{equation}\label{dalpha}
D(\alpha, \lambda, \psi):=D(\lambda, \psi)\otimes_{\cR_E} \cR_E(\unr(\alpha)).
\end{equation}
We also make the following hypotheses.

\begin{hypothesis}\label{hypo: hL-pLL0}
(1) There exists a natural isomorphism:
\begin{equation}\label{equ: hL-pLL0}
\pLL: \Ext^1_{(\varphi,\Gamma)}(D(p,\lambda, \psi), D(p,\lambda, \psi)) \xlongrightarrow{\sim} \Ext^1_{\GL_2(\Q_p)}(\pi(\lambda^{\flat}, \psi), \pi(\lambda^{\flat}, \psi))
\end{equation}
and any representation in $\Ext^1_{\GL_2(\Q_p)}(\pi(\lambda, \psi), \pi(\lambda, \psi))$ is very strongly admissible.\\
\noindent
(2) The isomorphism (\ref{equ: hL-pLL0}) induces an isomorphism:
\begin{equation}\label{equ: hL-pLLtri}
\Ext^1_{\tri}\big(D(p,\lambda, \psi),D(p,\lambda, \psi)\big)\xlongrightarrow{\sim} \Ext^1_{\tri}\big(\pi(\lambda^{\flat}, \psi), \pi(\lambda^{\flat}, \psi)\big).
\end{equation}
(3) For $\widetilde{D}\in \Ext^1_{\tri}(D(p,\lambda, \psi),D(p,\lambda, \psi))$ and $(\psi_1, \psi_2)\in \Hom(\Q_p^{\times},E)^2$ such that:
$$\big(x^{k_1}(1+\psi_1\epsilon), |\cdot|^{-1}x^{k_2}(1+\psi_2\epsilon)\big)$$
is a trianguline parameter of $\widetilde{D}$ (see \S~\ref{sec: hL-FM3}), if $\widetilde{\pi}\in \Ext^1_{\tri}\big(\pi(\lambda^{\flat}, \psi), \pi(\lambda^{\flat}, \psi)\big)$ is the image of $\widetilde{D}$ via the isomorphism (\ref{equ: hL-pLLtri}), we have an embedding:
\begin{equation*}
 \delta_{\lambda^{\flat}}(|\cdot|\otimes |\cdot|^{-1})(1+\Psi\epsilon) \hooklongrightarrow J_B(\widetilde{\pi})
 \end{equation*}
where $\Psi:=(\psi_1,\psi_2)\in \Hom(T(\Q_p),E)$.
\end{hypothesis}

\begin{remark}\label{remhypo}
{\rm (1) By Lemma \ref{lem: hL-ext2} and Lemma \ref{lem: hL-ext1}(3), Hypothesis \ref{hypo: hL-pLL0}(1) implies Hypothesis \ref{hypo: hL-dim}.\\
\noindent
(2) Under mild hypothesis, we will show Hypothesis \ref{hypo: hL-pLL0} in Proposition \ref{prop: hL-gl2pLL} and Proposition \ref{prop: hL-gl2pLL2} below using some deformation theory combined with Colmez's functor. The isomorphism (\ref{equ: hL-pLL0}) should also induce a bijection:
\begin{equation}\label{equ: hL-pLLcent}
 \Ext^1_{(\varphi,\Gamma), Z}(D(p,\lambda, \psi), D(p,\lambda, \psi)) \xlongrightarrow{\sim} \Ext^1_{\GL_2(\Q_p),Z}(\pi(\lambda^{\flat}, \psi), \pi(\lambda^{\flat}, \psi)),
\end{equation}
but we won't need this property in the paper.}
\end{remark}

\begin{lemma}\label{lem: hL-kappag}
Assuming Hypothesis \ref{hypo: hL-pLL0}, then (\ref{equ: hL-pLL0}) induces isomorphisms:
\begin{eqnarray}
\label{equ: hL-pLLdR} \Ext^1_g\big(D(p,\lambda, \psi),D(p,\lambda, \psi)\big)&\xlongrightarrow{\sim}& \Ext^1_g\big(\pi(\lambda^{\flat}, \psi), \pi(\lambda^{\flat}, \psi)\big)\\
\label{equ: hL-kp1L} \Ker (\kappa^{\gal}) &\xlongrightarrow{\sim}& \Ker (\kappa^{\aut})
\end{eqnarray}
where we denote by $\kappa^{\gal}$ the morphism $\kappa$ in (\ref{equ: hL-lex}) and by $\kappa^{\aut}$ the morphism $\kappa$ in (\ref{equ: hL-kp1aut}).
\end{lemma}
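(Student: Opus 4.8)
The plan is to reduce both claimed isomorphisms to statements inside the trianguline subspaces $\Ext^1_{\tri}$, where Hypothesis \ref{hypo: hL-pLL0}(2) already gives an isomorphism, and then to identify the relevant subspaces through their explicit parametrizations: on the Galois side by the deformation $(\psi_1,\psi_2)$ of the trianguline parameter $(x^{k_1},|\cdot|^{-1}x^{k_2})$ of $D:=D(p,\lambda,\psi)$, and on the automorphic side by the character $\Psi$ with $\delta_{\lambda^{\flat}}(|\cdot|\otimes|\cdot|^{-1})(1+\Psi\epsilon)\hookrightarrow J_B(\widetilde\pi)$ furnished by Remark \ref{rem: hL-tri}. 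Hypothesis \ref{hypo: hL-pLL0}(3) is precisely the bridge between these two parametrizations. Since Hypothesis \ref{hypo: hL-pLL0}(1) implies Hypothesis \ref{hypo: hL-dim} by Remark \ref{remhypo}(1), I would freely use all the results of \S~\ref{sec: hL-ext1} applied to $\pi:=\pi(\lambda^{\flat},\psi)$.

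First I would record that all four subspaces sit inside the appropriate trianguline subspace: $\Ker(\kappa^{\gal})\subseteq\Ext^1_{\tri}(D,D)$ because $\kappa^{\gal}$ is a factor of the map $\kappa_0$ of (\ref{equ: l3-exa2}) whose kernel defines $\Ext^1_{\tri}(D,D)$; $\Ext^1_g(D,D)\subseteq\Ext^1_{\tri}(D,D)$ by Lemma \ref{lem: hL-dRg}(1); $\Ker(\kappa^{\aut})\subseteq\Ext^1_{\tri}(\pi,\pi)$ by definition of $\kappa^{\aut}$; and $\Ext^1_g(\pi,\pi)\subseteq\Ext^1_{\tri}(\pi,\pi)$ by Lemma \ref{lem: hL-dR}(1). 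So by Hypothesis \ref{hypo: hL-pLL0}(2) it is enough to show that the isomorphism $\pLL\colon\Ext^1_{\tri}(D,D)\xrightarrow{\sim}\Ext^1_{\tri}(\pi,\pi)$ carries $\Ker(\kappa^{\gal})$ onto $\Ker(\kappa^{\aut})$ and $\Ext^1_g(D,D)$ onto $\Ext^1_g(\pi,\pi)$.

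For (\ref{equ: hL-kp1L}) the key is to describe both kernels as the locus ``$\psi_2=0$''. On the Galois side, since $\Ker(\kappa^{\gal})\subseteq\Ext^1_{\tri}(D,D)$, it equals the kernel of the map $\kappa$ of (\ref{equ: l3-exa1}) (using the factorization through $\Ker(\kappa_2)=\Ima(\iota_2)$ coming from (\ref{equ: hL-lex3})), and Lemma \ref{lem: hL-kk} identifies this with $\{\widetilde D : \widetilde\delta_2=\delta_2\}$, i.e. with the set of $\widetilde D$ whose parameter deformation has $\psi_2=0$. On the automorphic side, (\ref{equ: hL-kp1aut}) together with Remark \ref{rem: hL-tri} shows that $\kappa^{\aut}(\widetilde\pi)$ is the second component of the character $\Psi$ attached to $\widetilde\pi$ via its Jacquet module, so $\Ker(\kappa^{\aut})$ is the set of $\widetilde\pi$ for which this second component vanishes. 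Now for $\widetilde D\in\Ext^1_{\tri}(D,D)$ with trianguline parameter deformation $(\psi_1,\psi_2)$, Hypothesis \ref{hypo: hL-pLL0}(3) says that $\widetilde\pi:=\pLL(\widetilde D)$ has $\Psi=(\psi_1,\psi_2)$; hence $\widetilde D\in\Ker(\kappa^{\gal})$ iff $\psi_2=0$ iff $\widetilde\pi\in\Ker(\kappa^{\aut})$, and as $\pLL$ is $E$-linear this gives the isomorphism (a dimension count, both sides being $2$-dimensional by Lemma \ref{lem: hL-ext2} and Lemma \ref{lem: hL-kp1}(1), serves as a sanity check). For (\ref{equ: hL-pLLdR}) the argument is parallel: by Lemma \ref{lem: hL-dRg}(2), $\widetilde D\in\Ext^1_g(D,D)$ iff $\psi_1,\psi_2\in\Hom_{\infty}(\Q_p^{\times},E)$, equivalently iff $\Psi$ is smooth; by (the proof of) Lemma \ref{lem: hL-dR}(2), $\Ext^1_g(\pi,\pi)$ is the preimage in $\Ext^1_{\tri}(\pi,\pi)$ of $\Hom_{\infty}(T(\Q_p),E)_{\psi}$ under (\ref{equ: hL-jac7}), i.e. the set of $\widetilde\pi$ whose attached $\Psi$ is smooth; and Hypothesis \ref{hypo: hL-pLL0}(3) again matches the two conditions (with matching dimensions $2$ or $3$ on both sides according as $\psi$ is smooth or not).

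I do not expect a genuine difficulty here: granting Hypothesis \ref{hypo: hL-pLL0}, the lemma is formal. The step requiring the most care is bookkeeping rather than mathematics, namely correctly threading the chain of identifications of \S~\ref{sec: hL-FM3} (through the long exact sequences (\ref{equ: hL-lex}), (\ref{equ: hL-lex3}) and the subspace (\ref{equ: l3-exa1})) so that ``$\Ker(\kappa^{\gal})$'' becomes literally the locus $\psi_2=0$ inside $\Ext^1_{\tri}(D,D)$, and symmetrically that the composite (\ref{equ: hL-jac7}) computing $\kappa^{\aut}$ really extracts the second component of the Jacquet-module deformation character. Once these two descriptions are pinned down, all the content is carried by Hypothesis \ref{hypo: hL-pLL0}(3), whose role is exactly to match the Galois-side trianguline parameter with the automorphic-side $J_B$-parameter under $\pLL$.
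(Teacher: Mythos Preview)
Your proposal is correct and follows essentially the same approach as the paper: reduce to $\Ext^1_{\tri}$ via Hypothesis \ref{hypo: hL-pLL0}(2), characterize each subspace through the trianguline parameter $(\psi_1,\psi_2)$ (Lemma \ref{lem: hL-kk} and Lemma \ref{lem: hL-dRg}(2) on the Galois side, Remark \ref{rem: hL-tri} and Lemma \ref{lem: hL-dR}(2) on the automorphic side), and match via Hypothesis \ref{hypo: hL-pLL0}(3). The paper's proof invokes exactly the same ingredients, only more tersely.
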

\begin{proof}
For $\widetilde{D}\in \Ext^1_{(\varphi,\Gamma)}(D(\lambda,\psi), D(\lambda,\psi))$ it follows from Lemma \ref{lem: hL-dRg} that we have $\widetilde{D}\in \Ext^1_{g}(D(\lambda,\psi), D(\lambda,\psi))$ if and only if $\widetilde{D}$ is trianguline and the trianguline parameter of $\widetilde{D}$ is locally algebraic. Together with Remark \ref{rem: hL-tri}, Lemma \ref{lem: hL-dR}(2) and Hypothesis \ref{hypo: hL-pLL0}(2)\&(3), the first isomorphism follows. The second follows from Lemma \ref{lem: hL-kk} together with Remark \ref{rem: hL-tri}, (\ref{equ: hL-kp1aut}) and Hypothesis \ref{hypo: hL-pLL0}(2)\&(3).
\end{proof}

\noindent
The following lemma is a trivial consequence of the Colmez-Fontaine theorem and of the main result of \cite{Berger2}.

\begin{lemma}\label{lem: hL-etale}
Let $\alpha\in E^{\times}$ such that $\val_p(\alpha)=\frac{1-(k_1+k_2)}{2}$, then $D(\alpha, \lambda, \psi)$ is \'etale, i.e. $D(\alpha, \lambda, \psi)\cong D_{\rig}(\rho)$ for a $2$-dimensional continuous representation $\rho$ of $\Gal_{\Q_p}$ over $E$.
\end{lemma}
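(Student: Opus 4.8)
The plan is to reduce the statement to a concrete computation of the Hodge and Newton polygons of the filtered $(\varphi,N)$-module attached to $D(\alpha,\lambda,\psi)$, and then invoke the known equivalences of categories. First I would recall that by Berger's equivalence \cite{Berger2} (combined with the Colmez--Fontaine theorem), a two-dimensional $(\varphi,\Gamma)$-module over $\cR_E$ that is de Rham (equivalently, potentially semi-stable with the relevant Hodge--Tate weights) and \emph{\'etale} (i.e. all slopes of its underlying isocrystal equal to zero) is of the form $D_{\rig}(\rho)$ for a genuine $p$-adic Galois representation $\rho$; so it suffices to check that $D(\alpha,\lambda,\psi)$ is de Rham and that its Frobenius slopes both vanish.

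For the de Rham property: $D(k,\psi)$ with $k\in\Z_{>0}$ is by construction the unique non-split extension of $\cR_E$ by $\cR_E(|\cdot|x^k)$ whose orthogonal (under the cup product of \S~\ref{sec: hL-FM3}) is $E\psi$. Since $\psi\neq 0$, the extension is non-split, and because $\wt(|\cdot|x^k)=k>0$ with both characters de Rham, Remark \ref{rem: hL-sim} (or equivalently \cite[Lem.~1.11]{Ding4}) shows every class in $H^1_{(\varphi,\Gamma)}(\cR_E(|\cdot|x^k))$ is de Rham; hence $D(k,\psi)$ is de Rham. Twisting by $\cR_E(x^{k_2})$ and then by the \emph{unramified} character $\unr(\alpha)$ preserves de Rham-ness, so $D(\alpha,\lambda,\psi)=D(k_1-k_2,\psi)\otimes\cR_E(x^{k_2}\unr(\alpha))$ is de Rham with Hodge--Tate weights $\{k_2,k_1+1\}$ (up to the sign convention in force), in particular distinct; its associated filtered $(\varphi,N)$-module $D_{\st}$ is two-dimensional with $N\neq 0$ (the extension being non-split and non-crystalline), so it is irreducible as a $(\varphi,N)$-module and the Newton polygon is a single line segment. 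Its slope equals $\tfrac12\val_p(\det\varphi)$, and $\det\varphi$ is read off from the product of the two parameter characters evaluated at $p$: the character $|\cdot|x^k$ contributes a Frobenius eigenvalue of valuation $k-1$ (from $|\cdot|=\unr(p^{-1})$ after our local class field theory normalisation, together with the weight-$k$ part), $\cR_E$ contributes valuation $0$, and the twists by $x^{k_2}$ and $\unr(\alpha)$ shift the total valuation by $2k_2$ and $2\val_p(\alpha)$ respectively. Thus $\val_p(\det\varphi)=(k_1-k_2-1)+2k_2+2\val_p(\alpha)=k_1+k_2-1+2\val_p(\alpha)$, and the hypothesis $\val_p(\alpha)=\tfrac{1-(k_1+k_2)}{2}$ gives $\val_p(\det\varphi)=0$, hence slope $0$.

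Finally I would observe that a two-dimensional isocrystal with $N\neq 0$ whose total slope is $0$ automatically has both individual slopes equal to $0$ (an $N$-stable line forces the slope to be constant, and the common slope must then be the average), so $D(\alpha,\lambda,\psi)$ is \'etale. By Berger's equivalence it is therefore $D_{\rig}(\rho)$ for a two-dimensional $\rho\colon\Gal_{\Q_p}\to\GL_2(E)$, which is exactly the claim. The only genuinely delicate point is bookkeeping the normalisations: one must be consistent about whether $|\cdot|=\unr(q_L^{-1})=\unr(p^{-1})$ contributes slope $+1$ or $-1$ to $\val_p(\det\varphi)$, and about the Hodge--Tate weight convention ($\HT_\sigma(\varepsilon)=1$ here) used to define $D(k,\psi)$; once these are pinned down the slope computation is forced and matches the stated condition on $\val_p(\alpha)$. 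I expect this sign/normalisation check to be the main (and essentially only) obstacle, everything else being a direct appeal to \cite{Berger2} and the Colmez--Fontaine theorem together with the de Rham inheritance already recorded in \S~\ref{sec: hL-FM3}.
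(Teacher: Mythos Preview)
Your strategy (de Rham, then Colmez--Fontaine plus \cite{Berger2}) is exactly the paper's intended argument, and the degree computation is right. But the last paragraph contains two errors. First, $N\neq 0$ on $D_{\st}$ holds only when $\psi$ is not smooth (see Remark~\ref{rem: hL-sim}); the lemma allows any $0\neq\psi\in\Hom(\Q_p^\times,E)$, including smooth ones. Second, even when $N\neq 0$, the $(\varphi,N)$-module $D_{\st}$ is \emph{not} irreducible: $\Ker N$ is always a $(\varphi,N)$-stable line (from $N\varphi=p\varphi N$), and the two $\varphi$-eigenvalues $p^{-1}\alpha,\alpha$ on $D_{\st}$ have valuations differing by $1$, so ``both individual slopes equal $0$'' is simply false for the Frobenius slopes on $D_{\st}$. \'Etaleness is weak admissibility, not vanishing of these slopes; one must still check the inequality $t_H(D')\leq t_N(D')$ on the sub $D'=\Ker N$ (and on both eigenlines when $N=0$).

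The actual verification is immediate once set up correctly: the saturated rank-$1$ sub $\cR_E(|\cdot|x^{k_1}\unr(\alpha))$ of $D(\alpha,\lambda,\psi)$ has slope $\val_p(p^{k_1-1}\alpha)=(k_1-k_2-1)/2\geq 0$, and since the total degree is $0$ there can be no saturated rank-$1$ sub of negative slope, hence $D$ is pure of slope $0$, i.e.\ \'etale. (When $N=0$ there is a second saturated rank-$1$ sub coming from the other $\varphi$-eigenline of $D_{\cris}$, but since the extension is nonsplit this eigenline is not the Hodge line, and one computes its slope to be $(k_1-k_2+1)/2>0$.) It is nonsplitness of the extension, not $N\neq 0$, that drives the argument.
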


\noindent
If $\alpha'\in E^{\times}$ is such that $D(\alpha', \lambda, \psi)\cong D_{\rig}(\rho')$ is also \'etale, then $\alpha^{-1}\alpha'\in \co_E^{\times}$ and $\rho'\cong \rho\otimes_E \unr(\alpha'\alpha^{-1})$, hence $\rho$ as in Lemma \ref{lem: hL-etale} is unique up to twist by characters. Let $\rho$ be as in Lemma \ref{lem: hL-etale} (for a choice of $\alpha$) and denote by $\widehat{\pi}(\rho)$ the continuous Banach representation of $\GL_2(\Q_p)$ over $E$ attached to $\rho$ via the local $p$-adic Langlands correspondence for $\GL_2(\Q_p)$ (\cite{Colm10a}). Then we have using Remark \ref{compatgl2} together with \cite{LXZ}:
$$\widehat{\pi}(\rho)^{\an}\cong \pi(p^{-1}\alpha, \lambda^{\flat}, \psi):=\pi(\lambda^{\flat}, \psi)\otimes_E \unr(p^{-1}\alpha) \circ \dett.$$

\begin{proposition}\label{prop: hL-gl2pLL}
Assume $\rho$ admits an invariant lattice such that its mod $\varpi_E$ reduction $\overline\rho$ satisfies (\ref{hypo: hL-modp}) (in the appendix), then Hypothesis \ref{hypo: hL-pLL0}(1) (hence Hypothesis \ref{hypo: hL-dim} by Remark \ref{remhypo}(1)) is true.
\end{proposition}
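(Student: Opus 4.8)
The plan is to establish Hypothesis \ref{hypo: hL-pLL0}(1) by passing through Colmez's functor $\check{\mathbf{V}}$ (and its inverse) to translate the $\GL_2(\Q_p)$-side deformations into $(\varphi,\Gamma)$-module deformations, and then to invoke the $\GL_2(\Q_p)$ $p$-adic local Langlands correspondence over deformation rings (as developed by Kisin, Pa\v{s}k\={u}nas, and Colmez, and recalled in the appendix under the hypothesis (\ref{hypo: hL-modp}) on $\overline\rho$). The genericity hypothesis (\ref{hypo: hL-modp}) is exactly what guarantees that Pa\v{s}k\={u}nas's machinery applies: the block of the relevant category containing $\overline\rho$ is governed by a Galois deformation ring, and the functor realizing the correspondence is fully faithful on the relevant subcategories. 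So the first step is to recall precisely that $\widehat\pi(\rho) \leftrightarrow \rho$ is compatible with (framed) deformations: an infinitesimal deformation $\widetilde\rho$ of $\rho$ over $E[\epsilon]/\epsilon^2$ corresponds functorially to a deformation $\widehat{\widetilde\pi}(\widetilde\rho)$ of $\widehat\pi(\rho)$, and conversely. Taking locally analytic vectors (which is exact on the relevant admissible representations and commutes with the base change $-\otimes_E E[\epsilon]/\epsilon^2$, since $E[\epsilon]/\epsilon^2$ is finite free over $E$) and using $\widehat\pi(\rho)^{\an}\cong \pi(p^{-1}\alpha,\lambda^\flat,\psi)$, one gets a canonical bijection between infinitesimal deformations of $\pi(\lambda^\flat,\psi)$ (after the harmless unramified twist, which one absorbs) and infinitesimal deformations of $D_{\rig}(\rho) \cong D(\alpha,\lambda,\psi)$ as $(\varphi,\Gamma)$-modules. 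Since $D(\alpha,\lambda,\psi)$ and $D(p,\lambda,\psi)$ differ only by an unramified twist, $\Ext^1_{(\varphi,\Gamma)}(D(\alpha,\lambda,\psi),D(\alpha,\lambda,\psi))\cong \Ext^1_{(\varphi,\Gamma)}(D(p,\lambda,\psi),D(p,\lambda,\psi))$ canonically, and similarly on the automorphic side after twisting by $\unr(p^{-1}\alpha)\circ\dett$; this yields the desired map $\pLL$.

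Next I would check that this $\pLL$ is an \emph{isomorphism}. The key point is that the correspondence over the deformation ring gives, on the one hand, that every infinitesimal deformation of $\rho$ is realized (surjectivity) and, on the other hand, full faithfulness of Colmez's functor on the block, which gives injectivity — two deformations of $\widehat\pi(\rho)$ inducing isomorphic $(\varphi,\Gamma)$-module deformations must themselves be isomorphic. One subtle point: Hypothesis \ref{hypo: hL-pLL0}(1) is stated for $\Ext^1_{\GL_2(\Q_p)}$ with \emph{no} central character condition, whereas Pa\v{s}k\={u}nas's deformation theory is most naturally phrased with a fixed central character (or with the central character deformed along with $\det\rho$). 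The way around this is that an arbitrary infinitesimal deformation of $\rho$ is determined by its restriction to $\SL_2$-type deformations together with the twist by the deformation of $\det\rho$; equivalently one combines the fixed-determinant statement with the one-dimensional family of twists $\rho\otimes(1+\tfrac{\psi'}{2}\epsilon)$, and matches this on the automorphic side with $\pi(\lambda^\flat,\psi)\otimes(1+\tfrac{\psi'}{2}\epsilon)\circ\dett$ exactly as in the proof of Lemma \ref{Lem: hL-cent4}. This reduces the no-central-character statement to the constant-determinant one.

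For the dimension bookkeeping I would then argue: $\dim_E\Ext^1_{(\varphi,\Gamma)}(D(p,\lambda,\psi),D(p,\lambda,\psi))=5$ by Lemma \ref{lem: hL-ext2} (the $(\varphi,\Gamma)$-module $D(p,\lambda,\psi)$ is special noncritical nonsplit of rank $2$, by construction from $D(k,\psi)$), and $\dim_E\Ext^1_{\GL_2(\Q_p)}(\pi(\lambda^\flat,\psi),\pi(\lambda^\flat,\psi))\le 5$ by Lemma \ref{lem: hL-ext1}(3). Since $\pLL$ is an injection (by full faithfulness) between a $5$-dimensional space and a space of dimension $\le 5$, it is forced to be an isomorphism, and simultaneously this proves $\dim_E\Ext^1_{\GL_2(\Q_p)}(\pi(\lambda^\flat,\psi),\pi(\lambda^\flat,\psi))=5$, i.e.\ part of Hypothesis \ref{hypo: hL-dim}(2) (the rest following from Lemma \ref{lem: hL-ext1} and Lemma \ref{Lem: hL-cent4}). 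The ``very strongly admissible'' assertion for every representation in $\Ext^1_{\GL_2(\Q_p)}(\pi(\lambda,\psi),\pi(\lambda,\psi))$ is handled separately and more directly: each such extension is an admissible unitarizable (after passing to the Banach completion attached via the correspondence to the corresponding $(\varphi,\Gamma)$-module deformation, which is a successive extension of étale pieces up to twist) locally analytic representation of finite length whose constituents are all known to be very strongly admissible, and very strong admissibility is stable under extensions in the relevant sense — this is where the compatibility with Colmez's functor over $E[\epsilon]/\epsilon^2$ is used again to exhibit the extension as the locally analytic vectors of a continuous admissible $E[\epsilon]/\epsilon^2$-Banach representation.

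The main obstacle I anticipate is making the compatibility of the $\GL_2(\Q_p)$ $p$-adic Langlands correspondence with infinitesimal deformations fully precise in the required generality — specifically, checking that the functor $\widehat\pi(-)$ (or equivalently Colmez's $D\mapsto \check{\mathbf V}(D)$) is exact and commutes with the tensor operation $-\otimes_E E[\epsilon]/\epsilon^2$ on the nose, that ``locally analytic vectors'' commutes with this base change, and that the non-étale twist (recall $D(p,\lambda,\psi)$ is generally not étale since $k_1\ne 0$) causes no trouble because one can always reduce to the étale case $D(\alpha,\lambda,\psi)$ by an unramified twist that is invisible to all the $\Ext^1$ groups involved. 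This is essentially bookkeeping built on the appendix's deformation-theoretic input, but it is the place where genericity hypothesis (\ref{hypo: hL-modp}) is genuinely needed and where care is required; everything else is linear algebra with the exact sequences already established in \S\ref{sec: hL-FM3} and \S\ref{sec: hL-ext1}.
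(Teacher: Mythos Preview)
Your strategy is essentially the paper's: pass through the Banach $\Ext^1$ via Colmez's functor, use the dimension count ($5$ on the Galois side by Lemma~\ref{lem: hL-ext2}, $\leq 5$ on the locally analytic side by Lemma~\ref{lem: hL-ext1}(3)), and deduce very strong admissibility from the fact that every locally analytic extension arises as the analytic vectors of a Banach extension. The central-character reduction you outline is unnecessary (the paper works directly without fixing central character), but harmless.

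There is, however, one genuine gap. You assert that $\pLL$ is injective ``by full faithfulness'', but full faithfulness of Colmez's functor (or of the deformation-theoretic correspondence) only tells you that the map from \emph{Banach} extensions to Galois extensions is injective. Your $\pLL$ is the composite $\Ext^1_{(\varphi,\Gamma)}\to \Ext^1_{\mathrm{Banach}}\to \Ext^1_{\GL_2(\Q_p)}$ (locally analytic), and you need the second arrow --- taking locally analytic vectors --- to be injective on extension classes. Exactness of $(\cdot)^{\an}$ alone does not give this: a split locally analytic extension could a priori come from a nonsplit Banach extension. The paper handles this with a specific argument you do not mention: if $\widehat\pi(\rho)^{\an}\hookrightarrow\widetilde\pi^{\an}$ is a section, then since $\widehat\pi(\rho)$ is the \emph{universal unitary completion} of $\widehat\pi(\rho)^{\an}$ (this is the result of Colmez--Dospinescu \cite{CD}), the inclusion $\widehat\pi(\rho)^{\an}\hookrightarrow\widetilde\pi$ extends to a section $\widehat\pi(\rho)\hookrightarrow\widetilde\pi$. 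This is the key point that makes the dimension count close up; without it your chain of inequalities does not quite meet. Once you insert this step, the paper's argument and yours coincide: Corollary~\ref{coro: hL-pLL1} gives surjectivity of Banach $\to$ Galois, the universal completion argument gives injectivity of Banach $\to$ locally analytic, and the dimensions force both to be isomorphisms.
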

\begin{proof}
Let $\alpha\in E^{\times}$ such that $D(\alpha, \lambda, \psi)\cong D_{\rig}(\rho)$. By Corollary \ref{coro: hL-pLL1}, Colmez's functor $\hV_{\varepsilon^{-1}}$ (see \S~\ref{notprel}) induces a surjection:
\begin{equation}\label{equ: hL-pLLb}
\Ext^1_{\GL_2(\Q_p)}\big(\widehat{\pi}(\rho), \widehat{\pi}(\rho)\big) \twoheadlongrightarrow \Ext^1_{(\varphi,\Gamma)}\big(D(\alpha, \lambda, \psi), D(\alpha, \lambda, \psi)\big)
\end{equation}
where the $\Ext^1$ on the left is in the category of admissible {\it unitary} Banach representations of $\GL_2(\Q_p)$ (recall unitary means that there exists a unit ball preserved by $\GL_2(\Q_p)$). By the exactness of locally ($\Q_p$-)analytic vectors (\cite[Thm.~7.1]{ST03}), we have a morphism:
\begin{equation}\label{equ: hL-pLLbis}
\Ext^1_{\GL_2(\Q_p)}\big(\widehat{\pi}(\rho), \widehat{\pi}(\rho)\big)\longrightarrow \Ext^1_{\GL_2(\Q_p)}\big(\widehat{\pi}(\rho)^{\an}, \widehat{\pi}(\rho)^{\an}\big)
\end{equation}
which we claim is injective. Indeed, assume there is a continuous $\GL_2(\Q_p)$-equivariant section $\widehat{\pi}(\rho)^{\an}\hookrightarrow \widetilde \pi^{\an}\subset \widetilde\pi$ for $\widetilde \pi\in \Ext^1_{\GL_2(\Q_p)}(\widehat{\pi}(\rho), \widehat{\pi}(\rho))$. Then, using that the universal unitary completion of $\widehat{\pi}(\rho)^{\an} \cong \pi(p^{-1}\alpha, \lambda^{\flat}, \psi)$ is isomorphic to $\widehat{\pi}(\rho)$ (by \cite{CD}) together with the universal property of this universal completion and the exactness in \cite[Thm.~7.1]{ST03}, we easily deduce that the above continuous injection $\widehat{\pi}(\rho)^{\an}\hookrightarrow \widetilde\pi$ canonically factors through a continuous injection $\widehat{\pi}(\rho)\hookrightarrow \widetilde\pi$ which provides a section to $\widetilde\pi\twoheadrightarrow \widehat{\pi}(\rho)$. However, by Lemma \ref{lem: hL-ext2} we have $\dim_E \Ext^1_{(\varphi,\Gamma)}(D(\alpha,\lambda, \psi), D(\alpha,\lambda, \psi))=5$, and by Lemma \ref{lem: hL-ext1}(3) (and twisting by $\unr(p^{-1}\alpha)\circ \dett$) we have $\dim_E \Ext^1_{\GL_2(\Q_p)}(\widehat{\pi}(\rho)^{\an}, \widehat{\pi}(\rho)^{\an})\leq 5$. Thus both (\ref{equ: hL-pLLbis}) and (\ref{equ: hL-pLLb}) are bijective. The composition of (\ref{equ: hL-pLLb}) with the inverse of (\ref{equ: hL-pLLbis}) gives an isomorphism:
\begin{equation}\label{equ: hL-pLLa}
\Ext^1_{(\varphi,\Gamma)}\big(D(\alpha, \lambda, \psi), D(\alpha, \lambda, \psi)\big) \xlongrightarrow{\sim} \Ext^1_{\GL_2(\Q_p)}\big(\widehat{\pi}(\rho)^{\an}, \widehat{\pi}(\rho)^{\an}\big).
\end{equation}
Twisting by $\cR_E(\unr(p\alpha^{-1}))$ on the left hand side and by $\unr(p\alpha^{-1})\circ \dett$ on the right hand side, we deduce an isomorphism:
\begin{equation}\label{equ: hL-pLL5}
\pLL: \Ext^1_{(\varphi,\Gamma)}\big(D(p,\lambda, \psi), D(p,\lambda, \psi)\big) \xlongrightarrow{\sim} \Ext^1_{\GL_2(\Q_p)}(\pi(\lambda^{\flat}, \psi), \pi(\lambda^{\flat}, \psi)).
\end{equation}
The first part of Hypothesis \ref{hypo: hL-pLL0}(1) follows.\\
\noindent
From the bijectivity of (\ref{equ: hL-pLLbis}), we see any element in $\Ext^1_{\GL_2(\Q_p)}(\pi(p^{-1}\alpha, \lambda^{\flat}, \psi), \pi(p^{-1}\alpha, \lambda^{\flat}, \psi))$ is isomorphic to the locally analytic vectors of an extension of $\widehat{\pi}(\rho)$ by $\widehat{\pi}(\rho)$ (in the category of admissible unitary Banach representations of $\GL_2(\Q_p)$) and in particular is very strongly admissible. Twisting by $\unr(p\alpha^{-1})\circ \dett$, we deduce any element in $\Ext^1_{\GL_2(\Q_p)}(\pi(\lambda^{\flat}, \psi), \pi(\lambda^{\flat}, \psi))$ is also very strongly admissible, which is the second part of Hypothesis \ref{hypo: hL-pLL0}(1).
\end{proof}

\begin{remark}\label{rem: hL-pLL2}
{\rm (1) Keeping the assumptions of Proposition \ref{prop: hL-gl2pLL}, by the same argument together with a version with fixed central character of (\ref{equ: hL-pLL}) (see (\ref{equ: GL2-centC})), we can show that (\ref{equ: hL-pLL5}) induces an isomorphism as in (\ref{equ: hL-pLLcent}).\\
(2) Assume $\End_{\Gal_{\Q_p}}(\overline{\rho})=k_E$, any element $t$ in the left hand side set of (\ref{equ: hL-pLLa}) gives rise to an ideal $\cI_t\subseteq R_{\overline{\rho}}$ with $R_{\overline{\rho}}/\cI_t\cong \co_E[\epsilon]/\epsilon^2$ ($R_{\overline{\rho}}$ is the universal deformation ring of $\overline{\rho}$, see \S~\ref{sec: ord-galoisdef}). With the notation of \S~\ref{sec: app-def}, the map (\ref{equ: hL-pLLa}) then sends $t$ to $((\pi^{\univ}(\overline{\rho}) \otimes_{R_{\overline{\rho}}} R_{\overline{\rho}}/\cI_t)\otimes_{\co_E} E)^{\an}$.}
\end{remark}

\noindent
The following proposition is presumably not new, but we couldn't find the precise statement in the existing literature. We provide a complete proof in \S~\ref{sec: app-tri}.

\begin{proposition}\label{prop: hL-gl2pLL2}
Keep \ the \ assumptions \ of \ Proposition \ \ref{prop: hL-gl2pLL} \ and \ assume \ moreover $\End_{\Gal_{\Q_p}}(\overline{\rho})=k_E$, and $p\geq 5$ if $\overline{\rho}$ is nongeneric (see just before Proposition \ref{prop: gl2-tang} for this terminology). Then Hypothesis \ref{hypo: hL-pLL0}(2) and Hypothesis \ref{hypo: hL-pLL0}(3) are true. Consequently, the statements in Lemma \ref{lem: hL-kappag} also hold.
\end{proposition}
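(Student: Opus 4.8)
The plan is to transport both sides of the correspondence $\pLL$ to the Galois side, where --- because $\End_{\Gal_{\Q_p}}(\overline\rho)=k_E$ --- they both become the tangent space of the universal deformation ring $R_{\overline\rho}$, and then to match the two ``trianguline'' subspaces inside this common tangent space. First I would fix $\alpha\in E^\times$ with $D(\alpha,\lambda,\psi)\cong D_{\rig}(\rho)$ étale as in Lemma \ref{lem: hL-etale}; since twisting by the constant unramified characters $\cR_E(\unr(p\alpha^{-1}))$, resp.\ $\unr(p\alpha^{-1})\circ\dett$, changes neither the relevant $\Ext^1$ groups nor the ``derivative'' parts $\psi_i$ of trianguline parameters (last sentence of \S~\ref{sec: hL-ext1}), it suffices to argue for the isomorphism (\ref{equ: hL-pLLa}) and the representation $\widehat\pi(\rho)^{\an}\cong\pi(p^{-1}\alpha,\lambda^\flat,\psi)$. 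By \cite{Liu07} one has $\Ext^1_{(\varphi,\Gamma)}(D_{\rig}(\rho),D_{\rig}(\rho))\cong H^1(\Gal_{\Q_p},\ad\rho)$, the tangent space of $R_{\overline\rho}$; by Remark \ref{rem: hL-pLL2}(2) together with the construction of $\pi^{\univ}(\overline\rho)$ recalled in \S~\ref{sec: app-def}, the right-hand side of (\ref{equ: hL-pLLa}) is this same tangent space, and (\ref{equ: hL-pLLa}) is precisely this identification. So Hypothesis \ref{hypo: hL-pLL0}(2) and (3) become a statement purely about $R_{\overline\rho}$.

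On the Galois side, $\Ext^1_{\tri}(D(p,\lambda,\psi),D(p,\lambda,\psi))$ is, by its definition in \S~\ref{sec: hL-FM3}, the space of deformations that remain a nonsplit extension of $\cR_{E[\epsilon]/\epsilon^2}(\widetilde\delta_2)$ by $\cR_{E[\epsilon]/\epsilon^2}(\widetilde\delta_1)$; transported through $D_{\rig}$ this is the tangent space of the trianguline deformation functor of $\rho$ equipped with its unique triangulation, and the maps $\kappa$, $\kappa_1$ of \S~\ref{sec: hL-FM3} read off the deformed parameters $\widetilde\delta_2$, $\widetilde\delta_1$. On the $\GL_2(\Q_p)$-side, Lemma \ref{dimtri}(2) tells us that a deformation $\widetilde\pi$ lies in $\Ext^1_{\tri}$ exactly when $\delta_{\lambda^\flat}(|\cdot|\otimes|\cdot|^{-1})$ occurs with multiplicity $2$ in $J_B(\widetilde\pi)$. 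Hence the whole proposition follows once one proves: for $\widetilde D\in\Ext^1_{\tri}$ of trianguline parameter $\bigl(x^{k_1}(1+\psi_1\epsilon),\,|\cdot|^{-1}x^{k_2}(1+\psi_2\epsilon)\bigr)$, the representation $\widetilde\pi:=\pLL(\widetilde D)$ satisfies $\delta_{\lambda^\flat}(|\cdot|\otimes|\cdot|^{-1})(1+\Psi\epsilon)\hookrightarrow J_B(\widetilde\pi)$ with $\Psi=(\psi_1,\psi_2)$. Indeed this is verbatim Hypothesis \ref{hypo: hL-pLL0}(3), and by Lemma \ref{dimtri}(2) it forces $\widetilde\pi\in\Ext^1_{\tri}$, so $\pLL$ maps $\Ext^1_{\tri}$ into $\Ext^1_{\tri}$; since $\pLL$ is an isomorphism (Proposition \ref{prop: hL-gl2pLL}) and both spaces are $4$-dimensional (Lemma \ref{lem: hL-tri3} for $D(p,\lambda,\psi)$, and Lemma \ref{dimtri}(1) on the automorphic side, the latter being available because Hypothesis \ref{hypo: hL-dim} holds by Remark \ref{remhypo}(1)), this inclusion is an equality --- which is Hypothesis \ref{hypo: hL-pLL0}(2). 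The final sentence of the proposition is then immediate from Lemma \ref{lem: hL-kappag}.

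The technical core, which I would carry out in \S~\ref{sec: app-tri}, is the computation of $J_B(\widetilde\pi)$. The approach is to exploit the explicit shape of $\widehat\pi(\rho)^{\an}$ for trianguline $\rho$: by \cite{LXZ} (building on Colmez) the representation $\pi(p^{-1}\alpha,\lambda^\flat,\psi)$ is an explicit multiplicity-free subquotient of a locally analytic principal series $\bigl(\Ind_{\overline B(\Q_p)}^{\GL_2(\Q_p)}\delta\bigr)^{\an}$ whose parameter $\delta$ is read off from the triangulation of $D_{\rig}(\rho)$. Under the $p$-adic Langlands correspondence over $R_{\overline\rho}$, trianguline infinitesimal deformations of $\rho$ (with its triangulation) correspond precisely to the principal-series-type deformations of this subquotient --- the infinitesimal, purely local form of the compatibility between $\Spec R_{\overline\rho}$, the local trianguline variety and Emerton's Jacquet functor, which for $\GL_2(\Q_p)$ is available through work of Colmez, Chenevier and Emerton --- so $\widetilde\pi$ is the corresponding subquotient of $\bigl(\Ind_{\overline B(\Q_p)}^{\GL_2(\Q_p)}\delta(1+\Psi\epsilon)\bigr)^{\an}$, and applying $J_B$ to principal series (via \cite[Lem.~0.3]{Em2} and the left-exactness of $J_B$, exactly as in the proof of Lemma \ref{lem: hL-tri1}) yields the required embedding. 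The main obstacle is to make this matching of deformations precise while tracking all normalizations --- the twists by $\unr(\cdot)$, $|\cdot|$, $\delta_B$ and the shift $\lambda\rightsquigarrow\lambda^\flat$ --- since $J_B$ does not visibly commute with reduction modulo $\epsilon$ and $J_B(\widetilde\pi)$ cannot be computed formally; this is where deformation theory for $R_{\overline\rho}$ genuinely enters, and it is also where the genericity hypotheses are used: one needs $\overline\rho$ generic, or $p\ge 5$ in the non-generic case, so that $R_{\overline\rho}$ is formally smooth of the expected dimension and the fine structure of the $\GL_2(\Q_p)$ correspondence --- hence also the dimension count $\dim_E\Ext^1_{\GL_2(\Q_p)}(\pi(\lambda^\flat,\psi),\pi(\lambda^\flat,\psi))=5$ underlying the above argument --- is free of the small-$p$ exceptional cases.
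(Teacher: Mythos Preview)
Your reduction strategy is correct and matches the paper exactly: twist to work with $D(\alpha,\lambda,\psi)$ and $\pi(p^{-1}\alpha,\lambda^\flat,\psi)$, then observe that Hypothesis \ref{hypo: hL-pLL0}(3) implies (2) by the dimension count (Lemma \ref{lem: hL-tri3} against Lemma \ref{dimtri}(1), the latter available because Hypothesis \ref{hypo: hL-dim} holds via Remark \ref{remhypo}(1)). So the whole problem is to produce the embedding $\delta_{\lambda^\flat}(|\cdot|\otimes|\cdot|^{-1})(1+\Psi\epsilon)\hookrightarrow J_B(\widetilde\pi)$ for each $\widetilde D\in\Ext^1_{\tri}$. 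This is precisely how the paper sets things up.

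The gap is in your ``technical core'' paragraph. You write that the compatibility between trianguline deformations and principal-series-type deformations of $\widehat\pi(\rho)^{\an}$ is ``available through work of Colmez, Chenevier and Emerton'', and that $\widetilde\pi$ is then the corresponding subquotient of $(\Ind_{\overline B}^{\GL_2}\delta(1+\Psi\epsilon))^{\an}$. The paper explicitly says it ``couldn't find precise references in the existing literature'' for this step, and it does \emph{not} proceed by realizing $\widetilde\pi$ inside a deformed principal series. Instead it builds an eigenvariety-type object $X\subseteq(\Spf R_{\overline\rho})^{\rig}\times\cT$ as the support of the dual of $J_B(\Pi^{R_{\overline\rho}-\an})$, where $\Pi$ comes from the universal deformation $\cN$ of $\pi(\overline\rho)^\vee$ (Corollary \ref{coro: app-NR}); proves that noncritical crystalline points are Zariski-dense in $X$; identifies $X_{\red}$ with the trianguline variety $X_{\rm tri}(\overline\rho)$; shows the resulting coherent sheaf $\cM_1$ is locally free of rank $1$ near the point $x$ attached to $\rho$; and then pulls back a tangent vector $v$ via the isomorphism $V_x\cong\Ext^1_{\tri}(\rho,\rho)$ of (\ref{equ: hL-tang1}) to extract the required $T(\Q_p)$-embedding (\ref{criteretri}). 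The Jacquet-module statement is obtained without ever exhibiting $\widetilde\pi$ as a subquotient of a principal series over $E[\epsilon]/\epsilon^2$.

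You have also misplaced the role of $p\ge 5$ in the nongeneric case. It is not used for formal smoothness of $R_{\overline\rho}$; it enters through Proposition \ref{prop: GL2-proj}, which shows $\cN$ is finitely generated \emph{projective} over $S[[\GL_2(\Z_p)]]$ for a suitable $S=\co_E[[x,y]]\hookrightarrow R_{\overline\rho}$. This projectivity is what makes the eigenvariety arguments of \cite{BHS1} (density of classical points, equidimensionality) go through for $X$, and the proof of Proposition \ref{prop: GL2-proj} invokes \cite[Thm.~3.5]{HP}, which needs $p\ge 5$ in the nongeneric case. Without this input one cannot establish $X_{\red}\cong X_{\rm tri}(\overline\rho)$, and the argument does not start.
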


\begin{remark}
{\rm Assume $\psi$ smooth, let $\widetilde{\pi}\in \Ext^1_g(\pi(\lambda^{\flat}, \psi), \pi(\lambda^{\flat}, \psi))$ as in Remark \ref{rem: hL-sp} (with $\lambda$ replaced by $\lambda^{\flat}$), and let $\widetilde{D}\in \Ext^1_{g}(D(p,\lambda,\psi), D(p,\lambda,\psi))$ the inverse image of $\widetilde{\pi}$ via the isomorphism (\ref{equ: hL-pLLdR}). By Remark \ref{rem: hL-sp} (see in particular (\ref{equ: hL-lalg})), the existence of $\widetilde{D}$ confirms the discussion in \cite[Rem. 1.6(a)]{Dos15}.}
\end{remark}

\subsection{$\cL$-invariants for $\GL_3(\Q_p)$}\label{allthecij}

\noindent
We use the previous results for $\GL_2(\Q_p)$ and the results of \S~\ref{sec: hL-FM} to associate to a $3$-dimensional semi-stable representation of $\Gal_{\Q_p}$ with $N^2\ne 0$ and distinct Hodge-Tate weights one of the finite length locally analytic representations of $\GL_3(\Q_p)$ constructed in \cite{Br16}.

\subsubsection{Notation and preliminaries}\label{sec: hL-GL30}

\noindent
We define some useful locally analytic representations of $\GL_3(\Q_p)$.\\

\noindent
We now switch to $\GL_3(\Q_p)$ and we let $B(\Q_p)$ (resp. $\overline{B}(\Q_p)$) be the Borel subgroup of upper (resp. lower) triangular matrices, $T(\Q_p)$ the diagonal torus and $N(\Q_p)$ (resp. $\overline{N}(\Q_p)$) the unipotent radical of $B(\Q_p)$ (resp. $\overline{B}(\Q_p)$). We set:
$$P_1(\Q_p):=\begin{pmatrix} * & * & * \\ * & * & *\\0 & 0&* \end{pmatrix},\ \ P_2(\Q_p):=\begin{pmatrix} * & * & * \\0&*&*\\0 & *&*\end{pmatrix}.$$
For $i\in \{1,2\}$ we denote by $L_i(\Q_p)$ the Levi subgroup of $P_i(\Q_p)$ containing $T(\Q_p)$, $N_i(\Q_p)$ the unipotent radical of $P_i(\Q_p)$, $\overline{P}_i(\Q_p)$ the parabolic subgroup opposite to $P_i(\Q_p)$ and $\overline{N}_i(\Q_p)$ the unipotent radical of $\overline{P}_i$. Finally we let $\ug$, $\ub$, $\ft$, $\fn$, $\fp_i$, $\fl_i$, $\fn_i$, $\overline{\fn}_i$ the respective $\Q_p$-Lie algebras.\\

\noindent
We fix $\lambda=(k_1,k_2,k_3)$ a dominant integral weight of $\ft$ with respect to the Borel subgroup $B$, i.e. $k_1\geq k_2\geq k_3$. We let $L(\lambda)$ (resp. $L_i(\lambda)$ for $i\in \{1,2\}$) be the algebraic representation of $\GL_3(\Q_p)$ (resp. of $L_i(\Q_p)$) of highest weight $\lambda$ and $\delta_{\lambda}$ be the algebraic character of $T(\Q_p)$ of weight $\lambda$. To lighten notation we set:
\begin{eqnarray*}
 I_{\overline{B}}^{\GL_3}(\lambda)&:=&\big(\Ind_{\overline{B}(\Q_p)}^{\GL_3(\Q_p)} \delta_{\lambda}\big)^{\an} \\
 I_{\overline{P}_i}^{\GL_3}(\lambda)&:=& \big(\Ind_{\overline{P}_i(\Q_p)}^{\GL_3(\Q_p)} L_i(\lambda)\big)^{\an} \\
 i_{\overline{B}}^{\GL_3}(\lambda) &:=&\big(\Ind_{\overline{B}(\Q_p)}^{\GL_3(\Q_p)} 1\big)^{\infty} \otimes_E L(\lambda) \\
 i_{\overline{P}_i}^{\GL_3}(\lambda) &:=&\big(\Ind_{\overline{P}_i(\Q_p)}^{\GL_3(\Q_p)} 1\big)^{\infty} \otimes_E L(\lambda).
\end{eqnarray*}
We also set:
\begin{eqnarray*}
\St_3^{\an}(\lambda)&:=&I_{\overline{B}}^{\GL_3}(\lambda)/\sum_{i=1,2} I_{\overline{P}_i}^{\GL_3}(\lambda)\\
\St_3^{\infty}(\lambda)&:=&i_{\overline{B}}^{\GL_3}(\lambda)/\sum_{i=1,2} i_{\overline{P}_i}^{\GL_3}(\lambda)\\
v_{\overline{P}_i}^{\an}(\lambda)&:=&I_{\overline{P}_i}^{\GL_3}(\lambda)/L(\lambda)\\
v_{\overline{P}_i}^{\infty}(\lambda)&:=&i_{\overline{P}_i}^{\GL_3}(\lambda)/L(\lambda).
\end{eqnarray*}
We have $\St_3^{\infty}(\lambda) \xrightarrow{\sim} \St_3^{\an}(\lambda)^{\lalg}$, $v_{\overline{P}_i}^{\infty}(\lambda)\xrightarrow{\sim}v_{\overline{P}_i}^{\an}(\lambda)^{\lalg}$ and long exact sequences (cf. \cite[Prop. 5.4]{Sch11}):
\begin{eqnarray}\label{equ: hL-resol}
0 \lra L(\lambda)\lra I_{\overline{P}_1}^{\GL_3}(\lambda) \oplus I_{\overline{P}_2}^{\GL_3}(\lambda) \lra I_{\overline{B}}^{\GL_3}(\lambda) \lra \St_3^{\an}(\lambda)\lra 0&&\\
\nonumber
0 \lra L(\lambda) \lra i_{\overline{P}_1}^{\GL_3}(\lambda) \oplus i_{\overline{P}_2}^{\GL_3}(\lambda) \lra i_{\overline{B}}^{\GL_3}(\lambda) \lra \St_3^{\infty}(\lambda) \lra 0.&&
\end{eqnarray}
For an integral weight $\mu$, we denote by $\overline{L}(\mu)$ the unique simple quotient of the Verma module $\overline{M}(\mu):=\text{U}(\ug)\otimes_{\text{U}(\overline{\ub})} \mu$. Note that we have $\overline{L}(-\lambda)\cong L(\lambda)'$. We use without comment the theory of \cite{OS}, see e.g. \cite[\S~2]{Br13I} for a summary. We often write $\GL_3$, $\overline{P}_i$, $Z$ (= the center of $\GL_3$) instead of $\GL_3(\Q_p)$, $\overline{P}_i(\Q_p)$, $Z(\Q_p)$ etc.\\

\noindent
We now give several useful short exact sequences of admissible locally analytic representations of $\GL_3(\Q_p)$ over $E$. For $i=1,2$, we have a \emph{nonsplit} exact sequence:
\begin{equation}\label{equ: hL-gSt}
 0 \lra v_{\overline{P}_i}^{\infty}(\lambda) \lra v_{\overline{P}_i}^{\an}(\lambda) \lra \cF_{\overline{P}_i}^{\GL_3}(\overline{L}(-s_j\cdot \lambda), 1) \lra 0
\end{equation}
where $j\neq i$ and $s_i$ denotes the simple reflection corresponding to the simple root of $L_i(\Q_p)$. Indeed, by \cite[Lem.~5.3.1]{Br16}, the theory of \cite{OS} and \cite[Cor. 2.5]{Br13I}, we have a nonsplit exact sequence:
\begin{equation*}
 0 \lra i_{\overline{P}_i}^{\GL_3}(\lambda) \lra I_{\overline{P}_i}^{\GL_3}(\lambda) \lra \cF_{\overline{P}_i}^{\GL_3}(\overline{L}(-s_j\cdot \lambda), 1)\lra 0,
\end{equation*}
which together with the fact $\Ext^1_{{\GL_3}(\Q_p)}( \cF_{\overline{P}_i}^{\GL_3}(\overline{L}(-s_j\cdot \lambda), 1), L(\lambda))=0$ (cf. \cite[Cor. 4.3]{Sch11}) implies that (\ref{equ: hL-gSt}) is nonsplit by a straightforward d\'evissage. We let $\lambda_{1,2}:=(k_1,k_2)$ (which is thus a dominant weight for $\GL_2(\Q_p)$ as in \S~\ref{sec: hL-ext1}), it is easy to see that we have a commutative diagram (where we write $\GL_3$ for $\GL_3(\Q_p)$ etc.):
\begin{equation*}\begin{CD}
\big(\Ind_{\overline{P}_1}^{\GL_3}\big((\Ind_{\overline{B}\cap L_1}^{L_{1}}1)^{\infty}\otimes_E L_1(\lambda)\big)\big)^{\an} @>>> \big(\Ind_{\overline{P}_1}^{\GL_3} \St_2^{\infty}(\lambda_{1,2}) \otimes x^{k_3}\big)^{\an} \\
@VVV @VVV \\
\big(\Ind_{\overline{P}_1}^{\GL_3}\big((\Ind_{\overline{B}\cap L_1}^{L_{1}} \delta_{\lambda})^{\an}\big)\big)^{\an} @>>> \big(\Ind_{\overline{P}_1}^{\GL_3} \St_2^{\an}(\lambda_{1,2}) \otimes x^{k_3}\big)^{\an}
\end{CD}
\end{equation*}
where all the vertical maps are injective and all the horizontal maps are surjective. Using the exactness and transitivity of parabolic induction, the bottom surjection induces an isomorphism $I_{\overline{B}}^{\GL_3}(\lambda)/I_{\overline{P}_1}^{\GL_3}(\lambda) \xlongrightarrow{\sim} (\Ind_{\overline{P}_1}^{\GL_3} \St_2^{\an}(\lambda_{1,2}) \otimes x^{k_3})^{\an}$. Together with (\ref{equ: hL-resol}), we deduce an exact sequence:
\begin{equation}\label{equ: hL-gSt1}
 0 \lra v_{\overline{P}_2}^{\an}(\lambda) \lra \big(\Ind_{\overline{P}_1}^{\GL_3} \St_2^{\an}(\lambda_{1,2}) \otimes x^{k_3}\big)^{\an} \lra \St_3^{\an}(\lambda)\lra 0.
\end{equation}
By the theory of \cite{OS} and \cite[Cor. 2.5]{Br13I}, we have a nonsplit exact sequence:
\begin{multline}\label{equ: hL-St1}
 0 \lra\big(\Ind_{\overline{P}_1}^{\GL_3} \St_2^{\infty}\otimes 1\big)^{\infty}\otimes_E L(\lambda) \lra \big(\Ind_{\overline{P}_1}^{\GL_3} \St_2^{\infty}(\lambda_{1,2}) \otimes x^{k_3}\big)^{\an} \\ \lra \cF_{\overline{P}_1}^{\GL_3}(\overline{L}(-s_2\cdot \lambda), \St_2^{\infty}\otimes 1)\lra 0.
\end{multline}
We also have another exact sequence (see e.g. \cite[(53)]{Br16}):
\begin{equation}\label{exactinfty}
 0 \lra v_{\overline{P}_2}^{\infty}(\lambda) \lra \big(\Ind_{\overline{P}_1}^{\GL_3} \St_2^{\infty}\otimes 1\big)^{\infty}\otimes_E L(\lambda) \lra \St_3^{\infty}(\lambda) \lra 0
\end{equation}
and from (\ref{equ: hL-gSt1}), (\ref{equ: hL-St1}), (\ref{equ: hL-gSt}) and (\ref{exactinfty}) we easily deduce that in $(\Ind_{\overline{P}_1}^{\GL_3} \St_2^{\an}(\lambda_{1,2}) \otimes x^{k_3})^{\an}$ we have:
\begin{equation}\label{inter}
\big(\Ind_{\overline{P}_1}^{\GL_3} \St_2^{\infty}(\lambda_{1,2}) \otimes x^{k_3}\big)^{\an} \cap v_{\overline{P}_2}^{\an}(\lambda) = v_{\overline{P}_2}^{\infty}(\lambda).
\end{equation}
Let $C_{2,1}:=\cF_{\overline{P}_1}^{\GL_3}(\overline{L}(-s_2\cdot \lambda), \St_2^{\infty}\otimes 1)$ and:
\begin{equation*}
S_{1,0}:=\big(\Ind_{\overline{P}_1}^{\GL_3} \St_2^{\infty}(\lambda_{1,2}) \otimes x^{k_3}\big)^{\an} /v_{\overline{P}_2}^{\infty}(\lambda)
\end{equation*}
which is a subrepresentation of $\St_3^{\an}(\lambda)$ by (\ref{equ: hL-gSt1}) and (\ref{inter}) and sits in an exact sequence by (\ref{equ: hL-St1}) and (\ref{exactinfty}):
\begin{equation}\label{S10}
 0 \lra \St_3^{\infty}(\lambda) \lra S_{1,0} \lra C_{2,1} \lra 0.
\end{equation}
We claim the latter is nonsplit. Indeed, as in the proof of \cite[Prop. 4.6.1]{Br16}, we have $\Ext^1_{{\GL_3}(\Q_p)}(C_{2,1}, v_{\overline{P}_2}^{\infty}(\lambda))=0$. Together with the fact that (\ref{equ: hL-St1}) is nonsplit, the claim follows by a straightforward d\'evissa\-ge. By replacing $P_1$ by $P_2$ and $s_2$ by $s_1$, we define in the same way $C_{1,1}$ as $C_{2,1}$ and $S_{2,0}$ as $S_{1,0}$, and we have similar results for $C_{1,1}$ and $S_{2,0}$. In particular, we have $\soc_{{\GL_3}(\Q_p)} S_{i,0}\cong \St_3^{\infty}(\lambda)$.\\

\noindent
In the sequel, we define several locally analytic representations $C_{i,j}$ and $S_{i,j}$ of $\GL_3(\Q_p)$ for $i\in \{1,2\}$ and $j\in \{0,1,2,3\}$, these representations being such that $C_{i,0}=\St_{3}^{\infty}(\lambda)$ for $i\in \{1,2\}$ and $C_{i,j}\hookrightarrow \soc_{{\GL_3}(\Q_p)} S_{i,j}$ for all $i,j$.

\subsubsection{Simple $\cL$-invariants}\label{sec: hL-sim3}

\noindent
We recall some facts on simple $\cL$-invariants.\\

\noindent
We keep all the previous notation.

\begin{lemma}\label{lem: hL-sim}
Let $i,j\in\{1,2\}$, $i\neq j$.\\
\noindent
(1) We have $\Ext^1_{{\GL_3}(\Q_p)}(v_{\overline{P}_i}^{\infty}(\lambda), \St_{3}^{\an}(\lambda)/S_{j,0})=0$ and an isomorphism:
\begin{equation*}
\Ext^1_{{\GL_3}(\Q_p)}(v_{\overline{P}_i}^{\infty}(\lambda), S_{j,0}) \xlongrightarrow{\sim} \Ext^1_{{\GL_3}(\Q_p)}(v_{\overline{P}_i}^{\infty}(\lambda), \St_{3}^{\an}(\lambda)).
\end{equation*}
(2) We have $\Ext^1_{{\GL_3}(\Q_p)}(L(\lambda), \St_3^{\an}(\lambda))=0$ and an isomorphism:
\begin{equation*}
 \Ext^1_{{\GL_3}(\Q_p)}(v_{\overline{P}_i}^{\infty}(\lambda), \St_{3}^{\an}(\lambda)) \xlongrightarrow{\sim} \Ext^1_{{\GL_3}(\Q_p)}(i_{\overline{P}_i}^{\GL_3}(\lambda), \St_{3}^{\an}(\lambda)).
\end{equation*}
\end{lemma}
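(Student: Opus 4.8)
The plan is to deduce both isomorphisms from long exact sequences, so that the real content reduces to a short list of $\Hom$ and $\Ext^1$ vanishings between subquotients of principal series of $\GL_3(\Q_p)$. For (1), put $Q_j:=\St_3^{\an}(\lambda)/S_{j,0}$ and recall $S_{j,0}\hookrightarrow\St_3^{\an}(\lambda)$. Applying $\Hom_{{\GL_3}(\Q_p)}(v_{\overline{P}_i}^{\infty}(\lambda),-)$ to $0\to S_{j,0}\to\St_3^{\an}(\lambda)\to Q_j\to 0$ shows it suffices to prove $\Hom_{{\GL_3}(\Q_p)}(v_{\overline{P}_i}^{\infty}(\lambda),Q_j)=0$ and $\Ext^1_{{\GL_3}(\Q_p)}(v_{\overline{P}_i}^{\infty}(\lambda),Q_j)=0$ (the second is the first assertion of (1), and the two together give the isomorphism). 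For (2), the vanishing $\Hom_{{\GL_3}(\Q_p)}(L(\lambda),\St_3^{\an}(\lambda))=0$ is immediate since $L(\lambda)$ is finite dimensional and $\soc_{{\GL_3}(\Q_p)}\St_3^{\an}(\lambda)\cong\St_3^{\infty}(\lambda)$; and once $\Ext^1_{{\GL_3}(\Q_p)}(L(\lambda),\St_3^{\an}(\lambda))=0$ is known, the long exact sequence attached to $0\to L(\lambda)\to i_{\overline{P}_i}^{\GL_3}(\lambda)\to v_{\overline{P}_i}^{\infty}(\lambda)\to 0$ yields the isomorphism $\Ext^1_{{\GL_3}(\Q_p)}(v_{\overline{P}_i}^{\infty}(\lambda),\St_3^{\an}(\lambda))\xrightarrow{\sim}\Ext^1_{{\GL_3}(\Q_p)}(i_{\overline{P}_i}^{\GL_3}(\lambda),\St_3^{\an}(\lambda))$. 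So everything comes down to understanding $\Hom/\Ext^1$ into $Q_j$ and $\Ext^1_{{\GL_3}(\Q_p)}(L(\lambda),\St_3^{\an}(\lambda))$.

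To access $Q_j$ I would first make it explicit. Take $j=1$, $i=2$ (the case $j=2$ is symmetric under $P_1\leftrightarrow P_2$, $s_2\leftrightarrow s_1$). Chasing (\ref{equ: hL-gSt1}), (\ref{equ: hL-St1}), (\ref{exactinfty}) and (\ref{inter}), using exactness and transitivity of parabolic induction and the identity $\St_2^{\an}(\lambda_{1,2})/\St_2^{\infty}(\lambda_{1,2})\cong I(s\cdot\lambda_{1,2})=\big(\Ind_{\overline{B}(\Q_p)\cap L_1(\Q_p)}^{L_1(\Q_p)}\delta_{s\cdot\lambda_{1,2}}\big)^{\an}$, and identifying the image of $v_{\overline{P}_2}^{\an}(\lambda)$ with $v_{\overline{P}_2}^{\an}(\lambda)/v_{\overline{P}_2}^{\infty}(\lambda)\cong\cF_{\overline{P}_2}^{\GL_3}(\overline{L}(-s_1\cdot\lambda),1)$ via (\ref{equ: hL-gSt}), one gets $Q_1\cong\big(\Ind_{\overline{B}(\Q_p)}^{\GL_3(\Q_p)}\delta_{s_1\cdot\lambda}\big)^{\an}\big/\cF_{\overline{P}_2}^{\GL_3}(\overline{L}(-s_1\cdot\lambda),1)$. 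This exhibits $Q_1$ as a quotient of a principal series with a strictly subdominant integral parameter, whose socle is $\cF_{\overline{P}_2}^{\GL_3}(\overline{L}(-s_1\cdot\lambda),1)$, so that $\soc_{{\GL_3}(\Q_p)}Q_1\cong C_{1,1}$ (this can be read off from the structure of $\St_3^{\an}(\lambda)$, cf. \cite[\S~4]{Br16}); since $v_{\overline{P}_2}^{\infty}(\lambda)$ is irreducible (it is $L(\lambda)$ tensored with the irreducible generalized Steinberg of the maximal parabolic $\overline{P}_2(\Q_p)$) and is not $C_{1,1}$, the $\Hom$ vanishes. For the $\Ext^1$ I would dévisser $Q_1$ along its Jordan--Hölder filtration and show $\Ext^1_{{\GL_3}(\Q_p)}(v_{\overline{P}_2}^{\infty}(\lambda),X)=0$ for each constituent $X$, via the spectral sequences of \cite[\S~4]{Sch11} which compute $\Ext^\bullet_{{\GL_3}(\Q_p)}$ between the representations $\cF_{\overline{Q}}^{\GL_3}(\cdot,\cdot)$ in terms of the $\overline{N}_Q(\Q_p)$-homology of $\overline{L}(-\lambda)\otimes_E L_2(\lambda)$ and of smooth $\Ext$ groups of generalized Steinberg representations of Levi subgroups (the latter vanishing in the relevant range by \cite{Orl}, as in (\ref{equ: hL-smooth})). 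Concretely these are exactly the extension computations carried out in \cite[\S~4]{Br16}, which I would quote.

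For the remaining input of (2), namely $\Ext^1_{{\GL_3}(\Q_p)}(L(\lambda),\St_3^{\an}(\lambda))=0$, I would dévisser $\St_3^{\an}(\lambda)$ through $0\to S_{1,0}\to\St_3^{\an}(\lambda)\to Q_1\to 0$ and (\ref{S10}) and check $\Ext^1_{{\GL_3}(\Q_p)}(L(\lambda),X)=0$ for $X=\St_3^{\infty}(\lambda)$, for $X=C_{2,1}$, and for every Jordan--Hölder constituent $X$ of $Q_1$. Because $L(\lambda)$ is finite dimensional, \cite[Cor.~4.3]{Sch11} shifts the degree and replaces $L(\lambda)$ by a principal series, and these vanishings then follow from the same Schraen computations (together with (\ref{equ: hL-smooth}) and \cite[Cor.~4.8]{Sch11} for the locally algebraic piece). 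This finishes (2).

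The main obstacle is the case-by-case $\Ext^1$ check into $Q_j$: $\Ext^1$ between two distinct irreducible locally analytic representations of $\GL_3(\Q_p)$ with the same infinitesimal character need \emph{not} vanish -- such nonzero $\Ext^1$'s are precisely the simple $\cL$-invariants of this section (for instance $\Ext^1_{{\GL_3}(\Q_p)}(v_{\overline{P}_i}^{\infty}(\lambda),\St_3^{\infty}(\lambda))\neq 0$, which produces $C_{i,1}$). So one genuinely has to run through the Jordan--Hölder factors of $Q_j$ one by one and verify, through the relative positions of $\overline{P}_i(\Q_p)$ and the parabolics that occur and through the $\overline{N}_{P_i}(\Q_p)$-homology of $L(\lambda)$ tensored with the smooth Steinberg of $L_i(\Q_p)$, that for these particular factors the cohomology controlling $\Ext^1$ vanishes. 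This is precisely the bookkeeping done in \cite[\S\S~4--5]{Sch11} and \cite[\S~4]{Br16}, and the proof ultimately rests on it.
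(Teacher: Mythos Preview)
Your proposal is correct and for part (1) is essentially the paper's argument: d\'evissage along the Jordan--H\"older constituents of $Q_j=\St_3^{\an}(\lambda)/S_{j,0}$, reducing to $\Ext^1_{\GL_3(\Q_p)}(v_{\overline{P}_i}^{\infty}(\lambda),C)=0$ for each irreducible $C$ and handling these via Schraen's spectral sequences. The paper is simply more specific about the citations: for $C=\cF_{\overline{P}_w}^{\GL_3}(\overline{L}(-w\cdot\lambda),\pi^\infty)$ with $\lg(w)\geq 2$ it uses \cite[(4.37)~\&~Thm.~4.10]{Sch11} to kill $\Ext^1$ into the full parabolic induction, while the delicate case $w=s_j$ (where $C=C_{j,1}$ has the same infinitesimal character and a ``close'' parabolic) is covered by \cite[(4.45)]{Sch11}. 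You might want to isolate this last case explicitly, since as you yourself note it is exactly where a naive vanishing argument could fail.

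For part (2) your route diverges from the paper's. You propose to d\'evisser $\St_3^{\an}(\lambda)$ through $S_{1,0}$ and $Q_1$ and check $\Ext^1_{\GL_3(\Q_p)}(L(\lambda),X)=0$ constituent by constituent. This works, but the paper takes a shortcut: the vanishing $\Ext^1_{\GL_3(\Q_p)}(L(\lambda),\St_3^{\an}(\lambda))=0$ is a direct consequence of \cite[Prop.~5.6]{Sch11} (together with Lemma~\ref{lem: hL-cc} to pass between $\Ext^1$ and $\Ext^1_Z$), which handles the analytic Steinberg in one shot. Your approach has the virtue of being self-contained within the toolbox already used for (1), but the paper's citation avoids re-proving a result already available in the literature.
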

\begin{proof}
In each case, the isomorphism follows from the first equality by an obvious d\'evissage.\\
\noindent
(1) It is enough to prove $\Ext^1_{{\GL_3}(\Q_p)}(v_{\overline{P}_i}^{\infty}(\lambda), C)=0$ for all the irreducible constituents $C$ of $\St_{3}^{\an}(\lambda)/S_{j,0}$. By the theory \cite{OS}, we know that $C$ is of the form $\cF_{\overline{P}_w}^{\GL_3}(\overline{L}(-w\cdot \lambda), \pi^{\infty})$ were $w$ is a nontrivial element of the Weyl group distinct from $s_i$ (since we mod out by $S_{j,0}$), $P_w\subset \GL_3$ is the maximal parabolic subgroup containing $B$ such that $w\cdot \lambda$ is dominant for $L_{P_w}$ (with respect to $B\cap L_{P_w}$) and $\pi^\infty$ is a smooth irreducible representation of $L_{P_w}(\Q_p)$ over $E$. In particular $C$ is a subrepresentation of $(\Ind_{\overline{P}_w(\Q_p)}^{\GL_3(\Q_p)} {L}(w\cdot \lambda)_{L_{P_w}}\otimes_E\pi^\infty)^{\an}$ where ${L}(w\cdot \lambda)_{L_{P_w}}$ is the irreducible algebraic representation of $L_{P_w}$ of highest weight $w\cdot \lambda$ with respect to $B\cap L_{P_w}$. If $w\ne s_j$, i.e. $w$ has length $>1$, then it easily follows from \cite[(4.37)]{Sch11} together with \cite[Thm.~4.10]{Sch11} that we have:
$$\Ext^1_{{\GL_3}(\Q_p)}\big(v_{\overline{P}_i}^{\infty}(\lambda), (\Ind_{\overline{P}_w(\Q_p)}^{\GL_3(\Q_p)} {L}(w\cdot \lambda)_{L_{P_w}}\otimes_E\pi^\infty)^{\an}\big)=0$$
(note that the separateness assumption in {\it loc.cit.} holds since $\Sigma:=v_{\overline{P}_i}^{\infty}(\lambda)$ is locally algebraic (with the notation of {\it loc.cit.})). This implies $\Ext^1_{{\GL_3}(\Q_p)}(v_{\overline{P}_i}^{\infty}(\lambda), C)=0$. If $w=s_j$, then we have $\pi^\infty=\St_{2}^{\infty}\otimes 1$ if $i=1$ or $\pi^\infty=1\otimes \St_{2}^{\infty}$ if $i=2$, and $\Ext^1_{{\GL_3}(\Q_p)}(v_{\overline{P}_i}^{\infty}(\lambda), C)=0$ (via Lemma \ref{lem: hL-cc}) is then one of the cases of \cite[(4.45)]{Sch11} (or its symmetric).\\
\noindent
(2) This follows directly from \cite[Prop.~5.6]{Sch11} and Lemma \ref{lem: hL-cc}.
\end{proof}

\noindent
Let $\Psi=(\psi_1, \psi_2, \psi_3)\in \Hom(T(\Q_p), E)$ (with obvious notation) and consider the exact sequence:
\begin{equation}\label{equ: hL-sim1}
 0 \lra I_{\overline{B}}^{\GL_3}(\lambda) \lra \big(\Ind_{\overline{B}(\Q_p)}^{\GL_3(\Q_p)} \delta_{\lambda}(1+\Psi \epsilon)\big)^{\an} \xlongrightarrow{\pr}I_{\overline{B}}^{\GL_3}(\lambda)\lra 0.
\end{equation}
For $i=1, 2$, we see that $\pr^{-1}(i_{\overline{P_i}}^{\GL_3}(\lambda))/\sum_{j=1,2} I_{\overline{P_j}}^{\GL_3}(\lambda)$ is by construction an extension of $i_{\overline{P}_i}^{\GL_3}(\lambda)$ by $\St_{3}^{\an}(\lambda)$. By Lemma \ref{lem: hL-sim}, it comes from a unique extension $\Pi^i(\lambda, \Psi)_0$ of $v_{\overline{P}_i}^{\infty}(\lambda)$ by $S_{i,0}$. If $\Psi$ is smooth (i.e. all $\psi_j$ are smooth, $j\in \{1,2,3\}$), by considering the following exact sequence (which is then ``contained'' in (\ref{equ: hL-sim1})):
\begin{equation*}
 0 \lra i_{\overline{B}}^{\GL_3}(\lambda) \lra \big(\Ind_{\overline{B}(\Q_p)}^{\GL_3(\Q_p)}(1+\Psi \epsilon)\big)^{\infty} \otimes_E L(\lambda) \xlongrightarrow{\pr'}i_{\overline{B}}^{\GL_3}(\lambda)\lra 0,
\end{equation*}
we see that $\Pi^i(\lambda,\Psi)_0$ then comes via the embedding $\St_3^{\infty}(\lambda)\hookrightarrow S_{i,0}$ from a (unique) locally algebraic extension of $v_{\overline{P}_i}^{\infty}(\lambda)$ by $\St_3^{\infty}(\lambda)$.

\begin{proposition}\label{prop: hL-sim}
For $i\in \{1,2\}$ the extension $\Pi^i(\lambda,\Psi)^-\in \Ext^1_{{\GL_3}(\Q_p)}\big(v_{\overline{P}_i}^{\infty}(\lambda), S_{i,0}\big)$ is split if and only if $\psi_i=\psi_{i+1}$, i.e. $\Psi\in \Hom(Z_{L_i}(\Q_p), E)$ where $Z_{L_i}(\Q_p)$ is the center of $L_i(\Q_p)$. Moreover, we have a commutative diagram:
\begin{equation}\label{equ: hL-sim0}
\begin{CD}
\Hom_{\infty}(\Q_p^{\times}, E) @> \sim >>\Ext^1_{{\GL_3}(\Q_p)}(v_{\overline{P}_i}^{\infty}(\lambda), \St_3^{\infty}(\lambda))\\
@VVV @VVV \\
\Hom(\Q_p^{\times}, E) @> \sim >>\Ext^1_{{\GL_3}(\Q_p)}(v_{\overline{P}_i}^{\infty}(\lambda), S_{j,0})
\end{CD}
\end{equation}
where the vertical maps are the natural injections, the bottom horizontal map is given by the composition of $\Hom(\Q_p^{\times}, E)\buildrel\sim\over\rightarrow \Hom(T(\Q_p),E)/\Hom(Z_{L_i}(\Q_p),E)$ with $\Psi\mapsto \Pi^i(\lambda, \Psi)_0$, and the top horizontal map is induced by the bottom map.
\end{proposition}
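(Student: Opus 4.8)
We may assume $i=1$, the case $i=2$ being symmetric (interchange $\overline{P}_1\leftrightarrow\overline{P}_2$, $s_1\leftrightarrow s_2$ and replace the $\GL_2(\Q_p)$-weight $(k_1,k_2)$ by $(k_2,k_3)$). The plan is to deduce the statement from the $\GL_2(\Q_p)$-results of \S\ref{sec: hL-ext1} by parabolic induction along $\overline{P}_1$, the rest being dimension bookkeeping based on \cite{Sch11} and \cite{Br16}. First I would record the input from \cite[\S~4]{Sch11} and \cite{Br16}: $\dim_E\Ext^1_{\GL_3(\Q_p)}(v_{\overline{P}_1}^\infty(\lambda),\St_3^\infty(\lambda))=1$, $\dim_E\Ext^1_{\GL_3(\Q_p)}(v_{\overline{P}_1}^\infty(\lambda),S_{1,0})=2$, and $\Hom_{\GL_3(\Q_p)}(v_{\overline{P}_1}^\infty(\lambda),C_{2,1})=0$. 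The last vanishing together with the long exact sequence attached to (\ref{S10}) makes the right vertical arrow of (\ref{equ: hL-sim0}) injective. As $\dim_E\Hom(\Q_p^\times,E)=2$ and $\dim_E\Hom_\infty(\Q_p^\times,E)=1$, it then suffices to check: (i) $\Psi\mapsto\Pi^1(\lambda,\Psi)_0$ vanishes on $\Hom(Z_{L_1}(\Q_p),E)=\{\Psi\in\Hom(T(\Q_p),E):\psi_1=\psi_2\}$; (ii) the induced map $\Hom(\Q_p^\times,E)\to\Ext^1_{\GL_3(\Q_p)}(v_{\overline{P}_1}^\infty(\lambda),S_{1,0})$ is surjective; (iii) it sends $\Hom_\infty(\Q_p^\times,E)$ into (hence onto, by dimensions) the image of $\Ext^1_{\GL_3(\Q_p)}(v_{\overline{P}_1}^\infty(\lambda),\St_3^\infty(\lambda))$. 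Granting (i)--(iii), the bottom arrow of (\ref{equ: hL-sim0}) is a well-defined isomorphism by a dimension count, the top arrow is the map induced by (iii), commutativity is built into the construction, and the first assertion of the proposition (split precisely when $\psi_1=\psi_2$) is the statement that this isomorphism has trivial kernel.

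The heart of the matter is the reduction to $\GL_2(\Q_p)$. By transitivity of locally analytic parabolic induction and $\overline{B}=(\overline{B}\cap L_1)\overline{N}_1$, I would write $\big(\Ind_{\overline{B}(\Q_p)}^{\GL_3(\Q_p)}\delta_\lambda(1+\Psi\epsilon)\big)^{\an}\cong\big(\Ind_{\overline{P}_1(\Q_p)}^{\GL_3(\Q_p)}\tau_\Psi\big)^{\an}$ with $\tau_\Psi:=\big(\Ind_{\overline{B}(\Q_p)\cap L_1(\Q_p)}^{L_1(\Q_p)}\delta_\lambda(1+\Psi\epsilon)\big)^{\an}$, so that the map $\pr$ of (\ref{equ: hL-sim1}) becomes $\Ind_{\overline{P}_1(\Q_p)}^{\GL_3(\Q_p)}$ of the reduction $\tau_\Psi\twoheadrightarrow\big(\Ind_{\overline{B}(\Q_p)\cap L_1(\Q_p)}^{L_1(\Q_p)}\delta_\lambda\big)^{\an}$. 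Via $L_1=\GL_2\times\GL_1$, on the $\GL_2$-factor $\tau_\Psi$ is the representation $\big(\Ind_{\overline{B}_2(\Q_p)}^{\GL_2(\Q_p)}\delta_{\lambda_{1,2}}\otimes\sigma(\psi_1,\psi_2)\big)^{\an}$ of \S\ref{sec: hL-ext1} (before (\ref{equ: hL-upsi})), and on the $\GL_1$-factor it is $x^{k_3}(1+\psi_3\epsilon)$. Since $i_{\overline{P}_1}^{\GL_3}(\lambda)\subset I_{\overline{P}_1}^{\GL_3}(\lambda)=\big(\Ind_{\overline{P}_1(\Q_p)}^{\GL_3(\Q_p)}L(\lambda_{1,2})\otimes x^{k_3}\big)^{\an}$ is supported on the $L(\lambda_{1,2})$-constituent of $I(\lambda_{1,2})$, the preimage $\pr^{-1}(i_{\overline{P}_1}^{\GL_3}(\lambda))$ is built from $\pr_2^{-1}(L(\lambda_{1,2}))$, $\pr_2$ being the $\GL_2$-map of \S\ref{sec: hL-ext1}. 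Tracking constituents through (\ref{equ: hL-gSt1}), (\ref{equ: hL-St1}), (\ref{exactinfty}), (\ref{S10}) then identifies $\Pi^1(\lambda,\Psi)_0$, viewed via Lemma \ref{lem: hL-sim}(1),(2) inside $\Ext^1_{\GL_3(\Q_p)}(i_{\overline{P}_1}^{\GL_3}(\lambda),\St_3^{\an}(\lambda))$, with the image of the $\GL_2(\Q_p)$-class $\pi(\lambda_{1,2},\psi_1-\psi_2)^-\in\Ext^1_{\GL_2(\Q_p)}(L(\lambda_{1,2}),I(\lambda_{1,2})/L(\lambda_{1,2}))$ of (\ref{equ: hL-upsi}); in particular the $\psi_3$-direction contributes nothing (the $\GL_1$-block is absorbed into a deformation of the algebraic subrepresentation $L(\lambda)$, which is annihilated in passing to $v_{\overline{P}_1}^\infty(\lambda)$ and $S_{1,0}$), and neither does the $\GL_2$-determinant direction $\psi_1+\psi_2$, by (\ref{equ: hL-L2}).

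Granting this identification, (i)--(iii) follow from \S\ref{sec: hL-ext1}. By (\ref{equ: hL-upsi}) the class $\pi(\lambda_{1,2},\psi_1-\psi_2)^-$ depends only on $\psi_1-\psi_2$, and $\psi\mapsto\pi(\lambda_{1,2},\psi)^-$ is by Theorem \ref{thm: hL-lGL2} an isomorphism $\Hom(\Q_p^\times,E)\xlongrightarrow{\sim}\Ext^1_{\GL_2(\Q_p)}(L(\lambda_{1,2}),I(\lambda_{1,2})/L(\lambda_{1,2}))$; this gives (i), and it gives (ii) once one knows the $\Ext^1$-map induced by $\Ind_{\overline{P}_1(\Q_p)}^{\GL_3(\Q_p)}$ together with the above subquotient operations is injective on this $2$-dimensional space — which one can see by applying a right adjoint of parabolic induction (e.g. $J_{P_1}$, resp. $\Ord_{\overline{P}_1}$) to recover the $\GL_2$-data. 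Once this is known, the kernel of $\Psi\mapsto\Pi^1(\lambda,\Psi)_0$ is exactly $\Hom(Z_{L_1}(\Q_p),E)$ by the dimension count of the first paragraph. Finally, for $\Psi$ smooth, $\pi(\lambda_{1,2},\psi_1-\psi_2)^-$ is the image of the locally algebraic extension $\St_2^\infty(\lambda_{1,2})-L(\lambda_{1,2})$, so $\Pi^1(\lambda,\Psi)_0$ is then obtained from the smooth exact sequence displayed just after (\ref{equ: hL-sim1}) and comes from $\Ext^1_{\GL_3(\Q_p)}(v_{\overline{P}_1}^\infty(\lambda),\St_3^\infty(\lambda))$, giving (iii).

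The main obstacle is the constituent bookkeeping of the second-to-last paragraph: pinning down precisely which subquotient of $\big(\Ind_{\overline{P}_1(\Q_p)}^{\GL_3(\Q_p)}\tau_\Psi\big)^{\an}$ realises $\Pi^1(\lambda,\Psi)_0$, verifying that the $\psi_3$- and $(\psi_1+\psi_2)$-directions genuinely drop out, and checking that parabolic induction (followed by the subquotient operations) does not lose extension information. For all of this the exact sequences of \S\ref{sec: hL-GL30}, together with \cite[\S~4]{Sch11} and \cite{Br16}, are the essential tools; the results for $\GL_2(\Q_p)$ of \S\ref{sec: hL-ext1} then do the rest essentially formally.
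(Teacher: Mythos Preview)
The paper's proof is simply a citation to \cite[Thm.~2.17~\&~Rem.~2.18(ii)]{Ding17}, so your sketch is already more detailed than what the paper records. Your approach --- reduce to the $\GL_2(\Q_p)$-results of \S\ref{sec: hL-ext1} via transitivity of parabolic induction along $\overline{P}_1$, then count dimensions --- is precisely the strategy of the cited reference, and the paper itself reuses the same transitivity manoeuvre in Lemma~\ref{lem: hL-sim2}.

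Two corrections. First, there is a systematic indexing slip, inherited from a typo in the first sentence of the statement (it should read $S_{j,0}$ with $j\neq i$, consistently with the diagram and with Lemma~\ref{lem: hL-sim}). For $i=1$ the extension $\Pi^1(\lambda,\Psi)_0$ lives in $\Ext^1_{\GL_3(\Q_p)}(v_{\overline{P}_1}^\infty(\lambda), S_{2,0})$, not $S_{1,0}$, and the relevant cosocle constituent is $C_{1,1}$, not $C_{2,1}$; see the analogue of (\ref{S10}) for $S_{2,0}$ and the top row of the diagram in Lemma~\ref{lem: hL-sim2}. Your dimension claims and the vanishing of $\Hom$ survive with the indices swapped. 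Second, your injectivity step (ii) --- ``apply a right adjoint of parabolic induction to recover the $\GL_2$-data'' --- is the right instinct but is where the real work hides: one has to check that applying $J_B$ to $\Pi^1(\lambda,\Psi)_0$ (or to its pullback in the full induced representation) genuinely detects the deformation $\delta_\lambda(1+\Psi\epsilon)$, which is a constituent chase through the subquotients of $(\Ind_{\overline B(\Q_p)}^{\GL_3(\Q_p)}\delta_\lambda(1+\Psi\epsilon))^{\an}$. That is exactly the computation carried out in \cite{Ding17}.
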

\begin{proof}
See \cite[Thm.~2.17\ \&\ Rem.~2.18(ii)]{Ding17}.
\end{proof}

\noindent
We now let $\delta_1:=x^{k_1}$, $\delta_2:=|\cdot|^{-1}x^{k_2-1}$, $\delta_3:=|\cdot|^{-2}x^{k_3-2}$ and identify $\Ext^1_{(\varphi,\Gamma)}(\cR_E(\delta_i), \cR_E(\delta_i))$ with $\Hom(\Q_p^{\times}, E)$ by (\ref{equ: hL-cad}).

\begin{corollary}\label{coro: hL-simL}
For $i,j\in\{1,2\}$, $i\neq j$, we have a natural perfect pairing:
\begin{equation*}
\Ext^1_{{\GL_3}(\Q_p)}(v_{\overline{P}_i}^{\infty}(\lambda), S_{j,0}) \times \Ext^1_{(\varphi,\Gamma)}(\cR_E(\delta_{i+1}), \cR_E(\delta_i)) \xlongrightarrow{\cup_1} E,
\end{equation*}
and the same holds with $S_{j,0}$ replaced by $\St^{\an}_3(\lambda)$ (for $i\in \{1,2\}$). Moreover, the one dimensional subspace $\Ext^1_e(\cR_E(\delta_{i+1}), \cR_E(\delta_i))$ of crystalline extensions is exactly annihilated by the subspace $\Ext^1_{{\GL_3}(\Q_p)}(v_{\overline{P}_i}^{\infty}(\lambda), \St_3^{\infty}(\lambda))$.
\end{corollary}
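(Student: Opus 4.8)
The plan is to deduce the perfect pairing from the $\GL_2(\Q_p)$-machinery of \S\ref{sec: hL-ext1} together with the description of simple $\cL$-invariants in Proposition \ref{prop: hL-sim}. First I would record the relevant dimensions: by Lemma \ref{lem: hL-sim} and Proposition \ref{prop: hL-sim}, the bottom horizontal map in (\ref{equ: hL-sim0}) gives an isomorphism $\Hom(\Q_p^\times,E)\xrightarrow{\sim}\Ext^1_{\GL_3(\Q_p)}(v_{\overline P_i}^\infty(\lambda),S_{j,0})$, so the left-hand space is $2$-dimensional; and by (\ref{equ: hL-cad}) the space $\Ext^1_{(\varphi,\Gamma)}(\cR_E(\delta_{i+1}),\cR_E(\delta_i))$ is $1$-dimensional (note $\wt(\delta_{i+1}\delta_i^{-1})<0$, so the ``wrong'' direction; in fact $\dim_E\Ext^1_{(\varphi,\Gamma)}(\cR_E(\delta_{i+1}),\cR_E(\delta_i))=1$ as in the proof of Lemma \ref{lem: hL-ext2}). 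Since the two sides have different dimensions, ``perfect pairing'' must be interpreted in the sense that the induced map from the $1$-dimensional space to the dual of the $2$-dimensional one is injective, equivalently that the pairing is nondegenerate in the second variable; I would make this precise at the outset and then it suffices to exhibit a nonzero $E$-bilinear map with the stated annihilator property, the nondegeneracy following since $\Ext^1_e(\cR_E(\delta_{i+1}),\cR_E(\delta_i))$ is a line inside a $2$-dimensional space.

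The construction of $\cup_1$ is the heart of the matter. I would proceed as follows, treating $i=1$ (the case $i=2$ being symmetric). On the Galois side, set $D_1^2:=\cR_E(\delta_1)-\cR_E(\delta_2)$ and recall from \S\ref{sec: hL-pLL} the $(\varphi,\Gamma)$-module $D(\lambda_{1,2},\psi)$ and its associated $\GL_2(\Q_p)$-representation $\pi(\lambda_{1,2}^\flat,\psi)$ via $\pLL$; for the present corollary the relevant object is the pair $\big(\cR_E(\delta_2)-\cR_E(\delta_3)\big)$, i.e.\ $\GL_2$ applied to the ``bottom'' step, but the cleanest route is: there is a perfect Tate-duality pairing
\[
\Ext^1_{(\varphi,\Gamma)}(\cR_E(\delta_{i+1}),\cR_E(\delta_i))\times \Ext^1_{(\varphi,\Gamma)}(\cR_E(\delta_i),\cR_E(\delta_{i+1}))\lra E
\]
coming from $H^2_{(\varphi,\Gamma)}(\cR_E(\varepsilon))\cong E$ together with the isomorphism $\cR_E(\delta_{i+1})\hookrightarrow \cR_E(\delta_i\varepsilon^{-1})$ as in Remark \ref{rem: hL-sim} and (\ref{equ: hL-cupS}). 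Dually, on the automorphic side, locally analytic parabolic induction $(\Ind_{\overline P_i}^{\GL_3}-)^{\an}$ from the Levi $L_i(\Q_p)\cong \GL_2(\Q_p)\times\GL_1(\Q_p)$, combined with the first isomorphism of Theorem \ref{thm: hL-lGL2} ($\Hom(\Q_p^\times,E)\xrightarrow{\sim}\Ext^1_{\GL_2(\Q_p)}(L(\lambda_{1,2}),I(\lambda_{1,2})/L(\lambda_{1,2}))$), identifies $\Ext^1_{\GL_3(\Q_p)}(v_{\overline P_i}^\infty(\lambda),S_{j,0})$ with $\Hom(\Q_p^\times,E)$ and hence (via (\ref{equ: hL-cad})) with $\Ext^1_{(\varphi,\Gamma)}(\cR_E(\delta_i),\cR_E(\delta_{i+1}))$-data — wait, more precisely with $\Hom(\Q_p^\times,E)\cong\Ext^1_{(\varphi,\Gamma)}(\cR_E(\delta_{i}),\cR_E(\delta_{i}))$. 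So the definition of $\cup_1$ I propose is: given $v\in\Ext^1_{\GL_3(\Q_p)}(v_{\overline P_i}^\infty(\lambda),S_{j,0})$ producing $\psi_v\in\Hom(\Q_p^\times,E)$ via (\ref{equ: hL-sim0}), and given $e\in\Ext^1_{(\varphi,\Gamma)}(\cR_E(\delta_{i+1}),\cR_E(\delta_i))$, set $v\cup_1 e := \langle \psi_v, e\rangle$ using the pairing $\Hom(\Q_p^\times,E)\times\Ext^1_{(\varphi,\Gamma)}(\cR_E(\delta_{i+1}),\cR_E(\delta_i))\to E$ obtained by composing (\ref{equ: hL-cad}) (identifying $\Hom(\Q_p^\times,E)$ with $\Ext^1_{(\varphi,\Gamma)}(\cR_E(\delta_{i+1}),\cR_E(\delta_{i+1}))$) with the perfect pairing (\ref{equ: hL-cupS}). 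One checks this is well defined and $E$-bilinear; since (\ref{equ: hL-cupS}) is perfect and $\psi_v$ ranges over all of $\Hom(\Q_p^\times,E)$ as $v$ varies, the pairing $\cup_1$ is nondegenerate in the $e$-variable (injective from the $1$-dimensional side), which is the assertion. The last statement then reduces to: under (\ref{equ: hL-cupS}), the subspace $\Hom_\infty(\Q_p^\times,E)\subset\Hom(\Q_p^\times,E)$ is exactly the annihilator of $\Ext^1_e(\cR_E(\delta_{i+1}),\cR_E(\delta_i))$ — which is precisely the content of the displayed equality $\Ext^1_g(\cR_E(\delta_{i+1}),\cR_E(\delta_{i+1}))\cong\Ext^1_e(\cR_E(\delta_{i+1}),\cR_E(\delta_i))^\perp$ recalled in Remark \ref{rem: hL-sim} (using \cite[Prop.~1.9]{Ding4} and (\ref{ginfty})) — and the fact, from the top square of (\ref{equ: hL-sim0}) and Proposition \ref{prop: hL-sim}, that $\Hom_\infty(\Q_p^\times,E)$ corresponds under (\ref{equ: hL-sim0}) to $\Ext^1_{\GL_3(\Q_p)}(v_{\overline P_i}^\infty(\lambda),\St_3^\infty(\lambda))\subseteq \Ext^1_{\GL_3(\Q_p)}(v_{\overline P_i}^\infty(\lambda),S_{j,0})$.

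Finally, for the variant with $S_{j,0}$ replaced by $\St_3^{\an}(\lambda)$: by Lemma \ref{lem: hL-sim}(1) the inclusion $S_{j,0}\hookrightarrow\St_3^{\an}(\lambda)$ induces an isomorphism $\Ext^1_{\GL_3(\Q_p)}(v_{\overline P_i}^\infty(\lambda),S_{j,0})\xrightarrow{\sim}\Ext^1_{\GL_3(\Q_p)}(v_{\overline P_i}^\infty(\lambda),\St_3^{\an}(\lambda))$ (valid for both $i=1,2$), so transporting $\cup_1$ along this isomorphism gives the second pairing with all stated properties for free. The step I expect to be the main obstacle is pinning down the compatibility between the ``geometric'' identification of $\Ext^1_{\GL_3(\Q_p)}(v_{\overline P_i}^\infty(\lambda),S_{j,0})$ with $\Hom(\Q_p^\times,E)$ coming from Proposition \ref{prop: hL-sim} (i.e.\ from parabolic induction and the explicit extension $\Pi^i(\lambda,\Psi)_0$) and the duality (\ref{equ: hL-cupS}) on the Galois side — in other words, verifying that the line $\Ext^1_{\GL_3(\Q_p)}(v_{\overline P_i}^\infty(\lambda),\St_3^\infty(\lambda))$ really is sent to $\Hom_\infty(\Q_p^\times,E)$ and not to some other hyperplane; this is exactly where the commutativity of the diagram (\ref{equ: hL-sim0}) and the smooth-induction exact sequence used in its proof (\cite[Thm.~2.17 \& Rem.~2.18(ii)]{Ding17}) must be invoked carefully, matching conventions on the cyclotomic twist in the definition of $\delta_2,\delta_3$.
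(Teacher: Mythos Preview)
Your overall strategy matches the paper's: identify $\Ext^1_{\GL_3(\Q_p)}(v_{\overline P_i}^\infty(\lambda),S_{j,0})$ with $\Hom(\Q_p^\times,E)$ via Proposition~\ref{prop: hL-sim}, feed this into the Galois-side cup product (\ref{equ: hL-cupS}), transport to $\St_3^{\an}(\lambda)$ via Lemma~\ref{lem: hL-sim}(1), and deduce the last sentence from Remark~\ref{rem: hL-sim} together with the top row of (\ref{equ: hL-sim0}). The last two paragraphs of your plan are essentially the paper's proof.

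However, your first paragraph contains a genuine computational error that you should fix before anything else: the space $\Ext^1_{(\varphi,\Gamma)}(\cR_E(\delta_{i+1}),\cR_E(\delta_i))$ is \emph{two}-dimensional, not one-dimensional. This is exactly Lemma~\ref{equ: hL-dim} with $n=2$ and $L=\Q_p$ (giving $(n-1)[L:\Q_p]+1=2$); equivalently, it is stated explicitly just after (\ref{equ: hL-lex2}). The $1$-dimensional space in the proof of Lemma~\ref{lem: hL-ext2} that you cite is $\Ext^1_{(\varphi,\Gamma)}(\cR_E(\delta_1),\cR_E(\delta_2))$, i.e.\ the \emph{opposite} direction, where the Hodge--Tate weight is negative. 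Here $\delta_i\delta_{i+1}^{-1}=|\cdot|x^{k_i-k_{i+1}+1}$ has positive weight. Consequently $\cup_1$ is a genuine perfect pairing of two $2$-dimensional $E$-vector spaces, and your entire discussion of ``different dimensions'' and ``nondegeneracy only in the second variable'' is unnecessary and should be removed.

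Once this is corrected, your construction of $\cup_1$ is right, with one small adjustment: the identification via (\ref{equ: hL-cad}) should be $\Hom(\Q_p^\times,E)\cong\Ext^1_{(\varphi,\Gamma)}(\cR_E(\delta_i),\cR_E(\delta_i))$ (not $\delta_{i+1}$), since this is the second factor of (\ref{equ: hL-cupS}). This is nothing but Proposition~\ref{prop-l3-cup}(2) applied with $D_1^{n-1}=\cR_E(\delta_i)$ and $\delta_n=\delta_{i+1}$, which is precisely what the paper invokes. The ``main obstacle'' you flag at the end is not an obstacle at all: the top row of (\ref{equ: hL-sim0}) is literally the statement that $\Hom_\infty(\Q_p^\times,E)$ corresponds to $\Ext^1_{\GL_3(\Q_p)}(v_{\overline P_i}^\infty(\lambda),\St_3^\infty(\lambda))$, and Remark~\ref{rem: hL-sim} gives $\Hom_\infty(\Q_p^\times,E)=(\Ext^1_e)^\perp$ under (\ref{equ: hL-cupS}), so no further compatibility check is needed.
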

\begin{proof}
By Proposition \ref{prop: hL-sim} (together with the above identification) and Proposition \ref{prop-l3-cup}(2) (applied to $\cR_E(\delta_n)=\cR_E(\delta_{i+1})$, $D_1^{n-1}=\cR_E(\delta_i)$ for $i=1,2$), we obtain the perfect pairing of the statement. By Lemma \ref{lem: hL-sim}(1), we have a similar perfect pairing with $S_{j,0}$ replaced by $\St^{\an}_3(\lambda)$. The last part follows then from (\ref{equ: hL-sim0}) and the discussion in Remark \ref{rem: hL-sim}.
\end{proof}

\subsubsection{Parabolic inductions}\label{sec: hL-pI}

\noindent
We study the locally analytic representation $(\Ind_{\overline{P}_1(\Q_p)}^{\GL_3(\Q_p)}\pi(\lambda_{1,2}, \psi)\otimes x^{k_3})^{\an}$ (cf. \S~\ref{sec: hL-ext1}) and some of its subquotients.\\

\noindent
We keep the previous notation and fix $0\neq \psi\in \Hom(\Q_p^{\times}, E)$. For a locally analytic representation $V$ of $\GL_2(\Q_p)$ over $E$ we use the notation:
\begin{equation*}
 I_{\overline{P}_1}^{\GL_3}(V,k_3):=\big(\Ind_{\overline{P}_1(\Q_p)}^{\GL_3(\Q_p)} V\otimes x^{k_3}\big)^{\an}.
\end{equation*}
We have studied the subrepresentation $I_{\overline{P}_1}^{\GL_3}(\St_2^{\infty}(\lambda_{1,2}), k_3)$ in \S~\ref{sec: hL-GL30}. Exactness of parabolic induction gives the isomorphism (recalling that $s$ is the unique nontrivial element in the Weyl group of $\GL_2$):
\begin{equation*}
I_{\overline{P}_1}^{\GL_3}(I(s\cdot \lambda_{1,2}),k_3)\cong I_{\overline{P}_1}^{\GL_3}(\St_2^{\an}(\lambda_{1,2}),k_3)/I_{\overline{P}_1}^{\GL_3}(\St_2^{\infty}(\lambda_{1,2}), k_3).
\end{equation*}
From (\ref{inter}) and (\ref{equ: hL-gSt}) (for $i=2$) we deduce an injection $\cF_{\overline{P}_2}^{\GL_3}(\overline{L}(-s_1\cdot \lambda), 1)\hookrightarrow I_{\overline{P}_1}^{\GL_3}(I(s\cdot\lambda_{1,2}),k_3)$ and together with (\ref{equ: hL-gSt1}) an isomorphism:
\begin{equation*}
 S_{1,1}:= I_{\overline{P}_1}^{\GL_3}(I(s\cdot \lambda_{1,2}),k_3)/\cF_{\overline{P}_2}^{\GL_3}(\overline{L}(-s_1\cdot \lambda), 1) \xlongrightarrow{\sim} \St_3^{\an}(\lambda)/S_{1,0}.
\end{equation*}
Since $C_{1,1}=\cF_{\overline{P}_2}^{\GL_3}(\overline{L}(-s_1\cdot \lambda), \St_2^{\infty}\otimes 1)\hookrightarrow \St_3^{\an}(\lambda)/\St_3^{\infty}(\lambda)$ and $C_{1,1}$ is not an irreducible constituent of $S_{1,0}$ by (\ref{S10}), we have a commutative diagram:
\begin{equation*}\begin{CD}
S_{2,0} @>>> \St_3^{\an}(\lambda) \\
@VVV @VVV \\
 C_{1,1} @>>>  S_{1,1}
 \end{CD}
\end{equation*}
where the vertical maps are the natural surjections and the horizontal maps are injections. From the theory of \cite{OS}, one moreover easily deduces that the irreducible constituents of $S_{1,1}/C_{1,1}$ are:
\begin{multline}\label{equ: hL-ic}
\Big\{ \cF_{\overline{P}_2}^{\GL_3}(\overline{L}(-s_1s_2\cdot \lambda), 1), \ \cF_{\overline{P}_2}^{\GL_3}(\overline{L}(-s_1s_2\cdot \lambda), 1\otimes \St_2^{\infty}), \ \cF_{\overline{P}_1}^{\GL_3}(\overline{L}(-s_2s_1\cdot \lambda),1), \\ \cF_{\overline{P}_1}^{\GL_3}(\overline{L}(-s_2s_1\cdot \lambda), \St_2^{\infty}\otimes 1), \ \cF_{\overline{B}}^{\GL_3}(\overline{L}(-s_1s_2s_1 \cdot \lambda), 1)\Big\},
\end{multline}
all of them occurring with multiplicity one. Since $\pi(\lambda_{1,2}, \psi)^-\cong \St_2^{\an}(\lambda_{1,2})-L(\lambda_{1,2})$ (see (\ref{equ: hL-upsi})), we have an exact sequence:
\begin{equation}\label{Ipi-}
 0 \lra I_{\overline{P}_1}^{\GL_3}(\St_2^{\an}(\lambda_{1,2}), k_3) \lra I_{\overline{P}_1}^{\GL_3}(\pi(\lambda_{1,2}, \psi)^-,k_3) \xlongrightarrow{\pr} I_{\overline{P}_1}^{\GL_3}(L(\lambda_{1,2}), k_3) \lra 0
\end{equation}
where $I_{\overline{P}_1}^{\GL_3}(L(\lambda_{1,2}), k_3)\cong I_{\overline{P}_1}^{\GL_3}(\lambda)$. Denote by:
\begin{equation*}
S_{1,2}:=v_{\overline{P}_1}^{\an}(\lambda), \ \ C_{1,2}:=v_{\overline{P}_1}^{\infty}(\lambda)\cong \soc_{{\GL_3}(\Q_p)} S_{1,2}.
 \end{equation*}
Since $I_{\overline{P}_1}^{\GL_3}(\St_2^{\an}(\lambda_{1,2}), k_3)/v_{\overline{P}_2}^{\an}(\lambda)\cong \St_3^{\an}(\lambda)$ (see (\ref{equ: hL-gSt1})) and $L(\lambda)\hookrightarrow I_{\overline{P}_1}^{\GL_3}(\lambda)$, it follows from (\ref{Ipi-}) together with Lemma \ref{lem: hL-sim}(2) that we have an injection:
\begin{equation*}
L(\lambda)\hooklongrightarrow I_{\overline{P}_1}^{\GL_3}(\pi(\lambda_{1,2}, \psi)^-,k_3)/v_{\overline{P}_2}^{\an}(\lambda),
\end{equation*}
and \ we \ let \ $\widetilde\Pi^1(\lambda, \psi)^-$ \ be \ the \ cokernel, \ which \ is \ thus \ isomorphic \ to \ an \ extension \ of $I_{\overline{P}_1}^{\GL_3}(\lambda)/L(\lambda)\cong v_{\overline{P}_1}^{\an}(\lambda)$ by $\St_3^{\an}(\lambda)$. Finally we denote by $T_2$, resp. $\overline{B}_2$, the diagonal torus, resp. the lower triangular matrices, of $\GL_2$.

\begin{lemma}\label{lem: hL-sim2}
We have a commutative diagram:
\begin{equation*}
  \begin{CD}
   0 @>>> S_{2,0} @>>> \Pi^1(\lambda, \psi)_0 @>>> v_{\overline{P}_1}^{\infty}(\lambda) @>>> 0 \\
   @. @VVV @VVV @VVV @. \\
   0 @>>> \St_3^{\an}(\lambda) @>>> \widetilde\Pi^1(\lambda, \psi)^- @>>> v_{\overline{P}_1}^{\an}(\lambda) @>>> 0
  \end{CD}
\end{equation*}
where $\Pi^1(\lambda, \psi)_0$ denotes the image of $\psi$ via the bottom isomorphism of (\ref{equ: hL-sim0}).
\end{lemma}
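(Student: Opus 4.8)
The plan is to construct the commutative diagram by carefully identifying where the various subquotients live inside the big parabolic induction $I_{\overline{P}_1}^{\GL_3}(\pi(\lambda_{1,2},\psi)^-,k_3)$ and then matching up the two rows. First I would recall the exact sequence (\ref{Ipi-}) and the short exact sequence (\ref{equ: hL-gSt1}) exhibiting $v_{\overline{P}_2}^{\an}(\lambda)\hookrightarrow I_{\overline{P}_1}^{\GL_3}(\St_2^{\an}(\lambda_{1,2}),k_3)$ with quotient $\St_3^{\an}(\lambda)$; quotienting (\ref{Ipi-}) by $v_{\overline{P}_2}^{\an}(\lambda)$ gives an extension of $I_{\overline{P}_1}^{\GL_3}(\lambda)$ by $\St_3^{\an}(\lambda)$, and then killing $L(\lambda)\hookrightarrow I_{\overline{P}_1}^{\GL_3}(\lambda)$ (legitimate by Lemma \ref{lem: hL-sim}(2), which forces $L(\lambda)$ to be a subobject of the whole extension after $\Ext^1_{\GL_3}(L(\lambda),\St_3^{\an}(\lambda))=0$) produces $\widetilde\Pi^1(\lambda,\psi)^-$ sitting in the bottom row, with $v_{\overline{P}_1}^{\an}(\lambda)=S_{1,2}$ as the quotient. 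So the bottom row is essentially by definition of $\widetilde\Pi^1(\lambda,\psi)^-$.

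Next I would produce the vertical maps and the top row. The key point is that $\pi(\lambda_{1,2},\psi)^-\cong \St_2^{\an}(\lambda_{1,2})-L(\lambda_{1,2})$ contains the subrepresentation coming from $i(\lambda_{1,2})\cong L(\lambda_{1,2})-\St_2^{\infty}(\lambda_{1,2})$ only after passing to a further subquotient; more precisely one uses that the smooth/locally algebraic part of $\pi(\lambda_{1,2},\psi)^-$ interacts with $I_{\overline{P}_1}^{\GL_3}(\St_2^{\infty}(\lambda_{1,2}),k_3)$ via (\ref{equ: hL-St1}), (\ref{exactinfty}) and (\ref{S10}). I would realize $\Pi^1(\lambda,\psi)_0$ — defined just before Proposition \ref{prop: hL-sim} as the unique extension of $v_{\overline{P}_1}^{\infty}(\lambda)$ by $S_{1,0}$ lifting the pushout of (\ref{equ: hL-sim1}) along $\St_3^{\an}(\lambda)\to S_{1,0}$, equivalently the image of $\psi$ under the bottom isomorphism of (\ref{equ: hL-sim0}) — as a subquotient of $I_{\overline{P}_1}^{\GL_3}(\pi(\lambda_{1,2},\psi)^-,k_3)$. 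Concretely, $\Pi^1(\lambda,\psi)_0$ should be $\pr^{-1}$ of the sub $i_{\overline{P}_1}^{\GL_3}(\lambda)\subset I_{\overline{P}_1}^{\GL_3}(\lambda)$ in (\ref{Ipi-}), modulo $v_{\overline{P}_2}^{\an}(\lambda)$ and modulo $L(\lambda)$: this has $S_{1,0}$ (not $S_{2,0}$) or $S_{2,0}$ as a sub and $v_{\overline{P}_1}^{\infty}(\lambda)=C_{1,2}$ on top. I need to check the labelling $S_{2,0}$ versus $S_{1,0}$ carefully here — this is the subtlety: $S_{1,0}$ is built from $I_{\overline{P}_1}^{\GL_3}(\St_2^{\infty}(\lambda_{1,2}),k_3)$ while $S_{2,0}$ comes from the $P_2$-side, so I would verify via (\ref{equ: hL-gSt1}) and the commutative diagram right before (\ref{equ: hL-ic}) (where $S_{2,0}\hookrightarrow\St_3^{\an}(\lambda)$) which one actually appears as the sub of $\Pi^1(\lambda,\psi)_0$; the statement asserts it is $S_{2,0}$, so I would trace through $S_{2,0}\hookrightarrow \St_3^{\an}(\lambda)\hookrightarrow \widetilde\Pi^1(\lambda,\psi)^-$ and intersect with the locally algebraic part to get $S_{2,0}\hookrightarrow \Pi^1(\lambda,\psi)_0$.

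Then I would assemble the diagram: the left vertical map is the composite $S_{2,0}\hookrightarrow \St_3^{\an}(\lambda)\hookrightarrow \widetilde\Pi^1(\lambda,\psi)^-$ already constructed, the middle vertical map is the inclusion $\Pi^1(\lambda,\psi)_0\hookrightarrow \widetilde\Pi^1(\lambda,\psi)^-$, and the right vertical map is the natural inclusion $v_{\overline{P}_1}^{\infty}(\lambda)=C_{1,2}\hookrightarrow v_{\overline{P}_1}^{\an}(\lambda)=S_{1,2}$ of Lemma \ref{lem: hL-gSt} (i.e. (\ref{equ: hL-gSt}) for $i=1$). Commutativity of the right square amounts to compatibility of the quotient maps, which is immediate from the construction, and commutativity of the left square is the statement that the sub $S_{2,0}$ of $\Pi^1(\lambda,\psi)_0$ maps to the sub $\St_3^{\an}(\lambda)$ of $\widetilde\Pi^1(\lambda,\psi)^-$ compatibly, again built into the construction. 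Finally I would identify $\Pi^1(\lambda,\psi)_0$ with the image of $\psi$ under the bottom isomorphism of (\ref{equ: hL-sim0}) by comparing the defining extension class: both are obtained from the parabolic induction (\ref{equ: hL-sim1}) of the deformed character $\delta_\lambda(1+\Psi\epsilon)$ with $\Psi=(\psi_1,\psi_2,k_3)$ and $\psi_1-\psi_2=\psi$, pushed to $v_{\overline{P}_1}^{\infty}(\lambda)$-by-$S_{i,0}$, and Proposition \ref{prop: hL-sim} tells us this depends only on $\psi\bmod\Hom(Z_{L_1}(\Q_p),E)$. The main obstacle is the bookkeeping of which $S_{i,0}$ and which $C_{i,j}$ appear where — in particular justifying that the sub of $\Pi^1(\lambda,\psi)_0$ is $S_{2,0}$ and not $S_{1,0}$ — which requires carefully tracking the $\cF_{\overline{P}_i}^{\GL_3}$-constituents through (\ref{equ: hL-gSt1}), (\ref{equ: hL-St1}), (\ref{exactinfty}), (\ref{inter}) and the constituent list (\ref{equ: hL-ic}), rather than any deep new input.
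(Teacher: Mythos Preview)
Your overall strategy is sound, and the constituent-tracking you outline would eventually work, but the organization is more circuitous than necessary and leaves the crucial identification step underargued. The paper's proof is much shorter because it makes one key move up front that you only invoke at the very end: \emph{transitivity of parabolic induction}. Concretely, since $\pi(\lambda_{1,2},\psi)^-$ is by construction a subquotient of $(\Ind_{\overline{B}_2}^{\GL_2}\delta_{\lambda_{1,2}}(1+\Psi_1\epsilon))^{\an}$ with $\Psi_1=(\psi_1,\psi_2)$, $\psi_1-\psi_2\in E\psi$, applying $I_{\overline{P}_1}^{\GL_3}(-,k_3)$ and using transitivity gives an embedding of $I_{\overline{P}_1}^{\GL_3}(\pi(\lambda_{1,2},\psi)^-,k_3)$ into $(\Ind_{\overline{B}}^{\GL_3}\delta_\lambda(1+\Psi\epsilon))^{\an}/I_{\overline{P}_1}^{\GL_3}(\lambda)$ with $\Psi=(\psi_1,\psi_2,0)$. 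Passing to the appropriate quotients yields $\widetilde\Pi^1(\lambda,\psi)^-\hookrightarrow W:=\big((\Ind_{\overline{B}}^{\GL_3}\delta_\lambda(1+\Psi\epsilon))^{\an}/\sum_j I_{\overline{P}_j}^{\GL_3}(\lambda)\big)/L(\lambda)$. But $\Pi^1(\lambda,\psi)_0$ was \emph{defined} as a subrepresentation of $W$ (this is the construction preceding Proposition~\ref{prop: hL-sim}), so it is already there; a constituent comparison then shows $\Pi^1(\lambda,\psi)_0\hookrightarrow\widetilde\Pi^1(\lambda,\psi)^-$, and the diagram falls out.

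Your route instead builds a candidate extension inside $I_{\overline{P}_1}^{\GL_3}(\pi(\lambda_{1,2},\psi)^-,k_3)$ and defers to the end the verification that this candidate agrees with the $\Pi^1(\lambda,\psi)_0$ defined via the Borel induction. That verification is exactly the transitivity step above, so you gain nothing by postponing it; worse, your sketch of the identification (``both are obtained from\ldots'') is not yet a proof, and the confusion you flag about $S_{1,0}$ versus $S_{2,0}$ is a symptom of not having the common ambient space $W$ in which to compare. (A small slip: you write $\Psi=(\psi_1,\psi_2,k_3)$; the third entry is a character, not the integer $k_3$, and should be $0$ since the $\GL_1$-factor is already twisted by $x^{k_3}$.) Once you lead with transitivity, the $S_{2,0}$-versus-$S_{1,0}$ question answers itself and the bookkeeping you worry about in the last paragraph becomes unnecessary.
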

\begin{proof}
Let $\Psi_1:=(\psi_1, \psi_2)\in \Hom(T_2(\Q_p), E)$ and $\Psi:=(\psi_1,\psi_2,0)\in \Hom(T(\Q_p),E)$ with $0\neq \psi_1-\psi_2\in E \psi$. We have (by the transitivity of parabolic inductions):
\begin{multline*}
I_{\overline{P}_1}^{\GL_3}(\pi(\lambda_{1,2}, \psi)^-,k_3) \hooklongrightarrow I_{\overline{P}_1}^{\GL_3}\big((\Ind_{\overline{B}_2(\Q_p)}^{\GL_2(\Q_p)} \delta_{\lambda_{1,2}}(1+\Psi_1\epsilon))^{\an}/L(\lambda_{1,2}), k_3\big) \\ \cong \big(\Ind_{\overline{B}(\Q_p)}^{\GL_3(\Q_p)} \delta_{\lambda}(1+\Psi\epsilon)\big)^{\an}/ I_{\overline{P}_1}^{\GL_3} (\lambda)
\end{multline*}
which induces an injection by (\ref{equ: hL-resol}) together with Lemma \ref{lem: hL-sim}(2):
\begin{equation*}
 \widetilde\Pi^1(\lambda, \psi)^- \hooklongrightarrow W:=\Big(\big(\Ind_{\overline{B}(\Q_p)}^{{\GL_3}(\Q_p)} \delta_{\lambda}(1+\Psi\epsilon)\big)^{\an}/ \sum_{i=1,2}I_{\overline{P}_i}^{\GL_3} (\lambda)\Big)/L(\lambda).
\end{equation*}
By Proposition \ref{prop: hL-sim} and the discussion above it, $W$ contains $\Pi^1(\lambda, \psi)_0$ as subrepresentation, and it is easy to see that the injection $\Pi^1(\lambda, \psi)_0\hookrightarrow W$ factors through $\widetilde\Pi^1(\lambda, \psi)^-$ (e.g. by comparing the irreducible constituents). The lemma follows.
\end{proof}
\noindent We set $\widetilde{C}_{1,2}:=\cF_{\overline{P}_1}^{\GL_3}(\overline{L}(-s_2s_1\cdot \lambda),1)$. By \cite[Prop. 4.2.1 (ii)]{Br16} and the proof of \cite[Lem. 4.4.1]{Br16}, we know that there exits a unique (up to isomorphism) non-split extension $C_{1,1}-\widetilde{C}_{1,2}$, and it is a subrepresentation of $S_{1,1}$. Using the formula in \cite[\S~5.2]{Br16} and (\cite[(4.37)]{Sch11}), it is not difficult to show:
\begin{equation*}
  \Ext^1_{\GL_3(\Q_p)}\big(\widetilde{C}_{1,2}, \cF_{\overline{P}_1}^{\GL_3}(\overline{M}(-s_2\cdot \lambda),\St_2^{\infty}\otimes1)\big)=0,
\end{equation*}
and hence (by d\'evissage) $\Ext^1_{\GL_3(\Q_p)}\big(\widetilde{C}_{1,2}, C_{2,1}\big)=0$. We deduce that $\St_3^{\an}(\lambda)$ (which is of the form $S_{1,0}-S_{1,1}$) has a unique subrepresentation of the form $\St_3^{\infty}(\lambda)-C_{1,1}-\widetilde{C}_{1,2}$, containing $S_{2,0}$. Denote by $\Pi^1(\lambda, \psi)^-$ the push-forward of $\Pi^1(\lambda, \psi)_0$ via $S_{2,0}\hookrightarrow \St_3^{\infty}(\lambda)-C_{1,1}-\widetilde{C}_{1,2}$, which, by Lemma \ref{lem: hL-sim2}, is a subrepresentation of $\widetilde{\Pi}^1(\lambda, \psi)^-$.

\begin{remark}
{\rm If $\psi$ is not smooth then $\Pi^1(\lambda, \psi)^-$ has the form:
\begin{equation*}
 \begindc{\commdiag}[32]
 \obj(0,10)[a]{$\St_3^{\infty}(\lambda)$}
 \obj(14,10)[b]{$C_{1,1}$}
 \obj(26,4)[c]{$C_{1,2}$}
 \obj(26,16)[d]{$\widetilde{C}_{1,2}$}
   \mor{a}{b}{}[+1,\solidline]
   \mor{b}{c}{}[+1,\solidline]
   \mor{b}{d}{}[+1,\solidline]
 \enddc
 \end{equation*}
 whereas if $\psi$ is smooth it has the form:
\begin{equation*}
 \begindc{\commdiag}[32]
 \obj(0,10)[a]{$\St_3^{\infty}(\lambda)$}
  \obj(12,18)[c]{$C_{1,1}$}
  \obj(24,18)[b]{$\widetilde{C}_{1,2}$}
  \obj(14,10)[d]{$C_{1,2}.$}
   \mor{a}{c}{}[+1,\solidline]
   \mor{a}{d}{}[+1,\solidline]
   \mor{c}{b}{}[+1,\solidline]
 \enddc
 \end{equation*}
In all cases $\widetilde\Pi^1(\lambda, \psi)^-$ has the form $S_{1,0} - S_{1,1} - S_{1,2} \cong \St_3^{\an}(\lambda) - S_{1,2}$.}
\end{remark}

\noindent
We now set:
\begin{multline*}
S_{1,3}:=I_{\overline{P}_1}^{\GL_3}(\widetilde{I}(s\cdot \lambda_{1,2}), k_3) \cong \big(\Ind_{\overline{B}(\Q_p)}^{{\GL_3}(\Q_p)} \delta_{s_1\cdot \lambda}(|\cdot|^{-1}\otimes |\cdot|\otimes 1)\big)^{\an}\\ \cong \cF_{\overline{B}}^{\GL_3}\big(\overline{M}(-s_1\cdot \lambda), |\cdot|^{-1}\otimes |\cdot|\otimes 1\big)
\end{multline*}
\begin{multline*}
 C_{1,3}:=\cF_{\overline{B}}^{\GL_3}\big(\overline{L}(-s_1\cdot \lambda), |\cdot|^{-1}\otimes |\cdot|\otimes 1\big) \\ \cong \cF_{\overline{P}_2}^{\GL_3}\big(\overline{L}(-s_1\cdot \lambda), |\cdot|^{-1}\otimes (\Ind_{\overline{B}_2(\Q_p)}^{\GL_2(\Q_p)}|\cdot|\otimes 1)^{\infty}\big) \cong \soc_{{\GL_3}(\Q_p)} S_{1,3},
\end{multline*}
where the last isomorphism follows from \cite[Cor. 2.5]{Br13I}. The irreducible constituents of $S_{1,3}/C_{1,3}$ are (from \cite{OS}):
\begin{multline}\label{equ: hL-ic2}
\Big\{\cF_{\overline{P}_2}^{\GL_3}\big(\overline{L}(-s_1s_2\cdot \lambda), |\cdot|^{-1}\otimes (\Ind_{\overline{B}_2(\Q_p)}^{\GL_2(\Q_p)} |\cdot |\otimes 1)^{\infty}\big), \ \cF_{\overline{P}_1}^{\GL_3}\big(\overline{L}(-s_2s_1\cdot \lambda), \St_2^{\infty}\otimes 1\big), \\ \cF_{\overline{P}_1}^{\GL_3}\big(\overline{L}(-s_2s_1\cdot \lambda), 1\big), \ \cF_{\overline{B}}^{\GL_3}\big(\overline{L}(-s_2s_1s_2\cdot \lambda), |\cdot|^{-1}\otimes |\cdot|\otimes 1\big)\Big\},
\end{multline}
all of them occurring with multiplicity one.

\begin{lemma}\label{lem: hL-C13}
The natural map:
 \begin{equation}\label{equ: hL-E2}
  \Ext^1_{{\GL_3}(\Q_p)}(C_{1,3}, \Pi^1(\lambda, \psi)^-) \lra \Ext^1_{{\GL_3}(\Q_p)}(C_{1,3}, \widetilde\Pi^1(\lambda, \psi)^-)
 \end{equation}
is an isomorphism of $1$-dimensional vector spaces.
\end{lemma}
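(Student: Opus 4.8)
The plan is to split the statement into injectivity and surjectivity, both handled by a dévissage on the exact sequence relating $\Pi^1(\lambda,\psi)^-$ and $\widetilde\Pi^1(\lambda,\psi)^-$. By Lemma \ref{lem: hL-sim2} and the construction of $\Pi^1(\lambda,\psi)^-$ as the push-forward of $\Pi^1(\lambda,\psi)_0$ along $S_{2,0}\hookrightarrow \St_3^\infty(\lambda)-C_{1,1}-\widetilde C_{1,2}$, the quotient $Q:=\widetilde\Pi^1(\lambda,\psi)^-/\Pi^1(\lambda,\psi)^-$ has a two-step filtration whose graded pieces are: (a) the part of $\St_3^{\an}(\lambda)$ that lies above the subrepresentation $\St_3^\infty(\lambda)-C_{1,1}-\widetilde C_{1,2}$, i.e. essentially $S_{1,1}$ modulo the constituents already in $\Pi^1(\lambda,\psi)^-$, and (b) the part $v_{\overline P_1}^{\an}(\lambda)/v_{\overline P_1}^\infty(\lambda)\cong \cF_{\overline P_2}^{\GL_3}(\overline L(-s_1\cdot\lambda),1)$ coming from the $v_{\overline P_1}$-layer via the nonsplit sequence (\ref{equ: hL-gSt}). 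Thus it suffices to show $\Ext^i_{\GL_3(\Q_p)}(C_{1,3},C)=0$ for $i=0,1$ for every irreducible constituent $C$ of $Q$.

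First I would handle injectivity, which amounts to $\Hom_{\GL_3(\Q_p)}(C_{1,3},Q)=0$: since $C_{1,3}$ is irreducible and does not appear among the constituents of $Q$ (its constituents being $\St_3^\infty(\lambda)$, $C_{1,1}$, $\widetilde C_{1,2}$, the constituents of $S_{1,1}/(\text{lower part})$ listed in (\ref{equ: hL-ic}), plus $\cF_{\overline P_2}^{\GL_3}(\overline L(-s_1\cdot\lambda),1)$ — one checks $C_{1,3}=\cF_{\overline B}^{\GL_3}(\overline L(-s_1\cdot\lambda),|\cdot|^{-1}\otimes|\cdot|\otimes 1)$ is distinct from all of these by comparing the $\cF_{\overline P}^{\GL_3}(\overline L(\cdot),\cdot)$-parameters), we get the vanishing of $\Hom$. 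Then the long exact sequence for $0\to\Pi^1(\lambda,\psi)^-\to\widetilde\Pi^1(\lambda,\psi)^-\to Q\to 0$ gives that (\ref{equ: hL-E2}) is injective.

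For surjectivity I would show $\Ext^1_{\GL_3(\Q_p)}(C_{1,3},C)=0$ for each constituent $C$ of $Q$. The constituent $\cF_{\overline P_2}^{\GL_3}(\overline L(-s_1\cdot\lambda),1)$ and the various $\cF_{\overline P}^{\GL_3}(\overline L(-w\cdot\lambda),\pi^\infty)$ of longer length appearing in (\ref{equ: hL-ic}) can be treated exactly as in the proof of Lemma \ref{lem: hL-sim}(1): realize $C$ inside a parabolic induction $(\Ind_{\overline P_w(\Q_p)}^{\GL_3(\Q_p)}L(w\cdot\lambda)_{L_{P_w}}\otimes_E\pi^\infty)^{\an}$ and apply the spectral sequence \cite[(4.37)]{Sch11} together with \cite[Thm.~4.10]{Sch11}, using that $C_{1,3}$ is locally algebraic (hence the separateness hypothesis holds) and that the relevant $\overline N_w$-homology of $C_{1,3}$ — which one reads off as in the $\GL_2$ computations of §\ref{sec: hL-ext1} extended by transitivity of Jacquet functors along $\overline P_1$ — has no character in common with the twisted torus characters attached to $C$, invoking Lemma \ref{tacit}. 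For the remaining constituents $\St_3^\infty(\lambda)$, $C_{1,1}$, $\widetilde C_{1,2}$ that lie in the "small" subrepresentation, I would instead use the computations already made in \cite{Br16}: the relevant $\Ext^1$-vanishings $\Ext^1_{\GL_3(\Q_p)}(C_{1,3},\St_3^\infty(\lambda))=\Ext^1_{\GL_3(\Q_p)}(C_{1,3},C_{1,1})=\Ext^1_{\GL_3(\Q_p)}(C_{1,3},\widetilde C_{1,2})=0$ follow from \cite[\S~5.2]{Br16} combined with \cite[(4.37)]{Sch11} and devissage, of the same type as the vanishing $\Ext^1_{\GL_3(\Q_p)}(\widetilde C_{1,2},C_{2,1})=0$ established just above in §\ref{sec: hL-pI}. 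Combining all these vanishings, $\Ext^1_{\GL_3(\Q_p)}(C_{1,3},Q)=0$, so the cokernel of (\ref{equ: hL-E2}) vanishes. Finally, one-dimensionality of both sides: $\Ext^1_{\GL_3(\Q_p)}(C_{1,3},\widetilde\Pi^1(\lambda,\psi)^-)$ is computed via the sequence $0\to\St_3^{\an}(\lambda)\to\widetilde\Pi^1(\lambda,\psi)^-\to v_{\overline P_1}^{\an}(\lambda)\to 0$, reducing (by the same $\cF_{\overline P}^{\GL_3}$-parameter arguments and \cite[Prop.~5.6]{Sch11}, as in Lemma \ref{lem: hL-sim}(2)) to a single nonzero contribution, which one exhibits explicitly as the subquotient $S_{1,3}=I_{\overline P_1}^{\GL_3}(\widetilde I(s\cdot\lambda_{1,2}),k_3)$ of the relevant deformed principal series; this gives dimension exactly one.

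The main obstacle I anticipate is the bookkeeping in the second step: one must be sure to enumerate \emph{all} irreducible constituents of $Q$ correctly — in particular the constituents coming from $S_{1,1}/C_{1,1}$ in (\ref{equ: hL-ic}) and from the $v_{\overline P_1}^{\an}(\lambda)$-layer — and then verify, constituent by constituent, that the $\overline N$- or $\overline N_w$-homology of $C_{1,3}$ genuinely avoids the torus characters dictating a possible extension. The length-one constituents $\St_3^\infty(\lambda)$, $C_{1,1}$, $\widetilde C_{1,2}$ are the delicate ones, since there the naive parabolic-induction argument does not immediately apply and one has to lean on the finer $\Ext$-computations of \cite{Br16}; isolating exactly which of those computations are needed, and checking that $C_{1,3}$'s parameters put it outside the range where \cite{Br16} produces nonzero extensions, is where the real work lies.
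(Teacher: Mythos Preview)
Your overall strategy (inject via $\Hom(C_{1,3},Q)=0$, surject via $\Ext^1(C_{1,3},C)=0$ for each constituent $C$ of $Q$) is exactly the paper's, but your execution has two genuine gaps.

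First, your enumeration of the constituents of $Q=\widetilde\Pi^1(\lambda,\psi)^-/\Pi^1(\lambda,\psi)^-$ is wrong. The representations $\St_3^\infty(\lambda)$, $C_{1,1}$, $\widetilde C_{1,2}$ are \emph{in} $\Pi^1(\lambda,\psi)^-$, so they are not constituents of $Q$; you should not be proving $\Ext^1(C_{1,3},-)$ vanishes on them. Conversely, you have omitted $C_{2,1}=\cF_{\overline P_1}^{\GL_3}(\overline L(-s_2\cdot\lambda),\St_2^\infty\otimes 1)$, which \emph{is} a constituent of $Q$ (it sits in $S_{1,0}\subset\St_3^{\an}(\lambda)$ but not in $\Pi^1(\lambda,\psi)^-$). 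In the paper this is precisely the delicate constituent: Step~3 of \cite[Prop.~4.4.2]{Br16} already handles the others, and the remaining work is to show $\Ext^1_{\GL_3(\Q_p)}(C_{1,3},C_{2,1})=0$, which is done by first proving $\Ext^1(C_{1,3},\cF_{\overline P_1}^{\GL_3}(\overline M(-s_2\cdot\lambda),\St_2^\infty\otimes 1))=0$ via \cite[Cor.~5.3.2(ii)~\&~Lem.~5.3.3]{Br16} together with \cite[(4.37)]{Sch11}. (Also, the $v_{\overline P_1}^{\an}$-layer contributes $\cF_{\overline P_1}^{\GL_3}(\overline L(-s_2\cdot\lambda),1)$, not $\cF_{\overline P_2}^{\GL_3}(\overline L(-s_1\cdot\lambda),1)$; see (\ref{equ: hL-gSt}) with $i=1$.)

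Second, your argument for the vanishing against the constituents in (\ref{equ: hL-ic}) invokes the proof of Lemma~\ref{lem: hL-sim}(1), asserting that ``$C_{1,3}$ is locally algebraic (hence the separateness hypothesis holds)''. But $C_{1,3}=\cF_{\overline B}^{\GL_3}(\overline L(-s_1\cdot\lambda),|\cdot|^{-1}\otimes|\cdot|\otimes 1)$ is \emph{not} locally algebraic, so that route is blocked. One must instead use the explicit $\overline N$-homology formulae of \cite[\S~5.2]{Br16} for such $\cF$-constituents (as the paper does). For the one-dimensionality of the source, the paper does not go via $\widetilde\Pi^1(\lambda,\psi)^-$ at all: it uses \cite[Prop.~4.6.1]{Br16} to pass to $\Pi^1(\lambda,\psi)^-/\St_3^\infty(\lambda)$ and then \cite[Prop.~4.4.2, Lem.~4.4.1, Prop.~4.2.1(i)]{Br16}.
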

\begin{proof}
(a) By \cite[Prop. 4.6.1]{Br16}, we have
\begin{equation*}
  \Ext^1_{\GL_3(\Q_p)}(C_{1,3}, \Pi^1(\lambda, \psi)^-) \xlongrightarrow{\sim} \Ext^1_{\GL_3(\Q_p)}(C_{1,3}, \Pi^1(\lambda, \psi)^-/\St_3^{\infty}(\lambda)).
\end{equation*}
By \cite[Prop. 4.4.2 \& Prop. 4.2.1(i)]{Br16} (resp. by \cite[Lem. 4.4.1 \& Prop. 4.2.1(i)]{Br16}), we deduce:
\begin{equation*}
  \dim_E  \Ext^1_{\GL_3(\Q_p)}(C_{1,3}, \Pi^1(\lambda, \psi)^-/\St_3^{\infty}(\lambda))=1
\end{equation*}
in the case where $\psi$ is not smooth (resp. in the case where $\psi$ is smooth).

\noindent
(b) Since $\Hom_{{\GL_3}(\Q_p)}(C_{1,3}, \widetilde\Pi^1(\lambda, \psi)^-/\Pi^1(\lambda, \psi)^-)=0$, we see (\ref{equ: hL-E2}) is injective, and it is sufficient to prove $\Ext^1_{{\GL_3}(\Q_p)}(C_{1,3},C)=0$ for any irreducible constituent of $\widetilde\Pi^1(\lambda, \psi)^-/\Pi^1(\lambda, \psi)^-$. By Step 3 of \cite[Prop. 4.4.2]{Br16}, it is left to show $\Ext^1_{\GL_3(\Q_p)}(C_{1,3},C_{2,1})=0$. However, using \cite[Cor. 5.3.2(ii)\ \&\ Lem. 5.3.3]{Br16} and (\cite[(4.37)]{Sch11}), one can show:
\begin{equation*}
  \Ext^1_{{\GL_3}(\Q_p)}(C_{1,3}, \cF_{\overline{P}_1}^{\GL_3}(\overline{M}(-s_2\cdot \lambda), \St_2^{\infty}\otimes 1))=0
\end{equation*}
and hence $\Ext^1_{\GL_3(\Q_p)}(C_{1,3},C_{2,1})=0$. The lemma follows.
\end{proof}

\noindent
Now consider the exact sequence (see (\ref{pipsi})):
\begin{equation}\label{exactforpush}
 0 \lra I_{\overline{P}_1}^{\GL_3}(\pi(\lambda_{1,2}, \psi)^-, x^{k_3}) \lra I_{\overline{P}_1}^{\GL_3}(\pi(\lambda_{1,2}, \psi), x^{k_3}) \xlongrightarrow{\pr} S_{1,3} \lra 0.
\end{equation}
The push-forward of $\pr^{-1}(C_{1,3})$ via $I_{\overline{P}_1}^{\GL_3}(\pi(\lambda_{1,2}, \psi)^-, x^{k_3}) \twoheadrightarrow \widetilde\Pi^1(\lambda, \psi)^-$ gives an extension of $C_{1,3}$ by $\widetilde\Pi^1(\lambda, \psi)^-$, which by Lemma \ref{lem: hL-C13} comes from an extension of $C_{1,3}$ by $\Pi^1(\lambda, \psi)^-$ denoted by $\Pi^1(\lambda, \psi)$.

\begin{lemma}
The extension $\Pi^1(\lambda, \psi)\in \Ext^1_{{\GL_3}(\Q_p)}(C_{1,3}, \Pi^1(\lambda, \psi)^-)$ is nonsplit.
\end{lemma}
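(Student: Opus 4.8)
The plan is to show the nonsplitness of $\Pi^1(\lambda,\psi)\in \Ext^1_{\GL_3(\Q_p)}(C_{1,3},\Pi^1(\lambda,\psi)^-)$ by tracing it back to a nonsplitness statement for $\GL_2(\Q_p)$-representations, using the parabolic induction $I_{\overline{P}_1}^{\GL_3}(\cdot,x^{k_3})$ and the exactness of $J_B$-type functors. Concretely, the extension $\Pi^1(\lambda,\psi)$ is by construction obtained from $\pr^{-1}(C_{1,3})\subseteq I_{\overline{P}_1}^{\GL_3}(\pi(\lambda_{1,2},\psi),x^{k_3})$ (see (\ref{exactforpush})) by pushing forward along $I_{\overline{P}_1}^{\GL_3}(\pi(\lambda_{1,2},\psi)^-,x^{k_3})\twoheadrightarrow \widetilde\Pi^1(\lambda,\psi)^-$ and then descending via the isomorphism of Lemma \ref{lem: hL-C13}. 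So it suffices to show that the extension of $C_{1,3}$ by $\widetilde\Pi^1(\lambda,\psi)^-$ coming from $\pr^{-1}(C_{1,3})$ is nonsplit, since the descent isomorphism (\ref{equ: hL-E2}) preserves (non)splitness.

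First I would recall that $\pi(\lambda_{1,2},\psi)\simeq I(\lambda_{1,2})/L(\lambda_{1,2})-(L(\lambda_{1,2})-\widetilde{I}(s\cdot\lambda_{1,2}))$ and that its top layer $\pi(\lambda_{1,2},\psi)/\St_2^\infty(\lambda_{1,2})$ surjects onto $\widetilde{I}(s\cdot\lambda_{1,2})/\St_2^\infty(\lambda_{1,2})$; crucially, the extension $\pi(\lambda_{1,2},\psi)$ of $\widetilde{I}(\lambda_{1,2})/\St_2^\infty(\lambda_{1,2})$ by $I(\lambda_{1,2})/L(\lambda_{1,2})$ is nonsplit by definition (see (\ref{pipsi})), and in fact the subextension $\pi(\lambda_{1,2},\psi)^-=\St_2^{\an}(\lambda_{1,2})-L(\lambda_{1,2})$ is the nonsplit pushout corresponding to $0\neq\psi$. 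Applying the exact functor $I_{\overline{P}_1}^{\GL_3}(\cdot,x^{k_3})$ and restricting over $C_{1,3}=\soc_{\GL_3(\Q_p)}S_{1,3}\hookrightarrow S_{1,3}=I_{\overline{P}_1}^{\GL_3}(\widetilde{I}(s\cdot\lambda_{1,2}),k_3)$, I would argue that a splitting of $\pr^{-1}(C_{1,3})$ over $\widetilde\Pi^1(\lambda,\psi)^-$ would produce a $\GL_3(\Q_p)$-equivariant section $C_{1,3}\hookrightarrow I_{\overline{P}_1}^{\GL_3}(\pi(\lambda_{1,2},\psi),x^{k_3})$ whose image lies outside $I_{\overline{P}_1}^{\GL_3}(\pi(\lambda_{1,2},\psi)^-,x^{k_3})$. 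By adjunction ($\Ord_{\overline{P}_1}$ or the Jacquet–Emerton functor $J_{P_1}$, noting $C_{1,3}$ is a subobject of a parabolic induction from $\overline{P}_1$ and identifying $C_{1,3}$ with $\cF_{\overline{P}_2}^{\GL_3}(\overline{L}(-s_1\cdot\lambda),|\cdot|^{-1}\otimes(\Ind_{\overline{B}_2}^{\GL_2}|\cdot|\otimes 1)^\infty)$), such a section would be detected on the $\GL_2$-factor, forcing the $\GL_2(\Q_p)$-extension $L(\lambda_{1,2})-\widetilde{I}(s\cdot\lambda_{1,2})$ inside $\pi(\lambda_{1,2},\psi)$ to split off, contradicting the nonsplitness of $\pi(\lambda_{1,2},\psi)$. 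A cleaner route: combine (\ref{exactforpush}) with the observation that the extension class of $\pr^{-1}(C_{1,3})$ in $\Ext^1_{\GL_3(\Q_p)}(C_{1,3}, I_{\overline{P}_1}^{\GL_3}(\pi(\lambda_{1,2},\psi)^-,x^{k_3}))$ maps, under the quotient to $\widetilde\Pi^1(\lambda,\psi)^-$, to the class in question, and that its image further down in $\Ext^1_{\GL_3(\Q_p)}(C_{1,3}, v_{\overline{P}_1}^{\an}(\lambda))$ is nonzero — but since $v_{\overline{P}_1}^{\an}(\lambda)=S_{1,2}$ sits at the bottom and the relevant nonsplit extension $S_{1,2}-C_{1,3}$ already appears inside $S_{1,3}$ and hence inside $I_{\overline{P}_1}^{\GL_3}(\widetilde{I}(\lambda_{1,2})/\St_2^\infty(\lambda_{1,2}),k_3)$, the nonsplitness is inherited from that of $\widetilde I(\lambda_{1,2})\cong \St_2^\infty(\lambda_{1,2})-L(\lambda_{1,2})-\widetilde I(s\cdot\lambda_{1,2})$.

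The cleanest implementation, and the one I would write up, proceeds by contradiction and reduction to $\GL_2$: suppose $\Pi^1(\lambda,\psi)\cong \Pi^1(\lambda,\psi)^-\oplus C_{1,3}$. Then via $\Pi^1(\lambda,\psi)^-\hookrightarrow \widetilde\Pi^1(\lambda,\psi)^-$ and Lemma \ref{lem: hL-C13}, the pushout extension of $C_{1,3}$ by $\widetilde\Pi^1(\lambda,\psi)^-$ also splits, hence the preimage $\pr^{-1}(C_{1,3})\subseteq I_{\overline{P}_1}^{\GL_3}(\pi(\lambda_{1,2},\psi),x^{k_3})$ (which is an extension of $C_{1,3}$ by $I_{\overline{P}_1}^{\GL_3}(\pi(\lambda_{1,2},\psi)^-,x^{k_3})$, an extension whose further quotient to $\widetilde\Pi^1(\lambda,\psi)^-$ is the split one, but whose quotient to $v_{\overline{P}_1}^{\an}(\lambda)$ need not split) would admit a $\GL_3(\Q_p)$-equivariant map $C_{1,3}\to \widetilde\Pi^1(\lambda,\psi)^-$ lifting $\id_{C_{1,3}}$ modulo $\St_3^{\an}(\lambda)$-type terms. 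Now restrict to $\overline{P}_1$-Jacquet modules: applying $J_{\overline{P}_1}$ (or $\Ord_{P_1}$) and using that on $\GL_2$-factors the induced sequence becomes $\pi(\lambda_{1,2},\psi)^-\to\pi(\lambda_{1,2},\psi)\to\widetilde{I}(s\cdot\lambda_{1,2})$ tensored by $x^{k_3}$ on $\Q_p^\times$, a splitting upstairs descends to a $\GL_2(\Q_p)$-equivariant splitting of the extension $\widetilde I(s\cdot\lambda_{1,2})/\St_2^\infty(\lambda_{1,2})$ by $L(\lambda_{1,2})$ inside $\pi(\lambda_{1,2},\psi)$ — equivalently of the subrepresentation $L(\lambda_{1,2})-\widetilde I(s\cdot\lambda_{1,2})$ — contradicting the nonsplitness of $\pi(\lambda_{1,2},\psi)$ recorded right after (\ref{pipsi}). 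Hence $\Pi^1(\lambda,\psi)$ is nonsplit. The main obstacle will be the bookkeeping in transferring the splitting from the $\GL_3(\Q_p)$-picture down to the $\GL_2(\Q_p)$-factor cleanly: one must verify that the adjunction ($J_{P_1}$ or $\Ord_{P_1}$) applied to the constituents $C_{1,3}$ and $\widetilde\Pi^1(\lambda,\psi)^-$ isolates exactly the relevant $\GL_2$-extension and does not kill it, which requires knowing the $\overline{P}_1$-Jacquet modules of the Orlik–Strauch constituents $C_{1,3}$, $\widetilde C_{1,2}$, $C_{1,1}$ precisely — but all the needed vanishing/nonvanishing is available from \cite[(4.37)]{Sch11}, the computations in \cite[\S5.2--\S5.3]{Br16}, and the Jacquet module formulas (\ref{equ: hL-N1})--(\ref{equ: hL-N4}) already in hand. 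Alternatively, and perhaps more economically, one avoids Jacquet modules entirely: since by Lemma \ref{lem: hL-C13} the group $\Ext^1_{\GL_3(\Q_p)}(C_{1,3},\Pi^1(\lambda,\psi)^-)$ is $1$-dimensional, it suffices to exhibit \emph{any} nonzero class, and the class of $\pr^{-1}(C_{1,3})$ modulo $\Pi^1(\lambda,\psi)^-$ is nonzero because, were it zero, $C_{1,3}$ would be a direct summand of a subquotient of $I_{\overline{P}_1}^{\GL_3}(\pi(\lambda_{1,2},\psi),x^{k_3})$ containing $\St_3^\infty(\lambda)$ in its socle, which is incompatible with $\soc_{\GL_3(\Q_p)}I_{\overline{P}_1}^{\GL_3}(\pi(\lambda_{1,2},\psi),x^{k_3})\cong\St_3^\infty(\lambda)$ (the latter following from $\soc_{\GL_2(\Q_p)}\pi(\lambda_{1,2},\psi)\cong\St_2^\infty(\lambda_{1,2})$ and exactness of $I_{\overline{P}_1}^{\GL_3}(\cdot,x^{k_3})$ on socles via \cite[Prop.~5.6]{Sch11} or \cite[Thm.~4.10]{Sch11}).
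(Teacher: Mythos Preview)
The paper's own proof is simply a reference to Step~2 of the proof of \cite[Prop.~4.4.2]{Br16}, which carries out precisely the $\Ext$-computations (for constituents of the form $C_{1,j}$, $\widetilde C_{1,j}$) needed to show this extension class is nonzero.

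Your Jacquet/adjunction approach is in the right spirit and could be made to work, but the final ``alternative'' socle argument has a concrete error. The claim
\[
\soc_{\GL_3(\Q_p)} I_{\overline{P}_1}^{\GL_3}(\pi(\lambda_{1,2},\psi), x^{k_3}) \cong \St_3^\infty(\lambda)
\]
is false: by (\ref{equ: hL-gSt1}) we have $v_{\overline{P}_2}^{\an}(\lambda)\hookrightarrow I_{\overline{P}_1}^{\GL_3}(\St_2^{\an}(\lambda_{1,2}),x^{k_3})\hookrightarrow I_{\overline{P}_1}^{\GL_3}(\pi(\lambda_{1,2},\psi),x^{k_3})$, so $v_{\overline{P}_2}^\infty(\lambda)$ sits in the socle; in fact from (\ref{equ: hL-St1}) and (\ref{exactinfty}) one sees the socle of $I_{\overline{P}_1}^{\GL_3}(\St_2^\infty(\lambda_{1,2}),x^{k_3})$ is $v_{\overline{P}_2}^\infty(\lambda)$, not $\St_3^\infty(\lambda)$. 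Neither \cite[Prop.~5.6]{Sch11} nor \cite[Thm.~4.10]{Sch11} asserts that parabolic induction preserves socles in the way you suggest. There is a second, more structural issue: $\Pi^1(\lambda,\psi)$ is only a \emph{subquotient} of $I_{\overline{P}_1}^{\GL_3}(\pi(\lambda_{1,2},\psi),x^{k_3})$ (it is a subrepresentation of the quotient $\widetilde\Pi^1(\lambda,\psi)$), so even a correct identification of the socle of the full induction would not directly constrain the socle of $\Pi^1(\lambda,\psi)$. What one actually needs is $\Hom_{\GL_3(\Q_p)}(C_{1,3},\widetilde\Pi^1(\lambda,\psi))=0$, and tracing this through the long exact sequence for $0\to K\to I_{\overline{P}_1}^{\GL_3}(\pi(\lambda_{1,2},\psi),x^{k_3})\to\widetilde\Pi^1(\lambda,\psi)\to 0$ requires controlling $\Ext^1_{\GL_3(\Q_p)}(C_{1,3},K)$ for $K$ with constituents $v_{\overline{P}_2}^\infty(\lambda)$, $\cF_{\overline{P}_2}^{\GL_3}(\overline L(-s_1\cdot\lambda),1)$, $L(\lambda)$---which is exactly the kind of computation carried out in \cite[\S4]{Br16}. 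Your first (adjunction) route faces the same bookkeeping: to pass from a $\GL_3$-splitting down to a $\GL_2$-splitting inside $\pi(\lambda_{1,2},\psi)$ one must compute the relevant $H_i(\overline N_1,-)$ of the Orlik--Strauch constituents, and those computations are again the content of \cite[Prop.~4.4.2]{Br16}.
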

\begin{proof}The lemma follows from Step 2 of the proof of \cite[Prop. 4.4.2]{Br16}.
\end{proof}

\begin{remark}\label{formofrepr}
{\rm (1) If $\psi$ is not smooth then $\Pi^1(\lambda, \psi)$ has the form:
\begin{equation}\label{ctilde}
 \begindc{\commdiag}[32]
 \obj(0,10)[a]{$\St_3^{\infty}(\lambda)$}
 \obj(14,10)[b]{$C_{1,1}$}
 \obj(26,4)[c]{$C_{1,2}$}
 \obj(26,16)[d]{$\widetilde{C}_{1,2}$}
 \obj(38, 10)[e]{$C_{1,3}$,}
   \mor{a}{b}{}[+1,\solidline]
   \mor{b}{c}{}[+1,\solidline]
   \mor{b}{d}{}[+1,\solidline]
   \mor{c}{e}{}[+1,\solidline]
   \mor{d}{e}{}[+1, \dashline]
 \enddc
 \end{equation} whereas if $\psi$ is smooth it has the form:
\begin{equation*}
 \begindc{\commdiag}[32]
 \obj(0,10)[a]{$\St_3^{\infty}(\lambda)$}
  \obj(12,18)[c]{$C_{1,1}$}
   \obj(24,18)[b]{$\widetilde{C}_{1,2}$}
  \obj(14,10)[d]{$C_{1,2}$}
  \obj(26,10)[e]{$C_{1,3}.$}
   \mor{a}{c}{}[+1,\solidline]
   \mor{a}{d}{}[+1,\solidline]
   \mor{d}{e}{}[+1, \solidline]
   \mor{c}{b}{}[+1, \solidline]
 \enddc
 \end{equation*}

\noindent (2) One can actually show that the subquotient $\widetilde{C}_{1,2}-C_{1,3}$ in (\ref{ctilde}) is also non-split (see \cite[Rk.~4.4.3(ii)]{Br16}). But we don't need this fact in the paper.}
\end{remark}

\noindent
Denote by $\widetilde\Pi^1(\lambda, \psi)$ the push-forward of (\ref{exactforpush}) along $I_{\overline{P}_1}^{\GL_3}(\pi(\lambda_{1,2}, \psi)^-, k_3) \twoheadrightarrow \widetilde\Pi^1(\lambda, \psi)^-$, which thus has the following form by Lemma \ref{lem: hL-sim2}:
\begin{equation}\label{sigmatilde1}
\widetilde\Pi^1(\lambda, \psi)\cong S_{1,0} - S_{1,1} - S_{1,2} - S_{1,3}\cong \St_3^{\an}(\lambda) - S_{1,2} - S_{1,3}
\end{equation}
and contains $\Pi^1(\lambda, \psi)$ by Lemma \ref{lem: hL-C13}.\\

\noindent
We define $C_{2, i}$, $S_{2,i}$ for $i\in\{1,2,3\}$, $\widetilde{C}_{2,2}$, $\Pi^2(\lambda, \psi)_0$, $\Pi^2(\lambda, \psi)^-$, $\widetilde{\Pi}^2(\lambda, \psi)^-$, $\Pi^2(\lambda, \psi)$ and $\widetilde\Pi^2(\lambda, \psi)$ in a similar way be replacing $\overline{P}_1$ by $\overline{P}_2$ (and modifying everything accordingly, e.g. $I(s\cdot\lambda_{1,2})\otimes x^{k_3}$ is replaced by $x^{k_1}\otimes I(s\cdot\lambda_{2,3})$ with $\lambda_{2,3}:=(k_2,k_3)$ etc.). In particular all these representations are subquotients of:
\begin{equation*}
 \big(\Ind_{\overline{P}_2(\Q_p)}^{{\GL_3}(\Q_p)} x^{k_1} \otimes \pi(\lambda_{2,3}, \psi)\big)^{\an}
\end{equation*}
and all the above results have their symmetric version with $\overline{P}_1$ replaced by $\overline{P}_2$.

\subsubsection{$\cL$-invariants}\label{linvariantgl3}

\noindent
We associate a finite length locally analytic representation of $\GL_3(\Q_p)$ to a $3$-dimensional semi-stable representation of $\Gal_{\Q_p}$ with $N^2\ne 0$ and distinct Hodge-Tate weights.\\

\noindent
We keep the notation of the previous sections (in particular we have fixed $\lambda=(k_1,k_2,k_3)$ and $0\neq \psi\in \Hom(\Q_p^{\times}, E)$). From the constructions of $\Pi^1(\lambda, \psi)$ and $\widetilde\Pi^1(\lambda, \psi)$ (and from Lemma \ref{lem: hL-C13}), it is not difficult to see that one has an injection:
\begin{equation*}
\Pi^1(\lambda, \psi)^+:=\Pi^1(\lambda, \psi)\oplus_{\St_3^{\infty}(\lambda)} S_{1,0} \ \hookrightarrow \ \widetilde\Pi^1(\lambda, \psi).
 \end{equation*}
From Remark \ref{formofrepr}, we see that $\Pi^1(\lambda, \psi)^+$ has the following form ($\psi$ not smooth on the left, $\psi$ smooth on the right):
$$
 \begindc{\commdiag}[32]
 \obj(0,10)[a]{$\St_3^{\infty}(\lambda)$}
  \obj(12,2)[b]{$C_{2,1}$}
  \obj(12,18)[c]{$C_{1,1}$}
  \obj(24,10)[d]{$C_{1,2}$}
  \obj(36,18)[e]{$C_{1,3}$}
  \obj(24,26)[f]{$\widetilde{C}_{1,2}$}
  \mor{a}{b}{}[+1,\solidline]
   \mor{a}{c}{}[+1,\solidline]
   \mor{c}{d}{}[+1,\solidline]
   \mor{c}{f}{}[+1,\solidline]
   \mor{d}{e}{}[+1, \solidline]
   \mor{f}{e}{}[+1, \dashline]
 \enddc\ \ \ \ \ \ \ \ \ \ \ \ \ \ \ \ \ \
 \begindc{\commdiag}[32]
 \obj(0,10)[a]{$\St_3^{\infty}(\lambda)$}
  \obj(12,2)[b]{$C_{2,1}$.}
  \obj(12,18)[c]{$C_{1,1}$}
  \obj(14,10)[d]{$C_{1,2}$}
  \obj(25,10)[e]{$C_{1,3}$}
  \obj(23,18)[f]{$\widetilde{C}_{1,2}$}
  \mor{a}{b}{}[+1,\solidline]
   \mor{a}{c}{}[+1,\solidline]
   \mor{a}{d}{}[+1,\solidline]
   \mor{d}{e}{}[+1, \solidline]
     \mor{c}{f}{}[+1, \solidline]
 \enddc
$$

\begin{lemma}\label{lem: hL-L3}
(1) The natural map:
\begin{equation}\label{equ: hL-L3a}
\Ext^1_{{\GL_3}(\Q_p)}\big(v_{\overline{P}_2}^{\infty}(\lambda), \Pi^1(\lambda, \psi)^+\big) \longrightarrow \Ext^1_{{\GL_3}(\Q_p)}\big(v_{\overline{P}_2}^{\infty}(\lambda), \widetilde\Pi^1(\lambda,\psi)\big)
\end{equation}
is an isomorphism.

\noindent
(2) We have an exact sequence:
\begin{multline}\label{equ: hL-L3b}
 0 \lra \Ext^1_{{\GL_3}(\Q_p)}\big(v_{\overline{P}_2}^{\infty}(\lambda), \St_3^{\infty}(\lambda)\big) \lra \Ext^1_{{\GL_3}(\Q_p)}\big(v_{\overline{P}_2}^{\infty}(\lambda), \Pi^1(\lambda, \psi)^+\big)\\ \lra \Ext^1_{{\GL_3}(\Q_p)}\big(v_{\overline{P}_2}^{\infty}(\lambda), \Pi^1(\lambda, \psi)/\St_3^{\infty}(\lambda)\big)\oplus \Ext^1_{{\GL_3}(\Q_p)}\big(v_{\overline{P}_2}^{\infty}(\lambda), C_{2,1}\big) \lra 0
\end{multline}
where:
\begin{eqnarray*}
 &&\dim_E \Ext^1_{{\GL_3}(\Q_p)}\big(v_{\overline{P}_2}^{\infty}(\lambda), \St_3^{\infty}(\lambda)\big)=1\\
 && \dim_E \Ext^1_{{\GL_3}(\Q_p)}\big(v_{\overline{P}_2}^{\infty}(\lambda), \Pi^1(\lambda, \psi)^+\big)=3\\
 && \dim_E \Ext^1_{{\GL_3}(\Q_p)}\big(v_{\overline{P}_2}^{\infty}(\lambda), \Pi^1(\lambda, \psi)/\St_3^{\infty}(\lambda)\big)=1\\
 && \dim_E \Ext^1_{{\GL_3}(\Q_p)}\big(v_{\overline{P}_2}^{\infty}(\lambda), C_{2,1}\big) = 1.
\end{eqnarray*}
\end{lemma}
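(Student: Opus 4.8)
The plan is to run, in both parts, the long exact sequence of $\Ext^\bullet_{{\GL_3}(\Q_p)}(v_{\overline{P}_2}^{\infty}(\lambda),-)$ attached to a suitable short exact sequence among the representations of \S~\ref{sec: hL-pI}, and to evaluate each term by d\'evissage into irreducible Orlik--Strauch constituents, using Schraen's spectral sequence \cite[(4.37)]{Sch11} together with \cite[Thm.~4.10]{Sch11} and the explicit $\Ext^1$-computations of \cite[\S~4]{Br16}, exactly in the style of the proofs of Lemma \ref{lem: hL-sim} and Lemma \ref{lem: hL-C13}.

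For (1), I would apply $\Hom_{{\GL_3}(\Q_p)}(v_{\overline{P}_2}^{\infty}(\lambda),-)$ to $0\to \Pi^1(\lambda,\psi)^+\to \widetilde\Pi^1(\lambda,\psi)\to Q\to 0$ with $Q:=\widetilde\Pi^1(\lambda,\psi)/\Pi^1(\lambda,\psi)^+$. By (\ref{sigmatilde1}) and the explicit shape of $\Pi^1(\lambda,\psi)^+$, the representation $Q$ admits a finite filtration whose graded pieces are the irreducible constituents appearing in (\ref{equ: hL-ic}), in $v_{\overline{P}_1}^{\an}(\lambda)/v_{\overline{P}_1}^{\infty}(\lambda)$ (cf.\ (\ref{equ: hL-gSt})) and in (\ref{equ: hL-ic2}) that are not already constituents of $\Pi^1(\lambda,\psi)^+$; in particular none is isomorphic to $v_{\overline{P}_2}^{\infty}(\lambda)$, so $\Hom_{{\GL_3}(\Q_p)}(v_{\overline{P}_2}^{\infty}(\lambda),Q)=0$ and it remains to check $\Ext^1_{{\GL_3}(\Q_p)}(v_{\overline{P}_2}^{\infty}(\lambda),C)=0$ for each such $C$. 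For the constituents $\cF_{\overline{P}_w}^{\GL_3}(\overline{L}(-w\cdot\lambda),\pi^\infty)$ with $w$ of length $\geq 2$ this follows from \cite[(4.37)]{Sch11} and \cite[Thm.~4.10]{Sch11} (the separateness hypothesis holds since $v_{\overline{P}_2}^{\infty}(\lambda)$ is locally algebraic), precisely as in Lemma \ref{lem: hL-sim}(1); the only length-$1$ constituent occurring is $\cF_{\overline{P}_1}^{\GL_3}(\overline{L}(-s_2\cdot\lambda),1)$ coming from $v_{\overline{P}_1}^{\an}(\lambda)/v_{\overline{P}_1}^{\infty}(\lambda)$, and its vanishing is one of the cases of \cite[(4.45)]{Sch11} (or follows from \cite[Cor.~5.3.2, Lem.~5.3.3]{Br16} combined with \cite[(4.37)]{Sch11}, as in Lemma \ref{lem: hL-C13}). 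This gives $\Ext^1_{{\GL_3}(\Q_p)}(v_{\overline{P}_2}^{\infty}(\lambda),Q)=0$ as well, hence (1).

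For (2), the starting point is the identity $\Pi^1(\lambda,\psi)^+=\Pi^1(\lambda,\psi)\oplus_{\St_3^{\infty}(\lambda)}S_{1,0}$ together with $S_{1,0}/\St_3^{\infty}(\lambda)\cong C_{2,1}$ (see (\ref{S10})), which yields $\Pi^1(\lambda,\psi)^+/\St_3^{\infty}(\lambda)\cong \big(\Pi^1(\lambda,\psi)/\St_3^{\infty}(\lambda)\big)\oplus C_{2,1}$; applying $\Hom_{{\GL_3}(\Q_p)}(v_{\overline{P}_2}^{\infty}(\lambda),-)$ to $0\to\St_3^{\infty}(\lambda)\to\Pi^1(\lambda,\psi)^+\to\big(\Pi^1(\lambda,\psi)/\St_3^{\infty}(\lambda)\big)\oplus C_{2,1}\to 0$ and using that $v_{\overline{P}_2}^{\infty}(\lambda)$ is a constituent of neither quotient term produces the left-exact part of (\ref{equ: hL-L3b}). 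The three outer dimensions I would compute as follows: $\dim_E\Ext^1_{{\GL_3}(\Q_p)}(v_{\overline{P}_2}^{\infty}(\lambda),\St_3^{\infty}(\lambda))=1$ by (\ref{equ: hL-sim0}) (it is $\cong\Hom_{\infty}(\Q_p^{\times},E)$); $\dim_E\Ext^1_{{\GL_3}(\Q_p)}(v_{\overline{P}_2}^{\infty}(\lambda),C_{2,1})=1$ by \cite[(4.45)]{Sch11} (equivalently from \cite[\S~4.6]{Br16}); and $\dim_E\Ext^1_{{\GL_3}(\Q_p)}(v_{\overline{P}_2}^{\infty}(\lambda),\Pi^1(\lambda,\psi)/\St_3^{\infty}(\lambda))=1$ by d\'evissage along the filtration $C_{1,1}-\{C_{1,2},\widetilde{C}_{1,2}\}-C_{1,3}$ of Remark \ref{formofrepr}, feeding in the $\Ext^1$-groups of \cite[\S~4]{Br16} (and distinguishing $\psi$ smooth from $\psi$ non-smooth). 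This already gives $\dim_E\Ext^1_{{\GL_3}(\Q_p)}(v_{\overline{P}_2}^{\infty}(\lambda),\Pi^1(\lambda,\psi)^+)\leq 3$, and since the target of the last map of (\ref{equ: hL-L3b}) has dimension $2$, equality of dimensions and surjectivity are equivalent; to get the lower bound $\geq 3$ I would first note $\dim_E\Ext^1_{{\GL_3}(\Q_p)}(v_{\overline{P}_2}^{\infty}(\lambda),S_{1,0})=2$ by Corollary \ref{coro: hL-simL}, so via $0\to S_{1,0}\to\Pi^1(\lambda,\psi)^+\to\Pi^1(\lambda,\psi)/\St_3^{\infty}(\lambda)\to 0$ it suffices to produce one extension of $v_{\overline{P}_2}^{\infty}(\lambda)$ by $\Pi^1(\lambda,\psi)^+$ not coming from $S_{1,0}$; using part (1) to replace $\Pi^1(\lambda,\psi)^+$ by $\widetilde\Pi^1(\lambda,\psi)$, such an extension is produced by parabolic induction from the deformed $\GL_2(\Q_p)$-representations, i.e.\ from a suitable $\widetilde\pi\in\Ext^1_{\tri}(\pi(\lambda_{1,2},\psi),\pi(\lambda_{1,2},\psi))$ (a $4$-dimensional space by Lemma \ref{dimtri}(1)): the representation $(\Ind_{\overline{P}_1(\Q_p)}^{\GL_3(\Q_p)}\widetilde\pi\otimes x^{k_3})^{\an}$, after passing to the quotient defining $\widetilde\Pi^1(\lambda,\psi)$, carries $v_{\overline{P}_2}^{\infty}(\lambda)$ on its top, and matching against the Jacquet-module analysis of \S~\ref{sec: hL-ext1} pins down the image of (\ref{equ: hL-L3b}) as $2$-dimensional.

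The hard part will be precisely this last step: producing enough extensions to force surjectivity in (\ref{equ: hL-L3b}), which requires tracking exactly which subquotients of $(\Ind_{\overline{P}_1(\Q_p)}^{\GL_3(\Q_p)}\pi(\lambda_{1,2},\psi)\otimes x^{k_3})^{\an}$ survive in $\widetilde\Pi^1(\lambda,\psi)$ and matching the resulting extension classes with the $\GL_2(\Q_p)$-picture of \S~\ref{sec: hL-ext1}. The careful bookkeeping of Orlik--Strauch constituents and the smooth/non-smooth dichotomy for $\psi$ is where the real work concentrates; the remaining verifications are a routine Schraen-type d\'evissage.
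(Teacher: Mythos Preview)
Your approach to part (1) is essentially the same as the paper's: d\'evissage of the quotient $\widetilde\Pi^1(\lambda,\psi)/\Pi^1(\lambda,\psi)^+$ into Orlik--Strauch constituents and vanishing of each $\Ext^1$ via \cite[(4.37),(4.41),Prop.~4.10]{Sch11}, with the single length-$1$ piece $\cF_{\overline{P}_1}^{\GL_3}(\overline{L}(-s_2\cdot\lambda),1)$ handled separately. The paper cites Step~4 of the proof of \cite[Prop.~4.3.1]{Br16} for this last case rather than \cite[(4.45)]{Sch11}, but this is a cosmetic difference.

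For part (2), however, you are making life much harder than necessary. The paper observes that by Lemma~\ref{lem: hL-cc} one may work with fixed central character, and then \cite[Cor.~4.8]{Sch11} gives
\[
\Ext^2_{\GL_3(\Q_p),Z}\big(v_{\overline{P}_2}^{\infty}(\lambda),\St_3^{\infty}(\lambda)\big)=0,
\]
which \emph{immediately} yields surjectivity on the right of (\ref{equ: hL-L3b}) from the long exact sequence. All four dimensions then follow from the three outer ones, with no need to construct any extension by hand. Your proposed route---producing extensions via parabolic induction of $\widetilde\pi\in\Ext^1_{\tri}(\pi(\lambda_{1,2},\psi),\pi(\lambda_{1,2},\psi))$---is essentially the mechanism used \emph{later} in the paper (in (\ref{equ: hL-L3e})--(\ref{equ: hL-L3f}) and Lemma~\ref{lem: hL-L3b}) to \emph{exploit} Lemma~\ref{lem: hL-L3}, not to prove it. Worse, your appeal to Lemma~\ref{dimtri}(1) imports Hypothesis~\ref{hypo: hL-dim}, which is not assumed in the statement of Lemma~\ref{lem: hL-L3}; the lemma is meant to be purely representation-theoretic. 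So while your idea could perhaps be made to work with enough care, it is both circuitous and adds an extraneous hypothesis; the $\Ext^2_Z$ vanishing is the missing ingredient that dissolves your ``hard part'' entirely.
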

\begin{proof}
(1) It is easy to see $\Hom_{{\GL_3}(\Q_p)}(v_{\overline{P}_2}^{\infty}(\lambda), \widetilde\Pi^1(\lambda, \psi)/\Pi^1(\lambda, \psi)^+)=0$, and thus (\ref{equ: hL-L3a}) is injective. It is sufficient to show $\Ext^1_{{\GL_3}(\Q_p)}(v_{\overline{P}_2}^{\infty}(\lambda), \widetilde\Pi^1(\lambda, \psi)/\Pi^1(\lambda, \psi)^+)=0$. From \cite[(4.37)~\&~(4.41)]{Sch11} and \cite[Prop. 4.10]{Sch11}, we easily deduce that for any irreducible representation $W$ in the union (\ref{equ: hL-ic}) $\cup$ (\ref{equ: hL-ic2}) we have $\Ext^1_{{\GL_3}(\Q_p)}(v_{\overline{P}_2}^{\infty}(\lambda), W)=0$. As in Step 4 of the proof of \cite[Prop. 4.3.1]{Br16}, we also have $\Ext^1_{{\GL_3}(\Q_p)}(v_{\overline{P}_2}^{\infty}(\lambda), \cF_{\overline{P}_1}^{\GL_3}(\overline{L}(-s_2\cdot \lambda), 1))=0$. Since the irreducible constituents of $\widetilde\Pi^1(\lambda, \psi)/\Pi^1(\lambda, \psi)^+$ are exactly given by the representations in the set (\ref{equ: hL-ic}) $\cup$ (\ref{equ: hL-ic2}) $\cup$ $\{\cF_{\overline{P}_1}^{\GL_3}(\overline{L}(-s_2\cdot \lambda), 1)\}$, the result follows by d\'evissage.\\
\noindent
(2) First note that by Lemma \ref{lem: hL-cc} the extension groups in (\ref{equ: hL-L3b}) do not change if $\Ext^1_{{\GL_3}(\Q_p)}$ is replaced by $\Ext^1_{{\GL_3}(\Q_p),Z}$. By \cite[Cor. 4.8]{Sch11}, we have $\Ext^2_{{\GL_3}(\Q_p),Z}(v_{\overline{P}_2}^{\infty}(\lambda), \St_3^{\infty}(\lambda))=0$, from which we easily deduce (\ref{equ: hL-L3b}). By \emph{loc.cit.} we also have:
$$\dim_E \Ext^1_{{\GL_3}(\Q_p),Z}(v_{\overline{P}_2}^{\infty}(\lambda), \St_3^{\infty}(\lambda))=\dim_E \Ext^1_{{\GL_3}(\Q_p)}(v_{\overline{P}_2}^{\infty}(\lambda), \St_3^{\infty}(\lambda))=1.$$
It follows from (\ref{equ: hL-sim0}) and $\Ext^2_{{\GL_3}(\Q_p),Z}(v_{\overline{P}_2}^{\infty}(\lambda), \St_3^{\infty}(\lambda))=0$ that we have:
\begin{equation*}
 \dim_E \Ext^1_{{\GL_3}(\Q_p)}\big(v_{\overline{P}_2}^{\infty}(\lambda), C_{2,1}\big) =\dim_E \Ext^1_{{\GL_3}(\Q_p)}\big(v_{\overline{P}_2}^{\infty}(\lambda), S_{1,0}/\St_3^{\infty}(\lambda)\big) = 1.
\end{equation*}
From Remark \ref{formofrepr} and \cite[Prop. 4.2.2 (ii) \& Prop. 4.2.3 (ii)]{Br16} we easily deduce:
\begin{equation*}
\Ext^1_{{\GL_3}(\Q_p)}\big(v_{\overline{P}_2}^{\infty}(\lambda), \Pi^1(\lambda, \psi)/\St_3^{\infty}(\lambda)\big)\xlongrightarrow{\sim}\Ext^1_{{\GL_3}(\Q_p)}\big(v_{\overline{P}_2}^{\infty}(\lambda), C_{1,2}-C_{1,3}\big) .
\end{equation*}
By \cite[Prop. 4.3.1~\&~Prop.~4.2.1 (i)]{Br16} the latter is one dimensional. This concludes the proof.
\end{proof}

\noindent
By \cite[(4.38)]{Sch11}, we have a spectral sequence: \footnote{Actually, to apply \cite[(4.38)]{Sch11}, one needs to show that the (dual of the) $\overline{P}_1$-representation $\pi(\lambda_{1,2}, \psi)\otimes x^{k_3}$ satisfies the condition (FIN) of \cite[\S~6]{St-dual}. However, any irreducible constituent of $\pi(\lambda_{1,2}, \psi)\otimes x^{k_3}$ is either locally algebraic or isomorphic to a locally analytic principal series, and hence satisfies the condition (FIN) (see the discussion in the beginning of \cite[\S~4.4]{Sch11} for the locally algebraic case, and the discussion before Step $1$ in the proof of \cite[Prop.~4.3.1]{Br16} for the case of principal series). One deduces then that the dual of $\pi(\lambda_{1,2}, \psi)\otimes x^{k_3}$ also satisfies (FIN).}
\begin{multline}\label{equ: hL-spe}
\Ext^i_{L_1(\Q_p), Z}\big(H_j(\overline{N}_1(\Q_p), v_{\overline{P}_2}^{\infty}(\lambda)), \pi(\lambda_{1,2}, \psi)\otimes x^{k_3}\big)\\ \Rightarrow \Ext^{i+j}_{{\GL_3}(\Q_p), Z}\big(v_{\overline{P}_2}^{\infty}(\lambda), I_{\overline{P}_1}^{\GL_3}(\pi(\lambda_{1,2}, \psi), k_3)\big).
\end{multline}
From \cite[(4.41)~\&~(4.42)]{Sch11} and the discussion after \cite[(52)]{Br16} we have (with obvious notation):
\begin{equation*}
H_i(\overline{N}_1(\Q_p),v_{\overline{P}_2}^{\infty}(\lambda))\cong \Big(\bigoplus_{\substack{\lg w=i\\ \text{$w\!\cdot\! \lambda$ \!is\! $B\!\cap\! L_1$\!-dominant}}} L_1(w\cdot \lambda)\Big) \otimes \big((\St_2^{\infty}\otimes 1) \oplus (|\cdot|^{-1}\circ \dett\otimes |\cdot |^{2})\big).
\end{equation*}
For all $w$ with $w\cdot \lambda$ dominant with respect to $B(\Q_p)\cap L_1(\Q_p)$ we have by considering the action of the center of $L_1(\Q_p)$:
\begin{eqnarray*}
\Hom_{L_1(\Q_p)}\big(L_1(w\cdot \lambda)\otimes_E(|\cdot|^{-1}\circ \dett\otimes |\cdot |^{2}), \pi(\lambda_{1,2}, \psi)\otimes x^{k_3}\big)=0 \\
\Ext^1_{L_1(\Q_p)}\big(L_1(w\cdot \lambda)\otimes_E(|\cdot|^{-1}\circ \dett\otimes |\cdot |^{2}), \pi(\lambda_{1,2}, \psi)\otimes x^{k_3}\big)=0
 \end{eqnarray*}
and it is easy to see from the above formula:
\begin{equation*}
 \Hom_{L_1(\Q_p)}\big(H_1(\overline{N}_1(\Q_p), v_{\overline{P}_2}^{\infty}(\lambda)), \pi(\lambda_{1,2}, \psi)\otimes x^{k_3}\big)=0.
\end{equation*}
Thus we deduce from (\ref{equ: hL-spe}) an isomorphism:
\begin{multline}\label{equ: hL-L3c}
 \Ext^1_{L_1(\Q_p), Z}\big(\St_2^{\infty}(\lambda_{1,2}) \otimes x^{k_3}, \pi(\lambda_{1,2}, \psi)\otimes x^{k_3}\big)\\ \xlongrightarrow{\sim} \Ext^1_{{\GL_3}(\Q_p), Z}\big(v_{\overline{P}_2}^{\infty}(\lambda), I_{\overline{P}_1}^{\GL_3}(\pi(\lambda_{1,2}, \psi), k_3)\big).
\end{multline}

\noindent
Denote by $W$ be the kernel of $I_{\overline{P}_1}^{\GL_3}(\pi(\lambda_{1,2}, \psi), k_3) \twoheadrightarrow \widetilde\Pi^1(\lambda, \psi)$, which (by the definition of $\widetilde\Pi^1(\lambda, \psi)$ and $\widetilde\Pi^1(\lambda, \psi)^-$) is an extension of $L(\lambda)$ by $v_{\overline{P}_2}^{\an}(\lambda)$. By \cite[Cor. 2.13]{Ding17}, we have $\Ext^i_{{\GL_3}(\Q_p), Z}(v_{\overline{P}_2}^{\infty}(\lambda), I_{\overline{P}_2}^{\GL_3}(\lambda))=0$ for all $i\geq 0$ and:
\begin{equation*}
 \Ext^i_{{\GL_3}(\Q_p), Z}\big(v_{\overline{P}_2}^{\infty}(\lambda), L(\lambda)\big)=\begin{cases}
  E & {\rm if}\ i=1 \\
  0 & \text{otherwise}.
 \end{cases}
\end{equation*}
By d\'evissage (recall $ I_{\overline{P}_2}^{\GL_3}(\lambda)\cong L(\lambda)-v_{\overline{P}_2}^{\an}(\lambda)$, see \S~\ref{sec: hL-GL30}), we get:
\begin{equation}
 \Ext^i_{{\GL_3}(\Q_p),Z}\big(v_{\overline{P}_2}^{\infty}(\lambda), v_{\overline{P}_2}^{\an}(\lambda)\big)=\begin{cases}
  E & {\rm if}\ i=0 \\
  0 & \text{otherwise}.
 \end{cases}
\end{equation}
Again by d\'evissage, we deduce $\Ext^2_{{\GL_3}(\Q_p), Z}(v_{\overline{P}_2}^{\infty}(\lambda), W) =0$ and an isomorphism:
$$\Ext^1_{{\GL_3}(\Q_p), Z}(v_{\overline{P}_2}^{\infty}\big(\lambda), W\big) \xlongrightarrow{\sim} \Ext^1_{{\GL_3}(\Q_p), Z}\big(v_{\overline{P}_2}^{\infty}(\lambda), L(\lambda)\big)$$
of $1$-dimensional $E$-vector spaces (with \cite[Cor.~4.8]{Sch11} for the dimension). From the former equality we obtain an exact sequence:
\begin{multline}\label{equ: hL-L3d}
 0 \lra \Ext^1_{{\GL_3}(\Q_p),Z}\big(v_{\overline{P}_2}^{\infty}(\lambda), W\big) \lra \Ext^1_{{\GL_3}(\Q_p),Z}\big(v_{\overline{P}_2}^{\infty}(\lambda), I_{\overline{P}_1}^{\GL_3}(\pi(\lambda_{1,2}, \psi), k_3) \big) \\ \lra \Ext^1_{{\GL_3}(\Q_p), Z}\big(v_{\overline{P}_2}^{\infty}(\lambda), \widetilde\Pi^1(\lambda, \psi)\big)
  \lra 0.
\end{multline}
Together with Lemma \ref{lem: hL-L3}, (\ref{equ: hL-L3c}) and a dimension count we obtain:
\begin{multline}\label{dimen=4}
\dim_E \Ext^1_{L_1(\Q_p), Z}\big(\St_2^{\infty}(\lambda_{1,2})\otimes x^{k_3}, \pi(\lambda_{1,2}, \psi)\otimes x^{k_3}\big)\\ =\dim_E \Ext^1_{{\GL_3}(\Q_p),Z}\big(v_{\overline{P}_2}^{\infty}(\lambda), I_{\overline{P}_1}^{\GL_3}(\pi(\lambda_{1,2}, \psi), k_3)\big)=4.
\end{multline}
By (\ref{equ: hL-L3c}) and (\ref{equ: hL-L3d}), we have a natural surjection:
\begin{equation}\label{equ: hL-L3h}
\Ext^1_{L_1(\Q_p), Z}\big(\St_2^{\infty}(\lambda_{1,2}) \otimes x^{k_3}, \pi(\lambda_{1,2}, \psi)\otimes x^{k_3}\big) \twoheadlongrightarrow \Ext^1_{{\GL_3}(\Q_p), Z}\big(v_{\overline{P}_2}^{\infty}(\lambda), \widetilde\Pi^1(\lambda, \psi)\big).
\end{equation}
Similarly, we have natural maps (without fixing the central character of $\GL_3(\Q_p)$ and using (\ref{equ: hL-St1}) and (\ref{exactinfty})):
\begin{multline}\label{equ: hL-L3e}
 \Ext^1_{L_1(\Q_p)}\big(\St_2^{\infty}(\lambda_{1,2})\otimes x^{k_3}, \pi(\lambda_{1,2}, \psi)\otimes x^{k_3}\big) \longrightarrow \Ext^1_{{\GL_3}(\Q_p)}\big(v_{\overline{P}_2}^{\infty}(\lambda), I_{\overline{P}_1}^{\GL_3}(\pi(\lambda_{1,2}, \psi), k_3)\big) \\ \lra \Ext^1_{{\GL_3}(\Q_p)}\big(v_{\overline{P}_2}^{\infty}(\lambda), \widetilde\Pi^1(\lambda, \psi)\big)
\end{multline}
whose composition is surjective by (\ref{equ: hL-L3h}) and the isomorphism (Lemma \ref{lem: hL-cc}):
\begin{equation*}
\Ext^1_{{\GL_3}(\Q_p), Z}\big(v_{\overline{P}_2}^{\infty}(\lambda), \widetilde\Pi^1(\lambda, \psi)\big) \xlongrightarrow{\sim} \Ext^1_{{\GL_3}(\Q_p)}\big(v_{\overline{P}_2}^{\infty}(\lambda), \widetilde\Pi^1(\lambda, \psi)\big).
\end{equation*}

\begin{remark}\label{rem: hL-L3}
{\rm We can describe (\ref{equ: hL-L3e}) (and similarly for (\ref{equ: hL-L3c}) and (\ref{equ: hL-L3h})) in the following explicit way. For any $\widetilde{\pi}\in \Ext^1_{L_1(\Q_p)}(\St_2^{\infty}(\lambda_{1,2}) \otimes x^{k_3}, \pi(\lambda_{1,2}, \psi)\otimes x^{k_3})$, the parabolic induction $(\Ind_{\overline{P}_1(\Q_p)}^{{\GL_3}(\Q_p)} \widetilde{\pi})^{\an}$ lies in an exact sequence:
\begin{equation}\small\label{equ: hL-L3g}
 0 \lra I_{\overline{P}_1}^{\GL_3}(\pi(\lambda_{1,2}, \psi), k_3) \lra \big(\Ind_{\overline{P}_1(\Q_p)}^{{\GL_3}(\Q_p)} \widetilde{\pi}\big)^{\an} \xlongrightarrow{\pr} \big(\Ind_{\overline{P}_1(\Q_p)}^{{\GL_3}(\Q_p)} \St_2^{\infty}(\lambda_{1,2})\otimes x^{k_3}\big)^{\an} \lra 0.
\end{equation}
Then the first map of (\ref{equ: hL-L3e}) is given by sending $\widetilde{\pi}$ to $\pr^{-1}(v_{\overline{P}_2}^{\infty}(\lambda))$ and the second map is given by quotienting by the subspace $W$. In particular the composition sends $\widetilde{\pi}$ to $\pr^{-1}(v_{\overline{P}_2}^{\infty}(\lambda))/W$.}
\end{remark}

\noindent
Consider the following composition:
\begin{multline}\label{equ: hL-L3f}
 \Ext^1_{\GL_2(\Q_p)}\big(\St_2^{\infty}(\lambda_{1,2}), \pi(\lambda_{1,2}, \psi)\big) \lra \Ext^1_{L_1(\Q_p)}\big(\St_2^{\infty}(\lambda_{1,2})\otimes x^{k_3}, \pi(\lambda_{1,2}, \psi)\otimes x^{k_3}\big) \\ \xlongrightarrow{\text{(\ref{equ: hL-L3e})}} \Ext^1_{{\GL_3}(\Q_p)}\big(v_{\overline{P}_2}^{\infty}(\lambda), \widetilde\Pi^1(\lambda, \psi)\big)
\end{multline}
where the first map sends $\widetilde{\pi}$ to $\widetilde{\pi}\otimes x^{k_3}$.

\begin{lemma}\label{lem: hL-L3b}
(1) The composition (\ref{equ: hL-L3f}) is surjective.\\
(2) The kernel of the composition (\ref{equ: hL-L3f}) is $1$-dimensional and is generated by $\iota_1(\pi(\lambda, \psi,0)^-)$ (see (\ref{equ: hL-iot1}) and (\ref{equ: hL-wpsi})).
\end{lemma}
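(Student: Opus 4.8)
\textbf{Proof proposal for Lemma \ref{lem: hL-L3b}.}

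The plan is to analyse the composition \eqref{equ: hL-L3f} by splitting it into the two maps of which it consists and tracking dimensions throughout. For surjectivity in (1), I would first note that the second map \eqref{equ: hL-L3e} is surjective (this was already established just before Remark \ref{rem: hL-L3}, as the composition \eqref{equ: hL-L3h}), so it suffices to show that the first map in \eqref{equ: hL-L3f}, namely $\widetilde{\pi}\mapsto\widetilde{\pi}\otimes x^{k_3}$ from $\Ext^1_{\GL_2(\Q_p)}(\St_2^{\infty}(\lambda_{1,2}),\pi(\lambda_{1,2},\psi))$ into $\Ext^1_{L_1(\Q_p)}(\St_2^{\infty}(\lambda_{1,2})\otimes x^{k_3},\pi(\lambda_{1,2},\psi)\otimes x^{k_3})$, hits everything that survives to the target. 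Here I would invoke the same type of $L_1(\Q_p)=\GL_2(\Q_p)\times\Q_p^\times$ decomposition used around \eqref{equ: hL-L3c}: an element of $\Ext^1_{L_1(\Q_p)}(\St_2^{\infty}(\lambda_{1,2})\otimes x^{k_3},\pi(\lambda_{1,2},\psi)\otimes x^{k_3})$ is built from $\Ext^1_{\GL_2(\Q_p)}(\St_2^{\infty}(\lambda_{1,2}),\pi(\lambda_{1,2},\psi))$, $\Ext^1_{\Q_p^\times}(x^{k_3},x^{k_3})\cong\Hom(\Q_p^\times,E)$ and a cross term, and by $\dim_E\Ext^1_{\GL_2(\Q_p)}(\St_2^{\infty}(\lambda_{1,2}),\pi(\lambda_{1,2},\psi))=4$ (Lemma \ref{lem: hL-ext1}(2)) together with the computation $\dim_E\Ext^1_{L_1(\Q_p),Z}(\St_2^{\infty}(\lambda_{1,2})\otimes x^{k_3},\pi(\lambda_{1,2},\psi)\otimes x^{k_3})=4$ from \eqref{dimen=4}, I would argue that the ``pure $\Q_p^\times$'' part (twisting $\pi(\lambda_{1,2},\psi)\otimes x^{k_3}$ by an additive character of $x^{k_3}$) lands in the kernel $W$ of $I_{\overline{P}_1}^{\GL_3}(\pi(\lambda_{1,2},\psi),k_3)\twoheadrightarrow\widetilde\Pi^1(\lambda,\psi)$, hence dies in the target; what remains is exactly the image of the first map. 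Combined with the surjectivity of \eqref{equ: hL-L3h}, this gives (1).

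For (2), I would first determine $\dim_E$ of the kernel of \eqref{equ: hL-L3f} by a dimension count: the source $\Ext^1_{\GL_2(\Q_p)}(\St_2^{\infty}(\lambda_{1,2}),\pi(\lambda_{1,2},\psi))$ has dimension $4$ by Lemma \ref{lem: hL-ext1}(2), and the target $\Ext^1_{{\GL_3}(\Q_p)}(v_{\overline{P}_2}^{\infty}(\lambda),\widetilde\Pi^1(\lambda,\psi))\cong\Ext^1_{{\GL_3}(\Q_p)}(v_{\overline{P}_2}^{\infty}(\lambda),\Pi^1(\lambda,\psi)^+)$ has dimension $3$ by Lemma \ref{lem: hL-L3}(1) and (2); since the map is surjective by (1), the kernel is $1$-dimensional. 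It then remains to exhibit $\iota_1(\pi(\lambda,\psi,0)^-)$ as a nonzero element of this kernel. Using the explicit description in Remark \ref{rem: hL-L3}, the class of $\widetilde\pi\otimes x^{k_3}$ maps to $\pr^{-1}(v_{\overline{P}_2}^{\infty}(\lambda))/W$ inside $\widetilde\Pi^1(\lambda,\psi)$; I would show that when $\widetilde\pi=\pi(\lambda,\psi,0)^-$ (i.e. the image of $\iota_1$ in \eqref{equ: hL-iot1}, coming from \eqref{equ: hL-wpsi}), the resulting extension of $v_{\overline{P}_2}^{\infty}(\lambda)$ by $\widetilde\Pi^1(\lambda,\psi)$ is split. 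This should follow from the fact that $\pi(\lambda,\psi,0)^-$ sits inside $(\Ind_{\overline B_2}^{\GL_2}\delta_{\lambda_{1,2}}(1+(\psi,0)\epsilon))^{\an}$, so the corresponding $\GL_3$-induction already contains a $\overline{P}_1$-induced — and hence $I_{\overline{P}_2}^{\GL_3}$-avoiding — splitting of the relevant extension; alternatively one checks directly that the class lies in the kernel of the second map of \eqref{equ: hL-L3e}, i.e. is absorbed into $W$, using that $v_{\overline{P}_2}^{\an}(\lambda)$ is a subrepresentation of $W$ together with the vanishing $\Ext^1_{{\GL_3}(\Q_p),Z}(v_{\overline{P}_2}^{\infty}(\lambda),v_{\overline{P}_2}^{\an}(\lambda))=0$ established before \eqref{equ: hL-L3d}. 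Nonvanishing of $\iota_1(\pi(\lambda,\psi,0)^-)$ as an extension class is clear since $\iota_1$ is injective (see \eqref{equ: hL-iot1}) and $\pi(\lambda,\psi,0)^-$ is a nonsplit extension.

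The main obstacle I expect is the last point in (2): proving precisely that the image of $\iota_1(\pi(\lambda,\psi,0)^-)$ in $\Ext^1_{{\GL_3}(\Q_p)}(v_{\overline{P}_2}^{\infty}(\lambda),\widetilde\Pi^1(\lambda,\psi))$ vanishes, rather than merely lies in some small subspace. The dimension count pins down the kernel to be $1$-dimensional, so it is enough to produce \emph{one} nonzero element; the natural candidate is $\iota_1(\pi(\lambda,\psi,0)^-)$, and the delicate part is the bookkeeping with the various exact sequences (\eqref{equ: hL-iot1}, \eqref{equ: hL-L3d}, \eqref{equ: hL-L3e}, \eqref{exactforpush}) to see that the $\overline{P}_1$-parabolic induction of this particular $\GL_2$-extension, after quotienting by $W$, no longer extends $v_{\overline{P}_2}^{\infty}(\lambda)$ nontrivially — essentially because $\pi(\lambda,\psi,0)^-$ is ``of principal-series type for the torus direction'' and its induction meets $I_{\overline{P}_2}^{\GL_3}(\lambda)$ in a way that trivialises the extension. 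Once that verification is done, combining it with the kernel being $1$-dimensional and $\iota_1$ being injective gives statement (2) immediately.
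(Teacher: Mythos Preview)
Your overall structure is right, and you correctly anticipate that (2) is where the real content lies. But there are genuine gaps in both parts, and in (2) the gap is precisely the step you flag as ``the main obstacle''.

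\medskip

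\textbf{Part (1).} Your argument is imprecise in a way that matters. The phrase ``lands in the kernel $W$'' is a type error: $W$ is a $\GL_3(\Q_p)$-representation (the kernel of $I_{\overline{P}_1}^{\GL_3}(\pi(\lambda_{1,2},\psi),k_3)\twoheadrightarrow\widetilde\Pi^1(\lambda,\psi)$), whereas the ``pure $\Q_p^\times$'' part is a subspace of an $\Ext^1$. Even interpreting charitably (as ``maps into $\Ext^1_{\GL_3}(v_{\overline{P}_2}^\infty(\lambda),W)$ under the first map of \eqref{equ: hL-L3e}''), you give no argument for this; a K\"unneth decomposition alone does not tell you where the second summand goes after parabolic induction. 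The paper sidesteps this entirely by using Lemma~\ref{lem: hL-cent3}: rather than showing a complement is killed, it shows that any class in $\Ext^1_{L_1,Z}$ can be twisted by a character of the form $(1+\psi'\epsilon)\circ\dett_{\GL_3}|_{L_1}$ to lie in the image of $\widetilde\pi\mapsto\widetilde\pi\otimes x^{k_3}$, and that this twist does not change the image in the target (that is exactly what Lemma~\ref{lem: hL-cent3} says). Since \eqref{equ: hL-L3h} is surjective, (1) follows.

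\medskip

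\textbf{Part (2).} Your dimension count ($4\to 3$ surjective $\Rightarrow$ kernel is one-dimensional) is correct and matches the paper. The gap is the verification that $\iota_1(\pi(\lambda_{1,2},\psi,0)^-)$ actually lies in the kernel. Neither of your two approaches is carried out: the first (``$I_{\overline{P}_2}^{\GL_3}$-avoiding splitting'') is a slogan, and the second (using $\Ext^1_{\GL_3,Z}(v_{\overline{P}_2}^\infty(\lambda),v_{\overline{P}_2}^{\an}(\lambda))=0$) only shows that \emph{if} the class factors through $\Ext^1(v_{\overline{P}_2}^\infty(\lambda),W)$ then it is controlled---it does not show the class factors there in the first place. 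The paper's argument is concrete and short: with $\Psi=(\psi,0,0)$, the $\GL_3$-induction of $\pi(\lambda_{1,2},\psi,0)^-\otimes x^{k_3}$ is a subquotient of $(\Ind_{\overline B}^{\GL_3}\delta_\lambda(1+\Psi\epsilon))^{\an}$, and the key point is Proposition~\ref{prop: hL-sim}: since $\psi_2=\psi_3(=0)$, the extension $\Pi^2(\lambda,\Psi)_0$ of $v_{\overline{P}_2}^\infty(\lambda)$ by $S_{1,0}$ is \emph{split}. Hence $v_{\overline{P}_2}^\infty(\lambda)$ embeds directly into the relevant quotient, which means the image of $\iota_1(\pi(\lambda_{1,2},\psi,0)^-)$ under \eqref{equ: hL-L3f} contains $v_{\overline{P}_2}^\infty(\lambda)$ as a subrepresentation and is therefore the split extension. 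This use of Proposition~\ref{prop: hL-sim} is the missing idea in your proposal.
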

\begin{proof}
(1) For any $\widetilde{\pi}\in \Ext^1_{L_1(\Q_p), Z}(\St_2^{\infty}(\lambda_{1,2})\otimes x^{k_3}, \pi(\lambda_{1,2}, \psi)\otimes x^{k_3})$, we can view $\widetilde{\pi}$ as a representation of $L_1(\Q_p)$ over $E[\epsilon]/\epsilon^2$ by making $\epsilon$ act as the composition (unique up to nonzero scalars):
\begin{equation*}
\widetilde{\pi} \twoheadlongrightarrow \St_2^{\infty}(\lambda_{1,2})\otimes x^{k_3} \hooklongrightarrow \pi(\lambda_{1,2}, \psi)\otimes x^{k_3} \hooklongrightarrow \widetilde{\pi}.
\end{equation*}
Let $Z_2:=\Q_p^{\times} \hookrightarrow L_1(\Q_p)\cong \GL_2(\Q_p)\times \Q_p^{\times}, \ a\mapsto (1,a)$, which acts on $\widetilde{\pi}$ by a character $\widetilde{\chi}$ of $\Q_p^{\times}$ over $E[\epsilon]/\epsilon^2$ (by the same argument as in the proof of Lemma \ref{lem: hL-cent2}). Consider $\widetilde{\pi}':=\widetilde{\pi}\otimes_{E[\epsilon]/\epsilon^2} (\widetilde{\chi}^{-1}\circ \dett)$, on which $Z_2$ acts thus by $x^{k_3}$. So there exists $\widetilde{\pi}'_0\in \Ext^1_{\GL_2(\Q_p)}\big(\St_2^{\infty}(\lambda_{1,2}), \pi(\lambda_{1,2}, \psi)\big)$ such that $\widetilde{\pi}'\cong \widetilde{\pi}'_0\otimes x^{k_3}$ (``external'' tensor product). However, by Lemma \ref{lem: hL-cent3}, Remark \ref{rem: hL-L3} and the fact that:
\begin{equation*}
\big(\Ind_{\overline{P}_1(\Q_p)}^{\GL_3(\Q_p)} \widetilde{\pi}'\big)^{\an}\cong \big(\Ind_{\overline{P}_1(\Q_p)}^{\GL_3(\Q_p)} \widetilde{\pi}\big)^{\an}\otimes_{E[\epsilon]/\epsilon^2} \widetilde{\chi}^{-1}\circ \dett,
\end{equation*}
we see that the image of $\widetilde{\pi}$ via (\ref{equ: hL-L3h}) is isomorphic to the image of $\widetilde{\pi}'_0$ via (\ref{equ: hL-L3f}). Since (\ref{equ: hL-L3h}) is surjective, so is
 (\ref{equ: hL-L3f}).

\noindent
(2) Since (\ref{equ: hL-L3f}) is surjective, by counting dimensions using Lemma \ref{lem: hL-L3} and (\ref{dimen=4}) we see that the kernel of (\ref{equ: hL-L3f}) is one dimensional. It is thus sufficient to prove $\iota_1(\pi(\lambda, \psi,0)^-)$ is sent to zero. Let $\Psi_{1,2}:=(\psi,0)$ and $\Psi:=(\psi,0,0)$. By construction (cf. (\ref{equ: hL-wpsi})), $\pi(\lambda, \psi,0)^-$ is a subquotient of $(\Ind_{\overline{B}_2(\Q_p)}^{\GL_2(\Q_p)} \delta_{\lambda_{1,2}}(1+\Psi_{1,2}\epsilon))^{\an}$, and thus $(\Ind_{\overline{P}_1(\Q_p)}^{{\GL_3}(\Q_p)} \pi(\lambda,\psi, 0)^-\otimes x^{k_3})^{\an}$ is a subquotient of $(\Ind_{\overline{B}(\Q_p)}^{{\GL_3}(\Q_p)} \delta_{\lambda} (1+\Psi\epsilon))^{\an}$. However, from the first part of Proposition \ref{prop: hL-sim} and Lemma \ref{lem: hL-sim}(1), we deduce (see Proposition \ref{prop: hL-sim} for $\Pi^2(\lambda, \Psi)_0$):
\begin{equation*}
v_{\overline{P}_2}^{\infty}(\lambda) \hooklongrightarrow \Pi^2(\lambda, \Psi)_0\hooklongrightarrow \big(\Ind_{\overline{B}(\Q_p)}^{{\GL_3}(\Q_p)} \delta_{\lambda} (1+\Psi\epsilon)\big)/\sum_{i=1,2} I_{\overline{P}_i}^{\GL_3}(\lambda).
 \end{equation*}
In particular the image of $\iota_1(\pi(\lambda, \psi,0)^-)$ via (\ref{equ: hL-L3f}) contains $v_{\overline{P}_2}^{\infty}(\lambda)$ as a subrepresentation, hence the associated extension is split. This concludes the proof.
\end{proof}

\noindent
We now can prove the main result of the section. We let $\lambda^{\sharp}:=(k_1, k_2-1, k_3-2)$, $\lambda_{1,2}^{\sharp}:=(k_1, k_2-1)$, $\lambda_{2,3}^{\sharp}:=(k_2-1, k_3-2)$ and $D_1^2:=D(p, \lambda_{1,2}^{\sharp}, \psi)$ (see (\ref{dalpha}), the notation $D_1^2$ is for (future) compatibility with the notation at the beginning of \S~\ref{sec: hL-FM}).

\begin{theorem}\label{thm: hL-L3}
Assume Hypothesis \ref{hypo: hL-pLL0} for $D_1^2$. The cup product (\ref{equ: hL-cup}) together with the isomorphisms:
\begin{multline*}
\Ext^1_{(\varphi,\Gamma)}\big(D_1^2, D_1^2\big)\cong \Ext^1_{(\varphi,\Gamma)}\big(D(p,\lambda_{1,2}^{\sharp}, \psi),D(p,\lambda_{1,2}^{\sharp}, \psi)\big)\\
\xlongrightarrow[\sim]{(\ref{equ: hL-pLL0})} \Ext^1_{\GL_2(\Q_p)}\big(\pi(\lambda_{1,2}, \psi), \pi(\lambda_{1,2}, \psi)\big)
\end{multline*}
induce a perfect pairing of $3$-dimensional $E$-vector spaces:
\begin{equation}\label{equ: hL-L3}
 \Ext^1_{(\varphi,\Gamma)}\big(\cR_E(x^{k_3-2}|\cdot|^{-2}), D_1^2\big) \times \Ext^1_{{\GL_3}(\Q_p)}\big(v_{\overline{P}_2}^{\infty}(\lambda),\Pi\big) \xlongrightarrow{\cup} E
\end{equation}
with $\Pi= \widetilde\Pi^1(\lambda, \psi)$ or $\Pi^1(\lambda, \psi)^+$.
\end{theorem}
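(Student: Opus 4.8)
The plan is to assemble the pairing \eqref{equ: hL-L3} from the machinery already in place, namely the Galois-side perfect pairing of Proposition \ref{prop-l3-cup}(2) (in the rank $2$ case, or via Lemma \ref{lem: hL-ext2}) and the explicit description of the automorphic-side $\Ext^1$ groups from Lemma \ref{lem: hL-L3} and \eqref{equ: hL-L3f}. First I would record that both sides are $3$-dimensional: the left hand side $\Ext^1_{(\varphi,\Gamma)}(\cR_E(x^{k_3-2}|\cdot|^{-2}),D_1^2)$ has dimension $(n-1)[L:\Q_p]+1=2\cdot 1 - 1 + \dots$; more precisely, applying Lemma \ref{equ: hL-dim} with $n=3$, $\delta_n=x^{k_3-2}|\cdot|^{-2}=\delta_3$ and $D_1^{n-1}=D_1^2$ gives $\dim_E=2+1=3$ (note $D_1^2$ here plays the role of $D_1^{n-1}$, and $x^{k_3-2}|\cdot|^{-2}=\delta_3$ is the correct ``$\delta_n$'' compatible with the special noncritical parameter since $D(p,\lambda_{1,2}^\sharp,\psi)$ has parameter $(\delta_1,\delta_2)=(x^{k_1},|\cdot|^{-1}x^{k_2-1})$). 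The right hand side has $\dim_E \Ext^1_{\GL_3(\Q_p)}(v_{\overline P_2}^\infty(\lambda),\widetilde\Pi^1(\lambda,\psi))=\dim_E\Ext^1_{\GL_3(\Q_p)}(v_{\overline P_2}^\infty(\lambda),\Pi^1(\lambda,\psi)^+)=3$ by Lemma \ref{lem: hL-L3}(1)--(2).

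Next I would construct the map relating the two sides. Combining the surjection \eqref{equ: hL-L3f} (which by Lemma \ref{lem: hL-L3b}(1) is surjective with $1$-dimensional kernel spanned by $\iota_1(\pi(\lambda,\psi,0)^-)$) with the isomorphism $\Ext^1_{(\varphi,\Gamma)}(D_1^2,D_1^2)\xrightarrow{\sim}\Ext^1_{\GL_2(\Q_p)}(\pi(\lambda_{1,2},\psi),\pi(\lambda_{1,2},\psi))$ of Hypothesis \ref{hypo: hL-pLL0}(1) and the exact sequence \eqref{equ: hL-stv3} (giving $\kappa_1:\Ext^1_{\GL_2(\Q_p)}(\pi(\lambda_{1,2},\psi),\pi(\lambda_{1,2},\psi))\to\Ext^1_{\GL_2(\Q_p)}(\St_2^\infty(\lambda_{1,2}),\pi(\lambda_{1,2},\psi))$), I get a commutative square relating $\Ext^1_{(\varphi,\Gamma)}(D_1^2,D_1^2)\to\Ext^1_{(\varphi,\Gamma)}(D_1^2,\cR_E(\delta_2))$ (the map $\kappa$ of \eqref{equ: hL-ex0}) with $\Ext^1_{\GL_3(\Q_p)}(v_{\overline P_2}^\infty(\lambda),\Pi)\to(\text{quotient})$. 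The key point is then to identify, under these identifications, the cup product pairing \eqref{equ: hL-cup} (for $D$ replaced by $D_1^2$, $\delta_n=\delta_3$) with the Yoneda/cup pairing on the $\GL_3$ side coming from composing extensions $v_{\overline P_2}^\infty(\lambda)$--$\Pi$ with the ``fixed'' extension datum encoded in $\Pi$. Perfectness on the Galois side is Proposition \ref{prop-l3-cup}(2); I would transport it across the (iso)morphisms, checking that the kernel of \eqref{equ: hL-L3f}, namely $E\iota_1(\pi(\lambda,\psi,0)^-)$, maps under $\kappa_1$ and the $\pLL$ isomorphism precisely to $\Ima(\iota_0)\subseteq\Ker(\kappa)$ on the Galois side (using Lemma \ref{lem: hL-kp1}(1), which says $\widetilde\pi\in\Ker(\kappa)$ iff $\kappa_1(\widetilde\pi)\in E\iota_1(\pi(\lambda,\psi,0)^-)$, matched with the fact that $\Ima(\iota_0)$ on the Galois side is spanned by the deformation with $\widetilde\delta_1=\delta_1$, $\widetilde\delta_2=\delta_2$, as recorded after \eqref{equ: hL-kap}). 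That the two $1$-dimensional ``degenerate'' subspaces correspond is what guarantees the induced pairing on the $3$-dimensional quotients is nondegenerate, hence perfect.

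I would also need to handle the two choices $\Pi=\widetilde\Pi^1(\lambda,\psi)$ versus $\Pi=\Pi^1(\lambda,\psi)^+$: by Lemma \ref{lem: hL-L3}(1) the natural map $\Ext^1_{\GL_3(\Q_p)}(v_{\overline P_2}^\infty(\lambda),\Pi^1(\lambda,\psi)^+)\to\Ext^1_{\GL_3(\Q_p)}(v_{\overline P_2}^\infty(\lambda),\widetilde\Pi^1(\lambda,\psi))$ is an isomorphism, so it suffices to treat one of them and transport the pairing along this isomorphism. Finally, I would verify the target is $E$ by invoking Lemma \ref{equ: hL-dim} (the identification $\Ext^2_{(\varphi,\Gamma)}(\cR_E(\delta_3),D_1^2)\xrightarrow{\sim}\Ext^2_{(\varphi,\Gamma)}(\cR_E(\delta_3),\cR_E(\delta_2))\cong E$). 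I expect the main obstacle to be the bookkeeping of compatibilities: precisely matching the cup product \eqref{equ: hL-cup} on the Galois side with the extension-composition pairing on the $\GL_3$ side through the chain of isomorphisms (parabolic induction from $\overline P_1$, the $\pLL$ functor of Hypothesis \ref{hypo: hL-pLL0}, the Jacquet-module computations of Lemma \ref{lem: hL-tri1}), and in particular checking that the degenerate lines on each side are genuinely identified rather than merely having the same dimension — this is where Lemma \ref{lem: hL-L3b}(2), Lemma \ref{lem: hL-kp1}, and Hypothesis \ref{hypo: hL-pLL0}(2)--(3) all have to be brought to bear simultaneously.
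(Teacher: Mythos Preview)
Your proposal is essentially correct and follows the same route as the paper: build the surjection $\Ext^1_{(\varphi,\Gamma)}(D_1^2,D_1^2)\twoheadrightarrow\Ext^1_{\GL_3(\Q_p)}(v_{\overline P_2}^\infty(\lambda),\Pi)$ via $\pLL$, $\kappa_1$, and \eqref{equ: hL-L3f}, identify its kernel with $\Ker(\kappa^{\gal})$ using Lemma~\ref{lem: hL-L3b}(2), Lemma~\ref{lem: hL-kp1}(1), and \eqref{equ: hL-kp1L}, and then invoke Proposition~\ref{prop-l3-cup}(2). One minor slip in your bookkeeping: the ``degenerate'' subspace you need to track is the \emph{two}-dimensional $\Ker(\kappa^{\aut})=\kappa_1^{-1}(E\iota_1(\pi(\lambda,\psi,0)^-))$ (matching $\Ker(\kappa^{\gal})$, also $2$-dimensional by \eqref{equ: hL-ex0}), not a $1$-dimensional line; once that identification is made the $5$-dimensional $\Ext^1_{(\varphi,\Gamma)}(D_1^2,D_1^2)$ surjects onto the $3$-dimensional automorphic side with the right kernel, and perfectness drops out of Proposition~\ref{prop-l3-cup}(2) directly rather than by a separate nondegeneracy check.
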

\begin{proof}
The dimension $3$ comes from Lemma \ref{lem: hL-L3}(2). We have morphisms (see (\ref{equ: hL-stv3}) for $\kappa_1$):
\begin{multline}\label{equ: hL-L3p}
 \Ext^1_{\GL_2(\Q_p)}\big(\pi(\lambda_{1,2}, \psi), \pi(\lambda_{1,2}, \psi)\big)
\xlongrightarrow{\kappa_1} \Ext^1_{\GL_2(\Q_p)}\big(\St_2^{\infty}(\lambda_{1,2}), \pi(\lambda_{1,2}, \psi)\big)\\
 \xlongrightarrow{(\ref{equ: hL-L3f})}\Ext^1_{{\GL_3}(\Q_p)}\big(v_{\overline{P}_2}^{\infty}(\lambda), \widetilde\Pi^1(\lambda, \psi)\big)
 \cong \Ext^1_{{\GL_3}(\Q_p)}\big(v_{\overline{P}_2}^{\infty}(\lambda), \Pi^1(\lambda, \psi)^+\big)
\end{multline}
where the first morphism is the surjection in (\ref{equ: hL-stv3}) (it is surjective by Remark \ref{remhypo}(1)) and the last isomorphism is Lemma \ref{lem: hL-L3}(1). By Lemma \ref{lem: hL-L3b}(2) and Lemma \ref{lem: hL-kp1}(1), we obtain that the kernel of the composition in (\ref{equ: hL-L3p}) is equal to $\Ker(\kappa^{\aut})$ where we use the notation of Lemma \ref{lem: hL-kappag}. Note that this composition is surjective by Lemma \ref{lem: hL-L3b}(1) (and the surjectivity of $\kappa_1$). Now consider:
\begin{multline}\label{equ: hL-L3i}
\Ext^1_{(\varphi,\Gamma)}\big(D_1^2, D_1^2\big) \xlongrightarrow{\sim} \Ext^1_{(\varphi,\Gamma)}\big(D(p,\lambda_{1,2}^{\sharp}, \psi),D(p,\lambda_{1,2}^{\sharp}, \psi)\big) \\ \xlongrightarrow[\sim]{(\ref{equ: hL-pLL0})} \Ext^1_{\GL_2(\Q_p)}\big(\pi(\lambda_{1,2}, \psi), \pi(\lambda_{1,2}, \psi)\big) \buildrel {(\ref{equ: hL-L3p})}\over \twoheadlongrightarrow \Ext^1_{{\GL_3}(\Q_p)}\big(v_{\overline{P}_2}^{\infty}(\lambda), \Pi^1(\lambda, \psi)^+\big).
\end{multline}
By (\ref{equ: hL-kp1L}), the kernel of the composition in (\ref{equ: hL-L3i}) is thus isomorphic to $\Ker(\kappa^{\gal})$. Since this composition is moreover surjective, the theorem then follows from Proposition \ref{prop-l3-cup} (where $\kappa$ there is denoted $\kappa^{\gal}$ here).
\end{proof}

\noindent
We let $\delta_1:=x^{k_1}$, $\delta_2:=x^{k_2-1} |\cdot|^{-1}$, and $\delta_3:=x^{k_3-2}|\cdot|^{-2}$.

\begin{proposition}\label{prop: hL-compL}
Assume Hypothesis \ref{hypo: hL-pLL0} for $D_1^2$. We have a commutative diagram:
 \begin{equation}\label{equ: hL-compL}
   \begin{CD}\Ext^1_{{\GL_3}(\Q_p)}\big(v_{\overline{P}_2}^{\infty}(\lambda), \St_3^{\an}(\lambda)\big) @. \ \times \ \ @. \Ext^1_{(\varphi,\Gamma)}\big(\cR_E(\delta_3), \cR_E(\delta_2)\big) @> \cup_1 >> E\\
 @VVV @. @A u_1 AA @ | \\
 \Ext^1_{{\GL_3}(\Q_p)}\big(v_{\overline{P}_2}^{\infty}(\lambda),\widetilde\Pi^1(\lambda, \psi)\big) @. \!\times @. \!\!\!\!\!\!\!\!\Ext^1_{(\varphi,\Gamma)}\big(\cR_E(\delta_3), D_1^2\big) @> \cup >> E\\
 \end{CD}
 \end{equation}
where the left vertical map is the natural injection, the middle vertical map is the natural surjection, the bottom (perfect) pairing is the one in Theorem \ref{thm: hL-L3} and the top (perfect) pairing is the one in Corollary \ref{coro: hL-simL}. The same holds with $(\St_3^{\an}(\lambda),\widetilde\Pi^1(\lambda, \psi))$ replaced by $(S_{1,0},\Pi^1(\lambda, \psi)^+)$.
\end{proposition}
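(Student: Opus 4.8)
The plan is to show that both squares in (\ref{equ: hL-compL}) (i.e. the case $(\St_3^{\an}(\lambda),\widetilde\Pi^1(\lambda,\psi))$ and the case $(S_{1,0},\Pi^1(\lambda,\psi)^+)$) are compatible via the functorial maps, and to produce the left vertical injection and the map $u_1$ on the Galois side so that the diagram commutes. First I would recall that the bottom pairing of (\ref{equ: hL-compL}) is exactly the pairing constructed in Theorem \ref{thm: hL-L3}, namely the cup-product (\ref{equ: hL-cup}) for $D_1^2$ transported via the sequence of isomorphisms (\ref{equ: hL-L3i}) on the $\GL_3$-side; and that the top pairing is the one from Corollary \ref{coro: hL-simL}, which itself comes (via Proposition \ref{prop: hL-sim} and the identifications (\ref{equ: hL-sim0})) from the perfect pairing of Proposition \ref{prop-l3-cup}(2) applied to $\cR_E(\delta_n)=\cR_E(\delta_3)$, $D_1^{n-1}=\cR_E(\delta_2)$. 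The key point is that the Galois-side map $u_1$ is precisely the injection $\iota\colon\Ext^1_{(\varphi,\Gamma)}(\cR_E(\delta_3),\cR_E(\delta_2))\hookrightarrow\Ext^1_{(\varphi,\Gamma)}(\cR_E(\delta_3),D_1^2)$ induced by $\cR_E(\delta_2)\hookrightarrow D_1^2$ (this uses $\delta_1=x^{k_1}$, $\delta_2=x^{k_2-1}|\cdot|^{-1}$, and our assumptions on the $\delta_i$, so that $\cR_E(\delta_2)$ is indeed the quotient $D_1^2/\cR_E(\delta_1)$ — note the labeling: here $\cR_E(\delta_1)$ is the sub and $\cR_E(\delta_2)$ the quotient of $D_1^2$, consistent with the conventions of \S\ref{sec: hL-FM} applied to the rank-$2$ module $D_1^{n-1}$). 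Dually, the $\GL_3$-side vertical map is the push-forward $\Ext^1_{{\GL_3}(\Q_p)}(v_{\overline{P}_2}^{\infty}(\lambda),\St_3^{\an}(\lambda))\to\Ext^1_{{\GL_3}(\Q_p)}(v_{\overline{P}_2}^{\infty}(\lambda),\widetilde\Pi^1(\lambda,\psi))$ coming from $\St_3^{\an}(\lambda)\hookrightarrow\widetilde\Pi^1(\lambda,\psi)$ (this is an injection by (\ref{sigmatilde1}), and on Ext it is injective because $\Hom_{{\GL_3}(\Q_p)}(v_{\overline{P}_2}^{\infty}(\lambda),\widetilde\Pi^1(\lambda,\psi)/\St_3^{\an}(\lambda))=0$, which is part of Lemma \ref{lem: hL-L3}(1)'s proof).

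The main step is then a compatibility-of-cup-products argument, analogous to the commutative diagrams (\ref{equ: l3-diag2}) and (\ref{equ: hL-comtri}). Concretely, I would trace through the chain of identifications defining the bottom pairing: on the Galois side it factors through $\kappa^{\gal}\colon\Ext^1_{(\varphi,\Gamma)}(D_1^2,D_1^2)\twoheadrightarrow\Ext^1_{(\varphi,\Gamma)}(D_1^2,\cR_E(\delta_2))$ and the perfect pairing $\Ext^1_{(\varphi,\Gamma)}(\cR_E(\delta_3),D_1^2)\times\Ext^1_{(\varphi,\Gamma)}(D_1^2,\cR_E(\delta_2))\to E$ of Lemma \ref{lem: hL-compL} (for $i=n-2$, which is the simple pairing (\ref{equ: hL-cupS})); restricting the first factor along $\iota$ and the second along $\kappa_1\colon\Ext^1_{(\varphi,\Gamma)}(D_1^2,\cR_E(\delta_2))\supseteq$ the image of $\Ext^1_{(\varphi,\Gamma)}(\cR_E(\delta_2),\cR_E(\delta_2))$, one lands exactly in the simple pairing $\Ext^1_{(\varphi,\Gamma)}(\cR_E(\delta_3),\cR_E(\delta_2))\times\Ext^1_{(\varphi,\Gamma)}(\cR_E(\delta_2),\cR_E(\delta_2))\to E$ that (via (\ref{equ: hL-sim0}) and Corollary \ref{coro: hL-simL}) defines $\cup_1$. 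On the $\GL_3$-side the identification (\ref{equ: hL-L3i}) is built from $\kappa_1$ (the $\GL_2$-map of (\ref{equ: hL-stv3})), the map (\ref{equ: hL-L3f}), and Lemma \ref{lem: hL-L3}(1); the compatibility I need is that composing $\pLL$ with these and then restricting to the subspace $\Ext^1_{{\GL_3}(\Q_p)}(v_{\overline{P}_2}^{\infty}(\lambda),\St_3^{\an}(\lambda))$ corresponds, under $\pLL$, to the trianguline/$\kappa_0$-kernel picture, i.e. matches the ``simple'' sub-pairing. This is where Hypothesis \ref{hypo: hL-pLL0}(2)--(3) enters (via Lemma \ref{lem: hL-kappag} and Proposition \ref{prop: hL-gl2pLL2}): it guarantees that $\pLL$ carries $\Ext^1_{\tri}$ to $\Ext^1_{\tri}$ and is compatible with the Jacquet-module description, so that the restriction to the ``Steinberg edge'' on the automorphic side matches the restriction to $\Ext^1_{(\varphi,\Gamma)}(\cR_E(\delta_2),\cR_E(\delta_2))$ on the Galois side.

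Having matched the two pairings up to the functorial restrictions, commutativity of (\ref{equ: hL-compL}) is then a diagram chase: for $x\in\Ext^1_{{\GL_3}(\Q_p)}(v_{\overline{P}_2}^{\infty}(\lambda),\St_3^{\an}(\lambda))$ and $y\in\Ext^1_{(\varphi,\Gamma)}(\cR_E(\delta_3),\cR_E(\delta_2))$, I want $\langle x,y\rangle_{\cup_1}=\langle \mathrm{push}(x),\iota(y)\rangle_{\cup}$, and both sides compute the same cup-product in $\Ext^2_{(\varphi,\Gamma)}(\cR_E(\delta_3),\cR_E(\delta_2))\cong E$ after unwinding the definitions — exactly the pattern of Lemma \ref{lem: hL-compL} and diagram (\ref{equ: hL-comtri}), now transported through $\pLL$. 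The case $(S_{1,0},\Pi^1(\lambda,\psi)^+)$ is identical, using $S_{1,0}\hookrightarrow\St_3^{\an}(\lambda)$, $\Pi^1(\lambda,\psi)^+\hookrightarrow\widetilde\Pi^1(\lambda,\psi)$, the isomorphism of Lemma \ref{lem: hL-L3}(1), and that Corollary \ref{coro: hL-simL} already asserts the same perfect pairing with $S_{1,0}$ in place of $\St_3^{\an}(\lambda)$; the relevant Ext-vanishing needed to see the vertical maps are the right ones is Lemma \ref{lem: hL-sim}(1). The main obstacle I expect is the bookkeeping in the second paragraph: precisely identifying, step by step, how the composite (\ref{equ: hL-L3i}) interacts with the sub-object $\St_3^{\an}(\lambda)\subset\widetilde\Pi^1(\lambda,\psi)$ and checking that the induced map on the ``Steinberg'' $\Ext^1$ is $\pLL$-compatible with $\kappa^{\gal}$ restricted to $\Ext^1_{(\varphi,\Gamma)}(\cR_E(\delta_2),\cR_E(\delta_2))$ — i.e. verifying that no spurious sign or normalization discrepancy (in the Jacquet-module normalization of \cite{Em11}, the twists by $|\cdot|$, or the identification (\ref{equ: hL-cad})) obstructs the commutativity. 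Once the normalizations are pinned down (as in Remark \ref{compatgl2} and Remark \ref{rem: hL-Jac}), the rest is formal.
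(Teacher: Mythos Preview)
Your overall strategy is the same as the paper's --- transport the Galois-side diagram (\ref{equ: hL-comtri}) through $\pLL$ and match it against the simple pairing of Corollary~\ref{coro: hL-simL} via the explicit description of the map (\ref{equ: hL-L3p}) --- and you have correctly located the real work in the bookkeeping of how (\ref{equ: hL-L3i}) interacts with $\St_3^{\an}(\lambda)\subset\widetilde\Pi^1(\lambda,\psi)$.

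There is however one genuine error that propagates. You describe $u_1$ as the \emph{injection} $\Ext^1_{(\varphi,\Gamma)}(\cR_E(\delta_3),\cR_E(\delta_2))\hookrightarrow\Ext^1_{(\varphi,\Gamma)}(\cR_E(\delta_3),D_1^2)$ ``induced by $\cR_E(\delta_2)\hookrightarrow D_1^2$'', yet you simultaneously (and correctly) note that $\cR_E(\delta_2)$ is the \emph{quotient} of $D_1^2$. There is no such injection; the map $u_1$ in (\ref{equ: hL-compL}) goes the other way and is the natural \emph{surjection} $\Ext^1_{(\varphi,\Gamma)}(\cR_E(\delta_3),D_1^2)\twoheadrightarrow\Ext^1_{(\varphi,\Gamma)}(\cR_E(\delta_3),\cR_E(\delta_2))$ induced by $D_1^2\twoheadrightarrow\cR_E(\delta_2)$. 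Consequently the compatibility you write down, $\langle x,y\rangle_{\cup_1}=\langle\mathrm{push}(x),\iota(y)\rangle_{\cup}$, is not the one encoded by the diagram: what you must prove is $\langle\mathrm{push}(x),z\rangle_{\cup}=\langle x,u_1(z)\rangle_{\cup_1}$ for $x$ upstairs and $z\in\Ext^1_{(\varphi,\Gamma)}(\cR_E(\delta_3),D_1^2)$. This is not a cosmetic slip, because your formulation would require a section of $u_1$, which does not exist.

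Once this is repaired, the argument proceeds as in the paper, but note that the step you flag as ``the main obstacle'' --- showing that restricting (\ref{equ: hL-L3p}) to $\Ext^1_{\tri}(\pi(\lambda_{1,2},\psi),\pi(\lambda_{1,2},\psi))$ factors through $\Ext^1_{\GL_3(\Q_p)}(v_{\overline{P}_2}^\infty(\lambda),\St_3^{\an}(\lambda))$ --- is not purely formal. The paper's step~(a) establishes this by observing that trianguline deformations land (via (\ref{equ: hL-tria}), (\ref{equ: hL-iot1})) in $\Ima(\iota_1)=\Ext^1_{\GL_2(\Q_p)}(\St_2^\infty(\lambda_{1,2}),\pi(\lambda_{1,2},\psi)^-)$, and that after parabolic induction to $\GL_3$ the constituent $\widetilde I(s\cdot\lambda_{1,2})$ (which lives in $\widetilde\Pi^1(\lambda,\psi)/\St_3^{\an}(\lambda)$) is absent from $(\Ind_{\overline P_1}^{\GL_3}\pi(\lambda_{1,2},\psi)^-\otimes x^{k_3})^{\an}$; this together with the $\Ext^1$-vanishing already used in Lemma~\ref{lem: hL-L3}(1) gives the factorization. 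The paper then needs step~(b) (surjectivity onto $\Ext^1_{\GL_3(\Q_p)}(v_{\overline{P}_2}^\infty(\lambda),\St_3^{\an}(\lambda))$, by a dimension count) and step~(c) (the explicit identification of the resulting map with $\kappa^{\aut}$ followed by (\ref{equ: hL-sim0}), using Remark~\ref{rem: hL-Jac}) before the Galois-side diagrams (\ref{equ: hL-L3o}) and (\ref{equ: hL-comma}) can be invoked. Your sketch gestures at all of this but does not separate out the factorization argument of (a), which is where the representation-theoretic content lies.
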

\begin{proof}
(a) We first show that the composition:
\begin{multline}\label{equ: hL-trivL}
\Ext^1_{\tri}\big(\pi(\lambda_{1,2},\psi), \pi(\lambda_{1,2}, \psi)\big) \hooklongrightarrow \Ext^1_{\GL_2(\Q_p)}\big(\pi(\lambda_{1,2},\psi), \pi(\lambda_{1,2}, \psi)\big) \\ \twoheadlongrightarrow \Ext^1_{\GL_2(\Q_p)}\big(\St_2^{\infty}(\lambda_{1,2}), \pi(\lambda_{1,2}, \psi)\big) \buildrel{(\ref{equ: hL-L3f})} \over \twoheadlongrightarrow \Ext^1_{{\GL_3}(\Q_p)}\big(v_{\overline{P}_2}^{\infty}(\lambda),\widetilde\Pi^1(\lambda, \psi)\big)
\end{multline}
factors through:
\begin{equation}\footnotesize\label{equ: hL-L3k}
\Ext^1_{\tri}\big(\pi(\lambda_{1,2}, \psi), \pi(\lambda_{1,2}, \psi)\big) \longrightarrow \Ext^1_{{\GL_3}(\Q_p)}\big(v_{\overline{P}_2}^{\infty}(\lambda), \St_3^{\an}(\lambda)\big)\hooklongrightarrow \Ext^1_{{\GL_3}(\Q_p)}\big(v_{\overline{P}_2}^{\infty}(\lambda),\widetilde\Pi^1(\lambda, \psi)\big).
\end{equation}
By (\ref{equ: hL-tria}), the composition of the first two maps in (\ref{equ: hL-trivL}) has image equal to $\Ima(\iota_1)$ (cf. (\ref{equ: hL-iot1})). It is thus sufficient to show that the composition:
\begin{multline}\label{equ: hL-L3j}
 \Ext^1_{\GL_2(\Q_p)}\big(\St_2^{\infty}(\lambda_{1,2}), \pi(\lambda_{1,2}, \psi)^-\big) \xlongrightarrow{\iota_1} \Ext^1_{\GL_2(\Q_p)}\big(\St_2^{\infty}(\lambda_{1,2}), \pi(\lambda_{1,2}, \psi)\big)\\ \xlongrightarrow{(\ref{equ: hL-L3f})} \Ext^1_{{\GL_3}(\Q_p)}\big(v_{\overline{P}_2}^{\infty}(\lambda),\widetilde\Pi^1(\lambda, \psi)\big)
\end{multline}
factors through:
\begin{equation}\footnotesize\label{equ: hL-L3l}
\Ext^1_{\GL_2(\Q_p)}\big(\St_2^{\infty}(\lambda_{1,2}), \pi(\lambda_{1,2}, \psi)^-\big) \longrightarrow \Ext^1_{{\GL_3}(\Q_p)}\big(v_{\overline{P}_2}^{\infty}(\lambda), \St_3^{\an}(\lambda)\big)\hooklongrightarrow \Ext^1_{{\GL_3}(\Q_p)}\big(v_{\overline{P}_2}^{\infty}(\lambda),\widetilde\Pi^1(\lambda, \psi)\big).
\end{equation}
By the construction in Remark \ref{rem: hL-L3}, it is easy to see that any element in the image of (\ref{equ: hL-L3j}) comes by push-forward from a certain extension of $v_{\overline{P}_2}^{\infty}(\lambda)$ by $\widetilde\Pi^1(\lambda, \psi)^-$. By the proof of Lemma \ref{lem: hL-L3}(1), one has:
\begin{equation*}
 \Ext^1_{{\GL_3}(\Q_p)}\big(v_{\overline{P}_2}^{\infty}(\lambda), \St_3^{\an}(\lambda)\big) \xlongrightarrow{\sim} \Ext^1_{{\GL_3}(\Q_p)}\big(v_{\overline{P}_2}^{\infty}(\lambda), \widetilde\Pi^1(\lambda, \psi)^-\big).
\end{equation*}
We deduce that the map (\ref{equ: hL-L3j}) factors through $\Ext^1_{{\GL_3}(\Q_p)}(v_{\overline{P}_2}^{\infty}(\lambda), \St_3^{\an}(\lambda))$.\\
\noindent
(b) We prove the map $\Ext^1_{\tri}(\pi(\lambda_{1,2}, \psi), \pi(\lambda_{1,2}, \psi)) \twoheadlongrightarrow \Ext^1_{{\GL_3}(\Q_p)}(v_{\overline{P}_2}^{\infty}(\lambda), \St_3^{\an}(\lambda))$ is surjective.\\
\noindent
The composition of the last two maps in (\ref{equ: hL-trivL}) is equal to (\ref{equ: hL-L3p}) and has kernel equal to $\Ker (\kappa^{\aut})$ by \ the \ proof \ of \ Theorem \ \ref{thm: hL-L3}. \ From \ (\ref{equ: hL-kp1aut}) \ we \ have \ $\Ker (\kappa^{\aut})\subseteq \Ext^1_{\tri}(\pi(\lambda_{1,2}, \psi), \pi(\lambda_{1,2}, \psi))$, so the kernel of the composition in (\ref{equ: hL-trivL}) is $\Ker (\kappa^{\aut})$. From Lemma \ref{lem: hL-kp1}(1), we get that the kernel of the composition in (\ref{equ: hL-L3k}) is (also) $\Ker (\kappa^{\aut})$ and is $2$-dimensional. \ From \ Lemma \ \ref{lem: hL-sim}(1) \ and \ Proposition \ \ref{prop: hL-sim} \ we \ deduce \ $\dim_E\Ext^1_{{\GL_3}(\Q_p)}(v_{\overline{P}_2}^{\infty}(\lambda), \St_3^{\an}(\lambda))=2$. Together with Lemma \ref{dimtri}(1) and a dimension count, we obtain that the first map in (\ref{equ: hL-L3k}) is surjective. From the proof of (a), it follows that the first map in (\ref{equ: hL-L3l}) is also surjective. In summary, we have a natural commutative diagram:
\begin{equation}\label{equ: hL-CD}\begin{CD}
   \Ext^1_{\tri}\big(\pi(\lambda_{1,2}, \psi), \pi(\lambda_{1,2}, \psi)\big) @>>> \Ext^1_{{\GL_3}(\Q_p)}\big(v_{\overline{P}_2}^{\infty}(\lambda), \St_3^{\an}(\lambda)\big) \\
   @VVV @VVV \\
    \Ext^1_{\GL_2(\Q_p)}\big(\pi(\lambda_{1,2}, \psi), \pi(\lambda_{1,2}, \psi)\big) @>>> \Ext^1_{{\GL_3}(\Q_p)}\big(v_{\overline{P}_2}^{\infty}(\lambda), \widetilde\Pi^1(\lambda,\psi)\big)
    \end{CD}
\end{equation}
where the horizontal maps are surjective and the vertical maps are injective.\\
\noindent
(c) By the discussion in (a), the morphism (\ref{equ: hL-L3l}) can be constructed in a similar way as in Remark \ref{rem: hL-L3}. In particular, for $\widetilde{\pi}\in \Ext^1_{\GL_2(\Q_p)}(\St_2^{\infty}(\lambda_{1,2}), \pi(\lambda_{1,2}, \psi)^-) $, its image in $\Ext^1_{{\GL_3}(\Q_p)}(v_{\overline{P}_2}^{\infty}(\lambda), \St_3^{\an}(\lambda))$ is a subquotient of $(\Ind_{\overline{P}_1(\Q_p)}^{\GL_3(\Q_p)} \widetilde{\pi}\otimes x^{k_3})^{\an}$. By the transitivity of parabolic inductions, one can check the following diagram commutes:
\begin{equation*}\begin{CD}
\Hom(T_2(\Q_p), E)_{\psi} @> \pr_2 >> \Hom(\Q_p^{\times}, E) \\
@VVV @VVV\\
  \Ext^1_{\GL_2(\Q_p)}\big(\St_2^{\infty}(\lambda_{1,2}), \pi(\lambda_{1,2}, \psi)^-\big) @> (\ref{equ: hL-L3l}) >> \Ext^1_{{\GL_3}(\Q_p)}\big(v_{\overline{P}_2}^{\infty}(\lambda), \St_3^{\an}(\lambda)\big)
  \end{CD}
\end{equation*}
where the left vertical map is given by the inverse of (\ref{equ: hL-jac2}) (see Remark \ref{rem: hL-Jac} for its construction), and the right vertical map is given as in Proposition \ref{prop: hL-sim} (see the discussion above Proposition \ref{prop: hL-sim} for its construction). We deduce that the following diagram commutes (see (\ref{equ: hL-kp1aut}) for $\kappa^{\aut}=\kappa$ and recall (\ref{equ: hL-L3k}) comes from (\ref{equ: hL-L3l}) by the proof of (a)):
\begin{equation}\label{equ: hL-l3m}
 \begin{CD}  \Ext^1_{\tri}\big(\pi(\lambda_{1,2}, \psi), \pi(\lambda_{1,2}, \psi)\big)@>(\ref{equ: hL-L3k}) >> \Ext^1_{{\GL_3}(\Q_p)}\big(v_{\overline{P}_2}^{\infty}(\lambda), \St_3^{\an}(\lambda)\big) \\
@V \kappa^{\aut} VV @V \wr VV\\
 \Hom(\Q_p^{\times}, E) @ >\sim >> \Hom(\Q_p^{\times}, E)
 \end{CD}
\end{equation}
where the right vertical map is the inverse of the bottom horizontal map in (\ref{equ: hL-sim0}) (via Lemma \ref{lem: hL-sim}(1)).\\
\noindent
(d) By Hypothesis \ref{hypo: hL-pLL0}(2)\&(3), the bottom squares of (\ref{equ: hL-comtri}) induce a commutative diagram:
\begin{equation}\label{equ: hL-L3o}
   \begin{CD}
   \ \ \ \ \ \Ext^1_{\tri}\big(\pi(\lambda_{1,2}, \psi), \pi(\lambda_{1,2}, \psi)\big) @. \ \ \times \ \ @. \Ext^1_{(\varphi,\Gamma)}\big(\cR_E(\delta_3), \cR_E(\delta_2)\big) @> \cup >> E\\
 @VVV @. @AAA @| \\
 \Ext^1_{\GL_2(\Q_p)}\big(\pi(\lambda_{1,2}, \psi), \pi(\lambda_{1,2}, \psi)\big) @. \!\ \times @. \!\!\!\!\!\!\Ext^1_{(\varphi,\Gamma)}\big(\cR_E(\delta_3), D_1^2\big) @> \cup >> E.\\
 \end{CD}
\end{equation}
and the top squares of (\ref{equ: hL-comtri}) induce another commutative diagram:
\begin{equation}\label{equ: hL-comma}
 \begin{CD}\Ext^1_{\tri}\big(\pi(\lambda_{1,2}, \psi), \pi(\lambda_{1,2}, \psi)\big) @. \!\times \!@. \ \Ext^1_{(\varphi,\Gamma)}\big(\cR_E(\delta_3), \cR_E(\delta_2)\big) @> \cup >> E\\
 @V \kappa^{\aut} VV @. @| @| \\
\ \Ext^1_{(\varphi,\Gamma)}(\cR_E(\delta_2),\cR_E(\delta_2)) \!@. \ \ \times \ \ @. \Ext^1_{(\varphi,\Gamma)}\big(\cR_E(\delta_3), \cR_E(\delta_2)\big) @> \cup_1 >> E\\
 \end{CD}
\end{equation}
where we identify $\Hom(\Q_p^{\times}, E)$ with $\Ext^1_{(\varphi,\Gamma)}(\cR_E(\delta_2),\cR_E(\delta_2))$ (see (\ref{equ: hL-cad})).\\
\noindent
(e) We finally prove the proposition. By (\ref{equ: hL-CD}), (\ref{equ: hL-L3o}) and Theorem \ref{thm: hL-L3}, we deduce a commutative diagram as in (\ref{equ: hL-compL}) but with the top pairing $\cup_1$ replaced by the pairing induced by the top pairing of (\ref{equ: hL-L3o}) via the surjection (see (b)):
$$\Ext^1_{\tri}\big(\pi(\lambda_{1,2}, \psi), \pi(\lambda_{1,2}, \psi)\big) \twoheadlongrightarrow \Ext^1_{{\GL_3}(\Q_p)}\big(v_{\overline{P}_2}^{\infty}(\lambda), \St_3^{\an}(\lambda)\big).$$
However, by (\ref{equ: hL-comma}) and (\ref{equ: hL-l3m}), we see these two pairings actually coincide. This concludes the proof.
\end{proof}

\noindent
We fix a {\it nonsplit} extension $D\in \Ext^1_{(\varphi,\Gamma)}\big(\cR_E(\delta_3), D_1^2\big)$ and we let (assuming Hypothesis \ref{hypo: hL-pLL0} for $D_1^2$):
\begin{equation}\label{callaut}
\cL_{\aut}(D:D_1^2)\subseteq \Ext^1_{{\GL_3}(\Q_p)}\big(v_{\overline{P}_2}^{\infty}(\lambda),\widetilde\Pi^1(\lambda, \psi)\big)\cong \Ext^1_{{\GL_3}(\Q_p)}\big(v_{\overline{P}_2}^{\infty}(\lambda),\Pi^1(\lambda, \psi)^+\big)
\end{equation}
be the $2$-dimensional $E$-vector subspace annihilated by $D$ via (\ref{equ: hL-L3}).

\begin{remark}\label{rk3.47}
{\rm By Theorem \ref{thm: hL-L3} and its proof, the composition (\ref{equ: hL-L3i}) actually induces an isomorphism $ \Ext^1_{(\varphi,\Gamma)}(D_1^2, \cR_E(\delta_2)) \xlongrightarrow{\sim}  \Ext^1_{{\GL_3}(\Q_p)}(v_{\overline{P}_2}^{\infty}(\lambda),\Pi)$ with $\Pi=\widetilde\Pi^1(\lambda, \psi)$ or $\Pi^1(\lambda, \psi)^+$\!. Moreover, for a nonsplit $D$ in $\Ext^1_{(\varphi,\Gamma)}(\cR_E(\delta_3), D_1^2)$ as above and from the definitions of $\ell_{\FM}(D: D_1^2)$ and $\cL_{\aut}(D:D_1^2)$, this isomorphism induces an isomorphism:
\begin{equation}\label{rem: hL-LgalLaut}
\ell_{\FM}(D: D_1^2) \xlongrightarrow{\sim} \cL_{\aut}(D:D_1^2)
\end{equation}
since both are annihilated by $D$ via the corresponding pairing.}
\end{remark}

\noindent
We define (cf. Notation \ref{not: hL-ext}):
\begin{eqnarray}
 \widetilde{\Pi}^1(D)^-&:=&\sE\big(\widetilde\Pi^1(\lambda, \psi), v_{\overline{P}_2}^{\infty}(\lambda)^{\oplus 2}, \cL_{\aut}(D:D_1^2)\big)\label{equ: L3-tildPi} \\
 \Pi^1(D)^-&:=& \sE\big(\Pi^1(\lambda, \psi)^+, v_{\overline{P}_2}^{\infty}(\lambda)^{\oplus 2}, \cL_{\aut}(D:D_1^2)\big).\label{equ: L3-Pi}
\end{eqnarray}
It follows from the perfect pairing (\ref{equ: hL-L3}) and Lemma \ref{lem: hL-L3}(1) that $D$ is {\it determined} by the subspace $\cL_{\aut}(D:D_1^2)$, hence by $\widetilde{\Pi}^1(D)^-$ and $\Pi^1(D)^-$. Let:
$$\cL_{\aut}(D: D_1^2)_0:=\Ext^1_{{\GL_3}(\Q_p)}\big(v_{\overline{P}_2}^{\infty}(\lambda), \St_3^{\an}(\lambda)\big)\cap \cL_{\aut}(D:D_1^2)$$
which we also view as a subspace of $\Ext^1_{{\GL_3}(\Q_p)}\big(v_{\overline{P}_2}^{\infty}(\lambda), S_{2,0}\big)$ by Lemma \ref{lem: hL-sim}(1). By Proposition \ref{prop: hL-compL}, we have via the pairing $\cup_1$ ($u_1$ as in Proposition \ref{prop: hL-compL} and identifying $D$ with its corresponding extension):
\begin{equation}\label{equ: hL-ort}
 \cL_{\aut}(D:D_1^2)_0 =(E u_1(D))^{\perp}.
\end{equation}

\noindent
We assume now that the extension $u_1(D)$ of $\cR_E(\delta_3)$ by $\cR_E(\delta_2)$ inside $D$ is nonsplit. As $\cup_1$ is perfect (cf. (\ref{equ: hL-compL})) this implies $\dim_E \cL_{\aut}(D: D_1^2)_0=1$. We define (cf. Notation \ref{not: hL-ext}):
\begin{eqnarray*}
 \widetilde{\Pi}^1(D)^-_2&:=&\sE\big(\St_3^{\an}(\lambda), v_{\overline{P}_2}^{\infty}(\lambda), \cL_{\aut}(D:D_1^2)_0\big)\\
 \Pi^1(D)^-_2&:=&\sE\big(S_{2,0}, v_{\overline{P}_2}^{\infty}(\lambda), \cL_{\aut}(D:D_1^2)_0\big).
\end{eqnarray*}
We have injections $\widetilde{\Pi}^1(D)^-_2\hookrightarrow \widetilde{\Pi}^1(D)^-$ and $\Pi^1(D)^-_2 \hookrightarrow \Pi^1(D)^-$.

\begin{remark}\label{rem: hL-sim2}
{\rm Identifying $\Ext^1_{(\varphi,\Gamma)}(\cR_E(\delta_2), \cR_E(\delta_2))$ with $\Hom(\Q_p^{\times}, E)$, the vector space $(Eu_1(D))^{\perp}$ via the top perfect pairing $\cup_1$ of (\ref{equ: hL-comtri}) is thus a one dimensional subspace of $\Hom(\Q_p^{\times}, E)$, and we let $\psi'$ be a basis. By Corollary \ref{coro: hL-simL}, we have $\Pi^1(D)^-_2\cong \Pi^2(\lambda, \psi')_0$ where we denote by $\Pi^2(\lambda, \psi')_0$ the image of $\psi'$ via the bottom bijection of (\ref{equ: hL-sim0}).}
\end{remark}

\begin{proposition}\label{prop: hL-L3}
Assume Hypothesis \ref{hypo: hL-pLL0} for $D_1^2$ and $N^2\neq 0$ on the filtered $(\varphi,N)$-module associated to $D$ (\cite[Thm.~A]{Berger2}, and note that the latter implies that $u_1(D)$ is nonsplit and $\psi$ is non smooth). Then there exists a unique subrepresentation:
$$\Pi^1(D)^{-}_1\in \Ext^1_{{\GL_3}(\Q_p)}\big(v_{\overline{P}_2}^{\infty}(\lambda), \Pi^1(\lambda, \psi)\big)\setminus \Ext^1_{{\GL_3}(\Q_p)}\big(v_{\overline{P}_2}^{\infty}(\lambda), \St_3^{\infty}(\lambda)\big)$$
of $\Pi^1(D)^-$ such that $\Pi^1(D)^-\cong \Pi^1(D)_1^-\oplus_{\St_3^{\infty}(\lambda)} \Pi^1(D)_2^-$. In particular, $\Pi^1(D)^-$ has the following form:
\begin{equation*}
 \begindc{\commdiag}[32]
 \obj(0,10)[a]{$\St_3^{\infty}(\lambda)$}
  \obj(12,2)[b]{$C_{2,1}$}
  \obj(25,2)[c]{$v_{\overline{P}_2}^{\infty}(\lambda).$}
  \obj(12,18)[d]{$C_{1,1}$}
  \obj(23,10)[e]{$C_{1,2}$}
  \obj(34,18)[f]{$C_{1,3}$}
  \obj(47,10)[g]{$v_{\overline{P}_2}^{\infty}(\lambda)$}
  \obj(23, 26)[h]{$\widetilde{C}_{1,2}$}
  \mor{a}{b}{}[+1,\solidline]
   \mor{b}{c}{}[+1,\solidline]
   \mor{a}{d}{}[+1,\solidline]
   \mor{d}{e}{}[+1, \solidline]
   \mor{e}{f}{}[+1, \solidline]
   \mor{f}{g}{}[+2, \solidline]
   \mor{d}{h}{}[+2, \solidline]
   \mor{h}{f}{}[+1, \dashline]
 \enddc
\end{equation*}
\end{proposition}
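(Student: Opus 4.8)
The plan is to identify $\Pi^1(D)^-$ as an amalgamated sum of two extensions of $v_{\overline{P}_2}^{\infty}(\lambda)$ along $\St_3^{\infty}(\lambda)$ by directly unravelling the definition \eqref{equ: L3-Pi} together with the structure of the subspace $\cL_{\aut}(D:D_1^2)$. First I would recall that by \eqref{rem: hL-LgalLaut} (Remark \ref{rk3.47}) we have a canonical isomorphism $\ell_{\FM}(D:D_1^2)\xrightarrow{\sim}\cL_{\aut}(D:D_1^2)$, and that under the surjection $\kappa:\Ext^1_{(\varphi,\Gamma)}(D_1^2,D_1^2)\twoheadrightarrow\Ext^1_{(\varphi,\Gamma)}(D_1^2,\cR_E(\delta_2))$ (whose automorphic counterpart is the isomorphism of Remark \ref{rk3.47}) the subspace $\cL_{\aut}(D:D_1^2)_0$ corresponds precisely to $\ell_{\FM}(D/D_1^1:D_1^2/D_1^1)$ seen inside $\Ext^1_{(\varphi,\Gamma)}(\cR_E(\delta_2),\cR_E(\delta_2))$ via Lemma \ref{lem: hL-compL} (the case $i=n-2$ of Remark \ref{rem: hL-sim}). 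The hypothesis $N^2\ne 0$ on the $(\varphi,N)$-module of $D$ forces, via Berger's equivalence \cite[Thm.~A]{Berger2}, that $u_1(D)$ is a nonsplit extension $\cR_E(\delta_2)-\cR_E(\delta_3)$ and that $\psi$ is non-smooth (since $D_1^2$ itself is then semi-stable noncrystalline); I would spell this out at the start so that Remark \ref{formofrepr}(1) applies in its non-smooth form, giving $\Pi^1(\lambda,\psi)$ the five-layer shape displayed in \eqref{ctilde}.

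\textbf{Key steps.} The core of the argument is a dimension count inside $\Ext^1_{{\GL_3}(\Q_p)}(v_{\overline{P}_2}^{\infty}(\lambda),\Pi^1(\lambda,\psi)^+)$. By Lemma \ref{lem: hL-L3}(2) this space is $3$-dimensional and sits in the exact sequence \eqref{equ: hL-L3b}, with the $1$-dimensional piece $\Ext^1_{{\GL_3}(\Q_p)}(v_{\overline{P}_2}^{\infty}(\lambda),\St_3^{\infty}(\lambda))$ as a subspace and $\Ext^1_{{\GL_3}(\Q_p)}(v_{\overline{P}_2}^{\infty}(\lambda),\Pi^1(\lambda,\psi)/\St_3^{\infty}(\lambda))\oplus\Ext^1_{{\GL_3}(\Q_p)}(v_{\overline{P}_2}^{\infty}(\lambda),C_{2,1})$ (each factor $1$-dimensional) as quotient. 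Since $u_1(D)$ is nonsplit, Proposition \ref{prop: hL-compL} gives that $\cL_{\aut}(D:D_1^2)_0$ is $1$-dimensional; I claim $\cL_{\aut}(D:D_1^2)$ is $2$-dimensional (orthogonal to the line $E[D]$ under the perfect pairing \eqref{equ: hL-L3}) and meets $\Ext^1_{{\GL_3}(\Q_p)}(v_{\overline{P}_2}^{\infty}(\lambda),\St_3^{\an}(\lambda))$ exactly in that line, so projecting to the quotient in \eqref{equ: hL-L3b} it surjects onto a $1$-dimensional subspace. The key point is that this image projects \emph{nontrivially} onto the $C_{1,2}-C_{1,3}$ factor $\Ext^1_{{\GL_3}(\Q_p)}(v_{\overline{P}_2}^{\infty}(\lambda),\Pi^1(\lambda,\psi)/\St_3^{\infty}(\lambda))$ — otherwise $\cL_{\aut}(D:D_1^2)$ would be contained in the preimage of the $C_{2,1}$-factor, which has dimension $2$ and contains $\cL_{\aut}(D:D_1^2)_0$, but one checks using $N^2\ne 0$ (equivalently that $D$ does not factor through $D_1^2\oplus\cR_E(\delta_3)$ compatibly with the filtration on the $\cR_E(\delta_2)-\cR_E(\delta_3)$ piece) that the relevant orthogonality conditions are genuinely independent. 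Granting this, choose a basis $\{w_0,w_1\}$ of $\cL_{\aut}(D:D_1^2)$ with $w_0$ spanning $\cL_{\aut}(D:D_1^2)_0$ and $w_1$ having nonzero image in the $C_{1,2}-C_{1,3}$ factor; then $\sE(\Pi^1(\lambda,\psi)^+,v_{\overline{P}_2}^{\infty}(\lambda),Ew_1)$ is an extension of $v_{\overline{P}_2}^{\infty}(\lambda)$ by $\Pi^1(\lambda,\psi)$ not coming from $\Ext^1_{{\GL_3}(\Q_p)}(v_{\overline{P}_2}^{\infty}(\lambda),\St_3^{\infty}(\lambda))$ — this is the sought $\Pi^1(D)^-_1$ — while $\sE(\Pi^1(\lambda,\psi)^+,v_{\overline{P}_2}^{\infty}(\lambda),Ew_0)=\Pi^1(D)^-_2$ by construction, and $\Pi^1(D)^-=\sE(\Pi^1(\lambda,\psi)^+,v_{\overline{P}_2}^{\infty}(\lambda)^{\oplus 2},\cL_{\aut}(D:D_1^2))$ is by Notation \ref{not: hL-ext} precisely the amalgamated sum $\Pi^1(D)^-_1\oplus_{\St_3^{\infty}(\lambda)}\Pi^1(D)^-_2$ (the two extensions share the common subobject $\Pi^1(\lambda,\psi)^+$, whose ``overlap'' is $\Pi^1(\lambda,\psi)^+/C_{2,1}$ restricted appropriately — concretely they are glued along $\St_3^{\infty}(\lambda)$ after observing that $w_1$ kills $C_{2,1}$ and $w_0$ kills $C_{1,1}-C_{1,2}-C_{1,3}-\widetilde C_{1,2}$). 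Uniqueness of $\Pi^1(D)^-_1$ follows since the quotient map in \eqref{equ: hL-L3b} is injective on $\cL_{\aut}(D:D_1^2)/\cL_{\aut}(D:D_1^2)_0$ and the target factor $\Ext^1_{{\GL_3}(\Q_p)}(v_{\overline{P}_2}^{\infty}(\lambda),\Pi^1(\lambda,\psi)/\St_3^{\infty}(\lambda))$ is $1$-dimensional, so $Ew_1$, hence $\Pi^1(D)^-_1$, is determined up to the ambiguity fixed by requiring $\Pi^1(D)^-\cong\Pi^1(D)^-_1\oplus_{\St_3^{\infty}(\lambda)}\Pi^1(D)^-_2$.

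\textbf{Reading off the shape and the main obstacle.} Once the splitting $\Pi^1(D)^-\cong\Pi^1(D)^-_1\oplus_{\St_3^{\infty}(\lambda)}\Pi^1(D)^-_2$ is established, the displayed Loewy diagram is immediate: the upper branch $C_{1,1}-(C_{1,2}\ \&\ \widetilde C_{1,2})-C_{1,3}-v_{\overline{P}_2}^{\infty}(\lambda)$ comes from $\Pi^1(D)^-_1$, whose underlying representation of $v_{\overline{P}_2}^{\infty}(\lambda)$ by $\Pi^1(\lambda,\psi)$ has the shape of \eqref{ctilde} with an extra $v_{\overline{P}_2}^{\infty}(\lambda)$ glued above $C_{1,3}$ via the nonzero component of $w_1$, while the lower branch $C_{2,1}-v_{\overline{P}_2}^{\infty}(\lambda)$ is exactly $\Pi^1(D)^-_2=\Pi^2(\lambda,\psi')_0$ by Remark \ref{rem: hL-sim2} (the nonsplit extension $S_{2,0}-v_{\overline{P}_2}^{\infty}(\lambda)$, whose socle is $\St_3^{\infty}(\lambda)$ and which is nonsplit precisely because $\psi'\ne 0$, forced by $u_1(D)$ nonsplit); the two branches share only their socle $\St_3^{\infty}(\lambda)$. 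The dashed line $\widetilde C_{1,2}\text{---}C_{1,3}$ is inherited from \eqref{ctilde}. The main obstacle is the nontriviality claim at the heart of step two, i.e.\ that the projection of $\cL_{\aut}(D:D_1^2)$ to the $C_{1,2}-C_{1,3}$ quotient in \eqref{equ: hL-L3b} is surjective: this is where the hypothesis $N^2\ne 0$ genuinely enters, and it must be extracted by transporting the orthogonality defining $\cL_{\aut}(D:D_1^2)$ through the isomorphisms of Theorem \ref{thm: hL-L3} and Proposition \ref{prop: hL-compL} back to the Galois side, where it becomes the statement that $D$, viewed in $\Ext^1_{(\varphi,\Gamma)}(\cR_E(\delta_3),D_1^2)$, does not lie in the image of $\Ext^1_{(\varphi,\Gamma)}(\cR_E(\delta_3),\cR_E(\delta_2))$ — equivalently that the extension $D$ is ``genuinely rank $3$'' and not built from the simple $\cL$-invariant alone. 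Checking this reduction carefully, and matching it with the explicit description of the map \eqref{equ: hL-L3f} via parabolic induction in Remark \ref{rem: hL-L3}, is the delicate bookkeeping step; the rest is dévissage already carried out in \cite{Br16}.
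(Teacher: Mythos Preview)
Your overall strategy --- splitting $\cL_{\aut}(D:D_1^2)$ into two complementary lines and building $\Pi^1(D)^-_1$, $\Pi^1(D)^-_2$ from them --- is exactly the paper's, but your construction of $w_1$ has a genuine gap. You choose $w_1$ with nonzero image under $\pr_1$ (the projection to the $\Pi^1(\lambda,\psi)/\St_3^\infty(\lambda)$ factor of \eqref{equ: hL-L3b}) and then assert both that $\sE(\Pi^1(\lambda,\psi)^+,v_{\overline P_2}^\infty(\lambda),Ew_1)$ is an extension of $v_{\overline P_2}^\infty(\lambda)$ by $\Pi^1(\lambda,\psi)$ and that ``$w_1$ kills $C_{2,1}$''. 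But killing $C_{2,1}$ means $\pr_2(w_1)=0$, i.e.\ $w_1\in\Ker(\pr_2)=\Ext^1_{\GL_3(\Q_p)}(v_{\overline P_2}^\infty(\lambda),\Pi^1(\lambda,\psi))$, and nothing in your choice forces this: having $\pr_1(w_1)\neq 0$ is entirely compatible with $\pr_2(w_1)\neq 0$. If $\pr_2(w_1)\neq 0$ the representation attached to $w_1$ genuinely involves $C_{2,1}$, so it is not an extension by $\Pi^1(\lambda,\psi)$ and the amalgamation over $\St_3^\infty(\lambda)$ alone cannot recover $\Pi^1(D)^-$. What is needed is to take $w_1$ spanning the line $\cL_{\aut}(D:D_1^2)\cap\Ker(\pr_2)$ --- but you have not shown this intersection is $1$-dimensional (or even nonzero).

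The paper secures everything from the single equality $\Ker(\pr)\cap\cL_{\aut}(D:D_1^2)=0$, where $\Ker(\pr)=\Ext^1_{\GL_3(\Q_p)}(v_{\overline P_2}^\infty(\lambda),\St_3^\infty(\lambda))$ is the $1$-dimensional locally algebraic line; a dimension count then gives $\dim(\cL_{\aut}(D:D_1^2)\cap\Ker(\pr_i))=1$ for $i=1,2$ and the direct-sum decomposition of $\cL_{\aut}(D:D_1^2)$. This equality --- not the surjectivity of $\pr_1\vert_{\cL_{\aut}}$ that you flag as the main obstacle, which follows at once from $\dim\cL_{\aut}(D:D_1^2)_0=1<2$ and needs only $u_1(D)$ nonsplit --- is where $N^2\neq 0$ genuinely enters: it forces $u_1(D)$ to be \emph{noncrystalline}, and the second part of Corollary~\ref{coro: hL-simL} identifies $\Ext^1_{\GL_3(\Q_p)}(v_{\overline P_2}^\infty(\lambda),\St_3^\infty(\lambda))$ as exactly the annihilator of the crystalline line $\Ext^1_e(\cR_E(\delta_3),\cR_E(\delta_2))$ under $\cup_1$, so $u_1(D)\notin\Ext^1_e$ gives $\cL_{\aut}(D:D_1^2)_0\cap\Ker(\pr)=0$. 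Your proposed Galois-side reduction (``$D$ not in the image of $\Ext^1_{(\varphi,\Gamma)}(\cR_E(\delta_3),\cR_E(\delta_2))$'') does not parse as stated --- the map $u_1$ goes the other way --- and in any case the correct condition is not that $D$ be ``genuinely rank $3$'' but that $u_1(D)$ be noncrystalline rather than merely nonsplit.
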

\begin{proof}
Considering the surjection in (\ref{equ: hL-L3b}):
\begin{multline*}
\pr: \Ext^1_{{\GL_3}(\Q_p)}\big(v_{\overline{P}_2}^{\infty}(\lambda), \Pi^1(\lambda, \psi)^+\big)\\ \xlongrightarrow{(\pr_1, \pr_2)} \Ext^1_{{\GL_3}(\Q_p)}\big(v_{\overline{P}_2}^{\infty}(\lambda), \Pi^1(\lambda, \psi)/\St_3^{\infty}(\lambda)\big)\oplus \Ext^1_{{\GL_3}(\Q_p)}\big(v_{\overline{P}_2}^{\infty}(\lambda), C_{2,1}\big),
\end{multline*}
we see with Lemma \ref{lem: hL-L3}(2), Remark \ref{formofrepr} and the form of $\Pi^1(\lambda, \psi)^+$ at the beginning of \S~\ref{linvariantgl3} that we have:
\begin{eqnarray*}
 \Ker(\pr)&\cong& \Ext^1_{{\GL_3}(\Q_p)}\big(v_{\overline{P}_2}^{\infty}(\lambda), \St_3^{\infty}(\lambda)\big) \\
 \Ker(\pr_1) & \cong& \Ext^1_{{\GL_3}(\Q_p)}\big(v_{\overline{P}_2}^{\infty}(\lambda), S_{1,0}\big) \\
 \Ker(\pr_2) & \cong & \Ext^1_{{\GL_3}(\Q_p)}\big(v_{\overline{P}_2}^{\infty}(\lambda), \Pi^1(\lambda, \psi)\big)
\end{eqnarray*}
and it follows from Lemma \ref{lem: hL-L3}(2) that the first kernel has dimension $1$ and the two others dimension $2$. We first show:
\begin{equation}\label{equ: hL-pr12}
\Ker(\pr)\cap \cL_{\aut}(D:D_1^2)=\Ker(\pr)\cap \cL_{\aut}(D:D_1^2)_0=0.
\end{equation}
The first equality is clear since by definition and Lemma \ref{lem: hL-sim}(1) we have $\cL_{\aut}(D:D_1^2)_0=\Ker(\pr_1)\cap \cL_{\aut}(D:D_1^2)$. As $N^2\neq 0$, the quotient $u_1(D)$ (as an extension of $\cR_E(\delta_3)$ by $\cR_E(\delta_2)$) is {\it not} crystalline, hence by the second part of Corollary \ref{coro: hL-simL}, $u_1(D)$ is {\it not} annihilated by $\Ext^1_{{\GL_3}(\Q_p)}(v_{\overline{P}_2}^{\infty}(\lambda), \St_3^{\infty}(\lambda))$. Since the latter vector space has dimension $1$, one deduces from (\ref{equ: hL-ort}):
\begin{equation}\label{equ: hL-nsm}
\Ext^1_{{\GL_3}(\Q_p)}\big(v_{\overline{P}_2}^{\infty}(\lambda), \St_3^{\infty}(\lambda)\big)\cap \cL_{\aut}(D:D_1^2)_0 =0
\end{equation}
and \ the \ second \ equality \ in \ (\ref{equ: hL-pr12}) \ follows. \ As \ $\cL_{\aut}(D:D_1^2)$ \ has \ dimension \ $2$ \ and \ $\Ext^1_{{\GL_3}(\Q_p)}(v_{\overline{P}_2}^{\infty}(\lambda), \Pi^1(\lambda, \psi)^+)$ \ dimension \ $3$, \ we \ easily \ deduce \ from \ (\ref{equ: hL-pr12}) \ and \ $\dim_E\Ker(\pr_i)=2$ that $\dim_E\Ker(\pr_i)\cap \dim_E \cL_{\aut}(D:D_1^2) =1$ for $i=1,2$. Let $\cL_{\aut}(D:D_1^2)_1:=\Ker(\pr_2)\cap \cL_{\aut}(D:D_1^2)\subseteq \Ext^1_{{\GL_3}(\Q_p)}\big(v_{\overline{P}_2}^{\infty}(\lambda), \Pi^1(\lambda, \psi)\big)$ and set (with Notation \ref{not: hL-ext}):
\begin{equation*}
 \Pi^1(D)^-_1:=\sE\big(\Pi^1(\lambda, \psi), v_{\overline{P}_2}^{\infty}(\lambda), \cL_{\aut}(D:D_1^2)_1\big).
\end{equation*}
Since we have $\cL_{\aut}(D:D_1^2)_1\oplus \cL_{\aut}(D:D_1^2)_0=\cL_{\aut}(D:D_1^2)$ (as follows from (\ref{equ: hL-pr12})) and $\Ker(\pr)\cap \cL_{\aut}(D:D_1^2)_1=0$ ({\it ibid.}), one easily checks the statements in the proposition.
\end{proof}

\noindent
Replacing $\overline{P}_1$ by $\overline{P}_2$, we define $\Pi^2(\lambda, \psi)^+:=\Pi^2(\lambda, \psi)\oplus_{\St_3^{\infty}(\lambda)} S_{2,0} \hookrightarrow \widetilde\Pi^2(\lambda, \psi)$ as for $\Pi^1(\lambda, \psi)^+$ at the beginning of \S~\ref{linvariantgl3}. All the above results have their analogue (or symmetric) version. Let $D_2^3:=D(p^2,\lambda_{2,3}^{\sharp}, \psi)$ (see the beginning of \S~\ref{sec: hL-pLL}). The following theorem is the analogue of Theorem \ref{thm: hL-L3} and Proposition \ref{prop: hL-compL}.

\begin{theorem}\label{symm}
Assume Hypothesis \ref{hypo: hL-pLL0} for $D_2^3$. The isomorphism (\ref{equ: hL-pLL0}) and (\ref{equ: hL-CD3}) induce a perfect pairing:
\begin{equation*}
   \Ext^1_{{\GL_3}(\Q_p)}\big(v_{\overline{P}_1}^{\infty}(\lambda),W\big) \times \Ext^1_{(\varphi,\Gamma)}\big(D_2^3, \cR_E(\delta_1)\big) \xlongrightarrow{\cup} E
\end{equation*}
such that the following diagram commutes:
\begin{equation*}
  \begin{CD}\Ext^1_{{\GL_3}(\Q_p)}\big(v_{\overline{P}_1}^{\infty}(\lambda), \Pi^-\big) @. \ \times \ @. \Ext^1_{(\varphi,\Gamma)}\big(\cR_E(\delta_2), \cR_E(\delta_1)\big) @> \cup_1 >> E\\
 @VVV @. @A u_1 AA @| \\
 \Ext^1_{{\GL_3}(\Q_p)}\big(v_{\overline{P}_1}^{\infty}(\lambda),\Pi\big) @. \times @. \!\!\!\!\!\!\!\!\Ext^1_{(\varphi,\Gamma)}\big(D_2^3, \cR_E(\delta_1)\big) @> \cup >> E\\
 \end{CD}
\end{equation*}
with $(\Pi^-,\Pi)=(S_{2,0},\Pi^2(\lambda, \psi)^+)$ or $(\St^{\an}_3(\lambda),\widetilde\Pi^2(\lambda, \psi))$ and where the top perfect pairing is given as in Corollary \ref{coro: hL-simL} (via Lemma \ref{lem: hL-sim}(1)).
\end{theorem}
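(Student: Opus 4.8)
The plan is to deduce Theorem~\ref{symm} from the $\overline{P}_1$-results (Theorem~\ref{thm: hL-L3} and Proposition~\ref{prop: hL-compL}) by the symmetry exchanging $\overline{P}_1\leftrightarrow\overline{P}_2$, $s_1\leftrightarrow s_2$, and the pair of characters $(\delta_1,\delta_2)\leftrightarrow(\delta_2,\delta_3)$. All the intermediate representations $C_{2,i}$, $S_{2,i}$, $\widetilde{C}_{2,2}$, $\Pi^2(\lambda,\psi)_0$, $\Pi^2(\lambda,\psi)^-$, $\widetilde\Pi^2(\lambda,\psi)^-$, $\Pi^2(\lambda,\psi)$, $\widetilde\Pi^2(\lambda,\psi)$ were already defined at the end of \S~\ref{sec: hL-pI} precisely as subquotients of $(\Ind_{\overline{P}_2(\Q_p)}^{\GL_3(\Q_p)} x^{k_1}\otimes\pi(\lambda_{2,3},\psi))^{\an}$, obtained from the $\overline{P}_1$-constructions by this replacement; hence Lemmas~\ref{lem: hL-sim}, \ref{lem: hL-sim2}, \ref{lem: hL-C13}, \ref{lem: hL-L3}, \ref{lem: hL-L3b} all hold verbatim with $\overline{P}_1$ replaced by $\overline{P}_2$ (and $v_{\overline{P}_2}^{\infty}(\lambda)$ by $v_{\overline{P}_1}^{\infty}(\lambda)$, $\lambda_{1,2}$ by $\lambda_{2,3}$, $x^{k_3}$ by $x^{k_1}$ on the left), as was noted in the last sentence of \S~\ref{sec: hL-pI}.

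First I would record the Galois-side input. For the $\overline{P}_2$ situation the relevant cup-product is the one in diagram~(\ref{equ: hL-CD3}), i.e. the perfect pairing
$\Ext^1_{(\varphi,\Gamma)}(\cR_E(\delta_2),D_2^{n})\times \Ext^1_{(\varphi,\Gamma)}(D_2^{n},\cR_E(\delta_1))\to \Ext^2_{(\varphi,\Gamma)}(\cR_E(\delta_2),\cR_E(\delta_1))\cong E$
together with the surjection $\kappa'$, all of which were established right after (\ref{equ: hL-CD3}); here $n=3$ and $D_2^3=D(p^2,\lambda_{2,3}^{\sharp},\psi)$. This is exactly the mirror of the pairing (\ref{equ: hL-cup}) used for $\overline{P}_1$, so Proposition~\ref{prop-l3-cup} applies. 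On the automorphic side one needs the mirror of Lemma~\ref{lem: hL-L3} and Lemma~\ref{lem: hL-L3b}: that the composition
$\Ext^1_{\GL_2(\Q_p)}(\St_2^{\infty}(\lambda_{2,3}),\pi(\lambda_{2,3},\psi))\to \Ext^1_{\GL_3(\Q_p)}(v_{\overline{P}_1}^{\infty}(\lambda),\widetilde\Pi^2(\lambda,\psi))$
is surjective with one-dimensional kernel generated by $\iota_1(\pi(\lambda,0,\psi)^-)$, which follows by the same Jacquet-module/spectral-sequence argument (\ref{equ: hL-spe})--(\ref{equ: hL-L3h}) after swapping the two maximal parabolics. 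Combining these exactly as in the proof of Theorem~\ref{thm: hL-L3}, using the isomorphism (\ref{equ: hL-pLL0}) for $D_2^3$ furnished by Hypothesis~\ref{hypo: hL-pLL0} and the kernel computations (\ref{equ: hL-kp1L}), Lemma~\ref{lem: hL-kp1}(1) and Lemma~\ref{lem: hL-L3b}(2) (all in their $\overline{P}_2$-version), gives the perfect pairing of the statement; the dimension $3$ comes from the mirror of Lemma~\ref{lem: hL-L3}(2). The commuting square with the top pairing $\cup_1$ of Corollary~\ref{coro: hL-simL} is then proved word-for-word as in Proposition~\ref{prop: hL-compL}: steps (a)--(e) there only use the formal mechanism of the bottom squares of (\ref{equ: l3-diag2})/(\ref{equ: hL-commL}) and their $\overline{P}_2$-analogues appearing after (\ref{equ: hL-CD3}), Remark~\ref{rem: hL-Jac}, Proposition~\ref{prop: hL-sim}, and Hypothesis~\ref{hypo: hL-pLL0}(2)\&(3) for $D_2^3$.

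The one genuinely non-formal point to check is that the asymmetry in the Galois input — for $\overline{P}_1$ one uses $\Ext^1(\cR_E(\delta_3),D_1^2)$ with $D_1^2$ a \emph{submodule}, whereas for $\overline{P}_2$ one uses $\Ext^1(D_2^3,\cR_E(\delta_1))$ with $D_2^3$ a \emph{quotient} — is genuinely harmless, i.e. that the second family of results in \S~\ref{sec: hL-FM} (the one built on diagram (\ref{equ: hL-CD3}), which treats exactly the quotient case $D_2^n$) really is the verbatim mirror of the first. This is essentially already asserted in the text surrounding (\ref{equ: hL-CD3}), so the main obstacle is merely bookkeeping: matching twists (note $D_2^3$ involves $\unr(p^2)$ rather than $\unr(p)$, reflecting the Hodge--Tate/Sen weight shift $\delta_2=|\cdot|^{-1}x^{k_2-1}$ versus $\delta_1=x^{k_1}$) and making sure that the normalization of $\pi(\lambda_{2,3},\psi)\otimes x^{k_1}$ on the $\overline{P}_2$-side induces the same identification of extension groups as $x^{k_3}\otimes\pi(\lambda_{1,2},\psi)$ did on the $\overline{P}_1$-side, via Lemma~\ref{lem: hL-cent3} and the argument of Lemma~\ref{lem: hL-L3b}(1). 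Once these compatibilities are in place the proof is complete.

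Concretely, I would organize the write-up as: (i) invoke the $\overline{P}_2$-analogues of Lemmas~\ref{lem: hL-sim}--\ref{lem: hL-L3b}, observing they follow by the symmetry already set up in \S~\ref{sec: hL-pI}; (ii) invoke Hypothesis~\ref{hypo: hL-pLL0} for $D_2^3$ to get (\ref{equ: hL-pLL0}) and hence the analogues of Lemma~\ref{lem: hL-kappag} via Proposition~\ref{prop: hL-gl2pLL2}; (iii) run the proof of Theorem~\ref{thm: hL-L3} mutatis mutandis with (\ref{equ: hL-cup}) replaced by the pairing after (\ref{equ: hL-CD3}) and $\kappa$ by $\kappa'$, to obtain the displayed perfect pairing for $(\Pi^-,\Pi)=(\St_3^{\an}(\lambda),\widetilde\Pi^2(\lambda,\psi))$; (iv) run the proof of Proposition~\ref{prop: hL-compL}, parts (a)--(e), with $\overline{P}_1\leftrightarrow\overline{P}_2$ throughout, to get the commuting square; (v) finally pass from $\widetilde\Pi^2(\lambda,\psi)$ to $\Pi^2(\lambda,\psi)^+$ and from $\St_3^{\an}(\lambda)$ to $S_{2,0}$ using the $\overline{P}_2$-analogue of Lemma~\ref{lem: hL-L3}(1), exactly as at the end of Theorem~\ref{thm: hL-L3}. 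No new idea beyond symmetry is needed; I expect the proof in the paper to simply say ``by the same arguments, replacing $\overline{P}_1$ by $\overline{P}_2$''.
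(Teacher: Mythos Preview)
Your proposal is correct and matches the paper's approach: the paper gives no separate proof of Theorem~\ref{symm}, introducing it only as ``the analogue of Theorem~\ref{thm: hL-L3} and Proposition~\ref{prop: hL-compL}'' and relying on the $\overline{P}_1\leftrightarrow\overline{P}_2$ symmetry already set up at the end of \S\ref{sec: hL-pI}. Your expectation in the last sentence is exactly right, and your detailed verification that the bookkeeping (twists, the $D_2^3$/quotient-versus-$D_1^2$/sub asymmetry handled via (\ref{equ: hL-CD3}), and the kernel computation) goes through is more than the paper itself supplies.
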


\noindent
For $D\in \Ext^1_{(\varphi,\Gamma)}\big(D_2^3, \cR_E(\delta_1)\big)$, we define (using the symmetric version of Lemma \ref{lem: hL-L3}(1)):
\begin{equation}\footnotesize\label{aut2}
\cL_{\aut}(D:D_2^3):=(E D)^{\perp}\subseteq \Ext^1_{{\GL_3}(\Q_p)}\big(v_{\overline{P}_1}^{\infty}(\lambda),\Pi^2(\lambda, \psi)^+\big)\cong \Ext^1_{{\GL_3}(\Q_p)}\big(v_{\overline{P}_1}^{\infty}(\lambda),\widetilde\Pi^2(\lambda, \psi)\big)
\end{equation}
and likewise using Lemma \ref{lem: hL-sim}(1):
\begin{multline*}
 \cL_{\aut}(D:D_2^3)_0:=\cL_{\aut}(D:D_2^3) \cap \Ext^1_{{\GL_3}(\Q_p)}\big(v_{\overline{P}_1}^{\infty}(\lambda),S_{2,0}\big)\\ \subseteq \Ext^1_{{\GL_3}(\Q_p)}\big(v_{\overline{P}_1}^{\infty}(\lambda),S_{2,0}\big)\cong \Ext^1_{{\GL_3}(\Q_p)}\big(v_{\overline{P}_1}^{\infty}(\lambda),\St_3^{\an}(\lambda)\big),
\end{multline*}
and we have $\cL_{\aut}(D:D_2^3)_0=(Eu_1(D))^{\perp}$ via the pairing $\cup_1$ in Theorem \ref{symm}. We also define:
\begin{eqnarray}
 \Pi^2(D)^-&:=& \sE\big(\Pi^2(\lambda, \psi)^+, v_{\overline{P}_1}^{\infty}(\lambda)^{\oplus 2}, \cL_{\aut}(D:D_2^3)\big)\label{piD2}\\
 \widetilde{\Pi}^2(D)^- &:=&\sE\big(\widetilde\Pi^2(\lambda, \psi), v_{\overline{P}_1}^{\infty}(\lambda)^{\oplus 2}, \cL_{\aut}(D:D_2^3)\big) \label{pitilde2}\\
 \Pi^2(D)^-_1&:=& \sE\big(S_{2,0}, v_{\overline{P}_1}^{\infty}(\lambda), \cL_{\aut}(D:D_2^3)_0\big)\nonumber \\
 \widetilde{\Pi}^2(D)^-_1&:=& \sE\big(\St_3^{\an}(\lambda), v_{\overline{P}_1}^{\infty}(\lambda), \cL_{\aut}(D:D_2^3)_0\big)\nonumber
\end{eqnarray}
and we have $\Pi^2(D)^-_1\hookrightarrow \Pi^2(D)^-$ and $\widetilde{\Pi}^2(D)^-_1\hookrightarrow\widetilde{\Pi}^2(D)^-$. Similarly as in Proposition \ref{prop: hL-L3}, assuming Hypothesis \ref{hypo: hL-pLL0} there exists a unique representation if $N^2\neq 0$:
\begin{equation*}
 \Pi^2(D)^-_2\in \Ext^1_{{\GL_3}(\Q_p)}\big(v_{\overline{P}_1}^{\infty}(\lambda), \Pi^2(\lambda, \psi)\big)\setminus \Ext^1_{{\GL_3}(\Q_p)}\big(v_{\overline{P}_1}^{\infty}(\lambda), \St_3^{\infty}(\lambda)\big)
\end{equation*}
such that $\Pi^2(D)^-\cong \Pi^2(D)^-_1\oplus_{\St_3^{\infty}(\lambda)} \Pi^2(D)^-_2$.\\

\noindent
Now we fix $(D,(\delta_1,\delta_2,\delta_3))$ a special noncritical $(\varphi,\Gamma)$-module of rank $3$ over $\cR_E$ (see the beginning of \S~\ref{sec: hL-FM}) with $\delta_1=x^{k_1}$, $\delta_2=x^{k_2-1}|\cdot |^{-1}$, and $\delta_3= x^{k_3-2}|\cdot |^{-2}$. We assume the extension of $\cR_E(\delta_2)$ (resp. of $\cR_E(\delta_3)$) by $\cR_E(\delta_1)$ (resp. by $\cR_E(\delta_2)$) is nonsplit and we let $\psi_1$ be a basis of $\cL_{\FM}(D_1^2: \cR_E(\delta_1))\subseteq \Hom(\Q_p^{\times}, E)$ and $\psi_2$ a basis of $\cL_{\FM}(D_2^3: \cR_E(\delta_2))\subseteq \Hom(\Q_p^{\times}, E)$ (see \S~\ref{sec: hL-FM}), i.e. we have $D_1^2\cong D(p,\lambda_{1,2}^{\sharp}, \psi_1)$ and $D_2^3\cong D(p^2,\lambda_{2,3}^{\sharp}, \psi_2)$ (see the beginning of \S~\ref{sec: hL-pLL} and (\ref{dalpha})). We assume $N^2\neq 0$, which is equivalent to $\psi_i$ not smooth for $i=1,2$ and we also assume that Hypothesis \ref{hypo: hL-pLL0} holds for $D_1^2$ and $D_2^3$ (recall that under quite mild genericity assumptions this is automatic by Lemma \ref{lem: hL-etale}, Proposition \ref{prop: hL-gl2pLL} and Proposition \ref{prop: hL-gl2pLL2}). We can then associate to $D$ the above representations:
\begin{eqnarray*}
\Pi^1(D)^-&\cong &\Pi^1(D)^-_1\oplus_{\St_3^{\infty}(\lambda)} \Pi^1(D)^-_2\ \hookrightarrow \ \widetilde{\Pi}^1(D)^-\\
\Pi^2(D)^-&\cong &\Pi^2(D)^-_1\oplus_{\St_3^{\infty}(\lambda)} \Pi^2(D)^-_2\ \hookrightarrow \ \widetilde{\Pi}^2(D)^-.
\end{eqnarray*}
By the symmetric version of Lemma \ref{lem: hL-sim2}, the subrepresentation $S_{1,0}-v_{\overline{P}_1}^{\infty}(\lambda)$ of $\Pi^2(\lambda, \psi_2)\!\subseteq \Pi^2(D)^-_2\subseteq \Pi^2(D)^-$ is isomorphic to the image $\Pi^2(\lambda,\psi_2)_0$ of $\psi_2$ via the bottom map of (\ref{equ: hL-sim0}). By Lemma \ref{lem: hL-compL}, we deduce:
\begin{multline*}
\cL_{\FM}(D_2^3:\cR_E(\delta_2))=\ell_{\FM}(D_2^3:\cR_E(\delta_2))=\ell_{\FM}(D:D_1^2)\cap \Ext^1_{(\varphi,\Gamma)}(\cR_E(\delta_2), \cR_E(\delta_2))\\
=E \psi_2\subseteq \Hom(\Q_p^{\times}, E^{\times}).
\end{multline*}
Thus by Remark \ref{rem: hL-sim2}, $\Pi^1(D)^-_2$ is also isomorphic to $\Pi^2(\lambda,\psi_2)_0$. In particular, we have an injection $\Pi^1(D)^-_2\hookrightarrow \Pi^2(D)^-_2$. Similarly, we have an injection $\Pi^2(D)^-_1\hookrightarrow \Pi^1(D)^-_1$.
Denote by $\Pi^0(D)^-$ the following subrepresentation of $\Pi^1(D)^-$ and $\Pi^2(D)^-$:
\begin{equation*}
 \begindc{\commdiag}[32]
 \obj(0,10)[a]{\!\!\!\!\!\!\!\!\!\!\!\!\!\!\!\!\!\!\!\!\!\!\!\!\!\!\!\!\!\!\!\!$\Pi^0(D)^-\ \ \cong \ \ \St_3^{\infty}(\lambda)$}
  \obj(12,2)[b]{$C_{2,1}$}
  \obj(23,2)[c]{$C_{2,2}$}
  \obj(12,18)[d]{$C_{1,1}$}
  \obj(23,18)[e]{$C_{1,2}$}
  \mor{a}{b}{}[+1,\solidline]
   \mor{b}{c}{}[+1,\solidline]
   \mor{a}{d}{}[+1,\solidline]
   \mor{d}{e}{}[+1, \solidline]
 \enddc
\end{equation*}
and put $\Pi(D)^-:=\Pi^1(D)^-\oplus_{\Pi^0(D)^-} \Pi^2(D)^-$, which is thus of the following form (where $C_{1,4}\cong v_{\overline{P}_2}^{\infty}(\lambda)\cong C_{2,2}$ and $C_{2,4}\cong v_{\overline{P}_1}^{\infty}(\lambda)\cong C_{1,2}$):
\begin{equation}\label{sansc5}
\begin{gathered}
\begindc{\commdiag}[32]
 \obj(0,16)[a]{\!\!\!\!\!\!\!\!\!\!\!\!\!\!\!\!\!\!\!\!\!\!\!\!\!\!\!\!$\Pi(D)^-\ \ \cong \ \ \St_3^{\infty}(\lambda)$}
  \obj(12,2)[b]{$C_{2,1}$}
  \obj(23,-6)[c]{$\widetilde{C}_{2,2}$}
  \obj(23,10)[f]{$C_{2,2}$}
  \obj(34,2)[x]{$C_{2,3}$}
  \obj(45,2)[y]{$C_{2,4}$}
  \obj(12,30)[d]{$C_{1,1}$}
  \obj(23,38)[e]{$\widetilde{C}_{1,2}$}
  \obj(23,22)[g]{$C_{1,2}$}
  \obj(34,30)[u]{$C_{1,3}$}
  \obj(45,30)[v]{$C_{1,4}$}
  \mor{a}{b}{}[+1,\solidline]
   \mor{b}{c}{}[+1,\solidline]
      \mor{b}{f}{}[+1,\solidline]
        \mor{c}{x}{}[+1, \dashline]
             \mor{f}{x}{}[+1,\solidline]
   \mor{x}{y}{}[+1, \solidline]
   \mor{a}{d}{}[+1,\solidline]
   \mor{d}{e}{}[+1, \solidline]
\mor{e}{u}{}[+1, \dashline]
\mor{u}{v}{}[+1, \solidline]
\mor{g}{u}{}[+1, \solidline]
\mor{d}{g}{}[+1, \solidline]
\enddc.
\end{gathered}
\end{equation}
It follows from the previous results that the $(\varphi,\Gamma)$-module $D$ and the $\GL_3(\Q_p)$-representation $\Pi(D)^-$ determine each other. From the results of \cite[\S~4]{Br16} (see in particular \cite[Rem.~4.6.3]{Br16}), there is a unique locally analytic representation $\Pi(D)$ containing $\Pi(D)^-$ of the form:
\begin{equation}\label{piDplus}
\begin{gathered}
\begindc{\commdiag}[32]
 \obj(0,16)[a]{\!\!\!\!\!\!\!\!\!\!\!\!\!\!\!\!\!\!\!\!\!\!\!\!\!\!\!\!$\Pi(D)^-\ \ \cong \ \ \St_3^{\infty}(\lambda)$}
  \obj(12,2)[b]{$C_{2,1}$}
  \obj(23,-6)[c]{$\widetilde{C}_{2,2}$}
  \obj(23,10)[f]{$C_{2,2}$}
  \obj(34,2)[x]{$C_{2,3}$}
  \obj(45,10)[y]{$C_{2,4}$}
  \obj(45,-6)[h]{$\widetilde{C}_{2,4}$}
  \obj(56,2)[i]{$C_{2,5}$}
  \obj(12,30)[d]{$C_{1,1}$}
  \obj(23,38)[e]{$\widetilde{C}_{1,2}$}
  \obj(23,22)[g]{$C_{1,2}$}
  \obj(34,30)[u]{$C_{1,3}$}
  \obj(45,22)[v]{$C_{1,4}$}
  \obj(45,38)[j]{$\widetilde{C}_{1,4}$}
  \obj(56,30)[k]{$C_{1,5}$}
  \mor{a}{b}{}[+1,\solidline]
   \mor{b}{c}{}[+1,\solidline]
      \mor{b}{f}{}[+1,\solidline]
        \mor{c}{x}{}[+1, \dashline]
             \mor{f}{x}{}[+1,\solidline]
   \mor{x}{y}{}[+1, \solidline]
   \mor{a}{d}{}[+1,\solidline]
   \mor{d}{e}{}[+1, \solidline]
\mor{e}{u}{}[+1, \dashline]
\mor{u}{v}{}[+1, \solidline]
\mor{g}{u}{}[+1, \solidline]
\mor{d}{g}{}[+1, \solidline]
\mor{h}{i}{}[+1,\dashline]
\mor{y}{i}{}[+1,\solidline]
\mor{x}{h}{}[+1,\solidline]
\mor{u}{j}{}[+1, \solidline]
\mor{j}{k}{}[+1,\dashline]
\mor{v}{k}{}[+1,\solidline]
\enddc.
\end{gathered}
\end{equation}
where the irreducible constituents $C_{1,5}$, $C_{2,5}$, $\widetilde{C}_{1,4}$, $\widetilde{C}_{2,4}$ are defined in \cite[\S~4.1]{Br16}.\\

\noindent
For $\chi: \Q_p^{\times} \ra E^{\times}$ and $D':=D\otimes_{\cR_E} \cR_E(\chi)$, we finally set $\Pi(D')^-:=\Pi(D)^-\otimes \chi \circ\dett$, $\Pi(D'):=\Pi(D)\otimes \chi \circ\dett$, and if $D'\cong D_{\rig}(\rho)$ for a certain $\rho:\Gal_{\Q_p} \ra \GL_3(E)$, we set $\Pi(\rho):=\Pi(D')$. In particular, we have thus associated to any sufficiently generic semi-stable $\rho:\Gal_{\Q_p} \ra \GL_3(E)$ with distinct Hodge-Tate weights and with $N^2\ne 0$ on $D_{\st}(\rho)=(B_{\rm st}\otimes_{\Q_p}\rho)^{\Gal_{\Q_p}}$ a locally analytic representation $\Pi(\rho)$ of $\GL_3(\Q_p)$ over $E$ which has the form (\ref{piDplus}) and which only depends on and completely determines $\rho$.

\section{Ordinary part functor}\label{ordinarypartfunctor}

\noindent
In this section we give several properties of the ordinary part functor of \cite{EOrd1} and review the ordinary part of a locally algebraic representation that has an invariant lattice (\cite[\S~5.6]{Em4}).

\subsection{Notation and preliminaries}\label{sec: ord-1}

\noindent
We start with some preliminary notation.\\

\noindent
We fix finite extensions $L$ and $E$ of $\Q_p$ as in \S~\ref{intronota} and denote by $\varpi_L$ a uniformizer of $L$. We let $G$ be a connected reductive algebraic group over $L$, $B$ a Borel subgroup of $G$, $P$ a parabolic subgroup of $G$ containing $B$ with $N_P$ the unipotent radical of $P$ and $L_P$ a Levi subgroup of $P$. We let $\overline{P}$ be the parabolic subgroup of $G$ opposite to $P$, $N_{\overline P}$ its unipotent radical and $Z_{L_P}$ the center of $L_P=L_{\overline P}$. As in \cite{EOrd1}, we denote by $\Comp(\co_E)$ the category of complete noetherian local $\co_E$-algebras with finite residue field.\\

\noindent
Let $K$ be a compact open subgroup of $G(L)$, as in \cite[\S~3.3]{EOrd1} we say $K$ admits an \emph{Iwahori decomposition} (with respect to $P$ and $\overline{P}$) if the following natural map:
\begin{equation*}
 (K\cap N_{\overline{P}}(L)) \times (K\cap Z_{L_P}(L)) \times (K\cap N_P(L)) \longrightarrow K
\end{equation*}
is an isomorphism. We let $I_0 \supset I_1\supset I_2\supset \cdots \supset I_i \supset I_{i+1} \supset \cdots$ be a cofinal family of compact open subgroups of $G(L)$ such that:
\begin{itemize}
 \item $I_i$ is normal in $I_0$
 \item $I_i$ admits an Iwahori decomposition.
\end{itemize}
For $i\in \Z_{\geq 0}$, we put $N_i:=N_P(L)\cap I_i$, $L_i:=L_P(L)\cap I_i$ and $\overline{N}_i:=N_{\overline{P}}(L)\cap I_i$. For $i\geq j\geq 0$, we put $I_{i,j}:=\overline{N}_i L_j N_0$, which can be checked to be a compact open subgroup of $I_0$ such that:
\begin{equation*}
 \overline{N}_i \times L_j \times N_0 \xlongrightarrow{\sim} I_{i,j}.
\end{equation*}

\begin{remark}
{\rm For any $i\in \Z_{\geq 0}$, the subgroups $\overline{N}_i$, $L_i$, and $N_i$ of $I_0$ are normalized by $L_0$, and hence $I_{i,j}$ is normalized by $L_0$ for any $i\geq j \geq 0$. We show this for $\overline{N}_i$ (the other cases are similar). Let $z\in L_0$, we have $zN_{\overline{P}}(L)z^{-1}=N_{\overline{P}}(L)$, which together with the fact $zI_iz^{-1}=I_i$ implies $z \overline{N}_i z^{-1}=z(N_{\overline{P}}(L) \cap I_i) z^{-1}=N_{\overline{P}}(L) \cap I_i=\overline{N}_i$.}
\end{remark}

\noindent
Now we set:
\begin{equation*}
L_P^+:=\{z\in L_P(L),\ zN_0z^{-1}\subseteq N_0\}
\end{equation*}
and $Z_{L_P}^+:=L_P^+\cap Z_{L_P}(L)$. We will assume moreover the following hypothesis.

\begin{hypothesis}\label{hypo: ord}
For any $z\in Z_{L_P}^+$ and $i\in \Z_{\geq 0}$, we have $\overline{N}_i \subseteq z \overline{N}_i z^{-1}$.
\end{hypothesis}

\begin{example}\label{ex: ord-gln}
{\rm Let $G=\GL_n$, $P$ a parabolic subgroup containing the Borel subgroup $B$ of upper triangular matrices, and let $L_P\cong \GL_{n_1} \times \cdots \times \GL_{n_k}$ be the Levi subgroup of $P$ containing the diagonal subgroup $T$. Let $I_i:=\{g\in \GL_n(\co_L),\ g\equiv 1 \pmod{\varpi_L^i}\}$, we have:
\begin{equation*}
  Z_{L_P}^+=\{(a_1,\cdots, a_k)\in Z_{L_P}(L),\ \val_p(a_1)\geq \cdots \geq \val_p(a_k)\}
\end{equation*}
where $a_j\in L^{\times}$ is seen in (the center of) $\GL_{n_j}(L)$ by the diagonal map. It is straightforward to check that Hypothesis \ref{hypo: ord} is satisfied for $\{I_i\}_{i\in \Z_{\geq 0}}$.}
\end{example}

\subsection{The functor $\Ord_P$}\label{sec: ord-2}

\noindent
We review and/or prove useful results on the functor $\Ord_P$ of \cite{EOrd1}, \cite{EOrd2}.\\

\noindent
Let $A\in \Comp(\co_E)$ with $\fm_A$ the maximal ideal of $A$ and let $V$ be a smooth representation of $G(L)$ over $A$ in the sense of \cite[Def. 2.2.5]{EOrd1}. Recall we have in particular $V\cong \varinjlim_n V[\fm_A^n]$. The $A$-submodule $V^{N_0}$ of elements fixed by $N_0$ is equipped with a natural Hecke action of $L_P^+$ given by (cf. \cite[Def. 3.1.3]{EOrd1}):
\begin{equation}\label{equ: ord-Hec}
 z \cdot v:= \sum_{x\in N_0/zN_0z^{-1}} \widetilde{x} (zv)
\end{equation}
where $z\in L_P^+$, $v\in V^{N_0}$, and $\widetilde{x}$ is an arbitrary lift of $x$ in $N_0$. Note that the $A$-module $V^{N_0}$ is a smooth representation of $L_0$ over $A$. Following \cite[Def. 3.1.9]{EOrd1}, we define:
\begin{equation}\label{deford}
\Ord_P(V):= \Hom_{A[Z_{L_P}^+]}\big(A[Z_{L_P}(L)], V^{N_0}\big)_{Z_{L_P}(L)-\fini},
\end{equation}
which is called the $P$-ordinary part of $V$. Here the $A$-module $\Hom_{A[Z_{L_P}^+]}(A[Z_{L_P}(L)], V^{N_0})$ is naturally equipped with an $A$-linear action of $Z_{L_P}(L)$ given by $(z\cdot f)(x):=f(zx)$, and $(\cdot)_{Z_{L_P}(L)-\fini}$ denotes the $A$-submodule of locally $Z_{L_P}(L)$-finite elements (cf. \cite[Def. 2.3.1 (2)]{EOrd1}). By \cite[Lem. 3.1.7]{EOrd1}, $\Hom_{A[Z_{L_P}^+]}(A[Z_{L_P}(L)], V^{N_0})$ and $ \Ord_P(V)$ are smooth representations of $L_P(L)$ over $A$. By \cite[Thm. 3.3.3]{EOrd1}, if $V$ is moreover admissible (cf. \cite[Def. 2.2.9]{EOrd1}), then $\Ord_P(V)$ is a smooth admissible representation of $L_P(L)$ over $A$. As in \cite[Def. 3.1.10]{EOrd1}, we have the canonical lifting map:
\begin{equation}\label{equ: ord-can}
\iota_{\can}: \Ord_P(V) \lra V^{N_0}, \ f\mapsto f(1)
\end{equation}
which is $L_P^+$-linear, and injective if $V$ is admissible (cf. \cite[Thm. 3.3.3]{EOrd1}). We put:
\begin{equation*}
\NOrd_P(V):=\big\{v\in V^{N_0}\text{ such that there exists $z\in Z_{L_P}^+$ with $z\cdot v=0$}\big\}
\end{equation*}
which is an $A$-submodule of $V^{N_0}$ stable by $L_P^+$.

\begin{theorem}\label{thm: Ord-dirc}
Assume $V$ is an admissible representation of $G(L)$, then we have:
\begin{equation*}
\Ord_P(V) \oplus \NOrd_P(V) \xlongrightarrow{\sim} V^{N_0}
\end{equation*}
as smooth representations of $L_0$, where $\Ord_P(V)$ is sent to $V^{N_0}$ by $\iota_{\can}$.
\end{theorem}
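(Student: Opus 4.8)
\textbf{Proof strategy for Theorem \ref{thm: Ord-dirc}.}

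The plan is to build the direct sum decomposition from an idempotent on $V^{N_0}$ coming from the Hecke action of a single well-chosen element $z\in Z_{L_P}^+$, exploiting the admissibility of $V$. First I would reduce to the case where $V$ is killed by a power of $\fm_A$: since $V\cong\varinjlim_n V[\fm_A^n]$, each $V[\fm_A^n]$ is an admissible (in fact, finitely generated over $A/\fm_A^n$ after taking $N_0$-invariants on a small enough congruence subgroup) smooth representation, the functors $\Ord_P(\cdot)$, $\NOrd_P(\cdot)$ and $(\cdot)^{N_0}$ all commute with the filtered colimit $\varinjlim_n(\cdot)[\fm_A^n]$, and a compatible family of decompositions passes to the colimit. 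So assume $\fm_A^nV=0$.

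Next I would pick $z\in Z_{L_P}^+$ and analyze the Hecke operator $U_z:V^{N_0}\to V^{N_0}$ defined by \eqref{equ: ord-Hec}. The key input is \cite[Thm.~3.3.3]{EOrd1} (admissibility of $\Ord_P(V)$ together with injectivity of $\iota_{\can}$), combined with the standard fact that on an admissible smooth representation over an Artinian coefficient ring, the operator $U_z$ acts on the finite-dimensional spaces $V[\fm_A]^{N_0\cap K}$ (for $K$ running over the $I_i$) with a well-defined generalized eigenspace decomposition; concretely, for each such finite level the operator $U_z$ decomposes $V^{N_0}$ as the direct sum of the subspace on which $U_z$ is invertible (the ``ordinary'' or unit-root part) and the subspace on which $U_z$ is topologically nilpotent. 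Because Hypothesis \ref{hypo: ord} guarantees $\overline N_i\subseteq z\overline N_iz^{-1}$ (so that the formula \eqref{equ: ord-Hec} is compatible with the Iwahori decomposition and $U_z$ is ``integral''), and because $z$ is central in $L_P$, this decomposition is $L_0$-stable, indeed $L_P^+$-stable. I would then identify the unit-root part with $\Ord_P(V)$ via $\iota_{\can}$: an element $v\in V^{N_0}$ lies in the image of $\iota_{\can}$ iff the $Z_{L_P}^+$-orbit of $v$ under the Hecke action extends to a locally $Z_{L_P}(L)$-finite function on $Z_{L_P}(L)$, which, once $U_z$ is invertible on the ambient finite-dimensional eigenspace, one produces by inverting $U_z$; conversely elements of $\Ord_P(V)$ are $U_z$-invertible by construction of \eqref{deford}. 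The complementary (topologically nilpotent) part is exactly $\NOrd_P(V)$: if $U_z$ is topologically nilpotent on a finite level, some power $z^m\cdot v=0$, so $v\in\NOrd_P(V)$; conversely any $v$ with $z'\cdot v=0$ for some $z'\in Z_{L_P}^+$ cannot have a nonzero unit-root component.

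The main obstacle I anticipate is making precise and clean the claim that a \emph{single} $z\in Z_{L_P}^+$ suffices to split off all of $\NOrd_P(V)$ — a priori $\NOrd_P(V)$ is defined as a union over all $z\in Z_{L_P}^+$ of kernels of Hecke operators, and one must check that on the admissible representation $V$ there is a cofinal $z$ whose topologically nilpotent part already absorbs every such kernel. This is where admissibility is essential: at each finite level $V[\fm_A^j]^{K}$ is finite-dimensional over a finite ring, the monoid $Z_{L_P}^+/Z_{L_P}^{+,\times}$ is finitely generated, and one checks that the nilpotent parts for different $z$ are all contained in the nilpotent part of a suitable product; alternatively one invokes \cite[Lem.~3.2.1, Prop.~3.2.4]{EOrd1} (the structure of the Hecke action and of $\Ord_P$) directly. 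Once that point is settled, the decomposition $\Ord_P(V)\oplus\NOrd_P(V)\xrightarrow{\sim}V^{N_0}$ as $L_0$-representations (and the identification of the first summand with the image of $\iota_{\can}$) is immediate, and I would finally remark that the decomposition is automatically $L_0$-equivariant because $U_z$ commutes with the $L_0$-action, $L_0$ normalizing $N_0$ and centralizing $z$.
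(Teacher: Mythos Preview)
Your strategy is sound and close in spirit to the paper's, but the paper organizes things differently in a way that sidesteps exactly the obstacle you flag. Instead of a single $z$, the paper works on each finite level $V_i:=V^{I_{i,i}}$ (stable under $Z_{L_P}^+$ by Hypothesis~\ref{hypo: ord} and \cite[Lem.~3.3.2]{EOrd1}, finitely generated over $A$ by admissibility) and forms the commutative Artinian $A$-algebra $B_i\subset\End_A(V_i)$ generated by the image of \emph{all} of $Z_{L_P}^+$. Decomposing $V_i$ along $\Spec B_i$ into ``ordinary'' (where the image of $Z_{L_P}^+$ avoids the maximal ideal) and ``nonordinary'' parts gives $V_i=(V_i)_{\ord}\oplus(V_i)_{\nord}$; then $(V_i)_{\nord}=\NOrd_P(V)\cap V_i$ tautologically, while $(V_i)_{\ord}\cong\Ord_P(V)^{L_i}$ via $\iota_{\can}$ by \cite[Lem.~3.1.5, Thm.~3.3.3]{EOrd1}. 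Passing to $\varinjlim_i$ finishes. Your own ``alternative fix'' of passing to products of monoid generators and looking at their joint spectrum is precisely this $B_i$-decomposition in disguise.

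If you want to keep the single-$z$ framing, you must close the gap you identify by choosing $z$ strictly in the interior of the cone $Z_{L_P}^+$ (modulo the maximal compact), so that every $z'\in Z_{L_P}^+$ divides some power $z^m$ in the monoid, say $z^m=z'z''$ with $z''\in Z_{L_P}^+$. Then on each $V_i$ one has $U_{z'}U_{z''}=U_z^m$, whence: (i) if $z'\cdot v=0$ then $z^m\cdot v=0$, so $\NOrd_P(V)\cap V_i$ lies in the $U_z$-nilpotent part; and (ii) on the $U_z$-unit part every $U_{z'}$ is invertible, so the $Z_{L_P}^+$-action extends to $Z_{L_P}(L)$ and $\iota_{\can}$ identifies this part with $\Ord_P(V)^{L_i}$. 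Without such a choice of $z$, neither (i) nor (ii) is automatic. A minor correction: the finite-level spaces you want are $V^{I_{i,i}}$ (invariants under $\overline N_iL_iN_0$), not $V^{N_0\cap I_i}$; only the former are $Z_{L_P}^+$-stable, and this is where Hypothesis~\ref{hypo: ord} actually enters.
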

\begin{proof}
We easily reduce to the case where $V$ is annihilated by $\fm_A^n$ for a certain $n\in \Z_{>0}$.\\

\noindent
(a) Set $V_i:=V^{I_{i,i}}$, since $V$ is smooth we have $V^{N_0}=\varinjlim_{i} V_i$. By Hypothesis \ref{hypo: ord} and \cite[Lem. 3.3.2]{EOrd1} (applied to $I_0=I_1=I_{i,i}$), we see that $V_i$ is stable by the action of $Z_{L_P}^+$. Since $V$ is admissible, $V_i$ is a finitely generated $A$-module. Let $B_i$ be the $A$-subalgebra of $\End_A(V_i)$ generated by $Z_{L_P}^+$, then $B_i$ is a finite commutative $A$-algebra. Note that $B_i$ is actually a finite $A/\fm_A^n$-algebra since $V_i$ is annihilated by $\fm_A^n$, so in particular is Artinian. For a maximal ideal $\fm$ of $B_i$, we call $\fm$ ordinary (resp. nonordinary) if ${\rm Image}(Z_{L_P}^+)\cap \fm=\emptyset$ (resp. ${\rm Image}(Z_{L_P}^+) \cap \fm\neq \emptyset$) where ${\rm Image}(Z_{L_P}^+)$ is the image of $Z_{L_P}^+$ in $\End_A(V_i)$ (or in $B_i$). Since $B_i$ is artinian we have a natural decomposition:
\begin{equation*}
 B_i \cong \prod_{\fm \text{\ ordinary}} \!\!\!(B_i)_{\fm} \ \ \times \!\!\prod_{\fm \text{\ non\ ordinary}} \!\!\!\!\!(B_i)_{\fm}\ \ =:\ \ B_{i,\ord} \times B_{i, \nord}
\end{equation*}
and another decomposition:
\begin{equation}\label{equ: ord-dec}
 V_i\ \cong \ (V_i)_{\ord}\oplus (V_i)_{\nord} \ :=\prod_{\fm \text{\ ordinary}} \!\!\!(V_i)_{\fm}\ \ \times \!\!\!\prod_{\fm \text{\ non\ ordinary}} \!\!\!\!\!\!(V_i)_{\fm}.
\end{equation}
Note that, for $v\in V_i$, we have $v\in (V_i)_{\nord}$ if and only if there exists $z\in Z_{L_P}^+$ such that $z\cdot v=0$. In particular $(V_i)_{\nord}=\NOrd_P(V)\cap V_i$. Note also that $V_i$ is stable by $L_0$ since $I_{i,i}$ is normalized by $L_0$. Since the action of $L_0$ and $Z_{L_P}^+$ commute, (\ref{equ: ord-dec}) is equivariant under the action of $L_0$.\\

\noindent
(b) For $j>i$, the natural injection $V_i \hookrightarrow V_j$ is equivariant under the action of $Z_{L_P}^+$ and $L_0$. Therefore the restriction to the subspace $V_i$ induces a surjection $\kappa_{j,i}: B_j \twoheadrightarrow B_i$ of finite $A/\fm_A^n$-algebras (it is surjective because both $A$-algebras are generated by the image of $Z_{L_P}^+$). For a maximal ideal $\fn$ of $B_i$, it is clear that $\fn$ is ordinary (resp. nonordinary) if and only if $\kappa_{j,i}^{-1}(\fn)$ is ordinary (resp. nonordinary). Thus the inclusion
$V_i \hookrightarrow V_j$ induces injections $(V_i)_{\ord}\hookrightarrow (V_j)_{\ord}$ and $(V_i)_{\nord} \hookrightarrow (V_j)_{\nord}$ which are equivariant under the action of $L_0$ and $Z_{L_P}^+$. From $(V_i)_{\nord}=\NOrd_P(V)\cap V_i$ in (a), we also see $\NOrd_P(V)\cong \varinjlim_i (V_i)_{\nord}$.\\

\noindent
(c) By \cite[Thm. 3.3.3]{EOrd1}, we have $\Ord_P(V)=\varinjlim_i \Ord_P(V)^{L_i}$ and $\iota_{\can}$ is injective. Moreover, we have:
\begin{equation}\small\label{equ: ord-fini}
 \Ord_P(V)^{L_i}=\Hom_{A[Z_{L_P}^+]}\big(A[Z_{L_P}(L)], V^{L_iN_0}\big)_{Z_{L_P}(L)-\fini}=\Hom_{A[Z_{L_P}^+]}\big(A[Z_{L_P}(L)], V^{I_{i,i}}\big)
\end{equation}
where the first equality follows by definition (recall $L_0$, and hence $L_i$, normalize $N_0$ and commute with $Z_{L_P}^+$), and the second follows by the proof of \emph{loc.cit} as we now explain. Since $V^{I_{i,i}}$ is a finitely generated $A$-module, any element in $\Hom_{A[Z_{L_P}^+]}(A[Z_{L_P}(L)], V^{I_{i,i}})$ is locally $Z_{L_P}(L)$-finite, hence we have an inclusion:
\begin{equation}\label{equ: ord-fini2}
\Hom_{A[Z_{L_P}^+]}\big(A[Z_{L_P}(L)], V^{I_{i,i}}\big)\subseteq \Hom_{A[Z_{L_P}^+]}\big(A[Z_{L_P}(L)], V^{L_iN_0}\big)_{Z_{L_P}(L)-\fini}.
\end{equation}
However, by the proof of \cite[Thm. 3.3.3]{EOrd1}, we have $\iota_{\can}(\Ord_P(V)^{L_i}) \subseteq V^{I_{i,i}}$, in other words, any element in the right hand side set of (\ref{equ: ord-fini2}) has image in $V^{I_{i,i}}$ and thus is contained in the left hand side (note that by Hypothesis \ref{hypo: ord}, the $A$-module $U$ in the proof of \cite[Thm. 3.3.3]{EOrd1} is actually equal to $V^{I_{j,j}}$ with the notation of {\it loc.cit.}).\\

\noindent
(d) Combining (\ref{equ: ord-fini}) with the isomorphism at the end of the proof of \cite[Lem. 3.1.5]{EOrd1} (applied to $U=V^{I_{i,i}}=V_i$), the map $\iota_{\can}$ induces an isomorphism $\Ord_P(V)^{L_i} \xlongrightarrow{\sim} (V_i)_{\ord}$ which is equivariant under the action of $L_0$ and $Z_{L_P}^+$. Thus we deduce $\Ord_P(V)\cong \varinjlim_i (V_i)_{\ord}$ and together with (\ref{equ: ord-dec}) and (b):
\begin{equation*}
 V^{N_0}\cong \varinjlim_i V_i \cong \varinjlim_i \big((V_i)_{\ord} \oplus (V_i)_{\nord}\big) \cong \Ord_P(V) \oplus \NOrd_P(V)
\end{equation*}
which concludes the proof.
\end{proof}

\begin{corollary}\label{coro: ordinj}
Assume $A:=\co_E/\varpi_E^n$ for some $n>0$, $V$ is an admissible representation of $G(L)$ over $A$ and $V$ is an injective object in the category of smooth representations of $I_0$ over $A$. Then $\Ord_P(V)$ is an injective object in the category of smooth representations of $L_0$ over $A$.
\end{corollary}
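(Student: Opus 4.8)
The plan is to reduce the statement, via Theorem~\ref{thm: Ord-dirc}, to a statement about the direct summand $\Ord_P(V)\subseteq V^{N_0}$ inside the category of smooth $L_0$-representations over $A=\co_E/\varpi_E^n$, and then to exploit the standard criterion that an object is injective precisely when $\Hom$ out of it is exact, combined with a Frobenius-reciprocity (adjunction) manipulation that transfers injectivity from $I_0$ to $L_0$. First I would recall that, since $A$ is Artinian, the category of smooth $A[H]$-modules for a profinite group $H$ has enough injectives, and an object is injective iff it is a direct summand of a ``co-induced'' (equivalently, by compactness, induced) module $\cC^\infty(H,A)^{\oplus ?}$ or, more usefully, iff the functor of $H$-invariants composed with it behaves well; concretely, $V$ injective as a smooth $I_0$-representation is equivalent to $V|_{\overline N_0 L_0 N_0}$ being injective and, via the Iwahori decomposition $\overline N_i\times L_i\times N_0\xrightarrow{\sim} I_{i,i}$, one can isolate the $L_0$-direction.

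The key steps, in order, would be: (1) By Theorem~\ref{thm: Ord-dirc}, $V^{N_0}\cong \Ord_P(V)\oplus \NOrd_P(V)$ as smooth $L_0$-representations, so it suffices to prove $V^{N_0}$ is an injective smooth $L_0$-representation (a direct summand of an injective is injective). (2) Reduce to showing $V^{N_0}=\varinjlim_i V^{I_{i,i}}$ is injective; since a filtered colimit of injectives over a Noetherian-type category need not be injective in general, I would instead argue at finite level or use that over the Artinian ring $A$ with $I_0$ profinite the relevant category is locally Noetherian so that $\varinjlim$ of injectives \emph{is} injective (Eisenbud--Enochs / Matlis-type argument, or the explicit description of smooth injectives as products of $\cC^\infty$). (3) The heart of the argument: for any compact open $N_0\subseteq G(L)$ normalized appropriately, taking $N_0$-invariants of a smooth $I_0$-representation, followed by restriction to $L_0$, sends injectives to injectives. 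This I would establish by the adjunction
\[
\Hom_{L_0}^{\mathrm{sm}}\big(W, V^{N_0}\big)\;\cong\;\Hom_{I_{0,0}}^{\mathrm{sm}}\big(\Ind_{L_0N_0}^{I_{0,0}} W, V\big)
\]
for smooth $W$ (inflating $W$ along $L_0N_0\twoheadrightarrow L_0$), using the Iwahori factorization $I_{0,0}=\overline N_0 L_0 N_0$ to see that this inflation–induction functor is exact, and then using that $V$ is injective as a smooth $I_0$-representation (hence as a smooth $I_{0,0}$-representation, $I_{0,0}$ being open in $I_0$, restriction along an open inclusion of profinite groups preserving smooth injectivity since it has an exact left adjoint, compact induction). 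Exactness of the left adjoint $W\mapsto \Ind_{L_0N_0}^{I_{0,0}}(\mathrm{infl}\,W)$ forces its right adjoint $V\mapsto V^{N_0}|_{L_0}$ to preserve injectives, which is exactly the claim.

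The main obstacle I anticipate is step (3): making the adjunction $\Hom_{L_0}(W,V^{N_0})\cong \Hom_{I_{0,0}}(\Ind\,W,V)$ precise and checking that the left adjoint is genuinely exact — one must be careful that $N_0$-invariants, not $N_0$-coinvariants, appear, and that $L_0$ normalizes $N_0$ so that $V^{N_0}$ is indeed an $L_0$-representation (this is Hypothesis~\ref{hypo: ord} and the remark preceding it, already available). A secondary subtlety is the passage to the colimit in step (2): I would either invoke that smooth representations of a profinite group over an Artinian ring form a locally finite (hence locally Noetherian) Grothendieck category in which direct limits of injectives are injective, or avoid it entirely by noting that the argument in (3) applies verbatim with $I_{i,i}$ in place of $I_{0,0}$ and $L_i$ in place of $L_0$, giving that each $(V^{I_{i,i}})^{N_0}|_{L_i}=V^{I_{i,i}}|_{L_i}$ (wait—here one uses $V^{I_{i,i}}$ already $N_0$-fixed) is $L_i$-injective, and then assembling. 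I expect the cleanest writeup to cite \cite[\S3]{EOrd1} for the relevant adjunction formalism and deduce the corollary formally from Theorem~\ref{thm: Ord-dirc} together with exactness of $\Ord_P$ on the relevant subcategory (which follows from the direct sum decomposition), so that the only real input beyond \emph{loc.~cit.} is the observation that co-induction from $L_0$ to $I_0$ in the inflate direction is exact.
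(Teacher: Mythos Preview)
Your reduction via Theorem~\ref{thm: Ord-dirc} to showing that $V^{N_0}$ is injective as a smooth $L_0$-representation is correct and is shared with the paper. The gap is in step~(3). The displayed adjunction would need the functor you call $\Ind_{L_0N_0}^{I_{0,0}}\circ\mathrm{infl}$ to be a \emph{left} adjoint to $V\mapsto V^{N_0}|_{L_0}$, but smooth induction from a closed subgroup of a profinite group is a \emph{right} adjoint to restriction, and $L_0N_0$ is closed and \emph{not} open in $I_{0,0}$ (the Iwahori decomposition identifies the coset space with the infinite profinite set $\overline{N}_0$). In fact $V\mapsto V^{N_0}|_{L_0}$ has no left adjoint whatsoever, because it does not preserve products: take $M_i=\cC(I_0,A)$ and $f_i=\mathbf{1}_{I_{i,0}}$ for $i\geq 0$; then each $f_i$ is $L_0N_0$-fixed, so $(f_i)_i\in\prod_i M_i^{N_0}$ is $L_0$-smooth, but its stabilizer in $I_{0,0}$ is $\bigcap_i I_{i,0}=L_0N_0$, which is not open, so $(f_i)_i$ does not lie in the smooth $I_{0,0}$-product. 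Hence no ``exact left adjoint forces preservation of injectives'' argument is available.

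The paper avoids this by using the admissibility hypothesis concretely rather than abstractly: an admissible injective smooth $I_0$-representation over $A=\co_E/\varpi_E^n$ is a direct summand of $\cC(I_0,A)^{\oplus r}$ for some \emph{finite} $r$ (as in \cite[Cor.~5.3.19]{Em4}). One then computes $\cC(I_0,A)^{N_0}\cong\cC(\overline{N}_0,A)\otimes_A\cC(L_0,A)$ directly from the Iwahori decomposition, and this is $L_0$-injective by \cite[Prop.~2.1.3]{EOrd2}. Thus $V^{N_0}$, and hence its direct summand $\Ord_P(V)$, is a summand of an explicit $L_0$-injective. The structure theorem for injectives is doing the work that your missing left adjoint cannot.
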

\begin{proof}
By the same argument as in the proof of \cite[Cor. 5.3.19]{Em4}, there exists $r>0$ such that $V$ is a direct factor of $\cC(I_0, A)^{\oplus r}$ as a representation of $I_0$ where $\cC(I_0, A)$ (= the $A$-module of continuous, hence locally constant, functions from $I_0$ to $A$ with the discrete topology on the latter) is endowed with the left action of $I_0$ by right translation. Since $I_0$ admits an Iwahori decomposition, we deduce from this that $V^{N_0}$ is a direct factor of:
\begin{equation}\label{equ: ord-N0inv}
W:=(\cC(I_0, A)^{N_0})^{\oplus r}\cong \big(\cC(\overline{N}_0, A) \otimes_{\co_E/\varpi_E^n}\cC(L_0, A)\big)^{\oplus r}
\end{equation}
where $L_0$ acts on the latter by $l(f\otimes h):=f\otimes l(h)$. By \cite[Prop. 2.1.3]{EOrd2}, $W$ is an injective object in the category of smooth representations of $L_0$ over $A$. It follows from Theorem \ref{thm: Ord-dirc} that $\Ord_P(V)$ is a direct factor of $W$, and hence also an injective object.
\end{proof}

\noindent
Let now $V$ be a $\varpi_E$-adically continuous representation of $G$ over $A$ in the sense of \cite[Def.~2.4.1]{EOrd1}. Then $V/\varpi_E^n$ is a smooth representation of $G$ over $A/\varpi_E^n$ for all $n\in \Z_{>0}$. Following \cite[Def.~3.4.1]{EOrd1}, we define:
\begin{equation}\label{ordproj}
\Ord_P(V):=\varprojlim_{n} \Ord_P(V/\varpi_E^n V)
\end{equation}
which is a $\varpi_E$-adically continuous representation of $L_P(L)$ over $A$ (cf. \cite[Prop. 3.4.6]{EOrd1}). We have the canonical lifting map (cf. \cite[(3.4.7)]{EOrd1}):
\begin{equation}\label{equ: ord-can2}
\iota_{\can}: \Ord_P(V) \longrightarrow V^{N_0}
\end{equation}
which is $L_P^+$-equivariant. By \cite[Thm. 3.4.8]{EOrd1}, if $V$ is moreover admissible (\cite[Def.~2.4.7]{EOrd1}), $\Ord_P(V)$ is also admissible and $\iota_{\can}$ is a closed embedding (where the target and the source are equipped with the $\varpi_E$-adic topology).\\

\noindent
Let $V$ be a unitary Banach space representation of $G(L)$ over $E$ and $V^0$ an open bounded $G(L)$-invariant lattice of $V$ (i.e. a unit ball preserved by $G(L)$, which exists by definition as the representation is unitary). Then $V^0$ is a $\varpi_E$-adically continuous representation of $G$ over $\co_E$ and we put $\Ord_P(V):=\Ord_P(V^0)[1/p]$, which is easily checked to be independent of the choice of $V^0$. For any compact group $K$ we endow $\cC(K, \co_E)$ and $\cC(K, E)$ with the left action of $K$ by right translation on functions.

\begin{corollary}\label{coro: ord-inj}
Assume moreover that $V^0|_{I_0}$ is isomorphic to a direct factor of $\cC(I_0, \co_E)^{\oplus r}$ for some integer $r>0$. Then $\Ord_P(V^0)|_{L_0}$ (resp. $\Ord_P(V)|_{L_0}$) is isomorphic to a direct factor of $\cC(L_0, \co_E)^{\oplus r}$ (resp. $\cC(L_0, E)^{\oplus r}$) for some integer $s\geq 0$.
\end{corollary}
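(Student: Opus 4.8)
The plan is to reduce Corollary \ref{coro: ord-inj} to the mod $\varpi_E^n$ statement already established in the proof of Corollary \ref{coro: ordinj}, and then pass to the limit and invert $p$. First I would fix an open bounded $G(L)$-invariant lattice $V^0$ of $V$ and assume, as in the hypothesis, that $V^0|_{I_0}$ is a direct factor of $\cC(I_0,\co_E)^{\oplus r}$ as a representation of $I_0$. Reducing modulo $\varpi_E^n$, the representation $V^0/\varpi_E^n$ is a direct factor of $\cC(I_0,\co_E/\varpi_E^n)^{\oplus r}$, so by the argument inside the proof of Corollary \ref{coro: ordinj} — using the Iwahori decomposition of $I_0$, the isomorphism $\cC(I_0,\co_E/\varpi_E^n)^{N_0}\cong \cC(\overline N_0,\co_E/\varpi_E^n)\otimes_{\co_E/\varpi_E^n}\cC(L_0,\co_E/\varpi_E^n)$, and Theorem \ref{thm: Ord-dirc} — the $L_0$-representation $\Ord_P(V^0/\varpi_E^n)$ is a direct factor of $\cC(L_0,\co_E/\varpi_E^n)^{\oplus r}$. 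The key point is that \emph{the same} integer $r$ works for every $n$, because the splitting of $V^0|_{I_0}$ off of $\cC(I_0,\co_E)^{\oplus r}$ is defined over $\co_E$ and hence survives reduction.

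Next I would take the inverse limit over $n$. By definition (\ref{ordproj}), $\Ord_P(V^0)=\varprojlim_n \Ord_P(V^0/\varpi_E^n)$, and since $\Ord_P$ applied to a surjection of smooth admissible representations is surjective (or more simply since all the transition maps here are the natural reductions of direct factors of $\cC(L_0,\co_E/\varpi_E^n)^{\oplus r}$, which form a surjective system), the limit of the direct-factor decompositions gives that $\Ord_P(V^0)$ is a direct factor of $\varprojlim_n \cC(L_0,\co_E/\varpi_E^n)^{\oplus r}=\cC(L_0,\co_E)^{\oplus r}$ as a $\varpi_E$-adically continuous $L_0$-representation. Concretely, writing $\Ord_P(V^0/\varpi_E^n)\oplus W_n\cong \cC(L_0,\co_E/\varpi_E^n)^{\oplus r}$ for a complementary smooth $L_0$-representation $W_n$, the idempotent $e_n\in \End_{L_0}(\cC(L_0,\co_E/\varpi_E^n)^{\oplus r})$ cutting out $\Ord_P(V^0/\varpi_E^n)$ is the reduction of $e_{n+1}$ (both are characterized by the ordinary/non-ordinary decomposition, which is compatible with reduction as in step (b) of the proof of Theorem \ref{thm: Ord-dirc}), so the $e_n$ assemble to an idempotent $e\in \End_{L_0}(\cC(L_0,\co_E)^{\oplus r})$ with image $\Ord_P(V^0)$.

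Finally, inverting $p$ gives $\Ord_P(V)=\Ord_P(V^0)[1/p]$ as a direct factor of $\cC(L_0,\co_E)^{\oplus r}[1/p]=\cC(L_0,E)^{\oplus r}$, via the idempotent $e\otimes_{\co_E}E$. This handles both assertions of the corollary with $s=r$; the statement's ``some integer $s\geq 0$'' is then harmless (one may of course take $s=r$). The main obstacle is the bookkeeping in the passage to the limit: one must check that the complements $W_n$ and the cutting idempotents are genuinely compatible under reduction mod $\varpi_E^n$, rather than merely isomorphic, so that $\varprojlim_n$ of the decomposition is again a decomposition. This is where one uses that $\Ord_P$ is realized (Theorem \ref{thm: Ord-dirc}) as the ordinary summand in the Hecke-algebra decomposition of the $N_0$-invariants, a decomposition which by construction (see step (b) of the proof of Theorem \ref{thm: Ord-dirc}) commutes with the transition maps $V^0/\varpi_E^{n+1}\twoheadrightarrow V^0/\varpi_E^n$; everything else is formal manipulation with direct factors and idempotents, together with the injectivity input of \cite[Prop.~2.1.3]{EOrd2} that already underlies Corollary \ref{coro: ordinj}.
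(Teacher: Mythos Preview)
Your overall strategy---reduce mod $\varpi_E^n$, invoke Corollary~\ref{coro: ordinj}, pass to the limit---is the right shape, but the passage to the limit has a genuine gap that the paper handles differently.

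The problem is your claim that $\Ord_P(V^0/\varpi_E^n)$ is a direct factor of $\cC(L_0,\co_E/\varpi_E^n)^{\oplus r}$ with the \emph{same} $r$ and with compatible idempotents $e_n$. The proof of Corollary~\ref{coro: ordinj} does not give this: what it shows is that $\Ord_P(V^0/\varpi_E^n)$ is a direct factor of $W_n=(\cC(\overline N_0,\co_E/\varpi_E^n)\otimes\cC(L_0,\co_E/\varpi_E^n))^{\oplus r}$, hence injective; but $W_n$ is not finitely many copies of $\cC(L_0,\co_E/\varpi_E^n)$. More seriously, your justification that the $e_n$ are compatible because they are ``characterized by the ordinary/non-ordinary decomposition'' does not apply: neither $\cC(L_0,\co_E/\varpi_E^n)^{\oplus r}$ nor $W_n$ carries a Hecke $Z_{L_P}^+$-action, since $\cC(I_0,\co_E)$ has only an $I_0$-action, not a $G(L)$-action. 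The ordinary/non-ordinary decomposition of Theorem~\ref{thm: Ord-dirc} lives on $(V^0/\varpi_E^n)^{N_0}$, where the $Z_{L_P}^+$-action comes from the full $G(L)$-action on $V^0$; it does not extend to the ambient injective envelope.

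The paper circumvents this by dualizing. First it proves the compatibility $\Ord_P(V^0/\varpi_E^{n_2})/\varpi_E^{n_1}\cong\Ord_P(V^0/\varpi_E^{n_1})$ that you gloss over: the direct-factor hypothesis on $V^0|_{I_0}$ forces the short exact sequence $0\to(V^0/\varpi_E^{n_2-n_1})^{N_0}\to(V^0/\varpi_E^{n_2})^{N_0}\to(V^0/\varpi_E^{n_1})^{N_0}\to 0$ to be exact, and then Theorem~\ref{thm: Ord-dirc} gives the claim. Next, since each $\Ord_P(V^0/\varpi_E^n)$ is injective and admissible, its Pontryagin dual is a finitely generated projective $\co_E/\varpi_E^n[[L_0]]$-module; the compatibility just proven makes these duals into an inverse system whose limit is then finitely generated projective over $\co_E[[L_0]]$ (this is where idempotent-lifting is done, but on the projective side where it is well-posed). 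Dualizing back via \cite{ST} gives the result. The finite rank $s$ in the conclusion arises from admissibility of $\Ord_P(V^0)$, not from the original $r$; your argument becomes correct if you replace the direct attempt to lift splittings of $\cC(L_0)^{\oplus r}$ by this dual-side argument.
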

\begin{proof}
Let $n_1, n_2\in \Z_{>0}$ with $n_2>n_1$ and consider the exact sequence:
\begin{equation*}
0 \ra V^0/\varpi_E^{n_2-n_1} \xrightarrow{\varpi_E^{n_1}} V^0/\varpi_E^{n_2} \ra V^0/\varpi_E^{n_1} \ra 0.
\end{equation*}
Since $V^0|_{I_0} $ is a direct factor of $\cC(I_0,\co_E)^{\oplus r}$, arguing as in (\ref{equ: ord-N0inv}) we deduce an exact sequence (which is equivariant for the action of $Z_{L_P}^+$ and $L_0$):
\begin{equation*}
0 \ra (V^0/\varpi_E^{n_2-n_1})^{N_0} \xrightarrow{\varpi_E^{n_1}} (V^0/\varpi_E^{n_2})^{N_0} \ra (V^0/\varpi_E^{n_1})^{N_0} \ra 0.
\end{equation*}
Together with Theorem \ref{thm: Ord-dirc}, it follows that $\Ord_P(V^0/\varpi_E^{n_2})/\varpi_E^{n_1} \cong \Ord_P(V^0/\varpi_E^{n_1})$. Moreover, from Corollary \ref{coro: ordinj} we deduce that the dual $\Hom_{\co_E}(\Ord_P(V^0/\varpi_E^n), \co_E/\varpi_E^n)$ (= $\co_E$-linear maps) is a finitely generated projective $\co_E/\varpi_E^n \lbrack \lbrack L_0\rbrack \rbrack$-module. By a projective limit argument, it is then not difficult to deduce that $\Hom_{\co_E}(\Ord_P(V^0), \co_E)$ is also a finitely generated projective $\co_E[[L_0]]$-module. Dualizing back using \cite{ST} the corollary follows.
\end{proof}

\subsection{Ordinary parts of locally algebraic representations}\label{sec: ord-lalg}

\noindent
We review and generalize the ordinary part of a locally algebraic representation of $G(L)$ that admits an invariant lattice (see \cite[\S~5.6]{Em4}).\\

\noindent
We keep the notation of \S\S~\ref{sec: ord-1}~\&~\ref{sec: ord-2} and now assume that $G$ is split. We fix a split torus $T$ over $L$ and a Borel subgroup containing $T$ such that $B\subseteq P$ (where $P$ is the parabolic subgroup of {\it loc.cit.}). We let $V_{\infty}$ be a smooth \emph{admissible} representation of $G(L)$ over $E$, $L(\lambda)$ the irreducible $\Q_p$-algebraic representation of $G(L)$ over $E$ of highest weight $\lambda\in \Hom({\rm Res}_{L/\Q_p}T,{{\mathbb G}_{\rm m}}_{/\Q_p})$ where $\lambda$ is dominant with respect to ${\rm Res}_{L/\Q_p}B$ and we set:
$$V:=V_{\infty}\otimes_E L(\lambda).$$
We denote by $L_P(\lambda)$ the irreducible $\Q_p$-algebraic representation of $L_P$ over $E$ of highest weight $\lambda$ and by $\delta_{L_P, \lambda}$ the central character of $L_P(\lambda)$. Note that we have $L_P(\lambda)\cong L(\lambda)^{N_0}\cong L(\lambda)^{N_P(L)}$ and by \cite[Prop. 4.3.6]{Em11}:
\begin{equation}\label{jpanal}
 J_P(V)\cong J_P(V_{\infty}) \otimes_E L_P(\lambda)
\end{equation}
where $J_P(V)$ on the left is the Jacquet-Emerton functor of the locally algebraic representation $V$ relative to the parabolic subgroup $P(L)$ and $J_P(V_{\infty})$ is the usual Jacquet functor of the smooth representation $V_\infty$.\\

\noindent
For $i\geq 0$ consider:
\begin{equation*}
V_i:=V_{\infty}^{I_{i,i}}\otimes_E L_P(\lambda)\subseteq V^{N_0}\cong V_{\infty}^{N_0} \otimes_E L_P(\lambda)
\end{equation*}
which is finite dimensional over $E$ since $V_{\infty}$ is admissible. We equip $V_{\infty}^{N_0}$ and $V^{N_0}$ with the Hecke action of $L_P^+$ given by (\ref{equ: ord-Hec}). Note that we have $z\cdot (v\otimes u)=(z\cdot v)\otimes (zu)$ for $z\in L_P^+$, $v\in V_{\infty}^{N_0}$ and $u\in L_P(\lambda)$. In particular by Hypothesis \ref{hypo: ord}, $V_i\subseteq V^{N_0}$ is invariant under this $Z_{L_P}^+$-action. Denote by $B_i$ the $E$-subalgebra of $\End_E(V_i)$ generated by the operators in $Z_{L_P}^+$, then $B_i$ is an Artinian $E$-algebra. Similarly to what we did in the proof of Theorem \ref{thm: Ord-dirc}, a maximal ideal $\fm$ of $B_i$ is called \emph{of finite slope} if ${\rm Image}(Z_{L_P}^+)\cap \fm=\emptyset$ (inside $\End_E(V_i)$). Let $\fm$ be such a maximal ideal of finite slope and consider:
\begin{equation*}
Z_{L_P}^+ \lra B_i \twoheadrightarrow B_i/\fm\hooklongrightarrow \overline{\Q_p}.
\end{equation*}
Note that the image of $ Z_{L_P}^+$ lies in $ \overline{\Q_p}^\times$. We call $\fm$ \emph{of slope zero} if the above composition has image contained in the units $ \overline{\Z_p}^{\times}$ (this is independent of the choice of the last embedding). Denote by $(V_i)_*$ with $*\in\{\cfs,\ \nul,\ 0,\ >0\}$ the direct sum of the localizations of $V_i$ at maximal ideals which are respectively: of finite slope, not of finite slope, of slope zero, not of slope zero. We have thus:
\begin{equation}\label{equ: ord-alga}
 V_i\cong (V_i)_{\cfs}\oplus (V_i)_{\nul} \cong (V_i)_0 \oplus (V_i)_{>0}
\end{equation}
and note that $v\in (V_i)_{\nul}$ if and only if there exists $z\in Z_{L_P}^+$ such that $z\cdot v=0$. Moreover, as in the proof of Theorem \ref{thm: Ord-dirc}, for $j\geq i$ the natural injection $V_i \hookrightarrow V_j$ induces a $Z_{L_P}^+$-equivariant map for $*\in \{\cfs, \nul, 0, >0\}$:
\begin{equation*}
(V_i)_* \hooklongrightarrow (V_j)_*.
\end{equation*}
For $*\in \{\cfs, 0\}$, this action (uniquely) extends to $Z_{L_P}(L)$ since the action of $Z_{L_P}^+$ on $(V_i)_*$ is invertible. For $*\in \{\cfs, 0, \nul, >0\}$, we set:
\begin{equation}\label{equ: ord-algalim}
(V^{N_0})_{*}:=\varinjlim_i (V_i)_*
\end{equation}
which is an $E$-vector subspace of $V^{N_0}$ stable by $L_P^+$ (indeed, each $(V_i)_*$ is a generalized eigenspace of some sort for the action of $Z_{L_P}^+$ on $V_i$, and the action of $L_P^+$ on $V^{N_0}=\varinjlim_{i} V_i$ commutes with that of $Z_{L_P}^+$, so preserves generalized eigenspaces of $Z_{L_P}^+$ even though it may send a vector of $V_i$ to $V_j$ for some $j\gg i$). Moreover, for $*\in \{\cfs, 0\}$ this action of $L_P^+$ on $(V^{N_0})_{*}$ uniquely extends to $L_P(L)$ by \cite[Prop.~3.3.6]{Em11}. The decomposition (\ref{equ: ord-alga}) induces $L_P^+$-equivariant decompositions:
\begin{equation}\label{equ: ord-algb}
 V^{N_0}\cong (V^{N_0})_{\cfs} \oplus (V^{N_0})_{\nul} \cong (V^{N_0})_0 \oplus (V^{N_0})_{>0}.
\end{equation}
Moreover it follows from (\ref{jpanal}), $V^{N_0}\cong V_{\infty}^{N_0} \otimes_E L_P(\lambda)$ and the proof of \cite[Prop.~4.3.2]{Em11} (we leave here the details to the reader) that we have an isomorphism of locally algebraic representations of $L_P(L)$ (called the canonical lifting):
\begin{equation}\label{equ: lgOrd-relev}
J_P(V) \xlongrightarrow{\sim} (V^{N_0})_{\cfs}(\delta_P)
\end{equation}
where $(\delta_P)$ means the twist by the modulus character $\delta_P$.\\

\noindent
If $W$ is an $E$-vector space, recall an $\co_E$-lattice of $W$ is by definition an $\co_E$-submodule which generates $W$ over $E$ and doesn't contain any nonzero $E$-line. If $W$ is a $E[Z_{L_P}(L)]$-module such that the $Z_{L_P}(L)$-orbit of any element of $W$ is of finite dimension, by the very same construction as above we have a decomposition $W=W_0\oplus W_{>0}$ analogous to (\ref{equ: ord-algb}).

\begin{lemma}\label{lem: ord-alg7}
Let $W$ be an $E$-vector space equipped with a $Z_{L_P}^+$-action and let $f: W \ra V^{N_0}$ be an $E$-linear $Z_{L_P}^+$-equivariant map.\\
\noindent
(1) If $W$ is moreover an $E[Z_{L_P}(L)]$-module, then $f$ factors through a $Z_{L_P}(L)$-equivariant map $f: W \lra (V^{N_0})_{\cfs}$.\\
\noindent
(2) If $W$ is an $E[Z_{L_P}(L)]$-module such that the $Z_{L_P}(L)$-orbit of any element of $W$ is of finite dimension, then $f$ restricts to a $Z_{L_P}(L)$-equivariant map $W_0 \lra (V^{N_0})_0$. In particular, if $W$ admits a $Z_{L_P}(L)$-invariant $\co_E$-lattice, then $f$ factors through $W \lra (V^{N_0})_0$.
\end{lemma}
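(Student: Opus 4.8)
\textbf{Proof proposal for Lemma \ref{lem: ord-alg7}.}

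The plan is to deduce both statements from the decomposition (\ref{equ: ord-algb}) of $V^{N_0}$, together with the general-nonsense decomposition $W=W_0\oplus W_{>0}$ available for any $E[Z_{L_P}(L)]$-module with finite-dimensional $Z_{L_P}(L)$-orbits, by tracking how the eigenvalue/slope filtrations of $Z_{L_P}^+$ behave under the $Z_{L_P}^+$-equivariant map $f$. First I would treat (1). Since $W$ is an $E[Z_{L_P}(L)]$-module, every element $w\in W$ has $Z_{L_P}^+$ acting invertibly on the $E$-span of its $Z_{L_P}(L)$-orbit; more precisely, for any $z\in Z_{L_P}^+$ the operator $z$ on $W$ is injective (it has the two-sided inverse $z^{-1}$ coming from the $Z_{L_P}(L)$-action), so $W$ has no nonzero vector killed by some element of $Z_{L_P}^+$. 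Now take $w\in W$. Its image $f(w)\in V^{N_0}$ lies in some $V_i$ for $i$ large, and decomposing $f(w)=f(w)_{\cfs}+f(w)_{\nul}$ according to (\ref{equ: ord-alga}), I claim $f(w)_{\nul}=0$. Indeed, $f(w)_{\nul}$ is killed by some $z\in Z_{L_P}^+$ (recall $v\in(V_i)_{\nul}$ iff some $z\in Z_{L_P}^+$ kills $v$, as noted after (\ref{equ: ord-alga})); but one checks, enlarging $i$ if necessary and using that the $Z_{L_P}^+$-stable subspace $(V_i)_{\nul}$ of $V_i$ is exactly the span of the generalized $Z_{L_P}^+$-eigenspaces with a zero eigenvalue, that $f(w)_{\nul}$ is the image under $f$ of a vector of the form $z'\cdot w'$ with $z'\in Z_{L_P}^+$ acting nilpotently, hence (by injectivity of every $z'\in Z_{L_P}^+$ on $W$) of $0$. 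Thus $f(W)\subseteq (V^{N_0})_{\cfs}$, and since both sides carry the unique extension of the $Z_{L_P}^+$-action to $Z_{L_P}(L)$ (via \cite[Prop.~3.3.6]{Em11} on the target, via the given $Z_{L_P}(L)$-structure on the source, these being compatible because $f$ is $Z_{L_P}^+$-equivariant and $Z_{L_P}^+$ generates $Z_{L_P}(L)$ up to inverses), $f$ is automatically $Z_{L_P}(L)$-equivariant.

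For (2), the argument is parallel but with ``slope zero'' replacing ``finite slope''. Given $w\in W_0$, the $Z_{L_P}(L)$-orbit of $w$ spans a finite-dimensional space on which every $z\in Z_{L_P}^+$ acts with eigenvalues in $\overline{\Z_p}^\times$; since $f$ is $Z_{L_P}^+$-equivariant, the eigenvalues of $z$ on the finite-dimensional space $E[Z_{L_P}^+]\cdot f(w)\subseteq V_i$ (some large $i$) are a subset of those on $E[Z_{L_P}^+]\cdot w$, hence also lie in $\overline{\Z_p}^\times$, so $f(w)\in (V_i)_0$ and therefore $f(w)\in (V^{N_0})_0$. This shows $f(W_0)\subseteq (V^{N_0})_0$, and $Z_{L_P}(L)$-equivariance of the restriction $f|_{W_0}\colon W_0\to (V^{N_0})_0$ follows again from the fact that $Z_{L_P}^+$ generates $Z_{L_P}(L)$ up to inverses and the action is invertible on both source and target of slope-zero parts. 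For the last sentence, if $W$ admits a $Z_{L_P}(L)$-invariant $\co_E$-lattice $W^0$, then every $z\in Z_{L_P}(L)$ preserves $W^0$, so $z$ and $z^{-1}$ both have matrix entries in $\co_E$ in any $\co_E$-basis of $W^0\cap(\text{finite-dim.~orbit span})$; hence all eigenvalues of $z$ are units, i.e. $W=W_0$, and (2) gives that $f$ factors through $W=W_0\to (V^{N_0})_0$.

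The only genuinely delicate point is the verification in (1) that the ``null part'' of $f(w)$ must vanish, i.e. that $f$ cannot create $Z_{L_P}^+$-eigenvalue zero where there was none; this is where one really uses that $Z_{L_P}^+$ acts injectively on $W$ (which in turn relies on $W$ being a $Z_{L_P}(L)$-module, not merely a $Z_{L_P}^+$-module) and that $(V_i)_{\nul}$ is precisely the sum of generalized eigenspaces for the zero eigenvalue, so that a vector in it is in the image of a nilpotent-times-something. Everything else is bookkeeping with the decompositions (\ref{equ: ord-alga})--(\ref{equ: ord-algalim}) already set up in the text and the compatibility of the various $Z_{L_P}(L)$-actions, which I would state but not belabor.
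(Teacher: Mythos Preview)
Your treatment of (2) is fine and matches the paper's: reduce to a finite-dimensional $Z_{L_P}(L)$-stable piece $W_\alpha\subseteq W$, then use that a $Z_{L_P}^+$-equivariant map out of a finite-dimensional space can only produce $Z_{L_P}^+$-eigenvalues already occurring on the source, so slope zero goes to slope zero; and an invariant $\co_E$-lattice forces all eigenvalues to be units, so $W=W_0$.

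Your argument for (1), however, breaks at exactly the spot you yourself call ``the only genuinely delicate point''. You decompose $f(w)=f(w)_{\cfs}+f(w)_{\nul}$ inside $V_i$ and then assert that ``$f(w)_{\nul}$ is the image under $f$ of a vector of the form $z'\cdot w'$ with $z'\in Z_{L_P}^+$ acting nilpotently, hence (by injectivity of every $z'\in Z_{L_P}^+$ on $W$) of $0$''. Neither clause is justified. The decomposition $f(w)=f(w)_{\cfs}+f(w)_{\nul}$ is internal to $V_i\subset V^{N_0}$; there is no reason at all for the summand $f(w)_{\nul}$ to lie in $f(W)$, so you cannot write it as $f(z'w')$. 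Knowing that $f(w)_{\nul}$ sits in a generalized $0$-eigenspace for some $z'$ acting on $(V_i)_{\nul}$ does not manufacture a preimage in $W$. And even were such a preimage available, ``$z'$ acting nilpotently'' is a statement about the action on $(V_i)_{\nul}$, not on $W$; injectivity of $z'$ on $W$ would not give $z'w'=0$. You are conflating the $Z_{L_P}^+$-action on the source (where every $z$ is bijective) with the action on the null part of the target (where some $z$'s have kernel).

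The paper's argument is organized differently and avoids this trap entirely: it never tries to realize $f(w)_{\nul}$ as a value of $f$. Instead, one uses directly the $Z_{L_P}^+$-equivariant direct sum $V^{N_0}=(V^{N_0})_{\cfs}\oplus (V^{N_0})_{\nul}$ and the characterization that $v\in (V^{N_0})_{\nul}$ iff some $z\in Z_{L_P}^+$ kills $v$, together with the fact that every $z\in Z_{L_P}^+$ acts bijectively on $W$. In other words, one studies the $Z_{L_P}^+$-equivariant composite $W\xrightarrow{f}V^{N_0}\twoheadrightarrow (V^{N_0})_{\nul}$ and argues this vanishes from the incompatibility between ``$z$ bijective on the source'' and ``every nonzero vector in the target killed by some $z$''. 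Rewrite your (1) around this incompatibility rather than around a putative preimage of $f(w)_{\nul}$.
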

\begin{proof}
(1) For $v\in V^{N_0}$, we have $v\in (V^{N_0})_{\nul}$ if and only if there exists $z\in Z_{L_P}^+$ such that $z\cdot v=0$, which easily implies (1) using the first isomorphism in (\ref{equ: ord-algb}).\\
\noindent
(2) From the assumption on $W$ we can write $W=\varinjlim_\alpha (W_\alpha)$ where the $W_\alpha\subseteq W$ are finite dimensional and preserved by $Z_{L_P}(L)$, and by (1) it is enough to prove $f((W_\alpha)_0)\subseteq (V^{N_0})_0$, but this is clear from the definition. If $W^0$ is a $Z_{L_P}(L)$-invariant $\co_E$-lattice of $W$, then $W^0\cap W_\alpha$ is a $Z_{L_P}(L)$-invariant $\co_E$-lattice in $W_\alpha$ which easily implies $(W_\alpha)_0= W_\alpha$ and (2) follows.
\end{proof}

\begin{remark}\label{choicen0}
{\rm It easily follows from the first statement in Lemma \ref{lem: ord-alg7}(2) and the fact the $L_P(L)$-representations $(V^{N_0})_{\cfs}$ doesn't depend on the choice of $N_0$ up to isomorphism (see \cite[Prop.~3.4.11]{Em11}) that the ${L_P}(L)$-representation $(V^{N_0})_{0}$ also doesn't depend on the choice of $N_0$ up to isomorphism.}
\end{remark}

\noindent
Assume from now on that $V$ is a {\it unitary} $G(L)$-representation, i.e. admits an $\co_E$-lattice $V^0$ which is stable by $G(L)$, and set $V_i^0:=V_i \cap V^0$, which is thus an $\co_E$-lattice of $V_i$ stable by $Z_{L_P}^+$ (note that $(V^0)^{N_0}=\varinjlim_{i} V_i^0$). Denote by $A_i$ the $\co_E$-subalgebra of $\End_{\co_E}(V_i^0)$ generated by $Z_{L_P}^+$, then $A_i$ is an $\co_E$-algebra which is a free $\co_E$-module of finite type and we have $B_i\cong A_i\otimes_{\co_E} E$ and $A_i=\prod_{\fn}(A_i)_{\fn}$ where the product runs over the maximal ideals $\fn$ of $A_i$. As in the proof of Theorem \ref{thm: Ord-dirc}, a maximal ideal $\fn$ of $A_i$ is called ordinary if ${\rm Image}(Z_{L_P}^+) \cap \fn=\emptyset$ and we put:
\begin{equation*}
(V_i^0)_{\ord}:=\oplus_{\fn \text{ ordinary}} (V_i^0)_{\fn} \ \ \ \ (V_i^0)_{\nord}:=\oplus_{\fn \text{ nonordinary}} (V_i^0)_{\fn}.
\end{equation*}
We have $V_i^0\cong (V_i^0)_{\ord} \oplus (V_i^0)_{\nord}$ and we set $(V_i)_{\ord}:=(V_i^0)_{\ord} \otimes_{\co_E} E$.

\begin{lemma}\label{lem: ord-alg}
We have $(V_i^0)_{\ord}\cong V_i^0\cap (V_i)_0$, and hence $(V_i)_{\ord}\cong (V_i)_0$.
\end{lemma}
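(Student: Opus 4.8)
The plan is to compare, at each fixed level $i$, the ``ordinary'' idempotent (defined integrally, using the $\co_E$-algebra $A_i$) with the ``slope zero'' idempotent (defined over $E$, using $B_i$ and the notion of a maximal ideal of slope zero), and to show they cut out the same summand of $V_i^0$. Since $B_i \cong A_i \otimes_{\co_E} E$, the maximal ideals of $B_i$ of finite slope correspond bijectively to the ordinary maximal ideals of $A_i$: a maximal ideal $\fm$ of $B_i$ is of finite slope precisely when $\mathrm{Image}(Z_{L_P}^+)\cap \fm = \emptyset$, and contracting along $A_i \to B_i$ gives an ordinary maximal ideal $\fn = \fm \cap A_i$, while conversely an ordinary $\fn$ of $A_i$ generates (after inverting $p$) a product of finite-slope maximal ideals of $B_i$. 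So on the finite-slope part one already has $(V_i^0)_{\ord}\otimes_{\co_E} E = (V_i)_{\cfs}$; what remains is to distinguish slope zero from positive slope \emph{inside} the finite-slope part, and to see that this distinction is exactly detected by whether the $Z_{L_P}^+$-action preserves an $\co_E$-lattice.

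First I would record that $(V_i^0)_{\ord}$ is by construction a $Z_{L_P}^+$-stable $\co_E$-lattice in $(V_i)_{\cfs}$ on which every $z \in Z_{L_P}^+$ acts \emph{invertibly} (its image in the localization $(A_i)_{\fn}$ at an ordinary $\fn$ is a unit, since $\mathrm{Image}(Z_{L_P}^+)$ avoids $\fn$ hence lands in $(A_i)_{\fn}^{\times}$). Consequently each $z$ acts on $(V_i^0)_{\ord}$ by an automorphism of the $\co_E$-module, so its eigenvalues (after a finite base change) are units in $\overline{\Z_p}$; this forces every maximal ideal of $B_i$ occurring in $(V_i)_{\ord} := (V_i^0)_{\ord}\otimes_{\co_E} E$ to be of slope zero. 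Hence $(V_i)_{\ord} \subseteq (V_i)_0$, and therefore $(V_i^0)_{\ord} \subseteq V_i^0 \cap (V_i)_0$.

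Next I would prove the reverse inclusion $V_i^0 \cap (V_i)_0 \subseteq (V_i^0)_{\ord}$. Using the decomposition $V_i = (V_i)_0 \oplus (V_i)_{>0}$ refining $V_i = (V_i)_{\cfs}\oplus (V_i)_{\nul}$, an element $v$ of $V_i^0 \cap (V_i)_0$ lies in the finite-slope part, so it suffices to see $(V_i)_0 \subseteq (V_i)_{\ord}$ compatibly with lattices, i.e. that a maximal ideal of $B_i$ of slope zero contracts to an ordinary maximal ideal of $A_i$ whose localization contributes to $(V_i^0)_{\ord}$. Concretely, on the slope zero part the operators in $Z_{L_P}^+$ have all eigenvalues in $\overline{\Z_p}^{\times}$; since they also preserve the $\co_E$-lattice $V_i^0$ (by Hypothesis \ref{hypo: ord}, as already noted before the lemma), their action on the corresponding summand is by $\co_E$-module automorphisms, so the associated maximal ideal of $A_i$ misses $\mathrm{Image}(Z_{L_P}^+)$ and is ordinary. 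This gives $V_i^0 \cap (V_i)_0 \subseteq (V_i^0)_{\ord}$, hence the equality of $\co_E$-lattices, and inverting $p$ yields $(V_i)_{\ord}\cong (V_i)_0$.

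The main obstacle is the careful bookkeeping in the middle step: one must be sure that ``slope zero'' (a condition on the eigenvalues of $Z_{L_P}^+$ after embedding $B_i/\fm \hookrightarrow \overline{\Q_p}$) is genuinely equivalent to ``the $Z_{L_P}^+$-action is by automorphisms of some $\co_E$-lattice'' on the relevant generalized eigenspace, and that this lattice can be taken to be $V_i^0$ intersected with that eigenspace rather than some auxiliary lattice. The subtlety is that $B_i$ need not be reduced, so one is really comparing generalized eigenspaces, and one should check that a generalized eigenvalue which is a $p$-adic unit forces the whole (finite-rank, free) $\co_E$-module summand $V_i^0 \cap (V_i)_{0}$ to be $Z_{L_P}^+$-stable with invertible action — which follows from the fact that it is a direct summand as an $A_i$-module, cut out by an idempotent of $A_i$. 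Once that is in place the rest is formal.
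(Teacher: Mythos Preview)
Your core argument (the second and third paragraphs) is essentially correct and close in spirit to the paper's, but there are two issues worth flagging.

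First, your opening paragraph contains a genuine error: you claim that the finite-slope maximal ideals of $B_i$ correspond to the ordinary maximal ideals of $A_i$, and conclude $(V_i^0)_{\ord}\otimes_{\co_E} E = (V_i)_{\cfs}$. This is false. A finite-slope $\fm$ of positive slope (say $z$ acts by $p$) contracts to a nonordinary $\fn$: the image of $z$ in $A_i/(\fm\cap A_i)\hookrightarrow \overline{\Z_p}$ lies in $\fm_{\overline{\Z_p}}$, so $z\in\fn$. The correct statement, which is what the paper proves, is that $\fn$ is ordinary if and only if every $\fm$ lying over it is of slope \emph{zero}. Also, $\fm\cap A_i$ is not a maximal ideal of $A_i$ (it is a minimal prime); $\fn$ is the unique maximal ideal containing it. Fortunately you never actually use this paragraph in what follows.

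Second, your final paragraph's justification that $V_i^0\cap (V_i)_0$ ``is a direct summand as an $A_i$-module, cut out by an idempotent of $A_i$'' is circular: this is exactly the assertion $(V_i^0)_{\ord}=V_i^0\cap(V_i)_0$ you are trying to prove. The honest argument for your reverse inclusion is the Nakayama step you nearly state: set $L:=V_i^0\cap(V_i)_0$, an $A_i$-submodule of $V_i^0$ on which each $z\in Z_{L_P}^+$ acts by an $\co_E$-automorphism (unit determinant). Decomposing $L=\bigoplus_{\fn}L_{\fn}$, if some nonordinary $\fn$ had $L_{\fn}\neq 0$, pick $z\in\fn\cap\mathrm{Image}(Z_{L_P}^+)$; then $z$ kills the nonzero quotient $L_{\fn}/\fn L_{\fn}$, contradicting invertibility of $z$ on the summand $L_{\fn}$.

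The paper's route is shorter: it directly establishes the equivalence ``$\fn$ ordinary $\Leftrightarrow$ $\fm$ slope zero'' by observing that $A_i/(\fm\cap A_i)\hookrightarrow\overline{\Z_p}$ identifies $\fn/(\fm\cap A_i)$ with the preimage of $\fm_{\overline{\Z_p}}$, and then reads off both inclusions from $(V_i^0)_{\fn}\subseteq (V_i)_{\fm}$ and $V_i^0\cap(V_i)_{\fm}\subseteq (V_i^0)_{\fn}$. Your eigenvalue/lattice argument is a valid alternative, but the ideal-theoretic formulation is cleaner and avoids the circularity pitfall.
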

\begin{proof}
Let $\fm$ be a maximal ideal of $B_i$ and $\fn$ the unique maximal ideal of $A_i$ containing $\fm\cap A_i$ and $j: B_i/\fm \hookrightarrow \overline{\Q_p}$ an embedding as above. Then the restriction of $j$ to $A_i/(\fm\cap A_i)$ induces $j: A_i/(\fm\cap A_i) \hookrightarrow \overline{\Z_p}$ and we have $\fn/(\fm\cap A_i)=j^{-1}(\fm_{\overline{\Z_p}})$ (where $\fm_{\overline{\Z_p}}$ is the maximal ideal of $\overline{\Z_p}$). It is then easy to see that $\fn$ is ordinary if and only if $\fm$ is of slope zero. The inclusions $(V_i^0)_{\fn} \subseteq (V_i)_{\fn}\subseteq (V_i)_{\fm}$ thus imply $(V_i^0)_{\ord} \subseteq V_i^0 \cap (V_i)_0$. On the other hand, we have $V_i^0 \cap (V_i)_{\fm}\subseteq (V_i^0)_{\fn}$ and thus $V_i^0 \cap (V_i)_{0}\subseteq (V_i^0)_{\ord}$. The lemma follows.
\end{proof}

\noindent
The action of $Z_{L_P}^+$ on $(V_i^0)_{\ord}$ being invertible, it (uniquely) extends to an action of $Z_{L_P}(L)$ and the isomorphism $(V_i^0)_{\ord}\otimes_{\co_E} E \cong (V_i)_0$ of Lemma \ref{lem: ord-alg} is equivariant under the action of $Z_{L_P}(L)$. We set (using Lemma \ref{lem: ord-alg} for the second equality):
\begin{equation}\label{equ: ord-ordalg}
\Ord_P(V^0):=\varinjlim_i (V_i^0)_{\ord} =V^0\cap (V^{N_0})_0\hooklongrightarrow (V^0)^{N_0}=\varinjlim_i (V_i^0)
\end{equation}
and $\Ord_P(V):=\Ord_P(V^0)\otimes_{\co_E} E\hookrightarrow V^{N_0}$. The combined actions of $Z_{L_P}(L)$ and of $L_P^+$ on $\Ord_P(V^0)$ (the action of $L_P^+$ being induced by that on $(V^0)^{N_0}$) imply with \cite[Prop.~3.3.6]{Em11} that this $L_P^+$-action uniquely extends to $L_P(L)$. We deduce that $\Ord_P(V)$ is a unitary representation of $L_P(L)$ over $E$ and we call it the \emph{$P$-ordinary part} of $V$.

\begin{lemma}\label{lem: ord-alg6}
We have an isomorphism $\Ord_P(V)\cong (V^{N_0})_0$, in particular the $L_P(L)$-represen\-tation $\Ord_P(V)$ is independent of the choice of $V^0$ and $N_0$, and $(V^{N_0})_0$ is a unitary representation of $L_P(L)$ over $E$.
\end{lemma}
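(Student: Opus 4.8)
The goal is to prove Lemma \ref{lem: ord-alg6}: that $\Ord_P(V)\cong (V^{N_0})_0$ as $L_P(L)$-representations, that this is independent of the auxiliary choices of lattice $V^0$ and of $N_0$, and that $(V^{N_0})_0$ is thereby unitary. The plan is to extract the isomorphism almost directly from the constructions already in place, and then to deduce the independence and unitarity as formal consequences.

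\medskip

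\noindent\textbf{Step 1: the isomorphism.} By definition (\ref{equ: ord-ordalg}) we have $\Ord_P(V^0)=V^0\cap (V^{N_0})_0$ inside $(V^0)^{N_0}$, and $\Ord_P(V)=\Ord_P(V^0)\otimes_{\co_E}E$. So I would first check that the inclusion $V^0\cap (V^{N_0})_0\hookrightarrow (V^{N_0})_0$ becomes an isomorphism after inverting $p$. This is the statement that $(V^{N_0})_0$ is generated over $E$ by $V^0\cap (V^{N_0})_0$, which follows level by level: $(V^{N_0})_0=\varinjlim_i (V_i)_0$ by (\ref{equ: ord-algalim}), and for each $i$ we have $V_i^0\cap (V_i)_0=(V_i^0)_{\ord}$ by Lemma \ref{lem: ord-alg}, with $(V_i^0)_{\ord}\otimes_{\co_E}E\cong (V_i)_0$ (also Lemma \ref{lem: ord-alg}). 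Passing to the colimit gives $\Ord_P(V^0)\otimes_{\co_E}E\xrightarrow{\sim}(V^{N_0})_0$, and this identification is visibly equivariant: the $L_P^+$-action on both sides is the restriction of the Hecke action on $V^{N_0}$, and the $Z_{L_P}(L)$-action on both sides is the canonical extension (invertibility of $Z_{L_P}^+$ on each $(V_i)_0$), so by \cite[Prop.~3.3.6]{Em11} the resulting $L_P(L)$-actions agree.

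\medskip

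\noindent\textbf{Step 2: independence of $V^0$.} Given two $G(L)$-stable $\co_E$-lattices $V^0$, $V^{0\prime}$ in $V$, there is $n\geq 0$ with $p^n V^0\subseteq V^{0\prime}\subseteq p^{-n}V^0$; since $(V^{N_0})_0$ is already defined purely in terms of the $E$-vector space $V^{N_0}$ with its $Z_{L_P}^+$-action (via (\ref{equ: ord-alga})–(\ref{equ: ord-algalim}), which involve no lattice), Step 1 shows $\Ord_P(V)\cong (V^{N_0})_0$ regardless of which $V^0$ was used. This is the cleanest route: rather than comparing the two $\Ord_P(V^0)$ directly, one observes both are identified with the single lattice-free object $(V^{N_0})_0$.

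\medskip

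\noindent\textbf{Step 3: independence of $N_0$ and unitarity.} Independence of $N_0$ for $(V^{N_0})_0$ is exactly Remark \ref{choicen0}, which I would simply cite; combined with Step 1 this gives independence of $\Ord_P(V)$ from $N_0$ up to isomorphism. Finally, unitarity of $(V^{N_0})_0$ as an $L_P(L)$-representation is immediate: by Step 1 it is isomorphic to $\Ord_P(V)=\Ord_P(V^0)\otimes_{\co_E}E$, and $\Ord_P(V^0)=V^0\cap(V^{N_0})_0$ is by construction an $\co_E$-lattice stable under $Z_{L_P}(L)$ and under $L_P^+$, hence (again by the extension of the $L_P^+$-action to $L_P(L)$, \cite[Prop.~3.3.6]{Em11}, which preserves the lattice) under all of $L_P(L)$. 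So $\Ord_P(V^0)$ is the required invariant unit ball. There is no serious obstacle here; the only point requiring a little care is the equivariance bookkeeping in Step 1 — making sure that the two a priori different ways of extending the $L_P^+$-action to an $L_P(L)$-action (on $\Ord_P(V^0)\otimes E$ via the lattice construction, and on $(V^{N_0})_0$ via the finite-dimensional pieces $(V_i)_0$) really coincide — but this is forced by uniqueness in \cite[Prop.~3.3.6]{Em11}.
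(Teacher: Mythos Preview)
Your proof is correct and follows essentially the same approach as the paper. The paper's own proof is a terse two sentences---the isomorphism is read off directly from the second equality in (\ref{equ: ord-ordalg}), and the independence and unitarity follow since $(V^{N_0})_0$ involves no lattice---so your Steps~1--3 simply unpack this, with some extra bookkeeping on $L_P(L)$-equivariance that the paper leaves implicit.
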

\begin{proof}
The isomorphism follows from the second equality in (\ref{equ: ord-ordalg}). The lemma follows since $(V^{N_0})_0$ doesn't depend on any lattice.
\end{proof}

\begin{remark}
{\rm If we drop the assumption that $V$ admits an invariant $\co_E$-lattice, then the $L_P(L)$-representation $(V^{N_0})_0$ might not be a unitary representation of $L_P(L)$ over $E$.}
\end{remark}

\begin{lemma}\label{ordpart}
Let $P'\supseteq P$ be another parabolic subgroup of $G$ and $L_{P'}$ the Levi subgroup of $P'$ (containing $L_P$). Then we have:
\begin{equation*}
  \Ord_P(V) \cong \Ord_{P\cap L_{P'}}\big(\Ord_{P'}(V)\big).
\end{equation*}
\end{lemma}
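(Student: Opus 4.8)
The statement is a transitivity property for $P$-ordinary parts of unitary locally algebraic representations, and the natural strategy is to reduce everything to the corresponding well-known transitivity for Emerton's Jacquet functor $J_P$ (\cite[Prop.~4.3.6]{Em11} or rather its analogue $J_P = J_{P\cap L_{P'}}\circ J_{P'}$) together with the characterizations established above. First I would recall that by Lemma \ref{lem: ord-alg6} we have canonical isomorphisms $\Ord_P(V)\cong (V^{N_0})_0$ and $\Ord_{P'}(V)\cong (V^{N'_0})_0$, where $N_0$ (resp.\ $N'_0$) is a fixed small compact open subgroup of $N_P(L)$ (resp.\ $N_{P'}(L)$), and that by Remark \ref{choicen0} these are independent of the choice of $N_0$, $N'_0$. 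So it suffices to exhibit an $L_P(L)$-equivariant isomorphism $(V^{N_0})_0 \cong \big((\Ord_{P'}(V))^{N_0\cap L_{P'}(L)}\big)_0$, the subscript $0$ on the right referring to the slope-zero part for the central torus $Z_{L_P}$ of $L_P = L_{P\cap L_{P'}}$ inside $L_{P'}$.

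The key point to check is compatibility of the two filtrations (by "slope" with respect to $Z_{L_{P'}}^+$ and with respect to $Z_{L_P}^+$). Choose $N_0$ so that $N_0 = (N_0\cap L_{P'}(L))\cdot N'_0$ (possible since $N_P\subseteq P'$ and $N_P \cap L_{P'}$ together with $N_{P'}$ generate $N_P$ — this is the group-theoretic heart of the argument). Then $V^{N_0} = (V^{N'_0})^{N_0\cap L_{P'}(L)}$, and the Hecke action of $Z_{L_P}^+\subseteq Z_{L_{P'}}^+$ on $V^{N_0}$ given by (\ref{equ: ord-Hec}) is computed inside $V^{N'_0}$ as the Hecke action of $Z_{L_P}^+$ relative to the group $N_0\cap L_{P'}(L)$ of the Levi $L_{P'}(L)$; one must verify that Hypothesis \ref{hypo: ord} is inherited by the pair $(P\cap L_{P'}, L_{P'})$ from the pair $(P,G)$ and $(P',G)$ (for $\GL_n$ this is immediate from Example \ref{ex: ord-gln}, so in the applications there is nothing to do, but in general it follows since $\overline N_i \cap L_{P'}(L)$ is conjugated correctly by $Z_{L_P}^+$). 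Next, taking a vector $v\in V_i^{N_0}$, it lies in the slope-zero part for $Z_{L_P}^+$ if and only if the $Z_{L_P}^+$-orbit generates a lattice; since $Z_{L_{P'}}^+ \subseteq Z_{L_P}\cdot Z_{L_P}^+$ and the slope-zero condition for $Z_{L_{P'}}^+$ is implied by (indeed is weaker than, being a condition on a smaller positive submonoid) that for $Z_{L_P}^+$, one gets $(V^{N_0})_0 \subseteq \big((V^{N'_0})_0\big)^{N_0\cap L_{P'}(L)}$, and restricting to the latter the $Z_{L_P}^+$-action is already invertible; the slope-zero part of $\Ord_{P'}(V)^{N_0\cap L_{P'}(L)}$ for $Z_{L_P}^+$ is then exactly $(V^{N_0})_0$. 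All identifications are $L_P^+$-equivariant, hence $L_P(L)$-equivariant after the canonical extension of the action via \cite[Prop.~3.3.6]{Em11}.

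I expect the main obstacle to be the bookkeeping in the slope-zero matching: one needs that a vector fixed by $N_0$ which is slope-zero for the smaller monoid $Z_{L_{P'}}^+$ acting on $V^{N'_0}$ is automatically, after further restricting to $(V^{N'_0})^{N_0\cap L_{P'}(L)}$, slope-zero for $Z_{L_P}^+$ if and only if it was slope-zero for $Z_{L_P}^+$ as a vector in $V^{N_0}$ directly — i.e.\ that the two-step Hecke operators (first relative to $N'_0$, then relative to $N_0\cap L_{P'}(L)$) compose to the one-step Hecke operator relative to $N_0$. This is the exact analogue on ordinary parts of the transitivity $J_P = J_{P\cap L_{P'}}\circ J_{P'}$, and the cleanest route is probably to deduce it formally: the finite-slope parts satisfy $(V^{N_0})_{\cfs}\cong (V^{N'_0})_{\cfs}{}^{N_0\cap L_{P'}(L)}_{\cfs}$ via (\ref{equ: lgOrd-relev}) and \cite[Prop.~4.3.6]{Em11}, and then the slope-zero part is cut out inside the finite-slope part by the same lattice-finiteness condition on $Z_{L_P}^+$ on both sides, using Lemma \ref{lem: ord-alg7}(2) to see that a $Z_{L_P}(L)$-invariant lattice on one side transports to one on the other. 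An alternative, more hands-on approach would be to work throughout at the level of the lattices $V_i^0$ and the finite $\co_E$-algebras $A_i$, decomposing into ordinary and nonordinary blocks in two stages and checking the blocks agree; this avoids invoking $J_P$-transitivity but requires the same group-theoretic decomposition $N_0 = (N_0\cap L_{P'}(L))N'_0$ and a compatible choice of the cofinal families $\{I_i\}$ for $G$, $L_{P'}$.
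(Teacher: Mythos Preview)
Your proposal is correct and follows essentially the same route as the paper. The paper's proof is slightly more streamlined: it writes $N_0\cong N_0'\rtimes N_0''$ with $N_0'=N_0\cap N_{P'}(L)$ and $N_0''=N_0\cap N_{P\cap L_{P'}}(L)$, so that $V^{N_0}\cong (V^{N_0'})^{N_0''}$, and then obtains the two inclusions $(((V^{N_0'})_0)^{N_0''})_0\hookrightarrow (V^{N_0})_0$ and $(V^{N_0})_0\hookrightarrow (((V^{N_0'})_0)^{N_0''})_0$ each by a direct application of Lemma~\ref{lem: ord-alg6} (existence of an invariant lattice on the source) together with the second statement of Lemma~\ref{lem: ord-alg7}(2). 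In particular the paper does \emph{not} pass through $J_P$-transitivity or the finite-slope part; that detour (which you propose as the ``cleanest route'') is unnecessary, though it is recorded in the paper as a remark for the $\cfs$-analogue.

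The Hecke-compatibility issue you flag---that the two-step Hecke action of $z\in Z_{L_P}^+$ (first relative to $N_0'$, then relative to $N_0''$) agrees with the one-step Hecke action relative to $N_0$---is indeed what makes the embedding $(((V^{N_0'})_0)^{N_0''})_0\hookrightarrow V^{N_0}$ a $Z_{L_P}^+$-equivariant map, which is the hypothesis needed to invoke Lemma~\ref{lem: ord-alg7}. The paper takes this compatibility as understood: it is immediate from the semidirect product decomposition $N_0\cong N_0'\rtimes N_0''$ and the resulting bijection of coset spaces $N_0/zN_0z^{-1}\cong (N_0''/zN_0''z^{-1})\times (N_0'/zN_0'z^{-1})$. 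So your instinct that this is the crux is right, but it is a one-line verification rather than an obstacle, and there is no need to route around it via Jacquet functors.
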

\begin{proof}
Let $N_0':=N_0\cap N_{P'}(L)$ and $N_0'':=N_0\cap N_{P\cap L_{P'}}(L)$. We have $N_0\cong N_0'\rtimes N_0''$ and thus an isomorphism $V^{N_0}\cong (V^{N_0'})^{N_0''}$. By Lemma \ref{lem: ord-alg6} and (the first statement in) Lemma \ref{lem: ord-alg7}(2), we see that the embedding $(((V^{N_0'})_{0})^{N_0''})_{0} \hookrightarrow V^{N_0}$ factors through $(V^{N_0})_{0}$. On the other hand, we have an embedding $(V^{N_0})_{0} \hookrightarrow (V^{N_0'})_{0}$ (using $Z_{L_{P'}}(L) \subseteq Z_{L_P}(L)$) which factors through an embedding $(V^{N_0})_{0} \hookrightarrow (((V^{N_0'})_{0})^{N_0''})_{0}$ using $L_{P\cap L_{P'}}\cong L_P$ and (again the first statement in) Lemma \ref{lem: ord-alg7}(2). We deduce an isomorphism $(V^{N_0})_{0}\cong (((V^{N_0'})_{0})^{N_0''})_{0}$ whence the result by Lemma \ref{lem: ord-alg6}.
\end{proof}

\begin{remark}\label{ordpart0}
{\rm If we drop the assumption that $V$ admits an invariant $\co_E$-lattice, the proof of Lemma \ref{ordpart} still gives $(V^{N_0})_{0}\cong (((V^{N_0'})_{0})^{N_0''})_{0}$ (with the notation in the proof of {\it loc.cit.}), and if we use Lemma \ref{lem: ord-alg7}(1) instead of Lemma \ref{lem: ord-alg7}(2), the same proof gives $(V^{N_0})_{\cfs}\cong (((V^{N_0'})_{\cfs})^{N_0''})_{\cfs}$ (which can also be deduced from (\ref{equ: lgOrd-relev})).}
\end{remark}

\noindent
Fix $n\in \Z_{>0}$ and consider $V^0/\varpi_E^n$ which is a smooth representation of $G(L)$ over $\co_E/\varpi_E^n$. We have $(V^0)^{N_0}/\varpi_E^n=\varinjlim_i (V_i^0/\varpi_E^n)$. For $i\in \Z_{\geq 0}$ the quotient $A_i/\varpi_E^n$ of $A_i$ is isomorphic to the $\co_E/\varpi_E^n$-subalgebra of $\End_{\co_{E}/\varpi_E^n}(V_i^0/\varpi_E^n)$ generated by $Z_{L_P}^+$. We have a natural bijection between the maximal ideals $\fm$ of $A_i$ and the maximal ideals $\overline{\fm}$ of $A_i/\varpi_E^n$ (since any maximal ideal of $A_i$ contains $\varpi_E$) and it is easy to see that $\fm\subset A_i$ is ordinary if and only if $\overline{\fm}$ is ordinary (see the proof of Theorem \ref{thm: Ord-dirc}). We deduce an isomorphism of $A_i/\varpi_E^n$-modules (see (\ref{equ: ord-dec})):
\begin{equation}\label{equ: ord-alg3}
(V_i^0)_{\ord}/\varpi_E^n \xlongrightarrow{\sim} (V_i^0/\varpi_E^n)_{\ord}.
\end{equation}

\begin{lemma}\label{lem: ord-alg2}
We have an $L_P(L)$-equivariant injection where $\Ord_P(V^0/\varpi_E^n)$ is defined as in (\ref{deford}):
\begin{equation}\label{equ: ord-alg}
\varinjlim_i (V_i^0/\varpi_E^n)_{\ord} \hooklongrightarrow \Ord_P(V^0/\varpi_E^n).
\end{equation}
Moreover, the composition of (\ref{equ: ord-alg}) with the canonical lifting (\ref{equ: ord-can}) gives the natural injection $\varinjlim_i (V_i^0/\varpi_E^n)_{\ord} \hooklongrightarrow (V^0/\varpi_E^n)^{N_0}$.
\end{lemma}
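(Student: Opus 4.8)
The statement to prove (Lemma \ref{lem: ord-alg2}) has two parts: constructing an $L_P(L)$-equivariant injection $\varinjlim_i (V_i^0/\varpi_E^n)_{\ord} \hooklongrightarrow \Ord_P(V^0/\varpi_E^n)$, and checking that composing it with $\iota_{\can}$ recovers the tautological inclusion into $(V^0/\varpi_E^n)^{N_0}$. The plan is to go back to the very definition (\ref{deford}) of $\Ord_P$ for the smooth representation $V^0/\varpi_E^n$ over $\co_E/\varpi_E^n$, namely $\Ord_P(V^0/\varpi_E^n) = \Hom_{\co_E/\varpi_E^n[Z_{L_P}^+]}((\co_E/\varpi_E^n)[Z_{L_P}(L)], (V^0/\varpi_E^n)^{N_0})_{Z_{L_P}(L)-\fini}$, and to produce the map by hand. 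Given an element $v \in (V_i^0/\varpi_E^n)_{\ord}$, the operators in $Z_{L_P}^+$ act invertibly on the finite $\co_E/\varpi_E^n$-module $(V_i^0/\varpi_E^n)_{\ord}$ (this is the content of ordinariness, exactly as in the proof of Theorem \ref{thm: Ord-dirc}), so the $Z_{L_P}^+$-action on $v$ extends uniquely to a $Z_{L_P}(L)$-action, and this defines $f_v \in \Hom_{\co_E/\varpi_E^n[Z_{L_P}^+]}((\co_E/\varpi_E^n)[Z_{L_P}(L)], (V^0/\varpi_E^n)^{N_0})$ with $f_v(1) = v$. Since $(V_i^0/\varpi_E^n)_{\ord}$ is $Z_{L_P}(L)$-stable and finitely generated over $\co_E/\varpi_E^n$, the orbit of $v$ is finite, so $f_v$ lands in the $Z_{L_P}(L)$-locally finite part; hence $f_v \in \Ord_P(V^0/\varpi_E^n)$. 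Passing to the colimit over $i$ gives the desired map $\varinjlim_i (V_i^0/\varpi_E^n)_{\ord} \to \Ord_P(V^0/\varpi_E^n)$.

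First I would check $L_P(L)$-equivariance. The source $\varinjlim_i (V_i^0/\varpi_E^n)_{\ord}$ carries an $L_P^+$-action (induced from the Hecke action on $(V^0/\varpi_E^n)^{N_0}$) which, as discussed in the text just before the lemma (and as in the construction of $\Ord_P(V^0)$ via (\ref{equ: ord-ordalg})), extends uniquely to $L_P(L)$ by \cite[Prop.~3.3.6]{Em11}; similarly $\Ord_P(V^0/\varpi_E^n)$ is an $L_P(L)$-representation by \cite[Lem.~3.1.7]{EOrd1}. Equivariance for $L_P^+$ is a direct computation: for $z \in L_P^+$ one checks $f_{z\cdot v} = z \cdot f_v$ using that the Hecke action of $L_P^+$ and the $Z_{L_P}(L)$-action commute (this is the same commutation used repeatedly in \S\ref{sec: ord-2}, e.g. in the proof of Theorem \ref{thm: Ord-dirc}); since both actions then extend uniquely from $L_P^+$ to $L_P(L)$, equivariance for all of $L_P(L)$ follows formally.

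Next, injectivity: if $f_v = 0$ then in particular $v = f_v(1) = 0$, so the map is injective, and this same identity $f_v(1) = v$ is exactly the assertion that composing with the canonical lifting $\iota_{\can}: \Ord_P(V^0/\varpi_E^n) \to (V^0/\varpi_E^n)^{N_0}$, $f \mapsto f(1)$, recovers the natural inclusion $\varinjlim_i (V_i^0/\varpi_E^n)_{\ord} \hooklongrightarrow (V^0/\varpi_E^n)^{N_0}$ coming from $(V_i^0/\varpi_E^n)_{\ord} \subseteq (V_i^0/\varpi_E^n) \subseteq (V^0/\varpi_E^n)^{N_0}$. That disposes of the second sentence of the lemma.

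The only genuinely delicate point — and the step I expect to be the main obstacle — is making sure all the identifications are compatible at finite level, i.e. that $(V_i^0/\varpi_E^n)_{\ord}$ really is what is obtained by reducing mod $\varpi_E^n$ the ordinary part $(V_i^0)_{\ord}$ defined before Lemma \ref{lem: ord-alg}, and that the $Z_{L_P}(L)$-action used to build $f_v$ is compatible under the transition maps $(V_i^0/\varpi_E^n)_{\ord} \hookrightarrow (V_j^0/\varpi_E^n)_{\ord}$ for $j \geq i$. The first of these is precisely (\ref{equ: ord-alg3}), whose proof (bijection between maximal ideals of $A_i$ and of $A_i/\varpi_E^n$, preservation of the ordinary/nonordinary dichotomy) was already recorded in the text; the second is the analogue, for the $A_i$'s over $\co_E/\varpi_E^n$, of the transition-map compatibility established in step (b) of the proof of Theorem \ref{thm: Ord-dirc} (the restriction map $A_j/\varpi_E^n \twoheadrightarrow A_i/\varpi_E^n$ sends ordinary maximal ideals to ordinary maximal ideals and conversely). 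Granting these, the colimit map is well-defined and the rest is the bookkeeping above. I would therefore organize the write-up as: (i) recall (\ref{equ: ord-alg3}) and the transition compatibility; (ii) construct $f_v$ and verify $f_v \in \Ord_P(V^0/\varpi_E^n)$; (iii) verify $L_P^+$- then $L_P(L)$-equivariance; (iv) read off injectivity and the compatibility with $\iota_{\can}$ from $f_v(1)=v$.
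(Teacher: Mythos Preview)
Your proposal is correct and follows essentially the same approach as the paper: the paper's proof invokes the last isomorphism in the proof of \cite[Lem.~3.1.5]{EOrd1} to identify $(V_i^0/\varpi_E^n)_{\ord}$ with $\Hom_{\co_E/\varpi_E^n[Z_{L_P}^+]}(\co_E/\varpi_E^n[Z_{L_P}(L)], V_i^0/\varpi_E^n)$ (which is automatically $Z_{L_P}(L)$-finite since $V_i^0$ has finite $\co_E$-rank), then pushes forward along $V_i^0/\varpi_E^n \hookrightarrow (V^0/\varpi_E^n)^{N_0}$ and takes the colimit; your explicit construction of $f_v$ is precisely what that cited isomorphism encodes, and your verification that $\iota_{\can}(f_v)=f_v(1)=v$ is the paper's ``by unwinding the maps''.
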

\begin{proof}
For any $i\geq 0$, by the last isomorphism in the proof of \cite[Lem. 3.1.5]{EOrd1} we have:
\begin{eqnarray*}
(V_i^0/\varpi_E^n)_{\ord} &\cong & \Hom_{\co_E/\varpi_E^n[Z_{L_P}^+]}\big(\co_E/\varpi_E^n[Z_{L_P}(L)], V_i^0/\varpi_E^n\big) \\
  &\cong & \Hom_{\co_E/\varpi_E^n[Z_{L_P}^+]}\big(\co_E/\varpi_E^n[Z_{L_P}(L)], V_i^0/\varpi_E^n\big)_{Z_{L_P}(L)-\fini} \\
  & \hooklongrightarrow & \Hom_{\co_E/\varpi_E^n[Z_{L_P}^+]}\big(\co_E/\varpi_E^n[Z_{L_P}(L)], (V^0/\varpi_E^n)^{N_0}\big)_{Z_{L_P}(L)-\fini}
\end{eqnarray*}
where the second isomorphism follows from the fact $V_i^0$ is of finite rank over $\co_E$.
The first part of the lemma follows. By unwinding the maps, the second part also easily follows.
\end{proof}

\begin{remark}\label{rem: ord-sur}
{\rm (1) The embedding $\varinjlim_i V_i^0/\varpi_E^n \cong (V^0)^{N_0}/\varpi_E^n \hooklongrightarrow (V^0/\varpi_E^n)^{N_0}$ is not surjective in general. Consequently (e.g. by the proof of Lemma \ref{lem: ord-alg2}), (\ref{equ: ord-alg}) might not be surjective in general.\\
\noindent
(2) If the inclusion $V_i^0/\varpi_E^n \hookrightarrow (V^0/\varpi_E^n)^{I_{i,i}}$ is an isomorphism for all $i$ (which in particular implies $(V^0)^{N_0}/\varpi_E^n \buildrel\sim\over\rightarrow (V^0/\varpi_E^n)^{N_0}$ and that the $G(L)$-representation $V^0/\varpi_E^n$ is admissible), it follows from (\ref{equ: ord-fini}) and the proof of Lemma \ref{lem: ord-alg2} that (\ref{equ: ord-alg}) is an isomorphism.}
\end{remark}

\begin{lemma}\label{lem: ord-alg3}
We have a natural $L_P(L)$-equivariant injection:
\begin{equation}\label{equ: ord-alg2}
\Ord_P(V^0) \hooklongrightarrow \varprojlim_n \Ord_P(V^0/\varpi_E^n) = \Ord_P(\widehat{V}^0)\ \ ({see}\ (\ref{ordproj}))
\end{equation}
where $\widehat{V}^0:=\varprojlim_n V^0/\varpi_E^n$. Moreover, the composition of (\ref{equ: ord-alg2}) with the (projective limit over $n$ of the) canonical lifting (\ref{equ: ord-can2}) coincides with the composition of the natural injections:
\begin{equation*}
\Ord_P(V^0) \hooklongrightarrow (V^0)^{N_0} \hooklongrightarrow (\widehat{V}^0)^{N_0}.
\end{equation*}
\end{lemma}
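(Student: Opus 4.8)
\textbf{Proof plan for Lemma \ref{lem: ord-alg3}.}
The plan is to reduce everything to the level of $\varpi_E$-torsion quotients, where Lemma \ref{lem: ord-alg2} already provides the needed injection, and then pass to the projective limit. First I would note that for each $n$ the natural map $V^0/\varpi_E^n \to \widehat{V}^0/\varpi_E^n$ is an isomorphism, so that $\Ord_P(\widehat{V}^0)=\varprojlim_n\Ord_P(\widehat{V}^0/\varpi_E^n)=\varprojlim_n\Ord_P(V^0/\varpi_E^n)$ by the definition (\ref{ordproj}). Next, recall from (\ref{equ: ord-ordalg}) that $\Ord_P(V^0)=\varinjlim_i(V_i^0)_{\ord}$; by the isomorphism (\ref{equ: ord-alg3}), reducing mod $\varpi_E^n$ gives $\Ord_P(V^0)/\varpi_E^n\cong\varinjlim_i\big((V_i^0)_{\ord}/\varpi_E^n\big)\cong\varinjlim_i(V_i^0/\varpi_E^n)_{\ord}$ (the direct limit over $i$ and the quotient by $\varpi_E^n$ commute, and each $(V_i^0)_{\ord}$ is $\co_E$-free of finite rank so tensoring with $\co_E/\varpi_E^n$ is exact). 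Composing this identification with the $L_P(L)$-equivariant injection (\ref{equ: ord-alg}) of Lemma \ref{lem: ord-alg2} yields, for each $n$, an $L_P(L)$-equivariant map
\begin{equation*}
\Ord_P(V^0)/\varpi_E^n \hooklongrightarrow \Ord_P(V^0/\varpi_E^n),
\end{equation*}
and these are visibly compatible with the transition maps in $n$ (both sides being functorial in $n$ via reduction). Taking $\varprojlim_n$ produces an $L_P(L)$-equivariant map
\begin{equation*}
\Ord_P(V^0)=\varprojlim_n \Ord_P(V^0)/\varpi_E^n \longrightarrow \varprojlim_n \Ord_P(V^0/\varpi_E^n)=\Ord_P(\widehat{V}^0),
\end{equation*}
where the left equality uses that $\Ord_P(V^0)\subseteq(V^0)^{N_0}$ is $\varpi_E$-adically separated and complete (it is an $\co_E$-submodule of the $\varpi_E$-adically separated complete module $\widehat{V}^0$ — here one should check $\Ord_P(V^0)$ is closed in $(V^0)^{N_0}$ for the $\varpi_E$-adic topology, which follows from it being a direct summand of each $V_i^0$ compatibly, or more simply from $\Ord_P(V^0)=V^0\cap(V^{N_0})_0$ and the latter being cut out by a direct-sum decomposition).

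The injectivity of the resulting map, and the identification of its composition with the canonical lifting, I would obtain as follows. Injectivity on each graded piece: an element of $\Ord_P(V^0)$ mapping to $\varpi_E\cdot\Ord_P(\widehat{V}^0)$ must already lie in $\varpi_E\cdot\Ord_P(V^0)$ because, via the commuting diagram with the canonical liftings, its image in $(V^0/\varpi_E)^{N_0}$ is zero, hence its image in $(V^0)^{N_0}/\varpi_E$ is zero (using the injection $(V^0)^{N_0}/\varpi_E\hookrightarrow(V^0/\varpi_E)^{N_0}$, Remark \ref{rem: ord-sur}(1)), hence it lies in $\varpi_E(V^0)^{N_0}\cap\Ord_P(V^0)=\varpi_E\Ord_P(V^0)$ since $\Ord_P(V^0)$ is a direct summand of $(V^0)^{N_0}$ as an $\co_E$-module through the compatible summands $(V_i^0)_{\ord}\subseteq V_i^0$. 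A $\varpi_E$-adic successive-approximation argument then upgrades this to injectivity of the map on the level of $\varprojlim_n$. For the second assertion, by construction (\ref{equ: ord-alg2}) is the projective limit of the maps $\Ord_P(V^0)/\varpi_E^n\to\Ord_P(V^0/\varpi_E^n)$, and by the last sentence of Lemma \ref{lem: ord-alg2} the composition of each of these with the canonical lifting (\ref{equ: ord-can}) is the natural inclusion $\varinjlim_i(V_i^0/\varpi_E^n)_{\ord}\hookrightarrow(V^0/\varpi_E^n)^{N_0}$, i.e.\ is induced by $\Ord_P(V^0)/\varpi_E^n\to(V^0)^{N_0}/\varpi_E^n\to(V^0/\varpi_E^n)^{N_0}$; passing to $\varprojlim_n$ and using $(V^0)^{N_0}=\varprojlim_n(V^0)^{N_0}/\varpi_E^n$ (as $(V^0)^{N_0}$ is $\varpi_E$-adically complete and separated, being a submodule of $\widehat V^0$), $(\widehat V^0)^{N_0}=\varprojlim_n(V^0/\varpi_E^n)^{N_0}$, and the fact that (\ref{equ: ord-can2}) is by definition the projective limit of the (\ref{equ: ord-can}), gives exactly the claimed factorization $\Ord_P(V^0)\hookrightarrow(V^0)^{N_0}\hookrightarrow(\widehat V^0)^{N_0}$.

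The main obstacle I anticipate is not any single deep input but the bookkeeping needed to make all the limits interact correctly: specifically, justifying that $\varinjlim_i$ commutes with $(-)/\varpi_E^n$ and with the $\ord$-decomposition (which is clean because each $(V_i^0)_{\ord}$ is finite free over $\co_E$ and a direct summand of $V_i^0$), and checking the topological points — that $\Ord_P(V^0)$ is $\varpi_E$-adically complete and separated and is a closed (indeed $\co_E$-direct) summand inside $(V^0)^{N_0}$, so that $\varprojlim_n\Ord_P(V^0)/\varpi_E^n=\Ord_P(V^0)$ — and that the inclusion $(V^0)^{N_0}/\varpi_E^n\hookrightarrow(V^0/\varpi_E^n)^{N_0}$ of Remark \ref{rem: ord-sur}(1), though not surjective, is enough for the injectivity argument. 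None of these require new ideas; they are routine once the direct-summand structure $(V_i^0)_{\ord}\oplus(V_i^0)_{\nord}\cong V_i^0$, compatible in $i$ and compatible with reduction mod $\varpi_E^n$ via (\ref{equ: ord-alg3}), is set up carefully. One could also phrase the whole argument more slickly by observing that $\Ord_P(V^0)$ is, by (\ref{equ: ord-ordalg}), the $\varpi_E$-adic completion of $\varinjlim_i(V_i^0)_{\ord}$ intersected appropriately, but the term-by-term approach above seems the most transparent and least error-prone.
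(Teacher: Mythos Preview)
Your approach is essentially the paper's: identify $\Ord_P(V^0)/\varpi_E^n\cong\varinjlim_i(V_i^0/\varpi_E^n)_{\ord}$, plug in Lemma~\ref{lem: ord-alg2}, and pass to the projective limit. The one issue is your claimed equality $\Ord_P(V^0)=\varprojlim_n\Ord_P(V^0)/\varpi_E^n$, which would require $\Ord_P(V^0)$ to be $\varpi_E$-adically complete. This is false in general: $\Ord_P(V^0)=\varinjlim_i(V_i^0)_{\ord}$ is a rising union of finite free $\co_E$-modules, and neither being a direct summand of $(V^0)^{N_0}$ (which is itself typically not complete) nor being a submodule of $\widehat V^0$ yields completeness. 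Remark~\ref{rem: ord-dense}, which only asserts that (\ref{equ: ord-alg2}) has \emph{dense} image under an extra hypothesis, confirms that equality does not hold in general.

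The fix is painless: you only need an \emph{injection} $\Ord_P(V^0)\hookrightarrow\varprojlim_n\Ord_P(V^0)/\varpi_E^n$, and this follows from $\varpi_E$-adic separatedness alone ($\cap_n\varpi_E^n\Ord_P(V^0)=0$, since already $\cap_n\varpi_E^nV^0=0$). This is exactly what the paper does. With that correction (\ref{equ: ord-alg2}) becomes a composite of injections, and your separate injectivity paragraph --- which is correct but ultimately just repackages separatedness --- becomes unnecessary. Your argument for the second part matches the paper's.
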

\begin{proof}
For any $n\in \Z_{>0}$, by (\ref{equ: ord-ordalg}) and (\ref{equ: ord-alg3}) we have:
\begin{equation*}
 \Ord_P(V^0)\cong \varinjlim_i (V_i^0)_{\ord} \twoheadlongrightarrow \varinjlim_i (V_i^0/\varpi_E^n)_{\ord}.
\end{equation*}
It is easy to see $(\varpi_E^n V_{i+1}^0) \cap V_i^0=(\varpi_E^n V^0) \cap V_i^0=\varpi_E^n V_i^0$. Hence the above surjection induces an isomorphism $\Ord_P(V^0)/\varpi_E^n \xrightarrow{\sim} \varinjlim_i (V_i^0/\varpi_E^n)_{\ord}$. We also have $\cap_n \varpi_E^n \Ord_P(V^0)=0$ since the same holds for $V^0$, and thus we obtain an injection:
\begin{equation*}\label{lastdense}
\Ord_P(V^0) \hooklongrightarrow \varprojlim_n \big(\Ord_P(V^0)/\varpi_E^n\big) \cong \varprojlim_n \big( \varinjlim_i (V_i^0/\varpi_E^n)_{\ord}\big).
\end{equation*}
By (\ref{equ: ord-alg}) and taking the projective limit over $n$, (\ref{equ: ord-alg2}) follows. The second part of the lemma follows from the second part of Lemma \ref{lem: ord-alg2}.
\end{proof}

\begin{remark}\label{rem: ord-dense}
{\rm By (\ref{lastdense}) and Remark \ref{rem: ord-sur}(2), if $V_i^0/\varpi_E^n \xrightarrow{\sim} (V^0/\varpi_E^n)^{I_{i,i}}$ for all $i$, then we see that (\ref{equ: ord-alg2}) has dense image where $\Ord_P(\widehat{V}^0)$ is endowed with the $\varpi_E$-adic topology.}
\end{remark}

\begin{lemma}\label{lem: ord-alg4}
Let $W$ be a unitary Banach representation of $G(L)$ over $E$, $W^0\subset W$ an open bounded $G(L)$-invariant lattice and $f: V^0 \ra W^0$ an $\co_E$-linear $G(L)$-equivariant morphism, which induces a $G(L)$-equivariant morphism $f: V\ra W$. Then $f$ induces an $L_P(L)$-equivariant morphism:
\begin{equation}\label{equ: ord-alg4}
\Ord_P(V^0) \lra \Ord_P(W^0) \ \ (\text{resp. } \Ord_P(V) \lra \Ord_P(W))
\end{equation}
such that the following diagram commutes (resp. with $V^0$, $W^0$ replaced by $V$, $W$):
\begin{equation}\label{equ: ord-comm}
\begin{CD}
  \Ord_P(V^0) @>>> \Ord_P(W^0) \\
  @VVV @VVV \\
  (V^0)^{N_0} @>>> (W^0)^{N_0}.
\end{CD}
\end{equation}
Moreover, if $f$ is injective and $V^0=W^0 \cap V$, then the morphisms in (\ref{equ: ord-alg4}) are injective.
\end{lemma}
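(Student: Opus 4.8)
The plan is to reduce everything to the explicit descriptions of $\Ord_P$ on locally algebraic representations established earlier in \S\ref{sec: ord-lalg}, and then to chase the relevant functoriality through the finite-dimensional pieces $V_i^0$, $W_i^0$. First I would set $V_i^0 := V_i \cap V^0$ and $W_i^0 := W_i \cap W^0$ as in the text, where $V_i = V_\infty^{I_{i,i}}\otimes_E L_P(\lambda)$ and similarly for $W$ (note $W$ is automatically locally algebraic of the same weight $\lambda$ up to the obvious compatibility, since $f\colon V\to W$ is $G(L)$-equivariant with $V$ locally $\lambda$-algebraic; one reduces to the $\lambda$-isotypic part of $W$, the rest being killed). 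The $G(L)$-equivariant map $f$ restricts to a $Z_{L_P}^+$-equivariant $\co_E$-linear map $V_i^0\to W_i^0$ for each $i$, because $f$ commutes with $N_0$ and with the Hecke action (\ref{equ: ord-Hec}) of $L_P^+$ (this last point is the one genuinely small computation: the formula for the Hecke operator only involves $N_0$-translates and the $G(L)$-action, both of which $f$ respects). Hence $f$ maps $A_i^V$-module structure to $A_i^W$-module structure compatibly, so it carries the ordinary localization $(V_i^0)_{\ord}$ into $(W_i^0)_{\ord}$: indeed a maximal ideal $\fn$ of $A_i^V$ is ordinary iff $\mathrm{Image}(Z_{L_P}^+)\cap\fn=\emptyset$, and $f$ being equivariant sends generalized eigenspaces for non-ordinary ideals to the same, so the idempotent decomposition is respected.

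Next I would pass to the colimit over $i$. Since $\Ord_P(V^0)=\varinjlim_i (V_i^0)_{\ord}$ and $\Ord_P(W^0)=\varinjlim_i (W_i^0)_{\ord}$ by (\ref{equ: ord-ordalg}) and Lemma \ref{lem: ord-alg}, the maps $(V_i^0)_{\ord}\to (W_i^0)_{\ord}$ assemble into an $\co_E$-linear map $\Ord_P(V^0)\to\Ord_P(W^0)$. This map is $L_P^+$-equivariant because it is the restriction of the $L_P^+$-equivariant map $(V^0)^{N_0}\to (W^0)^{N_0}$ (here I use that $\Ord_P(V^0)=V^0\cap (V^{N_0})_0\hookrightarrow (V^0)^{N_0}$ and likewise for $W$, which is exactly the content of (\ref{equ: ord-ordalg})); and it is moreover $Z_{L_P}(L)$-equivariant since the $Z_{L_P}^+$-action on the ordinary part is invertible and extends uniquely, and $f$ intertwines these extensions. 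By \cite[Prop.~3.3.6]{Em11} (as used in the construction of $\Ord_P(V)$ just after (\ref{equ: ord-ordalg})), the combined $L_P^+$- and $Z_{L_P}(L)$-equivariance forces $L_P(L)$-equivariance. The commutativity of (\ref{equ: ord-comm}) is then immediate from the construction, because both horizontal arrows are induced by the single map $f$ and both vertical arrows are the canonical inclusions $\Ord_P(-^0)\hookrightarrow (-^0)^{N_0}$. Inverting $p$ (i.e.\ tensoring with $E$ over $\co_E$) gives the $L_P(L)$-equivariant map $\Ord_P(V)\to\Ord_P(W)$ and the analogous commutative square, using $\Ord_P(V)=\Ord_P(V^0)\otimes_{\co_E}E\hookrightarrow V^{N_0}$.

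Finally, for the last assertion: if $f$ is injective and $V^0=W^0\cap V$, then for each $i$ one has $V_i^0 = V_i\cap V^0 = V_i\cap W^0\cap V$. Identifying $V_i$ with its image inside $W_i$ under $f$ (which is legitimate as $f$ is injective and $W_i$ contains the image of $V_i$, since $f(V_\infty^{I_{i,i}})\subseteq W_\infty^{I_{i,i}}$), this reads $V_i^0 = W_i^0\cap V_i$ inside $W_i$, so the map $V_i^0\hookrightarrow W_i^0$ is injective with image a pure $\co_E$-submodule. Since the idempotent decomposition $W_i^0 = (W_i^0)_{\ord}\oplus (W_i^0)_{\nord}$ is defined over $\co_E$ and compatible with the one on $V_i^0$ by the previous step, the induced map $(V_i^0)_{\ord}\hookrightarrow (W_i^0)_{\ord}$ is still injective; passing to the colimit over $i$, which is exact, yields that $\Ord_P(V^0)\hookrightarrow\Ord_P(W^0)$ is injective, and then so is $\Ord_P(V)\hookrightarrow\Ord_P(W)$ after inverting $p$.

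The only real point requiring care — and the step I expect to be the main (minor) obstacle — is verifying that $f$ genuinely intertwines the Hecke actions (\ref{equ: ord-Hec}) on $V^{N_0}$ and $W^{N_0}$, not merely the naive $G(L)$-actions, and relatedly that the ordinary idempotents on $V_i^0$ and $W_i^0$ are matched by $f$ rather than just loosely compatible; everything downstream (colimits, inverting $p$, the purity argument for injectivity) is formal. One should also take a moment to confirm that $W$ may indeed be assumed locally $\lambda$-algebraic of the correct weight — replacing $W$ by the image of $f$, or by the $L(\lambda)$-isotypic locally algebraic part, so that the machinery of \S\ref{sec: ord-lalg} applies verbatim to $W$ as well.
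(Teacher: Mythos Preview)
There is a genuine gap: you have misread the hypotheses. In this lemma $W$ is an arbitrary unitary Banach representation of $G(L)$, not a locally algebraic one, so there is no $W_\infty$, no $W_i=W_\infty^{I_{i,i}}\otimes_E L_P(\lambda)$, and the description $\Ord_P(W^0)=\varinjlim_i(W_i^0)_{\ord}$ of (\ref{equ: ord-ordalg}) is simply unavailable. For such $W$, $\Ord_P(W^0)$ means Emerton's construction (\ref{ordproj}), i.e.\ $\varprojlim_n\Ord_P(W^0/\varpi_E^n)$, and the whole point of the lemma is to produce a map from the ad hoc ordinary part of the locally algebraic $V^0$ (built in \S\ref{sec: ord-lalg}) into this Banach $\Ord_P(W^0)$. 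Your suggested fix of replacing $W$ by $f(V)$ or by the $L(\lambda)$-isotypic locally algebraic part does not close the gap: even granting that, you would land in $\Ord_P$ of that locally algebraic subspace in the \S\ref{sec: ord-lalg} sense, and you would still need a comparison map into $\Ord_P(W^0)$ in the Banach sense, which is exactly what has to be constructed.

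The paper's route is different and essential here: one first completes, using that $W^0$ is $\varpi_E$-adically complete, to get $\widehat f:\widehat V^0\to W^0$; then Lemma~\ref{lem: ord-alg3} supplies the bridge $\Ord_P(V^0)\hookrightarrow\Ord_P(\widehat V^0)$ between the two definitions of $\Ord_P$, and functoriality of Emerton's $\Ord_P$ on $\varpi_E$-adically continuous representations gives $\Ord_P(\widehat V^0)\to\Ord_P(W^0)$. Commutativity of (\ref{equ: ord-comm}) then follows from the second part of Lemma~\ref{lem: ord-alg3} and functoriality of the canonical lifting (\ref{equ: ord-can2}). For injectivity, the argument is again on the Banach side: $V^0=W^0\cap V$ gives $V^0/\varpi_E^n\hookrightarrow W^0/\varpi_E^n$ for all $n$, and one applies left exactness of Emerton's $\Ord_P$ (\cite[Prop.~3.2.4]{EOrd1}) level by level, together with (\ref{equ: ord-alg2}). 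Your verification that $f$ intertwines the Hecke actions (\ref{equ: ord-Hec}) is correct and harmless, but it is not where the content lies.
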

\begin{proof}
Since $W^0$ is $\varpi_E$-adically complete, the morphism $f$ induces a morphism $\widehat{f}: \widehat{V}^0 \ra W^0$. By (\ref{equ: ord-alg2}) and the functoriality of $\Ord_P(\cdot)$ on the category of $\varpi_E$-adically continuous representations of $G(L)$, we deduce the morphisms in (\ref{equ: ord-alg4}). By the functoriality of the canonical lifting (\ref{equ: ord-can2}) and the second part of Lemma \ref{lem: ord-alg3}, the commutative diagram (\ref{equ: ord-comm}) follows. At last, if $f$ is injective and $V^0=W^0\cap V$, we have $\varpi_E^n V^0=(\varpi_E^n W^0)\cap V$ hence $V^0/\varpi_E^n\hookrightarrow W^0/\varpi_E^n$, and by (\ref{equ: ord-alg2}) and the left exactness of $\Ord_P(\cdot)$ (\cite[Prop.~3.2.4]{EOrd1}) the morphisms in (\ref{equ: ord-alg4}) are injective.
\end{proof}

\subsection{An adjunction property}

\noindent
We study some adjunction property of the functor $\Ord_P(\cdot)$ of \S~\ref{sec: ord-lalg} on locally algebraic representations.\\

\noindent
We keep the notation of \S\S~\ref{sec: ord-1},~\ref{sec: ord-2}~\&~\ref{sec: ord-lalg}. If $U$ is any $E$-vector space, denote by $\cC_c^{\infty}(N_P(L), U)$ the $E$-vector space of $U$-valued locally constant functions with compact support in $N_P(L)$ endowed with the left action of $N_P(L)$ by right translation on function. If $U_\infty$ is a smooth representation of $L_P(L)$ over $E$, recall that there is a natural $N_P(L)$-equivariant injection:
\begin{equation}\label{embednp}
\cC_c^{\infty}(N_P(L), U_{\infty}) \hooklongrightarrow \big(\Ind_{\overline{P}(L)}^{G(L)} U_{\infty}\big)^{\infty}
\end{equation}
sending $f\in \cC_c^{\infty}(N_P(L), U_{\infty})$ to $F\in (\Ind_{\overline{P}(L)}^{G(L)} U_{\infty})^{\infty}$ such that:
\begin{equation*}
 F(g)=\begin{cases}
   \overline{p}(f(n)) & {\rm for}\ g=\overline{p}n\in \overline{P}(L)N_P(L) \\
   0 & \text{otherwise}.
  \end{cases}
\end{equation*}

\begin{lemma}\label{lem: ord-alg5}
Let $U_{\infty}$ be a smooth admissible representation of $L_P(L)$ over $E$ and assume that $U:=U_{\infty} \otimes_E L_P(\lambda)$ is unitary as representation of $L_P(L)$ ($L_P(\lambda)$ as in the beginning of \S~\ref{sec: ord-lalg}). Then the locally algebraic representation $(\Ind_{\overline{P}(L)}^{G(L)} U_{\infty})^{\infty} \otimes_E L(\lambda)$ is unitary as representation of $G(L)$ and there is a natural $L_P(L)$-equivariant injection:
\begin{equation}\label{equ: ord-alg5}
U=U_{\infty}\otimes_E L_P(\lambda) \hooklongrightarrow \Ord_P\big((\Ind_{\overline{P}(L)}^{G(L)} U_{\infty})^{\infty}\otimes_E L(\lambda)\big)
\end{equation}
such that the composition of (\ref{equ: ord-alg5}) with the natural injection (see just after (\ref{equ: ord-ordalg})):
\begin{equation*}
\Ord_P\big((\Ind_{\overline{P}(L)}^{G(L)} U_{\infty})^{\infty} \otimes_E L(\lambda)\big) \hooklongrightarrow \big((\Ind_{\overline{P}(L)}^{G(L)} U_{\infty})^{\infty} \otimes_E L(\lambda)\big)^{N_0}
\end{equation*}
has image in $\cC_c^{\infty}(N_P(L), U)^{N_0}\cong(\cC_c^{\infty}(N_P(L), U^\infty)\otimes_EL_P(\lambda))^{N_0}$ via (\ref{embednp}) (tensored with $L(\lambda)$) and maps $u\in U$ to the unique function $f_u\in \cC_c^{\infty}(N_P(L), U)^{N_0}$ such that $f_u(n)=u$ for all $n\in N_0$ and $f_u(n)=0$ otherwise.
\end{lemma}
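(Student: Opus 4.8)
The strategy is to reduce everything to the smooth setting, where the adjunction between $\Ord_P$ and parabolic induction is already understood (via Emerton's work and the constructions of \S~\ref{sec: ord-lalg}), and then to tensor by $L(\lambda)$ and track the canonical liftings. First I would record that $(\Ind_{\overline{P}(L)}^{G(L)} U_{\infty})^{\infty}$ is a smooth admissible representation of $G(L)$ (admissibility of parabolic induction preserves admissibility), so that $(\Ind_{\overline{P}(L)}^{G(L)} U_{\infty})^{\infty}\otimes_E L(\lambda)$ is a locally algebraic representation to which the machinery of \S~\ref{sec: ord-lalg} applies. Unitarity of this $G(L)$-representation: since $U=U_\infty\otimes_E L_P(\lambda)$ is unitary, fix a $G(L)$-invariant... rather an $L_P(L)$-invariant $\co_E$-lattice $U^0$ in $U$; then $U^0$ is $\overline{P}(L)$-stable (letting $N_{\overline P}(L)$ act trivially on $L_P(\lambda)$ is not quite right, so one uses that $L(\lambda)^{N_{\overline P}(L)}\cong L_P(\lambda)$ and the standard fact that an invariant lattice in $U_\infty\otimes_E L_P(\lambda)$ extends to an invariant lattice in $(\Ind_{\overline P(L)}^{G(L)}U_\infty)^\infty\otimes_E L(\lambda)$ — this is exactly the computation behind \cite[\S~4.1.3]{Em11} or \cite[Lem.~4.4.2]{Em4}). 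Concretely, $(\Ind_{\overline P(L)}^{G(L)}U_\infty^0)^\infty\otimes_{\co_E}\bL(\lambda)$ for an invariant lattice $\bL(\lambda)\subset L(\lambda)$ gives the desired $G(L)$-invariant $\co_E$-lattice.

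Next I would construct the map (\ref{equ: ord-alg5}) at the level of the smooth $N_0$-invariants. By (\ref{embednp}) tensored with $L(\lambda)$, we have an $N_P(L)$-equivariant (in particular $L_P^+$- and $N_0$-equivariant, once we take $L_P(L)$ and $L(\lambda)$ into account appropriately) embedding $\cC_c^\infty(N_P(L),U_\infty)\otimes_E L(\lambda)\hookrightarrow (\Ind_{\overline P(L)}^{G(L)}U_\infty)^\infty\otimes_E L(\lambda)$. Taking $N_0$-invariants, $\cC_c^\infty(N_P(L),U)^{N_0}$ contains for each $u\in U$ the function $f_u$ supported on $N_0$ with constant value $u$; the assignment $u\mapsto f_u$ is $E$-linear and visibly $L_P^+$-equivariant for the Hecke action (\ref{equ: ord-Hec}) — here one uses $zN_0z^{-1}\subseteq N_0$ for $z\in L_P^+$ together with the fact that translating $f_u$ by elements of $N_0/zN_0z^{-1}$ and summing reproduces $f_{zu}$ up to the scaling built into (\ref{equ: ord-Hec}). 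This is the one genuinely computational point and I expect it to be the main obstacle: one must check carefully that $z\cdot f_u = f_{zu}$ under the Hecke action, which requires unwinding the definition of (\ref{equ: ord-Hec}) and the $\overline P(L)$-action on induced functions, being careful about the modulus character and about the fact that $z$ acts on the $L_P(\lambda)$-component. Once this is verified, the image of $u\mapsto f_u$ lands in an $L_P^+$-stable subspace of $((\Ind_{\overline P(L)}^{G(L)}U_\infty)^\infty\otimes_E L(\lambda))^{N_0}$ on which $Z_{L_P}^+$ acts invertibly (because it acts invertibly on $U=U_\infty\otimes_E L_P(\lambda)$, $L_P(\lambda)$ having a central character), and which admits a $Z_{L_P}(L)$-invariant $\co_E$-lattice (namely the image of $U^0$).

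Finally, by Lemma \ref{lem: ord-alg7}(2) applied to $W:=U$ with its $L_P(L)$-action (the $Z_{L_P}(L)$-orbits being finite-dimensional and $U$ admitting a $Z_{L_P}(L)$-invariant lattice), the $Z_{L_P}^+$-equivariant map $u\mapsto f_u$ into $V^{N_0}$, where $V:=(\Ind_{\overline P(L)}^{G(L)}U_\infty)^\infty\otimes_E L(\lambda)$, factors through $(V^{N_0})_0\cong\Ord_P(V)$ by Lemma \ref{lem: ord-alg6}. This gives the injection (\ref{equ: ord-alg5}) (injectivity is clear since $u\mapsto f_u$ is already injective into $V^{N_0}$), and its $L_P^+$-equivariance upgrades to $L_P(L)$-equivariance because both source and target are $L_P(L)$-representations on which the $L_P^+$-action determines the $L_P(L)$-action (using \cite[Prop.~3.3.6]{Em11} and the invertibility of $Z_{L_P}^+$). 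The compatibility with the canonical lifting $\Ord_P(V)\hookrightarrow V^{N_0}$ — namely that the composite sends $u$ to $f_u$ — is immediate from the construction, since the factorization through $(V^{N_0})_0$ in Lemma \ref{lem: ord-alg7}(2) is compatible by definition with the inclusion $(V^{N_0})_0\hookrightarrow V^{N_0}$, which is precisely the canonical lifting after the identification of Lemma \ref{lem: ord-alg6}. This completes the proof.
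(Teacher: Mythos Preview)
Your construction of the injection (\ref{equ: ord-alg5}) is essentially the paper's own: both verify that $u\mapsto f_u$ is $L_P^+$-equivariant and then invoke Lemma~\ref{lem: ord-alg7}(2) and Lemma~\ref{lem: ord-alg6} to factor through $\Ord_P(V)=(V^{N_0})_0$. The paper packages the Hecke-action verification as the known isomorphism $U\cong J_P(\cC_c^\infty(N_P(L),U))(\delta_P^{-1})$ from \cite[Lem.~3.5.2]{Em11} combined with the canonical lifting (\ref{equ: lgOrd-relev}), whereas you propose to compute $z\cdot f_u=f_{zu}$ by hand; these amount to the same computation, and your identification of this as the one nontrivial point is accurate.

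The unitarity argument, however, has a gap. Your ``concrete'' lattice $(\Ind_{\overline P(L)}^{G(L)}U_\infty^0)^\infty\otimes_{\co_E}\bL(\lambda)$ does not work: first, $U_\infty^0$ was never defined (you fixed only a lattice $U^0$ in $U=U_\infty\otimes_E L_P(\lambda)$, and $U_\infty$ itself need not be unitary); second, and more seriously, $L(\lambda)$ has no $G(L)$-invariant $\co_E$-lattice in general (only a $G(\co_L)$-invariant one), so tensoring by any $\bL(\lambda)$ cannot produce a $G(L)$-stable lattice. The paper's route avoids this: one embeds $V:=(\Ind_{\overline P(L)}^{G(L)}U_\infty)^\infty\otimes_E L(\lambda)$ into the continuous parabolic induction $(\Ind_{\overline P(L)}^{G(L)}\widehat{U^0}\otimes_{\co_E}E)^{\cC^0}$ (through the locally analytic induction $(\Ind_{\overline P(L)}^{G(L)}U)^{\an}$), and then intersects with the evident integral lattice $(\Ind_{\overline P(L)}^{G(L)}\widehat{U^0})^{\cC^0}$. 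This is the standard device for producing $G(L)$-invariant lattices in locally algebraic inductions when only the Levi piece is known to be unitary.
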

\begin{proof}
For simplicity, we write $V:=(\Ind_{\overline{P}(L)}^{G(L)} U_{\infty})^{\infty} \otimes_E L(\lambda)$. Let $U^0$ be an $L_P(L)$-invariant $\co_E$-lattice of $U$ and $\widehat{U^0}:=\varprojlim_n U^0/\varpi_E^n$. We have $G(L)$-equivariant embeddings:
\begin{equation}\label{equ: ord-alg6}
V \hooklongrightarrow \big(\Ind_{\overline{P}(L)}^{G(L)}U\big)^{\an}\hookrightarrow \big(\Ind_{\overline{P}(L)}^{G(L)}\widehat{U^0}\otimes_{\co_E}E\big)^{\cC^0}.
\end{equation}
Since the right hand side of (\ref{equ: ord-alg6}) has an obvious invariant lattice given by $(\Ind_{\overline{P}(L)}^{G(L)}\widehat{U^0})^{\cC^0}$, its intersection with the left hand side also gives an invariant lattice on $V$, hence $V$ is unitary. We have:
\begin{equation*}
U \xlongrightarrow{\sim} J_P\big(\cC_c^{\infty}(N_P(L), U)\big)(\delta_P^{-1}) \hooklongrightarrow J_P(V)(\delta_P^{-1})
\end{equation*}
where the first isomorphism follows from \cite[Lem. 3.5.2]{Em11} (the above action of $N_P(L)$ on $\cC_c^{\infty}(N_P(L), U)$ being extended to $P(L)$ as in \cite[\S~3.5]{Em11}) and the second injection follows from the left exactness of $J_P(\cdot)$. Since $U$ is unitary, by Lemma \ref{lem: ord-alg7}(2) and Lemma \ref{lem: ord-alg6} we deduce an injection:
\begin{equation*}
U \hooklongrightarrow \Ord_P(V)\big(\hooklongrightarrow J_P(V)(\delta_P^{-1}) \hooklongrightarrow V^{N_0}\big)
\end{equation*}
(recall the second embedding follows from (\ref{equ: lgOrd-relev}) and the third from (\ref{equ: ord-algb})). Moreover the composition is equal to the composition:
\begin{equation*}
U \xlongrightarrow{\sim} J_P\big(\cC_c^{\infty}(N_P(L), U)\big)(\delta_P^{-1}) \hooklongrightarrow \cC_c^{\infty}(N_P(L), U)^{N_0} \hooklongrightarrow V^{N_0}
\end{equation*}
sending $u\in U$ to $f_u\in \cC_c^{\infty}(N_P(L), U)^{N_0}$ as in the statement of the lemma (see \cite[\S~3.5]{Em11}, in particular the proof of \cite[Lem. 3.5.2]{Em11}, see also the beginning of \cite[\S~2.8]{Em2}).
\end{proof}

\begin{lemma}\label{lem: ord-ordind}
Keep the notation and assumptions of Lemma \ref{lem: ord-alg5} and let $U^0$ be an $L_P(L)$-invariant $\co_E$-lattice of $U$ and $\widehat{U}:=(\varprojlim_n U^0/\varpi_E^n)\otimes_{\co_E}E$. Assume that $\widehat{U}$ is an admissible Banach representation of $G(L)$ over $E$ (\cite[\S~3]{ST}). We have a natural commutative diagram:
\begin{equation*}
  \begin{CD}
   U @>>> \widehat{U} \\
   @V (\ref{equ: ord-alg5}) VV @V \wr VV \\
   \Ord_P\big((\Ind_{\overline{P}(L)}^{G(L)} U_{\infty})^{\infty}\otimes_E L(\lambda)\big) @>>> \Ord_P\big((\Ind_{\overline{P}(L)}^{G(L)} \widehat{U})^{\cC^0}\big)
  \end{CD}
\end{equation*}
where the bottom map is induced by (\ref{equ: ord-alg6}) and Lemma \ref{lem: ord-alg4}, and where the isomorphism on the right is \cite[Cor. 4.3.5]{EOrd1}.
\end{lemma}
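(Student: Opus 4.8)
\textbf{Plan of proof for Lemma \ref{lem: ord-ordind}.}

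The plan is to verify that the square commutes by chasing the canonical lifting maps, using the explicit descriptions already established. First I would set $V:=(\Ind_{\overline{P}(L)}^{G(L)} U_{\infty})^{\infty}\otimes_E L(\lambda)$ and $\widehat{V}:=(\Ind_{\overline{P}(L)}^{G(L)} \widehat{U})^{\cC^0}$, and record the inclusion $V\hookrightarrow \widehat{V}$ from (\ref{equ: ord-alg6}) together with a choice of $G(L)$-invariant $\co_E$-lattice $V^0\subseteq V$ compatible with the obvious lattice $\widehat{V}^0:=(\Ind_{\overline{P}(L)}^{G(L)}\varprojlim_n U^0/\varpi_E^n)^{\cC^0}$ of $\widehat{V}$ (i.e. $V^0=\widehat{V}^0\cap V$), so that Lemma \ref{lem: ord-alg4} applies and the bottom horizontal map $\Ord_P(V)\to\Ord_P(\widehat{V})$ is well defined, injective, and compatible with the canonical liftings into $V^{N_0}\hookrightarrow \widehat{V}^{N_0}$ via the commutative square (\ref{equ: ord-comm}). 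The right vertical map is the isomorphism $\widehat{U}\buildrel\sim\over\to \Ord_P(\widehat{V})$ of \cite[Cor.~4.3.5]{EOrd1}, which (as in \emph{loc.cit.}) is characterized by the fact that its composition with the canonical lifting $\Ord_P(\widehat{V})\hookrightarrow \widehat{V}^{N_0}$ sends $u\in \widehat{U}$ to the function supported on $\overline{P}(L)N_0$ with value $u$ on $N_0$ — exactly the analogue, for the continuous induction, of the function $f_u$ described in Lemma \ref{lem: ord-alg5}.

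Next I would observe that both composites $U\to \Ord_P(\widehat{V})$ around the square land in the \emph{same} target $\Ord_P(\widehat{V})$, which by \cite[Thm.~3.4.8]{EOrd1} embeds (via the canonical lifting) as a closed subspace of $\widehat{V}^{N_0}$; hence it suffices to check that the two composites agree after post-composing with this embedding. Going around the top-right, $u\in U$ maps into $\widehat{U}$ and then, by the characterization recalled above, to the function $f_u\in \cC_c^{\infty}(N_P(L),\widehat{U})^{N_0}\subseteq \widehat{V}^{N_0}$ supported on $N_0$ with value $u$. Going around the left-bottom, the last sentence of Lemma \ref{lem: ord-alg5} says that $U\hookrightarrow \Ord_P(V)\hookrightarrow V^{N_0}$ sends $u$ to the function $f_u\in \cC_c^{\infty}(N_P(L),U)^{N_0}$ with exactly the same support and value, and then the bottom map followed by the canonical lifting into $\widehat{V}^{N_0}$ is compatible with $V^{N_0}\hookrightarrow \widehat{V}^{N_0}$ by (\ref{equ: ord-comm}); under the inclusion $\cC_c^{\infty}(N_P(L),U)\hookrightarrow \cC_c^{\infty}(N_P(L),\widehat{U})$ coming from $U\hookrightarrow\widehat{U}$ and the compatibility of the embeddings (\ref{embednp}) for $V$ and for $\widehat{V}$, the image of $f_u$ is again $f_u$. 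So both composites, followed by the canonical lifting, equal $u\mapsto f_u$, and since the canonical lifting for $\Ord_P(\widehat{V})$ is injective, the square commutes.

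The main obstacle I expect is purely bookkeeping rather than conceptual: one must be careful that the several ``canonical lifting'' maps in play — the one for the locally algebraic $\Ord_P(V)$ from \S~\ref{sec: ord-lalg} (constructed via the decomposition (\ref{equ: ord-algb}) and (\ref{equ: lgOrd-relev})), the one for the $\varpi_E$-adically continuous $\Ord_P(\widehat{V})$ from \cite[(3.4.7)]{EOrd1}, and the comparison map of Lemma \ref{lem: ord-alg3} relating them — are all mutually compatible under the inclusion $V\hookrightarrow \widehat{V}$, and that the identification $\widehat{V}=(\Ind_{\overline{P}(L)}^{G(L)}\widehat{U})^{\cC^0}$ used in \cite[Cor.~4.3.5]{EOrd1} matches the completion of $V$ appearing in (\ref{equ: ord-alg6}); this is where the hypothesis that $\widehat{U}$ is admissible is used (to invoke \cite[Thm.~3.4.8]{EOrd1} and \cite[Cor.~4.3.5]{EOrd1}). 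Once these compatibilities are in place — and each is either in \cite{EOrd1} or follows from Lemma \ref{lem: ord-alg3} and Lemma \ref{lem: ord-alg4} — the diagram chase above closes the argument.
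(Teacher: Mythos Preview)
Your proposal is correct and follows essentially the same route as the paper: reduce commutativity of the square to commutativity after post-composing with the injective canonical lifting $\Ord_P(\widehat{V})\hookrightarrow \widehat{V}^{N_0}$ (using (\ref{equ: ord-comm}) and admissibility of $\widehat{V}$), and then identify both composites explicitly as $u\mapsto f_u$ via Lemma~\ref{lem: ord-alg5} on the left and the description in \cite[\S~4]{EOrd1} of Emerton's isomorphism on the right. The paper's proof is terser but the argument is the same.
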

\begin{proof}
By (\ref{equ: ord-comm}) and the fact (\ref{equ: ord-can2}) (with $V=(\Ind_{\overline{P}(L)}^{G(L)} \widehat{U})^{\cC^0}$) is an embedding (note that $V$ is admissible by assumption), it is sufficient to prove that the following diagram:
\begin{equation}\label{equ: ord-alg7}
  \begin{CD}
   U @>>> \widehat{U} \\
   @VVV @VVV\\
   \big((\Ind_{\overline{P}(L)}^{G(L)} U_{\infty})^{\infty}\otimes_E L(\lambda)\big)^{N_0} @>>> \big((\Ind_{\overline{P}(L)}^{G(L)}
   \widehat{U})^{\cC^0}\big)^{N_0}
  \end{CD}
\end{equation}
is commutative. By Lemma \ref{lem: ord-alg5} the left map sends $u\in U^0$ to $f_u\in \cC_c^{\infty}(N_P(L),U^0)^{N_0}$, and by \cite[\S~4]{EOrd1} the right map is induced by the maps (with obvious notation):
\begin{equation*}
  U^0/\varpi_E^n \lra \cC_c^{\infty}(N_P(L), U^0/\varpi_E^n)^{N_0}, \ \ \overline{u} \longmapsto \overline{f_u}
\end{equation*}
then taking the inverse limit over $n$ and inverting $p$. We see (\ref{equ: ord-alg7}) commutes.
\end{proof}

\begin{proposition}\label{prop: ord-adj}
Let $U_{\infty}$ be a smooth admissible representation of $L_P(L)$ over $E$, $U:=U_{\infty} \otimes_E L_P(\lambda)$ and $V$ a unitary admissible Banach representation of $G(L)$ over $E$. Let $f: U \hooklongrightarrow \Ord_P(V)$ be an $L_P(L)$-equivariant injection and denote by $\widehat{U}$ the closure of $U$ in the Banach space $\Ord_P(V)$. Then $f$ induces $G(L)$-equivariant morphisms:
\begin{equation}\label{equ: ord-adj}
 (\Ind_{\overline{P}(L)}^{G(L)} U_{\infty})^{\infty} \otimes_E L(\lambda) \hooklongrightarrow \big(\Ind_{\overline{P}(L)}^{G(L)}\widehat{U}\big)^{\cC^0} \lra V
 \end{equation}
from which $f$ can be recovered as the following composition:
\begin{equation}
U \xlongrightarrow{(\ref{equ: ord-alg5})} \Ord_P\big((\Ind_{\overline{P}(L)}^{G(L)} U_{\infty})^{\infty}\otimes_E L(\lambda)\big) \lra \Ord_P(V)
\end{equation}
where the last map is induced from the composition (\ref{equ: ord-adj}) and Lemma \ref{lem: ord-alg4}.
\end{proposition}
\begin{proof}
Note that $U$ is a unitary representation of $L_P(L)$ and that $\widehat{U}$ is a unitary admissible Banach representation of $L_P(L)$ over $E$ by \cite[Thm.~3.4.8]{EOrd1}. The second map in (\ref{equ: ord-adj}) is then obtained by applying \cite[Thm. 4.4.6]{EOrd1}, and the first map is obtained as in (\ref{equ: ord-alg6}) (with $U^0:=\Ord_P(V^0)\cap U$ where $V^0$ is an open bounded $G(L)$-invariant lattice in $V$). The second part of the proposition follows from \cite[Thm. 4.4.6]{EOrd1} together with Lemma \ref{lem: ord-ordind}.
\end{proof}

\section{$P$-ordinary Galois representations and local Langlands correspondence}\label{galoisclassicallanglands}

\noindent
In this section, for $P$ a parabolic subgroup we define $P$-ordinary Galois representations and prove some standard compatibility with classical local Langlands correspondence which will be used later. We denote by $L$ a finite extension of $\Q_p$.

\subsection{$P$-ordinary Galois deformations}\label{sec: ord-galoisdef}

\noindent
We define $P$-ordinary Galois deformations and recall some standard useful statements.\\

\noindent
We fix $P$ a parabolic subgroup of $\GL_n$ containing the Borel subgroup of upper triangular matrices and with a Levi subgroup $L_P$ given by (where $\sum_{i=1}^k n_i=n$):
\begin{equation}\label{equ: ord-LP}
 \begin{pmatrix}
  \GL_{n_1} & 0 & \cdots & 0 \\
  0 & \GL_{n_2} & \cdots & 0 \\
  \vdots & \vdots & \ddots & 0 \\
  0 & 0 &\cdots & \GL_{n_k}
 \end{pmatrix}.
\end{equation}

\begin{definition}\label{Porddef}
Let $A$ be a topological commutative ring and $(\rho_A,T_A)$ a continuous $A$-linear representation of $\Gal_L$ on a free $A$-module $T_A$ of rank $n$ (we often just write $\rho_A$ for simplicity). The representation $\rho_A$ is $P$-ordinary (over $A$) if there exists an increasing filtration of $T_A$ by invariant free $A$-submodules which are direct summands as $A$-modules such that the graded pieces are of rank $n_1$, $n_2$, $\cdots$, $n_k$ over $A$.
\end{definition}

\noindent
Choosing a basis of $T_A$ over $A$, we see that a $P$-ordinary representation gives rise to a continuous group homomorphism $\Gal_L\rightarrow P(A)$. Fix a $P$-ordinary representation $\overline{\rho}=(\overline{\rho},T_{k_E})$ of $\Gal_L$ over $k_E$ together with an invariant increasing filtration $0=T_{k_E,0}\subsetneq T_{k_E,1}\subsetneq \cdots \subsetneq T_{k_E,k}=T_{k_E}$ as in Definition \ref{Porddef} and denote by $(\overline{\rho}_i,\gr_iT_{k_E,\bullet}:=T_{k_E,i}/T_{k_E,i-1})$, $i\in \{1,\cdots ,k\}$, or simply $\overline{\rho}_i$, for the representations of $\Gal_L$ over $k_E$ given by the graded pieces (thus $\overline{\rho}_i$ is of dimension $n_i$). We assume the following hypothesis on $\overline{\rho}$ and the $\overline{\rho}_i$.

\begin{hypothesis}\label{hypo: ord-Pord1}
We have $\End_{\Gal_L}(\overline{\rho}) \cong k_E$, $\End_{\Gal_L}(\overline{\rho}_i)\cong k_E$ for $i=1, \cdots, k$ and $\Hom_{\Gal_L}(\overline{\rho}_i, \overline{\rho}_j)=0$ for all $i\neq j$.
\end{hypothesis}

\noindent
Let $\Art(\co_E)$ be the category of local artinian $\co_E$-algebras with residue field $k_E$ and $\Def_{\overline{\rho}}$ (resp. $\Def_{\overline{\rho}_i}$) the usual functor of deformations of $\overline{\rho}$ (resp. of $\overline{\rho}_i$), i.e. the functor from $\Art(\co_E)$ to sets which sends $A\in \Art(\co_E)$ to the set $\{((\rho_A,T_A),i_A)\}/\!\sim$ where $(\rho_A,T_A)$ is a representation of $\Gal_L$ over $A$ as above, $i_A$ is a $\Gal_L$-equivariant isomorphism $T_A\otimes_Ak_E\buildrel\sim\over\rightarrow T_{k_E}$ ($T_{k_E}$ being the underlying vector space of $\overline{\rho}$) and $\sim$ means modulo the $\Gal_L$-equivariant isomorphisms $T_A\buildrel\sim\over\rightarrow T'_A$ such that the following induced diagram commutes:
\begin{equation}\label{deformA}
\begin{CD}
 T_A\otimes_A k_E @> \sim >> T'_A\otimes_A k_E\\
 @V \iota_A V\!\wr V @V \iota'_A V\!\wr V \\
T_{k_E} @= T_{k_E}
\end{CD}
\end{equation}
(resp. with $\overline{\rho}_i$ instead of $\overline{\rho}$). If $A\rightarrow B$ in $\Art(\co_E)$ then $T_A$ is sent to $T_A\otimes_AB$ (and $i_A$ to itself via $T_A\otimes_A B\otimes_Bk_E\cong T_A\otimes_Ak_E$). By choosing basis, the functor $\Def_{\overline{\rho}}(A)$ can also be described as the set:
\begin{multline*}
\{\rho_A:\Gal_L\rightarrow \GL_n(A)\ {\rm such\ that\ the\ composition\ with}\ \GL_n(A)\twoheadrightarrow \GL_n(k_E)\ {\rm gives}\\ \overline{\rho}:\Gal_L\rightarrow \GL_n(k_E)\}/\!\sim
\end{multline*}
where $\sim$ means modulo conjugation by matrices in $\GL_n(A)$ which are congruent to $1$ modulo the maximal ideal $\fm_A$ of $A$. Since $\End_{\Gal_L}(\overline{\rho})  \cong k_E$ (resp. $\End_{\Gal_L}(\overline{\rho}_i) \cong k_E$), it is a standard result (first due to Mazur) that this functor is pro-representable, and we denote by $R_{\overline{\rho}}$ (resp. $R_{\overline{\rho}_i}$) the universal deformation ring of $\overline{\rho}$ (resp. of $\overline{\rho}_i$), which is a complete local noetherian $\co_E$-algebra of residue field $k_E$.\\

\noindent
We now switch to $P$-ordinary deformations. We define the functor $\Def_{\overline{\rho}, \{\overline{\rho}_i\}}^{P-\ord}: \Art(\co_E) \ra \{\text{Sets}\}$ by sending $A\in \Art(\co_E)$ to the set:
$$\left\{\big((\rho_A,T_A),T_{A,\bullet},i_A\big)\right\}/\!\sim$$
where $((\rho_A,T_A),i_A)$ is as above, $T_{A,\bullet}=(0=T_{A,0}\subsetneq T_{A,1}\subsetneq \cdots \subsetneq T_{A,k}=T_{A})$ is an increasing filtration of $T_A$ by invariant free $A$-submodules which are direct summands as $A$-modules such that $i_A$ induces a $\Gal_L$-equivariant isomorphism $T_{A,i}\otimes_Ak_E\buildrel\sim\over\rightarrow T_{k_E,i}$ for $i\in \{1,\cdots,k\}$, and where $\sim$ means modulo the $\Gal_L$-equivariant isomorphisms $T_A\buildrel\sim\over\rightarrow T'_A$ satisfying (\ref{deformA}) and which moreover respect the increasing filtration on both sides. Alternatively, by choosing adapted basis one can describe $\Def_{\overline{\rho}, \{\overline{\rho}_i\}}^{P-\ord}(A)$ as the set:
\begin{multline}\label{equ: ord-Pdef}
\{\rho:\Gal_L\rightarrow P(A)\ {\rm such\ that\ the\ composition\ with}\ P(A)\twoheadrightarrow P(k_E)\ {\rm gives}\\
\overline{\rho}:\Gal_L\rightarrow P(k_E)\}/\!\sim
\end{multline}
where $\sim$ means modulo conjugation by matrices in $P(A)$ which are congruent to $1$ modulo the maximal ideal $\fm_A$ of $A$. The following two propositions are standard, we provide short proofs for the convenience of the reader.

\begin{lemma}\label{lem: ord-Pdef}
The functor $\Def_{\overline{\rho}, \{\overline{\rho}_i\}}^{P-\ord}$ is a subfunctor of $\Def_{\overline{\rho}}$.
\end{lemma}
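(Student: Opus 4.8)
The goal is to show that $\Def_{\overline{\rho}, \{\overline{\rho}_i\}}^{P-\ord}$ is a subfunctor of $\Def_{\overline{\rho}}$, i.e.\ that the natural forgetful transformation which sends a $P$-ordinary deformation $((\rho_A,T_A),T_{A,\bullet},i_A)$ to the underlying deformation $((\rho_A,T_A),i_A)$ is injective on isomorphism classes for every $A\in \Art(\co_E)$. Concretely, I must check two things: (a) if two $P$-ordinary deformations become isomorphic after forgetting the filtrations, then they are already isomorphic as $P$-ordinary deformations; and (b) the filtration $T_{A,\bullet}$ on a given deformation $T_A$ lifting the fixed filtration $T_{k_E,\bullet}$ is in fact \emph{unique} (so that the datum of the filtration carries no extra information once the deformation is fixed). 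Point (b) clearly implies point (a): given a $\Gal_L$-equivariant isomorphism $\phi\colon T_A\buildrel\sim\over\rightarrow T'_A$ satisfying the compatibility square \eqref{deformA}, both $\phi(T_{A,\bullet})$ and $T'_{A,\bullet}$ are invariant filtrations of $T'_A$ by free direct summands lifting $T_{k_E,\bullet}$, so by uniqueness they coincide, and hence $\phi$ automatically respects the filtrations.

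\textbf{Key steps.} First I would reduce to proving the uniqueness statement (b) by an induction on the length of $A$ in $\Art(\co_E)$, or equivalently by dévissage along a small surjection $A\twoheadrightarrow A/I$ with $I$ a one-dimensional $k_E$-vector space killed by $\fm_A$. The base case $A=k_E$ is trivial since the filtration is part of the fixed data $\overline{\rho}$. For the inductive step, suppose $T_{A,\bullet}$ and $T'_{A,\bullet}$ are two invariant filtrations of $T_A$ by free direct summands lifting $T_{k_E,\bullet}$; by the inductive hypothesis they agree after base change to $A/I$. It then suffices to treat one graded step at a time: given that $T_{A,i-1}$ is already pinned down, I must show that the invariant free-direct-summand submodule $T_{A,i}\subseteq T_A$ with $T_{A,i}\otimes_A k_E = T_{k_E,i}$ and $T_{A,i}/T_{A,i-1}$ free of rank $n_i$ is unique. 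Equivalently, after passing to the quotient $T_A/T_{A,i-1}$, I must show that the invariant rank-$n_i$ direct summand lifting $\gr_i T_{k_E,\bullet} = \overline{\rho}_i$ inside $T_A/T_{A,i-1}$ is unique. Two such lifts differ by a cocycle in $Z^1(\Gal_L, \Hom_{k_E}(\overline{\rho}_i, \bigoplus_{j>i}\overline{\rho}_j)\otimes_{k_E} I)$ modulo coboundaries, so the obstruction to uniqueness lives in $H^1(\Gal_L, \Hom_{k_E}(\overline{\rho}_i,\overline{\rho}_j))\otimes_{k_E}I$ for the various $j>i$ — wait, more precisely the difference of two lifts, being an $A$-module map which is zero mod $\fm_A$, lands in $\Hom(\overline{\rho}_i,\overline{\rho}_j)\otimes I$ and $\Gal_L$-equivariance of both lifts forces this difference to be a $1$-cocycle; changing the lift by an automorphism of $T_A$ congruent to $1$ mod $\fm_A$ and preserving $T_{A,i-1}$ changes the cocycle by a coboundary. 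So the two lifts give the same submodule if and only if the class in $H^1(\Gal_L,\Hom(\overline{\rho}_i,\overline{\rho}_j)\otimes I)$ vanishes — but this is \emph{not} automatic, so the argument I sketched is not quite the right one.

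\textbf{The real mechanism and the main obstacle.} The correct observation is that the filtration on $T_A$, \emph{when it exists}, is canonically determined by $\overline{\rho}$ through Hypothesis \ref{hypo: ord-Pord1}: the hypothesis $\Hom_{\Gal_L}(\overline{\rho}_i,\overline{\rho}_j)=0$ for $i\neq j$ (and $\End_{\Gal_L}(\overline{\rho}_i)\cong k_E$) is exactly what guarantees that, inside any deformation $T_A$ admitting \emph{some} such filtration, there is no choice involved. The cleanest route is: let $T_{A,\bullet}$ and $T'_{A,\bullet}$ both be filtrations of $T_A$ as in the definition; consider the composite map $T_{A,i}\hookrightarrow T_A \twoheadrightarrow T_A/T'_{A,i}$. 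This is a $\Gal_L$-equivariant $A$-linear map between finite free $A$-modules. Mod $\fm_A$ it is the map $T_{k_E,i}\hookrightarrow T_{k_E}\twoheadrightarrow T_{k_E}/T_{k_E,i}$, which is zero. One then shows by induction on the length of $A$ — using at each step that $\Hom_{\Gal_L}(\gr_a T_{k_E,\bullet}, \gr_b T_{k_E,\bullet}) = \Hom_{\Gal_L}(\overline{\rho}_a,\overline{\rho}_b)=0$ for $a\le i < b$ (so all relevant $\Gal_L$-equivariant Hom groups between associated graded pieces vanish) — that any such map reducing to zero mod $\fm_A$ is zero. Hence $T_{A,i}\subseteq T'_{A,i}$, and by symmetry $T_{A,i}=T'_{A,i}$ for all $i$, giving uniqueness. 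Then (a) follows as explained above, and one concludes that $\Def_{\overline{\rho},\{\overline{\rho}_i\}}^{P-\ord}\hookrightarrow\Def_{\overline{\rho}}$ is injective on isomorphism classes and compatible with the maps induced by morphisms $A\to B$ in $\Art(\co_E)$, i.e.\ is a subfunctor. I expect the main (though still routine) obstacle to be the bookkeeping in the dévissage argument showing that a $\Gal_L$-equivariant $A$-linear map between the free modules underlying successive graded pieces, which vanishes modulo $\fm_A$, must vanish — this is where Hypothesis \ref{hypo: ord-Pord1} is used in an essential way, and one must be careful to set up the induction along a small extension $0\to I\to A\to A/I\to 0$ and invoke right-exactness of $\otimes_A$ together with the vanishing of the relevant mod-$\fm_A$ Hom groups.
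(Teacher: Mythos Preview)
Your proposal is correct and arrives at essentially the same argument as the paper: the key point is that the composite $T_{A,i}\hookrightarrow T_A\twoheadrightarrow T_A/T'_{A,i}$ is a $\Gal_L$-equivariant map that must vanish because of Hypothesis~\ref{hypo: ord-Pord1}, and this forces uniqueness of the filtration (whence (a) follows as you explain). The paper's presentation is somewhat more streamlined: rather than inducting on the length of $A$, it observes directly that $T^{(1)}_{A,1}$ is, as a $\Gal_L$-representation over $k_E$, a successive extension of copies of $\overline{\rho}_1$ while $T_A/T^{(2)}_{A,1}$ is a successive extension of the $\overline{\rho}_j$ with $j>1$, so the vanishing $\Hom_{\Gal_L}(\overline{\rho}_1,\overline{\rho}_j)=0$ for $j\neq 1$ kills the map by an obvious d\'evissage on Jordan--H\"older factors; one then quotients by $T_{A,1}$ and iterates. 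Your induction on $\ell(A)$ and the paper's d\'evissage on the Jordan--H\"older series of the source and target are two ways of packaging the same elementary vanishing, and both work.
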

\begin{proof}
Let $A\in \Art(\co_E)$, starting from $((\rho_A,T_A),i_A)\in \Def_{\overline{\rho}}(A)$, it is enough to prove that there is at most one filtration $T_{A,\bullet}$ on $T_A$ such that $i_A$ induces isomorphisms $T_{A,i}\otimes_Ak_E\buildrel\sim\over\rightarrow T_{k_E,i}$ and that any isomorphism $T_A\buildrel\sim\over\rightarrow T'_A$ satisfying (\ref{deformA}) is automatically compatible with the filtrations (when they exist). For the first statement, by d\'evissage it is enough to prove $T^{(1)}_{A,1}=T^{(2)}_{A,1}$ (where $T^{(1)}_{A,\bullet}$, $T^{(2)}_{A,\bullet}$ are two filtrations). But the equivariant map $T^{(1)}_{A,1}\rightarrow T_A/T^{(2)}_{A,1}$ must be zero (and hence $T^{(1)}_{A,1}=T^{(2)}_{A,1}$) since the $\Gal_L$-representation $T_A/T^{(2)}_{A,1}$ is by definition a successive extension of $\overline{\rho}_i$, $i\ne 1$ and we have $\Hom_{\Gal_L}(T^{(1)}_{A,1}, \overline{\rho}_i)=0$ for $i\ne 1$ by Hypothesis \ref{hypo: ord-Pord1} (and an obvious d\'evissage). The same argument replacing $T_A/T^{(2)}_{A,1}$ by $T'_A/T'_{A,1}$ shows that any equivariant isomorphism $T_A\buildrel\sim\over\rightarrow T'_A$ must send $T_{A,i}$ to $T'_{A,i}$.
\end{proof}

\begin{proposition}\label{prop: ord-Pdef}
The functor $\Def_{\overline{\rho}, \{\overline{\rho}_i\}}^{P-\ord}$ is pro-representable by a complete local noetherian $\co_E$-algebra $R_{\overline{\rho}, \{\overline{\rho}_i\}}^{P-\ord}$ of residue field $k_E$.
\end{proposition}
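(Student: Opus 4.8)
The plan is to deduce pro-representability from that of $\Def_{\overline{\rho}}$ (represented by $R_{\overline{\rho}}$), using that $\Def_{\overline{\rho}, \{\overline{\rho}_i\}}^{P-\ord}$ is a subfunctor of it (Lemma~\ref{lem: ord-Pdef}), by verifying Schlessinger's criteria for the subfunctor. First I would record the two immediate points: the tangent space $\Def_{\overline{\rho}, \{\overline{\rho}_i\}}^{P-\ord}(k_E[\epsilon])$ is finite-dimensional over $k_E$ because it injects into $\Def_{\overline{\rho}}(k_E[\epsilon])$, and $\Def_{\overline{\rho}, \{\overline{\rho}_i\}}^{P-\ord}(k_E)$ is a single point (the only datum over $k_E$ is $\overline{\rho}$ together with its fixed filtration $T_{k_E,\bullet}$). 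The content of the remaining conditions (H1), (H2), (H4) is that for a small extension $A_2\twoheadrightarrow A_0$ the natural map
\[
\Def_{\overline{\rho}, \{\overline{\rho}_i\}}^{P-\ord}(A_1\times_{A_0}A_2)\longrightarrow \Def_{\overline{\rho}, \{\overline{\rho}_i\}}^{P-\ord}(A_1)\times_{\Def_{\overline{\rho}, \{\overline{\rho}_i\}}^{P-\ord}(A_0)}\Def_{\overline{\rho}, \{\overline{\rho}_i\}}^{P-\ord}(A_2)
\]
is surjective (and, where required, bijective); since the analogous map for $\Def_{\overline{\rho}}$ is already bijective by pro-representability, injectivity is automatic and everything reduces to surjectivity.

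The key step, which I would carry out next, is a gluing argument producing the required deformation over $A:=A_1\times_{A_0}A_2$. Starting from $\xi_j=((\rho_{A_j},T_{A_j}),T_{A_j,\bullet},i_{A_j})\in\Def_{\overline{\rho}, \{\overline{\rho}_i\}}^{P-\ord}(A_j)$ for $j=1,2$ with common image $\xi_0$ over $A_0$, pro-representability of $\Def_{\overline{\rho}}$ first yields a deformation $(\rho_A,T_A)$ over $A$ with $T_A\cong T_{A_1}\times_{T_{A_0}}T_{A_2}$ reducing to $\xi_1$ and $\xi_2$. The point is then to equip it with a $P$-ordinary filtration: by the uniqueness part of Lemma~\ref{lem: ord-Pdef} (this is where Hypothesis~\ref{hypo: ord-Pord1} enters) the reductions $T_{A_1,\bullet}\otimes_{A_1}A_0$ and $T_{A_2,\bullet}\otimes_{A_2}A_0$ both coincide with the unique $P$-ordinary filtration of $\rho_{A_0}$ lifting $T_{k_E,\bullet}$, so the termwise fibre products $T_{A,i}:=T_{A_1,i}\times_{T_{A_0,i}}T_{A_2,i}\subseteq T_A$ are well defined and $\Gal_L$-stable. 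Then I would invoke the Milnor patching lemma for modules over the fibre product ring $A=A_1\times_{A_0}A_2$: since each $T_{A_j,i}$ is free over $A_j$, a direct summand of $T_{A_j}$, and has reduction to $A_0$ identified with $T_{A_0,i}$, the module $T_{A,i}$ is free over $A$ of rank $n_1+\cdots+n_i$ and a direct summand of $T_A$. This exhibits an element of $\Def_{\overline{\rho}, \{\overline{\rho}_i\}}^{P-\ord}(A)$ mapping to $(\xi_1,\xi_2)$, giving the surjectivity, whence Schlessinger's theorem provides the representing ring $R_{\overline{\rho}, \{\overline{\rho}_i\}}^{P-\ord}$, which is moreover a quotient of $R_{\overline{\rho}}$ and hence a complete local noetherian $\co_E$-algebra with residue field $k_E$.

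I expect the only genuinely non-formal ingredient to be the module-patching step, and within it the assertion that the termwise fibre products are even well defined — this rests entirely on the compatibility over $A_0$ of the two given filtrations, i.e. on the uniqueness statement of Lemma~\ref{lem: ord-Pdef}, without which the construction would not make sense. Once that compatibility is granted, freeness and the direct-summand property are standard facts about modules over a fibre product of artinian local rings, and I do not anticipate any difficulty there; similarly, the bookkeeping needed to check that the gluing is independent of the choice of representatives $\xi_j$, hence compatible with the equivalence relations defining the functors, is routine.
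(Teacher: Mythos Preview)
Your approach is correct and follows the same overall strategy as the paper: reduce to Schlessinger's criterion via Lemma~\ref{lem: ord-Pdef} and the pro-representability of $\Def_{\overline{\rho}}$, so that only the surjectivity of the gluing map remains. The difference lies in how this surjectivity is checked. The paper simply invokes the alternative description~(\ref{equ: ord-Pdef}) of the functor as homomorphisms $\Gal_L\to P(A)$ modulo $\ker(P(A)\to P(k_E))$-conjugacy: since $P(A_1\times_{A_0}A_2)=P(A_1)\times_{P(A_0)}P(A_2)$, compatible $P$-valued cocycles over $A_1$ and $A_2$ glue on the nose, and the surjectivity is a one-line observation. Your route via the filtration description, Milnor patching, and the uniqueness from Lemma~\ref{lem: ord-Pdef} is more hands-on and recovers the same conclusion, but the patching and the appeal to uniqueness are unnecessary once one passes to description~(\ref{equ: ord-Pdef}); in particular the compatibility of filtrations over $A_0$ that you flag as the delicate point is already built into the data of an element of the fibre product (and is in any case automatic from the $P$-valued viewpoint).
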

\begin{proof}
By Schlessinger's criterion (\cite{Schl}), Lemma \ref{lem: ord-Pdef} and the fact that $\Def_{\overline{\rho}}$ is pro-representa\-ble, it is enough to check that, given morphisms $f_1: A\ra C$, $f_2: B\ra C$ in $\Art(\co_E)$ with $f_2$ surjective and small, the induced map:
\begin{equation*}
\Def_{\overline{\rho}, \{\overline{\rho}_i\}}^{P-\ord}(A\times_C B)\lra \Def_{\overline{\rho}, \{\overline{\rho}_i\}}^{P-\ord}(A)\times_{\Def_{\overline{\rho}, \{\overline{\rho}_i\}}^{P-\ord}(C)} \Def_{\overline{\rho}, \{\overline{\rho}_i\}}^{P-\ord}(B)
\end{equation*}
is surjective. But this is immediate from the description (\ref{equ: ord-Pdef}).
\end{proof}

\noindent
By Proposition \ref{prop: ord-Pdef}, Lemma \ref{lem: ord-Pdef} and the fact that $R_{\overline{\rho}}$ is a complete local noetherian $\co_E$-algebra, we see (e.g. by \cite[Lem. 2.1]{Fer}) that the natural morphism $R_{\overline{\rho}} \ra R_{\overline{\rho}, \{\overline{\rho}_i\}}^{P-\ord}$ is surjective. Moreover, for $i\in \{1,\cdots, k\}$, we have a natural transformation of functors $\Def^{P-\ord}_{\overline{\rho}, \{\overline{\rho}_i\}} \ra \Def_{\overline{\rho}_i}$ sending $((\rho_A,T_A),T_{A,\bullet},i_A)$ to $\gr_iT_{A,\bullet}$ with the induced $i_A$. It corresponds to a canonical morphism of $\co_E$-algebras $R_{\overline{\rho}_i} \ra R_{\overline{\rho}, \{\overline{\rho}_i\}}^{P-{\ord}}$, and we deduce a morphism of local complete $\co_E$-algebras (with obvious notation):
\begin{equation}\label{equ: ord-diag}
 \widehat{\bigotimes}_{i=1, \cdots, k} R_{\overline{\rho}_i} \lra R_{\overline{\rho}, \{\overline{\rho}_i\}}^{P-\ord}.
\end{equation}

\noindent
Let us now consider equal characteristic $0$ deformations. Fix a $P$-ordinary representation ${\rho}$ of $\Gal_L$ over $E$ together with an invariant increasing filtration $0=T_{E,0}\subsetneq T_{E,1}\subsetneq \cdots \subsetneq T_{E,k}=T_{E}$ as in Definition \ref{Porddef} and denote by $({\rho}_i,\gr_iT_{E,\bullet}:=T_{E,i}/T_{E,i-1})$, $i\in \{1,\cdots ,k\}$ the graded pieces. As previously we assume the following hypothesis on $\rho$ and the $\rho_i$.

\begin{hypothesis}\label{hypo: ord-Pord2}
We \ have \ $\End_{\Gal_L}(\rho)\cong E$, \ $\End_{\Gal_L}(\rho_i)\cong E$ \ for \ $i=1, \cdots, k$ \ and $\Hom_{\Gal_L}(\rho_i, \rho_j)=0$ for $i\neq j$.
\end{hypothesis}

\noindent
Let $\Art(E)$ be the category of local artinian $E$-algebras with residue field $E$ and define $\Def_{{\rho}}$ (resp. $\Def_{{\rho}_i}$) as $\Def_{\overline{\rho}}$ (resp. $\Def_{\overline{\rho}_i}$) but replacing $\Art(E)$ by $\Art(\co_E)$ and $\overline{\rho}$ (resp. $\overline{\rho}_i$) by $\rho$ (resp. $\rho_i$). Then from Hypothesis \ref{hypo: ord-Pord2} the functor $\Def_{\rho}$ (resp. $\Def_{\rho_i}$) is pro-representable by a complete local noetherian $E$-algebra of residue field $E$ denoted by $R_{\rho}$ (resp. $R_{\rho_i}$). Likewise we define the functor $\Def_{\rho, \{\rho_i\}}^{P-\ord}$ of $P$-ordinary deformations of $\rho$ on $\Art(E)$ in a similar way as (\ref{equ: ord-Pdef}) and before by replacing $\overline{\rho}$, $T_{k_E,i}$ and $\overline{\rho}_i$ by $\rho$, $T_{E,i}$ and $\rho_i$. By the same proof as for Lemma \ref{lem: ord-Pdef} and Proposition \ref{prop: ord-Pdef}, we obtain the following proposition.

\begin{proposition}
The functor $\Def_{\rho, \{\rho_i\}}^{P-\ord}$ is a subfunctor of $\Def_{\rho}$ and is pro-representable by a complete local noetherian $E$-algebra $R_{\rho, \{\rho_i\}}^{P-\ord}$ of residue field $E$.
\end{proposition}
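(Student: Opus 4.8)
The statement to be proved is the exact analogue, in equal characteristic zero, of Lemma~\ref{lem: ord-Pdef} and Proposition~\ref{prop: ord-Pdef}. So the plan is simply to transcribe those two arguments with $\Art(\co_E)$ replaced by $\Art(E)$, with $\overline{\rho}$, $T_{k_E,i}$, $\overline{\rho}_i$ replaced by $\rho$, $T_{E,i}$, $\rho_i$, and with Hypothesis~\ref{hypo: ord-Pord1} replaced by Hypothesis~\ref{hypo: ord-Pord2}. Concretely, there are two assertions: (i) $\Def_{\rho, \{\rho_i\}}^{P-\ord}$ is a subfunctor of $\Def_{\rho}$; (ii) it is pro-representable by a complete local noetherian $E$-algebra $R_{\rho, \{\rho_i\}}^{P-\ord}$ with residue field $E$.

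For (i), the plan is to reproduce the proof of Lemma~\ref{lem: ord-Pdef} verbatim up to renaming. Given $A\in \Art(E)$ and a deformation $((\rho_A,T_A),i_A)\in\Def_\rho(A)$, one must show there is at most one $P$-ordinary filtration $T_{A,\bullet}$ lifting $T_{E,\bullet}$, and that any isomorphism $T_A\xrightarrow{\sim}T'_A$ satisfying the analogue of (\ref{deformA}) automatically respects the filtrations. By d\'evissage this reduces to showing $T^{(1)}_{A,1}=T^{(2)}_{A,1}$ for any two such filtrations: the equivariant map $T^{(1)}_{A,1}\to T_A/T^{(2)}_{A,1}$ vanishes because $T_A/T^{(2)}_{A,1}$ is a successive extension of the $\rho_i$ with $i\ne 1$, while $\Hom_{\Gal_L}(T^{(1)}_{A,1},\rho_i)=0$ for $i\ne 1$ by Hypothesis~\ref{hypo: ord-Pord2} and an obvious d\'evissage (using $\End_{\Gal_L}(\rho_i)\cong E$ at each step). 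The same computation with $T'_A/T'_{A,1}$ in place of $T_A/T^{(2)}_{A,1}$ shows the compatibility of isomorphisms with filtrations.

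For (ii), the plan is to invoke Schlessinger's criterion \cite{Schl} exactly as in the proof of Proposition~\ref{prop: ord-Pdef}. Since $\Def_\rho$ is pro-representable (a standard consequence of $\End_{\Gal_L}(\rho)\cong E$, first due to Mazur, now in the $E$-linear setting) and $\Def_{\rho, \{\rho_i\}}^{P-\ord}$ is a subfunctor of it by part~(i), it suffices to check that for morphisms $f_1\colon A\to C$, $f_2\colon B\to C$ in $\Art(E)$ with $f_2$ surjective and small, the natural map
\begin{equation*}
\Def_{\rho, \{\rho_i\}}^{P-\ord}(A\times_C B)\lra \Def_{\rho, \{\rho_i\}}^{P-\ord}(A)\times_{\Def_{\rho, \{\rho_i\}}^{P-\ord}(C)} \Def_{\rho, \{\rho_i\}}^{P-\ord}(B)
\end{equation*}
is surjective; this is immediate from the description of $\Def_{\rho, \{\rho_i\}}^{P-\ord}(A)$ as the set of continuous homomorphisms $\rho_A\colon\Gal_L\to P(A)$ lifting $\rho\colon\Gal_L\to P(E)$ modulo conjugation by $\ker(P(A)\to P(E))$, since a pair of such homomorphisms into $P(A)$ and $P(B)$ agreeing over $P(C)$ glues to one into $P(A\times_C B)=P(A)\times_{P(C)}P(B)$. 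Noetherianity of $R_{\rho, \{\rho_i\}}^{P-\ord}$ follows because the tangent space is a subspace of that of $R_\rho$, which is finite-dimensional.

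There is essentially no obstacle here: the equal-characteristic-zero case is formally identical to the mod~$p$ case, the only inputs being Hypothesis~\ref{hypo: ord-Pord2} (which replaces Hypothesis~\ref{hypo: ord-Pord1}) and the pro-representability of $\Def_\rho$ over $\Art(E)$, which is standard. The mildly delicate point, as in Lemma~\ref{lem: ord-Pdef}, is the d\'evissage argument proving uniqueness of the lifted filtration; everything else is a routine application of Schlessinger's criterion.
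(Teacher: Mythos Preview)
Your proposal is correct and takes exactly the same approach as the paper: the paper simply states ``By the same proof as for Lemma~\ref{lem: ord-Pdef} and Proposition~\ref{prop: ord-Pdef}, we obtain the following proposition,'' and your write-up carries out precisely that transcription to the equal-characteristic-zero setting, replacing Hypothesis~\ref{hypo: ord-Pord1} by Hypothesis~\ref{hypo: ord-Pord2} and $\Art(\co_E)$ by $\Art(E)$.
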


\noindent
Let $(\overline{\rho}, \{\overline{\rho}_i\})$ as before satisfying Hypothesis \ref{hypo: ord-Pord1}. Let $\xi: R_{\overline{\rho}, \{\overline{\rho}_i\}}^{P-\ord}\twoheadrightarrow \co_E$ be a homomorphism of local $\co_E$-algebras and denote by $\rho_{\xi}^0$ (resp. $\rho_{\xi,i}^0$) the deformation of $\overline{\rho}$ (resp. of $\overline{\rho}_i$) over $\co_E$ associated to $\xi$ via $\Def^{P-\ord}_{\overline{\rho}, \{\overline{\rho}_i\}} \ra \Def_{\overline{\rho}}$ (resp. $\Def^{P-\ord}_{\overline{\rho}, \{\overline{\rho}_i\}} \ra \Def_{\overline{\rho}_i}$). In particular, $\rho_{\xi}^0$ is a representation of $\Gal_L$ over a free $\co_E$-module $T_{\co_E}$ endowed with an invariant filtration by direct summands $T_{\co_E,i}$ as $\co_E$-modules such that the graded pieces give the representations $\rho_{\xi,i}^0$, $i=1, \cdots, k$. Let $\rho_{\xi}:=\rho_{\xi}^0 \otimes_{\co_E} E$ and $\rho_{\xi,i}:=\rho_{\xi,i}^0\otimes_{\co_E} E$.

\begin{proposition}\label{prop: ord-Pdef2}
(1) We have that $(\rho_{\xi}, \{\rho_{\xi,i}\})$ satisfies Hypothesis \ref{hypo: ord-Pord2}.\\
\noindent
(2) The \ $E$-algebra \ $R_{\rho_{\xi}, \{\rho_{\xi,i}\}}^{P-\ord}$ \ is \ isomorphic \ to \ the \ $(\Ker(\xi)\otimes_{\co_E} E)$-adic \ completion \ of $R_{\overline{\rho}, \{\overline{\rho}_i\}}^{P-\ord}\otimes_{\co_E} E$.
\end{proposition}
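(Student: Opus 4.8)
The proof of Proposition \ref{prop: ord-Pdef2} is a purely deformation-theoretic argument: it is the $P$-ordinary analogue of the well-known fact that the (framed or unframed) deformation ring of a Galois representation over $E$ with $\End_{\Gal_L}=E$ is the completion of the integral deformation ring along the point $\xi$. Both parts follow once one sets up the right functorial comparison between $\Def_{\overline{\rho},\{\overline{\rho}_i\}}^{P-\ord}$ and $\Def_{\rho_\xi,\{\rho_{\xi,i}\}}^{P-\ord}$.

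For part (1), the plan is to observe that $\End_{\Gal_L}(\rho_\xi)$ is the generic fiber of the finite $\co_E$-module $\End_{\Gal_L}(T_{\co_E})$; since $\overline{\rho}$ satisfies $\End_{\Gal_L}(\overline{\rho})\cong k_E$, reduction mod $\varpi_E$ and Nakayama force $\End_{\Gal_L}(T_{\co_E})$ to be free of rank one over $\co_E$, hence $\End_{\Gal_L}(\rho_\xi)\cong E$; likewise $\End_{\Gal_L}(\rho_{\xi,i})\cong E$ for each $i$ using Hypothesis \ref{hypo: ord-Pord1} for $\overline{\rho}_i$. For the vanishing $\Hom_{\Gal_L}(\rho_{\xi,i},\rho_{\xi,j})=0$ when $i\neq j$: $\Hom_{\Gal_L}(T_{\co_E,i}^{\gr},T_{\co_E,j}^{\gr})$ is a finite $\co_E$-module whose reduction mod $\varpi_E$ injects into $\Hom_{\Gal_L}(\overline{\rho}_i,\overline{\rho}_j)=0$ (injectivity of reduction here is the standard left-exactness of $\Hom_{\Gal_L}(-,-)$ against $\varpi_E$-torsion-free modules), so the module is $\varpi_E$-divisible and finite, hence zero, and its generic fiber vanishes. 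So $(\rho_\xi,\{\rho_{\xi,i}\})$ satisfies Hypothesis \ref{hypo: ord-Pord2}, and in particular $\Def_{\rho_\xi,\{\rho_{\xi,i}\}}^{P-\ord}$ is pro-representable by the previous proposition.

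For part (2), write $R:=R_{\overline{\rho},\{\overline{\rho}_i\}}^{P-\ord}$, let $\fp:=\Ker(\xi)$ and $\fp_E:=\fp\otimes_{\co_E}E\subset R\otimes_{\co_E}E=:R_E$, a maximal ideal with residue field $E$; let $\widehat{R_E}$ be the $\fp_E$-adic completion of $R_E$, which is a complete local noetherian $E$-algebra with residue field $E$. The plan is to exhibit a natural isomorphism of deformation functors $\Def_{\rho_\xi,\{\rho_{\xi,i}\}}^{P-\ord}\cong \Hom_{E\text{-alg, cont}}(\widehat{R_E},-)$ on $\Art(E)$, which by Yoneda gives $R_{\rho_\xi,\{\rho_{\xi,i}\}}^{P-\ord}\cong\widehat{R_E}$. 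Given $A\in\Art(E)$ and a $P$-ordinary deformation $(\rho_A,T_{A,\bullet})$ of $\rho_\xi$, choose an adapted basis so that $\rho_A:\Gal_L\to P(A)$ reduces to $\rho_\xi:\Gal_L\to P(E)$; since $A$ is artinian local over $E$, one can conjugate by an element of $P(A)$ congruent to $1$ mod $\fm_A$ so that $\rho_A$ takes values in $P(\co_E+\fm_A)\subseteq P(\co_E')$ for a suitable finite flat $\co_E$-subalgebra (essentially $\co_E\oplus\fm_A$ inside $A$, which is in $\Art(\co_E)$ after identifying residue fields), lifting $\overline{\rho}:\Gal_L\to P(k_E)$; this gives a point of $\Def_{\overline{\rho},\{\overline{\rho}_i\}}^{P-\ord}(\co_E\oplus\fm_A)$, i.e. a local $\co_E$-algebra map $R\to\co_E\oplus\fm_A\subseteq A$ reducing to $\xi$ at $\varpi_E$, hence inverting $p$ a continuous $E$-algebra map $R_E\to A$ sending $\fp_E$ into $\fm_A$, i.e. a map $\widehat{R_E}\to A$. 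One checks this is independent of the choices (two such integral structures differ by $P(1+\fm_A)$-conjugation, which on the level of $R$ gives the same map since $R$ pro-represents $\Def_{\overline{\rho},\{\overline{\rho}_i\}}^{P-\ord}$) and is functorial in $A$; the inverse direction, reading off a deformation from an algebra map $\widehat{R_E}\to A$, is the universal $P$-ordinary deformation over $R$ base-changed along $R\to R_E\to\widehat{R_E}\to A$ (and one notes the resulting $\End$ conditions are automatic on an artinian local $E$-algebra). The main obstacle, and the only delicate bookkeeping point, is the first direction: showing that an $A$-valued $P$-ordinary deformation of $\rho_\xi$ always descends, after an allowable conjugation, to a genuine $\co_E$-structure lifting $\overline{\rho}$ compatibly with the $P$-filtration --- here one uses that $\rho_\xi$ itself has the integral model $\rho_\xi^0$ with its filtration $T_{\co_E,\bullet}$, that $\fm_A$ is nilpotent, and that $P(1+\fm_A)$-conjugacy is exactly the equivalence relation defining $\Def^{P-\ord}$, so the ambiguity in the descent is absorbed. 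Once this is in place, Yoneda closes the argument, and functoriality together with part (1) shows $R_{\rho_\xi,\{\rho_{\xi,i}\}}^{P-\ord}$ is indeed the $\fp_E$-adic completion of $R\otimes_{\co_E}E$.
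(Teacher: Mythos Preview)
Your overall strategy for both parts matches the paper's, and part (1) is fine. For part (2), however, there is a concrete error in your construction of the integral model. You write that one can arrange $\rho_A$ to take values in $P(\co_E+\fm_A)$ and that ``essentially $\co_E\oplus\fm_A$ inside $A$ \ldots\ is in $\Art(\co_E)$''. This is false: $\fm_A$ is a (finite-dimensional) $E$-vector space, so $\co_E\oplus\fm_A$ is not finitely generated over $\co_E$ and is neither artinian nor complete local noetherian with residue field $k_E$. You therefore cannot invoke the universal property of $R=R_{\overline{\rho},\{\overline{\rho}_i\}}^{P-\ord}$ on this ring.

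The fix, and what the paper in effect does, is to cite Kisin's formalism of the generic fiber of a deformation functor \cite[\S 2.3]{Kis09}. By \cite[Lem.~2.3.3]{Kis09}, the $(\Ker(\xi)\otimes_{\co_E}E)$-adic completion of $R\otimes_{\co_E}E$ pro-represents the functor $\Def_{\overline{\rho},\{\overline{\rho}_i\},(\xi)}^{P-\ord}$ on $\Art(E)$ defined as the colimit of $\Def_{\overline{\rho},\{\overline{\rho}_i\}}^{P-\ord}(A_0)$ over finite $\co_E$-subalgebras $A_0\subseteq A$ with $A_0[1/p]=A$ lying over $\xi$. The existence of such an $A_0$ together with a Galois-stable $A_0$-lattice $T_{A_0}\subseteq T_A$ reducing to $\rho_\xi^0$ is exactly the content of \cite[Prop.~2.3.5]{Kis09} (and uses compactness of $\Gal_L$ in a genuine way). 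The paper then reduces to showing $\Def_{\overline{\rho},\{\overline{\rho}_i\},(\xi)}^{P-\ord}\cong\Def_{\rho_\xi,\{\rho_{\xi,i}\}}^{P-\ord}$, where the only additional step beyond Kisin is to transport the $P$-filtration: one sets $T_{A_0,i}:=T_{A,i}\cap T_{A_0}$ and checks this is a direct summand of $T_{A_0}$ with the correct reduction. If you prefer to avoid the citation, the correct hands-on replacement for your ``$\co_E\oplus\fm_A$'' is $A_0:=\co_E\oplus\Lambda$ where $\Lambda\subseteq\fm_A$ is a multiplicatively closed $\co_E$-lattice containing all matrix entries of $\rho_A(g)\rho_\xi(g)^{-1}-1$ (such a $\Lambda$ exists since the image of $\Gal_L$ is compact and $\fm_A$ is nilpotent); then $A_0$ is genuinely a finite local $\co_E$-algebra with residue field $k_E$, and your argument goes through.
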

\begin{proof}
(1) is straightforward from Hypothesis \ref{hypo: ord-Pord1} and a d\'evissage.\\
\noindent
(2) Denote by $\Def_{\overline{\rho}, \{\overline{\rho}_i\}, (\xi)}^{P-\ord}$ (resp. $\Def_{\overline{\rho}, (\xi)}$) the generic fiber of $\Def_{\overline{\rho}, \{\overline{\rho}_i\}}^{P-\ord}$ (resp. $\Def_{\overline{\rho}}$) at $\xi$ in the sense of \cite[\S~2.3]{Kis09}. By \cite[Lem. 2.3.3]{Kis09}, it is sufficient to prove $\Def_{\overline{\rho}, \{\overline{\rho}_i\}, (\xi)}^{P-\ord}\cong \Def_{\rho_{\xi}, \{\rho_{\xi,i}\}}^{P-\ord}$. By \cite[Prop. 2.3.5]{Kis09}, the generic fiber $\Def_{\overline{\rho}, (\xi)}$ is isomorphic to $\Def_{\rho_{\xi}}$. Moreover, by the argument in the proof of \emph{loc.cit.} (together with Lemma \ref{lem: ord-Pdef} and Proposition \ref{prop: ord-Pdef2}), the isomorphism $\Def_{\overline{\rho}, (\xi)} \xrightarrow{\sim} \Def_{{\rho}_{\xi}}$ induces an injection of functors:
\begin{equation}\label{equ: ord-Pdef2}
 \Def_{\overline{\rho}, \{\overline{\rho}_i\}, (\xi)}^{P-\ord} \hooklongrightarrow \Def_{\rho_{\xi}, \{\rho_{\xi,i}\}}^{P-\ord}.
\end{equation}
For $A\in \Art(E)$, let $A_0$ be an $\co_E$-subalgebra of $A$ such that $A_0$ is finitely generated as $\co_E$-module and $A_0[1/p]\cong A$. The canonical surjection of $E$-algebras $A\twoheadrightarrow E$ induces a surjection of $\co_E$-algebras $A_0\twoheadrightarrow \co_E$. Let $((\rho_A,T_A),T_{A,\bullet},i_A)\in \Def_{\rho_{\xi}, \{\rho_{\xi,i}\}}^{P-\ord}(A)$. As in the proof of \cite[Prop. 2.3.5]{Kis09}, the free $A$-module $T_A$ admits a $\Gal_L$-invariant $A_0$-lattice $T_{A_0}$ such that $T_{A_0}\otimes_{A_0}\co_E\cong \rho_{\xi}^0$. We define an invariant filtration on $T_{A_0}$ by $T_{A_0,i}:=T_{A,i}\cap T_{A_0}$ (inside $T_A$) and it is not difficult to check that $T_{A_0,i}$ is a direct summand of $T_{A_0}$ as $A_0$-module and that $T_{A_0,i}\otimes_{A_0}\co_E\cong T_{\co_E,i}$. Hence $((\rho_A,T_A),T_{A,\bullet},i_A)\in \Def_{\overline{\rho}, \{\overline{\rho}_i\}, (\xi)}^{P-\ord} (A)$ (see \cite[\S~2.3]{Kis09}) which implies (\ref{equ: ord-Pdef2}) is also surjective, and thus an isomorphism.
\end{proof}

\begin{definition}\label{def: ord-strict}
Let $\overline{\rho}$ (resp. $\rho$) be a $P$-ordinary representation of $\Gal_L$ over $k_E$ (resp. $E$) and fix an invariant increasing filtration of the underlying space $T_{k_E}$ (resp. $T_E$) as in Definition \ref{Porddef} leading to representations $\overline{\rho}_i$ (resp. ${\rho}_i$) for $i\in \{1,\cdots ,k\}$ on the graded pieces. The representation $\overline{\rho}$ (resp. $\rho$) is strictly $P$-ordinary if the following conditions are satisfied:
\begin{itemize}
\item $(\overline{\rho}, \{\overline{\rho}_i\})$ satisfies Hypothesis \ref{hypo: ord-Pord1} (resp. $(\rho, \{\rho_i\})$ satisfies Hypothesis \ref{hypo: ord-Pord2})
\item if $\overline{\rho}$ (resp. $\rho$) is isomorphic to a successive extension of $n_i$-dimensional representations $\overline{\rho}_i'$ (resp. $\rho_i'$) for $i=1,\cdots,k$, then $\overline{\rho}_i'\cong \overline{\rho}_i$ (resp. $\rho_i' \cong \rho_i$) for all $i=1, \cdots, k$.
\end{itemize}
\end{definition}

\noindent
In particular, if $\overline{\rho}$ (resp. $\rho$) is strictly $P$-ordinary, there is a {\it unique} invariant increasing filtration on its underlying space as in Definition \ref{Porddef}.

\begin{lemma}\label{lem: ord-strict}
Let $\overline{\rho}$ be a strictly $P$-ordinary representation of $\Gal_L$ over $k_E$, $\xi: R_{\overline{\rho}} \twoheadrightarrow \co_E$ a surjection of local $\co_E$-algebras and $\rho_{\xi}^0$ the deformation of $\overline{\rho}$ over $\co_E$ associated to $\xi$. Assume that $\rho_{\xi}^0$, and thus $\rho_{\xi}:=\rho_{\xi}^0\otimes_{\co_E} E$, are $P$-ordinary.\\
\noindent
(1) The morphism $\xi$ factors through the quotient $R_{\overline{\rho}, \{\overline{\rho}_i\}}^{P-\ord}$ of $R_{\overline{\rho}}$.\\
\noindent
(2) The representation $\rho_{\xi}$ is strictly $P$-ordinary.
\end{lemma}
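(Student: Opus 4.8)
\textbf{Proof plan for Lemma \ref{lem: ord-strict}.}

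The plan is to deduce (1) from the uniqueness of the $P$-ordinary filtration on $\overline{\rho}$ together with a filtration-spreading argument over $\co_E$, and then to deduce (2) from (1) by combining Proposition \ref{prop: ord-Pdef2} with a reduction-modulo-$\varpi_E$ argument. First I would observe that since $\overline{\rho}$ is strictly $P$-ordinary, its underlying $k_E$-space $T_{k_E}$ carries a unique invariant increasing filtration $T_{k_E,\bullet}$ with graded pieces of ranks $n_1,\dots,n_k$, giving the fixed representations $\overline{\rho}_i$. The representation $\rho_\xi^0=(\rho_\xi^0,T_{\co_E})$ is assumed $P$-ordinary, so by Definition \ref{Porddef} there is an invariant increasing filtration $T_{\co_E,\bullet}$ of $T_{\co_E}$ by $\Gal_L$-invariant direct summands with graded ranks $n_1,\dots,n_k$. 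Reducing this filtration modulo $\varpi_E$ produces an invariant filtration on $T_{k_E}$ with the same graded ranks; by the uniqueness just recalled, this reduction must coincide with $T_{k_E,\bullet}$, so $T_{\co_E,i}\otimes_{\co_E}k_E\buildrel\sim\over\rightarrow T_{k_E,i}$ compatibly with the chosen identification. Hence the datum $((\rho_\xi^0,T_{\co_E}),T_{\co_E,\bullet})$ is exactly a point of $\Def^{P-\ord}_{\overline{\rho},\{\overline{\rho}_i\}}(\co_E)$ lifting $\xi$, and since $\Def^{P-\ord}_{\overline{\rho},\{\overline{\rho}_i\}}$ is a subfunctor of $\Def_{\overline{\rho}}$ (Lemma \ref{lem: ord-Pdef}) with $R^{P-\ord}_{\overline{\rho},\{\overline{\rho}_i\}}$ a quotient of $R_{\overline{\rho}}$, the map $\xi\colon R_{\overline{\rho}}\twoheadrightarrow\co_E$ factors through $R^{P-\ord}_{\overline{\rho},\{\overline{\rho}_i\}}$. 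This gives (1).

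For (2), I would first note that $\rho_\xi=\rho_\xi^0\otimes_{\co_E}E$ comes with the filtration $T_E:=T_{\co_E}\otimes_{\co_E}E$, $T_{E,i}:=T_{\co_E,i}\otimes_{\co_E}E$, whose graded pieces are $\rho_{\xi,i}:=\rho_{\xi,i}^0\otimes_{\co_E}E$; by Proposition \ref{prop: ord-Pdef2}(1), $(\rho_\xi,\{\rho_{\xi,i}\})$ satisfies Hypothesis \ref{hypo: ord-Pord2}, which is the first bullet of Definition \ref{def: ord-strict}. It remains to verify the second bullet: if $\rho_\xi$ is isomorphic to a successive extension of $n_i$-dimensional representations $\rho_i'$, then $\rho_i'\cong\rho_{\xi,i}$ for all $i$. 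Given such a filtration $T_{E,\bullet}'$ of $T_E$ with graded pieces $\rho_i'$, I would intersect it with the $\Gal_L$-invariant $\co_E$-lattice $T_{\co_E}$ to get an invariant filtration $T_{\co_E,i}':=T_{E,i}'\cap T_{\co_E}$ of $T_{\co_E}$ by direct summands (saturation makes the quotients $\co_E$-free, exactly as in the proof of Proposition \ref{prop: ord-Pdef2}(2)), and reduce modulo $\varpi_E$ to obtain an invariant filtration of $T_{k_E}$ with graded ranks $n_1,\dots,n_k$. By strict $P$-ordinarity of $\overline{\rho}$, this reduction equals $T_{k_E,\bullet}$, so the reduction of $\gr_i T_{\co_E,\bullet}'$ is $\overline{\rho}_i$; by Hypothesis \ref{hypo: ord-Pord1} on the $\overline{\rho}_i$ (end of $\Art(\co_E)$-theory, Nakayama plus the vanishing $\Hom_{\Gal_L}(\overline{\rho}_i,\overline{\rho}_j)=0$ for $i\ne j$) the filtrations $T_{\co_E,\bullet}$ and $T_{\co_E,\bullet}'$ of $T_{\co_E}$ must coincide (the argument of Lemma \ref{lem: ord-Pdef} applied over $\co_E$: the equivariant map $T_{\co_E,1}'\to T_{\co_E}/T_{\co_E,1}$ is zero because its reduction mod $\varpi_E$ is zero and the source is $\varpi_E$-torsion free, hence $T_{\co_E,1}'\subseteq T_{\co_E,1}$, and by ranks equality, then induct). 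Inverting $p$ gives $T_{E,\bullet}'=T_{E,\bullet}$ and hence $\rho_i'\cong\rho_{\xi,i}$, proving (2).

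The main obstacle, such as it is, lies in the second half of (2): passing from an abstract $E$-linear filtration on $\rho_\xi$ to a statement about the lattice $T_{\co_E}$ requires the saturation/direct-summand bookkeeping over the discrete valuation ring $\co_E$ and a careful Nakayama-type rigidity argument to upgrade ``the reductions agree'' to ``the lattice filtrations agree.'' This is the step where Hypothesis \ref{hypo: ord-Pord1} (not merely Hypothesis \ref{hypo: ord-Pord2}) is genuinely used, since one is forced to work integrally; it is essentially the $\co_E$-analogue of the uniqueness argument in Lemma \ref{lem: ord-Pdef}, so no new ideas are needed, only care. Everything else is formal manipulation of deformation functors and the definitions.
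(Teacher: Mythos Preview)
Your proposal is correct and follows essentially the same approach as the paper's (very terse) proof: for (1) the paper also reduces any $P$-ordinary filtration on $\rho_\xi^0$ modulo $\varpi_E$ and invokes the uniqueness of the filtration on $\overline{\rho}$, and for (2) it simply says ``by the same argument as for Lemma \ref{lem: ord-Pdef},'' which is exactly the integral uniqueness argument you spell out (saturate to an $\co_E$-filtration, reduce mod $\varpi_E$, use strict $P$-ordinarity of $\overline{\rho}$, then apply the Hom-vanishing d\'evissage of Lemma \ref{lem: ord-Pdef} $\varpi_E$-adically). One microscopic remark: your sentence ``its reduction mod $\varpi_E$ is zero and the source is $\varpi_E$-torsion free'' does not by itself force the map to vanish; you need to iterate (the target is also $\varpi_E$-torsion free, so divide and repeat), but your parenthetical ``Nakayama plus the vanishing'' shows you have this in mind.
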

\begin{proof}
Any choice of filtration as in Definition \ref{Porddef} on the underlying space of $\rho_{\xi}^0$ is such that its reduction modulo $\varpi_E$ gives the above unique filtration on the underlying space of $\overline{\rho}$, from which (1) follows easily. The proof of (2) is by the same argument as for Lemma \ref{lem: ord-Pdef}.
\end{proof}

\noindent
When $\overline{\rho}$ (resp. $\rho$) is strictly $P$-ordinary, by Definition \ref{def: ord-strict} the representations $\overline{\rho}_i$ (resp. ${\rho}_i$) are defined without ambiguity and we then write $R_{\overline{\rho}}^{P-\ord}:=R_{\overline{\rho}, \{\overline{\rho}_i\}}^{P-\ord}$ (resp. $R_{\rho}^{P-\ord}:=R_{\rho, \{\rho_i\}}^{P-\ord}$).

\subsection{Classical local Langlands correspondence}\label{classical}

\noindent
We give a sufficient condition in terms of the (usual) local Langlands correspondence for a $p$-adic Galois representation to be $P$-ordinary. The results of this section will be used in \S\S~\ref{sec: Pord-3}~\&~\ref{gl2ordinarylocalglobal}.\\

\noindent
Let $\rho: \Gal_L \ra \GL_n(E)$ be a potentially semi-stable representation of $\Gal_L$ over $E$ and $L'$ a finite Galois extension of $L$ such that $\rho|_{\Gal_{L'}}$ is semi-stable, following Fontaine we can associate to $\rho$ a Deligne-Fontaine module:
\begin{equation*}
\DF(\rho):=\big((B_{\st}\otimes_{\Q_p} \rho)^{\Gal_{L'}}, \varphi, N, \Gal(L'/L)\big),
\end{equation*}
where $D_{L'}:=(B_{\st}\otimes_{\Q_p} \rho)^{\Gal_{L'}}$ is a finite free $L'_0\otimes_{\Q_p} E$-module of rank $n$, $L_0'$ being the maximal unramified subextension of $L'$ (over $\Q_p$), where the $(\varphi, N)$-action on $D_{L'}$ is induced from the $(\varphi,N)$-action on $B_{\st}$, and where the $\Gal(L'/L)$-action on $D_{L'}$ is the residual action of $\Gal_L$. As in \cite[\S~4]{BS07}, we associate to $\DF(\rho)$ an $n$-dimensional Weil-Deligne representation $\WD(\rho)$ in the following way. By enlarging $E$, we assume $E$ contains all the embeddings of $L'$ (and hence $L_0'$) in $\overline{\Q_p}$. We have thus $L_0'\otimes_{\Q_p} E\cong \prod_{\sigma: L_0'\hookrightarrow E} E$ and therefore an isomorphism:
\begin{equation*}
 D_{L'}\xlongrightarrow{\sim} \prod_{\sigma: L_0'\hookrightarrow E} D_{L',\sigma}
\end{equation*}
where $D_{L',\sigma}:=D_{L'}\otimes_{L_0'\otimes_{\Q_p} E,\sigma\otimes 1}E$. Each $D_{L',\sigma}$ is stable by the $N$-action. Moreover, for $w\in \W_L$ (the Weil group of $L$), we have that $r(w):=\varphi^{-\alpha(w)} \circ \overline{w}$ acts $L_0'\otimes_{\Q_p} E$-linearly on $D_{L'}$ where $\alpha(w)\in [L_0:\Q_p]\Z$ is such that the image of $w$ in $\Gal_{\F_p}$ is equal to $\Frob^{\alpha(w)}$, $\Frob$ being the absolute arithmetic Frobenius, and where $\overline{w}$ denotes the image of $w$ in $\Gal(L'/L)$. We still denote by $r(w)$ the induced map $D_{L',\sigma} \ra D_{L',\sigma}$ for $\sigma:L'_0\hookrightarrow E$, then we denote by $\W(\rho)$ the representation $(D_{L',\sigma}, r)$ of $\W_L$ and by $\WD(\rho):=(\W(\rho), N)$ the Weil-Deligne representation obtained when taking $N$ into account. Both $\W(\rho)$ and $\WD(\rho)$ are independent of the choice of $\sigma$: if we replace $\sigma$ by $\sigma\circ \Frob^{-j}$ for $j\in \Z$ ($\Frob$ being the absolute arithmetic Frobenius on $L'_0$), then $\varphi^{j}: D_{L'} \ra D_{L'}$ induces an isomorphism of Weil-Deligne representations $D_{L',\sigma}\xrightarrow{\sim} D_{L',\sigma\circ \Frob^{-j}}$ (cf. \cite[Lem. 2.2.1.2]{BM02}). In fact, we only make use of $\W(\rho)$ in the sequel.\\

\noindent
Let $\pi$ be the smooth irreducible (hence admissible) representation of $\GL_n(L)$ over $E$ associated to $\W(\rho)^{\sss}$ via the classical local Langlands correspondence normalized so that $\rec(\pi)\cong \W(\rho)^{\sss}$ where $\rec(\pi)$ is as in \cite[Thm. 1.2(a)]{Scho13} and $\W(\rho)^{\sss}$ denotes the semi-simplification of $\W(\rho)$. We assume moreover that for all $\sigma\in \Sigma_L$ the $\sigma$-Hodge-Tate weights $\HT_{\sigma}(\rho)$ of $\rho$ are given by $\HT_{\sigma}(\rho):=\{1-n, 2-n,\cdots, 0\}$. Let $P\subseteq \GL_n$ as in (\ref{equ: ord-LP}) and choose $N_0$ a compact open subgroup of the unipotent radical $N_P(L)$ as in \S~\ref{sec: ord-1}. Recall that we defined a canonical representation $(\pi^{N_0})_{0}$ of $L_P(L)=\prod_{i=1}^k\GL_{n_i}(L)$ in (\ref{equ: ord-algalim}) (see Remark \ref{choicen0}). For $i\in \{1,\cdots, k\}$, we denote by $s_i:=\sum_{j=0}^{i-1} n_j$ where we set $n_0:=0$ (hence $s_1=0$). For a representation $W$ of $\W_L$ and an integer $s$, we set $W(s):=W\otimes |\cdot|^s=W\otimes_E \unr(q_L^{-s})$.

\begin{proposition}\label{prop: ord-cLLO}
For $i=1, \cdots, k$ let $\pi_i$ be a smooth irreducible representation of $\GL_{n_i}(L)$ over $E$. If there is an embedding $\otimes_{i=1}^k \pi_i\hookrightarrow (\pi^{N_0})_{0}$ of smooth representations of $L_P(L)=\prod_{i=1}^k\GL_{n_i}(L)$, then there exist $\rho_i:\Gal_L\ra \GL_{n_i}(E)$ for $i=1, \cdots, k$ such that:
\begin{itemize}
\item $\rho$ is isomorphic to a successive extension of the $\rho_i$ (thus $\rho_i$ is potentially semi-stable for all $i$)
\item $\rec(\pi_i)(s_i)\cong \W(\rho_i)^{\sss}$
\item $\HT_{\sigma}(\rho_i)=\{1-s_{i+1}, \cdots, -s_i\}$ for all $\sigma\in \Sigma_L$.
\end{itemize}
In particular, if $(\pi^{N_0})_{0}\neq 0$, then $\rho$ is $P$-ordinary over $E$ in the sense of Definition \ref{Porddef}.
\end{proposition}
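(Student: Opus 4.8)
The plan is to reconstruct the $P$-ordinary filtration on $\rho$ from the Jacquet-module data, working one graded piece at a time and feeding the output back in. First I would unwind the definition of $(\pi^{N_0})_0$: by the discussion surrounding \eqref{equ: ord-algalim} and Remark \ref{choicen0}, a nonzero embedding $\otimes_{i=1}^k\pi_i\hookrightarrow(\pi^{N_0})_0$ is (up to the twist by $\delta_P$, cf. \eqref{equ: lgOrd-relev} and Remark \ref{ordpart0}) the same datum as a nonzero $L_P(L)$-equivariant map $\otimes_{i=1}^k\pi_i\hookrightarrow J_P(\pi)(\delta_P^{-1})$, i.e.\ $\otimes_{i=1}^k\pi_i$ is, after a modulus twist, a subrepresentation of the smooth Jacquet module $J_P(\pi)$. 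Since $\pi$ is irreducible with $\rec(\pi)=\W(\rho)^{\sss}$, the classical theory of Jacquet modules (Bernstein--Zelevinsky, together with the fact that $\rec$ is compatible with parabolic induction/Jacquet restriction as in \cite{Scho13}) tells us that any irreducible subquotient $\otimes_i\pi_i'$ of $J_P(\pi)$ satisfies $\W(\rho)^{\sss}\cong\bigoplus_{i=1}^k\rec(\pi_i')(a_i)$ for suitable half-integral twists $a_i$ matching the modulus character $\delta_P^{1/2}$; here the relevant twist on the $i$-th block is exactly $|\cdot|^{s_i}$ after the normalization chosen just before the proposition. This pins down the semisimplifications: $\W(\rho)^{\sss}\cong\bigoplus_{i=1}^k\rec(\pi_i)(s_i)^{\sss}$, which is the second bullet at the level of semisimple Weil representations.

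Next I would produce the $\rho_i$ themselves and the extension. The key point is to realize the block decomposition of $\W(\rho)$ as coming from an honest filtration of $\rho$ by subrepresentations, not merely a decomposition of the associated Weil--Deligne module. For this I would invoke $p$-adic Hodge theory: an irreducible smooth subrepresentation of $J_P(\pi)$ corresponds, under the compatibility of classical local Langlands with the formation of Jacquet modules and with the Hodge--Tate/monodromy filtrations, to a potentially semistable sub-$(\varphi,N,\Gal)$-module of $\DF(\rho)$ whose Hodge--Tate weights are the prescribed intervals. Concretely, the appearance of $\otimes_i\pi_i$ inside $J_P(\pi)$ forces the Deligne--Fontaine module $\DF(\rho)$ to admit a $(\varphi,N,\Gal(L'/L))$-stable filtration with graded pieces $\DF(\rho_i)$, where $\rho_i$ is the potentially semistable representation attached to $(\rec(\pi_i)(s_i),N_i)$ with Hodge--Tate weights $\{1-s_{i+1},\dots,-s_i\}$ (the weights of $\rho$ being $\{1-n,\dots,0\}$, these intervals partition the weight multiset exactly as the $s_i$ prescribe). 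By weak admissibility (Colmez--Fontaine) each such sub-object is itself weakly admissible, hence corresponds to a genuine $\Gal_L$-stable subspace of $\rho$; iterating gives the increasing filtration $0=V_0\subsetneq V_1\subsetneq\cdots\subsetneq V_k=\rho$ with $V_i/V_{i-1}\cong\rho_i$, which is the first bullet and, together with the weight computation, the third.

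Finally, the $P$-ordinarity assertion: once $\rho$ is a successive extension of the $\rho_i$ with $\dim\rho_i=n_i$, the subspaces $V_i\subset\rho$ are free $E$-submodules that are direct summands as $E$-modules with graded pieces of the right ranks, so $\rho$ is $P$-ordinary in the sense of Definition \ref{Porddef} (and picking a basis adapted to the $V_i$ exhibits $\rho$ as a homomorphism $\Gal_L\to P(E)$). For the last sentence it suffices to observe that $(\pi^{N_0})_0\neq0$ implies, since $(\pi^{N_0})_0$ is a smooth admissible $L_P(L)$-representation, that it contains \emph{some} irreducible $\otimes_i\pi_i$, and then the above applies. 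The main obstacle I anticipate is the second step: passing from the decomposition of the Weil--Deligne module (which is essentially formal from Bernstein--Zelevinsky plus \cite{Scho13}) to an actual filtration of $\rho$ by subrepresentations with the correct Hodge--Tate weights on each graded piece. This requires knowing that the sub-Weil--Deligne modules cut out by the Jacquet-module filtration are automatically weakly admissible with the expected Hodge filtration — which is where the hypothesis on the Hodge--Tate weights of $\rho$ (consecutive integers $\{1-n,\dots,0\}$, so that noncriticality is automatic and each candidate sub-object has a forced Hodge polygon) does the real work. One must also be a little careful that the twists $|\cdot|^{s_i}$ are correctly bookkept through the two modulus-character normalizations (the one in $J_P$ versus the one implicit in $\rec$ and in $(\pi^{N_0})_0$ via \eqref{equ: lgOrd-relev}); I would check this on the torus case $P=B$ first, where it reduces to the classical ordering of Satake parameters by valuation.
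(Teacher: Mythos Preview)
Your overall strategy matches the paper's: reduce to $k=2$ (the paper does this explicitly via Remark \ref{ordpart0}), identify $\otimes_i\pi_i$ inside $J_P(\pi)$ up to modulus twist, deduce the decomposition $\W(\rho)^{\sss}\cong\bigoplus_i\rec(\pi_i)(s_i)$ via \cite{Scho13}, and then promote this to a filtration of $\rho$ using weak admissibility. You correctly flag the promotion step as the heart of the matter.

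The gap is in how that promotion actually works. There is no ``sub-Weil--Deligne module cut out by the Jacquet-module filtration'': the Jacquet-module data only gives a direct-sum decomposition of $\W(\rho)^{\sss}$, not a distinguished submodule of $\DF(\rho)$. The paper constructs the submodule by hand, and the crucial input you do not use is the \emph{slope-zero} condition defining $(\pi^{N_0})_0$ (as opposed to merely $(\pi^{N_0})_{\cfs}\cong J_P(\pi)(\delta_P^{-1})$): it forces the central character $\omega_{\pi_1}$ to be a $p$-adic unit, hence via $\wedge^{n_1}\rec(\pi_1)(\tfrac{1-n_1}{2})=\omega_{\pi_1}$ pins down $t_N(D_1)=\tfrac{n_1(n_1-1)}{2}[E:\Q_p]$ for any $\varphi$-submodule $D_1\subset D_{L'}$ lifting $\sD_1$. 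With the consecutive Hodge--Tate weights one also has $t_H(D_1)\geq\tfrac{n_1(n_1-1)}{2}[E:\Q_p]$, so weak admissibility of $D_{L'}$ forces equality. A separate argument is then needed for $N$-stability (the paper produces, if $D_1$ were not $N$-stable, a $(\varphi^{f'},N)$-submodule of the same rank but strictly smaller $t_N$, contradicting admissibility), after which one shows $D_1=(D_{L'})_{\leq(n_1-1)f'}$ coincides with a piece of the $\varphi$-slope filtration. This last identification is what gives $\Gal(L'/L)$-stability for free, since $\Gal(L'/L)$ commutes with $\varphi$. Without the slope-zero input fixing $t_N$ and the slope-filtration characterization, there is no mechanism to upgrade the semisimple decomposition to a genuine sub-$(\varphi,N,\Gal(L'/L))$-module, so your second step as written remains an assertion rather than an argument.
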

\begin{proof}
The very last assertion easily follows from the others and the finite length of the $L_P(L)$-representation $(\pi^{N_0})_{0}$ (which follows from $(\pi^{N_0})_{0}\subseteq J_P(\pi)(\delta_P^{-1})$, see (\ref{equ: lgOrd-relev}), and the finite length of $J_P(\pi)$). The general idea of the proof below is the following: by classical local Langlands correspondence, we deduce first a ``$P$-filtration'' of the Weil representation $\W(\rho)^{\sss}$, then we show that this filtration actually comes from a filtration of Galois representations.\\

\noindent
(a) First we reduce to the case $k=2$ (i.e. $P$ maximal). Take $P'\supseteq P$ such that the Levi subgroup $L_{P'}$ of $P'$ satisfies $L_{P'}\cong \begin{pmatrix} \GL_{n-n_k} & 0 \\ 0 & \GL_{n_k} \end{pmatrix}$. By the proof of Lemma \ref{ordpart} (see Remark \ref{ordpart0}), we have with the notation as {\it loc.cit.} $(\pi^{N_0})_{0}\cong (((\pi^{N_0'})_{0})^{N_0''})_{0}$. Thus if $\otimes_{i=1}^k\pi_i\hookrightarrow (\pi^{N_0})_{0}$, there exists a smooth irreducible representation $\pi'\otimes \pi_k$ of $L_{P'}(L)$ over $E$ such that $\pi'\otimes\pi_k\hookrightarrow (\pi^{N_0'})_{0}$ and $\otimes_{i=1}^k \pi_i \hookrightarrow ((\pi'\otimes \pi_k)^{N_0''})_0$. Assume the statement holds for $k=2$, we then obtain $\rho'$, $\rho_k$ corresponding to $\pi'$, $\pi_k$ respectively as in the proposition. Applying the same argument with $\rho'$, $\pi'$ instead of $\rho$, $\pi$ and using an easy induction, we deduce the statement for arbitrary $k$.\\

\noindent
(b) Assume now $L_P\cong \begin{pmatrix}\GL_{n_1} & 0 \\ 0 & \GL_{n_2}\end{pmatrix}$. The composition $\pi_1\otimes \pi_2 \hooklongrightarrow (\pi^{N_0})_{0} \hooklongrightarrow J_P(\pi)(\delta_P)$ induces a nonzero, hence surjective, morphism (recall $\overline{P}$ is the opposite parabolic):
\begin{equation*}
\big(\Ind_{\overline{P}(L)}^{\GL_n(L)} \pi_1 \otimes \pi_2\big)^{\infty} \twoheadrightarrow \pi.
\end{equation*}
Let $\W_i:=\rec(\pi_i)$, $i\in \{1,2\}$ be the semi-simple representation of $\W_L$ associated to $\pi$, we have (see \cite[Thm. 1.2(b)]{Scho13}):
\begin{equation*}
\W(\rho)^{\sss}\cong \W_1\oplus \W_2(n_1)
\end{equation*}
with $\W_i\vert_{\W_{L'}}$ being unramified. For $i\in \{1,2\}$ let $\DF_i:=(\sD_i, \varphi, N=0,\Gal(L'/L))$ be the Deligne-Fontaine module associated to $(\W_i, N=0)$ (\cite[Prop.~4.1]{BS07}). Enlarging $E$ if needed, there exists a $\varphi$-submodule $D_1$ of $D_{L'}=(B_{\st}\otimes_{\Q_p} \rho)^{\Gal_{L'}}$ such that the $\varphi^{[L_0':\Q_p]}$-semi-simplifica\-tion of $D_1$ is isomorphic to $\sD_1$ as $\varphi$-modules over $L_0'\otimes_{\Q_p} E$. Indeed, for $\sigma: L_0'\hookrightarrow E$, there exists a $\varphi^{[L_0':\Q_p]}$-submodule $D_{1, \sigma}$ of $D_{L',\sigma}$ such that $D_{1,\sigma}^{\sss}\cong \sD_{1,\sigma}$ (since $\sD_{1,\sigma}$ is a $\varphi^{[L_0':\Q_p]}$-submodule of $D_{L',\sigma}^{\sss}$ and $E$ is sufficiently large). We can then take $D_1$ to be the $\varphi$-submodule of $D_{L'}$ generated by $D_{1,\sigma}$. We will show that $D_1$ is stable by $N$ and by $\Gal(L'/L)$ (hence is a Deligne-Fontaine submodule of $D$) and that the induced filtration on $D_1$ is admissible.\\

\noindent
(c) We first show that we have (where $t_N(\cdot):=\frac{1}{[L'_0:\Q_p]}\val_p(\det_{L'_0}(\varphi^{[L'_0:\Q_p]}))$):
\begin{equation}\label{equ: ord-tnd}
t_N(D_1)=\frac{n_1(n_1-1)}{2}[E:\Q_p].
\end{equation}
From \cite[Thm. 14.1(iv)~\&~Thm. 12.1]{Scho13} (note that $\rec(\pi_1)=\sigma(\pi_1)(\frac{1-n_1}{2})$ with the notation of \emph{loc.cit.}), one easily deduces that $\wedge_E^{n_1}(\W_1(\frac{1-n_1}{2}))$ coincides with the central character $\omega_{\pi_1}$ of $\pi_1$. However, $\omega_{\pi_1}$ is unitary (in the $p$-adic sense) since we are in $(\pi^{N_0})_0$ (see \S~\ref{sec: ord-lalg}) and (\ref{equ: ord-tnd}) easily follows. We equip $D_1\otimes_{L'_0}L'$ with the Hodge filtration $\Fil^\bullet(D_1\otimes_{L'_0}L')$ induced by $D_{L'}\otimes_{L'_0}L'$. Since $\HT_{\sigma}(\rho)=\{1-n, 2-n, \cdots, 0\}$ for all $\sigma\in \Sigma_L$, it is easy to deduce (where $t_H(\cdot\otimes_{L'_0}L'):=\sum_{i\in \Z}\dim_{L'}i\Fil^i(\cdot\otimes_{L'_0}L')/\Fil^{i+1}(\cdot\otimes_{L'_0}L')$):
\begin{equation*}
t_H(D_1\otimes_{L'_0}L')\geq \big(0+1+\cdots + (n_1-1)\big)[E:\Q_p]=\frac{n_1(n_1-1)}{2}[E:\Q_p].
\end{equation*}

\noindent
(d) We show that $D_1$ is stable by the monodromy operator $N$ of $D_{L'}$. Let $\sigma: L_0'\hookrightarrow E$, by the relation $N\varphi=p\varphi N$ and the fact that $\varphi^j$ induces an isomorphism $D_{1,\sigma} \ra D_{1, \sigma\circ \Frob^{-j}}$ for $j\in \Z_{\geq 0}$, it is sufficient to prove that $D_{1,\sigma}$ is stable by $N$. Let $f':=[L'_0:\Q_p]$ and denote by $D_{\sigma}'$ the $(\varphi^{f'}, N)$-submodule of $D_{L',\sigma}$ generated by $D_{1,\sigma}$. Let $D'$ be the $(\varphi, N)$-submodule of $D$ generated by $D_{\sigma}'$, i.e. $D'_{\sigma\circ \Frob^{-j}}=\varphi^j (D'_{\sigma})$ for $j\in \Z_{\geq 0}$.\\
\noindent
\textbf{Claim.} If $D_{\sigma}'\neq D_{1,\sigma}$ then there exists a $(\varphi^{f'}, N)$-submodule $D_{\sigma}''$ of $D_{\sigma}'$ such that:
\begin{equation*}
\dim_E D_{\sigma}''=\dim_E D_{1,\sigma}=n_1\ \ {\rm and}\ \ t_N(D'') < t_N(D_1)
\end{equation*}
where $D''$ is the $(\varphi, N)$-submodule of $D'$ generated by $D''_{\sigma}$.\\
\noindent
We first prove the claim in the case where there is $\alpha\in E^{\times}$ and $m\in \Z_{>0}$ such that the $\varphi^{f'}$-eigenvalues on $D_{\sigma}'$ lie in $\{\alpha, p^{-f'}\alpha, \cdots, p^{-f'm}\alpha\}$ (enlarging $E$ if necessary) and $\alpha$ is an eigenvalue of $\varphi^{f'}$. Since $D_{\sigma}'$ is generated by $D_{1,\sigma}$, we see from $N\varphi=p\varphi N$ that $\alpha$ is also a $\varphi^{f'}$-eigenvalue on $D_{1,\sigma}$. Since $N$ is nilpotent on $D'_{\sigma}$, there exists $s\in \Z_{\geq 0}$ such that $\dim_E \Ker(N^s)\geq n_1$ and $\dim_E\Ker(N^{s-1})<n_1$ as $(\varphi^{f'},N)$-submodule of $D_{\sigma}'$. Consider the short exact sequence:
\begin{equation*}
0 \lra \Ker(N^{s-1}) \lra \Ker(N^s) \xlongrightarrow{N^{s-1}} N^{s-1} (\Ker(N^s))\ra 0.
\end{equation*}
Let $M$ be a $\varphi^{f'}$-submodule of $N^{s-1} (\Ker(N^s))$ of dimension $n_1-\dim_E \Ker(N^{s-1})$ and let $D''_{\sigma}$ be the preimage of $M$ in $\Ker(N^s)$, which is thus a $(\varphi^{f'},N)$-submodule of $D_{\sigma}'$ of dimension $n_1$. Since $D_{\sigma}'\neq D_{1,\sigma}$, we have $\Ker(N^{s-1})\nsubseteq D_{1,\sigma}$ or $D_{1,\sigma}\nsubseteq \Ker(N^s)$ (indeed, otherwise we have $\Ker(N^{s-1})\subseteq D_{1,\sigma}\subseteq \Ker(N^s)$ which implies $N(D_{1,\sigma}) \subseteq \Ker(N^{s-1}) \subseteq D_{1,\sigma}$ hence $D_{1,\sigma}$ stable by $N$ and $D'_{\sigma}=D_{1,\sigma}$). In both cases, by comparing the $\varphi^{f'}$-eigenvalues, it is not difficult to see $t_N(D_1)>t_N(D'')$. The claim in this case follows. In general, we have a decomposition $D_{1,\sigma}\cong \oplus_{j\in J} D_{1,\sigma, j}$ where the $\varphi^{f'}$-eigenvalues on the $D_{1,\sigma, j}$ lie in disjoint finite sets of elements of $E^\times$ of the form $\{\alpha_j, p^{-f'}\alpha_j, \cdots, p^{-f'm'_j}\alpha_j\}$ with $\alpha_j$ an eigenvalue of $\varphi^{f'}$ on $D_{1,\sigma, j}$. Since $D_{\sigma}'$ is generated by $D_{1,\sigma}$, from $N\varphi=p\varphi N$ we have $D_{\sigma}'\cong \oplus_{j\in J} D_{\sigma, j}'$ where $D_{\sigma, j}'$ is generated by $D_{1,\sigma, j}$ and the $\varphi^{f'}$-eigenvalues on $D_{\sigma, j}'$ lie in $\{\alpha_j, p^{-f'}\alpha_j, \cdots, p^{-f'm_j}\alpha_j\}$ for $m_j\geq m'_j$. We put $D_{\sigma,j}'':=D_{1,\sigma,j}$ if $D_{1,\sigma,j}=D_{\sigma,j}'$ and define $D_{\sigma,j}''\subseteq D_{\sigma, j}'$ as above when $D_{1,\sigma,j}\neq D_{\sigma, j}'$. The claim then follows with $D_{\sigma}'':=\oplus_{j\in J} D_{\sigma,j}''$.\\
\noindent
Assume now $D'_{\sigma}\neq D_{1,\sigma}$ and let $D''$ be as in the claim. The same argument as in (c) with the induced Hodge filtration gives then $t_H(D''\otimes_{L'_0}L')\geq \frac{n_1(n_1-1)}{2}[E:\Q_p]>t_N(D'')$, which contradicts the fact that $D_{L'}$ is admissible. So we have $D'_{\sigma}=D_{1,\sigma}$, $D_1=D'$ and these spaces are stable by $N$. By (c) and the fact that $D_{L'}$ is admissible, we deduce:
$$t_H(D_1\otimes_{L'_0}L')=\frac{n_1(n_1-1)}{2} [E:\Q_p]$$
and hence together with (\ref{equ: ord-tnd}) that $D_1$ is a weakly admissible $(\varphi, N)$-submodule of $D_{L'}$.\\

\noindent
(e) For a $\varphi^{f'}$-module $W$ over $E$ and $a\in \bR$, denote by $W_{<a}$ (resp. $W_{\leq a}$) the $E$-vector subspace of $W$ generated by the generalized $\varphi^{f'}$-eigenvectors of eigenvalues $\beta$ satisfying $\val_p(\beta)<a$ (resp. $\val_p(\beta)\leq a$). If $W$ is moreover a $(\varphi, N)$-module over $L_0'\otimes_{\Q_p} E$, it is easy to see that $W_{<a}$ and $W_{\leq a}$ are still $(\varphi, N)$-submodules (over $L_0'\otimes_{\Q_p} E$) of $W$. We now show $(D_{L'})_{<0}=0$ and $D_1=(D_{L'})_{\leq (n_1-1)f'}$. Since $t_H(W)>0$ for any nonzero $E$-vector subspace $W$ of $D_{L'}$ (with the induced filtration) and since $D_{L'}$ is admissible, it follows that $(D_{L'})_{<0}=0$. We show $(D_1)_{\leq (n_1-1)f'}=D_1$ (and hence $D_1=(D_{L'})_{\leq (n_1-1)f'}$). Assume not and let $n_1'<n_1$ such that $\dim_E (D_1)_{\leq (n_1-1)f'}=n_1'f'$ (note that $\dim_E D_1=n_1f'$ and that $ (D_1)_{\leq (n_1-1)f'}$ is free over $L_0'\otimes_{\Q_p} E$ (as is easily checked)). Then we deduce:
\begin{equation*}
t_N\big((D_1)_{\leq (n_1-1)f'}\big) < t_N(D_1)-(n_1-1)(n_1-n_1')[E:\Q_p]=\frac{(n_1-1)(2n'_1-n_1)}{2}[E:\Q_p]
\end{equation*}
where the equality follows from (\ref{equ: ord-tnd}). But we also have (with the induced Hodge filtration and by the same argument as in (c)):
\begin{equation*}
 t_H\big((D_1)_{\leq (n_1-1)f'}\big) \geq (0+1+\cdots + (n_1'-1))[E:\Q_p]=\frac{n_1'(n'_1-1)}{2}[E:\Q_p]
\end{equation*}
which contradicts the fact $D_1$ is weakly admissible (see the end of (d)) since we easily check that $(n_1-1)(2n'_1-n_1)\leq n_1'(n'_1-1)$ for $0\leq n'_1<n_1$.\\

\noindent
(f) Since $\Gal(L'/L)$ commutes with $\varphi$, we see that $D_1=(D_{L'})_{\leq (n_1-1)f'}$ is stable by $\Gal(L'/L)$. Let $\rho_1$ be the continuous representation of $\Gal_L$ over $E$ associated to $D_1$ by the Colmez-Fontaine theorem and $\rho_2:=\rho/\rho_1$. Thus $\W(\rho)^{\sss}\cong \W(\rho_1)^{\sss} \oplus \W(\rho_2)^{\sss}$ and the first and third properties in the statement are then clear. To finish the proof, we only need to show that the $\W_L$-representations $\W(\rho_1)^{\sss}$ and $\W_1$ (see (b)) are isomorphic. Let $\DF_1':=(D_1^{\sss}, \varphi, N=0, \Gal(L'/L))$ be the Deligne-Fontaine module associated to $(\W(\rho_1)^{\sss}, N=0)$ (\cite[Prop.~4.1]{BS07}) where $D_1^{\sss}$ denotes the semi-simplification of $D_1$ for the $\varphi^{f'}$-action, we are reduced to show that $\DF_1'$ and $\DF_1=(\sD_1, \varphi, N=0,\Gal(L'/L))$ (see (b)) are isomorphic (that is, one has to take care of the $\Gal(L'/L)$-action). The natural inclusion $\W_1\hookrightarrow \W(\rho)^{\sss}$ induces an embedding of Deligne-Fontaine modules:
\begin{equation*}
 \DF_1 \hooklongrightarrow \DF:=(D_{L'}^{\sss}, \varphi, N=0, \Gal(L'/L))
\end{equation*} where the latter is isomorphic to the Deligne-Fontaine module associated to $(\W(\rho)^{\sss}, N=0)$ and where $D_{L'}^{\sss}$ denotes the semi-simplification of $D_{L'}$ for the $\varphi^{f'}$-action. Similarly, the inclusion $\W(\rho_1)^{\sss}\hookrightarrow \W(\rho)^{\sss}$ induces an injection $\DF_1'\hooklongrightarrow \DF$. By construction, we also know $\sD_1\cong D_1^{\sss}$ as $\varphi$-module. However, by (e) we have $D_1^{\sss}=(D_{L'}^{\sss})_{\leq (n_1-1)f'}$, thus we also have $\sD_1=(D_{L'}^{\sss})_{\leq (n_1-1)f'}$ since $(D_{L'}^{\sss})_{\leq (n_1-1)f'}$ is only defined in terms of the $\varphi$-action. So both $\DF_1$ and $\DF_1'$ are isomorphic to the Deligne-Fontaine submodule $((D_{L'}^{\sss})_{\leq (n_1-1)f'}, \varphi, N=0, \Gal(L'/L))$ of $\DF$. This concludes the proof.
\end{proof}

\section{Automorphic and $P$-ordinary automorphic representations}\label{ordinaryautomorphic}\label{automor}

\noindent
In this section we start the global theory: we give the global setup, state our local-global compatibility conjecture for $\GL_3(\Q_p)$, and prove several useful results on the $P$-ordinary part of (localized) Banach spaces of $p$-adic automorphic forms on definite unitary groups.

\subsection{Global setup and main conjecture}\label{prelprel}

\noindent
We introduce the global setup and state our main local-global compatibility conjecture for $\GL_3(\Q_p)$.\\

\noindent
We fix field embeddings $\iota_{\infty}: \overline{\Q} \hookrightarrow \bC$, $\iota_p: \overline{\Q} \hookrightarrow \overline{\Q_p}$. We also fix $F^+$ a totally real number field, $F$ a quadratic totally imaginary extension of $F^+$ and $G/F^+$ a unitary group attached to the quadratic extension $F/F^+$ as in \cite[\S~6.2.2]{Bch} such that $G\times_{F^+} F\cong \GL_n$ ($n\geq 2$) and $G(F^+\otimes_{\Q} \bR)$ is compact. For a finite place $v$ of $F^+$ which is totally split in $F$ and $\tilde{v}$ a place of $F$ dividing $v$, we have thus isomorphisms $ i_{G, \widetilde{v}}: G(F^+_v)\xlongrightarrow{\sim} G(F_{\tilde{v}})\xlongrightarrow{\sim} \GL_n(F_{\tilde{v}})$. We let $\Sigma_p$ denote the set of places of $F^+$ dividing $p$ and we assume that each place in $\Sigma_p$ is totally split in $F$. We fix an open compact subgroup $U^p=\prod_{v\nmid p} U_v$ of $G(\bA_{F^+}^{p,\infty})$ and set:
\begin{equation*}
\widehat{S}(U^p,E):=\Big\{f: G(F^+) \setminus G(\bA_{F^+}^{\infty})/U^p\lra E,\ f \text{ is continuous}\Big\}.
\end{equation*}
Since $G(F^+\otimes_{\Q} \bR)$ is compact, $G(F^+)\setminus G(\bA_{F^+}^{\infty})/U^p$ is a profinite set, and we see that $\widehat{S}(U^p,E)$ is a Banach space over $E$ with the norm defined by the (complete) $\co_E$-lattice:
\begin{equation*}
\widehat{S}(U^p,\co_E):=\Big\{f: G(F^+)\setminus G(\bA_{F^+}^{\infty})/U^p \lra \co_E,\ f \text{ is continuous}\Big\}.\end{equation*}
Moreover, $\widehat{S}(U^p,E)$ is equipped with a continuous action of $G(F^+\otimes_{\Q}\Q_p)$ given by $(g'f)(g)=f(gg')$ for $f\in \widehat{S}(U^p,E)$, $g'\in G(F^+\otimes_{\Q} \Q_p)$, $g\in G(\bA_{F^+}^{\infty})$. The lattice $\widehat{S}(U^p,\co_E)$ is obviously stable by this action, so the Banach representation $\widehat{S}(U^p,E)$ of $G(F^+\otimes_{\Q} \Q_p)$ is unitary. Moreover, we know (see e.g. the proof of Lemma \ref{projH}) that $\widehat{S}(U^p,E)$ is admissible. Let $D(U^p)$ be the set of primes $v$ of $F^+$ satisfying:
\begin{itemize}
 \item $v\nmid p$ and $v$ is totally split in $F$
 \item $U_v$ is a maximal compact open subgroup of $G(F^+_v)$.
\end{itemize}Let $\bT(U^p):=\co_E[T_{\tilde{v}}^{(j)}]$ be the commutative polynomial $\co_E$-algebra generated by the formal variables $T_{\tilde{v}}^{(j)}$ where $j\in \{1,\cdots,n\}$ and $\tilde{v}$ is a finite place of $F$ above a finite place $v$ in $D(U^p)$. The $\co_E$-algebra $\bT(U^p)$ acts on $\widehat{S}(U^p,E)$ and $\widehat{S}(U^p, \co_E)$ by making $T_{\tilde{v}}^{(j)}$ act by the double coset operator:
\begin{equation}\label{equ: ord-hecke1}
 T_{\tilde{v}}^{(j)}:=\Big[U_{v} g_v i_{G,\tilde{v}}^{-1}\begin{pmatrix}
  \text{\textbf{1}}_{n-j} & 0 \\ 0 & \text{$\varpi_{\tilde{v}}$ \textbf{1}}_{j}
 \end{pmatrix} g_v^{-1}U_{v}\Big]
\end{equation}
where $\varpi_{\tilde{v}}$ is a uniformizer of $F_{\tilde{v}}$, and where $g_v\in G(F_v^+)$ is such that $i_{G,\tilde{v}}(g_v^{-1} U_v g_v)=\GL_n(\co_{F_{\tilde{v}}})$. This action commutes with that of $G(F^+ \otimes_{\Q} \Q_p)$.\\

\noindent
Recall that the automorphic representations of $G(\bA_{F^+})$ are the irreducible constituents of the $\bC$-vector space of functions $f: G(F^+)\backslash G(\bA_{F^+})\lra \bC$ which are:
\begin{itemize}
 \item $\cC^{\infty}$ when restricted to $G(F^+\otimes_{\Q} \bR)$
 \item locally constant when restricted to $G(\bA_{F^+}^{\infty})$
 \item $G(F^+\otimes_{\Q} \bR)$-finite,
\end{itemize}
where $G(\bA_{F^+})$ acts on this space via right translation. An automorphic representation $\pi$ is isomorphic to $\pi_{\infty}\otimes_{\bC} \pi^{\infty}$ where $\pi_{\infty}=W_{\infty}$ is an irreducible algebraic representation of $(\Res_{F^+/\Q}G)(\bR)=G(F^+\otimes_{\Q} \bR)$ over $\bC$ and $\pi^{\infty}\cong \Hom_{G(F^+\otimes_{\Q}\bR)}(W_{\infty}, \pi)\cong \otimes'_{v}\pi_v$ is an irreducible smooth representation of $G(\bA_{F^+}^{\infty})$. The algebraic representation $W_{\infty}\vert_{(\Res_{F^+/\Q}G)(\Q)}$ is defined over $\overline{\Q}$ via $\iota_{\infty}$ and we denote by $W_p$ its base change to $\overline{\Q_p}$ via $\iota_p$, which is thus an irreducible algebraic representation of $(\Res_{F^+/\Q}G)(\Q_p)=G(F^+\otimes_{\Q} \Q_p)$ over $\overline{\Q_p}$. Via the decomposition $G(F^+\otimes_{\Q} \Q_p)\xrightarrow{\sim} \prod_{v\in \Sigma_p} G(F^+_v)$, one has $W_p\cong \otimes_{v\in \Sigma_p} W_v$ where $W_v$ is an irreducible $\Q_p$-algebraic representation of $G(F^+_v)$ over $\overline{\Q_p}$. One can also prove $\pi^{\infty}$ is defined over a number field via $\iota_{\infty}$ (e.g. see \cite[\S~6.2.3]{Bch}). Denote by $\pi^{\infty,p}:=\otimes'_{v\nmid p} \pi_v$, so that we have $\pi^\infty \cong \pi^{\infty,p}\otimes_{\overline{\Q}} \pi_p$ (seen over $\overline{\Q}$ via $\iota_\infty$), and by $m(\pi)\in \Z_{\geq 1}$ the multiplicity of $\pi$ in the above space of functions $f: G(F^+)\backslash G(\bA_{F^+})\ra \bC$. Denote by $\widehat{S}(U^p,E)^{\lalg}$ the subspace of $\widehat{S}(U^p,E)$ of locally algebraic vectors for the $(\Res_{F^+/\Q}G)(\Q_p)=G(F^+\otimes_{\Q} \Q_p)$-action, which is stable by $\bT(U^p)$. We have an isomorphism which is equivariant under the action of $G(F^+\otimes_{\Q} \Q_p)\times \bT(U^p)$ (see e.g. \cite[Prop. 5.1]{Br13II} and the references in \cite[\S~5]{Br13II}):
\begin{equation}\label{equ: lalgaut}
 \widehat{S}(U^p,E)^{\lalg} \otimes_E \overline{\Q_p} \cong \bigoplus_{\pi}\Big((\pi^{\infty,p})^{U^p} \otimes_{\overline{\Q}}(\pi_p \otimes_{\overline{\Q}} W_p)\Big)^{\oplus m(\pi)}
\end{equation}
where $\pi\cong \pi_{\infty}\otimes_{\overline{\Q}} \pi^{\infty}$ runs through the automorphic representations of $G(\bA_{F^+})$ and $W_p$ is associated to $\pi_{\infty}=W_\infty$ as above, and where $T_{\tilde{v}}^{(j)}\in \bT(U^p)$ acts on $(\pi^{\infty,p})^{U^p}$ by the double coset operator (\ref{equ: ord-hecke1}).\\

\noindent
We now fix a place $\wp$ of $F^+$ above $p$, a place $\widetilde{\wp}$ of $F$ dividing $\wp$ and we set $L:=F^+_\wp\cong F_{\widetilde{\wp}}$. We have thus an isomorphism $i_{G,\widetilde{\wp}}: G(F^+_\wp)\xrightarrow{\sim} \GL_n(L)$. We also fix an irreducible $\Q_p$-algebraic representation $W^\wp$ of $\prod_{v|p, v\neq \wp} G(F^+_v)$ over $E$, that we see as a representation of $G(F^+\otimes_{\Q} \Q_p)$ via $G(F^+\otimes_{\Q} \Q_p)\twoheadrightarrow \prod_{v|p, v\neq \wp} G(F^+_v)$, and a compact open subgroup $U_p^\wp=\prod_{v|p,v\neq \wp} U_v$ of $\prod_{v|p, v\neq \wp}G(F^+_v)$. We put $U^\wp:=U^pU_p^\wp$ and:
$$\widehat{S}(U^\wp,W^\wp):=\big(\widehat{S}(U^p,E)\otimes_E W^\wp\big)^{U_p^\wp}$$
which is an admissible unitary Banach representation of $G(F^+_\wp)$ over $E$ (see the discussion below) which is equipped with a natural action of $\bT(U^p)$ commuting with the action of $G(F^+_\wp)\cong \GL_n(L)$. We have an isomorphism equivariant under the action of $G(F^+\otimes_{\Q} \Q_p)\times \bT(U^p)$:
\begin{equation*}
\widehat{S}(U^p,E)\otimes_E W^{\wp}=\Big\{f: G(F^+) \setminus G(\bA_{F^+}^{\infty})/U^p\lra W^{\wp},\ f \text{ is continuous}\Big\}
\end{equation*}
where $G(F^+\otimes_{\Q} \Q_p)$ acts on the right hand side by $(g_pf)(g)=g_p (f(gg_p))$ for $g_p\in G(F^+\otimes_{\Q} \Q_p)$ and $g\in G(\bA_{F^+})$. We deduce:
\begin{multline}\label{equ: ord-aut}
 \widehat{S}(U^{\wp},W^{\wp})=\Big\{f: G(F^+) \setminus G(\bA_{F^+}^{\infty})/U^p \lra W^{\wp},\ \ f \text{ is continuous and} \\ f(gg_p^{\wp})=(g_p^{\wp})^{-1} (f(g)) \ \text{for all}\ g\in G(\bA_{F^+}^{\infty}) \ \text{and all} \ g_p^{\wp}\in U_p^{\wp}\Big\}.
\end{multline}
Let $\bW^{\wp}$ be an $\co_E$-lattice of $W^{\wp}$ stable by $U_p^{\wp}$, we define $\widehat{S}(U^{\wp}, *)$ by replacing $W^{\wp}$ by $*$ in (\ref{equ: ord-aut}) for $*\in \{\bW^{\wp}, \bW^{\wp}/\varpi_E^s\}$ (where $s\geq 1$). We also define for $*\in \{W^{\wp}, \bW^{\wp}, \bW^{\wp}/\varpi_E^s\}$:
\begin{multline*}
 S(U^{\wp},*):=\Big\{f: G(F^+) \setminus G(\bA_{F^+}^{\infty})/U^p \lra *,\ \ f \text{ is locally constant and} \\ f(gg_p^{\wp})=(g_p^{\wp})^{-1}(f(g))\text{ for all }g\in G(\bA_{F^+}^{\infty})\text{ and all }g_p^{\wp}\in U_p^{\wp}\Big\}.
\end{multline*}
All these spaces are equipped with the action of $\GL_n(L)\times \bT(U^p)$ by right translation on functions for $\GL_n(L)$ and by the double coset operators (\ref{equ: ord-hecke1}) for $\bT(U^p)$. We have moreover $\GL_n(L)\times \bT(U^p)$-equivariant isomorphisms:
\begin{eqnarray}
S(U^{\wp}, \bW^{\wp}/\varpi_E^s) &\cong &\widehat{S}(U^{\wp}, \bW^{\wp}/\varpi_E^s) \ \ \ \ \cong \ \ S(U^{\wp}, \bW^{\wp})/\varpi_E^s \label{equ: ord-modps}\\
\widehat{S}(U^{\wp}, \bW^{\wp}) & \cong& \varprojlim_s S(U^{\wp}, \bW^{\wp}/\varpi_E^s) \label{equ: ord-compl}\\
\widehat{S}(U^{\wp}, W^{\wp}) & \cong &\widehat{S}(U^{\wp}, \ \bW^{\wp}) \otimes_{\co_E} E \nonumber
\\
S(U^{\wp}, W^{\wp}) &\cong & S(U^{\wp}, \bW^{\wp})\otimes_{\co_E} E\ \cong \ \widehat{S}(U^{\wp}, W^{\wp})^{\sm}.\label{equ: ord-sm}
\end{eqnarray}
Finally, for a compact open subgroup $U_{\wp}$ of $\GL_n(L)$, we define for $*\in \{W^{\wp}, \bW^{\wp}, \bW^{\wp}/\varpi_E^s\}$:
\begin{multline*}
S(U^{\wp}U_{\wp},*):=\Big\{f: G(F^+) \setminus G(\bA_{F^+}^{\infty})/U^p U_{\wp}\lra *,\\ f(gg_p^{\wp})=(g_p^{\wp})^{-1}(f(g))\text{ for all }g\in G(\bA_{F^+}^{\infty})\text{ and all }g_p^{\wp}\in U_p^{\wp}\Big\}
\end{multline*}
and we thus have $\varinjlim_{U_{\wp}} S(U^{\wp}U_{\wp},*) = S(U^{\wp}, *)$. We can then easily deduce from (\ref{equ: lalgaut}) a $\GL_n(L)\times \bT(U^p)$-equivariant isomorphism:
\begin{equation}\label{equ: lalgaut2}
\widehat{S}(U^{\wp},W^{\wp})^{\lalg}\otimes_{E} \overline{\Q_p} \cong \bigoplus_{\pi} \big( (\pi^{\infty,p})^{U^p} \otimes_{\overline{\Q}} (\otimes_{v|p, v\neq {\wp}} \pi_v^{U_v})\otimes_{\overline{\Q}} (\pi_{\wp} \otimes_{\overline{\Q}} W_{\wp})\big)^{m(\pi)}
\end{equation}
where $\pi\cong \pi_{\infty} \otimes_{\bC} \pi^{\infty}$ runs through the automorphic representations of $G(\bA_{F_+})$ such that the algebraic representation $W_p$ in (\ref{equ: lalgaut}) associated to $\pi_\infty$ is of the form $W_p\cong W_{\wp}\otimes_{E}(W^{\wp})^{\vee}$ where $(W^{\wp})^{\vee}$ is the dual of $W^{\wp}$ and $W_{\wp}$ is a $\Q_p$-algebraic representation of $ \GL_n(L)$ over $\overline{\Q_p}$.\\

\noindent
Following \cite[\S~3.3]{CHT}, we say that $U^p$ is {\it sufficiently small} if there is a place $v\nmid p$ such that $1$ is the only element of finite order in $U_v$. The following (standard) lemma will be useful.

\begin{lemma}\label{projH}
Assume $U^p$ sufficiently small, then for any $U_p^\wp$, $\bW^{\wp}$ as above and any compact open subgroup $U_{\wp}$ of $G(F^+_{\wp})$ there is an integer $r\geq 1$ such that $\widehat{S}(U^\wp,\bW^\wp)|_{U_{\wp}}$ is isomorphic to $\cC(U_{\wp},\co_E)^{\oplus r}$.
\end{lemma}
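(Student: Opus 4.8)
The plan is to reduce the statement to a classical fact about the local structure of spaces of algebraic automorphic forms, namely that when $U^p$ is sufficiently small the double coset space $G(F^+)\backslash G(\bA_{F^+}^\infty)/U^p$, together with the residual $G(F^+_\wp)$-action (or more precisely the action of $\prod_{v|p}G(F^+_v)$), is a free $G(F^+\otimes_\Q\Q_p)$-space in a suitable sense, so that the continuous functions on it become an induced (hence ``free'') module over the relevant compact group. Concretely, first I would recall the finite-set description: since $G(F^+\otimes_\Q\bR)$ is compact, $G(F^+)\backslash G(\bA_{F^+}^\infty)/U^p$ is a profinite set, and for any compact open $U_p=U_p^\wp U_\wp$ the finite set $X_{U_p}:=G(F^+)\backslash G(\bA_{F^+}^\infty)/U^pU_p$ surjects onto $X:=G(F^+)\backslash G(\bA_{F^+}^\infty)/U^p$ with $X\cong\varprojlim_{U_p}X_{U_p}$.

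The key step is the standard ``sufficiently small'' argument (as in \cite[\S~3.3]{CHT} or \cite[\S~6.2]{Bch}): choosing a place $v_0\nmid p$ where $U_{v_0}$ contains no nontrivial element of finite order, one checks that for $U_p$ small enough the stabilizer in $G(F^+\otimes_\Q\Q_p)$ of any point of $X$ is trivial, so that $G(F^+\otimes_\Q\Q_p)$ — and in particular the subgroup $U_\wp\times U_p^\wp$ — acts freely on $X$. Hence $X$ decomposes as a finite disjoint union of copies of $U_\wp$-orbits, each isomorphic as a $U_\wp$-space to $U_\wp$ itself (using that $X$ is the projective limit of the $X_{U_p}$ and that $U_\wp$ acts freely and continuously), say $X\cong\coprod_{j=1}^{r} U_\wp$ as a $U_\wp$-space for some $r\geq 1$ depending on $U^\wp$, $\bW^\wp$ and $U_\wp$. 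Restricting the defining formula (\ref{equ: ord-aut}) for $\widehat S(U^\wp,\bW^\wp)$ to $U_\wp$, continuity plus the $U_p^\wp$-equivariance condition on $f$ then identifies $\widehat S(U^\wp,\bW^\wp)|_{U_\wp}$ with $\big(\cC(U_\wp,\co_E)\otimes_{\co_E}\bW^\wp\big)^{\oplus r'}$ for the appropriate index; since $\bW^\wp$ is a finite free $\co_E$-module on which $U_\wp$ acts trivially (as $U_\wp\subset G(F^+_\wp)$ commutes with the $\prod_{v\neq\wp}G(F^+_v)$-action defining $\bW^\wp$), this is just $\cC(U_\wp,\co_E)^{\oplus r}$ after adjusting $r$. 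Taking $\co_E/\varpi_E^s$-coefficients via (\ref{equ: ord-modps}) and passing to the limit over $s$ via (\ref{equ: ord-compl}) gives the claim at the level of $\widehat S(U^\wp,\bW^\wp)$.

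I expect the main obstacle to be making the ``freeness of the $U_\wp$-action on $X$'' step fully rigorous, i.e. genuinely deducing from sufficient smallness at $v_0$ that \emph{all} points of the profinite set $X$ have trivial $G(F^+\otimes_\Q\Q_p)$-stabilizer (one must rule out, using strong approximation and the structure of $G$, that some automorphic point picks up extra automorphisms at $p$), and then packaging the resulting orbit decomposition compatibly through the projective limit $X\cong\varprojlim_{U_p}X_{U_p}$ so that the isomorphism $\widehat S(U^\wp,\bW^\wp)|_{U_\wp}\cong\cC(U_\wp,\co_E)^{\oplus r}$ is genuinely $U_\wp$-equivariant and $\varpi_E$-adically continuous. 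This is entirely routine and well documented in the literature (e.g. \cite[Lem.~2.1.3 and its proof]{Em4}, or the proof of \cite[Cor.~5.3.19]{Em4} already cited in the proof of Corollary \ref{coro: ordinj}), so I would invoke those references rather than reproduce the argument; the point worth stating explicitly is only the choice of $r$, which is the number of $U_\wp$-orbits in $X$ times $\rk_{\co_E}\bW^\wp$.
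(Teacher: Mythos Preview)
Your proposal is correct and follows essentially the same approach as the paper: both arguments hinge on the fact that ``$U^p$ sufficiently small'' forces $U^pU_p\cap gG(F^+)g^{-1}=\{1\}$ for all $g$, so that $G(F^+)\backslash G(\bA_{F^+}^\infty)/U^p$ is a free $U_p$-space, hence a disjoint union of copies of $U_p$. The paper's presentation is slightly cleaner in that it separates the argument into two steps --- first proving $\widehat S(U^p,\co_E)|_{U_p}\cong\cC(U_p,\co_E)^{\oplus r'}$ for the full $U_p=U_\wp U_p^\wp$, then taking $U_p^\wp$-invariants of the tensor with $\bW^\wp$ --- which makes the handling of the $U_p^\wp$-equivariance condition more transparent than your direct decomposition into $U_\wp$-orbits (which are not a priori $U_p^\wp$-stable); your passage through $\co_E/\varpi_E^s$-coefficients and the limit is also unnecessary, as the argument works directly over $\co_E$.
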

\begin{proof}
(a) We first show that for any compact open subgroup $U_p$ of $G(F^+\otimes_{\Q} \Q_p)$ there exist an integer $r'$ such that $\widehat S(U^p, \co_E)\vert_{U_p} \cong C(U_p, \co_E)^{r'}$. Since $U^p$ is sufficiently small, we have $U^pU_p\cap g G(F^+) g^{-1}=\{1\}$ for all $g\in G(\bA_{F^+}^{\infty})$ (the left hand side is a finite group as $G(F^+)$ is discrete in $G(\bA_{F^+}^{\infty})$, then $U^p$ being sufficiently small implies it has to be $\{1\}$). From which we deduce a $U_p$-invariant isomorphism:
\begin{equation}\label{cups}
\amalg_{s} sU_p \buildrel\sim\over\lra G(F^+)\backslash G(\bA_{F^+}^{\infty})/U^p, \ \ sh \longmapsto sh
\end{equation}
where $h\in U_p$ and $s$ runs through a representative set of $G(F^+)\backslash G(\bA_{F^+}^{\infty})/U^pU_p$. Indeed, first (\ref{cups}) is clearly surjective. If $s_1h_1=s_2h_2$ in $G(F^+)\backslash G(\bA_{F^+}^{\infty})/U^p$ (for $h_1,h_2\in U_p$), we have $s_1=s_2=s$, and there exist $g \in G(F^+)$, $u \in U^p$ such that $sh_1=gsh_2u$ in $G(\bA_{F^+}^{\infty})$, which implies $g$ lies in $s^{-1} (U^pU_p) s \cap G(F^+)=\{1\}$, and injectivity follows. From (\ref{cups}), we deduce (a).\\
\noindent
(b) We have $\widehat S(U^{\wp}, \bW^{\wp}) \cong (\widehat S(U^p, \co_E) \otimes_{\co_E} \bW^{\wp})^{U^{\wp}_p}$. Then from (a) we deduce using $U_p=U_{\wp}U^{\wp}_p$:
$$\widehat S(U^{\wp}, \bW^{\wp})|_{U_{\wp}} \cong \cC(U_{\wp}, \co_E)\widehat{\otimes}_{\co_E} [\cC(U_p^{\wp}, \co_E)^{r'} \otimes_{\co_E} \bW^{\wp}]^{U^{\wp}_p}.$$
Since $[\cC(U_p^{\wp}, \co_E)^{r'} \otimes_{\co_E} \bW^{\wp}]^{U^{\wp}_p}$ is easily checked to be a finite free $\co_E$-module, the lemma follows with $r$ the rank of this $\co_E$-module.
\end{proof}

\noindent
Let $\Gal_{F}:={\rm Gal}(\overline F/F)$, ${\rho}: \Gal_{F}\ra \GL_n(E)$ a continuous representation and assume ${\rho}$ is unramified for $v\in D(U^p)$. We associate to ${\rho}$ the unique maximal ideal $\fm_{{\rho}}$ of residue field $E$ of $\bT(U^p)[1/p]$ such that for any $v\in D(U^p)$ and $\tilde{v}$ a place of $F$ above $v$, the characteristic polynomial of ${\rho}(\Frob_{\tilde{v}})$, where $\Frob_{\tilde{v}}$ is a {\it geometric} Frobenius at $\tilde v$, is given by (compare \cite[\S~4.2]{BH}):
\begin{equation}\label{equ: ord-ideal}
X^n+\cdots + (-1)^j (N\tilde{v})^{\frac{j(j-1)}{2}} \theta_{{\rho}}(T_{\tilde{v}}^{(j)}) X^{n-j}+\cdots +(-1)^n(N\tilde{v})^{\frac{n(n-1)}{2}} \theta_{{\rho}}(T_{\tilde{v}}^{(n)})
\end{equation}
where $N\tilde{v}$ is the cardinality of the residue field at $\tilde v$ and $\theta_{{\rho}}: \bT(U^p)[1/p]/\fm_{\rho}\buildrel\sim\over\lra E$. Recall that (by the result of many people) if $\widehat{S}(U^{\wp}, W^{\wp})[\fm_{\rho}]^{\lalg}\ne 0$ then $\rho_{\widetilde{\wp}}$ is in particular de Rham with distinct Hodge-Tate weights. We end this section by our main local-global compatibility conjecture when $n=3$ and $L=\Q_p$. If $\rho_p:\Gal_{\Q_p}\lra \GL_3(E)$ is a semi-stable representation such that $N^2\ne 0$ on $D_{\st}(\rho_p)$, there exists a unique triangulation $\cR_E(\delta_1)-\cR_E(\delta_2)-\cR_E(\delta_3)$ on the $(\varphi,\Gamma)$-module $D_{\rig}(\rho_p)$ (with $\cR_E(\delta_1)$ as unique subobject and $\cR_E(\delta_3)$ as unique quotient). If $(D_{\rig}(\rho_p),(\delta_1,\delta_2,\delta_3))$ is (special) noncritical and if the $(\varphi,\Gamma)$-modules $\cR_E(\delta_1)-\cR_E(\delta_2)$ and $\cR_E(\delta_2)-\cR_E(\delta_3)$ satisfy Hypothesis \ref{hypo: hL-pLL0}, we say that $D_{\rig}(\rho_p)$ is {\it sufficiently generic}. We have then associated to such a $\rho_p$ a finite length locally analytic representation $\Pi(\rho_p)$ at the end of \S~\ref{linvariantgl3} which determines and only depends on $\rho_p$.

\begin{conjecture}\label{THEconjecture}
Assume $n=3$ and $F^+_\wp\cong F_{\widetilde{\wp}}=\Q_p$. Let $\rho: \Gal_F\ra \GL_3(E)$ be a continuous absolutely irreducible representation which is unramified at the places of $D(U^p)$ and such that:
\begin{itemize}
\item $\widehat{S}(U^{\wp}, W^{\wp})[\fm_{\rho}]^{\lalg}\neq 0$
\item $\rho_{\widetilde{\wp}}:={\rho}|_{\Gal_{F_{\widetilde{\wp}}}}$ is semi-stable with $N^2\ne 0$ on $D_{\st}(\rho_{\widetilde{\wp}})$
\item $D_{\rig}(\rho_{\widetilde{\wp}})$ is sufficiently generic.
\end{itemize}
Let $\Pi(\rho_{\widetilde{\wp}})$ be the locally analytic representation of $\GL_3(\Q_p)$ at the very end of \S~\ref{linvariantgl3}, then the following restriction morphism is bijective (recall we have $\Pi(\rho_{\widetilde{\wp}})^{\lalg}=\soc_{\GL_3(\Q_p)}\Pi(\rho_{\widetilde{\wp}})$):
\begin{equation*}
 \Hom_{\GL_3(\Q_p)}\big(\Pi(\rho_{\widetilde{\wp}}),\widehat{S}(U^{\wp}, W^{\wp})[\fm_{\rho}]\big)
 \xlongrightarrow{\sim} \Hom_{\GL_3(\Q_p)}\big(\Pi(\rho_{\widetilde{\wp}})^{\lalg}, \widehat{S}(U^{\wp}, W^{\wp})[\fm_{\rho}]\big).
\end{equation*}
\end{conjecture}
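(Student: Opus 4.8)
The plan is to prove Conjecture \ref{THEconjecture} by the strategy outlined for Theorem \ref{mainintro} in the introduction, reducing the statement about $\GL_3(\Q_p)$ to two symmetric statements, one for each maximal parabolic $P_1$, $P_2$ of $\GL_3$, each of which is handled through Emerton's ordinary part functor and the $p$-adic Langlands correspondence for $\GL_2(\Q_p)$. First I would twist so that $\lambda=0$ and $\chi=1$, so that $\rho_{\widetilde\wp}$ is (up to twist) upper triangular and $\Pi(\rho_{\widetilde\wp})=\Pi(D)$ for the trianguline $(\varphi,\Gamma)$-module $D=D_{\rig}(\rho_{\widetilde\wp})$, with the two ``branches'' built from $D_1^2$ and $D_2^3$ and the two simple $\cL$-invariants $\psi_1,\psi_2$. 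Injectivity of the restriction map is the easy part: since $\soc_{\GL_3(\Q_p)}\Pi(D)=\St_3^\infty(\lambda)$ is irreducible, a morphism $\Pi(D)\to\widehat S(U^\wp,W^\wp)[\fm_\rho]$ restricting to $0$ on the socle is $0$. The content is surjectivity, i.e. that every map $\St_3^\infty\to\widehat S(U^\wp,W^\wp)[\fm_\rho]^{\an}$ extends to $\Pi(D)$.

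The key structural reduction is that, by the results of \cite{Br16} recalled at the end of \S\ref{linvariantgl3}, $\Pi(D)$ is the unique representation of its shape containing $\Pi(D)^-$, and that the extension $\Pi(D)^-$ is amalgamated from the two subspaces $\cL_{\aut}(D:D_1^2)\subseteq\Ext^1_{\GL_3(\Q_p)}(v_{\overline P_2}^\infty,\Pi^1(\lambda,\psi_1)^+)$ and $\cL_{\aut}(D:D_2^3)\subseteq\Ext^1_{\GL_3(\Q_p)}(v_{\overline P_1}^\infty,\Pi^2(\lambda,\psi_2)^+)$, each two-dimensional and cut out as the orthogonal of $[D]$ under the perfect pairings of Theorem \ref{thm: hL-L3} and Theorem \ref{symm} via the isomorphisms (\ref{rem: hL-LgalLaut}) with $\ell_{\FM}(D:D_1^2)$, $\ell_{\FM}(D:D_2^3)$. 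So it suffices to show: for each $i\in\{1,2\}$ and each nonzero $w$ in $D^\perp\subseteq\Ext^1_{\GL_3(\Q_p)}(v_{\overline P_{3-i}}^\infty,\Pi^i)$, the restriction $\Hom_{\GL_3(\Q_p)}(\Pi^w,\widehat S(U^\wp,W^\wp)[\fm_\rho]^{\an})\to\Hom_{\GL_3(\Q_p)}(\St_3^\infty,\widehat S(U^\wp,W^\wp)[\fm_\rho]^{\an})$ is surjective, where $\Pi^w$ is the extension $\Pi^i-v_{\overline P_{3-i}}^\infty$ attached to $w$. Treating $i=1$, I would: (1) use $\Ord_{P_1}$ and its adjunction (Proposition \ref{prop: ord-adj}) plus the Jacquet–Emerton computations for $\St_3^\infty$ to identify $\Hom_{\GL_3(\Q_p)}(\St_3^\infty,\widehat S(U^\wp,W^\wp)_{\overline\rho}[\fm_\rho]^{\an})\simeq\Hom_{L_{P_1}(\Q_p)}(\St_2^\infty\boxtimes1,(\Ord_{P_1}\widehat S(U^\wp,W^\wp)_{\overline\rho}[\fm_\rho])^{\an})$; (2) prove this in turn equals $\Hom_{L_{P_1}(\Q_p)}(\pi_{1,2}\boxtimes1,(\Ord_{P_1}\widehat S(U^\wp,W^\wp)_{\overline\rho}[\fm_\rho])^{\an})$, i.e. every $\St_2^\infty$-map upgrades to a $\pi_{1,2}$-map — this is the ``simple $\cL$-invariant'' input and uses the $p$-adic local Langlands correspondence for $\GL_2(\Q_p)$ over the $P_1$-ordinary Hecke algebra and the local-global compatibility à la Emerton; (3) introduce the tangent space $V_\rho$ of the $P_1$-ordinary localized Hecke algebra at $\rho$, construct the map $d\omega_{1,\rho}^+\colon V_\rho\to\Ext^1_{(\varphi,\Gamma)}(D_1^2,D_1^2)$, and show via Theorem \ref{thm: hL-hL} (the Colmez–Greenberg–Stevens/parabolic direction) that its composite to $\Ext^1_{(\varphi,\Gamma)}(D_1^2,\cR_E(\delta_2))$ lands in $D^\perp$, together with a dimension lower bound on $V_\rho$ forcing the image to be all of $D^\perp$; (4) pick $v\in V_\rho$ mapping to $w$, with ideal $\cI_v$ in the Hecke algebra, show $X_{P_1}(U^\wp)[\cI_v][1/p]$ is free of finite rank over $E[\epsilon]/\epsilon^2$ (generalizing Chenevier's argument), deduce that the $\pi_{1,2}\boxtimes1$-map extends to an $E[\epsilon]/\epsilon^2$-linear map $\widetilde\pi_{1,2}\boxtimes_{E[\epsilon]/\epsilon^2}\widetilde1\to(\Ord_{P_1}\widehat S(U^\wp,W^\wp)_{\overline\rho}[\cI_v])^{\an}$, and then use the adjunction for $\Ord_{P_1}$ to get $(\Ind_{\overline P_1(\Q_p)}^{\GL_3(\Q_p)}\widetilde\pi_{1,2}\boxtimes\widetilde1)^{\an}\to\widehat S(U^\wp,W^\wp)_{\overline\rho}[\cI_v]^{\an}$; (5) finally observe that $\Pi^w$ is a multiplicity-free subquotient of this parabolic induction (this is exactly where $w\in D^\perp$ enters, matching the construction of $\widetilde\Pi^1(\lambda,\psi_1)$ in \S\ref{sec: hL-pI}) and that the composite lands in the $\fm_\rho$-part, producing the desired map $\Pi^w\to\widehat S(U^\wp,W^\wp)_{\overline\rho}[\fm_\rho]^{\an}$ restricting correctly on $\St_3^\infty$. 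The case $i=2$ is symmetric with $P_2$, $D_2^3$, $\psi_2$.

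Amalgamating the maps obtained from all $w$ in a basis of $D^\perp$ for $i=1$ and for $i=2$ gives, by Notation \ref{not: hL-ext} and the description of $\Pi(D)^-$ in (\ref{sansc5}), a map $\Pi(D)^-\to\widehat S(U^\wp,W^\wp)[\fm_\rho]^{\an}$ (more precisely on the appropriate isotypic multiple), and then the results of \cite{Br16} on how $\Pi(D)$ is obtained from $\Pi(D)^-$ by uniquely adjoining $\widetilde C_{1,4},\widetilde C_{2,4},C_{1,5},C_{2,5}$, combined with vanishing of the relevant $\Ext^1$ and $\Ext^2$ groups against $\widehat S(U^\wp,W^\wp)[\fm_\rho]^{\an}$, let one extend further to $\Pi(D)$. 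Thus the restriction map is a bijection. The hard part will be step (3) — controlling the image of $d\omega_{1,\rho}^+$, which requires both the delicate Galois-side deformation-theoretic identification of the tangent space of the $P_1$-ordinary Hecke algebra with a space of $(\varphi,\Gamma)$-module deformations (so that Theorem \ref{thm: hL-hL} applies and cuts out $D^\perp$) and the automorphic lower bound $\dim_E V_\rho\geq\dim_E D^\perp+1$ needed to force surjectivity onto $D^\perp$; the freeness statement in step (4) is also technically substantial, as is checking that all the $\Ext$-vanishing needed to pass from $\Pi(D)^-$ to $\Pi(D)$ holds against the completed cohomology. A secondary subtlety is verifying that the various isomorphisms of $\Ext^1$ groups (via $\Ord_{P_i}$-adjunction, parabolic induction, and the $\GL_2$ $p$-adic Langlands correspondence) are all compatible with the cup-product pairings, so that ``orthogonal to $D$ on the Galois side'' matches ``the subquotient $\Pi^w$ of the parabolic induction'' on the automorphic side — but these compatibilities are essentially packaged in Theorem \ref{thm: hL-L3}, Proposition \ref{prop: hL-compL} and Theorem \ref{symm}.
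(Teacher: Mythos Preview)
The statement you are addressing is a \emph{conjecture}, and the paper does not prove it in full generality. What the paper establishes is Corollary~\ref{main}, i.e.\ the special case where the Hodge--Tate weights of $\rho_{\widetilde\wp}$ are \emph{consecutive}, together with further conditions ($p\ge5$, $U_v$ maximal for $v\mid p$, $v\ne\wp$, $\overline\rho$ absolutely irreducible, and each $2$-dimensional subquotient of $\overline\rho_{\widetilde\wp}$ nonsplit).

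Your proposal has a real gap at the very first move: you write ``twist so that $\lambda=0$ and $\chi=1$'', but a character twist shifts all three Hodge--Tate weights by the same integer, so one can arrange $\lambda=0$ only when the weights are already of the form $(k,k-1,k-2)$. For general $\lambda$ the entire apparatus of \S\ref{sec: lg2} is unavailable: there one works under the standing hypothesis that $\rho_{\widetilde\wp}$ is a successive extension of characters $\chi_i$ with $\chi_i\chi_{i+1}^{-1}=\varepsilon$ (equivalently, consecutive weights), so that $\overline\rho_{\widetilde\wp}$ and $\rho_{\widetilde\wp}$ are strictly $B$-ordinary and the functors $\Ord_{P_r}$ applied to $\widehat S(U^\wp,W^\wp)_{\overline\rho}$ actually see the point $\rho$. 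Without this, the identification (\ref{equ: lg1-ordPr}) of $\Ord_{P_r}(\St_3^\infty\otimes\chi_1\circ\dett)$, the dimension bound of Proposition~\ref{prop: lg1-dim}, and the freeness statement (\ref{equ: lg2-tgfree}) all fail to apply or require a substantially new argument. In short, your outline is a faithful sketch of the paper's proof of Corollary~\ref{main}, not a proof of Conjecture~\ref{THEconjecture}; the general-weight case remains open in the paper.
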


\subsection{Hecke operators}\label{sec: ord-hecke}

\noindent
We give (or recall) the definition of some useful pro-$p$-Hecke algebras and of their localisations.\\

\noindent
We keep the notation of \S~\ref{prelprel}. For $s\in \Z_{>0}$ and a compact open subgroup $U_{\wp}$ of $\GL_n(L)$, we let $\bT(U^{\wp}U_{\wp}, \bW^{\wp}/\varpi_E^s)$ (resp. $\bT(U^{\wp}U_{\wp}, \bW^{\wp})$) be the $\co_E/\varpi_E^s$-subalgebra (resp. $\co_E$-subalgebra) of the endomorphism ring of $S(U^{\wp}U_{\wp}, \bW^{\wp}/\varpi_E^s)$ (resp. $S(U^{\wp}U_{\wp}, \bW^{\wp})$) generated by the operators in $\bT(U^p)$. Since $S(U^{\wp}U_{\wp}, \bW^{\wp}/\varpi_E^s)$ (resp. $S(U^{\wp}U_{\wp}, \bW^{\wp})$) is a finite free $\co_E/\varpi_E^s$-module (resp. $\co_E$-module), $\bT(U^{\wp}U_{\wp}, \bW^{\wp}/\varpi_E^s)$ is a finite $\co_E/\varpi_E^s$ algebra \big(resp. $\bT(U^{\wp}U_{\wp}, \bW^{\wp})$ an $\co_E$-algebra which is finitely free as $\co_E$-module\big). For $s'\leq s$, since we have:
\begin{equation}\label{equ: ord-red}
S(U^{\wp}U_{\wp}, \bW^{\wp}/\varpi_E^s)\otimes_{\co_E/\varpi_E^s} \co_E/\varpi_E^{s'}\cong S(U^{\wp}U_{\wp}, \bW^{\wp}/\varpi_E^{s'}),
\end{equation}
it is easy to deduce $\bT(U^{\wp}U_{\wp}, \bW^{\wp}/\varpi_E^s)\otimes_{\co_E/\varpi_E^s} \co_E/\varpi_{s'}\xlongrightarrow{\sim} \bT(U^{\wp}U_{\wp}, \bW^{\wp}/\varpi_E^{s'})$. From (\ref{equ: ord-modps}), it is also easy to see:
\begin{equation}\label{equ: ord-redT}
\bT(U^{\wp}U_{\wp}, \bW^{\wp})\xlongrightarrow{\sim} \varprojlim_s \bT(U^{\wp}U_{\wp}, \bW^{\wp}/\varpi_E^s).
\end{equation}
For $U_{\wp,2}\subseteq U_{\wp,1}$ an inclusion of compact open subgroups of $\GL_n(L)$, the natural injections:
\begin{equation*}
S(U^{\wp}U_{\wp,1}, \bW^{\wp}/\varpi_E^s) \hooklongrightarrow S(U^{\wp}U_{\wp,2}, \bW^{\wp}/\varpi_E^s)
\text{ and } S(U^{\wp}U_{\wp,1}, \bW^{\wp}) \hooklongrightarrow S(U^{\wp}U_{\wp,2}, \bW^{\wp})
\end{equation*}
induce natural surjections:
\begin{equation*}
\bT(U^{\wp}U_{\wp,2}, \bW^{\wp}/\varpi_E^s) \twoheadlongrightarrow \bT(U^{\wp}U_{\wp,1}, \bW^{\wp}/\varpi_E^s)
\text{ and } \bT(U^{\wp}U_{\wp,2}, \bW^{\wp}) \twoheadlongrightarrow \bT(U^{\wp}U_{\wp,1}, \bW^{\wp}).
\end{equation*}
giving rise to projective systems when $U_{\wp}$ gets smaller. From (\ref{equ: ord-redT}) we deduce isomorphisms:
\begin{equation}\small\label{tildeproj}
\widetilde{\bT}(U^{\wp}):=\varprojlim_s \varprojlim_{U_{\wp}} \bT(U^{\wp}U_{\wp}, \bW^{\wp}/\varpi_E^s)
\cong \varprojlim_{U_{\wp}}\varprojlim_s \bT(U^{\wp}U_{\wp}, \bW^{\wp}/\varpi_E^s) \cong \varprojlim_{U_{\wp}} \bT(U^{\wp}U_{\wp}, \bW^{\wp}).
\end{equation}

\begin{lemma}\label{lem: ord-hecke}
The $\co_E$-algebra $\widetilde{\bT}(U^{\wp})$ is reduced and acts faithfully on $\widehat{S}(U^{\wp}, W^{\wp})$.
\end{lemma}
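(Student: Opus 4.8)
The plan is to prove the two assertions — that $\widetilde{\bT}(U^{\wp})$ is reduced and that it acts faithfully on $\widehat{S}(U^{\wp},W^{\wp})$ — essentially in tandem, reducing both to classical facts about Hecke algebras acting on spaces of classical automorphic forms together with a density/limit argument. First I would recall from \eqref{equ: ord-sm} and \eqref{equ: lalgaut2} that $\widehat{S}(U^{\wp},W^{\wp})^{\sm}=S(U^{\wp},W^{\wp})=\varinjlim_{U_{\wp}}S(U^{\wp}U_{\wp},W^{\wp})$ is the space of classical algebraic automorphic forms (of weight determined by $W^\wp$, $W_\wp$), on which the abstract Hecke algebra $\bT(U^p)$ acts semisimply after $\otimes_E\overline{\Q_p}$, since each $S(U^{\wp}U_{\wp},W^{\wp})\otimes_E\overline{\Q_p}$ decomposes as a direct sum of (finite-dimensional) Hecke eigenspaces $(\pi^{\infty,p})^{U^p}\otimes(\otimes_{v|p,v\ne\wp}\pi_v^{U_v})\otimes\pi_\wp^{U_\wp}$ running over automorphic $\pi$. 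Hence the $E$-algebra $\bT(U^{\wp}U_{\wp},W^\wp)$ generated by $\bT(U^p)$ inside $\End_E(S(U^{\wp}U_{\wp},W^\wp))$ is a finite product of fields (it is a finite reduced $E$-algebra because it is a finite-dimensional commutative algebra generated by commuting semisimple operators), and it acts faithfully on $S(U^{\wp}U_{\wp},W^\wp)$ by its very definition as a subalgebra of the endomorphism ring.

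Next I would pass between the integral Hecke algebra $\bT(U^{\wp}U_{\wp},\bW^{\wp})$ and its rational counterpart. The key point is that $\bT(U^{\wp}U_{\wp},\bW^{\wp})$ is finitely free as an $\co_E$-module (stated in \S\ref{sec: ord-hecke}) and $\bT(U^{\wp}U_{\wp},\bW^{\wp})\otimes_{\co_E}E$ is identified with the $E$-subalgebra of $\End_E(S(U^{\wp}U_{\wp},W^\wp))$ generated by $\bT(U^p)$, i.e.\ with the finite product of fields from the previous paragraph; in particular $\bT(U^{\wp}U_{\wp},\bW^{\wp})$ is $\varpi_E$-torsion-free and embeds into a reduced $E$-algebra, hence is itself reduced, and it acts faithfully on $S(U^{\wp}U_{\wp},\bW^{\wp})$ (any $T$ acting as $0$ on $S(U^{\wp}U_{\wp},\bW^{\wp})$ acts as $0$ on $S(U^{\wp}U_{\wp},W^\wp)$, so is $0$ in $\bT(U^{\wp}U_{\wp},W^\wp)$, so is $0$ in the torsion-free $\bT(U^{\wp}U_{\wp},\bW^{\wp})$). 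Then I would take the projective limit over $U_{\wp}$. Using \eqref{tildeproj}, $\widetilde{\bT}(U^{\wp})\cong\varprojlim_{U_{\wp}}\bT(U^{\wp}U_{\wp},\bW^{\wp})$. A projective limit of reduced rings need not be reduced in general, but here each transition map $\bT(U^{\wp}U_{\wp,2},\bW^{\wp})\twoheadrightarrow\bT(U^{\wp}U_{\wp,1},\bW^{\wp})$ is surjective and the rings are finite over $\co_E$, so a nilpotent element of $\widetilde{\bT}(U^{\wp})$ would have nilpotent image in each $\bT(U^{\wp}U_{\wp},\bW^{\wp})$, hence would be zero in each one, hence zero; thus $\widetilde{\bT}(U^{\wp})$ is reduced. (Concretely: $\widetilde{\bT}(U^{\wp})\hookrightarrow\prod_{U_{\wp}}\bT(U^{\wp}U_{\wp},\bW^{\wp})$, a product of reduced rings, so it is reduced.)

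Finally, for faithfulness on $\widehat{S}(U^{\wp},W^{\wp})$: suppose $T=(T_{U_{\wp}})_{U_{\wp}}\in\widetilde{\bT}(U^{\wp})$ annihilates $\widehat{S}(U^{\wp},W^{\wp})$. By \eqref{equ: ord-compl}, \eqref{equ: ord-modps} and \eqref{equ: ord-sm}, $\widehat{S}(U^{\wp},\bW^{\wp})=\varprojlim_s S(U^{\wp},\bW^{\wp}/\varpi_E^s)$ and $S(U^{\wp},\bW^{\wp})=\varinjlim_{U_{\wp}}S(U^{\wp}U_{\wp},\bW^{\wp})$ is dense in $\widehat{S}(U^{\wp},\bW^{\wp})$ for the $\varpi_E$-adic topology (its image mod $\varpi_E^s$ is all of $S(U^{\wp},\bW^{\wp}/\varpi_E^s)$, which surjects onto each $S(U^{\wp}U_{\wp},\bW^{\wp}/\varpi_E^s)$); since $T$ acts continuously and kills the dense subspace $S(U^{\wp},W^{\wp})=\widehat{S}(U^{\wp},W^{\wp})^{\sm}$ (wait: we need it to kill $S(U^{\wp},\bW^{\wp})$, which it does since that sits inside $\widehat{S}(U^{\wp},W^{\wp})$), $T$ acts as $0$ on $\widehat{S}(U^{\wp},\bW^{\wp})$, hence on each quotient, hence on each $S(U^{\wp}U_{\wp},\bW^{\wp})$ (which injects into the relevant limit), hence $T_{U_{\wp}}=0$ for all $U_{\wp}$ by the faithfulness already proved at finite level, so $T=0$. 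The main obstacle here is not any single deep input but making the density and continuity bookkeeping precise — in particular checking that the $\varpi_E$-adic topology on $\widehat{S}(U^{\wp},\bW^{\wp})$ is the one for which the $\bT(U^p)$-action is continuous and for which $S(U^{\wp},\bW^{\wp})$ is dense — and correctly threading the finite-level semisimplicity of the Hecke action (which is where the classical theory genuinely enters) through the two projective limits (over $s$ and over $U_{\wp}$) without losing reducedness.
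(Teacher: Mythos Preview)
Your proof is correct and follows essentially the same approach as the paper: faithfulness is deduced from the fact that $\widetilde{\bT}(U^{\wp})$ acts faithfully on $S(U^{\wp},\bW^{\wp})=\varinjlim_{U_{\wp}}S(U^{\wp}U_{\wp},\bW^{\wp})$ by construction (and this sits inside $\widehat{S}(U^{\wp},W^{\wp})$), and reducedness from the semisimplicity of the Hecke operators on $S(U^{\wp},W^{\wp})$ coming from the automorphic decomposition \eqref{equ: lalgaut2}. The paper's proof is considerably terser but uses exactly the same ingredients; your detailed verification that each finite-level $\bT(U^{\wp}U_{\wp},\bW^{\wp})$ is reduced and then passing to the projective limit is a valid way of unpacking the paper's one-line deduction.
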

\begin{proof}
By construction, the algebra $\widetilde{\bT}(U^{\wp})$ acts $\co_E$-linearly and faithfully on $S(U^{\wp}, \bW^{\wp})\cong \varinjlim_{U_{\wp}} S(U^{\wp}U_{\wp}, \bW^{\wp})$. By (\ref{equ: ord-compl}) and (\ref{equ: ord-modps}), this action extends naturally to an $\co_E$-linear faithful action of $\widetilde{\bT}(U^{\wp})$ on $\widehat{S}(U^{\wp}, \bW^{\wp})$ and hence to an $E$-linear faithful action on $\widehat{S}(U^{\wp}, W^{\wp})$. Since the operators in $\bT(U^p)$ acting on $S(U^{\wp}, W^{\wp})$ are semi-simple (which easily follows from (\ref{equ: ord-sm}) and (\ref{equ: lalgaut2})), we deduce $\widetilde{\bT}(U^{\wp})$ is reduced.
\end{proof}

\noindent
To a continuous representation $\overline{\rho}: \Gal_{F}\ra \GL_n(k_E)$ which is unramified for $v\in D(U^p)$, we associate a maximal ideal $\fm_{\overline{\rho}}$ of residue field $k_E$ of $\bT(U^p)$ by the same formula as (\ref{equ: ord-ideal}) replacing $\theta_\rho$ by $\theta_{\overline{\rho}}: \bT(U^p)/\fm_{\overline{\rho}} \buildrel\sim\over\lra k_E$.

\begin{definition}
A maximal ideal $\fm$ of $\bT(U^p)$ is called $(U^{\wp}, \bW^{\wp})$-automorphic if there exist $s$, $U_{\wp}$ as above such that the image of $\fm$ in $\bT(U^{\wp}U_{\wp}, \bW^{\wp}/\varpi_E^s)$ is still a maximal ideal, or equivalently such that the localisation $S(U^{\wp}U_{\wp}, \bW^{\wp}/\varpi_E^s)_{\fm}$ is nonzero. A continuous representation $\overline{\rho}: \Gal_{F}\ra \GL_n(k_E)$ is called $(U^{\wp}, \bW^{\wp})$-automor\-phic if $\fm_{\overline{\rho}}$ is $(U^{\wp}, \bW^{\wp})$-automorphic.
\end{definition}

\begin{lemma}\label{rhodevientm}
There are finitely many $(U^{\wp}, \bW^{\wp})$-automorphic maximal ideals of $\bT(U^p)$.
\end{lemma}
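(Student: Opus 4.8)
The plan is to reduce the finiteness of the set of $(U^{\wp},\bW^{\wp})$-automorphic maximal ideals to a single finite-dimensionality statement about the smooth automorphic forms with fixed infinitesimal data, and then invoke the classical finiteness theorems for automorphic forms on the definite unitary group $G$. First I would observe that if $\fm$ is $(U^{\wp},\bW^{\wp})$-automorphic, then by definition there exist $s\geq 1$ and a compact open $U_{\wp}\subseteq \GL_n(L)$ with $S(U^{\wp}U_{\wp},\bW^{\wp}/\varpi_E^s)_{\fm}\neq 0$. Using (\ref{equ: ord-red}) with $s'=1$, this forces $S(U^{\wp}U_{\wp},\bW^{\wp}/\varpi_E)_{\fm}\neq 0$, i.e. $\fm$ already shows up at level $(U_{\wp},s=1)$; so it suffices to bound, uniformly in $U_{\wp}$, the number of maximal ideals in the support of $S(U^{\wp}U_{\wp},\bW^{\wp}/\varpi_E)$, and in fact to show this module has a ``bounded-from-below-in-$U_{\wp}$'' structure forcing only finitely many residual Hecke eigensystems to occur for \emph{any} $U_{\wp}$.

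Second, I would pass to the colimit $S(U^{\wp},\bW^{\wp}/\varpi_E)=\varinjlim_{U_{\wp}} S(U^{\wp}U_{\wp},\bW^{\wp}/\varpi_E)$ and its $\GL_n(L)$-socle or, better, its finite-type property as a module over the Iwasawa-type ring. Here is where I would use the hypothesis that $U^p$ is sufficiently small (or, if one does not wish to assume it, replace $U^p$ by a sufficiently small subgroup, inducing the general case): by Lemma \ref{projH}, $\widehat S(U^{\wp},\bW^{\wp})|_{U_{\wp}}\cong \cC(U_{\wp},\co_E)^{\oplus r}$ for some $r$ depending only on $U^p,U_p^{\wp},\bW^{\wp}$, hence $\widehat S(U^{\wp},\bW^{\wp}/\varpi_E)$ is, as a smooth $\GL_n(L)$-representation over $k_E$, admissible and finitely generated; equivalently its Pontryagin dual is a finitely generated module over $k_E[[\GL_n(\co_L)]]$ of rank $\leq r$. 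The action of $\bT(U^p)$ on this dual commutes with $\GL_n(L)$, and the key point is that a finitely generated $k_E[[\GL_n(\co_L)]]$-module carries a faithful action of only a \emph{finite} quotient of any commutative $\co_E$-algebra acting on it continuously and compatibly — more concretely, $\bT(U^p)$ acts on $S(U^{\wp}U_{\wp},\bW^{\wp}/\varpi_E)$ through the finite $k_E$-algebra $\bT(U^{\wp}U_{\wp},\bW^{\wp}/\varpi_E)$, and these algebras form a projective system of quotients of the fixed ring $\varprojlim_{U_{\wp}}\bT(U^{\wp}U_{\wp},\bW^{\wp}/\varpi_E)$ which, by the boundedness of $r$, is itself a finite $k_E$-algebra. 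A finite $k_E$-algebra has finitely many maximal ideals, and the image of $\fm$ in each $\bT(U^{\wp}U_{\wp},\bW^{\wp}/\varpi_E)$ is a maximal ideal pulled back from $\bT(U^p)$; since every $(U^{\wp},\bW^{\wp})$-automorphic $\fm$ arises this way, there are only finitely many.

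The step I expect to be the main obstacle is establishing that $\widetilde\bT(U^{\wp})\otimes_{\co_E} k_E$, or rather $\varprojlim_{U_{\wp}}\bT(U^{\wp}U_{\wp},\bW^{\wp}/\varpi_E)$, is a finite (or at least semilocal artinian) $k_E$-algebra uniformly in $U_{\wp}$: one must convert the uniform bound $r$ on the $k_E[[\GL_n(\co_L)]]$-rank of the dual of $\widehat S(U^{\wp},\bW^{\wp}/\varpi_E)$ into a uniform bound on the number of residual Hecke eigensystems. The cleanest route is probably to note that each residual eigensystem $\theta_{\overline\rho}$ appearing in $S(U^{\wp}U_{\wp},\bW^{\wp}/\varpi_E)$ contributes a nonzero $\GL_n(L)$-subrepresentation $S(U^{\wp},\bW^{\wp}/\varpi_E)[\fm_{\overline\rho}]$, and distinct eigensystems give $k_E[[\GL_n(\co_L)]]$-submodules of the dual that are ``independent'' in the sense that their sum is direct after semisimplification; since the dual has rank $\leq r$, at most finitely many (indeed $\leq r\cdot\dim_{k_E}k_E[[\GL_n(\co_L)]]/\fm$, a finite bound) distinct eigensystems can occur — and this bound is independent of $U_{\wp}$ because $r$ is. Once this finiteness of residual eigensystems is in hand, the lemma follows immediately, and I would record it in one or two lines.
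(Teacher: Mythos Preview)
Your reduction to $s=1$ is correct and matches the paper. However, your intermediate claim that ``$\varprojlim_{U_{\wp}}\bT(U^{\wp}U_{\wp},\bW^{\wp}/\varpi_E)$ is itself a finite $k_E$-algebra'' is not justified by the rank bound and is in general false: the Hecke algebra acts through $\GL_n(L)$-equivariant endomorphisms of the Pontryagin dual $M\cong k_E[[K]]^{\oplus r}$, i.e.\ through a commutative subring of $M_r(k_E[[K]])$, and there is no reason for the image to be finite over $k_E$ (think of Hida or big Hecke algebras modulo $p$). The assertion just before, that ``a finitely generated $k_E[[\GL_n(\co_L)]]$-module carries a faithful action of only a finite quotient of any commutative $\co_E$-algebra'', is likewise false as stated. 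What \emph{is} finite is the number of maximal ideals, and that is what you actually need.

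Your ``cleanest route'' does salvage the argument, but it should be stated precisely as follows. Fix a pro-$p$ open $K\subset\GL_n(\co_L)$. For each automorphic $\fm$ the space $S(U^{\wp},\bW^{\wp}/\varpi_E)[\fm]$ is a nonzero smooth $K$-representation over $k_E$, hence has a nonzero $K$-fixed vector (this is the key use of characteristic $p$ and pro-$p$). Distinct $\fm$'s give linearly independent contributions to $S(U^{\wp},\bW^{\wp}/\varpi_E)^K=S(U^{\wp}K,\bW^{\wp}/\varpi_E)$, which is finite-dimensional. This is the content hidden in your phrase ``independent after semisimplification''; note that on the dual side these give independent \emph{quotients}, not submodules, and your bound ``$\leq r\cdot\dim_{k_E}k_E[[\GL_n(\co_L)]]/\fm$'' is garbled (the clean bound is just $\dim_{k_E}S(U^{\wp}K,\bW^{\wp}/\varpi_E)$).

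The paper's proof is this same idea in slightly different packaging: it fixes a sufficiently small pro-$p$ level $U_{\wp,1}$, passes to trivial coefficients at the other $p$-adic places, uses the freeness of $S((U^{\wp})'U_{\wp,2},k_E)$ over $k_E[U_{\wp,1}/U_{\wp,2}]$ (from \cite[Lem.~3.3.1]{CHT}) and the $\bT(U^p)$-equivariant trace map to show that any $\fm$ visible at deep level $U_{\wp,2}$ is already visible at the fixed level $U_{\wp,1}$, and then concludes by finiteness of the Artinian ring $\bT((U^{\wp})'U_{\wp,1},k_E)$. This sidesteps any appeal to Lemma~\ref{projH} by shrinking at $\wp$ rather than shrinking $U^p$; your proposed reduction to sufficiently small $U^p$ requires an extra Chebotarev argument to see that distinct automorphic $\fm$'s of $\bT(U^p)$ remain distinct after restricting to the smaller Hecke algebra, which is doable but less clean.
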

\begin{proof}
By (\ref{equ: ord-red}), $\fm$ is $(U^{\wp}, \bW^{\wp})$-automorphic if and only if the image of $\fm$ is a maximal ideal of $\bT(U^{\wp}U_{\wp}, \bW^{\wp}/\varpi_E)$ for some $U_{\wp}$. Fix $U_{\wp,1}$ a pro-$p$ compact open subgroup of $\GL_n(L)$ which is sufficiently small. Let $(U_p^{\wp})'\subseteq U_p^{\wp}$ such that $(U_p^{\wp})'$ acts trivially on $\bW^{\wp}/\varpi_E$, and let $(U^{\wp})':=U^p(U_p^{\wp})'\subseteq U^{\wp}$. If $\fm$ is $(U^{\wp}, \bW^{\wp})$-automorphic, there exists a compact open subgroup $U_{\wp,2}\subseteq U_{\wp,1}$ (depending on $\fm$), which we can choose to be normal, such that $ S(U^{\wp}U_{\wp,2}, \bW^{\wp}/\varpi_E)_{\fm}\neq 0$, and hence $S((U^{\wp})'U_{\wp,2}, k_E)_{\fm}\neq 0$. We claim that this implies $S((U^{\wp})'U_{\wp,1}, k_E)_{\fm}\neq 0$. We first have a decomposition:
\begin{equation*}
S((U^{\wp})'U_{\wp,2}, k_E) \cong \oplus_{\fm'} S((U^{\wp})'U_{\wp,2}, k_E)_{\fm'}
\end{equation*}
where the sum is over the maximal ideals $\fm'$ of the Artinian ring $\bT((U^{\wp})'U_{\wp,2},k_E)$. Moreover this decomposition is equivariant under the $U_{\wp,1}/U_{\wp,2}$-action. By the proof of \cite[Lem. 3.3.1]{CHT}, $S((U^{\wp})'U_{\wp,2}, k_E) $ is a finite free $k_E[U_{\wp,1}/U_{\wp,2}]$-module, and hence so is $S((U^{\wp})'U_{\wp,2}, k_E)_{\fm'}$ for any $\fm'$ by the above decomposition. Using the trace map $\tr_{U_{\wp,1}/U_{\wp,2}}$ (which is $\bT(U^p)$-equivariant) and the proof of \cite[Lem. 3.3.1]{CHT}, we then deduce that if $S((U^{\wp})'U_{\wp,2}, k_E)_{\fm'}\neq 0$ then $S((U^{\wp})'U_{\wp,1}, k_E)_{\fm'}\neq 0$ (where we identify $\fm'\subseteq \bT((U^{\wp})'U_{\wp,2},k_E)$ with its image in $\bT((U^{\wp})'U_{\wp,1},k_E)$). In particular the image of $\fm\subset \bT(U^p)$ in $\bT((U^{\wp})'U_{\wp,1}, k_E)$ is still a maximal ideal in $\bT((U^{\wp})'U_{\wp,1}, k_E)$. Since $\bT((U^{\wp})'U_{\wp,1},k_E)$ is Artinian, it only has a finite number of maximal ideals, and the lemma follows.
\end{proof}

\noindent
If $\fm$ is $(U^{\wp}, \bW^{\wp})$-automorphic, by (\ref{tildeproj}) we can associate to $\fm$ a maximal ideal (still denoted) $\fm$ of $\widetilde{\bT}(U^{\wp})$ of residue field $k_E$. It then easily follows from Lemma \ref{rhodevientm} and its proof, (\ref{tildeproj}) and from $\bT(U^{\wp}U_{\wp}, \bW^{\wp}/\varpi_E^s)\cong \prod_{\fm}\bT(U^{\wp}U_{\wp}, \bW^{\wp}/\varpi_E^s)_{\fm}$ (where the product is over the $(U^{\wp}, \bW^{\wp})$-automorphic maximal ideals $\fm$ of $\bT(U^p)$), that we have a decomposition $\widetilde{\bT}(U^{\wp})\xlongrightarrow{\sim} \prod_{\fm} \widetilde{\bT}(U^{\wp})_{\fm}$, isomorphisms:
\begin{equation}\label{projtrho}
\widetilde{\bT}(U^{\wp})_{\fm}\cong \varprojlim_s \varprojlim_{U_{\wp}} \bT(U^{\wp}U_{\wp}, \bW^{\wp}/\varpi_E^s)_{\fm}
\cong \varprojlim_{U_{\wp}} \bT(U^{\wp}U_{\wp}, \bW^{\wp})_{\fm}
\end{equation}
and that $\bT(U^{\wp}U_{\wp}, \bW^{\wp}/\varpi_E^s)_{\fm}$ (resp. $\bT(U^{\wp}U_{\wp}, \bW^{\wp})_{\fm}$) is isomorphic to the $\co_E$-subalgebra of the \ endomorphism \ ring \ of \ $S(U^{\wp}U_{\wp}, \bW^{\wp}/\varpi_E^s)_{\fm}$ \ (resp. \ of \ the \ endomorphism \ ring \ of $S(U^{\wp}U_{\wp}, \bW^{\wp})_{\fm}\cong \varprojlim_s S(U^{\wp}U_{\wp}, \bW^{\wp}/\varpi_E^s)_{\fm}$) generated by the operators in $\bT(U^p)$. It is also easy to see that:
\begin{equation}
\widehat{S}(U^{\wp}, \bW^{\wp})_{\fm}\cong \varprojlim_s \varinjlim_{U_{\wp}} S(U^{\wp}U_{\wp}, \bW^{\wp}/\varpi_E^s)_{\fm}
\end{equation}
is a direct summand of $\widehat{S}(U^{\wp}, \bW^{\wp})$ (where the localisation $\widehat{S}(U^{\wp}, \bW^{\wp})_{\fm}$ is with respect to the $\widetilde{\bT}(U^{\wp})$-module structure on $\widehat{S}(U^{\wp}, \bW^{\wp})$, which might be different from the localisation at $\fm$ with respect to the $\bT(U^p)$-module structure). When $\fm=\fm_{\overline{\rho}}$ comes from a continuous $\overline{\rho}: \Gal_{F}\ra \GL_n(k_E)$ as at the beginning of \S~\ref{sec: ord-hecke}, we simply denote by $M_{\overline{\rho}}$ the localisation of a $\widetilde{\bT}(U^{\wp})$-module (resp. of a $\bT(U^p)$-module) $M$ at $\fm_{\overline{\rho}}$. We easily check $\widehat{S}(U^{\wp}, W^{\wp})_{\overline{\rho}}\cong \widehat{S}(U^{\wp}, \bW^{\wp})_{\overline{\rho}}\otimes_{\co_E} E$. The following result is then a consequence of Lemma \ref{lem: ord-hecke} and its proof.

\begin{lemma}
Let $\overline{\rho}$ be $(U^{\wp}, \bW^{\wp})$-automorphic, then the local $\co_E$-algebra $\widetilde{\bT}(U^{\wp})_{\overline{\rho}}$ is reduced and acts faithfully on $\widehat{S}(U^{\wp}, W^{\wp})_{\overline{\rho}}$.
\end{lemma}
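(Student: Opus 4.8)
The statement asserts two things: $\widetilde{\bT}(U^{\wp})_{\overline{\rho}}$ is reduced, and it acts faithfully on $\widehat{S}(U^{\wp}, W^{\wp})_{\overline{\rho}}$. Both are local versions of the assertions in Lemma \ref{lem: ord-hecke}, and the plan is simply to extract them from that lemma and its proof together with the localisation formalism set up right after Lemma \ref{rhodevientm}. The key structural input is the decomposition $\widetilde{\bT}(U^{\wp})\xrightarrow{\sim} \prod_{\fm} \widetilde{\bT}(U^{\wp})_{\fm}$ over the finitely many $(U^{\wp},\bW^{\wp})$-automorphic maximal ideals $\fm$ (finiteness being Lemma \ref{rhodevientm}), and the compatible decomposition $\widehat{S}(U^{\wp},\bW^{\wp})\cong \bigoplus_{\fm}\widehat{S}(U^{\wp},\bW^{\wp})_{\fm}$ exhibiting each $\widehat{S}(U^{\wp},\bW^{\wp})_{\fm}$ as a direct summand of $\widehat{S}(U^{\wp},\bW^{\wp})$ as a $\widetilde{\bT}(U^{\wp})$-module.

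\emph{Reducedness.} First I would note that a finite product of rings is reduced if and only if each factor is reduced; since $\widetilde{\bT}(U^{\wp})$ is reduced by Lemma \ref{lem: ord-hecke} and $\widetilde{\bT}(U^{\wp})_{\overline{\rho}}=\widetilde{\bT}(U^{\wp})_{\fm_{\overline{\rho}}}$ is one of its factors in the displayed product decomposition, it is reduced. (Alternatively, one can rerun the argument of Lemma \ref{lem: ord-hecke} directly: the Hecke operators in $\bT(U^p)$ act semisimply on $S(U^{\wp},W^{\wp})_{\overline{\rho}}\cong \widehat{S}(U^{\wp},W^{\wp})_{\overline{\rho}}^{\sm}$ by (\ref{equ: ord-sm}) and (\ref{equ: lalgaut2}), and a commutative algebra generated by commuting semisimple operators is reduced.)

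\emph{Faithfulness.} Here I would argue that $\widetilde{\bT}(U^{\wp})_{\overline{\rho}}$, being the subalgebra of $\End(S(U^{\wp},\bW^{\wp})_{\overline{\rho}})$ generated by the operators of $\bT(U^p)$ — this is exactly the content of the isomorphism $\widetilde{\bT}(U^{\wp})_{\fm}\cong \varprojlim_{U_{\wp}} \bT(U^{\wp}U_{\wp},\bW^{\wp})_{\fm}$ combined with the description of $\bT(U^{\wp}U_{\wp},\bW^{\wp})_{\fm}$ as a subalgebra of $\End(S(U^{\wp}U_{\wp},\bW^{\wp})_{\fm})$ and the identification $S(U^{\wp},\bW^{\wp})_{\fm}\cong \varinjlim_{U_{\wp}} S(U^{\wp}U_{\wp},\bW^{\wp})_{\fm}$ — acts by construction faithfully on $S(U^{\wp},\bW^{\wp})_{\overline{\rho}}$, hence on $\widehat{S}(U^{\wp},\bW^{\wp})_{\overline{\rho}}=\varprojlim_s\varinjlim_{U_{\wp}} S(U^{\wp}U_{\wp},\bW^{\wp}/\varpi_E^s)_{\overline{\rho}}$ which contains it densely, and therefore on $\widehat{S}(U^{\wp},W^{\wp})_{\overline{\rho}}\cong \widehat{S}(U^{\wp},\bW^{\wp})_{\overline{\rho}}\otimes_{\co_E}E$ since any element of $\widetilde{\bT}(U^{\wp})_{\overline{\rho}}$ killing the $E$-span also kills the $\co_E$-lattice. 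One could again alternatively deduce this from Lemma \ref{lem: ord-hecke}: the faithful action of $\widetilde{\bT}(U^{\wp})=\prod_{\fm}\widetilde{\bT}(U^{\wp})_{\fm}$ on $\widehat{S}(U^{\wp},W^{\wp})=\bigoplus_{\fm}\widehat{S}(U^{\wp},W^{\wp})_{\fm}$ respects the decompositions, so each factor $\widetilde{\bT}(U^{\wp})_{\fm}$ acts faithfully on the corresponding summand $\widehat{S}(U^{\wp},W^{\wp})_{\fm}$.

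\emph{Main obstacle.} There is essentially no obstacle: the proof is a bookkeeping exercise in transporting Lemma \ref{lem: ord-hecke} through the (already established) product decompositions indexed by automorphic maximal ideals. The only point requiring a line of care is checking that the $\fm_{\overline{\rho}}$-summand of $\widehat{S}(U^{\wp},W^{\wp})$ — a priori defined by localising at $\fm_{\overline{\rho}}$ for the $\widetilde{\bT}(U^{\wp})$-module structure — is genuinely one of the factors appearing in the finite decomposition coming from Lemma \ref{rhodevientm}, i.e. that $\fm_{\overline{\rho}}$ is $(U^{\wp},\bW^{\wp})$-automorphic; but this is precisely the standing hypothesis on $\overline{\rho}$, so nothing more is needed.
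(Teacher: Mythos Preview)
Your proposal is correct and takes essentially the same approach as the paper, which simply states that the result is a consequence of Lemma~\ref{lem: ord-hecke} and its proof. You have just spelled out in detail the localisation bookkeeping (via the product decomposition $\widetilde{\bT}(U^{\wp})\cong\prod_{\fm}\widetilde{\bT}(U^{\wp})_{\fm}$ and the corresponding direct summand decomposition of $\widehat{S}(U^{\wp},\bW^{\wp})$) that the paper leaves implicit.
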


\subsection{$P$-ordinary automorphic representations}\label{sec: Pord-3}

\noindent
We relate the space $\Ord_P(S(U^{\wp}, W^{\wp})_{\overline{\rho}})$ to $P$-ordinary Galois representations (\S~\ref{sec: ord-galoisdef}).\\

\noindent
We keep the previous notation. We let $\overline{\rho}: \Gal_{F}\ra \GL_n(k_E)$ be $(U^{\wp}, \bW^{\wp})$-automorphic and \emph{absolutely irreducible}. We fix $P$ a parabolic subgroup of $\GL_n$ as in \S~\ref{sec: ord-galoisdef}. Recall we have from (\ref{equ: ord-ordalg}):
\begin{equation*}
\Ord_P(S(U^{\wp}, \bW^{\wp})_{\overline{\rho}})\cong \varinjlim_{i} \big(S(U^{\wp}, \bW^{\wp})_{\overline{\rho}}^{I_{i,i}}\big)_{\ord}
\end{equation*}
where $(I_{i,i})_i$ is as in \S~\ref{sec: ord-1} with $(I_i)_i$ as in Example \ref{ex: ord-gln}. For any $i\geq 0$, $(S(U^{\wp}, \bW^{\wp})_{\overline{\rho}}^{I_{i,i}})_{\ord}=(S(U^{\wp}I_{i,i}, \bW^{\wp})_{\overline{\rho}})_{\ord}$ is stable by $\bT(U^{\wp})$ (since the action of $\bT(U^{\wp})$ on $S(U^{\wp}, \bW^{\wp})^{I_{i,i}}_{\overline{\rho}}$ commutes with that of $L_P^+$), and we denote by $\bT(U^{\wp}I_{i,i}, \bW^{\wp})_{\overline{\rho}}^{P-\ord}$ the $\co_E$-subalgebra of the endomorphism ring of $(S(U^{\wp}, \bW^{\wp})_{\overline{\rho}}^{I_{i,i}})_{\ord}$ generated by the operators in $\bT(U^p)$. Since:
\begin{equation*}
\big(S(U^{\wp}, \bW^{\wp})_{\overline{\rho}}^{I_{i,i}}\big)_{\ord} \hooklongrightarrow S(U^{\wp}, \bW^{\wp})_{\overline{\rho}}^{I_{i,i}}\cong S(U^{\wp}I_{i,i}, \bW^{\wp})_{\overline{\rho}},
\end{equation*}
we have a natural surjection of local $\co_E$-algebras (finite free over $\co_E$):
\begin{equation*}
  \bT(U^{\wp}I_{i,i}, \bW^{\wp})_{\overline{\rho}} \twoheadlongrightarrow \bT(U^{\wp}I_{i,i}, \bW^{\wp})_{\overline{\rho}}^{P-\ord}.
\end{equation*}
We set:
$$\widetilde{\bT}(U^{\wp})^{P-\ord}_{\overline{\rho}}:=\varprojlim_i \bT(U^{\wp}I_{i,i}, \bW^{\wp})_{\overline{\rho}}^{P-\ord}$$
which is thus easily checked to be a quotient of $\widetilde{\bT}(U^{\wp})_{\overline{\rho}}$ and is also a complete local $\co_E$-algebra of residue field $k_E$. Moreover, as in the proof of Lemma \ref{lem: ord-hecke}, the operators in $\bT(U^p)$ acting on $(S(U^{\wp}, \bW^{\wp})_{\overline{\rho}}^{I_{i,i}})_{\ord}\otimes_{\co_E}E$ are semi-simple (since they are so on $S(U^{\wp}, W^{\wp}$)), and we have as in {\it loc.cit.} the following consequence.

\begin{lemma}\label{lem: Pord-reduce}
The $\co_E$-algebra $\widetilde{\bT}(U^{\wp})_{\overline{\rho}}^{P-\ord}$ is reduced and the natural action of $\widetilde{\bT}(U^{\wp})_{\overline{\rho}}^{P-\ord}$ on $\Ord_p(S(U^{\wp}, \bW^{\wp})_{\overline{\rho}})$ and $\Ord_P(S(U^{\wp}, W^{\wp})_{\overline{\rho}})$ is faithful
\end{lemma}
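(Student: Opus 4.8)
The plan is to mimic almost verbatim the proof of Lemma \ref{lem: ord-hecke}, the only extra ingredient being that the ordinary-part direct summands behave well with respect to localization at $\fm_{\overline{\rho}}$ and with respect to taking $I_{i,i}$-invariants. Write $A_i:=\bT(U^{\wp}I_{i,i},\bW^{\wp})_{\overline{\rho}}^{P-\ord}$ and $M_i:=(S(U^{\wp}I_{i,i},\bW^{\wp})_{\overline{\rho}})_{\ord}$, so that by definition $\widetilde{\bT}(U^{\wp})_{\overline{\rho}}^{P-\ord}=\varprojlim_i A_i$ and $\Ord_P(S(U^{\wp},\bW^{\wp})_{\overline{\rho}})=\varinjlim_i M_i$ (notation of \S~\ref{sec: ord-lalg} and of the discussion preceding the lemma). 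First I would record the structural bookkeeping: $S(U^{\wp}I_{i,i},\bW^{\wp})$ is finite free over $\co_E$ (locally constant functions on a profinite set valued in $\bW^{\wp}$), its localization at $\fm_{\overline{\rho}}$ is a $\co_E$-direct summand, and $(\cdot)_{\ord}$ is again a direct summand by the decomposition $V_i^0\cong (V_i^0)_{\ord}\oplus(V_i^0)_{\nord}$ of \S~\ref{sec: ord-lalg}; hence each $M_i$ is finite free over $\co_E$. Since $A_i$ is by construction the image of $\bT(U^p)$ in $\End_{\co_E}(M_i)$, it is a finitely generated, $\co_E$-torsion-free $\co_E$-module, hence finite free, and it acts faithfully on $M_i$.

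For reducedness I would then invert $p$: one has $A_i\hooklongrightarrow A_i[1/p]\hooklongrightarrow \End_E(M_i[1/p])$, and $M_i[1/p]$ is a direct summand of $S(U^{\wp}I_{i,i},W^{\wp})$, on which the operators of $\bT(U^p)$ act semi-simply (this is exactly the remark made just before the lemma, which comes from (\ref{equ: ord-sm}) and (\ref{equ: lalgaut2}) as in the proof of Lemma \ref{lem: ord-hecke}). Consequently the commutative subalgebra $A_i[1/p]$ of $\End_E(M_i[1/p])$ generated by simultaneously semi-simple commuting operators is a finite product of fields, in particular reduced, whence $A_i$ is reduced. Finally a projective limit of reduced rings is reduced, since a nilpotent element of $\varprojlim_i A_i$ has nilpotent, hence zero, image in each $A_i$; so $\widetilde{\bT}(U^{\wp})_{\overline{\rho}}^{P-\ord}$ is reduced. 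The same inverse-limit remark shows that $\widetilde{\bT}(U^{\wp})_{\overline{\rho}}^{P-\ord}$ is $\co_E$-torsion-free, which I will use for the Banach-space statement.

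For faithfulness, suppose $t\in\widetilde{\bT}(U^{\wp})_{\overline{\rho}}^{P-\ord}$ annihilates $\Ord_P(S(U^{\wp},\bW^{\wp})_{\overline{\rho}})=\varinjlim_i M_i$. Then its image $t_i\in A_i$ annihilates $M_i$ for every $i$, so $t_i=0$ by faithfulness of $A_i$ on $M_i$, and therefore $t=0$; this gives faithfulness on $\Ord_P(S(U^{\wp},\bW^{\wp})_{\overline{\rho}})$. For $\Ord_P(S(U^{\wp},W^{\wp})_{\overline{\rho}})\cong\Ord_P(S(U^{\wp},\bW^{\wp})_{\overline{\rho}})\otimes_{\co_E}E$, I would use that $\widetilde{\bT}(U^{\wp})_{\overline{\rho}}^{P-\ord}$, being $\co_E$-torsion-free, embeds into $\widetilde{\bT}(U^{\wp})_{\overline{\rho}}^{P-\ord}[1/p]$, and that the latter acts faithfully on the $E$-vector space obtained from the faithful integral module by inverting $p$.

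The only point requiring a little care — though not, I think, a genuine obstacle — is the compatibility of the ordinary-part functor with localization at $\fm_{\overline{\rho}}$ and with $(\cdot)^{I_{i,i}}$, i.e.\ the identification $(S(U^{\wp},\bW^{\wp})_{\overline{\rho}})^{I_{i,i}}_{\ord}=(S(U^{\wp}I_{i,i},\bW^{\wp})_{\overline{\rho}})_{\ord}$ used above, together with the fact that this is a $\co_E$-direct summand of $S(U^{\wp}I_{i,i},\bW^{\wp})$. This is immediate from the constructions of \S\S~\ref{sec: ord-lalg}~\&~\ref{sec: ord-hecke}, since all the operators involved (the Hecke action of $\bT(U^p)$, the Hecke action of $L_P^+$ on $N_0$-invariants, and the resulting idempotent cutting out the ordinary part) mutually commute; once this is in place the proof is formally identical to that of Lemma \ref{lem: ord-hecke}.
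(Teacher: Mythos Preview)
Your proposal is correct and follows exactly the approach the paper intends: the paper's proof consists of the single sentence that the operators in $\bT(U^p)$ act semi-simply on $(S(U^{\wp},\bW^{\wp})_{\overline{\rho}}^{I_{i,i}})_{\ord}\otimes_{\co_E}E$ (since they do on $S(U^{\wp},W^{\wp})$), and then refers back to the proof of Lemma~\ref{lem: ord-hecke}. You have simply unpacked that reference in detail, including the inverse-limit bookkeeping for reducedness and faithfulness, and the compatibility of $(\cdot)_{\ord}$ with localization at $\fm_{\overline{\rho}}$ and $I_{i,i}$-invariants.
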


\noindent
From now on we assume that the compact open subgroup $U^p$ is sufficiently small (see the end of \S~\ref{prelprel}).

\begin{lemma}\label{lem: Pord-comp}
(1) The $\co_E$-module $\Ord_P(S(U^{\wp}, \bW^{\wp})_{\overline{\rho}})$ is dense for the $p$-adic topology in $\Ord_P(\widehat{S}(U^{\wp}, \bW^{\wp})_{\overline{\rho}})$ (see (\ref{ordproj}) for the latter). Consequently, the action of $\widetilde{\bT}(U^{\wp})_{\overline{\rho}}^{P-\ord}$ on $\Ord_P(S(U^{\wp}, \bW^{\wp})_{\overline{\rho}})$ extends to a faithful action on $\Ord_P(\widehat{S}(U^{\wp}, \bW^{\wp})_{\overline{\rho}})$.\\
\noindent
(2) The \ representation \ $\Ord_P(\widehat S(U^{\wp}, \bW^{\wp})_{\overline{\rho}})|_{L_P(\Z_p)}$ \ is \ isomorphic \ to \ a \ direct \ summand \ of $\cC(L_P(\Z_p), \co_E)^{\oplus r}$ for some $r\geq 1$.
\end{lemma}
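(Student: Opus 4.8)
The plan is to deduce part~(1) from Remark~\ref{rem: ord-dense} and part~(2) from Corollary~\ref{coro: ord-inj}, after checking their hypotheses in the present situation; throughout, the $\overline{\rho}$-localisation will be handled by means of the idempotent cutting out $\widehat{S}(U^{\wp},\bW^{\wp})_{\overline{\rho}}$ inside $\widehat{S}(U^{\wp},\bW^{\wp})$.

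For part~(1), I would apply the constructions of \S~\ref{sec: ord-lalg} with trivial algebraic weight to $V^0:=S(U^{\wp},\bW^{\wp})_{\overline{\rho}}$, which is an admissible smooth representation of $\GL_n(L)=G(F^+_\wp)$ over $\co_E$ that is finitely generated and free over $\co_E$ at each finite level; one takes $(I_i)_i$ as in Example~\ref{ex: ord-gln}, so $I_0=\GL_n(\co_L)$. First I would record that for all $i$ and $n$ reduction modulo $\varpi_E^n$ commutes both with passage to $I_{i,i}$-invariants and with $\overline{\rho}$-localisation, because $S(U^{\wp}I_{i,i},\bW^{\wp})_{\overline{\rho}}$ is a finite free $\co_E$-module; combined with (\ref{equ: ord-modps}) this gives $V_i^0/\varpi_E^n\xrightarrow{\sim}(V^0/\varpi_E^n)^{I_{i,i}}$, while $\widehat{V}^0=\varprojlim_n V^0/\varpi_E^n\cong\widehat{S}(U^{\wp},\bW^{\wp})_{\overline{\rho}}$ by (\ref{equ: ord-compl}). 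Remark~\ref{rem: ord-dense} then says precisely that the natural injection $\Ord_P(S(U^{\wp},\bW^{\wp})_{\overline{\rho}})\hookrightarrow \Ord_P(\widehat{S}(U^{\wp},\bW^{\wp})_{\overline{\rho}})$ of (\ref{equ: ord-alg2}) has dense image for the $\varpi_E$-adic topology, which is the first assertion. For the ``consequently'' part, each Hecke operator in $\bT(U^p)$ commutes with the $\GL_n(L)$-action, hence by functoriality of $\Ord_P$ acts $\varpi_E$-adically continuously on $\Ord_P(\widehat{S}(U^{\wp},\bW^{\wp})_{\overline{\rho}})$, compatibly with its action on the dense submodule $\Ord_P(S(U^{\wp},\bW^{\wp})_{\overline{\rho}})$; since the latter action factors through $\widetilde{\bT}(U^{\wp})_{\overline{\rho}}^{P-\ord}=\varprojlim_i\bT(U^{\wp}I_{i,i},\bW^{\wp})_{\overline{\rho}}^{P-\ord}$ by the very definition of the algebras $\bT(U^{\wp}I_{i,i},\bW^{\wp})_{\overline{\rho}}^{P-\ord}$, density forces the extended action on the completion to factor through $\widetilde{\bT}(U^{\wp})_{\overline{\rho}}^{P-\ord}$ as well. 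Faithfulness is then immediate: an element killing $\Ord_P(\widehat{S}(U^{\wp},\bW^{\wp})_{\overline{\rho}})$ kills the submodule $\Ord_P(S(U^{\wp},\bW^{\wp})_{\overline{\rho}})$, hence vanishes by Lemma~\ref{lem: Pord-reduce}.

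For part~(2), I would apply Corollary~\ref{coro: ord-inj} to $V:=\widehat{S}(U^{\wp},W^{\wp})_{\overline{\rho}}$ with open bounded invariant lattice $V^0:=\widehat{S}(U^{\wp},\bW^{\wp})_{\overline{\rho}}$ and $I_0=\GL_n(\co_L)$; both $V$ and $V^0$ are unitary and admissible since they are direct summands of $\widehat{S}(U^{\wp},W^{\wp})$ and $\widehat{S}(U^{\wp},\bW^{\wp})$. The single hypothesis to verify is that $V^0|_{I_0}$ is a direct factor of $\cC(I_0,\co_E)^{\oplus r}$ for some $r$. Here Lemma~\ref{projH}, which uses that $U^p$ is sufficiently small, gives $\widehat{S}(U^{\wp},\bW^{\wp})|_{I_0}\cong\cC(I_0,\co_E)^{\oplus r}$; moreover, by the finiteness of the set of automorphic maximal ideals (Lemma~\ref{rhodevientm}) and the resulting decomposition $\widetilde{\bT}(U^{\wp})\cong\prod_{\fm}\widetilde{\bT}(U^{\wp})_{\fm}$, there is an idempotent $e_{\overline{\rho}}\in\widetilde{\bT}(U^{\wp})$ with $V^0=e_{\overline{\rho}}\widehat{S}(U^{\wp},\bW^{\wp})$; since $e_{\overline{\rho}}$ commutes with the $\GL_n(L)$-action this realises $V^0$ as an $\co_E[I_0]$-module direct summand of $\cC(I_0,\co_E)^{\oplus r}$. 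Corollary~\ref{coro: ord-inj} then yields that $\Ord_P(V^0)|_{L_0}$ is a direct factor of $\cC(L_0,\co_E)^{\oplus s}$ for some $s\ge 0$, where $L_0:=L_P(L)\cap I_0$ is the maximal compact subgroup of $L_P(L)$, equal to $L_P(\Z_p)$ when $L=\Q_p$; since $\Ord_P(V^0)=\Ord_P(\widehat{S}(U^{\wp},\bW^{\wp})_{\overline{\rho}})$ this is the desired statement, after discarding a zero summand if necessary so that $r\ge 1$.

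There is no genuine obstacle: the lemma is a bookkeeping statement resting entirely on \S\S~\ref{sec: ord-2}--\ref{sec: ord-lalg}, on Lemma~\ref{projH}, and on the Hecke formalism of \S~\ref{sec: ord-hecke}. The only point requiring care is to make sure that the operations involved---$\overline{\rho}$-localisation, reduction modulo $\varpi_E^n$, passage to $I_{i,i}$-invariants, and the splitting off of $\widehat{S}(U^{\wp},\bW^{\wp})_{\overline{\rho}}$---are simultaneously $\co_E$-linear and $\GL_n(L)$-equivariant, so that the hypotheses of Remark~\ref{rem: ord-dense} and Corollary~\ref{coro: ord-inj} are genuinely met and the direct-summand decompositions occurring are decompositions of $\co_E[I_0]$- (respectively $\co_E[L_0]$-)modules. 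I would spell these compatibilities out just enough to invoke the cited results verbatim.
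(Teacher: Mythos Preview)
Your proposal is correct and follows essentially the same approach as the paper: both parts rest on Lemma~\ref{projH} together with the idempotent decomposition of $\widetilde{\bT}(U^{\wp})$, feeding into Remark~\ref{rem: ord-dense} for (1) and Corollary~\ref{coro: ord-inj} for (2). The only minor difference is that for (1) the paper verifies the condition $V_i^0/\varpi_E^n\xrightarrow{\sim}(V^0/\varpi_E^n)^{I_{i,i}}$ by first observing it for $V^0=\cC^{\infty}(\GL_n(\co_L),\co_E)$ and then passing to the direct summand, whereas you check it directly from the finite-freeness of $S(U^{\wp}I_{i,i},\bW^{\wp})_{\overline{\rho}}$; your route is arguably cleaner, and in either case the proof of Lemma~\ref{lem: ord-alg3} actually yields the stronger isomorphism~(\ref{equ: ord-dense}), which the paper records and uses later.
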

\begin{proof}
(1) From Lemma \ref{projH} we deduce that there exist $r\geq 1$ and a $\GL_n(\co_L)$-representation $Q$ such that:
\begin{equation}\label{equ: ord-dir}
\widehat{S}(U^{\wp}, \bW^{\wp})_{\overline{\rho}}\vert_{\GL_n(\co_L)} \oplus Q\cong \cC(\GL_n(\co_L), \co_E)^{\oplus r}
\end{equation}
which implies using (\ref{equ: ord-sm}) that $S(U^{\wp}\!, \!\bW^{\wp})_{\overline{\rho}}\vert_{\GL_n(\co_L)}$ is a direct summand of $\cC^{\infty}\!(\GL_n(\co_L), \!\co_E)^{\!\oplus r}$. It \ is \ easy \ to \ see \ that \ the \ condition \ in \ Remark \ \ref{rem: ord-sur}(2) \ is \ satisfied \ with \ $V^0=\cC^{\infty}(\GL_n(\co_L),\co_E)$, which then implies it is also satisfied with $V^0=S(U^{\wp}, \bW^{\wp})_{\overline{\rho}}$. Thus the natural injection from (\ref{equ: ord-alg}):
\begin{equation}\small\label{equ: ord-dense}
\Ord_P (S(U^{\wp}, \bW^{\wp})_{\overline{\rho}})/\varpi_E^s \cong \big(\varinjlim_{i} \big(S(U^{\wp}, \bW^{\wp})_{\overline{\rho}}^{I_{i,i}}\big)_{\ord}\big)/\varpi_E^s \hooklongrightarrow \Ord_P\big(\widehat{S}(U^{\wp}, \bW^{\wp})_{\overline{\rho}}/\varpi_E^s\big)
\end{equation}
is actually an isomorphism for all $s\geq 1$ by the proof of Lemma \ref{lem: ord-alg3}. Then (1) follows (see also Remark \ref{rem: ord-dense}).\\
\noindent
(2) The statement follows from (\ref{equ: ord-dir}) and Corollary \ref{coro: ord-inj}.
\end{proof}

\noindent
We now make the following further hypothesis on $G$ and $F$ {\it till the end of the paper}.

\begin{hypothesis}\label{hypoglobal}
We have either ($p>2$, $n\leq 3$) or ($p>2$, $F/F^+$ is unramified and $G$ is quasi-split at all finite places of $F^+$).
\end{hypothesis}

\noindent
When $n\leq 3$, Rogawski's well-known results (\cite{Ro}) imply that strong base change holds from $G/F^+$ to $\GL_n/F$. When $F/F^+$ is moreover unramified, it also holds by well-known results of Labesse (\cite{Lab}).

\begin{remark}
{\rm It is possible that for $n>3$ the recent results (\cite{Mo}, \cite{KMSW}) now allow to relax (for this paper) some of the assumptions in Hypothesis \ref{hypoglobal}. Note that the main result of the paper will be anyway for $n=3$.}
\end{remark}

\noindent
We now also assume that $U_v$ is maximal in $\GL_n(L)=\GL_n(F_{\tilde{v}})$ for all $v|p$, $v\neq \wp$. Let $S(U^p)$ be the union of $\Sigma_p$ and of the places $v\notin \Sigma_p$ such that $U_v$ is not hyperspecial. Since $\overline{\rho}$ is $(U^{\wp}, \bW^{\wp})$-automorphic, recall we have in particular that $\overline{\rho}$ is unramified outside $S(U^p)$ and $\overline{\rho}^{\vee} \circ c \cong \overline{\rho}\otimes \overline{\varepsilon}^{n-1}$ where $\overline{\rho}^{\vee}$ is the dual of $\overline{\rho}$ and $c$ is the nontrivial element in $\Gal(F/F^+)$. The functor $A\mapsto \rho_A$ of (isomorphism classes of) deformations of $\overline{\rho}$ on the category of local artinian $\co_E$-algebras $A$ of residue field $k_E$ satisfying that $\rho_A$ is unramified outside $S(U^p)$ and that $\rho_A^{\vee}\circ c \cong \rho_A\otimes \varepsilon^{n-1}$ is pro-representable by a complete local noetherian algebra of residue field $k_E$ denoted by $R_{\overline{\rho}, S(U^p)}$. By \cite[Prop. 6.7]{Thor} (which holds under Hypothesis \ref{hypoglobal}, this is the place where $p>2$ is required), for any compact open subgroup $U_{\wp}$ of $\GL_n(L)$, we have a natural surjection of local $\co_E$-algebras $R_{\overline{\rho}, S(U^p)} \twoheadrightarrow \bT(U^{\wp}U_{\wp}, \bW^{\wp})_{\overline{\rho}}$, from which we easily deduce using (\ref{projtrho}) a surjection of local complete $\co_E$-algebras:
\begin{equation}\label{equ: Pord-RT2}
R_{\overline{\rho}, S(U^p)}\twoheadlongrightarrow \widetilde{\bT}(U^{\wp})_{\overline{\rho}}.
\end{equation}
In particular, $\widetilde{\bT}(U^{\wp})_{\overline{\rho}}$ and $\widetilde{\bT}(U^{\wp})_{\overline{\rho}}^{P-\ord}$ are noetherian (local complete) $\co_E$-algebras.

\begin{lemma}
The representation $\Ord_P(\widehat{S}(U^{\wp}, \bW^{\wp})_{\overline{\rho}})$ is a $\varpi_E$-adically admissible representation of $L_P(L)$ over $\widetilde{\bT}(U^{\wp})_{\overline{\rho}}^{P-\ord}$ in the sense of \cite[Def.~3.1.1]{Em4}.
\end{lemma}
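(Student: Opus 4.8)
The plan is to unwind the definition of $\varpi_E$-adic admissibility from \cite[Def.~3.1.1]{Em4} and verify the two required conditions: first that $\Ord_P(\widehat{S}(U^{\wp}, \bW^{\wp})_{\overline{\rho}})$ is a profinite-flat (equivalently, $\varpi_E$-adically complete and $\varpi_E$-torsion free) $\widetilde{\bT}(U^{\wp})_{\overline{\rho}}^{P-\ord}$-module with an action of $L_P(L)$, and second that its reduction modulo $\varpi_E^s$ (or equivalently its Pontryagin dual) is, for each $s$, an admissible smooth representation of $L_P(L)$ over the (now Artinian) ring $\widetilde{\bT}(U^{\wp})_{\overline{\rho}}^{P-\ord}/\varpi_E^s$. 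The key structural inputs are already in place: $\widetilde{\bT}(U^{\wp})_{\overline{\rho}}^{P-\ord}$ is a noetherian complete local $\co_E$-algebra (this follows from the surjection (\ref{equ: Pord-RT2}) onto $\widetilde{\bT}(U^{\wp})_{\overline{\rho}}$ and the fact that $\widetilde{\bT}(U^{\wp})_{\overline{\rho}}^{P-\ord}$ is a quotient of it), and $\Ord_P(\widehat{S}(U^{\wp}, \bW^{\wp})_{\overline{\rho}})$ carries a faithful action of this algebra by Lemma \ref{lem: Pord-comp}(1).

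First I would record that $\widehat{S}(U^{\wp}, \bW^{\wp})_{\overline{\rho}}$ is itself a $\varpi_E$-adically admissible representation of $\GL_n(L)$ over $\widetilde{\bT}(U^{\wp})_{\overline{\rho}}$: this is essentially the content of \cite[\S~5.3]{Em4} (or can be extracted from Lemma \ref{projH}, which exhibits $\widehat{S}(U^{\wp},\bW^{\wp})$ as a finite direct summand of $\cC(\GL_n(\co_L),\co_E)^{\oplus r}$ as a $\GL_n(\co_L)$-representation, together with the noetherianity of $\widetilde{\bT}(U^{\wp})_{\overline{\rho}}$ coming from (\ref{equ: Pord-RT2})). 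Then I would invoke the fact that Emerton's functor $\Ord_P$ preserves $\varpi_E$-adic admissibility: it sends a $\varpi_E$-adically admissible representation of $\GL_n(L)$ over a noetherian complete local $\co_E$-algebra to a $\varpi_E$-adically admissible representation of $L_P(L)$ over the same algebra (this is \cite[Thm.~3.4.8]{EOrd1} in the admissibility statement, combined with the exactness/functoriality properties of $\Ord_P$ over $\Comp(\co_E)$). Concretely, the required finiteness is checked on $\Ord_P(\widehat{S}(U^{\wp}, \bW^{\wp})_{\overline{\rho}})/\varpi_E$: by the isomorphism (\ref{equ: ord-dense}) this equals $\varinjlim_i (S(U^{\wp},\bW^{\wp})_{\overline{\rho}}^{I_{i,i}})_{\ord}/\varpi_E$, which is an admissible smooth $L_P(L)$-representation over $k_E$ because it is a summand (via Theorem \ref{thm: Ord-dirc}) of the $N_0$-invariants of the admissible $\GL_n(\co_L)$-representation $S(U^{\wp},\bW^{\wp})_{\overline{\rho}}/\varpi_E$, and admissibility of the $N_0$-invariants follows from the Iwahori decomposition as in Corollary \ref{coro: ordinj}.

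The second point — that the module structure is over $\widetilde{\bT}(U^{\wp})_{\overline{\rho}}^{P-\ord}$ rather than merely over $\widetilde{\bT}(U^{\wp})_{\overline{\rho}}$ — is a formal consequence of the construction of $\widetilde{\bT}(U^{\wp})_{\overline{\rho}}^{P-\ord}$: by definition the action of $\widetilde{\bT}(U^{\wp})_{\overline{\rho}}$ on $\Ord_P(S(U^{\wp},\bW^{\wp})_{\overline{\rho}})$ factors through the quotient $\widetilde{\bT}(U^{\wp})_{\overline{\rho}}^{P-\ord}$, and this factorization passes to the $\varpi_E$-adic completion $\Ord_P(\widehat{S}(U^{\wp},\bW^{\wp})_{\overline{\rho}})$ by the density statement in Lemma \ref{lem: Pord-comp}(1). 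Since $\widetilde{\bT}(U^{\wp})_{\overline{\rho}}^{P-\ord}$ is again noetherian complete local, the $\varpi_E$-adic admissibility over $\widetilde{\bT}(U^{\wp})_{\overline{\rho}}$ implies $\varpi_E$-adic admissibility over $\widetilde{\bT}(U^{\wp})_{\overline{\rho}}^{P-\ord}$: a cofinally small compact open $K \subseteq L_P(\Z_p)$ having $(\Ord_P(\widehat{S}(U^{\wp},\bW^{\wp})_{\overline{\rho}})/\varpi_E)^{\vee}$ finitely generated over $\widetilde{\bT}(U^{\wp})_{\overline{\rho}}[[K]]$ also has it finitely generated over the quotient $\widetilde{\bT}(U^{\wp})_{\overline{\rho}}^{P-\ord}[[K]]$.

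I expect the main obstacle — or at least the only genuinely non-formal point — to be the careful bookkeeping between the \emph{smooth} ordinary part $\Ord_P(S(U^{\wp},\bW^{\wp})_{\overline{\rho}})$ and the \emph{$\varpi_E$-adically continuous} ordinary part $\Ord_P(\widehat{S}(U^{\wp},\bW^{\wp})_{\overline{\rho}})$: one must know that the natural map (\ref{equ: ord-dense}) is an isomorphism mod $\varpi_E^s$ for all $s$, which in turn requires checking the hypothesis of Remark \ref{rem: ord-sur}(2), namely that $S(U^{\wp},\bW^{\wp})_{\overline{\rho}}^{I_{i,i}}/\varpi_E^s \to (S(U^{\wp},\bW^{\wp})_{\overline{\rho}}/\varpi_E^s)^{I_{i,i}}$ is an isomorphism. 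This was already verified inside the proof of Lemma \ref{lem: Pord-comp}(1) using the direct-summand decomposition (\ref{equ: ord-dir}), so the present lemma is really a repackaging of that analysis together with the general $\varpi_E$-adic admissibility of $\widehat{S}(U^{\wp},\bW^{\wp})_{\overline{\rho}}$; I would therefore keep the write-up short, citing \cite[Thm.~3.4.8]{EOrd1}, Lemma \ref{lem: Pord-comp} and the noetherianity statement following (\ref{equ: Pord-RT2}).
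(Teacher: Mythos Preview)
Your proposal is correct and takes essentially the same approach as the paper. The paper's proof is even more compressed---it simply says the lemma follows by the same argument as in \cite[Lem.~5.3.5]{Em4} with (5.3.3) of \emph{loc.~cit.} replaced by the isomorphism (\ref{equ: ord-dense})---but your unwinding of the admissibility conditions, your identification of (\ref{equ: ord-dense}) as the only non-formal point, and your reduction to \cite[Thm.~3.4.8]{EOrd1} together with Lemma~\ref{lem: Pord-comp} are exactly what that citation amounts to.
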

\begin{proof}
The lemma follows by the same argument as in the proof of \cite[Lem. 5.3.5]{Em4} with (5.3.3) of \emph{loc.cit.} replaced by the isomorphism (\ref{equ: ord-dense}).
\end{proof}

\noindent
Assume now that $\overline{\rho}_{\widetilde{\wp}}:=\overline{\rho}|_{\Gal_{F_{\widetilde{\wp}}}}$ is strictly $P$-ordinary (cf. Definition \ref{def: ord-strict}) and is isomorphic to a successive extension of $\overline{\rho}_i$ for $i=1,\cdots,k$ with $\overline{\rho}_i: \Gal_{L} \ra \GL_{n_i}(k_E)$ (recall $L\cong F_{\widetilde{\wp}}$). The restriction to $\Gal_{F_{\widetilde{\wp}}}$ gives a natural morphism:
\begin{equation}\label{mapdeform}
R_{\overline{\rho}_{\widetilde{\wp}}} \lra R_{\overline{\rho}, S(U^p)}.
\end{equation}
We fix $\rho: \Gal_F\ra \GL_n(E)$ a continuous representation such that $\rho$ is unramified outside $S(U^p)$ and $\rho^{\vee}\circ c\cong \rho\otimes \varepsilon^{1-n}$. We set $\fp_{\rho}:=\fm_{\rho}\cap \bT(U^p)$, which is a prime ideal of $\bT(U^p)$ (see (\ref{equ: ord-ideal}) for $\fm_{\rho}$), and ${\rho}_{\widetilde{\wp}}:={\rho}|_{\Gal_{F_{\widetilde{\wp}}}}$. We assume $\widehat{S}(U^{\wp}, \bW^{\wp})_{\overline{\rho}}[\fp_{\rho}]\neq 0$, then $\fp_{\rho}$ can also be seen as a prime ideal of $\widetilde{\bT}(U^{\wp})_{\overline{\rho}}$ (using (\ref{tildeproj})). Note that this implies that the mod $p$ semi-simplification of $\rho$ is isomorphic to $\overline{\rho}$ (and is thus irreducible).

\begin{theorem}\label{thm: ord-ordlg}
(1) The action of $R_{\overline{\rho}_{\widetilde{\wp}}}$ on $\Ord_P(\widehat{S}(U^{\wp}, \bW^{\wp})_{\overline{\rho}})$ via (\ref{mapdeform}) and (\ref{equ: Pord-RT2}) factors through $R_{\overline{\rho}_{\widetilde{\wp}}}^{P-\ord}$ (see the very end of \S~\ref{sec: ord-galoisdef}).\\
\noindent
(2) If $\Ord_P(\widehat{S}(U^{\wp}, \bW^{\wp})_{\overline{\rho}}[\fp_{\rho}])\neq 0$ then $\rho_{\widetilde{\wp}}$ is $P$-ordinary.
\end{theorem}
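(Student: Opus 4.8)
The plan is to establish (1) first and then deduce (2) from it by a short argument using the left exactness of $\Ord_P$. For (1), recall that $\widetilde{\bT}(U^{\wp})_{\overline{\rho}}^{P-\ord}=\varprojlim_i \bT(U^{\wp}I_{i,i}, \bW^{\wp})_{\overline{\rho}}^{P-\ord}$, so it suffices to show that for every $i$ the composite $R_{\overline{\rho}_{\widetilde{\wp}}}\to R_{\overline{\rho},S(U^p)}\to \widetilde{\bT}(U^{\wp})_{\overline{\rho}}\to\bT(U^{\wp}I_{i,i}, \bW^{\wp})_{\overline{\rho}}^{P-\ord}$ (via (\ref{mapdeform}) and (\ref{equ: Pord-RT2})) factors through $R_{\overline{\rho}_{\widetilde{\wp}}}^{P-\ord}$. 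By its very definition $\bT(U^{\wp}I_{i,i}, \bW^{\wp})_{\overline{\rho}}^{P-\ord}$ acts faithfully on the finite free $\co_E$-module $(S(U^{\wp}, \bW^{\wp})_{\overline{\rho}}^{I_{i,i}})_{\ord}$, on which the operators in $\bT(U^p)$ act semisimply after inverting $p$ (as in the proof of Lemma~\ref{lem: Pord-reduce}); hence it is reduced and $\co_E$-torsion free, so it embeds into the finite product $\prod_x\kappa(x)$ of the residue fields at the points $x$ of $\Spec\bT(U^{\wp}I_{i,i}, \bW^{\wp})_{\overline{\rho}}^{P-\ord}[1/p]$. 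It is therefore enough to prove the factorization after composition with each projection $\bT(U^{\wp}I_{i,i}, \bW^{\wp})_{\overline{\rho}}^{P-\ord}\to\kappa(x)$.

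Each such $x$ is the Hecke eigensystem of an automorphic representation $\pi\cong\pi_\infty\otimes\pi^\infty$ of $G(\bA_{F^+})$ contributing to $(S(U^{\wp}, \bW^{\wp})_{\overline{\rho}}^{I_{i,i}})_{\ord}\otimes_E\overline{\Q_p}$; here $\pi_\infty$ corresponds to the trivial algebraic representation at $\wp$ (and to $(W^{\wp})^\vee$ at the other places above $p$), and unwinding the definition of $(\cdot)_{\ord}$ via Theorem~\ref{thm: Ord-dirc} and Lemma~\ref{lem: ord-alg6} one gets $(\pi_\wp^{N_0})_0\neq 0$, where $\pi_\wp$ denotes the component of $\pi$ at $\wp$ seen via $i_{G,\widetilde{\wp}}$ as a smooth irreducible representation of $\GL_n(L)$. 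Let $\rho_\pi:\Gal_F\to\GL_n(\kappa(x))$ be the Galois representation attached to $x$ (the pullback of the universal deformation along $R_{\overline{\rho},S(U^p)}\to\kappa(x)$) and put $\rho_{\pi,\widetilde{\wp}}:=\rho_\pi|_{\Gal_{F_{\widetilde{\wp}}}}$. By strong base change from $G/F^+$ to $\GL_n/F$ (\cite{Ro} when $n\le 3$, \cite{Lab} in the other case of Hypothesis~\ref{hypoglobal}) together with local-global compatibility at $p$-adic places (\cite{Scho13}), $\rho_{\pi,\widetilde{\wp}}$ is potentially semi-stable with $\W(\rho_{\pi,\widetilde{\wp}})^{\sss}\cong\rec(\pi_\wp)$ and, after twisting $\rho_\pi$ and $\pi_\wp$ by a fixed algebraic character (which affects neither the hypothesis $(\pi_\wp^{N_0})_0\ne 0$ nor the conclusion of being $P$-ordinary), we may assume $\HT_\sigma(\rho_{\pi,\widetilde{\wp}})=\{1-n,\dots,0\}$ for all $\sigma\in\Sigma_L$. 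Proposition~\ref{prop: ord-cLLO} then shows that $\rho_{\pi,\widetilde{\wp}}$ is $P$-ordinary; since $\overline{\rho}_{\widetilde{\wp}}$ is strictly $P$-ordinary, Lemma~\ref{lem: ord-strict} (whose proof applies verbatim with $\co_E$ replaced by the ring of integers of $\kappa(x)$) shows that $R_{\overline{\rho}_{\widetilde{\wp}}}\to\kappa(x)$ factors through $R_{\overline{\rho}_{\widetilde{\wp}}}^{P-\ord}$. Running over all $x$ and all $i$ proves (1).

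For (2), left exactness of $\Ord_P$ (\cite[Prop.~3.2.4]{EOrd1}) applied to the closed embedding $\widehat{S}(U^{\wp}, \bW^{\wp})_{\overline{\rho}}[\fp_{\rho}]\hookrightarrow\widehat{S}(U^{\wp}, \bW^{\wp})_{\overline{\rho}}$, together with Hecke-equivariance, shows that $\Ord_P(\widehat{S}(U^{\wp}, \bW^{\wp})_{\overline{\rho}}[\fp_{\rho}])$ embeds into $\Ord_P(\widehat{S}(U^{\wp}, \bW^{\wp})_{\overline{\rho}})[\fp_{\rho}]$, so the hypothesis forces $\Ord_P(\widehat{S}(U^{\wp}, \bW^{\wp})_{\overline{\rho}})[\fp_{\rho}]\neq 0$. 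Since $\Ord_P(\widehat{S}(U^{\wp}, \bW^{\wp})_{\overline{\rho}})$ is a faithful $\widetilde{\bT}(U^{\wp})_{\overline{\rho}}^{P-\ord}$-module (Lemma~\ref{lem: Pord-comp}) on which $\fp_{\rho}$ acts through its image in $\widetilde{\bT}(U^{\wp})_{\overline{\rho}}^{P-\ord}$, that image is a proper ideal; equivalently, $\fp_{\rho}$ contains the kernel of $\widetilde{\bT}(U^{\wp})_{\overline{\rho}}\twoheadrightarrow\widetilde{\bT}(U^{\wp})_{\overline{\rho}}^{P-\ord}$. Hence the map $R_{\overline{\rho},S(U^p)}\to\widetilde{\bT}(U^{\wp})_{\overline{\rho}}\to\widetilde{\bT}(U^{\wp})_{\overline{\rho}}/\fp_{\rho}\hookrightarrow E$ defining $\rho$ factors through $\widetilde{\bT}(U^{\wp})_{\overline{\rho}}^{P-\ord}$, and by (1) the induced map $R_{\overline{\rho}_{\widetilde{\wp}}}\to E$ giving $\rho_{\widetilde{\wp}}$ factors through $R_{\overline{\rho}_{\widetilde{\wp}}}^{P-\ord}$. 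As this map has bounded image it comes from a surjection $R_{\overline{\rho}_{\widetilde{\wp}}}^{P-\ord}\twoheadrightarrow\co_E$; pulling back the universal $P$-ordinary deformation modulo $\varpi_E^n$ and passing to the limit, as in the proof of Proposition~\ref{prop: ord-Pdef2}, equips $\rho_{\widetilde{\wp}}$ with an invariant filtration as in Definition~\ref{Porddef}, so $\rho_{\widetilde{\wp}}$ is $P$-ordinary.

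I expect the only genuine work to lie in two bookkeeping points hidden in the second paragraph: identifying the Hecke-operator ordinary part $(\cdot)_{\ord}$ of the finite-level spaces $S(U^{\wp}I_{i,i},W^{\wp})_{\overline{\rho}}$ with the subspaces $(\pi_\wp^{N_0})_0$ feeding into Proposition~\ref{prop: ord-cLLO} (combining \S~\ref{sec: ord-2}, \S~\ref{sec: ord-lalg} and the decomposition of $S(U^{\wp},W^{\wp})_{\overline{\rho}}$ over automorphic representations), and checking local-global compatibility at $\wp$ with the correct Hodge-Tate weights at the classical points. Both are standard; everything else is formal manipulation of deformation rings and Hecke algebras.
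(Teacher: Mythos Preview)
Your proof is correct and follows essentially the same approach as the paper: both reduce (1) to checking, via local-global compatibility and Proposition~\ref{prop: ord-cLLO}, that every classical Hecke eigensystem contributing to the $P$-ordinary part has $P$-ordinary local Galois representation (and then invoke Lemma~\ref{lem: ord-strict}), and both deduce (2) from (1) by the maximality-of-$\fm_\rho$ argument. The only cosmetic difference is that you embed each finite-level $P$-ordinary Hecke algebra into a product of residue fields, whereas the paper shows directly that $I^{P-\ord}$ kills the dense locally algebraic ordinary part (Lemma~\ref{lem: Pord-comp}(1)); and note that your twist to adjust the Hodge--Tate weights is unnecessary since $W_\wp$ is already trivial, and the word ``equivalently'' in your (2) tacitly uses that $\fm_\rho$ is maximal in $\widetilde{\bT}(U^{\wp})_{\overline{\rho}}[1/p]$.
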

\begin{proof}
(1) Assume first $S(U^{\wp}, \bW^{\wp})_{\overline{\rho}}[\fp_{\rho}]\neq 0$. By (\ref{equ: ord-sm}) and (\ref{equ: lalgaut2}), there is an automorphic representation $\pi$ of $G(\bA_{F_+})$ (with $W_{\wp}$ trivial in (\ref{equ: lalgaut2})) which contributes to:
$$S(U^{\wp}, W^{\wp})_{\overline{\rho}}[\fm_{\rho}]\cong S(U^{\wp}, \bW^{\wp})_{\overline{\rho}}[\fp_{\rho}] \otimes_{\co_E} E.$$
By the local-global compatibility for classical local Langlands correspondence (see e.g. \cite[Thm.~6.5(v)]{Thor} and \cite{Car12} taking into account our various normalisations and note that this uses Hypothesis \ref{hypoglobal} via strong base change), $\rho_{\widetilde{\wp}}$ is potentially semi-stable with $\HT_{\sigma}(\rho_{\widetilde{\wp}})=\{1-n, \cdots, 0\}$ for all $\sigma: L\hookrightarrow E$ and $\rec(\pi_{\wp}) \cong \W(\rho_{\widetilde{\wp}})^{\sss}$ where $\pi_{\wp}$ is the $\wp$-th component of $\pi$ and is viewed as a representation of $\GL_n(L)$ via $i_{G,\widetilde{\wp}}$ (see \S~\ref{classical} for the notation). If $\Ord_P(S(U^{\wp}, W^{\wp})_{\overline{\rho}}[\fm_{\rho}])\neq 0$, then there exists $\pi$ as above such that moreover $\Ord_P(\pi_{\wp})\neq 0$ (since we actually have $S(U^{\wp}, W^{\wp})_{\overline{\rho}}[\fm_{\rho}]\cong \pi_{\wp}^{\oplus r}$ as $\GL_n(L)$-representations for some $r\geq 1$). It follows from Lemma \ref{lem: ord-alg6} and Proposition \ref{prop: ord-cLLO} that $\rho_{\widetilde{\wp}}$ is $P$-ordinary. Denote by $I^{P-\ord}$ the kernel of the natural surjection $R_{\overline{\rho}_{\widetilde{\wp}}}\twoheadrightarrow R_{\overline{\rho}_{\widetilde{\wp}}}^{P-\ord}$, which we also view as an ideal of $\widetilde{\bT}(U^{\wp})_{\overline{\rho}}$ via:
\begin{equation}\label{equ: ord-compo}
R_{\overline{\rho}_{\widetilde{\wp}}} \buildrel(\ref{mapdeform})\over\lra R_{\overline{\rho}, S(U^p)} \buildrel(\ref{equ: Pord-RT2})\over\lra \widetilde{\bT}(U^{\wp})_{\overline{\rho}}.
\end{equation}
Then Lemma \ref{lem: ord-strict} easily implies $I^{P-\ord}\subseteq \fp_{\rho}$, in particular $S(U^{\wp}, \bW^{\wp})_{\overline{\rho}}[\fp_{\rho}]$ is killed by $I^{P-\ord}$, and with (\ref{equ: ord-sm}) and (\ref{equ: lalgaut2}) we deduce that $\Ord_P(S(U^{\wp}, \bW^{\wp})_{\overline{\rho}})$ is also killed by $I^{P-\ord}$. By Lemma \ref{lem: Pord-comp}(1), $\Ord_P(S(U^{\wp}, \bW^{\wp})_{\overline{\rho}})$ is dense in $\Ord_P(\widehat{S}(U^{\wp}, \bW^{\wp})_{\overline{\rho}})$ for the $\varpi_E$-adic topology. We deduce then:
$$I^{P-\ord}\Ord_P\big(\widehat{S}(U^{\wp}, \bW^{\wp})_{\overline{\rho}}\big)\subset \cap_{i\in \Z_{\geq 0}} \varpi_E^i \Ord_P\big(\widehat{S}(U^{\wp}, \bW^{\wp})_{\overline{\rho}}\big)=0$$
and (1) follows.\\
\noindent
(2) Let $\fp_{\rho_{\widetilde{\wp}}}$ be the prime ideal of $R_{\overline{\rho}_{\widetilde{\wp}}}$ attached to $\rho_{\widetilde{\wp}}$, which is just the preimage of $\fp_{\rho}$ via (\ref{equ: ord-compo}), and $\fm_{\rho_{\widetilde{\wp}}}:=\fp_{\rho_{\widetilde{\wp}}}[1/p]$, which is a maximal ideal of $R_{\overline{\rho}_{\widetilde{\wp}}}[1/p]$. If $\Ord_P(\widehat{S}(U^{\wp}, \bW^{\wp})_{\overline{\rho}}[\fp_{\rho}])\neq 0$ then we have $I^{P-\ord}[1/p]\subseteq \fm_{\rho_{\widetilde{\wp}}}$, since otherwise $1\in \fm_{\rho_{\widetilde{\wp}}}+I^{P-\ord}[1/p]$ annihilates $\Ord_P(\widehat{S}(U^{\wp}, W^{\wp})_{\overline{\rho}}[\fm_{\rho}])=\Ord_P(\widehat{S}(U^{\wp}, \bW^{\wp})_{\overline{\rho}}[\fp_{\rho}])\otimes_{\co_E} E$ by the first part. From the discussion above Proposition \ref{prop: ord-Pdef2}, we obtain that $\rho_{\widetilde{\wp}}$ is $P$-ordinary.
\end{proof}

\noindent
By Theorem \ref{thm: ord-ordlg}(1) and the last part in Lemma \ref{lem: Pord-comp}(1), the surjection $\widetilde{\bT}(U^{\wp})_{\overline{\rho}}\!\twoheadrightarrow \!\widetilde{\bT}(U^{\wp})_{\overline{\rho}}^{P-\ord}$ factors through:
\begin{equation*}
\widetilde{\bT}(U^{\wp})_{\overline{\rho}} \otimes_{R_{\overline{\rho}_{\widetilde{\wp}}}} R_{\overline{\rho}_{\widetilde{\wp}}}^{P-\ord} \twoheadlongrightarrow \widetilde{\bT}(U^{\wp})_{\overline{\rho}}^{P-\ord}.
\end{equation*}
In particular, we have natural morphisms of local complete noetherian $\co_E$-algebras of residue field $k_E$:
\begin{equation}\label{equ: ord-RT}
\omega :\ \widehat{\bigotimes}_{i=1, \cdots, k} R_{\overline{\rho}_i} \xlongrightarrow{(\ref{equ: ord-diag})} R_{\overline{\rho}_{\widetilde{\wp}}}^{P-\ord} \lra \widetilde{\bT}(U^{\wp})_{\overline{\rho}}^{P-\ord}.
\end{equation}
We end this section by the following proposition.

\begin{proposition}\label{prop: lg1-semisimp}
Let $\pi_{\infty}$ be a smooth admissible representation of $L_P(L)$, $\lambda$ be a dominant weight as in the beginning of \S~\ref{sec: ord-lalg} (for $G=\GL_n$ and $P$ as above), $x$ be a closed point of $\Spec (\widetilde{\bT}(U^{\wp})_{\overline{\rho}}^{P-\ord}[1/p])$, and $\fm_x$ be the associated maximal ideal. Then any $L_P(L)$-equivariant morphism:
\begin{equation*}
\pi_{\infty}\otimes_E L_P(\lambda) \lra \Ord_P\big(\widehat{S}(U^{\wp}, W^{\wp})\big)\{\fm_x\}
\end{equation*}
has image in $\Ord_P\big(\widehat{S}(U^{\wp}, W^{\wp})^{\lalg}\big)[\fm_x]$.
\end{proposition}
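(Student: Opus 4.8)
The plan is to split the statement into two parts: first, that the image of such a morphism consists of locally $\Q_p$-algebraic vectors of $\Ord_P(\widehat{S}(U^{\wp},W^{\wp}))$ and in fact lands in $\Ord_P(\widehat{S}(U^{\wp},W^{\wp})^{\lalg})$ — the $P$-ordinary part, in the sense of \S\ref{sec: ord-lalg}, of the locally algebraic $\GL_n(L)$-representation $\widehat{S}(U^{\wp},W^{\wp})^{\lalg}$, viewed inside $\Ord_P(\widehat{S}(U^{\wp},W^{\wp}))$; second, that $\fm_x$ acts semisimply on $\Ord_P(\widehat{S}(U^{\wp},W^{\wp})^{\lalg})$, so that its $\fm_x$-power-torsion is already killed by $\fm_x$. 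The first reduction already shows that it is enough to establish these two facts, because the source $\pi_{\infty}\otimes_E L_P(\lambda)$ is a locally $\Q_p$-algebraic representation of $L_P(L)$ and hence any continuous $L_P(L)$-equivariant map sends it into the locally $\Q_p$-algebraic vectors of the target.

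The semisimplicity is the easy input. By (the evident analogue for $\widehat{S}(U^{\wp},W^{\wp})$ of) the decomposition (\ref{equ: lalgaut2}), after extending scalars to $\overline{\Q_p}$ the representation $\widehat{S}(U^{\wp},W^{\wp})^{\lalg}$ is a direct sum over the contributing automorphic representations $\pi$ of pieces on which each $T\in\bT(U^p)$ acts by a scalar, so $\bT(U^p)$, hence its quotient $\widetilde{\bT}(U^{\wp})_{\overline{\rho}}^{P-\ord}$, acts semisimply (as already observed in the proof of Lemma \ref{lem: ord-hecke}). Since the functor $\Ord_P$ of \S\ref{sec: ord-lalg} commutes with the Hecke action and with direct sums (it is built out of $(\cdot)^{N_0}$ and the formation of the slope-zero part for $Z_{L_P}^+$, both of which commute with direct sums of smooth representations), the Hecke action on $\Ord_P(\widehat{S}(U^{\wp},W^{\wp})^{\lalg})$ is semisimple, whence $\Ord_P(\widehat{S}(U^{\wp},W^{\wp})^{\lalg})\{\fm_x\}=\Ord_P(\widehat{S}(U^{\wp},W^{\wp})^{\lalg})[\fm_x]$. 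Granting the first fact, the image of our morphism therefore lies in $\Ord_P(\widehat{S}(U^{\wp},W^{\wp})^{\lalg})\cap\Ord_P(\widehat{S}(U^{\wp},W^{\wp}))\{\fm_x\}=\Ord_P(\widehat{S}(U^{\wp},W^{\wp})^{\lalg})[\fm_x]$, which is the assertion.

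For the first fact — the main obstacle — I would argue via the adjunction of Proposition \ref{prop: ord-adj}. First reduce to the case of an injective morphism $f:\pi_{\infty}\otimes_E L_P(\lambda)\hookrightarrow\Ord_P(\widehat{S}(U^{\wp},W^{\wp}))$: its kernel is an $L_P(L)$-subrepresentation, hence of the form $\pi''_{\infty}\otimes_E L_P(\lambda)$ because $L_P(\lambda)$ is irreducible algebraic, so one may replace $\pi_{\infty}$ by the smooth admissible quotient $\pi_{\infty}/\pi''_{\infty}$. By Proposition \ref{prop: ord-adj} applied with $V=\widehat{S}(U^{\wp},W^{\wp})$, the morphism $f$ is recovered as the composite of (\ref{equ: ord-alg5}) with the map
\begin{equation*}
\Ord_P\big((\Ind_{\overline{P}(L)}^{\GL_n(L)}\pi_{\infty})^{\infty}\otimes_E L(\lambda)\big)\longrightarrow\Ord_P(\widehat{S}(U^{\wp},W^{\wp}))
\end{equation*}
induced, via Lemma \ref{lem: ord-alg4}, by a $\GL_n(L)$-equivariant map $g:(\Ind_{\overline{P}(L)}^{\GL_n(L)}\pi_{\infty})^{\infty}\otimes_E L(\lambda)\to\widehat{S}(U^{\wp},W^{\wp})$. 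The source of $g$ being locally $\Q_p$-algebraic, $g$ corestricts to a $\GL_n(L)$-map into $\widehat{S}(U^{\wp},W^{\wp})^{\lalg}$; applying the locally algebraic ordinary-part functor of \S\ref{sec: ord-lalg} to this corestriction (legitimate since $(\Ind_{\overline{P}(L)}^{\GL_n(L)}\pi_{\infty})^{\infty}\otimes_E L(\lambda)$ is unitary by Lemma \ref{lem: ord-alg5} and $\widehat{S}(U^{\wp},W^{\wp})^{\lalg}$ is unitary, carrying the $\co_E$-lattice cut out by a unit ball of $\widehat{S}(U^{\wp},W^{\wp})$), composing with (\ref{equ: ord-alg5}), and invoking the compatibility of the locally algebraic and Banach versions of $\Ord_P$ (Lemmas \ref{lem: ord-alg3}, \ref{lem: ord-ordind} and \ref{lem: ord-alg4}, together with left-exactness of the Banach $\Ord_P$), one checks that $f$ factors through $\Ord_P(\widehat{S}(U^{\wp},W^{\wp})^{\lalg})\hookrightarrow\Ord_P(\widehat{S}(U^{\wp},W^{\wp}))$. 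Hence the image of $f$ lies in $\Ord_P(\widehat{S}(U^{\wp},W^{\wp})^{\lalg})$.

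The genuinely delicate step is the last diagram chase above: the assertion that the locally algebraic vectors of the Banach $P$-ordinary part are exactly accounted for by the $P$-ordinary part of the locally algebraic vectors. This is the $\GL_n(L)$-analogue of \cite[\S~5.6]{Em4}, and it is precisely what the bookkeeping of \S\ref{sec: ord-lalg} — the canonical liftings $\iota_{\can}$ into $(\cdot)^{N_0}$ and their mutual compatibility under Proposition \ref{prop: ord-adj}, Lemma \ref{lem: ord-ordind} and Lemma \ref{lem: ord-alg4} — is designed to supply. One also has to be slightly careful that $\widehat{S}(U^{\wp},W^{\wp})^{\lalg}$ is a dense, non-closed direct sum inside $\widehat{S}(U^{\wp},W^{\wp})$ rather than a Banach subspace; this is handled by passing to the closed $\GL_n(L)$-subrepresentation it generates and using left-exactness of $\Ord_P$, after which its locally algebraic vectors are recovered as $\widehat{S}(U^{\wp},W^{\wp})^{\lalg}$ itself.
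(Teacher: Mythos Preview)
Your proposal is correct and follows essentially the same route as the paper: reduce to an injection, invoke the adjunction of Proposition~\ref{prop: ord-adj} to produce a $\GL_n(L)$-map $(\Ind_{\overline P(L)}^{\GL_n(L)}\pi_\infty)^\infty\otimes_E L(\lambda)\to \widehat S(U^\wp,W^\wp)$ whose image is forced into $\widehat S(U^\wp,W^\wp)^{\lalg}$, and then use the semisimplicity of the Hecke action on the locally algebraic vectors coming from (\ref{equ: lalgaut2}). The paper's proof is three sentences and simply says ``From Proposition~\ref{prop: ord-adj} we deduce that the image is in $\Ord_P(\widehat S(U^\wp,W^\wp)^{\lalg})$''; your more explicit unpacking via Lemmas~\ref{lem: ord-alg3}, \ref{lem: ord-alg4} and \ref{lem: ord-ordind} is exactly the bookkeeping that Proposition~\ref{prop: ord-adj} packages, so there is no substantive difference.
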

\begin{proof}
Replacing $\pi_{\infty}\otimes_E L_P(\lambda)$ by its image, we can assume the morphism is injective. From Proposition \ref{prop: ord-adj} we deduce that the image is in $\Ord_P(\widehat{S}(U^{\wp}, W^{\wp})^{\lalg})$, hence also in $\Ord_P(\widehat{S}(U^{\wp}, W^{\wp})^{\lalg})\{\fm_x\}$. From (\ref{equ: lalgaut2}) it is easy to check that $\Ord_P(\widehat{S}(U^{\wp}, W^{\wp})^{\lalg})[\fm_x]=\Ord_P(\widehat{S}(U^{\wp}, W^{\wp})^{\lalg})\{\fm_x\}$, whence the result.
\end{proof}

\section{$\cL$-invariants, $\GL_2(\Q_p)$-ordinary families and local-global compatibility}\label{maintheroemlocalglobal}

\noindent
We now assume that the field $L=F^+_\wp\cong F_{\widetilde{\wp}}$ in \S~\ref{prelprel} is $\Q_p$ and study $\Ord_P(\widehat{S}(U^{\wp}, W^{\wp})_{\overline{\rho}})$ when the factors in the Levi $L_P$ of the parabolic subgroup $P$ are either $\GL_1$ or $\GL_2$. We derive several local-global compatibility results in this case. In particular we prove Conjecture \ref{THEconjecture} when $\HT(\rho_{\widetilde{\wp}})=\{k_1, k_1-1, k_1-2\}$ for some integer $k_1$ (under mild genericity assumptions).

\subsection{$\GL_2(\Q_p)$-ordinary families and local-global compatibility}\label{gl2ordinarylocalglobal}

\noindent
When the factors of the $L_P$ are either $\GL_1$ or $\GL_2$ we prove local-global compatibility results for the $L_P(\Q_p)$-representation $\Ord_P(\widehat{S}(U^{\wp}, \bW^{\wp})_{\overline{\rho}})$ by generalizing Emerton's method (\cite{Em4}).\\

\subsubsection{Dominant algebraic vectors}\label{domialgebraic}

\noindent
In this section, which is purely local, we prove density results of subspaces of algebraic functions.\\

\noindent
We fix $H$ a connected reductive algebraic group over $\Z_p$ and denote by $A$ the finitely generated $\Z_p$-algebra which represents $H$. For any $f\in A$, the natural map $\Hom_{\Z_p-\alg}(A, \Z_p)=H(\Z_p) \ra \Z_p, \ z \mapsto z(f)$ lies in $\cC(H(\Z_p), \Z_p)$ and induces an $E$-linear morphism $A\otimes_{\Z_p} E\rightarrow \cC(H(\Z_p), E)$. We denote by $\cC^{\alg}(H(\Z_p), E)$ its image, which is called the vector space of algebraic functions on the compact group $H(\Z_p)$. By \cite[Lem. A.1]{Pas11}, $\cC^{\alg}(H(\Z_p), E)$ is dense in the Banach space $\cC(H(\Z_p), E)$. For $f\in \cC(H(\Z_p), E)$, we set $\us(f):=\inf_{z\in H(\Z_p)} \val_p(f(z))$ and note that the associated norm gives the Banach topology on $\cC(H(\Z_p), E)$. Now we let $H=\GL_r$, $r\geq 1$. By \cite[Prop. A.3]{Pas11} we have an isomorphism:
\begin{equation}\label{isoalgebraic}
\cC^{\alg}(\GL_r(\Z_p), E)\cong \bigoplus_{\sigma} \Hom_{\GL_r(\Z_p)}(\sigma, \cC(\GL_r(\Z_p), E)) \otimes_E \sigma
\end{equation}
where $\sigma$ runs through the irreducible algebraic representations of $\GL_r$ over $E$ and where $\Hom_{\GL_r(\Z_p)}(\sigma, \cC(\GL_r(\Z_p), E))$ denotes the $E$-linear $\GL_r(\Z_p)$-equivariant morphisms with $\GL_r(\Z_p)$ acting on $\cC(\GL_r(\Z_p), E)$ by the usual right translation on functions. Recall there exists a one-to-one correspondence between the integral dominant weights $\lambda=(\lambda_1, \cdots, \lambda_r)$ for $\GL_r$ with respect to the Borel subgroup of upper triangular matrices, i.e. such that $\lambda_1\geq \lambda_2 \geq \cdots \geq \lambda_r$, and the irreducible algebraic representations $L(\lambda)$ of $\GL_r$. For $a\in \Z$, we put:
\begin{equation}\label{smallera}
\cC^{\alg}_{\leq a}(\GL_r(\Z_p),E):=\bigoplus_{\substack{\lambda=(\lambda_1,\cdots, \lambda_n)\\ \lambda_1\leq a}} \Hom_{\GL_r(\Z_p)}\big(L(\lambda), \cC(\GL_r(\Z_p), E)\big) \otimes_E L(\lambda).
\end{equation}

\begin{lemma}\label{lem: lg1-dense}
For any $a\in \Z$, the vector space $\cC^{\alg}_{\leq a}(\GL_r(\Z_p), E)$ is dense in $\cC(\GL_r(\Z_p), E)$.
\end{lemma}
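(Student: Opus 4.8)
The plan is to reduce the statement, by a twisting argument, to the fact that $\cC^{\alg}(\GL_r(\Z_p),E)$ is dense in $\cC(\GL_r(\Z_p),E)$ (\cite[Lem.~A.1]{Pas11}). First I would fix $f\in \cC(\GL_r(\Z_p),E)$ and $\epsilon>0$ and, using the density of $\cC^{\alg}(\GL_r(\Z_p),E)$, pick $g\in \cC^{\alg}(\GL_r(\Z_p),E)$ with $\us(f-g)\geq \epsilon$. Via the decomposition \eqref{isoalgebraic}, the function $g$ is a finite sum $g=\sum_{\lambda\in S} g_\lambda$ where $S$ is a finite set of dominant weights and $g_\lambda$ lies in the $L(\lambda)$-isotypic piece. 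The point is to approximate each $g_\lambda$ (in the $\us$-norm) by elements of $\cC^{\alg}_{\leq a}(\GL_r(\Z_p),E)$; then the same bound $\us$ applies to the sum and we are done.

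The key idea for the single weight $\lambda$ is to twist by a power of the determinant. Let $\det^m$ denote the algebraic character $g\mapsto \det(g)^m$ of $\GL_r$; it is the one-dimensional representation $L(m,m,\cdots,m)$, and the corresponding function $d_m:=\det(\cdot)^m\in \cC^{\alg}(\GL_r(\Z_p),E)$ takes values in $\Z_p^\times$, so $\us(d_m)=0$ and multiplication by $d_m$ is an isometry of $\cC(\GL_r(\Z_p),E)$. Moreover, for $m\in\Z_{\leq 0}$, multiplication by $d_m$ sends the $L(\lambda)$-isotypic subspace of $\cC^{\alg}(\GL_r(\Z_p),E)$ into the $L(\lambda+(m,\cdots,m))$-isotypic subspace (here $\lambda+(m,\cdots,m)=(\lambda_1+m,\cdots,\lambda_r+m)$ is again dominant, and $L(\lambda)\otimes\det^m\cong L(\lambda+(m,\cdots,m))$ as $\GL_r$-representations, hence as $\GL_r(\Z_p)$-subrepresentations of $\cC(\GL_r(\Z_p),E)$). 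Now I would \emph{not} try to approximate $g_\lambda$ directly: instead, observe that $d_m g_\lambda$ lies in the $L(\lambda+(m,\cdots,m))$-isotypic piece, and for $m$ sufficiently negative (namely $m\leq a-\lambda_1$) the highest weight of $L(\lambda+(m,\cdots,m))$ has first coordinate $\lambda_1+m\leq a$, so $d_m g_\lambda\in \cC^{\alg}_{\leq a}(\GL_r(\Z_p),E)$. But $d_m g_\lambda$ does not approximate $g_\lambda$; rather, the correct move is to go the other way: given the target $f$, approximate $d_m f$ first.

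So the cleaner organization is: fix $a$ and $f$; choose $m\in\Z_{\leq 0}$; by \cite[Lem.~A.1]{Pas11} applied to $d_m f\in\cC(\GL_r(\Z_p),E)$, find $h\in\cC^{\alg}(\GL_r(\Z_p),E)$ with $\us(d_m f-h)\geq\epsilon$; write $h=\sum_{\mu\in S'}h_\mu$ in the isotypic decomposition \eqref{isoalgebraic} with $S'$ finite; now choose $m$ small enough (depending on $S'$, i.e. on $\epsilon$ and $f$ and $a$) that $\mu_1+(-m)\cdot 0$—wait, rather: the function $d_{-m}h_\mu=d_m^{-1}h_\mu$ lies in the isotypic piece for $\mu+(-m)(1,\cdots,1)=(\mu_1-m,\cdots,\mu_r-m)$, which has first coordinate $\mu_1-m$; choosing $m\leq \min_{\mu\in S'}(a-\mu_1+\text{(correction)})$ this is $\leq a$, so $d_m^{-1}h\in\cC^{\alg}_{\leq a}(\GL_r(\Z_p),E)$; and since multiplication by $d_m^{-1}$ is an isometry, $\us(f-d_m^{-1}h)=\us(d_m f-h)\geq\epsilon$. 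This exhibits $d_m^{-1}h$ as the desired approximation, and the lemma follows. The one genuine subtlety—the part I would be most careful about—is the bookkeeping of which direction the determinant twist shifts the highest weight and ensuring that the required $m$ can be chosen \emph{after} knowing the finite support $S'$ of the approximating algebraic function, which is fine since $\epsilon$, $f$, $a$ are fixed before $h$ (hence $S'$, hence $m$) is chosen; there is no circularity because $m$ only needs to beat the finitely many weights appearing in $h$, not all weights in advance.
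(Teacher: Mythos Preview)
Your ``cleaner organization'' is circular, and the closing paragraph does not resolve it. You write: choose $m\leq 0$, then find $h\in\cC^{\alg}$ approximating $d_m f$, then decompose $h=\sum_{\mu\in S'}h_\mu$, then choose $m$ depending on $S'$. But $h$ was chosen to approximate $d_m f$, so $h$---and hence $S'$---already depends on $m$; your claim that ``$\epsilon$, $f$, $a$ are fixed before $h$ (hence $S'$, hence $m$) is chosen'' forgets this. There is also a sign problem: with $m\leq 0$, the function $d_m^{-1}h_\mu=d_{-m}h_\mu$ lies in the isotypic piece for $(\mu_1-m,\dots,\mu_r-m)$, whose first coordinate $\mu_1-m\geq\mu_1$ only grows as $m$ becomes more negative; so this twist cannot push weights below $a$.

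The missing observation---which in fact makes your \emph{first} organization work---is that $d_m\to 1$ uniformly on $\GL_r(\Z_p)$ along the sequence $m=-(p-1)p^N$, since $\det(z)\in\Z_p^\times$ and $(\Z_p^\times)^{(p-1)p^N}\subseteq 1+p^{N+1}\Z_p$. So after approximating $f$ by $g=\sum_{\lambda\in S}g_\lambda\in\cC^{\alg}$, choose $N$ large enough that $-(p-1)p^N\leq a-\lambda_1$ for every $\lambda\in S$ and that $\us(d_{-(p-1)p^N}-1)+\us(g)\geq\epsilon$; then $d_{-(p-1)p^N}\,g$ lies in $\cC^{\alg}_{\leq a}$ and approximates $g$, hence $f$. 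There is no circularity here because $g$ (and $S$) is chosen to approximate $f$ itself, independently of $m$. This is precisely the $r=1$ step of the paper's proof (there one replaces $x^j$ by $x^{j-(p-1)p^{M'}}$); the paper then reduces general $r$ to the case $r=1$ via the homeomorphism $\SL_r(\Z_p)\times\Z_p^\times\cong\GL_r(\Z_p)$, whereas your corrected determinant-twist argument handles all $r$ at once.
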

\begin{proof}
We first prove the lemma for $r=1$, in which case we have by (\ref{isoalgebraic}) (with obvious notation) $\cC^{\alg}(\Z_p^\times, E)\cong \oplus_{j\in \Z}Ex^j$. Let $W$ be the closure of $\oplus_{j\leq a} E x^j$, we have to prove $x^j\in W$ for any $j\in \Z$. It is enough to prove that, for any $j\in \Z$ and $M>0$, there exists $j'\leq a$ such that $\us(x^{j'}-x^{j})\geq M$. If we consider $j':=j-(p-1)p^{M'}$ with $M'>M$ sufficiently large so that $j'\leq a$, then we indeed have $\val_p(x^{j'}-x^j)=\val_p(x^{(p-1)p^{M'}}-1)>M$ for any $z\in \Z_p^{\times}$. The case $r=1$ follows.\\
\noindent
For general $r$, denote by $\iota_{11}: \Z_p^{\times}\hookrightarrow \GL_r(\Z_p)$, $u\mapsto \diag(u, 1, \cdots, 1)$ and consider the induced map $\SL_r(\Z_p)\times \Z_p^{\times} \ra \GL_r(\Z_p)$, $(u, v)\mapsto u\iota_{11}(v)$. This map is a homeomorphism and thus induces an isomorphism:
\begin{equation*}
h: \cC(\GL_r(\Z_p), E) \xlongrightarrow{\sim} \cC(\SL_r(\Z_p)\times \Z_p^{\times}, E) \cong \cC(\SL_r(\Z_p),E)\widehat{\otimes}_E \cC(\Z_p^{\times}, E).
\end{equation*}
For a dominant weight $\lambda=(\lambda_1, \cdots, \lambda_r)$ as above, let $L(\lambda)_0:=L(\lambda)|_{\SL_r(\Z_p)}$. We claim that $h\vert_{\cC^{\alg}(\GL_r(\Z_p),E)}$ induces an isomorphism via (\ref{isoalgebraic}):
\begin{multline}\label{equ:lg1-dec}
\Hom_{\GL_r(\Z_p)}\big(L(\lambda), \cC(\GL_r(\Z_p), E)\big)\otimes_E L(\lambda)\\
\xlongrightarrow{\sim} \big( \Hom_{\SL_r(\Z_p)}\big(L(\lambda)_0, \cC(\SL_r(\Z_p),E)\big)\otimes_E L(\lambda)_0\big) \otimes_E Ex^{\lambda_1}.
\end{multline}
Indeed, we have a natural commutative diagram (induced by the restriction map):
\begin{equation*}
 \begin{CD}
  \Hom_{\GL_r(\Z_p)}\big(L(\lambda), \cC(\GL_r(\Z_p), E)\big)\otimes_E L(\lambda) @>>> \cC(\GL_r(\Z_p),E) \\
  @VVV @VVV \\
  \Hom_{\SL_r(\Z_p)}\big(L(\lambda)_0, \cC(\SL_r(\Z_p), E)\big)\otimes_E L(\lambda)_0 @>>> \cC(\SL_r(\Z_p),E)
 \end{CD}
\end{equation*}
where the horizontal maps are the evaluation maps and are injective by (\ref{isoalgebraic}). The morphism $\cC(\GL_r(\Z_p),E)\ra \cC(\Z_p^{\times}, E)$ induced by $\iota_{11}$ is easily checked to send (via (\ref{isoalgebraic})) $\Hom_{\GL_r(\Z_p)}(L(\lambda), \cC(\GL_r(\Z_p), E))\otimes_E L(\lambda)$ (on)to $E x^{\lambda_1}$. We thus obtain the morphism in (\ref{equ:lg1-dec}), which is moreover injective since $h$ is. Since we have from the proof of \cite[Prop. A.3]{Pas11}:
\begin{multline}\label{dimfinie}
\dim_E \Hom_{\GL_r(\Z_p)}\big(L(\lambda), \cC(\GL_r(\Z_p), E)\big)\\
=\dim_E \Hom_{\SL_r(\Z_p)}\big(L(\lambda)_0, \cC(\SL_r(\Z_p), E)\big)=\dim_E L(\lambda),
\end{multline}
we deduce that (\ref{equ:lg1-dec}) is an isomorphism. The isomorphism $h$ then induces a bijection:
\begin{equation*}
\cC^{\alg}_{\leq a}(\GL_r(\Z_p), E) \xlongrightarrow{\sim} \cC^{\alg}(\SL_r(\Z_p), E) \otimes_E \cC^{\alg}_{\leq a}(\Z_p^{\times}, E).
\end{equation*}
Since $\cC^{\alg}_{\leq a}(\Z_p^{\times}, E)$ is dense in $\cC(\Z_p^{\times}, E)$ and $\cC^{\alg}(\SL_r(\Z_p), E)$ is dense in $\cC(\SL_r(\Z_p),E)$, we deduce that $\cC^{\alg}(\SL_r(\Z_p), E) \otimes_E \cC^{\alg}_{\leq a}(\Z_p^{\times}, E)$ is dense in $\cC(\SL_r(\Z_p),E)\widehat{\otimes}_E \cC(\Z_p^{\times}, E)$, that is $\cC^{\alg}_{\leq a}(\GL_r(\Z_p), E)$ is dense in $\cC(\GL_r(\Z_p), E)$.
\end{proof}

\noindent
We fix $P$ a parabolic subgroup of $\GL_n$ as in \S~\ref{sec: ord-galoisdef} (or \S~\ref{sec: Pord-3}) with $L_P$ as in (\ref{equ: ord-LP}). We have in particular:
\begin{equation*}
\cC(L_P(\Z_p), E)\cong \widehat{\bigotimes}_{i=1,\cdots, k}\cC(\GL_{n_i}(\Z_p), E)\ {\rm and}\ \cC^{\alg}(L_P(\Z_p), E) \cong \bigotimes_{i=1, \cdots, k} \cC^{\alg}(\GL_{n_i}(\Z_p), E).
\end{equation*}
For $i\in \{1,\cdots,k\}$ we define $s_i:=\sum_{j=0}^{i-1} n_j$ (with $n_0:=0$) as in \S~\ref{classical} and set:
\begin{equation}\label{equ: lg1-Lp+}
 \cC^{\alg}_+(L_P(\Z_p), E):=\bigoplus_{\substack{(\lambda_1, \cdots, \lambda_n)\in \Z^{n}\\ \lambda_1\geq \lambda_2\geq \cdots \geq \lambda_n}} \Big(\bigotimes_{i=1,\cdots, k} \cC^{\ul{\lambda}_i-\alg}(\GL_{n_i}(\Z_p),E)\Big)
\end{equation}
where $\ul{\lambda}_i:=(\lambda_{s_{i}+1}, \cdots, \lambda_{s_{i+1}})$ and:
\begin{equation*}
\cC^{\ul{\lambda}_i-\alg}(\GL_{n_i}(\Z_p), E):= \Hom_{\GL_{n_i}(\Z_p)}\big(L(\ul{\lambda}_i), \cC(\GL_{n_i}(\Z_p), E)\big)\otimes_EL(\ul{\lambda}_i).
\end{equation*}
We define the subspace $\cC^{\alg}_{++}(L_P(\Z_p), E)$ of $\cC^{\alg}_+(L_P(\Z_p),E)$ in the same way but taking in (\ref{equ: lg1-Lp+}) the direct sum only over those (dominant) $\lambda$ such that $\lambda_{s_i}>\lambda_{s_i+1}$ for $i=2,\cdots, k$. We call vectors in $ \cC^{\alg}_+(L_P(\Z_p), E)$ dominant $L_P(\Z_p)$-algebraic vectors.

\begin{proposition}\label{prop: lg1-dense}
The \ vector \ spaces \ $\cC^{\alg}_{++}(L_P(\Z_p), E)$ \ and \ $\cC^{\alg}_+(L_P(\Z_p),E)$ \ are \ dense \ in $\cC(L_P(\Z_p), E)$.
\end{proposition}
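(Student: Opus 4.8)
The plan is to reduce to the case of pure tensors and then run a downward induction on the blocks of $L_P$. Recall that $\cC(L_P(\Z_p),E)$ is the completed tensor product $\cC(\GL_{n_1}(\Z_p),E)\widehat\otimes_E\cdots\widehat\otimes_E\cC(\GL_{n_k}(\Z_p),E)$ (using $\cC(X\times Y,E)\cong\cC(X,E)\widehat\otimes_E\cC(Y,E)$ for profinite $X,Y$), so finite $E$-linear combinations of pure tensors $f_1\otimes\cdots\otimes f_k$ with $f_i\in\cC(\GL_{n_i}(\Z_p),E)$ are dense. Hence it suffices to approximate a single such pure tensor by elements of $\cC^{\alg}_+(L_P(\Z_p),E)$ (resp. $\cC^{\alg}_{++}(L_P(\Z_p),E)$); the ultrametric estimate $\us(a\otimes b-a'\otimes b')\geq\min\big(\us(a-a')+\us(b),\,\us(a')+\us(b-b')\big)$, iterated over the $k$ factors, reduces the approximation of a pure tensor to approximations factor by factor.

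For $j\in\{1,\dots,k\}$ write $L_P^{(j)}:=\prod_{i=j}^k\GL_{n_i}$ and let $V_{j,\leq a}\subseteq\cC(L_P^{(j)}(\Z_p),E)$ be the span of the subspaces $\bigotimes_{i=j}^k\cC^{\ul\lambda_i-\alg}(\GL_{n_i}(\Z_p),E)$ over those tuples $(\ul\lambda_j,\dots,\ul\lambda_k)$ which concatenate to a dominant weight of $\GL_{n_j+\cdots+n_k}$ and satisfy $\lambda_{j,1}\leq a$ (the largest entry of the first block is $\leq a$). I would prove by downward induction on $j$ that $V_{j,\leq a}$ is dense in $\cC(L_P^{(j)}(\Z_p),E)$ for every $a\in\Z$. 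For $j=k$ this is exactly the density of $\cC^{\alg}_{\leq a}(\GL_{n_k}(\Z_p),E)$ from Lemma~\ref{lem: lg1-dense}. For the inductive step from $j$ to $j-1$: given a pure tensor $f_{j-1}\otimes\cdots\otimes f_k$, first use Lemma~\ref{lem: lg1-dense} to approximate $f_{j-1}$ by a finite sum $\sum_l w_l$ with $w_l\in\cC^{\ul\mu_l-\alg}(\GL_{n_{j-1}}(\Z_p),E)$ and $\mu_{l,1}\leq a$, and set $b:=\min_l\mu_{l,n_{j-1}}$ (finite). Then apply the inductive hypothesis with parameter $b$ to approximate $f_j\otimes\cdots\otimes f_k$ by a finite sum $\sum_m v_m\in V_{j,\leq b}$. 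Expanding $\big(\sum_l w_l\big)\otimes\big(\sum_m v_m\big)$, each term $w_l\otimes v_m$ has concatenated weight $(\ul\mu_l,\ul\lambda_j^{(m)},\dots,\ul\lambda_k^{(m)})$: dominant within the first block, dominant on the remaining blocks by the inductive hypothesis, and dominant at the transition since $\mu_{l,n_{j-1}}\geq b\geq\lambda_{j,1}^{(m)}$; moreover $\mu_{l,1}\leq a$. Hence the sum lies in $V_{j-1,\leq a}$, and by the tensor estimate above it approximates $f_{j-1}\otimes\cdots\otimes f_k$.

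Taking $j=1$ shows $V_{1,\leq a}$ is dense for every $a$, and since $\cC^{\alg}_+(L_P(\Z_p),E)=\bigcup_{a\in\Z}V_{1,\leq a}$ the density of $\cC^{\alg}_+(L_P(\Z_p),E)$ follows. For $\cC^{\alg}_{++}(L_P(\Z_p),E)$ one runs the identical induction with $V_{j,\leq a}$ replaced by the subspace cut out by the strict inter-block inequalities $\lambda_{s_i}>\lambda_{s_i+1}$ for the blocks $i\geq j$, and at the transition step one chooses the parameter $b-1$ instead of $b$, forcing $\mu_{l,n_{j-1}}>\lambda_{j,1}^{(m)}$; the base case $j=k$ carries no inter-block condition and is again Lemma~\ref{lem: lg1-dense}.

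The main obstacle is the one implicit in the statement: for an interior block of $L_P$ the weights must be bounded both above (by the block to its left) and below (by the block to its right), and the span of $\cC^{\ul\mu-\alg}$ over weights with both bounds fixed is finite-dimensional, hence never dense. The device above circumvents this by inducting from the last block toward the first and choosing the upper bound $b$ for the remaining blocks only \emph{after} the preceding block has been approximated, so that the window $[b,a]$ available to that preceding block is as wide as its own approximation requires — the union over such windows recovering the genuinely dense space $\cC^{\alg}_{\leq a}(\GL_{n_{j-1}}(\Z_p),E)$ of Lemma~\ref{lem: lg1-dense}. Everything else is routine $p$-adic norm bookkeeping under the completed tensor product, for which one uses the identification $\cC(X\times Y,E)\cong\cC(X,E)\widehat\otimes_E\cC(Y,E)$ for profinite $X,Y$ together with \cite[Prop.~A.3]{Pas11}.
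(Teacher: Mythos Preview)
Your proof is correct and follows essentially the same strategy as the paper's. The paper states that ``an easy induction argument'' reduces to the case $k=2$, and then for $k=2$ it fixes the highest weight $\ul\lambda_1$ of the first block, uses Lemma~\ref{lem: lg1-dense} to close up the second factor inside the (finite-dimensional)$\,\otimes\,$(Banach) space $\cC^{\ul\lambda_1-\alg}(\GL_{n_1}(\Z_p),E)\otimes_E\cC(\GL_{n_2}(\Z_p),E)$, and finally closes the first factor using the density of $\cC^{\alg}(\GL_{n_1}(\Z_p),E)$. Your explicit downward induction on the blocks, proving that the auxiliary space $V_{j,\leq a}$ is dense in $\cC(L_P^{(j)}(\Z_p),E)$ for every $a$, is precisely what that ``easy induction'' unpacks to: the inductive hypothesis is the bounded-from-above version of the statement for the tail product, and the passage from $j$ to $j-1$ is the paper's $k=2$ argument with $\GL_{n_{j-1}}$ in place of $\GL_{n_1}$ and $L_P^{(j)}$ in place of $\GL_{n_2}$. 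Your handling of the $++$ case by shifting the inductive parameter from $b$ to $b-1$ is likewise the obvious adaptation.
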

\begin{proof}
It is enough to prove the result for the first one. Using an easy induction argument, we can reduce to the case where $k=2$. In this case, we have (see (\ref{smallera})):
\begin{equation*}
\cC^{\alg}_{++}(L_P(\Z_p), E) \cong \bigoplus_{\substack{\ul{\lambda}_1=(\lambda_1, \cdots, \lambda_{n_1})\\ \lambda_1\geq\cdots \geq \lambda_{n_1}}}\Big( \cC^{\ul{\lambda}_1-\alg}(\GL_{n_1}(\Z_p), E)\otimes_E \cC^{\alg}_{\leq \lambda_{n_1}-1}(\GL_{n_2}(\Z_p),E)\Big).
\end{equation*}
From (\ref{dimfinie}) we have that $\dim_E\cC^{\ul{\lambda}_1-\alg}(\GL_{n_1}(\Z_p), E)<+\infty$, which implies that $F_{\ul{\lambda}_1}:=\cC^{\ul{\lambda}_1-\alg}(\GL_{n_1}(\Z_p), E)\otimes_E \cC(\GL_{n_2}(\Z_p),E)$ is a Banach space. From Lemma \ \ref{lem: lg1-dense} we have that $\cC^{\ul{\lambda}_1-\alg}(\GL_{n_1}(\Z_p), E)\otimes_E \cC^{\alg}_{\leq \lambda_{n_1}-1}(\GL_{n_2}(\Z_p),E)$ is dense in $F_{\ul{\lambda}_1}$. We deduce that the closure of $\cC^{\alg}_+(L_P(\Z_p), E)$ in $\cC(L_P(\Z_p), E)$ contains $\oplus_{\ul{\lambda}_1}F_{\ul{\lambda}_1}\cong \cC^{\alg}(\GL_{n_1}(\Z_p), E)\otimes_E\cC(\GL_{n_2}(\Z_p), E)$. But $\cC^{\alg}(\GL_{n_1}(\Z_p), E)$ is dense in $\cC(\GL_{n_1}(\Z_p), E)$, hence $\sum_{\ul{\lambda}_1}F_{\ul{\lambda}_1}$ is dense in $\cC(L_P(\Z_p), E)$ and the lemma follows.
\end{proof}

\noindent
Let $V$ be an admissible continuous Banach representation of $L_P(\Q_p)$ over $E$ and put:
\begin{equation}\label{equ: lg1-alg}
V^{L_P(\Z_p)-\alg}:=\bigoplus_{\sigma} \Hom_{L_P(\Z_p)}(\sigma, V)\otimes_E \sigma\\
\cong \bigoplus_{\sigma} (V\otimes_E \sigma^\vee)^{L_P(\Z_p)} \otimes_E \sigma
\end{equation}
where $\sigma$ runs through the irreducible algebraic representations of $L_P$ and $\sigma^\vee$ is the dual of $\sigma$. By \cite[Prop. 4.2.4]{Em04}, the evaluation map induces a natural injection $V^{L_P(\Z_p)-\alg}\hookrightarrow V$. We denote by $V^{L_P(\Z_p)-\alg}_+$ (resp. $V^{L_P(\Z_p)-\alg}_{++}$) the subspace of $V^{L_P(\Z_p)-\alg}$ defined as in (\ref{equ: lg1-alg}) but taking the direct sum over those irreducible algebraic representations of $L_P$ of highest weight $(\lambda_1, \cdots, \lambda_n)$ such that $\lambda_1\geq \cdots\geq \lambda_n$ (resp. such that $\lambda_1\geq \cdots \geq \lambda_n$ and $\lambda_{s_i}>\lambda_{s_i+1}$ for $i=2,\cdots, k$). If $W$ is a closed subrepresentation of $V$, one easily checks that $W^{L_P(\Z_p)-\alg}_*\cong W\cap V^{L_P(\Z_p)-\alg}_*$ with $*\in \{\emptyset, +, ++\}$.

\begin{corollary}\label{coro: lg1-dens}
Assume that $V|_{L_P(\Z_p)}$ is isomorphic to a direct summand of $\cC(L_P(\Z_p), E)^{\oplus r}$ for some $r\geq 1$. Then $V^{L_P(\Z_p)-\alg}_*$ is dense in $V$ for $*\in \{\emptyset, +, ++\}$.
\end{corollary}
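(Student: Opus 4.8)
The plan is to combine Proposition~\ref{prop: lg1-dense} (density of $\cC^{\alg}_*(L_P(\Z_p),E)$ in $\cC(L_P(\Z_p),E)$) with the hypothesis that $V|_{L_P(\Z_p)}$ is a direct summand of $\cC(L_P(\Z_p),E)^{\oplus r}$, transferring the density statement from the ``standard'' Banach representation to $V$. First I would fix an $L_P(\Z_p)$-equivariant splitting, i.e.\ an idempotent $e\in \End_{L_P(\Z_p)}(\cC(L_P(\Z_p),E)^{\oplus r})$ with image (isomorphic to) $V$. Since the formation of $L_P(\Z_p)$-algebraic vectors is functorial in the Banach representation and the decomposition (\ref{equ: lg1-alg}) identifies $(-)^{L_P(\Z_p)-\alg}$ with a sum of the finite-dimensional ``$\sigma$-isotypic'' functors $\Hom_{L_P(\Z_p)}(\sigma,-)\otimes_E\sigma$, the idempotent $e$ preserves each of these pieces; hence $e$ restricts to a (continuous) idempotent on $\cC^{\alg}_*(L_P(\Z_p),E)^{\oplus r}$ whose image is exactly $V^{L_P(\Z_p)-\alg}_*$ (for each $*\in\{\emptyset,+,++\}$, since the defining condition on the highest weight is preserved piece by piece).

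Next I would invoke the fact that a continuous $E$-linear idempotent on a Banach space is an open map onto its image, so it sends dense subspaces to dense subspaces. Concretely: by Proposition~\ref{prop: lg1-dense}, $\cC^{\alg}_*(L_P(\Z_p),E)^{\oplus r}$ is dense in $\cC(L_P(\Z_p),E)^{\oplus r}$; applying $e$ and using that $e$ is continuous and surjective onto $V$, the image $e\big(\cC^{\alg}_*(L_P(\Z_p),E)^{\oplus r}\big)=V^{L_P(\Z_p)-\alg}_*$ is dense in $e\big(\cC(L_P(\Z_p),E)^{\oplus r}\big)=V$. This handles all three cases $*\in\{\emptyset,+,++\}$ uniformly, since Proposition~\ref{prop: lg1-dense} gives density for $\cC^{\alg}_+$ and $\cC^{\alg}_{++}$, and density for the full $\cC^{\alg}$ follows either from \cite[Lem.~A.1]{Pas11} directly or a fortiori from the $\cC^{\alg}_+$ case.

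The one point that needs a little care — and is really the only potential obstacle — is checking that the idempotent $e$ genuinely respects the weight conditions defining $V^{L_P(\Z_p)-\alg}_+$ and $V^{L_P(\Z_p)-\alg}_{++}$, not merely the full space $V^{L_P(\Z_p)-\alg}$. This is where I would be slightly careful: the decomposition $\cC(L_P(\Z_p),E)^{\oplus r}\cong\widehat{\bigoplus}_\sigma(\text{$\sigma$-part})$ is canonical and $L_P(\Z_p)$-equivariant, so any $L_P(\Z_p)$-endomorphism — in particular $e$ — is block-diagonal with respect to it; restricting the sum to those $\sigma$ whose highest weight is dominant (resp.\ dominant with the strict inequalities $\lambda_{s_i}>\lambda_{s_i+1}$) is then automatically preserved by $e$. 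Once this is observed the argument is immediate, and no further computation is required.
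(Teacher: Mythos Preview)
Your proof is correct and follows essentially the same approach as the paper: both use that the functor $(-)^{L_P(\Z_p)-\alg}_*$ respects the $L_P(\Z_p)$-equivariant direct sum decomposition $\cC(L_P(\Z_p),E)^{\oplus r}\cong V\oplus W$, together with Proposition~\ref{prop: lg1-dense}. The paper phrases this via the elementary fact that $X_1\oplus X_2$ is dense in $V_1\oplus V_2$ iff each $X_i$ is dense in $V_i$, while you phrase it via the projection idempotent $e$ sending dense subspaces to dense subspaces; these are the same argument (and you do not actually need the open mapping property---continuity and surjectivity of $e$ onto $V$ already suffice to push density forward).
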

\begin{proof}
If $V_1$, $V_2$ are two locally convex $E$-vector spaces and $X_i\subseteq V_i$, $i=1,2$ two $E$-vector subspaces, then $X_1\oplus X_2$ is dense in $V_1\oplus V_2$ (with the direct sum topology) if and only if $X_i$ is dense in $V_i$ for $i=1,2$. The result follows then from Proposition \ref{prop: lg1-dense} together with $(V_1\oplus V_2)^{L_P(\Z_p)-\alg}_*$ = $(V_1)^{L_P(\Z_p)-\alg}_*\oplus (V_2)^{L_P(\Z_p)-\alg}_*$ for $*\in \{\emptyset, +, ++\}$.
\end{proof}

\subsubsection{Benign points}\label{sec: lg1-bpts}

\noindent
We define benign points of $\Spec \widetilde{\bT}(U^{\wp})_{\overline{\rho}}^{P-\ord}[1/p]$ and prove several results on them.\\

\noindent
We keep the previous notation. We also keep all the notation and assumption of \S~\ref{sec: Pord-3} with $L=\Q_p$ (in particular $U^p$ is sufficiently small, $U_v$ is maximal for $v|p$, $v\neq \wp$, and we assume Hypothesis \ref{hypoglobal}). We denote by $B$ the subgroup of upper triangular matrices in $\GL_n$ and by $T$ the torus of diagonal matrices. We assume moreover $n_i\leq 2$ for all $i=1,\cdots, k$ (though many results in this section hold more generally). For $x$ a closed point of $\Spec \widetilde{\bT}(U^{\wp})_{\overline{\rho}}^{P-\ord}[1/p]$, we denote by $\fm_x$ the associated maximal ideal, $k(x)$ the residue field (a finite extension of $E$) and by $\fp_x:=\fm_x\cap \widetilde{\bT}(U^{\wp})_{\overline{\rho}}^{P-\ord}$ (a prime ideal). We also denote by $\fm_x$ (resp. $\fp_x$) the corresponding maximal ideal of $\widetilde{\bT}(U^{\wp})_{\overline{\rho}}[1/p]$ (resp. the corresponding prime ideal of $\widetilde{\bT}(U^{\wp})_{\overline{\rho}}$). We easily deduce from the left exactness of $\Ord_P$ (\cite[Prop.~3.2.4]{EOrd1}) an $L_P(\Q_p)$-equivariant isomorphism:
\begin{equation*}
\Ord_P\big(\widehat{S}(U^{\wp}, \bW^{\wp})_{\overline{\rho}}[\fp_x]\big) \cong \Ord_P\big(\widehat{S}(U^{\wp}, \bW^{\wp})_{\overline{\rho}}\big)[\fp_x].
\end{equation*}
and we recall that $\Ord_P(\widehat{S}(U^{\wp}, \bW^{\wp})_{\overline{\rho}}[\fp_x])$ is an invariant lattice in the admissible unitary $L_p(\Q_p)$-representation $\Ord_P(\widehat{S}(U^{\wp}, W^{\wp})_{\overline{\rho}}[\fm_x])$. We denote by:
$$\rho_x: \Gal_F \lra \GL_n(R_{\overline{\rho}, S(U^p)})\lra \GL_n(\widetilde{\bT}(U^{\wp})_{\overline{\rho}}^{P-\ord})\lra \GL_n(k(x))$$
the continuous representation attached to $x$ and set $\rho_{x, \widetilde{\wp}}:=\rho_{x}|_{\Gal_{F_{\widetilde{\wp}}}}$. We also denote by $x_i$ for $i\in \{1,\cdots,k\}$ the associated point of $\Spec R_{\overline{\rho}_i}[1/p]$ via (\ref{equ: ord-RT}) and $\rho_{x_i}: \Gal_{\Q_p} \ra \GL_{n_i}(k(x))$ the attached representation. Thus $\rho_{x, \widetilde{\wp}}$ is a successive extension of the $\rho_{x_i}$ for $i=1,\cdots,k$ and is strictly $P$-ordinary by Lemma \ref{lem: ord-strict} (applied with $E=k(x)$). In particular each $\rho_{x_i}$ is indecomposable by Hypothesis \ref{hypo: ord-Pord2}.

\begin{definition}
A closed point $x\in \Spec \widetilde{\bT}(U^{\wp})_{\overline{\rho}}^{P-\ord}[1/p]$ is benign if:
\begin{equation*}
\Ord_P\big(\widehat{S}(U^{\wp}, W^{\wp}\big)_{\overline{\rho}}[\fm_x]\big)^{L_P(\Z_p)-\alg}_+\neq 0.
\end{equation*}
\end{definition}

\noindent
We recall that a closed point $x\in \Spec \widetilde{\bT}(U^{\wp})_{\overline{\rho}}[1/p]$ is {\it classical} if $\widehat{S}(U^{\wp}, W^{\wp})_{\overline{\rho}}[\fm_x]^{\lalg} \neq 0$. If $x$ classical, then it follows \cite[Prop. 4.2.4]{Em04} that there is an integral dominant $\lambda=(\lambda_1, \cdots, \lambda_n)$ as in \S~\ref{domialgebraic} such that~:
$$\big(\widehat{S}(U^{\wp}, W^{\wp})_{\overline{\rho}})[\fm_x]\otimes_E L_P(\lambda)^\vee\big)^{\sm}\otimes_EL(\lambda)\hookrightarrow \widehat{S}(U^{\wp}, W^{\wp})_{\overline{\rho}}[\fm_x]^{\lalg}\hookrightarrow (\widehat{S}(U^{\wp}, W^{\wp})_{\overline{\rho}})[\fm_x].$$
One then easily deduces from (\ref{equ: lalgaut2}) and, e.g. \cite[Thm.~6.5(v)]{Thor} (taking into account the normalisations) and \cite[Rem.~4.2.4]{BH}, that $\HT(\rho_{x,\widetilde{\wp}})=\{\lambda_1,\lambda_2-1,\cdots,\lambda_n-(n-1)\}$. In particular, $\lambda$ is uniquely determined by $x$.

\begin{proposition}\label{thm: lg1-bp}
(1) A benign point is classical.\\
\noindent
(2) The set of benign points is Zariski-dense in $\Spec \widetilde{\bT}(U^{\wp})_{\overline{\rho}}^{P-\ord}[1/p]$.
\end{proposition}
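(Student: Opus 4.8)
### Proof proposal for Proposition \ref{thm: lg1-bp}

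The plan is to prove part (1) first and then use it, together with an eigenvariety-style argument, to get the Zariski-density in part (2).

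For part (1), suppose $x$ is benign, so there is an integral dominant weight $\lambda = (\lambda_1,\dots,\lambda_n)$ (with $\lambda_{s_i} > \lambda_{s_i+1}$ allowed to fail, since we only impose dominance, not the $++$ condition) and a nonzero $L_P(\Q_p)$-equivariant map $\pi_\infty \otimes_E L_P(\lambda) \hookrightarrow \Ord_P(\widehat S(U^\wp,W^\wp)_{\overline\rho}[\fm_x])$ for some smooth admissible $L_P(\Q_p)$-representation $\pi_\infty$. The key input is Proposition \ref{prop: lg1-semisimp}: any such morphism lands in $\Ord_P(\widehat S(U^\wp,W^\wp)^{\lalg})[\fm_x]$. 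Hence $\Ord_P$ applied to the locally algebraic vectors of the localized space is nonzero at $x$, and by the adjunction property (Proposition \ref{prop: ord-adj}) this produces a nonzero $\GL_n(\Q_p)$-equivariant map from a locally algebraic parabolic induction $(\Ind_{\overline P(\Q_p)}^{\GL_n(\Q_p)}\pi_\infty)^\infty \otimes_E L(\lambda)$ into $\widehat S(U^\wp,W^\wp)_{\overline\rho}[\fm_x]$, whose image is locally algebraic. Therefore $\widehat S(U^\wp,W^\wp)_{\overline\rho}[\fm_x]^{\lalg}\neq 0$, i.e. $x$ is classical. (One should also check that $x$ as a point of $\Spec\widetilde{\bT}(U^\wp)_{\overline\rho}^{P-\ord}$ really comes from a point of $\Spec\widetilde{\bT}(U^\wp)_{\overline\rho}$, which follows since $\widetilde{\bT}(U^\wp)_{\overline\rho}^{P-\ord}$ is a quotient of $\widetilde{\bT}(U^\wp)_{\overline\rho}$.)

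For part (2), I would argue as follows. By Lemma \ref{lem: Pord-comp}(2), $\Ord_P(\widehat S(U^\wp,\bW^\wp)_{\overline\rho})|_{L_P(\Z_p)}$ is a direct summand of $\cC(L_P(\Z_p),\co_E)^{\oplus r}$; inverting $p$, Corollary \ref{coro: lg1-dens} applies and shows that the dominant $L_P(\Z_p)$-algebraic vectors $\Ord_P(\widehat S(U^\wp,W^\wp)_{\overline\rho})^{L_P(\Z_p)-\alg}_+$ are dense in $\Ord_P(\widehat S(U^\wp,W^\wp)_{\overline\rho})$. Since $\widetilde{\bT}(U^\wp)_{\overline\rho}^{P-\ord}$ acts faithfully on $\Ord_P(\widehat S(U^\wp,\bW^\wp)_{\overline\rho})$ (Lemma \ref{lem: Pord-comp}(1)) and the latter is $\varpi_E$-adically admissible over it, the support of the (dense) dominant-algebraic part is again all of $\Spec\widetilde{\bT}(U^\wp)_{\overline\rho}^{P-\ord}$; equivalently, no nonzero element of $\widetilde{\bT}(U^\wp)_{\overline\rho}^{P-\ord}$ kills the dominant-algebraic vectors. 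Concretely: if some nonzero $f\in \widetilde{\bT}(U^\wp)_{\overline\rho}^{P-\ord}$ vanished on $\Ord_P(\widehat S(U^\wp,W^\wp)_{\overline\rho})^{L_P(\Z_p)-\alg}_+$, then by continuity of the $\widetilde{\bT}$-action it would vanish on the closure, i.e. on all of $\Ord_P(\widehat S(U^\wp,W^\wp)_{\overline\rho})$, contradicting faithfulness. Now the dominant-algebraic vectors decompose over weights $\lambda$, and for each $\lambda$ the $\lambda$-isotypic piece is, via $\Ord_P$ and Proposition \ref{prop: ord-adj}, built out of the classical (locally algebraic) automorphic spectrum at weight $\lambda$ localized at $\overline\rho$; the Hecke eigensystems occurring there are exactly (a Zariski-dense subset of, in fact all of) the benign points of that weight. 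Thus the union over all dominant $\lambda$ of the benign points is Zariski-dense in $\Spec\widetilde{\bT}(U^\wp)_{\overline\rho}^{P-\ord}[1/p]$: any element of $\widetilde{\bT}(U^\wp)_{\overline\rho}^{P-\ord}[1/p]$ vanishing at all benign points would vanish on the dense dominant-algebraic subspace, hence be $0$.

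The main obstacle I expect is bookkeeping at the interface between the abstract density statement (Corollary \ref{coro: lg1-dens}) and the automorphic content: one needs to verify that the $\lambda$-dominant-algebraic vectors of $\Ord_P(\widehat S(U^\wp,W^\wp)_{\overline\rho})$ really are accounted for by classical automorphic forms of weight $\lambda$ — this uses the adjunction of Proposition \ref{prop: ord-adj} (to pass from $L_P(\Q_p)$-data back up to $\GL_n(\Q_p)$-data) together with the decomposition (\ref{equ: lalgaut2}) of the locally algebraic vectors of completed cohomology, and one must track the shift in Hodge–Tate weights coming from $\delta_P$ and the normalisations in \S\ref{classical}. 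A secondary technical point is ensuring that the density of the \emph{dominant} algebraic vectors (rather than all algebraic vectors) suffices; this is exactly why Proposition \ref{prop: lg1-dense} was proved for $\cC^{\alg}_+$ and not just $\cC^{\alg}$, and the only thing to check is that dominance of $\lambda$ is the condition making $L_P(\lambda)$ a genuine (dominant) algebraic representation of $\GL_n$ contributing to (\ref{equ: lalgaut2}).
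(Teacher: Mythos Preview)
Your proposal is correct and follows essentially the same route as the paper. For (1), the paper bypasses Proposition~\ref{prop: lg1-semisimp} and applies Proposition~\ref{prop: ord-adj} directly (the induced map to $\widehat S(U^\wp,W^\wp)_{\overline\rho}[\fm_x]$ is nonzero since the original injection can be recovered from it, and its image is automatically locally algebraic), but your detour through Proposition~\ref{prop: lg1-semisimp} is harmless; for (2), your outline matches the paper's argument exactly, and the ``bookkeeping obstacle'' you anticipate is handled precisely as you say---via Proposition~\ref{prop: ord-adj} and the decomposition (\ref{equ: lalgaut2})---to show each dominant-algebraic vector lies in a finite direct sum $\bigoplus_{x\in C}\Ord_P(\widehat S(U^\wp,W^\wp)_{\overline\rho}[\fm_x])$ over benign points $x$.
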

\begin{proof}
(1) Let $x$ be a benign point. The admissibility of the $L_P(\Q_p)$-continuous representation $\Ord_P(\widehat{S}(U^{\wp}, W^{\wp})_{\overline{\rho}}[\fm_x]$ together with \cite[Thm.~7.1]{ST03} and \cite[Prop.~6.3.6]{Em04} imply that there exist a smooth admissible representation $\pi_x^{\infty}$ of $L_P(\Q_p)$ over $k(x)$ with $(\pi_x^{\infty})^{L_P(\Z_p)}\neq 0$ and $\lambda$ integral dominant such that:
\begin{equation}\label{equ: lg1-bp1}
\pi_x:=\pi_x^{\infty}\otimes_E L_P(\lambda) \hooklongrightarrow \Ord_P\big(\widehat{S}(U^{\wp}, W^{\wp})_{\overline{\rho}}[\fm_x]\big).
\end{equation}
Denote by $\widehat{\pi}_x$ the closure of $\pi_x$ in $\Ord_P\big(\widehat{S}(U^{\wp}, W^{\wp})_{\overline{\rho}}[\fm_x]\big)$, by Proposition \ref{prop: ord-adj} we have continuous $L_P(\Q_p)$-equivariant morphisms:
\begin{multline*}
(\Ind_{\overline{P}(\Q_p)}^{\GL_n(\Q_p)} \pi_x^{\infty})^{\infty}\otimes_E L(\lambda) \hooklongrightarrow (\Ind_{\overline{P}(\Q_p)}^{\GL_n(\Q_p)} \pi_x)^{\an} \hooklongrightarrow (\Ind_{\overline{P}(\Q_p)}^{\GL_n(\Q_p)} \widehat{\pi}_x)^{\cC^0}\\ \lra \widehat{S}(U^{\wp}, W^{\wp})_{\overline{\rho}}[\fm_x],
\end{multline*}
the composition of which is nonzero. (1) follows (and $\lambda$ is the unique dominant weight as discussed just before Proposition \ref{thm: lg1-bp}).\\
\noindent
(2) Let $\cI:=\bigcap_{x \in Z_0} \fm_x$ where $Z_0$ is the set of benign points of $\Spec \widetilde{\bT}(U^{\wp})_{\overline{\rho}}^{P-\ord}[1/p]$, we have to prove $\cI=0$. By Lemma \ref{lem: Pord-comp}(2) and Corollary \ref{coro: lg1-dens}, $\Ord_P(\widehat{S}(U^{\wp}, W^{\wp})_{\overline{\rho}})^{L_P(\Z_p)-\alg}_{+}$ is dense in $\Ord_P(\widehat{S}(U^{\wp}, W^{\wp})_{\overline{\rho}})$. Since by Lemma \ref{lem: Pord-comp}(1) the action of $\widetilde{\bT}(U^{\wp})_{\overline{\rho}}^{P-\ord}$ on $\Ord_P(\widehat{S}(U^{\wp}, W^{\wp})_{\overline{\rho}})$ is faithful, it is thus sufficient to prove that $\Ord_P(\widehat{S}(U^{\wp}, W^{\wp})_{\overline{\rho}})^{L_P(\Z_p)-\alg}_{+}$ is annihilated by $\cI$. Let $\lambda$ be an integral dominant weight, by (\ref{equ: lg1-alg}) we are reduced to prove that any $v\in (\Ord_P(\widehat{S}(U^{\wp}, W^{\wp})_{\overline{\rho}})\otimes_E L_P(\lambda)^\vee)^{L_P(\Z_p)} \otimes_E L_P(\lambda)$ is annihilated by $\cI$. It is enough to consider the case $v=v_{\infty}\otimes u$ with $v_{\infty}\in (\Ord_P(\widehat{S}(U^{\wp}, W^{\wp})_{\overline{\rho}})\otimes_E L_P(\lambda)^\vee)^{L_P(\Z_p)}$ and $u \in L_P(\lambda)$. Let $V_{\infty}$ be the smooth $L_P(\Q_p)$-subrepresentation of $(\Ord_P(\widehat{S}(U^{\wp}, W^{\wp})_{\overline{\rho}})\otimes_E L_P(\lambda)^\vee)^{\sm}$ generated by $v_{\infty}$ and consider the $L_P(\Q_p)$-equivariant injection (see \cite[Prop. 4.2.4]{Em04}):
\begin{equation}\label{equ: lg1-inj1}
V_{\infty}\otimes_E L_P(\lambda) \hooklongrightarrow  \Ord_P(\widehat{S}(U^{\wp}, W^{\wp})_{\overline{\rho}}).
\end{equation}
By Proposition \ref{prop: ord-adj} again, this injection induces:
\begin{multline}\label{equ: lg1-adj1}
(\Ind_{\overline{P}(\Q_p)}^{\GL_n(\Q_p)}V_{\infty})^{\infty} \otimes_E L(\lambda) \lra \widehat{S}(U^{\wp}, W^{\wp})_{\overline{\rho}}^{\lalg} \\
\cong \bigoplus_{x \text{ classical}} \widehat{S}(U^{\wp}, W^{\wp})_{\overline{\rho}}[\fm_x]^{\lalg} \hooklongrightarrow \widehat{S}(U^{\wp}, W^{\wp})_{\overline{\rho}}
\end{multline}
where the middle isomorphism follows from (\ref{equ: lalgaut2}). Since we can recover the injection (\ref{equ: lg1-inj1}) from (\ref{equ: lg1-adj1}) by applying the functor $\Ord_P(\cdot)$ (cf. Proposition \ref{prop: ord-adj}), we see (\ref{equ: lg1-inj1}) factors through:
\begin{multline*}
\Ord_P\Big(\bigoplus_{x \text{ classical}} \widehat{S}(U^{\wp}, W^{\wp})_{\overline{\rho}}[\fm_x]\Big)\cong \bigoplus_{x \text{ classical}} \Ord_P\big(\widehat{S}(U^{\wp}, W^{\wp})_{\overline{\rho}}[\fm_x]\big)\hookrightarrow \\
\Ord_P\big(\widehat{S}(U^{\wp}, W^{\wp})_{\overline{\rho}}[\fm_x]\big).
\end{multline*}
Since $V_{\infty}$ is generated by $v_\infty$ and each $\Ord_P(\widehat{S}(U^{\wp}, W^{\wp})_{\overline{\rho}}[\fm_x])$ is preserved by $L_P(\Q_p)$, there is a finite set $C$ of classical points such that (\ref{equ: lg1-inj1}) has image in $\oplus_{x\in C} \Ord_P(\widehat{S}(U^{\wp}, W^{\wp})_{\overline{\rho}}[\fm_x])$. In particular $v\in \Ord_P(\widehat{S}(U^{\wp}, W^{\wp})_{\overline{\rho}})^{L_P(\Z_p)-\alg}_{+}$ is contained in:
\begin{equation*}
\bigoplus_{x\in C} \Ord_P\big(\widehat{S}(U^{\wp}, W^{\wp})_{\overline{\rho}}[\fm_x]\big)^{L_P(\Z_p)-\alg}_{+}=\bigoplus_{x\in C\cap Z_0} \Ord_P\big(\widehat{S}(U^{\wp}, W^{\wp})_{\overline{\rho}}[\fm_x]\big)^{L_P(\Z_p)-\alg}_{+}
\end{equation*}
and hence is annihilated by $\cI$. (2) follows.
\end{proof}

\noindent
Let $x$ be a closed point of $\Spec \widetilde{\bT}(U^{\wp})_{\overline{\rho}}^{P-\ord}[1/p]$. For $i=1,\cdots, k$ we denote by $\widehat{\pi}(\rho_{x_i})$ the continuous finite length representation of $\GL_{n_i}(\Q_p)$ over $k(x)$ associated to $\rho_{x_i}$ via the $p$-adic local Langlands correspondence for $\GL_2(\Q_p)$ (\cite{Colm10a}) normalized as in \cite[\S~3.1]{Br11b} when $n_i=2$, via local class field theory for $\GL_1(\Q_p)=\Q_p^{\times}$ normalized as in \S~\ref{intronota} when $n_i=1$. Recall that $\overline{B}_2$ denotes the lower triangular matrices of $\GL_2$.

\begin{proposition}\label{prop: lg1-bpp}
(1) If $x$ is a benign point then $\rho_{x, \widetilde{\wp}}$ is semi-stable.\\
\noindent
(2) If $x$ is benign (hence classical by Proposition \ref{thm: lg1-bp}(1)) and $\lambda=(\lambda_1,\lambda_2,\cdots,\lambda_n)$ is the unique integral dominant weight associated to $x$ before Proposition \ref{thm: lg1-bp}, then for $i=1,\cdots, k$, $\rho_{x_i}$ is semi-stable with $\HT(\rho_{x_i})=\{\lambda_{s_i+1}-s_{i}, \lambda_{s_{i}+n_i}-(s_{i}+n_i-1)\}$ (note these two integers are the same when $n_i=1$ and recall $s_i=\sum_{j=0}^{i-1} n_j$).
\end{proposition}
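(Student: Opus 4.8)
The plan is to deduce both assertions from the structure of the associated automorphic representation together with the weak‑admissibility estimates already used in the proof of Proposition \ref{prop: ord-cLLO}. First I would record the consequences of Proposition \ref{thm: lg1-bp}(1): since $x$ is benign it is classical, so there is an automorphic representation $\pi\cong\pi_\infty\otimes_{\bC}\pi^\infty$ of $G(\bA_{F^+})$ contributing to $\widehat{S}(U^{\wp},W^{\wp})_{\overline\rho}[\fm_x]^{\lalg}$, and by classical local–global compatibility (\cite[Thm.~6.5]{Thor}, \cite{Car12}, taking our normalisations into account) $\rho_{x,\widetilde\wp}$ is potentially semi-stable with $\rec(\pi_\wp^{\sm})\cong\W(\rho_{x,\widetilde\wp})^{\sss}$ and $\HT(\rho_{x,\widetilde\wp})=\{\lambda_1,\lambda_2-1,\dots,\lambda_n-(n-1)\}$, where $\lambda=(\lambda_1\geq\cdots\geq\lambda_n)$ is the unique dominant weight attached to $x$ (the ``$+$'' in the definition of benign is exactly what guarantees this weight is $\GL_n$-dominant, not merely $L_P$-dominant). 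Moreover, by Theorem \ref{thm: ord-ordlg} and Lemma \ref{lem: ord-strict} applied over $k(x)$, $\rho_{x,\widetilde\wp}$ is strictly $P$-ordinary, hence a successive extension $0=T_0\subset T_1\subset\cdots\subset T_k=T$ with $\gr_iT_\bullet\cong\rho_{x_i}$ of dimension $n_i$.

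Next I would extract from the proof of Proposition \ref{thm: lg1-bp}(1) the embedding $\pi_x^\infty\otimes_E L_P(\lambda)\hookrightarrow\Ord_P(\widehat{S}(U^{\wp},W^{\wp})_{\overline\rho}[\fm_x])$ with $\pi_x^\infty$ smooth irreducible and $(\pi_x^\infty)^{L_P(\Z_p)}\neq0$; writing $L_P\cong\prod_i\GL_{n_i}$ and $\pi_x^\infty\cong\boxtimes_i\pi_i^\infty$, each $\pi_i^\infty$ is then an unramified irreducible smooth representation of $\GL_{n_i}(\Q_p)$. By Proposition \ref{prop: ord-adj}, $\pi_\wp^{\sm}$ is a quotient of $(\Ind_{\overline{P}(\Q_p)}^{\GL_n(\Q_p)}\pi_x^\infty)^{\infty}$, hence, by transitivity of parabolic induction, a subquotient of an unramified principal series of $\GL_n(\Q_p)$; therefore $\W(\rho_{x,\widetilde\wp})^{\sss}=\rec(\pi_\wp^{\sm})$ is unramified. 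Since $\rho_{x,\widetilde\wp}$ is potentially semi-stable, the inertia subgroup acts on $\W(\rho_{x,\widetilde\wp})$ through a finite quotient, so this action is semisimple; being semisimple with unramified semisimplification it is trivial on inertia, i.e. $\W(\rho_{x,\widetilde\wp})$ is unramified. This forces $\rho_{x,\widetilde\wp}$ to be semi-stable, proving (1), and then each subquotient $\rho_{x_i}$ is semi-stable as well.

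For the Hodge–Tate weights in (2) I would run the argument of steps (a)–(f) in the proof of Proposition \ref{prop: ord-cLLO}. First reduce to the case of a maximal parabolic, using Lemma \ref{ordpart}, Remark \ref{ordpart0} and an induction on $k$, so that $L_P\cong\GL_{n_1}\times\GL_{n_2}$ with $n_1,n_2\leq2$ and $0\to\rho_{x_1}\to\rho_{x,\widetilde\wp}\to\rho_{x_2}\to0$. Using that $\pi_\wp^{\sm}$ is a quotient of $(\Ind_{\overline{P}(\Q_p)}^{\GL_n(\Q_p)}\pi_1^\infty\boxtimes\pi_2^\infty)^{\infty}$, one produces inside the Deligne–Fontaine module $D_{\st}(\rho_{x,\widetilde\wp}\vert_{\Gal_{L'}})$ the $\varphi$-submodule $D_1$ corresponding to $\pi_1^\infty$ and compares its Hodge and Newton numbers. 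The one new point compared with Proposition \ref{prop: ord-cLLO} is that the Hodge filtration now has jumps $\{\lambda_1,\lambda_2-1,\dots,\lambda_n-(n-1)\}$ instead of $\{1-n,\dots,0\}$; the dominance $\lambda_1\geq\cdots\geq\lambda_n$ is exactly what keeps the inequalities $t_H\geq\ast$ in the right direction and forces $D_1$ to be the ``slope‑$\leq\ast$'' $\varphi$-submodule, so that $\HT(\rho_{x_1})$ consists of the $n_1$ largest elements of $\HT(\rho_{x,\widetilde\wp})$, namely $\{\lambda_1,\dots,\lambda_{n_1}-(n_1-1)\}$, and $\HT(\rho_{x_2})$ of the remaining ones. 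An easy induction then yields $\HT(\rho_{x_i})=\{\lambda_{s_i+1}-s_i,\lambda_{s_i+n_i}-(s_i+n_i-1)\}$ for all $i$.

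The main obstacle I expect is precisely this adaptation of the weak‑admissibility estimates of Proposition \ref{prop: ord-cLLO} to a general dominant weight $\lambda$: one has to check that, even though the Hodge–Tate weights are no longer consecutive, the dominance of $\lambda$ still guarantees that the $P$-ordinary filtration is ``noncritical'', so that the graded pieces carry the expected decreasing blocks of Hodge–Tate weights, and simultaneously to re-derive (as in step (f)) that the residual $\Gal(L'/L)$-action on $D_1$ matches the one cut out by $\rec(\pi_1^\infty)$ — which is also what makes the $N$-stability and $\Gal(L'/L)$-stability of $D_1$ compatible with the semi-stability established in part (1).
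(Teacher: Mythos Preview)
Your approach is essentially the same as the paper's. The paper also picks an unramified irreducible $\otimes_i\pi_i^\infty$ inside $\pi_x^\infty$ (via a spherical Hecke eigenvector rather than by asserting $\pi_x^\infty$ itself is irreducible), uses adjunction plus classical local--global compatibility to see $\W(\rho_{x,\widetilde\wp})^{\sss}$ is unramified and hence $\rho_{x,\widetilde\wp}$ is semi-stable, and then runs the weak-admissibility argument of Proposition \ref{prop: ord-cLLO} adapted to the weight $\lambda$, finally invoking strict $P$-ordinarity to identify the resulting pieces with the $\rho_{x_i}$.

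Two small points to tighten. First, in your reduction to a maximal parabolic you write ``$L_P\cong\GL_{n_1}\times\GL_{n_2}$ with $n_1,n_2\leq 2$'': when you peel off the top block as in step (a) of Proposition \ref{prop: ord-cLLO} you get $L_{P'}\cong\GL_{n-n_k}\times\GL_{n_k}$, and $n-n_k$ need not be $\leq 2$; the induction continues by working with $\rho^{k-1}$ and the parabolic $P\cap\GL_{n-n_k}$. Second, the adaptation to general $\lambda$ affects not only the $t_H$ side but also the $t_N$ computation: the slope-zero condition defining $\Ord_P$ is applied to $\pi_i^\infty\otimes L_i(\underline\lambda_i)$, so unitarity of the central character now gives $\val_p(\alpha_{s_i+1})=-\lambda_{s_i+1}+s_i$ (when $n_i=1$) and $\val_p(\alpha_{s_i+1})+\val_p(\alpha_{s_i+2})=-\lambda_{s_i+1}-\lambda_{s_i+2}+2s_i+1$ (when $n_i=2$); these are what make $t_N(D^{k-1})$ match $t_H(D^{k-1})$ and the argument go through. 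The paper records exactly these constraints and then argues as in (d) of Proposition \ref{prop: ord-cLLO}.
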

\begin{proof}
We fix a benign point $x$ and use the notation of the proof of Proposition \ref{thm: lg1-bp}(1).\\
\noindent
(1) Let $0\neq v\in (\pi_x^{\infty})^{L_P(\Z_p)}$ be an eigenvector for the spherical Hecke algebra of $L_P(\Q_p)$ with respect to $L_P(\Z_p)$ and let $\pi^{\infty}$ be the $L_P(\Q_p)$-subrepresentation of $\pi_x^{\infty}$ generated by $v$. Then it is easy to check that we have:
\begin{equation*}
\pi^{\infty}\cong \bigotimes_{i=1,\cdots, k} \pi_i^{\infty}
\end{equation*}
where if $n_i=1$, $\psi_{s_i+1}:=\pi_i^{\infty}$ is an unramified character of $\Q_p^{\times}$ and if $n_i=2$, either there exist unramified characters $\psi_{s_i+1}$, $\psi_{s_i+2}$ of $\Q_p^{\times}$ such that $\pi_i^{\infty}\cong (\Ind_{\overline{B}_2(\Q_p)}^{\GL_2(\Q_p)} \psi_{s_i+1} \otimes \psi_{s_i+2})^{\infty}$ with $\psi_{s_i+1} \ne \psi_{s_i+2}$ or $\pi_i^{\infty}$ is isomorphic to the composition of an unramified character of $\Q_p^{\times}$ with the determinant character (note that we can assume $\psi_{s_i+1} \ne \psi_{s_i+2}$ in the first case since otherwise we would in fact be in the second). As in (\ref{equ: lg1-bp1}), we have an $L_P(\Q_p)$-equivariant embedding:
\begin{equation}\label{equ: lg1-bp2}
\Big(\bigotimes_{i=1,\cdots, k} \pi_i^{\infty}\Big)\otimes_{E} L_P(\lambda) \hooklongrightarrow \Ord\big(\widehat{S}(U^{\wp},W^{\wp})_{\overline{\rho}}[\fm_x]\big)
\end{equation}
which, by Proposition \ref{prop: ord-adj}, induces a nonzero morphism:
\begin{equation}\label{equ: lg1-adj2}
 \big(\Ind_{\overline{P}(\Q_p)}^{\GL_n(\Q_p)}\otimes_{i=1,\cdots, k} \pi_i^{\infty}\big)^{\infty}\otimes_{E} L(\lambda) \lra \widehat{S}(U^{\wp},W^{\wp})_{\overline{\rho}}[\fm_x].
\end{equation}
By (\ref{equ: lalgaut2}) and the local-global compatibility at $\ell=p$ in the classical local Langlands correspondence (cf. \cite{Car12}), there exists an automorphic representation $\pi$ of $G$ associated to $\rho_x$ such that the factor of $\pi$ at the place $\wp$ is of the form $\pi_{\wp}\otimes_{k(x)}\overline{\Q_p}$ where $\pi_{\wp}$ is an irreducible constituent of $(\Ind_{\overline{P}(\Q_p)}^{\GL_n(\Q_p)}\otimes_{i=1,\cdots, k} \pi_i^{\infty})^{\infty}$ (note that the action of $\GL_n(\Q_p)$ on $\pi_{\wp}$ actually also depends on $\tilde\wp$). Since the representation $\pi_i^{\infty}$ is unramified for all $i$, it easily follows from \cite{Car12} and properties of the local Langlands correspondence that the potentially semi-stable $\rho_{x, \widetilde{\wp}}$ must be semi-stable. This proves (1). Moreover, since $\overline{\rho}$ is irreducible so is $\rho_x$, and thus $\pi_{\wp}$ is a generic representation of $\GL_n(\Q_p)$ as follows by base change to $\GL_n$ (\cite{Ro}, \cite{Lab}) and genericity of local components of cuspidal automorphic representations of $\GL_n$. This implies that $\pi_i^{\infty}$ is infinite dimensional when $n_i=2$ since otherwise it is easy to check that $(\Ind_{\overline{P}(\Q_p)}^{\GL_n(\Q_p)}\otimes_{i=1,\cdots, k} \pi_i^{\infty})^{\infty}$ has no generic irreducible constituent.\\
\noindent
(2) The fact that $\rho_{x_i}$ is semi-stable follows from (1). We prove the statement on $\HT(\rho_{x_i})$. If $n_i=1$, set $\alpha_{s_i+1}:=p^{s_i} \psi_{s_i+1}(p)$. If $n_i=2$, set $\alpha_{s_i+1}:=\psi_{s_i+1}(p)p^{s_i}$, $\alpha_{s_i+2}:=\psi_{s_i+2}(p)p^{s_{i}+1}$ (so $\alpha_{s_i+1}\alpha_{s_i+2}^{-1}\neq p^{-1}$). It follows from \cite{Car12} and \cite[Thm. 1.2(b)]{Scho13} that we have:
\begin{equation}\label{equ: lg1-wd}
\rec(\pi_{\widetilde{\wp}})\cong  W(\rho_{x, \widetilde{\wp}})^{\sss}\cong \oplus_{j=1}^n \unr(\alpha_j)
\end{equation}
(rec := semi-simplified local Langlands correspondence, see \S~\ref{classical}). Denote by $L_i(\ul{\lambda}_i)$ the algebraic representation of $\GL_{n_i}(\Q_p)$ over $E$ of highest weight $\ul{\lambda}_i=(\lambda_{s_i+1}, \lambda_{s_i+n_i})$. Since $\pi_i^{\infty}\otimes_{E} L_i(\ul{\lambda}_i)$ is unitary by (\ref{equ: lg1-bp2}), we have if $n_i=1$:
\begin{equation}\label{equ: lg1-val}
  \val_p(\alpha_{s_i+1})=-\lambda_{s_i+1} +s_i
\end{equation}
and if $n_i=2$:
\begin{equation}\label{equ: lg1-val2}
 \val_p(\alpha_{s_i+1})+\val_p(\alpha_{s_i+2})=-\lambda_{s_i+1} -\lambda_{s_i+2}+s_i+(s_i+1).
\end{equation}
For $j\in \{1,\cdots,n\}$ set $\mu_j:=-\lambda_j+(j-1)$: the $\mu_j$ are the opposite of the Hodge-Tate weights of $\rho_{x,\widetilde{\wp}}$ and are strictly increasing with $j$. Let $D^{k-1}$ be a $\varphi$-submodule of $D_{\st}(\rho_{x, \widetilde{\wp}})$ such that the $\varphi$-semi-simplification $(D^{k-1})^{\sss}$ is isomorphic to $\oplus_{j=1}^{n-n_k} \unr(\alpha_j)$, we thus have $t_H(D^{k-1})\geq [k(x):\Q_p](\sum_{j=1}^{n-n_k} \mu_j)$ and:
\begin{equation*}
 t_N(D^{k-1})=[k(x):\Q_p]\Big(\sum_{j=1}^{n-n_k} \mu_j\Big), \ \ \ t_N(D/D^{k-1})=[k(x):\Q_p]\Big(\sum_{j=n-n_k+1}^n \mu_j\Big)
\end{equation*}
($t_N$ and $t_H$ as in the proof of Proposition \ref{prop: ord-cLLO}). As in (d) of the proof of Proposition \ref{prop: ord-cLLO}, we can prove $D^{k-1}$ is stable by $N$ and (hence) that $t_H(D^{k-1})=[k(x):\Q_p](\sum_{j=1}^{n-n_k} \mu_j)$. Thus $D^{k-1}$ corresponds to a subrepresentation $\rho^{k-1}$ of $\rho_{x, \widetilde{\wp}}$. Let $\rho_{x_k}':=\rho_{x, \widetilde{\wp}}/\rho^{k-1}$, we have $D_{\st}(\rho_{x_k}')\cong D/D^{k-1}$ and in particular (note $s_k+n_k=n$):
\begin{itemize}
\item $\{-\mu_{s_k+1}, -\mu_{s_k+n_k}\}=\HT(\rho'_{x_k})$
\item $\{\alpha_{s_k+1}, \alpha_{s_k+n_k}\}$= eigenvalue(s) of $\varphi$ on $D_{\st}(\rho_{x_k}')$.
 \end{itemize}
We can then continue the same argument with $D_{\st}(\rho_{x, \widetilde{\wp}})$ replaced by $D^{k-1}$ etc., and obtain representations $\rho_{x_i}'$ for $i=1,\cdots,k$ which are semi-stable with $\HT(\rho'_{x_i})=\{\lambda_{s_i+1}-s_{i}, \lambda_{s_{i}+n_i}-(s_{i}+n_i-1)\}$ and $\varphi$-eigenvalues $\{\alpha_{s_i+1}, \alpha_{s_i+n_i}\}$. But since $\rho_{x,\wp}$ is strictly $P$-ordinary, we have $\rho_{x_i}'\cong \rho_{x_i}$ for all $i$, which finishes the proof of (2).
\end{proof}

\begin{remark}\label{rem: lg1-distin}
{\rm If $n_i=2$, we have by weak admissibility:
\begin{equation}\label{equ: lg1-evF}
\mu_{s_i+1} \leq \val_p(\alpha_{s_i+l}) \leq \mu_{s_i+2}\ \  \forall \  l=1,2.
\end{equation}
Together with (\ref{equ: lg1-val}), we see $\alpha_j\neq \alpha_{j'}$ if $j$, $j'$ do not lie in $\{s_i+1, s_i+{n_i}\}$ for any $i\in \{1,\cdots,k\}$. If $\lambda$ is moreover strictly dominant, i.e. $\lambda_j>\lambda_{j+1}$ for all $j$, we deduce $\alpha_j \alpha_{j'}^{-1}\notin \{1,p,p^{-1}\}$ if $j$, $j'$ do not lie in $\{s_i+1, s_i+{n_i}\}$ for any $i\in \{1,\cdots,k\}$.}
\end{remark}

\noindent
With the notation in the proof of Proposition \ref{prop: lg1-bpp}, there exists $m(x)\in \Z_{\geq 1}$ such that:
\begin{equation}\label{equ: lg1-mult0}
\widehat{S}(U^{\wp}, W^{\wp})_{\overline{\rho}}[\fm_x]^{\lalg} \cong (\pi_{\wp} \otimes_{E} L(\lambda))^{\oplus m(x)}.\end{equation}
In fact, we have:
\begin{equation}\label{equ: lg2-mult1}
 m(x)= \sum_{\pi} m(\pi)\dim_{\overline{\Q_p}} (\pi^{\infty, \wp})^{U^{\wp}}= \sum_{\pi} m(\pi)\dim_{\overline{\Q_p}} (\pi^{\infty, p})^{U^{p}}
\end{equation}
where $\pi$ runs through the automorphic representations of $G(\bA_{F_+})$ whose factor at the place $\wp$ is isomorphic to $\pi_{\wp}\otimes_{k(x)} \overline{E}$ and where the second equality follows from the fact that $U_{v}$ is maximal for $v|p$, $v\neq \wp$. By (\ref{equ: lg1-bp2}) and the fact that each $\pi_i^{\infty}$ for $i=1,\cdots,k$, and thus $\otimes_{i=1,\cdots, k} \pi_i^{\infty}$, has an irreducible socle (see the proof of Proposition \ref{prop: lg1-bpp}(1)), we deduce an $L_P(\Q_p)$-equivariant injection:
\begin{equation}\label{equ: lg1-ordlg}
\big(\bigotimes_{i=1,\cdots, k} \pi_i^{\infty}\big)\otimes_{E} L_P(\lambda) \hooklongrightarrow \Ord_P\big(\pi_{\wp}\otimes_{E} L(\lambda)\big).
\end{equation}

\begin{lemma}\label{lem: lg1-ordalg}
The injection (\ref{equ: lg1-ordlg}) is bijective.
\end{lemma}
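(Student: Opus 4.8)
Since the morphism (\ref{equ: lg1-ordlg}) is already known to be injective, it suffices to prove that the $L_P(\Q_p)$-representation $\Ord_P(\pi_{\wp}\otimes_E L(\lambda))$ contains no constituent other than a single copy of $\bigotimes_{i}\pi_i^{\infty}\otimes_E L_P(\lambda)$. The plan is to reduce everything to a local computation with the smooth Jacquet functor. First I would note that $\pi_{\wp}\otimes_EL(\lambda)$ is unitary (by (\ref{equ: lg1-mult0}) it sits inside $\widehat{S}(U^{\wp},W^{\wp})_{\overline{\rho}}$), so by Lemma \ref{lem: ord-alg6} we have $\Ord_P(\pi_{\wp}\otimes_EL(\lambda))\cong ((\pi_{\wp}\otimes_EL(\lambda))^{N_0})_0$, and by (\ref{jpanal}) and (\ref{equ: lgOrd-relev}) this is the $L_P(\Q_p)$-subrepresentation of $J_P(\pi_{\wp})(\delta_P^{-1})\otimes_EL_P(\lambda)$ on which $Z_{L_P}^+$ acts through characters of slope zero; in particular it is of finite length since $J_P(\pi_{\wp})$ is.

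Next I would use that $\pi_{\wp}$ is a \emph{generic} irreducible constituent of $(\Ind_{\overline{P}(\Q_p)}^{\GL_n(\Q_p)}\bigotimes_i\pi_i^{\infty})^{\infty}$: genericity forces $\pi_{\wp}$ to embed into this induced representation, hence $J_P(\pi_{\wp})\hooklongrightarrow J_P\big((\Ind_{\overline{P}(\Q_p)}^{\GL_n(\Q_p)}\bigotimes_i\pi_i^{\infty})^{\infty}\big)$, and the Bernstein--Zelevinsky geometric lemma gives the right-hand side a finite filtration whose graded pieces are, up to unramified twists and $\delta_P$-normalization, parabolically induced representations attached to the elements $w$ of $W_{L_P}\backslash W/W_{L_P}$, the identity double coset contributing exactly $\bigotimes_i\pi_i^{\infty}$ with multiplicity one. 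It then remains to check that the identity-coset piece, after the $L_P(\lambda)$-twist, is of slope zero, and that every piece attached to a nontrivial $w$ is not. The first point is forced by (\ref{equ: lg1-ordlg}) itself (or can be verified directly from the normalization of the $\alpha_j$ in terms of the $\psi_j(p)$ together with (\ref{equ: lg1-val}) and (\ref{equ: lg1-val2})). For the second, a nontrivial $w$ moves at least one Satake parameter $\alpha_j$ into a block other than its own; evaluating the combined $Z_{L_P}^+$-action (smooth Hecke action twisted by the weight $\lambda$) on the partial products attached to the generators $z_l=(p,\ldots,p,1,\ldots,1)\in Z_{L_P}^+$ of Example \ref{ex: ord-gln}, and using that block by block the valuations $\val_p(\alpha_j)$ are squeezed into the intervals $[\mu_{s_i+1},\mu_{s_i+n_i}]$ by weak admissibility of the $\rho_{x_i}$ (cf. (\ref{equ: lg1-val}), (\ref{equ: lg1-evF}) and Proposition \ref{prop: lg1-bpp}(2)) together with the monotonicity of the $\mu_j$ across blocks coming from dominance of $\lambda$, one sees that at least one such $z_l$ then acts with strictly positive slope, so the piece cannot survive $\Ord_P$. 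A cleaner way to package this last point is to invoke Proposition \ref{prop: ord-cLLO}: any irreducible smooth subrepresentation $\bigotimes_i\sigma_i$ of $(\pi_{\wp}^{N_0})_0$ produces a $P$-stable filtration of $\rho_{x,\widetilde{\wp}}$ whose graded pieces are pinned down by $\rec(\sigma_i)$ and the prescribed Hodge--Tate weights; since $\rho_{x,\widetilde{\wp}}$ is strictly $P$-ordinary (Lemma \ref{lem: ord-strict}) this filtration is unique, its graded pieces are the $\rho_{x_i}$, and hence the supercuspidal support of each $\sigma_i$ is that of $\pi_i^{\infty}$. Comparing with the filtration of $J_P(\pi_{\wp})$ above then yields the claim.

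The step I expect to be the main obstacle is exactly this slope bookkeeping: one must make sure that no nontrivial Weyl-element contribution can accidentally land in slope zero, and this is where the inequalities forced by weak admissibility of the $\rho_{x_i}$ are essential, as well as the genericity of $\pi_{\wp}$, which keeps $J_P(\pi_{\wp})$ as small as possible. The potentially delicate degenerate situation --- in which some $\GL_2$-block $\pi_i^{\infty}$ is a (reducible) principal series with a finite-dimensional constituent --- is handled by recalling from the proof of Proposition \ref{prop: lg1-bpp}(1) that such finite-dimensional pieces are excluded from $\pi_{\wp}$, hence also from $\Ord_P(\pi_{\wp}\otimes_EL(\lambda))$, so the comparison above is unaffected.
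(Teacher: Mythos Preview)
Your approach is correct and rests on the same core mechanism as the paper's: both arguments show that among the Weyl-group pieces contributing to the Jacquet module of $\pi_{\wp}$, only the identity contribution survives the slope-zero (equivalently, unitarity) condition, and this is forced by the valuation constraints (\ref{equ: lg1-val}), (\ref{equ: lg1-val2}), (\ref{equ: lg1-evF}) coming from weak admissibility of the $\rho_{x_i}$. The packaging differs, though. The paper descends further to the Borel: it applies $J_{B\cap L_P}$ to $\Ord_P(\pi_{\wp}\otimes_EL(\lambda))$, embeds the result into $J_B(\pi_{\wp}\otimes_EL(\lambda))(\delta_P^{-1})$, and then rules out any extra character $\chi'$ by showing the corresponding principal series $(\Ind_{\overline{B}\cap L_P}^{L_P}\chi'\delta_{\lambda}\delta_{B\cap L_P})^{\infty}\otimes_EL_P(\lambda)$ has no unitary subquotient; the passage back up uses that $J_P(\pi_{\wp})$ has no cuspidal constituents and that $J_{B\cap L_P}$ is exact. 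You instead stay at the $P$-level and work directly with the geometric-lemma filtration of $J_P$, which is slightly more direct and avoids the cuspidal-support remark at the end.

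One small imprecision: you assert that genericity forces $\pi_{\wp}$ to \emph{embed} into $(\Ind_{\overline{P}}^{\GL_n}\bigotimes_i\pi_i^{\infty})^{\infty}$, but all you actually need (and all that is immediate) is that $\pi_{\wp}$ is a \emph{constituent}, so that $J_P(\pi_{\wp})^{\sss}$ is controlled by the geometric lemma applied to the full induction; since the slope condition is read off on semisimplifications this is enough. Your alternative route via Proposition \ref{prop: ord-cLLO} and strict $P$-ordinarity is a nice observation, but as you note it only pins down irreducible \emph{sub}representations of $\Ord_P$, so it does not by itself replace the filtration comparison; it is best viewed as a consistency check rather than an independent proof.
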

\begin{proof}
Denote by $I_P:=\{i=1,\cdots, k,\ n_i=2\}$, we have:
\begin{equation}\label{equ: lg1-Jacord}
J_{B\cap L_P}\big((\otimes_{i=1,\cdots, k} \pi_i^{\infty})\otimes_{E} L_P(\lambda)\big)^{\sss} \cong \delta_{\lambda} \otimes\big(\oplus_{w=(w_i)\in S_2^{|I_P|}} (\otimes_{j=1}^n \unr(\beta_{w,j}))\big)
\end{equation}
where $\delta_{\lambda}$ is the algebraic character of $T(\Q_p)$ of weight $\lambda$, $\sss$ denotes the semi-simplification as $T(\Q_p)$-representations, and where, if $n_i=1$, $\beta_{w,s_i+1}:=p^{-s_i}\alpha_{s_i+1}$, and if $n_i=2$, $\beta_{w,s_i+1}:=p^{-s_i-1}\alpha_{s_i+w_i(1)}$, $\beta_{w,s_i+2}:=p^{-s_i}\alpha_{s_i+w_i(2)}$ with $S_2$ the Weyl group of $\GL_2$ identified with the permutations on the set $\{1,2\}$. On the other hand, we deduce from Remark \ref{ordpart0}:
\begin{equation*}
J_{B\cap L_P}\big( \Ord_P(\pi_{\wp}\otimes_{E}L(\lambda))\big)\hooklongrightarrow J_B(\pi_{\wp}\otimes_{E}L(\lambda))(\delta_P^{-1}).
\end{equation*}
Comparing \cite[Thm. 5.4]{PR} with (\ref{equ: lg1-Jacord}) (recall $\pi_{\wp}$ is a constituent of $(\Ind_{\overline{P}(\Q_p)}^{\GL_n(\Q_p)}\otimes_{i=1,\cdots, k} \pi_i^{\infty})^{\infty}$) and using Remark \ref{rem: lg1-distin}, one can check that any character:
\begin{equation}\label{equ: lg1-chi'}
\chi'\hooklongrightarrow  J_B(\pi_{\wp}\otimes_{E}L(\lambda))[\delta_P^{-1}]^{\sss}/ J_{B\cap L_P}\big((\otimes_{i=1,\cdots, k} \pi_i^{\infty})\otimes_{E} L_P(\lambda)\big)^{\sss}
\end{equation}
does not appear on the right hand-side of (\ref{equ: lg1-Jacord}). Let $\pi_P^{\infty}$ be the smooth admissible representation of $L_P(\Q_p)$ over $k(x)$ such that $\Ord_P(\pi_{\wp}\otimes_{E}L(\lambda))\cong\pi_P^{\infty}\otimes_{E}L_P(\lambda)$. Let $\chi'$ be as in (\ref{equ: lg1-chi'}). If $\chi'$ injects into $J_{B\cap L_P}\big(\pi_P^{\infty}\otimes_{E}L_P(\lambda)\big)$ (which is equivalent to $\chi'\delta_{\lambda}^{-1}\hookrightarrow  J_{B\cap L_P}(\pi_P^{\infty})$), by \cite[(0.1)]{Em2} we deduce a nonzero morphism~:
$$\big(\Ind_{\overline{B}(\Q_p)\cap L_P(\Q_p)}^{L_P(\Q_p)} \chi'\delta_{\lambda}\delta_{B\cap L_P}\big)^{\infty}\lra \pi_P^{\infty}$$
and hence a nonzero morphism:
\begin{equation}\label{equ: lg1-adj10}
\big(\Ind_{\overline{B}(\Q_p)\cap L_P(\Q_p)}^{L_P(\Q_p)} \chi'\delta_{\lambda}\delta_{B\cap L_P}\big)^{\infty}\otimes_{E} L_P(\lambda) \lra \Ord_P(\pi_{\wp}\otimes_{E}L(\lambda)).
\end{equation}
However, $\Ord_P(\pi_{\wp}\otimes_{E}L(\lambda))$ is unitary, while, by (\ref{equ: lg1-val}), (\ref{equ: lg1-val2}) and (\ref{equ: lg1-evF}), one can check that the left hand-side of (\ref{equ: lg1-adj10}) does not have any unitary subquotient (e.g. by considering the central characters, the key point being that, for $w$ in the Weyl group of $\GL_n$ which does not lie in the Weyl group of $L_P$, if we replace the $\alpha_j$ by the $\alpha_j':=\alpha_{w^{-1}(j)}$ for $j=1,\cdots,n$, then at least one of (\ref{equ: lg1-val}), (\ref{equ: lg1-val2}) or (\ref{equ: lg1-evF}) cannot hold). Consequently, any $\chi'$ as in (\ref{equ: lg1-chi'}) cannot inject into $J_{B\cap L_P}( \Ord_P(\pi_{\wp}\otimes_{E}L(\lambda)))$, and hence cannot appear in the semi-simplification of the latter (using that there does not exist nontrivial extension between different characters of $T(\Q_p)$). It follows that the natural injection induced by (\ref{equ: lg1-ordlg}):
\begin{equation*}
J_{B\cap L_P}\big( (\otimes_{i=1,\cdots, k} \pi_i^{\infty})\otimes_{E} L_P(\lambda)\big)\hooklongrightarrow J_{B\cap L_P}\big(\Ord_P(\pi_{\wp} \otimes_{E} L(\lambda))\big)
\end{equation*}
is bijective. Since $J_P(\pi_{\wp})$ does not have cuspidal constituents and $J_{B\cap L_P}$ is an exact functor, we deduce that the injection (\ref{equ: lg1-ordlg}) must be bijective.
\end{proof}

\begin{proposition}\label{prop: lg1-oralgJ}
With the notation of Proposition \ref{prop: lg1-bpp} and its proof, we have an $L_P(\Q_p)$-equivariant isomorphism:
\begin{equation}\label{equ: lg1-ordalg2}
\Ord_P\big(\widehat{S}(U^{\wp}, W^{\wp})_{\overline{\rho}}[\fm_x]^{\lalg}\big) \cong \Big(\big(\bigotimes_{i=1,\cdots, k} \pi_i^{\infty}\big)\otimes_{E} L_P(\lambda)\Big)^{\oplus m(x)}.
\end{equation}
\end{proposition}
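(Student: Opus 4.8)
The statement combines two ingredients: the classical multiplicity computation (\ref{equ: lg1-mult0}) identifying $\widehat{S}(U^{\wp}, W^{\wp})_{\overline{\rho}}[\fm_x]^{\lalg}$ with $(\pi_{\wp}\otimes_E L(\lambda))^{\oplus m(x)}$, and the local computation in Lemma~\ref{lem: lg1-ordalg} identifying $\Ord_P(\pi_{\wp}\otimes_E L(\lambda))$ with $(\bigotimes_{i} \pi_i^{\infty})\otimes_E L_P(\lambda)$. The plan is to apply the functor $\Ord_P(\cdot)$ on locally algebraic representations (\S\ref{sec: ord-lalg}) to the isomorphism (\ref{equ: lg1-mult0}) and then invoke Lemma~\ref{lem: lg1-ordalg}. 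The only subtlety is that $\Ord_P$ does not obviously commute with the (infinite, locally algebraic) direct sums involved, so one has to be slightly careful about how the locally algebraic vectors sit inside the completed space.

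First I would record that, as in (\ref{equ: lg1-bp2}) and the discussion around (\ref{equ: lg1-mult0}), every automorphic constituent contributing to $\widehat{S}(U^{\wp}, W^{\wp})_{\overline{\rho}}[\fm_x]^{\lalg}$ has the same $\wp$-component $\pi_{\wp}$ and the same weight $\lambda$, so that (\ref{equ: lg1-mult0}) is an isomorphism of locally algebraic representations of $\GL_n(\Q_p) = G(F^+_{\wp})$. Next I would observe that $\Ord_P$ (in its incarnation on unitary, hence via \S\ref{sec: ord-lalg} on locally algebraic, representations) is additive: for a finite direct sum this is immediate from the construction in (\ref{equ: ord-ordalg}) and (\ref{equ: ord-algalim}), since the $Z_{L_P}^+$-eigenspace decomposition and the passage to $(\cdot)_0$ both commute with finite direct sums. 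Applying this to the $m(x)$-fold sum in (\ref{equ: lg1-mult0}) gives
\begin{equation*}
\Ord_P\big(\widehat{S}(U^{\wp}, W^{\wp})_{\overline{\rho}}[\fm_x]^{\lalg}\big) \cong \Ord_P\big(\pi_{\wp}\otimes_E L(\lambda)\big)^{\oplus m(x)}.
\end{equation*}
Then Lemma~\ref{lem: lg1-ordalg} (together with its proof, which identifies the smooth part $\pi_P^{\infty}$ of $\Ord_P(\pi_{\wp}\otimes_E L(\lambda))$ with $\bigotimes_i \pi_i^{\infty}$) rewrites the right-hand side as $\big((\bigotimes_{i=1,\cdots,k}\pi_i^{\infty})\otimes_E L_P(\lambda)\big)^{\oplus m(x)}$, which is exactly (\ref{equ: lg1-ordalg2}).

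For the additivity of $\Ord_P$ on locally algebraic representations I would argue directly from the definitions rather than quoting a formal exactness statement: given $V = V_1 \oplus \cdots \oplus V_r$ a finite direct sum of unitary locally algebraic $G(F^+_{\wp})$-representations admitting compatible invariant lattices $V_j^0$, one has $V^0 = \bigoplus_j V_j^0$, $V^{N_0} = \bigoplus_j V_j^{N_0}$ compatibly with the Hecke action of $L_P^+$ and with the filtration by the $V_i$'s of \S\ref{sec: ord-lalg}, and hence $(V^{N_0})_0 = \bigoplus_j (V_j^{N_0})_0$ by inspection of the eigenspace construction; Lemma~\ref{lem: ord-alg6} then gives $\Ord_P(V)\cong \bigoplus_j \Ord_P(V_j)$ as $L_P(\Q_p)$-representations. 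The main (and essentially only) obstacle is bookkeeping: one must make sure that the $L_P(\Q_p)$-equivariant structure on $\Ord_P(\widehat{S}(U^{\wp}, W^{\wp})_{\overline{\rho}}[\fm_x]^{\lalg})$ obtained this way is the one induced from the ambient completed cohomology, i.e. that the embedding of (\ref{equ: lg1-ordalg2}) into $\Ord_P(\widehat{S}(U^{\wp}, W^{\wp})_{\overline{\rho}}[\fm_x])$ agrees on each summand with the map coming from (\ref{equ: lg1-ordlg}); this is exactly the compatibility guaranteed by the functoriality of $\Ord_P$ recorded in Lemma~\ref{lem: ord-alg4} and by Proposition~\ref{prop: ord-adj}. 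Once this is in place the proof is complete.
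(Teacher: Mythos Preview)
Your proposal is correct and follows exactly the same route as the paper, which simply says the result ``is an immediate consequence of (\ref{equ: lg1-mult0}) and Lemma~\ref{lem: lg1-ordalg}.'' Your additional justification that $\Ord_P$ commutes with the finite direct sum in (\ref{equ: lg1-mult0}) is the only content one could reasonably unpack here, and your argument via $(V^{N_0})_0 = \bigoplus_j (V_j^{N_0})_0$ is fine; the paper evidently regards this as obvious and does not spell it out.
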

\begin{proof}
This is an immediate consequence of (\ref{equ: lg1-mult0}) and Lemma \ref{lem: lg1-ordalg}.
\end{proof}

\begin{corollary}\label{rajoutcristalline}
(1) If $x$ is benign, the representations $\rho_{x_i}$ are crystalline for $i=1,\cdots, k$.\\
\noindent
(2) If $x$ is benign, there exists an $L_P(\Q_p)$-equivariant injection:
 \begin{equation}\label{equ: lg1-lalginj}
\bigotimes_{i=1,\cdots, k} \big(\widehat{\pi}(\rho_{x_i})^{\lalg}\otimes \varepsilon^{s_i}\circ \dett\big) \hooklongrightarrow \Ord_P\big(\widehat{S}(U^{\wp}, W^{\wp})_{\overline{\rho}}[\fm_x]\big).
 \end{equation}
\end{corollary}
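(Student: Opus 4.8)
The plan is to deduce both statements from Proposition \ref{prop: lg1-oralgJ}, which computes $\Ord_P(\widehat{S}(U^{\wp}, W^{\wp})_{\overline{\rho}}[\fm_x]^{\lalg})$, together with the description of the smooth representations $\pi_i^\infty$ obtained in the proof of Proposition \ref{prop: lg1-bpp} and the compatibility of the $p$-adic Langlands correspondence for $\GL_2(\Q_p)$ with the classical local Langlands correspondence in the crystalline case.

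I first prove (1). When $n_i=1$ the representation $\rho_{x_i}$ is one-dimensional and semi-stable (Proposition \ref{prop: lg1-bpp}(1)), hence crystalline since the monodromy operator on a rank-one $(\varphi,N)$-module vanishes. When $n_i=2$, recall from the proof of Proposition \ref{prop: lg1-bpp} that (using that $\overline{\rho}$, hence $\rho_x$, is absolutely irreducible, so that $\rho_x$ is attached to a cuspidal automorphic representation with generic local components) $\pi_i^\infty\cong(\Ind_{\overline{B}_2(\Q_p)}^{\GL_2(\Q_p)}\psi_{s_i+1}\otimes\psi_{s_i+2})^\infty$ with $\psi_{s_i+1}\neq\psi_{s_i+2}$ unramified, that $\rho_{x_i}$ is semi-stable with distinct Hodge--Tate weights, and that the eigenvalues of $\varphi$ on $D_{\st}(\rho_{x_i})$ are $\alpha_{s_i+1}=\psi_{s_i+1}(p)p^{s_i}$ and $\alpha_{s_i+2}=\psi_{s_i+2}(p)p^{s_i+1}$. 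Suppose $\rho_{x_i}$ is not crystalline. Then the monodromy $N$ on $D_{\st}(\rho_{x_i})$ is nonzero, and using $N\varphi=p\varphi N$ together with $\operatorname{rk}N=1$ one sees that the eigenvalues of $\varphi$ are of the form $\{\alpha,p\alpha\}$, i.e. $\alpha_{s_i+1}\alpha_{s_i+2}^{-1}\in\{p,p^{-1}\}$; the case $p^{-1}$ would force $\psi_{s_i+1}(p)=\psi_{s_i+2}(p)$, hence $\psi_{s_i+1}=\psi_{s_i+2}$, which is excluded, so $\alpha_{s_i+1}\alpha_{s_i+2}^{-1}=p$ and therefore $\psi_{s_i+1}\psi_{s_i+2}^{-1}=|\cdot|^{-2}$. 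But then $\pi_i^\infty$ is an irreducible principal series, so $\rec(\pi_i^\infty)$ is a Weil--Deligne representation with trivial monodromy. Applying the local-global compatibility at $p$ (\cite{Car12}) to $\rho_{x,\widetilde{\wp}}$ --- which is $P$-ordinary, so that $D_{\st}(\rho_{x,\widetilde{\wp}})$ is a successive extension of the $D_{\st}(\rho_{x_j})$ as filtered $(\varphi,N)$-modules --- together with the facts that $\pi_{\widetilde{\wp}}$ is the generic constituent of $(\Ind_{\overline{P}(\Q_p)}^{\GL_n(\Q_p)}\otimes_j\pi_j^\infty)^\infty$ and (Remark \ref{rem: lg1-distin}) that the Frobenius eigenvalues coming from distinct blocks are not linked, one gets that the monodromy of $\WD(\rho_{x,\widetilde{\wp}})$ restricted to the block $D_{\st}(\rho_{x_i})$ vanishes, contradicting $N\neq0$. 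Hence $\rho_{x_i}$ is crystalline.

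For (2), by Proposition \ref{prop: lg1-oralgJ} there is an $L_P(\Q_p)$-equivariant isomorphism
$$\Ord_P\big(\widehat{S}(U^{\wp}, W^{\wp})_{\overline{\rho}}[\fm_x]^{\lalg}\big)\cong\Big(\big(\textstyle\bigotimes_{i}\pi_i^\infty\big)\otimes_E L_P(\lambda)\Big)^{\oplus m(x)}$$
with $m(x)\geq1$, and $L_P(\lambda)\cong\bigotimes_i L_i(\ul{\lambda}_i)$ where $\ul{\lambda}_i=(\lambda_{s_i+1},\lambda_{s_i+n_i})$. It therefore suffices to produce, for each $i$, a $\GL_{n_i}(\Q_p)$-equivariant isomorphism
$$\widehat{\pi}(\rho_{x_i})^{\lalg}\otimes\varepsilon^{s_i}\!\circ\!\dett\ \cong\ \pi_i^\infty\otimes_E L_i(\ul{\lambda}_i),$$
for then (\ref{equ: lg1-lalginj}) is obtained by tensoring these isomorphisms over $i$ and composing with the inclusion of one of the $m(x)$ direct summands into (\ref{equ: lg1-ordalg2}). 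This local isomorphism follows from part (1): $\rho_{x_i}$ is crystalline, with Hodge--Tate weights $\{\lambda_{s_i+1}-s_i,\lambda_{s_i+n_i}-(s_i+n_i-1)\}$ and crystalline Frobenius eigenvalues $\{\alpha_{s_i+1},\alpha_{s_i+n_i}\}$ by Proposition \ref{prop: lg1-bpp}(2) and its proof, so that by the compatibility of the $p$-adic Langlands correspondence for $\GL_2(\Q_p)$ (resp. of local class field theory for $\GL_1$) with the classical local Langlands correspondence one has $\widehat{\pi}(\rho_{x_i})^{\lalg}\cong\pi_{\mathrm{sm}}(\rho_{x_i})\otimes_E L(\rho_{x_i})$, where $\pi_{\mathrm{sm}}(\rho_{x_i})$ is the smooth representation attached to $\WD(\rho_{x_i})$ by classical local Langlands and $L(\rho_{x_i})$ is the algebraic representation whose highest weight is read off the Hodge--Tate weights; the twist by $\varepsilon^{s_i}\!\circ\!\dett$ matches $\pi_{\mathrm{sm}}(\rho_{x_i})\otimes\varepsilon^{s_i}\!\circ\!\dett$ with $\pi_i^\infty$ (the factors $p^{s_i}$ and $p^{s_i+1}$ in $\alpha_{s_i+\bullet}$ being exactly the shift by the $\delta_{\overline{P}}$-type modulus character coming from the position of the $i$-th block inside the parabolic induction) and $L(\rho_{x_i})\otimes\varepsilon^{s_i}\!\circ\!\dett$ with $L_i(\ul{\lambda}_i)$.

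The main obstacle is, for (1), the passage through local-global compatibility at $p$ needed to conclude that the irreducibility of the principal series $\pi_i^\infty$ forces the monodromy on the $i$-th block of $\WD(\rho_{x,\widetilde{\wp}})$ to vanish, and, for (2), the careful normalization bookkeeping showing that the twist by $\varepsilon^{s_i}\!\circ\!\dett$ precisely absorbs the powers of $p$ built into the $\alpha_{s_i+\bullet}$; everything else is a direct consequence of Proposition \ref{prop: lg1-oralgJ} and the results already established in \S\ref{sec: lg1-bpts}.
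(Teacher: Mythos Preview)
Your argument for (2) is essentially the paper's: identify $\widehat{\pi}(\rho_{x_i})^{\lalg}\otimes\varepsilon^{s_i}\!\circ\!\dett$ with $\pi_i^\infty\otimes L_i(\ul{\lambda}_i)$ via compatibility of $p$-adic and classical local Langlands, then plug into (\ref{equ: lg1-bp2}). The paper writes out the explicit formulas (see (\ref{equ: lg1-algbis})), but the idea is the same.

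Your argument for (1), however, contains a genuine error. In the case $n_i=2$ with $\alpha_{s_i+1}\alpha_{s_i+2}^{-1}=p$, you correctly compute $\psi_{s_i+1}\psi_{s_i+2}^{-1}=|\cdot|^{-2}$, but then assert that $\pi_i^\infty=(\Ind_{\overline B_2(\Q_p)}^{\GL_2(\Q_p)}\psi_{s_i+1}\otimes\psi_{s_i+2})^\infty$ is irreducible. This is false: for unnormalized induction from the \emph{lower} Borel, the reducibility points are $\psi_1\psi_2^{-1}\in\{1,|\cdot|^{-2}\}$ (equivalently, the normalized induction is reducible at $|\cdot|^{\pm1}$, and the shift by $\delta_{\overline B_2}^{1/2}=|\cdot|^{-1/2}\otimes|\cdot|^{1/2}$ moves these to $1$ and $|\cdot|^{-2}$). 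Indeed, the paper records in \S\ref{sec: hL-ext1} that $(\Ind_{\overline B_2}^{\GL_2}|\cdot|^{-1}\otimes|\cdot|)^\infty\cong\widetilde i(0)$ has $\St_2^\infty$ as subobject and the trivial representation as quotient. So in your bad case $\pi_i^\infty$ is reducible with a one-dimensional quotient, its generic constituent is a twist of Steinberg, and $\rec$ of that constituent has \emph{nonzero} monodromy --- exactly the opposite of what you need.

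The paper's argument avoids this trap by exploiting genericity of $\pi_\wp$ directly: if $\alpha_{s_i+1}\alpha_{s_i+2}^{-1}=p$ then $\pi_i^\infty$ has a one-dimensional quotient $\pi_i'$, whence (via Lemma \ref{lem: lg1-ordalg} and Frobenius reciprocity for $J_P$) one gets an embedding $\pi_\wp\hookrightarrow(\Ind_{\overline P(\Q_p)}^{\GL_n(\Q_p)}(\otimes_j\pi_j')(\delta_P))^\infty$; but a parabolic induction with a one-dimensional factor on a $\GL_2$-block has no generic constituent, contradicting genericity of $\pi_\wp$. Your attempt to route the contradiction through the monodromy of $\WD(\rho_{x,\widetilde\wp})$ and Remark \ref{rem: lg1-distin} also does not go through as stated: that remark only gives $\alpha_j\alpha_{j'}^{-1}\notin\{1,p,p^{-1}\}$ for eigenvalues from different blocks under the extra hypothesis that $\lambda$ is \emph{strictly} dominant, which is not assumed for an arbitrary benign point.
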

\begin{proof}
(1) We use the notation of Proposition \ref{prop: lg1-bpp} and its proof. The first statement is clear when $n_i=1$ by Proposition \ref{prop: lg1-bpp}(2). By {\it loc.cit.} and its proof, it is enough to prove that for $n_i=2$ we have $\alpha_{s_i+1}\alpha_{s_i+2}^{-1}\neq p^{\pm 1}$. From the proof of Proposition \ref{prop: lg1-bpp}, we have already seen $\alpha_{s_i+1}\alpha_{s_i+2}^{-1}\neq p^{-1}$. Assume there exists $i$ such that $n_i=2$ and $\alpha_{s_i+1}\alpha_{s_i+2}^{-1}=p$, then $\pi_i^{\infty}$ is reducible and has a $1$-dimensional quotient. Let $\pi_j'$ be the (unique) irreducible quotient of $\pi_j^{\infty}$ for $j=1,\cdots, k$, we have $\otimes_{j=1}^k \pi_j^{\infty}\twoheadrightarrow \otimes_{j=1}^k \pi_j'$ where $\pi_i'$ is $1$-dimensional. By Lemma \ref{lem: lg1-ordalg} and the fact that $\Ord_P(\pi_{\wp}\otimes_{E} L(\lambda))$ is a direct summand of $J_P\big(\pi_{\wp}\otimes_{E} L(\lambda)\big)(\delta_P^{-1})$ (which follows from (\ref{equ: ord-algb})), we deduce an $L_P(\Q_p)$-equivariant surjection $J_P(\pi_{\wp}) \twoheadlongrightarrow (\otimes_{j=1}^k \pi_j')(\delta_P)$. By \cite[Thm.~5.3(3)]{PR} this induces a nonzero morphism $\pi_{\wp} \ra (\Ind_{\overline{P}(\Q_p)}^{\GL_n(\Q_p)}(\otimes_{j=1}^k \pi_j')(\delta_P))^{\infty}$, which is an injection since $\pi_{\wp}$ is irreducible. However $(\Ind_{\overline{P}(\Q_p)}^{\GL_n(\Q_p)}(\otimes_{j=1}^k \pi_j')(\delta_P))^{\infty}$ does not have any generic irreducible constituent since $\dim_{k(x)}\pi_i'=1$. This gives a contradiction and finishes the proof of (1). Note that we also obtain that $\pi_i^{\infty}$ is irreducible for $i=1,\cdots, k$.\\
\noindent
(2) By well-known properties of the $p$-adic local Langlands correspondence for $\GL_2(\Q_p)$ we have:
\begin{equation*}
\widehat{\pi}(\rho_{x_i})^{\lalg}\cong
\begin{cases}
\unr(\alpha_{s_i+1}) x^{\lambda_{s_i+1}-s_i} & n_i=1\\
\big(\Ind_{\overline{B}_2(\Q_p)}^{\GL_n(\Q_p)} \unr(\alpha_{s_i+1})\otimes \unr(\alpha_{s_i+2}p^{-1})\big)^{\infty}\otimes_{E} L_i(\ul{\lambda}_i-s_i) & n_i=2
\end{cases}
\end{equation*}
where $\ul{\lambda}_i-s_i$ is by definition the weight $(\lambda_{s_i+1}-s_i, \lambda_{s_i+2}-s_i)$. Using $\varepsilon=z\unr(p^{-1})$, we easily deduce:
\begin{equation}\label{equ: lg1-algbis}
\bigotimes_{i=1,\cdots, k} \big(\widehat{\pi}(\rho_{x_i})^{\lalg}\otimes \varepsilon^{s_i}\circ \dett\big) \cong \Big(\bigotimes_{i=1,\cdots, k} \pi_i^{\infty}\Big)\otimes_{E} L_P(\lambda),
\end{equation}
whence (2) by (\ref{equ: lg1-bp2}).
\end{proof}

\subsubsection{$P$-ordinary eigenvarieties}\label{pordinary}

\noindent
We define and study $P$-ordinary Hecke eigenvarieties and use them to prove geometric properties of $\Spec \widetilde{\bT}(U^{\wp})_{\overline{\rho}}^{P-\ord}[1/p]$.\\

\noindent
We keep the notation and assumptions of the previous sections. We now consider the locally analytic representation of $T(\Q_p)$~:
$$J_{B\cap L_P}\big(\Ord_P(\widehat{S}(U^{\wp}, W^{\wp})_{\overline{\rho}})^{\an}\big)$$
where $J_{B\cap L_P}$ is Emerton's locally analytic Jacquet functor (\cite[\S~3.4]{Em11}). This is an essentially admissible representation of $T(\Q_p)$ over $E$ (\cite[Def.~6.4.9]{Em04}) which is equipped with an action of $\widetilde{\bT}(U^{\wp})^{P-\ord}_{\overline{\rho}}$ commuting with $T(\Q_p)$. Let $\cT$ be the rigid analytic space over $E$ parametrizing the locally analytic characters of $T(\Q_p)$ and $(\Spf \widetilde{\bT}(U^{\wp})_{\overline{\rho}}^{P-\ord})^{\rig}$ the generic rigid fiber (\`a la Raynaud-Berthelot) of the formal scheme $\Spf \widetilde{\bT}(U^{\wp})_{\overline{\rho}}^{P-\ord}$ associated to the complete noetherian local ring $\widetilde{\bT}(U^{\wp})_{\overline{\rho}}^{P-\ord}$ (in particular the points of $(\Spf \widetilde{\bT}(U^{\wp})_{\overline{\rho}}^{P-\ord})^{\rig}$ are the closed points of $\Spec \widetilde{\bT}(U^{\wp})_{\overline{\rho}}^{P-\ord}[1/p]$). Then, following \cite[\S~ 2.3]{Em1} the continuous dual $J_{B\cap L_P}(\Ord_P(\widehat{S}(U^{\wp}, W^{\wp})_{\overline{\rho}})^{\an})^\vee$ is the global sections of a coherent sheaf on the rigid analytic space $(\Spf \widetilde{\bT}(U^{\wp})_{\overline{\rho}}^{P-\ord})^{\rig}\times_E \cT$, the schematic support of which defines a Zariski-closed immersion of rigid spaces~:
$$\cE^{P-\ord}\hookrightarrow (\Spf \widetilde{\bT}(U^{\wp})_{\overline{\rho}}^{P-\ord})^{\rig}\times_E \cT.$$
In particular $y=(x, \chi)\in \cE^{P-\ord}$ if and only if there is a $T(\Q_p)$-equivariant embedding:
\begin{equation*}
\chi \hooklongrightarrow J_{B\cap L_P}\big(\Ord_P(\widehat{S}(U^{\wp}, W^{\wp})_{\overline{\rho}})^{\an}\big)[\fm_x].
\end{equation*}
By the same proof (in fact simpler) as for \cite[Cor. 3.12]{BHS1} using Lemma \ref{lem: Pord-comp}(2) to ensure that the analogous results of the ones in \cite[\S\S~3.3~\&~5.2]{BHS1} hold in our setting, we have the following proposition.

\begin{proposition}
The rigid analytic space $\cE^{P-\ord}$ is equidimensional of dimension $n$.
\end{proposition}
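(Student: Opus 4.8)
The plan is to follow the argument of \cite[Cor.~3.12]{BHS1} essentially verbatim, transposing it from the eigenvariety attached to the completed cohomology of a definite unitary group to the present ``$P$-ordinary'' eigenvariety $\cE^{P-\ord}$. The key point is that all the inputs used there have analogues here; the main one is the fact (Lemma \ref{lem: Pord-comp}(2)) that $\Ord_P(\widehat{S}(U^{\wp}, \bW^{\wp})_{\overline{\rho}})\vert_{L_P(\Z_p)}$ is a direct summand of $\cC(L_P(\Z_p),\co_E)^{\oplus r}$, i.e. it is ``$\varpi_E$-adically admissible and projective'' as a representation of the compact group $L_P(\Z_p)$. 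This is exactly the property that makes the machinery of \cite[\S\S~3.3~\&~5.2]{BHS1} work, and it plays the role that the projectivity of completed cohomology over the Iwasawa algebra plays there.

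First I would record the dimension estimates that bound $\dim \cE^{P-\ord}$ from both sides. For the upper bound: $J_{B\cap L_P}(\Ord_P(\widehat{S}(U^{\wp}, W^{\wp})_{\overline{\rho}})^{\an})^{\vee}$ is the module of global sections of a coherent sheaf on $(\Spf \widetilde{\bT}(U^{\wp})_{\overline{\rho}}^{P-\ord})^{\rig}\times_E \cT$, whose support is $\cE^{P-\ord}$; since $\dim(\Spf \widetilde{\bT}(U^{\wp})_{\overline{\rho}}^{P-\ord})^{\rig}$ is bounded above using the Galois deformation ring side (via the surjections (\ref{equ: Pord-RT2}) and (\ref{equ: ord-RT})) and $\dim \cT = n$, one gets that every irreducible component of $\cE^{P-\ord}$ has dimension $\le n$. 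This is the analogue of the weight map / finite slope part of the argument. The essential work is then the lower bound and equidimensionality, for which I would run the patching-free version of the argument of \cite[\S~5.2]{BHS1}: show that $\cE^{P-\ord}$ is, locally on the weight space $\cW:=\prod_{i=1}^k \cW_{n_i}$ of the torus, the support of a module which is Cohen--Macaulay (indeed, coming from a finite flat family over an open in $\cW$ of dimension $n$ after a suitable spreading-out), using that the relevant Banach/orthonormalizable modules of $p$-adic automorphic forms are ON-able over affinoid algebras and that the action of $\widetilde{\bT}(U^{\wp})_{\overline{\rho}}^{P-\ord}$ is via a \emph{finite} morphism. Combined with the density of benign (hence classical) points established in Proposition \ref{thm: lg1-bp}(2) and Proposition \ref{prop: lg1-bpp}, which produces enough ``nice'' points in every component, one deduces that no component can have dimension $<n$, giving equidimensionality of dimension exactly $n$.

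Concretely the steps are: (i) present $\Ord_P(\widehat{S}(U^{\wp}, W^{\wp})_{\overline{\rho}})^{\an}$ as a locally analytic representation and take $J_{B\cap L_P}$, checking essential admissibility as a $T(\Q_p)$-representation over $\widetilde{\bT}(U^{\wp})_{\overline{\rho}}^{P-\ord}$, which follows from \cite[\S~3.4]{Em11} together with Lemma \ref{lem: Pord-comp}; (ii) decompose over a cover of $\cW$ by affinoids $W$ and, using the projectivity in Lemma \ref{lem: Pord-comp}(2), realize $J_{B\cap L_P}(\cdots)^\vee$ over $\widetilde{\bT}(U^{\wp})_{\overline{\rho}}^{P-\ord}\widehat{\otimes}\,\co(W)$ as a direct summand of an ON-able module on which a compact operator $U_P$ acts, so that Coleman--Buzzard--Serre--Riesz slope decomposition applies; (iii) deduce that locally $\cE^{P-\ord}$ is cut out inside an equidimensional space of dimension $n$ and is itself equidimensional of dimension $n$ at every point lying over a point where the slope-adapted piece is free (a dense set); (iv) spread this out, using the Zariski-density of benign points (Proposition \ref{thm: lg1-bp}(2)) together with the fact, from Proposition \ref{prop: lg1-bpp} and Corollary \ref{rajoutcristalline}, that benign points correspond to crystalline (hence, on the eigenvariety, smooth) points, to conclude equidimensionality on all of $\cE^{P-\ord}$. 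I expect the main obstacle to be step (ii)--(iii): one has to make sure that the extra variables coming from the ``ordinary'' $\GL_1$-factors of $L_P$ (which contribute unramified characters with prescribed valuation rather than a genuine slope) are handled correctly in the slope decomposition, i.e. that the resulting module is still flat of the expected dimension over the weight space of $L_P(\Q_p)$ after restricting to the locus of integral weights dominant for $P$; this is exactly the point where the density results of \S~\ref{domialgebraic} and the control of $\Ord_P$ on locally algebraic vectors (Proposition \ref{prop: lg1-oralgJ}, Lemma \ref{lem: lg1-ordalg}) get used, and where any divergence from \cite{BHS1} will show up.
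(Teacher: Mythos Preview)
Your core approach is exactly what the paper does: invoke \cite[Cor.~3.12]{BHS1} together with Lemma \ref{lem: Pord-comp}(2), which supplies the projectivity of $\Ord_P(\widehat{S}(U^{\wp}, \bW^{\wp})_{\overline{\rho}})$ over $\co_E[[L_P(\Z_p)]]$ needed to transport the machinery of \cite[\S\S~3.3~\&~5.2]{BHS1}. The paper's proof is in fact literally that one sentence.

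However, the extra detail you have added around this core contains two problems you should remove. First, your ``upper bound'' paragraph is wrong: knowing that $\cE^{P-\ord}$ is a closed subspace of $(\Spf \widetilde{\bT}(U^{\wp})_{\overline{\rho}}^{P-\ord})^{\rig}\times_E \cT$ only bounds its dimension by the sum of the dimensions of the two factors, not by $n$. The correct upper bound in the eigenvariety argument comes from the fact that the weight map $\omega^2_0:\cE^{P-\ord}\to \cW$ is, after slope decomposition, locally finite; this is already contained in your step (ii)--(iii), so the separate upper-bound paragraph should simply be deleted. Second, your step (iv) is superfluous and misleading: the Fredholm--hypersurface/slope-decomposition argument from \cite{BHS1} gives equidimensionality of dimension $n$ \emph{directly} and \emph{everywhere}, not just at a dense set of points which is then spread out. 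The density of benign points (Proposition \ref{thm: lg1-bp}(2)), Proposition \ref{prop: lg1-bpp}, Corollary \ref{rajoutcristalline}, and the results of \S~\ref{domialgebraic} play no role in this proposition; they are used later in the paper for different purposes. Once you remove these two pieces, what remains (your steps (i)--(iii)) is precisely the transposition of \cite[Cor.~3.12]{BHS1} that the paper intends.
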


\begin{definition}\label{pordclas}
A point $y=(x, \chi)\in \cE^{P-\ord}$ is $P$-ordinary classical if:
\begin{itemize}
\item $\chi$ is of the form $\chi^\infty\delta_\lambda$ where $\chi^\infty$ is smooth and $\lambda=(\lambda_1, \cdots, \lambda_n)$ is integral dominant
\item $J_{B\cap L_P}\big(\Ord_P(\widehat{S}(U^{\wp}, W^{\wp})_{\overline{\rho}})^{\lalg}\big)[\fm_x, T(\Q_p)=\chi]\neq 0$.
\end{itemize}
\end{definition}

\begin{lemma}\label{equ: lg1-pclass}
Let $y=(x, \chi)\in \cE^{P-\ord}$ be $P$-ordinary classical, then the point $x\in $ is classical.
\end{lemma}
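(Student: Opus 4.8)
The plan is to extract from the $P$-ordinary classicality of $y=(x,\chi)$ a locally algebraic subrepresentation of the correct algebraic weight inside $\Ord_P(\widehat{S}(U^{\wp}, W^{\wp})_{\overline{\rho}}[\fm_x])$, and then to feed it into the adjunction property of Proposition \ref{prop: ord-adj} in order to produce a nonzero $\GL_n(\Q_p)$-equivariant morphism from a locally algebraic representation into $\widehat{S}(U^{\wp}, W^{\wp})_{\overline{\rho}}[\fm_x]$, which is exactly what is needed to conclude that $x$ is classical.

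First I would unwind Definition \ref{pordclas}. Enlarging $E$ if necessary we may assume $k(x)=E$. By the left exactness of $\Ord_P$ (as already used at the beginning of \S~\ref{sec: lg1-bpts}) one has $\Ord_P(\widehat{S}(U^{\wp}, W^{\wp})_{\overline{\rho}})^{\lalg}[\fm_x]\cong \Ord_P(\widehat{S}(U^{\wp}, W^{\wp})_{\overline{\rho}}[\fm_x])^{\lalg}$, the locally algebraic vectors of a closed $L_P(\Q_p)$-subrepresentation being its intersection with the locally algebraic vectors of the ambient space. By \cite[Prop. 4.2.4]{Em04} applied to the admissible unitary Banach representation $\Ord_P(\widehat{S}(U^{\wp}, W^{\wp})_{\overline{\rho}}[\fm_x])$ of $L_P(\Q_p)$, this locally algebraic representation decomposes as $\bigoplus_{\mu}\pi^{\infty}_{\mu}\otimes_E L_P(\mu)$, the sum running over integral dominant weights $\mu$ and each $\pi^{\infty}_{\mu}$ being a smooth admissible representation of $L_P(\Q_p)$. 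By \cite[Prop. 4.3.6]{Em11} (the statement already used for (\ref{jpanal})), $J_{B\cap L_P}(\pi^{\infty}_{\mu}\otimes_E L_P(\mu))$ is, with the normalization of \cite{Em11}, the twist by $\delta_{\mu}$ of the smooth Jacquet module of $\pi^{\infty}_{\mu}$ relative to $B\cap L_P$. Since $J_{B\cap L_P}(\Ord_P(\widehat{S}(U^{\wp}, W^{\wp})_{\overline{\rho}})^{\lalg})[\fm_x]$ contains the character $\chi=\chi^{\infty}\delta_{\lambda}$ with $\chi^{\infty}$ smooth, comparing the algebraic parts of characters of $T(\Q_p)$ forces the contribution to come from $\mu=\lambda$, so that $\pi^{\infty}:=\pi^{\infty}_{\lambda}\ne 0$. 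This provides an $L_P(\Q_p)$-equivariant injection $\pi^{\infty}\otimes_E L_P(\lambda)\hookrightarrow \Ord_P(\widehat{S}(U^{\wp}, W^{\wp})_{\overline{\rho}}[\fm_x])$ with $\pi^{\infty}$ a nonzero smooth admissible representation of $L_P(\Q_p)$.

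Then I would apply Proposition \ref{prop: ord-adj} with $U_{\infty}=\pi^{\infty}$, $U=\pi^{\infty}\otimes_E L_P(\lambda)$, and $V=\widehat{S}(U^{\wp}, W^{\wp})_{\overline{\rho}}[\fm_x]$, the latter being a unitary admissible Banach representation of $\GL_n(\Q_p)$ over $E$ since the $\widetilde{\bT}(U^{\wp})_{\overline{\rho}}^{P-\ord}$-action commutes with that of $\GL_n(\Q_p)$ and $\widehat{S}(U^{\wp}, W^{\wp})_{\overline{\rho}}$ is itself unitary admissible. Writing $\widehat{U}$ for the closure of $U$ in $\Ord_P(V)$, this yields $\GL_n(\Q_p)$-equivariant morphisms
\begin{equation*}
(\Ind_{\overline{P}(\Q_p)}^{\GL_n(\Q_p)}\pi^{\infty})^{\infty}\otimes_E L(\lambda)\hooklongrightarrow \big(\Ind_{\overline{P}(\Q_p)}^{\GL_n(\Q_p)}\widehat{U}\big)^{\cC^0}\lra V
\end{equation*}
whose composition $g$ recovers, upon applying $\Ord_P$, the injection $\pi^{\infty}\otimes_E L_P(\lambda)\hookrightarrow \Ord_P(V)$; in particular $g\ne 0$. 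Since $(\Ind_{\overline{P}(\Q_p)}^{\GL_n(\Q_p)}\pi^{\infty})^{\infty}\otimes_E L(\lambda)$ is a locally algebraic representation of $\GL_n(\Q_p)$, its nonzero image under $g$ lies in $V^{\lalg}=\widehat{S}(U^{\wp}, W^{\wp})_{\overline{\rho}}[\fm_x]^{\lalg}$, which is therefore nonzero; by definition this means $x$ is classical. The argument is essentially formal once Proposition \ref{prop: ord-adj} is invoked; the only point that really requires care is the bookkeeping in the first step, namely matching Emerton's normalization of $J_{B\cap L_P}$ on locally algebraic representations with the decomposition $\bigoplus_{\mu}\pi^{\infty}_{\mu}\otimes_E L_P(\mu)$ so as to read off $\pi^{\infty}_{\lambda}\ne 0$ from the shape $\chi=\chi^{\infty}\delta_{\lambda}$ of the character that appears.
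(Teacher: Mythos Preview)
Your proposal is correct and follows essentially the same route as the paper: extract a nonzero locally algebraic $L_P(\Q_p)$-representation of the form $\pi^{\infty}\otimes_E L_P(\lambda)$ inside $\Ord_P(\widehat{S}(U^{\wp}, W^{\wp})_{\overline{\rho}}[\fm_x])$, then apply Proposition~\ref{prop: ord-adj} to produce a nonzero locally algebraic map into $\widehat{S}(U^{\wp}, W^{\wp})_{\overline{\rho}}[\fm_x]$. The only cosmetic difference is that the paper obtains $\pi^{\infty}$ by invoking the smooth adjunction for $J_{B\cap L_P}$ (so $\pi^{\infty}$ arises as the image of a smooth principal series), whereas you read off $\pi^{\infty}_{\lambda}\ne 0$ directly from the weight decomposition of the locally algebraic vectors; both achieve the same thing.
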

\begin{proof}
This follows by the same argument as in the proof of Proposition \ref{thm: lg1-bp}(1) (except that we don't necessarily have $(\pi_x^{\infty})^{L_P(\Z_p)}\neq 0$ anymore), using the adjunction property of the functor $J_{B\cap L_P}(\cdot)$ on locally algebraic representations and then applying Proposition \ref{prop: ord-adj}.
\end{proof}

\begin{lemma}\label{lem: lg2-classici}
Let $\lambda=(\lambda_1, \cdots, \lambda_n)$ be an integral dominant weight, $\chi^{\infty}=\chi^{\infty}_1\otimes \cdots \otimes \chi^{\infty}_n$ be an unramified character of $T(\Q_p)$, and $y=(x, \chi)\in \cE^{P-\ord}$ with $\chi=\delta_{\lambda}\chi^{\infty}=\chi_1\otimes \cdots \otimes \chi_n$. If we have for all $i=1, \cdots, k$ such that $n_i=2$:
\begin{equation}\label{equ: lg1-slopenc}
\val_p(\chi_{s_i+1}(p))<\lambda_{s_i+1}-\lambda_{s_i+2} \ \ \ (\text{equivalently }\val_p(\chi_{s_i+1}^{\infty}(p))<-\lambda_{s_i+2}),
\end{equation}
then $y$ is $P$-ordinary classical.
\end{lemma}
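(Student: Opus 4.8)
The statement is a classicality criterion for points on the $P$-ordinary eigenvariety $\cE^{P-\ord}$ satisfying a ``small slope'' condition on the $\GL_2$-factors of the Levi $L_P$. The overall strategy is the standard one: such a small-slope condition should force the locally analytic vector cut out by the point $y$ to in fact be locally algebraic, hence classical by Lemma \ref{equ: lg1-pclass}. The key reduction is to pass through the $p$-adic local Langlands correspondence for $\GL_2(\Q_p)$ on each $\GL_2$-factor: by construction of $\cE^{P-\ord}$, the character $\chi$ embeds into $J_{B\cap L_P}\big(\Ord_P(\widehat S(U^{\wp},W^{\wp})_{\overline{\rho}})^{\an}\big)[\fm_x]$, and by the adjunction property of $J_{B\cap L_P}$ (\cite[Thm.~0.13]{Em2} in the form used in the proof of Lemma \ref{lem: hL-tri1}(1)), together with the very strong admissibility coming from Lemma \ref{lem: Pord-comp}(2), this balanced embedding integrates to a nonzero $L_P(\Q_p)$-equivariant map from a locally analytic principal series of $L_P(\Q_p)$ into $\Ord_P(\widehat S(U^{\wp},W^{\wp})_{\overline{\rho}})[\fm_x]^{\an}$.

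\textbf{Main steps.} First I would unwind $J_{B\cap L_P}$ factor by factor: since $L_P\cong\prod_i\GL_{n_i}$ and $B\cap L_P$ is the product of the Borels, $J_{B\cap L_P}$ on $\Ord_P(\cdots)^{\an}$ decomposes, and the character $\chi_{s_i+1}\otimes\chi_{s_i+2}$ (for $n_i=2$) embeds into $J_{B_2}$ of the $\GL_2(\Q_p)$-component. Second, on each $\GL_2$-factor where $n_i=2$ I would invoke the small-slope hypothesis \eqref{equ: lg1-slopenc}: $\val_p(\chi_{s_i+1}(p))<\lambda_{s_i+1}-\lambda_{s_i+2}$ is exactly Coleman's classicality bound for $\GL_2(\Q_p)$ (the weight $\lambda_{s_i+1}-\lambda_{s_i+2}$ being the relevant gap), so the corresponding eigenvector is classical on that factor; concretely, the overconvergent/locally analytic form is forced to be locally algebraic because the relevant $\Ext$ or the ``companion point'' obstruction vanishes in this slope range. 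Third, I would assemble these factorwise classicality statements: the map out of the principal series factors through the locally algebraic part, so $J_{B\cap L_P}\big(\Ord_P(\widehat S(U^{\wp},W^{\wp})_{\overline{\rho}})^{\lalg}\big)[\fm_x,T(\Q_p)=\chi]\neq 0$, which is precisely the definition (Definition \ref{pordclas}) of $P$-ordinary classical. Throughout I would use Proposition \ref{prop: ord-adj} to move between $\Ord_P$ and parabolic induction on $\GL_n(\Q_p)$, and the essential admissibility of $J_{B\cap L_P}(\cdots)$ to control the spaces.

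\textbf{Where the difficulty lies.} The routine part is the bookkeeping with Jacquet functors and central characters. The genuine obstacle is establishing the factorwise classicality on the $\GL_2(\Q_p)$-components in families: one needs that the small-slope condition \eqref{equ: lg1-slopenc} on $\chi_{s_i+1}$ (the \emph{first} diagonal entry, which is the one that gets a $U_p$-type eigenvalue under $\Ord_P$) genuinely implies the locally analytic eigenvector is locally algebraic, and not merely that it lies in a ``finite slope'' subspace. This requires either a direct Coleman-type argument (showing the relevant rigid-cohomology or overconvergent obstruction vanishes) or, more in the spirit of the present paper, an appeal to the $p$-adic local Langlands correspondence for $\GL_2(\Q_p)$ together with the description of $\widehat\pi(\rho_{x_i})^{\lalg}$: one must check that the small slope precludes $\rho_{x_i}$ from being a non-split extension giving rise to a companion point, so that the locally analytic vectors cut out by $\chi_{s_i+1}$ are exhausted by $\widehat\pi(\rho_{x_i})^{\lalg}$. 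A secondary technical point is to ensure that the embedding of $\chi$ into the locally analytic Jacquet module is ``balanced'' in the sense of \cite[Def.~0.8]{Em11} so that Emerton's adjunction applies; this is where the essential admissibility and very strong admissibility (via Lemma \ref{lem: Pord-comp}(2) and Corollary \ref{coro: ord-inj}) are used, exactly as in the proof of Lemma \ref{lem: hL-tri1}(1).
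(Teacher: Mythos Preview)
Your overall shape is right---construct a nonzero $L_P(\Q_p)$-map from a locally analytic principal series into $\Ord_P(\widehat S(U^{\wp},W^{\wp})_{\overline{\rho}})^{\an}[\fm_x]$, then argue it factors through the locally algebraic quotient---but the mechanism you propose for the second step is either circular or missing the key idea.

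First, on the construction of the map: the paper does not go through Emerton's balanced adjunction \cite[Thm.~0.13]{Em2}. It uses instead \cite[Thm.~4.3]{Br13II}, which directly produces a nonzero map
\[
\widehat{\bigotimes}_{i}\,\pi_i \longrightarrow \Ord_P(\widehat S(U^{\wp},W^{\wp})_{\overline{\rho}})^{\an}[\fm_x]
\]
from the Orlik--Strauch representation $\pi_i=\cF_{\overline{B}_2}^{\GL_2}(\overline{M}_i(-\ul{\lambda}_i),\,|\cdot|^{-1}\chi_{s_i+1}^{\infty}\otimes|\cdot|\chi_{s_i+2}^{\infty})$ when $n_i=2$. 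No ``balanced'' verification is needed. Your route via \cite{Em2} could presumably be made to work, but it is heavier and not what the paper does.

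Second, and more seriously: your proposed reason for why small slope forces local algebraicity does not work as stated. Invoking $p$-adic Langlands and the structure of $\widehat\pi(\rho_{x_i})^{\lalg}$ presupposes knowing that $\rho_{x_i}$ is de Rham with the right Hodge--Tate weights, which is a \emph{consequence} of classicality (Proposition~\ref{prop: lg1-bpp}), not an input to it. Likewise the companion-point language presumes information about $\rho_{x_i}$ you do not yet have. The paper's argument is purely on the automorphic side and much simpler: the non-locally-algebraic constituent of $\pi_i$ is
\[
\cF_{\overline{B}_2}^{\GL_2}\big(\overline{L}_i(-s\cdot\ul{\lambda}_i),\,|\cdot|^{-1}\chi_{s_i+1}^{\infty}\otimes|\cdot|\chi_{s_i+2}^{\infty}\big)\ \cong\ \big(\Ind_{\overline B_2(\Q_p)}^{\GL_2(\Q_p)}|\cdot|^{-1}\chi_{s_i+1}^{\infty}x^{\lambda_{s_i+2}-1}\otimes|\cdot|\chi_{s_i+2}^{\infty}x^{\lambda_{s_i+1}+1}\big)^{\an},
\]
and the small-slope hypothesis \eqref{equ: lg1-slopenc} is exactly the numerical condition under which \cite[Cor.~3.6]{Br13I} says this representation admits \emph{no} $\GL_2(\Q_p)$-invariant lattice. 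Since the target $\Ord_P(\widehat S(U^{\wp},W^{\wp})_{\overline{\rho}})[\fm_x]$ is unitary, the map must therefore kill this constituent and factor through the locally algebraic quotient $\big(\otimes_i\pi_i^{\infty}\big)\otimes_E L_P(\lambda)$. That is the missing idea.
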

\begin{proof}
As in \S~\ref{sec: hL-GL30} we use without comment in this proof the theory of \cite{OS} (see \cite[\S~2]{Br13I} for a summary). For $i\in \{1,\cdots,k\}$ let $\pi_{s_i+1}:= x^{\lambda_{s_i+1}} \chi^{\infty}_{s_i+1}$ if $n_i=1$ and
$$\pi_i:= \cF_{\overline{B}_2}^{\GL_2}\big(\overline{M}_i(-\ul{\lambda}_i), |\cdot|^{-1}\chi_{s_i+1}^{\infty}\otimes |\cdot|\chi_{s_i+2}^{\infty}\big)$$
if $n_i=2$ where $-\ul{\lambda}_i$ is the algebraic weight $(-\lambda_{s_i+1}, -\lambda_{s_i+2})$ and $\overline{M}_i(-\ul{\lambda}_i):=\text{U}(\gl_2)\otimes_{\text{U}(\overline\ub_2)} (-\ul{\lambda}_i)$ ($\overline \ub_2$ being the Lie algebra of $\overline B_2$). It follows from \cite[Thm. 4.3]{Br13II} that the injection $\chi\hookrightarrow J_{B\cap L_P}(\Ord_P(\widehat{S}(U^{\wp}, W^{\wp})_{\overline{\rho}})^{\an})[\fm_x]$ \ induces \ a \ nonzero \ continuous \ $L_P(\Q_p)$-equivariant \ morphism:
\begin{equation}\label{equ: lg1-adjJac}
\widehat{\bigotimes}_{i=1, \dots, k} \pi_i \lra \Ord_P(\widehat{S}(U^{\wp}, W^{\wp})_{\overline{\rho}})^{\an}[\fm_x]
\end{equation}
where the completed tensor product on the left hand side is with respect to the projective limit topology, or equivalently by \cite[Prop.~1.1.31]{Em04}, the inductive limit topology, on $\pi_1\otimes_{k(x)}\cdots\otimes_{k(x)}\pi_k$. If $\val_p(p\chi^{\infty}_{s_i+1}(p))<1-\lambda_{s_i+2}$, by \cite[Cor. 3.6]{Br13I} the representation:
\begin{multline*}
\cF_{\overline{B}_2}^{\GL_2}\big(\overline{L}_i(-s\cdot \ul{\lambda}_i), |\cdot|^{-1}\chi_{s_i+1}^{\infty}\otimes |\cdot|\chi_{s_i+2}^{\infty}\big)\\ \cong \big(\Ind_{\overline{B}_2(\Q_p)}^{\GL_2(\Q_p)} |\cdot|^{-1}\chi^{\infty}_{s_i+1}x^{\lambda_{s_i+2}-1}\otimes |\cdot| \chi^{\infty}_{s_i+2}x^{\lambda_{s_i+1}+1}\big)^{\an}
 \end{multline*}
does not have a $\GL_2(\Q_p)$-invariant lattice, where $\overline{L}_i(-s\cdot \ul{\lambda}_i)$ is the unique simple subobject of $\overline{M}_i(-\ul{\lambda}_i)$. We then easily deduce that the map in (\ref{equ: lg1-adjJac}) factors through a (nonzero) morphism:
\begin{equation}\label{equ: lg1-adjclass}
\Big(\bigotimes_{i=1, \cdots, k} \pi_i^{\infty}\Big)\otimes_E L_P(\lambda)\lra \Ord_P(\widehat{S}(U^{\wp}, W^{\wp})_{\overline{\rho}})^{\an}[\fm_x]
\end{equation}
where $\pi_i^{\infty}:= \unr(\beta_i)$ if $n_i=1$ and $\pi_i^\infty:= (\Ind_{\overline{B}_2(\Q_p)}^{\GL_2(\Q_p)} |\cdot|^{-1}\chi_{s_i+1}^{\infty}\otimes|\cdot| \chi_{s_i+2}^{\infty})^{\infty}$ if $n_i=2$. The lemma follows (using the adjunction property of $J_{B\cap L_P}(\cdot)$ on locally algebraic representations).
\end{proof}

\noindent
The proof of the following lemma is standard and we omit it (see e.g. the proof of \cite[Thm. 3.19]{BHS1}).

\begin{lemma}\label{zdensebis}
The set of points satisfying the conditions in Lemma \ref{lem: lg2-classici} is Zariski-dense in $\cE^{P-\ord}$.
\end{lemma}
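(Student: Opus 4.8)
The plan is to follow the standard eigenvariety density argument, as in the proof of \cite[Thm.~3.19]{BHS1}, adapted to the $P$-ordinary setting. The strategy is to exhibit enough points of the prescribed form lying in $\cE^{P-\ord}$ to force Zariski density, using that $\cE^{P-\ord}$ is equidimensional of dimension $n$ and that the weight map to the $n$-dimensional weight space is, up to the finite data of the tame level, ``as flexible as possible''. Concretely, first I would recall that $\cE^{P-\ord}$ admits a locally finite map to $\cT$ given by the composite $\cE^{P-\ord}\hookrightarrow (\Spf \widetilde{\bT}(U^{\wp})_{\overline{\rho}}^{P-\ord})^{\rig}\times_E \cT \to \cT$, and that (by Lemma \ref{equ: lg1-pclass} and Lemma \ref{lem: lg2-classici}) the locus of $P$-ordinary classical points dominates a Zariski-dense subset of $\cT$ already under the open slope condition \eqref{equ: lg1-slopenc}.

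The key steps, in order, are: (i) Reduce to showing that the set $Z$ of points satisfying \eqref{equ: lg1-slopenc} is Zariski-dense in each irreducible component of $\cE^{P-\ord}$. (ii) Fix an irreducible component $X$; since $\cE^{P-\ord}$ is equidimensional of dimension $n$ and the fibres of $X\to \cT$ are finite (hence $X\to \cT$ is, on a Zariski-open dense subset, finite flat onto an admissible open of $\cT$), the image of $X$ in $\cT$ contains a nonempty admissible open $V\subseteq \cT$. (iii) Observe that the algebraic-weight characters $\delta_\lambda$ with $\lambda$ integral dominant and with $\lambda_{s_i+1}-\lambda_{s_i+2}$ arbitrarily large (for each $i$ with $n_i=2$) form a Zariski-dense subset of $\cT$; intersecting with $V$ and then, for each such $\lambda$, choosing the finitely many points of $X$ above $\delta_\lambda\chi^\infty$ for a suitable unramified $\chi^\infty$ with small enough $\val_p(\chi^\infty_{s_i+1}(p))$ (possible because the slope varies in a bounded family along the finite fibre while $-\lambda_{s_i+2}\to\infty$), produces points of $Z\cap X$ accumulating at a Zariski-dense set of weights. (iv) Apply the standard accumulation/Zariski-density criterion (as in \cite[\S~3]{BHS1}): a subset of $X$ whose image in the equidimensional base $\cT$ contains a Zariski-dense set of points over which the fibre is accounted for is itself Zariski-dense. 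Finally combine over all irreducible components.

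The main obstacle will be step (iii): one must ensure that the small-slope condition \eqref{equ: lg1-slopenc} can be met simultaneously on a single irreducible component $X$ and for a Zariski-dense set of dominant weights, i.e. that the slopes $\val_p(\chi_{s_i+1}(p))$ of points of $X$ do not grow as fast as $\lambda_{s_i+1}-\lambda_{s_i+2}$. This is where one uses that along $X$ the ``$U_p$-type'' eigenvalues vary $p$-adically continuously and hence have locally bounded valuation, together with the fact that the weight map on $X$ surjects onto an open of the full $n$-dimensional $\cT$, so one has genuine freedom to push $-\lambda_{s_i+2}$ to $+\infty$ while the slope stays bounded on compact subsets. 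Once this boundedness-versus-freedom dichotomy is in place, the density is formal. I would organize the argument so that all the analytic input is isolated in step (iii) and invoke \cite[\S\S~3.3~\&~5.2]{BHS1} (valid here by Lemma \ref{lem: Pord-comp}(2), as already used above) for the structural facts about $\cE^{P-\ord}$.
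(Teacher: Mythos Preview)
Your proposal follows exactly the approach the paper points to (the paper omits the proof and simply refers to the argument of \cite[Thm.~3.19]{BHS1}), and the overall strategy---reduce to an irreducible component, use that the weight map has open image, exploit local boundedness of slopes versus the freedom to push $\lambda_{s_i+1}-\lambda_{s_i+2}\to\infty$, then conclude by accumulation---is correct.

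There is one slip to fix in step (ii)--(iii): the relevant map is to the weight space $\cW$ (parametrizing characters of $T(\Z_p)$, of dimension $n$), not to $\cT$ (of dimension $2n$). Since $\cE^{P-\ord}$ is equidimensional of dimension $n$, its image in $\cT$ cannot contain an admissible open of $\cT$; rather, it is the composite $\omega^2_0:\cE^{P-\ord}\to\cT\to\cW$ that, on each irreducible component, has open image and locally finite fibres (this is the structural input from \cite[\S\S~3.3~\&~5.2]{BHS1} via Lemma~\ref{lem: Pord-comp}(2)). The local boundedness of the slopes $\val_p(\chi_{s_i+1}(p))$ is then over affinoid opens of $\cW$, and the Zariski-density of dominant integral weights with arbitrarily large gaps $\lambda_{s_i+1}-\lambda_{s_i+2}$ is a statement in $\cW$, not $\cT$. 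With this correction your argument goes through verbatim.
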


\begin{proposition}\label{prop: lg1-Pdense}
The set of $P$-ordinary classical points is Zariski-dense in $\cE^{P-\ord}$.
\end{proposition}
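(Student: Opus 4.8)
The plan is to deduce the statement directly from the two preceding lemmas, so that essentially nothing new has to be proved here. Recall that a point $y=(x,\chi)\in\cE^{P-\ord}$ is called $P$-ordinary classical (Definition \ref{pordclas}) when $\chi$ factors as $\chi^\infty\delta_\lambda$ with $\chi^\infty$ smooth and $\lambda=(\lambda_1,\dots,\lambda_n)$ integral dominant, and $J_{B\cap L_P}\big(\Ord_P(\widehat{S}(U^{\wp}, W^{\wp})_{\overline{\rho}})^{\lalg}\big)[\fm_x, T(\Q_p)=\chi]\neq 0$. Lemma \ref{lem: lg2-classici} produces a large supply of such points: any $y=(x,\chi)\in\cE^{P-\ord}$ with $\chi=\delta_\lambda\chi^\infty$, $\chi^\infty$ unramified, and satisfying the slope inequalities $\val_p(\chi_{s_i+1}(p))<\lambda_{s_i+1}-\lambda_{s_i+2}$ for every $i$ with $n_i=2$ (equivalently (\ref{equ: lg1-slopenc})), is $P$-ordinary classical. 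This is where the actual content sits: one uses Emerton's adjunction for $J_{B\cap L_P}(\cdot)$ together with the intertwining and non-existence-of-lattice arguments of \cite{Br13II} and \cite{Br13I} to push the embedding $\chi\hookrightarrow J_{B\cap L_P}(\Ord_P(\widehat{S}(U^{\wp}, W^{\wp})_{\overline{\rho}})^{\an})[\fm_x]$ down to a locally algebraic quotient of the completed tensor product $\widehat{\bigotimes}_i\pi_i$.

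With Lemma \ref{lem: lg2-classici} in hand, it remains to invoke Lemma \ref{zdensebis}, which asserts—by the standard Zariski-density argument for eigenvarieties, carried out as in \cite[Thm.~3.19]{BHS1}—that the set of points of $\cE^{P-\ord}$ satisfying the hypotheses of Lemma \ref{lem: lg2-classici} is already Zariski-dense in $\cE^{P-\ord}$. Since the set of $P$-ordinary classical points of $\cE^{P-\ord}$ contains this Zariski-dense subset, and any superset of a Zariski-dense subset of a rigid analytic space is again Zariski-dense, the proposition follows.

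I expect no genuine obstacle at this stage, precisely because everything has been reduced to Lemma \ref{lem: lg2-classici} and Lemma \ref{zdensebis}; the only thing to be careful about is that the points supplied by Lemma \ref{zdensebis} genuinely lie on $\cE^{P-\ord}$ and that the defining conditions of Lemma \ref{lem: lg2-classici} match those of Definition \ref{pordclas}, both of which are immediate from the constructions. Should one prefer to avoid the accumulation statement Lemma \ref{zdensebis}, an alternative route would start from the Zariski-density of benign points in $\Spec\widetilde{\bT}(U^{\wp})_{\overline{\rho}}^{P-\ord}[1/p]$ (Proposition \ref{thm: lg1-bp}(2)), observe via Proposition \ref{prop: lg1-oralgJ} that each benign $x$ lifts to a $P$-ordinary classical point $(x,\chi)\in\cE^{P-\ord}$ with $\chi=\delta_\lambda$ times the unramified character read off $J_{B\cap L_P}$ of $(\otimes_i\pi_i^\infty)\otimes_E L_P(\lambda)$, and then upgrade density along the $x$-coordinate to density on $\cE^{P-\ord}$; but this is more cumbersome, and the route through Lemma \ref{lem: lg2-classici} and Lemma \ref{zdensebis} is the clean one.
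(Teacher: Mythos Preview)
Your proof is correct and follows exactly the paper's approach: the paper's proof is the single sentence ``This follows from Lemma \ref{lem: lg2-classici} and Lemma \ref{zdensebis}.'' Your additional commentary and the alternative route you sketch are sound but unnecessary for the argument.
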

\begin{proof}
This follows from Lemma \ref{lem: lg2-classici} and Lemma \ref{zdensebis}.
\end{proof}

\noindent
Replacing \ the \ locally \ analytic \ $T(\Q_p)$-representation \ $J_{B\cap L_P}(\Ord_P(\widehat{S}(U^{\wp}, W^{\wp})_{\overline{\rho}})^{\an})$ \ by $J_B(\widehat{S}(U^{\wp}, W^{\wp})_{\overline{\rho}}^{\an})$, we obtain in the same way a rigid analytic variety $\cE$ over $E$ together with a Zariski-closed immersion:
$$\cE\hookrightarrow (\Spf \widetilde{\bT}(U^{\wp})_{\overline{\rho}})^{\rig}\times_E \cT$$
such \ that \ $(x, \chi)\in \cE$ \ if \ and \ only \ if \ there \ is \ a \ $T(\Q_p)$-equivariant \ embedding $\chi\hookrightarrow J_B(\widehat{S}(U^{\wp}, W^{\wp})_{\overline{\rho}}^{\an})[\fm_x]$. Moreover $\cE$ is also equidimensional of dimension $n$. Consider now the following closed immersion:
\begin{equation*}
\iota^{P-\ord}:(\Spf \widetilde{\bT}(U^{\wp})^{P-\ord}_{\overline{\rho}})^{\rig}\times_E \cT \hooklongrightarrow (\Spf \widetilde{\bT}(U^{\wp})_{\overline{\rho}})^{\rig}\times_E \cT, \ \ \ (x, \chi)\longmapsto (x, \chi\delta_P^{-1}).
\end{equation*}
Let $y\in \cE^{P-\ord}$ be $P$-ordinary classical. By Lemma \ref{equ: lg1-pclass} and $J_{B\cap L_P}\circ \Ord_P\hookrightarrow J_B(\delta_P^{-1})$ (see \S~\ref{sec: ord-lalg}), we see $\iota^{P-\ord}(y)$ is a classical point in $\cE$. Together with Proposition \ref{prop: lg1-Pdense}, we deduce that $\iota^{P-\ord}$ induces a closed immersion of reduced rigid analytic spaces:
\begin{equation}\label{equiforP}
\iota^{P-\ord}: \cE^{P-\ord}_{\red} \hooklongrightarrow \cE_{\red}
\end{equation}
where ``$\red$" means the reduced closed rigid subspace.

\begin{corollary}\label{cor: lg1-PordEv}
The rigid space $\cE^{P-\ord}_{\red}$ is isomorphic to a union of irreducible components of $\cE_{\red}$.
\end{corollary}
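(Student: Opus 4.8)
The plan is to deduce Corollary \ref{cor: lg1-PordEv} from the closed immersion (\ref{equiforP}) together with the equidimensionality of both $\cE^{P-\ord}$ and $\cE$. Recall that a reduced rigid analytic space which is equidimensional of dimension $n$ has the property that any Zariski-closed analytic subspace which is also equidimensional of dimension $n$ is automatically a union of irreducible components of the ambient space. So the argument reduces to checking that the image of $\iota^{P-\ord}$ in $\cE_{\red}$, being isomorphic to $\cE^{P-\ord}_{\red}$, is equidimensional of dimension $n$ (which we already know, since $\cE^{P-\ord}$ is equidimensional of dimension $n$), and that $\cE_{\red}$ is equidimensional of dimension $n$ (likewise already known).

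More precisely, first I would recall (or invoke the standard fact) that for a reduced rigid space $X$ which is equidimensional of dimension $n$, and $Z\hookrightarrow X$ a Zariski-closed reduced subspace, if every irreducible component of $Z$ has dimension $n$ then $Z$ is a union of irreducible components of $X$: indeed each irreducible component $Z_0$ of $Z$ is an irreducible closed analytic subset of $X$ of dimension $n=\dim X$, hence must coincide with some irreducible component of $X$ (an irreducible closed subset of dimension equal to the dimension of an irreducible space containing it must be the whole space, applied to the irreducible component of $X$ containing $Z_0$). Second, I would apply this with $X=\cE_{\red}$, which is equidimensional of dimension $n$ by the analogue of the statement proved just before Definition \ref{pordclas} (and is reduced by construction), and $Z=\iota^{P-\ord}(\cE^{P-\ord}_{\red})$, the scheme-theoretic image, which is reduced and whose irreducible components correspond bijectively (via the isomorphism (\ref{equiforP}) onto its image) to those of $\cE^{P-\ord}_{\red}$, hence all have dimension $n$ by the equidimensionality of $\cE^{P-\ord}$.

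The only mild subtlety — and the step I would be most careful about — is that (\ref{equiforP}) is a closed immersion of reduced rigid spaces, so its image is genuinely a Zariski-closed reduced analytic subspace of $\cE_{\red}$ isomorphic to $\cE^{P-\ord}_{\red}$; this is exactly what was established in the paragraph preceding the Corollary using Lemma \ref{equ: lg1-pclass}, the containment $J_{B\cap L_P}\circ \Ord_P \hookrightarrow J_B(\delta_P^{-1})$, and the Zariski-density of $P$-ordinary classical points (Proposition \ref{prop: lg1-Pdense}). Given that, the Corollary is immediate: $\cE^{P-\ord}_{\red}\cong \iota^{P-\ord}(\cE^{P-\ord}_{\red})$ is a union of irreducible components of $\cE_{\red}$. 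There is essentially no hard analytic content remaining; the work was all in setting up (\ref{equiforP}) and the two equidimensionality statements, and this final step is a formal consequence. I would therefore keep the proof to one or two sentences, citing the equidimensionality of $\cE$ and $\cE^{P-\ord}$ and the closed immersion (\ref{equiforP}).

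\begin{proof}
By construction $\iota^{P-\ord}$ in (\ref{equiforP}) is a closed immersion, so $\cE^{P-\ord}_{\red}$ is isomorphic to a Zariski-closed reduced rigid subspace of $\cE_{\red}$. Since $\cE^{P-\ord}$ is equidimensional of dimension $n$, every irreducible component of this subspace has dimension $n$. As $\cE_{\red}$ is reduced and equidimensional of dimension $n$, any irreducible closed analytic subset of dimension $n$ is an irreducible component of $\cE_{\red}$; hence $\cE^{P-\ord}_{\red}$ is a union of irreducible components of $\cE_{\red}$.
\end{proof}
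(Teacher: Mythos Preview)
Your proof is correct and follows essentially the same approach as the paper's own proof, which simply invokes the closed immersion (\ref{equiforP}) together with the fact that both $\cE^{P-\ord}_{\red}$ and $\cE_{\red}$ are equidimensional of dimension $n$. Your version merely unpacks the standard implication that a closed reduced subspace of an equidimensional reduced rigid space, having the same dimension, must be a union of irreducible components.
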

\begin{proof}
This follows from (\ref{equiforP}) and the fact both $\cE^{P-\ord}_{\red}$ and $\cE_{\red}$ are equidimensional of dimension $n$.
\end{proof}

\noindent
Recall that, for any $(x,\chi)\in \cE$, the associated $\Gal_{\Q_p}$-representation $\rho_{x,\widetilde{\wp}}$ is trianguline (see \cite{KPX} and also \cite{Liu}) and that $(x,\chi)$ is called \emph{noncritical} if $\chi \delta_B^{-1} (1\otimes \varepsilon^{-1}\otimes \cdots \otimes \varepsilon^{1-n})$ gives a parameter of the trianguline $(\varphi,\Gamma)$-module $D_{\rig}(\rho_{x,\widetilde{\wp}})$ associated to $\rho_{x,\widetilde{\wp}}$ (with the usual identification of the $T(\Q_p)$-character $\delta_1\otimes\cdots\otimes\delta_n$ and the parameter $(\delta_1,\cdots,\delta_n)$). We call a point $y=(x, \chi)$ of $\cE^{P-\ord}$ \emph{noncritical} if $\iota^{P-\ord}(y)$ is noncritical, or equivalently if $\chi\delta_{B\cap L_P}^{-1}(1\otimes \varepsilon^{-1}\otimes \cdots \otimes \varepsilon^{1-n})$ is a trianguline parameter of $D_{\rig}(\rho_{x,\widetilde{\wp}})$.

\begin{lemma}\label{lem: lg1-noncrit1}
Let $y=(x, \chi)$ be a $P$-ordinary classical point with $x$ benign, then $y$ is noncritical.
\end{lemma}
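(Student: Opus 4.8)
The plan is to deduce noncriticality of $y = (x,\chi)$ from the explicit knowledge, established in Corollary \ref{rajoutcristalline}, that when $x$ is benign each graded piece $\rho_{x_i}$ of $\rho_{x,\widetilde{\wp}}$ is crystalline with prescribed Hodge--Tate weights, and from the explicit description of the locally algebraic vectors appearing in the ordinary part. First I would unwind what $y \in \cE^{P-\ord}$ being $P$-ordinary classical gives us: by the construction preceding Lemma \ref{lem: lg2-classici} and the adjunction argument in its proof, the embedding $\chi \hookrightarrow J_{B\cap L_P}(\Ord_P(\widehat S(U^{\wp},W^{\wp})_{\overline\rho})^{\an})[\fm_x]$ together with $x$ benign (so that by Corollary \ref{rajoutcristalline}(1) the $\pi_i^{\infty}$ are irreducible unramified principal series or unramified characters) forces $\chi = \delta_\lambda \chi^{\infty}$ with $\chi^{\infty} = \chi_1^{\infty}\otimes\cdots\otimes\chi_n^{\infty}$ unramified, and the components are governed by the crystalline Frobenius eigenvalues $\alpha_j$ of $D_{\st}(\rho_{x_i})$ via $\chi_{s_i+l}^{\infty}(p)$ being (up to the normalizing powers of $p$ coming from $\delta_P$ and $|\cdot|^{\pm 1}$) equal to $\alpha_{s_i+l}$. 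Concretely, one reads off from the proof of Proposition \ref{prop: lg1-bpp}(2) and of Corollary \ref{rajoutcristalline}(2) the precise dictionary between $\chi\delta_{B\cap L_P}^{-1}(1\otimes\varepsilon^{-1}\otimes\cdots\otimes\varepsilon^{1-n})$ and the candidate trianguline parameter.

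Next I would verify that this candidate parameter is actually a parameter of $D_{\rig}(\rho_{x,\widetilde{\wp}})$. Here the key point is that $\rho_{x,\widetilde{\wp}}$ carries, by strict $P$-ordinarity (Lemma \ref{lem: ord-strict}), the $P$-filtration whose graded pieces are the $\rho_{x_i}$; since each $\rho_{x_i}$ is crystalline of the expected Hodge--Tate weights, the filtration on $D_{\rig}(\rho_{x,\widetilde{\wp}})$ by the $D_{\rig}(\rho_{x}^{\leq i})$ refines, upon choosing for each $i$ a $\varphi$-eigenvector in $D_{\st}(\rho_{x_i})$ adapted to the ordering dictated by $\chi$, to a full triangulation of $D_{\rig}(\rho_{x,\widetilde{\wp}})$ whose successive quotients are exactly the rank-one $(\varphi,\Gamma)$-modules $\cR_E(\delta_j)$ with $\delta_j$ matching the components of $\chi\delta_{B\cap L_P}^{-1}(1\otimes\varepsilon^{-1}\otimes\cdots)$. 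The weight inequalities needed to guarantee that this is a genuine triangulation (i.e. that the relevant $\Ext$'s do not vanish the wrong way and that the parameter is the ``correct'' one, not a critical refinement) come from the Hodge--Tate weights being $\{\lambda_{s_i+1}-s_i, \lambda_{s_i+n_i}-(s_i+n_i-1)\}$ on each block together with dominance of $\lambda$; one checks the two Hodge--Tate weights within a $\GL_2$-block are separated correctly and across blocks they are strictly increasing, so the parameter attached to $\chi$ sees the Hodge filtration in the non-critical position on each rank-one graded piece.

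The main obstacle, and the step I would spend the most care on, is precisely the comparison between the $T(\Q_p)$-character $\chi$ extracted from the Jacquet module of the ordinary part and the trianguline parameter of $D_{\rig}(\rho_{x,\widetilde{\wp}})$ — i.e. checking that the normalizations (the twist by $\delta_{B\cap L_P}$, the $\delta_P$ coming from $\Ord_P$, the $1\otimes\varepsilon^{-1}\otimes\cdots\otimes\varepsilon^{1-n}$, and the $\varepsilon^{s_i}\circ\dett$ twists in Corollary \ref{rajoutcristalline}(2)) all line up, and that within each $\GL_2$-block the refinement selected by $\chi$ is the non-critical one. For this I would invoke the local-global compatibility at $p$ in the crystalline case for $\GL_2(\Q_p)$ (the $p$-adic local Langlands correspondence, as used already in \S\ref{sec: lg1-bpts}) applied block by block: $\widehat\pi(\rho_{x_i})$ with $\rho_{x_i}$ crystalline non-critical is the correct unitary completion of $(\Ind_{\overline B_2}^{\GL_2}\unr(\alpha_{s_i+1})\otimes\unr(\alpha_{s_i+2}p^{-1}))^{\an}\otimes L_i(\ul\lambda_i-s_i)$ whose Jacquet module, up to the standard twists, recovers the non-critical parameter $(\cR_E(\delta_{s_i+1}),\cR_E(\delta_{s_i+2}))$ of $D_{\rig}(\rho_{x_i})$. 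Assembling these via the $P$-filtration, and using that $\iota^{P-\ord}(y)$ lands in $\cE$ so that $\chi\delta_{B}^{-1}(1\otimes\varepsilon^{-1}\otimes\cdots)$ is already known to be \emph{some} parameter of $D_{\rig}(\rho_{x,\widetilde\wp})$, it remains only to rule out that it is a critical refinement; but a critical refinement would force two of the $\delta_j$'s to violate the weight constraints just established, contradicting the crystallinity and the Hodge--Tate data from Corollary \ref{rajoutcristalline}(2). This completes the argument.
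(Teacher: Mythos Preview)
Your overall strategy---pin down $\chi$ in terms of the crystalline Frobenius eigenvalues of the $\rho_{x_i}$, then assemble a triangulation of $D_{\rig}(\rho_{x,\widetilde\wp})$ block by block using the $P$-filtration---is exactly the paper's approach. However, the paper's execution is considerably shorter, and the reason is worth noting.

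The paper does not go through Lemma~\ref{lem: lg2-classici} or any adjunction argument to determine $\chi$. Instead it invokes Lemma~\ref{lem: lg1-ordalg}, which computes $\Ord_P(\pi_\wp\otimes L(\lambda))$ completely as $(\otimes_i \pi_i^\infty)\otimes L_P(\lambda)$, together with the Jacquet module formula (\ref{equ: lg1-Jacord}). This immediately gives that $\chi^\infty = \unr(\beta_{w,1})\otimes\cdots\otimes\unr(\beta_{w,n})$ for some $w\in S_2^{|I_P|}$, i.e.\ one of finitely many explicit characters indexed by the possible orderings of the Frobenius eigenvalues in each $\GL_2$-block. Then Proposition~\ref{prop: lg1-bpp}(2) and Corollary~\ref{rajoutcristalline}(1) already tell you exactly what the trianguline parameter of each $D_{\rig}(\rho_{x_i})$ is, for either ordering, and gluing via the $P$-filtration finishes the argument in one line. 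You should cite Lemma~\ref{lem: lg1-ordalg} rather than reconstructing $\chi$ by hand.

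Your final paragraph contains a misstep and an unnecessary worry. First, the claim that ``$\iota^{P-\ord}(y)\in\cE$ so that $\chi\delta_B^{-1}(\cdots)$ is already known to be some parameter of $D_{\rig}(\rho_{x,\widetilde\wp})$'' is not justified: membership in $\cE$ only tells you $\rho_{x,\widetilde\wp}$ is trianguline, not that this particular $\chi$ is a parameter---that is precisely the content of noncriticality here. Second, and relatedly, in this paper ``noncritical'' simply \emph{means} that $\chi\delta_{B\cap L_P}^{-1}(1\otimes\varepsilon^{-1}\otimes\cdots\otimes\varepsilon^{1-n})$ is a parameter of $D_{\rig}(\rho_{x,\widetilde\wp})$; there is no further ``critical refinement'' condition to rule out. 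Once you have exhibited the triangulation block by block (which you do), you are done---the normalization check is the entire content, and there is no separate obstruction to eliminate.
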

\begin{proof}
We use the notation of Definition \ref{pordclas} and of Lemma \ref{lem: lg1-ordalg} and its proof. By Lemma \ref{lem: lg1-ordalg} and (\ref{equ: lg1-Jacord}), there exists $w\in S_2^{|I_P|}$ such that $\chi^{\infty}=\unr(\beta_{w,1})\otimes \cdots \otimes \unr(\beta_{w,n})$. It follows from Proposition \ref{prop: lg1-bpp}(2) and its proof together with Corollary \ref{rajoutcristalline}(1) that:
$$\big(\chi^{\infty}_{s_i+1}|\cdot|^{-1}x^{\lambda_{s_i+1}}\varepsilon^{-s_i}, \chi^{\infty}_{s_i+2}|\cdot|x^{\lambda_{s_i+2}}\varepsilon^{-s_i-1}\big)$$
is a trianguline parameter of $D_{\rig}(\rho_{x_i})$ if $n_i=2$ and $\chi_{s_i+1}^{\infty}x^{\lambda_{s_i+1}} \varepsilon^{-s_i}\cong \rho_{x_i}$ if $n_i=1$ (where $i\in \{1,\cdots,k\}$). Together with the fact $\rho_{x, \widetilde{\wp}}$ is isomorphic to a successive extension of the $\rho_{x_i}$, we deduce that $\chi \delta_{B\cap L_P}^{-1}(1\otimes \varepsilon^{-1}\otimes \cdots \otimes \varepsilon^{1-n})$ is a trianguline parameter of $\rho_{x,\widetilde{\wp}}$.
\end{proof}

\noindent
We say that an $r$-dimensional crystalline representation $V$ of $\Gal_{\Q_p}$ is {\it generic} if the eigenvalues $(\varphi_i)_{i=1,\cdots,r}$ of $\varphi$ on $D_{\cris}(V)$ are such that $\varphi_i\varphi_j^{-1}\notin \{1,p,p^{-1}\}$ for $i\ne j$.

\begin{lemma}\label{lem: lg1-classici2}
Let $y=(x, \chi)\in \cE^{P-\ord}$ as in Lemma \ref{lem: lg2-classici} and assume moreover for all $i=1,\cdots, k$ such that $n_i=2$ (with the notation of {\it loc.cit.}):
\begin{equation}\label{equ: lg1-slopenc2}
\val_p(\chi_{s_i+1}(p))<\frac{\lambda_{s_i+1}-\lambda_{s_i+2}}{2}-1.
\end{equation}
Then $x$ is a benign point and $\rho_{x_i}$ is crystalline generic for $i=1, \cdots, k$.
\end{lemma}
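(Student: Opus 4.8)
Here is a plan for the proof of Lemma \ref{lem: lg1-classici2}.

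\medskip

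\noindent
\textbf{Plan of proof.} The strategy is to first show that $x$ is benign by producing a nonzero $L_P(\Z_p)$-algebraic vector in $\Ord_P(\widehat{S}(U^{\wp}, W^{\wp})_{\overline{\rho}}[\fm_x])$, and then to exploit the strengthened slope bound \eqref{equ: lg1-slopenc2} to conclude that each $\rho_{x_i}$ is crystalline \emph{generic}.

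First I would observe that the hypothesis \eqref{equ: lg1-slopenc2} implies \eqref{equ: lg1-slopenc}, so by Lemma \ref{lem: lg2-classici} the point $y=(x,\chi)$ is $P$-ordinary classical; in particular $x$ is classical by Lemma \ref{equ: lg1-pclass}. Unwinding the proof of Lemma \ref{lem: lg2-classici}, the character $\chi=\delta_\lambda\chi^\infty$ embeds into $J_{B\cap L_P}(\Ord_P(\widehat{S}(U^{\wp}, W^{\wp})_{\overline{\rho}})^{\lalg})[\fm_x]$ and, via the adjunction property of $J_{B\cap L_P}$ on locally algebraic representations together with \cite[Prop.~5.3(3)]{PR} (or the smooth version thereof), gives a nonzero $L_P(\Q_p)$-equivariant map as in \eqref{equ: lg1-adjclass} from $\big(\bigotimes_i \pi_i^\infty\big)\otimes_E L_P(\lambda)$ into $\Ord_P(\widehat{S}(U^{\wp}, W^{\wp})_{\overline{\rho}}[\fm_x])$, where $\pi_i^\infty$ is unramified principal series (or an unramified character) for each $i$. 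Since $\pi_i^\infty$ has a nonzero $\GL_{n_i}(\Z_p)$-fixed vector, the tensor product has a nonzero $L_P(\Z_p)$-fixed vector, and hence a nonzero $L_P(\Z_p)$-algebraic vector lying in the ``$+$'' part (the weight $\lambda$ is dominant). This shows $\Ord_P(\widehat{S}(U^{\wp}, W^{\wp})_{\overline{\rho}}[\fm_x])^{L_P(\Z_p)-\alg}_+\neq 0$, i.e. $x$ is benign. Then Proposition \ref{prop: lg1-bpp}(1)--(2) gives that $\rho_{x,\widetilde{\wp}}$ is semi-stable and each $\rho_{x_i}$ is semi-stable with $\HT(\rho_{x_i})=\{\lambda_{s_i+1}-s_i,\ \lambda_{s_i+n_i}-(s_i+n_i-1)\}$, and Corollary \ref{rajoutcristalline}(1) gives that each $\rho_{x_i}$ is in fact crystalline.

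It remains to upgrade ``crystalline'' to ``crystalline generic'' for those $i$ with $n_i=2$ (the case $n_i=1$ is vacuous). Here I would use that, by the benignity argument above and the analysis in the proof of Proposition \ref{prop: lg1-bpp}(2), the two $\varphi$-eigenvalues $\alpha_{s_i+1},\alpha_{s_i+2}$ on $D_{\cris}(\rho_{x_i})$ satisfy $\val_p(\alpha_{s_i+l})$ determined by the Satake parameters $\chi^\infty_{s_i+l}$ up to the explicit shifts in \eqref{equ: lg1-val2}, together with the weak admissibility bounds \eqref{equ: lg1-evF}. Concretely, one of the two valuations equals $\val_p(\chi_{s_i+1}(p))+$const, so \eqref{equ: lg1-slopenc2} forces $\val_p(\alpha_{s_i+1})$ to be strictly less than the midpoint $\tfrac{1}{2}(\mu_{s_i+1}+\mu_{s_i+2})$ of the Hodge--Tate slope interval; since $\val_p(\alpha_{s_i+1})+\val_p(\alpha_{s_i+2})=\mu_{s_i+1}+\mu_{s_i+2}$, this gives $\val_p(\alpha_{s_i+1})<\val_p(\alpha_{s_i+2})$, so $\alpha_{s_i+1}\neq\alpha_{s_i+2}$ and moreover $\val_p(\alpha_{s_i+1}\alpha_{s_i+2}^{-1})<0$. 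Combined with $\alpha_{s_i+1}\alpha_{s_i+2}^{-1}\neq p^{-1}$ (already established in the proof of Proposition \ref{prop: lg1-bpp}) and the analogous inequality ruling out $p$ (from Corollary \ref{rajoutcristalline}(1)), and with the fact that the strictness of \eqref{equ: lg1-slopenc2} gives a strict inequality $\val_p(\alpha_{s_i+1}\alpha_{s_i+2}^{-1})<-1$ preventing the ratio from being $p^{-1}$ with equality or from being $1$ or $p$, we obtain $\alpha_{s_i+1}\alpha_{s_i+2}^{-1}\notin\{1,p,p^{-1}\}$, i.e. $\rho_{x_i}$ is generic.

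\medskip

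\noindent
\textbf{Main obstacle.} The routine part is the bookkeeping of normalizations translating the slope condition on $\chi_{s_i+1}(p)$ into the condition on $\val_p(\alpha_{s_i+1}\alpha_{s_i+2}^{-1})$ via \eqref{equ: lg1-val2} and \eqref{equ: lg1-evF}. The genuinely delicate point I expect is making the genericity inequality \emph{strict} at the boundary: one must check that \eqref{equ: lg1-slopenc2}, with its explicit ``$-1$'' correction term, is precisely calibrated so that none of the three forbidden values $1,p,p^{-1}$ can be attained, using that $D_{\cris}(\rho_{x_i})$ is weakly admissible (so both valuations lie in the closed Hodge--Tate interval) — this is what forces the correction term to be exactly $\tfrac{\lambda_{s_i+1}-\lambda_{s_i+2}}{2}-1$ rather than, say, $\tfrac{\lambda_{s_i+1}-\lambda_{s_i+2}}{2}$.
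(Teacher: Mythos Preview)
Your overall strategy matches the paper's, but there is one genuine gap in the benignity step and some unnecessary detours in the genericity step.

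For benignity, you say that since the source $\big(\bigotimes_i\pi_i^\infty\big)\otimes L_P(\lambda)$ of \eqref{equ: lg1-adjclass} has a nonzero $L_P(\Z_p)$-fixed vector, so does the target, whence $x$ is benign. But benignity requires the \emph{image} of \eqref{equ: lg1-adjclass} to contribute an $L_P(\Z_p)$-algebraic vector, i.e.\ the smooth part of the image must have a nonzero $L_P(\Z_p)$-fixed vector. If some $\pi_i^\infty$ (with $n_i=2$) were reducible with one-dimensional subrepresentation, the nonzero map could factor through the Steinberg quotient and kill the spherical vector. The paper closes this by first using the unitarity of the image of \eqref{equ: lg1-adjclass} to get $\val_p(\chi_{s_i+1}(p))+\val_p(\chi_{s_i+2}(p))=0$, and then combining this with \eqref{equ: lg1-slopenc2} to obtain
\[
\val_p\!\big(\chi_{s_i+1}^\infty(p)\chi_{s_i+2}^\infty(p)^{-1}\big)<-2,\quad\text{hence}\quad \chi_{s_i+1}^\infty(p)\chi_{s_i+2}^\infty(p)^{-1}\notin\{p^{-2},p^{-1},1\},
\]
which forces each $\pi_i^\infty$ irreducible; benignity then follows at once. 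You need this irreducibility step.

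For genericity, your appeals to Proposition~\ref{prop: lg1-bpp} and Corollary~\ref{rajoutcristalline}(1) to rule out $p^{\pm1}$ separately are redundant, and the ``strictness'' discussion is a red herring. Once $x$ is benign, the proof of Proposition~\ref{prop: lg1-bpp}(2) identifies the $\varphi$-eigenvalues on $D_{\cris}(\rho_{x_i})$ as $p^{s_i+1}\chi_{s_i+1}^\infty(p)$ and $p^{s_i}\chi_{s_i+2}^\infty(p)$, so their ratio is $p\cdot\chi_{s_i+1}^\infty(p)\chi_{s_i+2}^\infty(p)^{-1}$. The \emph{same} displayed inequality above shows this ratio avoids $\{p^{-1},1,p\}$ in one stroke. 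This is exactly how the paper argues: one valuation inequality, derived once, handles both irreducibility of $\pi_i^\infty$ and genericity of $\rho_{x_i}$.
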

\begin{proof}
We use the notation of Lemma \ref{lem: lg2-classici}. Since $\chi_{s_i+1}(p)+\chi_{s_i+2}(p)=0$ (as follows from (\ref{equ: lg1-adjclass})), we easily deduce from (\ref{equ: lg1-slopenc2}) that:
\begin{equation}\label{equ: lg1-igeneric}
\chi^{\infty}_{s_i+1}(p)\chi^{\infty}_{s_i+2}(p)^{-1}\notin \{p^{-2}, p^{-1}, 1\}.
\end{equation}
which implies that $\pi_i^\infty$ in the proof of Lemma \ref{lem: lg2-classici} is irreducible. It then follows from (\ref{equ: lg1-adjclass}) that $x$ is benign. Hence the $\rho_{x_i}$ are crystalline by Corollary \ref{rajoutcristalline}(1). Moreover, by the proof of Proposition \ref{prop: lg1-bpp}(2), the crystalline eigenvalues of $\varphi$ on $D_{\cris}(\rho_{x_i})$ are given by $\{p^{s_i+1}\chi_{s_i+1}^{\infty}(p), p^{s_i}\chi_{s_i+2}^{\infty}(p)\}$ if $n_i=2$ and $p^{s_i}\chi^{\infty}_{s_i+1}$ if $n_i=1$. We deduce then from (\ref{equ: lg1-igeneric}) that $\rho_{x_i}$ is generic.
\end{proof}

\noindent
Denote by $\omega^1$ the following composition:
\begin{equation*}
\omega^1: \cE^{P-\ord} \hooklongrightarrow (\Spf \widetilde{\bT}(U^{\wp})^{P-\ord}_{\overline{\rho}})^{\rig}\times_E \cT \xlongrightarrow{\pr_1} (\Spf \widetilde{\bT}(U^{\wp})^{P-\ord}_{\overline{\rho}})^{\rig}.
\end{equation*}
Denote by $Z_1'$ the set of $P$-ordinary classical points $y=(x,\chi)\in \cE^{P-\ord}$ such that:
\begin{itemize}
\item $y$ satisfies all the conditions in Lemma \ref{lem: lg2-classici} and Lemma \ref{lem: lg1-classici2} (in particular $x$ is benign)
\item $\chi=\chi^\infty\delta_\lambda$ is such that $\lambda=(\lambda_1, \cdots, \lambda_n)$ is {\it strictly} dominant, i.e. $\lambda_j>\lambda_{j+1}$ for all $j$.
\end{itemize}
We let $Z_1:=\omega^1(Z_1')\subseteq (\Spf \widetilde{\bT}(U^{\wp})^{P-\ord}_{\overline{\rho}})^{\rig}$, which we can also view as a subset of (closed) points of the scheme $\Spec \widetilde{\bT}(U^{\wp})^{P-\ord}_{\overline{\rho}}[1/p]$.

\begin{proposition}\label{prop: lg1-dense2}
(1) The set $Z_1'$ is Zariski-dense in $\cE^{P-\ord}$ and accumulates (see \cite[D\'ef.~2.2]{BHS1}) at any point $(x,\chi)$ with $\chi$ locally algebraic such that $\chi^\infty$ is unramified.\\
\noindent
(2) The set $Z_1$ is Zariski-dense in the scheme $\Spec \widetilde{\bT}(U^{\wp})^{P-\ord}_{\overline{\rho}}[1/p]$.
\end{proposition}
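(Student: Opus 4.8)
<br>

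The plan is to prove the two assertions about $Z_1'$ and $Z_1$ by combining the density and accumulation results already in place with the eigenvariety formalism of the previous subsection.

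For part (1), I would first show that $Z_1'$ is Zariski-dense in $\cE^{P-\ord}$. By Proposition \ref{prop: lg1-Pdense} (or rather the combination of Lemma \ref{lem: lg2-classici} and Lemma \ref{zdensebis}) the set of $P$-ordinary classical points satisfying the slope condition \eqref{equ: lg1-slopenc} of Lemma \ref{lem: lg2-classici} is already Zariski-dense, and by Lemma \ref{zdensebis} this density is achieved by points with $\chi$ locally algebraic and $\chi^\infty$ unramified. The conditions cutting out $Z_1'$ inside this set are: the stronger slope inequality \eqref{equ: lg1-slopenc2} of Lemma \ref{lem: lg1-classici2}, and strict dominance of $\lambda$. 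Both are ``open and unbounded'' conditions of the usual type on eigenvarieties: each amounts to requiring that finitely many analytic functions (the valuations of the $\chi_{s_i+1}(p)$ and the differences $\lambda_j - \lambda_{j+1}$, read off from the $T(\Q_p)$-character) lie in a suitable half-line, and one can always move a given classical point along the weight directions to decrease the slope relative to $\lambda_{s_i+1}-\lambda_{s_i+2}$ and to separate the $\lambda_j$. So the standard argument (as in \cite[Thm.~3.19]{BHS1} or the proof of Lemma \ref{zdensebis}) applies verbatim: $Z_1'$ is Zariski-dense, and it accumulates at any point $(x,\chi)$ with $\chi$ locally algebraic and $\chi^\infty$ unramified, since near such a point one can find plenty of classical points with arbitrarily large and suitably spread-out weights.

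For part (2), the key input is that $\omega^1 \colon \cE^{P-\ord} \to (\Spf \widetilde{\bT}(U^{\wp})^{P-\ord}_{\overline{\rho}})^{\rig}$ is, on each irreducible component, a finite (hence closed, and in particular dominant onto its image which is a union of irreducible components) morphism — this is part of the eigenvariety construction and follows from the analogue of \cite[\S\S~3.3,~5.2]{BHS1} in our setting, already invoked before the statement that $\cE^{P-\ord}$ is equidimensional of dimension $n$. Since $\widetilde{\bT}(U^{\wp})^{P-\ord}_{\overline{\rho}}$ is reduced (Lemma \ref{lem: Pord-reduce}) and its Krull dimension is $n$ (it receives a surjection from $R_{\overline{\rho},S(U^p)}$ and is a faithful ring of endomorphisms of the $n$-dimensional ordinary part), the map $\omega^1$ is surjective onto $(\Spf \widetilde{\bT}(U^{\wp})^{P-\ord}_{\overline{\rho}})^{\rig}$: every point of the target is in the image because it is the projection of a $P$-ordinary classical point (use Proposition \ref{prop: lg1-Pdense} together with the fact that $J_B$ of the completed cohomology sees every Hecke eigensystem appearing in $\widehat{S}(U^\wp,W^\wp)_{\overline\rho}$, so every maximal ideal of $\widetilde{\bT}(U^{\wp})^{P-\ord}_{\overline{\rho}}[1/p]$ lies below some point of $\cE^{P-\ord}$). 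Granting surjectivity of $\omega^1$, the Zariski-density of $Z_1 = \omega^1(Z_1')$ follows from the Zariski-density of $Z_1'$ in $\cE^{P-\ord}$ (part (1)) by a standard argument: a surjective morphism of (reduced, equidimensional) rigid spaces sends a Zariski-dense subset to a Zariski-dense subset, because the preimage of a proper Zariski-closed subset of the target is a proper Zariski-closed subset of the source. Finally, since the closed points of $(\Spf \widetilde{\bT}(U^{\wp})^{P-\ord}_{\overline{\rho}})^{\rig}$ are exactly the closed points of $\Spec \widetilde{\bT}(U^{\wp})^{P-\ord}_{\overline{\rho}}[1/p]$ and a constructible (here: Zariski-dense) subset of a Jacobson scheme is dense iff it is dense in the rigid fiber, $Z_1$ is Zariski-dense in $\Spec \widetilde{\bT}(U^{\wp})^{P-\ord}_{\overline{\rho}}[1/p]$.

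The main obstacle I anticipate is not any single deep step but rather carefully justifying the surjectivity (or at least dominance onto a union of irreducible components together with equidimensionality) of $\omega^1$ and the compatibility of Zariski-density between the rigid-analytic space $(\Spf \widetilde{\bT}(U^{\wp})^{P-\ord}_{\overline{\rho}})^{\rig}$ and the scheme $\Spec \widetilde{\bT}(U^{\wp})^{P-\ord}_{\overline{\rho}}[1/p]$; the cleanest route is to import the structure theory of \cite{BHS1} (which gives that $\omega^1$ restricted to each irreducible component of $\cE^{P-\ord}$ is finite onto an irreducible component of a union of components of $(\Spf \widetilde{\bT}(U^{\wp})^{P-\ord}_{\overline{\rho}})^{\rig}$ of dimension $n$, forcing it to be all of it by equidimensionality) and then invoke the Jacobson property of $\widetilde{\bT}(U^{\wp})^{P-\ord}_{\overline{\rho}}[1/p]$. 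Everything else — the verification that $Z_1'$ is cut out by the expected open-and-unbounded conditions, and the accumulation statement — is routine given Lemma \ref{lem: lg2-classici}, Lemma \ref{lem: lg1-classici2}, and Lemma \ref{zdensebis}.
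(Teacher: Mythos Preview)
Your approach to (1) is fine and matches the paper's (which simply says the proof is standard and omits it).

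Your approach to (2), however, has a genuine gap. You want to push forward the Zariski-density of $Z_1'$ along $\omega^1$, and for this you claim that $\omega^1$ is surjective (or at least dominant onto each irreducible component of the target). But this cannot be right in general: $\cE^{P-\ord}$ is equidimensional of dimension $n$, while the later Proposition \ref{prop: lg1-dim} shows that each irreducible component of $\Spec \widetilde{\bT}(U^{\wp})^{P-\ord}_{\overline{\rho}}[1/p]$ has dimension $\geq n+(n-k)$, which is strictly larger than $n$ whenever some $n_i=2$. So $\omega^1$ is not dominant onto any such component, and Zariski-density does not push forward. Your claim that the Krull dimension of the target is $n$ is simply false. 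In fact, the Remark immediately following the proposition in the paper explicitly notes that it is not known whether $Z_1$ is Zariski-dense in the rigid space $(\Spf \widetilde{\bT}(U^{\wp})^{P-\ord}_{\overline{\rho}})^{\rig}$, which is exactly what your argument would need.

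The paper's route is different and avoids this obstruction entirely. It uses that the set of \emph{benign} points is already known to be Zariski-dense in $\Spec \widetilde{\bT}(U^{\wp})^{P-\ord}_{\overline{\rho}}[1/p]$ (Proposition \ref{thm: lg1-bp}(2)), and then shows that every benign point $x$ lies in the scheme-theoretic Zariski closure of $Z_1$. To do this, one lifts $x$ to a $P$-ordinary classical point $y=(x,\chi)\in\cE^{P-\ord}$ (existence by Corollary \ref{rajoutcristalline}(2) and its proof), observes that $\chi$ is locally algebraic with $\chi^\infty$ unramified, and then invokes the accumulation statement in (1): $Z_1'$ accumulates at $y$, hence $y$ lies in the Zariski closure of $(\omega^1)^{-1}(Z_1)$, so $x=\omega^1(y)$ lies in the Zariski closure of $Z_1$. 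The point is that one never needs $\omega^1$ to be surjective; one only needs to lift enough points of the target (namely the benign ones, which are already dense) to the eigenvariety.
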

\begin{proof}
(1) The proof is standard and we omit it.\\
\noindent
(2) Let $X_0$ be the Zariski closure of $Z_1$ in the scheme $\Spec \widetilde{\bT}(U^{\wp})^{P-\ord}_{\overline{\rho}}[1/p]$ and $X$ be the associated closed subspace of $(\Spf \widetilde{\bT}(U^{\wp})^{P-\ord}_{\overline{\rho}})^{\rig}$. Note that $X$ contains the Zariski closure of $Z_1$ in the rigid space $(\Spf \widetilde{\bT}(U^{\wp})^{P-\ord}_{\overline{\rho}})^{\rig}$. By Proposition \ref{thm: lg1-bp}(2) it is enough to show any benign point of $\Spec \widetilde{\bT}(U^{\wp})^{P-\ord}_{\overline{\rho}}[1/p]$ belongs to $X_0$, or equivalently to $X$ when seen in $(\Spf \widetilde{\bT}(U^{\wp})^{P-\ord}_{\overline{\rho}})^{\rig}$. Let $x$ be a benign point and $y=(x, \chi)$ a $P$-ordinary classical point of $\cE^{P-\ord}$ lying above $x$. The existence of $y$ follows easily from Corollary \ref{rajoutcristalline}(2) and its proof. By (1), $Z_1'$ accumulates at $y$, in particular $y$ lies in the Zariski closure of $(\omega^1)^{-1}(Z_1)$ in $\cE^{P-\ord}$, from which we easily deduce that $\omega^1(y)=x$ lies in the Zariski closure of $Z_1$ in the rigid space $(\Spf \widetilde{\bT}(U^{\wp})^{P-\ord}_{\overline{\rho}})^{\rig}$. As the latter is contained in $X$, (2) follows.
\end{proof}

\begin{remark}
{\rm We do not know if $Z_1$ is also Zariski-dense in the rigid analytic space $(\Spf \widetilde{\bT}(U^{\wp})^{P-\ord}_{\overline{\rho}})^{\rig}$.}
\end{remark}

\begin{lemma}\label{lem: lg1-glotri1}
Let $y=(x,\chi)\in \cE^{P-\ord}$ such that $Z_1'$ accumulates at $y$. Let $i\in \{1,\cdots, k\}$ and assume $\chi_{s_i+1}\chi_{s_i+2}^{-1}\neq x^m|\cdot|^2$ for any $m\in \Z$ if $n_i=2$. Then $\rho_{x_i}$ is trianguline and there exists an injection of $(\varphi,\Gamma)$-modules over $\cR_{k(x)}$:
\begin{equation}\label{equ: lg1-glotri1}
\begin{cases} \cR_{k(x)}(\chi_{s_i+1}\varepsilon^{-s_i}) \xlongrightarrow{\sim} D_{\rig}(\rho_{x_i}) & \ n_i=1 \\
\cR_{k(x)}(\chi_{s_i+1}\varepsilon^{-s_i}|\cdot|^{-1}) \hooklongrightarrow D_{\rig}(\rho_{x_i}) & \ n_i=2.
\end{cases}
\end{equation}
\end{lemma}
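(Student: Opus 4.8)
The strategy is to reduce the statement to the benign-point case, where it follows from the explicit descriptions established earlier, and then to propagate triangulinity along the $P$-ordinary eigenvariety by a density-plus-specialization argument. First I would recall that since $y=(x,\chi)\in\cE^{P-\ord}$, the point $\iota^{P-\ord}(y)\in\cE$, so $\rho_{x,\widetilde{\wp}}$ is trianguline with a trianguline parameter given by $\chi\delta_{B\cap L_P}^{-1}(1\otimes\varepsilon^{-1}\otimes\cdots\otimes\varepsilon^{1-n})$ (see the discussion before Lemma \ref{lem: lg1-noncrit1}); however a priori this only gives a full triangulation of $D_{\rig}(\rho_{x,\widetilde{\wp}})$ and not directly a triangulation adapted to the ordinary filtration, i.e. compatible with the subquotients $\rho_{x_i}$. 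So the actual content is to show that this triangulation is ``$P$-compatible'' in the sense that the jumps at positions $s_i+1,\dots,s_{i+1}$ cut out $D_{\rig}(\rho_{x_i})$ as a successive extension, and then to read off the first step for each $\rho_{x_i}$, which is exactly (\ref{equ: lg1-glotri1}). When $n_i=1$ the statement is just that $\rho_{x_i}$, a character, equals $\chi_{s_i+1}\varepsilon^{-s_i}$, which is immediate once the triangulation of $\rho_{x,\widetilde{\wp}}$ is known to be $P$-compatible.

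The key steps, in order: (1) Treat first the case where $x$ is benign. Here Lemma \ref{lem: lg1-noncrit1} already shows $y$ is noncritical, and the proof of that lemma together with Proposition \ref{prop: lg1-bpp}(2) and Corollary \ref{rajoutcristalline}(1) exhibits an explicit trianguline parameter of $D_{\rig}(\rho_{x_i})$ for each $i$, namely $\chi^\infty_{s_i+1}|\cdot|^{-1}x^{\lambda_{s_i+1}}\varepsilon^{-s_i}$ (resp. the obvious character when $n_i=1$); so the desired injection (\ref{equ: lg1-glotri1}) holds, and in fact $\rho_{x_i}$ is even crystalline. (2) For general $y$ at which $Z_1'$ accumulates, use the family: over $\cE^{P-\ord}$ there is (by \cite{KPX}, as in the references before Lemma \ref{lem: lg1-noncrit1}) a global triangulation of the family $D_{\rig}(\rho_{\cE^{P-\ord},\widetilde{\wp}})$ over a Zariski-open dense subset, and in particular the rank-one sub-object $\cR(\chi_{s_i+1}\varepsilon^{-s_i}|\cdot|^{-1})$ of the $i$-th graded piece extends to an injection of families in a neighbourhood of the benign locus; since $Z_1'$ is Zariski-dense in $\cE^{P-\ord}$ (Proposition \ref{prop: lg1-dense2}(1)) and accumulates at $y$, one specializes this family of injections at $y$. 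The hypothesis $\chi_{s_i+1}\chi_{s_i+2}^{-1}\neq x^m|\cdot|^2$ (when $n_i=2$) is exactly the genericity condition ensuring that $H^0$ of the relevant rank-one $(\varphi,\Gamma)$-module vanishes in the fiber, so that the specialized map $\cR_{k(x)}(\chi_{s_i+1}\varepsilon^{-s_i}|\cdot|^{-1})\to D_{\rig}(\rho_{x_i})$ remains nonzero, hence injective because both sides have the relevant rank/slope compatibility forced by the Hodge–Tate weights of $\rho_{x_i}$ (computed as in Proposition \ref{prop: lg1-bpp}(2) using the strict $P$-ordinarity, which propagates along the family by Lemma \ref{lem: ord-strict}). (3) Finally note that one does not need the full global triangulation result: it suffices to use the triangulation of $\rho_{x,\widetilde{\wp}}$ coming from $y\in\cE$ together with the fact that the ordinary filtration on $\rho_{x,\widetilde{\wp}}$ is canonically split off (since $\rho_{x,\widetilde{\wp}}$ is strictly $P$-ordinary) to conclude that the triangulation restricts to $D_{\rig}(\rho_{x_i})$; then (\ref{equ: lg1-glotri1}) is the first jump of this restricted triangulation, and the genericity hypothesis guarantees this first sub-object really is $\cR_{k(x)}(\chi_{s_i+1}\varepsilon^{-s_i}|\cdot|^{-1})$ rather than a different re-ordering.

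The main obstacle is step (2)–(3): controlling the triangulation in a whole neighbourhood and checking that the first triangulation step of the restricted $(\varphi,\Gamma)$-module $D_{\rig}(\rho_{x_i})$ is the expected one, and not a permuted one. This requires knowing that the parameter $\chi$ has its ``ordinary'' jumps in the correct (strictly increasing Hodge–Tate weight) order; the computation of $\HT(\rho_{x_i})$ in Proposition \ref{prop: lg1-bpp}(2) at benign points, combined with Zariski-density and the fact that Hodge–Tate–Sen weights vary analytically, gives this, but one must be careful that the genericity hypothesis $\chi_{s_i+1}\chi_{s_i+2}^{-1}\neq x^m|\cdot|^2$ is precisely what prevents a ``critical'' degeneration where the sub-object of $D_{\rig}(\rho_{x_i})$ could jump; making this rigorous is the delicate point, handled via the vanishing of $H^0_{(\varphi,\Gamma)}(\cR_{k(x)}(\chi_{s_i+2}\chi_{s_i+1}^{-1}|\cdot|^2))$ and left-exactness of the relevant functors together with the accumulation property in Proposition \ref{prop: lg1-dense2}(1).
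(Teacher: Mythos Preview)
Your steps (1)--(2) are exactly the paper's argument: the paper's proof is two sentences, first observing that (\ref{equ: lg1-glotri1}) holds at every point of $Z_1'$ by Lemma \ref{lem: lg1-noncrit1} (your step (1)), and then invoking the global triangulation results of \cite{KPX} and \cite{Liu} to propagate along the family (your step (2)). Your identification of the role of the hypothesis $\chi_{s_i+1}\chi_{s_i+2}^{-1}\neq x^m|\cdot|^2$ as the regularity condition preventing degeneration of the rank-one sub-object at specialization is correct and is precisely the ``standard detail'' the paper omits. One small sharpening: rather than triangulating the full $D_{\rig}(\rho_{x,\widetilde{\wp}})$ and then arguing compatibility with the ordinary filtration, it is cleaner to apply the global triangulation machinery directly to the rank-$n_i$ family $D_{\rig}(\rho_{x_i})$ pulled back via $\omega_i^1:\cE^{P-\ord}_{\red}\to(\Spf R_{\overline{\rho}_i})^{\rig}$; the compatibility question then never arises.

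Your step (3), however, contains an error: you write that the ordinary filtration is ``canonically split off'' because $\rho_{x,\widetilde{\wp}}$ is strictly $P$-ordinary. This is false. Strict $P$-ordinarity (Definition \ref{def: ord-strict}) means the filtration is \emph{unique}, not that it splits; the successive extensions of the $\rho_{x_i}$ are in general nonsplit. Consequently, knowing a triangulation of $D_{\rig}(\rho_{x,\widetilde{\wp}})$ and knowing the ordinary filtration separately does not automatically tell you the triangulation refines the ordinary filtration, so step (3) does not give a shortcut around step (2). Since you present (3) only as an alternative and your main argument (2) is sound, this does not affect the overall correctness, but you should drop (3).
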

\begin{proof}
Let $i\in \{1,\cdots, k\}$, by Lemma \ref{lem: lg1-noncrit1} we have (\ref{equ: lg1-glotri1}) for any point in $Z_1'$. The result then follows from the global triangulation theory (\cite{KPX}, \cite{Liu}), we leave the (standard) details to the reader.
\end{proof}

\begin{proposition}\label{equ: lg1-adjan1}
Let $y=(x,\chi)\in \cE^{P-\ord}$ be a $P$-ordinary classical point with $x$ benign. Then any injection as in (\ref{equ: lg1-lalginj}) extends to an injection of locally analytic representations of $L_P(\Q_p)$ over $k(x)$:
\begin{equation}
\big(\Ind_{\overline{B}\cap L_P(\Q_p)}^{L_P(\Q_p)} \chi \delta_{B\cap L_P}^{-1}\big)^{\an} \hooklongrightarrow \Ord_P\big(\widehat{S}(U^{\wp}, W^{\wp})_{\overline{\rho}}[\fm_x]\big)^{\an}.
\end{equation}
\end{proposition}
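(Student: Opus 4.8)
The plan is to build up the full principal series one factor at a time, using the structure of $L_P(\Q_p)$ as a product of $\GL_2(\Q_p)$'s and $\GL_1(\Q_p)$'s and the fact that $x$ is benign (so that each $\rho_{x_i}$ is crystalline by Corollary \ref{rajoutcristalline}(1), hence each $\widehat\pi(\rho_{x_i})$ is either a principal series or, via the $p$-adic Langlands correspondence for $\GL_2(\Q_p)$, a unitary Banach completion of one). First I would record that, by Proposition \ref{prop: lg1-oralgJ} and its proof (cf.\ (\ref{equ: lg1-algbis}) and Lemma \ref{lem: lg1-ordalg}), the locally algebraic representation $\bigotimes_i(\widehat\pi(\rho_{x_i})^{\lalg}\otimes\varepsilon^{s_i}\circ\dett)$ is exactly $(\bigotimes_i\pi_i^\infty)\otimes_E L_P(\lambda)$, and each $\pi_i^\infty$ for $n_i=2$ is an irreducible unramified principal series $(\Ind_{\overline B_2(\Q_p)}^{\GL_2(\Q_p)}\psi_{s_i+1}\otimes\psi_{s_i+2})^\infty$ (irreducibility from genericity of the automorphic component, as in the proof of Corollary \ref{rajoutcristalline}). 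Thus the target of the desired injection, $(\Ind_{\overline B\cap L_P(\Q_p)}^{L_P(\Q_p)}\chi\delta_{B\cap L_P}^{-1})^{\an}$, is the external tensor product over $i$ of $(\Ind_{\overline B_2(\Q_p)}^{\GL_2(\Q_p)}\chi_{s_i+1}\otimes\chi_{s_i+2}\delta_{B_2}^{-1})^{\an}$ (resp.\ of the locally algebraic character $\chi_{s_i+1}$ when $n_i=1$), and each such $\GL_2(\Q_p)$-factor is nothing but $\widehat\pi(\rho_{x_i})^{\an}\otimes\varepsilon^{s_i}\circ\dett$: indeed, for a generic crystalline $2$-dimensional $\rho_{x_i}$, the locally analytic vectors of $\widehat\pi(\rho_{x_i})$ are the locally analytic principal series with the two refinements appearing as subrepresentations, and by Proposition \ref{prop: lg1-bpp}(2) and Lemma \ref{lem: lg1-noncrit1} the parameter $\chi\delta_{B\cap L_P}^{-1}(1\otimes\varepsilon^{-1}\otimes\cdots\otimes\varepsilon^{1-n})$ is exactly the noncritical trianguline parameter of $D_{\rig}(\rho_{x,\widetilde\wp})$.

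Next I would use the adjunction machinery of Section \ref{ordinarypartfunctor}, in particular Proposition \ref{prop: ord-adj}, to promote the locally algebraic inclusion (\ref{equ: lg1-lalginj}) to a map out of the analytic principal series. The key input is that $\Ord_P(\widehat S(U^\wp,W^\wp)_{\overline\rho}[\fm_x])$ is a unitary admissible Banach representation of $L_P(\Q_p)$ over $k(x)$ (by Lemma \ref{lem: Pord-comp} and $\varpi_E$-adic admissibility), so its closed subrepresentation $\widehat U$ generated by the image of (\ref{equ: lg1-lalginj}) is again unitary admissible, and for each $\GL_2$-factor the universal unitary completion of $\pi_i^\infty\otimes L_i(\ul\lambda_i-s_i)$ is $\widehat\pi(\rho_{x_i})\otimes\varepsilon^{s_i}\circ\dett$ (this is the $\GL_2(\Q_p)$-input, cf.\ \cite{CD} and the argument in the proof of Proposition \ref{prop: hL-gl2pLL}). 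Taking the external tensor product over $i$ (and noting $\GL_1$-factors contribute genuine characters) gives $\widehat U\cong\bigotimes_i(\widehat\pi(\rho_{x_i})\otimes\varepsilon^{s_i}\circ\dett)$, and passing to locally analytic vectors (exact by \cite[Thm.~7.1]{ST03}) yields a continuous $L_P(\Q_p)$-equivariant map $(\Ind_{\overline B\cap L_P(\Q_p)}^{L_P(\Q_p)}\chi\delta_{B\cap L_P}^{-1})^{\an}\to\Ord_P(\widehat S(U^\wp,W^\wp)_{\overline\rho}[\fm_x])^{\an}$ which restricts to (\ref{equ: lg1-lalginj}) on locally algebraic vectors.

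It then remains to see this map is injective. I would argue as follows: the socle of the analytic principal series $(\Ind_{\overline B\cap L_P(\Q_p)}^{L_P(\Q_p)}\chi\delta_{B\cap L_P}^{-1})^{\an}$ is the locally algebraic piece $\bigotimes_i(\widehat\pi(\rho_{x_i})^{\lalg}\otimes\varepsilon^{s_i}\circ\dett)$ — this uses that each $\rho_{x_i}$ is crystalline \emph{generic}, hence the corresponding $\GL_2(\Q_p)$-analytic principal series is of the expected irreducible-socle shape with no repeated constituents, together with an external-tensor-product argument for the socle of a tensor product of such representations (reducible, but with locally algebraic socle). Since (\ref{equ: lg1-lalginj}) is an injection on this socle and the socle meets every nonzero subrepresentation, the extended map is injective. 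The main obstacle I anticipate is precisely the last point: controlling the socle (and more generally the constituent structure) of $(\Ind_{\overline B\cap L_P(\Q_p)}^{L_P(\Q_p)}\chi\delta_{B\cap L_P}^{-1})^{\an}$ as an external tensor product of reducible locally analytic $\GL_2(\Q_p)$-principal series — one needs genericity of the crystalline Frobenius eigenvalues (Lemma \ref{lem: lg1-classici2}, available here since $x$ is benign) to rule out unexpected maps into $\Ord_P$ of the kind eliminated in the proof of Lemma \ref{lem: lg1-ordalg}, so that the injectivity on the socle really does propagate. Everything else is a routine assembly of Proposition \ref{prop: ord-adj}, the $\GL_2(\Q_p)$ $p$-adic correspondence, and exactness of locally analytic vectors.
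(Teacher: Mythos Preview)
Your approach is genuinely different from the paper's, and there is a real gap in the second paragraph. You want to produce the Banach embedding $\widehat{\bigotimes}_i(\widehat\pi(\rho_{x_i})\otimes\varepsilon^{s_i}\!\circ\dett)\hookrightarrow\Ord_P(\widehat S(U^\wp,W^\wp)_{\overline\rho}[\fm_x])$ directly from (\ref{equ: lg1-lalginj}) via universal unitary completion, then take analytic vectors and restrict. The gap is the claim that the universal unitary completion of the \emph{locally algebraic} representation $\pi_i^\infty\otimes L_i(\ul\lambda_i-s_i)$ is $\widehat\pi(\rho_{x_i})$: the references you give do not say this. \cite{CD} and the argument in Proposition~\ref{prop: hL-gl2pLL} concern the universal completion of $\widehat\pi(\rho)^{\an}$, and what \cite{BeBr}, \cite{Pas09} prove is that the universal completion of the \emph{analytic} principal series $\pi_i^{\an}$ (for the refinement with $\val_p(\alpha_{s_i+1})\geq\val_p(\alpha_{s_i+2})$) is $\widehat\pi(\rho_{x_i})$. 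This is exactly how the paper deduces the Banach embedding in Corollary~\ref{coro: lg1-bpBana} \emph{from} the present proposition, not the other way round; your plan reverses that logical flow. A minor further issue: you invoke Lemma~\ref{lem: lg1-classici2} to get genericity of the Frobenius eigenvalues, but that lemma requires the extra slope condition (\ref{equ: lg1-slopenc2}), which is not part of the hypothesis here (only ``$x$ benign'').

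The paper's argument avoids universal completions entirely at this stage. It proceeds constituent-by-constituent: by \cite[Cor.~4.25]{OS2} each irreducible constituent $V$ of $(\Ind_{\overline B\cap L_P(\Q_p)}^{L_P(\Q_p)}\chi\delta_{B\cap L_P}^{-1})^{\an}$ is an external tensor product $\widehat\otimes_i V_i$, with each $V_i$ either locally algebraic or (when $n_i=2$) of the form $\cF_{\overline B_2}^{\GL_2}(\overline L_i(-s\cdot\ul\lambda_i),\cdot)$. If some $V_i$ is not locally algebraic and $V$ embeds in $\Ord_P(\ldots)^{\an}$, applying $J_{B\cap L_P}$ produces a point $y'=(x,\chi')\in\cE^{P-\ord}$ with $\chi'$ of the ``wrong'' weight; global triangulation at $y'$ (Lemma~\ref{lem: lg1-glotri1}, available since $Z_1'$ accumulates at $y'$ by Proposition~\ref{prop: lg1-dense2}(1)) then forces a trianguline parameter on $D_{\rig}(\rho_{x_i})$ incompatible with $\rho_{x_i}$ being nonsplit crystalline, a contradiction (with a separate unitarity contradiction via \cite[Cor.~3.6]{Br13I} in the exceptional case $\chi_{s_i+1}\chi_{s_i+2}^{-1}=x^{\lambda_{s_i+1}-\lambda_{s_i+2}}|\cdot|^2$). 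Having thus shown that no non-locally-algebraic constituent can embed, the actual construction of the extension of (\ref{equ: lg1-lalginj}) is carried out by the socle/extension argument of \cite[\S6.4, Cas $i=1$]{Br16}, using Lemma~\ref{lem: Pord-comp}(2) in place of \cite[Lem.~6.3.1]{Br16}. This eigenvariety route is what lets the paper handle every benign $x$ and every $P$-ordinary classical $\chi$ uniformly.
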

\begin{proof}
We use the notation in the proofs of Proposition \ref{prop: lg1-bpp} and Lemma \ref{lem: lg2-classici}. Let $V$ be an irreducible constituent of $(\Ind_{\overline{B}\cap L_P(\Q_p)}^{L_P(\Q_p)} \chi \delta_{B\cap L_P}^{-1})^{\an}$. It follows from \cite[Cor.~4.25]{OS2} that $V\cong \widehat{\otimes}_{i=1,\cdots,k} V_i$ where $V_i\cong \pi_i^{\infty}\otimes_{E} L_i(\ul{\lambda}_i)$ if $n_i=1$ and $V_i\cong \pi_i^{\infty}\otimes_{E}L_i(\ul{\lambda}_i)$ or $\cF_{\overline{B}_2}^{\GL_2}(\overline{L}_i(-s\cdot \ul{\lambda}_i), |\cdot|^{-1}\chi_{s_i+1}^{\infty}\otimes |\cdot|\chi_{s_i+2}^{\infty})$ if $n_i=2$. Assume that we have an injection:
\begin{equation}\label{equ: lg1-comp2}
V\hooklongrightarrow \Ord_P\big(\widehat{S}(U^{\wp}, W^{\wp})_{\overline{\rho}}[\fm_x]\big)^{\an}
\end{equation}
for a constituent $V$ such that there exists $i\in \{1,\cdots,k\}$ with $V_i$ {\it not} locally algebraic (so $n_i=2$ and $V_i=\cF_{\overline{B}_2}^{\GL_2}(\overline{L}_i(-s\cdot \ul{\lambda}_i), |\cdot|^{-1}\chi_{s_i+1}^{\infty}\otimes |\cdot|\chi_{s_i+2}^{\infty})$). Applying the functor $J_{B\cap L_P}(\cdot)$ to (\ref{equ: lg1-comp2}) gives a point $y'=(x, \chi')\in \cE^{P-\ord}$ with $(\chi')^{\infty}=\chi^{\infty}$ (which is unramified) and $\chi'_{s_i+1}=\chi_{s_i+1} x^{\lambda_{s_i+2}-\lambda_{s_i+1}-1}$, $\chi'_{s_i+2}=\chi_{s_i+2} x^{\lambda_{s_i+1}-\lambda_{s_i+2}+1}$. If $\chi_{s_i+1}\chi_{s_i+2}^{-1}\neq x^{\lambda_{s_i+1}-\lambda_{s_i+2}}|\cdot|^2$ (thus $\chi_{s_i+1}\chi_{s_i+2}^{-1}\neq x^m |\cdot|^2$ for any $m\in \Z$, and hence also $\chi'_{s_i+1}(\chi'_{s_i+2})^{-1}\neq x^m |\cdot|^2$ for any $m\in \Z$), applying Lemma \ref{lem: lg1-glotri1} to the point $y'$ (via Proposition \ref{prop: lg1-dense2}(1)), we easily deduce a contradiction with the fact the $2$-dimensional crystalline $\Gal_{\Q_p}$-representation $\rho_{x_i}$ is nonsplit. Hence such a point $y'$ doesn't exist on $\cE^{P-\ord}$ (and we can't have (\ref{equ: lg1-comp2})). If $\chi_{s_i+1}\chi_{s_i+2}^{-1}= x^{\lambda_{s_i+1}-\lambda_{s_i+2}}|\cdot|^2$, we have $\val_p(\chi_{s_i+1}(p))=\frac{\lambda_{s_i+1}-\lambda_{s_i+2}}{2}-1<\lambda_{s_i+1}-\lambda_{s_i+2}$, and as in the proof of Lemma \ref{lem: lg2-classici}, we then see by \cite[Cor. 3.6]{Br13I} that $V_i$ does not admit a $\GL_2(\Q_p)$-invariant lattice, a contradiction with (\ref{equ: lg1-comp2}). Using \cite[Cor.~4.5]{Br13II} we deduce that $y'$ again doesn't exist on $\cE^{P-\ord}$. The proposition then follows by the same arguments as in \cite[\S~6.4\ Cas\ $i=1$]{Br16} (or as in \cite[\S~5.6]{BE} when $k=1$) using Lemma \ref{lem: Pord-comp}(2) as a replacement for \cite[Lem.~6.3.1]{Br16} and the above discussion as a replacement for \cite[Prop.~6.3.4]{Br16}.
\end{proof}

\begin{corollary}\label{coro: lg1-bpBana}
Let $x$ be a benign point, then any injection as in (\ref{equ: lg1-lalginj}) extends to a closed injection of Banach representations of $L_P(\Q_p)$ over $k(x)$:
\begin{equation}
\widehat{\bigotimes}_{i=1,\cdots, k} \big(\widehat{\pi}(\rho_{x_i})\otimes \varepsilon^{s_i}\circ \dett\big) \hooklongrightarrow \Ord_P\big(\widehat{S}(U^{\wp}, W^{\wp})_{\overline{\rho}}[\fm_x]\big).
\end{equation}
\end{corollary}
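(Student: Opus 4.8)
The plan is to deduce Corollary \ref{coro: lg1-bpBana} from Proposition \ref{equ: lg1-adjan1} by passing from locally analytic representations to their unitary completions, exactly parallel to the way Proposition \ref{prop: hL-gl2pLL} was proved from the admissibility of Banach completions. First I would recall that, by well-known properties of the $p$-adic local Langlands correspondence for $\GL_2(\Q_p)$ (and local class field theory for $\GL_1$), each $\widehat{\pi}(\rho_{x_i})$ is a unitary admissible Banach representation of $\GL_{n_i}(\Q_p)$ whose subspace of locally analytic vectors is $(\Ind_{\overline{B}_2(\Q_p)}^{\GL_2(\Q_p)} \cdots)^{\an}$-type when $n_i=2$ (using that $\rho_{x_i}$ is crystalline generic for a dense set of benign points, and in general that $\widehat{\pi}(\rho_{x_i})^{\an}$ contains $\widehat{\pi}(\rho_{x_i})^{\lalg}$ with the locally analytic principal series of the trianguline parameter on top), and simply $\chi_{s_i+1}\varepsilon^{-s_i}$ composed with a completion when $n_i=1$. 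Crucially, $\widehat{\pi}(\rho_{x_i})$ is the universal unitary completion of $\widehat{\pi}(\rho_{x_i})^{\lalg}$ (by \cite{CD} in the relevant cases, or more simply since $\widehat{\pi}(\rho_{x_i})^{\an}$ is already known to be a closed subrepresentation whose universal completion recovers the Banach space). Completing the external tensor product over $k(x)$, the representation $\widehat{\bigotimes}_{i=1,\cdots,k}(\widehat{\pi}(\rho_{x_i})\otimes \varepsilon^{s_i}\circ\dett)$ is a unitary admissible Banach representation of $L_P(\Q_p)$ over $k(x)$, and its locally analytic vectors are dense, with $\widehat{\bigotimes}_{i}(\widehat{\pi}(\rho_{x_i})^{\an}\otimes\varepsilon^{s_i}\circ\dett)$ containing $(\Ind_{\overline{B}\cap L_P(\Q_p)}^{L_P(\Q_p)}\chi\delta_{B\cap L_P}^{-1})^{\an}$.

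Then I would argue as follows. By Proposition \ref{equ: lg1-adjan1}, the given injection (\ref{equ: lg1-lalginj}) extends to a $L_P(\Q_p)$-equivariant closed injection
\[
\big(\Ind_{\overline{B}\cap L_P(\Q_p)}^{L_P(\Q_p)}\chi\delta_{B\cap L_P}^{-1}\big)^{\an}\hooklongrightarrow \Ord_P\big(\widehat{S}(U^{\wp},W^{\wp})_{\overline{\rho}}[\fm_x]\big)^{\an}.
\]
Composing with the natural inclusion $\Ord_P(\widehat{S}(U^{\wp},W^{\wp})_{\overline{\rho}}[\fm_x])^{\an}\hookrightarrow \Ord_P(\widehat{S}(U^{\wp},W^{\wp})_{\overline{\rho}}[\fm_x])$ and using that the target is a unitary admissible Banach representation of $L_P(\Q_p)$, the universal unitary completion of the source maps to it. Since $\widehat{\bigotimes}_{i}(\widehat{\pi}(\rho_{x_i})\otimes\varepsilon^{s_i}\circ\dett)$ is this universal completion — here I use that $\widehat{\pi}(\rho_{x_i})$ is the universal completion of $\widehat{\pi}(\rho_{x_i})^{\lalg}$, that a completed tensor product of universal completions is the universal completion of the tensor product, and that the locally analytic principal series $(\Ind_{\overline{B}\cap L_P}^{L_P}\chi\delta_{B\cap L_P}^{-1})^{\an}$ and $\widehat{\bigotimes}_i(\widehat{\pi}(\rho_{x_i})^{\an}\otimes\cdots)$ have the same universal completion (they have the same locally algebraic vectors, and extra locally analytic constituents that are non-unitary do not contribute) — we obtain a $L_P(\Q_p)$-equivariant map
\[
\widehat{\bigotimes}_{i=1,\cdots,k}\big(\widehat{\pi}(\rho_{x_i})\otimes\varepsilon^{s_i}\circ\dett\big)\lra \Ord_P\big(\widehat{S}(U^{\wp},W^{\wp})_{\overline{\rho}}[\fm_x]\big)
\]
restricting to (\ref{equ: lg1-lalginj}) on locally algebraic vectors. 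It remains to check this is a closed injection: injectivity follows because its restriction to the dense subspace of locally analytic vectors is injective (it agrees there with the closed injection from Proposition \ref{equ: lg1-adjan1}, after identifying locally analytic vectors via \cite[Thm.~7.1]{ST03}), and an injective continuous morphism of admissible Banach representations with the source admissible is automatically closed with closed image (exactness of passage to locally analytic vectors, or directly the theory of admissible Banach representations of \cite{ST}).

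The main obstacle I anticipate is the bookkeeping around universal unitary completions: one must verify that $\widehat{\pi}(\rho_{x_i})$ really is the universal unitary completion of $\widehat{\pi}(\rho_{x_i})^{\lalg}$ in all the cases that arise (crystalline generic, crystalline non-generic, and the $n_i=1$ case, where the completion is just of an unramified locally algebraic character), and that this universal completion commutes with completed external tensor products over $k(x)$ and with the passage from $\widehat{\pi}(\rho_{x_i})^{\an}$ back to $\widehat{\pi}(\rho_{x_i})$. For $n_i=2$ crystalline generic this is exactly the content used in the proof of Proposition \ref{prop: hL-gl2pLL} (via \cite{CD}), so the non-generic case — where one may invoke $p\geq 5$ as in Proposition \ref{prop: hL-gl2pLL2} — and the tensor product step are the points requiring care. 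Once the universal completion statement is in hand, the corollary is essentially formal, combining Proposition \ref{equ: lg1-adjan1} with the exactness of locally analytic vectors and standard finiteness properties of admissible Banach representations.
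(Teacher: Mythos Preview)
Your approach is essentially the paper's: apply Proposition \ref{equ: lg1-adjan1} to get a locally analytic injection, pass to the universal unitary completion, and then check injectivity and closedness via admissibility. The final step---injectivity from \cite[\S~7]{ST03} and closedness from \cite[\S~3]{ST}, using that both sides are admissible (\cite[Lem.~2.14]{BH2} for the source)---is exactly what the paper does.

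There is one concrete point you gloss over that the paper handles explicitly. You do not specify \emph{which} $\chi$ you take, and the universal completion of $(\Ind_{\overline{B}_2(\Q_p)}^{\GL_2(\Q_p)}\chi_{s_i+1}|\cdot|^{-1}\otimes\chi_{s_i+2}|\cdot|)^{\an}$ being $\widehat{\pi}(\rho_{x_i})\otimes\varepsilon^{s_i}\circ\dett$ is \emph{not} automatic for an arbitrary refinement: one must choose the ordering with $\val_p(\alpha_{s_i+1})\geq\val_p(\alpha_{s_i+2})$, and then invoke \cite{BeBr} (distinct eigenvalues) or \cite{Pas09} (equal eigenvalues), not \cite{CD}. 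Your attempted workaround---arguing that $(\Ind\cdots)^{\an}$ and $\widehat{\bigotimes}_i\widehat{\pi}(\rho_{x_i})^{\an}$ ``have the same locally algebraic vectors, and extra constituents are non-unitary''---is not a proof: having the same locally algebraic vectors does not in general force equal universal completions, and for the non-dominant refinement the locally analytic principal series has a different universal completion. Likewise, the claim that $\widehat{\pi}(\rho_{x_i})$ is the universal completion of $\widehat{\pi}(\rho_{x_i})^{\lalg}$ is not what \cite{CD} says and is not what is needed here. The paper also cites \cite[Lem.~3.4]{BH2} for the fact that the universal completion of $\widehat{\otimes}_i\pi_i^{\an}$ is $\widehat{\otimes}_i\widehat{\pi}(\rho_{x_i})\otimes\varepsilon^{s_i}\circ\dett$; you correctly flag this tensor-product step as needing care, and that reference is the missing ingredient.
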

\begin{proof}
(a) We use the notation in the proofs of Proposition \ref{prop: lg1-bpp} or Corollary \ref{rajoutcristalline}. When $n_i=2$, by exchanging $\alpha_{s_i+1}$ and $\alpha_{s_i+2}$ if necessary, we can assume $\val_p(\alpha_{s_i+1})\geq \val_p(\alpha_{s_i+2})$. Let $\chi:=\delta_{\lambda}\chi^{\infty}$ with $\chi^{\infty}_{s_i+1}:=\unr(p^{-s_i}\alpha_{s_i+1})$ if $n_i=1$ and $\chi_{s_i+1}^{\infty}:=\unr(p^{-s_i-1}\alpha_{s_i+1})$, $\chi_{s_i+2}^{\infty}:=\unr(p^{-s_i}\alpha_{s_i+2})$ if $n_i=2$. We have $(x,\chi)\in \cE^{P-\ord}$. From Proposition \ref{equ: lg1-adjan1}, we deduce a continuous $L_P(\Q_p)$-equivariant injection:
\begin{equation}\label{equ: lg1-bana1}
\widehat{\bigotimes}_{i=1,\cdots, k} \pi_i^{\an} \cong
\big(\Ind_{\overline{B}\cap L_P(\Q_p)}^{L_P(\Q_p)} \chi \delta_{B\cap L_P}^{-1}\big)^{\an} \hooklongrightarrow \Ord_P\big(\widehat{S}(U^{\wp}, W^{\wp})_{\overline{\rho}}[\fm_x]\big)
\end{equation}
where $\pi_i^{\an}:= \chi_{s_i+1}$ if $n_i=1$ and $\pi_i^{\an} := (\Ind_{\overline{B}_2(\Q_p)}^{\GL_2(\Q_p)} \chi_{s_i+1} |\cdot|^{-1} \otimes \chi_{s_i+2} |\cdot|)^{\an}$ if $n_i=2$. By the above condition on $\alpha_{s_i+1}$, $\alpha_{s_i+2}$, we know that $\widehat{\pi}(\rho_{x_i})\otimes \varepsilon^{s_i}\circ \dett$ is isomorphic to the universal unitary completion of $\pi_i^{\an}$ (see \cite{BeBr} for the case where $\alpha_{s_i+1}\neq \alpha_{s_i+2}$ and \cite{Pas09} for the case where $\alpha_{s_i+1}=\alpha_{s_i+2}$). It then follows from \cite[Lem.~3.4]{BH2} that the universal unitary completion of $\widehat{\otimes}_{i=1,\cdots, k} \pi_i^{\an}$ is isomorphic to $\widehat{\otimes}_{i=1,\cdots, k} (\widehat{\pi}(\rho_{x_i})\otimes \varepsilon^{s_i}\circ \dett)$. We deduce that (\ref{equ: lg1-bana1}) induces a continuous $L_P(\Q_p)$-equivariant morphism:
\begin{equation}\label{equ: lg1-comp}
\widehat{\bigotimes}_{i=1,\cdots, k} \big(\widehat{\pi}(\rho_{x_i})\otimes_{k(x)} \varepsilon^{s_i}\circ \dett\big) \longrightarrow \Ord_P\big(\widehat{S}(U^{\wp} ,W^{\wp})_{\overline{\rho}}[\fm_x]\big)
\end{equation}
which restricts to (\ref{equ: lg1-bana1}) on the left hand side. Since (\ref{equ: lg1-bana1}) is injective and the two Banach representations in (\ref{equ: lg1-comp}) are admissible (for the left hand side, this follows by induction e.g. from \cite[Lem.~2.14]{BH2}), it follows from \cite[\S~7]{ST03} that (\ref{equ: lg1-comp}) is also injective, and from \cite[\S~3]{ST} that it is automatically closed.
\end{proof}

\noindent
We now give a lower bound on the Krull dimension of $\Spec \widetilde{\bT}(U^{\wp})^{P-\ord}_{\overline{\rho}}[1/p]$. We denote by $\cW$ the rigid analytic space over $E$ parametrizing the locally analytic characters of $T(\Z_p)$, by $\omega^2$ the composition:
\begin{equation*}
\omega^2: \cE^{P-\ord}_{\red} \hooklongrightarrow (\Spf \widetilde{\bT}(U^{\wp})^{P-\ord}_{\overline{\rho}})^{\rig}\times_E \cT \xlongrightarrow{\pr_2} \cT
\end{equation*}
and by $\omega^2_0$ the composition of $\omega^2$ with the natural surjection $\cT\twoheadrightarrow \cW$.\\

\noindent
We fix $x\in Z_1$ and use the notation in the proof of Lemma \ref{lem: lg1-ordalg}. For $J\subseteq I_P$, we let $w_J:=(w_{J,i})_{i\in I_P}\in S_2^{|I_P|}$ with $w_{J,i}\neq 1$ if and only if $i \in J$, and put $\chi_J:=\delta_{\lambda}(\otimes_{j=1}^n \unr(\beta_{w_J,j}))$ with the notation of (\ref{equ: lg1-Jacord}). By definition, $\lambda$ is strictly dominant. By (\ref{equ: lg1-Jacord}) and the proof of Lemma \ref{lem: lg1-noncrit1}, we have that $y_J:=(x, \chi_J)\in \cE^{P-\ord}$ and $y_J$ is noncritical. By Proposition \ref{prop: lg1-bpp}, the second part of Remark \ref{rem: lg1-distin} and Lemma \ref{lem: lg1-classici2}, we easily deduce that $\rho_{x,\widetilde{\wp}}$ is crystalline generic. Recall we have assumed Hypothesis \ref{hypoglobal}. We now assume one more condition {\it till the end of the paper}.

\begin{hypothesis}\label{hypo: lg2-globhypo}
If $n>3$, we have $U_v$ maximal hyperspecial at all inert places $v$.
\end{hypothesis}

\noindent
It then follows from \cite[Thm. 4.8~\&~4.10]{Che11} and the smoothness of $\cW$ that the rigid variety $\cE_{\red}$ is smooth at the point $\iota^{P-\ord}(y_J)$ (see (\ref{equiforP})), which therefore belongs to only one irreducible component of $\cE_{\red}$. Combining \cite[Thm. 4.8~\&~4.10]{Che11} with Corollary \ref{cor: lg1-PordEv}, we deduce the following result.

\begin{proposition}\label{prop: lg1-etale}
The morphism $\omega^2_0$ is \'etale at the point $y_J$.
\end{proposition}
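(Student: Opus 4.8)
The plan is to deduce Proposition \ref{prop: lg1-etale} from Chenevier's infinitesimal étaleness result \cite[Thm.~4.8~\&~4.10]{Che11} applied to the point $\iota^{P-\ord}(y_J)$ on the (full) eigenvariety $\cE$, transported back to $\cE^{P-\ord}$ via the closed immersion $\iota^{P-\ord}\colon \cE^{P-\ord}_{\red}\hookrightarrow \cE_{\red}$ of Corollary \ref{cor: lg1-PordEv}. First I would recall that $x\in Z_1$, so by construction $x$ is benign, $\lambda$ is strictly dominant, and the conditions of Lemma \ref{lem: lg2-classici} and Lemma \ref{lem: lg1-classici2} hold; hence, by Proposition \ref{prop: lg1-bpp}(2), Corollary \ref{rajoutcristalline}(1) and the second part of Remark \ref{rem: lg1-distin} together with (\ref{equ: lg1-igeneric}), the representation $\rho_{x,\widetilde{\wp}}$ is crystalline with pairwise-distinct Hodge--Tate weights (strict dominance) and with $\varphi$-eigenvalues $(\alpha_j)_{j=1,\dots,n}$ satisfying $\alpha_j\alpha_{j'}^{-1}\notin\{1,p,p^{-1}\}$ for $j\ne j'$, i.e. $\rho_{x,\widetilde{\wp}}$ is generic crystalline. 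Next I would check that $\iota^{P-\ord}(y_J)=(x,\chi_J\delta_P^{-1})$ is a noncritical classical point of $\cE$ whose refinement is ``accessible'' and ``regular'' in Chenevier's sense: noncriticality is exactly Lemma \ref{lem: lg1-noncrit1} (the point $y_J$ is $P$-ordinary classical with $x$ benign, so $\iota^{P-\ord}(y_J)$ is noncritical by definition of noncriticality on $\cE^{P-\ord}$), and the genericity of the $\alpha_j$ together with strict dominance of $\lambda$ gives that the refinement attached to $w_J$ is regular and non-critical; here Hypothesis \ref{hypoglobal} (strong base change, via Rogawski/Labesse) and Hypothesis \ref{hypo: lg2-globhypo} (hyperspecial level at inert places when $n>3$) are what allow \cite[Thm.~4.8~\&~4.10]{Che11} to apply to our definite unitary group setting.

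Granting this, \cite[Thm.~4.8~\&~4.10]{Che11} yields that $\cE_{\red}$ is smooth at $\iota^{P-\ord}(y_J)$ and, more precisely, that the weight map $\cE_{\red}\to\cW$ (composition of $\omega^2$-analogue on $\cE$ with $\cT\twoheadrightarrow\cW$) is étale at $\iota^{P-\ord}(y_J)$. Because $\iota^{P-\ord}$ is a closed immersion identifying $\cE^{P-\ord}_{\red}$ with a union of irreducible components of $\cE_{\red}$ (Corollary \ref{cor: lg1-PordEv}) and $\cE_{\red}$ is smooth — hence irreducible — at $\iota^{P-\ord}(y_J)$, the map $\iota^{P-\ord}$ is a local isomorphism near $y_J$; moreover the weight maps on $\cE^{P-\ord}_{\red}$ and on $\cE_{\red}$ are compatible via $\iota^{P-\ord}$ up to the twist by $\delta_P^{-1}$, which induces an isomorphism on $\cT$ (hence on $\cW$) and therefore does not affect étaleness. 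Composing, $\omega^2_0\colon\cE^{P-\ord}_{\red}\to\cW$ is étale at $y_J$, which is the assertion. I would then note for completeness that since $\cE^{P-\ord}_{\red}$ is reduced by construction, étaleness of $\omega^2_0$ at $y_J$ is equivalent to: $\omega^2_0$ induces an isomorphism on tangent spaces at $y_J$, and this is the formulation that will actually be used in the sequel (e.g. in the lower bound on $\dim_E V_\rho$).

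The main obstacle is verifying that the hypotheses of \cite[Thm.~4.8~\&~4.10]{Che11} are genuinely met at $\iota^{P-\ord}(y_J)$ in the form needed: one must be sure that the crystalline point $\rho_{x,\widetilde\wp}$ together with the chosen refinement $w_J$ is simultaneously (a) ``non-critical'' in Chenevier's sense (which we get from noncriticality of $y_J$, but one must match the two notions of noncriticality — the one via the trianguline parameter and the one in \cite{Che11}), (b) ``regular'' — here the genericity $\alpha_j\alpha_{j'}^{-1}\notin\{1,p,p^{-1}\}$ and the strict dominance of $\lambda$ are exactly what is needed, but this requires the full strength of the conditions defining $Z_1$, so one has to trace carefully that $y_J$ does satisfy both Lemma \ref{lem: lg2-classici} and Lemma \ref{lem: lg1-classici2}, and (c) that the global input (absolute irreducibility of $\overline\rho$, the base-change/automorphy from \cite{Ro}, \cite{Lab} under Hypothesis \ref{hypoglobal}, and the level hypotheses including Hypothesis \ref{hypo: lg2-globhypo}) places us squarely in the situation where Chenevier's argument — which is phrased for eigenvarieties of definite unitary groups — is valid. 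Once these compatibilities are spelled out, the rest is formal transport along the closed immersion $\iota^{P-\ord}$.
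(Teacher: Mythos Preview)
Your proposal is correct and follows essentially the same approach as the paper. The paper in fact gives its proof in the paragraph \emph{preceding} the proposition: it first notes (from the definition of $Z_1$, Proposition~\ref{prop: lg1-bpp}, Remark~\ref{rem: lg1-distin} and Lemma~\ref{lem: lg1-classici2}) that $\rho_{x,\widetilde\wp}$ is crystalline generic, then invokes \cite[Thm.~4.8~\&~4.10]{Che11} to get smoothness of $\cE_{\red}$ at $\iota^{P-\ord}(y_J)$ and \'etaleness of the weight map there, and finally combines this with Corollary~\ref{cor: lg1-PordEv} to transport the conclusion along $\iota^{P-\ord}$ --- exactly as you do.
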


\noindent
For $i\in \{1, \cdots, k\}$, we denote by $\omega_i: R_{\overline{\rho}_i}\rightarrow \widetilde{\bT}(U^{\wp})^{P-\ord}_{\overline{\rho}}$ the $i$-th factor of $\omega$ in (\ref{equ: ord-RT}) and we still denote by $\omega_i$ the induced morphism on the respective $(\Spf \cdot)^{\rig}$. We fix $i\in \{1, \cdots, k\}$ and denote by $\omega^1_i$ the following composition:
\begin{equation}\label{omegai}
\omega^1_i:  \cE_{\red}^{P-\ord} \xlongrightarrow{\omega^1} (\Spf \widetilde{\bT}(U^{\wp})^{P-\ord}_{\overline{\rho}})^{\rig} \xlongrightarrow{\omega_i} (\Spf R_{\overline{\rho}_i})^{\rig}.
\end{equation}

\noindent
Recall we have $\widehat {\mathcal O}_{(\Spf R_{\overline{\rho}_i})^{\rig}}\cong R_{\rho_{x_i}}$ (\cite[\S~2.3]{Kis09} and see \S~\ref{sec: ord-galoisdef} for $R_{\rho_{x_i}}$), hence the tangent space~:
$$V_{(\Spf R_{\overline{\rho}_i})^{\rig},x_i}=\Hom_{k(x_i)-\alg}(\widehat {\mathcal O}_{(\Spf R_{\overline{\rho}_i})^{\rig},x_i},k(x_i)[\epsilon]/\epsilon^2)$$
of the rigid analytic variety $(\Spf R_{\overline{\rho}_i})^{\rig}$ at $x_i$ is naturally isomorphic to $\Ext^1_{\Gal_{\Q_p}}(\rho_{x_i}, \rho_{x_i})$. Extending scalars if necessary, we can see everything over the finite extension $k(x)$ of $k(x_i)$. Assume first $n_i=1$, then we have $\dim_{k(x)} \Ext^1_{\Gal_{\Q_p}}(\rho_{x_i}, \rho_{x_i})=2$ and we denote by $\Ext^1_g(\rho_{x_i}, \rho_{x_i})$ the $1$-dimensional ${k(x)}$-vector subspace of de Rham (or equivalently crystalline) deformations. Assume $n_i=2$, since $\rho_{x_i}$ is crystalline, generic and nonsplit, we have $\dim_{k(x)} \Ext^1_{\Gal_{\Q_p}}(\rho_{x_i}, \rho_{x_i})=5$ (e.g. by similar arguments as in Lemma \ref{lem: hL-ext2}). For each refinement $(\alpha_{s_i+w_i(1)}, \alpha_{s_i+w_i(2)})$ on the Frobenius eigenvalues $\{\alpha_{s_i+1}, \alpha_{s_i+2}\}$ of $D_{\cris}(\rho_{x_i})$ with $w_i\in S_2$, one can proceed as in (\ref{equ: l3-exa2}) and Lemma \ref{lem: hL-tri3} and define a ${k(x)}$-vector subspace $\Ext^1_{w_i}(\rho_{x_i}, \rho_{x_i})$ of $\Ext^1_{\Gal_{\Q_p}}(\rho_{x_i}, \rho_{x_i})\cong \Ext^1_{(\varphi,\Gamma)}(D_{\rig}(\rho_{x_i}),D_{\rig}(\rho_{x_i}))$, analogous to the subspace $\Ext^1_{\tri}(D,D)$ of $\Ext^1_{(\varphi,\Gamma)}(D,D)$ in Lemma \ref{lem: hL-tri3}, consisting of trianguline deformations of $\rho_{x_i}$ over ${k(x)}[\epsilon]/\epsilon^2$ with respect to the triangulation on $D_{\rig}(\rho_{x_i})$ associated to the refinement $(\alpha_{s_i+w_i(1)}, \alpha_{s_i+w_i(2)})$. We denote by $\Ext^1_g( \rho_{x_i}, \rho_{x_i})\subseteq \Ext^1_{\Gal_{\Q_p}}(\rho_{x_i}, \rho_{x_i})$ the ${k(x)}$-vector subspace of de Rham deformations, or equivalently of crystalline deformations (since $\rho_{x_i}$ is crystalline generic).

\begin{lemma}\label{lem: lg1-trid}
Let $i\in \{1, \cdots, k\}$ such that $n_i=2$.\\
\noindent
(1) For any $w_i\in S_2$, we have $\dim_{k(x)} \Ext^1_{w_i}(\rho_{x_i}, \rho_{x_i})=4$, $\dim_{k(x)} \Ext^1_g(\rho_{x_i}, \rho_{x_i})=2$ and $\Ext^1_g(\rho_{x_i}, \rho_{x_i})\subseteq \Ext^1_{w_i}(\rho_{x_i}, \rho_{x_i})$.\\
\noindent
(2) We have $\sum_{w_i\in S_2} \Ext^1_{w_i}(\rho_{x_i}, \rho_{x_i})=\Ext^1_{\Gal_{\Q_p}}(\rho_{x_i}, \rho_{x_i})$.
\end{lemma}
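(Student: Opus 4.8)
The plan is to translate the whole statement into the cohomology of $(\varphi,\Gamma)$-modules over $\cR:=\cR_{k(x)}$, writing $D:=D_{\rig}(\rho_{x_i})$ for the associated $(\varphi,\Gamma)$-module: it has rank $2$, is crystalline, nonsplit, has distinct Hodge-Tate weights (by Proposition \ref{prop: lg1-bpp}(2) and the strict dominance of $\lambda$), and is generic in that its two crystalline Frobenius eigenvalues $\alpha,\beta$ satisfy $\alpha\beta^{-1}\notin\{1,p,p^{-1}\}$. For each $w_i\in S_2$ one has a triangulation $\cR(\delta_1^{(w_i)})\hookrightarrow D\twoheadrightarrow\cR(\delta_2^{(w_i)})$ coming from the corresponding eigenline of $D_{\cris}(\rho_{x_i})$, and $\Ext^1_{w_i}(\rho_{x_i},\rho_{x_i})$ is, by construction, the kernel of the composition $\kappa_0^{w_i}\colon\Ext^1_{(\varphi,\Gamma)}(D,D)\to\Ext^1_{(\varphi,\Gamma)}(D,\cR(\delta_2^{(w_i)}))\to\Ext^1_{(\varphi,\Gamma)}(\cR(\delta_1^{(w_i)}),\cR(\delta_2^{(w_i)}))$, exactly as $\Ext^1_{\tri}(D,D)=\Ker(\kappa_0)$ was defined after (\ref{equ: l3-exa2}) and in Lemma \ref{lem: hL-tri3}.

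For part (1) I would simply rerun the cohomological bookkeeping of Lemma \ref{lem: hL-ext2}, Lemma \ref{lem: hL-tri3} and Lemma \ref{lem: hL-dRg}, now in the crystalline (rather than ``special'') situation. The genericity of $\alpha\beta^{-1}$ together with the distinctness of the Hodge-Tate weights supplies the vanishings needed for the dévissages — $\Ext^2_{(\varphi,\Gamma)}(D,D)=0$, $\Ext^2_{(\varphi,\Gamma)}(D,\cR(\delta_1^{(w_i)}))=0$ and $\Ext^2_{(\varphi,\Gamma)}(\cR(\delta_2^{(w_i)}),\cR(\delta_2^{(w_i)}))=0$, as well as $\dim_{k(x)}\Ext^1_{(\varphi,\Gamma)}(\cR(\delta_1^{(w_i)}),\cR(\delta_2^{(w_i)}))=1$ — by the same kind of Tate-duality computations as in the proof of Proposition \ref{prop-l3-cup}(1) and Lemma \ref{lem: hL-ext2}. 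An Euler characteristic count then gives $\dim_{k(x)}\Ext^1_{(\varphi,\Gamma)}(D,D)=5$, and surjectivity of $\kappa_0^{w_i}$ onto a one-dimensional target gives $\dim_{k(x)}\Ext^1_{w_i}(\rho_{x_i},\rho_{x_i})=4$. The identity $\dim_{k(x)}\Ext^1_g(\rho_{x_i},\rho_{x_i})=2$ is the standard Bloch--Kato computation of $\dim_{k(x)}H^1_g(\Q_p,\ad\rho_{x_i})$, again using genericity to kill $H^0(\Q_p,\ad\rho_{x_i}(1))$ and $D_{\cris}(\ad\rho_{x_i}(1))^{\varphi=1}$. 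Finally $\Ext^1_g(\rho_{x_i},\rho_{x_i})\subseteq\Ext^1_{w_i}(\rho_{x_i},\rho_{x_i})$ for each $w_i$ follows as in Lemma \ref{lem: hL-dRg}(1): one has $\Ext^1_g(\cR(\delta_1^{(w_i)}),\cR(\delta_2^{(w_i)}))=0$ (wrong Sen weight for a de Rham extension of rank-one objects), so every de Rham deformation lies in $\Ker(\kappa_0^{w_i})$.

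For part (2), since each of the two subspaces $\Ext^1_{w_i}(\rho_{x_i},\rho_{x_i})$ has codimension one in the $5$-dimensional space $\Ext^1_{(\varphi,\Gamma)}(D,D)$, the asserted equality $\sum_{w_i\in S_2}\Ext^1_{w_i}(\rho_{x_i},\rho_{x_i})=\Ext^1_{(\varphi,\Gamma)}(D,D)$ is equivalent to the two subspaces being distinct, and for this it suffices to produce a single infinitesimal deformation that is trianguline for one refinement but not the other. I would first strip off the central twists: writing $j\colon\Hom(\Q_p^\times,k(x))\hookrightarrow\Ext^1_{(\varphi,\Gamma)}(D,D)$, $\psi\mapsto D\otimes_{k(x)}\cR_{k(x)[\epsilon]/\epsilon^2}(1+(\psi/2)\epsilon)$, as in the proof of Lemma \ref{lem: hL-cenG}, one has $\Ext^1_{(\varphi,\Gamma)}(D,D)=\Ext^1_{(\varphi,\Gamma),Z}(D,D)\oplus\Ima(j)$ with $\Ima(j)$ contained in both subspaces (twisting $D$ by a deformation of the trivial character twists every triangulation), so it is enough to separate the two inside the constant-determinant part. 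There I would take, for the trivial refinement, the constant-determinant trianguline deformation with $\widetilde\delta_1=\delta_1^{(1)}(1+\log_p\epsilon)$, $\widetilde\delta_2=\delta_2^{(1)}(1-\log_p\epsilon)$ and extension class lifting the one of $D$ (such a lift exists because the relevant $H^2$ vanishes, so the reduction map on $H^1$ of the corresponding rank-one module is surjective); call it $\widetilde D$. Then $\widetilde D$ lies in the first subspace but, having non-locally-algebraic trianguline parameter, is not de Rham. If $\widetilde D$ were also trianguline for the nontrivial refinement $s$, it would carry a second rank-one sub-$(\varphi,\Gamma)$-module $\cR(\widetilde\delta_1')$ over $\cR_{k(x)[\epsilon]/\epsilon^2}$ reducing to $\cR(\delta_1^{(s)})\subseteq D$; comparing the Sen weights of $\widetilde D$ read from the two triangulations fixes the $\log_p$-component of the deformation $\widetilde\delta_1'$ of $\delta_1^{(s)}$, while the composite $\cR(\widetilde\delta_1')\hookrightarrow\widetilde D\twoheadrightarrow\cR(\widetilde\delta_2)$ — which is nonzero because $\cR(\delta_1^{(s)})\not\subseteq\cR(\delta_1^{(1)})$ inside $D$ — forces $\widetilde\delta_2(\widetilde\delta_1')^{-1}$ to be a locally algebraic deformation of $\delta_2^{(1)}(\delta_1^{(s)})^{-1}$, pinning the same $\log_p$-component to the opposite value; contradiction.

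The hard part is precisely this last step of part (2): controlling how one infinitesimal deformation of $D$ can simultaneously carry the two triangulations attached to the two refinements. This is genuinely more delicate than its ``special'' analogue in \S~\ref{sec: hL-FM3}, because here the intersection of the two subspaces is strictly larger than $\Ext^1_g(\rho_{x_i},\rho_{x_i})$ — it also contains the two-dimensional space $\Ima(j)$ of central twists — so ``trianguline for both refinements'' does not force ``de Rham'', and the reduction to constant determinant is what makes the contradiction accessible. If the explicit $(\varphi,\Gamma)$-module manipulation proves unwieldy, an alternative is to deduce the distinctness of the two subspaces from the smoothness of the trianguline deformation (eigen)variety for $\GL_2(\Q_p)$ at crystalline generic points à la \cite{Che11}, already used for Proposition \ref{prop: lg1-etale}: the two refinements of $\rho_{x_i}$ give two distinct points of that variety mapping to the same Galois point, with tangent spaces the two $\Ext^1_{w_i}(\rho_{x_i},\rho_{x_i})$ inside $\Ext^1_{(\varphi,\Gamma)}(D,D)$, and the distinctness $\cR(\delta_1^{(1)})\neq\cR(\delta_1^{(s)})$ of the underlying sub-triangulations of $D$ prevents these two tangent spaces from coinciding.
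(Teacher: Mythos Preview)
Your treatment of part~(1) is exactly the paper's: the paper just points to the proofs of Lemma~\ref{lem: hL-tri3} and Lemma~\ref{lem: hL-dRg}, and you spell out the parallel d\'evissages in the crystalline generic setting.

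For part~(2) you do strictly more than the paper. The paper's entire proof of~(2) is the sentence ``easily follows from $\dim_{k(x)}\Ext^1_{w_i}(\rho_{x_i},\rho_{x_i})=4$ and $\dim_{k(x)}\Ext^1_{\Gal_{\Q_p}}(\rho_{x_i},\rho_{x_i})=5$''. As you correctly observe, this alone only gives that the sum has dimension $4$ or $5$, and one still has to rule out that the two codimension-one subspaces coincide; you then supply two arguments for this. So your proposal is more complete than the paper on this point, not less.

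Of your two approaches, the eigenvariety alternative is the cleaner one and closest in spirit to how the paper argues elsewhere (cf.\ Proposition~\ref{prop: lg1-etale}). Your explicit $(\varphi,\Gamma)$-module argument is also correct, but the key step --- that a map $\cR_{k(x)[\epsilon]/\epsilon^2}(\widetilde\delta_1')\to\cR_{k(x)[\epsilon]/\epsilon^2}(\widetilde\delta_2)$ which is nonzero modulo~$\epsilon$ forces the ratio $\widetilde\delta_2(\widetilde\delta_1')^{-1}=x^{-m}(1+\psi\epsilon)$ to be ``locally algebraic'' --- is asserted without proof. In fact a stronger statement holds: such a map forces $\psi=0$. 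This amounts to showing that the connecting map $H^0_{(\varphi,\Gamma)}(\cR(x^{-m}))\to H^1_{(\varphi,\Gamma)}(\cR(x^{-m}))$, which sends $t^m$ to the class of the cocycle $(\psi(p)t^m,\gamma\mapsto\psi(\chi_{\cyc}(\gamma))t^m)$, is injective on $\Hom(\Q_p^\times,k(x))$; and this follows from $\cR_{k(x)}^{\varphi_0=p^m,\,\Gamma=\chi_{\cyc}^m}=k(x)\cdot t^m$ (so any coboundary $g$ would have to be a scalar multiple of $t^m$, which then kills the cocycle only when $\psi=0$). Either add this line of justification, or simply keep the eigenvariety route.
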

\begin{proof}
(1) follows by arguments similar to the ones in the proofs of Lemma \ref{lem: hL-tri3} and Lemma \ref{lem: hL-dRg}. (2) easily follows from $\dim_{k(x)} \Ext^1_{w_i}(\rho_{x_i}, \rho_{x_i})=4$ and $\dim_{k(x)} \Ext^1_{\Gal_{\Q_p}}(\rho_{x_i}, \rho_{x_i})=5$.
\end{proof}

\noindent
For a morphism $f:X\rightarrow Y$ of rigid analytic varieties and a point $x\in X$, we denote by $df_x:V_{X,x}\rightarrow V_{Y,f(x)}$ the $k(x)$-linear map induced by $f$ on the respective tangent spaces of $X$ and $Y$ at $x$ and $f(x)$.\\

\noindent
We fix $J\subseteq I_P$ and denote by $V_J=V_{\cE_{\red}^{P-\ord} ,y_J}$ the tangent space of $\cE_{\red}^{P-\ord}$ at the point $y_J$. We let $\overline{d}\omega^1_{i, y_J}$ be the composition:
\begin{equation*}
\overline{d}\omega^1_{i, y_J}: V_J\xlongrightarrow{d \omega^1_{i,y_J}} \Ext^1_{\Gal_{\Q_p}}(\rho_{x_i}, \rho_{x_i}) \twoheadlongrightarrow \Ext^1_{\Gal_{\Q_p}}(\rho_{x_i}, \rho_{x_i}) /\Ext^1_g(\rho_{x_i}, \rho_{x_i})
\end{equation*}
where we recall that $V_{(\Spf R_{\overline{\rho}_i})^{\rig},x_i}\cong \Ext^1_{\Gal_{\Q_p}}(\rho_{x_i}, \rho_{x_i})$. We set:
$$\overline{d}\omega^1_{y_J}:=(\overline{d}\omega^1_{i, y_J})_{i=1,\cdots, k}:V_J \lra \bigoplus_{i=1, \cdots, k} \Ext^1_{\Gal_{\Q_p}}(\rho_{x_i}, \rho_{x_i}) /\Ext^1_g(\rho_{x_i}, \rho_{x_i}).$$

\begin{proposition}\label{prop: lg1-1fern}
(1) Let $i\in \{1,\cdots, k\}$, we have $\Ima(d\omega^1_{i,y_J})\subseteq \Ext^1_{w_{J,i}}(\rho_{x_i}, \rho_{x_i})$ where $\Ext^1_{w_{J,i}}(\rho_{x_i}, \rho_{x_i}):=\Ext^1_{\Gal_{\Q_p}}(\rho_{x_i}, \rho_{x_i})$ if $n_i=1$.\\
\noindent
(2) The morphism $\overline{d}\omega^1_{y_J}$ induces a bijection (using $\Ext^1_g(\rho_{x_i}, \rho_{x_i})\subseteq \Ext^1_{w_{J,i}}(\rho_{x_i}, \rho_{x_i})$):
\begin{equation*}
\overline{d}\omega^1_{y_J}: V_J \xlongrightarrow{\sim} \bigoplus_{i=1, \cdots, k} \Ext^1_{w_{J,i}}(\rho_{x_i}, \rho_{x_i}) /\Ext^1_g(\rho_{x_i}, \rho_{x_i}).
\end{equation*}
\end{proposition}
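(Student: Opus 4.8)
The plan is to deduce (1) from the interpolation of the triangulation over $\cE^{P-\ord}$ in a neighbourhood of $y_J$, and to deduce (2) from (1) together with a dimension count and the \'etaleness of the weight map at $y_J$ proved in Proposition \ref{prop: lg1-etale}.

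For (1), recall that $y_J=(x,\chi_J)$ is a noncritical $P$-ordinary classical point at which $Z_1'$ accumulates (Proposition \ref{prop: lg1-dense2}(1)), and that $\rho_{x,\widetilde{\wp}}$ is crystalline generic at $y_J$ by the choice of $Z_1$. By the theory of global triangulations (\cite{KPX}, \cite{Liu}), exactly as in the proof of Lemma \ref{lem: lg1-glotri1}, a tangent vector $v\in V_J$ — viewed through the closed immersion $\cE^{P-\ord}\hookrightarrow(\Spf\widetilde{\bT}(U^{\wp})_{\overline{\rho}}^{P-\ord})^{\rig}\times_E\cT$ as a compatible pair consisting of a deformation $\widetilde{\rho}_x$ of $\rho_x$ over $k(x)[\epsilon]/\epsilon^2$ and a deformation of the character $\chi_J$ occurring in the relevant Jacquet module — yields a deformation $\widetilde{\rho}_{x,\widetilde{\wp}}$ of $\rho_{x,\widetilde{\wp}}$ which is trianguline over $k(x)[\epsilon]/\epsilon^2$, with triangulation parameter the tautological deformation of $\chi_J\delta_{B\cap L_P}^{-1}(1\otimes\varepsilon^{-1}\otimes\cdots\otimes\varepsilon^{1-n})$. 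On the other hand, by Theorem \ref{thm: ord-ordlg}(1) the family $\rho_{x,\widetilde{\wp}}$ is pulled back from the universal $P$-ordinary (filtered) deformation of $\overline{\rho}_{\widetilde{\wp}}$, the $i$-th graded piece of the filtration being the family classified by $\omega_i$; so $\widetilde{\rho}_{x,\widetilde{\wp}}$ carries a filtration with $i$-th graded piece $\widetilde{\rho}_{x_i}=d\omega^1_{i,y_J}(v)$. Using Lemma \ref{lem: lg1-noncrit1} and its proof (which identifies the triangulation parameters of $D_{\rig}(\rho_{x_i})$ with the corresponding sub-tuple of those of $D_{\rig}(\rho_{x,\widetilde{\wp}})$) together with the crystalline genericity at $y_J$ (so that the triangulation is rigid), one checks that this triangulation refines the $P$-ordinary filtration. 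Consequently $\widetilde{\rho}_{x_i}$ is a trianguline deformation of $\rho_{x_i}$ with respect to the refinement $(\alpha_{s_i+w_{J,i}(1)},\alpha_{s_i+w_{J,i}(2)})$, i.e. $d\omega^1_{i,y_J}(v)\in\Ext^1_{w_{J,i}}(\rho_{x_i},\rho_{x_i})$; when $n_i=1$ there is nothing to prove.

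For (2), since $\omega^2_0$ is \'etale at $y_J$ (Proposition \ref{prop: lg1-etale}) and $\cW$ is smooth of dimension $n$, the differential $d\omega^2_{0,y_J}\colon V_J\to V_{\cW,\omega^2_0(y_J)}$ is an isomorphism, so $\dim_{k(x)}V_J=n$; by Lemma \ref{lem: lg1-trid}(1) (and the trivial case $n_i=1$) the target of $\overline{d}\omega^1_{y_J}$ has dimension $\sum_{i=1}^k\big(\dim_{k(x)}\Ext^1_{w_{J,i}}(\rho_{x_i},\rho_{x_i})-\dim_{k(x)}\Ext^1_g(\rho_{x_i},\rho_{x_i})\big)=\sum_{i=1}^k n_i=n$, so it suffices to prove injectivity. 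Let $v\in\ker\overline{d}\omega^1_{y_J}$; then for each $i$ the deformation $\widetilde{\rho}_{x_i}=d\omega^1_{i,y_J}(v)$ lies in $\Ext^1_g(\rho_{x_i},\rho_{x_i})$, hence is de Rham and has integral Sen weights with vanishing $\epsilon$-part. As Sen weights are additive in short exact sequences of $(\varphi,\Gamma)$-modules and $\widetilde{\rho}_{x,\widetilde{\wp}}$ is a successive extension of the $\widetilde{\rho}_{x_i}$, the Sen weights of $\widetilde{\rho}_{x,\widetilde{\wp}}$ also have vanishing $\epsilon$-part; since $\omega^2_0$ records (the restriction to $T(\Z_p)$ of) the character whose components carry these Sen weights, this forces $d\omega^2_{0,y_J}(v)=0$, whence $v=0$ by the injectivity of $d\omega^2_{0,y_J}$. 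Therefore $\overline{d}\omega^1_{y_J}$ is injective, and by the dimension count it is bijective.

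The main obstacle is the verification in (1) that the interpolated triangulation near $y_J$ refines the universal $P$-ordinary filtration — so that the $i$-th graded piece of the deformation attached to a tangent vector is itself trianguline with the prescribed refinement $w_{J,i}$. This rests on the genericity of the crystalline representation $\rho_{x,\widetilde{\wp}}$ at $y_J$ (built into the definition of $Z_1$) and on matching the triangulation parameters of the graded pieces with those of $D_{\rig}(\rho_{x,\widetilde{\wp}})$ as in Lemma \ref{lem: lg1-noncrit1}; the remaining steps are formal consequences of Proposition \ref{prop: lg1-etale} and Lemma \ref{lem: lg1-trid}.
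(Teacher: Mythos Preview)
Your proposal is correct and follows the same overall strategy as the paper: global triangulation theory for (1), then \'etaleness of $\omega^2_0$ at $y_J$ plus a dimension count for (2). The one noteworthy difference is in the execution of (1). The paper works directly with each factor $\rho_{x_i}$: since the family of $\rho_{x_i}$'s over $\cE^{P-\ord}$ is already available via $\omega^1_i$, Lemma~\ref{lem: lg1-glotri1} together with \cite[Prop.~5.13]{Liu} immediately yields the injection $\cR_{k(x)[\epsilon]/\epsilon^2}(\widetilde{\chi}_{J,s_i+1}\varepsilon^{-s_i}|\cdot|^{-1})\hookrightarrow D_{\rig}(\widetilde{\rho}_{x_i})$ for $n_i=2$, and (1) follows by definition of $\Ext^1_{w_{J,i}}$. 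You instead triangulate the full deformation $\widetilde{\rho}_{x,\widetilde{\wp}}$ and then argue that this triangulation refines the $P$-ordinary filtration before passing to graded pieces; this is valid but introduces the extra compatibility check you yourself flag as the ``main obstacle'', which the paper's route avoids entirely. For (2), your Sen-weight argument is simply the contrapositive of the paper's: the paper shows that if $v\neq 0$ then $d\omega^2_{0,y_J}(v)\neq 0$, so some $\widetilde{\chi}_{J,j}$ is not locally algebraic, and then the explicit injection from (1) forces $\widetilde{\rho}_{x_i}\notin\Ext^1_g$ for the relevant $i$.
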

\begin{proof}
(1) Let $v\in V_J$, set $\widetilde{\rho}_{x_i}:=d\omega^1_{i,y_J}(v)$, which we view as a deformation of $\rho_{x_i}$ over $k(x)[\epsilon]/\epsilon^2$, and let $\widetilde{\chi}_J:= d\omega^2_{y_J}(v)$, which we view as a deformation of $\chi_J$ over $k(x)[\epsilon]/\epsilon^2$. From the global triangulation theory (see for instance \cite[Prop. 5.13]{Liu}) and Lemma \ref{lem: lg1-glotri1}, we derive:
\begin{equation}\label{equ: lg-glotri2}
\begin{cases} \cR_{k(x)[\epsilon]/\epsilon^2}(\widetilde{\chi}_{J,s_i+1}\varepsilon^{-s_i})  \xlongrightarrow{\sim} D_{\rig}(\widetilde{\rho}_{x_i}) & n_i=1 \\
\cR_{k(x)[\epsilon]/\epsilon^2}(\widetilde{\chi}_{J,s_i+1}\varepsilon^{-s_i}|\cdot|^{-1}) \hooklongrightarrow D_{\rig}(\widetilde{\rho}_{x_i}) & n_i=2.
\end{cases}
\end{equation}
Then (1) follows by definition of $\Ext^1_{w_{J,i}}(\rho_{x_i}, \rho_{x_i})$.\\
\noindent
(2) By Proposition \ref{prop: lg1-etale}, we have $\dim_{k(x)} V_J=n$. By Lemma \ref{lem: lg1-trid}(1) and the discussion before it we have:
\begin{equation}\label{dimi}
\begin{cases} \dim_{k(x)}\Ext^1_{w_{J,i}}(\rho_{x_i}, \rho_{x_i}) /\Ext^1_g(\rho_{x_i}, \rho_{x_i})=1 & n_i=1 \\
\dim_{k(x)}\Ext^1_{w_{J,i}}(\rho_{x_i}, \rho_{x_i}) /\Ext^1_g(\rho_{x_i}, \rho_{x_i})=2 & n_i=2.
\end{cases}
\end{equation}
Hence it is enough to prove that $\overline{d}\omega^1_{y_J}$ is injective. If $0\neq v\in V_J$ then we have $d\omega^2_{0, y_J}(v)\neq 0$ by Proposition \ref{prop: lg1-etale}, and hence there exists $j\in \{1,\cdots,n\}$ such that the character $\widetilde{\chi}_{J,j}$ is {\it not} locally algebraic (i.e. doesn't come from an extension of ${\chi}_{J,j}$ by ${\chi}_{J,j}$ given by $E\val_p$). It then follows from (\ref{equ: lg-glotri2}) and (\ref{ginfty}) that $\widetilde{\rho}_{x_i}\notin\Ext^1_g(\rho_{x_i}, \rho_{x_i})$ if $j\in \{s_i+1, s_i+n_i\}$, whence $\overline{d}\omega^1_{y_J}(v)\ne 0$.
\end{proof}

\noindent
We denote by $V_x$ the tangent space of the rigid variety $(\Spf \widetilde{\bT}(U^{\wp})^{P-\ord}_{\overline{\rho}})^{\rig}$ at the point $x$.

\begin{corollary}\label{coro: lg1-ferns}
We have $\dim_{k(x)} V_x \geq n+(n-k)$.
\end{corollary}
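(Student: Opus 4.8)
The plan is to deduce the bound from the surjectivity of the tangent map $\overline{d}\omega_x : V_x\to \bigoplus_{i=1}^{k}\Ext^1_{\Gal_{\Q_p}}(\rho_{x_i},\rho_{x_i})/\Ext^1_g(\rho_{x_i},\rho_{x_i})$ obtained by composing $(d\omega_{i,x})_{i=1,\dots,k}: V_x\to\bigoplus_i\Ext^1_{\Gal_{\Q_p}}(\rho_{x_i},\rho_{x_i})$ — the tangent maps at $x$ of $\omega_i:(\Spf\widetilde{\bT}(U^{\wp})^{P-\ord}_{\overline{\rho}})^{\rig}\to(\Spf R_{\overline{\rho}_i})^{\rig}$, using $\widehat{\mathcal O}_{(\Spf R_{\overline{\rho}_i})^{\rig},x_i}\cong R_{\rho_{x_i}}$ so that $V_{(\Spf R_{\overline{\rho}_i})^{\rig},x_i}\cong\Ext^1_{\Gal_{\Q_p}}(\rho_{x_i},\rho_{x_i})$ — with the projection modulo $\bigoplus_i\Ext^1_g(\rho_{x_i},\rho_{x_i})$. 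Granting this surjectivity, the dimension of the target is, by the dimension formulas recalled just before Lemma \ref{lem: lg1-trid} and in Lemma \ref{lem: lg1-trid}(1), equal to $\#\{i:n_i=1\}\cdot(2-1)+\#\{i:n_i=2\}\cdot(5-2)$; writing $a:=\#\{i:n_i=1\}$, $b:=\#\{i:n_i=2\}$, so that $a+b=k$ and $a+2b=n$, hence $b=n-k$ and $a=2k-n$, this dimension is $a+3b=(2k-n)+3(n-k)=2n-k=n+(n-k)$, and therefore $\dim_{k(x)}V_x\geq n+(n-k)$.

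For the surjectivity I would use all the $P$-ordinary classical points $y_J=(x,\chi_J)$ attached to subsets $J\subseteq I_P$ fixed just above Hypothesis \ref{hypo: lg2-globhypo}. Each satisfies $\omega^1(y_J)=x$, so $d\omega^1_{y_J}:V_J\to V_x$, and since $\omega^1_i=\omega_i\circ\omega^1$ (see (\ref{omegai})) we obtain $\overline{d}\omega^1_{y_J}=\overline{d}\omega_x\circ d\omega^1_{y_J}$. By Proposition \ref{prop: lg1-1fern}(2), $\overline{d}\omega^1_{y_J}$ is an isomorphism onto $\bigoplus_i\Ext^1_{w_{J,i}}(\rho_{x_i},\rho_{x_i})/\Ext^1_g(\rho_{x_i},\rho_{x_i})$, so this whole subspace lies in the image of $\overline{d}\omega_x$, for every $J\subseteq I_P$. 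Now add the images coming from $J=\emptyset$ and from $J=\{i_0\}$ for each $i_0\in I_P$: in a slot with $n_i=1$ the summand $\Ext^1_{w_{\emptyset,i}}(\rho_{x_i},\rho_{x_i})/\Ext^1_g$ is already all of $\Ext^1_{\Gal_{\Q_p}}(\rho_{x_i},\rho_{x_i})/\Ext^1_g$ by the convention of Proposition \ref{prop: lg1-1fern}(1); in the slot $i_0\in I_P$ the contributions from $J=\emptyset$ and from $J=\{i_0\}$ are $\Ext^1_{w}(\rho_{x_{i_0}},\rho_{x_{i_0}})/\Ext^1_g$ for $w$ the identity and for $w$ the nontrivial element of $S_2$ respectively, whose sum is $\Ext^1_{\Gal_{\Q_p}}(\rho_{x_{i_0}},\rho_{x_{i_0}})/\Ext^1_g$ by Lemma \ref{lem: lg1-trid}(2); and since $0$ belongs to every $\Ext^1_{w_{J,i}}(\rho_{x_i},\rho_{x_i})/\Ext^1_g$ one can isolate each direct summand, so the sum of all these images is the full $\bigoplus_i\Ext^1_{\Gal_{\Q_p}}(\rho_{x_i},\rho_{x_i})/\Ext^1_g(\rho_{x_i},\rho_{x_i})$. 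This proves the surjectivity, and with it the corollary.

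The main (mild) obstacle is precisely this last combinatorial assembling: surjectivity onto a direct sum is not a formal consequence of surjectivity in each coordinate, so one genuinely has to exploit both that the tuples $(w_{J,i})_{i\in I_P}$ range over all of $\{1,s\}^{I_P}$ as $J$ runs over subsets of $I_P$ and that $0$ lies in each $\Ext^1_{w_{J,i}}(\rho_{x_i},\rho_{x_i})/\Ext^1_g(\rho_{x_i},\rho_{x_i})$, in order to reduce to the single-slot identity $\Ext^1_1+\Ext^1_s=\Ext^1_{\Gal_{\Q_p}}$ of Lemma \ref{lem: lg1-trid}(2). All the other ingredients — the tangent-space identifications, the compatibility $\omega^1_i=\omega_i\circ\omega^1$, Proposition \ref{prop: lg1-1fern}(2), and the dimension counts of Lemma \ref{lem: lg1-trid} — are already available from the preceding results.
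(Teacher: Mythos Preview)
Your proof is correct and follows essentially the same approach as the paper: factor $\overline{d}\omega^1_{y_J}$ through $V_x$, use Proposition \ref{prop: lg1-1fern}(2) to identify its image, and then sum over $J\subseteq I_P$ using Lemma \ref{lem: lg1-trid}(2) to get surjectivity of $\overline{d}\omega_x$ onto $\bigoplus_i\Ext^1_{\Gal_{\Q_p}}(\rho_{x_i},\rho_{x_i})/\Ext^1_g(\rho_{x_i},\rho_{x_i})$. The only cosmetic difference is that the paper phrases the assembling step via the external direct sum $\bigoplus_{J}\bigoplus_i\Ext^1_{w_{J,i}}/\Ext^1_g\twoheadrightarrow\bigoplus_i\Ext^1/\Ext^1_g$ rather than your explicit choice of $J=\emptyset$ and $J=\{i_0\}$, but the content is identical.
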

\begin{proof}
For any $J\subseteq I_P$ the morphism $\overline{d}\omega^1_{y_J}$ factors as:
$$\overline{d}\omega^1_{y_J}:V_J\buildrel {d}\omega^1_{y_J} \over \longrightarrow V_x\buildrel \oplus_i d\omega_{i,x}\over\longrightarrow \bigoplus_{i=1, \cdots, k}\Ext^1_{\Gal_{\Q_p}}(\rho_{x_i}, \rho_{x_i}) \twoheadlongrightarrow \bigoplus_{i=1, \cdots, k}\Ext^1_{\Gal_{\Q_p}}(\rho_{x_i}, \rho_{x_i}) /\Ext^1_g(\rho_{x_i}, \rho_{x_i})$$
which implies an inclusion of $k(x)$-vector spaces:
\begin{equation}\label{inclrajout}
\sum_{J\subseteq I_P}\Ima(\overline{d}\omega^1_{y_J})\subseteq \Ima\Big(V_x\longrightarrow \bigoplus_{i=1, \cdots, k}\Ext^1_{\Gal_{\Q_p}}(\rho_{x_i}, \rho_{x_i}) /\Ext^1_g(\rho_{x_i}, \rho_{x_i})\Big).
\end{equation}
But, by Proposition \ref{prop: lg1-1fern} we have:
\begin{eqnarray*}
\bigoplus_{J\subseteq I_P} \Ima(\overline{d}\omega^1_{y_J}) &\cong &\bigoplus_{J\subseteq I_P} \Big(\bigoplus_{i=1, \cdots, k} \Ext^1_{w_{J,i}}(\rho_{x_i}, \rho_{x_i}) /\Ext^1_g(\rho_{x_i}, \rho_{x_i})\Big)\\
&\cong &\bigoplus_{i=1, \cdots, k} \Big(\bigoplus_{J\subseteq I_P} \big(\Ext^1_{w_{J,i}}(\rho_{x_i}, \rho_{x_i}) /\Ext^1_g(\rho_{x_i}, \rho_{x_i})\big)\Big)\\
&\twoheadlongrightarrow& \bigoplus_{i=1,\cdots, k} \Ext^1_{\Gal_{\Q_p}}(\rho_{x_i}, \rho_{x_i})/\Ext^1_g(\rho_{x_i}, \rho_{x_i})
\end{eqnarray*}
where the last morphism is surjective by Lemma \ref{lem: lg1-trid}(2). Together with (\ref{inclrajout}) it follows that the morphism $V_x\rightarrow \oplus_{i=1, \cdots, k}\Ext^1_{\Gal_{\Q_p}}(\rho_{x_i}, \rho_{x_i}) /\Ext^1_g(\rho_{x_i}, \rho_{x_i})$ is in fact surjective. Since the right hand side has dimension $n+\vert I_P\vert=n+(n-k)$ by Lemma \ref{lem: lg1-trid}(1) and the discussion before it, the corollary follows.
\end{proof}

\begin{proposition}\label{prop: lg1-dim}
Each irreducible component of $\Spec \widetilde{\bT}(U^{\wp})_{\overline{\rho}}^{P-\ord}[1/p]$ has (Krull) dimension $\geq n+(n-k)$.
\end{proposition}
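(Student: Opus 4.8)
The plan is to combine the Zariski-density of benign points on each irreducible component (Proposition \ref{prop: lg1-dense2}(2), via $Z_1$) with the lower bound on the tangent space at such points (Corollary \ref{coro: lg1-ferns}). Let $X$ be an irreducible component of $\Spec \widetilde{\bT}(U^{\wp})_{\overline{\rho}}^{P-\ord}[1/p]$, which we equip with its reduced structure, and let $d:=\dim X$ be its Krull dimension. The key point is that $Z_1$ is Zariski-dense in the whole scheme $\Spec \widetilde{\bT}(U^{\wp})_{\overline{\rho}}^{P-\ord}[1/p]$ by Proposition \ref{prop: lg1-dense2}(2); since $X$ is an irreducible component and $Z_1$ is Zariski-dense, $Z_1\cap X$ is Zariski-dense in $X$ (the closure of $Z_1$ is the whole space, so $X$ cannot be contained in the closure of $Z_1\setminus X$, hence $Z_1\cap X$ is dense in $X$). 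In particular $Z_1\cap X$ is infinite, so we may pick a point $x\in Z_1\cap X$ which lies on no other irreducible component of $\Spec \widetilde{\bT}(U^{\wp})_{\overline{\rho}}^{P-\ord}[1/p]$ — this is possible because the points lying on at least two components form a proper Zariski-closed subset of $X$, while $Z_1\cap X$ is dense.

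For such a point $x$, the completed local ring $\widehat{\mathcal O}_{X,x}$ coincides with $\widehat{\mathcal O}_{\Spec \widetilde{\bT}(U^{\wp})_{\overline{\rho}}^{P-\ord}[1/p],x}$, and passing to the generic rigid fiber, the tangent space $V_{X,x}$ of $X$ at $x$ is naturally identified with the tangent space $V_x$ of $(\Spf \widetilde{\bT}(U^{\wp})^{P-\ord}_{\overline{\rho}})^{\rig}$ at $x$ (using $\widehat{\mathcal O}_{(\Spf \widetilde{\bT}(U^{\wp})^{P-\ord}_{\overline{\rho}})^{\rig},x}\cong \widehat{\mathcal O}_{\Spec \widetilde{\bT}(U^{\wp})_{\overline{\rho}}^{P-\ord}[1/p],x}$, cf. \cite[\S~2.3]{Kis09}). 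By Corollary \ref{coro: lg1-ferns} we have $\dim_{k(x)} V_{X,x}=\dim_{k(x)}V_x\geq n+(n-k)$. Since the dimension of the tangent space at any point of an irreducible (reduced, finite type over a field) scheme is at least its Krull dimension, we conclude $d=\dim X\geq \dim_{k(x)}V_{X,x}\geq n+(n-k)$, which is the desired bound.

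The main thing to be careful about is the choice of the point $x$: one needs $x\in Z_1$ (so that Corollary \ref{coro: lg1-ferns} applies — that corollary is stated for $x\in Z_1$, its proof going through the points $y_J$ on the eigenvariety and Proposition \ref{prop: lg1-1fern}) and simultaneously $x$ should be a point of $X$ not lying on any other irreducible component, so that $V_{X,x}=V_x$ (otherwise the tangent space of $X$ at $x$ could be strictly smaller than $V_x$). Both requirements are met because $Z_1\cap X$ is Zariski-dense in $X$ while the locus of $X$ meeting another component is nowhere dense. The remaining verifications — that $\widehat{\mathcal O}_{X,x}$ is the quotient of $\widehat{\mathcal O}_{\Spec \widetilde{\bT}(U^{\wp})_{\overline{\rho}}^{P-\ord}[1/p],x}$ by the (unique, minimal) prime corresponding to $X$, hence equals the latter when $x$ lies on no other component, and that $\dim_{k(x)}\mathfrak m_x/\mathfrak m_x^2\geq \dim \widehat{\mathcal O}_{X,x}$ — are standard commutative algebra and require no real work.
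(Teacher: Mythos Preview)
There is a genuine gap in the final step. You correctly state that ``the dimension of the tangent space at any point of an irreducible (reduced, finite type over a field) scheme is at least its Krull dimension'', i.e.\ $\dim_{k(x)}V_{X,x}\geq d$. But you then conclude $d\geq \dim_{k(x)}V_{X,x}$, which is the opposite inequality and does not follow. Knowing $\dim_{k(x)}V_{X,x}\geq n+(n-k)$ together with $\dim_{k(x)}V_{X,x}\geq d$ says nothing about whether $d\geq n+(n-k)$: the tangent space could simply be large because $X$ is singular at $x$.

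The fix, and this is exactly what the paper does, is to pick $x\in Z_1\cap X$ at which $X$ is \emph{smooth}. Since $\widetilde{\bT}(U^{\wp})_{\overline{\rho}}^{P-\ord}[1/p]$ is reduced (Lemma~\ref{lem: Pord-reduce}), the smooth locus of $X$ is open and dense, so it meets the dense set $Z_1\cap X$. At such a smooth point one has $\dim_{k(x)}V_{X,x}=d$ exactly (regularity of the local ring), and combining this equality with $\dim_{k(x)}V_x\geq n+(n-k)$ from Corollary~\ref{coro: lg1-ferns} gives the bound. Your condition that $x$ lies on a unique component is automatic at a smooth point of the reduced scheme, so the identification $V_{X,x}=V_x$ you set up still holds; the only missing ingredient is requiring smoothness so that the tangent-space inequality becomes an equality.
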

\begin{proof}
By Lemma \ref{lem: Pord-reduce} $\Spec\widetilde{\bT}(U^{\wp})_{\overline{\rho}}^{P-\ord}[1/p]$ is a reduced scheme and by Proposition \ref{prop: lg1-dense2}(2) the set of closed points $Z_1$ is Zariski-dense in $\Spec \widetilde{\bT}(U^{\wp})_{\overline{\rho}}^{P-\ord}[1/p]$. Thus for each irreducible component $X$ of $\Spec \widetilde{\bT}(U^{\wp})_{\overline{\rho}}^{P-\ord}[1/p]$, there exists a closed point of $X$ which is in $Z_1$ and such that $X$ is smooth at $x$. Since the completed local rings of the scheme $\Spec \widetilde{\bT}(U^{\wp})_{\overline{\rho}}^{P-\ord}[1/p]$ and of the rigid space $(\Spf \widetilde{\bT}(U^{\wp})^{P-\ord}_{\overline{\rho}})^{\rig}$ at $x$ are isomorphic (see e.g. \cite[Lem.~7.1.9]{DJ}), the tangent space of $\Spec \widetilde{\bT}(U^{\wp})_{\overline{\rho}}^{P-\ord}[1/p]$ at the point $x$ is isomorphic to $V_x$. The result then follows from Corollary \ref{coro: lg1-ferns}.
\end{proof}

\subsubsection{Local-global compatibility}\label{sec: lg1-lg1}

\noindent
We prove local-global compatibility results for the $L_P(\Q_p)$-representation $\Ord_P(\widehat{S}(U^{\wp}, \bW^{\wp})_{\overline{\rho}})$ by generalizing Emerton's method (\cite{Em4}).\\

\noindent
We keep the notation and assumptions of \S\S~\ref{sec: Pord-3}, \ref{domialgebraic}, \ref{sec: lg1-bpts}, \ref{pordinary} (in particular we assume Hypothesis \ref{hypoglobal}~\&~\ref{hypo: lg2-globhypo}) and we assume moreover that the $\GL_2(\Q_p)$-representations $\overline{\rho}_i$ satisfy the assumption (\ref{hypo: hL-modp}) in the appendix when $n_i=2$. We denote by $\overline{\pi}_i$ the representation of $\GL_{n_i}(\Q_p)$ over $k_E$ associated to $\overline{\rho}_i$ by the modulo $p$ Langlands correspondence for $\GL_2(\Q_p)$ normalized as in \cite[\S~3.1]{Br11b} when $n_i=2$ and by local class field theory for $\GL_1(\Q_p)$ normalized as in \S~\ref{intronota} when $n_i=1$. We denote by $\pi_i^{\univ}$ the universal deformation of $\overline{\pi}_i$ over $R_{\overline{\rho}_i}$ in the sense of \cite[Def. 3.3.7]{Em4} (see also \S~\ref{sec: app-def}). We set:
\begin{equation}\label{equ: lg1-pii}
\pi_i(U^{\wp}):=\pi_i^{\univ} \widetilde{\otimes}_{R_{\overline{\rho}_i}} \widetilde{\bT}(U^{\wp})_{\overline{\rho}}^{P-\ord}
\end{equation}
where $\widetilde{\otimes}$ means the $\fm_{\overline{\rho}}$-adic completion of the tensor product (still denoting by $\fm_{\overline{\rho}}$ the maximal ideal of $\widetilde{\bT}(U^{\wp})_{\overline{\rho}}^{P-\ord}$). One can check that this is an orthonormalizable admissible representation of $\GL_{n_i}(\Q_p)$ over $\widetilde{\bT}(U^{\wp})_{\overline{\rho}}^{P-\ord}$ in the sense of \cite[Def. 3.1.11]{Em4}. We set:
\begin{equation}\label{equ: lg1-piup}
\pi^{\otimes}_P(U^{\wp}):=\widetilde{\bigotimes}_{i=1, \cdots, k} \big(\pi_i(U^{\wp}) \otimes \varepsilon^{s_i} \circ \dett\big)
\end{equation}
(the $\fm_{\overline{\rho}}$-completed tensor product being over $\widetilde{\bT}(U^{\wp})_{\overline{\rho}}^{P-\ord}$) which is an orthonormalizable admissible representation of $L_P(\Q_p)$ over $\widetilde{\bT}(U^{\wp})_{\overline{\rho}}^{P-\ord}$. We have:
\begin{eqnarray*}
\pi^{\otimes}_P(U^{\wp})\otimes_{\widetilde{\bT}(U^{\wp})_{\overline{\rho}}^{P-\ord}}\big( \widetilde{\bT}(U^{\wp})_{\overline{\rho}}^{P-\ord}/\fm_{\overline{\rho}}\big) &\cong &
\bigotimes_{i=1,\cdots, k} \big((\pi_i^{\univ} \otimes_{R_{\overline{\rho}_i}} k_E)\otimes \overline{\varepsilon}^{s_i} \circ \dett\big)\\
&\cong & \bigotimes_{i=1, \cdots, k} (\overline{\pi}_i \otimes \overline{\varepsilon}^{s_i} \circ \dett).
\end{eqnarray*}
As in \cite[Def. 6.3.4]{Em4}, we define the $\co_E$-module:
\begin{equation}\label{xpup}
 X_P(U^{\wp}):=
 \Hom_{\widetilde{\bT}(U^{\wp})_{\overline{\rho}}^{P-\ord}[L_P(\Q_p)]}^{\cts}\big(\pi^{\otimes}_P(U^{\wp}), \Ord_P(\widehat{S}(U^{\wp}, \bW^{\wp})_{\overline{\rho}})\big)
\end{equation}
where ``$\cts$" denotes the continuous maps for the $\fm_{\overline{\rho}}$-adic topology on the source and the $\varpi_E$-adic topology on the target. Note that $X_P(U^{\wp})$ is equipped with a natural action of $\widetilde{\bT}(U^{\wp})_{\overline{\rho}}^{P-\ord}$.\\

\noindent
We fix a point $x$ of $(\Spf \widetilde{\bT}(U^{\wp})_{\overline{\rho}}^{P-\ord})^{\rig}$ and let $x_i$ for $i\in \{1,\cdots,k\}$ the associated closed points of $\Spec R_{\overline{\rho}_i}[1/p]$ as in \S~\ref{sec: lg1-bpts}. For $i\in \{1,\cdots,k\}$ we let $\widehat{\pi}(\rho_{x_i})^0$ be the open bounded $\GL_{n_i}(\Q_p)$-invariant $\co_{k(x)}$-lattice of $\widehat{\pi}(\rho_{x_i})$ given by $\widehat{\pi}(\rho_{x_i})^0:= \pi_i^{\univ}\otimes_{R_{\overline{\rho}_i}} \co_{k(x)}$ where the morphism $R_{\overline{\rho}_i}\rightarrow \co_{k(x)}$ is given by $x_i$. We can deduce then (note that the $\fm_{\overline{\rho}}$-adic topology on $\widetilde{\bT}(U^{\wp})^{P-\ord}_{\overline{\rho}}/\fp_x$ coincides with the $p$-adic topology):
\begin{equation}\label{piPmodpx}
\pi^{\otimes}_P(U^{\wp}) \otimes_{\widetilde{\bT}(U^{\wp})^{P-\ord}_{\overline{\rho}}} \widetilde{\bT}(U^{\wp})^{P-\ord}_{\overline{\rho}}/\fp_x \cong \widehat{\bigotimes}_{i=1, \cdots, k} \big(\widehat{\pi}(\rho_{x_i})^0\otimes \varepsilon^{s_i}\circ \dett\big),
\end{equation}
from which we easily get:
\begin{multline}\label{equ: lg1-Xx}
X_P(U^{\wp})[\fp_x]\\
\cong \Hom_{\co_{k(x)}[L_P(\Q_p)]}\Big(\widehat{\bigotimes}_{i=1, \cdots, k} \big(\widehat{\pi}(\rho_{x_i})^0\otimes \varepsilon^{s_i}\circ \dett\big), \Ord_P(\widehat{S}(U^{\wp}, \bW^{\wp})_{\overline{\rho}})[\fp_x]\Big)
\end{multline}
(where $\widehat\otimes$ means the $p$-adic completion of the tensor product). We refer to \cite[Def.~C.1]{Em4} for the definition of a cofinitely generated $\widetilde{\bT}(U^{\wp})_{\overline{\rho}}^{P-\ord}$-module.

\begin{lemma}
The $\widetilde{\bT}(U^{\wp})_{\overline{\rho}}^{P-\ord}$-module $X_P(U^{\wp})$ is cofinitely generated.
\end{lemma}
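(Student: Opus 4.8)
The plan is to recognize $X_P(U^{\wp})$ as one of the $\Hom$-modules studied in Appendix C of \cite{Em4} and to verify the finiteness input by reduction modulo $\mathfrak{m}_{\overline{\rho}}$. Recall that $\widetilde{\bT}(U^{\wp})_{\overline{\rho}}^{P-\ord}$ is a reduced (Lemma \ref{lem: Pord-reduce}) complete noetherian local $\co_E$-algebra with residue field $k_E$, that $\pi^{\otimes}_P(U^{\wp})$ is an orthonormalizable admissible representation of $L_P(\Q_p)$ over $\widetilde{\bT}(U^{\wp})_{\overline{\rho}}^{P-\ord}$ with $\pi^{\otimes}_P(U^{\wp})\otimes_{\widetilde{\bT}(U^{\wp})_{\overline{\rho}}^{P-\ord}}k_E\cong \bigotimes_{i}(\overline{\pi}_i\otimes\overline{\varepsilon}^{s_i}\circ\dett)$, and that $\Ord_P(\widehat{S}(U^{\wp}, \bW^{\wp})_{\overline{\rho}})$ is a $\varpi_E$-adically admissible representation of $L_P(\Q_p)$ over $\widetilde{\bT}(U^{\wp})_{\overline{\rho}}^{P-\ord}$. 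Essentially by definition together with the formalism of \cite[\S~C]{Em4}, it then suffices to check two things: that $X_P(U^{\wp})$ is $\varpi_E$-adically separated, and that $X_P(U^{\wp})/\mathfrak{m}_{\overline{\rho}}X_P(U^{\wp})$ is finite-dimensional over $k_E$ (equivalently, that the Pontryagin dual $X_P(U^{\wp})^{\vee}$ is finitely generated over $\widetilde{\bT}(U^{\wp})_{\overline{\rho}}^{P-\ord}$).

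For separatedness, Lemma \ref{lem: Pord-comp}(2) shows that $\Ord_P(\widehat{S}(U^{\wp}, \bW^{\wp})_{\overline{\rho}})$ is a direct summand of $\cC(L_P(\Z_p),\co_E)^{\oplus r}$, hence is $\varpi_E$-torsion free and $\varpi_E$-adically complete and separated; an element of $\bigcap_{m\geq 0}\varpi_E^m X_P(U^{\wp})$ is then a homomorphism with image in $\bigcap_{m\geq 0}\varpi_E^m\Ord_P(\widehat{S}(U^{\wp}, \bW^{\wp})_{\overline{\rho}})=0$, so it vanishes. For the finiteness, I would use the compatibility of $\Ord_P$ with reduction modulo $\varpi_E^s$ (the isomorphism in the proof of Lemma \ref{lem: Pord-comp}(1) together with (\ref{equ: ord-modps})) and the base-change machinery of \cite[\S~C]{Em4} — which relies precisely on the orthonormalizability of $\pi^{\otimes}_P(U^{\wp})$ — to reduce the claim, up to a finite ambiguity, to the finite-dimensionality over $k_E$ of
\[
\Hom_{k_E[L_P(\Q_p)]}\Bigl(\bigotimes_{i=1,\cdots,k}\bigl(\overline{\pi}_i\otimes\overline{\varepsilon}^{s_i}\circ\dett\bigr),\ \Ord_P\bigl(\widehat{S}(U^{\wp}, \bW^{\wp}/\varpi_E)_{\overline{\rho}}\bigr)\Bigr).
\]

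This last space is finite-dimensional because it is a $\Hom$-space from a finitely generated admissible smooth $k_E[L_P(\Q_p)]$-representation to an admissible smooth $k_E[L_P(\Q_p)]$-representation. Indeed, $\overline{\pi}_i$ is of finite length over $k_E[\GL_{n_i}(\Q_p)]$ — this is the mod $p$ Langlands correspondence for $\GL_2(\Q_p)$ under the running assumption (\ref{hypo: hL-modp}) when $n_i=2$, and a smooth character when $n_i=1$ — so $\bigotimes_i(\overline{\pi}_i\otimes\overline{\varepsilon}^{s_i}\circ\dett)$ is finitely generated and admissible over $k_E[L_P(\Q_p)]$; and $\Ord_P(\widehat{S}(U^{\wp}, \bW^{\wp}/\varpi_E)_{\overline{\rho}})\cong\Ord_P(\widehat{S}(U^{\wp}, \bW^{\wp})_{\overline{\rho}})/\varpi_E$ is admissible smooth over $k_E$ by \cite[Thm.~3.3.3]{EOrd1} (or Corollary \ref{coro: ordinj}). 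For an irreducible admissible $M$ and admissible $N$, restriction to $K$-invariants for a sufficiently small compact open $K\subseteq L_P(\Z_p)$ embeds $\Hom_{k_E[L_P(\Q_p)]}(M,N)$ into $\Hom_{k_E}(M^K,N^K)$, which is finite-dimensional; summing over a composition series of the source gives the claim. The main obstacle is the base-change step, i.e. the precise comparison of $X_P(U^{\wp})/\mathfrak{m}_{\overline{\rho}}X_P(U^{\wp})$ with the displayed mod $\mathfrak{m}_{\overline{\rho}}$ $\Hom$-space; this is exactly what the general results of \cite[\S~C]{Em4} provide, and one applies them here as in \cite[\S~6.3]{Em4}.
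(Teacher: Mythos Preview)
Your overall strategy---reduce to the finite-dimensionality of a mod-$p$ $\Hom$-space between $\bigotimes_i(\overline{\pi}_i\otimes\overline{\varepsilon}^{s_i}\circ\dett)$ and an admissible target---is the same as the paper's, and your argument for that finiteness (finitely generated source, admissible target) is correct. But you have misidentified the condition to check. What \cite[Def.~C.1]{Em4} requires, and what the paper verifies, is that $(X_P(U^{\wp})/\varpi_E)[\fm_{\overline{\rho}}]$ be finite-dimensional over $k_E$, not that $X_P(U^{\wp})/\fm_{\overline{\rho}}X_P(U^{\wp})$ be. Your parenthetical ``equivalently'' is also wrong: the dual relevant in \cite[Prop.~C.5]{Em4} is $\Hom_{\co_E}(X,\co_E)$, and in any case finite generation of a dual corresponds under Nakayama to finiteness of $[\fm_{\overline{\rho}}]$-invariants, not $\fm_{\overline{\rho}}$-coinvariants. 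Finally, the ``base-change machinery'' of \cite[App.~C]{Em4} that you invoke generally \emph{presupposes} cofinite generation, so it cannot be used to establish it.

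Once the right condition is in hand, no machinery is needed: one writes down directly the injection
\[
(X_P(U^{\wp})/\varpi_E)[\fm_{\overline{\rho}}]\ \hookrightarrow\ \Hom_{k_E[L_P(\Q_p)]}\!\Big(\textstyle\bigotimes_i(\overline{\pi}_i\otimes\overline{\varepsilon}^{s_i}\circ\dett),\ \big(\Ord_P(\widehat{S}(U^{\wp},\bW^{\wp})_{\overline{\rho}})/\varpi_E\big)[\fm_{\overline{\rho}}]\Big),
\]
obtained by sending $f\in X_P(U^{\wp})$ to its composition with $\Ord_P(\cdots)\twoheadrightarrow\Ord_P(\cdots)/\varpi_E$ (the kernel is $\varpi_E X_P(U^{\wp})$ since the target is $\co_E$-torsion free) and then using $\widetilde{\bT}(U^{\wp})_{\overline{\rho}}^{P-\ord}$-linearity on the $[\fm_{\overline{\rho}}]$-part to descend the source to $\pi^{\otimes}_P(U^{\wp})\otimes k_E$ and force the image into $(\cdot)[\fm_{\overline{\rho}}]$. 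The right-hand side is finite-dimensional exactly as you say, using (\ref{equ: isomodp}) for admissibility of the target. Your $\varpi_E$-adic separatedness argument is fine.
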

\begin{proof}
We verify the conditions in \cite[Def. C.1]{Em4}. The first three conditions are easy to check from the definition (\ref{xpup}). We have an injection of $k_E$-vector spaces:
$$X_P(U^{\wp})/\varpi_E\hooklongrightarrow \Hom_{\widetilde{\bT}(U^{\wp})_{\overline{\rho}}^{P-\ord}[L_P(\Q_p)]}\Big(\bigotimes_{i=1,\cdots, k}(\overline{\pi}_i\otimes \overline{\varepsilon}^{s_i}\circ \dett), \Ord_P(\widehat{S}(U^{\wp}, \bW^{\wp})_{\overline{\rho}})/\varpi_E\Big)$$
from which we deduce an injection of $k_E$-vector spaces:
\begin{multline}\label{equ: lg1-xupm}
(X_P(U^{\wp})/\varpi_E) [\fm_{\overline{\rho}}]\\ \hooklongrightarrow \Hom_{k_E[L_P(\Q_p)]}\Big(\bigotimes_{i=1,\cdots, k}\big(\overline{\pi}_i\otimes \overline{\varepsilon}^{s_i}\circ \dett\big), \big(\Ord_P(\widehat{S}(U^{\wp}, \bW^{\wp})_{\overline{\rho}})/\varpi_E\big)[\fm_{\overline{\rho}}]\Big).
\end{multline}
By Lemma \ref{lem: Pord-comp}(1) and its proof (see the isomorphism (\ref{equ: ord-dense})), we have an isomorphism:
\begin{equation}\label{equ: isomodp}
\big(\Ord_P(\widehat{S}(U^{\wp}, \bW^{\wp})_{\overline{\rho}})/\varpi_E\big)[\fm_{\overline{\rho}}] \cong \Ord_P(S(U^{\wp}, \bW^{\wp}/\varpi_E)_{\overline{\rho}})[\fm_{\overline{\rho}}]
\end{equation}
which is a smooth admissible representation of $L_P(\Q_p)$ over $k_E$. Together with the fact that $\otimes_{i=1,\cdots, k}(\overline{\pi}_i\otimes \overline{\varepsilon}^{s_i}\circ \dett)$ can be generated over $L_P(\Q_p)$ by a finite dimensional $k_E$-vector subspace, we easily deduce that the right hand side of (\ref{equ: lg1-xupm}) is finite dimensional over $k_E$. The lemma follows.
\end{proof}

\begin{theorem}\label{thm: lg1-pt}
(1) The $\widetilde{\bT}(U^{\wp})_{\overline{\rho}}^{P-\ord}$-module $X_P(U^{\wp})$ is faithful.\\
\noindent
(2) For any point $x\in (\Spf \widetilde{\bT}(U^{\wp})_{\overline{\rho}}^{P-\ord})^{\rig}$, we have $X_P(U^{\wp})[\fp_x]\neq 0$,
equivalently by (\ref{equ: lg1-Xx}) there exists a nonzero morphism of admissible Banach representations of $L_P(\Q_p)$ over $k(x)$:
\begin{equation}\label{equ: lg1-lgOrd}
 \widehat{\bigotimes}_{i=1,\cdots, k} \big(\widehat{\pi}(\rho_{x_i}) \otimes_{k(x)} \varepsilon^{s_i}\circ \dett\big) \lra \Ord_P\big(\widehat{S}(U^{\wp}, W^{\wp})_{\overline{\rho}}\big)[\fm_x].
\end{equation}
\end{theorem}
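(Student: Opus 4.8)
The plan is to deduce both assertions from the cofinite generation of $X_P(U^{\wp})$ together with the nonvanishing of $X_P(U^{\wp})[\fp_x]$ at \emph{benign} points and their Zariski-density. First I would set up the duality formalism. Since $X_P(U^{\wp})$ is a cofinitely generated $\widetilde{\bT}(U^{\wp})_{\overline{\rho}}^{P-\ord}$-module, its Pontryagin dual $X_P(U^{\wp})^{\vee}$ is a finitely generated module over the noetherian complete local ring $\widetilde{\bT}(U^{\wp})_{\overline{\rho}}^{P-\ord}$, and $\Ann(X_P(U^{\wp})^{\vee})=\Ann(X_P(U^{\wp}))$. For any prime ideal $\fp$ one then has $X_P(U^{\wp})[\fp]\neq 0$ if and only if $(X_P(U^{\wp})^{\vee})_{\fp}\neq 0$, i.e. if and only if $\fp\in\Supp(X_P(U^{\wp})^{\vee})=V(\Ann(X_P(U^{\wp})))$; this is the standard interplay of Nakayama's lemma for the finitely generated module over the local ring $(\widetilde{\bT}(U^{\wp})_{\overline{\rho}}^{P-\ord})_{\fp}$ with Pontryagin duality.

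Next I would prove the nonvanishing at benign points. Let $x$ be benign. By Corollary \ref{coro: lg1-bpBana} any injection as in (\ref{equ: lg1-lalginj}) extends to a closed $L_P(\Q_p)$-equivariant injection $\widehat{\bigotimes}_{i=1,\cdots,k}(\widehat{\pi}(\rho_{x_i})\otimes_{k(x)}\varepsilon^{s_i}\circ\dett)\hookrightarrow\Ord_P(\widehat{S}(U^{\wp},W^{\wp})_{\overline{\rho}})[\fm_x]$. After rescaling this map so that it carries the lattice $\widehat{\bigotimes}_{i}(\widehat{\pi}(\rho_{x_i})^0\otimes\varepsilon^{s_i}\circ\dett)$ into the lattice $\Ord_P(\widehat{S}(U^{\wp},\bW^{\wp})_{\overline{\rho}})[\fp_x]\cong\Ord_P(\widehat{S}(U^{\wp},\bW^{\wp})_{\overline{\rho}}[\fp_x])$ (using left exactness of $\Ord_P$), and using the isomorphism (\ref{piPmodpx}) identifying $\pi^{\otimes}_P(U^{\wp})\otimes_{\widetilde{\bT}(U^{\wp})_{\overline{\rho}}^{P-\ord}}\widetilde{\bT}(U^{\wp})_{\overline{\rho}}^{P-\ord}/\fp_x$ with that lattice, I obtain via (\ref{equ: lg1-Xx}) a nonzero element of $X_P(U^{\wp})[\fp_x]$. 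Hence $\fp_x\supseteq\Ann(X_P(U^{\wp}))$ for every benign $x$.

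Then I would conclude. By Proposition \ref{thm: lg1-bp}(2) the benign points are Zariski-dense in $\Spec\widetilde{\bT}(U^{\wp})_{\overline{\rho}}^{P-\ord}[1/p]$; since $\widetilde{\bT}(U^{\wp})_{\overline{\rho}}^{P-\ord}[1/p]$ is reduced (Lemma \ref{lem: Pord-reduce}) the intersection of the corresponding maximal ideals is zero, and since $\widetilde{\bT}(U^{\wp})_{\overline{\rho}}^{P-\ord}$ is $p$-torsion free (being an inverse limit of finite free $\co_E$-modules) this forces $\bigcap_{x\text{ benign}}\fp_x=0$ already in $\widetilde{\bT}(U^{\wp})_{\overline{\rho}}^{P-\ord}$. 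Combined with the previous paragraph, $\Ann(X_P(U^{\wp}))\subseteq\bigcap_{x\text{ benign}}\fp_x=0$, which is assertion (1). For (2), faithfulness means $\Ann(X_P(U^{\wp})^{\vee})=0$, so $\Supp(X_P(U^{\wp})^{\vee})=\Spec\widetilde{\bT}(U^{\wp})_{\overline{\rho}}^{P-\ord}$ contains every $\fp_x$; by the duality formalism of the first paragraph this gives $X_P(U^{\wp})[\fp_x]\neq 0$ for every point $x$, the equivalence with the existence of a nonzero morphism (\ref{equ: lg1-lgOrd}) being again (\ref{equ: lg1-Xx}) together with (\ref{piPmodpx}).

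Granting the earlier inputs (Corollary \ref{coro: lg1-bpBana}, Proposition \ref{thm: lg1-bp}(2), Lemma \ref{lem: Pord-reduce}, and the deformation-theoretic identity $\pi_i^{\univ}\otimes_{R_{\overline{\rho}_i}}\co_{k(x)}\cong\widehat{\pi}(\rho_{x_i})^0$ behind (\ref{piPmodpx})), the argument is essentially formal. The one genuinely delicate point is the integral bookkeeping in the benign case: making sure the Banach-space injection of Corollary \ref{coro: lg1-bpBana} really produces an element of the $\co_E$-module $X_P(U^{\wp})$ — not merely of $X_P(U^{\wp})[1/p]$ — by matching the specialization of the ``big'' representation $\pi^{\otimes}_P(U^{\wp})$ at $\fp_x$ with the completed tensor product of the explicit lattices $\widehat{\pi}(\rho_{x_i})^0$ and rescaling appropriately; this is exactly what (\ref{piPmodpx}) is set up to provide.
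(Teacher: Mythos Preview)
Your proof is correct and is essentially the paper's: the paper cites \cite[Prop.~C.36]{Em4} (and its proof) for the equivalence of (1) and (2) and for the density-to-faithfulness step, whereas you unpack that duality/support reasoning explicitly; the inputs (Corollary~\ref{coro: lg1-bpBana} for nonvanishing at benign points, Proposition~\ref{thm: lg1-bp}(2) for Zariski-density) are identical. One minor point of terminology: the dual the paper actually uses is $M_P(U^{\wp})=\Hom_{\co_E}(X_P(U^{\wp}),\co_E)$ (finitely generated by \cite[Prop.~C.5]{Em4}, with the rank equality supplied by \cite[Lem.~C.14]{Em4}) rather than the Pontryagin dual, but your argument goes through unchanged with this adjustment.
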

\begin{proof}
By \cite[Prop. C.36]{Em4}, (1) and (2) are equivalent, hence it is enough to prove (1). By Corollary \ref{coro: lg1-bpBana}, if $x$ is a benign point we have $X_P(U^{\wp})[\fp_x]\neq 0$. By Proposition \ref{thm: lg1-bp}(2) the benign points are Zariski-dense in $\Spec \widetilde{\bT}(U^{\wp})_{\overline{\rho}}^{P-\ord}[1/p]$. The theorem then follows by the same argument as in the proof of \cite[Prop. C.36]{Em4} (see also \cite[Prop. 4.7]{Br11b}).
\end{proof}

\begin{corollary}\label{coro: lg1-pt}
Let $x\in (\Spf \widetilde{\bT}(U^{\wp})_{\overline{\rho}}^{P-\ord})^{\rig}$, there exists a nonzero morphism of admissible Banach representations of $\GL_n(\Q_p)$ over $k(x)$:
\begin{equation}\label{equ: lg1-pt}
\Big(\Ind_{\overline{P}(\Q_p)}^{\GL_n(\Q_p)}\widehat{\bigotimes}_{i=1,\cdots, k} \big(\widehat{\pi}(\rho_{x_i}) \otimes \varepsilon^{s_i}\circ \dett\big)\Big)^{\cC^0} \lra \widehat{S}(U^{\wp}, W^{\wp})_{\overline{\rho}}[\fm_x].
\end{equation}
\end{corollary}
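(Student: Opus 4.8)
The plan is to deduce Corollary \ref{coro: lg1-pt} from Theorem \ref{thm: lg1-pt}(2) by applying the adjunction property of Emerton's ordinary part functor established in \S~\ref{ordinarypartfunctor}. First I would start from the nonzero morphism of admissible Banach representations of $L_P(\Q_p)$ over $k(x)$ provided by (\ref{equ: lg1-lgOrd}):
\begin{equation*}
f:\ \widehat{\bigotimes}_{i=1,\cdots, k} \big(\widehat{\pi}(\rho_{x_i}) \otimes_{k(x)} \varepsilon^{s_i}\circ \dett\big) \lra \Ord_P\big(\widehat{S}(U^{\wp}, W^{\wp})_{\overline{\rho}}\big)[\fm_x].
\end{equation*}
Set $\widehat{U}:=\widehat{\bigotimes}_{i=1,\cdots, k} (\widehat{\pi}(\rho_{x_i}) \otimes_{k(x)} \varepsilon^{s_i}\circ \dett)$, which by \cite[Lem.~3.4]{BH2} (used iteratively, as in the proof of Corollary \ref{coro: lg1-bpBana}) is an admissible unitary Banach representation of $L_P(\Q_p)$ over $k(x)$. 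Now apply \cite[Thm.~4.4.6]{EOrd1} (the adjunction between $\Ord_{\overline{P}}$ and $\Ind_{\overline{P}(\Q_p)}^{\GL_n(\Q_p)}(\cdot)^{\cC^0}$): the morphism $f$, whose target is $\Ord_P$ of an admissible unitary Banach representation of $\GL_n(\Q_p)$, corresponds by adjunction to a $\GL_n(\Q_p)$-equivariant morphism
\begin{equation*}
\big(\Ind_{\overline{P}(\Q_p)}^{\GL_n(\Q_p)}\widehat{U}\big)^{\cC^0} \lra \widehat{S}(U^{\wp}, W^{\wp})_{\overline{\rho}}[\fm_x],
\end{equation*}
which is exactly (\ref{equ: lg1-pt}). (Here one uses that $\widehat{S}(U^{\wp}, W^{\wp})_{\overline{\rho}}[\fm_x]$ is an admissible unitary Banach representation of $\GL_n(\Q_p)$ over $k(x)$, which follows from the admissibility of $\widehat{S}(U^{\wp}, W^{\wp})_{\overline{\rho}}$ over $E$ together with the fact that $\fm_x$ is a maximal ideal of residue field $k(x)$.)

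The remaining point is to check that the adjoint morphism is nonzero. For this I would argue that the unit of the adjunction $\widehat{U}\hookrightarrow \Ord_P((\Ind_{\overline{P}(\Q_p)}^{\GL_n(\Q_p)}\widehat{U})^{\cC^0})$ is injective (this is part of \cite[Thm.~4.4.6]{EOrd1}, or can be seen via the canonical lifting $\iota_{\can}$ of (\ref{equ: ord-can2}), which is a closed embedding for admissible $V$): composing with $\Ord_P$ of the adjoint morphism recovers $f$, which is nonzero by Theorem \ref{thm: lg1-pt}(2); hence the adjoint morphism itself is nonzero. This yields the desired nonzero morphism (\ref{equ: lg1-pt}) and completes the proof.

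The main obstacle, if any, is purely bookkeeping: one must make sure that all the hypotheses of \cite[Thm.~4.4.6]{EOrd1} are met, namely that $\widehat{U}$ is an admissible unitary Banach representation of $L_P(\Q_p)$ (which is where \cite[Lem.~3.4]{BH2} and the admissibility statements used in Corollary \ref{coro: lg1-bpBana} enter, noting again the running assumption that the $\overline{\rho}_i$ satisfy (\ref{hypo: hL-modp}) when $n_i=2$), and that $\widehat{S}(U^{\wp}, W^{\wp})_{\overline{\rho}}[\fm_x]$ is an admissible unitary Banach representation of $\GL_n(\Q_p)$. Both are standard given what has already been established, so no genuinely new input is required beyond invoking Theorem \ref{thm: lg1-pt} and the adjunction formalism of \S~\ref{ordinarypartfunctor}.
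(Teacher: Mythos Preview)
Your proposal is correct and follows exactly the paper's approach: the paper's proof is the single line ``This follows from (\ref{equ: lg1-lgOrd}) and \cite[Thm.~4.4.6]{EOrd1},'' and you have simply unpacked this by writing out the adjunction and checking nonvanishing. Two minor slips: the adjunction is between $\Ord_P$ (not $\Ord_{\overline{P}}$) and $(\Ind_{\overline{P}(\Q_p)}^{\GL_n(\Q_p)}\cdot)^{\cC^0}$, and the admissibility of the completed tensor product in the paper's proof of Corollary~\ref{coro: lg1-bpBana} is attributed to \cite[Lem.~2.14]{BH2} rather than \cite[Lem.~3.4]{BH2}.
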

\begin{proof}
This follows from (\ref{equ: lg1-lgOrd}) and \cite[Thm. 4.4.6]{EOrd1}.
\end{proof}

\begin{corollary}\label{coro: lg1-clas}
Let $x\in (\Spf \widetilde{\bT}(U^{\wp})_{\overline{\rho}}^{P-\ord})^{\rig}$ and assume:
\begin{itemize}
\item for any $i\in \{1,\cdots, k\}$, the $\Gal_{\Q_p}$-representation $\rho_{x_i}$ is irreducible de Rham with distinct Hodge-Tate weights $\{-\mu_{s_i+1}, -\mu_{s_i+n_i}\}$
\item $-\mu_1> -\mu_2>\cdots > -\mu_n$.
\end{itemize}
Then the point $x$ is classical.
\end{corollary}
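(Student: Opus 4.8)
The statement to prove is Corollary \ref{coro: lg1-clas}: a point $x$ of $(\Spf \widetilde{\bT}(U^{\wp})_{\overline{\rho}}^{P-\ord})^{\rig}$ for which each $\rho_{x_i}$ is irreducible de Rham with the prescribed (distinct, strictly decreasing) Hodge--Tate weights is automatically classical. The strategy is to feed the morphism produced by Corollary \ref{coro: lg1-pt} through the locally algebraic vectors on the automorphic side, which forces the existence of a classical automorphic form. First I would fix, for each $i\in\{1,\dots,k\}$, the $\GL_{n_i}(\Q_p)$-representation $\widehat{\pi}(\rho_{x_i})$ attached to $\rho_{x_i}$ by the $p$-adic Langlands correspondence for $\GL_2(\Q_p)$ (or class field theory when $n_i=1$); since $\rho_{x_i}$ is de Rham with distinct Hodge--Tate weights, the results recalled after Lemma \ref{lem: hL-etale} (compatibility with classical local Langlands, due to Emerton, Colmez, \dots) give that $\widehat{\pi}(\rho_{x_i})^{\lalg}\ne 0$ and is of the form $\pi_i^{\infty}\otimes_{k(x)} L_i(\ul{\mu}_i)$ with $\pi_i^{\infty}$ a smooth (irreducible, generic when $n_i=2$) representation and $L_i(\ul{\mu}_i)$ the algebraic representation of highest weight determined by $\{-\mu_{s_i+1},-\mu_{s_i+n_i}\}$.

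\textbf{Key steps.} The main steps, in order, are: (i) apply Corollary \ref{coro: lg1-pt} to get a nonzero $\GL_n(\Q_p)$-equivariant map $\big(\Ind_{\overline{P}(\Q_p)}^{\GL_n(\Q_p)}\widehat{\otimes}_i(\widehat{\pi}(\rho_{x_i})\otimes\varepsilon^{s_i}\circ\dett)\big)^{\cC^0}\to \widehat{S}(U^{\wp},W^{\wp})_{\overline\rho}[\fm_x]$; (ii) restrict this map to the subspace of locally algebraic vectors: inside the continuous parabolic induction there is the locally algebraic parabolically induced subrepresentation $\big(\Ind_{\overline{P}(\Q_p)}^{\GL_n(\Q_p)}\otimes_i(\pi_i^{\infty}\otimes\varepsilon^{s_i}\circ\dett)\big)^{\infty}\otimes_{k(x)}L(\mu)$, where $L(\mu)$ is the algebraic $\GL_n$-representation whose restriction to $L_P$ is $\otimes_i L_i(\ul{\mu}_i)$ (here the hypothesis $-\mu_1>\cdots>-\mu_n$ guarantees that the $\ul{\mu}_i$ glue to a \emph{dominant} weight $\mu$ for $\GL_n$, so $L(\mu)$ exists and $L(\mu)|_{L_P}\cong\otimes_i L_i(\ul{\mu}_i)$ is irreducible); (iii) show this locally algebraic subrepresentation maps nontrivially into $\widehat{S}(U^{\wp},W^{\wp})_{\overline\rho}[\fm_x]$, hence into its locally algebraic vectors $\widehat{S}(U^{\wp},W^{\wp})_{\overline\rho}[\fm_x]^{\lalg}$. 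The cleanest way to run (ii)--(iii) is to use the adjunction/ordinary-part machinery: by Proposition \ref{prop: ord-adj} the map of Corollary \ref{coro: lg1-pt} is recovered from the map on ordinary parts $\widehat{\otimes}_i(\widehat{\pi}(\rho_{x_i})\otimes\varepsilon^{s_i}\circ\dett)\to \Ord_P(\widehat{S}(U^{\wp},W^{\wp})_{\overline\rho})[\fm_x]$ of Theorem \ref{thm: lg1-pt}(2); restricting the source to $\widehat{\otimes}_i(\widehat{\pi}(\rho_{x_i})^{\lalg}\otimes\varepsilon^{s_i}\circ\dett)$ (which is nonzero and equals $\otimes_i(\pi_i^{\infty}\otimes L_i(\ul{\mu}_i)\otimes\varepsilon^{s_i}\circ\dett)$, a locally algebraic representation of $L_P(\Q_p)$) and applying Proposition \ref{prop: lg1-semisimp}, the image lands in $\Ord_P(\widehat{S}(U^{\wp},W^{\wp})^{\lalg})[\fm_x]$. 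Since this image is nonzero (the restriction of a nonzero $L_P(\Q_p)$-map to a nonzero subrepresentation that generates, using irreducibility of the socle of each $\widehat{\pi}(\rho_{x_i})^{\lalg}$, is nonzero), we get $\Ord_P(\widehat{S}(U^{\wp},W^{\wp})^{\lalg})[\fm_x]\ne 0$, and in particular $\widehat{S}(U^{\wp},W^{\wp})_{\overline\rho}[\fm_x]^{\lalg}\ne 0$, i.e. $x$ is classical.

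\textbf{Main obstacle.} The delicate point is step (ii)--(iii): one must be sure that restricting the Banach-space map to locally algebraic vectors does not kill it. For this I would argue on ordinary parts rather than directly: the map $\widehat{\otimes}_i(\widehat{\pi}(\rho_{x_i})\otimes\varepsilon^{s_i}\circ\dett)\to \Ord_P(\widehat{S}(U^{\wp},W^{\wp})_{\overline\rho})[\fm_x]$ is nonzero, and since each $\widehat{\pi}(\rho_{x_i})$ has irreducible locally algebraic socle contained in every nonzero closed subrepresentation meeting $\widehat{\pi}(\rho_{x_i})^{\lalg}$ (a standard fact from the $p$-adic Langlands correspondence for $\GL_2(\Q_p)$, and trivial for $\GL_1$), the completed tensor product $\widehat{\otimes}_i(\widehat{\pi}(\rho_{x_i})\otimes\varepsilon^{s_i}\circ\dett)$ has the property that its locally algebraic part $\otimes_i(\pi_i^{\infty}\otimes L_i(\ul{\mu}_i)\otimes\varepsilon^{s_i}\circ\dett)$ generates a dense subrepresentation; hence the map is nonzero on it. One then invokes Proposition \ref{prop: ord-adj} and Proposition \ref{prop: lg1-semisimp} exactly as above. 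A secondary technical check is that the dominance hypothesis $-\mu_1>-\mu_2>\cdots>-\mu_n$ indeed makes $(\mu_1,\dots,\mu_n)$ a dominant weight for $\GL_n$ with $L(\mu)|_{L_P}$ irreducible equal to $\otimes_i L_i(\ul\mu_i)$ --- this is immediate from the inequalities but must be stated --- and that the smooth parts $\pi_i^\infty$ glue via parabolic induction to a genuine smooth representation of $\GL_n(\Q_p)$, which is automatic. The irreducibility of each $\rho_{x_i}$ enters to guarantee that $\widehat{\pi}(\rho_{x_i})$ is topologically irreducible (or at least has the socle behaviour above), ruling out degenerate situations, and the distinctness of Hodge--Tate weights within each $\rho_{x_i}$ is what gives $\widehat{\pi}(\rho_{x_i})^{\lalg}\ne 0$ in the first place.
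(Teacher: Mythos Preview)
Your approach is essentially the same as the paper's, and the argument is correct. Two small points: first, the paper's justification for non-vanishing on locally algebraic vectors is cleaner than yours --- rather than arguing that the locally algebraic part generates a dense subrepresentation, the paper simply observes that each $\widehat{\pi}(\rho_{x_i})$ is topologically \emph{irreducible} (since $\rho_{x_i}$ is irreducible), so the completed tensor product $\widehat{\otimes}_i(\widehat{\pi}(\rho_{x_i})\otimes\varepsilon^{s_i}\circ\dett)$ is irreducible as an $L_P(\Q_p)$-representation, hence the map of Theorem \ref{thm: lg1-pt}(2) is \emph{injective} and restricts injectively to the (nonzero) locally algebraic part; your density argument is valid but ultimately rests on the same irreducibility. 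Second, there is a notational slip in your weight bookkeeping: the condition $-\mu_1>\cdots>-\mu_n$ does not make $(\mu_1,\dots,\mu_n)$ dominant; the correct dominant weight is $\lambda_j:=-\mu_j+(j-1)$ (the usual shift between Hodge--Tate weights and highest weights), and the paper writes $L(\lambda)$, $L_i(\ul{\lambda}_i)$ rather than $L(\mu)$, $L_i(\ul{\mu}_i)$.
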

\begin{proof}
Let $\lambda_j:=-\mu_j+(j-1)$, thus $\lambda:=(\lambda_1, \cdots, \lambda_n)$ is a dominant weight. It follows from \cite[Thm. VI.5.7~\&~VI.6.18]{Colm10a} that there exists a nonzero smooth representation $\pi^{\infty}_i$ of $\GL_{n_i}(\Q_p)$ over $k(x)$ such that $\widehat{\pi}(\rho_{x_i})^{\lalg}\cong \pi^{\infty}_i\otimes_{k(x)} L_i(\ul{\lambda}_i)$ where $\ul{\lambda}_i:=(\lambda_{s_i+1}, \lambda_{s_i+n_i})$. Moreover, since $\rho_{x_i}$ is irreducible, we know that $\widehat{\pi}(\rho_{x_i})$ is also irreducible as a continuous representation of $\GL_{n_i}(\Q_p)$. It then follows that $\widehat{\otimes}_{i=1,\cdots, k} (\widehat{\pi}(\rho_{x_i}) \otimes \varepsilon^{s_i}\circ \det)$ is also irreducible as a continuous representation of $L_P(\Q_p)$ (for lack of a direct reference, one can use locally analytic vectors as follows: one easily checks that it is enough to prove that any nonzero closed invariant subspace of $\widehat{\otimes}_{i=1,\cdots, k} (\widehat{\pi}(\rho_{x_i}) \otimes \varepsilon^{s_i}\circ \det)$ contains a nonzero vector of the form $v_1\otimes\cdots\otimes v_k$ with $v_i\in \widehat{\pi}(\rho_{x_i}) \otimes \varepsilon^{s_i}\circ \det$, but this follows from successively using \cite[Thm.~7.1]{ST03}, \cite[Lem.~2.14]{BH2}, \cite[Cor.~4.25]{OS2} and \cite[Lem.~2.10]{BH2}). It follows that the morphism (\ref{equ: lg1-lgOrd}) is injective and thus restricts to an injective $L_P(\Q_p)$-equivariant morphism:
\begin{multline}\label{equ: lg1-lgalg}
\Big(\bigotimes_{i=1,\cdots, k} \pi_i^{\infty}\Big) \otimes_{k(x)} L_P(\lambda)\\ \cong \bigotimes_{i=1,\cdots, k} \big(\widehat{\pi}(\rho_{x_i})^{\lalg} \otimes \varepsilon^{s_i}\circ \dett\big) \hooklongrightarrow \Ord_P\big(\widehat{S}(U^{\wp}, W^{\wp})_{\overline{\rho}}[\fm_x]\big).
\end{multline}
By Proposition \ref{prop: ord-adj}, the morphism (\ref{equ: lg1-lgalg}) induces a nonzero $\GL_n(\Q_p)$-equivariant morphism:
\begin{equation*}
\Big(\Ind_{\overline{P}(\Q_p)}^{\GL_n(\Q_p)} \bigotimes_{i=1,\cdots, k} \pi_i^{\infty}\Big)^{\infty}\otimes_{k(x)} L(\lambda) \lra \widehat{S}(U^{\wp}, W^{\wp})_{\overline{\rho}}[\fm_x]
\end{equation*}
which implies $\widehat{S}(U^{\wp}, W^{\wp})_{\overline{\rho}}[\fm_x]^{\lalg} \neq 0$, whence the result.
\end{proof}

\begin{remark}
{\rm For $x\in (\Spf \widetilde{\bT}(U^{\wp})_{\overline{\rho}}^{P-\ord})^{\rig}$, by passing to a smaller parabolic subgroup, it should be possible to prove that Corollary \ref{coro: lg1-clas} still holds when $\rho_{x_i}$ is reducible for some $i$.}
\end{remark}

\noindent
We set (where $\Hom_{\co_E}=\co_E$-linear homomorphisms):
\begin{equation}\label{mu}
M_P(U^{\wp}):=\Hom_{\co_E}(X_P(U^{\wp}), \co_E)
\end{equation}
which, by \cite[Prop. C.5]{Em4}, is a finitely generated $\widetilde{\bT}(U^{\wp})_{\overline{\rho}}^{P-\ord}$-module which is $\co_E$-torsion free. Moreover by \cite[Lem. C.14]{Em4}, for any $x\in (\Spf \widetilde{\bT}(U^{\wp})_{\overline{\rho}}^{P-\ord})^{\rig}$, the $\co_{k(x)}$-modules $M_P(U^{\wp}) /\fp_x$ and $X_P(U^{\wp})[\fp_x]$ are finitely generated free of the same rank, that we denote by $m_P(x)$.

\begin{lemma}\label{lem: lg1-mult1}
Let $x$ be a benign point, then $m_P(x)=m(x)$ (see (\ref{equ: lg2-mult1}) for $m(x)$).
\end{lemma}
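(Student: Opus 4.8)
The statement to prove is that for a benign point $x$, the integer $m_P(x)$ (the common rank of the finitely generated free $\co_{k(x)}$-modules $M_P(U^{\wp})/\fp_x$ and $X_P(U^{\wp})[\fp_x]$) equals the classical multiplicity $m(x)$ defined in~(\ref{equ: lg2-mult1}). The natural strategy is to compute $X_P(U^{\wp})[\fp_x]$ directly from its description~(\ref{equ: lg1-Xx}) as
$$X_P(U^{\wp})[\fp_x]\cong \Hom_{\co_{k(x)}[L_P(\Q_p)]}\Big(\widehat{\bigotimes}_{i=1,\cdots,k}\big(\widehat{\pi}(\rho_{x_i})^0\otimes\varepsilon^{s_i}\circ\dett\big),\ \Ord_P\big(\widehat{S}(U^{\wp},\bW^{\wp})_{\overline{\rho}}\big)[\fp_x]\Big),$$
and to compare this $\Hom$-space with the one computing $m(x)$ through the locally algebraic vectors.

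\textbf{First steps.} First I would reduce the computation of the right-hand side to a computation after inverting $p$, i.e. identify $X_P(U^{\wp})[\fp_x]\otimes_{\co_{k(x)}}k(x)$ with the $L_P(\Q_p)$-equivariant Hom from $\widehat{\bigotimes}_i(\widehat{\pi}(\rho_{x_i})\otimes\varepsilon^{s_i}\circ\dett)$ into $\Ord_P(\widehat{S}(U^{\wp},W^{\wp})_{\overline{\rho}})[\fm_x]$, using that $M_P(U^{\wp})/\fp_x$ is free over $\co_{k(x)}$ so $m_P(x)$ can be read off after $\otimes_{\co_E}E$. Since $x$ is benign, each $\rho_{x_i}$ is crystalline (Corollary~\ref{rajoutcristalline}(1)) and $\widehat{\pi}(\rho_{x_i})$ is the universal unitary completion of its locally analytic (equivalently, by genericity, locally algebraic up to the principal series) vectors; by the argument in the proof of Corollary~\ref{coro: lg1-bpBana} the completed tensor product $\widehat{\bigotimes}_i(\widehat{\pi}(\rho_{x_i})\otimes\varepsilon^{s_i}\circ\dett)$ is the universal unitary completion of $\widehat{\bigotimes}_i(\pi_i^{\an}\otimes\varepsilon^{s_i}\circ\dett)$, and in particular is \emph{topologically irreducible} as an admissible Banach representation of $L_P(\Q_p)$ (combining \cite[Thm.~7.1]{ST03}, \cite[Lem.~2.14]{BH2}, \cite[Cor.~4.25]{OS2}, \cite[Lem.~2.10]{BH2} as in the proof of Corollary~\ref{coro: lg1-clas}; the required irreducibility of each $\widehat{\pi}(\rho_{x_i})$ here, when $n_i=2$ and $\rho_{x_i}$ is crystalline generic but possibly reducible, follows since the refined Banach representation is irreducible when the Frobenius eigenvalues are distinct and noncritical). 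Hence any nonzero $L_P(\Q_p)$-equivariant map from this Banach space into the admissible Banach $\Ord_P(\widehat{S}(U^{\wp},W^{\wp})_{\overline{\rho}})[\fm_x]$ is a closed injection, and the dimension of the Hom-space equals the multiplicity with which $\widehat{\bigotimes}_i(\widehat{\pi}(\rho_{x_i})\otimes\varepsilon^{s_i}\circ\dett)$ appears as a subrepresentation.

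\textbf{Matching with the classical multiplicity.} Next I would pass to locally algebraic vectors. By Proposition~\ref{prop: ord-adj} (the adjunction property of $\Ord_P$), the closed injection $\widehat{\bigotimes}_i(\widehat{\pi}(\rho_{x_i})\otimes\varepsilon^{s_i}\circ\dett)\hookrightarrow\Ord_P(\widehat{S}(U^{\wp},W^{\wp})_{\overline{\rho}})[\fm_x]$ is equivalent to a $\GL_n(\Q_p)$-equivariant map $\big(\Ind_{\overline P(\Q_p)}^{\GL_n(\Q_p)}\widehat{\bigotimes}_i(\widehat{\pi}(\rho_{x_i})\otimes\varepsilon^{s_i}\circ\dett)\big)^{\cC^0}\to\widehat{S}(U^{\wp},W^{\wp})_{\overline{\rho}}[\fm_x]$, and conversely; taking $\Ord_P$ of such maps recovers the original map. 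On the source, the locally algebraic vectors are controlled by $\widehat{\pi}(\rho_{x_i})^{\lalg}\cong\pi_i^{\infty}\otimes L_i(\ul{\lambda}_i)$, and by~(\ref{equ: lg1-algbis}) and~(\ref{equ: lg1-ordalg2}) we have $\Ord_P(\widehat{S}(U^{\wp},W^{\wp})_{\overline{\rho}}[\fm_x]^{\lalg})\cong\big((\bigotimes_i\pi_i^{\infty})\otimes L_P(\lambda)\big)^{\oplus m(x)}$. So the key point is that the natural restriction map
$$\Hom_{L_P(\Q_p)}\Big(\widehat{\bigotimes}_i\big(\widehat{\pi}(\rho_{x_i})\otimes\varepsilon^{s_i}\circ\dett\big),\ \Ord_P(\widehat{S}(U^{\wp},W^{\wp})_{\overline{\rho}})[\fm_x]\Big)\longrightarrow \Hom_{L_P(\Q_p)}\Big(\bigotimes_i\big(\widehat{\pi}(\rho_{x_i})^{\lalg}\otimes\varepsilon^{s_i}\circ\dett\big),\ \Ord_P(\widehat{S}(U^{\wp},W^{\wp})_{\overline{\rho}}[\fm_x]^{\lalg})\Big)$$
is bijective: injectivity because $\widehat{\bigotimes}_i(\widehat{\pi}(\rho_{x_i})^{\lalg}\otimes\varepsilon^{s_i}\circ\dett)$ is dense in $\widehat{\bigotimes}_i(\widehat{\pi}(\rho_{x_i})\otimes\varepsilon^{s_i}\circ\dett)$ and the target $\Ord_P(\widehat{S}(U^{\wp},W^{\wp})_{\overline{\rho}})[\fm_x]$ is Hausdorff; surjectivity because any $L_P(\Q_p)$-map out of the dense locally algebraic subspace that lands in a locally algebraic piece extends to the universal unitary completion on one side and is continuous — this is exactly where the universal unitary completion property and Proposition~\ref{prop: ord-adj} are used (a locally algebraic map factors through the locally algebraic quotient of the induced $\GL_n$-representation, then completes). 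The right-hand $\Hom$-space has dimension $m(x)$ since $\bigotimes_i\pi_i^{\infty}$ has irreducible socle. Therefore $\dim_{k(x)}X_P(U^{\wp})[\fp_x]\otimes E=m(x)$, i.e. $m_P(x)=m(x)$.

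\textbf{Main obstacle.} The delicate point will be the surjectivity half of the displayed bijection, namely showing that \emph{every} $L_P(\Q_p)$-equivariant map $\bigotimes_i(\widehat{\pi}(\rho_{x_i})^{\lalg}\otimes\varepsilon^{s_i}\circ\dett)\to\Ord_P(\widehat{S}(U^{\wp},W^{\wp})_{\overline{\rho}}[\fm_x]^{\lalg})$ extends to the completed Banach tensor product. This requires knowing that $\Ord_P(\widehat{S}(U^{\wp},W^{\wp})_{\overline{\rho}}[\fm_x])$ has no ``extra'' unitary subrepresentation whose locally algebraic vectors coincide with $\bigotimes_i\pi_i^{\infty}\otimes L_P(\lambda)$ other than (copies of) $\widehat{\bigotimes}_i(\widehat{\pi}(\rho_{x_i})\otimes\varepsilon^{s_i}\circ\dett)$ itself — equivalently, that the completion of $\bigotimes_i\pi_i^{\infty}\otimes L_P(\lambda)$ inside the admissible Banach $\Ord_P(\widehat{S}(U^{\wp},W^{\wp})_{\overline{\rho}}[\fm_x])$ is forced to be $\widehat{\bigotimes}_i(\widehat{\pi}(\rho_{x_i})\otimes\varepsilon^{s_i}\circ\dett)$. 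I expect to handle this by Proposition~\ref{prop: ord-adj} combined with the uniqueness of the universal unitary completion (\cite{BeBr}, \cite{Pas09}, \cite{BH2}) of each $\GL_2$-factor $\pi_i^{\an}$ and the fact that all these unitary completions are already known to be admissible, so the closure is squeezed between $\bigotimes_i\pi_i^{\infty}\otimes L_P(\lambda)$ and its admissible universal completion and hence equals the latter. Once this is in place the equality of multiplicities is immediate.
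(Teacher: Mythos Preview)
Your overall strategy is the same as the paper's: identify $m_P(x)$ with the dimension of the Banach $\Hom$-space via~(\ref{equ: lg1-Xx}), and match this with the locally algebraic $\Hom$-space whose dimension is $m(x)$ by Proposition~\ref{prop: lg1-oralgJ} and~(\ref{equ: lg1-algbis}). Injectivity of the restriction map is indeed by density, and you have correctly isolated surjectivity as the main issue.

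However, there are two problems. First, the irreducibility claim for $\widehat{\bigotimes}_i(\widehat{\pi}(\rho_{x_i})\otimes\varepsilon^{s_i}\circ\dett)$ is not correct in general: at a benign point $\rho_{x_i}$ is only indecomposable, not irreducible, and when $n_i=2$ with $\rho_{x_i}$ reducible crystalline, $\widehat{\pi}(\rho_{x_i})$ is not irreducible. The argument you borrow from the proof of Corollary~\ref{coro: lg1-clas} explicitly assumes each $\rho_{x_i}$ irreducible. Fortunately this irreducibility is not needed for the lemma, and the paper does not use it.

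Second, and more seriously, your proposed handling of the surjectivity has a genuine gap. You want to extend an $L_P(\Q_p)$-map out of $\bigotimes_i\widehat{\pi}(\rho_{x_i})^{\lalg}$ to one out of $\widehat{\bigotimes}_i\widehat{\pi}(\rho_{x_i})$ by invoking universal unitary completion, but the universal unitary completion of $\pi_i^{\lalg}=\pi_i^{\infty}\otimes L_i(\ul{\lambda}_i)$ is \emph{not} $\widehat{\pi}(\rho_{x_i})$: many nonisomorphic crystalline $\rho$'s share the same locally algebraic vectors, and each $\widehat{\pi}(\rho)$ is a distinct admissible unitary completion of that same $\pi_i^{\lalg}$, so the universal one cannot equal any single $\widehat{\pi}(\rho_{x_i})$. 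What \emph{is} true (and what the paper uses) is that the universal unitary completion of $\pi_i^{\an}$ is $\widehat{\pi}(\rho_{x_i})$ (\cite{BeBr}, \cite{Pas09}). So the missing step is to first extend the locally algebraic map to a map out of $\widehat{\bigotimes}_i\pi_i^{\an}$: this is precisely Proposition~\ref{equ: lg1-adjan1} (and is the nontrivial content of Corollary~\ref{coro: lg1-bpBana}), which relies on the eigenvariety and noncriticality arguments, not on any abstract completion property. The paper's proof organizes this as a three-term chain $\Hom(\widehat{\bigotimes}\widehat{\pi}(\rho_{x_i}),-)\to\Hom(\widehat{\bigotimes}\pi_i^{\an},-)\to\Hom(\bigotimes\widehat{\pi}(\rho_{x_i})^{\lalg},-)$, with the first map injective by density, the second injective by the proof of Proposition~\ref{equ: lg1-adjan1}, and the composition surjective by Corollary~\ref{coro: lg1-bpBana}; hence all are bijections. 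Your sketch conflates the roles of $\pi_i^{\lalg}$ and $\pi_i^{\an}$ at exactly the point where the distinction matters.
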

\begin{proof}
Consider the following composition:
\begin{align*}
\MoveEqLeft[10] {\Hom_{L_P(\Q_p)}\Big(\widehat{\bigotimes}_{i=1, \cdots, k} \big(\widehat{\pi}(\rho_{x_i})\otimes \varepsilon^{s_i}\circ \dett\big), \Ord_P(\widehat{S}(U^{\wp}, W^{\wp})_{\overline{\rho}})[\fm_x]\Big)} & \\
\lra{}& \Hom_{L_P(\Q_p)}\Big(\widehat{\bigotimes}_{i=1, \cdots, k} (\pi_i^{\an}\otimes \varepsilon^{s_i}\circ \dett), \Ord_P(\widehat{S}(U^{\wp}, W^{\wp})_{\overline{\rho}})[\fm_x]\Big)\\
\lra{}& \Hom_{L_P(\Q_p)}\Big(\bigotimes_{i=1, \cdots, k} \big(\widehat{\pi}(\rho_{x_i})^{\lalg}\otimes \varepsilon^{s_i}\circ \dett\big), \Ord_P(\widehat{S}(U^{\wp}, W^{\wp})_{\overline{\rho}})[\fm_x]\Big)
\end{align*}
where $\pi_i^{\an}$ is as in the proof of Corollary \ref{coro: lg1-bpBana}. The first map is injective since $\widehat{\otimes}_{i=1, \cdots, k} \pi_i^{\an}$ is dense in $\widehat{\otimes}_{i=1, \cdots, k} \widehat{\pi}(\rho_{x_i})$ (see the proof of Corollary \ref{coro: lg1-bpBana}). By Corollary \ref{coro: lg1-bpBana}, the composition is surjective. By the proof of Proposition \ref{equ: lg1-adjan1}, the second map is injective. We deduce then that all these maps are bijective. From Proposition \ref{prop: ord-adj}, we deduce an isomorphism:
\begin{multline*}
\Hom_{L_P(\Q_p)}\Big(\bigotimes_{i=1, \cdots, k} \big(\widehat{\pi}(\rho_{x_i})^{\lalg}\otimes \varepsilon^{s_i}\circ \dett\big), \Ord_P\big(\widehat{S}(U^{\wp}, W^{\wp})_{\overline{\rho}}[\fm_x]^{\lalg}\big)\Big)\\
\xlongrightarrow{\sim} \Hom_{L_P(\Q_p)}\Big(\bigotimes_{i=1, \cdots, k} \big(\widehat{\pi}(\rho_{x_i})^{\lalg}\otimes\varepsilon^{s_i}\circ \dett\big), \Ord_P(\widehat{S}(U^{\wp}, W^{\wp})_{\overline{\rho}})[\fm_x]\Big).
\end{multline*}
The lemma follows from these isomorphisms together with (\ref{equ: lg1-Xx}), (\ref{equ: lg1-algbis}), Proposition \ref{prop: lg1-oralgJ}.
\end{proof}

\noindent
Let $S_{\glo}^{P-\ord}(\overline{\rho}_{\widetilde{\wp}})$ be the set of (isomorphism classes of) irreducible $L_P(\Z_p)$-representations $\sigma=\otimes_{i=1}^k\sigma_i$ over $k_E$ such that:
$$\Hom_{L_P(\Z_p)}\big(\sigma, \Ord_P\big({S}(U^{\wp}, \bW^{\wp}/\varpi_E)_{\overline{\rho}}[\fm_{\overline{\rho}}]\big)\big)\neq 0.$$
For $i\in \{1,\cdots,k\}$ let $S_{\glo}(\overline{\rho}_i)$ be the set of (isomorphism classes of) irreducible $\GL_{n_i}(\Z_p)$-representations $\sigma_i$ such that there exist irreducible $\GL_{n_j}(\Z_p)$-representations $\sigma_j$ over $k_E$ for $j\neq i$ such that $\otimes_{j=1}^k (\sigma_j\otimes\overline{\varepsilon}^{s_j}\circ \dett)\in S_{\glo}^{P-\ord}(\overline{\rho}_{\widetilde{\wp}})$. Finally let $S(\overline{\rho}_i)$ be the set of Serre weights attached to $\overline{\rho}_i$, that is the set of irreducible summands in $\soc(\overline{\pi}_i\vert_{\GL_{n_i}(\Z_p)})$, and let $S^{P-\ord}$ be the set of (isomorphism classes of) irreducible $L_P(\Z_p)$-representations $\sigma\cong \otimes_{i=1}^k\ (\sigma_i\otimes \overline{\varepsilon}^{s_i}\circ \dett)$ with $\sigma_i\in S(\overline{\rho}_i)$.

\begin{proposition}\label{prop: lg2-wt}
We have $S_{\glo}^{P-\ord}(\overline{\rho}_{\widetilde{\wp}})\subseteq S^{P-\ord}(\overline{\rho}_{\widetilde{\wp}})$, hence $S_{\glo}(\overline{\rho}_i)\subseteq S(\overline{\rho}_i)$ for any $i\in \{1,\cdots,k\}$.
\end{proposition}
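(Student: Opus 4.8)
The plan is to realize $\Ord_P(S(U^{\wp},\bW^{\wp}/\varpi_E)_{\overline{\rho}})[\fm_{\overline{\rho}}]$ inside finitely many copies of the ``local'' mod $p$ representation $\widehat{\bigotimes}_{i=1,\cdots,k}(\overline{\pi}_i\otimes\overline{\varepsilon}^{s_i}\circ\dett)$, and then read off the socle. First I would reduce the statement. Since an irreducible smooth $k_E$-representation of $L_P(\Z_p)=\prod_i\GL_{n_i}(\Z_p)$ is uniquely of the form $\bigotimes_i\sigma_i$, and since the claimed inclusion $S_{\glo}(\overline{\rho}_i)\subseteq S(\overline{\rho}_i)$ is an immediate formal consequence of $S_{\glo}^{P-\ord}(\overline{\rho}_{\widetilde{\wp}})\subseteq S^{P-\ord}(\overline{\rho}_{\widetilde{\wp}})$ (apply the definition of $S_{\glo}(\overline{\rho}_i)$ to the witnessing tensor product), it suffices to show that every irreducible $k_E$-representation $\sigma$ of $L_P(\Z_p)$ occurring in $\soc_{L_P(\Z_p)}\Ord_P(S(U^{\wp},\bW^{\wp}/\varpi_E)_{\overline{\rho}})[\fm_{\overline{\rho}}]$ is of the form $\bigotimes_i(\sigma_i\otimes\overline{\varepsilon}^{s_i}\circ\dett)$ with $\sigma_i\in S(\overline{\rho}_i)$. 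By the isomorphism (\ref{equ: isomodp}) this space is $(\Ord_P(\widehat{S}(U^{\wp},\bW^{\wp})_{\overline{\rho}})/\varpi_E)[\fm_{\overline{\rho}}]$.

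The heart of the argument is an $L_P(\Q_p)$-equivariant embedding
\[
\big(\Ord_P(\widehat{S}(U^{\wp},\bW^{\wp})_{\overline{\rho}})/\varpi_E\big)[\fm_{\overline{\rho}}]\ \hooklongrightarrow\ \Pi_{\overline{\rho}}^{\oplus m},\qquad \Pi_{\overline{\rho}}:=\widehat{\bigotimes}_{i=1,\cdots,k}\big(\overline{\pi}_i\otimes\overline{\varepsilon}^{s_i}\circ\dett\big),
\]
where $m:=\dim_{k_E}\big(M_P(U^{\wp})\otimes_{\widetilde{\bT}(U^{\wp})_{\overline{\rho}}^{P-\ord}}k_E\big)<\infty$. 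To obtain it I would use the evaluation pairing defining $X_P(U^{\wp})$ to produce the natural $\widetilde{\bT}(U^{\wp})_{\overline{\rho}}^{P-\ord}$-linear, $L_P(\Q_p)$-equivariant biduality morphism
$$\Ord_P(\widehat{S}(U^{\wp},\bW^{\wp})_{\overline{\rho}})\longrightarrow \Hom^{\cts}_{\widetilde{\bT}(U^{\wp})_{\overline{\rho}}^{P-\ord}[L_P(\Q_p)]}\big(M_P(U^{\wp}),\pi^{\otimes}_P(U^{\wp})\big),$$
and then check it is injective; this follows from the formalism of \cite[App.~C]{Em4} because $X_P(U^{\wp})$ is cofinitely generated over the noetherian local ring $\widetilde{\bT}(U^{\wp})_{\overline{\rho}}^{P-\ord}$ (cf. (\ref{equ: Pord-RT2})) and faithful (Theorem \ref{thm: lg1-pt}(1)). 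Reducing modulo $\varpi_E$ (using that $M_P(U^{\wp})$ is $\co_E$-flat and $\pi^{\otimes}_P(U^{\wp})$ is $\varpi_E$-adically separated) and taking $\fm_{\overline{\rho}}$-torsion, a continuous $\widetilde{\bT}$-linear $\phi$ killed by $\fm_{\overline{\rho}}$ factors through the finite-dimensional $k_E$-space $M_P(U^{\wp})\otimes k_E$ and has image in $(\pi^{\otimes}_P(U^{\wp})/\varpi_E)[\fm_{\overline{\rho}}]$; by the universal deformation property of $\pi^{\univ}_i$ (\cite[\S~3.3]{Em4}) and the definitions (\ref{equ: lg1-pii})--(\ref{equ: lg1-piup}) one has $(\pi^{\otimes}_P(U^{\wp})/\varpi_E)[\fm_{\overline{\rho}}]\cong\Pi_{\overline{\rho}}$, which yields the displayed embedding.

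Finally I would compute $\soc_{L_P(\Z_p)}\Pi_{\overline{\rho}}$. By definition of the set $S(\overline{\rho}_i)$ of Serre weights one has $\soc_{\GL_{n_i}(\Z_p)}\overline{\pi}_i=\bigoplus_{\sigma_i\in S(\overline{\rho}_i)}\sigma_i$ (a single one-dimensional weight when $n_i=1$), and the $L_P(\Z_p)$-socle of a completed tensor product over the profinite group $\prod_i\GL_{n_i}(\Z_p)$ of smooth admissible $k_E$-representations is the tensor product of the socles, so $\soc_{L_P(\Z_p)}\Pi_{\overline{\rho}}=\bigoplus_{(\sigma_i)\in\prod_i S(\overline{\rho}_i)}\bigotimes_i(\sigma_i\otimes\overline{\varepsilon}^{s_i}\circ\dett)=\bigoplus_{\sigma\in S^{P-\ord}(\overline{\rho}_{\widetilde{\wp}})}\sigma$. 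Since $(\Ord_P(\widehat{S}(U^{\wp},\bW^{\wp})_{\overline{\rho}})/\varpi_E)[\fm_{\overline{\rho}}]$ is an $L_P(\Q_p)$-subrepresentation of $\Pi_{\overline{\rho}}^{\oplus m}$, its $L_P(\Z_p)$-socle only involves weights in $S^{P-\ord}(\overline{\rho}_{\widetilde{\wp}})$, which is exactly the assertion $S_{\glo}^{P-\ord}(\overline{\rho}_{\widetilde{\wp}})\subseteq S^{P-\ord}(\overline{\rho}_{\widetilde{\wp}})$; the statement for the individual $S_{\glo}(\overline{\rho}_i)$ then follows as explained above. The main obstacle is the injectivity of the biduality morphism together with the correct bookkeeping of the $\bmod\,\varpi_E$ and $\fm_{\overline{\rho}}$-torsion operations on $M_P(U^{\wp})$ and $\pi^{\otimes}_P(U^{\wp})$ — this is precisely where the cofinite generation and faithfulness of $X_P(U^{\wp})$ and the structural results of \cite[\S~3.3,\ App.~C]{Em4} are used in an essential way.
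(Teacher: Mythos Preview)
The heart of your argument---the ``biduality morphism''
\[
\Ord_P\big(\widehat{S}(U^{\wp},\bW^{\wp})_{\overline{\rho}}\big)\longrightarrow \Hom^{\cts}_{\widetilde{\bT}(U^{\wp})_{\overline{\rho}}^{P-\ord}[L_P(\Q_p)]}\big(M_P(U^{\wp}),\pi^{\otimes}_P(U^{\wp})\big)
\]
---does not exist in any natural way. The evaluation pairing you invoke is $X_P(U^{\wp})\times\pi^{\otimes}_P(U^{\wp})\to\Ord_P(\widehat{S})$, which adjoints to maps $X_P\to\Hom(\pi^{\otimes}_P,\Ord_P)$ (tautological) or $\pi^{\otimes}_P\to\Hom(X_P,\Ord_P)$; neither produces a map out of $\Ord_P(\widehat{S})$ into anything built from $\pi^{\otimes}_P$. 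Unwinding your target, a map $M_P=\Hom_{\co_E}(X_P,\co_E)\to\pi^{\otimes}_P$ corresponds (under reflexivity heuristics) to an element of $X_P\widehat{\otimes}\pi^{\otimes}_P$, so your morphism would be a section of the evaluation map---precisely the direction opposite to what is available. Faithfulness of $X_P(U^{\wp})$ (Theorem~\ref{thm: lg1-pt}(1)) is a statement about the $\widetilde{\bT}$-module structure, not about the $L_P(\Q_p)$-representation, and nothing in \cite[App.~C]{Em4} manufactures such a section. In fact an embedding $(\Ord_P/\varpi_E)[\fm_{\overline{\rho}}]\hookrightarrow\Pi_{\overline{\rho}}^{\oplus m}$ is essentially the content of Theorem~\ref{thm: lg2-lg}, whose proof \emph{uses} Proposition~\ref{prop: lg2-wt} (in step (a)) together with an extra Ihara-type hypothesis; so your route is circular.

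The paper's proof goes the other way: it lifts the mod~$p$ weight $\sigma$ to an algebraic $L_P(\Z_p)$-representation $\Theta$ over $\co_E$ of dominant highest weight, uses the direct-factor property of $\Ord_P(\widehat{S}(U^{\wp},\bW^{\wp})_{\overline{\rho}})|_{L_P(\Z_p)}$ inside $\cC(L_P(\Z_p),\co_E)^{\oplus r}$ (Lemma~\ref{lem: Pord-comp}(2)) to see $\Hom_{L_P(\Z_p)}(\Theta,\Ord_P(\widehat{S}))\neq 0$, and then---exactly as in the density argument of Proposition~\ref{thm: lg1-bp}(2)---shows this lands in a finite sum over benign points. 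At a benign point~$x$ one knows Proposition~\ref{prop: lg1-oralgJ} and the identification (\ref{equ: lg1-algbis}), so $\Theta_i$ occurs in $\widehat{\pi}(\rho_{x_i})^{\lalg}\otimes\varepsilon^{s_i}\circ\dett$ restricted to $\GL_{n_i}(\Z_p)$, whence $\sigma_i=\overline{\Theta_i}\otimes\overline{\varepsilon}^{-s_i}\circ\dett\in S(\overline{\rho}_i)$. The logic is characteristic-$0$ lifting and classical local--global compatibility at benign points, not a mod~$p$ structural embedding.
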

\begin{proof}
The proposition follows by similar arguments as in the proof of \cite[Thm. 5.7.7(1)]{Em4}. For $\sigma=\otimes_{i=1}^k\ (\sigma_i\otimes \overline{\varepsilon}^{s_i}\circ \dett)\in S_{\glo}^{P-\ord}(\overline{\rho}_{\widetilde{\wp}})$, we lift $\sigma$ to an algebraic representation $\Theta\cong \otimes_{i=1,\cdots, k} \Theta_i$ of $L_P(\Z_p)$ over $\co_E$ of (dominant) weight $\lambda$ such that $\lambda_{s_i}\geq\lambda_{s_i+1}$ and $0\leq \lambda_{s_i+n_i}-\lambda_{s_i+1}\leq p-1$ for $i=1,\cdots,k$. Since $\Ord_P(\widehat{S}(U^{\wp}, \bW^{\wp})_{\overline{\rho}})$ is isomorphic to a direct factor of $\cC(L_P(\Z_p),\co_E)^{\oplus r}$ (cf. Lemma \ref{lem: Pord-comp}(2)), we have an isomorphism (e.g. by \cite[Lem. 2.14]{Pas15}):
\begin{equation*}
  \Hom_{L_P(\Z_p)}\big(\Theta, \Ord_P(\widehat S(U^{\wp}, \bW^{\wp})_{\overline{\rho}})\big)/\varpi_E
  \xlongrightarrow{\sim}
  \Hom_{L_P(\Z_p)}\big(\sigma, \Ord_P(S(U^{\wp}, \bW^{\wp}/\varpi_E)_{\overline{\rho}})\big).
\end{equation*}
We deduce, using that $\lambda$ is dominant:
\begin{equation}\small\label{projmodp}
0\ne \Hom_{L_P(\Z_p)}\big(\Theta, \Ord_P(\widehat S(U^{\wp}, \bW^{\wp})_{\overline{\rho}}\big)
\cong \Hom_{L_P(\Z_p)}\big(\Theta, \Ord_P(\widehat S(U^{\wp}, \bW^{\wp})_{\overline{\rho}})^{L_P(\Z_p)-\alg}_{+}\big).
\end{equation}
By (\ref{equ: lalgaut2}) and the same argument as in the proof of Proposition \ref{thm: lg1-bp}(2), it follows that there exists a nonempty finite set $C$ of benign points such that the $\co_E$-module (\ref{projmodp}) is isomorphic to:
\begin{equation*}
\bigoplus_{x\in C} \Hom_{L_P(\Z_p)}\big(\Theta, \Ord_P(\widehat S(U^{\wp}, \bW^{\wp})_{\overline{\rho}})[\fp_x]\big)
\end{equation*}
with each factor in the direct sum being nonzero. Let $x\in C$ and consider (recall $\Theta$ is an $\co_E$-lattice in $L_P(\lambda)$ stable by $L_P(\Z_p)$):
\begin{multline}
\pi_x^{\infty}:=\big(\Ord_P\big(\widehat S(U^{\wp}, W^{\wp})_{\overline{\rho}}[\fm_x]\big) \otimes_{\co_E} L_P(\lambda)^{\vee}\big)^{\sm}\\ \supseteq\Big(\Ord_P\big(\widehat S(U^{\wp}, \bW^{\wp})_{\overline{\rho}}[\fp_x]^{\lalg}\big) \otimes_{\co_E} \Theta^{\vee}\big)^{L_P(\Z_p)}.
\end{multline}
By assumption we have $(\pi_x^{\infty})^{L_P(\Z_p)}\neq 0$, so that (picking up $0\neq v\in (\pi_x^{\infty})^{L_P(\Z_p)}$) we can define smooth irreducible $\GL_{n_i}(\Q_p)$-representations $\pi_i^{\infty}$ as in the proofs of Proposition \ref{prop: lg1-bpp} and Corollary \ref{rajoutcristalline}(1). In particular we have $\otimes_{i=1, \cdots, k} \pi_i^{\infty}\hookrightarrow \pi_x^{\infty}$ and $(\pi_i^{\infty})^{\GL_{n_i}(\Z_p)}\neq 0$, and from (\ref{equ: lg1-algbis}) we also have $\pi_i^{\infty}\otimes L_i(\ul{\lambda}_i)\cong \widehat{\pi}(\rho_{x_i})^{\lalg} \otimes \varepsilon^{s_i} \circ \dett$ where $\ul{\lambda}_i=(\lambda_{s_i+1}, \lambda_{s_i+n_i})$. But the latter isomorphism together with $(\pi_i^{\infty})^{\GL_{n_i}(\Z_p)}\neq 0$ easily imply, using that $\Theta_i$ is up to scaling the only $\co_E$-lattice in $L_i(\ul{\lambda}_i)$ which is stable by $\GL_{n_i}(\Z_p)$:
\begin{equation*}
\sigma_i=\overline{\Theta_i} \otimes \overline{\varepsilon}^{-s_i}\circ \dett \in S(\overline{\rho}_i).
\end{equation*}
The proposition follows.
\end{proof}

\begin{theorem}\label{thm: lg2-lg}
If there exists $i$ such that $n_i=2$ and $\overline{\rho}_i$ is peu ramifi\'e (up to twist), assume that any subrepresentation $\pi=\otimes_{i=1,\cdots, k} \pi_i$ of $\Ord_P(S(U^{\wp}, \bW^{\wp}/\varpi_E)_{\overline{\rho}})$ is such that $\pi_i$ is infinite dimensional. Then the evaluation map:
\begin{equation}
 \label{equ: lg2-ev}
 \ev: X_P(U^{\wp}) \widehat{\otimes}_{\widetilde{\bT}(U^{\wp})_{\overline{\rho}}^{P-\ord}} \pi^{\otimes}_P(U^{\wp})\lra \Ord_P(\widehat{S}(U^{\wp}, \bW^{\wp})_{\overline{\rho}})
\end{equation}
is an isomorphism where $\widehat{\otimes}$ denotes the $\varpi_E$-adic completion of the usual tensor product.
\end{theorem}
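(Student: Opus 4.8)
The plan is to adapt Emerton's proof of local--global compatibility for $\GL_2(\Q_p)$ (\cite{Em4}) to the $P$-ordinary setting. Both sides of (\ref{equ: lg2-ev}) are $\varpi_E$-adically complete and separated, $\co_E$-torsion-free, and carry commuting actions of $L_P(\Q_p)$ and $\widetilde{\bT}(U^{\wp})_{\overline{\rho}}^{P-\ord}$; the target is an orthonormalizable admissible $L_P(\Q_p)$-representation over $\widetilde{\bT}(U^{\wp})_{\overline{\rho}}^{P-\ord}$ by Lemma \ref{lem: Pord-comp}(2), and by the cofinite generation of $X_P(U^{\wp})$ together with (\ref{equ: lg1-piup}) so is the source. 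Since the source is $\varpi_E$-adically complete and the target $\varpi_E$-torsion-free and complete, $\ev$ is surjective (resp. injective, resp. bijective) as soon as $\ev\otimes_{\co_E}k_E$ is. Using the isomorphism (\ref{equ: isomodp}) of Lemma \ref{lem: Pord-comp}(1) on the target, (\ref{piPmodpx})--(\ref{equ: lg1-xupm}) on the source, and passing to $\fm_{\overline{\rho}}$-torsion, I would thus reduce to the mod-$p$ statement: the smooth admissible $k_E[L_P(\Q_p)]$-representation $\Ord_P(S(U^{\wp},\bW^{\wp}/\varpi_E)_{\overline{\rho}})[\fm_{\overline{\rho}}]$ is $\bigl(\bigotimes_i(\overline{\pi}_i\otimes\overline{\varepsilon}^{s_i}\circ\dett)\bigr)$-isotypic, the evaluation pairing identifying it with $\bigl(\bigotimes_i(\overline{\pi}_i\otimes\overline{\varepsilon}^{s_i}\circ\dett)\bigr)\otimes_{k_E}(X_P(U^{\wp})/\varpi_E)[\fm_{\overline{\rho}}]$.

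To prove this mod-$p$ factorization I would argue as follows. By Theorem \ref{thm: Ord-dirc} and Corollary \ref{coro: ordinj} (applied via Lemma \ref{lem: Pord-comp}(2)), $\Ord_P(S(U^{\wp},\bW^{\wp}/\varpi_E)_{\overline{\rho}})$ is an injective object in the category of smooth representations of $L_P(\Z_p)$ over $k_E$, hence is controlled by its $L_P(\Z_p)$-socle; by Proposition \ref{prop: lg2-wt} every Serre weight occurring there lies in $\bigotimes_i S(\overline{\rho}_i)$. One then invokes the mod-$p$ Langlands correspondence for $\GL_2(\Q_p)$ (\cite{Colm10a}) together with the block decomposition of the category of smooth mod-$p$ $\GL_2(\Q_p)$-representations (\cite{Pas13}), factor by factor, to force the $\GL_{n_i}(\Q_p)$-action on $\Ord_P(S(U^{\wp},\bW^{\wp}/\varpi_E)_{\overline{\rho}})[\fm_{\overline{\rho}}]$ to be $\overline{\pi}_i$-isotypic; the compatibility of $\ev$ with this factorization, together with $\pi^{\otimes}_P(U^{\wp})$ being built from the universal deformations $\pi_i^{\univ}$ and with (\ref{equ: lg1-xupm}), yields the identification of the multiplicity space with $(X_P(U^{\wp})/\varpi_E)[\fm_{\overline{\rho}}]$. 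The delicate case is a $\GL_2$-factor with $\overline{\rho}_i$ peu ramifi\'e (up to twist): then $\overline{\pi}_i$ lies in a block of the category that also contains $1$-dimensional representations and twists of the Steinberg, and the $\overline{\pi}_i$-isotypy may a priori fail. This is exactly where the hypothesis enters: assuming that every subrepresentation $\otimes_i\pi_i$ of $\Ord_P(S(U^{\wp},\bW^{\wp}/\varpi_E)_{\overline{\rho}})$ has all factors $\pi_i$ infinite-dimensional rules out the character-type constituents and restores the factorization.

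Granting the mod-$p$ factorization, the theorem follows formally: $\ev\otimes_{\co_E}k_E$ is then an isomorphism, and by the completeness/torsion-freeness remarks of the first paragraph so is $\ev$. The benign-point results of \S\S~\ref{sec: lg1-bpts}--\ref{sec: lg1-lg1} --- Proposition \ref{prop: lg1-oralgJ}, Lemma \ref{lem: lg1-mult1} giving $m_P(x)=m(x)$, Corollary \ref{coro: lg1-bpBana}, and through them the faithfulness of $X_P(U^{\wp})$ in Theorem \ref{thm: lg1-pt} and the Serre-weight bound of Proposition \ref{prop: lg2-wt} --- are what feed the mod-$p$ argument and also pin down the $E$-linear fibres of (\ref{equ: lg2-ev}). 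The main obstacle is the mod-$p$ factorization of the second paragraph: it requires importing the full block-theoretic structure of smooth mod-$p$ $\GL_2(\Q_p)$-representations, matching it with the injectivity/projectivity input for $\Ord_P$ of completed cohomology, and keeping track of how $\Ord_P$ interacts with parabolic induction and socles over the product Levi $L_P$; the peu ramifi\'e case is genuinely subtle and is accessible here only under the stated extra hypothesis, whereas the remaining steps are formal consequences of the results already established.
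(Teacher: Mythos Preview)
Your overall framework is right --- this is Emerton's method transported to the $P$-ordinary part --- but the proof strategy diverges from the paper's at the key point, and the divergence hides a real gap.

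The paper does \emph{not} prove $\ev\otimes_{\co_E}k_E$ is bijective and then lift. Instead it separates the two halves: (a) by \cite[Lem.~C.46]{Em4}, $\ev$ is injective with \emph{saturated image} as soon as the induced map (\ref{equ: lg2-evmodp}) on $\fm_{\overline{\rho}}$-torsion is injective, and this mod-$p$ injectivity is exactly what Proposition~\ref{prop: lg2-wt} together with \cite[Lem.~6.4.15, Thm.~6.4.16]{Em4} (plus the peu-ramifi\'e hypothesis) gives; (b) once the image is saturated, surjectivity is checked after inverting $p$, and there one uses the density of $\Ord_P(\widehat S(U^{\wp},W^{\wp})_{\overline{\rho}})^{L_P(\Z_p)-\alg}_+$ (Corollary~\ref{coro: lg1-dens}) together with the benign-point description of these vectors (Proposition~\ref{prop: lg1-oralgJ}, (\ref{equ: lg1-algbis}), Corollary~\ref{coro: lg1-bpBana}) to show they all lie in $\Ima(\ev\otimes E)$. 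So the benign points are used \emph{directly} for characteristic-$0$ surjectivity, not merely to ``feed the mod-$p$ argument''.

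Your second paragraph is where the gap lies. You want to deduce that $\Ord_P(S(U^{\wp},\bW^{\wp}/\varpi_E)_{\overline{\rho}})[\fm_{\overline{\rho}}]$ is $\bigl(\bigotimes_i(\overline{\pi}_i\otimes\overline{\varepsilon}^{s_i}\circ\dett)\bigr)$-isotypic from Serre-weight control and block decomposition. But block decomposition only pins down the irreducible constituents (or rather the block), not the extension structure: even when $\overline{\pi}_i$ is the unique irreducible in its block (e.g.\ the supersingular case), one has $\dim_{k_E}\Ext^1_{\GL_2(\Q_p)}(\overline{\pi}_i,\overline{\pi}_i)>0$, so an object of that block need not be semisimple. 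Nothing in the injectivity of $\Ord_P$ over $L_P(\Z_p)$, the Serre-weight bound, or the block decomposition forces the $L_P(\Q_p)$-representation to be a direct sum of copies of $\bigotimes_i\overline{\pi}_i$. In fact the mod-$p$ isotypy you are trying to prove is \emph{equivalent} to the conclusion of the theorem (the paper records it as a Corollary immediately afterwards), so you cannot use it as an input without an independent argument --- and the independent argument the paper supplies is the characteristic-$0$ density step, which your sketch bypasses.
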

\begin{proof}
(a) By \cite[Lem. C.46]{Em4}, the map $\ev$ is injective with saturated image (see \cite[Def.~C.6]{Em4}) if and only if the induced morphism:
\begin{equation}\label{equ: lg2-evmodp}
(X_P(U^{\wp})/\varpi_E)[\fm_{\overline{\rho}}]\otimes_{k_E} \Big(\bigotimes_{i=1,\cdots, k} (\overline{\pi}_i \otimes \overline{\varepsilon}^{s_i}\circ \dett) \Big) \lra \Ord_P(S(U^{\wp}, \bW^{\wp}/\varpi_E)_{\overline{\rho}})[\fm_{\overline{\rho}}]
\end{equation}
is injective. By (\ref{equ: lg1-xupm}), it is enough to prove that the evaluation map:
\begin{multline}
\Hom_{k_E[L_P(\Q_p)]}\Big(\bigotimes_{i=1,\cdots, k}(\overline{\pi}_i\otimes_{k_E} \overline{\varepsilon}^{s_i}\circ \dett), \big(\Ord_P(\widehat{S}(U^{\wp}, \bW^{\wp})_{\overline{\rho}})/\varpi_E\big)[\fm_{\overline{\rho}}]\Big) \otimes_{k_E} \\ \Big(\bigotimes_{i=1,\cdots, k} (\overline{\pi}_i \otimes_{k_E} \overline{\varepsilon}^{s_i}\circ \dett) \Big) \lra \Ord_P(S(U^{\wp}, \bW^{\wp}/\varpi_E)_{\overline{\rho}})[\fm_{\overline{\rho}}]
\end{multline}
is injective.
By \cite[Lem. 6.4.15]{Em4}, it is enough to show that any nonzero homomorphism in:
\begin{equation*}
\Hom_{k_E[L_P(\Q_p)]}\Big(\bigotimes_{i=1,\cdots, k}\big(\overline{\pi}_i\otimes_{k_E} \overline{\varepsilon}^{s_i}\circ \dett\big), \big(\Ord_P(\widehat{S}(U^{\wp}, \bW^{\wp})_{\overline{\rho}})/\varpi_E\big)[\fm_{\overline{\rho}}]\Big)
\end{equation*}
is injective. But this follows from the same argument as in the proof of \cite[Thm. 6.4.16]{Em4} (using Proposition \ref{prop: lg2-wt} and the assumption to deal with those $\overline{\pi}_i$ which are reducible).\\
\noindent
(b) We show that the map $\ev$ is surjective. Since its image is saturated, it is enough to prove the surjection after inverting $p$. By \cite[Lem. 3.1.16]{Em4} and the proof of \cite[Prop. 3.1.3]{Em4}, $\Ima(\ev\otimes E)$ is a closed $L_P(\Q_p)$-subrepresentation of $\Ord_P(\widehat{S}(U^{\wp}, W^{\wp})_{\overline{\rho}})$ which is preserved by $\widetilde{\bT}(U^{\wp})_{\overline{\rho}}^{P-\ord}$. By Lemma \ref{lem: Pord-comp}, Corollary \ref{coro: lg1-dens} and the same argument as in the proof of Proposition \ref{thm: lg1-bp}(2), it is enough to prove that for any benign point $x$, we have:
\begin{equation}\label{equ: lg2-bp}
\Ord_P\big(\widehat{S}(U^{\wp}, W^{\wp})_{\overline{\rho}}[\fm_x]\big)^{L_P(\Z_p)-\alg}_+\subset\Ima(\ev\otimes E).
\end{equation}
Using the adjunction formula of Proposition \ref{prop: ord-adj}, we can deduce (see the proof of Proposition \ref{thm: lg1-bp}(2)):
\begin{equation}\label{equ: lg2-1}
\Ord_P\big(\widehat{S}(U^{\wp}, W^{\wp})_{\overline{\rho}}[\fm_x]\big)^{L_P(\Z_p)-\alg}_+ \subseteq \Ord_P\big(\widehat{S}(U^{\wp}, W^{\wp})_{\overline{\rho}}[\fm_x]^{\lalg}\big).
\end{equation}
Then (\ref{equ: lg2-bp}) easily follows from Proposition \ref{prop: lg1-oralgJ}, (\ref{equ: lg1-algbis}), Corollary \ref{coro: lg1-bpBana} and (\ref{equ: lg1-Xx}).
\end{proof}

\begin{remark}
{\rm The assumption in Theorem \ref{thm: lg2-lg} when $\overline{\rho}_i$ is peu ramifi\'e is in the style of ``Ihara's lemma'' (see e.g. the proof of \cite[Thm. 5.7.7(3)]{Em4}) and one can conjecture that it is always satisfied.}
\end{remark}

\begin{corollary}
Keep the assumption of Theorem \ref{thm: lg2-lg}. There exists $s\geq 1$ such that we have an isomorphism of smooth admissible $L_P(\Q_p)$-representations over $k_E$:
\begin{equation*}
\Big(\bigotimes_{i=1,\cdots, k} \big(\overline{\pi}_i \otimes_{k_E} \overline{\varepsilon}^{s_i}\circ \dett\big) \Big)^{\oplus s} \xlongrightarrow{\sim} \Ord_P\big({S}(U^{\wp}, \bW^{\wp}/\varpi_E)_{\overline{\rho}}[\fm_{\overline{\rho}}]\big).
\end{equation*}
Consequently, $S^{P-\ord}_{\glo}(\overline{\rho}_{\widetilde{\wp}})=S^{P-\ord}(\overline{\rho}_{\widetilde{\wp}})$.
\end{corollary}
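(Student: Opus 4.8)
The plan is to deduce the statement directly from the isomorphism $\ev$ of Theorem~\ref{thm: lg2-lg} by reducing modulo $\varpi_E$ and passing to $\fm_{\overline{\rho}}$-torsion. First I would reduce (\ref{equ: lg2-ev}) modulo $\varpi_E$: since both the source $X_P(U^{\wp}) \widehat{\otimes}_{\widetilde{\bT}(U^{\wp})_{\overline{\rho}}^{P-\ord}} \pi^{\otimes}_P(U^{\wp})$ and the target $\Ord_P(\widehat{S}(U^{\wp}, \bW^{\wp})_{\overline{\rho}})$ are $\co_E$-torsion free and $\varpi_E$-adically separated and complete, the isomorphism $\ev$ of Theorem~\ref{thm: lg2-lg} stays an isomorphism after applying $-\otimes_{\co_E} k_E$. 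Then I would apply the left exact functor $(-)[\fm_{\overline{\rho}}]$ to both sides. On the target, (\ref{equ: isomodp}) identifies $(\Ord_P(\widehat{S}(U^{\wp}, \bW^{\wp})_{\overline{\rho}})/\varpi_E)[\fm_{\overline{\rho}}]$ with $\Ord_P(S(U^{\wp}, \bW^{\wp}/\varpi_E)_{\overline{\rho}})[\fm_{\overline{\rho}}]$, all of it respecting the smooth admissible $L_P(\Q_p)$-action since every map in sight is $L_P(\Q_p)$-equivariant.

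Next, on the source, using that $\pi^{\otimes}_P(U^{\wp})$ is orthonormalizable admissible over $\widetilde{\bT}(U^{\wp})_{\overline{\rho}}^{P-\ord}$ (hence flat, indeed pro-free, over that local ring) and that $\widetilde{\bT}(U^{\wp})_{\overline{\rho}}^{P-\ord}/\fm_{\overline{\rho}} = k_E$ is finitely presented, I would identify $\big((X_P(U^{\wp}) \widehat{\otimes}_{\widetilde{\bT}} \pi^{\otimes}_P(U^{\wp}))/\varpi_E\big)[\fm_{\overline{\rho}}]$ with $(X_P(U^{\wp})/\varpi_E)[\fm_{\overline{\rho}}] \otimes_{k_E} \big(\pi^{\otimes}_P(U^{\wp})\otimes_{\widetilde{\bT}}\widetilde{\bT}/\fm_{\overline{\rho}}\big)$, which by the identity recalled just after (\ref{equ: lg1-piup}) equals $(X_P(U^{\wp})/\varpi_E)[\fm_{\overline{\rho}}] \otimes_{k_E} \bigotimes_{i=1,\cdots,k}(\overline{\pi}_i \otimes_{k_E} \overline{\varepsilon}^{s_i}\circ\dett)$. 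This is precisely the map (\ref{equ: lg2-evmodp}), shown injective in step (a) of the proof of Theorem~\ref{thm: lg2-lg}; the extra input here is that, $\ev$ being an isomorphism, this map is bijective. Setting $V := (X_P(U^{\wp})/\varpi_E)[\fm_{\overline{\rho}}]$, which is finite dimensional over $k_E$ by (\ref{equ: lg1-xupm}), and $s := \dim_{k_E} V$, the factor $V$ contributes the multiplicity $s$ and we obtain the displayed isomorphism of the corollary. That $s \geq 1$ follows from the fact that $X_P(U^{\wp})$ is a faithful, hence nonzero, module over the local ring $\widetilde{\bT}(U^{\wp})_{\overline{\rho}}^{P-\ord}$ (Theorem~\ref{thm: lg1-pt}(1)): its $\co_E$-dual $M_P(U^{\wp})$ is a nonzero finitely generated module over this ring, so $M_P(U^{\wp})\otimes_{\widetilde{\bT}}k_E \neq 0$ by Nakayama, and this quotient is $k_E$-dual to $V$ by \cite[Lem.~C.14]{Em4}; alternatively, for a benign point $x$ one has $\dim_{k_E} V \geq m_P(x) = m(x) \geq 1$ by Lemma~\ref{lem: lg1-mult1}.

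For the final (``consequently'') assertion, the isomorphism just proved shows that $\Ord_P(S(U^{\wp}, \bW^{\wp}/\varpi_E)_{\overline{\rho}})[\fm_{\overline{\rho}}]$ has the same irreducible $L_P(\Z_p)$-subrepresentations as $\bigotimes_{i=1,\cdots,k}(\overline{\pi}_i \otimes_{k_E} \overline{\varepsilon}^{s_i}\circ\dett)$, passing to a direct sum of $s \geq 1$ copies not affecting the set of socle constituents. Since every irreducible smooth $k_E$-representation of $L_P(\Z_p) = \prod_i \GL_{n_i}(\Z_p)$ is an external tensor product of irreducible representations of the factors, one has $\soc_{L_P(\Z_p)}\big(\bigotimes_i(\overline{\pi}_i \otimes \overline{\varepsilon}^{s_i}\circ\dett)\big) \cong \bigotimes_i\big(\soc_{\GL_{n_i}(\Z_p)}(\overline{\pi}_i) \otimes \overline{\varepsilon}^{s_i}\circ\dett\big)$, so an irreducible $\sigma = \otimes_i(\sigma_i\otimes\overline{\varepsilon}^{s_i}\circ\dett)$ lies in $S^{P-\ord}_{\glo}(\overline{\rho}_{\widetilde{\wp}})$ if and only if each $\sigma_i \in S(\overline{\rho}_i)$, i.e. if and only if $\sigma \in S^{P-\ord}(\overline{\rho}_{\widetilde{\wp}})$. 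Combined with the inclusion $S^{P-\ord}_{\glo}(\overline{\rho}_{\widetilde{\wp}}) \subseteq S^{P-\ord}(\overline{\rho}_{\widetilde{\wp}})$ of Proposition~\ref{prop: lg2-wt}, this yields the desired equality.

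The main obstacle I anticipate is the identification in the second paragraph of $\big((X_P(U^{\wp}) \widehat{\otimes}_{\widetilde{\bT}} \pi^{\otimes}_P(U^{\wp}))/\varpi_E\big)[\fm_{\overline{\rho}}]$ with the ordinary tensor product $(X_P(U^{\wp})/\varpi_E)[\fm_{\overline{\rho}}] \otimes_{k_E} \bigotimes_i(\overline{\pi}_i \otimes \overline{\varepsilon}^{s_i}\circ\dett)$: one must commute $\fm_{\overline{\rho}}$-torsion past the completed tensor product, controlling the discrepancy between $\widehat{\otimes}$ and $\otimes$ using the pro-freeness of $\pi^{\otimes}_P(U^{\wp})$ and the finite presentation of $k_E$ over $\widetilde{\bT}(U^{\wp})_{\overline{\rho}}^{P-\ord}$. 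This is, however, exactly the book-keeping already performed (in the injectivity direction, via \cite[Lem.~C.46]{Em4}) in step (a) of the proof of Theorem~\ref{thm: lg2-lg}, so no genuinely new ingredient is required.
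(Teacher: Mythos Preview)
Your proof is correct and follows essentially the same route as the paper: reduce the isomorphism $\ev$ of Theorem~\ref{thm: lg2-lg} modulo $\varpi_E$, take $\fm_{\overline{\rho}}$-torsion, and identify both sides via (\ref{equ: isomodp}) and the display after (\ref{equ: lg1-piup}) to conclude that (\ref{equ: lg2-evmodp}) is an isomorphism with $s=\dim_{k_E}(X_P(U^{\wp})/\varpi_E)[\fm_{\overline{\rho}}]$. Your additional justifications for $s\geq 1$ and the ``consequently'' assertion are correct elaborations of points the paper leaves implicit.
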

\begin{proof}
By Theorem \ref{thm: lg2-lg}, (\ref{equ: isomodp}) and (\ref{piPmodpx}), (\ref{equ: lg2-evmodp}) is actually an isomorphism. The corollary follows since $(X_P(U^{\wp})/\varpi_E)[\fm_{\overline{\rho}}]$ is a finite dimensional $k_E$-vector space.
\end{proof}

\begin{corollary}\label{coro: lg2-lg}
Keep the assumption of Theorem \ref{thm: lg2-lg}. Let $x\in (\Spf \widetilde{\bT}(U^{\wp})_{\overline{\rho}}^{P-\ord})^{\rig}$, then:
\begin{equation*}
\Ord_P\big(\widehat{S}(U^{\wp}, W^{\wp})_{\overline{\rho}}\big)[\fm_x]\cong \big(\widehat{\otimes}_{i=1, \cdots, r} (\widehat{\pi}(\rho_{x_i})\otimes_{k(x)} \varepsilon^{s_i}\circ \dett)\big)^{\oplus m_P(x)}.
\end{equation*}
\end{corollary}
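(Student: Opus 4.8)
The plan is to combine Theorem \ref{thm: lg2-lg} with the base change computation of $\pi^{\otimes}_P(U^{\wp})$ at the point $x$ and the freeness of $X_P(U^{\wp})[\fp_x]$. First I would apply the isomorphism (\ref{equ: lg2-ev}) of Theorem \ref{thm: lg2-lg} and tensor it over $\widetilde{\bT}(U^{\wp})_{\overline{\rho}}^{P-\ord}$ (in the $\varpi_E$-completed sense) with $\widetilde{\bT}(U^{\wp})_{\overline{\rho}}^{P-\ord}/\fp_x$, and then invert $p$. On the left hand side this produces $X_P(U^{\wp})[1/p] \widehat{\otimes}_{\widetilde{\bT}(U^{\wp})_{\overline{\rho}}^{P-\ord}[1/p]} (\pi^{\otimes}_P(U^{\wp})\otimes_{\widetilde{\bT}(U^{\wp})_{\overline{\rho}}^{P-\ord}}\widetilde{\bT}(U^{\wp})_{\overline{\rho}}^{P-\ord}/\fp_x)[1/p]$, while on the right it produces $\Ord_P(\widehat{S}(U^{\wp}, W^{\wp})_{\overline{\rho}})[\fm_x]$ (using that $\Ord_P$ commutes with $[\fp_x]$, as recalled at the start of \S~\ref{sec: lg1-bpts}, together with $k(x)=\widetilde{\bT}(U^{\wp})_{\overline{\rho}}^{P-\ord}[1/p]/\fm_x$).

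The next step is to identify the two factors on the left. By (\ref{piPmodpx}) we have $\pi^{\otimes}_P(U^{\wp}) \otimes_{\widetilde{\bT}(U^{\wp})^{P-\ord}_{\overline{\rho}}} \widetilde{\bT}(U^{\wp})^{P-\ord}_{\overline{\rho}}/\fp_x \cong \widehat{\bigotimes}_{i=1, \cdots, k} (\widehat{\pi}(\rho_{x_i})^0\otimes \varepsilon^{s_i}\circ \dett)$, so after inverting $p$ this is $\widehat{\bigotimes}_{i=1, \cdots, k} (\widehat{\pi}(\rho_{x_i})\otimes_{k(x)} \varepsilon^{s_i}\circ \dett)$. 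For the Hecke factor, I would invoke the fact recalled just before Lemma \ref{lem: lg1-mult1} that $X_P(U^{\wp})/\fp_x$, hence also $X_P(U^{\wp})[\fp_x]$ (which is dual to $M_P(U^{\wp})/\fp_x$ via (\ref{mu})), is a finitely generated free $\co_{k(x)}$-module of rank $m_P(x)$; therefore $X_P(U^{\wp})[\fp_x][1/p]\cong k(x)^{\oplus m_P(x)}$. Since $X_P(U^{\wp})$ is $\co_E$-torsion free (\cite[Prop. C.5]{Em4}) one checks $X_P(U^{\wp})[1/p]\otimes_{\widetilde{\bT}(U^{\wp})_{\overline{\rho}}^{P-\ord}[1/p]}k(x)$ agrees with $X_P(U^{\wp})[\fp_x][1/p]$ after a standard localisation argument, so the completed tensor product degenerates to a finite direct sum of $m_P(x)$ copies. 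Plugging this in gives exactly $\big(\widehat{\otimes}_{i=1, \cdots, k} (\widehat{\pi}(\rho_{x_i})\otimes_{k(x)} \varepsilon^{s_i}\circ \dett)\big)^{\oplus m_P(x)}$, which is the claimed formula.

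I expect the main obstacle to be the careful handling of the completed tensor product on the left hand side of (\ref{equ: lg2-ev}) after passing to the fibre at $x$: one must verify that $X_P(U^{\wp})\widehat{\otimes}_{\widetilde{\bT}(U^{\wp})_{\overline{\rho}}^{P-\ord}}\pi^{\otimes}_P(U^{\wp})$, when $\varpi_E$-adically completed and then reduced modulo $\fp_x$ and $p$-inverted, is genuinely isomorphic to the naive object $X_P(U^{\wp})[\fp_x][1/p]\otimes_{k(x)}(\pi^{\otimes}_P(U^{\wp})\otimes\widetilde{\bT}(U^{\wp})^{P-\ord}_{\overline{\rho}}/\fp_x)[1/p]$. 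This requires invoking the finiteness of $M_P(U^{\wp})$ as a $\widetilde{\bT}(U^{\wp})_{\overline{\rho}}^{P-\ord}$-module and the fact that $\widetilde{\bT}(U^{\wp})_{\overline{\rho}}^{P-\ord}$ is noetherian (from (\ref{equ: Pord-RT2})), so that the completed and uncompleted tensor products agree on finitely generated modules and commute with the (finitely presented) quotient by $\fp_x$; all of this is in the same spirit as \cite[Appendix C]{Em4} and should go through, but it is the point where the argument is most delicate rather than formal. The remaining verifications — that $m_P(x)$ is the correct multiplicity and that the factors are the $\widehat{\pi}(\rho_{x_i})$ — are immediate from the cited results. $\hfill\square$
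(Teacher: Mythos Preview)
Your overall strategy is the same as the paper's: start from the isomorphism (\ref{equ: lg2-ev}), pass to the fibre at $x$, identify the $\pi^{\otimes}_P$-factor via (\ref{piPmodpx}), and identify the $X_P$-factor as free of rank $m_P(x)$. However, there is a genuine confusion in how you pass to the fibre. You write that you ``tensor (\ref{equ: lg2-ev}) over $\widetilde{\bT}(U^{\wp})_{\overline{\rho}}^{P-\ord}$ with $\widetilde{\bT}(U^{\wp})_{\overline{\rho}}^{P-\ord}/\fp_x$'' and claim the right-hand side becomes $\Ord_P(\widehat{S}(U^{\wp}, W^{\wp})_{\overline{\rho}})[\fm_x]$. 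But tensoring with $A/\fp_x$ gives the \emph{quotient} $\Ord_P(\ldots)/\fp_x$, not the torsion $\Ord_P(\ldots)[\fp_x]$; the fact you invoke from the start of \S~\ref{sec: lg1-bpts} is about the torsion functor, not the quotient. Likewise, on the left-hand side you slide from $X_P(U^{\wp})[1/p]\otimes k(x)$ to $X_P(U^{\wp})[\fp_x][1/p]$ without justification; for a cofinitely generated module these need not agree.

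The correct operation is to apply $(-)[\fp_x]=\Hom_A(A/\fp_x,-)$ to both sides of (\ref{equ: lg2-ev}). On the right this yields $\Ord_P(\widehat{S}(U^{\wp}, \bW^{\wp})_{\overline{\rho}})[\fp_x]$ directly. On the left the issue is exactly the ``delicate point'' you identify, and the paper resolves it not via Appendix~C of \cite{Em4} but via \cite[Lem.~3.1.17]{Em4} (applied with $A=\widetilde{\bT}(U^{\wp})_{\overline{\rho}}^{P-\ord}$ and $M=A/\fp_x$), which gives precisely
\[
\big(X_P(U^{\wp})\widehat{\otimes}_{A}\pi^{\otimes}_P(U^{\wp})\big)[\fp_x]\;\cong\;X_P(U^{\wp})[\fp_x]\;\widehat{\otimes}_{A/\fp_x}\;\big(\pi^{\otimes}_P(U^{\wp})\otimes_A A/\fp_x\big).
\]
This is also how the paper computes the analogous fibre in the proof of Lemma~\ref{lem: lg2-tang2}(b). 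Once this is in place, (\ref{piPmodpx}) and the definition of $m_P(x)$ (the rank of $X_P(U^{\wp})[\fp_x]$, as recalled just before Lemma~\ref{lem: lg1-mult1}) finish the argument exactly as you indicate.
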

\begin{proof}
The corollary follows from Theorem \ref{thm: lg2-lg}, \cite[Lem. 3.1.17]{Em4} (applied with $A=\widetilde{\bT}(U^{\wp})_{\overline{\rho}}^{P-\ord}$ and $M=\widetilde{\bT}(U^{\wp})_{\overline{\rho}}^{P-\ord}/\fp_x$), (\ref{piPmodpx}) and the definition of $m_P(x)$.
\end{proof}

\subsection{$\cL$-invariants}\label{sec: lg2}

\noindent
We prove Conjecture \ref{THEconjecture} when $\rho_{\widetilde{\wp}}$ has consecutive Hodge-Tate weights assuming weak genericity conditions.

\subsubsection{Preliminaries}\label{fewprel}

\noindent
We start with easy preliminaries.\\

\noindent
Throughout \S~\ref{sec: lg2} we keep the notation and assumptions of \S~\ref{sec: Pord-3} and of all the subsections of \S~\ref{gl2ordinarylocalglobal}, in particular we assume Hypothesis \ref{hypoglobal} and that the open compact subgroup $U^{\wp}$ is such that $U^p$ is sufficiently small, $U_v$ is maximal for $v|p$, $v\neq \wp$, and $U_v$ is maximal hyperspecial at all inert places $v$ if $n>3$ (Hypothesis \ref{hypo: lg2-globhypo}). We assume moreover that $\overline{\rho}$ is such that $\overline{\rho}_{\widetilde{\wp}}$ is a successive extension of characters $\overline{\chi}_i$ for $i=1,\cdots, n$ with $\overline{\chi}_i\overline{\chi}_{i+1}^{-1}=\overline{\varepsilon}$ (so in particular all the $n_i$ are $1$ and $k=n$) and that $\overline{\rho}_{\widetilde{\wp}}$ is strictly $B$-ordinary (Definition \ref{def: ord-strict}). This implies $\overline \chi_i=\overline \varepsilon^{1-i}\overline \chi_1$ for $i=1,\cdots,n$ and $p>n$. We fix $\rho: \Gal_F\ra \GL_n(E)$ a continuous representation such that $\rho$ is unramified outside $S(U^p)$ and such that:
\begin{itemize}
\item $\widehat{S}(U^{\wp}, W^{\wp})_{\overline{\rho}}[\fm_{\rho}]^{\lalg}\neq 0$
\item $\rho_{\widetilde{\wp}}$ is semi-stable noncrystalline and is isomorphic to a successive extension of characters $\chi_i:\Gal_{\Q_p}\rightarrow E^\times$ such that $\chi_i\chi_{i+1}^{-1}=\varepsilon$.
\end{itemize}
\noindent
The first assumption implies that $\rho$ is absolutely irreducible (since $\overline{\rho}$ is), is automorphic (by (\ref{equ: lalgaut2})) and satisfies $\rho^{\vee}\circ c\cong \rho\otimes \varepsilon^{1-n}$, and then the second implies that the monodromy operator on $D_{\st}(\rho_{\widetilde{\wp}})$ satisfies $N^{n-1}\neq 0$ (use \cite{Car12} together with the fact that the automorphic representation associated to $\rho$ has a generic local component at $\wp$ by base change to $\GL_n$ and the irreducibility of $\rho$, see the proof of Proposition \ref{prop: lg1-bpp}(1)). In particular $(\chi_1,\cdots,\chi_n)$ is the unique parameter of the $(\varphi,\Gamma)$-module $D:=D_{\rig}(\rho_{\widetilde{\wp}})$ and it is moreover special (see Definition \ref{def: hL-spe} and use \cite[Thm.~A]{Berger2}) and such that $\chi_i=\varepsilon^{1-i}\chi_1$ for $i=1,\cdots,n$. We also easily deduce that $\rho_{\widetilde{\wp}}$ is strictly $P$-ordinary for any parabolic subgroup $P$ of $\GL_n$ containing $B$.\\

\noindent
Using \cite{Car12}, we see that there exists $m(\rho)$ such that:
\begin{equation}\label{equ: lg2-GL3lalg}
\widehat{S}(U^{\wp}, W^{\wp})_{\overline{\rho}}[\fm_{\rho}]^{\lalg}\cong (\St_n^{\infty} \otimes \chi_1\circ \dett)^{\oplus m(\rho)}
\end{equation}
where $\St_n^{\infty}$ denotes the standard smooth Steinberg representation of $\GL_n(\Q_p)$ over $E$. As in (\ref{equ: lg2-mult1}), we have by (\ref{equ: lalgaut2}) and our assumptions on $U_v$ for $v|p$, $v\neq \wp$:
\begin{equation}\label{equ: lg2-mult2}
 m(\rho)= \sum_{\pi} m(\pi)\dim_{\overline{\Q_p}} (\pi^{\infty, \wp})^{U^{\wp}}= \sum_{\pi} m(\pi)\dim_{\overline{\Q_p}} (\pi^{\infty, p})^{U^{p}}
\end{equation}
where $\pi$ runs through the automorphic representations of $G(\bA_{F_+})$ which contribute to the locally algebraic representation $\widehat{S}(U^{\wp}, W^{\wp})_{\overline{\rho}}[\fm_{\rho}]^{\lalg}$. We easily check that:
\begin{equation}\label{equ: lg2-ordB}
 \Ord_B(\St_n^{\infty} \otimes \chi_1\circ \dett)\cong J_B(\St_n^{\infty}\otimes \chi_1\circ \dett)(\delta_B^{-1})\cong
\chi_1\circ \dett.
\end{equation}

\begin{lemma}\label{lem: lg2-jacncpt}
We have an isomorphism of $T(\Q_p)$-representations:
\begin{equation*}
\soc_{T(\Q_p)}J_B\big(\widehat{S}(U^{\wp}, W^{\wp})_{\overline{\rho}}^{\an}[\fm_\rho]\big) \cong ((\chi_1 \circ \dett)\otimes \delta_B)^{\oplus m(\rho)}.
\end{equation*}
\end{lemma}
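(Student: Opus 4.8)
\textbf{Proof plan for Lemma \ref{lem: lg2-jacncpt}.}
The strategy is to compute the $T(\Q_p)$-socle of the Jacquet--Emerton module $J_B(\widehat S(U^\wp,W^\wp)_{\overline\rho}^{\an}[\fm_\rho])$ by stepwise iterating the ordinary part functor through the standard maximal parabolic subgroups, using the local-global results of \S~\ref{sec: lg1-lg1} together with the $p$-adic Langlands correspondence for $\GL_2(\Q_p)$. First I would recall from \S~\ref{sec: ord-lalg} that $J_B\cong J_{T}\circ \Ord_{B}$ on the essentially admissible locally analytic vectors (more precisely, the analogue of Remark \ref{ordpart0} and (\ref{equ: lgOrd-relev}) give $J_B(\cdot)\cong J_{B}(\Ord_B(\cdot)^{\an})(\delta_B^{-1})$ up to the shift, and $\Ord_B$ is the composite of $\Ord$ along a chain of maximal parabolics). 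So the computation reduces to understanding $\Ord_B(\widehat S(U^\wp,W^\wp)_{\overline\rho}[\fm_\rho])$ as a $T(\Q_p)$-representation. Since $\overline\rho_{\widetilde\wp}$ is strictly $B$-ordinary, $\fm_\rho$ defines a point $x$ of $(\Spf\widetilde{\bT}(U^\wp)_{\overline\rho}^{B-\ord})^{\rig}$ (by Theorem \ref{thm: ord-ordlg}(1)\,--\,(2) and the discussion at the start of \S~\ref{fewprel}), and each graded piece $\rho_{x_i}$ is the character $\chi_i=\varepsilon^{1-i}\chi_1$ of $\Gal_{\Q_p}$.

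Next I would apply Corollary \ref{coro: lg1-pt} and, more precisely, the local-global isomorphism; but note that here each $n_i=1$, so the peu-ramifié hypothesis of Theorem \ref{thm: lg2-lg} is vacuous and we get the clean statement. Invoking Corollary \ref{coro: lg2-lg} with $P=B$, we obtain
\[
\Ord_B\big(\widehat S(U^\wp,W^\wp)_{\overline\rho}\big)[\fm_x]\ \cong\ \Big(\widehat{\bigotimes}_{i=1}^{n}\big(\widehat\pi(\rho_{x_i})\otimes\varepsilon^{s_i}\circ\dett\big)\Big)^{\oplus m_B(x)},
\]
and since each $\rho_{x_i}$ is a character, $\widehat\pi(\rho_{x_i})=\chi_i$ via local class field theory, so the completed tensor product is simply the $T(\Q_p)$-character $\bigotimes_{i=1}^n \chi_i\varepsilon^{s_i}$. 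A direct bookkeeping with $s_i=i-1$, $\chi_i=\varepsilon^{1-i}\chi_1$ and $\varepsilon=z\,\unr(p^{-1})$ identifies this character with $\chi_1\circ\dett$ composed with the appropriate modulus twist; then reinserting the $\delta_B$ shift from $J_B\cong J_T\circ\Ord_B(\cdot)(\delta_B^{-1})$ (compare (\ref{equ: lg2-ordB})) yields exactly $(\chi_1\circ\dett)\otimes\delta_B$, with multiplicity $m_B(x)$. Finally I would identify $m_B(x)$ with $m(\rho)$: by Lemma \ref{lem: lg1-mult1} this holds at benign points, and one argues (as in the proof of that lemma, using (\ref{equ: lg2-GL3lalg}), (\ref{equ: lg2-ordB}) and Proposition \ref{prop: ord-adj}) that restriction to locally algebraic vectors gives $\Hom_{\GL_n(\Q_p)}(\St_n^\infty\otimes\chi_1\circ\dett,\widehat S(U^\wp,W^\wp)_{\overline\rho}[\fm_\rho])$ of dimension $m(\rho)$, which matches $\dim_{k(x)}\Ord_B(\cdot)[\fm_x,T(\Q_p)=(\chi_1\circ\dett)\delta_B]$ after applying $\Ord_B$ and (\ref{equ: lg2-ordB}). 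Passing to the socle only keeps this one character (all other $T(\Q_p)$-constituents of $J_B$ of the full Banach space localize away or sit strictly higher in the filtration, by the genericity built into the setup and left exactness of $J_B$), giving the claimed isomorphism.

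The main obstacle I anticipate is the last point: pinning down that the $T(\Q_p)$-\emph{socle} is exactly this character with multiplicity precisely $m(\rho)$, rather than merely containing it. One has to rule out that other characters inject into $J_B(\widehat S(U^\wp,W^\wp)_{\overline\rho}^{\an}[\fm_\rho])$; this requires combining Corollary \ref{coro: lg2-lg} (which controls $\Ord_B(\cdot)[\fm_x]$ completely, hence its full $T(\Q_p)$-socle, not just one constituent) with the exactness properties of $J_B$ and the fact that $\Ord_B(\widehat S(\cdot)_{\overline\rho}[\fm_x])$ is a finite direct sum of copies of a single $T(\Q_p)$-character because each $\widehat\pi(\rho_{x_i})=\chi_i$ is irreducible (one-dimensional). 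Once Corollary \ref{coro: lg2-lg} is in hand the socle computation is essentially forced, so the real content is simply assembling the normalizations correctly and verifying the multiplicity identification $m_B(x)=m(\rho)$, which is routine given Lemma \ref{lem: lg1-mult1} and (\ref{equ: lg2-mult2}).
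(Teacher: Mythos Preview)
Your approach has a genuine gap at the very first step. The factorisation $J_B\cong J_T\circ\Ord_B$ (or your ``$J_B(\cdot)\cong J_B(\Ord_B(\cdot)^{\an})(\delta_B^{-1})$'') that you invoke from \S~\ref{sec: ord-lalg} does \emph{not} hold for locally analytic representations. That section treats only locally \emph{algebraic} representations, and even there $\Ord_P$ recovers only the slope-zero part $(V^{N_0})_0$ of the Jacquet module, not the full finite-slope part $(V^{N_0})_{\cfs}$; see (\ref{equ: ord-algb}) and (\ref{equ: lgOrd-relev}). For the locally analytic Jacquet--Emerton functor $J_B$ applied to $\widehat S(U^\wp,W^\wp)_{\overline\rho}^{\an}[\fm_\rho]$, the space is a priori much larger than anything one can see through $\Ord_B$: there could be characters of positive slope injecting into $J_B$ that are invisible to the ordinary part. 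So even granting Corollary~\ref{coro: lg2-lg} and a perfect identification $m_B(x)=m(\rho)$, you have only computed a piece of the socle, not ruled out the rest.

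The paper's argument is of a completely different nature: any character $\delta$ embedding into $J_B(\widehat S(\cdot)^{\an}[\fm_\rho])$ gives a point $(x,\delta)$ on the eigenvariety $\cE$, and the global triangulation theory (\cite{KPX}, \cite{Liu}) forces $\delta\delta_B^{-1}$ to be a parameter of $D_{\rig}(\rho_{\widetilde\wp})$; since $N^{n-1}\neq 0$ on $D_{\st}(\rho_{\widetilde\wp})$ there is a unique such parameter, namely $\chi_1\circ\dett$. This is the content of (\ref{equ: ord-ncomp}) (exactly as in \cite[Prop.~6.3.4]{Br16}). The multiplicity is then bounded below by $m(\rho)$ via (\ref{equ: lg2-GL3lalg}) and (\ref{equ: lg2-ordB}), and bounded above by observing (via \cite[Thm.~4.3]{Br13II}) that an extra copy would produce an extra copy of $\St_n^\infty\otimes(\chi_1\circ\dett)$ in the locally algebraic vectors, contradicting (\ref{equ: lg2-GL3lalg}). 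Note also that Lemma~\ref{lem: lg2-mult2} (the identification $m_B(x)=m(\rho)$) in fact \emph{uses} the present lemma in its proof, so your dependency runs in the wrong direction.
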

\begin{proof}
From the global triangulation theory (\cite{KPX}, \cite{Liu}) applied to the eigenvariety $\cE$ (see \S~\ref{pordinary}), exactly the same proof as the one of \cite[Prop.~6.3.4]{Br16} gives:
\begin{equation}\label{equ: ord-ncomp}
 \Hom_{T(\Q_p)}\big(\delta, J_B\big(\widehat{S}(U^{\wp}, W^{\wp})_{\overline{\rho}}[\fm_{\rho}]\big) \big)\neq 0 \Rightarrow \delta\delta_B^{-1}=\chi_1 \circ \dett.
\end{equation}
There exists thus an integer $m'\geq 1$ such that the isomorphism in the statement holds with $m(\rho)$ replaced by $m'$. By (\ref{equ: lg2-GL3lalg}) and (\ref{equ: lg2-ordB}), we have $m'\geq m(\rho)$. Using \cite[Thm. 4.3]{Br13II} together with (\ref{equ: ord-ncomp}), we see that an ``extra'' copy of $(\chi_1 \circ \dett)\otimes \delta_B$ in the socle would yield an extra copy of $\St_n^{\infty} \otimes \chi_1\circ \dett$ in $\widehat{S}(U^{\wp}, W^{\wp})_{\overline{\rho}}[\fm_{\rho}]^{\lalg}$, hence $m'=m(\rho)$.
\end{proof}

\noindent
We denote by $x$ the point of $(\Spf \widetilde{\bT}(U^{\wp})_{\overline{\rho}})^{\rig}$ associated to $\rho$ (thus $\fm_x=\fm_{\rho}$). By (\ref{equ: lg2-ordB}) and Lemma \ref{ordpart}, we obtain that $x\in (\Spf \widetilde{\bT}(U^{\wp})^{P-\ord}_{\overline{\rho}})^{\rig}$ for all $P\supseteq B$.\\

\noindent
For $1\leq i <i'\leq n$, we denote by $\rho_i^{i'}$ the (unique) subquotient of $\rho_{\widetilde{\wp}}$ which is isomorphic to a successive extension of the characters $\chi_j$ for $i\leq j\leq i'$. We have $D_{\rig}(\rho_i^{i'})=D_i^{i'}=$ the (unique) subquotient of $D$ isomorphic to a successive extension of the $\cR_E(\chi_j)$ for $i\leq j\leq i'$ (see the beginning of \S~\ref{sec: hL-FM} for this notation).

\subsubsection{Simple $\cL$-invariants}\label{simpleinv}

\noindent
For $L_P$ with only one factor being $\GL_2$, we show that one can recover the corresponding simple $\cL$-invariant in $\Ord_{P}(\widehat{S}(U^{\wp}, W^{\wp})_{\overline{\rho}})[\fm_\rho]$ (Corollary \ref{coro: lg2-simpL}). We work in arbitrary dimension.

\noindent
We keep the notation and assumptions of \S~\ref{fewprel}. By Theorem \ref{thm: lg2-lg}, we have an isomorphism (note that the assumption in {\it loc.cit.} is here automatic since $n_i=1$ for all $i$):
\begin{equation}\label{equ: lg2-ordBlg}
X_B(U^{\wp})\widehat{\otimes}_{\widetilde{\bT}(U^{\wp})_{\overline{\rho}}^{B-\ord}} \pi^{\otimes}_B(U^{\wp}) \xlongrightarrow{\sim} \Ord_B(\widehat{S}(U^{\wp}, \bW^{\wp})_{\overline{\rho}}).
\end{equation}
Recall we defined the integer $m_B(x)$ just before Lemma \ref{lem: lg1-mult1}.

\begin{lemma}\label{lem: lg2-mult2}
We have $m_B(x)=m(\rho)$.
\end{lemma}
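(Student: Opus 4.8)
The statement $m_B(x)=m(\rho)$ asserts that the multiplicity with which the $\GL_n(\Q_p)$-ordinary Banach-space package shows up at $x$ agrees with the classical automorphic multiplicity (\ref{equ: lg2-mult2}). Recall $m_B(x)$ is by definition (see the discussion before Lemma~\ref{lem: lg1-mult1}) the common rank of the finite free $\co_{k(x)}$-modules $M_B(U^{\wp})/\fp_x$ and $X_B(U^{\wp})[\fp_x]$, and by (\ref{equ: lg1-Xx}) the latter is
\[
\Hom_{\co_{k(x)}[T(\Q_p)]}\Big(\widehat{\bigotimes}_{i=1,\cdots,n}\big(\widehat{\pi}(\rho_{x_i})^0\otimes\varepsilon^{i-1}\circ\dett\big),\ \Ord_B(\widehat{S}(U^{\wp},\bW^{\wp})_{\overline{\rho}})[\fp_x]\Big).
\]
Here each $\rho_{x_i}=\chi_i$ is a character, so $\widehat{\pi}(\rho_{x_i})=\chi_i$ and the completed tensor product is simply the character $\bigotimes_i\chi_i\varepsilon^{i-1}=\bigotimes_i\chi_1=\chi_1\circ\dett$ (using $\chi_i=\varepsilon^{1-i}\chi_1$). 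Thus after inverting $p$ we must identify
\[
m_B(x)=\dim_{k(x)}\Hom_{k(x)[T(\Q_p)]}\big(\chi_1\circ\dett,\ \Ord_B(\widehat{S}(U^{\wp},W^{\wp})_{\overline{\rho}})[\fm_x]\big).
\]

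\textbf{Key steps.} First I would use the adjunction/ordinary-parts formalism to rewrite this $\Hom$-space purely in terms of the Jacquet–Emerton module. By Lemma~\ref{lem: ord-alg6} and the inclusion $J_{B}\circ\Ord_B\hookrightarrow J_B(\delta_B^{-1})$ (Remark~\ref{ordpart0}), together with the fact that $\chi_1\circ\dett$ is a unitary character of $T(\Q_p)$, a $T(\Q_p)$-equivariant map $\chi_1\circ\dett\to\Ord_B(\widehat{S}(U^{\wp},W^{\wp})_{\overline{\rho}})[\fm_x]$ is the same datum as a $T(\Q_p)$-equivariant embedding $(\chi_1\circ\dett)\otimes\delta_B\hookrightarrow J_B(\widehat{S}(U^{\wp},W^{\wp})_{\overline{\rho}}^{\an}[\fm_x])$ — here one must also invoke that $\widehat{S}(U^{\wp},W^{\wp})_{\overline{\rho}}$ is very strongly admissible so that $J_B$ is computed on locally analytic vectors and behaves well. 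Then Lemma~\ref{lem: lg2-jacncpt} says exactly that $\soc_{T(\Q_p)}J_B(\widehat{S}(U^{\wp},W^{\wp})_{\overline{\rho}}^{\an}[\fm_\rho])\cong((\chi_1\circ\dett)\otimes\delta_B)^{\oplus m(\rho)}$, and since every $T(\Q_p)$-equivariant map from a character lands in the socle-isotypic part for that character, the dimension of the $\Hom$-space is precisely the multiplicity of $(\chi_1\circ\dett)\otimes\delta_B$ in that socle, namely $m(\rho)$. Combining, $m_B(x)=m(\rho)$.

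\textbf{Alternative / cross-check.} Another route, consistent with how Lemma~\ref{lem: lg1-mult1} was proved at benign points, is to go through the locally algebraic vectors directly: the chain of isomorphisms
\[
\Hom_{T(\Q_p)}(\chi_1\circ\dett,\Ord_B(\widehat{S}(U^{\wp},W^{\wp})_{\overline{\rho}})[\fm_x])\cong\Hom_{T(\Q_p)}(\chi_1\circ\dett,\Ord_B(\widehat{S}(U^{\wp},W^{\wp})_{\overline{\rho}}[\fm_x]^{\lalg}))
\]
(using that $\chi_1\circ\dett$ is itself locally algebraic and Proposition~\ref{prop: ord-adj} to force the image into locally algebraic vectors, as in the proof of Lemma~\ref{lem: lg1-mult1}), then (\ref{equ: lg2-GL3lalg}) and (\ref{equ: lg2-ordB}) give $\Ord_B(\widehat{S}(U^{\wp},W^{\wp})_{\overline{\rho}}[\fm_x]^{\lalg})\cong(\chi_1\circ\dett)^{\oplus m(\rho)}$, so the $\Hom$-space has dimension $m(\rho)$. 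I would present the argument via Lemma~\ref{lem: lg2-jacncpt} as the main line and mention this second computation as confirmation.

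\textbf{Main obstacle.} The delicate point is not any deep input but making sure the passage between $\Hom$ out of the ordinary part and the socle of the Jacquet module is clean: one needs that $\Ord_B$ of the completed cohomology localized at $\fm_x$ has no ``extra'' contributions beyond what the socle of $J_B$ sees, i.e. that the only unitary $T(\Q_p)$-character that can map in is $\chi_1\circ\dett$ with exactly multiplicity $m(\rho)$. This is guaranteed by (\ref{equ: ord-ncomp}) and Lemma~\ref{lem: lg2-jacncpt}, but one must be careful that the same multiplicity $m(\rho)$ on the locally algebraic side (\ref{equ: lg2-GL3lalg}) and on the Jacquet-socle side really coincide — which is precisely the content of the proof of Lemma~\ref{lem: lg2-jacncpt} via \cite[Thm.~4.3]{Br13II}. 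So the bulk of the work is bookkeeping with very strong admissibility and the adjunction $\Ord_B\dashv(\Ind_{\overline B}^{\GL_n}-)^{\cC^0}$ rather than any new estimate.
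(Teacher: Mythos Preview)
Your ``alternative/cross-check'' is a correct and direct proof: from (\ref{equ: lg1-Xx}) one has $m_B(x)=\dim_E\Hom_{T(\Q_p)}(\chi_1\circ\dett,\Ord_B(\widehat S(U^\wp,W^\wp)_{\overline\rho})[\fm_x])$, Proposition~\ref{prop: ord-adj} (or Proposition~\ref{prop: lg1-semisimp}) forces any such map to factor through $\Ord_B(\widehat S(U^\wp,W^\wp)_{\overline\rho}[\fm_x]^{\lalg})$, and then (\ref{equ: lg2-GL3lalg}) with (\ref{equ: lg2-ordB}) gives the dimension $m(\rho)$. This is in fact cleaner than the paper's argument.

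The paper proceeds differently: it proves the two inequalities separately. For $m_B(x)\geq m(\rho)$ it invokes Corollary~\ref{coro: lg2-lg} (valid here since the hypothesis of Theorem~\ref{thm: lg2-lg} is vacuous when all $n_i=1$) together with (\ref{equ: ord-alg2}), (\ref{equ: lg2-GL3lalg}), (\ref{equ: lg2-ordB}). For $m_B(x)\leq m(\rho)$ it passes to the $\GL_n(\Q_p)$-side via the adjunction \cite[Thm.~4.4.6]{EOrd1}, then to locally analytic vectors, kills the contributions from $(\Ind_{\overline P}^{\GL_n}1)^{\an}$ for $P\supsetneq B$ using Lemma~\ref{lem: lg2-jacncpt} and \cite[Cor.~3.4]{Br13I}, and finally lands in $\Hom(\St_3^\infty\otimes\chi_1\circ\dett,\widehat S[\fm_\rho]^{\lalg})$. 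Your approach avoids this detour through the analytic Steinberg entirely.

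One caution about your ``main line'': the asserted bijection between $\Hom_{T(\Q_p)}(\chi_1\circ\dett,\Ord_B(\widehat S[\fm_x]))$ and embeddings $(\chi_1\circ\dett)\delta_B\hookrightarrow J_B(\widehat S[\fm_\rho]^{\an})$ is not quite what Lemma~\ref{lem: ord-alg6} and Remark~\ref{ordpart0} give you, since those concern locally algebraic representations while $\Ord_B$ here is on the Banach space. You can make this precise by first factoring through $\Ord_B(\widehat S[\fm_x]^{\lalg})$ via Proposition~\ref{prop: ord-adj} (exactly your alternative), but then there is no need for the Jacquet-socle step at all. I would promote the alternative to the main argument.
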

\begin{proof}
By Corollary \ref{coro: lg2-lg} combined with (\ref{equ: ord-alg2}), (\ref{equ: lg2-GL3lalg}) and (\ref{equ: lg2-ordB}), we have $m_B(x)\geq m(\rho)$. By \cite[Thm. 4.4.6]{EOrd1}, we have:
\begin{multline}\label{equ: lg2-adj1}
 \Hom_{\GL_n(\Q_p)}\big((\Ind_{\overline{B}(\Q_p)}^{\GL_n(\Q_p)} 1)^{\cC^0}\otimes \chi_1\circ \dett, \widehat{S}(U^{\wp}, W^{\wp})_{\overline{\rho}}[\fm_{\rho}]\big)\\ \xlongrightarrow{\sim} \Hom_{T(\Q_p)}\big(\chi_1\circ \dett, \Ord_B( \widehat{S}(U^{\wp}, W^{\wp})_{\overline{\rho}}[\fm_{\rho}])\big).
\end{multline}
We have an obvious injection:
\begin{multline}\label{equ: lg2-anv1}
 \Hom_{\GL_n(\Q_p)}\big((\Ind_{\overline{B}(\Q_p)}^{\GL_n(\Q_p)} 1)^{\cC^0}\otimes \chi_1\circ \dett, \widehat{S}(U^{\wp}, W^{\wp})_{\overline{\rho}}[\fm_{\rho}]\big)\\ \hooklongrightarrow \Hom_{\GL_n(\Q_p)}\big((\Ind_{\overline{B}(\Q_p)}^{\GL_n(\Q_p)} 1)^{\an}\otimes \chi_1\circ \dett, \widehat{S}(U^{\wp}, W^{\wp})_{\overline{\rho}}[\fm_{\rho}]^{\an}\big).
\end{multline}
From the description of irreducible constituents of $(\Ind_{\overline{P}(\Q_p)}^{\GL_n(\Q_p)} 1)^{\an}$ for $B\subseteq P$ (see \cite[\S~6]{OS}), Lemma \ref{lem: lg2-jacncpt} and \cite[Cor. 3.4]{Br13I}, we obtain if $P\supsetneq B$:
$$\Hom_{\GL_n(\Q_p)}\big((\Ind_{\overline{P}(\Q_p)}^{\GL_n(\Q_p)} 1)^{\an}\otimes \chi_1\circ \dett, \widehat{S}(U^{\wp}, W^{\wp})_{\overline{\rho}}[\fm_{\rho}]^{\an}\big)=0,$$
from which we deduce:
\begin{align*}
\MoveEqLeft[19]  {\Hom_{\GL_n(\Q_p)}\big((\Ind_{\overline{B}(\Q_p)}^{\GL_n(\Q_p)} 1)^{\an}\otimes \chi_1\circ \dett, \widehat{S}(U^{\wp}, W^{\wp})_{\overline{\rho}}[\fm_{\rho}]^{\an}\big)} &\\
\xlongleftarrow{\sim}{}& \Hom_{\GL_n(\Q_p)}\big(\St^{\an}_n \otimes \chi_1\circ \dett, \widehat{S}(U^{\wp}, W^{\wp})_{\overline{\rho}}[\fm_{\rho}]^{\an}\big)\\
\hooklongrightarrow{}& \Hom_{T(\Q_p)}\big(\St^{\infty}_n \otimes \chi_1\circ \dett, \widehat{S}(U^{\wp}, W^{\wp})_{\overline{\rho}}[\fm_{\rho}]^{\an}\big)
\end{align*}
where $\St^{\an}_n :=(\Ind_{\overline{B}(\Q_p)}^{\GL_n(\Q_p)} 1)^{\an}/\sum_{P\supsetneq B} (\Ind_{\overline{P}(\Q_p)}^{\GL_n(\Q_p)} 1)^{\an}$. Together with (\ref{equ: lg2-GL3lalg}), (\ref{equ: lg2-anv1}), (\ref{equ: lg2-adj1}) and Corollary \ref{coro: lg2-lg}, we deduce then $m_B(x)\leq m(\rho)$. The lemma follows.
\end{proof}

\begin{lemma}\label{lem: lg2-lfree1}
The $\widetilde{\bT}(U^{\wp})_{\overline{\rho}}^{B-\ord}[1/p]$-module $M_B(U^{\wp})[1/p]$ is locally free at the point $x$.
\end{lemma}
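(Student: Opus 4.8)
The goal is to show that the $\widetilde{\bT}(U^{\wp})_{\overline{\rho}}^{B-\ord}[1/p]$-module $M_B(U^{\wp})[1/p]$ is locally free at $x$. The natural strategy, which parallels arguments of Chenevier and the local-global patching technique used by Emerton, is to combine two ingredients: (i) the fact that $M_B(U^{\wp})$ is finitely generated and $\co_E$-torsion free over $\widetilde{\bT}(U^{\wp})_{\overline{\rho}}^{B-\ord}$ (this is \cite[Prop.~C.5]{Em4}, already recalled after (\ref{mu})), together with the property that $M_B(U^{\wp})/\fp_y$ has constant rank $m_B(y)$ for all $y$ (from \cite[Lem.~C.14]{Em4}); and (ii) a control of this rank near $x$, namely that $m_B(y)$ is \emph{constant} equal to $m(\rho)$ on a Zariski-dense set of benign points accumulating at $x$, plus a smoothness/reducedness input on $\Spec \widetilde{\bT}(U^{\wp})_{\overline{\rho}}^{B-\ord}[1/p]$ at $x$ coming from the eigenvariety machinery of \S~\ref{pordinary}. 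Once one knows that a finitely generated torsion-free module over a reduced noetherian ring has fiber rank that is locally constant near a point where the ring is regular (or more generally where the module is ``generically of the expected rank'' on every component through the point), local freeness follows by the standard criterion: a finitely generated module over a noetherian local ring whose fiber dimension equals the generic rank is free.

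\textbf{Key steps.} First I would record that by Lemma \ref{lem: lg2-mult2} we have $m_B(x) = m(\rho)$, and that by Lemma \ref{lem: lg1-mult1} every benign point $y$ has $m_B(y) = m(y)$. Second, I would invoke Proposition \ref{prop: lg1-dense2}(1), which says that the set $Z_1'$ of nice benign classical points accumulates at any point $(x,\chi)$ with $\chi$ locally algebraic and $\chi^\infty$ unramified; applying this to the point $(x,\chi)$ lying above $x$ coming from $\widehat{S}(U^{\wp}, W^{\wp})_{\overline{\rho}}[\fm_\rho]^{\lalg}\ne 0$ (which exists since $\St_n^\infty\otimes\chi_1\circ\dett$ contributes, giving $\chi$ locally algebraic with unramified smooth part), I get that benign points accumulate at $x$ in $\Spec \widetilde{\bT}(U^{\wp})_{\overline{\rho}}^{B-\ord}[1/p]$. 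Third, I would need that at such accumulating benign points the multiplicity $m(y)$ is constant — here the point is that $m(y)$ is computed by (\ref{equ: lg2-mult1}) as a sum of $\dim (\pi^{\infty,p})^{U^p}$ over automorphic $\pi$ with a fixed local component at $\wp$ up to an unramified twist, and along a $\GL_2(\Q_p)$-ordinary (or here $\GL_1$, i.e.\ ``$B$-ordinary'') family the prime-to-$p$ part $\pi^{\infty,p}$ and the level $U^p$ do not change, so $m(y)$ is constant on the relevant accumulating set, equal to $m_B(x)=m(\rho)$. Fourth, I would pass to the completed local ring: by \cite[Thm.~4.8~\&~4.10]{Che11} (invoked already in \S~\ref{pordinary} via Proposition \ref{prop: lg1-etale}), or directly from the étaleness of $\omega^2_0$ at the noncritical classical points and the structure of $\cE^{B-\ord}$, the scheme $\Spec \widetilde{\bT}(U^{\wp})_{\overline{\rho}}^{B-\ord}[1/p]$ is smooth (hence the local ring is regular) at a Zariski-dense set of benign points, and one transports constancy of fiber rank to an open neighborhood of $x$ by semicontinuity of fiber dimension together with the fact that on each irreducible component through $x$ the generic fiber rank is $m(\rho)$. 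Finally I would apply the local criterion for freeness: $M_B(U^{\wp})[1/p]$, being finitely generated and torsion-free (so of generic rank $m(\rho)$ on each component) over the reduced ring $\widetilde{\bT}(U^{\wp})_{\overline{\rho}}^{B-\ord}[1/p]$, with fiber dimension at $x$ equal to $m_B(x)=m(\rho)$, is locally free of rank $m(\rho)$ at $x$.

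\textbf{Main obstacle.} The delicate part is step three combined with step four: one must genuinely control that the \emph{fiber rank} $m_B(y)=\dim_{k(y)} M_B(U^{\wp})/\fp_y$ does not jump upward at or near $x$, i.e.\ that $x$ does not lie on an intersection of components where $M_B(U^{\wp})$ acquires extra rank. This is where the accumulation of benign points (Proposition \ref{prop: lg1-dense2}) and the smoothness input (Proposition \ref{prop: lg1-etale}, \cite{Che11}) must be combined carefully: one wants that $x$ lies on a single irreducible component (or on components all of whose generic rank is $m(\rho)$), and that $M_B(U^{\wp})[1/p]$ restricted to that component is generically free of rank $m(\rho)$, so that the fiber dimension $m(\rho)$ at $x$ forces freeness by Nakayama. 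I expect the bookkeeping needed to rule out rank jumps — essentially checking that the constant $m(\rho)$ computed from (\ref{equ: lg2-mult2}) agrees with the generic rank along every component of $\Spec \widetilde{\bT}(U^{\wp})_{\overline{\rho}}^{B-\ord}[1/p]$ passing through $x$, using Corollary \ref{coro: lg2-lg} and the $\GL_2(\Q_p)$-local-global compatibility of \S~\ref{gl2ordinarylocalglobal} — to be the real content, the rest being the standard commutative-algebra criterion and citations to \cite{Em4} and \cite{Che11}.
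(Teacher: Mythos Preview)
Your overall strategy --- show that the fiber rank $m_B(\cdot)$ is constant equal to $m(\rho)$ on a Zariski-dense set containing $x$, then apply upper semicontinuity of fiber dimension (\cite[Ex.~II.5.8]{Harts}) over the reduced ring $\widetilde{\bT}(U^{\wp})_{\overline{\rho}}^{B-\ord}[1/p]$ --- is exactly the paper's approach. You also correctly identify Lemma~\ref{lem: lg2-mult2} ($m_B(x)=m(\rho)$) and Lemma~\ref{lem: lg1-mult1} ($m_B(y)=m(y)$ for benign $y$) as the inputs.

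The genuine gap is in your step three. Your claim that ``along a $B$-ordinary family the prime-to-$p$ part $\pi^{\infty,p}$ and the level $U^p$ do not change, so $m(y)$ is constant'' is not an argument: the automorphic representations $\pi$ contributing at distinct benign points $y$ are distinct, and there is no a priori reason that $\sum_\pi m(\pi)\dim(\pi^{\infty,p})^{U^p}$ should be independent of $y$. The paper's proof supplies precisely this missing input: working on a fixed irreducible component $X=\Spec A$ through $x$, one uses \cite[Prop.~4.1.6]{EH} to produce Weil--Deligne representations of $\Gal_{F_\fl}$ over $A$ at each bad place $\fl\nmid p$, and then invokes \cite[Lem.~4.5~\&~4.6]{Che11} (which control the local conductor and the dimension of invariants in families), together with the multiplicity-one statement $m(\pi)=1$ from \cite{Ro}, \cite{Lab}, to conclude via (\ref{equ: lg2-mult1}) and (\ref{equ: lg2-mult2}) that $m(y)=m(\rho)$ for all benign $y\in X$. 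Your invocation of Corollary~\ref{coro: lg2-lg} here would be circular (it lies downstream of the local freeness you are trying to prove) and in any case carries an extra hypothesis not assumed at this point.

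A secondary remark: you do not need smoothness or regularity of $\Spec \widetilde{\bT}(U^{\wp})_{\overline{\rho}}^{B-\ord}[1/p]$ at $x$ (which is not established). The paper works component by component: once $m_B(\cdot)\equiv m(\rho)$ on the dense set $Z=\{\text{benign points in }X\}\cup\{x\}$, semicontinuity forces $U_{m(\rho)-1}=\emptyset$ and $x\in U_{m(\rho)}$, whence local freeness on $U_{m(\rho)}$ by \cite[Ex.~II.5.8(c)]{Harts} applied over the reduced scheme $X$. No regularity hypothesis is needed.
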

\begin{proof}
(a) Let $X:= \Spec A := \Spec (\widetilde{\bT}(U^{\wp})_{\overline{\rho}}^{B-\ord}[1/p]/\fp)$ be any irreducible component containing the closed point $x$, we show that the $A$-module $M_B(U^{\wp})[1/p]/\fp$ (see (\ref{mu})) is locally free at $x$, from which the corollary follows by \cite[Ex. II.5.8(c)]{Harts} (recall that $\widetilde{\bT}(U^{\wp})_{\overline{\rho}}^{B-\ord}$, hence $\widetilde{\bT}(U^{\wp})_{\overline{\rho}}^{B-\ord}[1/p]$ and $X$, are reduced by Lemma \ref{lem: Pord-reduce}). We define:
$$Z:=\{{\rm benign\ points\ in\ }X\}\cup \{x\}.$$
By Proposition \ref{thm: lg1-bp}(2), we know that $Z$ is Zariski-dense in $X$. By Lemma \ref{lem: lg1-mult1} (resp. by Lemma \ref{lem: lg2-mult2}), we know $m_B(x')=m(x')$ for $x'\in Z\setminus \{x\}$ (resp. $m_B(x)=m(\rho)$).\\
\noindent
(b) For any finite place $\fl\nmid p$ of $F$, we deduce from (\ref{equ: Pord-RT2}) a continuous representation $\rho_{A,\fl}: \Gal_{F_{\fl}} \ra \GL_n(A)$. By \cite[Prop. 4.1.6]{EH}, we can associate to $\rho_{A,\fl}$ a Weil-Deligne representation over $A$. Then the statement of \cite[Prop. 7.8.19]{Bch} (with ``open affinoid'' replaced by ``open affine'') still holds where the rigid space $X$ of {\it loc.cit.} is replaced by the scheme $X$ in (a) and the Weil-Deligne representation in \cite[Prop. 7.8.14]{Bch} is replaced by the one above (the argument of the proof of \cite[Prop. 7.8.19]{Bch} is then analogous, and even easier since we are in the setting of affine schemes). An examination of their proofs then shows that \cite[Lem. 4.5]{Che11} (for any $n$) and \cite[Lem. 4.6]{Che11} (for $n\leq 3$) both hold {\it verbatim} with $(\rho, \co(X))$ of \emph{loc.cit.} replaced by $(\rho_{A,\fl}|_{W_{F_{\fl}}}, A)$. From (\ref{equ: lg2-mult1}), (\ref{equ: lg2-mult2}) together with $m(\pi)=1$ (which follows from \cite{Ro} and \cite{Lab}), we then deduce $m(x')=m(\rho)$ for all $x'\in Z$, and hence $m_B(x')=m(\rho)$ by (a) for all $x'\in Z$.\\
\noindent
(c) Denote by $\cM$ the coherent sheaf on $X$ attached to the $A$-module $M_B(U^{\wp})[1/p]/\fp$. For any prime ideal $\fp'$ of $A$, set:
$$m_B(\fp'):=\dim_{\Frac(A/\fp')} \big((M_B(U^{\wp})[1/p]/\fp')\otimes_{A/\fp'} \Frac(A/\fp')\big)$$
which is upper semi-continuous on $\Spec A$ by \cite[Ex. II.5.8(a)]{Harts}. In particular, the sets:
\begin{equation*}
U_{m}:=\{\fp'\in \Spec A,\ m_B(\fp')\leq m\}=\{\fp'\in \Spec A,\ m_B(\fp')< m+1\}
\end{equation*}
are Zariski-open for $m\in \Z_{\geq 0}$. It follows from (b) that we have $Z\subseteq U_{m(\rho)}$ and $Z \cap U_{m(\rho)-1}=\emptyset$. Since $Z$ is Zariski-dense in $X$, this implies $U_{m(\rho)-1}=\emptyset$, and thus the function $\fp'\mapsto m(\fp')$ is constant of value $m(\rho)$ on the open set $U_{m(\rho)}$ which contains the point $x$. By \cite[Ex. II.5.8(c)]{Harts}, we deduce that $\cM$ is locally free on $U_{m(\rho)}$, which finishes the proof.
\end{proof}

\noindent
Denote by $V_x$ the tangent space of $(\Spf \widetilde{\bT}(U^{\wp})^{B-\ord}_{\overline{\rho}})^{\rig}$ at $x$. Recall that we have a natural morphism (see (\ref{omegai})):
$$\omega=(\omega_i)_{i=1, \cdots, n}: (\Spf \widetilde{\bT}(U^{\wp})^{B-\ord}_{\overline{\rho}})^{\rig} \lra \prod_{i=1}^n (\Spf R_{\overline{\rho}_i})^{\rig}$$
where $\overline{\rho}_i= \overline{\chi}_i$, and that we uniformly (in $i=1,\cdots,n$) identify the tangent space of $(\Spf R_{\overline{\rho}_i})^{\rig}$ at $\omega_i(x)$ with $\Hom(\Q_p^{\times}, E)$ via:
$$\Ext^1_{\Gal_{\Q_p}}(\rho_{x_i}, \rho_{x_i})=\Ext^1_{\Gal_{\Q_p}}(\chi_i, \chi_i)\cong \Ext^1_{(\varphi,\Gamma)}(\cR_E(\chi_i), \cR_E(\chi_i))\buildrel(\ref{equ: hL-cad})\over\cong  \Hom(\Q_p^{\times}, E).$$

\begin{lemma}\label{lem: lg2-tang1}
The morphism $d\omega_x:V_x \lra \oplus_{i=1,\cdots, n} \Hom(\Q_p^{\times},E)$ induced by $\omega$ on the tangent spaces is injective. Moreover, the induced morphism:
\begin{equation}\label{tgtspace}
\overline{d}\omega_x: V_x \lra \bigoplus_{i=1,\cdots, n} \big(\Hom(\Q_p^{\times},E)/\Hom_{\infty}(\Q_p^{\times}, E)\big)
\end{equation}
is bijective.
\end{lemma}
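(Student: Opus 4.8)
\textbf{Proof plan for Lemma \ref{lem: lg2-tang1}.}

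The overall strategy is to mimic, in the $B$-ordinary global setting, the structure of the argument already carried out for $P$-ordinary eigenvarieties in \S~\ref{pordinary} (especially Proposition \ref{prop: lg1-1fern} and Corollary \ref{coro: lg1-ferns}), but now at the specific non-crystalline (fully semistable, $N^{n-1}\ne 0$) point $x$ attached to $\rho$, where the Hodge--Tate weights are consecutive. First I would observe that, since $\rho_{\widetilde{\wp}}$ is strictly $P$-ordinary for every $P\supseteq B$, the point $x$ lies on $(\Spf \widetilde{\bT}(U^{\wp})^{B-\ord}_{\overline{\rho}})^{\rig}$ and we may feed it into the eigenvariety machinery $\cE^{B-\ord}$ of \S~\ref{pordinary}. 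The key input is Lemma \ref{lem: lg2-jacncpt}: the $T(\Q_p)$-socle of $J_B(\widehat{S}(U^{\wp},W^{\wp})_{\overline{\rho}}^{\an}[\fm_\rho])$ is $((\chi_1\circ\dett)\otimes\delta_B)^{\oplus m(\rho)}$, so there is a canonical point $y=(x,\chi)\in\cE^{B-\ord}$ with $\chi=(\chi_1\circ\dett)\delta_B$, and in fact, by the uniqueness of the parameter $(\chi_1,\dots,\chi_n)$ of $D$ and the global triangulation theory (\cite{KPX}, \cite{Liu}) applied as in the proof of Lemma \ref{lem: lg2-jacncpt}, this is the \emph{only} point of $\cE^{B-\ord}$ above $x$. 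Then the completed local ring of $\cE^{B-\ord}_{\red}$ at $y$ maps onto the completed local ring of $(\Spf\widetilde{\bT}(U^{\wp})^{B-\ord}_{\overline{\rho}})^{\rig}$ at $x$ via $\omega^1$, so $\dim_E V_x\le \dim_E V_{\cE^{B-\ord}_{\red},y}$, and I would first bound the latter.

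For the \emph{injectivity} of $d\omega_x$: let $0\ne v\in V_x$; it lifts (non-uniquely) to a tangent vector $\widetilde v$ at $y$ on $\cE^{B-\ord}_{\red}$, giving a deformation $\widetilde\rho_{\widetilde{\wp}}$ of $\rho_{\widetilde{\wp}}$ over $E[\epsilon]/\epsilon^2$ together with a trianguline parameter $\widetilde\chi\,\delta_B^{-1}(1\otimes\varepsilon^{-1}\otimes\cdots\otimes\varepsilon^{1-n})=(\widetilde\chi_1,\dots,\widetilde\chi_n)$ deforming $(\chi_1,\dots,\chi_n)$, by the global triangulation theory over $E[\epsilon]/\epsilon^2$ (\cite[Prop.~5.13]{Liu}), exactly as in the proof of Proposition \ref{prop: lg1-1fern}(1); here I need the genericity inputs analogous to Remark \ref{rem: lg1-distin} to know the refinement is non-critical and the triangulation deforms, which hold because the Hodge--Tate weights of $\rho_{\widetilde{\wp}}$ are distinct (consecutive) and $N^{n-1}\ne0$, and by density arguments from Proposition \ref{prop: lg1-dense2}. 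The graded pieces of this triangulation recover the $\widetilde\chi_i\varepsilon^{1-i}$, and these are precisely the images of $v$ under the components $d(\omega_i)_x$ after the identification $\Ext^1_{(\varphi,\Gamma)}(\cR_E(\chi_i),\cR_E(\chi_i))\cong\Hom(\Q_p^\times,E)$ via \eqref{equ: hL-cad}; thus if $d\omega_x(v)=0$ then all $\widetilde\chi_i=\chi_i$, so the deformed parameter is constant, which by Berger's equivalence (the special non-crystalline $(\varphi,\Gamma)$-module with a given constant parameter is rigid up to the choice of $\cL$-invariant, cf.\ Corollary \ref{codimension}) forces $\widetilde\rho_{\widetilde{\wp}}$ to be a ``constant'' deformation as a filtered $(\varphi,N)$-module; combined with the freeness statement below this will force $v=0$. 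More precisely I expect injectivity to drop out of the same dimension count that proves bijectivity of $\overline d\omega_x$, so I would prove them together.

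For the surjectivity/bijectivity of $\overline d\omega_x$: here the two essential ingredients are (i) Theorem \ref{thm: hL-hL} (the Colmez--Greenberg--Stevens formula, in its parabolic form), which tells us precisely which deformations of the graded pieces $\cR_E(\chi_i)$ arise from deformations of $D$ itself --- namely those landing in the $\cL_{\FM}$ subspaces --- and (ii) the local freeness of $M_B(U^{\wp})[1/p]$ at $x$ (Lemma \ref{lem: lg2-lfree1}), which, combined with the isomorphism \eqref{equ: lg2-ordBlg} and $m_B(x)=m(\rho)$ (Lemma \ref{lem: lg2-mult2}), lets me transfer tangent-space information between the Hecke/eigenvariety side and the Galois-deformation side without losing rank. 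Concretely, for each of the $n-1$ adjacent simple reflections I would, as in the proof of Proposition \ref{prop: lg1-1fern}(2) and Corollary \ref{coro: lg1-ferns}, produce enough tangent directions at $x$ by varying the weight (using the benign classical points of $Z_1'$ that accumulate at nearby points, Proposition \ref{prop: lg1-dense2}(1)) to see that the composite $V_x\to\bigoplus_i\Hom(\Q_p^\times,E)\twoheadrightarrow\bigoplus_i\Hom(\Q_p^\times,E)/\Hom_\infty(\Q_p^\times,E)$ is surjective onto a space of dimension $n$; then a dimension count (each irreducible component of $\Spec\widetilde{\bT}(U^{\wp})^{B-\ord}_{\overline{\rho}}[1/p]$ through $x$ has Krull dimension $\ge n+(n-k)=n$ by Proposition \ref{prop: lg1-dim} with $k=n$, wait --- with all $n_i=1$ we have $k=n$ so the lower bound is just $n$; combined with $\dim_E V_x\le n$ from étaleness over weight space at nearby generic points, Proposition \ref{prop: lg1-etale}) forces $\dim_E V_x=n$ and hence both maps to be bijective/injective. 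The main obstacle I anticipate is the last dimension count: establishing the \emph{upper} bound $\dim_E V_x\le n$ at the non-crystalline point $x$ itself (where smoothness results like \cite[Thm.~4.8, 4.10]{Che11} apply only at crystalline generic classical points, not at $x$), which is exactly why the detour through Lemma \ref{lem: lg2-lfree1} and the freeness of $M_B(U^{\wp})$ is needed --- it is what allows us to transport the tangent-space dimension bound from the generic benign points of $Z_1$ to the special point $x$; getting this comparison clean, and checking that \eqref{tgtspace} lands in the correct codimension-$1$-per-factor target (i.e.\ that the ``de Rham up to twist'' directions $\Hom_\infty(\Q_p^\times,E)$ are genuinely killed, which uses \eqref{ginfty} and Lemma \ref{lem: hL-dRg}(2)) will be the technical heart of the argument.
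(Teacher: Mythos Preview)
Your proposal assembles the right ingredients but misdiagnoses where the real work lies. You identify the upper bound $\dim_E V_x\le n$ as the ``main obstacle'' and plan to obtain it by transporting \'etaleness from benign points via Lemma \ref{lem: lg2-lfree1}. That route is not obviously viable: \'etaleness of $\omega^2_0$ at nearby crystalline points (Proposition \ref{prop: lg1-etale}) says nothing directly about the tangent space at the semistable point $x$, and local freeness of $M_B(U^{\wp})[1/p]$ is a statement about fibre ranks, not about the dimension of the base. Likewise your plan to ``produce enough tangent directions at $x$ by varying the weight'' using points of $Z_1'$ produces tangent vectors at \emph{those} points, not at $x$.

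The paper's argument is simpler and logically reversed. One already has $\dim_E V_x\ge n$ from Proposition \ref{prop: lg1-dim} (here $k=n$, so the bound is exactly $n$), and the target of $\overline d\omega_x$ has dimension $n$; hence it suffices to prove $\overline d\omega_x$ is \emph{injective}, after which both the upper bound and the bijectivity drop out. For injectivity one argues by contradiction: take $0\ne v\in V_x$ with $\overline d\omega_x(v)=0$ and let $\cI_v\subset \widetilde{\bT}(U^{\wp})^{B-\ord}_{\overline\rho}$ be the associated ideal. Lemma \ref{lem: lg2-lfree1} is used not to transport dimensions but to ensure, via \cite[Prop.~C.11]{Em4}, that $(X_B(U^{\wp})[\cI_v])[1/p]$ is free of rank $m_B(x)$ over $E[\epsilon]/\epsilon^2$; combined with \eqref{equ: lg2-ordBlg} and \cite[Lem.~3.1.17]{Em4} this yields an $E[\epsilon]/\epsilon^2$-linear isomorphism $\widetilde\chi^{\oplus m_B(x)}\buildrel\sim\over\to \Ord_B(\widehat S(U^{\wp},W^{\wp})_{\overline\rho})[\cI_v]$ extending the isomorphism on $[\fm_\rho]$. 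Now $\overline d\omega_x(v)=0$ forces each $\widetilde\chi_i$ to be locally algebraic by \eqref{ginfty}, so $\widetilde\chi$ is locally algebraic. The ingredient you are missing is Proposition \ref{prop: lg1-semisimp}: any map from a locally algebraic $L_P(\Q_p)$-representation into $\Ord_P(\widehat S(U^{\wp},W^{\wp}))\{\fm_x\}$ already lands in $[\fm_x]$. Since $\fm_\rho^2\subseteq \cI_v[1/p]$, the isomorphism $\widetilde\chi^{\oplus m_B(x)}\buildrel\sim\over\to(\cdot)[\cI_v]$ must then factor through $(\cdot)[\fm_\rho]$, which is absurd. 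No upper bound on $\dim_E V_x$ is ever proved directly.
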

\begin{proof}
By Proposition \ref{prop: lg1-dim}, we have $\dim_E V_x\geq n$. Since this is also the dimension of the right hand side of (\ref{tgtspace}), it is enough to prove that $\overline{d}\omega_x$ is injective. Let $0\ne v\in V_x$ such that $\overline{d}\omega_x(v)=0$ and denote by $\cI_v:=\Ker(\widetilde{\bT}(U^{\wp})^{B-\ord}_{\overline{\rho}}\rightarrow E[\epsilon]/\epsilon^2)$ the ideal attached to $v$ (so $\widetilde{\bT}(U^{\wp})^{B-\ord}_{\overline{\rho}}/\cI_v\cong \co_E[\epsilon]/\epsilon^2$). From \cite[Prop. C.11]{Em4} applied with $M=\widetilde{\bT}(U^{\wp})^{B-\ord}_{\overline{\rho}}/\cI_v$ we obtain:
\begin{eqnarray}\label{alglin}
(M_B(U^{\wp})/\cI_v)[1/p]&\cong &\Hom_{\co_E}\big(\Hom_{\widetilde{\bT}(U^{\wp})_{\overline{\rho}}^{B-\ord}}(\widetilde{\bT}(U^{\wp})_{\overline{\rho}}^{B-\ord}/\cI_v,X_B(U^{\wp})),\co_E\big)[1/p] \nonumber\\
&\cong & \Hom_{\co_E}\big(X_B(U^{\wp})[\cI_v],\co_E\big)[1/p].\label{dualdual}
\end{eqnarray}
From (\ref{dualdual}) and Lemma \ref{lem: lg2-lfree1} it easily follows that $(X_B(U^{\wp})[\cI_v])[1/p]$ is free of rank $m_B(x)$ over $(\widetilde{\bT}(U^{\wp})^{B-\ord}_{\overline{\rho}}/\cI_v)[1/p]\cong E[\epsilon]/\epsilon^2$. For $i=1, \cdots, n$ denote by $\widetilde{\chi}_i$ the extension of $\chi_i$ by $\chi_i$ associated to $d\omega_{i,x}(v)\in \Hom(\Q_p^{\times}, E)$. From (\ref{equ: lg1-pii}) we get $(\pi_i(U^{\wp})/\cI_v)[1/p]\cong \widetilde{\chi}_i$ (since $n_i=1$). Let ${\chi}:=\otimes_{i=1}^n ({\chi}_i\otimes \varepsilon^{s_i})$ and $\widetilde{\chi}:=\otimes_{i=1}^n (\widetilde{\chi}_i\otimes \varepsilon^{s_i})$ where the tensor product $\otimes_{i=1}^n$ on the latter is over $E[\epsilon]/\epsilon^2$. By (\ref{equ: lg2-ordBlg}) together with \cite[Lem. 3.1.17]{Em4} applied with $M=\widetilde{\bT}(U^{\wp})_{\overline{\rho}}^{B-\ord}/\cI_v$, we obtain a commutative diagram:
\begin{equation}\label{equ: lg2-linvT}
\begin{CD}
\chi^{\oplus m_B(x)}@> \sim >> \Ord_B(\widehat{S}(U^{\wp}, W^{\wp})_{\overline{\rho}})[\fm_\rho] \\
@VVV @VVV\\
\widetilde{\chi}^{\oplus m_B(x)} @> \sim>> \Ord_B(\widehat{S}(U^{\wp}, W^{\wp})_{\overline{\rho}})[\cI_v].
\end{CD}
\end{equation}
Since $\overline{d}\omega_x(v)=0$, the character $\widetilde{\chi}$ is locally algebraic by (\ref{ginfty}). It then follows from $\fm_\rho^2\subseteq \cI_v[1/p]$ and Proposition \ref{prop: lg1-semisimp} that the bottom horizontal map in (\ref{equ: lg2-linvT}) factors through $\Ord_B(\widehat{S}(U^{\wp}, W^{\wp})_{\overline{\rho}})[\fm_\rho]$, which contradicts (\ref{equ: lg2-linvT}). The lemma follows.
\end{proof}

\noindent
Recall that for $i=1,\cdots, n-1$ the $(\varphi,\Gamma)$-module $D_i^{i+1}$ was defined at the end of \S~\ref{fewprel}, and that $\cL_{\FM}(D_i^{i+1}: \cR_E(\chi_i))$ is the line in $\Ext^1_{(\varphi,\Gamma_L)}(\cR_E(\chi_i), \cR_E(\chi_i))\cong \Hom(\Q_p^\times,E)$ defined as the orthogonal of $ED_i^{i+1}\subseteq \Ext^1_{(\varphi,\Gamma_L)}(\cR_E(\chi_{i+1}), \cR_E(\chi_i))$ via the pairing as in (\ref{equ: hL-cup}), see \S~\ref{sec: hL-FM}.

\begin{proposition}\label{prop: lg2-sim}
For $i=1,\cdots, n-1$, the morphism $d\omega_{i,x}-d\omega_{i+1,x}$ factors through a surjection:
\begin{equation*}
 d\omega_{i,x}-d\omega_{i+1,x}: V_x \twoheadlongrightarrow \cL_{\FM}(D_i^{i+1}: \cR_E(\chi_i))\subsetneq \Hom(\Q_p^\times,E).
\end{equation*}
\end{proposition}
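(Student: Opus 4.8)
The strategy is to combine the injectivity part of Lemma \ref{lem: lg2-tang1} (that $d\omega_x$ is injective) with the ``Colmez--Greenberg--Stevens'' type result of Corollary \ref{coro: hL-tri} (equivalently the case $n=2$ of Theorem \ref{thm: hL-hL}), applied to the rank $2$ sub\-quotient $D_i^{i+1}$ of $D$. First I would show that for any $v\in V_x$, with associated infinitesimal deformation $\widetilde\rho_{\widetilde{\wp}}$ of $\rho_{\widetilde{\wp}}$ over $E[\epsilon]/\epsilon^2$ (obtained via the maps (\ref{mapdeform}) and (\ref{equ: Pord-RT2}), using that $x\in(\Spf\widetilde{\bT}(U^{\wp})^{B-\ord}_{\overline{\rho}})^{\rig}$ for all $B\subseteq P$), the deformation $\widetilde\rho_{\widetilde{\wp}}$ is still $B$-ordinary, i.e.\ sits in a full flag of $(\varphi,\Gamma)$-submodules deforming the one on $D$; this is exactly the content of Theorem \ref{thm: ord-ordlg}(1) which says the $R_{\overline{\rho}_{\widetilde{\wp}}}$-action on $\Ord_B(\widehat S(U^{\wp},\bW^{\wp})_{\overline{\rho}})$ factors through $R_{\overline{\rho}_{\widetilde{\wp}}}^{B-\ord}$, together with the description of $R^{B-\ord}_{\overline{\rho}_{\widetilde{\wp}}}$ as pro-representing $B$-ordinary deformations. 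In particular $\widetilde\rho_{\widetilde{\wp}}$ has a well-defined trianguline parameter $(\widetilde\chi_1,\dots,\widetilde\chi_n)$ over $E[\epsilon]/\epsilon^2$ with $\widetilde\chi_j\equiv\chi_j\pmod\epsilon$, and by construction $d\omega_{j,x}(v)$ is precisely the class of $\widetilde\chi_j$ in $\Hom(\Q_p^\times,E)$ under (\ref{equ: hL-cad}).

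\textbf{Key steps.} Having this, I would take the rank $2$ subquotient $\widetilde D_i^{i+1}$ of $\widetilde\rho_{\widetilde{\wp}}$: it is a deformation of $D_i^{i+1}$ over $\cR_{E[\epsilon]/\epsilon^2}$ which is trianguline with parameter $(\widetilde\chi_i,\widetilde\chi_{i+1})$. Since $D_i^{i+1}$ is special, noncritical and nonsplit (this is where I use that $\rho_{\widetilde{\wp}}$ is semi-stable noncrystalline with $N^{n-1}\ne 0$, forcing each adjacent extension $\cR_E(\chi_{i+1})$ by $\cR_E(\chi_i)$ in $D$ to be nonsplit), Corollary \ref{coro: hL-tri} applies and gives that the difference $\psi_i:=d\omega_{i,x}(v)-d\omega_{i+1,x}(v)$, which is the class such that $\widetilde\chi_{i+1}\widetilde\chi_i^{-1}=\chi_{i+1}\chi_i^{-1}(1+\psi_i\epsilon)$, lies in $\cL_{\FM}(D_i^{i+1}:\cR_E(\chi_i))$. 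This proves that $d\omega_{i,x}-d\omega_{i+1,x}$ indeed lands in the line $\cL_{\FM}(D_i^{i+1}:\cR_E(\chi_i))$. For surjectivity I would argue by dimension: by Lemma \ref{lem: lg2-tang1} the map $\overline d\omega_x:V_x\to\bigoplus_{i=1}^n\Hom(\Q_p^\times,E)/\Hom_\infty(\Q_p^\times,E)$ is bijective, so $\dim_E V_x=n$ and the composite $V_x\xrightarrow{d\omega_x}\bigoplus_{i=1}^n\Hom(\Q_p^\times,E)$ is injective with image a subspace surjecting onto each of the $n$ quotient lines; if some $d\omega_{i,x}-d\omega_{i+1,x}$ were zero on $V_x$, then $d\omega_{i,x}$ and $d\omega_{i+1,x}$ would agree on $V_x$, and combined with the bijectivity of $\overline d\omega_x$ one gets a contradiction — more precisely, the image of $d\omega_x$ is then contained in a proper subspace of $\bigoplus_{i}\Hom(\Q_p^\times,E)$ still surjecting onto all quotient lines, which is impossible by a rank count since $\cL_{\FM}(D_i^{i+1}:\cR_E(\chi_i))\ne\Hom_\infty(\Q_p^\times,E)$ (the latter because $D_i^{i+1}$, being semi-stable noncrystalline, has nonzero $\cL$-invariant, cf.\ Remark \ref{compatgl2}). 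Hence $d\omega_{i,x}-d\omega_{i+1,x}$ is nonzero on the $n$-dimensional space $V_x$ with image inside the $1$-dimensional line $\cL_{\FM}(D_i^{i+1}:\cR_E(\chi_i))$, so it is surjective onto it.

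\textbf{Main obstacle.} The routine parts are the identification of $d\omega_{j,x}(v)$ with the parameter of $\widetilde\chi_j$ and the bookkeeping with (\ref{equ: hL-cad}); these are direct from the constructions. The genuinely delicate point — and the one I expect to require the most care — is verifying that the infinitesimal Galois deformation $\widetilde\rho_{\widetilde{\wp}}$ attached to $v\in V_x$ is trianguline with a parameter \emph{deforming} $(\chi_1,\dots,\chi_n)$, so that the rank $2$ subquotient $\widetilde D_i^{i+1}$ really is a trianguline deformation of $D_i^{i+1}$ with parameter $(\widetilde\chi_i,\widetilde\chi_{i+1})$ in the precise sense needed to invoke Corollary \ref{coro: hL-tri}. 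This should follow from Theorem \ref{thm: ord-ordlg}(1) and the fact that, since $\rho_{\widetilde{\wp}}$ is strictly $P$-ordinary for every parabolic $P\supseteq B$, the full-flag ordinary filtration deforms uniquely (Lemma \ref{lem: ord-strict}), and that the $(\varphi,\Gamma)$-module of each graded piece $\chi_j$ deforms compatibly to $\widetilde\chi_j$; but pinning down the compatibility of "ordinary filtration on the Galois side" with "trianguline parameter on the $(\varphi,\Gamma)$-side" at the level of dual numbers is where I would spend the effort. Once that is in place, the $\cL$-invariant statement is a formal consequence of Corollary \ref{coro: hL-tri} plus the dimension count.
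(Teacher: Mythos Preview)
Your proposal is correct and follows essentially the same approach as the paper: both show the image lies in $\cL_{\FM}(D_i^{i+1}:\cR_E(\chi_i))$ via the Colmez--Greenberg--Stevens result (the paper cites Theorem~\ref{thm: hL-hL}, you cite its rank~$2$ case Corollary~\ref{coro: hL-tri}), and both deduce surjectivity from the bijectivity of $\overline{d}\omega_x$ in Lemma~\ref{lem: lg2-tang1}. The ``main obstacle'' you correctly identify---that the infinitesimal deformation $\widetilde\rho_{\widetilde{\wp}}$ carries a full flag deforming the one on $D$---is dispatched in the paper simply by invoking Proposition~\ref{prop: ord-Pdef2}(2), which identifies the generic fiber of $\Def^{B-\ord}_{\overline{\rho}_{\widetilde{\wp}},\{\overline{\chi}_i\}}$ at $\xi$ with $\Def^{B-\ord}_{\rho_{\widetilde{\wp}},\{\chi_i\}}$; this gives the desired filtration over $E[\epsilon]/\epsilon^2$ directly, without needing to go through Theorem~\ref{thm: ord-ordlg}(1) or Lemma~\ref{lem: ord-strict} separately.
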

\begin{proof}
Recall we have a morphism of rigid spaces (see (\ref{equ: ord-RT})):
\begin{equation*}
\omega': (\Spf \widetilde{\bT}(U^{\wp})_{\overline{\rho}}^{B-\ord})^{\rig}\lra(\Spf R_{\overline{\rho}_{\widetilde{\wp}}}^{B-\ord})^{\rig}.
\end{equation*}
For any nonzero $v$ in $V_x$, let $\widetilde{\rho}$ (resp. $\widetilde{\chi}_i$) be the $\Gal_{\Q_p}$-representation over $E[\epsilon]/\epsilon^2$ attached to $d\omega'_{x}(v)$ (resp. $d\omega_{i,x}(v)$). We know that $\widetilde{\rho}$ (resp. $\widetilde{\chi}_i$) is a deformation of $\rho_{\widetilde{\wp}}$ (resp. $\chi_i$) over $E[\epsilon]/\epsilon^2$. It follows from Proposition \ref{prop: ord-Pdef2}(2) that $v$ can be seen as an $E[\epsilon]/\epsilon^2$-valued point of $\Spec R_{{\rho}_{\widetilde{\wp}},\{\chi_i\}}^{B-\ord}$, hence that $\widetilde{\rho}$ is isomorphic to a successive extension of the $\widetilde{\chi}_i$ as $\Gal_{\Q_p}$-representation over $E[\epsilon]/\epsilon^2$. Then from Theorem \ref{thm: hL-hL} we easily deduce $(d\omega_{i,x}-d\omega_{i+1,x})(v)\in \cL_{\FM}(D_i^{i+1}: \cR_E(\chi_i))$ for all $i=1,\cdots, n-1$. If $d\omega_{i,x}-d\omega_{i+1,x}=0$ for some $i$ (equivalently $d\omega_{i,x}-d\omega_{i+1,x}$ is not surjective), then the morphism in Lemma \ref{lem: lg2-tang1} cannot be surjective, a contradiction.
\end{proof}

\noindent
For $r\in \{1, \cdots, n-1\}$, we denote by $P_r$ the parabolic subgroup as in (\ref{equ: ord-LP}) with $k=n-1$, $n_i=1$ for $i\in \{1,\cdots, n-1\}\backslash \{r\}$ and $n_i=2$ for $i=r$ (note that this implies $n\geq 3$). We have isomorphisms of smooth representations of $L_{P_r}(\Q_p)$ over $E$:
\begin{equation}\small\label{equ: lg1-ordPr}
\Ord_{P_r}(\St_n^{\infty} \otimes \chi_1 \circ \dett)\cong J_{P_r}(\St_n^{\infty}\otimes \chi_1\circ \dett)(\delta_{P_r}^{-1})\cong \Big(\big(\bigotimes_{\substack{i=1, \cdots, n-1\\ i\neq r}} 1\big) \otimes \St_2^{\infty}\Big)\otimes (\chi_1\circ \dett)
\end{equation}
where the first isomorphism follows from the second (see \S~\ref{sec: ord-lalg} for $\Ord_{P_r}$) and where the second easily follows from $J_{B\cap L_{P_r}}(J_{P_r}(\St_n^{\infty}))\cong J_B(\St_n^{\infty})\cong \delta_B$ and the usual adjunction for $J_{B\cap L_{P_r}}(\cdot)$.

\begin{corollary}\label{coro: lg2-simpL}
For $r=1,\cdots, n-1$, the restriction morphism:
\begin{multline}\label{equ: lg2-simpL}
\Hom_{L_{P_r}(\Q_p)}\Big(\big(\bigotimes_{\substack{i=1, \dots, n-1 \\ i \neq r}} \chi_1\big) \otimes ( \widehat{\pi}(\rho_r^{r+1})\otimes \varepsilon^{r-1} \circ \dett), \Ord_{P_r}(\widehat{S}(U^{\wp}, W^{\wp})_{\overline{\rho}})[\fm_\rho]\Big)\\
\lra \Hom_{L_{P_r}(\Q_p)}\Big( \Big(\big(\bigotimes_{\substack{i=1, \cdots, n-1\\ i\neq r}} 1\big) \otimes \St_2^{\infty}\Big)\otimes(
\chi_1\circ \dett), \Ord_{P_r}(\widehat{S}(U^{\wp}, W^{\wp})_{\overline{\rho}})[\fm_\rho]\Big)
\end{multline}
is an isomorphism. In particular, we have (see (\ref{equ: lg2-GL3lalg}) for $m(\rho)$):
\begin{equation*}
\dim_E \Hom_{L_{P_r}(\Q_p)}\Big(\big(\bigotimes_{\substack{i=1, \dots, n-1 \\ i \neq r}} \chi_1\big) \otimes ( \widehat{\pi}(\rho_r^{r+1})\otimes \varepsilon^{r-1} \circ \dett), \Ord_{P_r}(\widehat{S}(U^{\wp}, W^{\wp})_{\overline{\rho}})[\fm_\rho]\Big)=m(\rho).
\end{equation*}
\end{corollary}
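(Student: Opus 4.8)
The strategy is to combine the $\GL_2(\Q_p)$ local-global compatibility established for the parabolic $P_r$ (the isomorphism (\ref{equ: lg2-ordBlg}) generalized to $P_r$, i.e. the content of Theorem \ref{thm: lg2-lg} applied with $P=P_r$) with a dimension count that traces back to $m(\rho)$. First I would spell out the ordinary-part side: by Lemma \ref{ordpart} one has $\Ord_{P_r}=\Ord_{B\cap L_{P_r}}\circ\Ord_{P_r}$ compatibly, but more directly I would invoke the analogue of (\ref{equ: lg2-ordBlg}) for $P_r$, namely the evaluation isomorphism $X_{P_r}(U^{\wp})\widehat\otimes_{\widetilde{\bT}(U^{\wp})_{\overline\rho}^{P_r-\ord}}\pi^{\otimes}_{P_r}(U^{\wp})\xrightarrow{\sim}\Ord_{P_r}(\widehat S(U^{\wp},\bW^{\wp})_{\overline\rho})$ from Theorem \ref{thm: lg2-lg} (the hypothesis there is automatic here, since the only $\GL_2$-factor $\overline\rho_r$ appears and we may assume it satisfies the needed condition, or else the infinite-dimensionality condition holds as in \S~\ref{fewprel}). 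Localizing at $\fm_\rho$ and applying \cite[Lem.~3.1.17]{Em4} with $M=\widetilde{\bT}(U^{\wp})_{\overline\rho}^{P_r-\ord}/\fp_x$ gives, via Corollary \ref{coro: lg2-lg}, an $L_{P_r}(\Q_p)$-equivariant isomorphism
\[
\Ord_{P_r}\big(\widehat S(U^{\wp},W^{\wp})_{\overline\rho}\big)[\fm_\rho]\cong\Big(\big(\textstyle\bigotimes_{i\ne r}\chi_i\varepsilon^{\,\text{shift}}\big)\otimes\big(\widehat\pi(\rho_r^{r+1})\otimes\varepsilon^{r-1}\!\circ\dett\big)\Big)^{\oplus m_{P_r}(x)},
\]
where I will identify the shifts $\varepsilon^{s_i}$ with the appropriate powers so that $\otimes_{i\ne r}(\chi_i\otimes\varepsilon^{s_i})$ becomes (up to the determinant twist $\chi_1\circ\dett$ pulled out) the constant character $\bigotimes_{i\ne r}\chi_1$, using that $\chi_i=\varepsilon^{1-i}\chi_1$ and that $\widehat\pi(\rho_r^{r+1})\otimes\varepsilon^{r-1}\circ\dett$ carries the central character matching $\chi_r\chi_{r+1}$.

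Next I would compute $m_{P_r}(x)$. Exactly as in Lemma \ref{lem: lg2-mult2} (which is the case $P=B$), one shows $m_{P_r}(x)=m(\rho)$: the inequality $m_{P_r}(x)\ge m(\rho)$ follows from (\ref{equ: ord-alg2}), (\ref{equ: lg2-GL3lalg}) and (\ref{equ: lg1-ordPr}) (the locally algebraic vectors $\St_n^\infty\otimes\chi_1\circ\dett$ feed an $\Ord_{P_r}$-image of the required shape), and the reverse inequality $m_{P_r}(x)\le m(\rho)$ comes from the adjunction \cite[Thm.~4.4.6]{EOrd1} together with the vanishing $\Hom_{\GL_n(\Q_p)}((\Ind_{\overline P(\Q_p)}^{\GL_n(\Q_p)}1)^{\an}\otimes\chi_1\circ\dett,\widehat S(U^{\wp},W^{\wp})_{\overline\rho}[\fm_\rho]^{\an})=0$ for $P\supsetneq B$, which is Lemma \ref{lem: lg2-jacncpt} plus \cite[Cor.~3.4]{Br13I}, and from the fact that a map from $\St_2^\infty$ on the $\GL_2$-factor of $\Ord_{P_r}$ forces an extra copy of $\St_n^\infty$ in $\widehat S(U^{\wp},W^{\wp})_{\overline\rho}[\fm_\rho]^{\lalg}$ via $\St^{\an}_n$ and parabolic induction. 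With $m_{P_r}(x)=m(\rho)$ in hand, the isomorphism of the last displayed equation describes $\Ord_{P_r}(\widehat S(U^{\wp},W^{\wp})_{\overline\rho})[\fm_\rho]$ completely as a $\GL_2(\Q_p)\times(\prod\GL_1)$-representation.

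Given that explicit description, the restriction morphism (\ref{equ: lg2-simpL}) becomes the composition of $m(\rho)$ copies of a single local restriction map on the $\GL_2$-factor, namely
\[
\Hom_{\GL_2(\Q_p)}\big(\widehat\pi(\rho_r^{r+1}),\widehat\pi(\rho_r^{r+1})\big)\longrightarrow\Hom_{\GL_2(\Q_p)}\big(\St_2^\infty\otimes(\text{alg. twist}),\widehat\pi(\rho_r^{r+1})\big),
\]
tensored with the identity on the $\GL_1$-factors. Here $\widehat\pi(\rho_r^{r+1})^{\lalg}\cong\St_2^\infty\otimes(\text{alg. twist})$ since $\rho_r^{r+1}$ is semi-stable noncrystalline with the right Hodge-Tate weights (it is the rank-$2$ subquotient $D_r^{r+1}$ which is special noncritical and noncrystalline), and this $\St_2^\infty$-isotypic piece is $\soc_{\GL_2(\Q_p)}\widehat\pi(\rho_r^{r+1})^{\an}$; the map above is an isomorphism because $\widehat\pi(\rho_r^{r+1})$ is topologically irreducible (by the $p$-adic Langlands correspondence for $\GL_2(\Q_p)$, as $\rho_r^{r+1}$ is irreducible), so $\Hom_{\GL_2(\Q_p)}(\widehat\pi(\rho_r^{r+1}),\widehat\pi(\rho_r^{r+1}))=E$ and likewise $\Hom_{\GL_2(\Q_p)}(\St_2^\infty\otimes(\text{twist}),\widehat\pi(\rho_r^{r+1}))=E$, both generated by the obvious inclusion. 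Injectivity of (\ref{equ: lg2-simpL}) is clear since $\soc_{\GL_2(\Q_p)}\widehat\pi(\rho_r^{r+1})^{\an}=\St_2^\infty\otimes(\text{twist})$ forces a nonzero map out of $\widehat\pi(\rho_r^{r+1})$ to be nonzero on restriction; surjectivity is the substantive point and is precisely where the (topological) irreducibility of $\widehat\pi(\rho_r^{r+1})$ is used — a $\GL_2(\Q_p)$-map from $\St_2^\infty\otimes(\text{twist})$ into $\Ord_{P_r}(\cdots)[\fm_\rho]$, once we know the target is a direct sum of copies of $\widehat\pi(\rho_r^{r+1})$ (tensor constants), automatically extends to a map from $\widehat\pi(\rho_r^{r+1})$ by the universal property of the unitary completion of the locally algebraic Steinberg together with admissibility (\cite[\S~7]{ST03}, as in Corollary \ref{coro: lg1-bpBana}). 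The main obstacle I anticipate is the bookkeeping of the various twists — matching the determinant and cyclotomic powers $\varepsilon^{s_i}$, $\varepsilon^{r-1}$ against the parameters $\chi_i=\varepsilon^{1-i}\chi_1$ so that the $\GL_1$-factors really do become the constant $\chi_1$ and the $\GL_2$-factor is genuinely $\widehat\pi(\rho_r^{r+1})$ up to the stated twist — and, hidden inside $m_{P_r}(x)=m(\rho)$, re-running the freeness argument of Lemma \ref{lem: lg1-mult1}/Lemma \ref{lem: lg2-mult2} for $P_r$ in place of $B$; both are routine adaptations of arguments already in the text. The final dimension formula then follows since the right-hand side of (\ref{equ: lg2-simpL}) has dimension $m(\rho)$ by the description above.
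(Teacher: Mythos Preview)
Your approach is not wrong as a strategy, but it rests on an input that is not available in the setting of \S\ref{sec: lg2}. You invoke Theorem \ref{thm: lg2-lg} (and Corollary \ref{coro: lg2-lg}) for $P=P_r$ to obtain the structure of $\Ord_{P_r}(\widehat S(U^{\wp},W^{\wp})_{\overline\rho})[\fm_\rho]$, but that theorem carries the ``Ihara's lemma''-type hypothesis when the $\GL_2$-factor $\overline\rho_r$ is peu ramifi\'e up to twist. In the present situation $\overline\rho_r$ is (up to twist) a nonsplit extension of $1$ by $\overline\varepsilon$, so this hypothesis is genuinely in play, and nothing in \S\ref{fewprel} verifies it. The paper addresses this point explicitly in the Remark immediately following the Corollary: the whole purpose of the more elaborate argument is to prove the Corollary \emph{without} that hypothesis, because the Corollary is used on the way to the main result (Corollary \ref{main}) where no such assumption is imposed. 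Your parenthetical ``the hypothesis there is automatic here'' is therefore the gap.

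The paper's proof avoids Theorem \ref{thm: lg2-lg} entirely. It factors the restriction (\ref{equ: lg2-simpL}) through a chain passing successively through $\widehat\pi(\rho_r^{r+1})^{\an}$, then $\pi(0,\psi)^-$ (for a generator $\psi$ of $\cL_{\FM}(D_r^{r+1}:\cR_E(\chi_r))$), then $\St_2^\infty$. The first step is an isomorphism by the universal unitary completion property; injectivity of the remaining two steps uses eigenvariety socle constraints via \cite[Cor.~3.4]{Br13I}, the embedding (\ref{equiforP}) and Lemma \ref{lem: lg2-jacncpt}; surjectivity of the middle step is the Breuil-type argument of \cite[\S6.4, Cas $i=1$]{Br16}; and surjectivity of the last step is deduced from Proposition \ref{prop: lg2-sim}, which is where the simple $\cL$-invariant information enters. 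Note also that your attempt to compute $m_{P_r}(x)=m(\rho)$ ``as in Lemma \ref{lem: lg2-mult2}'' cannot be run directly: that lemma uses the $B$-ordinary structure theorem (i.e. Theorem \ref{thm: lg2-lg} for $P=B$, where all $n_i=1$ and the hypothesis is vacuous), and any adaptation to $P_r$ reintroduces the same unverified assumption. In the paper's logical order, $m_{P_r}(x)=m(\rho)$ is in fact \emph{deduced from} this Corollary, in the proof of Lemma \ref{lem: lg2-lfree2}.
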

\begin{proof}
Note first that $\chi_i\otimes \varepsilon^{s_i}=\chi_1$ for $i=1,\cdots,n$. Let $0\neq \psi \in \cL_{\FM}(D_r^{r+1}:\cR_E(\chi_r))$, then we have the following restriction maps:
\begin{align*}
\MoveEqLeft[5]  {\Hom_{L_{P_r}(\Q_p)}\Big(\big(\bigotimes_{\substack{i=1, \dots, n-1 \\ i \neq r}} \chi_1\big) \otimes ( \widehat{\pi}(\rho_r^{r+1})\otimes \varepsilon^{r-1} \circ \dett), \Ord_{P_r}(\widehat{S}(U^{\wp}, W^{\wp}))[\fm_x]\Big)}\\
\xlongrightarrow{\sim}{}&\Hom_{L_{P_r}(\Q_p)}\Big(\big(\bigotimes_{\substack{i=1, \dots, n-1 \\ i \neq r}} \chi_1\big) \otimes ( \widehat{\pi}(\rho_r^{r+1})^{\an}\otimes \varepsilon^{r-1} \circ \dett), \Ord_{P_r}(\widehat{S}(U^{\wp}, W^{\wp}))[\fm_x]^{\an}\Big)\\
\lra{}& \Hom_{L_{P_r}(\Q_p)}\Big(\big(\bigotimes_{\substack{i=1, \dots, n-1 \\ i \neq r}} \chi_1\big) \otimes ( \pi(0,\psi)^-\otimes (\chi_1 \circ \dett)), \Ord_{P_r}(\widehat{S}(U^{\wp}, W^{\wp}))[\fm_x]^{\an}\Big)\\
\lra{}& \Hom_{L_{P_r}(\Q_p)}\Big(\Big(\big(\bigotimes_{\substack{i=1, \cdots, n-1\\ i\neq r}} 1\big) \otimes \St_2^{\infty}\Big)\otimes(
\chi_1\circ \dett), \Ord_{P_r}(\widehat{S}(U^{\wp}, W^{\wp}))[\fm_x]^{\an}\Big)
\end{align*}
where the first isomorphism follows from the fact that the universal completion of $\widehat{\pi}(\rho_r^{r+1})^{\an}\cong \pi(0,\psi)\otimes \chi_1 \circ \dett$ is $\widehat{\pi}(\rho_r^{r+1})$ (\cite{CD} and see \S~\ref{sec: hL-ext1} for $\pi(0,\psi)$ and $\pi(0, \psi)^-$). Using \cite[Cor. 3.4]{Br13I}, (\ref{equiforP}) and Lemma \ref{lem: lg2-jacncpt}, we deduce that the second and third morphisms are injective by the same type of argument as in the proof of Proposition \ref{equ: lg1-adjan1}. By the same arguments as in \cite[\S~6.4\ Cas\ $i=1$]{Br16} using Lemma \ref{lem: Pord-comp}(2) and Lemma \ref{lem: lg2-jacncpt}, one can prove that the second morphism is moreover surjective (see also the end of the proof of Proposition \ref{equ: lg1-adjan1} for analogous considerations). By Proposition \ref{prop: lg2-sim} and an easier variation of step (c) in the proof of Theorem \ref{thm: lg2-GL3} below, it follows that the third morphism is also surjective (see also the proof of \cite[Prop. 12]{Ding3} for similar arguments). The last assertion follows from (\ref{equ: lg2-GL3lalg}), (\ref{equ: lg1-ordPr}) and Proposition \ref{prop: ord-adj}.
\end{proof}

\begin{remark}
{\rm Corollary \ref{coro: lg2-simpL} would actually be an easy consequence of Theorem \ref{thm: lg2-lg}, but we prove it here {\it without} the assumption in Theorem \ref{thm: lg2-lg}. This is important as it is used in the proof of the main result.}
\end{remark}

\subsubsection{Higher $\cL$-invariants}\label{sec: lg2-1}

\noindent
The main result of this section is Proposition \ref{prop: lg2-L-inv}, which can be seen as a version of Proposition \ref{prop: lg2-sim} for higher $\cL$-invariants. We still work in arbitrary dimension.\\

\noindent
We keep the notation and assumptions of \S\S~\ref{fewprel}~\&~\ref{simpleinv}. We fix $r\in \{1, \cdots, n-1\}$ and set $P:=P_r$ (so $p>n\geq 3$). Since $\overline{\rho}_{\widetilde{\wp}}$ is strictly $B$-ordinary, one can check that $\overline{\rho}_{\widetilde{\wp}}$ is strictly $P$-ordinary. With the notation of \S~\ref{sec: ord-galoisdef} we have $k=n-1$ and:
\begin{equation*}
\overline{\rho}_i = \begin{cases}
  \overline{\chi}_i & i\in \{1,\cdots,r-1\} \\
  {\rm nonsplit\ extension\ of}\ \overline{\chi}_{r+1}\ {\rm by}\ \overline{\chi}_r & i=r \\
  \overline{\chi}_{i+1} & i\in \{r+1,\cdots,n-1\}
 \end{cases}
\end{equation*}
with $\overline{\rho}_r$ satisfying (\ref{hypo: hL-modp}).

\begin{lemma}\label{lem: lg2-lfree2}
The $\widetilde{\bT}(U^{\wp})^{P-\ord}_{\overline{\rho}}[1/p]$-module $M_P(U^{\wp})[1/p]$ is locally free at $x$.
\end{lemma}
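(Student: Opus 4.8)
The statement to prove is Lemma~\ref{lem: lg2-lfree2}: that the $\widetilde{\bT}(U^{\wp})^{P-\ord}_{\overline{\rho}}[1/p]$-module $M_P(U^{\wp})[1/p]$ is locally free at the point $x$. The plan is to adapt the strategy of the proof of Lemma~\ref{lem: lg2-lfree1} (the case $P=B$) to the present parabolic $P=P_r$, whose Levi has exactly one $\GL_2$-factor. First I would pick an arbitrary irreducible component $X=\Spec A:=\Spec(\widetilde{\bT}(U^{\wp})^{P-\ord}_{\overline{\rho}}[1/p]/\fp)$ through $x$ and, as in Lemma~\ref{lem: lg2-lfree1}, reduce to showing that the coherent sheaf attached to the $A$-module $M_P(U^{\wp})[1/p]/\fp$ is locally free at $x$; the conclusion then follows from upper semicontinuity of fibre rank (\cite[Ex.~II.5.8]{Harts}) together with the fact that $\widetilde{\bT}(U^{\wp})^{P-\ord}_{\overline{\rho}}$ is reduced (Lemma~\ref{lem: Pord-reduce}). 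The set $Z:=\{\text{benign points in }X\}\cup\{x\}$ is Zariski-dense in $X$ by Proposition~\ref{thm: lg1-bp}(2), so it suffices to prove that the fibre rank $m_P(\cdot)$ is \emph{constant} of value $m(\rho)$ on $Z$.

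The second step is therefore to identify $m_P(x')$ for $x'\in Z$. For benign $x'$ I would invoke Lemma~\ref{lem: lg1-mult1}, giving $m_P(x')=m(x')$, and for the special point $x$ itself I need the analogue of Lemma~\ref{lem: lg2-mult2}, i.e. $m_P(x)=m(\rho)$; this last equality should follow by the same argument as for $m_B(x)=m(\rho)$, using the adjunction $\Ord_P$ of \cite[Thm.~4.4.6]{EOrd1}, the computation $\Ord_P(\St_n^\infty\otimes\chi_1\circ\dett)$ of (\ref{equ: lg1-ordPr}), and the vanishing of $\Hom_{\GL_n(\Q_p)}$ from the larger generalized Steinberg pieces via \cite[Cor.~3.4]{Br13I} and Lemma~\ref{lem: lg2-jacncpt}, combined with Corollary~\ref{coro: lg2-lg} and the description (\ref{equ: lg2-GL3lalg}) of $\widehat{S}(U^{\wp},W^{\wp})_{\overline{\rho}}[\fm_\rho]^{\lalg}$. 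The third step is the key input making all the $m(x')$ equal: exactly as in step (b) of Lemma~\ref{lem: lg2-lfree1}, one uses the Weil--Deligne representation attached to the big Galois representation $\rho_{A,\fl}:\Gal_{F_\fl}\to\GL_n(A)$ over the affine scheme $X$ (via \cite[Prop.~4.1.6]{EH} and the affine-scheme version of \cite[Prop.~7.8.19]{Bch}), so that the analogues of \cite[Lem.~4.5]{Che11} (any $n$) and \cite[Lem.~4.6]{Che11} ($n\le 3$, which holds here under Hypothesis~\ref{hypoglobal}) apply with $(\rho,\co(X))$ replaced by $(\rho_{A,\fl}|_{W_{F_\fl}},A)$; combined with multiplicity one $m(\pi)=1$ (Rogawski/Labesse) and formulas (\ref{equ: lg2-mult1})--(\ref{equ: lg2-mult2}) this yields $m(x')=m(\rho)$, hence $m_P(x')=m(\rho)$, for all $x'\in Z$. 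Finally, upper semicontinuity shows the open locus $U_{m(\rho)}=\{\fp':m_P(\fp')\le m(\rho)\}$ contains $Z$, hence is dense, hence is all of $X$; it contains $x$, and by \cite[Ex.~II.5.8(c)]{Harts} the sheaf is locally free on $U_{m(\rho)}$, which finishes the proof.

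The main obstacle I anticipate is verifying that the $\GL_2$-factor in $L_{P_r}$ does not break the constancy argument of step (c) of Lemma~\ref{lem: lg2-lfree1}. Concretely, one must be sure that the benign points of $X$ really do compute $m_P$ via the classical automorphic multiplicity $m(x')$ in the same way, and that the Chenevier-type freeness statements \cite[Lem.~4.5--4.6]{Che11}---which are about the $W_{F_\fl}$-representation for $\fl\nmid p$ and are insensitive to the parabolic at $\wp$---indeed force equidistribution of $m(x')$ across $Z$. Since the argument at places away from $p$ is genuinely independent of $P$, I expect this to go through, but the bookkeeping identifying $m_P(x')$ with the relevant $\dim_{\overline{\Q_p}}(\pi^{\infty,p})^{U^p}$ (via Lemma~\ref{lem: lg1-mult1} and Proposition~\ref{prop: lg1-oralgJ}, which require $\rho_{x'_i}$ crystalline when $n_i=2$, hence $\overline{\rho}_r$ satisfying (\ref{hypo: hL-modp})) is the place where care is needed. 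The rest is a routine transcription of the $P=B$ case.
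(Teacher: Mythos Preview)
Your overall strategy---restrict to an irreducible component $X=\Spec A$, show $m_P(\cdot)$ is constant equal to $m(\rho)$ on the dense set $Z=\{\text{benign points}\}\cup\{x\}$, then conclude by upper semicontinuity---is exactly the paper's route (it explicitly says ``the lemma then follows by the same argument as in the proof of Lemma~\ref{lem: lg2-lfree1}''), and your Chenevier step for constancy of $m(x')$ on benign points is correct and unchanged from the $B$-case.

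The one genuine gap is in how you propose to establish $m_P(x)=m(\rho)$ at the special point. You plan to rerun the proof of Lemma~\ref{lem: lg2-mult2}, invoking in particular Corollary~\ref{coro: lg2-lg}. But Corollary~\ref{coro: lg2-lg} rests on Theorem~\ref{thm: lg2-lg}, which for $P=P_r$ (where the Levi has a $\GL_2$-block) carries the Ihara-type hypothesis: if $\overline{\rho}_r$ is peu ramifi\'e up to twist, one must assume every $L_P(\Q_p)$-subrepresentation of $\Ord_P(S(U^{\wp},\bW^{\wp}/\varpi_E)_{\overline{\rho}})$ has infinite-dimensional $\GL_2$-component. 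In the running setup of \S~\ref{sec: lg2-1}, $\overline{\rho}_r$ is (up to twist) a nonsplit extension of $1$ by $\overline{\varepsilon}$, so it can perfectly well be peu ramifi\'e, and this hypothesis is \emph{not} among the standing assumptions. The paper is explicit about this: see the Remark immediately after Corollary~\ref{coro: lg2-simpL}, which stresses that Corollary~\ref{coro: lg2-simpL} is proved \emph{without} the assumption of Theorem~\ref{thm: lg2-lg} precisely because it feeds into the main result.

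The fix is simple and is what the paper actually does: combine (\ref{equ: lg1-Xx}) directly with the last statement of Corollary~\ref{coro: lg2-simpL}. Since for $P=P_r$ one has $\widehat{\pi}(\rho_{x_i})\otimes\varepsilon^{s_i}=\chi_1$ for $i\neq r$ and $\widehat{\pi}(\rho_{x_r})\otimes\varepsilon^{r-1}\circ\dett=\widehat{\pi}(\rho_r^{r+1})\otimes\varepsilon^{r-1}\circ\dett$, the $\Hom$-space on the right of (\ref{equ: lg1-Xx}) (inverted at $p$) is literally the space whose dimension Corollary~\ref{coro: lg2-simpL} computes as $m(\rho)$; hence $m_P(x)=m(\rho)$ with no detour through Corollary~\ref{coro: lg2-lg} and no extra hypothesis. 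Once you make this substitution, the rest of your argument goes through verbatim.
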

\begin{proof}
By (\ref{equ: lg1-Xx}) and the last statement in Corollary \ref{coro: lg2-simpL} we have $m_P(x)=m(\rho)$. Together with Lemma \ref{lem: lg1-mult1}, the lemma then follows by the same argument as in the proof of Lemma \ref{lem: lg2-lfree1}.
\end{proof}

\noindent
We denote by $V_x$ the tangent space of $(\Spf \widetilde{\bT}(U^{\wp})^{P-\ord}_{\overline{\rho}})^{\rig}$ at $x$ and by $\overline{d}\omega_x$ the induced morphism:
\begin{equation}\label{domegabar}
\overline{d}\omega_x: V_x \lra \bigoplus_{i=1, \cdots, n-1} \big(\Ext^1_{\Gal_{\Q_p}}(\rho_{x_{i}}, \rho_{x_{i}})/\Ext^1_g(\rho_{x_i}, \rho_{x_i})\big)
\end{equation}
where $\omega=(\omega_i)_{i=1,\cdots, n-1}: (\Spf \widetilde{\bT}(U^{\wp})^{P-\ord}_{\overline{\rho}})^{\rig} \lra \prod_{i=1,\cdots, n-1} (\Spf R_{\overline{\rho}_i})^{\rig}$ (there will be no confusion with the tangent space $V_x$ and the map $\omega$ in \S~\ref{simpleinv} and note that $\rho_{x_i}=\chi_i$ if $i<r$, $\rho_{x_i}=\chi_{i+1}$ if $i>r$ and $\rho_{x_r}=\rho_r^{r+1}$). The following lemma is analogous to Lemma \ref{lem: lg2-tang1}.

\begin{lemma}\label{lem: lg2-tang2}
The morphism $\overline{d}\omega_x$ is bijective.
\end{lemma}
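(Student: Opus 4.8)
\textbf{Proof proposal for Lemma \ref{lem: lg2-tang2}.}

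The plan is to mirror the proof of Lemma \ref{lem: lg2-tang1}, replacing the rank-one factor at position $r$ by the rank-two $(\varphi,\Gamma)$-module $D_r^{r+1}$ and using the $p$-adic local Langlands correspondence for $\GL_2(\Q_p)$ together with the results of \S~\ref{gl2ordinarylocalglobal}. First I would compute the dimension of the target of $\overline{d}\omega_x$: by the discussion before Lemma \ref{lem: lg1-trid}, $\dim_E\Ext^1_{\Gal_{\Q_p}}(\rho_{x_i},\rho_{x_i})/\Ext^1_g(\rho_{x_i},\rho_{x_i})$ equals $1$ when $i\ne r$ (so $n_i=1$) and $3$ when $i=r$ (since $\rho_r^{r+1}\cong D_r^{r+1}$ is, up to twist, semi-stable noncrystalline of rank $2$, so $\dim_E\Ext^1=5$ and $\dim_E\Ext^1_g=2$ by Lemma \ref{lem: hL-dRg} — note $\overline{\rho}_r$ satisfies (\ref{hypo: hL-modp}) and $p>n\geq 3$ so Hypothesis \ref{hypo: hL-pLL0} holds by Proposition \ref{prop: hL-gl2pLL} and Proposition \ref{prop: hL-gl2pLL2}, giving Hypothesis \ref{hypo: hL-dim}). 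Hence the target has dimension $(n-2)\cdot 1 + 3 = n+1$. On the other hand, by Proposition \ref{prop: lg1-dim} applied to $P=P_r$ (which has $k=n-1$ factors), each irreducible component of $\Spec\widetilde{\bT}(U^{\wp})_{\overline{\rho}}^{P-\ord}[1/p]$ has dimension $\geq n+(n-k)=n+1$, so $\dim_EV_x\geq n+1$. Therefore it suffices to prove $\overline{d}\omega_x$ is injective.

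For injectivity, let $0\neq v\in V_x$ with $\overline{d}\omega_x(v)=0$ and let $\cI_v$ be the associated ideal of $\widetilde{\bT}(U^{\wp})_{\overline{\rho}}^{P-\ord}$ with $\widetilde{\bT}(U^{\wp})_{\overline{\rho}}^{P-\ord}/\cI_v\cong E[\epsilon]/\epsilon^2$ (after inverting $p$). By Lemma \ref{lem: lg2-lfree2} and \cite[Prop. C.11]{Em4} applied with $M=\widetilde{\bT}(U^{\wp})_{\overline{\rho}}^{P-\ord}/\cI_v$ (exactly as in (\ref{dualdual})), the $E[\epsilon]/\epsilon^2$-module $(X_P(U^{\wp})[\cI_v])[1/p]$ is free of rank $m_P(x)=m(\rho)$. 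Write $\widetilde{\rho}_{x_i}:=d\omega_{i,x}(v)$, a deformation of $\rho_{x_i}$ over $E[\epsilon]/\epsilon^2$; since $\overline{d}\omega_x(v)=0$, each $\widetilde{\rho}_{x_i}$ lies in $\Ext^1_g(\rho_{x_i},\rho_{x_i})$, i.e. $\widetilde{\rho}_{x_i}$ is (de Rham hence) crystalline for $i\ne r$ and de Rham for $i=r$. From the definition (\ref{equ: lg1-pii}) we get $(\pi_i(U^{\wp})/\cI_v)[1/p]\cong\widehat{\pi}(\widetilde{\rho}_{x_i})$ where for $i\ne r$ this is a one-dimensional deformation of $\chi_i$ that is locally algebraic (by (\ref{ginfty})), and for $i=r$ it is a deformation $\widetilde{\pi}$ of $\widehat{\pi}(\rho_r^{r+1})=\pi(0,\psi)\otimes\chi_1\circ\dett$ over $E[\epsilon]/\epsilon^2$ which, being the $p$-adic local Langlands correspondent of a de Rham deformation, satisfies $\widetilde{\pi}^{\lalg}\neq\widehat{\pi}(\rho_r^{r+1})^{\lalg}$ by Remark \ref{rem: hL-sp} and Proposition \ref{prop: hL-gl2pLL2} (via the isomorphism (\ref{equ: hL-pLLdR})), so $\widetilde{\pi}\in\Ext^1_g(\widehat{\pi}(\rho_r^{r+1}),\widehat{\pi}(\rho_r^{r+1}))$ in the notation of Lemma \ref{lem: hL-dR}. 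Setting $\pi^{\otimes}:=\widehat{\otimes}_{i=1}^{n-1}(\widehat{\pi}(\rho_{x_i})\otimes\varepsilon^{s_i}\circ\dett)$ (over $E$) and $\widetilde{\pi}^{\otimes}$ the corresponding $E[\epsilon]/\epsilon^2$-completed tensor product, Theorem \ref{thm: lg2-lg} (whose hypothesis is automatic here since all $n_i\leq 2$ and, when $n_r=2$, $\overline{\rho}_r$ peu ramifié forces $\St_2^{\infty}$ in any subrepresentation of the ordinary part — this needs to be checked but follows as in the proof of \cite[Thm. 5.7.7]{Em4}; alternatively one replaces Theorem \ref{thm: lg2-lg} by Corollary \ref{coro: lg2-simpL} combined with Lemma \ref{lem: lg2-lfree2}) gives via \cite[Lem. 3.1.17]{Em4} a commutative diagram
\begin{equation*}
\begin{CD}
(\pi^{\otimes})^{\oplus m(\rho)} @>\sim>> \Ord_P(\widehat{S}(U^{\wp},W^{\wp})_{\overline{\rho}})[\fm_\rho] \\
@VVV @VVV \\
(\widetilde{\pi}^{\otimes})^{\oplus m(\rho)} @>\sim>> \Ord_P(\widehat{S}(U^{\wp},W^{\wp})_{\overline{\rho}})[\cI_v].
\end{CD}
\end{equation*}

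Now the key point: since each $\widehat{\pi}(\widetilde{\rho}_{x_i})$ with $i\ne r$ is locally algebraic and $\widetilde{\pi}$ (at $i=r$) has $\widetilde{\pi}^{\lalg}$ strictly larger but still locally algebraic, the $L_P(\Q_p)$-representation $\widetilde{\pi}^{\otimes}$ admits a dominant $L_P(\Z_p)$-algebraic structure in the relevant sense; more precisely $J_{B\cap L_P}(\widetilde{\pi}^{\otimes})$ is a locally algebraic character of $T(\Q_p)$ over $E[\epsilon]/\epsilon^2$. Since $\fm_\rho^2\subseteq\cI_v[1/p]$, Proposition \ref{prop: lg1-semisimp} (applied to the smooth part of $\widetilde{\pi}^{\otimes}$ tensored with the appropriate $L_P(\lambda)$) forces the bottom horizontal isomorphism to factor through $\Ord_P(\widehat{S}(U^{\wp},W^{\wp})_{\overline{\rho}})[\fm_\rho]$ — because any $L_P(\Q_p)$-map from the locally algebraic object $\widetilde{\pi}^{\otimes}$ into $\Ord_P(\cdots)\{\fm_x\}$ lands in $\Ord_P(\cdots)[\fm_x]$. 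This contradicts the fact that $\widetilde{\pi}^{\otimes}\to\Ord_P(\cdots)[\cI_v]$ is injective and $\cI_v\subsetneq\fm_\rho$ strictly (so $\Ord_P(\cdots)[\cI_v]\supsetneq\Ord_P(\cdots)[\fm_\rho]$). Hence $v=0$, proving injectivity and the lemma. The main obstacle I anticipate is verifying that $\widetilde{\pi}^{\otimes}$ is genuinely locally algebraic in the precise form needed to apply Proposition \ref{prop: lg1-semisimp} — this rests on the compatibility between de Rham-ness of $\widetilde{\rho}_{x_r}$ and local algebraicity of $\widehat{\pi}(\widetilde{\rho}_{x_r})^{\lalg}$ packaged in Proposition \ref{prop: hL-gl2pLL2} and Remark \ref{rem: hL-sp}, and on handling the completed tensor product, which should follow from \cite[Cor.~4.25]{OS2} and standard properties of locally algebraic vectors under $\widehat{\otimes}$ as in \cite[Lem.~2.10]{BH2}.
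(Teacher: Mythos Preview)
Your overall strategy is right and closely parallels the paper's: reduce to injectivity via the dimension count, then derive a contradiction from a vector $v$ with $\overline{d}\omega_x(v)=0$ using local freeness of $M_P(U^{\wp})[1/p]$ at $x$ and Proposition~\ref{prop: lg1-semisimp}. However, the final contradiction step has a genuine gap.

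The problem is your claim that $\widetilde{\pi}^{\otimes}$ is (essentially) locally algebraic once $\overline{d}\omega_x(v)=0$, so that Proposition~\ref{prop: lg1-semisimp} forces the whole bottom map to land in $\Ord_P(\cdots)[\fm_\rho]$. This is false: even when $\widetilde{\rho}_{x_r}\in\Ext^1_g$, the representation $\widetilde{\pi}_r$ is not locally algebraic (only $\widetilde{\pi}_r^{\lalg}\subsetneq\widetilde{\pi}_r$ becomes an extension of $\widehat{\pi}(\rho_{x_r})^{\lalg}$ by itself, via (\ref{equ: hL-pLLdR})). So Proposition~\ref{prop: lg1-semisimp} applies only to the restriction of the bottom map to $(\widetilde{\pi}^{\otimes})^{\lalg}$, not to all of $\widetilde{\pi}^{\otimes}$, and your contradiction does not follow. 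What is missing is the paper's step showing that the inclusion $\iota:\pi\hookrightarrow\widetilde{\pi}$ has image exactly $\epsilon\widetilde{\pi}$ (equivalently $\pi\cong\widetilde{\pi}[\epsilon]$); this uses \cite[Lem.~3.1.17]{Em4} and the freeness of $(X_P(U^{\wp})[\cI_v])[1/p]$ over $E[\epsilon]/\epsilon^2$. Once you know $\iota(\pi^{\lalg})=\epsilon\widetilde{\pi}^{\lalg}$, the $E[\epsilon]/\epsilon^2$-linearity of the bottom map combined with the fact that its restriction to $(\widetilde{\pi}^{\lalg})^{m_P(x)}$ lands in $[\fm_\rho]$ (by Proposition~\ref{prop: lg1-semisimp}) forces $\iota(\pi^{\lalg})^{m_P(x)}\mapsto 0$, contradicting injectivity of the top map.

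A secondary point: you invoke Theorem~\ref{thm: lg2-lg} to get isomorphisms in your diagram, which carries the extra peu ramifi\'e hypothesis. The paper avoids this entirely: it only needs the evaluation map (\ref{equ: lg2-ev}) to produce the diagram (\ref{equ: lg2-linvP}), with injectivity of the top map coming from Corollary~\ref{coro: lg2-simpL}. Your suggested alternative (``Corollary~\ref{coro: lg2-simpL} combined with Lemma~\ref{lem: lg2-lfree2}'') is indeed the right route here.
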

\begin{proof}
Since $\dim_E V_x\geq n+1$ by Proposition \ref{prop: lg1-dim} and the right hand side of (\ref{domegabar}) has dimension $(n-2)+(5-2)=n+1$ by Lemma \ref{lem: hL-ext2} and Lemma \ref{lem: hL-dRg}(3), it is enough to prove that $\overline{d}\omega_x$ is injective.\\
\noindent
(a) Let $v\in V_x$, $\cI_v:=\Ker(\widetilde{\bT}(U^{\wp})^{P-\ord}_{\overline{\rho}}\rightarrow E[\epsilon]/\epsilon^2)$ the ideal attached to $v$ (so $\widetilde{\bT}(U^{\wp})^{P-\ord}_{\overline{\rho}}/\cI_v\!\cong \co_E[\epsilon]/\epsilon^2$) and $\widetilde{\rho}_i$ for $i=1, \cdots, n-1$ the extension of $\rho_{x_i}$ by $\rho_{x_i}$ associated to $d\omega_{i,x}(v)$. Denote by $\widetilde{\pi}_i:=(\pi_i(U^{\wp})\otimes_{\widetilde{\bT}(U^{\wp})^{P-\ord}_{\overline{\rho}}} \widetilde{\bT}(U^{\wp})^{P-\ord}_{\overline{\rho}}/\cI_v)[1/p]$ (cf. (\ref{equ: lg1-pii})), which is isomorphic to the unitary Banach representation of $\GL_{n_i}(\Q_p)$ over $E[\epsilon]/\epsilon^2$ attached to $\widetilde{\rho}_i$ via (\ref{equ: hL-pLL0}), Proposition \ref{prop: hL-gl2pLL} and Remark \ref{rem: hL-pLL2}(2). Note that for $i\ne r$ we have $\widetilde{\pi}_i\cong \widetilde{\rho}_i$ as characters of $\Q_p^{\times}$ over $E[\epsilon]/\epsilon^2$. We set (cf. (\ref{equ: lg1-piup})):
\begin{equation*}
\pi:=\big(\pi^{\otimes}_P(U^{\wp}) \otimes_{\widetilde{\bT}(U^{\wp})^{P-\ord}_{\overline{\rho}}} \widetilde{\bT}(U^{\wp})^{P-\ord}_{\overline{\rho}}/\fp_x\big)[1/p]\cong \Big(\bigotimes_{\substack{i=1,\cdots, n-1\\ i\neq r}}\!\!\! \rho_{x_i}\otimes \varepsilon^{s_i}\Big)\otimes_E (\widehat{\pi}(\rho_{x_r})\otimes \varepsilon^{r-1}\circ \dett)
\end{equation*}
\begin{equation*}
\widetilde{\pi}:=\big(\pi^{\otimes}_P(U^{\wp}) \otimes_{\widetilde{\bT}(U^{\wp})^{P-\ord}_{\overline{\rho}}} \widetilde{\bT}(U^{\wp})^{P-\ord}_{\overline{\rho}}/\cI_v\big)[1/p]\cong \Big(\bigotimes_{\substack{i=1,\cdots, n-1\\ i\neq r}} \widetilde{\rho}_i\otimes \varepsilon^{s_i}\Big)\otimes_{E[\epsilon]/\epsilon^2} (\widetilde{\pi}_r \otimes \varepsilon^{r-1}\circ \dett)
\end{equation*}
where the tensor product of the $\widetilde{\rho}_i$ in the last term is over $E[\epsilon]/\epsilon^2$. Since $\widetilde{\pi}_i$ is free of rank one over $E[\epsilon]/\epsilon^2$ for $i\neq r$, we see that $\widetilde{\pi}$ is isomorphic to an extension of $\pi$ by $\pi$.
Since $M_P(U^{\wp})[1/p]$ is locally free at $x$ by Lemma \ref{lem: lg2-lfree2}, by (\ref{alglin}) and the discussion that follows we see that the evaluation map (\ref{equ: lg2-ev}) induces a commutative diagram:
\begin{equation}\label{equ: lg2-linvP}
\begin{CD}
\pi^{m_P(x)}@>>> \Ord_P(\widehat{S}(U^{\wp}, W^{\wp})_{\overline{\rho}})[\fm_\rho] \\
@V \iota VV @VVV\\
\widetilde{\pi}^{m_P(x)} @>>> \Ord_P(\widehat{S}(U^{\wp}, W^{\wp})_{\overline{\rho}})[\cI_v]
\end{CD}
\end{equation}
where the vertical maps are the natural injections (coming from $\pi\subseteq \widetilde{\pi}[\epsilon]$ for the first) and where the top horizontal map is also injective by Corollary \ref{coro: lg2-simpL} (and its proof).\\
\noindent
(b) We prove that the injection $\pi\xrightarrow{\iota} \widetilde{\pi}$ has image exactly $\epsilon \widetilde{\pi}$. It is enough to prove that $\iota$ induces $\pi\buildrel\sim\over\rightarrow \widetilde{\pi}[\epsilon]$ (since then we have a short exact sequence $0\rightarrow \pi \buildrel\iota\over\rightarrow \widetilde{\pi} \rightarrow \epsilon \widetilde{\pi}\rightarrow0$ and we use that $\widetilde{\pi}$ is an extension of $\pi$ by $\pi$). From \cite[Lem.~3.1.17]{Em04} we deduce isomorphisms:
\begin{eqnarray*}
\Big(\big(X_P(U^{\wp})\widehat\otimes_{\widetilde{\bT}(U^{\wp})_{\overline{\rho}}^{P-\ord}}\pi^{\otimes}_P(U^{\wp})\big)[\cI_v]\Big)[1/p]&\cong &(X_P(U^{\wp})[\cI_v])[1/p]\otimes_{E[\epsilon]/\epsilon^2}\widetilde{\pi}\cong \widetilde{\pi}^{m_P(x)}\\
\Big(\big(X_P(U^{\wp})\widehat\otimes_{\widetilde{\bT}(U^{\wp})_{\overline{\rho}}^{P-\ord}}\pi^{\otimes}_P(U^{\wp})\big)[\fp_x]\Big)[1/p]&\cong &(X_P(U^{\wp})[\fp_x])[1/p]\otimes_{E[\epsilon]/\epsilon^2}{\pi}\cong {\pi}^{m_P(x)}
\end{eqnarray*}
using that $(X_P(U^{\wp})[\cI_v])[1/p]$ is free of rank $m_P(x)$ over $E[\epsilon]/\epsilon^2$ by the same argument as in the proof of Lemma \ref{lem: lg2-tang1}. The result follows using $(\cdot)[\cI_v][\epsilon]=(\cdot)[\fp_x]$.\\
\noindent
(c) Suppose now that we have $\overline{d}\omega_x(v)=0$. Then it follows that $\widetilde{\pi}_i^{\lalg}=\widetilde{\pi}_i=\widetilde{\rho}_i$ is locally algebraic when $i\ne r$ (use (\ref{ginfty})) and that $\widetilde{\pi}_r^{\lalg}$ is an extension of $\widehat{\pi}(\rho_{x_r})^{\lalg}$ by $\widehat{\pi}(\rho_{x_r})^{\lalg}$ when $i=r$ (use (\ref{equ: hL-pLLdR})). In particular we have a commutative diagram:
\begin{equation}\label{equ: lg2-split1}\begin{CD}
 0 @>>> \pi^{\lalg} @>\iota >> \widetilde{\pi}^{\lalg} @>>> \pi^{\lalg} @>>> 0 \\
 @. @VVV @VVV @VVV @. \\
 0 @>>> \pi @>\iota >> \widetilde{\pi} @>>> \pi @>>> 0
 \end{CD}
\end{equation}
where the vertical maps are the natural inclusions. By (b), the multiplication by $\epsilon$ on $\widetilde{\pi}$ factors as $\widetilde{\pi}\twoheadrightarrow {\pi}\buildrel\sim\over\rightarrow \epsilon\widetilde{\pi}\hookrightarrow \widetilde{\pi}$. It follows from (\ref{equ: lg2-split1}) that the multiplication by $\epsilon$ on $\widetilde{\pi}^{\lalg}$ also factors as $\widetilde{\pi}^{\lalg}\twoheadrightarrow {\pi}^{\lalg}\buildrel\sim\over\rightarrow \epsilon\widetilde{\pi}^{\lalg}\hookrightarrow \widetilde{\pi}^{\lalg}$, in particular we have $\iota({\pi}^{\lalg})=\epsilon \widetilde{\pi}^{\lalg}$ inside $\widetilde{\pi}^{\lalg}$. From $\fm_\rho^2\subseteq \cI_v[1/p]$ and Proposition \ref{prop: lg1-semisimp}, any morphism $\widetilde{\pi}^{\lalg} \ra \Ord_P(\widehat{S}(U^{\wp}, W^{\wp})_{\overline{\rho}})[\cI_v]$ factors through $\widetilde{\pi}^{\lalg} \ra \Ord_P(\widehat{S}(U^{\wp}, W^{\wp})_{\overline{\rho}}^{\lalg})[\fm_{\rho}]$. It then follows that the bottom horizontal morphism in (\ref{equ: lg2-linvP}), which is $E[\epsilon]/\epsilon^2$-linear, sends $(\epsilon \widetilde{\pi}^{\lalg})^{m_P(x)}=\iota(\pi^{\lalg})^{m_P(x)}$ to $0$, which contradicts the injections in (\ref{equ: lg2-linvP}). The lemma follows.
\end{proof}

\noindent
We consider the $E$-linear injection $\xi: \Hom(\Q_p^{\times}, E) \hookrightarrow \Ext^1_{\Gal_{\Q_p}}(\rho_r^{r+1}, \rho_r^{r+1}), \ \psi \mapsto \rho_r^{r+1}\otimes_E (1+\psi\epsilon)$ and set $d\omega_{r,x}^+:=d\omega_{r,x} -\xi \circ d \omega_{r+1,x}$ (if $r<n-1$) and $d\omega_{r,x}^-:=d\omega_{r,x} -\xi \circ d \omega_{r-1,x}$ (if $r>1$). The following result is somewhat analogous to Proposition \ref{prop: lg2-sim} (see \S~\ref{sec: hL-FM} for $\cL_{\FM}(\cdot)$ and $\ell_{\FM}(\cdot)$).

\begin{proposition}\label{prop: lg2-L-inv}
(1) Inside $\Ext^1_{(\varphi,\Gamma)}(D_{r}^{r+1}\!, D_{r}^{r+1})\!\cong \!\Ext^1_{\Gal_{\Q_p}}\!(\rho_r^{r+1}\!, \rho_r^{r+1})$ we have $\Ima (d\omega_{r,x}^+)\!\subseteq \cL_{\FM}(D_{r}^{r+2}:D_{r}^{r+1})$ (if $r<n-1$) and $\Ima (d\omega_{r,x}^-) \subseteq \cL_{\FM}(D_{r-1}^{r+1}: D_{r}^{r+1})$ (if $r>1$).\\
\noindent
(2) If $r<n-1$ (resp. if $r>1$) the composition:
\begin{equation*}
\Ima (d\omega_{r,x}^+) \hooklongrightarrow \Ext^1_{(\varphi,\Gamma)}(D_{r}^{r+1}, D_{r}^{r+1}) \twoheadlongrightarrow \Ext^1_{(\varphi,\Gamma)}(D_{r}^{r+1}, \cR_E(\chi_{r+1}))
\end{equation*}
\begin{equation*}
\text{(resp. }\Ima (d\omega_{r,x}^-) \hooklongrightarrow \Ext^1_{(\varphi,\Gamma)}(D_{r}^{r+1}, D_{r}^{r+1}) \twoheadlongrightarrow \Ext^1_{(\varphi,\Gamma)}(\cR_E(\chi_{r}), D_{r}^{r+1})\text{)}
\end{equation*}
induces a surjective map $\Ima (d\omega_{r,x}^+) \twoheadrightarrow \ell_{\FM}(D_{r}^{r+2}: D_{r}^{r+1})\subseteq \Ext^1_{(\varphi,\Gamma)}(D_{r}^{r+1}, \cR_E(\chi_{r+1}))$ (resp. $\Ima (d\omega_{r,x}^-) \twoheadrightarrow \ell_{\FM}(D_{r-1}^{r+1}: D_{r}^{r+1})\subseteq \Ext^1_{(\varphi,\Gamma)}(\cR_E(\chi_{r}),D_{r}^{r+1})$).
\end{proposition}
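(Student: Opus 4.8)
The proposition is the higher-rank analogue of Proposition \ref{prop: lg2-sim}, and I will prove it by the same two-step scheme used there (and sketched in the introduction): first use Theorem \ref{thm: hL-hL} to show the images land in the claimed $\ell_{\FM}$-spaces, then use the bijectivity of $\overline{d}\omega_x$ (Lemma \ref{lem: lg2-tang2}) together with dimension counting to force the images to be everything. I treat $d\omega_{r,x}^+$; the case of $d\omega_{r,x}^-$ is entirely symmetric (exchanging sub and quotient, i.e. replacing the exact sequence in Theorem \ref{thm: hL-hL} by its ``resp.'' version, $D_r^{r+2}$ by $D_{r-1}^{r+1}$, $\chi_{r+2}$ by $\chi_{r-1}$, and the map $\kappa$ by its symmetric counterpart landing in $\Ext^1_{(\varphi,\Gamma)}(\cR_E(\chi_r),D_r^{r+1})$).

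\textbf{Part (1).} Let $v\in V_x$. Exactly as in the proof of Proposition \ref{prop: lg2-sim}, via $\omega'$ and Proposition \ref{prop: ord-Pdef2}(2) the vector $v$ is an $E[\epsilon]/\epsilon^2$-point of $\Spec R^{P-\ord}_{\rho_{\widetilde{\wp}}}$, hence gives a $P$-ordinary deformation $\widetilde{\rho}$ of $\rho_{\widetilde{\wp}}$ over $E[\epsilon]/\epsilon^2$ which is a successive extension of the $\widetilde{\rho}_i$, where $\widetilde{\rho}_i$ is the deformation of $\rho_{x_i}$ attached to $d\omega_{i,x}(v)$. The subquotient of $\widetilde{\rho}$ between the graded pieces $r$ and $r+1$ is a rank $3$ deformation of $\rho_r^{r+2}$ sitting in $0\to D_{\rig}(\widetilde{\rho}_r)\to \widetilde{D}\to \cR_{E[\epsilon]/\epsilon^2}(\widetilde{\delta})\to 0$, where $\widetilde{\delta}=\chi_{r+2}(1+\psi_{r+1}\epsilon)$, $\psi_{r+1}:=d\omega_{r+1,x}(v)\in\Hom(\Q_p^{\times},E)$, and $D_{\rig}(\widetilde{\rho}_r)$ corresponds to $d\omega_{r,x}(v)$. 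Since the $(\varphi,\Gamma)$-module $D_r^{r+2}$, being a subquotient of the $N^{n-1}\ne 0$ module $D=D_{\rig}(\rho_{\widetilde{\wp}})$, is special noncritical with nonsplit $\cR_E(\chi_r)$–$\cR_E(\chi_{r+1})$ subquotient, Theorem \ref{thm: hL-hL} (applied with ``$n$''$=3$) says the existence of such a $\widetilde{D}$ forces $[D_{\rig}(\widetilde{\rho}_r)\otimes \cR_{E[\epsilon]/\epsilon^2}(\widetilde{\delta}^{-1}\chi_{r+2})]\in\cL_{\FM}(D_r^{r+2}:D_r^{r+1})$. As twisting by $(1+\psi_{r+1}\epsilon)^{-1}$ subtracts $\xi(\psi_{r+1})$, this class is exactly $d\omega_{r,x}^+(v)$, so $\Ima(d\omega_{r,x}^+)\subseteq\cL_{\FM}(D_r^{r+2}:D_r^{r+1})$ and thus $\kappa(\Ima(d\omega_{r,x}^+))\subseteq\ell_{\FM}(D_r^{r+2}:D_r^{r+1})$, proving the inclusion in Part (1) of the statement and the inclusion half of Part (2).

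\textbf{Part (2), surjectivity.} Write $G:=\Ext^1_g(D_r^{r+1},D_r^{r+1})$ and $K:=\Ker\big(\kappa\colon \Ext^1_{(\varphi,\Gamma)}(D_r^{r+1},D_r^{r+1})\to\Ext^1_{(\varphi,\Gamma)}(D_r^{r+1},\cR_E(\chi_{r+1}))\big)=\Ext^1_{(\varphi,\Gamma)}(D_r^{r+1},\cR_E(\chi_r))$ (Lemma \ref{lem: hL-ext2}), so $\dim_E K=2$, $\dim_E G=2$ (Lemma \ref{lem: hL-dRg}(3), since $D_r^{r+1}$ is noncrystalline). First I will record two bookkeeping facts about rank $2$ special $(\varphi,\Gamma)$-modules: (i) $\xi(\Hom_{\infty}(\Q_p^{\times},E))\subseteq G$ — because $\rho_r^{r+1}\otimes\chi_r^{-1}$ is de Rham and, for smooth $\psi$, $\cR_{E[\epsilon]/\epsilon^2}(1+\psi\epsilon)$ is de Rham, so the tensor product stays de Rham; and (ii) $K\cap G=\Ima(\iota_0)$ is $1$-dimensional — because $K\subseteq \Ext^1_{\tri}(D_r^{r+1},D_r^{r+1})$ and $G\subseteq \Ext^1_{\tri}(D_r^{r+1},D_r^{r+1})$ (Lemma \ref{lem: hL-dRg}(1)), an element of $K$ has trianguline parameter with $\psi_2=0$ and $\psi_1\in\cL_{\FM}(D_r^{r+1}:\cR_E(\chi_r))$ (Lemma \ref{lem: hL-kk}, Corollary \ref{coro: hL-tri}), an element of $G$ has $\psi_1,\psi_2$ smooth (Lemma \ref{lem: hL-dRg}(2)), and $\cL_{\FM}(D_r^{r+1}:\cR_E(\chi_r))\cap\Hom_{\infty}=0$ (Remark \ref{compatgl2}), forcing $\psi_1=\psi_2=0$, i.e. the element lies in $\Ima(\iota_0)$. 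Granting (i), the map $\overline{d}\omega_{r,x}^{+}:=d\omega_{r,x}^{+}\bmod G:V_x\to \Ext^1_{(\varphi,\Gamma)}(D_r^{r+1},D_r^{r+1})/G$ is well defined and $\big(\overline{d}\omega_{r,x}^{+},(\overline{d}\omega_{i,x})_{i\ne r}\big)$ differs from the bijection $\overline{d}\omega_x$ of Lemma \ref{lem: lg2-tang2} by a unipotent linear automorphism of the target (subtracting a multiple of the $(r+1)$-component from the $r$-component), hence is itself bijective; in particular $\dim_E V_x=n+1$. Now let $W':=\Ker(\kappa\circ d\omega_{r,x}^+)$. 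By Part (1) and Corollary \ref{codimension}(2) (using $N^{n-1}\ne0$, so $D_r^{r+2}$ is nonsplit, and Lemma \ref{lem: hL-ext2}), $\dim_E\ell_{\FM}(D_r^{r+2}:D_r^{r+1})=2$, whence $\dim_E W'\ge n-1$. Conversely, for $v\in W'$ we have $d\omega_{r,x}^{+}(v)\in K$, so $\overline{d}\omega_{r,x}^{+}(v)\in (K+G)/G$, a space of dimension $\dim_E K-\dim_E(K\cap G)=2-1=1$ by (ii); via the bijection above, $W'$ embeds into $\big((K+G)/G\big)\oplus\bigoplus_{i\ne r}(\text{$1$-dimensional factors})$, so $\dim_E W'\le 1+(n-2)=n-1$. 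Therefore $\dim_E W'=n-1$, $\dim_E\Ima(\kappa\circ d\omega_{r,x}^+)=(n+1)-(n-1)=2=\dim_E\ell_{\FM}(D_r^{r+2}:D_r^{r+1})$, and combined with Part (1) this gives $\kappa(\Ima(d\omega_{r,x}^{+}))=\ell_{\FM}(D_r^{r+2}:D_r^{r+1})$, as desired. The case of $d\omega_{r,x}^{-}$ (when $r>1$) is obtained verbatim after the symmetry $\cR_E(\chi_r)\leftrightarrow\cR_E(\chi_{r+1})$, $D_r^{r+2}\leftrightarrow D_{r-1}^{r+1}$.

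\textbf{Expected main difficulty.} The genuinely substantial input is already packaged in Lemma \ref{lem: lg2-tang2} (bijectivity of $\overline{d}\omega_x$), which rests on the $p$-adic Langlands correspondence for $\GL_2(\Q_p)$ over deformation rings, the local freeness of $M_P(U^{\wp})[1/p]$ at $x$ (Lemma \ref{lem: lg2-lfree2}), and the global lower bound of Proposition \ref{prop: lg1-dim}; given that, the work here is comparatively formal. Within this proof the delicate points to get exactly right are the two $(\varphi,\Gamma)$-module computations (i) $\xi(\Hom_{\infty})\subseteq\Ext^1_g$ and (ii) $\dim_E(K\cap\Ext^1_g)=1$: these are what make the dimension count in Part (2) tight (if either failed the bound would only give $\dim_E W'\le n$, which is not enough). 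They are, however, straightforward consequences of Lemmas \ref{lem: hL-kk}, \ref{lem: hL-dRg} and Corollary \ref{coro: hL-tri} once one tracks the trianguline parameters carefully, so I do not anticipate a real obstruction.
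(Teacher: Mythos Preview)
Your proof is correct and follows essentially the same approach as the paper: Part (1) via Theorem \ref{thm: hL-hL} applied to the $P$-ordinary deformation, and Part (2) via the bijectivity of $\overline{d}\omega_x$ (Lemma \ref{lem: lg2-tang2}) together with the key facts $\xi(\Hom_\infty)\subseteq\Ext^1_g$ and $\dim_E(\Ker\kappa\cap\Ext^1_g)=1$. The only organizational difference is that you bound $\dim_E W'$ from both sides inside $V_x$, which cleanly avoids the paper's case split on $\dim_E\Ima(d\omega_{r,x}^+)$ (the paper handles $\dim\geq 4$ trivially and argues by contradiction when $\dim=3$); the ingredients and the logic are otherwise identical.
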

\begin{proof}
(1) From (\ref{equ: ord-RT}) we have $\omega': (\Spf R_{\overline{\rho}_{\widetilde{\wp}}}^{P-\ord})^{\rig} \ra (\Spf \widetilde{\bT}(U^{\wp})_{\overline{\rho}}^{P-\ord})^{\rig}$. Let $0\neq v\in V_x$ and $\widetilde{\rho}$ (resp. $\widetilde{\rho}_{i}$) the $\Gal_{\Q_p}$-representation over $E[\epsilon]/\epsilon^2$ attached to $d\omega'_{x}(v)$ (resp. $d\omega_{i,x}(v)$). We know that $\widetilde{\rho}$ (resp. $\widetilde{\rho}$) is a deformation of $\rho_{\widetilde{\wp}}$ (resp. $\rho_{x_i}$) over $E[\epsilon]/\epsilon^2$, and using Proposition \ref{prop: ord-Pdef2}(2) we see as in the proof of Proposition \ref{prop: lg2-sim} that $\widetilde{\rho}$ is isomorphic to a successive extension of $\widetilde{\rho}_i$ as representations over $E[\epsilon]/\epsilon^2$. (1) follows then from Theorem \ref{thm: hL-hL}.\\
\noindent
(2) We prove the statement for $\Ima (d\omega_{r,x}^+)$ (and $r<n-1$), the other case being similar. By Corollary \ref{codimension}(2) and Lemma \ref{lem: hL-ext2} (and the assumptions on $\rho_{\widetilde{\wp}}$ in \S~\ref{fewprel}), we have $\dim_E \ell_{\FM}(D_r^{r+2}: D_r^{r+1})=2$. Recall that we have by Lemma \ref{lem: hL-ext2} and Lemma \ref{lem: hL-dRg}(3):
$$\dim_E \Ext^1_{(\varphi,\Gamma)}(D_{r}^{r+1}, D_{r}^{r+1})/\Ext^1_g(D_{r}^{r+1}, D_{r}^{r+1})=5-2=3.$$
By Lemma \ref{lem: lg2-tang2}, it is then not difficult to deduce $\dim_E \Ima (d\omega_{r,x}^+)\geq 3$. By (1) and (\ref{sesker}), we have an exact sequence (see (\ref{equ: l3-diag2}) for the morphism $\kappa$):
\begin{equation*}
0 \lra \Ima (d\omega_{r,x}^+)\cap \Ker (\kappa) \lra \Ima (d\omega_{r,x}^+) \lra \ell_{\FM}(D_{r}^{r+2}: D_{r}^{r+1}).
\end{equation*}
We have $\dim_E \Ker(\kappa)=2$ by (\ref{sesker}), Corollary \ref{codimension}(2) and $\dim_E \ell_{\FM}(D_r^{r+2}: D_r^{r+1})=2$. If $\dim_E \Ima (d\omega_{r,x}^+)\geq 4$, the result is thus clear. Assume $\dim_E \Ima (d\omega_{r,x}^+)=3$, it is enough to prove $\dim_E \Ima (d\omega_{r,x}^+)\cap \Ker(\kappa)\leq 1$. From Lemma \ref{lem: hL-kk} and Lemma \ref{lem: hL-dRg}(1)\&(2), we deduce $\dim_E \Ker (\kappa) \cap \Ext^1_g(D_{r}^{r+1}, D_{r}^{r+1})=1$. It easily follows from Lemma \ref{lem: lg2-tang2} that the morphism $\overline{d}\omega_{r,x}^+:=\overline{d}\omega_{r,x}-\xi\circ \overline{d}\omega_{r+1,x}$ is surjective (note that it is well-defined since $\xi$ sends $\Hom_{\infty}(\Q_p^{\times}, E)$ to $\Ext^1_g(\rho_r^{r+1}, \rho_r^{r+1})$), which implies that the composition:
\begin{equation}\label{equ: lg2-compSur}
\Ima (d\omega_{r,x}^+) \hooklongrightarrow \Ext^1_{(\varphi,\Gamma)}(D_{r}^{r+1}, D_{r}^{r+1}) \twoheadrightarrow \Ext^1_{(\varphi,\Gamma)}(D_{r}^{r+1}, D_{r}^{r+1})/\Ext^1_g(D_{r}^{r+1}, D_{r}^{r+1})
\end{equation}
is also surjective, hence bijective as source and target have dimension $3$. If $\dim_E \Ima (d\omega_{r,x}^+)\cap \Ker (\kappa)=2$, we have $\Ker (\kappa)\subseteq \Ima (d\omega_{r,x}^+)$ since $\dim_E \Ker(\kappa)=2$, and thus $\Ima (d\omega_{r,x}^+)\cap \Ext^1_g(D_{r}^{r+1}, D_{r}^{r+1}) \neq 0$ as $\Ker (\kappa) \cap \Ext^1_g(D_{r}^{r+1}, D_{r}^{r+1})\ne 0$, which contradicts the fact (\ref{equ: lg2-compSur}) is bijective. This concludes the proof.
\end{proof}

\subsubsection{Local-global compatibility for $\GL_3(\Q_p)$}

\noindent
In dimension $3$, we finally use most of the previous material to prove our main local-global compatibility result (Corollary \ref{main}).\\

\noindent
We keep all the notation of \S\S~\ref{fewprel}, \ref{simpleinv}, \ref{sec: lg2-1} and now assume $n=3$ (and thus $p>3$). For $r=1, 2$, we let $\cL_r\in E$ such that:
\begin{equation*}
\psi_{\cL_r}:=\log_p -\cL_r \val_p \in \cL_{\FM}(D_r^{r+1}: \cR_E(\chi_r))\subsetneq \Hom(\Q_p^\times,E).
\end{equation*}
We set $\lambda:=(\wt(\chi_1), \wt(\chi_1),\wt(\chi_1))\in \Z^3$ and let $\alpha\in E^\times$ such that $\chi_1=\unr(\alpha)x^{\wt(\chi_1)}$. We define $v_{\overline{P}_r}^{\infty}(\lambda)$ for $r=1,2$ as in \S~\ref{sec: hL-GL30}, $\widetilde\Pi^1(\lambda, \psi_{\cL_1})$ as in (\ref{sigmatilde1}) and set $v_{\overline{P}_r}^{\infty}(\alpha,\lambda):=v_{\overline{P}_r}^{\infty}(\lambda)\otimes \unr(\alpha)\circ\dett$, $\widetilde\Pi^1(\alpha,\lambda, \psi_{\cL_1}):=\widetilde\Pi^1(\lambda, \psi_{\cL_1})\otimes \unr(\alpha)\circ\dett$ and $\cL_{\aut}(D:D_1^2)\subseteq \Ext^1_{{\GL_3}(\Q_p)}(v_{\overline{P}_2}^{\infty}(\alpha,\lambda),\widetilde\Pi^1(\alpha,\lambda, \psi))$ as in (\ref{callaut}) (tensoring by $\unr(\alpha)\circ\dett$). The assumptions on $\rho_{\widetilde{\wp}}$ imply in particular that $D$ is sufficiently generic in the sense of (the end of) \S~\ref{prelprel}, and we can then define $\sE(\widetilde\Pi^1(\alpha,\lambda, \psi_{\cL_1}), v_{\overline{P}_2}^{\infty}(\alpha,\lambda)^{\oplus 2}, \cL_{\aut}(D:D_1^2))$ as in Notation \ref{not: hL-ext} and set (see (\ref{equ: L3-tildPi}) when $\alpha=1$):
$$\widetilde{\Pi}^1(D)^-:=\sE\big(\widetilde\Pi^1(\alpha,\lambda, \psi_{\cL_1}), v_{\overline{P}_2}^{\infty}(\alpha,\lambda)^{\oplus 2}, \cL_{\aut}(D:D_1^2)\big).$$
Likewise we define $\widetilde\Pi^2(\alpha,\lambda, \psi_{\cL_2}):=\widetilde\Pi^2(\lambda, \psi_{\cL_2})\otimes \unr(\alpha)\circ\dett$ (see before \S~\ref{linvariantgl3}), $\cL_{\aut}(D:D_2^3)\subseteq \Ext^1_{{\GL_3}(\Q_p)}(v_{\overline{P}_1}^{\infty}(\alpha,\lambda),\widetilde\Pi^2(\alpha,\lambda, \psi))$ (see (\ref{aut2})) and we set (see (\ref{pitilde2}) when $\alpha=1$):
$$\widetilde{\Pi}^2(D)^-:=\sE\big(\widetilde\Pi^2(\alpha,\lambda, \psi_{\cL_2}), v_{\overline{P}_1}^{\infty}(\alpha,\lambda)^{\oplus 2}, \cL_{\aut}(D:D_2^3)\big).$$

\begin{theorem}\label{thm: lg2-GL3}
For $r\in \{1, 2\}$, the following restriction morphism is bijective:
\begin{equation}\small\label{equ: lg2-linv}
\Hom_{\GL_3(\Q_p)}\!\big(\widetilde{\Pi}^r(D)^-\!, \widehat{S}(U^{\wp}, W^{\wp})_{\overline{\rho}}[\fm_{\rho}]\big)
\!\xlongrightarrow{\sim} \!\Hom_{\GL_3(\Q_p)}\!\big(\!\St_3^{\infty} \otimes_E \chi_1\circ \dett, \widehat{S}(U^{\wp}, W^{\wp})_{\overline{\rho}}[\fm_{\rho}]\big).
\end{equation}
\end{theorem}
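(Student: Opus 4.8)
The plan is to treat the cases $r=1$ and $r=2$ symmetrically, so I would write out the argument for $r=1$ and indicate that $r=2$ follows by replacing $\overline P_1$, $D_1^2$ by $\overline P_2$, $D_2^3$ throughout. Injectivity of \eqref{equ: lg2-linv} is the easy direction: since $\St_3^\infty\otimes\chi_1\circ\dett$ is an irreducible subrepresentation of $\widetilde\Pi^1(D)^-$ and in fact equals its socle (by the shape of $\widetilde\Pi^1(\alpha,\lambda,\psi_{\cL_1})$ from \eqref{sigmatilde1} together with the amalgamation \eqref{equ: L3-tildPi}), any nonzero $\GL_3(\Q_p)$-equivariant map out of $\widetilde\Pi^1(D)^-$ is nonzero on the socle; hence the restriction map is injective. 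The real content is surjectivity, and this is where I would use the machinery of \S\ref{gl2ordinarylocalglobal} and \S\ref{sec: lg2-1}.

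For surjectivity, fix a nonzero $f\colon \St_3^\infty\otimes\chi_1\circ\dett\to \widehat S(U^\wp,W^\wp)_{\overline\rho}[\fm_\rho]^{\an}$. Taking $\Ord_{P_1}$ (here $P_1$ is the maximal parabolic with Levi $\GL_2\times\GL_1$, i.e. $P:=P_1$ of \S\ref{sec: lg2-1}) and using the adjunction of Proposition \ref{prop: ord-adj} together with the isomorphism \eqref{equ: lg1-ordPr}, $f$ corresponds to a nonzero $L_{P_1}(\Q_p)$-equivariant map $\St_2^\infty\boxtimes 1\otimes(\chi_1\circ\dett)\to \Ord_{P_1}(\widehat S(U^\wp,W^\wp)_{\overline\rho}[\fm_\rho])^{\an}$. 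By Corollary \ref{coro: lg2-simpL} this extends uniquely to a map from $(\widehat\pi(\rho_1^2)\otimes\varepsilon^{0}\circ\dett)\boxtimes 1\otimes(\chi_1\circ\dett)$ — equivalently, in the normalization of \S\ref{linvariantgl3}, to a map out of $\pi(\lambda_{1,2},\psi_{\cL_1})\boxtimes 1\otimes(\chi_1\circ\dett)$ after passing to locally analytic vectors. Now I would pick a vector $v$ in the tangent space $V_x$ of $(\Spf\widetilde\bT(U^\wp)^{P_1-\ord}_{\overline\rho})^{\rig}$ at $x$ lifting a prescribed class in $\cL_{\FM}(D:D_1^2)$ via the surjection $d\omega^+_{1,x}\colon V_x\twoheadrightarrow \ell_{\FM}(D:D_1^2)$ of Proposition \ref{prop: lg2-L-inv}(2) (using $n=3$, $r=1$), and run the argument from the proof of Lemma \ref{lem: lg2-tang2}: the ideal $\cI_v$ gives $\widetilde\bT(U^\wp)^{P_1-\ord}_{\overline\rho}/\cI_v\cong E[\epsilon]/\epsilon^2$, the module $X_{P_1}(U^\wp)[\cI_v][1/p]$ is free of finite rank over $E[\epsilon]/\epsilon^2$ by Lemma \ref{lem: lg2-lfree2} and the argument of Lemma \ref{lem: lg2-tang1}, and hence the map above extends $E[\epsilon]/\epsilon^2$-linearly to $\widetilde\pi_{1,2}\boxtimes_{E[\epsilon]/\epsilon^2}\widetilde1\to \Ord_{P_1}(\widehat S(U^\wp,W^\wp)_{\overline\rho}[\cI_v])^{\an}$, where $\widetilde\pi_{1,2}$ is the $\GL_2(\Q_p)$-deformation of $\pi(\lambda_{1,2},\psi_{\cL_1})$ over $E[\epsilon]/\epsilon^2$ attached to $v$ via $\pLL$ (Hypothesis \ref{hypo: hL-pLL0}, valid here by Proposition \ref{prop: hL-gl2pLL} and Proposition \ref{prop: hL-gl2pLL2} since $p\geq5$). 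Applying the adjunction property for $\Ord_{P_1}$ (Proposition \ref{prop: ord-adj}) produces a $\GL_3(\Q_p)$-equivariant map $(\Ind_{\overline P_1(\Q_p)}^{\GL_3(\Q_p)}\widetilde\pi_{1,2}\boxtimes_{E[\epsilon]/\epsilon^2}\widetilde1)^{\an}\to \widehat S(U^\wp,W^\wp)_{\overline\rho}[\cI_v]^{\an}$.

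The final step is to identify the relevant subquotient of $(\Ind_{\overline P_1(\Q_p)}^{\GL_3(\Q_p)}\widetilde\pi_{1,2}\boxtimes_{E[\epsilon]/\epsilon^2}\widetilde1)^{\an}$ with $\widetilde\Pi^1(D)^-$ (or rather with the sub-extension cut out by $\cL_{\aut}(D:D_1^2)$) and check that the resulting map lands in $\widehat S(U^\wp,W^\wp)_{\overline\rho}[\fm_\rho]^{\an}$ and restricts to $f$ on the socle. Here the key inputs are: Theorem \ref{thm: hL-L3} and Proposition \ref{prop: hL-compL}, which match the orthogonality condition defining $\cL_{\aut}(D:D_1^2)$ inside $\Ext^1_{\GL_3(\Q_p)}(v^\infty_{\overline P_2}(\alpha,\lambda),\widetilde\Pi^1(\alpha,\lambda,\psi_{\cL_1}))$ with the orthogonality condition $\ell_{\FM}(D:D_1^2)=(E[D])^\perp$ on the Galois side (via \eqref{rem: hL-LgalLaut}); Theorem \ref{thm: hL-hL}, which guarantees that the parabolic induction of $\widetilde\pi_{1,2}$ actually contains an extension $D-D$ as a submodule after the appropriate twist, forcing the $\Ext^1$-class of the $v^\infty_{\overline P_2}(\alpha,\lambda)$-quotient to lie in $\cL_{\aut}(D:D_1^2)$; and Proposition \ref{prop: lg1-semisimp}, which forces the image of the locally algebraic vectors into $\widehat S(U^\wp,W^\wp)_{\overline\rho}[\fm_\rho]^{\an}$, so that the whole map factors through $[\fm_\rho]$. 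Varying $v$ over a basis of $\ell_{\FM}(D:D_1^2)$ (a $2$-dimensional space, since $\cL_{\aut}(D:D_1^2)$ is $2$-dimensional and $\widetilde\Pi^1(D)^-$ is built by amalgamating the two corresponding extensions along $\widetilde\Pi^1(\alpha,\lambda,\psi_{\cL_1})$) and amalgamating the resulting maps gives the desired extension of $f$ to $\widetilde\Pi^1(D)^-$; uniqueness of the extension follows from injectivity. I expect the main obstacle to be precisely this last identification — keeping careful track of twists by $\unr(\alpha)\circ\dett$ and by powers of $\varepsilon$, checking that the relevant $\Ext^1$-spaces computed on the $\GL_3$-side (Lemma \ref{lem: hL-L3}, Lemma \ref{lem: hL-L3b}) really do capture exactly the subquotient $\widetilde\Pi^1(D)^-$ and no more, and verifying that the socle of the constructed map is the given $f$ rather than a multiple or a twist of it. The multiplicity bookkeeping (via $m(\rho)$, $m_{P_1}(x)$ and the freeness of $X_{P_1}(U^\wp)[\cI_v][1/p]$) is routine once Corollary \ref{coro: lg2-simpL} and Lemma \ref{lem: lg2-lfree2} are in hand.
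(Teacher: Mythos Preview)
Your overall strategy matches the paper's: reduce to $r=1$, pass to $\Ord_{P_1}$ via adjunction and Corollary \ref{coro: lg2-simpL}, lift along a tangent vector $v\in V_x$ using the freeness of $X_{P_1}(U^\wp)[\cI_v][1/p]$ (Lemma \ref{lem: lg2-lfree2}), induce back, and identify the resulting extension. However, two steps need correction.

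First, your injectivity argument is incomplete. Knowing that $\St_3^\infty\otimes\chi_1\circ\dett$ is the socle of $\widetilde\Pi^1(D)^-$ does not by itself prevent a nonzero map from vanishing on the socle: you must also rule out embeddings of the other irreducible constituents into $\widehat S(U^\wp,W^\wp)_{\overline\rho}[\fm_\rho]$. The paper does this via the Jacquet-module constraint (\ref{equ: ord-ncomp}) (Lemma \ref{lem: lg2-jacncpt}) combined with \cite[Cor.~3.4]{Br13I}.

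Second, and more substantively, you miss a crucial central-character twist. The vector $v$ produces the $\GL_2(\Q_p)$-deformation $\widetilde\pi_1$ attached to $d\omega_{1,x}(v)$, but the class that lands in $\cL_{\FM}(D:D_1^2)$ (and hence in $\cL_{\aut}(D:D_1^2)$ via Remark \ref{rk3.47}) is $w_0=d\omega_{1,x}^+(v)=d\omega_{1,x}(v)-\xi\circ d\omega_{2,x}(v)$, corresponding to a \emph{different} deformation $\widetilde\pi_{w_0}$. The paper bridges this by (\ref{equ: lg2-GL3det}) and (\ref{equ: lg2-GL3det2}): the two parabolic inductions differ by the twist $(\chi_1^{-1}\widetilde\chi_1)\circ\dett_{\GL_3}$ over $E[\epsilon]/\epsilon^2$, and Lemma \ref{lem: hL-cent3} shows this twist leaves the relevant quotient unchanged. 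Without this step you cannot identify the extension you have built with an element of $\cL_{\aut}(D:D_1^2)$. Relatedly, your appeal to Proposition \ref{prop: lg1-semisimp} for landing in $[\fm_\rho]$ is misplaced (that result is about locally algebraic maps into $\Ord_P(\cdot)\{\fm_x\}$, not $\GL_3$-maps into $\widehat S(\cdot)[\cI_v]^{\an}$); the paper instead first shows, via the chain of isomorphisms (\ref{equ: lg2-isomos}), that the restriction to $\widetilde\Pi^1(\alpha,\lambda,\psi_{\cL_1})$ already has image in $[\fm_\rho]$, and then argues that $\fm_\rho\cdot\Ima(\tilde f)\neq 0$ would force $v_{\overline P_2}^\infty(\alpha,\lambda)$ to embed, again contradicting the Jacquet constraint. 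Your invocation of Theorem \ref{thm: hL-hL} at the identification step is also off: that theorem feeds into Proposition \ref{prop: lg2-L-inv}(1) on the Galois side, whereas the identification of the $\GL_3$-subquotient goes through (\ref{equ: hL-L3p}) and Remark \ref{rk3.47}.
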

\begin{proof}
We only prove the case $r=1$, the case $r=2$ being symmetric.\\
\noindent
(a) It follows from (\ref{equ: ord-ncomp}) that (\ref{equ: lg2-linv}) is injective (by the usual argument: if (\ref{equ: lg2-linv}) is not injective, there exists an irreducible constituent $V$ of $\widetilde{\Pi}^1(D)^-$ such that $V\hookrightarrow \widehat{S}(U^{\wp}, W^{\wp})_{\overline{\rho}}[\fm_{\rho}]$, hence $J_B(V)\hookrightarrow J_B(\widehat{S}(U^{\wp}, W^{\wp})_{\overline{\rho}}[\fm_{\rho}])$, which contradicts (\ref{equ: ord-ncomp}) using \cite[Cor. 3.4]{Br13I}).\\
\noindent
(b) We have natural morphisms:
\begin{align}\label{equ: lg2-isomos}
\MoveEqLeft[10]  {\Hom_{\GL_3(\Q_p)}\big(\St_3^\infty\otimes (\chi_1 \circ \dett), \widehat{S}(U^{\wp}, W^{\wp})_{\overline{\rho}}[\fm_{\rho}]\big)} & \nonumber \\
 \xlongrightarrow{\sim}{}& \Hom_{L_{P_1}(\Q_p)}\big((\St_2^{\infty} \otimes 1) \otimes (\chi_1 \circ \dett), \Ord_{P_1}(\widehat{S}(U^{\wp}, W^{\wp})_{\overline{\rho}}[\fm_{\rho}])\big) \nonumber\\
\xlongrightarrow{\sim}{}& \Hom_{L_{P_1}(\Q_p)}\big(\widehat{\pi}(\rho_1^2)\otimes \chi_1, \Ord_{P_1}(\widehat{S}(U^{\wp}, W^{\wp})_{\overline{\rho}}[\fm_{\rho}])\big)\nonumber\\
\xlongrightarrow{\sim}{}& \Hom_{\GL_3(\Q_p)}\big((\Ind_{\overline{P}_1(\Q_p)}^{\GL_3(\Q_p)}\widehat{\pi}(\rho_1^2)\otimes \chi_1)^{\cC^0}, \widehat{S}(U^{\wp}, W^{\wp})_{\overline{\rho}}[\fm_{\rho}]\big)\nonumber\\
\hooklongrightarrow{}& \Hom_{\GL_3(\Q_p)}\big((\Ind_{\overline{P}_1(\Q_p)}^{\GL_3(\Q_p)}\widehat{\pi}(\rho_1^2)^{\an}\otimes \chi_1)^{\an}, \widehat{S}(U^{\wp}, W^{\wp})_{\overline{\rho}}[\fm_{\rho}]^{\an}\big)\nonumber\\
\xlongrightarrow{\sim}{}& \Hom_{\GL_3(\Q_p)}\big(\widetilde\Pi^1(\alpha,\lambda, \psi_{\cL_1}), \widehat{S}(U^{\wp}, W^{\wp})_{\overline{\rho}}[\fm_{\rho}]^{\an}\big)
\end{align}
where the first map is given by Lemma \ref{lem: ord-alg4} together with (\ref{equ: lg1-ordPr}) and is bijective by Proposition \ref{prop: ord-adj} together with (\ref{equ: lg2-GL3lalg}), the second isomorphism follows from Corollary \ref{coro: lg2-simpL}, the third from \cite[Thm. 4.4.6]{EOrd1}, the fourth map is injective since the locally analytic vectors are dense in the corresponding Banach representation, and where the last bijection follows from the fact that any irreducible constituent of the kernel $W$ of the surjection $(\Ind_{\overline{P}_1(\Q_p)}^{\GL_3(\Q_p)}\widehat{\pi}(\rho_1^2)^{\an}\otimes \chi_1)^{\an} \twoheadrightarrow \widetilde\Pi^1(\alpha,\lambda, \psi_{\cL_1})$ does not occur in $\soc_{\GL_3(\Q_p)} \widehat{S}(U^{\wp}, W^{\wp})_{\overline{\rho}}[\fm_{\rho}]^{\an}$ (see the discussion below (\ref{equ: hL-L3c}) and argue as in (a)). One can check by using the functor $J_B(\cdot)$ that the composition in (\ref{equ: lg2-isomos}) gives a section of the restriction morphism:
\begin{multline}\label{equ: lg2-GL3simp}
\Hom_{\GL_3(\Q_p)}\big(\widetilde\Pi^1(\alpha,\lambda, \psi_{\cL_1}), \widehat{S}(U^{\wp}, W^{\wp})_{\overline{\rho}}[\fm_{\rho}]\big)\\
\lra \Hom_{\GL_3(\Q_p)}\big(\St_3^\infty\otimes (\chi_1 \circ \dett), \widehat{S}(U^{\wp}, W^{\wp})_{\overline{\rho}}[\fm_{\rho}]\big),
\end{multline}
which is therefore surjective. Since (\ref{equ: lg2-GL3simp}) is injective by (again) the same argument as in (a), it follows that (\ref{equ: lg2-GL3simp}) is bijective. Consequently, the fourth injection in (\ref{equ: lg2-isomos}) is also bijective.\\
\noindent
(c) By (a) and (b), it is enough to prove that, for any line $Ew\subseteq \cL_{\aut}(D: D_1^2)$, setting $\Pi:=\sE(\widetilde\Pi^1(\alpha,\lambda, \psi_{\cL_1}), v_{\overline{P}_2}^{\infty}(\alpha,\lambda), Ew)$ (see Notation \ref{not: hL-ext}, in fact this is just here the representation associated to the extension $w$), the following restriction morphism is surjective:
\begin{equation}\small\label{equ: lg2-GL3}
\Hom_{\GL_3(\Q_p)}\big(\Pi, \widehat{S}(U^{\wp}, W^{\wp})_{\overline{\rho}}[\fm_{\rho}]\big)
\lra \Hom_{\GL_3(\Q_p)}\big(\St_3^\infty\otimes (\chi_1 \circ \dett), \widehat{S}(U^{\wp}, W^{\wp})_{\overline{\rho}}[\fm_{\rho}]\big).
\end{equation}
As in (\ref{equ: lg2-isomos}), we have:
\begin{multline}\label{equ: lg2-GL3b}
\Hom_{\GL_3(\Q_p)}\big(\St_3^\infty\otimes (\chi_1 \circ \dett), \widehat{S}(U^{\wp}, W^{\wp})_{\overline{\rho}}[\fm_{\rho}]\big)\\
\xlongrightarrow{\sim} \Hom_{L_{P_1}(\Q_p)}\big(\widehat{\pi}(\rho_1^2)\otimes \chi_1, \Ord_{P_1}(\widehat{S}(U^{\wp}, W^{\wp})_{\overline{\rho}}[\fm_{\rho}])\big)\cong X_P(U^{\wp})[\fp_x] \otimes_{\co_E} E
\end{multline}
where we use the notation in \S~\ref{sec: lg2-1}. Let $0\neq w\in \cL_{\aut}(D: D_1^2)\cong \ell_{\FM}(D:D_1^2)$ (cf. (\ref{rem: hL-LgalLaut})), by Proposition \ref{prop: lg2-L-inv}(2) there exists $v\in V_x$ such that $d \omega_{1, x}^+(v)\mapsto w \in \Ext^1_{(\varphi,\Gamma)}(D_1^2, \cR_E(\chi_2))$. Denote by $\cI_v$ the ideal of $\tilde{\bT}(U^{\wp})^{P_1-\ord}_{\overline{\rho}}$ attached to $v$ (see e.g. the beginning of the proof of Lemma \ref{lem: lg2-tang2}), recall we have (see e.g. (b) in the proof of Lemma \ref{lem: lg2-tang2}):
\begin{equation}\label{equ: lg2-tgfree}
\text{$(X(U^{\wp})[\cI_v])[1/p]$ is free over $E[\epsilon]/\epsilon^2$.}
\end{equation}
Let $f_0$ be a nonzero element in the right hand side of (\ref{equ: lg2-GL3}), and let $f: \widehat{\pi}(\rho_1^2)\otimes \chi_1 \hookrightarrow \Ord_{P_1}(\widehat{S}(U^{\wp}, W^{\wp})_{\overline{\rho}}[\fm_{\rho}])$ and $e\in (X_P(U^{\wp})[\fp_x])[1/p]$ be the corresponding elements via (\ref{equ: lg2-GL3b}). By (\ref{equ: lg2-tgfree}) and $X_P(U^{\wp})[\fp_x]=X_P(U^{\wp})[\cI_v][\epsilon]$, there exists $\tilde{e}\in (X_P(U^{\wp})[\cI_v])[1/p]$ such that $\epsilon \tilde{e}=e$. As in (\ref{equ: lg2-linvP}), letting $\widetilde{\pi}_1$ (resp. $\widetilde \chi_1$) be the deformation of $\widehat{\pi}(\rho_1^2)$ (resp. $\chi_1$) over $E[\epsilon]/\epsilon^2$ attached to $d\omega_{1,x}(v)$ (resp. $d\omega_{2,x}(v)$), we have a commutative diagram:
\begin{equation}\label{equ: lg2-GL3ordx}
\begin{CD}
\pi: = \widehat{\pi}(\rho_1^2)\boxtimes_E \chi_1 @> f>> \Ord_P(\widehat{S}(U^{\wp}, W^{\wp})_{\overline{\rho}})[\fm_{\rho}] \\
@V\iota VV @VVV\\
\widetilde{\pi}:=\widetilde{\pi}_1\boxtimes_{E[\epsilon]/\epsilon^2} \widetilde\chi_1 @>\tilde{f}>> \Ord_P(\widehat{S}(U^{\wp}, W^{\wp})_{\overline{\rho}})[\cI_v]
\end{CD}
\end{equation}
where $\tilde{f}$ is the morphism corresponding to $\tilde{e}$ and where we write $\boxtimes$ instead of $\otimes$ to emphasize that it is an {\it exterior} tensor product of representations ($\GL_2(\Q_p)$ acting on the left and $\Q_p^\times$ on the right). Using Proposition \ref{prop: lg2-L-inv}(1), let $w_0:=d \omega_{1, x}^+(v)\in \cL_{\FM}(D: D_1^2)$ and $\widetilde{\pi}_{w_0}$ the associated deformation of $\widehat{\pi}(\rho_1^2)$ over $E[\epsilon]/\epsilon^2$ via (\ref{equ: hL-pLL0}). From the definition of $d \omega_{1, x}^+$ we have:
\begin{equation}\label{equ: lg2-GL3det}
\widetilde{\pi} \cong ((\chi_1^{-1}\widetilde \chi_1)\circ \dett_{\GL_2} \otimes_{E[\epsilon]/\epsilon^2} \widetilde{\pi}_{w_0}) \boxtimes_{E[\epsilon]/\epsilon^2} \widetilde \chi_1\cong (\chi_1^{-1}\widetilde \chi_1)\circ \dett_{L_{P_1}} \otimes_{E[\epsilon]/\epsilon^2} (\widetilde{\pi}_{w_0}\boxtimes_E\chi_1).
\end{equation}
By \cite[Thm. 4.4.6]{EOrd1}, taking $(\Ind_{\overline{P}_1(\Q_p)}^{\GL_3(\Q_p)}\cdot )^{\cC^0}$ and then locally analytic vectors, the maps $\iota$ and $\widetilde f$ in (\ref{equ: lg2-GL3ordx}) induce morphisms of locally analytic representations of $\GL_3(\Q_p)$ over $E$:
\begin{equation}\label{equ: lg2-GL3ad}
 \big(\Ind_{\overline{P}_1(\Q_p)}^{\GL_3(\Q_p)} \pi^{\an}\big)^{\an}\hooklongrightarrow \big(\Ind_{\overline{P}_1(\Q_p)}^{\GL_3(\Q_p)} \widetilde{\pi}^{\an}\big)^{\an} \lra \widehat{S}(U^{\wp}, W^{\wp})_{\overline{\rho}}[\cI_v]^{\an}.
\end{equation}
Let $\widetilde{\pi}_0:=\widetilde{\pi}_{w_0} \boxtimes_E \chi_1$, from (\ref{equ: lg2-GL3det}) we deduce:
\begin{equation}\label{equ: lg2-GL3det2}
\big(\Ind_{\overline{P}_1(\Q_p)}^{\GL_3(\Q_p)} \widetilde{\pi}^{\an}\big)^{\an} \cong (\chi_1^{-1}\widetilde \chi_1)\circ \dett_{\GL_3} \otimes_{E[\epsilon]/\epsilon^2}\big(\Ind_{\overline{P}_1(\Q_p)}^{\GL_3(\Q_p)} \widetilde{\pi}_0^{\an}\big)^{\an}.
\end{equation}
As in (b), (\ref{equ: lg2-GL3ad}) factors as (see (a) for $W$):
\begin{equation}\label{equ: lg2-GL3comp}
\widetilde\Pi^1(\alpha,\lambda, \psi_{\cL_1}) \hooklongrightarrow \big(\Ind_{\overline{P}_1(\Q_p)}^{\GL_3(\Q_p)} \widetilde{\pi}^{\an}\big)^{\an}/W \lra \widehat{S}(U^{\wp}, W^{\wp})_{\overline{\rho}}[\cI_v]^{\an}.
\end{equation}
Since $w_0\in \cL_{\FM}(D: D_1^2)\mapsto w\in \ell_{\FM}(D:D_1^2)$, it follows from (\ref{equ: hL-L3p}) and Remark \ref{rk3.47} that $\Pi=\sE(\widetilde\Pi^1(\lambda, \psi_{\cL_1}), v_{\overline{P}_2}^{\infty}(\lambda), Ew)\otimes \unr(\alpha)\circ\dett$ is a subrepresentation of $(\Ind_{\overline{P}_1(\Q_p)}^{\GL_3(\Q_p)} \widetilde{\pi}_0^{\an})^{\an}/W$. By \ Lemma \ \ref{lem: hL-cent3} \ and \ (\ref{equ: lg2-GL3det2}), \ we \ deduce \ that \ $\Pi$ \ is \ also \ a \ subrepresentation \ of $(\Ind_{\overline{P}_1(\Q_p)}^{\GL_3(\Q_p)} \widetilde{\pi}^{\an})^{\an}/W$. Hence (\ref{equ: lg2-GL3comp}) induces $\GL_3(\Q_p)$-equivariant morphisms:
\begin{equation}\label{closetoend}
\widetilde\Pi^1(\alpha,\lambda, \psi_{\cL_1}) \hooklongrightarrow \Pi \xlongrightarrow{\tilde{f}} \widehat{S}(U^{\wp}, W^{\wp})_{\overline{\rho}}[\cI_v]^{\an}.
\end{equation}
As the composition in (\ref{closetoend}) restricts to $f_0$ via (\ref{equ: lg2-GL3simp}), we see it has image in $\widehat{S}(U^{\wp}, W^{\wp})_{\overline{\rho}}[\fm_{\rho}]$ (using that the analogue of (\ref{equ: lg2-GL3simp}) with $\widehat{S}(U^{\wp}, W^{\wp})_{\overline{\rho}}[\cI_v]$ instead of $\widehat{S}(U^{\wp}, W^{\wp})_{\overline{\rho}}[\fm_{\rho}]$ is still an injection). If $\fm_{\rho}\Ima(\tilde{f})\ne 0$, we deduce that $\fm_{\rho} \Ima(\tilde{f})\cong v_{\overline{P}_2}^{\infty}(\lambda)\otimes\unr(\alpha)\circ\dett$ is a subrepresentation of $\widehat{S}(U^{\wp}, W^{\wp})_{\overline{\rho}}[\cI_v]^{\an}$, a contradiction. Thus we have $\fm_{\rho}\Ima(\tilde{f})=0$, i.e. $\tilde{f}$ also has image in $\widehat{S}(U^{\wp}, W^{\wp})_{\overline{\rho}}[\fm_{\rho}]^{\an}$. The map $f\mapsto \tilde{f}$ gives a section to (\ref{equ: lg2-GL3}), which concludes the proof.
\end{proof}

\noindent
We refer to \S~\ref{sec: hL-pI}, \S~\ref{linvariantgl3} for the definition of the subrepresentations $\Pi^r(\lambda, \psi_{\cL_r})_0$, $\Pi^r(\lambda, \psi_{\cL_r})$, $\Pi^r(\lambda, \psi_{\cL_r})^+$ of $\widetilde\Pi^r(\lambda, \psi_{\cL_r})$. We set  $\Pi^r(\alpha,\lambda, \psi_{\cL_r})_0\!:=\Pi^r(\lambda, \psi_{\cL_r})_0\otimes \unr(\alpha)\circ\dett$, $\Pi^r(\alpha,\lambda, \psi_{\cL_r}):=\Pi^r(\lambda, \psi_{\cL_r})\otimes \unr(\alpha)\circ\dett$, and $\Pi^r(\alpha,\lambda, \psi_{\cL_r})^+:=\Pi^r(\lambda, \psi_{\cL_r})^+\otimes \unr(\alpha)\circ\dett$.

\begin{corollary}\label{unicity}
Let $r\in \{1,2\}$.\\
\noindent
(1) Let $\psi \in \Hom(\Q_p^{\times}, E)$, an injection $f: \St_3^\infty\otimes (\chi_1 \circ \dett) \hookrightarrow \widehat{S}(U^{\wp}, W^{\wp})_{\overline{\rho}}[\fm_{\rho}]$ extends to $\tilde{f}_1: \Pi^r(\alpha,\lambda, \psi)^+\ra \widehat{S}(U^{\wp}, W^{\wp})_{\overline{\rho}}[\fm_{\rho}]$ if and only if $\psi\in E\psi_{\cL_r}$.\\
\noindent
(2) Let $s\in \{1,2\}$, $s\neq r$, and let $v\in \Ext^1_{\GL_3(\Q_p)}(v_{\overline{P}_s}^{\infty}(\alpha,\lambda), \Pi^r(\alpha,\lambda, \psi_{\cL_r})^+)$. An injection $\Pi^r(\alpha,\lambda, \psi_{\cL_r})^+ \hookrightarrow \widehat{S}(U^{\wp}, W^{\wp})_{\overline{\rho}}[\fm_{\rho}]$ extends to:
\begin{equation*}
\sE\big(\Pi^r(\alpha,\lambda, \psi_{\cL_r})^+, v_{\overline{P}_s}^{\infty}(\alpha,\lambda), Ev\big)\longrightarrow \widehat{S}(U^{\wp}, W^{\wp})_{\overline{\rho}}[\fm_{\rho}]
\end{equation*}
if and only if $v\in \cL_{\aut}(D: D_r^{r+1})$.
\end{corollary}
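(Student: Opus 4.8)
Both statements will be deduced from Theorem~\ref{thm: lg2-GL3} together with the internal structure of the representations $\widetilde\Pi^r(D)^-$ and $\Pi^r(\alpha,\lambda,\psi)^+$ built in \S\ref{gl3gl3}, and, for the two ``only if'' implications, from the local--global input of \S\ref{sec: lg2}: Corollary~\ref{coro: lg2-simpL} and Proposition~\ref{prop: lg2-sim} for part~(1) (the simple $\cL$-invariant), and Proposition~\ref{prop: lg2-L-inv} together with Theorem~\ref{thm: hL-hL} and Theorem~\ref{thm: hL-L3}/Theorem~\ref{symm} for part~(2) (the higher $\cL$-invariant). The common observation is that $\St_3^{\infty}\otimes(\chi_1\circ\dett)$ is the $\GL_3(\Q_p)$-socle of every representation occurring in the statement (for $v=0$ part~(2) is trivial on both sides, so we may assume $v\ne 0$), so every extension of $f$ is automatically injective; and, exactly as in step~(a) of the proof of Theorem~\ref{thm: lg2-GL3}, the restriction map $\Hom_{\GL_3(\Q_p)}(-,\widehat{S}(U^{\wp},W^{\wp})_{\overline{\rho}}[\fm_{\rho}])\to\Hom_{\GL_3(\Q_p)}(\St_3^{\infty}\otimes(\chi_1\circ\dett),\widehat{S}(U^{\wp},W^{\wp})_{\overline{\rho}}[\fm_{\rho}])$ is \emph{injective} for $-$ equal to any of these representations, because a nonzero morphism killing the socle would embed one of the constituents $v_{\overline P_s}^{\infty}(\alpha,\lambda)$, $C_{i,j}(\alpha,\lambda)$, $\widetilde C_{i,j}(\alpha,\lambda)$ into $\widehat{S}(U^{\wp},W^{\wp})_{\overline{\rho}}[\fm_{\rho}]^{\an}$, which is impossible by Lemma~\ref{lem: lg2-jacncpt} and \cite[Cor.~3.4]{Br13I} (none of these carries the character $(\chi_1\circ\dett)\delta_B$ in its Jacquet module). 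Hence an injection $f$ extends over $-$ if and only if the unique morphism $\widetilde\Pi^r(D)^-\to\widehat{S}(U^{\wp},W^{\wp})_{\overline{\rho}}[\fm_{\rho}]$ of Theorem~\ref{thm: lg2-GL3} restricts to $-$, and the ``if'' directions then follow from the inclusions $\Pi^r(\alpha,\lambda,\psi_{\cL_r})^+\subseteq\widetilde\Pi^r(\alpha,\lambda,\psi_{\cL_r})\subseteq\widetilde\Pi^r(D)^-$ and, for part~(2), $\sE(\Pi^r(\alpha,\lambda,\psi_{\cL_r})^+,v_{\overline P_s}^{\infty}(\alpha,\lambda),Ev)\subseteq\widetilde\Pi^r(D)^-$ whenever $Ev\subseteq\cL_{\aut}(D:D_r^{r+1})$, using Lemma~\ref{lem: hL-L3}(1) to view $\cL_{\aut}(D:D_r^{r+1})$ inside $\Ext^1_{\GL_3(\Q_p)}(v_{\overline P_s}^{\infty}(\alpha,\lambda),\Pi^r(\alpha,\lambda,\psi_{\cL_r})^+)$ as in~(\ref{callaut}). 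Note also that $\pi(\lambda_{r,r+1},c\psi)\cong\pi(\lambda_{r,r+1},\psi)$ for $c\in E^{\times}$, so only the line $E\psi$ matters in part~(1).

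\textbf{Part~(1), ``only if''.} Suppose $f$ extends to $\tilde f_1\colon\Pi^r(\alpha,\lambda,\psi)^+\hookrightarrow\widehat{S}(U^{\wp},W^{\wp})_{\overline{\rho}}[\fm_{\rho}]$; we must show $\psi$ is a multiple of a generator of $\cL_{\FM}(D_r^{r+1}:\cR_E(\chi_r))$, i.e. $\psi\in E\psi_{\cL_r}$. The subquotient $\Pi^r(\alpha,\lambda,\psi)_0\hookrightarrow\Pi^r(\alpha,\lambda,\psi)^+$ records the class $\psi$ via the isomorphism of~(\ref{equ: hL-sim0}), and $\Pi^r(\alpha,\lambda,\psi)^+$ is built (up to twist by $\unr(\alpha)\circ\dett$) from $\pi(\lambda_{r,r+1},\psi)$ through the locally analytic parabolic induction of \S\ref{sec: hL-pI}. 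Applying $\Ord_{P_r}$ to $\tilde f_1$ and the adjunction of Proposition~\ref{prop: ord-adj} produces an $L_{P_r}(\Q_p)$-morphism out of a representation carrying the class $\psi$ whose restriction to the socle is nonzero; comparing with Corollary~\ref{coro: lg2-simpL}, whose left-hand side is governed by $\widehat\pi(\rho_r^{r+1})^{\an}$ (which carries precisely the simple $\cL$-invariant $\cL_r$ of $D_r^{r+1}$), and invoking Proposition~\ref{prop: lg2-sim} (which shows that the only first-order deformation directions of $(\chi_1\circ\dett)\delta_B$ occurring in $J_B(\widehat{S}(U^{\wp},W^{\wp})_{\overline{\rho}}[\fm_{\rho}]^{\an})$ attached to $D_r^{r+1}$ lie in $\cL_{\FM}(D_r^{r+1}:\cR_E(\chi_r))=E\psi_{\cL_r}$), one concludes $\psi\in E\psi_{\cL_r}$. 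The Jacquet-module computation needed to extract this deformation from $\Pi^r(\alpha,\lambda,\psi)_0$ proceeds exactly as in the proof of Corollary~\ref{coro: lg2-simpL}.

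\textbf{Part~(2), ``only if''.} Suppose the given injection extends to $\Phi\colon\sE(\Pi^r(\alpha,\lambda,\psi_{\cL_r})^+,v_{\overline P_s}^{\infty}(\alpha,\lambda),Ev)\hookrightarrow\widehat{S}(U^{\wp},W^{\wp})_{\overline{\rho}}[\fm_{\rho}]$. By the analogue of Remark~\ref{rem: hL-L3}, this representation is a subquotient of a locally analytic parabolic induction $(\Ind_{\overline P_r(\Q_p)}^{\GL_3(\Q_p)}\widetilde\pi)^{\an}$ where $\widetilde\pi$ is an $L_{P_r}(\Q_p)$-deformation over $E[\epsilon]/\epsilon^2$ whose $\GL_2(\Q_p)$-factor $\widetilde\pi_{r,r+1}$ corresponds, under the $p$-adic Langlands isomorphism~(\ref{equ: hL-pLL0}), to a deformation $\widetilde{D}_r^{r+1}$ of $D_r^{r+1}$ with $[\,\widetilde{D}_r^{r+1}\,]\mapsto v$ in $\Ext^1_{(\varphi,\Gamma)}(D_r^{r+1},\cR_E(\chi_{r+1}))$ via the identification of Remark~\ref{rk3.47}. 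Applying $\Ord_{P_r}$ to $\Phi$, the adjunction of Proposition~\ref{prop: ord-adj}, and the local freeness of $M_{P_r}(U^{\wp})[1/p]$ at $x$ (Lemma~\ref{lem: lg2-lfree2}) exactly as in step~(c) of the proof of Theorem~\ref{thm: lg2-GL3}, the datum $\Phi$ produces a tangent vector $v'\in V_x$ of $\Spec\widetilde{\bT}(U^{\wp})_{\overline{\rho}}^{P_r-\ord}[1/p]$ at $x$ with $d\omega_{r,x}^{\pm}(v')=[\,\widetilde{D}_r^{r+1}\,]$. By Proposition~\ref{prop: lg2-L-inv}(1) this lies in $\cL_{\FM}(D:D_r^{r+1})$, so its image $v$ in $\Ext^1_{(\varphi,\Gamma)}(D_r^{r+1},\cR_E(\chi_{r+1}))$ lies in $\ell_{\FM}(D:D_r^{r+1})$; by (\ref{rem: hL-LgalLaut}) together with the perfect pairing of Theorem~\ref{thm: hL-L3} (resp. Theorem~\ref{symm}) this means $v\in\cL_{\aut}(D:D_r^{r+1})$, as required.

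\textbf{Main obstacle.} The ``if'' directions are immediate, and the ``only if'' of part~(1) is a fairly direct consequence of Corollary~\ref{coro: lg2-simpL}/Proposition~\ref{prop: lg2-sim} once one identifies the $\psi$-carrying subquotient of $\Pi^r(\alpha,\lambda,\psi)^+$ and computes the relevant Jacquet modules. The genuinely delicate point is the ``only if'' of part~(2): one must run the automorphic-to-Galois dictionary of \S\ref{sec: lg2-1}--\S\ref{sec: lg1-lg1} in the direction opposite to its use in Theorem~\ref{thm: lg2-GL3}, namely to \emph{recover} from an embedding of $\sE(\Pi^r(\alpha,\lambda,\psi_{\cL_r})^+,v_{\overline P_s}^{\infty}(\alpha,\lambda),Ev)$ a tangent vector $v'$ with $d\omega_{r,x}^{\pm}(v')$ realizing the class $v$. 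This rests on the freeness results of \S\ref{simpleinv}--\S\ref{sec: lg2-1} (Lemma~\ref{lem: lg2-lfree2}), the precise description of $\Ima(d\omega_{r,x}^{\pm})$ (Proposition~\ref{prop: lg2-L-inv}), the compatibility of the automorphic and Galois pairings (Theorem~\ref{thm: hL-L3}/Theorem~\ref{symm}, via Remark~\ref{rk3.47}), and the identification of $\soc_{\GL_3(\Q_p)}\widehat{S}(U^{\wp},W^{\wp})_{\overline{\rho}}[\fm_{\rho}]^{\an}$ (Lemma~\ref{lem: lg2-jacncpt}) used throughout to pass between $f$ and its extensions.
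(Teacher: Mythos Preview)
Your ``if'' directions agree with the paper's: both parts follow at once from Theorem~\ref{thm: lg2-GL3} and the inclusions $\Pi^r(\alpha,\lambda,\psi_{\cL_r})^+\subseteq\widetilde\Pi^r(D)^-$ and (for $v\in\cL_{\aut}(D:D_r^{r+1})$) $\sE(\Pi^r(\alpha,\lambda,\psi_{\cL_r})^+,v_{\overline P_s}^{\infty}(\alpha,\lambda),Ev)\subseteq\widetilde\Pi^r(D)^-$.

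For the ``only if'' directions the paper takes a completely different, and much shorter, route than yours. It does \emph{not} try to pass through $\Ord_{P_r}$ or produce tangent vectors. Instead it amalgamates the hypothetical embedding with the one provided by Theorem~\ref{thm: lg2-GL3}. For (1): if $\psi\notin E\psi_{\cL_r}$ then $E\psi+E\psi_{\cL_r}=\Hom(\Q_p^{\times},E)$, so the amalgamated sum $\Pi^r(\alpha,\lambda,\psi_{\cL_r})_0\oplus_{S_{s,0}\otimes(\chi_1\circ\dett)}\Pi^r(\alpha,\lambda,\psi)_0$ embeds in $\widehat S(U^{\wp},W^{\wp})_{\overline\rho}[\fm_\rho]$; since $\val_p\in E\psi+E\psi_{\cL_r}$, this amalgamated sum contains the locally algebraic nonsplit extension $\pi^r(\alpha,\lambda)=\St_3^{\infty}\otimes(\chi_1\circ\dett)-v_{\overline P_r}^{\infty}(\alpha,\lambda)$, contradicting (\ref{equ: lg2-GL3lalg}). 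For (2): if $v\notin\cL_{\aut}(D:D_r^{r+1})$ then $Ev+\cL_{\aut}(D:D_r^{r+1})$ fills the whole $3$-dimensional $\Ext^1$ of Lemma~\ref{lem: hL-L3}(2), hence contains the line $\Ext^1_{\GL_3(\Q_p)}(v_{\overline P_s}^{\infty}(\lambda),\St_3^{\infty}(\lambda))$, so $\sE(\Pi^r(\alpha,\lambda,\psi_{\cL_r})^+,v_{\overline P_s}^{\infty}(\alpha,\lambda),Ev)\oplus_{\Pi^r(\alpha,\lambda,\psi_{\cL_r})^+}\Pi^r(D)^-$ contains $\pi^s(\alpha,\lambda)$, again contradicting (\ref{equ: lg2-GL3lalg}). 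This argument uses only representation-theoretic structure and the single global input (\ref{equ: lg2-GL3lalg}).

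Your proposed ``only if'' argument for (2) has a genuine gap. You claim that ``the datum $\Phi$ produces a tangent vector $v'\in V_x$ with $d\omega_{r,x}^{\pm}(v')=[\widetilde D_r^{r+1}]$'', invoking step~(c) of Theorem~\ref{thm: lg2-GL3}. But step~(c) runs in the \emph{opposite} direction: one first chooses $v'\in V_x$ (using Proposition~\ref{prop: lg2-L-inv}(2)), and then the freeness of $(X_P(U^{\wp})[\cI_{v'}])[1/p]$ over $E[\epsilon]/\epsilon^2$ is used to \emph{lift} a given $e\in X_P(U^{\wp})[\fp_x][1/p]$ to $\tilde e$ with $\epsilon\tilde e=e$. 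Nothing there lets you recover a Hecke-theoretic tangent vector from an arbitrary embedding $\Phi$ into $\widehat S[\fm_\rho]$: the target carries no $E[\epsilon]/\epsilon^2$-structure, and there is no reason your $\widetilde\pi$-map should come from an element of $X_P(U^{\wp})[\cI_{v'}]$ for any ideal $\cI_{v'}$ of the shape you need. Proposition~\ref{prop: lg2-L-inv}(1), which you want to apply to constrain $v$, only applies to classes in $\Ima(d\omega_{r,x}^{\pm})$, and you have not placed $v$ there. Your sketch for (1) via $\Ord_{P_r}$ and Proposition~\ref{prop: ord-adj} is also shaky (that adjunction is for locally algebraic inductions, not for $\Pi^r(\alpha,\lambda,\psi)^+$), though the underlying idea of reading off $\psi$ from Jacquet-module data can be made to work with more care. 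The amalgamation argument above avoids all of this.
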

\begin{proof}
For $i\in \{1,2\}$, denote by $\pi^i(\alpha,\lambda):=\St_3^{\infty}\otimes (\chi_1\circ\dett)-v_{\overline{P}_i}^{\infty}(\alpha,\lambda)$ the unique nonsplit locally algebraic extension of $v_{\overline{P}_i}^{\infty}(\alpha,\lambda)$ by $\St_3^{\infty}\otimes (\chi_1\circ\dett)$.\\
\noindent
(1) By Theorem \ref{thm: lg2-GL3}, $f$ extends to $\tilde{f}_0: \Pi^r(\alpha,\lambda, \psi_{\cL_r})^+\hookrightarrow \widehat{S}(U^{\wp}, W^{\wp})_{\overline{\rho}}[\fm_{\rho}]$, the ``if" part follows. If $\psi\notin E\psi_{\cL_r}$, we have $E\psi+ E\psi_{\cL_r}=\Hom(\Q_p^{\times}, E)$ and an injection induced by $\tilde{f}_0$, $\tilde{f}_1$ (where $S_{s,0}$ is defined as in \S~\ref{sec: hL-pI} with $\lambda=0$) :
\begin{equation*}
\Pi^r(\alpha,\lambda, \psi_{\cL_r})_0 \oplus_{S_{s,0}\otimes (\chi_1\circ\dett)} \Pi^r(\alpha,\lambda, \psi)_0\hooklongrightarrow \widehat{S}(U^{\wp}, W^{\wp})_{\overline{\rho}}[\fm_{\rho}].
\end{equation*}
Since $\val_p\in E\psi+ E\psi_{\cL_r}$, we easily deduce that the left hand side contains $\pi^r(\alpha,\lambda)$ as a subrepresentation. However $\pi^r(\alpha,\lambda)$ is not a subrepresentation of $\widehat{S}(U^{\wp}, W^{\wp})_{\overline{\rho}}[\fm_{\rho}]$ by (\ref{equ: lg2-GL3lalg}), a contradiction.\\
\noindent
(2) follows by the same argument, noting that if $v\notin \cL_{\aut}(D: D_r^{r+1})$, then one easily deduces from Lemma \ref{lem: hL-L3} that $\pi^s(\alpha,\lambda)$ is a subrepresentation of $$\sE(\Pi^r(\alpha,\lambda, \psi_{\cL_r})^+, v_{\overline{P}_s}^{\infty}(\alpha,\lambda), Ev) \oplus_{\Pi^r(\alpha,\lambda, \psi_{\cL_r})^+} \Pi^r(D)^-$$ (where $\Pi^r(D)^-\subseteq \widetilde \Pi^r(D)^-$ is defined in (\ref{equ: L3-Pi}) and (\ref{piD2}) modulo the twist by $\unr(\alpha)\circ\dett$), a contradiction.
\end{proof}

\noindent
We can now state our main result. We fix $\wp\vert p$, $\widetilde{\wp}\vert \wp$, $U^\wp=\prod_{v\ne \wp}U_v$ and $W^\wp$ as in \S~\ref{prelprel}.

\begin{corollary}\label{main}
Assume $n=3$, $F^+_\wp\cong F_{\widetilde{\wp}}=\Q_p$, $p\geq 5$ and $U_v$ maximal if $v\vert p$, $v\ne \wp$. Let $\rho: \Gal_F\ra \GL_3(E)$ be a continuous representation which is unramified at the places of $D(U^p)$ and such that:
\begin{itemize}
\item $\overline\rho$ is absolutely irreducible
\item $\widehat{S}(U^{\wp}, W^{\wp})[\fm_{\rho}]^{\lalg}\neq 0$
\item $\rho_{\widetilde{\wp}}$ is semi-stable with consecutive Hodge-Tate weights and $N^2\ne 0$ on $D_{\st}(\rho_{\widetilde{\wp}})$
\item any dimension $2$ subquotient of $\overline \rho_{\widetilde{\wp}}=\overline{\rho}|_{\Gal_{F_{\widetilde{\wp}}}}$ is nonsplit.
\end{itemize}
\noindent
Then we have the following results.\\
(1) The statement in Conjecture \ref{THEconjecture} is true, i.e. the restriction morphism is bijective:
\begin{equation*}
 \Hom_{\GL_3(\Q_p)}\big(\Pi(\rho_{\widetilde{\wp}}),\widehat{S}(U^{\wp}, W^{\wp})[\fm_{\rho}]\big)
 \xlongrightarrow{\sim} \Hom_{\GL_3(\Q_p)}\big(\Pi(\rho_{\widetilde{\wp}})^{\lalg}, \widehat{S}(U^{\wp}, W^{\wp})[\fm_{\rho}]\big).
\end{equation*}
\noindent
(2) The representation $\rho_{\widetilde{\wp}}$ of $\Gal_{\Q_p}$ is determined by the locally analytic representation $\widehat{S}(U^{\wp}, W^{\wp})[\fm_{\rho}]^{\rm an}$ of $\GL_3(\Q_p)$ (hence also by the continuous representation $\widehat{S}(U^{\wp}, W^{\wp})[\fm_{\rho}]$).
\end{corollary}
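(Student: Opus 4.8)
\textbf{Plan of proof of Corollary \ref{main}.}
The idea is to reduce the general ``consecutive Hodge-Tate weights'' case to the situation already analyzed in \S~\ref{sec: lg2}, namely $\rho_{\widetilde{\wp}}$ semi-stable noncrystalline with $N^2\ne 0$ and Hodge-Tate weights $(0,1,2)$ up to a common shift, and then invoke Theorem \ref{thm: lg2-GL3} and Corollary \ref{unicity} to assemble the full representation $\Pi(\rho_{\widetilde{\wp}})$. First I would twist: if the Hodge-Tate weights of $\rho_{\widetilde{\wp}}$ are $\{k_1,k_1-1,k_1-2\}$, then replacing $\rho$ by $\rho\otimes \varepsilon^{-k_1}$ (which only changes $\widehat S(U^\wp,W^\wp)[\fm_\rho]$ by a twist by $\varepsilon^{-k_1}\circ\dett$ on the $\GL_3(\Q_p)$-side, by (\ref{equ: lalgaut2}) and the compatibility of $\fm_\rho$ with twisting) brings us to weights $\{0,-1,-2\}$, i.e. after the usual normalization $\lambda=(0,0,0)$. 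The condition $N^2\ne 0$ together with \cite[Thm.~A]{Berger2} and \cite[Prop.~2.3.4]{Bch} forces $\rho_{\widetilde{\wp}}$ to be, up to the twist $\unr(\alpha)$, a successive extension of $\varepsilon^2$, $\varepsilon$, $1$ with all the relevant subquotient extensions nonsplit: this is exactly the setting of \S~\ref{fewprel}. I would then check that the hypotheses of \S~\ref{fewprel} are met: $\overline\rho$ absolutely irreducible gives $\End_{\Gal_F}(\overline\rho)=k_E$ and that $\overline\rho$ is $(U^\wp,\bW^\wp)$-automorphic; the fourth bullet (every $2$-dimensional subquotient of $\overline\rho_{\widetilde{\wp}}$ nonsplit) together with $\overline\chi_i\overline\chi_{i+1}^{-1}=\overline\varepsilon$ forces $\overline\rho_{\widetilde{\wp}}$ to be strictly $B$-ordinary and, crucially, makes the $\GL_2$-factors $\overline\rho_r$ satisfy the genericity hypothesis (\ref{hypo: hL-modp}) needed to invoke Proposition \ref{prop: hL-gl2pLL} and Proposition \ref{prop: hL-gl2pLL2} (here $p\geq 5$ enters, covering the nongeneric case); this in turn gives Hypothesis \ref{hypo: hL-pLL0} for $D_1^2$ and $D_2^3$, hence the whole construction of \S~\ref{linvariantgl3} applies to $D=D_{\rig}(\rho_{\widetilde{\wp}})$ and $\Pi(\rho_{\widetilde{\wp}})$ is defined.

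Next I would run the assembly argument. By Theorem \ref{thm: lg2-GL3} (applied with both $r=1$ and $r=2$, after undoing the twist $\unr(\alpha)$), the restriction morphism from $\Hom_{\GL_3(\Q_p)}(\widetilde\Pi^r(D)^-,\widehat S(U^\wp,W^\wp)[\fm_\rho])$ to $\Hom_{\GL_3(\Q_p)}(\St_3^\infty\otimes\chi_1\circ\dett,\widehat S(U^\wp,W^\wp)[\fm_\rho])$ is bijective. Restricting to the subrepresentations $\Pi^r(D)^-\subseteq \widetilde\Pi^r(D)^-$ and using that the two ``branches'' share the subrepresentation $\Pi^0(D)^-$, I would glue the two extensions to get that restriction from $\Hom_{\GL_3(\Q_p)}(\Pi(D)^-,\widehat S(U^\wp,W^\wp)[\fm_\rho])$ to $\Hom_{\GL_3(\Q_p)}(\St_3^\infty\otimes\chi_1\circ\dett,\widehat S(U^\wp,W^\wp)[\fm_\rho])$ is bijective (the uniqueness needed for the gluing is exactly Corollary \ref{unicity}(1) for the choice of $\psi_{\cL_r}$ on each branch, and Corollary \ref{unicity}(2) for the choice of the $2$-dimensional subspaces $\cL_{\aut}(D:D_r^{r+1})$). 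Finally, by \cite[Rem.~4.6.3]{Br16} there is a unique representation $\Pi(D)=\Pi(\rho_{\widetilde{\wp}})$ of the form (\ref{piDplus}) containing $\Pi(D)^-$, and the constituents $\widetilde C_{i,4}$, $C_{i,5}$ are added ``on the right'' so that any $\GL_3(\Q_p)$-map $\Pi(D)^-\to\widehat S(U^\wp,W^\wp)[\fm_\rho]$ automatically kills no vector coming from these constituents --- concretely, one checks that those constituents do not occur in $\soc_{\GL_3(\Q_p)}\widehat S(U^\wp,W^\wp)[\fm_\rho]^{\an}$ using (\ref{equ: ord-ncomp}) and \cite[Cor.~3.4]{Br13I}, so the restriction morphism from $\Hom_{\GL_3(\Q_p)}(\Pi(D),\cdot)$ to $\Hom_{\GL_3(\Q_p)}(\Pi(D)^-,\cdot)$ is bijective. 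Composing with $\Pi(\rho_{\widetilde{\wp}})^{\lalg}=\soc_{\GL_3(\Q_p)}\Pi(\rho_{\widetilde{\wp}})=\St_3^\infty(\lambda)\otimes\chi\circ\dett$ and undoing the initial twist gives part (1).

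For part (2), the point is that $\Pi(\rho_{\widetilde{\wp}})$ completely determines and only depends on $\rho_{\widetilde{\wp}}$ (this is the last sentence of \S~\ref{linvariantgl3}, built from the perfect pairings in Theorem \ref{thm: hL-L3} and Theorem \ref{symm} together with Corollary \ref{codimension}). By part (1), $\Pi(\rho_{\widetilde{\wp}})$ is realized inside $\widehat S(U^\wp,W^\wp)[\fm_\rho]^{\an}$ as soon as $\widehat S(U^\wp,W^\wp)[\fm_\rho]^{\lalg}\ne 0$ (which gives the nonzero map $\St_3^\infty(\lambda)\otimes\chi\circ\dett\to\widehat S(U^\wp,W^\wp)[\fm_\rho]$ to extend), and conversely any $\rho'_{\widetilde{\wp}}$ in the allowed family producing the same locally analytic $\GL_3(\Q_p)$-representation would have $\Pi(\rho'_{\widetilde{\wp}})$ embedding in it and hence, by the uniqueness statement in part (1) and in \cite[Rem.~4.6.3]{Br16}, $\Pi(\rho'_{\widetilde{\wp}})\cong\Pi(\rho_{\widetilde{\wp}})$, so $\rho'_{\widetilde{\wp}}\cong\rho_{\widetilde{\wp}}$. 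Since passing from $\widehat S(U^\wp,W^\wp)[\fm_\rho]$ to $\widehat S(U^\wp,W^\wp)[\fm_\rho]^{\an}$ is functorial, the continuous representation also determines $\rho_{\widetilde{\wp}}$.

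\textbf{Main obstacle.} The genuinely delicate point is not the gluing (which is formal once Theorem \ref{thm: lg2-GL3} and Corollary \ref{unicity} are in hand) but verifying that the hypotheses feeding Theorem \ref{thm: lg2-GL3} hold in the stated generality: namely that the fourth bullet ``every $2$-dimensional subquotient of $\overline\rho_{\widetilde{\wp}}$ nonsplit'' is precisely what is needed to (a) make $\overline\rho_{\widetilde{\wp}}$ strictly $B$-ordinary in the sense of Definition \ref{def: ord-strict}, and (b) ensure the reductions $\overline\rho_r$ of the rank-$2$ pieces $D_r^{r+1}$ satisfy (\ref{hypo: hL-modp}), so that Proposition \ref{prop: hL-gl2pLL}, Proposition \ref{prop: hL-gl2pLL2}, and hence Hypothesis \ref{hypo: hL-pLL0}, are available --- the condition $p\geq 5$ being exactly what lets one handle the nongeneric $\overline\rho_r$ in Proposition \ref{prop: hL-gl2pLL2}. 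One also has to make sure that $U_v$ maximal for $v\mid p$, $v\ne\wp$, and $U^p$ can be taken sufficiently small without changing $\widehat S(U^\wp,W^\wp)[\fm_\rho]$ (by the standard trick of enlarging the tame level away from $\wp$ and using that $\widehat S(U^\wp,W^\wp)[\fm_\rho]$ only grows by a finite multiplicity, which does not affect the bijectivity statements), and that Hypothesis \ref{hypoglobal} holds for $n=3$ by Rogawski's base change.
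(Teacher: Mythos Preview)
Your plan is essentially correct and follows the same route as the paper: reduce to the setting of \S~\ref{fewprel}, invoke Theorem \ref{thm: lg2-GL3} for $r=1,2$, glue over $\Pi^0(D)^-$ to get the statement for $\Pi(D)^-$, and then pass to $\Pi(D)$. The paper's proof is terser, citing \cite[\S~6.2~\'Etape~1]{Br16} for the reduction to $U^p$ sufficiently small, \cite[\S~6.2~\'Etape~2]{Br16} for the gluing, and \cite[\S~6.4~Cas~$i\geq 3$]{Br16} for the passage from $\Pi(D)^-$ to $\Pi(D)$.

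There is one imprecision worth flagging. In your argument for the isomorphism $\Hom_{\GL_3(\Q_p)}(\Pi(D),\cdot)\xrightarrow{\sim}\Hom_{\GL_3(\Q_p)}(\Pi(D)^-,\cdot)$, the socle argument via (\ref{equ: ord-ncomp}) and \cite[Cor.~3.4]{Br13I} only gives \emph{injectivity}: if two maps from $\Pi(D)$ agree on $\Pi(D)^-$, their difference factors through $\Pi(D)/\Pi(D)^-$, whose constituents do not embed in the target. It does \emph{not} give surjectivity, i.e. that every map from $\Pi(D)^-$ extends to $\Pi(D)$; for this one genuinely needs the adjunction-type arguments in \cite[\S~6.4~Cas~$i\geq 3$]{Br16}, which the paper cites directly. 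Your phrase ``automatically kills no vector coming from these constituents'' does not address this direction.

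For part (2), the paper argues more directly from Corollary \ref{unicity}: that corollary says the locally analytic representation $\widehat S(U^\wp,W^\wp)[\fm_\rho]^{\an}$ singles out exactly the lines $E\psi_{\cL_r}$ and the planes $\cL_{\aut}(D:D_r^{r+1})$, and these data determine $D$ hence $\rho_{\widetilde{\wp}}$. Your route through part (1) and the bijection $\rho_{\widetilde{\wp}}\leftrightarrow\Pi(\rho_{\widetilde{\wp}})$ reaches the same conclusion but is slightly more roundabout; note also that Corollary \ref{unicity} is not what is needed for the gluing in part (1) (that uses only the injectivity argument in step (a) of the proof of Theorem \ref{thm: lg2-GL3}), but rather is the key input for part (2).
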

\begin{proof}
(1) By the same argument as in \cite[\S~6.2~\'Etape 1]{Br16}, we can assume that $U^p$ is sufficiently small. Define $\Pi(D)^-$ as at the end of \S~\ref{linvariantgl3}, then it follows from Theorem \ref{thm: lg2-GL3} and (a) in its proof (and arguing e.g. as in \cite[\S~6.2~\'Etape 2]{Br16}) that the statement holds with $\Pi(D)^-$ instead of $\Pi(\rho_{\widetilde{\wp}})=\Pi(D)$. By \cite[\S~6.4\ Cas\ $i\geq 3$]{Br16}, we have:
$$\Hom_{\GL_3(\Q_p)}\big(\Pi(D),\widehat{S}(U^{\wp}, W^{\wp})[\fm_{\rho}]\big)
 \xlongrightarrow{\sim} \Hom_{\GL_3(\Q_p)}\big(\Pi(D)^-, \widehat{S}(U^{\wp}, W^{\wp})[\fm_{\rho}]\big)$$
 and (1) follows. (2) is a direct consequence of Corollary \ref{unicity} (which {\it a fortiori} still holds when $U^p$ is not sufficiently small).
\end{proof}

\appendix
\renewcommand*{\thesection}{\Alph{section}}

\section{Appendix}

\noindent
The aim of this appendix is to give a complete proof of Proposition \ref{prop: hL-gl2pLL2}, for which we couldn't find precise references in the existing literature.

\subsection{Notation and preliminaries}\label{notprel}

\noindent
We recall some notation and results of Emerton and Colmez.\\

\noindent
For $G$ a topological group which is locally pro-$p$ and $A\in \Comp(\co_E)$ (see the beginning of \S~\ref{sec: ord-1}), we denote by $\Mod_{G}^{\sm}(A)$ the category of smooth representations of $G$ over $A$ in the sense of \cite[\S~2.2]{EOrd1}, $\Mod_{G}^{\fin}(A)$ the full subcategory of smooth representations of finite length and $\Mod_{G}^{\lfin}(A)$ the full subcategory of smooth representations locally of finite length (i.e. the subrepresentation generated by $v$ is of finite length for any vector $v$). We denote by $(\cdot)^\vee:=\Hom_{\co_E}(\cdot,E/\co_E)=$ Pontryagin duality.\\

\noindent
We let $\Mod_{G}^{\pro\ \!\!\!\aug}(A)$ be the category of profinite augmented representations of $G$ over $A$ in the sense of \cite[Def. 2.1.6]{EOrd1}. By \cite[(2.2.8)]{EOrd1}, the functor $\pi\mapsto \pi^\vee$ induces an anti-equivalence of categories:
\begin{equation}\label{equ: app-dual}
\Mod_{G}^{\sm}(A) \xlongrightarrow{\sim} \Mod_{G}^{\pro\ \!\!\!\aug}(A).
\end{equation}
As in \cite[\S~2.1]{EOrd1}, we denote by $\Mod_G^{\fg\ \!\!\!\aug}(A)$ the full subcategory of $\Mod_G^{\pro\ \!\!\!\aug}(A)$ consisting of augmented $G$-representations that are finitely generated over $A[[H]]$ for some (equivalently any) compact open subgroup $H$ of $G$. We denote by $\Mod_G^{\ortho}(A)$ the category of orthonormalizable admissible representations of $G$ over $A$ in the sense of \cite[Def. 3.1.11]{Em4}. By \cite[Prop. 3.1.12]{Em4}, the functor $\pi\mapsto \Hom_A(\pi,A)$ induces an anti-equivalence of categories between $\Mod_G^{\ortho}(A)$ and the full subcategory of $\Mod_G^{\fg\ \!\!\!\aug}(A)$ consisting of $G$-representations which are moreover pro-free $A$-modules.\\

\noindent
We denote by $\Rep^{\fin}_{\Gal_{\Q_p}}(\co_E)$ the category of continuous representations of $\Gal_{\Q_p}$ on finite length (hence torsion) $\co_E$-modules equipped with the discrete topology. Recall that Colmez defined a covariant exact functor (called Colmez's functor, see \cite{Colm10a}):
\begin{equation*}
\hV: \Mod^{\fin}_{\GL_2(\Q_p)}(\co_E) \lra \Rep^{\fin}_{\Gal_{\Q_p}}(\co_E).
\end{equation*}
For a continuous character $\zeta: \Q_p^{\times} \ra \co_E^{\times}$ (which we view as a continuous character of $\Gal_{\Q_p}$), we denote by $\hV_{\zeta}$ the functor $\pi \mapsto \hV(\pi) \otimes \zeta$. As in \cite[\S~3.2]{Em4}, for $A\in \Comp(\co_E)$, $\hV$ (resp. $\hV_{\zeta}$) extends to a covariant and exact functor, still denoted by $\hV$ (resp. by $\hV_{\zeta}$), from the full subcategory of $\Mod_{\GL_2(\Q_p)}^{\ortho}(A)$ consisting of $A$-representations $\pi$ such that $\pi\otimes_A A/\fm_A\in \Mod^{\fin}_{\GL_2(\Q_p)}(k_E)$ to the category of continuous $\Gal_{\Q_p}$-representations on finite rank free $A$-modules.\\

\noindent

\subsection{Deformations I}\label{sec: app-def}
\noindent The main results of this section are Corollary \ref{coro: hL-pLL1} and Corollary \ref{coro: app-NR} below.\\

\noindent
We keep the notation of \S~\ref{notprel}. We fix $\overline{\rho}: \Gal_{\Q_p}\ra \GL_2(k_E)$ a continuous representation and let $\pi(\overline{\rho})$ be the smooth representation of $\GL_2(\Q_p)$ over $k_E$ associated to $\overline{\rho}$ by the mod $p$ Langlands correspondence normalized so that $\hV_{\varepsilon^{-1}}(\pi(\overline{\rho})) \cong \overline{\rho}$ (this is the normalization of \cite[\S~3.1]{Br11b}). We assume:
\begin{equation}\label{hypo: hL-modp}
 \overline{\rho}\ncong \begin{pmatrix}
 1 & * \\
 0 & \overline{\varepsilon}
 \end{pmatrix} \text{ up to twist by a character (with $*$ zero or not)}.
\end{equation}
Note that the assumption implies that $\pi(\overline{\rho})$ has length $\leq 3$.\\

\noindent
We denote by $\Def_{\overline{\rho}}$ the groupoid over $\Comp(\co_E)$ of deformations of $\overline{\rho}$ (see \cite[Def.~3.3.6]{Em4}) and by $\Def_{\pi(\overline{\rho}), \ortho}$ the groupoid over $\Comp(\co_E)$ of orthonormalizable admissible deformations of $\pi(\overline{\rho})$ (see \cite[Def. 3.3.7]{Em4}). Following \cite[Def. 3.3.9]{Em4} we denote by $\Def_{\pi(\overline{\rho}), \ortho}^* \subseteq \Def_{\pi(\overline{\rho}), \ortho}$ the subgroupoid of deformations $\pi$ such that the center of $G$ acts on $\pi$ by the character $\dett(\hV_{\varepsilon^{-1}}(\pi))\varepsilon$. The following theorem follows from work of Kisin and Pa{\v{s}}k{\=u}nas (see \cite[Thm. 3.3.13~\&~Rem. 3.3.14]{Em4}).

\begin{theorem}\label{thm: hL-pLL}
The functor $\hV_{\varepsilon^{-1}}$ induces an isomorphism of groupoids:
\begin{equation}\label{equ: hL-pLL}
\Def_{\pi(\overline{\rho}), \ortho}^* \xlongrightarrow{\sim} \Def_{\overline{\rho}}.
\end{equation}
\end{theorem}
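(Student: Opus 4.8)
\textbf{Proof proposal for Theorem \ref{thm: hL-pLL}.}

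The plan is to deduce this from the deformation-theoretic results of Kisin and Pa\v{s}k\=unas rather than to reprove them. First I would recall the setup: Colmez's functor $\hV_{\varepsilon^{-1}}$ is covariant and exact on $\Mod^{\fin}_{\GL_2(\Q_p)}(\co_E)$, sends $\pi(\overline\rho)$ to $\overline\rho$ by our normalization, and extends to orthonormalizable admissible families over $A\in\Comp(\co_E)$ as recalled in \S\ref{notprel}. The hypothesis (\ref{hypo: hL-modp}) excludes precisely the cases where $\pi(\overline\rho)$ is too degenerate for the machinery to apply cleanly; in particular under (\ref{hypo: hL-modp}) the representation $\pi(\overline\rho)$ has length $\le 3$, its endomorphism ring is $k_E$, and $\overline\rho$ has a (twist-)unique invariant lattice structure of the relevant shape. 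These are exactly the hypotheses under which Pa\v{s}k\=unas's construction of the universal deformation and Kisin's identification apply.

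The key step is to invoke \cite[Thm.~3.3.13]{Em4} (building on \cite{Pas13} and \cite{Kis10}), which asserts that, under the genericity condition (\ref{hypo: hL-modp}), the functor $\hV_{\varepsilon^{-1}}$ carries the universal object of $\Def^*_{\pi(\overline\rho),\ortho}$ to the universal object of $\Def_{\overline\rho}$, inducing the claimed isomorphism of groupoids; the remark \cite[Rem.~3.3.14]{Em4} handles the normalization bookkeeping (the twist by $\varepsilon$ on the central character and the choice $\dett(\hV_{\varepsilon^{-1}}(\pi))\varepsilon$ defining the starred subgroupoid). Concretely I would proceed in three sub-steps: (i) check that $\Def^*_{\pi(\overline\rho),\ortho}$ and $\Def_{\overline\rho}$ are both pro-representable by complete noetherian local $\co_E$-algebras with residue field $k_E$ — for the Galois side this is Mazur's theorem together with $\End_{\Gal_{\Q_p}}(\overline\rho)=k_E$ (which follows from (\ref{hypo: hL-modp})), and for the automorphic side this is Pa\v{s}k\=unas's representability result; (ii) show $\hV_{\varepsilon^{-1}}$ defines a morphism of groupoids $\Def^*_{\pi(\overline\rho),\ortho}\to\Def_{\overline\rho}$ lifting the identity on $\overline\rho$, using exactness and compatibility with base change in $\Comp(\co_E)$; (iii) show this morphism is an isomorphism by comparing universal deformation rings, which is the content of the Kisin--Pa\v{s}k\=unas comparison.

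The main obstacle is step (iii): identifying the two universal deformation rings, or equivalently proving that $\hV_{\varepsilon^{-1}}$ is fully faithful and essentially surjective at the level of deformations. This is genuinely deep — it rests on Pa\v{s}k\=unas's functorial reconstruction of $\pi(\overline\rho)$ via its $\co_E[[K]]$-injective envelope and the computation of $\Ext$-groups on the automorphic side matching Galois cohomology — but since it is exactly \cite[Thm.~3.3.13]{Em4}, for the purposes of this paper I would simply cite it, after verifying that our $\overline\rho$ satisfies the running hypotheses there (which is what (\ref{hypo: hL-modp}) ensures). The only real work left on our side is the normalization/twist compatibility in step (ii), which is routine and covered by \cite[Rem.~3.3.14]{Em4}; hence the proof of Theorem \ref{thm: hL-pLL} is short once the references are in place, and the genuinely new input of the appendix will be in the \emph{subsequent} sections where (\ref{equ: hL-pLL}) is used to transfer the subspaces $\Ext^1_{\tri}$ and the Jacquet-module conditions of Hypothesis \ref{hypo: hL-pLL0}(2)(3) across the correspondence.
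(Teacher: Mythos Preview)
Your proposal is correct and matches the paper's approach exactly: the paper gives no proof at all, simply stating that the theorem ``follows from work of Kisin and Pa\v{s}k\=unas (see \cite[Thm.~3.3.13 \& Rem.~3.3.14]{Em4}).'' Your elaboration into sub-steps (i)--(iii) is a reasonable unpacking of what lies behind that citation, and your conclusion that one should just cite \cite[Thm.~3.3.13]{Em4} and \cite[Rem.~3.3.14]{Em4} is precisely what the paper does.

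One small inaccuracy worth flagging: in your step (i) you claim that $\End_{\Gal_{\Q_p}}(\overline\rho)=k_E$ \emph{follows} from (\ref{hypo: hL-modp}). It does not; for instance a split $\overline\rho=\delta_1\oplus\delta_2$ with $\delta_1\delta_2^{-1}\notin\{1,\overline\varepsilon,\overline\varepsilon^{-1}\}$ satisfies (\ref{hypo: hL-modp}) but has two-dimensional endomorphisms. The paper handles this correctly by stating the theorem at the level of \emph{groupoids} (where pro-representability is not needed), and only afterwards imposing $\End_{\Gal_{\Q_p}}(\overline\rho)\cong k_E$ as an additional hypothesis when passing to deformation functors and universal rings. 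Your step (i) is therefore superfluous for the theorem as stated, and the parenthetical implication is false; but this does not affect the validity of the overall plan.
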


\noindent
Let $\xi=(\rho_{\xi}^0, \iota_{\xi})\in \Def(\overline{\rho})(\co_E)$ and $\rho_{\xi}:=\rho_{\xi}^0\otimes_{\co_E} E$ (recall $\iota_{\xi}$ is a $\Gal_{\Q_p}$-equivariant isomorphism $\iota_\xi:\rho_{\xi}^0\otimes_{\co_E}k_E\buildrel\sim\over\rightarrow \overline{\rho}$). We still denote by $\xi:=(\pi_{\xi}^0, \iota_{\xi}')\in \Def^*(\pi(\overline{\rho}))(\co_E)$ the inverse image of $\xi$ via the isomorphism (\ref{equ: hL-pLL}) and set $\widehat\pi(\rho_{\xi}):=\pi_{\xi}^0 \otimes_{\co_E} E$. The map $\rho_{\xi}\mapsto \widehat\pi(\rho_{\xi})$ is the $p$-adic local Langlands correspondence for $\GL_2(\Q_p)$ (normalized as in \cite[\S~3.1]{Br11b}).

\begin{corollary}\label{coro: hL-pLL1}
The functor $\hV_{\varepsilon^{-1}}$ induces a natural surjection:
\begin{equation}\label{equ: hL-pLL3}
\Ext^1_{\GL_2(\Q_p)}(\widehat{\pi}(\rho_{\xi}), \widehat{\pi}(\rho_{\xi})) \twoheadlongrightarrow \Ext^1_{\Gal_{\Q_p}}(\rho_{\xi}, \rho_{\xi})
\end{equation}
where the extension group on the left is in the category of (admissible) unitary Banach representations of $\GL_2(\Q_p)$.
\end{corollary}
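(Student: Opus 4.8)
\textbf{Proof proposal for Corollary \ref{coro: hL-pLL1}.}
The plan is to deduce the surjection (\ref{equ: hL-pLL3}) from Theorem \ref{thm: hL-pLL} by a standard deformation-theoretic argument, combined with the freeness (pro-freeness) properties recorded in \S~\ref{sec: app-def} and \S~\ref{notprel}. First I would recall that, via the anti-equivalence $\pi\mapsto \Hom_A(\pi,A)$ between $\Mod_G^{\ortho}(A)$ and the pro-free objects of $\Mod_G^{\fg\ \!\!\!\aug}(A)$, the groupoid $\Def_{\pi(\overline{\rho}),\ortho}$ (and its subgroupoid $\Def_{\pi(\overline{\rho}),\ortho}^*$) is in fact represented on the subcategory of $\Comp(\co_E)$-algebras finite over $\co_E$ by the completed tensor constructions of \cite[\S~3.3]{Em4}. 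Concretely, the key point is that for $A=\co_E[\epsilon]/\epsilon^2$ (or more generally any square-zero extension of $\co_E$), giving a lift of $\widehat{\pi}(\rho_{\xi})^0:=\pi_\xi^0$ to an orthonormalizable admissible $A$-representation with the prescribed central character is the same datum as giving an element of $\Ext^1_{\GL_2(\Q_p)}(\pi_\xi^0,\pi_\xi^0)$ (up to the usual identifications), and similarly on the Galois side a lift of $\rho_\xi^0$ to $A$ is an element of $\Ext^1_{\Gal_{\Q_p}}(\rho_\xi^0,\rho_\xi^0)$. Theorem \ref{thm: hL-pLL} then says these two $\Ext^1$-type sets (over $\co_E$) match up, at least after imposing the central character condition.

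The main steps, in order, would be: (i) reduce from $E$-coefficients to $\co_E$-coefficients, i.e. reduce (\ref{equ: hL-pLL3}) to the statement that $\hV_{\varepsilon^{-1}}$ induces a surjection from ($\co_E[\epsilon]/\epsilon^2$-)deformations of $\pi_\xi^0$ onto deformations of $\rho_\xi^0$ — this is where one uses that $\Ext^1$ over $E$ is obtained from the corresponding deformation functors over $\co_E$ after inverting $p$ (as in \cite[Lem.~2.1.1]{Br16} for the $\GL_2(\Q_p)$-side and the analogous statement on the Galois side), together with the exactness of $\pi\mapsto \pi\otimes_{\co_E}E$; (ii) observe that the isomorphism of groupoids (\ref{equ: hL-pLL}) of Theorem \ref{thm: hL-pLL}, evaluated at $A=\co_E[\epsilon]/\epsilon^2$, gives a bijection between $\co_E[\epsilon]/\epsilon^2$-deformations of $\pi_\xi^0$ with central character $\dett(\hV_{\varepsilon^{-1}}(\cdot))\varepsilon$ and $\co_E[\epsilon]/\epsilon^2$-deformations of $\rho_\xi^0$; (iii) pass from this central-character-constrained bijection to the unconstrained surjection by noting that every deformation of $\rho_\xi^0$ can be realized by a deformation of $\pi_\xi^0$ after possibly twisting by a deformation of the trivial character (concretely, one writes $\Ext^1_{\Gal_{\Q_p}}(\rho_\xi,\rho_\xi)$ as the sum of its ``fixed determinant'' part and the image of $\Hom(\Q_p^\times,E)$ via $\psi\mapsto\rho_\xi\otimes(1+\psi\epsilon)$, lifting the latter by the obvious twist $\widehat{\pi}(\rho_\xi)\otimes(1+\psi\epsilon)\circ\dett$, exactly as in the proof of Lemma \ref{lem: hL-cenG}); (iv) check that the resulting map is $E$-linear and functorial, which is immediate from the construction. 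Finally one inverts $p$ throughout to land in the category of unitary Banach representations.

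The main obstacle I anticipate is step (i)–(ii), namely carefully matching the ``abstract'' $\Ext^1$-groups (in the locally analytic or Banach sense) with the deformation-theoretic $\Ext^1$-groups coming from $\Comp(\co_E)$-valued functors, and checking that the exactness of $\hV_{\varepsilon^{-1}}$ together with Theorem \ref{thm: hL-pLL} really produces a \emph{surjection} rather than merely a map with dense image or a map defined only on a subspace — the point being that a priori Theorem \ref{thm: hL-pLL} is an isomorphism only on $\Def^*$, so one has to argue that enlarging to all of $\Def_{\overline{\rho}}$ (dropping the central character normalization) still lifts, which is the content of step (iii). Once those bookkeeping issues with central characters and with the passage $\co_E\rightsquigarrow E$ are settled, the surjectivity is formal. (I would also remark, as the excerpt does right after, that the map is in fact expected to be an isomorphism under the further hypothesis $\End_{\Gal_{\Q_p}}(\overline{\rho})=k_E$, but for Corollary \ref{coro: hL-pLL1} itself only surjectivity is claimed and only that is needed.)
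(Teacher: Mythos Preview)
Your overall strategy is correct and lands on essentially the same proof as the paper, but two points deserve comment.

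First, your step (iii) is unnecessary and stems from a misreading of Theorem~\ref{thm: hL-pLL}. The isomorphism (\ref{equ: hL-pLL}) is $\Def_{\pi(\overline{\rho}), \ortho}^* \xrightarrow{\sim} \Def_{\overline{\rho}}$, with target \emph{all} of $\Def_{\overline{\rho}}$, not a fixed-determinant subfunctor. The star condition on the automorphic side (central character $=\dett(\hV_{\varepsilon^{-1}}(\pi))\varepsilon$) is a normalization that makes Colmez's functor an equivalence onto the full Galois deformation groupoid; there is no determinant constraint to remove on the Galois side, so no twisting argument is needed.

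Second, the paper's version of your steps (i)--(ii) is more direct. Rather than setting up a general comparison between integral and rational $\Ext^1$-groups, the paper argues pointwise: given $\widetilde{\rho}_\xi \in \Ext^1_{\Gal_{\Q_p}}(\rho_\xi,\rho_\xi)$, it invokes \cite[Prop.~2.3.5]{Kis09} to find a finite $\co_E$-subalgebra $A\subset E[\epsilon]/\epsilon^2$ with $A[1/p]\cong E[\epsilon]/\epsilon^2$ and a deformation $\rho_{A,\xi}\in \Def_{\overline{\rho}}(A)$ lifting $\rho_\xi^0$ and recovering $\widetilde\rho_\xi$ after inverting $p$. Then Theorem~\ref{thm: hL-pLL} over $A\in\Comp(\co_E)$ produces $\widehat\pi_A$ with $\hV_{\varepsilon^{-1}}(\widehat\pi_A)\cong\rho_{A,\xi}$, and $\widehat\pi_A[1/p]$ is the desired preimage in $\Ext^1_{\GL_2(\Q_p)}(\widehat\pi(\rho_\xi),\widehat\pi(\rho_\xi))$. (The map itself is defined by choosing any invariant open lattice $\widetilde\pi_0$, applying $\hV_{\varepsilon^{-1}}$, and inverting $p$; commensurability of lattices and exactness of $\hV_{\varepsilon^{-1}}$ make this well-defined.) Your reduction to $A=\co_E[\epsilon]/\epsilon^2$ amounts to the same thing, since any such $A$ is abstractly isomorphic to $\co_E[\epsilon']/(\epsilon')^2$; but phrasing it via Kisin's result avoids having to separately justify that rational Galois $\Ext^1$ is spanned by integral classes.
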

\begin{proof}
Let $\widetilde{\pi}\in \Ext^1_{\GL_2(\Q_p)}(\widehat{\pi}(\rho_{\xi}), \widehat{\pi}(\rho_{\xi}))$ and $\widetilde{\pi}_0$ a $\GL_2(\Q_p)$-invariant open lattice. Using that two open lattices in the Banach space $\widetilde{\pi}$ are commensurable and the exactness $\hV_{\varepsilon^{-1}}$, one easily checks that $\hV_{\varepsilon^{-1}}(\widetilde{\pi}_0)[1/p]$ is in $\Ext^1_{\Gal_{\Q_p}}(\rho_{\xi}, \rho_{\xi})$ and doesn't depend on the choice of $\widetilde{\pi}_0$. This defines the morphism (\ref{equ: hL-pLL3}). We prove (\ref{equ: hL-pLL3}) is surjective. Let $\widetilde{\rho}_{\xi}$ be a deformation of $\rho_{\xi}$ over $E[\epsilon]/\epsilon^2$. By the proof of \cite[Prop. 2.3.5]{Kis09}, one can find a finite $\co_E$-subalgebra $A\subseteq E[\epsilon]/\epsilon^2$ such that $A[1/p]\cong E[\epsilon]/\epsilon^2$ and a deformation $\rho_{A,\xi}$ of $\overline{\rho}$ over $A$ such that $\rho_{A,\xi}\otimes_{A} \co_E\cong \rho_{\xi}^0$ (via the natural surjection $A\twoheadrightarrow \co_E$ induced by $E[\epsilon]/\epsilon^2\twoheadrightarrow E$) and $\rho_{A,\xi}\otimes_A E[\epsilon]/\epsilon^2\cong \widetilde{\rho}_{\xi}$. By (\ref{equ: hL-pLL}), there exists a deformation $\widehat{\pi}_A$ of $\pi(\overline{\rho})$ over $A$ such that $\hV_{\varepsilon^{-1}}(\widehat{\pi}_A)\cong \rho_{A,\xi}$. It is straightforward to check that $\widehat{\pi}_A[1/p]\in \Ext^1_{\GL_2(\Q_p)}(\widehat{\pi}(\rho_{\xi}), \widehat{\pi}(\rho_{\xi}))$ (using (\ref{equ: hL-pLL}) again) and that $\widehat{\pi}_A[1/p]$ is sent to $\widetilde{\rho}_{\xi}$ via (\ref{equ: hL-pLL3}).
\end{proof}

\noindent
From now on, we assume moreover $\End_{\Gal_{\Q_p}}(\overline{\rho})\cong k_E$. By \cite[Lem. 2.1.2]{Kis10}, we also have $\End_{\GL_2(\Q_p)}(\pi(\overline{\rho}))\cong k_E$. We now still denote by $\Def_{\overline{\rho}}$ (resp. $\Def_{\pi(\overline{\rho}), \ortho}^*$, $\Def_{\pi(\overline{\rho}), \ortho}$) the (usual) deformation functor (e.g. as in \S~\ref{sec: ord-galoisdef}) attached to the groupoid $\Def_{\overline{\rho}}$ (resp. $\Def_{\pi(\overline{\rho}), \ortho}^*$, $\Def_{\pi(\overline{\rho}), \ortho}$). We know that $\Def_{\overline{\rho}}$ is representable, hence so is $\Def_{\pi(\overline{\rho}), \ortho}^*$ by Theorem \ref{thm: hL-pLL}.\\

\noindent
Let $\overline{\zeta} :=\wedge^2_{k_E} \overline{\rho}$ be the determinant of $\overline{\rho}$ and recall that any element in $ \Ext^1_{\Gal_{\Q_p}}(\overline{\rho}, \overline{\rho})$ (resp. $\Ext^1_{\GL_2(\Q_p)}(\pi(\overline{\rho}), \pi(\overline{\rho}))$) can be viewed as a deformation $\widetilde{\rho}$ (resp. $\widetilde{\pi}$) of $\overline{\rho}$ (resp. $\pi(\overline{\rho})$) over $k_E[\epsilon]/\epsilon^2$. In particular we have a $k_E$-linear morphism:
\begin{equation}\label{equ: olg-det}
\Ext^1_{\Gal_{\Q_p}}(\overline{\rho},\overline{\rho}) \lra \Hom(\Gal_{\Q_p}, k_E)\cong \Hom(\Q_p^{\times}, k_E)
\end{equation}
(= group homomorphisms to the additive group $k_E$) sending $\widetilde{\rho}$ to $\big(\overline{\zeta}'\overline{\zeta}^{-1}-1\big)/\epsilon$ where $\overline{\zeta}':=\wedge^2_{k_E[\epsilon]/\epsilon^2} \widetilde{\rho}$. We define $\Ext^1_{\Gal_{\Q_p}, \overline{\zeta}}(\overline{\rho}, \overline{\rho})$ as the kernel of (\ref{equ: olg-det}). By the assumptions on $\overline{\rho}$, each irreducible constituent $\pi$ of $\pi(\overline{\rho})$ has multiplicity one in $\pi(\overline{\rho})$. Using the same arguments as in the proof of Lemma \ref{lem: hL-cent2}, we can then show that there exists $\overline{\zeta}': \Q_p^{\times} \ra (k_E[\epsilon]/\epsilon^2)^{\times}$ such that the center $Z(\Q_p)\cong \Q_p^\times$ acts on $\widetilde{\pi}$ by $\overline{\zeta}'\overline\varepsilon$. We thus deduce another $k_E$-linear morphism:
\begin{equation}\label{equ: gl2-det2}
\Ext^1_{\GL_2(\Q_p)}(\pi(\overline{\rho}), \pi(\overline{\rho})) \lra \Hom(\Q_p^{\times}, k_E), \ \widetilde{\pi} \longmapsto \big(\overline{\zeta}'\overline{\zeta}^{-1}-1\big)/\epsilon
\end{equation}
and we define $\Ext^1_{\GL_2(\Q_p), \overline{\zeta}\overline\varepsilon}(\pi(\overline{\rho}), \pi(\overline{\rho}))$ as the kernel of (\ref{equ: gl2-det2}), which is the $k_E$-vector subspace of extensions with central character $\overline{\zeta}\overline\varepsilon$.

\begin{lemma}\label{lem: olg-cc}
We have short exact sequences of $k_E$-vector spaces:
\begin{equation*}
0 \lra \Ext^1_{\Gal_{\Q_p}, \overline{\zeta}}(\overline{\rho}, \overline{\rho}) \lra \Ext^1_{\Gal_{\Q_p}}(\overline{\rho},\overline{\rho}) \xlongrightarrow{(\ref{equ: olg-det})} \Hom(\Q_p^{\times}, E) \lra 0
\end{equation*}
\begin{equation*}
0 \lra \Ext^1_{\GL_2(\Q_p), \overline{\zeta}\overline\varepsilon}(\pi(\overline{\rho}), \pi(\overline{\rho})) \lra \Ext^1_{\GL_2(\Q_p)}(\pi(\overline{\rho}), \pi(\overline{\rho}))\xlongrightarrow{(\ref{equ: gl2-det2})} \Hom(\Q_p^{\times}, k_E) \lra 0.
\end{equation*}
\end{lemma}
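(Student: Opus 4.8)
\textbf{Proof proposal for Lemma \ref{lem: olg-cc}.}

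The plan is to produce, for each of the two sequences, an explicit set-theoretic section of the surjection in question; once surjectivity is in hand, exactness at the middle term is a formality since the left-hand space is by definition the kernel, and exactness at the left is trivial. So the whole lemma reduces to constructing the sections. For the Galois sequence, given $\psi\in\Hom(\Q_p^{\times},k_E)\cong\Hom(\Gal_{\Q_p},k_E)$, I would send $\psi$ to the deformation $\overline{\rho}\otimes_{k_E}(1+\tfrac{\psi}{2}\epsilon)$, i.e. the twist of $\overline{\rho}$ by the character $\Gal_{\Q_p}\to (k_E[\epsilon]/\epsilon^2)^{\times}$, $g\mapsto 1+\tfrac{\psi(g)}{2}\epsilon$ (note $p\neq 2$ is in force since we assume (\ref{hypo: hL-modp}) and $p\geq 5$ in the relevant applications, but in any case $2$ is invertible in $k_E$ as $p$ is odd here). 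This is an element of $\Ext^1_{\Gal_{\Q_p}}(\overline{\rho},\overline{\rho})$ since $\overline{\rho}\otimes(1+\tfrac{\psi}{2}\epsilon)$ reduces to $\overline{\rho}$ modulo $\epsilon$, and its image under (\ref{equ: olg-det}) is $\big((1+\tfrac{\psi}{2}\epsilon)^2-1\big)/\epsilon=\psi$ because the determinant of a $2$-dimensional twist by a character $\eta$ gets multiplied by $\eta^2$. Hence (\ref{equ: olg-det}) is surjective with this section, and the first sequence is exact.

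For the $\GL_2(\Q_p)$ sequence, the analogous section sends $\psi$ to $\pi(\overline{\rho})\otimes_{k_E}(1+\tfrac{\psi}{2}\epsilon)\circ\dett$, the twist of $\pi(\overline{\rho})$ by the character $\GL_2(\Q_p)\xrightarrow{\dett}\Q_p^{\times}\to(k_E[\epsilon]/\epsilon^2)^{\times}$, $g\mapsto 1+\tfrac{\psi(\dett g)}{2}\epsilon$. Again this is visibly in $\Ext^1_{\GL_2(\Q_p)}(\pi(\overline{\rho}),\pi(\overline{\rho}))$. To compute its image under (\ref{equ: gl2-det2}) I need to identify the central character of the twist: if $Z(\Q_p)$ acts on $\pi(\overline{\rho})$ by $\overline{\zeta}\,\overline{\varepsilon}$ (which is the normalization forced by $\hV_{\varepsilon^{-1}}(\pi(\overline{\rho}))\cong\overline{\rho}$, so $\dett\hV_{\varepsilon^{-1}}(\pi(\overline{\rho}))=\overline\zeta$ and the central character of $\pi(\overline\rho)$ is $\overline\zeta\overline\varepsilon$), then since $\dett\begin{pmatrix}a&0\\0&a\end{pmatrix}=a^2$, the central character of the twist is $\overline{\zeta}\,\overline{\varepsilon}\cdot(1+\tfrac{\psi}{2}\epsilon\circ(a\mapsto a^2))=\overline{\zeta}\,\overline{\varepsilon}(1+\psi\epsilon)$. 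Therefore the associated $\overline{\zeta}'$ in the definition of (\ref{equ: gl2-det2}) is $\overline{\zeta}(1+\psi\epsilon)$, and the image is $(\overline{\zeta}'\overline{\zeta}^{-1}-1)/\epsilon=\psi$. This shows (\ref{equ: gl2-det2}) is surjective, and exactness of the second sequence follows, keeping in mind that (\ref{equ: gl2-det2}) is well-defined precisely because of the multiplicity-one statement for the constituents of $\pi(\overline{\rho})$ together with the argument of Lemma \ref{lem: hL-cent2} producing the character $\overline{\zeta}'$.

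I do not anticipate a genuine obstacle here; the only point requiring a little care is the bookkeeping of the two different normalizations — the $\varepsilon^{-1}$-twist in Colmez's functor and the factor $\overline{\varepsilon}$ in the central character of $\pi(\overline{\rho})$ — which is why the two sequences have $\overline{\zeta}$ on the Galois side and $\overline{\zeta}\,\overline{\varepsilon}$ on the automorphic side. One should also note that the section on the automorphic side lands inside the full $\Ext^1_{\GL_2(\Q_p)}$ rather than any subcategory with fixed central character, which is exactly what is needed for the sequence to be non-trivial; in fact it exhibits the second sequence as (non-canonically) split, the splitting being given by the map $\psi\mapsto \pi(\overline\rho)\otimes(1+\tfrac{\psi}{2}\epsilon)\circ\dett$, and likewise the first sequence is split by $\psi\mapsto\overline\rho\otimes(1+\tfrac\psi2\epsilon)$.
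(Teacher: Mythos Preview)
Your proposal is correct and takes essentially the same approach as the paper: reduce to surjectivity of (\ref{equ: olg-det}) and (\ref{equ: gl2-det2}), and exhibit the explicit sections $\psi\mapsto \overline{\rho}\otimes(1+\tfrac{\psi}{2}\epsilon)$ and $\psi\mapsto \pi(\overline{\rho})\otimes(1+\tfrac{\psi}{2}\epsilon)\circ\dett$. The paper's proof is just these two lines; your additional verification that the sections really land on $\psi$ and your remarks on normalizations are correct elaborations of the same argument.
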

\begin{proof}
It is enough to prove that (\ref{equ: olg-det}) (resp. (\ref{equ: gl2-det2})) is surjective. The map $\psi\mapsto \overline{\rho} \otimes (1+\psi/2 \epsilon)$ (resp. $\psi \mapsto \pi(\overline{\rho}) \otimes (1+\psi/2\epsilon)\circ\dett$) gives a section of (\ref{equ: olg-det}) (resp. of (\ref{equ: gl2-det2})).
\end{proof}

\noindent
As in \cite{Pas13}, we call $\overline{\rho}$ \emph{generic} if either $\overline{\rho}$ is irreducible or $\overline{\rho}\cong \begin{pmatrix}\delta_1 & * \\ 0 &\delta_2 \end{pmatrix}$ for $\delta_1\delta_2^{-1}\notin \{\overline{\varepsilon}, 1\}$ and we call $\overline{\rho}$ \emph{nongeneric} if $\overline{\rho}\cong \begin{pmatrix} \delta \overline{\varepsilon} & * \\ 0 & \delta\end{pmatrix}$ for some $\delta: \Gal_{\Q_p} \ra k_E^{\times}$ (recall we have $*\ne 0$ since $\End_{\Gal_{\Q_p}}(\overline{\rho})\cong k_E$).

\begin{proposition}\label{prop: gl2-tang}
We have:
\begin{equation*}
\dim_{k_E} \Ext^1_{\Gal_{\Q_p}, \overline{\zeta}}(\overline{\rho}, \overline{\rho})=\dim_{k_E} \Ext^1_{\GL_2(\Q_p),\overline{\zeta}\overline\varepsilon}(\pi(\overline{\rho}), \pi(\overline{\rho}))=3,
\end{equation*}
\begin{equation*}
\dim_{k_E} \Ext^1_{\Gal_{\Q_p}}(\overline{\rho}, \overline{\rho})=\dim_{k_E} \Ext^1_{\GL_2(\Q_p)}(\pi(\overline{\rho}), \pi(\overline{\rho}))=5.
\end{equation*}
\end{proposition}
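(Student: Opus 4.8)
The key equality to prove is the second pair, namely $\dim_{k_E}\Ext^1_{\GL_2(\Q_p)}(\pi(\overline\rho),\pi(\overline\rho))=5$ and $\dim_{k_E}\Ext^1_{\GL_2(\Q_p),\overline\zeta\overline\varepsilon}(\pi(\overline\rho),\pi(\overline\rho))=3$; the Galois-side equalities are standard (local Euler characteristic formula together with $\End_{\Gal_{\Q_p}}(\overline\rho)=k_E$ gives $\dim_{k_E}H^1(\Gal_{\Q_p},\ad\overline\rho)=5$ and $\dim_{k_E}H^1(\Gal_{\Q_p},\ad^0\overline\rho)=3$ under (\ref{hypo: hL-modp}), using that $H^0$ and $H^2$ of $\ad^0$ vanish), and then Lemma \ref{lem: olg-cc} reduces everything to showing the two sides have the same dimension after passing through the determinant sequences. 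So it suffices to match $\Ext^1_{\GL_2(\Q_p),\overline\zeta\overline\varepsilon}(\pi(\overline\rho),\pi(\overline\rho))$ with $\Ext^1_{\Gal_{\Q_p},\overline\zeta}(\overline\rho,\overline\rho)$ dimension-wise; the full $\Ext^1$ computation then follows from the compatible extensions in Lemma \ref{lem: olg-cc} and a diagram chase, since the cokernels $\Hom(\Q_p^\times,k_E)$ agree on both sides (both $2$-dimensional).

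The plan is to use Theorem \ref{thm: hL-pLL}: the functor $\hV_{\varepsilon^{-1}}$ gives an isomorphism of deformation groupoids $\Def^*_{\pi(\overline\rho),\ortho}\xrightarrow{\sim}\Def_{\overline\rho}$, hence an isomorphism of tangent spaces $\Def^*_{\pi(\overline\rho),\ortho}(k_E[\epsilon]/\epsilon^2)\xrightarrow{\sim}\Def_{\overline\rho}(k_E[\epsilon]/\epsilon^2)$. First I would identify $\Def_{\overline\rho}(k_E[\epsilon]/\epsilon^2)$ with $\Ext^1_{\Gal_{\Q_p}}(\overline\rho,\overline\rho)$ in the usual way. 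Next I would identify $\Def^*_{\pi(\overline\rho),\ortho}(k_E[\epsilon]/\epsilon^2)$ with $\Ext^1_{\GL_2(\Q_p),\overline\zeta\overline\varepsilon}(\pi(\overline\rho),\pi(\overline\rho))$: an element of the left-hand side is an orthonormalizable admissible deformation of $\pi(\overline\rho)$ over $k_E[\epsilon]/\epsilon^2$ on which $Z(\Q_p)$ acts by the \emph{fixed} character $\dett(\hV_{\varepsilon^{-1}}(\pi))\varepsilon$; since $\hV_{\varepsilon^{-1}}(\pi)$ reduces to $\overline\rho$ modulo $\epsilon$, this central character reduces to $\overline\zeta\overline\varepsilon$, and because we work over $k_E[\epsilon]/\epsilon^2$ with the determinant being the constant one, the deformation has central character exactly $\overline\zeta\overline\varepsilon$ (any infinitesimal deformation of the central character is killed by the $\Def^*$ condition). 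Conversely, any extension of $\pi(\overline\rho)$ by itself with central character $\overline\zeta\overline\varepsilon$ is automatically orthonormalizable admissible (it is a finite-length smooth-modulo-$p$ representation, and over $k_E[\epsilon]/\epsilon^2$ one checks flatness directly), and applying $\hV_{\varepsilon^{-1}}$ to it produces a deformation of $\overline\rho$ with constant determinant $\overline\zeta$, i.e. it lands in $\Def^*$. This gives the bijection $\Ext^1_{\GL_2(\Q_p),\overline\zeta\overline\varepsilon}(\pi(\overline\rho),\pi(\overline\rho))\cong\Ext^1_{\Gal_{\Q_p},\overline\zeta}(\overline\rho,\overline\rho)$, hence the equality of dimensions, which is $3$.

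Then I would deduce the remaining equalities. For the full $\Ext^1$: tensoring the two exact sequences of Lemma \ref{lem: olg-cc} and using that $\hV_{\varepsilon^{-1}}$ identifies the determinant map (\ref{equ: gl2-det2}) with the determinant map (\ref{equ: olg-det}) up to the twist by $\varepsilon$ (so the kernels and cokernels correspond), a snake-lemma / five-lemma argument gives $\dim_{k_E}\Ext^1_{\GL_2(\Q_p)}(\pi(\overline\rho),\pi(\overline\rho))=3+2=5=\dim_{k_E}\Ext^1_{\Gal_{\Q_p}}(\overline\rho,\overline\rho)$. For the Galois side I would just cite the local Euler characteristic formula: $\dim H^1(\Gal_{\Q_p},\ad^0\overline\rho)=\dim H^0+\dim H^2+[\,\Q_p:\Q_p\,]\dim\ad^0=0+0+3=3$ (the vanishing of $H^0(\ad^0\overline\rho)$ and, by local duality, of $H^2(\ad^0\overline\rho)$ follows from $\End_{\Gal_{\Q_p}}(\overline\rho)=k_E$ together with hypothesis (\ref{hypo: hL-modp}), which excludes exactly the twist of $\left(\begin{smallmatrix}1&*\\0&\overline\varepsilon\end{smallmatrix}\right)$ where these could be nonzero), and $\dim H^1(\Gal_{\Q_p},\ad\overline\rho)=\dim H^1(\ad^0)+\dim H^1(k_E)=3+2=5$. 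The main obstacle is the identification of $\Def^*_{\pi(\overline\rho),\ortho}(k_E[\epsilon]/\epsilon^2)$ with exactly $\Ext^1_{\GL_2(\Q_p),\overline\zeta\overline\varepsilon}(\pi(\overline\rho),\pi(\overline\rho))$: one must check carefully that every self-extension of $\pi(\overline\rho)$ with the prescribed central character really is orthonormalizable admissible in the sense of \cite[Def.~3.1.11]{Em4} and flat over $k_E[\epsilon]/\epsilon^2$ (so that it defines a point of the groupoid $\Def^*$), and conversely that the $\Def^*$ normalization forces the central character to be constant at the infinitesimal level; hypothesis (\ref{hypo: hL-modp}) (and $p\geq 5$ in the nongeneric case, which is built into Proposition \ref{prop: hL-gl2pLL2}) is what guarantees $\pi(\overline\rho)$ has length $\le 3$ with multiplicity-one constituents, which is exactly what makes the central-character argument (via the method of Lemma \ref{lem: hL-cent2}) go through.
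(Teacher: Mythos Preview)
Your argument has a genuine gap in the identification of $\Def^*_{\pi(\overline\rho),\ortho}(k_E[\epsilon]/\epsilon^2)$. The condition defining $\Def^*$ (see \cite[Def.~3.3.9]{Em4}) is that $Z(\Q_p)$ act by $\det(\hV_{\varepsilon^{-1}}(\pi))\varepsilon$, where $\hV_{\varepsilon^{-1}}(\pi)$ is the Galois representation produced from $\pi$ itself. This is a \emph{compatibility} between the central character and the Galois determinant, not a fixed-central-character condition: as $\pi$ varies, $\det(\hV_{\varepsilon^{-1}}(\pi))$ varies too, and the central character is allowed to deform along with it. In particular, over $k_E[\epsilon]/\epsilon^2$ the determinant is not forced to be constant, so your claim that ``any infinitesimal deformation of the central character is killed by the $\Def^*$ condition'' is false. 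Since Theorem~\ref{thm: hL-pLL} gives $\Def^*_{\pi(\overline\rho),\ortho}\cong \Def_{\overline\rho}$, the tangent space of $\Def^*$ has dimension $5$, not $3$; combined with Lemma~\ref{lem: olg-cc} your identification would force $\dim_{k_E}\Ext^1_{\GL_2(\Q_p)}(\pi(\overline\rho),\pi(\overline\rho))=7$, which is absurd. (Indeed, the paper later shows in (\ref{isodef}) that $\Def^*_{\pi(\overline\rho),\ortho}=\Def_{\pi(\overline\rho),\ortho}$, and this is deduced \emph{from} Proposition~\ref{prop: gl2-tang}, not the other way around.)

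One could try to repair your approach by replacing $\Def^*$ with a genuine fixed-central-character deformation functor on the automorphic side and invoking a fixed-determinant analogue of Theorem~\ref{thm: hL-pLL}; the paper does record such an isomorphism in (\ref{equ: GL2-centC}), but its proof runs through (\ref{isodef}) and hence through Proposition~\ref{prop: gl2-tang} itself, so this would be circular. The paper instead computes $\dim_{k_E}\Ext^1_{\GL_2(\Q_p),\overline\zeta\overline\varepsilon}(\pi(\overline\rho),\pi(\overline\rho))=3$ by direct citation of Pa\v{s}k\=unas: \cite[Prop.~6.1]{Pas13} in the supersingular case, \cite[Cor.~8.5]{Pas13} in the generic nonsupersingular case, and \cite[Thm.~6.10]{Pas15} in the nongeneric case. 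Your treatment of the Galois side and the reduction via Lemma~\ref{lem: olg-cc} are fine; what is missing is an independent input on the $\GL_2$ side, and at present that input comes from Pa\v{s}k\=unas's explicit Ext computations rather than from the groupoid isomorphism of Theorem~\ref{thm: hL-pLL}.
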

\begin{proof}
By Lemma \ref{lem: olg-cc}, it is enough to prove the result for $\Ext^1_{\Gal_{\Q_p}, \overline{\zeta}}$ and $\Ext^1_{\GL_2(\Q_p),\overline{\zeta}\overline\varepsilon}$. By our assumptions on $\overline{\rho}$, we easily check that $\dim_{k_E} \Ext^1_{\Gal_{\Q_p}, \overline{\zeta}}(\overline{\rho}, \overline{\rho})=3$. The result for $\Ext^1_{\GL_2(\Q_p), \overline{\zeta}\overline\varepsilon}$ follows from \cite[Prop. 6.1]{Pas13} (in the supersingular case), \cite[Cor. 8.5]{Pas13} (in the generic nonsupersingular case) and \cite[Thm. 6.10]{Pas15} together with $\dim_{k_E} \Ext^1_{\Gal_{\Q_p}, \overline{\zeta}}(\overline{\rho}, \overline{\rho})=3$ (in the nongeneric case).
\end{proof}

\noindent
Since $\End_{\GL_2(\Q_p)}(\pi(\overline{\rho}))\cong k_E$ and $\dim_{k_E} \Ext^1_{\GL_2(\Q_p)}(\pi(\overline{\rho}), \pi(\overline{\rho}))<\infty$ by the last equality in Proposition \ref{prop: gl2-tang}, it follows from Schlessinger's criterion that the functor $\Def_{\pi(\overline{\rho}),\ortho}$ is representable. Using Theorem \ref{thm: hL-pLL}, the third equality in Proposition \ref{prop: gl2-tang} and \cite[Lem.~2.1]{Fer} (and the representability of $\Def_{\overline{\rho}}$, $\Def_{\pi(\overline{\rho}), \ortho}^*$, $\Def_{\pi(\overline{\rho}), \ortho}$), we easily deduce that we have in fact isomorphisms:
\begin{equation}\label{isodef}
\Def_{\pi(\overline{\rho}), \ortho}^*\buildrel{\sim}\over\longrightarrow \Def_{\pi(\overline{\rho}), \ortho}\buildrel{{\sim}}\over\longrightarrow \Def_{\overline{\rho}}.
\end{equation}
Recall $\Art(\co_E)$ is the category of local artinian $\co_E$-algebras with residue field $k_E$ and let $\cC(\co_E)$ be the subcategory of $\Mod_{\GL_2(\Q_p)}^{\pro\ \!\!\!\aug}(\co_E)$ dual to $\Mod_{\GL_2(\Q_p)}^{\lfin}(\co_E)$ via (\ref{equ: app-dual}). Denote by $\Def_{\pi(\overline{\rho})^{\vee}, \cC(\co_E)}$ the functor from $\Art(\co_E)$ to (isomorphism classes of) deformations of $\pi(\overline{\rho})^{\vee}$ in the category $\cC(\co_E)$ in the sense of \cite[Def. 3.21]{Pas13} (since we only deal with commutative rings here, we drop the subscript ``ab'' of \cite[\S~3.1]{Pas13}). As $\Hom_{\cC(\co_E)}(\pi(\overline{\rho})^{\vee}, \pi(\overline{\rho})^{\vee})=k_E$ and $\dim_{k_E}\Ext^1_{\cC(\co_E)}(\pi(\overline{\rho})^{\vee}, \pi(\overline{\rho})^{\vee})<\infty $, Schlessinger's criterion again implies that $\Def_{\pi(\overline{\rho})^{\vee}, \cC(\co_E)}$ is pro-representable by a complete local noetherian $\co_E$-algebra $R_{\pi(\overline{\rho})^{\vee}}$ of residue field $k_E$.\\

\noindent
When considering a deformation, we now do not write anymore the reduction morphism $\iota$ (which is understated).

\begin{lemma}\label{tecnicA}
(1) Let $A$ in $\Art(\co_E)$ and $M_A\in \Def_{\pi(\overline{\rho})^{\vee}, \cC(\co_E)}(A)$, then $M_A\in \Mod_{\GL_2(\Q_p)}^{\fg\ \!\!\!\aug}(A)$ and $M_A$ is a pro-free $A$-module.\\
(2) Let $A$ in $\Art(\co_E)$ and $\pi_A\in \Def_{\pi(\overline{\rho}), \ortho}(A)$, then $\Hom_A(\pi_A, A) \in \Def_{\pi(\overline{\rho})^{\vee}, \cC(\co_E)}(A)$.
\end{lemma}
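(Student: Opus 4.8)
\textbf{Proof plan for Lemma \ref{tecnicA}.}

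The plan is to prove both parts by induction on the length of $A$ in $\Art(\co_E)$, using the structure theory of $\Mod_{\GL_2(\Q_p)}^{\pro\ \!\!\!\aug}$ and the fact that the mod $p$ representation $\pi(\overline{\rho})$ is admissible and locally of finite length (it has length $\leq 3$ by the assumption (\ref{hypo: hL-modp})). For the base case $A=k_E$, both statements are clear: $M_{k_E}=\pi(\overline{\rho})^{\vee}$ is the Pontryagin dual of an admissible smooth representation, hence a finitely generated $k_E[[H]]$-module for $H$ a compact open subgroup of $\GL_2(\Q_p)$ (indeed $\pi(\overline{\rho})^{\vee}$ is finitely generated over the Iwasawa algebra since $\pi(\overline{\rho})^H$ is finite-dimensional), and it is pro-free as a $k_E$-module since every $k_E$-vector space is free; symmetrically $\pi(\overline{\rho})$ is orthonormalizable over $k_E$ and its dual is $\pi(\overline{\rho})^{\vee}$.

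For the inductive step of (1), I would take a small surjection $A\twoheadrightarrow A'$ in $\Art(\co_E)$ with kernel $I$ a one-dimensional $k_E$-vector space, so that $IM_A$ is a $\GL_2(\Q_p)$-submodule of $M_A$ isomorphic to $\pi(\overline{\rho})^{\vee}\otimes_{k_E}I$ and $M_A/IM_A\cong M_{A'}$. One gets a short exact sequence $0\to \pi(\overline{\rho})^{\vee}\otimes_{k_E}I\to M_A\to M_{A'}\to 0$ in $\Mod_{\GL_2(\Q_p)}^{\pro\ \!\!\!\aug}(A)$. Finite generation over $A[[H]]$ follows from finite generation of the two outer terms over $k_E[[H]]$ (using that $A$ is finite over $\co_E$) together with the fact that $\Mod_G^{\fg\ \!\!\!\aug}$ is closed under extensions. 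For pro-freeness over $A$, I would use the criterion that a profinite $A$-module $M_A$ with $M_A/\fm_A M_A$ pro-free over $k_E$ and $\Tor_1^A(M_A,k_E)=0$ is pro-free; here one checks $\Tor_1^A(M_A,k_E)=0$ from the deformation condition (a deformation in $\cC(\co_E)$ is by definition $A$-flat, or more precisely is required to satisfy $M_A\otimes_A k_E\cong \pi(\overline{\rho})^{\vee}$ compatibly with the $\GL_2(\Q_p)$-action in the flat sense of \cite[Def.~3.21]{Pas13}), which by dévissage along $A\twoheadrightarrow A'$ and the base case gives the vanishing. This is essentially the content of \cite[Lem.~3.22 \& Cor.~3.24]{Pas13}, and I would cite those after setting up the exact sequence.

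For (2), I would dualize: given $\pi_A\in\Def_{\pi(\overline{\rho}),\ortho}(A)$, set $M_A:=\Hom_A(\pi_A,A)$. By the anti-equivalence \cite[Prop.~3.1.12]{Em4} between orthonormalizable admissible $A$-representations and objects of $\Mod_G^{\fg\ \!\!\!\aug}(A)$ that are pro-free over $A$, $M_A$ lies in $\Mod_G^{\fg\ \!\!\!\aug}(A)$ and is pro-free over $A$; it remains to check $M_A$ is a deformation of $\pi(\overline{\rho})^{\vee}$ in $\cC(\co_E)$, i.e. that $M_A\in \cC(\co_E)$ (equivalently $M_A$ is the dual of a locally finite length smooth representation — here automatic since $\pi_A\otimes_A k_E\cong \pi(\overline{\rho})$ has finite length and $\pi_A$ is a successive extension of copies of $\pi(\overline{\rho})$ by the orthonormalizable/pro-free structure, hence $\pi_A^{\sm}$ is of finite length over $k_E$, wait — more carefully $\pi_A$ is a Banach/lattice object, so I should instead argue that $M_A/\fm_A M_A\cong \pi(\overline{\rho})^{\vee}$ and that $M_A$ is $\fm_A$-adically separated, which places it in $\cC(\co_E)$) and that the flatness condition of \cite[Def.~3.21]{Pas13} holds, which is exactly pro-freeness over $A$. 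The main obstacle I anticipate is the bookkeeping identifying the two notions of "deformation" — orthonormalizable admissible deformations à la \cite[Def.~3.3.7]{Em4} on the $\GL_2(\Q_p)$-side versus deformations in $\cC(\co_E)$ à la \cite[Def.~3.21]{Pas13} on the dual side — and checking that the pro-free/flat hypotheses match up under Pontryagin (or Schikhof) duality; once that dictionary is pinned down, both statements reduce to the cited results of Pa{\v s}k{\=u}nas and Emerton.
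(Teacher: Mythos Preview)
Your proposal is correct but takes a more elaborate route than necessary. The paper's proof is considerably more direct and avoids induction entirely.

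For (1), rather than d\'evissage along small surjections, the paper observes that flatness over $A$ is already part of the definition of a deformation in $\cC(\co_E)$ (\cite[Def.~3.21]{Pas13}), and then invokes \cite[Exp.~VII$_B$(0.3.8)]{SGA3}: a profinite flat module over a local Artinian ring is automatically pro-free. This replaces your entire $\Tor_1$-vanishing-by-d\'evissage argument with a single citation. For finite generation over $A[[H]]$, the paper does not use closure of $\Mod_G^{\fg\ \!\!\!\aug}$ under extensions; instead it applies Nakayama's lemma directly to the (noncommutative) local ring $A[[H]]$ with $H$ pro-$p$: since $M_A\otimes_{A[[H]]}k_E\cong \pi(\overline{\rho})^{\vee}\otimes_{k_E[[H]]}k_E$ is finite-dimensional (admissibility of $\pi(\overline{\rho})$), Nakayama gives finite generation at once.

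For (2), you correctly identify \cite[Prop.~3.1.12]{Em4} as the source of flatness and the reduction isomorphism $M_A\otimes_A k_E\cong \pi(\overline{\rho})^{\vee}$. Your hesitation about why $M_A$ lands in $\cC(\co_E)$ is resolved cleanly in the paper by citing \cite[Thm.~2.3.8]{EOrd1}: over an Artinian coefficient ring, an admissible smooth representation is automatically locally of finite length. Since $A\in\Art(\co_E)$ kills some power of $\varpi_E$, the representation $\pi_A$ is genuinely smooth (your ``Banach/lattice'' worry is unfounded here), admissible by hypothesis, hence locally finite, so its Pontryagin dual lies in $\cC(\co_E)$ by definition. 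Your alternative argument --- that $\pi_A$ is a successive extension of copies of $\pi(\overline{\rho})$ and hence of finite length --- is also valid and in fact slightly more hands-on; either route works.

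In summary: no gap, but you are reproving by induction facts that follow immediately from standard structural results (SGA3 for pro-freeness, Nakayama for finite generation, \cite[Thm.~2.3.8]{EOrd1} for local finiteness). The paper's proof fits in a short paragraph.
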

\begin{proof}
(1) Since $M_A$ is in $\cC(\co_E)$, it is profinite. By definition (see \cite[Def. 3.21]{Pas13}), $M_A$ is a flat $A$-module and by \cite[Exp. VII$_B$(0.3.8)]{SGA3}, the second part of (1) follows. It is straightforward to see $M_A$ is in $\Mod_{\GL_2(\Q_p)}^{\pro\ \!\!\!\aug}(A)$. Let $H$ be a pro-$p$ compact open subgroup of $\GL_2(\Q_p)$, the algebra $A[[H]]$ is (noncommutative) local. Since $\pi(\overline{\rho})$ is admissible, we know $M_A\otimes_{A[[H]]} k_E \cong \pi(\overline{\rho})^\vee\otimes_{k_E[[H]]} k_E$ is a finite dimensional $k_E$-vector space. By Nakayama's lemma (see e.g. \cite[Lem.~4.22]{Lam}), we deduce $M_A$ is finitely generated over $A[[H]]$.\\
\noindent
(2) By \cite[Prop. 3.1.12]{Em4} and its proof, we have that $M_A:=\Hom_A(\pi_A,A)$ is flat over $A$ and $M_A\otimes_A k_E\cong \pi(\overline{\rho})^{\vee}$. Since $\pi_A$ is admissible and $A$ is artinian, $\pi_A$ is locally finite by \cite[Thm.~2.3.8]{EOrd1}. The lemma follows by definition of $\cC(\co_E)$.
\end{proof}

\begin{proposition}\label{pro: gl2-defo2}
We have an isomorphism of deformation functors:
\begin{equation*}
\Def_{\pi(\overline{\rho}), \ortho} \xlongrightarrow{\sim} \Def_{\pi(\overline{\rho})^{\vee}, \cC(\co_E)}, \ [A\mapsto \{\pi_A\}_{/\sim}]\longmapsto [A\mapsto \{M_A=\Hom_A(\pi_A, A)\}_{/\sim}].
\end{equation*}
\end{proposition}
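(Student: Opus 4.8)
The plan is to exhibit the two functors $\Def_{\pi(\overline{\rho}),\ortho}$ and $\Def_{\pi(\overline{\rho})^{\vee},\cC(\co_E)}$ as genuinely the same functor under Pontryagin/continuous duality, and to check this compatibility level by level over each $A\in\Art(\co_E)$. First I would verify that the assignment is well defined in both directions: for $\pi_A\in\Def_{\pi(\overline{\rho}),\ortho}(A)$, Lemma \ref{tecnicA}(2) already gives $M_A:=\Hom_A(\pi_A,A)\in\Def_{\pi(\overline{\rho})^{\vee},\cC(\co_E)}(A)$, so this direction is essentially done; conversely, for $M_A\in\Def_{\pi(\overline{\rho})^{\vee},\cC(\co_E)}(A)$, Lemma \ref{tecnicA}(1) says $M_A\in\Mod_{\GL_2(\Q_p)}^{\fg\ \!\!\!\aug}(A)$ and is a pro-free $A$-module, so by \cite[Prop.~3.1.12]{Em4} its ``Schikhof dual'' $\pi_A:=\Hom_A^{\cts}(M_A,A)$ is an orthonormalizable admissible representation of $\GL_2(\Q_p)$ over $A$, flat over $A$, with $\pi_A\otimes_A k_E\cong\pi(\overline{\rho})$; hence $\pi_A\in\Def_{\pi(\overline{\rho}),\ortho}(A)$.

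Next I would check that these two assignments are mutually inverse and functorial in $A$. The key point is biduality: for a pro-free $A$-module $M_A$ of the relevant type one has a natural $\GL_2(\Q_p)$-equivariant isomorphism $\Hom_A^{\cts}(\Hom_A^{\cts}(M_A,A),A)\cong M_A$ (and dually on the Banach side), compatible with the reduction isomorphisms modulo $\fm_A$, which over the residue field recovers the standard duality $\pi(\overline{\rho})^{\vee\vee}\cong\pi(\overline{\rho})$ of \cite[(2.2.8)]{EOrd1}. One then checks that an isomorphism $\pi_A\buildrel\sim\over\to\pi_A'$ respecting the reductions dualizes to an isomorphism $M_A'\buildrel\sim\over\to M_A$ respecting the reductions and vice versa, so the bijection descends to isomorphism classes; and that a morphism $A\to B$ in $\Art(\co_E)$ intertwines the two base-change operations $\pi_A\mapsto\pi_A\widehat\otimes_A B$ and $M_A\mapsto M_A\widehat\otimes_A B$ under duality (this is again just the standard compatibility of Schikhof duality with $\widehat\otimes$, using pro-freeness to avoid any $\Tor$-issues). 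Assembling these gives the claimed natural isomorphism of functors.

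The main obstacle I anticipate is purely bookkeeping around the two duality operations: making sure that ``$\Hom_A(\pi_A,A)$'' in the statement (finite free part, $A$-linear maps) and the topological dual ``$\Hom_A^{\cts}(M_A,A)$'' really are inverse to each other once one passes to the pro-finite/pro-free categories, and that all the module-theoretic hypotheses defining $\cC(\co_E)$ (profinite, flat over $A$, locally of finite length over $\GL_2(\Q_p)$) and defining $\Def_{\pi(\overline\rho),\ortho}$ (orthonormalizable, admissible) are preserved in both directions. Lemma \ref{tecnicA} already isolates exactly the two nontrivial inclusions needed, so beyond invoking it the remaining work is to spell out the biduality and the base-change compatibility; I would keep this short since it is formal once pro-freeness (guaranteed by artinian-ness of $A$ via \cite[Exp.~VII$_B$(0.3.8)]{SGA3}) is in hand.

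Finally, as a cross-check I would note that the resulting isomorphism on tangent spaces is forced to be consistent with Proposition \ref{prop: gl2-tang}: the right-hand functor is pro-representable by $R_{\pi(\overline\rho)^{\vee}}$ with $\dim_{k_E}\Ext^1_{\cC(\co_E)}(\pi(\overline\rho)^{\vee},\pi(\overline\rho)^{\vee})=\dim_{k_E}\Ext^1_{\GL_2(\Q_p)}(\pi(\overline\rho),\pi(\overline\rho))=5$, so combined with \eqref{isodef} one gets $R_{\pi(\overline\rho)^{\vee}}\cong R_{\pi(\overline\rho),\ortho}\cong R_{\overline\rho}$, which is the form in which the statement will be used in \S\ref{sec: app-tri}.
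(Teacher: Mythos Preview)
Your proposal is correct and follows essentially the same approach as the paper: the paper's proof is simply ``This follows from \cite[Prop.~3.1.12]{Em4} and Lemma \ref{tecnicA}'', and you have unpacked exactly what that entails --- Lemma \ref{tecnicA}(2) for the forward direction, Lemma \ref{tecnicA}(1) plus the anti-equivalence of \cite[Prop.~3.1.12]{Em4} for the inverse, with the biduality and base-change checks being formal once pro-freeness is in hand. Your additional cross-check on tangent spaces is not needed for the proof itself but is harmless.
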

\begin{proof}
This follows from \cite[Prop. 3.1.12]{Em4} and Lemma \ref{tecnicA}.
\end{proof}

\noindent
Proposition \ref{pro: gl2-defo2} together with (\ref{equ: hL-pLL}) and (\ref{isodef}) imply an isomorphism of deformation functors:
\begin{equation}\label{equ: funteq}
 \Def_{\pi(\overline{\rho})^{\vee}, \cC(\co_E)} \xlongrightarrow{\sim} \Def_{\overline{\rho}}
\end{equation}
and hence $R_{\pi(\overline{\rho})^{\vee}}\cong R_{\overline{\rho}}$.
Let $\rho^{\univ}$ be the universal deformation of $\overline{\rho}$ over $R_{\overline{\rho}}$ (for $\Def_{\overline{\rho}}$), $\cN \in \cC(\co_E)$ the universal deformation of $\pi(\overline{\rho})^{\vee}$ over $R_{\overline{\rho}}$ (for $\Def_{\pi(\overline{\rho})^{\vee}, \cC(\co_E)}$) and $\pi^{\univ}(\overline{\rho})\in \Mod^{\ortho}_{\GL_2(\Q_p)}(R_{\overline{\rho}})$ the universal deformation of $\pi(\overline{\rho})$ (for $\Def_{\pi(\overline{\rho}), \ortho}$).

\begin{corollary}\label{coro: app-NR}
We have $\cN\cong \Hom_{R_{\overline{\rho}}}(\pi^{\univ}(\overline{\rho}), R_{\overline{\rho}})$.
\end{corollary}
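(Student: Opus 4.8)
The statement $\cN\cong \Hom_{R_{\overline{\rho}}}(\pi^{\univ}(\overline{\rho}), R_{\overline{\rho}})$ should follow by unwinding the identifications of deformation functors established just above, together with a limit argument passing from the artinian truncations to the full (complete local noetherian) deformation ring. The plan is as follows. First I would set, for each $n\geq 1$, $R_n:=R_{\overline{\rho}}/\fm_{R_{\overline{\rho}}}^n$, which is an object of $\Art(\co_E)$, and let $\pi_n:=\pi^{\univ}(\overline{\rho})\otimes_{R_{\overline{\rho}}}R_n$, $\cN_n:=\cN\otimes_{R_{\overline{\rho}}}R_n$ (the latter in the category $\cC(\co_E)$, so really the pushforward of $\cN$ along $R_{\overline{\rho}}\twoheadrightarrow R_n$). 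By the universal property of $\pi^{\univ}(\overline{\rho})$ for $\Def_{\pi(\overline{\rho}),\ortho}$, the object $\pi_n$ represents $\Def_{\pi(\overline{\rho}),\ortho}$ restricted to $R_n$-algebras in $\Art(\co_E)$ in the appropriate sense; similarly $\cN_n$ represents $\Def_{\pi(\overline{\rho})^\vee,\cC(\co_E)}$ truncated at level $n$. Now apply Proposition \ref{pro: gl2-defo2}: the isomorphism of deformation functors $\Def_{\pi(\overline{\rho}),\ortho}\xrightarrow{\sim}\Def_{\pi(\overline{\rho})^\vee,\cC(\co_E)}$ sends a deformation $\pi_A$ to $\Hom_A(\pi_A,A)$. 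Applying this at $A=R_n$ to the universal object $\pi_n$ yields, by the uniqueness part of the universal property of $\cN_n$, a canonical isomorphism in $\cC(\co_E)$:
\[
\cN_n\;\cong\;\Hom_{R_n}(\pi_n,R_n)\;=\;\Hom_{R_n}\big(\pi^{\univ}(\overline{\rho})\otimes_{R_{\overline{\rho}}}R_n,\,R_n\big).
\]

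Second, I would check that these isomorphisms are compatible with the transition maps as $n$ varies, i.e. that for $m\geq n$ the square relating $\cN_m\to\cN_n$ (reduction) and $\Hom_{R_m}(\pi_m,R_m)\to\Hom_{R_n}(\pi_n,R_n)$ commutes. This is automatic from the functoriality in Proposition \ref{pro: gl2-defo2} (the isomorphism of functors is natural in $A$), but it should be spelled out. Then I would pass to the inverse limit over $n$. On the left, $\varprojlim_n\cN_n\cong\cN$ since $\cN$ lies in $\cC(\co_E)$ and is $\fm_{R_{\overline{\rho}}}$-adically complete as a profinite $R_{\overline{\rho}}$-module (indeed $\cN$ is pro-free, hence $\cN\cong\varprojlim_n\cN/\fm_{R_{\overline{\rho}}}^n\cN$). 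On the right, one needs $\varprojlim_n\Hom_{R_n}(\pi^{\univ}(\overline{\rho})\otimes_{R_{\overline{\rho}}}R_n,R_n)\cong\Hom_{R_{\overline{\rho}}}(\pi^{\univ}(\overline{\rho}),R_{\overline{\rho}})$. For this, recall that $\pi^{\univ}(\overline{\rho})$ is an orthonormalizable admissible representation over $R_{\overline{\rho}}$, so by \cite[Prop.~3.1.12]{Em4} its dual $\Hom_{R_{\overline{\rho}}}(\pi^{\univ}(\overline{\rho}),R_{\overline{\rho}})$ is a pro-free finitely generated augmented $R_{\overline{\rho}}$-representation, and the natural maps $\Hom_{R_{\overline{\rho}}}(\pi^{\univ}(\overline{\rho}),R_{\overline{\rho}})\otimes_{R_{\overline{\rho}}}R_n\to\Hom_{R_n}(\pi^{\univ}(\overline{\rho})\otimes_{R_{\overline{\rho}}}R_n,R_n)$ are isomorphisms (base change of duals along $R_{\overline{\rho}}\twoheadrightarrow R_n$ is well-behaved here because $\pi^{\univ}(\overline{\rho})$ is pro-free over $R_{\overline{\rho}}$ and $R_n$ is finite over $\co_E$; this is exactly the kind of computation already used in the proof of Lemma \ref{tecnicA}(2) and in \cite[Prop.~3.1.12]{Em4}). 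Taking $\varprojlim_n$ of an isomorphism of $\fm_{R_{\overline{\rho}}}$-adically complete modules then gives the desired identification.

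Finally I would package these two limits with the compatible family of isomorphisms from the first step to conclude $\cN\cong\Hom_{R_{\overline{\rho}}}(\pi^{\univ}(\overline{\rho}),R_{\overline{\rho}})$, as objects of $\cC(\co_E)$ equipped with their $R_{\overline{\rho}}$-action. The main obstacle I anticipate is not conceptual but bookkeeping: one must be careful that the isomorphism of functors in Proposition \ref{pro: gl2-defo2} is genuinely natural (so that the truncated isomorphisms form an inverse system), and that the "dual commutes with base change" statement $\Hom_{R_{\overline{\rho}}}(\pi^{\univ}(\overline{\rho}),R_{\overline{\rho}})\otimes_{R_{\overline{\rho}}}R_n\xrightarrow{\sim}\Hom_{R_n}(\pi_n,R_n)$ really holds at finite level — this uses the pro-freeness of $\pi^{\univ}(\overline{\rho})$ over $R_{\overline{\rho}}$ in an essential way, so the argument does not extend verbatim to non-orthonormalizable situations. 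An alternative, perhaps cleaner route would be to invoke the equivalence $\pi\mapsto\Hom(\pi,\co_E)$ between $\Mod_{\GL_2(\Q_p)}^{\ortho}(R_{\overline{\rho}})$-type objects and pro-free objects of $\Mod_{\GL_2(\Q_p)}^{\fg\,\aug}(R_{\overline{\rho}})$ directly over $R_{\overline{\rho}}$ (the $R_{\overline{\rho}}$-coefficient version of \cite[Prop.~3.1.12]{Em4}), apply it to $\pi^{\univ}(\overline{\rho})$, and then identify the resulting pro-free augmented representation with $\cN$ by checking it satisfies the universal property of $\Def_{\pi(\overline{\rho})^\vee,\cC(\co_E)}$; this avoids the explicit limit but requires the coefficient-ring version of the anti-equivalence, which may or may not be stated in the form needed.
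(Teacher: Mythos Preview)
Your proposal is correct and is essentially a fully detailed version of the paper's one-line proof, which simply says the result follows from Proposition~\ref{pro: gl2-defo2}. The limit argument you spell out (together with the base-change compatibility $\Hom_{R_{\overline{\rho}}}(\pi^{\univ}(\overline{\rho}),R_{\overline{\rho}})\otimes_{R_{\overline{\rho}}}R_n\cong\Hom_{R_n}(\pi_n,R_n)$, which is exactly \cite[Lem.~B.7]{Em4} and is in fact invoked in the very next corollary) is precisely what is implicit in that sentence.
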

\begin{proof}
This easily follows from Proposition \ref{pro: gl2-defo2}.
\end{proof}

\begin{corollary}\label{coro: app-NRI}
Let $\cI$ be an ideal of $R_{\overline{\rho}}$, then we have:
\begin{equation*}
\cN \otimes_{R_{\overline{\rho}}} R_{\overline{\rho}}/\cI \cong \Hom_{R_{\overline{\rho}}/\cI}\big(\pi^{\univ}(\overline{\rho})\otimes_{R_{\overline{\rho}}} R_{\overline{\rho}}/\cI, R_{\overline{\rho}}/\cI\big).
\end{equation*}
\end{corollary}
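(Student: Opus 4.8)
The plan is to deduce this from Corollary \ref{coro: app-NR} by a base-change argument, using that $\pi^{\univ}(\overline{\rho})$ is pro-free as an $R_{\overline{\rho}}$-module (hence behaves well under $\Hom$ and $\otimes$) and that $R_{\overline{\rho}}/\cI$ is the appropriate quotient ring. First I would recall from Corollary \ref{coro: app-NR} that $\cN\cong \Hom_{R_{\overline{\rho}}}(\pi^{\univ}(\overline{\rho}), R_{\overline{\rho}})$, so that
\[
\cN \otimes_{R_{\overline{\rho}}} R_{\overline{\rho}}/\cI \cong \Hom_{R_{\overline{\rho}}}(\pi^{\univ}(\overline{\rho}), R_{\overline{\rho}}) \otimes_{R_{\overline{\rho}}} R_{\overline{\rho}}/\cI,
\]
and the point is to commute this tensor product past the $\Hom$. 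Since $\pi^{\univ}(\overline{\rho})$ is an orthonormalizable admissible representation (see \cite[Def.~3.1.11]{Em4}), its underlying $R_{\overline{\rho}}$-module is pro-free, i.e. a projective limit of finite free modules of the shape $R_{\overline{\rho}}[[H]]^{\oplus r_n}$ for a compact open pro-$p$ subgroup $H$; for such modules the natural map
\[
\Hom_{R_{\overline{\rho}}}(\pi^{\univ}(\overline{\rho}), R_{\overline{\rho}}) \otimes_{R_{\overline{\rho}}} R_{\overline{\rho}}/\cI \;\longrightarrow\; \Hom_{R_{\overline{\rho}}/\cI}\big(\pi^{\univ}(\overline{\rho})\otimes_{R_{\overline{\rho}}} R_{\overline{\rho}}/\cI,\ R_{\overline{\rho}}/\cI\big)
\]
is an isomorphism (one checks it on the finite free pieces, where it is elementary, then passes to the limit, using that the relevant topologies are the compact/profinite ones and that $\cI$ is a finitely generated ideal of the noetherian ring $R_{\overline{\rho}}$ so that the quotient is again complete). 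Alternatively, one can argue more conceptually: by the functorial isomorphism (\ref{equ: funteq}) (combining Proposition \ref{pro: gl2-defo2} with (\ref{equ: hL-pLL}) and (\ref{isodef})), $\cN\otimes_{R_{\overline{\rho}}}R_{\overline{\rho}}/\cI$ is the universal object over $R_{\overline{\rho}}/\cI$ in $\cC(\co_E)$ for those deformations that are pushed forward from $R_{\overline{\rho}}$, while $\pi^{\univ}(\overline{\rho})\otimes_{R_{\overline{\rho}}}R_{\overline{\rho}}/\cI$ is the corresponding universal orthonormalizable admissible deformation, and Proposition \ref{pro: gl2-defo2} applied over $R_{\overline{\rho}}/\cI$ (or over each artinian quotient $R_{\overline{\rho}}/(\cI+\fm_{R_{\overline{\rho}}}^n)$ and then passing to the limit) identifies the two.

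The key steps, in order, are: (i) write down the isomorphism of Corollary \ref{coro: app-NR} and reduce the claim to the commutation of $\Hom_{R_{\overline{\rho}}}(\pi^{\univ}(\overline{\rho}),-)$ with $-\otimes_{R_{\overline{\rho}}}R_{\overline{\rho}}/\cI$; (ii) invoke the pro-freeness of the underlying $R_{\overline{\rho}}$-module of $\pi^{\univ}(\overline{\rho})$ from \cite[Prop.~3.1.12]{Em4}; (iii) verify the base-change isomorphism for finite free $R_{\overline{\rho}}[[H]]$-modules, which is immediate; (iv) pass to the projective limit, controlling the topologies (all modules are in $\Mod^{\pro\ \!\!\!\aug}$ and $\cC(\co_E)$ for the artinian quotients) and using that $R_{\overline{\rho}}$ is noetherian so $R_{\overline{\rho}}/\cI$ is complete and the relevant $\varprojlim$'s are exact on the systems involved; (v) conclude that the resulting object agrees with $\cN\otimes_{R_{\overline{\rho}}}R_{\overline{\rho}}/\cI$.

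The main obstacle I anticipate is purely the bookkeeping of topologies in step (iv): $\Hom$ and $\otimes$ do not commute with projective limits in general, so one has to be careful to work with the continuous $\Hom$ in the category $\cC(\co_E)$ (resp. $\Mod^{\pro\ \!\!\!\aug}$) rather than abstract $\Hom$, and to check that the transition maps in the pro-system defining $\pi^{\univ}(\overline{\rho})$ are well-behaved after tensoring with $R_{\overline{\rho}}/\cI$. This is standard once one uses \cite[Prop.~3.1.12]{Em4} and the anti-equivalences recalled in \S~\ref{notprel}, but it is the only place where something more than formal manipulation is needed; everything else is a direct consequence of Corollary \ref{coro: app-NR} and the representability statements already established.
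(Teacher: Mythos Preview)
Your proposal is correct and follows essentially the same approach as the paper: start from Corollary~\ref{coro: app-NR} and then commute $\Hom$ with the base change $-\otimes_{R_{\overline{\rho}}}R_{\overline{\rho}}/\cI$ using the pro-freeness of $\pi^{\univ}(\overline{\rho})$ over $R_{\overline{\rho}}$. The only difference is that the paper packages your steps (ii)--(iv) into a single citation of \cite[Lem.~B.7]{Em4}, which records precisely this base-change isomorphism for duals of orthonormalizable admissible representations, so you are effectively reproving that lemma.
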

\begin{proof}
This follows from the isomorphism in Corollary \ref{coro: app-NR} and \cite[Lem. B.7]{Em4}.
\end{proof}

\begin{remark}\label{topotopo}
{\rm Recall the isomorphism in Corollary \ref{coro: app-NR} and the isomorphism in Corollary \ref{coro: app-NRI} are topological isomorphisms where the left hand side is equipped with the profinite topology and the right hand side with the topology of pointwise convergence (see \cite[Prop.~B.11(2)]{Em4}).}
\end{remark}

\noindent
For any $\zeta:\Q_p^\times\rightarrow \co_E^\times$ we denote by $\Mod^{\lfin}_{\GL_2(\Q_p), \zeta}(\co_E)$ the full subcategory of $\Mod^{\lfin}_{\GL_2(\Q_p)}(\co_E)$ of representations on which $Z(\Q_p)$ acts by $\zeta$, and by $\cC_{\zeta}(\co_E)$ the full subcategory of $\cC(\co_E)$ dual to $\Mod^{\lfin}_{\GL_2(\Q_p), \zeta}(\co_E)$ via (\ref{equ: app-dual}). For any $\zeta:\Q_p^\times\rightarrow \co_E^\times$ such that $\zeta\equiv \overline{\zeta}$ mod $\varpi_E$, we denote by $\Def_{\overline{\rho}}^{\zeta}$ the subfunctor of $\Def_{\overline{\rho}}$ of deformations with fixed determinant $\zeta$ and by $R_{\overline{\rho}}^{\zeta}$ the universal deformation ring for $\Def_{\overline{\rho}}^{\zeta}$. We denote by $\Def_{\pi(\overline{\rho})^\vee, \cC_{\zeta\varepsilon}(\co_E)}$ the deformation functor on $\Art(\co_E)$ defined in the same way as $\Def_{\pi(\overline{\rho})^\vee, \cC(\co_E)}$ replacing $\cC(\co_E)$ by the subcategory $\cC_{\zeta\varepsilon}(\co_E)$. By the second equality in Proposition \ref{prop: gl2-tang} and Schlessinger's criterion, $\Def_{\pi(\overline{\rho})^\vee, \cC_{\zeta\varepsilon}(\co_E)}$ is pro-representable by a complete local noetherian $\co_E$-algebra $R_{\pi(\overline{\rho})^{\vee}}^{\zeta\varepsilon}$ of residue field $k_E$. It is not difficult to see that the isomorphism in (\ref{equ: funteq}) induces a natural isomorphism (so that $R_{\overline{\rho}}^{\zeta}\buildrel\sim\over\rightarrow R_{\pi(\overline{\rho})^{\vee}}^{\zeta\varepsilon}$):
\begin{equation}\label{equ: GL2-centC}
\Def_{\pi(\overline{\rho})^\vee, \cC_{\zeta\varepsilon}(\co_E)} \buildrel\sim\over \lra \Def_{\overline{\rho}}^{\zeta}.
\end{equation}

\noindent
We denote by $\cN^{\zeta\varepsilon}$ the universal deformation of $\pi(\overline{\rho})^{\vee}$ over $R_{\pi(\overline{\rho})^{\vee}}^{\zeta\varepsilon}\cong R_{\overline{\rho}}^{\zeta}$ for $\Def_{\pi(\overline{\rho})^\vee, \cC_{\zeta\varepsilon}(\co_E)}$ (note that $\cN^{\zeta\varepsilon}$ is denoted by $N$ in \cite{HP}) and by $\rho^{\univ, \zeta}$ the universal deformation of $\overline{\rho}$ over $R_{\overline{\rho}}^{\zeta}$. Let $\Lambda$ be the universal deformation ring of the trivial $1$-dimensional representation of $\Gal_{\Q_p}$ over $k_E$ and $1^{\univ}$ the corresponding universal deformation (which is thus a free $\Lambda$-module of rank $1$). We have $R_{\overline{\rho}}\cong R_{\overline{\rho}}^{\zeta}\widehat\otimes_{\co_E} \Lambda$ and $\rho^{\univ}\cong \rho^{\univ, \zeta}\widehat\otimes_{\co_E} 1^{\univ}$ where $\widehat{\otimes}$ denotes the $\varpi_E$-adic completion of the usual tensor product. We equip $1^{\univ}$ with a natural action of $\GL_2(\Q_p)$ via $\dett: \GL_2(\Q_p) \ra \Q_p^{\times}$. One easily sees $1^{\univ}\in \cC(\co_E)$.

\begin{proposition}\label{prop: GL2-NN}
We have $\cN\cong \cN^{\zeta\varepsilon}\widehat{\otimes}_{\co_E} 1^{\univ}$.
\end{proposition}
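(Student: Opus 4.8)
\textbf{Proof proposal for Proposition \ref{prop: GL2-NN}.}

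The plan is to deduce the isomorphism $\cN\cong \cN^{\zeta\varepsilon}\widehat{\otimes}_{\co_E} 1^{\univ}$ from the universal property of $\cN$ together with the twisting behaviour of Colmez's functor under characters of the determinant. First I would recall from Corollary \ref{coro: app-NR} and Corollary \ref{coro: app-NRI} (and Remark \ref{topotopo}) that $\cN$, $\cN^{\zeta\varepsilon}$ and $1^{\univ}$ are honest profinite objects of $\cC(\co_E)$ representing the appropriate deformation functors, so it suffices to construct a deformation of $\pi(\overline{\rho})^\vee$ over $R^\zeta_{\overline{\rho}}\widehat\otimes_{\co_E}\Lambda$ (the object $\cN^{\zeta\varepsilon}\widehat\otimes_{\co_E}1^{\univ}$) and check that the induced map $R_{\overline{\rho}}=R^{\zeta\varepsilon}_{\pi(\overline{\rho})^\vee}\widehat\otimes_{\co_E}\Lambda\to R^\zeta_{\overline{\rho}}\widehat\otimes_{\co_E}\Lambda$ classifying it is an isomorphism. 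Here I would use the identification $R_{\overline{\rho}}\cong R^\zeta_{\overline{\rho}}\widehat\otimes_{\co_E}\Lambda$ and $\rho^{\univ}\cong \rho^{\univ,\zeta}\widehat\otimes_{\co_E}1^{\univ}$ stated just before the proposition, which already says the Galois-side decomposition holds; the point is to transport it through the equivalence (\ref{equ: funteq}).

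The key step is that Colmez's functor $\hV_{\varepsilon^{-1}}$ (and hence the equivalence (\ref{equ: funteq})) is compatible with twisting by a character $\eta$ of $\Q_p^\times$ composed with $\dett$ on the $\GL_2(\Q_p)$-side and by the corresponding character of $\Gal_{\Q_p}$ on the Galois side. More precisely, for $M_A\in\cC(\co_E)$ a deformation of $\pi(\overline{\rho})^\vee$, one has $\hV_{\varepsilon^{-1}}(M_A\otimes(\eta\circ\dett))\cong \hV_{\varepsilon^{-1}}(M_A)\otimes\eta$, which follows from the corresponding property of Colmez's functor on $\Mod^{\fin}_{\GL_2(\Q_p)}(\co_E)$ (it intertwines twist by $\eta\circ\dett$ with twist by $\eta$) by passing to projective limits as in \S~\ref{sec: app-def}. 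Granting this, the object $\cN^{\zeta\varepsilon}\widehat\otimes_{\co_E}1^{\univ}$, viewed as a deformation of $\pi(\overline{\rho})^\vee$ over $R^\zeta_{\overline{\rho}}\widehat\otimes_{\co_E}\Lambda$ (with $1^{\univ}$ acting through $\dett$), is sent by (\ref{equ: funteq}) to $\rho^{\univ,\zeta}\widehat\otimes_{\co_E}1^{\univ}\cong\rho^{\univ}$, i.e. to the universal Galois deformation. Hence the classifying map $R_{\overline{\rho}}\to R^\zeta_{\overline{\rho}}\widehat\otimes_{\co_E}\Lambda$ is, under the identification $R_{\overline{\rho}}\cong R^\zeta_{\overline{\rho}}\widehat\otimes_{\co_E}\Lambda$, the identity, so by the universal property of $\cN$ we get the desired isomorphism $\cN\cong\cN^{\zeta\varepsilon}\widehat\otimes_{\co_E}1^{\univ}$ in $\cC(\co_E)$ (compatibly with the $R_{\overline{\rho}}$-module structures).

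I would also need to check the bookkeeping: that $\cN^{\zeta\varepsilon}\widehat\otimes_{\co_E}1^{\univ}$ really lies in $\cC(\co_E)$ and is flat (equivalently pro-free) as an $R^\zeta_{\overline{\rho}}\widehat\otimes_{\co_E}\Lambda$-module — this follows since $\cN^{\zeta\varepsilon}$ is pro-free over $R^\zeta_{\overline{\rho}}$ (Lemma \ref{tecnicA}(1)) and $1^{\univ}$ is free of rank one over $\Lambda$, and a completed tensor product of pro-free modules over the respective rings is pro-free over the completed tensor product ring — and that its reduction mod the maximal ideal recovers $\pi(\overline{\rho})^\vee$ (clear, since reducing both factors gives $\pi(\overline{\rho})^\vee\otimes_{k_E}k_E$). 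The central character is tracked correctly because $Z(\Q_p)$ acts on $\cN^{\zeta\varepsilon}$ via (the dual of) $\zeta\varepsilon$ and on $1^{\univ}$ via $\dett$ restricted to $Z(\Q_p)$, which is $z\mapsto z^2$, matching the fact that $\cN$ has no fixed central character but its ``$\zeta$-part'' does. The main obstacle I anticipate is making the twisting-compatibility of Colmez's functor precise at the level of the pro-categories $\cC(\co_E)$ and $\Mod^{\pro\ \!\!\!\aug}_{\GL_2(\Q_p)}(\co_E)$ — i.e. checking that the extension of $\hV_{\varepsilon^{-1}}$ described in \S~\ref{notprel} genuinely commutes with $\widehat\otimes_{\co_E}1^{\univ}$ and with twist by $\dett$-characters in families, rather than just for finite-length objects; everything else is formal once that is in hand (and it is essentially the content of how the $p$-adic correspondence behaves under twist, e.g. as used implicitly around (\ref{equ: GL2-centC})).
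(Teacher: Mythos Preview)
Your proposal is correct and follows the same overall strategy as the paper --- construct the deformation $\cN^{\zeta\varepsilon}\widehat\otimes_{\co_E}1^{\univ}$ of $\pi(\overline\rho)^\vee$ over $R^\zeta_{\overline\rho}\widehat\otimes_{\co_E}\Lambda$, and show that the resulting classifying map $R_{\overline\rho}\to R^\zeta_{\overline\rho}\widehat\otimes_{\co_E}\Lambda$ is an isomorphism --- but you diverge at the final step. You identify the classifying map explicitly with the known Galois-side isomorphism by invoking the twist-compatibility of Colmez's functor (so that $\cN^{\zeta\varepsilon}\widehat\otimes_{\co_E}1^{\univ}\mapsto\rho^{\univ,\zeta}\widehat\otimes_{\co_E}1^{\univ}\cong\rho^{\univ}$ under (\ref{equ: funteq})). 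The paper instead bypasses Colmez's functor entirely here and simply checks that the classifying map induces a bijection on tangent spaces; this is immediate from Lemma~\ref{lem: olg-cc} (the splitting $\Ext^1_{\GL_2(\Q_p)}(\pi(\overline\rho),\pi(\overline\rho))\cong\Ext^1_{\GL_2(\Q_p),\overline\zeta\overline\varepsilon}(\pi(\overline\rho),\pi(\overline\rho))\oplus\Hom(\Q_p^\times,k_E)$) together with Proposition~\ref{prop: gl2-tang}.

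Your route is more conceptual and makes the isomorphism canonical, but at the cost of the technical verification you yourself flag (extending the twist-compatibility $\hV_{\varepsilon^{-1}}(\pi\otimes\eta\circ\dett)\cong\hV_{\varepsilon^{-1}}(\pi)\otimes\eta$ from finite-length objects to the pro-completed setting over $R^\zeta_{\overline\rho}\widehat\otimes_{\co_E}\Lambda$). The paper's tangent-space argument is shorter and self-contained, needing only the dimension counts already established in \S~\ref{sec: app-def}.
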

\begin{proof}
We have that $\cN^{\zeta\varepsilon}\widehat{\otimes}_{\co_E} 1^{\univ}$ is a deformation of $\pi(\overline{\rho})^{\vee}$ over $R_{\overline{\rho}}^{\zeta} \widehat{\otimes}_{\co_E} \Lambda$ in $\cC(\co_E)$, from which we deduce a morphism of local $\co_E$-algebras $R_{\overline{\rho}} \lra R_{\overline{\rho}}^{\zeta} \widehat{\otimes}_{\co_E} \Lambda$, which is easily checked to be an isomorphism (e.g. by proving the tangent map is bijective). The proposition follows.
\end{proof}

\subsection{Deformations II}

\noindent
We prove here a key projectivity property of $\cN$.\\

\noindent
We keep the previous notation and assumption (in particular $\overline\rho$ satisfies (\ref{hypo: hL-modp}) and is such that $\End_{\Gal_{\Q_p}}(\overline{\rho})\cong k_E$). We assume moreover $p\geq 5$ if $\overline{\rho}$ is nongeneric.

\begin{proposition}\label{prop: GL2-proj}
There exist $x$, $y\in R_{\overline{\rho}}$ such that $S:=\co_E[[x,y]]$ is a subring of $R_{\overline{\rho}}$ and $\cN$ is a finitely generated projective $S[[\GL_2(\Z_p)]]$-module.
\end{proposition}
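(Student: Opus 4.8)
The plan is to reduce the statement to the corresponding projectivity assertion for the fixed-determinant deformation $\cN^{\zeta\varepsilon}$, which is precisely the content of Pa\v{s}k\=unas's work (and its complement by Hu--Pa\v{s}kunas in the nongeneric case, which is where the hypothesis $p\geq 5$ enters). Concretely, I would first invoke Proposition \ref{prop: GL2-NN}, which gives $\cN\cong \cN^{\zeta\varepsilon}\widehat{\otimes}_{\co_E}1^{\univ}$, where $1^{\univ}$ is the universal deformation of the trivial character, free of rank one over $\Lambda=\co_E[[y']]$ for a suitable variable $y'$, with $\GL_2(\Q_p)$ acting through $\dett$. Since $\GL_2(\Z_p)$ acts on $1^{\univ}$ through the (surjective, with pro-$p$ kernel on a finite-index subgroup) determinant map $\GL_2(\Z_p)\to \Z_p^\times$, the module $1^{\univ}|_{\GL_2(\Z_p)}$ is visibly a finitely generated projective (indeed free of rank one up to the completed group algebra of a finite quotient) $\Lambda[[\GL_2(\Z_p)]]$-module after restricting to a small enough subgroup; this is elementary and I would not belabor it.

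Next I would recall that $R_{\overline{\rho}}\cong R^{\zeta}_{\overline{\rho}}\widehat{\otimes}_{\co_E}\Lambda$ and that, by the deformation theory recalled in \S\ref{sec: app-def} together with the explicit computation of $R^\zeta_{\overline{\rho}}$ (supersingular, generic nonsupersingular, nongeneric cases), $R^\zeta_{\overline{\rho}}$ is a power series ring in one variable $x$ over $\co_E$ (using $\End_{\Gal_{\Q_p}}(\overline{\rho})\cong k_E$, Proposition \ref{prop: gl2-tang}, and the relevant structure results; in the nongeneric case this is exactly where $p\geq 5$ is used). Hence $S:=\co_E[[x,y]]$ with $y$ mapping to a generator of $\Lambda$ is a subring of $R_{\overline{\rho}}$ over which $R_{\overline{\rho}}$ is finite free, in fact an isomorphism after suitable choice of variables. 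The core input is then that $\cN^{\zeta\varepsilon}$ is a finitely generated projective $\co_E[[x]][[\GL_2(\Z_p)]]$-module — equivalently, that $\Hom^{\cts}_{\co_E}(\cN^{\zeta\varepsilon},\co_E)=\pi^{\univ,\zeta}_{\GL_2}$ (via Corollary \ref{coro: app-NR} applied in the fixed-determinant setting, i.e.\ Corollary \ref{coro: app-NRI}) is, restricted to $\GL_2(\Z_p)$, an injective object in the category of $\varpi_E$-adically admissible representations, which follows from the fact that $\pi(\overline{\rho})^\vee$ is projective in $\cC_{\zeta\varepsilon}(\co_E)$ restricted to $\GL_2(\Z_p)$ together with the flatness of $\cN^{\zeta\varepsilon}$ over $R^\zeta_{\overline{\rho}}$. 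I would cite \cite[\S~6]{Pas13} and \cite[\S~8]{Pas13} in the generic cases and the nongeneric refinement for $p\geq 5$; concretely the statement I need is that $\cN^{\zeta\varepsilon}$ is projective in the category of pseudocompact $R^\zeta_{\overline{\rho}}[[\GL_2(\Z_p)]]$-modules.

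Granting that, I would assemble the proof: tensoring the projective $S_0:=\co_E[[x]]$-linear statement for $\cN^{\zeta\varepsilon}$ with the projective $\Lambda$-linear statement for $1^{\univ}$ over $\co_E$ and completing $\varpi_E$-adically, using that completed tensor products of finitely generated projective modules over completed group algebras over the respective coefficient rings are again finitely generated projective over the completed tensor product of the coefficient rings with the completed group algebra — here one checks that $(S_0[[\GL_2(\Z_p)]])\widehat{\otimes}_{\co_E}(\Lambda[[\GL_2(\Z_p)]])$ surjects onto $S[[\GL_2(\Z_p)]]$ compatibly with the diagonal $\GL_2(\Z_p)$-action, and that along this map a projective module pushes forward to a projective module (a direct summand of a finite free module stays so). Finite generation is preserved since all the modules in sight are finitely generated over the relevant completed group algebras (Lemma \ref{tecnicA}(1) and its fixed-determinant analogue). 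The main obstacle, I expect, is bookkeeping the two distinct coefficient-ring factors and the diagonal $\GL_2(\Z_p)$-action through the completed tensor product — that is, making precise that $\cN\cong\cN^{\zeta\varepsilon}\widehat{\otimes}_{\co_E}1^{\univ}$ is projective over $S[[\GL_2(\Z_p)]]$ and not merely over the larger algebra $(S_0[[\GL_2(\Z_p)]])\widehat{\otimes}_{\co_E}(\Lambda[[\GL_2(\Z_p)]])$ — rather than the local structure of $R^\zeta_{\overline{\rho}}$, which is entirely standard. The identification of $S$ as an actual subring of $R_{\overline{\rho}}$ (as opposed to merely a quotient) follows from the power-series presentations of $R^\zeta_{\overline{\rho}}$ and $\Lambda$.
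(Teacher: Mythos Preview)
Your overall strategy matches the paper's --- reduce to $\cN^{\zeta\varepsilon}$ via Proposition~\ref{prop: GL2-NN} and handle the twist by $1^{\univ}$ separately --- but there is a genuine gap in the middle that would cause the argument to fail as written.

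The error is your claim that $R^{\zeta}_{\overline{\rho}}$ is a power series ring in \emph{one} variable over $\co_E$ (and similarly that $\Lambda=\co_E[[y']]$). Proposition~\ref{prop: gl2-tang} says the tangent space $\Ext^1_{\Gal_{\Q_p},\overline{\zeta}}(\overline{\rho},\overline{\rho})$ has dimension $3$, so $R^{\zeta}_{\overline{\rho}}$ has Krull dimension $3$ over $\co_E$, not $1$; likewise $\Lambda$ pro-represents deformations of the trivial character and $\Hom(\Q_p^\times,k_E)$ is $2$-dimensional. Hence $S$ is \emph{not} all of $R_{\overline{\rho}}$, and you cannot simply read off projectivity of $\cN^{\zeta\varepsilon}$ over $R^{\zeta}_{\overline{\rho}}[[\GL_2(\Z_p)]]$ from flatness plus projectivity of the special fibre.

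What the paper does instead is more subtle. First it passes to a pro-$p$ subgroup $K\subset\GL_2(\Z_p)$ admitting a splitting $K\cong K/Z_0\times Z_0$ with $Z_0\cong\Z_p$. Then it invokes \cite[Thm.~3.3~\&~3.5]{HP} to produce a single element $x$ in the maximal ideal of $R^{\zeta}_{\overline{\rho}}$ such that $\cN^{\zeta\varepsilon}/x$ is free over $\co_E[[K/Z_0]]$ (after twisting away the central character). The heart of the proof is a Tor computation: from the exact sequence $0\to\cN_1\xrightarrow{x}\cN_1\to\cN_1/x\to0$ one shows $\Tor^1_{S_1[[K/Z_0]]}(\cN_1,k_E)=0$ for $S_1:=\co_E[[x]]$, whence $\cN_1\cong S_1[[K/Z_0]]^{\oplus r}$. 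This is exactly the step your ``flatness plus fibre projective'' shortcut was trying to avoid, and it does not follow formally. On the $\Lambda$ side, one writes $\Lambda\cong S_2[[\Gamma]]$ for $S_2=\co_E[[y]]$ and observes that $1^{\univ}$ is finite projective over $S_2[[Z_0]]$ (via finite \'etaleness of $\Lambda$ over $S_2[[Z_0]]$). Finally, the product decomposition $K\cong K/Z_0\times Z_0$ is what allows one to conclude that $\cN^{\zeta\varepsilon}\widehat\otimes_{\co_E}1^{\univ}$ is projective over $S[[K]]$ --- this replaces the ``bookkeeping'' you anticipated with a clean external tensor product of projective modules over the two factors.
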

\begin{proof}
We fix $K$ a pro-$p$ compact open subgroup of $\GL_2(\Z_p)$ such that $K\cong K/Z_0\times Z_0$ with $Z_0:=K\cap Z(\Q_p)$ isomorphic to $\Z_p$. If $R$ is a (noncommutative) ring, we denote by $\Mod^{\fg}_R$ the category of finitely generated $R$-modules. It is enough to prove the statement with $\GL_2(\Z_p)$ replaced by $K$.\\
\noindent
(a) By \cite[Thm. 3.3]{HP} (in the generic case) and \cite[Thm. 3.5]{HP} and its proof (in the nongeneric case), there exists $x$ in the maximal ideal of $R_{\overline{\rho}}^{\zeta}$ such that $\cN^{\zeta\varepsilon}\otimes_{R_{\overline{\rho}}^{\zeta}}R_{\overline{\rho}}^{\zeta}/x$ is a finitely generated $\co_E[[K]]$-module which is projective in the category $\Mod_{\co_E[[K]], \zeta\varepsilon}^{\fg}$ := the full subcategory of $\Mod_{\co_E[[K]]}^{\fg}$ on which $Z_0$ acts by $\zeta\varepsilon$. In particular $\cN^{\zeta\varepsilon}$ is a finitely generated $S_1[[K]]$-module for $S_1:=\co_E[[x]]$. We first want to prove that $\cN^{\zeta\varepsilon}$ is moreover projective in $\Mod_{S_1[[K]], \zeta\varepsilon}^{\fg}$ (with obvious notation, as $\cN^{\zeta\varepsilon}$ is a $R_{\overline{\rho}}^{\zeta}$-module note this will also imply $S_1\hookrightarrow R_{\overline{\rho}}^{\zeta}$). Let $\chi: Z_0\ra \co_E^{\times}$ such that $\chi^2=\zeta$ (enlarging $E$ if necessary and using $Z_0\cong \Z_p$), we deduce an isomorphism of $\co_E[[K/Z_0]]$-modules (using that $\co_E[[K/Z_0]]$ is a local ring):
$$(\cN^{\zeta\varepsilon}\otimes \chi^{-1}\circ \dett)\otimes_{R_{\overline{\rho}}^{\zeta}}R_{\overline{\rho}}^{\zeta}/x \cong \co_E[[K/Z_0]]^{\oplus r}$$
and it is enough to prove that $\cN_1:=\cN^{\zeta\varepsilon}\otimes \chi^{-1}\circ \dett$ is projective in $\Mod_{S_1[[K/Z_0]]}^{\fg}$.\\
\noindent
(b) As at the beginning of \cite[\S~2.5]{Pas15}, it is enough to prove $\Tor^1_{S_1[[K/Z_0]]}(\cN_1, k_E)=0$ where $\Tor^i_{S_1[[K/Z_0]]}(-,k_E)$ denotes the $i$-th derived functor of $(\cdot){\otimes}_{S_1[[K/Z_0]]} k_E$ in $\Mod_{S_1[[K/Z_0]]}^{\fg}$ (recall $S_1[[K/Z_0]]$ is a local ring of residue field $k_E$). Indeed, let $\cP$ be a projective envelope of $\cN_1$ in $\Mod_{S_1[[K/Z_0]]}^{\fg}$ (whose existence follows from \cite[\S~23~\&~Prop.~24.12]{Lam}), and consider a short exact sequence $0 \ra \cM_1 \ra \cP \ra \cN_1 \ra 0$ in $\Mod_{S_1[[K/Z_0]]}^{\fg}$. If $\Tor^1_{S_1[[K/Z_0]]}(\cN_1, k_E)=0$, we get:
\begin{equation*}
0 \lra \cM_1{\otimes}_{S_1[[K/Z_0]]} k_E \lra \cP {\otimes}_{S_1[[K/Z_0]]} k_E \lra \cN_1{\otimes}_{S_1[[K/Z_0]]} k_E \lra 0.
\end{equation*}
Since $\cP$ is the projective envelope of $\cN_1$, we have $\cP {\otimes}_{S_1[[K/Z_0]]} k_E \xrightarrow{\sim} \cN_1{\otimes}_{S_1[[K/Z_0]]} k_E$, whence $\cM_1{\otimes}_{S_1[[K/Z_0]]} k_E=0$, and $\cM_1=0$ by Nakayama's lemma (\cite[Lem.~4.22]{Lam}). Now, the exact sequence $0 \lra \cN_1 \xlongrightarrow{x} \cN_1 \lra \cN_1/x \lra 0$ (recall $\cN^{\zeta\varepsilon}$ is flat over $R_{\overline{\rho}}^{\zeta}$) induces:
\begin{multline}\label{equ: gl2-NxTor}
\Tor^1_{S_1[[K/Z_0]]}(\cN_1, k_E) \xlongrightarrow{x} \Tor^1_{S_1[[K/Z_0]]}(\cN_1, k_E) \lra \Tor^1_{S_1[[K/Z_0]]}(\cN_1/x, k_E) \\ \lra \cN_1{\otimes}_{S_1[[K/Z_0]]} k_E
\lra \cN_1{\otimes}_{S_1[[K/Z_0]]} k_E \lra (\cN_1/x){\otimes}_{S_1[[K/Z_0]]} k_E \lra 0.
\end{multline}
Let $r:=\dim_{k_E} \cN_1{\otimes}_{S_1[[K/Z_0]]} k_E=\dim_{k_E} (\cN_1/x){\otimes}_{S_1[[K/Z_0]]} k_E$. By the argument as at end of the proof of \cite[Prop. 2.34]{Pas15}, $\Tor^1_{S_1[[K/Z_0]]}(\cN_1, k_E)$ is a finitely generated $S_1$-module (even a finite dimensional $k_E$-vector space). Using the exact sequence:
\begin{equation*}
0 \lra S_1[[K/Z_0]]^{\oplus r} \xlongrightarrow{x} S_1[[K/Z_0]]^{\oplus r} \lra \co_E[[K/Z_0]]^{\oplus r}(\cong \cN_1/x)\lra 0,
\end{equation*}
we easily deduce $\Tor^1_{S_1[[K/Z_0]]}(\cN_1/x, k_E) \buildrel\sim\over\lra k_E^{\oplus r}$, which implies with (\ref{equ: gl2-NxTor}) that the morphism $\Tor^1_{S_1[[K/Z_0]]}(\cN_1, k_E) \xlongrightarrow{x} \Tor^1_{S_1[[K/Z_0]]}(\cN_1, k_E)$ is surjective. But since $x\mapsto 0\in k_E$, we deduce $\Tor^1_{S_1[[K/Z_0]]}(\cN_1,k_E)=0$ and hence $\cN_1$ is projective, and even isomorphic to $S_1[[K/Z_0]]^{\oplus r}$.\\
\noindent
(c) We now finish the proof. Let $\Gamma:=1+p\Z_p$, the pro-$p$ completion of $\Q_p^{\times}$ is isomorphic to $\Gamma\times \Z_p$, from which we deduce $\Lambda \cong \co_E[[\Gamma \times \Z_p]]$. There exists thus $y\in \Lambda$ such that $\Lambda\cong S_2[[\Gamma]]$ with $S_2:=\co_E[[y]]$. Since $Z_0$ is a subgroup of finite index of $\Gamma$, we deduce $\Lambda$ is finite \'etale over $S_2[[Z_0]]$, and hence $1^{\univ}$ is a finite projective $S_2[[Z_0]]$-module. Together with (b), Proposition \ref{prop: GL2-NN} and $K\cong K/Z_0\times Z_0$, we obtain that $\cN\cong \cN^{\zeta\varepsilon}\widehat{\otimes}_{\co_E} 1^{\univ}$ is a finitely generated projective $S[[K]]$-module with $S:=\co_E[[x,y]]$. This concludes the proof.
\end{proof}

\subsection{Proof of Proposition \ref{prop: hL-gl2pLL2}} \label{sec: app-tri}

\noindent
We finally prove Proposition \ref{prop: hL-gl2pLL2}.\\

\noindent
We keep the previous notation. We assume $p\geq 5$ and fix $\rho:\Gal_{\Q_p}\rightarrow \GL_2(E)$ as in Proposition \ref{prop: hL-gl2pLL}, so that we have $D_{\rig}(\rho)\cong D(\alpha, \lambda, \psi)$ with $D(\alpha, \lambda, \psi)$ as in Lemma \ref{lem: hL-etale}. It is enough to prove the proposition with $D(p,\lambda, \psi)$, $\pi(\lambda^{\flat}, \psi)$ replaced by $D(\alpha, \lambda, \psi)$, $\pi(p^{-1}\alpha, \lambda^{\flat}, \psi)$ respectively (as in the proof of Proposition \ref{prop: hL-gl2pLL}). We fix a mod $p$ reduction $\overline{\rho}$ of $\rho$ satisfying (\ref{hypo: hL-modp}) and $\End_{\Gal_{\Q_p}}(\overline{\rho})=k_E$, and we define using Corollary \ref{coro: app-NR} and Remark \ref{topotopo}:
\begin{equation*}
\Pi:=\Hom_{\co_E}^{\cts}\big(\cN, \co_E\big)\otimes_{\co_E} E \cong \Hom_{\co_E}^{\cts}\big(\Hom_{R_{\overline{\rho}}}(\pi^{\univ}(\overline{\rho}), R_{\overline{\rho}}), \co_E\big)\otimes_{\co_E} E
\end{equation*}
where ``$\cts$" means the continuous morphisms. It follows from \cite[Thm.~1.2]{ST} and Proposition \ref{prop: GL2-proj} that the Banach space $\Pi$ (equipped with the supremum norm) is an $R_{\overline{\rho}}$-admissible continuous representation of $\GL_2(\Q_p)$ in the sense of \cite[D\'ef.~3.1]{BHS1}.

\begin{lemma}\label{chiant}
We have an isomorphism of Banach spaces:
$$\Pi\cong \Hom_{\co_E}^{\cts}(R_{\overline{\rho}},\co_E)\widehat \otimes_{R_{\overline{\rho}}} \pi^{\univ}(\overline{\rho})[1/p]$$
where $R_{\overline{\rho}}$ (in $\Hom_{\co_E}^{\cts}(R_{\overline{\rho}},\co_E)$) is equipped with its $\fm_{R_{\overline{\rho}}}$-adic topology and $\widehat \otimes$ is the $\varpi_E$-adic completion of the usual tensor product.
\end{lemma}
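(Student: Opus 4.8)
\textbf{Proof proposal for Lemma \ref{chiant}.}
The plan is to unwind the definition of $\Pi$ and apply a standard duality-plus-completion argument, using crucially the projectivity established in Proposition \ref{prop: GL2-proj}. First I would recall that $\cN\cong \Hom_{R_{\overline{\rho}}}(\pi^{\univ}(\overline{\rho}), R_{\overline{\rho}})$ topologically (Corollary \ref{coro: app-NR} and Remark \ref{topotopo}), where the left side carries the profinite topology and the right side the topology of pointwise convergence. By Proposition \ref{prop: GL2-proj} there is a power series subring $S=\co_E[[x,y]]\hookrightarrow R_{\overline{\rho}}$ over which $\cN$ is a finitely generated projective $S[[\GL_2(\Z_p)]]$-module. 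This means that, after restricting to $\GL_2(\Z_p)$, $\cN$ is a direct summand of $S[[\GL_2(\Z_p)]]^{\oplus r}$ for some $r$, so its $\co_E$-linear continuous dual $\Pi=\Hom^{\cts}_{\co_E}(\cN,\co_E)\otimes_{\co_E}E$ is a direct summand (as a Banach space with $\GL_2(\Z_p)$-action) of $\cC(\GL_2(\Z_p),\co_E)^{\oplus r}\otimes_{\co_E}E$; in particular $\Pi$ is an orthonormalizable, indeed $R_{\overline{\rho}}$-admissible, Banach representation, and all the completed tensor products below are well-behaved.

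Next I would compare the two descriptions of $\Pi$. Using the anti-equivalence between orthonormalizable admissible $R_{\overline{\rho}}$-representations and pro-free finitely generated augmented $R_{\overline{\rho}}$-modules (\cite[Prop.~3.1.12]{Em4}, \cite[Prop.~3.1.3]{Em4}), together with Corollary \ref{coro: app-NR}, one gets $\Hom^{\cts}_{\co_E}(\cN,\co_E)\cong \Hom^{\cts}_{\co_E}(\Hom_{R_{\overline{\rho}}}(\pi^{\univ}(\overline{\rho}),R_{\overline{\rho}}),\co_E)$ as topological $\co_E[[\GL_2(\Q_p)]]$-modules. Now I would invoke \cite[Lem.~B.7]{Em4} (and the general formalism of \cite[Appendix~B]{Em4}) to identify the double dual: for a finitely generated $R_{\overline{\rho}}$-module $M$ (here $M=\pi^{\univ}(\overline{\rho})$, which is $\varpi_E$-adically complete and pro-free over $R_{\overline{\rho}}$ after restriction to a compact open subgroup), one has a canonical topological isomorphism
\[
\Hom^{\cts}_{\co_E}\big(\Hom_{R_{\overline{\rho}}}(M,R_{\overline{\rho}}),\co_E\big)\;\cong\;\Hom^{\cts}_{\co_E}(R_{\overline{\rho}},\co_E)\,\widehat{\otimes}_{R_{\overline{\rho}}}\,M,
\]
where $R_{\overline{\rho}}$ on the left of the completed tensor product carries its $\fm_{R_{\overline{\rho}}}$-adic topology. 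Inverting $p$ and using $M[1/p]=\pi^{\univ}(\overline{\rho})[1/p]$ then yields exactly the claimed isomorphism $\Pi\cong \Hom^{\cts}_{\co_E}(R_{\overline{\rho}},\co_E)\widehat{\otimes}_{R_{\overline{\rho}}}\pi^{\univ}(\overline{\rho})[1/p]$ of Banach spaces. The $\GL_2(\Q_p)$-equivariance is automatic since all the maps involved are $\GL_2(\Q_p)$-equivariant by construction.

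The main obstacle I anticipate is the careful bookkeeping of topologies in the double-dual identification: one must check that $\Hom^{\cts}_{\co_E}(-,\co_E)$ interacts correctly with $\widehat{\otimes}_{R_{\overline{\rho}}}$, that the $\fm_{R_{\overline{\rho}}}$-adic topology (rather than, say, the $\varpi_E$-adic one) is the right one to put on $R_{\overline{\rho}}$ inside the completed tensor product, and that passing to $R_{\overline{\rho}}$-coefficients does not destroy the pro-freeness needed to apply \cite[Lem.~B.7]{Em4} and \cite[Prop.~3.1.12]{Em4}. The projectivity from Proposition \ref{prop: GL2-proj} is what makes this go through: it guarantees $\cN$ (hence $\pi^{\univ}(\overline{\rho})$) is a direct summand of a free augmented module over the power series ring $S$, reducing every assertion to the evident one for $S[[\GL_2(\Z_p)]]^{\oplus r}$, where the completed tensor product over $R_{\overline{\rho}}$ and the continuous dual can be computed by hand. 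Once Lemma \ref{chiant} is in place, the remaining steps toward Proposition \ref{prop: hL-gl2pLL2} (computing $\Hom$ and $\Ext^1$ of $\pi(p^{-1}\alpha,\lambda^\flat,\psi)$ via the tangent space of $\Spec R_{\overline{\rho}}[1/p]$ at the point cut out by $\rho$, and matching triangulations through the global triangulation theory and the isomorphism (\ref{equ: hL-pLL0})) follow the same deformation-theoretic pattern already used in \S~\ref{sec: hL-FM3} and \S~\ref{sec: hL-ext1}.
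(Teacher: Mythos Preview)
Your approach has a genuine gap. You assert that $\pi^{\univ}(\overline{\rho})$ is a finitely generated $R_{\overline{\rho}}$-module and then invoke a double-dual formula from \cite[Lem.~B.7]{Em4} for such modules. But $\pi^{\univ}(\overline{\rho})$ is an orthonormalizable admissible representation of $\GL_2(\Q_p)$ over $R_{\overline{\rho}}$: as an $R_{\overline{\rho}}$-module it is the $\fm_{R_{\overline{\rho}}}$-adic completion of a free module of \emph{infinite} rank, certainly not finitely generated. What \emph{is} finitely generated is its dual $\cN$, and only as a module over the much larger ring $R_{\overline{\rho}}[[\GL_2(\Z_p)]]$ (or $S[[\GL_2(\Z_p)]]$ via Proposition~\ref{prop: GL2-proj}). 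Moreover \cite[Lem.~B.7]{Em4} is a base-change statement for $\Hom_A(-,A)$, not a double-dual identity of the shape you wrote, so the citation does not support the step. Your reduction ``to the free case $S[[\GL_2(\Z_p)]]^{\oplus r}$'' is a reduction on the $\cN$ side, but the formula you want involves $\pi^{\univ}(\overline{\rho})$ directly, and there is no obvious way to read off $\pi^{\univ}(\overline{\rho})$ from a free summand description of $\cN$.

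The paper's proof is shorter and does not use Proposition~\ref{prop: GL2-proj}. By Schikhof duality \cite[Thm.~1.2]{ST} it suffices to show that the $\co_E$-dual of $\Hom_{\co_E}^{\cts}(R_{\overline{\rho}},\co_E)\widehat{\otimes}_{R_{\overline{\rho}}}\pi^{\univ}(\overline{\rho})$ is $\cN$. One drops the completion (the $\co_E$-dual does not see it), applies tensor--Hom adjunction to obtain $\Hom_{R_{\overline{\rho}}}\!\big(\pi^{\univ}(\overline{\rho}),\Hom_{\co_E}(\Hom_{\co_E}^{\cts}(R_{\overline{\rho}},\co_E),\co_E)\big)$, and then uses the reflexivity $\Hom_{\co_E}(\Hom_{\co_E}^{\cts}(R_{\overline{\rho}},\co_E),\co_E)\cong R_{\overline{\rho}}$ from \cite[Prop.~C.5]{Em4}; Corollary~\ref{coro: app-NR} finishes. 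Proposition~\ref{prop: GL2-proj} enters the appendix only to guarantee that $\Pi$ is $R_{\overline{\rho}}$-admissible (so that the eigenvariety machinery of \cite{BHS1} applies later), not for Lemma~\ref{chiant} itself.
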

\begin{proof}
Note that $\Hom_{\co_E}^{\cts}(R_{\overline{\rho}},\co_E)$ is a cofinitely generated $R_{\overline{\rho}}$-module by \cite[Prop.~C.5]{Em4}. By \cite[Thm.~1.2]{ST}, it is enough to prove $\Hom_{\co_E}(\Hom_{\co_E}^{\cts}(R_{\overline{\rho}},\co_E)\widehat \otimes_{R_{\overline{\rho}}} \pi^{\univ}(\overline{\rho}),\co_E)\cong \cN$. But:
\begin{align}
\MoveEqLeft[18]  {\Hom_{\co_E}\big(\Hom_{\co_E}^{\cts}(R_{\overline{\rho}},\co_E)\widehat \otimes_{R_{\overline{\rho}}} \pi^{\univ}(\overline{\rho}),\co_E\big)}& \nonumber \\
\cong{} & \Hom_{\co_E}\big(\Hom_{\co_E}^{\cts}(R_{\overline{\rho}},\co_E)\otimes_{R_{\overline{\rho}}} \pi^{\univ}(\overline{\rho}),\co_E\big) \nonumber\\
\cong{} &\Hom_{R_{\overline{\rho}}}\big(\pi^{\univ}(\overline{\rho}),\Hom_{\co_E}(\Hom_{\co_E}^{\cts}(R_{\overline{\rho}},\co_E),\co_E)\big) \nonumber\\
\cong{} & \Hom_{R_{\overline{\rho}}}\big(\pi^{\univ}(\overline{\rho}),R_{\overline{\rho}}\big) \cong \cN \nonumber
\end{align}
where the first two isomorphisms are easy, the third one comes from \cite[Prop.~C.5]{Em4} and the last one from Corollary \ref{coro: app-NR}.
\end{proof}

\noindent
Any $\widetilde{\rho}\in \Ext^1_{\Gal_{\Q_p}}(\rho, \rho)$ gives rise to an $E[\epsilon]/\epsilon^2$-valued point of $R_{\overline{\rho}}$, hence to an ideal $\cI_{\widetilde{\rho}}\subseteq R_{\overline{\rho}}$ with $R_{\overline{\rho}}/\cI_{\widetilde{\rho}}\cong \co_E[\epsilon]/\epsilon^2$.

\begin{lemma}\label{equ: app-isot}
Let $\pi(\widetilde{\rho})^{\an}$ be the image of $\widetilde{\rho}$ via (\ref{equ: hL-pLLa}), then we have an isomorphism $\pi(\widetilde{\rho})^{\an}\cong \Pi[\cI_{\widetilde{\rho}}]^{\an}$ of locally analytic representations of $\GL_2(\Q_p)$ over $E$.
\end{lemma}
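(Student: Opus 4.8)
\textbf{Proof plan for Lemma \ref{equ: app-isot}.}

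The plan is to identify both sides with natural modules built from the universal deformation, using the explicit description of $\Pi$ in Lemma \ref{chiant} and the compatibility of $\pLL$ with deformation rings recorded in Remark \ref{rem: hL-pLL2}(2). First I would record what $\Pi[\cI_{\widetilde\rho}]$ is: since $\Pi\cong \Hom_{\co_E}^{\cts}(R_{\overline{\rho}},\co_E)\widehat\otimes_{R_{\overline{\rho}}}\pi^{\univ}(\overline{\rho})[1/p]$ and $R_{\overline{\rho}}/\cI_{\widetilde\rho}\cong \co_E[\epsilon]/\epsilon^2$, taking $\cI_{\widetilde\rho}$-torsion is computed by $\Hom_{\co_E}^{\cts}(R_{\overline{\rho}},\co_E)[\cI_{\widetilde\rho}]\cong \Hom_{\co_E}(R_{\overline{\rho}}/\cI_{\widetilde\rho},\co_E)\cong \Hom_{\co_E}(\co_E[\epsilon]/\epsilon^2,\co_E)$ (a free $\co_E[\epsilon]/\epsilon^2$-module of rank one), so that, using $\cN$ is pro-free over $R_{\overline{\rho}}$ (Lemma \ref{tecnicA}(1) in the limit, or directly Proposition \ref{prop: GL2-proj}), one gets a natural isomorphism $\Pi[\cI_{\widetilde\rho}]\cong (\pi^{\univ}(\overline{\rho})\otimes_{R_{\overline{\rho}}} R_{\overline{\rho}}/\cI_{\widetilde\rho})[1/p]$ of Banach representations of $\GL_2(\Q_p)$ over $E[\epsilon]/\epsilon^2$; here I would cite \cite[Lem.~3.1.17]{Em4} (as already used in \S\ref{sec: lg2}) and Corollary \ref{coro: app-NRI} together with Remark \ref{topotopo} to make the torsion computation rigorous at the level of continuous duals rather than by hand.

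Next I would match this with the Galois side. By Remark \ref{rem: hL-pLL2}(2), the isomorphism (\ref{equ: hL-pLLa}) sends $\widetilde\rho$ (viewed as the element $t$ corresponding to the ideal $\cI_t=\cI_{\widetilde\rho}$) precisely to $((\pi^{\univ}(\overline{\rho})\otimes_{R_{\overline{\rho}}} R_{\overline{\rho}}/\cI_{\widetilde\rho})\otimes_{\co_E}E)^{\an}$. On the other hand, by definition $\pi(\widetilde\rho)^{\an}$ is the image of $\widetilde\rho$ under (\ref{equ: hL-pLLa}), which is an extension of $\pi(\rho)^{\an}=\widehat\pi(\rho)^{\an}$ by itself in the category of locally analytic representations. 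So the content is simply that the two constructions of this extension — one via Colmez's functor / the isomorphism (\ref{equ: hL-pLLa}), the other via cutting out $\cI_{\widetilde\rho}$-torsion in the ``big'' Banach representation $\Pi$ — agree. Combining the previous paragraph's isomorphism $\Pi[\cI_{\widetilde\rho}]\cong (\pi^{\univ}(\overline{\rho})\otimes_{R_{\overline{\rho}}} R_{\overline{\rho}}/\cI_{\widetilde\rho})[1/p]$ with the exactness of passing to locally analytic vectors (\cite[Thm.~7.1]{ST03}) then yields $\Pi[\cI_{\widetilde\rho}]^{\an}\cong ((\pi^{\univ}(\overline{\rho})\otimes_{R_{\overline{\rho}}} R_{\overline{\rho}}/\cI_{\widetilde\rho})\otimes_{\co_E}E)^{\an}\cong \pi(\widetilde\rho)^{\an}$, which is the claim.

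The main obstacle I expect is making the torsion computation $\Pi[\cI_{\widetilde\rho}]\cong (\pi^{\univ}(\overline{\rho})\otimes_{R_{\overline{\rho}}}R_{\overline{\rho}}/\cI_{\widetilde\rho})[1/p]$ genuinely canonical and topologically correct: one must pass through the continuous $\co_E$-dual, use that $\cN=\Hom_{R_{\overline{\rho}}}(\pi^{\univ}(\overline{\rho}),R_{\overline{\rho}})$ is pro-free over $R_{\overline{\rho}}$ so that $\cN\otimes_{R_{\overline{\rho}}}R_{\overline{\rho}}/\cI_{\widetilde\rho}$ has no higher Tor, and then dualize back using \cite[Thm.~1.2]{ST} or \cite[Prop.~C.5]{Em4}. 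The subtlety is that $\Pi[\cI_{\widetilde\rho}]$ is defined as a subrepresentation of $\Pi$ whereas the deformation-theoretic object is naturally a quotient of $\pi^{\univ}(\overline{\rho})$, so one has to check the duality (\ref{equ: app-dual})/\cite[Prop.~3.1.12]{Em4} really does interchange ``kill $\cI_{\widetilde\rho}$ on the quotient'' with ``take $\cI_{\widetilde\rho}$-torsion on the dual'' — this is where Corollary \ref{coro: app-NRI} and the flatness of $\cN^{\zeta\varepsilon}$ over $R_{\overline{\rho}}^{\zeta}$ do the work. Everything else (compatibility of $\pLL$ with $\cI_{\widetilde\rho}$, exactness of $(\cdot)^{\an}$) is either already in the excerpt or a standard application of \cite[Thm.~7.1]{ST03}.
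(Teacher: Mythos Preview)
Your proposal is correct and follows essentially the same route as the paper: both arguments compute $\Pi[\cI_{\widetilde\rho}]$ by dualizing through $\cN/\cI_{\widetilde\rho}$ via Corollary~\ref{coro: app-NRI}, identify the result with $(\pi^{\univ}(\overline{\rho})\otimes_{R_{\overline{\rho}}}R_{\overline{\rho}}/\cI_{\widetilde\rho})[1/p]$ (the paper phrases this as ``by the same proof as for Lemma~\ref{chiant}''), and then invoke Remark~\ref{rem: hL-pLL2}(2). Your discussion of the torsion/quotient duality subtlety is exactly the content the paper compresses into the phrase ``using $\Pi[\cI_{\widetilde{\rho}}]\cong \Hom_{\co_E}^{\cts}(\cN/\cI_{\widetilde{\rho}}, \co_E)\otimes_{\co_E}E$''.
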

\begin{proof}
By the same proof as for Lemma \ref{chiant} using $\Pi[\cI_{\widetilde{\rho}}]\cong \Hom_{\co_E}^{\cts}(\cN/\cI_{\widetilde{\rho}}, \co_E)\otimes_{\co_E}E$ and Corollary \ref{coro: app-NRI}, we deduce $\Pi[\cI_{\widetilde{\rho}}]\cong \Hom_{\co_E}^{\cts}(R_{\overline{\rho}}/\cI_{\widetilde{\rho}},\co_E)\widehat \otimes_{R_{\overline{\rho}}/\cI_{\widetilde{\rho}}} (\pi^{\univ}(\overline{\rho})/\cI_{\widetilde{\rho}})[1/p]$. The result follows then from Remark \ref{rem: hL-pLL2}(2) and the fact $\Hom_{\co_E}^{\cts}(R_{\overline{\rho}}/\cI_{\widetilde{\rho}},\co_E)$ is free of rank one over $R_{\overline{\rho}}/\cI_{\widetilde{\rho}}\cong \co_E[\epsilon]/\epsilon^2$.
\end{proof}

\noindent
As in \cite[D\'ef.~3.2]{BHS1}, we denote by $\Pi^{R_{\overline{\rho}}-\an}$ the subspace of locally $R_{\overline{\rho}}$-analytic vectors of $\Pi$ and consider the locally analytic $T(\Q_p)$-representation $J_B(\Pi^{R_{\overline{\rho}}-\an})$ ($T$, $B$ as in \S~\ref{sec: hL-ext1}). As in \cite[\S~3.2]{BHS1}, the strong dual $J_B(\Pi^{R_{\overline{\rho}}-\an})^{\vee}$ corresponds to a coherent sheaf $\cM$ over $(\Spf R_{\overline{\rho}})^{\rig}\times \cT$ ($\cT$ as in \S~\ref{pordinary}) and we let $X$ denote the schematic support of $\cM$. A point $x=(\rho_x, \delta_x)\in (\Spf R_{\overline{\rho}})^{\rig}\times \cT$ lies in $X$ if and only if there is a $T(\Q_p)$-embedding $\delta_x \hookrightarrow J_B(\Pi^{R_{\overline{\rho}}-\an}[\fp_{\rho_x}])=J_B(\widehat{\pi}(\rho_x)^{\an})$ where $\fp_{\rho_x}\subseteq R_{\overline{\rho}}$ is the prime ideal attached to $\rho_x$ and the isomorphism $\widehat\pi({\rho}_x)^{\an}\cong \Pi^{R_{\overline{\rho}}-\an}[\fp_{\rho_x}]= \Pi[\fp_{\rho_x}]^{\an}$ is proven as for the one in Lemma \ref{equ: app-isot}.\\

\noindent
Consider the Zariski-closed trianguline variety $X_{\rm tri}(\overline\rho)$ of $(\Spf R_{\overline{\rho}})^{\rig}\times \cT$ defined exactly as in \cite[\S~2.2]{BHS1} (for $K=\Q_p$ and $n=2$) but without the framing, i.e. replacing $R^\square_{\overline\rho}$ by $R_{\overline\rho}$. As in \cite[Thm.~2.6]{BHS1} the rigid variety $X_{\rm tri}(\overline\rho)$ is equidimensional of dimension $4$ and contains a Zariski-open Zariski-dense subspace $U_{\tri}(\overline{\rho})^{\reg}$ which we define in the same way but removing the framing. Arguing inside $(\Spf R_{\overline{\rho}})^{\rig}\times \cT$, it easily follows from \cite{Colm14}, \cite{LXZ} (and the above characterization of points of $X$) that there is an embedding $U^{\tri}(\overline{\rho})^{\reg}\hookrightarrow X$ (be careful that there is a shift on the $\cT$-part between the two sides analogous to (the inverse of) \cite[(3.2)]{BHS2}), and hence we deduce a closed embedding (note that $X_{\rm tri}(\overline\rho)$ is reduced) $j: X_{\rm tri}(\overline\rho) \hooklongrightarrow X$. The pull-back $\cM_1:=j^* \cM$ is thus a coherent sheaf on $X_{\rm tri}(\overline\rho)$.\\

\noindent
It follows from Proposition \ref{prop: GL2-proj} that $\cN$ is finitely generated and \emph{projective} as $S[[\GL_2(\Z_p)]]$-module where $S=\co_E[[x,y]]\hookrightarrow R_{\overline{\rho}}$. In this case, by the same argument as in \cite[\S\S~3.3,~3.4~\&~3.5]{BHS1} (see especially \cite[Thm.~3.19]{BHS1}), one can prove that the set $Z$ of points $(\rho,\delta)\in X$ such that $\rho$ is crystalline generic (see before Lemma \ref{lem: lg1-classici2}) and $\delta$ is noncritical (see before Lemma \ref{lem: lg1-noncrit1}) is Zariski-dense and accumulation in $X$. Since such points are in $U_{\tri}(\overline{\rho})^{\reg}$ (modulo the aforementioned shift) we deduce that $j$ induces an isomorphism $X_{\rm tri}(\overline{\rho}) \xlongrightarrow{\sim} X_{\red}$. In particular, the noncritical point $x:=(\rho, \delta_{\lambda}(|\cdot|\otimes 1)\unr(\alpha)\circ\dett)$ is in $X_{\rm tri}(\overline{\rho})$ (indeed, as $j$ is an isomorphism, all the trianguline representations with mod $p$ reduction isomorphic to $\overline{\rho}$ appear on $X_{\rm tri}(\overline{\rho})$ since they do on $X$ using \cite{Colm14}, \cite{LXZ}).\\

\noindent
Using the isomorphism $X_{\rm tri}(\overline{\rho}) \xrightarrow{\sim} X_{\red}$ and the above characterization of points of $X$ together with \cite{Colm14}, \cite{LXZ} and \cite[Ex.~5.1.9]{Em2}, it easily follows that there exists a sufficiently small affinoid neighborhood $U\subseteq X_{\tri}(\overline{\rho})$ of $x$ such that the special fiber of the coherent sheaf $\cM_1$ at each point $x'\in U$ is one dimensional over the residue field of $x'$. Since $U$ is reduced, we deduce $\cM_1$ is locally free of rank $1$ over $U$ by \cite[Ex.~II.5.8(c)]{Harts} (which is there in the scheme setting, but the rigid setting is analogous). We denote by $V_x$ the tangent space of $X_{\rm tri}(\overline{\rho})$ at $x$ and we identify the tangent space of $\cT$ at $\delta_x:=\delta_{\lambda}(|\cdot|\otimes 1)\unr(\alpha)\circ\dett$ with $\Hom(T(\Q_p),E)$. By the global triangulation theory (\cite{KPX}, \cite{Liu}) and using similar arguments as in \cite[\S 4.1]{BHS2}, we have the following facts:
\begin{itemize}
\item the morphism $X_{\rm tri}(\overline{\rho}) \lra (\Spf R_{\overline{\rho}})^{\rig}$ induces an isomorphism:
\begin{equation}\label{equ: hL-tang1}
 j_x: V_x \xlongrightarrow{\sim} \Ext^1_{\tri}(\rho, \rho)
\end{equation}
\item for $v\in V_x$, denote by $\Psi_v=(\psi_{v,1}, \psi_{v,2})\in \Hom(T(\Q_p),E)$ the image of $v$ in the tangent space of $\cT$ at $\delta_x$ induced by $X_{\rm tri}(\overline{\rho})\ra \cT$, then the $\Gal_{\Q_p}$-representation $j_x(v)$ is trianguline of parameter $(x^{k_1}|\cdot|(1+\psi_{v,1}\epsilon)\unr(\alpha), x^{k_2}(1+\psi_{v,2}\epsilon)\unr(\alpha))$.
\end{itemize}
Now let $0\neq v:\Spec E[\epsilon]/\epsilon^2 \hookrightarrow U$ be a nonzero element in $V_x$. Since $\cM_1$ is locally free at $x$, we have that $W_v:=v^* \cM_1$ is a free $E[\epsilon]/\epsilon^2$-module of rank $1$. The action of $R_{\overline{\rho}}$ on $W_v$ is induced by $v: \Spec E[\epsilon]/\epsilon^2 \hookrightarrow U \ra (\Spf R_{\overline{\rho}})^{\rig}$ and we denote as usual by $\cI_v$ the corresponding ideal of $R_{\overline{\rho}}$. Moreover $T(\Q_p)$ acts on the $E$-dual of $W_v$ by $\delta_{\lambda}(|\cdot|\otimes 1)(1+\Psi_v\epsilon)\unr(\alpha)\circ\dett$. Note that it is possible that $\Psi_v=0$, but we always have $\cI_v\neq \fm_{\rho}$ by (\ref{equ: hL-tang1}). Since the rigid space $(\Spf R_{\overline{\rho}})^{\rig}\times \cT$ is nested (\cite[Def.~7.2.10]{Bch}), so are its closed subspaces $X$ and $X_{\rm tri}(\overline{\rho})$, and it follows that the composition:
\begin{equation*}
 v: \Spec E[\epsilon]/\epsilon^2 \hooklongrightarrow U \hooklongrightarrow X_{\rm tri}(\overline{\rho})\buildrel\sim\over\lra X
\end{equation*}
induces a surjection $\Gamma(X, \cM) \twoheadrightarrow v^* \cM\cong W_v$ (using that the image of the composition $\Gamma(X, \cM)\!\rightarrow \Gamma(U, \cM)\!\twoheadrightarrow v^* \cM\cong W_v$ is dense as a composition of continuous maps with dense images, hence is surjective since $W_v$ is finite dimensional). Taking duals and keeping track of the shift, we obtain an $R_{\overline{\rho}}\times T(\Q_p)$-equivariant injection $\delta_{\lambda^\flat}(|\cdot|\otimes |\cdot|^{-1})(1+\Psi_v\epsilon)\unr(p^{-1}\alpha)\circ\dett \hooklongrightarrow J_B(\Pi^{R_{\overline{\rho}}-\an})$. Since the $E$-dual of $W_v$ is killed by $\cI_v$, we see that this map factors through an $E[\epsilon]/\epsilon^2$-linear embedding of locally analytic representations of $T(\Q_p)$:
\begin{equation}\label{criteretri}
\delta_{\lambda^\flat}(|\cdot|\otimes |\cdot|^{-1})(1+\Psi_v\epsilon)\unr(p^{-1}\alpha)\hooklongrightarrow J_B(\Pi^{R_{\overline{\rho}}-\an}[\cI_v])
\end{equation}
(note that the left hand side of (\ref{criteretri}) always has dimension $2$ over $E$ even if $\Psi_v=0$).\\

\noindent
We can now finally prove Proposition \ref{prop: hL-gl2pLL2}. By Proposition \ref{prop: hL-gl2pLL}, Lemma \ref{lem: hL-tri3} and Lemma \ref{dimtri}(1), it is enough to prove that (\ref{equ: hL-pLLa}) maps $\Ext^1_{\tri}$ to $\Ext^1_{\tri}$ in such a way that Hypothesis \ref{hypo: hL-pLL0}(3) holds (up to twist by $\unr(p^{-1}\alpha)$ on both sides). Fix an extension in $\Ext^1_{\tri}(\rho, \rho)$, i.e. a trianguline deformation $\widetilde{\rho}$ of $\rho$ over $E[\epsilon]/\epsilon^2$, by (\ref{equ: hL-tang1}) and what is below (\ref{equ: hL-tang1}), we have that $(x^{k_1}|\cdot|(1+\psi_{v,1}\epsilon)\unr(\alpha), x^{k_2}(1+\psi_{v,2}\epsilon)\unr(\alpha))$ is a parameter for $D_{\rig}(\widetilde\rho)$ where $v\in V_x$ is the associated vector. Let $\pi(\widetilde{\rho})^{\an}$ be the image of $\widetilde{\rho}$ via (\ref{equ: hL-pLLa}), by Lemma \ref{equ: app-isot} we have $\pi(\widetilde{\rho})^{\an}\cong \Pi[\cI_v]^{\an}=\Pi^{R_{\overline{\rho}}-\an}[\cI_v]$ and by (\ref{criteretri}) together with Lemma \ref{dimtri}(2), we finally deduce the result.

\section*{References}

\end{document}